\documentclass[10pt,a4paper]{amsart}
\usepackage{amsmath, amscd, amsfonts, amssymb, amsthm, latexsym, url, color, todonotes, bm, framed, rotating} 
\usepackage{graphicx}
\usepackage[left=1.23in, right=1.23in, top=1in, bottom=1.3in, includefoot, headheight=13.6pt]{geometry}
\usepackage[colorlinks=true,hyperindex,citecolor=blue,linkcolor=magenta]{hyperref}
\input{xy}
\xyoption{all}

\setlength{\marginparwidth}{1in}

\newtheorem{theorem}{Theorem}[section]
\newtheorem{lemma}[theorem]{Lemma}
\newtheorem{corollary}[theorem]{Corollary}
\newtheorem{proposition}[theorem]{Proposition}
\newtheorem{definition}[theorem]{Definition}

\newtheorem{assumption}[theorem]{Assumption}

\theoremstyle{definition}
\newtheorem{remark}[theorem]{Remark}
\newtheorem{example}[theorem]{Example}
\newtheorem{construction}[theorem]{Construction}

\newcommand{\xysquare}[8]{
\[\xymatrix{
#1 \ar@{#5}[r] \ar@{#6}[d] & #2 \ar@{#7}[d]\\
#3 \ar@{#8}[r] & #4
}\]
}

\DeclareMathOperator*{\projlimf}{``\varprojlim''}

\newcommand{\bb}{\mathbb}

\newcommand{\blob}{\bullet}

\newcommand{\comment}[1]{}

\newcommand{\ep}{\varepsilon}

\newcommand{\isoto}{\stackrel{\simeq}{\to}}
\newcommand{\Isoto}{\stackrel{\simeq}{\longrightarrow}}
\newcommand{\isofrom}{\stackrel{\simeq}{\leftarrow}}

\newcommand{\onto}{\twoheadrightarrow}
\newcommand{\op}{\operatorname}
\newcommand{\pid}[1]{\langle #1 \rangle}
\renewcommand{\phi}{\varphi}
\newcommand{\quis}{\stackrel{\sim}{\to}}

\newcommand{\roi}{\mathcal{O}}

\newcommand{\sub}[1]{{\mbox{\rm \scriptsize #1}}}

\newcommand{\To}{\longrightarrow}
\newcommand{\ul}[1]{\underline{#1}}

\newcommand{\xto}{\xrightarrow}

\newcommand{\gS}{\mathfrak{S}}

\newcommand{\cal}{\mathcal}
\renewcommand{\hat}{\widehat}
\renewcommand{\frak}{\mathfrak}
\newcommand{\indlim}{\varinjlim}
\renewcommand{\tilde}{\widetilde}
\renewcommand{\Im}{\operatorname{Im}}
\renewcommand{\ker}{\operatorname{Ker}}
\newcommand{\coker}{\operatorname{Coker}}
\renewcommand{\projlim}{\varprojlim}
\newcommand{\dR}{{\mathrm{dR}}}
\renewcommand{\inf}{{\mathrm{inf}}}
\newcommand{\crys}{{\mathrm{crys}}}
\newcommand{\cont}{{\mathrm{cont}}}

\newcommand{\cycl}{{\mathrm{cycl}}}

\DeclareMathOperator{\Ann}{Ann}

\DeclareMathOperator{\dlog}{dlog}

\DeclareMathOperator{\Ext}{Ext}

\DeclareMathOperator{\Hom}{Hom}

\DeclareMathOperator{\Lie}{Lie}

\DeclareMathOperator{\Spec}{Spec}
\DeclareMathOperator{\Spa}{Spa}
\DeclareMathOperator{\Spf}{Spf}

\DeclareMathOperator{\Tor}{Tor}


\newcommand{\dotimes}{\otimes^{\bb L}}
\newcommand{\xTo}[1]{\stackrel{#1}{\To}}

\setcounter{tocdepth}{1}

\begin{document}

\title{Integral $p$-adic Hodge theory}

\author{Bhargav Bhatt}
\author{Matthew Morrow}
\author{Peter Scholze}

\begin{abstract} We construct a new cohomology theory for proper smooth (formal) schemes over the ring of integers of $\bb C_p$. It takes values in a mixed-characteristic analogue of Dieudonn\'e modules, which was previously defined by Fargues as a version of Breuil--Kisin modules. Notably, this cohomology theory specializes to all other known $p$-adic cohomology theories, such as crystalline, de~Rham and \'etale cohomology, which allows us to prove strong integral comparison theorems.

The construction of the cohomology theory relies on Faltings' almost purity theorem, along with a certain functor $L\eta$ on the derived category, defined previously by Berthelot--Ogus. On affine pieces, our cohomology theory admits a relation to the theory of de~Rham--Witt complexes of Langer--Zink, and can be computed as a $q$-deformation of de~Rham cohomology.
\end{abstract}

\date{\today}

\maketitle

\tableofcontents

\setcounter{section}{0}

\section{Introduction}

This paper deals with the following question: as an algebraic variety degenerates from characteristic $0$ to characteristic $p$, how does its cohomology degenerate?

\subsection{Background}

To explain the meaning and the history of the above question, let us fix some notation. Let $K$ be a finite extension of $\bb Q_p$, and let $\roi_K\subset K$ be its ring of integers. Let $\frak X$ be a proper smooth scheme over $\roi_K$;\footnote{We use the fractal letter for consistency with the main body of the paper, where $\frak X$ will be allowed to be a formal scheme.} in other words, we consider only the case of good reduction in this paper, although we expect our methods to generalize substantially. Let $k$ be the residue field of $\roi$, and let $\bar{k}$ and $\bar{K}$ be algebraic closures.

There are many different cohomology theories one can associate to this situation. The best understood theory is $\ell$-adic cohomology for $\ell\neq p$. In that case, we have \'etale cohomology groups $H^i_\sub{\'et}(\frak X_{\bar{K}},\bb Z_\ell)$ and $H^i_\sub{\'et}(\frak X_{\bar{k}},\bb Z_\ell)$, and proper smooth base change theorems in  \'etale cohomology imply that these cohomology groups are canonically isomorphic (once one fixes a specialization of geometric points),
\[
H^i_\sub{\'et}(\frak X_{\bar{K}},\bb Z_\ell)\cong H^i_\sub{\'et}(\frak X_{\bar{k}},\bb Z_\ell)\ .
\]
In particular, the action of the absolute Galois group $G_K$ of $K$ on the left side factors through the action of the absolute Galois group $G_k$ of the residue field $k$ on the right side; i.e., the action of $G_K$ is unramified.

Grothendieck raised the question of understanding what happens in the case $\ell=p$. In that case, one still has well-behaved \'etale cohomology groups $H^i_\sub{\'et}(\frak X_{\bar{K}},\bb Z_p)$ of the generic fibre, but the \'etale cohomology groups of the special fibre are usually too small; for example, if $i=1$, they capture at best half of the \'etale cohomology of the generic fibre. A related phenomenon is that the action of $G_K$ on $H^i_\sub{\'et}(\frak X_{\bar{K}},\bb Z_p)$ is much more interesting than in the $\ell$-adic case; in particular, it is usually not unramified. As a replacement for the \'etale cohomology groups of the special fibre, Grothendieck defined the crystalline cohomology groups $H^i_\crys(\frak X_k/W(k))$. These are Dieudonn\'e modules, i.e.~finitely generated $W(k)$-modules equipped with a Frobenius operator $\phi$ which is invertible up to a power of $p$. However, $H^i_\sub{\'et}(\frak X_{\bar{K}},\bb Z_p)$ and $H^i_\crys(\frak X_k/W(k))$ are cohomology theories of very different sorts: the first is a variant of singular cohomology, whereas the second is a variant of de~Rham cohomology. Over the complex numbers $\bb C$, integration of differential forms along cycles and the Poincar\'e lemma give a comparison between the two, but algebraically the two objects are quite unrelated. Grothendieck's question of the mysterious functor was to understand the relationship between $H^i_\sub{\'et}(\frak X_{\bar{K}},\bb Z_p)$ and $H^i_\crys(\frak X_k/W(k))$, and ideally describe each in terms of the other.

Fontaine obtained the conjectural answer to this question, using his period rings, after inverting $p$, in \cite{Fontainepadicrepr}. Notably, he defined a $W(k)[\tfrac 1p]$-algebra $B_\crys$ whose definition will be recalled below, which comes equipped with actions of a Frobenius $\phi$ and of $G_K$, and he conjectured the existence of a natural $\phi,G_K$-equivariant isomorphism
\[
H^i_\sub{\'et}(\frak X_{\bar{K}},\bb Q_p)\otimes_{\bb Q_p} B_\crys\cong H^i_\crys(\frak X_k/W(k))[\tfrac 1p]\otimes_{W(k)[\frac 1p]} B_\crys\ .
\]
The existence of such an isomorphism was proved by Tsuji, \cite{Tsuji}, after previous work by Fontaine--Messing, \cite{FontaineMessing}, Bloch--Kato, \cite{BlochKato}, and Faltings, \cite{FaltingsJAMS}. This allows one to recover $H^i_\crys(\frak X_k/W(k))[\tfrac 1p]$ from $H^i_\sub{\'et}(\frak X_{\bar{K}},\bb Q_p)$ by the formula
\[
H^i_\crys(\frak X_k/W(k))[\tfrac 1p] = (H^i_\crys(\frak X_k/W(k))[\tfrac 1p]\otimes_{W(k)[\frac 1p]} B_\crys)^{G_K}\cong (H^i_\sub{\'et}(\frak X_{\bar{K}},\bb Q_p)\otimes_{\bb Q_p} B_\crys)^{G_K}\ .
\]
Conversely, Fontaine showed that one can recover $H^i_\sub{\'et}(\frak X_{\bar{K}},\bb Q_p)$ from $H^i_\crys(\frak X_k/W(k))[\tfrac 1p]$ together with the Hodge filtration coming from the identificaton $H^i_\crys(\frak X_k/W(k))\otimes_{W(k)} K = H^i_{\dR}(\frak X_K)$.

Unfortunately, when $p$ is small or $K/\bb Q_p$ is ramified, the integral structure is not preserved by these isomorphisms; only when $ie<p-1$, where $e$ is the ramification index of $K/\bb Q_p$, most of the story works integrally, roughly using the integral version $A_\crys$ of $B_\crys$ instead, as for example in work of Caruso, \cite{Caruso}; cf.~also work of Faltings, \cite{FaltingsRamified}, in the case $i<p-1$ with $e$ arbitrary.

\subsection{Results}

In this paper, we make no restriction of the sort mentioned above: very ramified extensions and large cohomological degrees are allowed throughout. Our first main theorem is the following; it is formulated in terms of formal schemes for wider applicability, and it implies that the torsion in the crystalline cohomology is an upper bound for the torsion in the \'etale cohomology.

\begin{theorem}\label{ThmA} Let $\frak X$ be a proper smooth formal scheme over $\roi_K$, where $\roi_K$ is the ring of integers in a complete discretely valued nonarchimedean extension $K$ of $\bb Q_p$ with perfect residue field $k$. Let $C$ be a completed algebraic closure of $K$, and write $\mathfrak{X}_C$ for the (geometric) rigid-analytic generic fibre of $\mathfrak{X}$. Fix some $i \geq 0$.
\begin{enumerate}
\item There is a comparison isomorphism
\[
H^i_\sub{\'et}(\frak X_{C},\bb Z_p)\otimes_{\bb Z_p} B_\crys\cong H^i_\crys(\frak X_k/W(k))\otimes_{W(k)} B_\crys\ ,
\]
compatible with the Galois and Frobenius actions, and the filtration. In particular, $H^i_\sub{\'et}(\frak X_{C},\bb Q_p)$ is a crystalline Galois representation.
\item For all $n\geq 0$, we have the inequality
\[
\mathrm{length}_{W(k)} (H^i_\crys(\frak X_k/W(k))_\sub{tor}/p^n)\geq \mathrm{length}_{\bb Z_p} (H^i_\sub{\'et}(\frak X_{C},\bb Z_p)_\sub{tor}/p^n)\ .
\]
In particular, if $H^i_\crys(\frak X_k/W(k))$ is $p$-torsion-free\footnote{We show that this is equivalent to requiring $H^i_\dR(\frak X)$ being a torsion-free $\roi_K$-module (for any fixed $i$).}, then so is $H^i_\sub{\'et}(\frak X_{C},\bb Z_p)$.
\item Assume that $H^i_\crys(\frak X_k/W(k))$ and $H^{i+1}_\crys(\frak X_k/W(k))$ are $p$-torsion-free. Then one can recover $H^i_\crys(\frak X_k/W(k))$ with its $\varphi$-action from $H^i_\sub{\'et}(\frak X_{C},\bb Z_p)$ with its $G_K$-action.
\end{enumerate}
\end{theorem}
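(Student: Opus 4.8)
The plan is to deduce all three assertions from the $A_\inf$-valued cohomology $R\Gamma_{A_\inf}(\frak X)$ constructed in the body of the paper, together with its three specializations. Here $A_\inf=W(\roi_C^\flat)$, the complex $R\Gamma_{A_\inf}(\frak X)$ is perfect over $A_\inf$ and carries a Frobenius $\varphi$, and writing $\theta\colon A_\inf\to\roi_C$ for the usual map, $\xi$ for a generator of $\ker\theta$, $\mu=[\epsilon]-1$ for a compatible system $\epsilon$ of $p$-power roots of unity, and $\tilde\theta=\theta\circ\varphi^{-1}$, one has (possibly up to a Frobenius twist): a crystalline comparison $R\Gamma_{A_\inf}(\frak X)\dotimes_{A_\inf}W(\bar k)\simeq R\Gamma_\crys(\frak X_{\bar k}/W(\bar k))$ along the map $A_\inf\to W(\bar k)$, under which $\mu\mapsto 0$ and $\xi\mapsto p$; a de Rham comparison $R\Gamma_{A_\inf}(\frak X)\dotimes_{A_\inf,\tilde\theta}\roi_C\simeq R\Gamma_\dR(\frak X_{\roi_C}/\roi_C)$; and an étale comparison $R\Gamma_{A_\inf}(\frak X)\dotimes_{A_\inf}W(C^\flat)\simeq R\Gamma_\sub{\'et}(\frak X_C,\bb Z_p)\otimes_{\bb Z_p}W(C^\flat)$, where $W(C^\flat)=A_\inf[\tfrac1\mu]^\wedge_p$, so that $\mu$ becomes a unit. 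Set $M^i:=H^i(R\Gamma_{A_\inf}(\frak X))$; by the finiteness theorems these are finitely presented $A_\inf$-modules with $M^i[\tfrac1p]$ finite free, hence, by the structure theory of such modules over $A_\inf$, Breuil--Kisin--Fargues modules up to bounded $p$-power torsion, and finite free precisely when $p$-torsion free.

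Part (i) is essentially formal given this machinery, being in substance the classical crystalline comparison (it is a $\bb Q_p$-statement, since $B_\crys$ is a $\bb Q_p$-algebra). Extending scalars in the crystalline comparison along $W(\bar k)\to B_\crys^+$, combining with the de Rham comparison after base change to $B_\dR$, and using the Frobenius $\varphi$ to pin down the crystalline — as opposed to merely de Rham — structure, one obtains a $\varphi$-, $G_K$-equivariant, filtered isomorphism $H^i_\sub{\'et}(\frak X_C,\bb Z_p)\otimes_{\bb Z_p}B_\crys\cong H^i_\crys(\frak X_k/W(k))\otimes_{W(k)}B_\crys$ (passing from complexes to cohomology in degree $i$ is harmless after inverting $p$). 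Crystallinity of $H^i_\sub{\'et}(\frak X_C,\bb Q_p)$ is then immediate, the associated $D_\crys$ having the expected dimension.

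For (ii) and (iii) the real content lies in the single module $M^i$ and the two extreme specializations of $\mu$ — a unit (étale) versus zero (crystalline). For (ii): the crystalline comparison identifies $\mathrm{length}_{W(k)}(H^i_\crys(\frak X_k/W(k))_\sub{tor}/p^n)$ (after faithfully flat descent to $W(\bar k)$, which preserves this length) with the torsion length mod $p^n$ of the degree-$i$ cohomology of the $\mu=0$ specialization of $R\Gamma_{A_\inf}(\frak X)$, while the étale comparison identifies $\mathrm{length}_{\bb Z_p}(H^i_\sub{\'et}(\frak X_C,\bb Z_p)_\sub{tor}/p^n)$ with the same quantity for the $\mu$-inverted specialization. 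The inequality then follows from the structure theory of $A_\inf$-modules — concretely, that inverting $\mu$ can only destroy torsion while setting $\mu=0$ cannot — applied to the (finitely presented, suitably bounded) cohomology of $R\Gamma_{A_\inf}(\frak X)$; the footnoted equivalence for $H^i_\dR$ emerges similarly from the de Rham comparison together with descent. For (iii): the $p$-torsion-freeness of $H^i_\crys$ and $H^{i+1}_\crys$ forces, via the de Rham comparison and the structure theory, that $M^i$ and $M^{i+1}$ are finite free, i.e.\ honest Breuil--Kisin--Fargues modules. By Fargues's classification, $M^i$ is then determined by the pair $(T,\Xi)$, where $T=(M^i\dotimes_{A_\inf}W(C^\flat))^{\varphi=1}=H^i_\sub{\'et}(\frak X_C,\bb Z_p)$ with its $G_K$-action (from the étale comparison, using that $C^\flat$ is algebraically closed) and $\Xi\subset T\otimes_{\bb Z_p}B_\dR$ is the $B_\dR^+$-lattice cut out by the Hodge filtration on $D_\dR(T[\tfrac1p])$ — which, $T[\tfrac1p]$ being crystalline by (i), is determined by the $G_K$-representation alone. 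Reassembling $M^i$ and base-changing along $A_\inf\to W(\bar k)$ (this base change being clean, with no contribution from $M^{i+1}$, since both modules are free), then descending to $W(k)$, recovers $H^i_\crys(\frak X_k/W(k))$ with its Frobenius.

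The main obstacle throughout is the structure theory of finitely presented (in particular $p$-power-torsion) $A_\inf$-modules and the behaviour of their lengths and freeness under the two extreme specializations of $\mu$; this ultimately rests on $\Spec A_\inf$ being regular away from its unique closed point $(p,[\varpi^\flat])$. The two genuinely delicate points are the length comparison in (ii) — that moving between killing and inverting $\mu$ does the right thing to torsion mod $p^n$ — and the implication in (iii) that $p$-torsion-freeness of crystalline cohomology in two consecutive degrees propagates to freeness of $M^i$. Neither requires further geometric input beyond the comparison isomorphisms, but both are where the real work lies; constructing $R\Gamma_{A_\inf}(\frak X)$ and establishing these comparisons — via Faltings's almost purity theorem and the functor $L\eta$ — is the separate bulk of the paper, assumed here.
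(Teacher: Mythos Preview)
Your strategy matches the paper's, and the outlines for (i) and (iii) are essentially right. Two steps are glossed over in a way that hides real content.

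For (ii), you conflate $H^i_\crys(\frak X_{\bar k}/W(\bar k))=H^i\big(R\Gamma_{A_\inf}(\frak X)\dotimes_{A_\inf}W(\bar k)\big)$ with $M^i\otimes_{A_\inf}W(\bar k)$. These need not coincide: the paper only proves (Lemma~\ref{lem:CrysSpecializationInjects}) that $M^i\otimes W(\bar k)\hookrightarrow H^i_\crys$ is injective with $p$-power-torsion cokernel. The length inequality you invoke (Corollary~\ref{cor:ElementaryDivisorsSpecialization}) is a statement about $M^i$, comparing $M^i\otimes W(\bar k)$ against $M^i\otimes W(C^\flat)$; one then needs the elementary fact that an injection $M\hookrightarrow N$ of finitely generated $W(\bar k)$-modules with torsion cokernel cannot decrease $\ell(-/p^n)$. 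Your slogan ``inverting $\mu$ can only destroy torsion while setting $\mu=0$ cannot'' is also not the right mechanism: the comparison is a generic-versus-closed-point length inequality for finitely presented $W_n(\roi_C^\flat)$-modules (Lemma~\ref{lem:InequalitySpecialization}), not a statement about $\mu$-torsion.

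For (iii), once you have recovered $M^i\otimes_{A_\inf}W(\bar k)=H^i_\crys(\frak X_{\bar k}/W(\bar k))$, you write ``then descending to $W(k)$''. This is not automatic from the Frobenius: a free $W(\bar k)$-module with $\phi$-action has many $W(k)$-forms. The paper accomplishes the descent through Kisin's theory (Theorem~\ref{ThmKisin}, Proposition~\ref{prop:compbkbkf}): the Breuil--Kisin module $\mathrm{BK}(T)$ lives over $\gS=W(k)[[T]]$, satisfies $\mathrm{BK}(T)\otimes_\gS A_\inf\cong M^i$, and base-changing along $\gS\to W(k)$ lands directly in $W(k)$-modules. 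Your approach via Fargues's classification alone produces only the $A_\inf$-module; you need Kisin's input to get the $W(k)$-structure.
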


Part (i) is the analogue of Fontaine's conjecture for proper smooth formal schemes over $\roi_K$. In fact, our methods work more generally: we directly prove the comparison isomorphism in (i) and the inequalities in (ii) (as well as a variant of (iii), formulated below) for any proper smooth formal scheme that is merely defined over $\roi_C$. For formal schemes over discretely valued base fields, part (i) has also been proved recently by Colmez--Niziol, \cite{ColmezNiziolBst} (in the more general case of semistable reduction), and Tan--Tong, \cite{TanTongCcrys} (in the absolutely unramified case, building on previous work of Andreatta--Iovita, \cite{AndreattaIovita}).

Intuitively, part (ii) says the following. If one starts with a proper smooth variety over the complex numbers $\bb C$, then the comparison between de~Rham and singular (co)homology says that any class in singular homology gives an obstruction to integrating differential forms: the integral over the corresponding cycle has to be zero. However, for torsion classes, this is not an actual obstruction: a multiple of the integral, and thus the integral itself, is always zero. Nevertheless, part (ii) implies the following  inequality:
\begin{equation}
\label{IneqdRet}
 \dim_k H^i_\dR(\mathfrak{X}_k) \geq \dim_{\mathbb{F}_p} H^i_{\sub{\'et}}(\mathfrak{X}_{C},\mathbb{F}_p).
 \end{equation}
In other words, $p$-torsion classes in singular homology still produce non-zero obstructions to integrating differential forms on any (good) reduction modulo $p$ of the variety. The relation is however much more indirect, as there is no analogue of ``integrating a differential form against a cycle'' in the $p$-adic world.

\begin{remark}
Theorem~\ref{ThmA} (ii) ``explains'' certain pathologies in algebraic geometry in characteristic $p$. For example, it was observed (by classification and direct calculation, see \cite[Corollaire 7.3.4 (a)]{IllusieDRWitt}) that for any Enriques surface $S_k$ over a perfect field $k$ of characteristic $2$, the group $H^1_{\dR}(S_k)$ is never $0$, contrary to what happens in any other characteristic. Granting the fact that any such $S_k$ lifts to characteristic $0$ (which is known, see \cite{EkedahlMO,LiedtkeEnriques}), this phenomenon is explained by Theorem~\ref{ThmA} (ii): an Enriques surface $S_C$ over $C$ has $H^1_{\sub{\'et}}(S_C,\mathbb{F}_2) \cong \mathbb{F}_2 \neq 0$ as the fundamental group is $\mathbb{Z}/2$, so the inequality \eqref{IneqdRet} above forces $H^1_{\dR}(S_k) \neq 0$. 
\end{remark}

\begin{remark}
\label{rmk:IntroExamples}
We also give examples illustrating the sharpness of Theorem~\ref{ThmA} (ii) in two different ways. First, we give an example of a smooth projective surface over $\bb Z_2$ for which all \'etale cohomology groups are $2$-torsion-free, while $H^2_\crys$ has nontrivial $2$-torsion; thus, the inequality can be strict. Note that this example falls (just) outside the hypotheses of previous results like those of Caruso, \cite{Caruso}, which give conditions under which there is an abstract isomorphism $H^i_\crys(\frak X_k/W(k)) \cong H^i_\sub{\'et}(\frak X_{\bar{K}},\bb Z_p)\otimes_{\bb Z_p} W(k)$. Similar examples of smooth projective surfaces can also be constructed over (unramified extensions of) $\bb Z_p[\zeta_p]$, which shows the relevance of the bound $ie<p-1$. Secondly, we construct a smooth projective surface $\mathfrak{X}$ over $\roi_K$ where $H^2_{\sub{\'et}}(\mathfrak{X}_{\bar{K}},\mathbb{Z}_p)_{\sub{tor}} = \mathbb{Z}/p^2\bb Z$, while $H^2_\crys(\mathfrak{X}_k/W(k))_{\sub{tor}} = k \oplus k$; thus, the inequality in part (ii) cannot be upgraded to a subquotient relationship between the corresponding groups.
\end{remark}

Part (iii) implies that the crystalline cohomology of the special fibre (under the stated hypothesis) can be recovered from the generic fibre. The implicit functor in this recovery process relies on the theory of Breuil--Kisin modules, which were defined by Kisin, \cite{Kisin}, following earlier work of Breuil, \cite{Breuil}; for us, Kisin's observation that one can use the ring $\gS=W(k)[[T]]$ in place of Breuil's $S$ involving divided powers is critical. The precise statement of (iii) is the following. As $H^i_\sub{\'et}(\frak X_{C},\bb Z_p)$ is torsion-free by (ii) and the assumption, it is a lattice in a crystalline $G_K$-representation by (i). Kisin associates to any lattice in a crystalline $G_K$-representation a finite free $\gS = W(k)[[T]]$-module $\mathrm{BK}(H^i_\sub{\'et}(\frak X_{C},\bb Z_p))$ equipped with a Frobenius $\phi$, in such a way that $\mathrm{BK}(H^i_\sub{\'et}(\frak X_{C},\bb Z_p))\otimes_\gS W(k)[\tfrac 1p]$ (where the map $\gS\to W(k)$ sends $T$ to $0$ and is the Frobenius on $W(k)$) gets identified with
\[
(H^i_\sub{\'et}(\frak X_{C},\bb Z_p)\otimes_{\bb Z_p} B_\crys)^{G_K} = H^i_\crys(\frak X_k/W(k))[\tfrac 1p]\ .
\]
Then, under the assumptions of part (iii), we show that
\[
\mathrm{BK}(H^i_\sub{\'et}(\frak X_{C},\bb Z_p))\otimes_\gS W(k) = H^i_\crys(\frak X_k/W(k))
\]
as submodules of $\mathrm{BK}(H^i_\sub{\'et}(\frak X_{C},\bb Z_p))\otimes_\gS W(k)[\tfrac 1p]\cong H^i_\crys(\frak X_k/W(k))[\tfrac 1p]$.

As alluded to earlier, there is also a variant of Theorem~\ref{ThmA} (iii) if $K$ is algebraically closed. In fact, our approach is to reduce to this case; so, from now on, let $C$ be {\em any} complete algebraically closed nonarchimedean extension of $\bb Q_p$, with ring of integers $\roi$ and residue field $k$. In this situation, the literal statement of Theorem~\ref{ThmA} (iii) above is clearly false, as there is no Galois action. Instead, our variant says the following:

\begin{theorem}
\label{IntroThmAoverC}
Let $\frak X$ be a proper smooth formal scheme over $\roi$. Assume that $H^i_\crys(\frak X_k/W(k))$ and $H^{i+1}_\crys(\frak X_k/W(k))$ are $p$-torsion-free. Then $H^i_\crys(\frak X_k/W(k))$, with its $\phi$-action, can be recovered functorially from the rigid-analytic generic fibre $X$ of $\frak X$. More precisely, the $\mathbb{Z}_p$-module $H^i_{\sub{\'et}}(X, \mathbb{Z}_p)$ equipped with the de~Rham comparison isomorphism (as in Theorem~\ref{ThmRat} below) functorially recovers $H^i_{\crys}(\frak X_k/W(k))$.
\end{theorem}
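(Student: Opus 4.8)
The plan is to route everything through the $A_\inf$-cohomology $R\Gamma_{A_\inf}(\mathfrak X)$ constructed in the body of the paper, where $A_\inf = W(\roi_{C^\flat})$. This is a perfect complex of $A_\inf$-modules carrying a semilinear Frobenius $\varphi$, and it simultaneously computes all three cohomology theories in play: base change along the crystalline specialization $A_\inf \to W(k)$ recovers $R\Gamma_\crys(\mathfrak X_k/W(k))$ (up to a Frobenius twist), base change along $\tilde\theta\colon A_\inf \to \roi_C$ recovers $R\Gamma_\dR(\mathfrak X/\roi_C)$, and after inverting $\mu$ and $p$-completing one recovers $R\Gamma_\sub{\'et}(X,\bb Z_p)\dotimes_{\bb Z_p} W(C^\flat)$. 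Write $M^i := H^i(R\Gamma_{A_\inf}(\mathfrak X))$, a finitely presented $A_\inf$-module equipped with $\varphi$.

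The first step is to upgrade the hypotheses into a structural statement about $M^i$. Using the structure theory of finitely presented $A_\inf$-modules together with the crystalline comparison, the assumption that $H^i_\crys(\mathfrak X_k/W(k))$ and $H^{i+1}_\crys(\mathfrak X_k/W(k))$ are $p$-torsion-free forces $M^i$ to be a finite free $A_\inf$-module, and also forces $H^i_\crys(\mathfrak X_k/W(k)) = M^i \otimes_{A_\inf} W(k)$ with no higher Tor contribution (the degree-$(i+1)$ hypothesis being what kills the relevant $\Tor_1$). This is essentially already contained in the proof of Theorem~\ref{ThmA}(ii)--(iii), which I would invoke.

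Now $(M^i,\varphi)$ is a finite free Breuil--Kisin--Fargues module: a finite free $A_\inf$-module with a $\varphi$-semilinear endomorphism that is an isomorphism after inverting $\xi$ (equivalently after inverting $p$). By Fargues's classification --- the mixed-characteristic Dieudonn\'e theory referenced in the abstract --- the functor
\[
(M,\varphi)\ \longmapsto\ \bigl(\,T := (M \otimes_{A_\inf} W(C^\flat))^{\varphi=1},\ \ \Xi := M\otimes_{A_\inf} B_\dR^+ \subset M\otimes_{A_\inf} B_\dR \cong T\otimes_{\bb Z_p} B_\dR\,\bigr)
\]
is an equivalence onto the category of pairs $(T,\Xi)$ with $T$ a finite free $\bb Z_p$-module and $\Xi$ a $B_\dR^+$-lattice. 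Being an equivalence, it recovers $M$ --- hence any functor of $M$, in particular $M\otimes_{A_\inf} W(k)$ --- from the pair. It then remains to identify the pair attached to $M = M^i$: by the \'etale comparison $T \cong H^i_\sub{\'et}(X,\bb Z_p)$ (torsion-free, by Theorem~\ref{ThmA}(ii)), and by the de~Rham specialization $\Xi$ is, up to the bookkeeping relating $\theta$ and $\tilde\theta$, exactly the $B_\dR^+$-lattice $H^i_\dR \otimes B_\dR^+$ sitting inside $H^i_\sub{\'et}(X,\bb Q_p)\otimes_{\bb Q_p} B_\dR$ via the comparison isomorphism of Theorem~\ref{ThmRat}. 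Thus $(M^i,\varphi)$, and therefore $H^i_\crys(\mathfrak X_k/W(k)) = M^i\otimes_{A_\inf} W(k)$ with its Frobenius, is functorially determined by $H^i_\sub{\'et}(X,\bb Z_p)$ together with its de~Rham comparison isomorphism; functoriality in $X$ of all of the comparisons then gives the functoriality asserted in the statement.

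The main obstacle is the combination of the first step --- freeness of $M^i$ over the non-Noetherian ring $A_\inf$, and its link to torsion-freeness of crystalline cohomology --- with the construction of the \emph{inverse} of Fargues's equivalence, i.e.\ producing a Breuil--Kisin--Fargues module, and ultimately $M^i\otimes_{A_\inf} W(k)$, out of the bare pair $(T,\Xi)$; these are what make the statement a theorem rather than a tautology. A secondary but genuine nuisance is tracking the Frobenius twists ($\xi$ versus $\tilde\xi = \varphi(\xi)$, $\theta$ versus $\tilde\theta$, the twist on $W(k)$) and checking that the \'etale and de~Rham specializations of $R\Gamma_{A_\inf}(\mathfrak X)$ are compatible with the standard comparison isomorphisms, so that what one recovers is genuinely ``$H^i_\sub{\'et}(X,\bb Z_p)$ with its de~Rham comparison.''
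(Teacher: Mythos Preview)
Your proposal is correct and follows essentially the same route as the paper: pass through $R\Gamma_{A_\inf}(\mathfrak X)$, use the commutative algebra over $A_\inf$ (Corollary~\ref{cor:crysspecbetter}) to get freeness of $M^i=H^i_{A_\inf}(\mathfrak X)$ and the equality $H^i_\crys(\mathfrak X_k/W(k))=M^i\otimes_{A_\inf}W(k)$ from the torsion-freeness hypotheses, and then invoke Fargues's Theorem~\ref{ThmFargues} together with the identification of the \'etale and $B_\dR^+$ specializations (Theorem~\ref{ThmAoverC}(i)). One notational caveat: the expression ``$H^i_\dR\otimes B_\dR^+$'' does not literally make sense over $C$, since there is no map $C\to B_\dR^+$; the correct object is the $B_\dR^+$-cohomology $H^i_\crys(X/B_\dR^+)$ of \S\ref{sec:ratPAdicHodgeNew}, which you correctly point to via Theorem~\ref{ThmRat} and whose compatibility with $M^i\otimes_{A_\inf}B_\dR^+$ is exactly the ``secondary nuisance'' you flag.
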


The proof of this result (and the implicit functor) relies on a variant of Breuil--Kisin modules, due to Fargues, \cite{FarguesBK}, formulated in terms of Fontaine's period ring $A_\inf$ instead of the ring $\gS$. To explain this further, we recall the definitions first. The ring $A_\inf$ is defined as
\[
A_\inf = W(\roi^\flat)\ ,
\]
where $\roi^\flat = \projlim_\phi \roi/p$ is the ``tilt'' of $\roi$. Then $\roi^\flat$ is the ring of integers in a complete algebraically closed nonarchimedean field $C^\flat$ of characteristic $p$, the tilt of $C$; in particular, the Frobenius map on $\roi^\flat$ is bijective, and thus $A_\inf = W(\roi^\flat)$ has a natural Frobenius automorphism $\phi$, and $A_\inf/p = \roi^\flat$.

We will need certain special elements of $A_\inf$. Fix a compatible system of primitive $p$-power roots of unity $\zeta_{p^r}\in \roi$; then the system $(1,\zeta_p,\zeta_{p^2},\ldots)$ defines an element $\epsilon\in \roi^\flat$. Let $\mu = [\epsilon]-1\in A_\inf$ and
\[
\xi = \frac{\mu}{\varphi^{-1}(\mu)} = \frac{[\epsilon]-1}{[\epsilon]^{1/p}-1} = \sum_{i=0}^{p-1} [\epsilon]^{i/p}\ .
\]
There is a natural map $\theta: A_\inf\to \roi$ whose kernel is generated by the non-zero-divisor $\xi$. Then $A_\crys$ is defined as the $p$-adic completion of the PD envelope of $A_\inf$ with respect to the kernel of $\theta$; equivalently, one takes the $p$-adic completion of the $A_\inf$-algebra generated by the elements $\frac{\xi^n}{n!}$, $n\geq 1$, inside $A_\inf[\tfrac 1p]$. Witt vector functoriality gives a natural map $A_\inf \to W(k)$ that carries $\xi$ to $p$, and hence factors through $A_\crys$. Finally, the ring $B_\crys$ that appeared in Fontaine's functor is
\[
B_\crys = A_\crys[\tfrac 1\mu]\ .
\]
This is a $\bb Q_p$-algebra as $\mu^{p-1}\in pA_\crys$. We will also need $B_\dR^+$, defined as the $\xi$-adic completion of $A_\inf[\tfrac 1p]$; this is a complete discrete valuation ring with residue field $C$, uniformizer $\xi$, and quotient field $B_\dR := B_\dR^+[\frac{1}{\xi}]$. 

With this notation, the relevant category of modules is defined as follows:

\begin{definition} A Breuil--Kisin--Fargues module is a finitely presented $A_\inf$-module $M$ equipped with a $\phi$-linear isomorphism $\phi_M: M[\tfrac 1\xi]\cong M[\tfrac 1{\phi(\xi)}]$, such that $M[\tfrac 1p]$ is finite free over $A_\inf[\tfrac 1p]$.
\end{definition}

This is a suitable mixed-characteristic analogue of a Dieudonn\'e module; in fact, these objects intervene in the work \cite{ScholzeLectureNotes} of the third author as ``mixed-characteristic local shtukas''. We note that the relation to shtukas has been emphasized by Kisin from the start, \cite{Kisin}. For us, Fargues' classification of finite free Breuil--Kisin--Fargues modules is critical.

\begin{theorem}[Fargues]
\label{IntroThmFargues}
 The category of finite free Breuil--Kisin--Fargues modules is equivalent to the category of pairs $(T,\Xi)$, where $T$ is a finite free $\bb Z_p$-module, and $\Xi\subset T\otimes_{\bb Z_p} B_\dR$ is a $B_\dR^+$-lattice.
\end{theorem}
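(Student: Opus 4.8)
The plan is to exhibit the equivalence via a Beauville--Laszlo-style gluing over $\Spec A_\inf$, organised around its closed point $\frak m=(p,[\varpi^\flat])$ (where $\varpi^\flat$ is a pseudo-uniformizer of $\roi^\flat$), combined with Lang's theorem over the algebraically closed perfect field $C^\flat$. Given a finite free BKF module $(M,\phi_M)$, I would first base change along $A_\inf\to W(C^\flat)=A_\inf[\tfrac1{[\varpi^\flat]}]^\wedge_p$. The reductions mod $p$ of $\xi=\tfrac{\mu}{\varphi^{-1}(\mu)}$ and of $\varphi(\xi)$ are nonzero elements of the valuation ring $\roi^\flat$ of the field $C^\flat$, hence units, so $\xi,\varphi(\xi)\in W(C^\flat)^\times$ and $\phi_M$ becomes a genuine $\varphi$-semilinear automorphism of $M_{W(C^\flat)}:=M\otimes_{A_\inf}W(C^\flat)$. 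Since $C^\flat$ is algebraically closed of characteristic $p$, the functor $N\mapsto N^{\varphi=1}$ is an equivalence from finite free $\varphi$-modules over $W(C^\flat)$ to finite free $\bb Z_p$-modules (Lang's theorem / pro-\'etale descent), with quasi-inverse $P\mapsto P\otimes_{\bb Z_p}W(C^\flat)$; set $T:=M_{W(C^\flat)}^{\varphi=1}$, so that $M_{W(C^\flat)}=T\otimes_{\bb Z_p}W(C^\flat)$ canonically and $\varphi$-equivariantly. To produce $\Xi$ I would use that $V(\xi)\subset\Spec A_\inf$ meets neither $V(p)$ nor $V(\varphi(\xi))$ except at $\frak m$: away from $\frak m$, the $\varphi$-structure on $M$ is a shtuka with a single leg at $V(\xi)$, so passing to the quotient by $\varphi$ (the Fargues--Fontaine curve $X$) exhibits $M$ as a modification, at the point $\infty$ of $X$ cut out by $V(\xi)$, of the bundle attached to $T$. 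This yields a canonical isomorphism $M\otimes_{A_\inf}B_\dR\cong T\otimes_{\bb Z_p}B_\dR$ (compatible with the one over $W(C^\flat)$), and I define $\Xi:=M\otimes_{A_\inf}B_\dR^+$, a $B_\dR^+$-lattice in $T\otimes_{\bb Z_p}B_\dR$.

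\emph{The inverse functor.} Conversely, given $(T,\Xi)$, I would build $M$ by gluing over $\Spec A_\inf\setminus\{\frak m\}$: on $\Spec W(C^\flat)$ (and its open part $\Spec A_\inf[\tfrac1{[\varpi^\flat]}]$) take $T\otimes_{\bb Z_p}W(C^\flat)$ with Frobenius $\mathrm{id}_T\otimes\varphi$, and then modify it along the Cartier divisor $V(\xi)$ inside $\Spec A_\inf[\tfrac1p]$ according to the lattice $\Xi\subset T\otimes_{\bb Z_p}B_\dR$ (Beauville--Laszlo gluing along $V(\xi)$, since $B_\dR^+$ is the $\xi$-adic completion of $A_\inf[\tfrac1p]$). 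This produces a finite projective module with a Frobenius structure on $\Spec A_\inf\setminus\{\frak m\}$, which I would then extend across the codimension-$2$ point $\frak m$ — here one uses that $A_\inf$ behaves like a regular $2$-dimensional (coherent) ring, so that vector bundles, and morphisms between them, extend uniquely over $\frak m$ — to obtain a finite free $A_\inf$-module with the required $\phi$-linear isomorphism after inverting $\xi$, i.e. a finite free BKF module. One then checks $T(M)\cong T$ and $\Xi(M)=\Xi$, which is immediate from the construction and the identifications above.

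\emph{Full faithfulness and that the two functors are mutually inverse.} For two finite free BKF modules $M,M'$, the $\Hom$ of BKF modules is the $\varphi$-invariants of the finite free $A_\inf$-module $\Hom_{A_\inf}(M,M')$; restricting to $\Spec A_\inf\setminus\{\frak m\}$ and using the Hartogs extension above, this $A_\inf$-module is recovered from its restrictions to the covering $\{\Spec A_\inf[\tfrac1p],\ \Spec W(C^\flat)\}$, and on the $W(C^\flat)$-side the $\varphi$-invariants turn $M,M'$ into $T,T'$ by Lang, while on the $A_\inf[\tfrac1p]$-side — reorganised via the modification picture — they turn into the pair of lattices; this identifies BKF-homomorphisms with $\bb Z_p$-linear maps $T\to T'$ carrying $\Xi$ into $\Xi'$. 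The same gluing, applied to the internal $\Hom$-bundles, then shows the two functors constructed above are inverse to one another.

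\emph{Main obstacle.} The genuine difficulties are the two geometric inputs about the non-Noetherian ring $A_\inf$: (a) that it behaves like a regular $2$-dimensional ring, so that the Beauville--Laszlo gluing over $\Spec A_\inf\setminus\{\frak m\}$ is effective and faithful and bundles extend over $\frak m$; and (b) the identification used to pass between the ``$W(C^\flat)$ with Frobenius'' description and the ``$B_\dR^+$-lattice'' description, which at bottom is the classification of vector bundles on the Fargues--Fontaine curve, needed to know that a modification of the trivial bundle at $\infty$ with prescribed lattice exists and is unique. Everything else — Lang's theorem over $C^\flat$, and the bookkeeping of the Frobenius and of the filtration — should be routine.
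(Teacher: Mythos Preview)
The paper does not prove this theorem in full: at its restatement as Theorem~\ref{ThmFargues} it refers to \cite{ScholzeLectureNotes} for a proof, and in the Remark following gives only an elementary proof of \emph{full faithfulness} of $M\mapsto(T,\Xi)$. Your construction of the forward functor matches the paper's---$T$ comes from Lemma~\ref{lem:bkfmoduleetale}, and $\Xi=M\otimes_{A_\inf}B_\dR^+$ sits inside $T\otimes_{\bb Z_p}B_\dR$ via that same lemma (since $\mu\in B_\dR^\times$), with no appeal to the curve needed. For full faithfulness the paper's argument is more hands-on than yours: from a map $T(M)\to T(N)$ respecting lattices, Lemma~\ref{lem:bkfmoduleetale} yields a candidate $M[\tfrac1\mu]\to N[\tfrac1\mu]$; if $M$ lands in $\mu^{-1}N$ but not $N$, one uses the lattice condition at each $\theta\circ\varphi^r$ to show in fact $M\subset\bigcap_{r\ge0}\varphi^{-r}(\mu)^{-1}N$, and this intersection is $N$ by Lemma~\ref{lem:mapAinfWitt}. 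This avoids both Lemma~\ref{lem:VectAinf} and any gluing.

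Your inverse-functor construction, however, has a genuine gap. The two charts do not glue as written: $V(\xi)$ meets the overlap $\Spec A_\inf[\tfrac1{p[\varpi^\flat]}]$ nontrivially (at $\Spec C$, since $A_\inf/\xi=\roi$ and inverting $p$ already inverts the pseudo-uniformizer $\theta([\varpi^\flat])$), so the Beauville--Laszlo modification of $T\otimes A_\inf[\tfrac1p]$ at $V(\xi)$ is \emph{not} isomorphic to the unmodified $T\otimes$ on the overlap. More fundamentally, a single modification at $V(\xi)$ carries no Frobenius structure, since $\varphi$ moves $V(\xi)$ to $V(\varphi(\xi))$; to get a $\varphi$-equivariant object one must modify along the entire orbit $\{V(\varphi^n(\xi))\}_{n\in\bb Z}$, and on the scheme $\Spec A_\inf$ these loci accumulate at $V(p)$, so the construction has to be carried out on the adic punctured disc $\Spa(A_\inf)\setminus V(p[\varpi^\flat])$ and its quotient by $\varphi$, the Fargues--Fontaine curve. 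You correctly name both hard inputs (extension across $\frak m$, which is Lemma~\ref{lem:VectAinf} in the paper, and the curve classification), and the overall strategy is the one in \cite{ScholzeLectureNotes}; but the scheme-theoretic gluing you sketch does not by itself produce a $\varphi$-bundle on the punctured spectrum.
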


Let us briefly explain how to use Theorem~\ref{IntroThmFargues} to formulate Theorem~\ref{IntroThmAoverC}. Under the hypothesis of the latter, by Theorem~\ref{ThmA} (ii), the $\mathbb{Z}_p$-module $T := H^i_{\sub{\'et}}(X,\mathbb{Z}_p)$ is finite free. The de~Rham comparison isomorphism for $X$, formulated in Theorem~\ref{ThmRat} next, gives a $B_\dR^+$-lattice $\Xi := H^i_\crys(X/B_{\dR}^+)$ in $T \otimes_{\mathbb{Z}_p} B_\dR$. The pair $(T,\Xi)$ determines a Breuil--Kisin--Fargues module $(M,\phi_M)$ by Theorem~\ref{IntroThmFargues}. Then Theorem~\ref{IntroThmAoverC} states that the ``crystalline realization'' $(M,\phi_M) \otimes_{A_\inf} W(k)$ coincides with $(H^i_\crys(\mathfrak{X}_k/W(k)),\phi)$, which gives the desired reconstruction. 

The preceding formulation of Theorem~\ref{IntroThmAoverC} relies on the existence of a good de~Rham cohomology theory for proper smooth rigid-analytic spaces $X$ over $C$ that takes values in $B_\dR^+$-modules, and satisfies a de~Rham comparison theorem. Note that $H^i_\dR(X)$ is a perfectly well-behaved object: it is a finite dimensional $C$-vector space. However, it is inadequate for our needs as there is no sensible formulation of the de~Rham comparison theorem in terms of $H^i_\dR(X)$: there is no natural map $C \to B_\dR^+$ splitting the map $\theta:B_\dR^+ \to C$ (unlike the discretely valued case). Our next result shows that $H^i_\dR(X)$ nevertheless admits a canonical deformation across $\theta$, and that this deformation interacts well with $p$-adic comparison theorems. We regard this as an analogue of crystalline cohomology (with respect to the topologically nilpotent thickening $B_\dR^+\to C$ in place of the usual $W(k)\to k$).

\begin{theorem}\label{ThmRat}
Let $X$ be a proper smooth adic space over $C$. Then there are cohomology groups $H^i_\crys(X/B_\dR^+)$ which come with a canonical isomorphism
\[
H^i_\crys(X/B_\dR^+)\otimes_{B_\dR^+} B_\dR\cong H^i_\sub{\'et}(X,\bb Z_p)\otimes_{\bb Z_p} B_\dR\ .
\]
In case $X=X_0\hat{\otimes}_K C$ arises via base change from some complete discretely valued extension $K$ of $\bb Q_p$ with perfect residue field, this isomorphism agrees with the comparison isomorphism
\[
H^i_\dR(X_0)\otimes_K B_\dR\cong H^i_\sub{\'et}(X,\bb Z_p)\otimes_{\bb Z_p} B_\dR
\]
from \cite{ScholzePAdicHodge} under a canonical identification
\[
H^i_\crys(X/B_\dR^+) = H^i_\dR(X_0)\otimes_K B_\dR^+\ .
\]
Moreover, $H^i_\crys(X/B_\dR^+)$ is a finite free $B_\dR^+$-module, and we have the following:
\begin{enumerate}
\item (Conrad-Gabber \cite{ConradGabber}) The Hodge--de~Rham spectral sequence
\[
E_1^{ij} = H^j(X,\Omega_{X/C}^i)\Rightarrow H^{i+j}_\dR(X)
\]
degenerates at $E_1$.
\item The Hodge--Tate spectral sequence~\cite{ScholzeSurvey}
\[
E_2^{ij} = H^i(X,\Omega_{X/C}^j)(-j)\Rightarrow H^{i+j}_\sub{\'et}(X,\bb Z_p)\otimes_{\bb Z_p} C
\]
degenerates at $E_2$.
\end{enumerate}
\end{theorem}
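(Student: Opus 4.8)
The plan is to build the groups $H^i_\crys(X/B_\dR^+)$ from Scholze's positive de~Rham period sheaf on the pro-\'etale site, to extract the comparison isomorphism and the degeneration statements from the de~Rham comparison of \cite{ScholzePAdicHodge} together with a spreading-out argument, and to pin down the discretely valued case by base change along a canonical copy of $K$ inside $B_\dR^+$. Concretely, let $\nu\colon X_\sub{pro\'et}\to X_\sub{\'et}$ be the projection from the pro-\'etale site and let $\bb B_\dR^+$, $\roi\bb B_\dR^+$ be the period sheaves of \cite{ScholzePAdicHodge}, with $\roi\bb B_\dR^+$ carrying its canonical connection $\nabla$ and its $\xi$-adic filtration $\op{Fil}^\bullet$. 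I would set
\[
R\Gamma_\crys(X/B_\dR^+)\;:=\;R\Gamma(X_\sub{pro\'et},\bb B_\dR^+),
\]
a complex of $B_\dR^+=H^0(X_\sub{pro\'et},\bb B_\dR^+)$-modules; the filtered Poincar\'e lemma identifies $\op{Fil}^r\bb B_\dR^+$ with the de~Rham complex $\bigl(\op{Fil}^r\roi\bb B_\dR^+\xto{\nabla}\op{Fil}^{r-1}\roi\bb B_\dR^+\otimes\Omega_X^1\to\cdots\bigr)$, which equips $R\Gamma_\crys(X/B_\dR^+)$ with a filtration and supplies the local computations used below. A preliminary point is to check that the period sheaves and the (filtered) Poincar\'e lemma extend from rigid-analytic varieties to arbitrary proper smooth adic spaces over $C$.

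\emph{The comparison isomorphism and the discretely valued case.} The elements $\xi$, $\mu=[\epsilon]-1$ and $t=\log[\epsilon]$ differ from one another by units of the discrete valuation ring $B_\dR^+$, so inverting $\xi$ is the same as inverting $\mu$; hence, writing $\bb B_\dR:=\bb B_\dR^+[\tfrac1\mu]$, inverting $\mu$ gives $R\Gamma_\crys(X/B_\dR^+)\dotimes_{B_\dR^+}B_\dR=R\Gamma(X_\sub{pro\'et},\bb B_\dR)$, and the de~Rham comparison theorem of \cite{ScholzePAdicHodge} identifies the latter with $R\Gamma_\sub{\'et}(X,\bb Z_p)\dotimes_{\bb Z_p}B_\dR$; this yields
\[
H^i_\crys(X/B_\dR^+)\otimes_{B_\dR^+}B_\dR\;\cong\;H^i_\sub{\'et}(X,\bb Z_p)\otimes_{\bb Z_p}B_\dR .
\]
Now suppose $X=X_0\hat{\otimes}_KC$. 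Since $B_\dR^+$ is a complete local ring with residue field $C\supseteq\overline{\bb Q}_p$, Hensel's lemma gives a canonical embedding $K\hookrightarrow\overline{\bb Q}_p\hookrightarrow B_\dR^+$; base change of the algebraic de~Rham complex of $X_0$ along $K\to B_\dR^+$ produces a map $R\Gamma_\dR(X_0)\otimes_KB_\dR^+\to R\Gamma_\crys(X/B_\dR^+)$, compatible after inverting $\mu$ with Scholze's de~Rham comparison isomorphism by the construction of $\roi\bb B_\dR^+$. To see it is a quasi-isomorphism, note that both sides are derived $\xi$-adically complete, so by derived Nakayama over the complete discrete valuation ring $B_\dR^+$ it suffices to check it after $\dotimes_{B_\dR^+}C$: the source becomes $R\Gamma_\dR(X_0)\otimes_KC\simeq R\Gamma_\dR(X/C)$ (flat base change for de~Rham cohomology of rigid spaces, finiteness by Kiehl), while the target becomes $R\Gamma_\dR(X/C)$ via the ``crystalline Poincar\'e lemma'' $R\Gamma_\crys(X/B_\dR^+)\dotimes_{B_\dR^+}C\simeq R\Gamma_\dR(X/C)$, obtained by comparing $\roi\bb B_\dR^+/\xi$ with the usual de~Rham complex. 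This furnishes the canonical identification $H^i_\crys(X/B_\dR^+)=H^i_\dR(X_0)\otimes_KB_\dR^+$ and its compatibility with the comparison isomorphism of \cite{ScholzePAdicHodge}.

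\emph{Finiteness and degeneration.} Granting the Conrad--Gabber spreading-out result, $X$ is the base change of a proper smooth rigid space $X_0$ over a complete discretely valued subfield $K_0\subset C$ with perfect residue field; the previous step then gives $H^i_\crys(X/B_\dR^+)\cong H^i_\dR(X_0)\otimes_{K_0}B_\dR^+$, which is finite free over $B_\dR^+$. For the two degeneration statements I would argue by dimension count. The Hodge--de~Rham spectral sequence comes from the stupid filtration on the de~Rham complex, and the Hodge--Tate spectral sequence from $R^j\nu_*\hat{\roi}_X\cong\Omega_X^j(-j)$ together with the primitive comparison theorem $H^n(X_\sub{pro\'et},\hat{\roi}_X)\cong H^n_\sub{\'et}(X,\bb Z_p)\otimes_{\bb Z_p}C$; by Kiehl each $H^j(X,\Omega_X^i)$ is finite-dimensional, and both spectral sequences give
\[
\dim_CH^n_\dR(X)\le\sum_{i+j=n}\dim_CH^j(X,\Omega_X^i),\qquad \dim_C\bigl(H^n_\sub{\'et}(X,\bb Z_p)\otimes_{\bb Z_p}C\bigr)\le\sum_{i+j=n}\dim_CH^j(X,\Omega_X^i).
\]
On the other hand the comparison isomorphism and the freeness just proved give $\dim_C(H^n_\sub{\'et}(X,\bb Z_p)\otimes C)=\op{rank}_{B_\dR^+}H^n_\crys(X/B_\dR^+)=\dim_CH^n_\dR(X)$, and the alternating sums of all three quantities coincide; hence every inequality above is an equality and the spectral sequences degenerate at $E_1$, resp.\ $E_2$.

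\emph{Main obstacle.} The technical heart is the control of the pro-\'etale cohomology of $\roi\bb B_\dR^+$: proving the filtered Poincar\'e lemma and identifying the reduction modulo $\xi$ with the ordinary de~Rham complex in the generality of proper smooth adic spaces over $C$, and securing the derived $\xi$-adic completeness that feeds derived Nakayama --- equivalently, controlling $R\projlim_n R\Gamma(X_\sub{pro\'et},\bb B_\dR^+/\xi^n)$ through its finite filtrations with graded pieces $\hat{\roi}_X(j)$. The other indispensable input is the Conrad--Gabber spreading-out theorem; without it the finiteness of $H^i_\crys(X/B_\dR^+)$ over $B_\dR^+$ and the degeneration of the two spectral sequences do not appear reachable by this argument.
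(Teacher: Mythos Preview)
Your proposed definition $R\Gamma_\crys(X/B_\dR^+):=R\Gamma(X_\sub{pro\'et},\bb B_\dR^+)$ gives the wrong object. For proper $X$ the primitive comparison (as quoted in the paper's proof of Theorem~\ref{thm:ratpadicHodgeC}) already yields a quasi-isomorphism
\[
R\Gamma(X_\sub{pro\'et},\bb B_{\dR,X}^+)\ \simeq\ R\Gamma_\sub{\'et}(X,\bb Z_p)\otimes_{\bb Z_p} B_\dR^+\,,
\]
so with your convention $H^i_\crys(X/B_\dR^+)$ is literally $H^i_\sub{\'et}(X,\bb Z_p)\otimes B_\dR^+$ and the comparison after inverting $\xi$ is tautological. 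But then the claimed identification with $H^i_\dR(X_0)\otimes_K B_\dR^+$ fails: inside $H^i_\sub{\'et}(X_C,\bb Z_p)\otimes B_\dR$ the \'etale lattice $H^i_\sub{\'et}\otimes B_\dR^+$ and the de~Rham lattice $H^i_\dR(X_0)\otimes_K B_\dR^+$ are \emph{different} $B_\dR^+$-lattices (their relative position encodes the Hodge filtration, which is the entire content of the comparison). Correspondingly, your ``crystalline Poincar\'e lemma'' $R\Gamma(X_\sub{pro\'et},\bb B_\dR^+)\dotimes_{B_\dR^+} C\simeq R\Gamma_\dR(X)$ is false: reducing modulo $\xi$ gives $R\Gamma(X_\sub{pro\'et},\hat\roi_X)\simeq R\Gamma_\sub{\'et}(X,\bb Z_p)\otimes C$, not de~Rham cohomology, so your derived Nakayama step collapses.

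The paper's fix is to define $R\Gamma_\crys(X/B_\dR^+)$ via an infinitesimal-site-type construction: locally embed $\Spa(R,R^\circ)$ into a torus over $B_\dR^+$ via a large set $\Sigma$ of units, complete along the embedding to get $D_\Sigma(R)$, and take the de~Rham complex $\Omega^\bullet_{D_\Sigma(R)/B_\dR^+}$; the colimit over $\Sigma$ is functorial and globalizes. With this definition one genuinely has $R\Gamma_\crys(X/B_\dR^+)\dotimes_{B_\dR^+} C\simeq R\Gamma_\dR(X)$ (Lemma~\ref{lem:crysdRcomp}), and the comparison map to $R\Gamma(X_\sub{pro\'et},\bb B_{\dR,X}^+)$ is only a quasi-isomorphism \emph{after inverting $\xi$} --- locally it lands in $\eta_\xi$ of the target Koszul complex, which is exactly what distinguishes the de~Rham lattice from the \'etale one. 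Your degeneration argument by dimension count is fine in spirit and close to what the paper does (Theorem~\ref{thm:hodgetate}); the paper additionally spreads out over a smooth base to make the Hodge sheaves locally free and checks Hodge--de~Rham degeneration at classical points, but the final step is the same numerical comparison you wrote.
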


We now turn to discussing the proof of Theorem~\ref{ThmA}. Our strategy is to construct a cohomology theory for proper smooth formal schemes over $\roi$ that is valued in Breuil--Kisin--Fargues modules. This new cohomology theory specializes to all other cohomology theories, as summarized next, and thus leads to explicit relationships between them, as in Theorem~\ref{ThmA}.

\begin{theorem}\label{ThmB} Let $\frak X$ be a proper smooth formal scheme over $\roi$, where $\roi$ is the ring of integers in a complete algebraically closed nonarchimedean extension $C$ of $\bb Q_p$. Then there is a perfect complex of $A_\inf$-modules
\[
R\Gamma_{A_\inf}(\frak X)\ ,
\]
equipped with a $\phi$-linear map $\phi: R\Gamma_{A_\inf}(\frak X)\to R\Gamma_{A_\inf}(\frak X)$ inducing a quasi-isomorphism
\[
R\Gamma_{A_\inf}(\frak X)[\tfrac 1\xi]\simeq R\Gamma_{A_\inf}(\frak X)[\tfrac 1{\phi(\xi)}]\ ,
\]
such that all cohomology groups are Breuil--Kisin--Fargues modules. Moreover, one has the following comparison results.
\begin{enumerate}
\item With crystalline cohomology of $\frak X_k$:
\[
R\Gamma_{A_\inf}(\frak X)\dotimes_{A_\inf} W(k)\simeq R\Gamma_\crys(\frak X_k/W(k))\ .
\]
\item With de~Rham cohomology of $\frak X$:
\[
R\Gamma_{A_\inf}(\frak X)\dotimes_{A_\inf} \roi\simeq R\Gamma_\dR(\frak X)\ .
\]
\item With crystalline cohomology of $\frak X_{\roi/p}$:
\[
R\Gamma_{A_\inf}(\frak X)\dotimes_{A_\inf} A_\crys\simeq R\Gamma_\crys(\frak X_{\roi/p}/A_\crys)\ .
\]
\item With \'etale cohomology of the rigid-analytic generic fibre $X$ of $\frak X$:
\[
R\Gamma_{A_\inf}(\frak X)\otimes_{A_\inf} A_\inf[\tfrac 1\mu]\simeq R\Gamma_\sub{\'et}(X,\bb Z_p)\otimes_{\bb Z_p} A_\inf[\tfrac 1\mu]\ .
\]
\end{enumerate}
\end{theorem}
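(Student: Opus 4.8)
The plan is to construct $R\Gamma_{A_\inf}(\frak X)$ by a sheafification-and-truncation procedure on the pro-étale site of the generic fibre $X$, following the strategy that made the Hodge--Tate and de~Rham comparisons work in \cite{ScholzePAdicHodge}, but now keeping track of integral structure via the Berthelot--Ogus functor $L\eta$. Concretely, let $\nu\colon X_{\mathrm{pro\acute{e}t}}\to \frak X_{\mathrm{Zar}}$ be the natural map of sites, let $\bb A_\inf = W(\hat{\roi}^\flat_X)$ be Fontaine's period sheaf on $X_{\mathrm{pro\acute{e}t}}$, and set $A\Omega_{\frak X} := L\eta_\mu\big(R\nu_* \bb A_\inf\big)$, a complex of sheaves on $\frak X_{\mathrm{Zar}}$; then define $R\Gamma_{A_\inf}(\frak X) := R\Gamma\big(\frak X, A\Omega_{\frak X}\big)$. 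The Frobenius on $A_\inf$ and the identity $\varphi(\mu) = \xi\cdot\mu$ (up to a unit) induce the claimed $\phi$-linear map and the isogeny statement after inverting $\xi$, essentially formally from the defining property $L\eta_f \circ L\eta_g \simeq L\eta_{fg}$ together with the fact that applying $L\eta_\mu$ and then inverting $\xi$ undoes the $L\eta$.

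The four comparison statements would be proved by computing $A\Omega_{\frak X}\dotimes$-various rings \emph{locally}, i.e. on a small affine $\Spf R$ admitting an étale map to a torus, where Faltings's almost purity theorem makes $R\nu_*\bb A_\inf$ explicitly computable (an almost-version of a Koszul/continuous-group-cohomology complex on the cyclotomic tower). The point of $L\eta_\mu$ is precisely that it converts these ``almost'' statements into honest ones: the operator kills the bounded $\mu$-torsion that separates $R\nu_*\bb A_\inf$ from its integral refinement. For (iv), inverting $\mu$ simply removes the effect of $L\eta_\mu$ and one is left with $R\nu_*\bb A_\inf[\tfrac1\mu] = R\Gamma_\sub{\'et}(X,\bb Z_p)\otimes A_\inf[\tfrac1\mu]$ by the primitive comparison theorem. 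For (ii), reduction along $\theta\colon A_\inf\to\roi$ should give, locally, the $q$-de~Rham complex which degenerates (via the $L\eta$) to the honest de~Rham complex $\Omega^\bullet_{R/\roi}$; for (iii), base change to $A_\crys$ together with the crystalline comparison over $A_\crys$ (Faltings, or a Poincaré-lemma argument for $\bb A_\crys$) identifies the result with crystalline cohomology of $\frak X_{\roi/p}$; and (i) follows from (iii) by further base change $A_\crys\to W(k)$ using the Berthelot--Ogus base-change theorem in crystalline cohomology, or alternatively from (ii) by deforming along $\roi\to k$. Finally, that the cohomology groups are Breuil--Kisin--Fargues modules — finitely presented over $A_\inf$, finite free after inverting $p$, with the isogeny $\phi_M$ — is extracted from the comparisons: finiteness after inverting $p$ is seen after base change to $A_\crys$ and $B_\dR^+$, finite presentation over $A_\inf$ needs a separate argument controlling the structure of $A_\inf$-modules that are finite free away from the closed point.

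The main obstacle, I expect, is proving that the $A\Omega$ construction actually has \emph{finitely presented} cohomology over $A_\inf$ with the right perfectness — $A_\inf$ is not Noetherian and not a valuation ring, so one cannot argue naively. This forces one to (a) establish that $L\eta_\mu$ applied to the local Koszul complexes yields \emph{perfect} complexes of $A_\inf$-modules, which requires understanding $L\eta$ at the level of complexes rather than just the derived category (strict complexes, flatness bookkeeping), and (b) globalize via a careful descent/gluing along a finite affine cover, checking that $L\eta_\mu$ commutes with the relevant base changes and with $R\Gamma(\frak X,-)$ up to controlled error. A secondary difficulty is the precise comparison in (ii)–(iii): identifying $A\Omega_{\frak X}\dotimes_{A_\inf}\roi$ with $R\Gamma_\dR(\frak X)$ on the nose (not just up to isogeny) is exactly where the deformation-of-de~Rham-cohomology picture, and a clean local description of $A\Omega$ as a $q$-de~Rham complex, must be made rigorous; getting the Frobenius-semilinearity and the identification of $q-1$ with $\mu$ to match across all four specializations simultaneously is the delicate bookkeeping at the heart of the argument.
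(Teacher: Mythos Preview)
Your overall strategy matches the paper's: define $R\Gamma_{A_\inf}(\frak X) = R\Gamma(\frak X, L\eta_\mu R\nu_*\bb A_{\inf,X})$, obtain the Frobenius from $\phi(\mu) = \tilde\xi\mu$, and deduce the four comparisons from sheaf-level statements on small affines (Theorem~\ref{ThmC}).

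However, your anticipated step (a)---that $L\eta_\mu$ of the local Koszul complexes yields perfect $A_\inf$-complexes---is both false and unnecessary. Locally on $\Spf R$, the complex $A\Omega_R$ is quasi-isomorphic to the $q$-de~Rham complex, whose terms are copies of the lift $A(R)^\square$; these are topologically free $A_\inf$-modules of infinite rank, hence certainly not perfect. Perfectness of $R\Gamma_{A_\inf}(\frak X)$ is a purely \emph{global} phenomenon, and the paper's argument is much simpler than you expect: since $L\eta_\mu$ preserves derived completeness (Lemma~\ref{lem:Letapreservecompleteness}), the complex $R\Gamma_{A_\inf}(\frak X)$ is derived $\xi$-complete, so perfectness reduces to perfectness of $R\Gamma_{A_\inf}(\frak X)\dotimes_{A_\inf,\theta}\roi \simeq R\Gamma_\dR(\frak X)$, which is immediate from properness and smoothness of $\frak X/\roi$. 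No gluing of local perfect complexes is needed.

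The genuinely delicate point, which you only touch on at the end, is showing that each $H^i_{A_\inf}(\frak X)[\tfrac 1p]$ is free over $A_\inf[\tfrac 1p]$, so that the cohomology groups are Breuil--Kisin--Fargues modules. Here the paper combines the \'etale comparison (giving freeness over $A_\inf[\tfrac 1{p\mu}]$) with the $A_\crys$-comparison plus a Berthelot--Ogus type result (Proposition~\ref{prop:BerthelotOgusAcrys}, giving freeness after $\otimes B_\crys^+$), and then appeals to a structural lemma for finitely presented $A_\inf$-modules (Lemma~\ref{lem:FreeoverAinf1p}, Corollary~\ref{cor:crysspecbetter}) to conclude freeness over all of $A_\inf[\tfrac 1p]$.
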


We note that statement (iii) formally implies (i) and (ii) by standard facts about crystalline cohomology. Also, we note that (if one fixes a section $k\to \roi/p$) there is a canonical isomorphism
\[
R\Gamma_\crys(\frak X_{\roi/p}/A_\crys)[\tfrac 1p]\simeq R\Gamma_\crys(\frak X_k/W(k))\otimes_{W(k)} A_\crys[\tfrac 1p]\ ;
\]
this is related to a result of Berthelot--Ogus, \cite{BerthelotOgus2}. Thus, combining parts (iii) and (iv), we get the comparison
\[
R\Gamma_\crys(\frak X_k/W(k))\otimes_{W(k)} B_\crys\simeq R\Gamma_{A_\inf}(\frak X)\dotimes_{A_\inf} B_\crys\simeq R\Gamma_\sub{\'et}(X,\bb Z_p)\otimes_{\bb Z_p} B_\crys\ ,
\]
which proves Theorem~\ref{ThmA} (i); note that since each $H^i_{A_\inf}(\mathfrak{X})[\frac{1}{p}]$ is free over $A_\inf[\frac{1}{p}]$, the derived comparison statement above immediately yields one for the individual cohomology groups.

The picture here is that there is the cohomology theory $R\Gamma_{A_\inf}(\frak X)$ which lives over all of $\Spec A_\inf$, and which over various (big) subsets of $\Spec A_\inf$ can be described through other cohomology theories. These subsets often overlap, and on these overlaps one gets comparison isomorphisms. However, the cohomology theory $R\Gamma_{A_\inf}(\frak X)$ itself is a finer invariant which cannot be obtained by a formal procedure from the other known cohomology theories. In particular, the base change $R\Gamma_{A_\inf}(\frak X) \dotimes_{A_\inf} \roi^\flat$ does not admit a  description in classical terms, and gives a  specialization from the \'etale cohomology of $X$ with $\bb F_p$-coefficients to the de~Rham cohomology of $\mathfrak{X}_k$ (by Theorem~\ref{ThmB} (ii) and (iv)),  and is thus responsible for the inequality in Theorem~\ref{ThmA} (ii).

\begin{remark} It is somewhat surprising that there is a Frobenius acting on $R\Gamma_{A_\inf}(\frak X)$, as there is no Frobenius acting on $\frak X$ itself. This phenomenon is reminiscent of the Frobenius action on the de~Rham cohomology $R\Gamma_{\dR}(Y)$ of a proper smooth $W(k)$-scheme $Y$. However, in the latter case, the formalism of crystalline cohomology shows that $R\Gamma_{\dR}(Y)$ depends functorially on the special fibre $Y_k$; the latter lives in characteristic $p$, and thus carries a Frobenius. In our case, though, the theory $R\Gamma_{A_\inf}(\frak X)$ is {\em not} a functor of $\mathfrak{X}_{\roi/p}$ (see Remark \ref{rmk:NotFunctorSpecialFibre}), so there is no obvious Frobenius in the picture. Instead, in our construction, the Frobenius on $R\Gamma_{A_\inf}(\mathfrak{X})$ comes from the Frobenius action on the ``tilt'' of $X$. 
\end{remark}

Let us explain the definition of $R\Gamma_{A_\inf}(\frak X)$. We will construct a complex $A\Omega_{\frak X}$ of sheaves of $A_\inf$-modules on $\frak X_\sub{Zar}$, which will in fact carry the structure of a commutative $A_\inf$-algebra (in the derived category).\footnote{Our constructions can be upgraded to make $A\Omega_{\frak X}$ into a sheaf of $E_\infty$-$A_\inf$-algebras, but we will merely consider it as a commutative algebra in the derived category of $A_\inf$-modules on $\frak X$.} Then
\[
R\Gamma_{A_\inf}(\frak X) := R\Gamma(\frak X_\sub{Zar},A\Omega_{\frak X})\ .
\]
Let us remark here that, in the way constructucted in this paper, $A\Omega_{\frak X}$ depends on the map $\frak X\to \Spf \roi$, and so it would be better to write $A\Omega_{\frak X/\roi}$ instead. We write $A\Omega_{\frak X}$ to keep notation light.\footnote{In fact, by \cite{BMS2}, $A\Omega_{\frak X}$ only depends on $\frak X$ itself.}

The comparison results above are consequences of the following results on $A\Omega_{\frak X}$.

\begin{theorem}\label{ThmC} Let $\mathfrak{X}/\roi$ be as in Theorem~\ref{ThmB}. For the complex $A\Omega_{\frak X}$ of sheaves of $A_\inf$-modules defined below, there are canonical quasi-isomorphisms of complexes of sheaves on $\frak X_\sub{Zar}$ (compatible with multiplicative structures).
\begin{enumerate}
\item With crystalline cohomology of $\frak X_k$:
\[
A\Omega_{\frak X}\hat{\dotimes}_{A_\inf} W(k)\simeq W\Omega^\bullet_{\frak X_k/W(k)}\ .
\]
Here, the tensor product is $p$-adically completed, and the right side denotes the de~Rham--Witt complex of $\frak X_k$, which computes crystalline cohomology of $\frak X_k$.
\item With de~Rham cohomology of $\frak X$:
\[
A\Omega_{\frak X}\dotimes_{A_\inf} \roi\simeq \Omega^{\bullet,\cont}_{\frak X/\roi}\ ,
\]
where $\Omega^{i,\cont}_{\frak X/\roi} = \projlim_n \Omega^i_{(\frak X/p^n)/(\roi/p^n)}$.
\item With crystalline cohomology of $\frak X_{\roi/p}$: if $u: (\frak X_{\roi/p}/A_\crys)_\crys\to \frak X_\sub{Zar}$ denotes the projection, then
\[
A\Omega_{\frak X}\hat{\dotimes}_{A_\inf} A_\crys\simeq Ru_\ast \roi_{\frak X_{\roi/p}/A_\crys}^\crys\ .
\]
\item With (a variant of) \'etale cohomology of the rigid-analytic generic fibre $X$ over $\mathfrak{X}$: if $\nu: X_\sub{pro\'et}\to \frak X_\sub{Zar}$ denotes the projection, then
\[
A\Omega_{\frak X}\otimes_{A_\inf} A_\inf[\tfrac 1\mu]\simeq (R\nu_\ast \bb A_{\inf,X})\otimes_{A_\inf} A_\inf[\tfrac 1\mu]\ .
\]
\end{enumerate}
\end{theorem}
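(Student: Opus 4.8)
The plan is to produce an explicit \emph{local} model for the complex $A\Omega_{\frak X}$ — defined below as $L\eta_\mu R\nu_\ast\bb A_{\inf,X}$, where $\nu\colon X_\sub{pro\'et}\to\frak X_\sub{Zar}$ — namely a $q$-deformation of the de~Rham complex, and then to read off each of the four comparisons by specializing this model along the appropriate map out of $A_\inf$. Since $A\Omega_{\frak X}$ and all four target complexes are complexes of sheaves on $\frak X_\sub{Zar}$, it suffices to construct canonical, functorial, multiplicative quasi-isomorphisms on a basis of affine opens and then check that they glue. \textbf{Step 1 (local setup; almost purity).} Reduce to $\frak X=\Spf R$ with $R$ a $p$-completely smooth $\roi$-algebra admitting a \emph{framing} $\Box\colon\roi\langle T_1^{\pm1},\dots,T_d^{\pm1}\rangle\to R$ that is $p$-completely \'etale. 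Adjoin compatible $p$-power roots of the $T_i$ (together with the $\zeta_{p^n}$) to form a perfectoid $R_\infty$ with $\Gal(R_\infty[\tfrac1p]/R[\tfrac1p])=\Delta\cong\bb Z_p(1)^{\oplus d}$. Faltings's almost purity theorem then computes $R\nu_\ast\bb A_{\inf,X}|_{\Spf R}$, up to an error term killed by $\mu$, by the continuous group cohomology $R\Gamma_\cont(\Delta,A_\inf(R_\infty))$, i.e.\ by the Koszul complex $K_{A_\inf(R_\infty)}(\gamma_1-1,\dots,\gamma_d-1)$ on topological generators $\gamma_i$ of $\Delta$.

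\textbf{Step 2 ($L\eta_\mu$ and the $q$-de~Rham complex).} Applying $L\eta_\mu$ to this Koszul complex accomplishes two things at once. First, on a suitable $\Delta$-stable subalgebra $A_\inf(R)^\Box\subseteq A_\inf(R_\infty)$ lifting $R$, each operator $\gamma_i-1$ is divisible by $\mu=[\epsilon]-1$, with a well-defined ``$q$-derivative'' $\nabla_{q,i}:=(\gamma_i-1)/([\epsilon]-1)$, so $L\eta_\mu$ of the Koszul complex is represented by the \emph{$q$-de~Rham complex} $q\Omega^\bullet_{R}$ (depending on $\Box$) — an explicit $q$-deformation, with $q=[\epsilon]$, of the continuous de~Rham complex $\Omega^{\bullet,\cont}_{R/\roi}$ built from the $\nabla_{q,i}$. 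Second, and crucially, $L\eta_\mu$ promotes the \emph{almost} quasi-isomorphism of Step~1 to an \emph{honest} one, since the bounded $\mu^\infty$-torsion discrepancy is annihilated by the d\'ecalage functor. This gives a local quasi-isomorphism $A\Omega_R\simeq q\Omega^\bullet_{R}$; the left-hand side is manifestly independent of the framing, so $q\Omega^\bullet_{R}$ serves purely as a computational model.

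\textbf{Step 3 (the four specializations).} \emph{(iv), \'etale:} inverting $\mu$ turns $L\eta_\mu$ into the identity up to quasi-isomorphism, so $A\Omega_{\frak X}[\tfrac1\mu]\simeq(R\nu_\ast\bb A_{\inf,X})[\tfrac1\mu]$ directly from the construction. \emph{(ii), de~Rham:} base-change along the surjection $A_\inf\to\roi$ — one must use $\tilde\theta:=\theta\circ\varphi^{-1}$ (with kernel generated by $\varphi(\xi)$), under which $\mu\mapsto\zeta_p-1$ is a nonzerodivisor, since the naive $\theta$ is useless here ($\theta(\mu)=0$). Using that $L\eta$ commutes with such a Tor-independent base change along an element going to a nonzerodivisor, one reduces to the elementary fact that $L\eta_{\zeta_p-1}$ of the reduced Koszul complex is quasi-isomorphic to $\Omega^{\bullet,\cont}_{R/\roi}$, i.e.\ that the $q$-de~Rham complex collapses to the honest de~Rham complex after the d\'ecalage and the specialization $q=\zeta_p$. \emph{(iii), crystalline over $A_\crys$:} $p$-completely base-change along $A_\inf\to A_\crys$ (where $\mu^{p-1}\in pA_\crys$) and identify $A\Omega_R\hat{\dotimes}_{A_\inf}A_\crys$ with $Ru_\ast\roi_{(R/p)/A_\crys}^\crys$ by a Poincar\'e-lemma computation, using the framing and the standard crystalline--de~Rham comparison over the divided-power thickening $A_\crys\to\roi/p$. \emph{(i), de~Rham--Witt:} $p$-completely base-change along $A_\inf\to W(k)$ (so $\xi\mapsto p$) and match $A\Omega_R\hat{\dotimes}_{A_\inf}W(k)$ with the Langer--Zink de~Rham--Witt complex $W\Omega^\bullet_{\frak X_k/W(k)}$ via its explicit presentation for framed algebras; alternatively, deduce this from (iii) and an integral Berthelot--Ogus comparison relating crystalline cohomology over $A_\crys$ with crystalline cohomology over $W(k)$.

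\textbf{Gluing and the main obstacle.} The local quasi-isomorphisms of Steps~1--3 are functorial in the framed algebra, and one glues them over $\frak X$ by checking compatibility on overlaps — either by a base-change argument relating two framings, or by pinning down each local identification uniquely (via its effect on $H^0$ and on multiplicative structure). The principal difficulty is \textbf{Steps~1--2}: carrying out the almost-purity computation of $R\nu_\ast\bb A_{\inf,X}$ and, above all, proving that $L\eta_\mu$ converts the resulting ``almost'' quasi-isomorphisms into genuine ones — this is exactly what forces the appearance of the d\'ecalage functor and is the technical heart of the whole construction. A second, also substantial, difficulty is comparison (i): identifying $A\Omega_R\hat{\dotimes}_{A_\inf}W(k)$ with the de~Rham--Witt complex requires redeveloping the Langer--Zink theory in coordinates and is effectively a theorem in its own right.
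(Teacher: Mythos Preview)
Your overall strategy --- build a local $q$-de~Rham model via almost purity and $L\eta_\mu$, then specialize --- is exactly the paper's approach, and you correctly flag Steps~1--2 and the de~Rham--Witt comparison as the hard parts. Two points, however, need correction.

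\textbf{Step 2 is mischaracterized.} The discrepancy coming from almost purity is not ``bounded $\mu^\infty$-torsion'' but rather \emph{almost zero} (killed by $W(\frak m^\flat)$), and $L\eta_\mu$ does \emph{not} automatically turn almost quasi-isomorphisms into honest ones --- the paper gives explicit counterexamples (e.g.\ $\frak m\to\roi$, see the remark after Lemma~\ref{lem:LetaActualIsom}). What actually makes it work is a one-sided criterion: if the \emph{source} complex $C$ has the property that each $H^i(C)$ and $H^i(C)/\mu$ contain no almost-zero elements (plus, over $A_\inf$, the intersection condition $\bigcap_{m}\tfrac{\mu}{m}H^i(C)=\mu H^i(C)$ of Lemma~\ref{lem:LEtaQuasiIsomAinf}), then $L\eta_\mu$ of an almost quasi-isomorphism $C\to D$ is a genuine quasi-isomorphism, with no hypothesis on $D$. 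Verifying these hypotheses for the explicit Koszul complex $K_{\bb A_\inf(R_\infty)}(\gamma_i-1)$ is the real work (Lemmas~\ref{lem:propWrOmega} and~\ref{lem:niceintersection}); it is not a formal consequence of the d\'ecalage functor killing torsion.

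\textbf{Part (ii) uses the wrong map.} Base change along $\tilde\theta$ sends $q=[\epsilon]\mapsto\zeta_p$, and the $q$-de~Rham complex at $q=\zeta_p$ is \emph{not} the de~Rham complex: you obtain the complex $\widetilde\Omega_{\frak X}$, whose cohomology groups are the continuous differential forms (Theorem~\ref{thm:IntegralCartier}) but which is a genuinely different object --- it encodes the obstruction to lifting $\frak X$ to $A_\inf/\tilde\xi^2$ and need not split. The de~Rham comparison in the theorem is along $\theta$, where indeed $\theta(\mu)=0$ so your objection applies. The paper's route is: use the Frobenius automorphism of $R\nu_\ast\bb A_{\inf,X}$ to rewrite
\[
A\Omega_{\frak X}\dotimes_{A_\inf,\theta}\roi\ \cong^{\phi}\ (L\eta_{\tilde\xi}A\Omega_{\frak X})\dotimes_{A_\inf,\tilde\theta}\roi,
\]
and then invoke Proposition~\ref{prop:LetaBock} (the Bockstein description of $(L\eta_f C)/f$) to realize the right-hand side as the complex $H^\bullet(A\Omega_{\frak X}/\tilde\xi)\cong\Omega^{\bullet,\cont}_{\frak X/\roi}$ with the Bockstein differential --- and \emph{that} is the de~Rham complex. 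The same Frobenius-plus-Bockstein manoeuvre, carried out at level $W_r(\roi)$ and combined with Theorem~\ref{thm:AOmegavsdRWLocalPart4}, gives part~(i). Alternatively, as you note at the end, both (i) and (ii) follow formally from (iii) by standard base change for crystalline cohomology; this is in fact the paper's primary argument.
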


We note that Theorem~\ref{ThmC} implies Theorem~\ref{ThmB}. This is clear for parts (i), (ii) and (iii). For part (iv), one uses the following result from \cite{ScholzePAdicHodge} (cf.~\cite[\S 3, Theorem 8]{FaltingsAlmostEtale}): the canonical map
\[
R\Gamma_\sub{\'et}(X,\bb Z_p)\otimes_{\bb Z_p} A_\inf\to R\Gamma_\sub{pro\'et}(X,\bb A_{\inf,X})
\]
is an almost quasi-isomorphism; in particular, it is a quasi-isomorphism after inverting $\mu$. Here, $\bb A_{\inf,X}$ is a relative version of Fontaine's period ring $A_\inf$, obtained by repeating the construction of $A_\inf$ on the pro-\'etale site.

Theorem~\ref{ThmC} provides two different ways of looking at $A\Omega_{\frak X}$. On one hand, it can be regarded as a deformation of the de~Rham complex of $\frak X$ from $\roi$ to its pro-infinitesimal thickening $A_\inf\to \roi$, by (ii). This is very analogous to regarding crystalline cohomology of $\frak X_k$ as a deformation of the de~Rham complex of $\frak X_k$ from $k$ to its pro-infinitesimal thickening $W(k)\to k$. This turns out to be a fruitful perspective for certain problems; in particular, if one chooses coordinates on $\frak X$, then $A\Omega_{\frak X}$ can be computed explicitly, as a certain ``$q$-deformation of de~Rham cohomology''. This is very concrete, but unfortunately it depends on coordinates in a critical way, and we do not know how to see directly that $A\Omega_{\frak X}$ is independent of the choice of coordinates in this picture. 

\begin{remark}
This discussion raises an interesting question: is there a site-theoretic formalism, akin to crystalline cohomology, that realizes $A\Omega_{\frak X}$? Note that $A\Omega_{\frak X} \widehat{\dotimes}_{A_\inf} A_\crys$ does indeed arise by the crystalline formalism thanks to Theorem~\ref{ThmC} (iii). It is tempting to use the infinitesimal site to descend further to $A_\inf$; however, one can show that this approach does not work, essentially for the same reason that infinitesimal cohomology does not work well in characteristic $p$.\footnote{Footnote added in print: The question raised in this remark has been answered affirmatively by the construction of the prismatic site that shall appear in the forthcoming \cite{BhattScholzePrism}.}
\end{remark}

On the other hand, by Theorem~\ref{ThmC} (iv), one can regard $A\Omega_{\frak X}$ as being $R\nu_\ast \bb A_{\inf,X}$, up to some $\mu$-torsion, i.e.~as a variant of \'etale cohomology. It is this perspective with which we will define $A\Omega_{\frak X}$; this has the advantage of being obviously canonical. However, this definition is not very explicit, and much of our work goes into computing the resulting $A\Omega_{\frak X}$, and, in particular, getting the comparison to the de~Rham complex. It is this computation which builds the bridge between the apparently disparate worlds of \'etale cohomology and de~Rham cohomology.

\subsection{Strategy of the construction}

We note that computations relating \'etale cohomology and differentials, as alluded to above, have been at the heart of Faltings' approach to $p$-adic Hodge theory; however, they always had the problem of some unwanted ``junk torsion''. The main novelty of our approach is that we can get rid of the ``junk torsion'' by the following definition:

\begin{definition}\label{IntroAOmegaDef} Let
\[
\nu: X_\sub{pro\'et}\to \frak X_\sub{Zar}
\]
denote the projection (or the ``nearby cycles map'').  Then
\[
A\Omega_{\frak X}  := L\eta_\mu(R\nu_\ast \bb A_{\inf,X})\ .
\]
\end{definition}

\begin{remark} 
If one is careful with pro-sheaves, one can replace the pro-\'etale site with Faltings' site, \cite{FaltingsAlmostEtale}, \cite{AbbesGrosTopos}, in Definition \ref{IntroAOmegaDef}. 
\end{remark}

Here, $\mu = [\epsilon]-1\in A_\inf$ is the element introduced above. The critical new ingredient is the operation $L\eta_f$, defined on the derived category of $A$-modules\footnote{
In fact, we define $L\eta_f$ operation on any ringed topos, such as $(\frak X_\sub{Zar},A_\inf)$, which is the setup in which we are using it in Definition~\ref{IntroAOmegaDef}.}, for any non-zero-divisor $f\in A$. Concretely, if $D^\bullet$ is a complex of $f$-torsion-free $A$-modules, then $\eta_f D^\bullet$ is a subcomplex of $D^\bullet[\tfrac 1f]$ with terms
\[
(\eta_f D)^i = \{x\in f^i D^i\mid dx\in f^{i+1} D^{i+1}\}\ .
\]
One shows that this operation passes to an operation $L\eta_f$ on the derived category. This relies on the observation that
\[
H^i(\eta_f D^\bullet) = H^i(D^\bullet) / H^i(D^\bullet)[f]\ .
\]
In particular, the operation $\eta_f$ has the effect of killing some torsion on the level of cohomology groups, which is what makes it possible to kill the ``junk torsion'' mentioned above. We warn the reader that $L\eta_f$ is \emph{not} an exact operation.

\begin{remark}
\label{rmk:Bogus}
We note that the operation $L\eta_f$ appeared previously, notably in the work of Berthelot--Ogus, \cite[Section 8]{BerthelotOgus}. There, they prove that for an affine smooth scheme $\Spec R$ over $k$, $\phi$ induces a quasi-isomorphism
\[
R\Gamma_\crys(\Spec R/W(k)) \simeq L\eta_p R\Gamma_\crys(\Spec R/W(k))\ ,
\]
with applications to the relation between Hodge and Newton polygon. Illusie has strengthened this to an isomorphism of complexes
\[
W\Omega_{R/k}^\bullet\cong \eta_p W\Omega_{R/k}^\bullet\ ,
\]
cf.~\cite[I.3.21.1.5]{IllusieDRWitt}. 
\end{remark}

\begin{remark}
For any object $K$ in the derived category of $\mathbb{Z}_p$-modules equipped with a quasi-isomorphism $L\eta_p K \simeq K$, we show that the complex $K/p^n$ admits a canonical representative $K_n^\bullet$ for each $n$, with $K_n^i = H^i(K/p^n)$. In the case $K = R\Gamma_\crys(\Spec R/W(k))$, equipped with the Berthelot-Ogus quasi-isomorphism mentioned in Remark \ref{rmk:Bogus}, this canonical representative is the de~Rham--Witt complex; this amounts essentially to Katz's reconstruction of the de~Rham--Witt complex from crystalline cohomology via the Cartier isomorphism, cf. \cite[\S III.1.5]{IllusieRaynaud}.
\end{remark}

Next, we explain the computation of $A\Omega_{\frak X}$ when $\frak X=\Spf R$ is an affine formal scheme, which is ``small'' in Faltings' sense, i.e.~there exists an \'etale  map
\[
\square: \Spf R\to \widehat{\bb G}_m^d = \Spf \roi\langle T_1^{\pm 1},\ldots,T_d^{\pm 1}\rangle
\]
to some (formal) torus; this is always true locally on $\frak X_\sub{Zar}$. In that case, we define
\[
R_\infty = R\hat{\otimes}_{\roi\langle T_1^{\pm 1},\ldots,T_d^{\pm 1}\rangle}\roi\langle T_1^{\pm 1/p^\infty},\ldots,T_d^{\pm 1/p^\infty}\rangle\ ,
\]
on which the Galois group $\Gamma = \bb Z_p^d$ acts; here we use the choice of $p$-power roots of unity in $\roi$. Faltings' almost purity theorem implies that the natural map
\begin{equation}
\label{map:Faltingsqis}
R\Gamma_\sub{cont}(\Gamma,\bb A_\inf(R_\infty))\to R\Gamma_\sub{pro\'et}(X,\bb A_{\inf,X})
\end{equation}
is an {\em almost} quasi-isomorphism, in the sense of Faltings' almost mathematics (with respect to the ideal $[\frak m^\flat]\subset A_\inf$, where $\frak m^\flat\subset \roi^\flat$ is the maximal ideal). The key lemma is that the $L\eta$-operation converts the preceding map to an {\em honest} quasi-isomorphism: 

\begin{lemma}\label{KeyLemma} The induced map
\[
L\eta_\mu R\Gamma_\sub{cont}(\Gamma,\bb A_\inf(R_\infty))\to L\eta_\mu R\Gamma_\sub{pro\'et}(X,\bb A_{\inf,X})
\]
is a quasi-isomorphism.
\end{lemma}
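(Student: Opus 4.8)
\emph{Plan.} The statement is local on $\frak X$, so keep $\frak X=\Spf R$ small and write $\alpha$ for the map \eqref{map:Faltingsqis}. By Faltings's almost purity theorem, $\alpha$ is an almost quasi-isomorphism with respect to the ideal $[\frak m^\flat]\subset A_\inf$; equivalently, the cohomology groups of its cone $E$ are killed by $[\frak m^\flat]$. Both $R\Gamma_\sub{cont}(\Gamma,\bb A_\inf(R_\infty))$ and $R\Gamma_\sub{pro\'et}(X,\bb A_{\inf,X})$ are concentrated in non-negative degrees and derived $p$-complete, and I would represent them by $\mu$-torsion-free complexes that are $p$-adically complete. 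The plan is then to deduce the lemma from the general statement: \emph{if $\beta\colon M\to N$ is a morphism in $D(A_\inf)$ between objects in non-negative degrees that are derived $p$-complete, and the cone of $\beta$ has $[\frak m^\flat]$-torsion cohomology groups, then $L\eta_\mu\beta$ is a quasi-isomorphism} — applied to $\beta=\alpha$.

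The arithmetic heart of this general statement is that $\mu$ is, for every $n$, a product of $n+1$ factors each of which is congruent modulo $p$ to an element of $\frak m^\flat$. Indeed, iterating $\mu=\xi\cdot\varphi^{-1}(\mu)$ gives
\[
\mu=\Bigl(\prod_{j=0}^{n-1}\varphi^{-j}(\xi)\Bigr)\cdot\varphi^{-n}(\mu);
\]
here $\varphi^{-n}(\mu)=[\epsilon^{1/p^n}]-1$ is congruent modulo $p$ to $\epsilon^{1/p^n}-1\in\frak m^\flat$, while each $\varphi^{-j}(\xi)$ reduces modulo $p$ to $\sum_{i=0}^{p-1}\epsilon^{i/p^{j+1}}$, which lies in $\frak m^\flat$ since it reduces to $p=0$ in the residue field. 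Consequently, if $P$ is an $A_\inf$-module killed by $[\frak m^\flat]$, multiplication by each factor sends $P$ into $pP$, so multiplication by $\mu$ sends $P$ into $p^{n+1}P$; as this holds for all $n$, $\mu P\subseteq\bigcap_k p^kP$, and if $P$ is $p$-adically separated then $\mu P=0$. Granting that the cohomology groups of $\mathrm{Cone}(\beta)$ are $p$-adically separated — being $[\frak m^\flat]$-torsion subquotients of $p$-adically complete complexes, this holds after the usual care with completions — they are killed by $\mu$, so $\eta_\mu$ of a $\mu$-torsion-free representative of $\mathrm{Cone}(\beta)$, whose cohomology groups are $H^i(-)/H^i(-)[\mu]$, is acyclic; that is, $L\eta_\mu\,\mathrm{Cone}(\beta)=0$ (and in particular $\beta$, hence $L\eta_\mu\beta$, becomes a quasi-isomorphism after inverting $\mu$).

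It remains to upgrade $L\eta_\mu\,\mathrm{Cone}(\beta)=0$ to $\mathrm{Cone}(L\eta_\mu\beta)=0$; this is the main obstacle, since $L\eta_\mu$ is not exact and does not commute with cones. I would argue on cohomology via $H^i(L\eta_\mu K)\cong H^i(K)/H^i(K)[\mu]$: one must show $H^i(M)/H^i(M)[\mu]\to H^i(N)/H^i(N)[\mu]$ is an isomorphism, and a chase in the long exact sequence of the cone shows its kernel and cokernel are $[\frak m^\flat]$-torsion; these vanish provided $H^i(M)$ and $H^i(N)$ have no $\mu$-power torsion beyond their $\mu$-torsion, for then the two quotients are $\mu$-torsion-free and, by the arithmetic above, contain no nonzero $[\frak m^\flat]$-torsion. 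That structural fact about the $\mu$-torsion is what the explicit computation of $R\Gamma_\sub{cont}(\Gamma,\bb A_\inf(R_\infty))$ provides — decomposing $\bb A_\inf(R_\infty)$ into weight spaces under $\Gamma=\bb Z_p^d$ and reading off the Koszul cohomology — together with the matching description of $L\eta_\mu R\Gamma_\sub{pro\'et}(X,\bb A_{\inf,X})$, and this computation is the technical crux. Once the Key Lemma is in place, it is precisely the device that eliminates the ``junk torsion'' of Faltings's theory and enables the local identification of $A\Omega_{\frak X}$ with the ($q$-)de~Rham complex.
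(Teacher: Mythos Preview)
There is a genuine gap. Your reduction to showing that
\[
H^i(M)/H^i(M)[\mu]\longrightarrow H^i(N)/H^i(N)[\mu]
\]
is an isomorphism is correct, but your proposed verification requires structural input about \emph{both} sides: you need the quotients to be $\mu$-torsion-free (equivalently, no $\mu^2$-torsion beyond $\mu$-torsion in $H^i$) and $p$-adically separated, so that your arithmetic forces any $[\frak m^\flat]$-torsion to vanish. For $M=R\Gamma_\cont(\Gamma,\bb A_\inf(R_\infty))$ such facts can be extracted from the explicit Koszul description, but for $N=R\Gamma_\sub{pro\'et}(X,\bb A_{\inf,X})$ no such description is available---indeed, producing one is exactly what the Key Lemma is for. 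Your appeal to ``the matching description of $L\eta_\mu R\Gamma_\sub{pro\'et}$'' is circular. (Separately, your factorisation argument for $\mu P\subset\bigcap_k p^kP$ is unnecessary: the cone cohomology is derived $p$-complete and $[\frak m^\flat]$-torsion, hence a $W(k)$-module, hence killed by $W(\frak m^\flat)\ni\mu$; but as you note, this alone is not enough.)

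The paper's route is different and avoids any hypothesis on $N$. It first proves the analogue at each finite Witt level: the map $L\eta_{[\zeta_{p^r}]-1}R\Gamma_\cont(\Gamma,W_r(R_\infty))\to L\eta_{[\zeta_{p^r}]-1}R\Gamma_\sub{pro\'et}(X,W_r(\hat\roi_X^+))$ is a quasi-isomorphism, using Lemma~\ref{lem:LetaActualIsom} with the nicer almost setting $(W_r(\roi),W_r(\frak m))$, where the required ``no almost-zero elements in $H^i(C)$ and $H^i(C)/f$'' is verified by explicit Koszul computation on the left only (Lemma~\ref{lem:propWrOmega}). The passage to $A_\inf$ then goes through a tailored criterion (Lemma~\ref{lem:LEtaQuasiIsomAinf}) whose hypotheses are: (i) almost quasi-isomorphism mod $p$; (ii) injectivity of $H^i(L\eta_\mu C)\to H^i(L\eta_\mu D)$; and (iii) the intersection identity $\bigcap_{m|\mu,\,m\in W(\frak m^\flat)}\tfrac{\mu}{m}H^i(C)=\mu H^i(C)$. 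Condition~(iii) is checked purely on the group-cohomology side (Lemma~\ref{lem:niceintersection}). Condition~(ii) is obtained by a limit trick: the square
\[
\xymatrix{
L\eta_\mu C\ar[r]\ar[d] & L\eta_\mu D\ar[d]\\
R\projlim_r\widetilde{W_r\Omega}_R^\square\ar[r]^-{\sim} & R\projlim_r\widetilde{W_r\Omega}_R^\sub{pro\'et}
}
\]
has left vertical and bottom maps already known to be quasi-isomorphisms (from the $W_r$-level result), which forces the top map to be injective on cohomology. Thus the paper never needs to know anything about $H^i(N)$ beyond the almost quasi-isomorphism; the missing control is supplied by the finite-level statements. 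Your plan could be repaired by replacing the symmetric condition on $M$ and $N$ with this asymmetric bootstrap.
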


This statement came as a surprise to us, and its proof relies on a rather long list of miracles; we have no good a priori reason to believe that this should be true. Part of the miracle is that the lemma can be proved by only showing that the left side is nice, without any extra knowledge of the right side than what follows from the almost quasi-isomorphism \eqref{map:Faltingsqis} above. In the announcement \cite{BMSAnnouncement}, we did not use this lemma, and instead had a more complicated definition of $A\Omega_{\frak X}$.

Moreover, the right side
\[
L\eta_\mu R\Gamma_\sub{pro\'et}(X,\bb A_{\inf,X})
\]
is equal to $A\Omega_R := R\Gamma(\Spf R,A\Omega_{\Spf R})$. This is not formal as $L\eta$ does not commute with taking global sections, but is also not the hard part of the argument.

Thus, one can compute $A\Omega_R$ as
\[
L\eta_\mu R\Gamma_\sub{cont}(\Gamma, \bb A_\inf(R_\infty))\ .
\]
This computation can be done explicitly, following the previous computations of Faltings. Before explaining the answer the general, we first give the description in the case of the torus; the result is best formulated using the so-called {\em $q$-analogue} $[i]_q := \tfrac{q^i-1}{q-1}$ of an integer $i \in \bb Z$.

\begin{theorem}\label{ThmD} If $R = \roi\langle T^{\pm 1}\rangle$, then $A\Omega_R$ is computed by the $q$-de~Rham complex
\[
A_\inf\langle T^{\pm 1}\rangle \xTo{\frac{\partial_q}{\partial_q T}} A_\inf \langle T^{\pm 1}\rangle\ : T^i\mapsto [i]_q T^{i-1}\ ,\ q=[\epsilon]\in A_\inf\ .
\]
In closed form,
\[
\frac{\partial_q}{\partial_q T}(f(T)) = \frac{f(qT)-f(T)}{qT-T}
\]
is a finite $q$-difference quotient.

In general, the formally \'etale map $\roi\langle T_1^{\pm 1},\ldots,T_d^{\pm 1}\rangle\to R$ deforms uniquely to a formally \'etale map
\[
A_\inf\langle T_1^{\pm 1},\ldots,T_d^{\pm 1}\rangle\to A(R)^\square\ .
\]
For each $i=1,\ldots,d$, one has an automorphism $\gamma_i$ of $A_\inf\langle T_1^{\pm 1},\ldots,T_d^{\pm 1}\rangle$ sending $T_i$ to $q T_i$ and $T_j$ to $T_j$ for $j\neq i$, where $q=[\epsilon]$. This automorphism lifts uniquely to an automorphism $\gamma_i$ of $A(R)^\square$ such that $\gamma_i\equiv 1\mod (q-1)$, so that one can define commuting ``$q$-derivations''
\[
\frac{\partial_q}{\partial_q T_i} := \frac{\gamma_i-1}{qT_i-T_i}: A(R)^\square\to A(R)^\square\ .
\]
Then $A\Omega_R$ is computed by the $q$-de~Rham complex
\[
0\to A(R)^\square\xTo{(\frac{\partial_q}{\partial_q T_1},\ldots,\frac{\partial_q}{\partial_q T_d})} (A(R)^\square)^d\to\ldots\to \bigwedge^i (A(R)^\square)^d\to \ldots\to \bigwedge^d (A(R)^\square)^d\to 0\ ,
\]
where all higher differentials are exterior powers of the first differential.
\end{theorem}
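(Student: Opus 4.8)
Let me sketch how I would attack the final statement (Theorem~\ref{ThmD}), taking Lemma~\ref{KeyLemma} and the preceding discussion as given.

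The excerpt has already reduced the theorem to a computation: by Lemma~\ref{KeyLemma} together with the (granted) identification of global sections, $A\Omega_R \simeq L\eta_\mu R\Gamma_\cont(\Gamma,\bb A_\inf(R_\infty))$ with $\Gamma \cong \bb Z_p^d$. Since $\Gamma$ acts on the $p$-adically complete module $\bb A_\inf(R_\infty)$, its continuous cohomology is computed by the Koszul complex $K(\bb A_\inf(R_\infty);\g_1-1,\dots,\g_d-1)$ on the topological generators; and $\bb A_\inf(R_\infty)$ is $\mu$-torsion free, so on this complex $L\eta_\mu$ coincides with the honest, termwise $\eta_\mu$. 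The plan is to evaluate this $\eta_\mu$ one ``monomial component'' at a time.

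To that end, write $U_i=[T_i^\flat]\in\bb A_\inf(R_\infty)$ for the Teichm\"uller lift of the chosen compatible system of $p$-power roots of $T_i$. Then $A(R)^\square\subset\bb A_\inf(R_\infty)$ (the \'etale $A_\inf\langle U_i^{\pm1}\rangle$-lift of $R$, via $T_i\mapsto U_i$) is $\Gamma$-stable, with $\g_i$ acting as the automorphism of the statement, and there is a $\Gamma$-equivariant topological direct sum decomposition
\[
\bb A_\inf(R_\infty)=\widehat{\bigoplus}_{a\in(\bb Z[1/p]/\bb Z)^d}A(R)^\square\cdot U^a,\qquad \g_i(f\cdot U^a)=\g_i(f)\cdot q^{a_i}U^a,\quad q=[\ep].
\]
Correspondingly the Koszul complex decomposes as a $p$-completed direct sum $\widehat{\bigoplus}_a K_a$ with $K_a=K(A(R)^\square;\,q^{a_1}\g_1-1,\dots,q^{a_d}\g_d-1)$, and since $\eta_\mu$ is a termwise construction on $\mu$-torsion-free complexes (the $p$-adic convergence conditions being automatic because $\mu$ is a non-zero-divisor modulo every $p^m$) it commutes with this decomposition. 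So it suffices to compute each $\eta_\mu K_a$. For $a=0$ one has $\g_i-1=\mu\cdot g_i$ with $g_i:=T_i\tfrac{\partial_q}{\partial_q T_i}$ by the very definition of the $q$-derivation; the $g_i$ are $A_\inf$-linear and pairwise commuting. The elementary identity $\eta_\mu K(M;\mu g_1,\dots,\mu g_d)\cong K(M;g_1,\dots,g_d)$ (divide the degree-$j$ term by $\mu^{j}$), combined with the rescaling isomorphism multiplying the wedge summand indexed by $S$ by $\prod_{i\in S}T_i$ (legitimate, the $T_i$ being units), identifies $\eta_\mu K_0$ with the $q$-de~Rham complex $K(A(R)^\square;\tfrac{\partial_q}{\partial_q T_1},\dots,\tfrac{\partial_q}{\partial_q T_d})$ of the statement; its higher differentials are exterior powers of the first because that is what a Koszul complex is. For $d=1$, $R=\roi\langle T^{\pm1}\rangle$, this is literally the two-term complex $A_\inf\langle T^{\pm1}\rangle\xrightarrow{\,T^i\mapsto[i]_qT^{i-1}\,}A_\inf\langle T^{\pm1}\rangle$, and rewriting $\tfrac{\partial_q}{\partial_q T}f$ as the difference quotient $\tfrac{f(qT)-f(T)}{qT-T}$ is just unwinding the definition of $\g$.

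It remains to show $\eta_\mu K_a\simeq 0$ for $a\neq0$, and this is where the ``miracle'' is genuinely used. Fix an index $i$ with $a_i\neq0$, say $a_i=m/p^n$ with $p\nmid m$, $n\ge1$. The arithmetic facts are: $q^{a_i}-1$ generates the ideal $(\varphi^{-n}(\mu))$, where $\varphi^{-n}(\mu)=[\ep^{1/p^n}]-1$; $\varphi^{-n}(\mu)$ divides $\mu$; and the quotient $\mu/\varphi^{-n}(\mu)=\xi\,\varphi^{-1}(\xi)\cdots\varphi^{-(n-1)}(\xi)$ is a non-unit. From these one gets $q^{a_i}\g_i-1=\varphi^{-n}(\mu)\cdot u$ with $u$ an \emph{automorphism} of $A(R)^\square$: for a torus this is transparent, $u$ being diagonal in the monomial basis with entries of the form $1$ plus a non-unit, hence units; for a general small $R$ one reduces modulo $p$ and invokes the rigidity of \'etale algebras over $\roi^\flat\langle T_i^{\pm1}\rangle$, so that $u$ is the unique lift of the invertible toric $u$. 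Since $u$ commutes with $\g_j$ for $j\neq i$, it is an automorphism of the complex $K_a$, hence acts invertibly on $H^\bullet(K_a)$; on the other hand, multiplication by the direction-$i$ operator $q^{a_i}\g_i-1$ on $K_a$ is null-homotopic (the usual Koszul homotopy, contraction with $e_i$), hence acts as $0$ on $H^\bullet(K_a)$. Therefore $\varphi^{-n}(\mu)$ --- and so $\mu$ --- annihilates $H^\bullet(K_a)$, whence by $H^j(\eta_\mu D)=H^j(D)/H^j(D)[\mu]$ the complex $\eta_\mu K_a$ is acyclic; and a $p$-completed direct sum of acyclic complexes of $p$-torsion-free modules is again acyclic. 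Combining, $A\Omega_R\simeq\eta_\mu K_0\simeq$ the $q$-de~Rham complex, as claimed. The main obstacle is exactly this last paragraph: the vanishing of the non-integral components after $L\eta_\mu$, and within it the one place requiring more than bookkeeping with Koszul complexes and $p$-adic completions, namely the factorization $q^{a_i}\g_i-1=\varphi^{-n}(\mu)\cdot(\text{automorphism})$ over $A(R)^\square$ for a general small $R$ --- while the genuinely surprising hard input, Lemma~\ref{KeyLemma}, has been granted.
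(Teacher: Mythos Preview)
Your approach is essentially the paper's (this theorem is proved as Lemma~\ref{lem:qdRvsAOmega}): decompose the Koszul complex for $\bb A_\inf(R_\infty)$ into the integral summand $K_0$ and the nonintegral summands $K_a$, identify $\eta_\mu K_0$ with the $q$-de~Rham complex via the Koszul identity $\eta_\mu K(M;\mu g_1,\dots,\mu g_d)\cong K(M;g_1,\dots,g_d)$, and show $\eta_\mu K_a$ is acyclic for $a\neq 0$. The only substantive difference is in how you establish this last point, and there your justification has a gap.

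Your claim that $q^{a_i}\gamma_i-1=\varphi^{-n}(\mu)\cdot u$ with $u$ an \emph{automorphism} of $A(R)^\square$ is correct, but the reason you give for general $R$ --- ``rigidity of \'etale algebras over $\roi^\flat\langle T_i^{\pm 1}\rangle$, so that $u$ is the unique lift of the invertible toric $u$'' --- does not apply: $u$ is merely an $A_\inf$-linear endomorphism, not a ring map, so there is no uniqueness-of-lifts statement to invoke, nor is there any ``toric $u$'' of which the general $u$ is a base change. The correct argument is $\xi$-adic: writing $\gamma_i=1+\mu\delta$ one has
\[
u=v+q^{a_i}\,\frac{\mu}{\varphi^{-n}(\mu)}\,\delta,\qquad v=\frac{q^{a_i}-1}{\varphi^{-n}(\mu)}\in A_\inf^\times,\quad \frac{\mu}{\varphi^{-n}(\mu)}=\xi\cdot\varphi^{-1}(\xi)\cdots\varphi^{-(n-1)}(\xi)\in\xi A_\inf,
\]
so $u\equiv v\cdot\mathrm{id}\bmod\xi$, and since $\delta$ is $A_\inf$-linear the geometric series $\sum_k(-v^{-1}q^{a_i}(\mu/\varphi^{-n}(\mu))\delta)^k$ converges in the $\xi$-adically complete $A(R)^\square$, inverting $u$. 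This is exactly the paper's ``successive $\xi$-adic approximation'' (its construction of the homotopy $h$ solving $h(a)=a-\xi\delta'(h(a))[\epsilon]^{1/p}$) recast as invertibility of an operator; the two formulations are equivalent, since an inverse $h$ of $u$ gives the desired null-homotopy via $(q^{a_i}\gamma_i-1)\circ h=\varphi^{-n}(\mu)$. With this correction your argument goes through and coincides with the paper's.
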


In particular, after setting $q=1$, this becomes the usual de~Rham complex, which is related to part (ii) of Theorem~\ref{ThmC}. In fact, already in $A_\crys$, the elements $[i]_q$ and $i$ differ by a unit, which is related to part (iii) of Theorem~\ref{ThmC}.

Interestingly, the $q$-de~Rham complex admits a natural structure as a differential graded algebra, but a noncommutative one: when commuting a function past a differential, one must twist by one of the automorphisms $\gamma_i$. Concretely, the Leibniz rule for $\frac{\partial_q}{\partial_q T}$ reads
\[
\frac{\partial_q}{\partial_q T} (f(T)g(T)) = f(T) \frac{\partial_q}{\partial_q T}(g(T)) + g(qT) \frac{\partial_q}{\partial_q T}(f(T))\ ,
\]
where $g(qT)$ appears in place of $g(T)$. (Note that this is not symmetric in $f$ and $g$, so there are really two different formulas.) If one wants to rewrite this as the Leibniz rule
\[
\frac{\partial_q}{\partial_q T} (f(T)g(T)) = f(T) \frac{\partial_q}{\partial_q T}(g(T)) + \frac{\partial_q}{\partial_q T}(f(T)) g(T)\ ,
\]
one has to introduce noncommutativity when multiplying the $q$-differential $\frac{\partial_q}{\partial_q T}(f(T))$ by the function $g(T)$; this can be done in a consistent way. Nevertheless, one can show that the $q$-de~Rham complex is an $E_\infty$-algebra (over $A_\inf$), so the commutativity is restored up to consistent higher homotopies.

\begin{remark}The occurrence of the perhaps less familiar (and more general) notion of an $E_\infty$-algebra, instead of the stricter and more hands-on notion of a commutative differential graded algebra, is not just an artifact of our construction, but a fundamental feature of the output: even when $R = \roi \langle T^{\pm 1} \rangle$, the $E_\infty$-$A_\inf$-algebra $A\Omega_R$ (or even $A\Omega_R/p$) cannot be represented by a commutative differential graded algebra (see Remark \ref{rmk:qdRcdga}).
\end{remark}

Finally, let us say a few words about the proof of Lemma~\ref{KeyLemma}. Its proof relies on a relation to the de~Rham--Witt complex of Langer--Zink \cite{LangerZink}. First, recall that there is an alternative definition of $A_\inf$ as
\[
A_\inf = \varprojlim_F W_r(\roi)\ ;
\]
similarly, we have
\[
\bb A_\inf(R_\infty) = \varprojlim_F W_r(R_\infty)\ .
\]
Roughly, Lemma~\ref{KeyLemma} follows by taking the inverse limit over $r$, along the $F$ maps, of the following variant.

\begin{lemma}\label{KeyLemmaWr} For any $r\geq 1$, the natural map
\[
L\eta_\mu R\Gamma_\sub{cont}(\Gamma,W_r(R_\infty))\to L\eta_\mu R\Gamma_\sub{pro\'et}(X,W_r(\hat{\roi}_X^+))
\]
is a quasi-isomorphism; let $\widetilde{W_r\Omega}_R$ denote their common value. Then (up to the choice of roots of unity) there are canonical isomorphisms
\[
H^i(\widetilde{W_r\Omega}_R)\cong W_r\Omega^{i,\cont}_{R/\roi}\ ,
\]
where the right side denotes $p$-adically completed versions of the de~Rham--Witt groups of Langer--Zink, \cite{LangerZink}.
\end{lemma}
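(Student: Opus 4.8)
The plan is to prove Lemma~\ref{KeyLemmaWr} in two parts: first establish that the $L\eta_\mu$-operation converts the almost quasi-isomorphism of \eqref{map:Faltingsqis} (applied at the level of $W_r$ instead of $\bb A_\inf$) into an honest quasi-isomorphism, and then separately identify the cohomology of the common value $\widetilde{W_r\Omega}_R$ with the $p$-completed Langer--Zink de~Rham--Witt groups. For the first part, the crucial general fact is that $L\eta_\mu$ only sees a module up to its $\mu$-torsion on cohomology, via the identity $H^i(\eta_f D^\bullet) = H^i(D^\bullet)/H^i(D^\bullet)[f]$; thus if two complexes differ by something killed by a bounded power of $\mu$ (which is what "almost isomorphism with respect to $[\frak m^\flat]$" gives us after the almost purity theorem, since $\mu$ generates an ideal cofinal with $[\frak m^\flat]$ in the relevant sense up to issues one controls by a uniform bound), applying $L\eta_{\mu^2}$ (or $L\eta_\mu$ twice) kills the discrepancy. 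So the heart of the first part is a bookkeeping argument: one computes $R\Gamma_\sub{cont}(\Gamma, W_r(R_\infty))$ via the Koszul complex for the commuting topological generators of $\Gamma = \bb Z_p^d$, decomposes $W_r(R_\infty)$ into weight spaces under the $\Gamma$-action indexed by $\tfrac{1}{p^\infty}\bb Z^d$, and checks that on each weight space the relevant $(\gamma_i - 1)$-operators act by an element of $W_r(\roi)$ that is either a unit or divisible/codivisible by a controlled power of $\mu$ (really of $[\epsilon]-1$ and its divided analogues at finite level). The point is that the same weight-space computation can be run on the geometric side after almost purity, and both computations land in the same place after $L\eta_\mu$.

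The second part — identifying $H^i(\widetilde{W_r\Omega}_R)$ with $W_r\Omega^{i,\cont}_{R/\roi}$ — I would approach by first doing the torus $R = \roi\langle T_1^{\pm1},\dots,T_d^{\pm1}\rangle$ explicitly and then deforming. On the torus, $R_\infty = \roi\langle T_1^{\pm 1/p^\infty},\dots\rangle$, and $W_r(R_\infty)$ decomposes over weights; the group cohomology becomes a completed direct sum over the weight lattice of explicit Koszul complexes for multiplication-by-$(\gamma_i-1)$ operators, and after applying $\eta_\mu$ term by term one gets, weight by weight, either zero or a free rank-one piece depending on the $p$-adic valuation of the weight. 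Summing these up, one recognizes the answer as (a $p$-completion of) the Langer--Zink construction of $W_r\Omega^\bullet$ for the torus, which itself has a known explicit basis description; matching the two bases (a combinatorial identity about which weights survive, governed by the "integral part" behavior of $[i]_q$ versus $i$ in $W_r$) completes the torus case. For general small $R$, one uses that $R$ is (formally) étale over the torus base, that both sides — $\widetilde{W_r\Omega}$ and $W_r\Omega^\bullet_{R/\roi}$ — commute with the étale base change $\roi\langle T^{\pm}\rangle \to R$ (for the de~Rham--Witt side this is the étale-base-change property of Langer--Zink; for the $\widetilde{W_r\Omega}$ side it follows from the construction of $A(R)^\square$ and the corresponding uniqueness of the étale deformation $W_r$-algebra), and a flatness/base-change spectral sequence argument to transport the cohomology computation. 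One must also check compatibility as $r$ varies, i.e.\ that the Frobenius (and restriction) maps on the tower $\{\widetilde{W_r\Omega}_R\}_r$ match the structure maps of the de~Rham--Witt pro-complex, so that passing to $\varprojlim_F$ recovers Lemma~\ref{KeyLemma} and the $\bb A_\inf$-statement; this is where the alternative presentation $A_\inf = \varprojlim_F W_r(\roi)$ and $\bb A_\inf(R_\infty) = \varprojlim_F W_r(R_\infty)$ is used, with care about $\lim^1$-terms (which vanish because the systems are Mittag-Leffler, the transition maps being surjective mod $p$).

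The main obstacle I expect is the weight-space computation underlying both parts — specifically, controlling exactly how the operators $(\gamma_i - 1)$ act on $W_r(R_\infty)$ and, after applying $\eta_\mu$, identifying precisely which weights contribute and with what multiplicity. Unlike the $\bb A_\inf$-level statement, where $\mu = [\epsilon]-1$ is a single honest non-zero-divisor and the arithmetic of $[i]_q = \tfrac{q^i - 1}{q-1}$ is clean, at finite truncation level $W_r$ one is working with Witt vectors where $[\epsilon]$, $\mu$, and their Frobenius-twists interact with the Verschiebung filtration in a more delicate way, and the "$q$-integers versus integers differ by a unit" phenomenon only holds after suitable localization or up to controlled denominators. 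Getting a clean, uniform-in-$r$ statement here — one that is robust enough to both (a) force the comparison map to be a quasi-isomorphism after $L\eta_\mu$ and (b) pin down the cohomology on the nose as the Langer--Zink groups — is the technical crux. A secondary difficulty is the compatibility of $L\eta$ with the completed tensor products and completed direct sums appearing throughout (since $L\eta$ is not exact and does not obviously commute with such limits/colimits), which requires care in choosing explicit representing complexes of $\mu$-torsion-free modules so that $\eta_\mu$ can be computed term by term.
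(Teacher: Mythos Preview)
Your proposal has a genuine gap in the first part. You claim that $L\eta_\mu$ upgrades the almost quasi-isomorphism to an honest one because ``if two complexes differ by something killed by a bounded power of $\mu$ \ldots\ applying $L\eta_{\mu^2}$ (or $L\eta_\mu$ twice) kills the discrepancy.'' This is false as a general mechanism. First, ``almost zero'' in the $W_r(\roi)$-setting means killed by $W_r(\frak m)$; this does imply killed by the single element $f=[\zeta_{p^r}]-1=\tilde\theta_r(\mu)$, but there is no ``bounded power'' statement to be had. Second, and more importantly, $L\eta_f$ does \emph{not} in general turn maps with $f$-torsion kernel and cokernel into quasi-isomorphisms: the Remark following Lemma~\ref{lem:LetaActualIsom} gives the counterexample $\frak m\hookrightarrow\roi$, on which $L\eta_{(\zeta_p-1)^N}$ changes nothing for any $N$. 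The correct mechanism (Lemma~\ref{lem:LetaActualIsom}) is one-sided and conditional: $L\eta_f(g)$ is a quasi-isomorphism provided the \emph{source} $C$ satisfies the nontrivial hypothesis that $H^i(C)$ and $H^i(C)/f$ contain no almost zero elements. Establishing this for $C=R\Gamma_\cont(\Gamma,W_r(R_\infty))$ is the actual work (Lemma~\ref{lem:propWrOmega}(iii)); it uses the weight-space decomposition you describe, but the crucial output is that each weight-piece is a perfect complex over the coherent ring $W_r(\roi)$ (Proposition~\ref{prop:WrCoherent}), so its cohomology is finitely presented and hence has no almost zero elements (Corollary~\ref{cor:WrFinPresNoAlmostZero}). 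Your plan computes the right object but misidentifies what the computation must deliver.

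For the identification with de~Rham--Witt, your strategy (torus case plus \'etale base change) matches the paper's outline, but the paper avoids ``matching the two bases'' by brute force. In \S\ref{section_constructing_Witt} it equips the groups $\cal W_r^n(D):=H^n((L\eta_\mu D)/\tilde\xi_r)$ with a canonical $F$-$V$-procomplex structure (Bockstein as $d$; the maps $F$, $R$, $V$ induced from the $A_\inf$-level structure), so the universal property of Langer--Zink produces a canonical map $\lambda_r^\bullet:W_r\Omega^\bullet_{R/\roi}\to\cal W_r^\bullet(D)$. That $\lambda_r^\bullet$ is an isomorphism for the torus is then checked by base-changing to $W_r(k)$ via a Nakayama argument (Lemma~\ref{lemma_Witt_decomposition_into_isotypical}), where it reduces to the classical Cartier isomorphism. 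This buys you compatibility with $d,F,R,V$ and multiplication for free, and replaces the combinatorial matching you anticipate with a structural argument.
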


\begin{remark} It is also true that $\widetilde{W_r\Omega}_R\cong A\Omega_R\dotimes_{A_\inf} W_r(\roi)$, and $A\Omega_R = \projlim_r \widetilde{W_r\Omega}_R.$
\end{remark}

Here, the strategy is the following. One first computes the cohomology groups of the explicit left side
\[
L\eta_\mu R\Gamma_\sub{cont}(\Gamma,W_r(R_\infty))
\]
and matches those with the de~Rham--Witt groups. These are made explicit by Langer--Zink, and we match their description with ours; this is not very hard but a bit cumbersome, as the descriptions are quite combinatorially involved. In fact, we can a priori give the cohomology groups the structure of a ``pro-$F$-$V$-complex'' (using a Bockstein operator as the differential), so that by the universal property of the de~Rham--Witt complex, they receive a map from the de~Rham--Witt complex; it is this canonical map that we prove to be an isomorphism. In particular, the isomorphism is compatible with natural $d$, $F$, $V$, $R$ and multiplication maps.

After this computation of the left side, one proves a lemma that if $D_1\to D_2$ is an almost quasi-isomorphism of complexes such that $D_1$ is sufficiently nice, then $L\eta_\mu D_1\to L\eta_\mu D_2$ is a quasi-isomorphism, see Lemma~\ref{lem:LetaActualIsom}. In fact, this argument only needs a qualitative description of the left side, and one can prove the main results of our paper without establishing the link to de~Rham--Witt complexes.

We note that the complexes $\widetilde{W_r\Omega}_R$ provide a partial lift of the Cartier isomorphism to mixed characteristic. More precisely, $A_\inf$ admits two different maps $\tilde\theta_r: A_\inf\to W_r(\roi)$ and $\theta_r = \tilde\theta_r \phi^r: A_\inf\to W_r(\roi)$ to $W_r(\roi)$, the first of which comes from the description $A_\inf = \projlim_F W_r(\roi)$; the map $\theta_1$ agrees with Fontaine's map $\theta$ used above. Then formal properties of the $L\eta$-operation (Proposition~\ref{prop:LetaBock}, Lemma~\ref{lem:compositionLeta}) show that
\[
A\Omega_{\frak X}\dotimes_{A_\inf,\theta_r} W_r(\roi)
\]
is computed by a complex whose terms are the cohomology groups $W_r\Omega^{i,\cont}_{\frak X/\roi}$ of
\[
\widetilde{W_r\Omega}_{\frak X} = A\Omega_{\frak X}\dotimes_{A_\inf,\tilde\theta_r} W_r(\roi)\ .
\]
By the crystalline comparison, $A\Omega_{\frak X}\dotimes_{A_\inf,\theta_r} W_r(\roi)$ computes the crystalline cohomology of $\frak X/W_r(\roi)$ (equivalently, of $\frak X_{\roi/p}/W_r(\roi)$). Thus, this reproves in this setup that Langer--Zink's de~Rham--Witt complex computes crystalline cohomology. On the other hand, after base extension from $A_\inf$ to $W(k)$, the maps $\theta_r$ and $\tilde\theta_r$ agree up to a power of Frobenius on $W(k)$. Thus, reformulating this from a slightly different perspective, there are two different deformations of $A\Omega_{\frak X}\dotimes_{A_\inf} W_r(k)\simeq W_r\Omega^\bullet_{\frak X_k/k}$ to mixed characteristic: one is the de~Rham--Witt complex $A\Omega_{\frak X}\dotimes_{A_\inf,\theta_r} W_r(\roi)\simeq W_r\Omega^{\bullet,\cont}_{\frak X/\roi}$, the other is the complex $A\Omega_{\frak X}\dotimes_{A_\inf,\tilde\theta_r} W_r(\roi) = \widetilde{W_r\Omega}_{\frak X}$ whose cohomology groups are the de~Rham--Witt groups $W_r\Omega^{\bullet,\cont}_{\frak X/\roi}$. From this point of view, the fact that these two specialize to the same complex over $W_r(k)$ recovers the Cartier isomorphism.

\subsection{The genesis of this paper}

We comment briefly on the history of this paper. The starting point for this work was the question whether one could geometrically construct Breuil--Kisin modules, which had proved to be a powerful tool in \emph{abstract} integral $p$-adic Hodge theory. A key point was the introduction of Fargues' variant of Breuil--Kisin modules, which does not depend on any choices, contrary to the classical theory of Breuil--Kisin modules (which depends on the choice of a uniformizer). The search for a natural $A_\inf$-valued cohomology theory took off ground after we read a paper of Hesselholt, \cite{HesselholtCp}, that computed the topological cyclic homology (or rather topological Frobenius homology) of $\roi = \roi_{\bb C_p}$, with the answer being given by the Breuil--Kisin--Fargues version of Tate twists. This made it natural to guess that in general, (a suitable graded piece of) topological Frobenius homology should produce the sought-after cohomology theory. A computation of the homotopy groups of $TR^r(R;p,\bb Z_p)$ then suggested the existence of complexes $\widetilde{W_r\Omega}_R$ with cohomology groups given by $W_r\Omega^{i,\cont}_{R/\roi}$, as in Lemma~\ref{KeyLemmaWr}. The naive guess $R\Gamma_\sub{pro\'et}(X,W_r(\hat{\roi}_X^+))$ for these complexes is correct up to some small torsion. In fact, it gets better as $r\to \infty$, and in the limit $r=\infty$, the naive guess can be shown to be almost correct; this gives an interpretation of the ``junk torsion'' as coming from the non-integral terms of the de~Rham--Witt complex, cf.~Proposition~\ref{prop:interpretjunk}. Analyzing the expected properties of $A\Omega_R$ then showed that one needed an operation like $L\eta$ with the property of Proposition~\ref{prop:LetaBock} below: the naive guess $D=R\Gamma_\sub{pro\'et}(X,\bb A_{\inf,X})$ has the property that $H^i(D/\mu)$ is almost given by $W\Omega^{i,\cont}_{R/\roi}$, whereas the correct complex $A\Omega_R$ should have the property that $A\Omega_R/\mu$ is (almost) quasi-isomorphic to the de~Rham--Witt complex of $R$. In this context we rediscovered the $L\eta$-operation. Thus, although topological Hochschild homology has played a key role in the genesis of this paper, it does not play any role in the paper itself (although it may become important for future developments). In particular, we do not prove that our new cohomology theory is actually related to topological Hochschild homology in the expected way\footnote{Footnote added in print: the reconstruction of the $A\Omega$ complexes via topological Hochschild homology as suggested in this paragraph has appeared in \cite{BMS2}.}.

\subsection{Outline}

Finally, let us explain the content of the different sections. As it is independent of the rest of the paper, we start in Section 2 by giving some examples of smooth projective surfaces illustrating the sharpness of our results.

In Sections 3 through 7, we collect various foundations. In Section 3, we recall a few facts about perfectoid algebras. This contains much more than we actually need in the paper, but we thought that it may be a good idea to give a summary of the different approaches and definitions of perfectoid rings in the literature, notably the original definition, \cite{ScholzeThesis}, the definition of Kedlaya--Liu, \cite{KedlayaLiu}, the results of Davis--Kedlaya, \cite{DavisKedlaya}, and the very general definition of Gabber--Ramero, \cite{GabberRamero2}. Next, in Section 4, we recall a few facts from the theory of Breuil--Kisin modules, and the variant notion over $A_\inf$ defined by Fargues. In particular, we state Fargues' classification theorem for finite free Breuil--Kisin--Fargues modules. This classification is in terms of data that can be easily defined using rational $p$-adic Hodge theory (using only the generic fibre). We recall some relevant facts about rational $p$-adic Hodge theory in Section 5, including a brief reminder on the pro-\'etale site. In Section 6, we define the $L\eta$-operation in great generality, and prove various basic properties. In Section 7, we recall that in some situations, one can use Koszul complexes to compute group cohomology, and discuss some related questions, such as multiplicative structures.

In Sections 8 through 14, we construct the new cohomology theory, and prove the geometric results mentioned above. As a toy case of the general statements that will follow, we construct in Section 8 the complex $\widetilde{\Omega}_R = \widetilde{W_1\Omega}_R$. All statements can be proved directly in this case, but the arguments are already indicative of the general case. After dealing with this case, we define and study $A\Omega_R$ in Section 9. In that section, we prove Lemma~\ref{KeyLemmaWr}, and deduce Lemma~\ref{KeyLemma}, except for the identification with de~Rham--Witt groups. In Section 10, we recall Langer--Zink's theory of the relative de~Rham--Witt complex. In Section 11, we show how to build an ``$F$-$V$-procomplex'' from the abstract structures of the pro-\'etale cohomology groups, and use this to prove the identification with de~Rham--Witt groups. It remains to prove the comparison with crystalline cohomology, which is the content of Section 12. Our approach here is very hands-on: we build explicit functorial models of both $A\Omega_R$ and crystalline cohomology, and an explicit functorial map. There should certainly be a more conceptual argument. In Section 13, we give a similar hands-on presentation of a de~Rham comparison isomorphism for rigid-analytic varieties over $\bb C_p$, and show that it is compatible with the result from~\cite{ScholzePAdicHodge}. We use this to prove Theorem~\ref{ThmRat}. In the final Section 14, we assemble everything and deduce the main results.

\subsection{Acknowledgements}

We would like to thank Ahmed Abbes, Sasha Beilinson, Chris Davis, Johan de Jong, Laurent Fargues, Ofer Gabber, Lars Hesselholt, Kiran Kedlaya, Jacob Lurie, Michael Ra\-po\-port and Jared Weinstein for useful conversations. During the course of this work, Bhatt was partially supported by NSF Grants DMS \#1501461 and DMS \#1522828, and a Packard fellowship, Morrow was funded by the Hausdorff Center for Mathematics, and Scholze was a Clay Research Fellow. Both Bhatt and Scholze would like to thank the University of California, Berkeley, and the MSRI for their hospitality during parts of the project. All the authors are grateful to the Clay Foundation for funding during parts of the project. 

Comments from the anonymous referee, Teruhisa Koshikawa and especially K\c{e}stutis \v{C}esnavi\v{c}ius were very helpful in improving the exposition of the paper.

\newpage

\section{Some examples}

In this section, we record some examples proving our results are sharp. First, in \S \ref{ss:ExampleSurface}, we give an example of a smooth projective surface over $\mathbb{Z}_2$ where there is no torsion in \'etale cohomology of the generic fibre (in a fixed degree), but there is torsion in crystalline cohomology of the special fibre (in the same degree); thus, the last implication in Theorem \ref{ThmA} (ii) cannot be reversed. Secondly, in \S \ref{ss:ExampleDegeneratingTorsion}, we record an example of a smooth projective surface over a (ramified) extension of $\mathbb{Z}_p$ such that the torsion in the \'etale cohomology of the generic fibre is {\em not} a subquotient of the torsion in the crystalline cohomology of the special fibre; this shows that the length inequality in Theorem \ref{ThmA} (ii) cannot be upgraded to an inclusion of the corresponding groups.

We note that both constructions rely on the interesting behaviour of finite flat group schemes in mixed characteristic: In the first example, a map of finite flat group schemes degenerates, while in the second example a finite flat group scheme itself degenerates.

\subsection{A smooth projective surface over $\bb Z_2$}
\label{ss:ExampleSurface}

The goal of this section is to prove the following result.

\begin{theorem}\label{thm:Example} There is a smooth projective geometrically connected (relative) surface $X$ over $\bb Z_2$ such that
\begin{enumerate}
\item the \'etale cohomology groups $H^i_\sub{\'et}(X_{\overline{\bb Q}_2},\bb Z_2)$ are free over $\bb Z_2$ for all $i\in \bb Z$, and
\item the second crystalline cohomology group $H^2_\sub{crys}(X_{\bb F_2}/\bb Z_2)$ has nontrivial $2$-torsion given by $H^2_\sub{crys}(X_{\bb F_2}/\bb Z_2)_\sub{tor} = \bb F_2$.
\end{enumerate}
\end{theorem}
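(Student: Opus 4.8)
The plan is to produce $X$ as a smooth projective surface fibered over $\mathbb{P}^1_{\mathbb{Z}_2}$ (or a suitable base curve), engineered so that the arithmetic of finite flat group schemes in characteristic $2$ forces torsion in crystalline cohomology of the special fibre while leaving the generic fibre's étale cohomology torsion-free. More precisely, I would look for $X$ equipped with a map $f\colon X\to C$ to a smooth projective curve $C/\mathbb{Z}_2$, such that over the generic fibre $f_{\mathbb{Q}_2}$ is a smooth abelian-fibration-like family (e.g. an elliptic or abelian surface, or an isotrivial family with no extra torsion), but such that over the special fibre $f_{\mathbb{F}_2}$ acquires a ``degeneration of a quasi-isogeny'' — concretely, a map $\alpha\colon A\to A'$ of finite flat group schemes over $\mathbb{Z}_2$ which is an isomorphism on generic fibres but whose reduction $\alpha_{\mathbb{F}_2}$ has nontrivial kernel/cokernel (the phenomenon flagged in the section intro). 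The simplest source of such $\alpha$ is the Frobenius/Verschiebung dichotomy: over $\mathbb{F}_2$, the group schemes $\mu_2$, $\mathbb{Z}/2$ and $\alpha_2$ are all $2$-torsion of order $2$ but distinct, and a $\mathbb{Z}_2$-family can interpolate $\mu_2$ (or $\mathbb{Z}/2$) in characteristic $0$ with $\alpha_2$ appearing (as a subquotient) in characteristic $2$.

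The key steps, in order, would be: (1) Choose an explicit $X$ — my first attempt would be a (desingularized) quotient of a product of curves, or a surface built from a genus-$1$ fibration with a $2$-torsion section that becomes ``additive-type'' mod $2$; alternatively, mimic a classical Enriques-type or bielliptic-type construction but over $\mathbb{Z}_2$. (2) Compute $H^i_{\text{ét}}(X_{\overline{\mathbb{Q}}_2},\mathbb{Z}_2)$, using the Leray spectral sequence for $f_{\mathbb{Q}_2}$ together with knowledge of the generic fibre's topology (Betti numbers, fundamental group, the Néron–Severi and transcendental lattices), and check there is no $2$-torsion in any degree; for degrees $0,1,4$ this is automatic for a geometrically connected surface, for $H^3$ use Poincaré duality against $H^1$, so the real content is $H^2$, where I need the transcendental part to be torsion-free (arrange $X_{\mathbb{Q}_2}$ to have, say, torsion-free $H^2$ as a topological space, e.g. a blow-up of an abelian surface or a suitable elliptic surface). (3) Compute $H^2_{\text{crys}}(X_{\mathbb{F}_2}/\mathbb{Z}_2)$, most efficiently by relating it to $H^2_{\dR}(X/\mathbb{Z}_2)$ and then to $H^2_{\dR}(X_{\mathbb{F}_2})$ via the universal coefficient/Bockstein sequence: torsion in $H^2_{\text{crys}}$ is detected by $H^1_{\dR}(X_{\mathbb{F}_2})$ being ``too big,'' i.e. by the failure of the Hodge–de Rham spectral sequence to degenerate or by $\dim_{\mathbb{F}_2}H^1_{\dR}(X_{\mathbb{F}_2})>2\cdot(\text{genus-like invariant})$. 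So I would pin down exactly one extra dimension in $H^1_{\dR}(X_{\mathbb{F}_2})$ coming from the degenerate group scheme (a $\dlog$ class that lifts to de Rham but not to crystalline), which by the standard torsion bookkeeping gives $H^2_{\text{crys},\text{tor}}=\mathbb{F}_2$.

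The main obstacle will be step (3): producing a surface where one can honestly \emph{compute} (not just estimate) the $2$-torsion in crystalline cohomology and pin it down to be exactly $\mathbb{F}_2$, rather than $0$ or something larger. This requires enough explicit control of $X_{\mathbb{F}_2}$ — e.g. a nice fibration with computable vanishing cycles, or a small enough model that $\Omega^\bullet_{X_{\mathbb{F}_2}}$ can be understood term by term — while simultaneously keeping $X_{\mathbb{Q}_2}$ topologically simple enough that step (2) yields genuine torsion-freeness. Balancing these two constraints (characteristic-$2$ pathology present but localized; characteristic-$0$ topology clean) is the crux, and I expect the construction to hinge on a carefully chosen finite flat group scheme quotient, with the degeneration $\mu_2\rightsquigarrow\alpha_2$ (or a $2$-isogeny of abelian varieties degenerating mod $2$) supplying exactly one unit of torsion. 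Once the group-scheme input is right, the cohomology computations should be a finite, if delicate, check using the comparison between de Rham and crystalline cohomology and Poincaré duality.
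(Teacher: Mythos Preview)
Your proposal correctly identifies the governing phenomenon --- a map of finite flat group schemes that is an isomorphism generically but degenerates in characteristic $2$ --- but it remains a research outline rather than a proof. The entire content of the theorem is the \emph{specific construction}, and you do not commit to one. ``My first attempt would be a quotient of a product of curves, or a surface built from a genus-$1$ fibration\ldots'' is not a construction one can compute with, and your steps (2) and (3) are conditional on choices you never make. In particular, the crux you yourself flag --- simultaneously forcing exactly $\bb F_2$ of crystalline torsion while keeping \'etale cohomology torsion-free --- is not resolved by anything in the proposal.

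The paper's actual mechanism is also somewhat different from what you sketch. You emphasize a \emph{degenerating group scheme} (``$\mu_2\rightsquigarrow\alpha_2$''), but here what degenerates is a \emph{map}: one takes the composite $\eta\colon \bb Z/2\bb Z\to\mu_2\hookrightarrow E$ into an ordinary elliptic curve over $\bb Z_2$, which is nonzero on the generic fibre but zero on the special fibre. Pushing the $\bb Z/2\bb Z$-torsor $\tilde S\to S$ (the K3 double cover of a singular Enriques surface $S/\bb Z_2$) along $\eta$ produces an $E$-torsor $D\to S$, a smooth projective $3$-fold. On the special fibre the torsor is trivial, so $D_k\cong S_k\times E_k$ and K\"unneth plus Illusie's computation of $H^*_\crys$ of Enriques surfaces gives $H^2_\crys(D_k/W(k))_\sub{tor}=\bb F_2$. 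On the generic fibre the $\bb Z/2\bb Z$ gets absorbed into the elliptic curve, so a direct $\pi_1$ computation shows $\pi_1(D_C)\cong\hat{\bb Z}\times\hat{\bb Z}$ is torsion-free, whence $H^2_\sub{\'et}(D_C,\bb Z_2)$ is torsion-free. One then passes from the $3$-fold $D$ to a surface $X\subset D$ by taking a sufficiently ample smooth hypersurface (Bertini over finite fields, via Gabber/Poonen) and invokes weak Lefschetz for both \'etale and crystalline cohomology to transfer the properties.

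Neither the Enriques--elliptic pushout nor the passage through a $3$-fold via Lefschetz appears in your plan, and your proposed Leray-spectral-sequence analysis of a surface fibration over a curve would be a rather different route, for which no candidate surface is given.
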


We are not aware of any such example in the literature. In fact, we are not aware of any example in the literature of a proper smooth scheme $X$ over the ring of integers $\roi$ in a $p$-adic field for which there is not an abstract isomorphism
\[
H^i_\crys(X_k/W(k))\cong H^i_\sub{\'et}(X_{\bar{K}},\bb Z_p)\otimes_{\bb Z_p} W(k)\ .
\]
For example, Illusie, \cite[Proposition 7.3.5]{IllusieDRWitt} has proved that the crystalline cohomology of any Enriques surface in characteristic $2$ ``looks like'' the \'etale cohomology of an Enriques surfaces in characteristic $0$, and all other examples we found were of a similar nature.

We will construct $X$ as a generic hypersurface inside a smooth projective $3$-fold with similar (but slightly weaker) properties. Let us describe the construction of this $3$-fold first. We start with a ``singular'' smooth Enriques surface $S$ over $\bb Z_2$; here, singular means that $\mathrm{Pic}^\tau(S) \cong \mu_2$ as a group scheme, and it is equivalent to the condition that $\pi_1(S_{\overline{\bb F}_2})\cong \bb Z/2\bb Z$. For existence of $S$, we note that there are singular Enriques surfaces over $\bb F_2$ (see below), and all of those lift to $\bb Z_2$ by a theorem of Lang and Ogus, \cite[Theorem 1.3, 1.4]{LangEnriques}. In particular, there is a double cover $\tilde{S}\to S$, and in fact $\tilde{S}$ is a K3 surface. Explicitly, cf.~\cite[pp.~222--223]{BombieriMumford3}, one can take for $\tilde{S}_{\bb F_2}$ the smooth intersection of three quadrics in $\bb P^5_{\bb F_2}$ (with homogeneous coordinates $x_1,x_2,x_3,y_1,y_2,y_3$) given by the equations
\[\begin{aligned}
x_1^2+x_2x_3+y_1^2+x_1y_1&=0\ ,\\
x_2^2+x_1x_3+y_2^2+x_2y_2&=0\ ,\\
x_3^2+x_1x_2+y_3^2+x_3y_3&=0\ .
\end{aligned}\]
This admits a free action of $\bb Z/2\bb Z$ given by $(x_i:y_i)\mapsto (x_i:x_i+y_i)$. Then $\tilde{S}_{\bb F_2}$ is a K3 surface, and $S_{\bb F_2}=\tilde{S}_{\bb F_2}/(\bb Z/2\bb Z)$ is a singular Enriques surface.\footnote{The $\bb Z/2\bb Z$-action is free away from $x_1=x_2=x_3=0$, which would intersect $\tilde{S}_{\bb F_2}$ only when $y_1=y_2=y_3=0$, which is impossible. To check smoothness, use the Jacobian criterion to compute possible singular points. The minor for the differentials of $y_1,y_2,y_3$ shows $x_1x_2x_3=0$; assume wlog $x_1=0$. Then the minor for $x_1,x_2,y_2$ shows $x_2^2x_3=0$, so wlog $x_2=0$. Then the first equation gives $y_1=0$, and the second $y_2=0$. Now the minor for $x_1,x_2,x_3$ shows $x_3^2y_3=0$, which together with the third equation shows $x_3=y_3=0$.}

Moreover, we fix an ordinary elliptic curve $E$ over $\bb Z_2$. This contains a canonical subgroup $\mu_2\subset E$, and we get a nontrivial map
\[
\eta: \bb Z/2\bb Z\to \mu_2\to E\ .
\]
Note that $\eta_{\bb Q_2}$ is nonzero, while $\eta_{\bb F_2}$ is zero. Finally, we let $\pi: D\to S$ be the $E$-torsor which is the pushout of the $\bb Z/2\bb Z$-torsor $\tilde{S}\to S$ along $\eta$; then $D$ is a smooth projective geometrically connected $3$-fold.

\begin{proposition} The smooth projective $3$-fold $D$ over $\bb Z_2$ has the following properties.
\begin{enumerate}
\item The \'etale cohomology groups $H^i_\sub{\'et}(D_{\overline{\bb Q}_2},\bb Z_2)$ are free over $\bb Z_2$ for $i=0,1,2$.
\item The crystalline cohomology group $H^2_\sub{crys}(D_{\bb F_2}/\bb Z_2)$ has nontrivial $2$-torsion, given by $\bb F_2$.
\end{enumerate}
\end{proposition}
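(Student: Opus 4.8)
The plan is to compute $H^i_\sub{\'et}(D_{\overline{\bb Q}_2},\bb Z_2)$ for $i\le 2$ and $H^2_\crys(D_{\bb F_2}/\bb Z_2)$ directly from the two fibres of $D$ over $\Spec\bb Z_2$, exploiting that $\eta$ is a nonzero map over $\bb Q_2$ but the zero map over $\bb F_2$.

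Over $\overline{\bb Q}_2$, $\eta$ is a nontrivial $2$-torsion point $P\in E[2](\overline{\bb Q}_2)$, so $E':=E/\langle P\rangle$ is an elliptic curve and $\phi\colon E\to E'$ a connected finite étale cover of degree $2$. I would first record two descriptions of $D_{\overline{\bb Q}_2}=(\tilde S\times E)/(\bb Z/2\bb Z)$, where the generator acts by $\sigma$ (the deck transformation of $\tilde S\to S$) on $\tilde S$ and by translation by $P$ on $E$: it is the $\tilde S$-bundle over $E'$ associated to the $\bb Z/2\bb Z$-torsor $\phi$, and it is also the quotient of $Y:=\tilde S_{\overline{\bb Q}_2}\times E_{\overline{\bb Q}_2}$ by the free $\bb Z/2\bb Z$-action $(s,e)\mapsto(\sigma s,\,e+P)$ (indeed $D\times_{E'}E\cong Y$ canonically). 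Feeding the resulting finite étale Galois cover $Y\to D_{\overline{\bb Q}_2}$ into the Hochschild--Serre spectral sequence yields
\[
E_2^{p,q}=H^p\!\big(\bb Z/2\bb Z,\,H^q_\sub{\'et}(Y,\bb Z_2)\big)\ \Longrightarrow\ H^{p+q}_\sub{\'et}(D_{\overline{\bb Q}_2},\bb Z_2).
\]
Since $\tilde S$ is a K3 surface and $E$ an elliptic curve, $H^*_\sub{\'et}(Y,\bb Z_2)=H^*(\tilde S)\otimes_{\bb Z_2}H^*(E)$ is $\bb Z_2$-free with $H^1(\tilde S)=0$, and $\bb Z/2\bb Z$ acts by $\sigma^*$ on $H^*(\tilde S)$ and trivially on $H^*(E)$ (translations act trivially on cohomology). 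Hence $H^0(Y)=\bb Z_2$ and $H^1(Y)=H^1(E)\cong\bb Z_2^2$ carry the trivial action, and $H^2(Y)^{\bb Z/2\bb Z}=H^2(E)\oplus H^2(\tilde S)^{\sigma}$ is $\bb Z_2$-free. Among the terms of total degree $\le 2$ one has $E_2^{1,0}=H^1(\bb Z/2\bb Z,\bb Z_2)=0$, $E_2^{1,1}=H^1(\bb Z/2\bb Z,\bb Z_2^2)=0$, and $E_2^{0,q}$ is $\bb Z_2$-free for $q\le 2$; the only a priori torsion term is $E_2^{2,0}=H^2(\bb Z/2\bb Z,\bb Z_2)=\bb Z/2\bb Z$.

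It thus remains to show that the transgression $d_2\colon E_2^{0,1}\to E_2^{2,0}$ is surjective. Its kernel is the image of the restriction $H^1_\sub{\'et}(D_{\overline{\bb Q}_2},\bb Z_2)\to H^1_\sub{\'et}(Y,\bb Z_2)^{\bb Z/2\bb Z}=H^1(E,\bb Z_2)$. Using the bundle structure $D_{\overline{\bb Q}_2}\to E'$ with simply connected fibre $\tilde S$ one gets $H^1_\sub{\'et}(D_{\overline{\bb Q}_2},\bb Z_2)\cong H^1(E',\bb Z_2)$ (Leray, since $R^1p_*\bb Z_2=0$), and tracing the composite $E\hookrightarrow Y\to D_{\overline{\bb Q}_2}\to E'$ (which equals $\phi$) identifies the restriction map with $\phi^*\colon H^1(E',\bb Z_2)\to H^1(E,\bb Z_2)$. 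As $\phi$ has degree $2$, $\phi^*$ is injective with cokernel $\bb Z/2\bb Z$, so $d_2\ne 0$, hence onto, hence $E_\infty^{2,0}=0$. For $i=0,1,2$ all graded pieces of the induced filtration on $H^i_\sub{\'et}(D_{\overline{\bb Q}_2},\bb Z_2)$ are then $\bb Z_2$-free, so $H^i_\sub{\'et}(D_{\overline{\bb Q}_2},\bb Z_2)$ is free; this is part (i).

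For part (ii), over $\bb F_2$ we have $\eta_{\bb F_2}=0$, so the pushout collapses to a product: $D_{\bb F_2}=(\tilde S_{\bb F_2}\times E_{\bb F_2})/(\bb Z/2\bb Z)=\big(\tilde S_{\bb F_2}/(\bb Z/2\bb Z)\big)\times E_{\bb F_2}=S_{\bb F_2}\times E_{\bb F_2}$. Since $H^*_\crys(E_{\bb F_2}/\bb Z_2)=(\bb Z_2,\bb Z_2^2,\bb Z_2)$ is torsion-free, the Künneth formula for crystalline cohomology gives (with no $\Tor$-correction terms)
\[
H^2_\crys(D_{\bb F_2}/\bb Z_2)\ \cong\ \bigoplus_{a+b=2}H^a_\crys(S_{\bb F_2}/\bb Z_2)\otimes_{\bb Z_2}H^b_\crys(E_{\bb F_2}/\bb Z_2).
\]
By Illusie's determination of the crystalline cohomology of a singular Enriques surface \cite[Prop.~7.3.5]{IllusieDRWitt}, $H^0_\crys(S_{\bb F_2}/\bb Z_2)=\bb Z_2$, $H^1_\crys(S_{\bb F_2}/\bb Z_2)=0$, and $H^2_\crys(S_{\bb F_2}/\bb Z_2)=\bb Z_2^{10}\oplus\bb Z/2$; substituting, the right-hand side is $\bb Z_2^{11}\oplus\bb Z/2$, whose torsion subgroup is $\bb Z/2\bb Z=\bb F_2$, as claimed. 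The one genuinely delicate step is the surjectivity of $d_2$ above: everything else is bookkeeping with the (torsion-free) cohomology of K3 surfaces and elliptic curves and the cited facts about Enriques surfaces, but the vanishing of $E_\infty^{2,0}$ really hinges on recognising the transgression as the isogeny pullback $\phi^*$ with its index-$2$ image.
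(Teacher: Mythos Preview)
Your proof is correct. Part (ii) is identical to the paper's argument (K\"unneth plus Illusie's computation for singular Enriques surfaces).

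For part (i) the paper takes a different, slightly more topological route. Rather than running Hochschild--Serre with $\bb Z_2$-coefficients, the paper observes that freeness of $H^2_\sub{\'et}(D_C,\bb Z_2)$ is equivalent (via the short exact sequence $0\to\Ext^1(H_1,\bb Z_2)\to H^2\to\Hom(H_2,\bb Z_2)\to 0$) to freeness of $\pi_1(D_C)^{\sub{ab},2}$, and then computes $\pi_1(D_C)$ outright: the covering $\tilde D_C=\tilde S_C\times E_C\to D_C$ gives an extension $0\to\pi_1(E_C)\to\pi_1(D_C)\to\bb Z/2\to 0$, and the map $D_C\to E'_C$ (your $\phi$-quotient) exhibits this extension as the pullback of $0\to\pi_1(E_C)\to\pi_1(E'_C)\to\bb Z/2\to 0$, whence $\pi_1(D_C)\cong\pi_1(E'_C)\cong\hat{\bb Z}^2$. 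Both arguments hinge on the same geometric input, the fibration $D_C\to E'_C$ with simply connected fibre $\tilde S_C$; the paper packages this as a one-line $\pi_1$ identification, whereas you unfold it as the surjectivity of a transgression. Your approach has the advantage of staying entirely in cohomology and making the role of the degree-$2$ isogeny $\phi^*$ visible on the nose; the paper's approach is shorter and yields the stronger statement that $\pi_1(D_C)$ is abelian and torsion-free (not just its pro-$2$ abelianization).
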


\begin{proof} We start with part (ii). Let $k=\bb F_2$. Then $D_k = S_k\times E_k$ is the trivial $E_k$-torsor by construction. Thus, the K\"unneth formula and Illusie's computation of $H^\ast_\crys(S_k/W(k))$, \cite[Proposition 7.3.5]{IllusieDRWitt}, show that $H^2_\crys(D_k/W(k))_\sub{tor} = k$.

Now we deal with part (i). Let $C=\overline{\bb Q}_2$. It is a general fact that $H^i_\sub{\'et}(D_C,\bb Z_2)$ is free over $\bb Z_2$ for $i=0,1$. Let $\pi_1(D_C)^{\sub{ab},2}$ be the maximal abelian pro-$2$-quotient of $\pi_1(D_C)$; equivalently, $\pi_1(D_C)^{\sub{ab},2} = H_{1,\sub{\'et}}(D_C,\bb Z_2)$. Then it is again a general fact that $H^2_\sub{\'et}(D_C,\bb Z_2)$ is free over $\bb Z_2$ if and only if $\pi_1(D_C)^{\sub{ab},2} = H_{1,\sub{\'et}}(D_C,\bb Z_2)$ is free over $\bb Z_2$. Indeed, this follows from the short exact sequence
\[
0\to \Ext^1(H_{1,\sub{\'et}}(D_C,\bb Z_2),\bb Z_2)\to H^2_\sub{\'et}(D_C,\bb Z_2)\to \Hom(H_{2,\sub{\'et}}(D_C,\bb Z_2),\bb Z_2)\to 0\ .
\]
Thus, it suffices to prove that $\pi_1(D_C)^{\sub{ab},2}$ is free over $\bb Z_2$. We can, in fact, compute the whole fundamental group $\pi_1(D_C)$ of $D_C$. Namely, pulling back $\tilde{S}_C\to S_C$ along $\pi_C: D_C\to S_C$ gives a $\bb Z/2\bb Z$-cover $\tilde{D}_C\to D_C$, and $\tilde{D}_C = \tilde{S}_C\times E_C$ decomposes as a product, which implies that
\[
\pi_1(\tilde{D}_C) = \pi_1(E_C)\cong \hat{\bb Z}\times \hat{\bb Z}\ .
\]
Thus, $\pi_1(D_C)$ is an extension of (not necessarily commutative) groups
\[
0\to \pi_1(E_C)\to \pi_1(D_C)\to \bb Z/2\bb Z\to 0\ .
\]
On the other hand, we have the map $\tilde{D}_C\to E_C$, which is by construction equivariant for the $\bb Z/2\bb Z$-action which is the covering action of $\tilde{D}_C\to D_C$ on the left, and is translation by $\eta: \bb Z/2\bb Z\to E_C$ on the right. As this action is nontrivial we may pass to the quotient and get a map $D_C\to E_C/\eta = E^\prime_C$, where $E^\prime_C$ is another elliptic curve over $C$. We get a commutative diagram with exact rows:
\[\xymatrix{
0\ar[r] & \pi_1(E_C)\ar@{=}[d]\ar[r] & \pi_1(D_C)\ar[d]\ar[r] & \bb Z/2\bb Z\ar@{=}[d]\ar[r] & 0\\
0\ar[r] & \pi_1(E_C)\ar[r] & \pi_1(E^\prime_C)\ar[r] & \bb Z/2\bb Z\ar[r] & 0.
}\]
This shows that $\pi_1(D_C) = \pi_1(E^\prime_C)\cong \hat{\bb Z}\times \hat{\bb Z}$, so that in particular $\pi_1(D_C)^{\sub{ab},2}\cong \bb Z_2\times \bb Z_2$ is free over $\bb Z_2$.
\end{proof}

\begin{proof}{\it (of Theorem~\ref{thm:Example})} Let $D$ over $\bb Z_2$ be the smooth projective $3$-fold constructed above. Let $X\subset D$ be a smooth and (sufficiently) ample hypersurface; this can be chosen over $\bb Z_2$: One has to arrange smoothness only over $\bb F_2$, so the result follows from the Bertini theorem over finite fields due to Gabber, \cite{GabberBertini}, and more generally Poonen, \cite{PoonenBertini}.

Let $C=\overline{\bb Q}_2$ as above. First, we check that $H^i_\sub{\'et}(X_C,\bb Z_2)$ is free over $\bb Z_2$ for all $i\in \bb Z$. Clearly, only $i=0,1,2,3,4$ are relevant, and by Poincar\'e duality it is enough to consider $i=0,1,2$, and again $i=0,1$ are always true. Let $U = D\setminus X$, which is affine. Then we have a long exact sequence
\[
H_{c,\sub{\'et}}^2(U_C,\bb Z_2)\to H^2_\sub{\'et}(D_C,\bb Z_2)\to H^2_\sub{\'et}(X_C,\bb Z_2)\to H_{c,\sub{\'et}}^3(U_C,\bb Z_2)\to \ldots\ .
\]
Recall that as $U$ is affine, smooth and $3$-dimensional, $H_{c,\sub{\'et}}^i(U_C,\bb Z_2) = H_{c,\sub{\'et}}^i(U_C,\bb Z/2\bb Z)=0$ for $i<3$ by Artin's cohomological bounds. In particular, $H_{c,\sub{\'et}}^3(U_C,\bb Z_2)$ is free over $\bb Z_2$, and so the displayed long exact sequence implies that $H^2_\sub{\'et}(X_C,\bb Z_2)$ is free over $\bb Z_2$, as desired.

Let $k=\bb F_2$. We claim that the map
\[
H^i_\crys(D_k/W(k))\to H^i_\crys(X_k/W(k))
\]
is an isomorphism for $i=0,1$ and is injective for $i=2$ with torsion-free cokernel, if $X$ was chosen sufficiently ample. This follows from a general weak Lefschetz theorem for crystalline cohomology by Berthelot, \cite{BerthelotLefschetz}, but can also be readily checked by hand by reducing to the similar question for $H^i_\dR(D_k)\to H^i_\dR(X_k)$, cf.~Lemma~\ref{lem:weaklefschetz} below.
\end{proof}

\begin{remark} In this example, the cospecialization map
\[
H^2_\sub{\'et}(X_{\bar{\bb F}_2},\bb Z_2)\to H^2_\sub{\'et}(X_{\bar{\bb Q}_2},\bb Z_2)
\]
is not injective. Indeed, the left side contains a torsion class coming from the pullback of the $\bb Z/2\bb Z$-cover $\tilde{S}_{\bar{\bb F}_2}\to S_{\bar{\bb F}_2}$, whereas the right side is torsion-free.
\end{remark}

\begin{remark}
\label{rmk:NotFunctorSpecialFibre}
In this example, the $3$-fold $D$  provides one  lift of the smooth projective $k$-scheme $D_k \simeq S_k \times E_k$ to $\mathbb{Z}_2$, and has $H^2_{\sub{\'et}}(D_{\bar{\mathbb{Q}}_2},\mathbb{Z}_2)$ being torsion-free. On the other hand, the $3$-fold $D' := S \times E$ gives another lift of $D_k$ to $\mathbb{Z}_2$ such that $H^2_{\sub{\'et}}(D'_{\bar{\mathbb{Q}}_2},\mathbb{Z}_2)$ contains $2$-torsion coming from $S$. Thus, the torsion in the \'etale cohomology of the generic fibre of a smooth and proper $\mathbb{Z}_2$-scheme is not a functor of the special fibre. In particular, the theory $R\Gamma_{A_\inf}(\mathfrak{X})$ from Theorem~\ref{ThmB} is not a functor of the special fibre $\mathfrak{X}_{\roi/p}$; in fact, not even $R\Gamma_{A_\inf}(\mathfrak X)/p$ is.
\end{remark}

\subsection{An example of degenerating torsion in cohomology}
\label{ss:ExampleDegeneratingTorsion}

Let $\mathcal{O}$ be the ring of integers in a complete nonarchimedean algebraically closed extension $C$ of $\mathbf{Q}_p$.\footnote{One can also realize the example over some sufficiently ramified finite extension of $\bb Q_p$.} Let $k$ be the residue field of $\mathcal{O}$. The goal of this section is to give an example of a smooth projective surface $H/\mathcal{O}$ such that the torsion in cohomology degenerates from $\mathbb{Z}/p^2\bb Z$ (in the \'etale cohomology of $H_C$) to $k \oplus k$ (in the crystalline cohomology of $H_k$); the precise statement is recorded in Theorem \ref{thm:ExampleDegeneratingTorsion}. 

\subsubsection{The construction}
\label{ss:ExampleConstruction}

The strategy of the construction is to first produce an example of the desired phenomenon in the world of algebraic stacks by using an interesting degeneration of group schemes; later, we will push the example to varieties. The basic idea is to degenerate the constant group scheme $\mathbb{Z}/p^2\bb Z$ to a group scheme that is killed by $p$; this is not possible in characteristic $0$, but can be accomplished over a mixed characteristic base.

\begin{lemma}
\label{lem:ConstructFFGS} Let $E/\roi$ be an elliptic curve with supersingular reduction. Let $x\in E(C)$ be a point of exact order $p^2$, and let $G\subset E$ be the flat closure of the subgroup generated by $x$. Then $G_C \simeq \mathbb{Z}/p^2\bb Z$ and $G_k=E_k[p]$.
\end{lemma}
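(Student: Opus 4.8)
The plan is to first identify $G$ as a finite flat group scheme with the expected generic fibre, and then to pin down its special fibre using that $E_k$ is supersingular.

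\emph{Step 1 ($G$ finite flat of order $p^2$, $G_C\cong(\bb Z/p^2\bb Z)_C$).} Since $E$ is proper, $E(C)=E(\roi)$, so $\langle x\rangle$ is a finite reduced closed subscheme of the open generic fibre $E_C\subseteq E$, and its scheme-theoretic closure $G$ in $E$ is $\roi$-flat (being $\roi$-torsion-free). By the standard argument $G$ is a closed subgroup scheme: the multiplication $m\colon E\times_\roi E\to E$ and the inversion are proper, $\langle x\rangle\subseteq E_C$ is a subgroup, and $G\times_\roi G$, the image of $G$ under inversion, and so on are themselves the flat closures of their generic fibres, so these maps carry $G$ into $G$. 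Topologically $G$ is the union of the $p^2$ sections $ix\colon\Spec\roi\to E$ ($0\le i<p^2$), so $G$ is proper and quasi-finite over $\roi$, hence finite over $\roi$; a finite flat module over the local ring $\roi$ is free, of rank $\dim_C\mathcal{O}(G_C)=p^2$. As $C$ is algebraically closed of characteristic $0$, $G_C=\langle x\rangle\cong(\bb Z/p^2\bb Z)_C$.

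\emph{Step 2 (reduction to $[p]_{G_k}=0$).} The reduction map $E(\roi)\to E_k(k)$ sends $ix$ to $i\bar x$, and $\bar x\in E_k(k)$ is killed by $p^2$; but $E_k$ is supersingular, so $E_k[p^\infty]$ is connected, $E_k(k)$ has no nonzero $p$-power torsion, and $\bar x=0$. Hence each section $ix$ reduces to the origin of $E_k$, so $G_k$ is supported at the origin, i.e.\ infinitesimal, of order $p^2$. It thus suffices to show that multiplication by $p$ annihilates $G_k$: then $G_k\subseteq E_k[p]$ is a closed subscheme of $k$-length $p^2=\mathrm{length}\,E_k[p]$, hence $G_k=E_k[p]$.

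\emph{Step 3 ($[p]_{G_k}=0$).} Here supersingularity is used essentially: over an ordinary base the analogous flat closure would have special fibre $\cong\bb Z/p^2\bb Z$, not killed by $p$. I would argue via Dieudonn\'e theory. Since $G_k$ is a finite subgroup scheme of the supersingular $E_k$ (it is killed by $p^2$, so lies in $E_k[p^2]$), and $E_k[p^\infty]$ is a connected $p$-divisible group with connected Serre dual, $G_k$ is \emph{local--local}: both $G_k$ and its Cartier dual are infinitesimal. Let $M$ be the covariant Dieudonn\'e module of $G_k$; then $\mathrm{length}_{W(k)}M=2$, the operators $F$ and $V$ act nilpotently, and $FV=VF=p$. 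The submodule $FM\subseteq M$ is stable under $F$, under $V$ (since $VF=FV=p$) and under $W(k)$, and it is proper ($F$ is nilpotent), so $\mathrm{length}_{W(k)}FM\le1$. If $FM=0$, then $p=VF$ kills $M$. If $FM$ has length $1$, then $FM$ is a simple Dieudonn\'e module; being a submodule of the local--local $M$ it is itself local--local, hence annihilated by $V$, so $VFM=V(FM)=0$ and again $pM=VFM=0$. Either way $[p]_{G_k}=0$, as required.

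I expect Step 3, the vanishing $[p]_{G_k}=0$, to be the main obstacle: the lift $G$ really is only killed by $p^2$, so one must genuinely use the structure of $E_k[p^\infty]$ in the supersingular case --- no formal or purely flatness-theoretic argument detects that the special fibre collapses into $E_k[p]$. (Equivalently, one may invoke the fact that every local--local finite group scheme of order $p^2$ over $k$ is killed by $p$; the Dieudonn\'e computation above is a self-contained proof of this.) A minor, routine point is that $\roi$ is a possibly non-Noetherian valuation ring, so Step 1 relies on the general facts ``proper $+$ quasi-finite $\Rightarrow$ finite'' and ``finitely generated flat over a local ring is free.''
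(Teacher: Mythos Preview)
Your proof is correct, but the paper's argument is considerably shorter. After noting (as you do) that $G_k\subset E_k$ is a subgroup scheme of order $p^2$, the paper simply invokes the fact that a supersingular elliptic curve over $k$ has a \emph{unique} finite subgroup scheme of each order $p^r$, namely the kernel of the $r$-fold Frobenius; since $E_k[p]=\ker F^2$ also has order $p^2$, one concludes $G_k=E_k[p]$ immediately. This uniqueness is a feature of one-dimensional formal groups: any finite connected subgroup scheme is determined by its order.

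Your route through Step~3 is different: rather than appeal to this uniqueness, you prove directly via Dieudonn\'e theory that $[p]$ annihilates $G_k$, using that $G_k$ is local--local of length $2$ so that $F$ and $V$ are nilpotent and $pM=V(FM)=0$. This is a perfectly valid self-contained argument (and, as you observe, amounts to the classification of local--local group schemes of order $p^2$), but it is more work than necessary here. The paper's observation bypasses the need to show $[p]_{G_k}=0$ as an intermediate step: once you know subgroups of a given order are unique, there is nothing left to compute. Your Steps~1--2, on the other hand, spell out points the paper leaves implicit, which is harmless.
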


\begin{proof} We only need to identify $G_k\subset E_k$; but $E_k$ has a unique subgroup of order $p^r$ for any $r$, given by the kernel of the $r$-fold Frobenius. Thus, $G_k=E_k[p]$ as both are subgroups of order $p^2$.
\end{proof}

\begin{remark}
\label{rmk:BGisExample} With suitable definitions of \'etale and crystalline cohomology for stacks, the classifying stack $BG$ of the group scheme constructed in Lemma \ref{lem:ConstructFFGS} is a proper smooth stack over $\mathcal{O}$, and satisfies: $H^2_\sub{\'et}(BG_C,\mathbb{Z}_p) \simeq \mathbb{Z}/p^2\bb Z$, while $H^2_{\crys}(BG_k/W(k)) \simeq k \oplus k$; this follows from the computations given later in the section.
\end{remark}

We now fix a finite flat group scheme $G$ sitting in an elliptic curve $E$ with supersingular reduction as above. Our goal is to approximate $BG$ by a smooth projective variety in a way that reflects the phenomenon in Remark \ref{rmk:BGisExample}. First, we find a convenient action of $G$ on a projective space. (In fact, the construction below applies to any finite flat group scheme $G$.)

\begin{lemma}
\label{lem:ConstructFFGSSpace}
There exists a projective space $P/\mathcal{O}$ with an action of $G$ such that the locus $Z_P \subset P$ of points with non-trivial stabilizers has codimension $>2$ on the special fibre.
\end{lemma}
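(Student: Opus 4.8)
The plan is to build $P$ as the projectivization of a representation of $G$ on a free $\roi$-module of sufficiently large rank, chosen so that the fixed-point loci of all the (finitely many) subgroup schemes of $G$ sit in small dimension. First I would note that since $G$ is a finite flat group scheme over $\roi$ of order $p^2$ that is an extension within an elliptic curve, it has only finitely many closed flat subgroup schemes; over the special fibre these are $0$, $E_k[F]$, and $E_k[p]=G_k$ (as $G_k$ is the kernel of $F^2$ on a supersingular curve, whose only subgroups are the kernels of $F^r$), and over the generic fibre the subgroups of $\bb Z/p^2\bb Z$. The point with non-trivial stabilizer in $\bb P(V)$ for a $G$-representation $V$ is the union, over non-trivial closed subgroup schemes $H \subset G$, of $\bb P(V^H)$ — more precisely the locus where the stabilizer contains $H$ is cut out inside $\bb P(V)$ by the $H$-fixed subspace $V^H$, and we need the total dimension of all these loci to drop below $\dim P - 2$ on the special fibre.

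The key step is a dimension count on the special fibre. For a fixed non-trivial $H_k \subset G_k$, a general large representation $V$ of $G$ has $V_k^{H_k}$ of codimension growing linearly in $\dim_{\roi} V$: concretely, take $V = W^{\oplus N}$ for a faithful representation $W$ of $G$ (for instance the regular representation $\roi[G]$, which is finite free over $\roi$ since $G$ is finite flat), so that $V_k^{H_k} = (W_k^{H_k})^{\oplus N}$ and $\mathrm{codim}_{\bb P(V_k)} \bb P(V_k^{H_k}) = N \cdot \mathrm{codim}_{W_k}(W_k^{H_k})$, which is $\geq N$ since $W$ is faithful (so $W_k^{H_k}$ is a proper subspace of $W_k$ for every non-trivial $H_k$). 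Taking $N \geq 3$ makes every such $\bb P(V_k^{H_k})$ of codimension $\geq 3$ in $\bb P(V_k)$, hence their finite union $Z_{P,k}$ has codimension $> 2$; and one should also check that the stabilizer of a point $z \in \bb P(V_k)$, being a closed subgroup scheme of $G_k$, is non-trivial exactly when $z$ lies in some such $\bb P(V_k^{H_k})$, so this union is indeed all of $Z_{P,k}$. Then set $P = \bb P(\roi[G]^{\oplus 3})$ (or any such $\bb P(V)$), which is a projective space over $\roi$ carrying the induced linear $G$-action.

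The main obstacle I anticipate is purely bookkeeping: making precise the scheme-theoretic statement "locus of points with non-trivial stabilizer" over a non-field base and checking it behaves well — i.e. that it is closed, that its formation is compatible with passing to the special fibre (so that a codimension bound over $k$ suffices), and that the stabilizer group scheme of a geometric point of $\bb P(V_k)$ really is detected by the linear-algebra condition $z \in \bb P(V_k^{H_k})$ for some non-trivial $H_k$. This is where one must use that $G_k$ has only finitely many subgroup schemes, each infinitesimal or \'etale of small rank, and that for such a group acting linearly the fixed locus in projective space is cut out by the fixed subspace. None of this is deep, but it requires care with the difference between the action of $G$ and the action of its finitely many subgroups, and with the fact that we only control the special fibre (which is all we need, since codimension can only go up on the generic fibre). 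Once the dimension count is in place, the lemma follows immediately.
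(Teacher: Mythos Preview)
Your approach is essentially the same construction as the paper's---take a faithful representation $W$ of $G$ and pass to $P=\mathbf P(W^{\oplus N})$ for $N\ge 3$---but you bound the bad locus differently. The paper does not classify subgroups of $G_k$ at all: instead it observes that if $Y\subset \mathbf P(W)$ is the locus of non-trivial stabilizers (proper because the action is faithful, hence generically free), then its affine cone $\tilde Y\subset W$ has codimension $\ge 1$, and the bad locus in $\mathbf P(W^{\oplus N})$ is contained in the image of $\tilde Y^N$, giving codimension $\ge N$. This works for \emph{any} finite flat $G$ and never mentions fixed subspaces.

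Your route via $\bigcup_H \mathbf P(V_k^{H_k})$ is also correct here, but the step you flag as ``bookkeeping'' hides a genuine point: for a subgroup $H_k\subset G_k$, the $H_k$-fixed locus in $\mathbf P(V_k)$ equals $\mathbf P(V_k^{H_k})$ only if $H_k$ has no non-trivial characters $H_k\to\mathbf G_m$ (otherwise projective fixed points can come from eigenlines with non-trivial eigenvalue). This is not automatic for infinitesimal groups---$\mu_p$ is a counterexample---but it \emph{is} true for the subgroups of your $G_k=E_k[p]$, since for supersingular $E_k$ these are $\alpha_p$ and $E_k[p]$ itself, both unipotent (local--local), hence characterless. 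Once you say that, your codimension count goes through; in fact since every non-trivial subgroup of $G_k$ contains $E_k[F]\cong\alpha_p$, you only need $Z_{P,k}=\mathbf P(V_k^{\alpha_p})$, which has codimension $N\cdot\operatorname{codim}_{W_k}W_k^{\alpha_p}\ge N$. So your argument is fine but specific to this $G$; the paper's product-of-cones trick buys generality and avoids the character discussion entirely.
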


\begin{remark} The number $2$ in Lemma \ref{lem:ConstructFFGSSpace} can be replaced by any positive integer.
\end{remark}

The closed set $Z_P \subset P$ mentioned above is (by definition) the complement of the maximal open $U_P \subset P$ with the following property: the base change $b:F \to P$ of the action map $a:G \times P \to P \times P$ given by $(g,x) \mapsto (gx, x)$ along the diagonal $\Delta:P \to P \times P$ is an isomorphism over $U_P$. As $b$ is finite surjective, one can alternately characterize the closed subset $Z_P \subset P$ by the following two equivalent conditions: 
\begin{enumerate}
\item $Z_P$ is the set of those $x \in P$ such that the fibre of $b$ over $\kappa(x)$ has length $> 1$.
\item $Z_P$ is the support of $b_* \mathcal{O}_F / \mathcal{O}_P$.
\end{enumerate}
In particular, the formation of $U_P$ and $Z_P$ (as subsets of $P$) commutes with taking fibres over points of $\mathrm{Spec}(\mathcal{O})$, and they are both $G$-stable subsets of $P$.

\begin{proof}
Choose a faithful representation $G \to \mathrm{GL}(V)$, inducing a $G$-action on $\mathbf{P}(V)$. By replacing $V$ if necessary, we may also assume that the $G$-action on $\mathbf{P}(V)$ is faithful on each fibre. In particular, there is a maximal $G$-stable open $U \subset \mathbf{P}(V)$ that is fiberwise dense such that the $G$-action on $U$ has no stabilizers (constructed as $U_P$ above). The complement $Y \subset \mathbf{P}(V)$ is a closed subset that has codimension $\geq 1$ on each fibre. Now fix an integer $c > 2$, and consider the induced $G$-action on $W := \prod_{i=1}^c V$. Set $P := \mathbf{P}(W)$. We claim that this satisfies the conclusion of the lemma.

Let $\tilde{U} \subset V - \{0\}$ be the inverse image of $U$ under $V - \{0\} \to \mathbf{P}(V)$, and let $\tilde{Y} = V - \tilde{U}$, so $\tilde{Y} - \{0\}$ is the inverse image of $Y$. Note that $\tilde{Y}$, equipped with its reduced structure, is a $\mathbf{G}_m$-equivariant closed subset of $V$ with codimension $\geq 1$ on each fibre. Now consider $\tilde{Z'} := \prod_{i=1}^c \tilde{Y}  \subset W := \prod_{i=1}^c V$. Then $\tilde{Z'}$ (say with its reduced structure) defines a $\mathbf{G}_m$-equivariant closed subset of $W$ of codimension $\geq c$ on each fibre. Removing $0$ and quotienting by $\mathbf{G}_m$ defines a proper closed subset $Z' \subset P$ of codimension $\geq c$ on each fibre. It is easy to see that the locus $Z_P \subset P$ of points with non-trivial stabilizers is contained in $Z'$, so $Z_P$ also has codimension $\geq c>2$ on each fibre.
\end{proof}

Choose $P$ and $G$ as  in Lemma \ref{lem:ConstructFFGSSpace}. We can use this action to approximate $BG$ by passing to the quotient as follows. Let $h: P\to X = P/G$ be the scheme-theoretic quotient, so that $X$ is a projective scheme, flat over $\roi$. Inside $X$, we have the open subset $U_X\subset X$ defined as the quotient $U_P/G$, with complement $Z_X = X\setminus U_X$. 
 
\begin{lemma} The construction satisfies the following properties.
\begin{enumerate}
\item The closed subset $Z_X\subset X$ has codimension $>2$ on the special fibre. 
\item The map $X\to \mathrm{Spec}(\mathcal{O})$ is smooth over $U_X$. 
\end{enumerate}
\end{lemma}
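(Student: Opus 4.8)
The plan is to prove both parts by descending along the finite surjective map $h: P \to X = P/G$ and using the corresponding statements for $P$, which were established in Lemma~\ref{lem:ConstructFFGSSpace}. For part (i), observe that $h^{-1}(Z_X) = Z_P$ by the very construction: $U_X$ was defined as $U_P/G$, and since $Z_P$ is $G$-stable (as noted after Lemma~\ref{lem:ConstructFFGSSpace}), the preimage of its complement is exactly $U_P$. Now $h$ is finite and surjective, hence closed and of relative dimension $0$, so it preserves dimension of closed subsets; restricting to the special fibre, $h_k: P_k \to X_k$ is again finite surjective, and $h_k^{-1}(Z_{X_k}) = Z_{P_k}$. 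Since $\dim X_k = \dim P_k$ (a finite surjection of schemes of finite type over a field preserves dimension), the bound $\codim_{P_k}(Z_{P_k}) > 2$ from Lemma~\ref{lem:ConstructFFGSSpace} transfers to $\codim_{X_k}(Z_{X_k}) > 2$. One should be slightly careful that $P_k$ (hence $X_k$) is irreducible or at least equidimensional, so that ``codimension $> 2$'' is unambiguous; this is clear since $P_k = \mathbf{P}(W_k)$ is a projective space and $X_k$ is its quotient by a finite group scheme, hence irreducible of the same dimension.

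For part (ii), the key point is that over $U_P$ the $G$-action is \emph{free} in the scheme-theoretic sense: by definition of $U_P$, the map $b: F \to P$ (base change of $a: G \times P \to P \times P$ along the diagonal) is an isomorphism over $U_P$, which says exactly that the action groupoid $G \times U_P \rightrightarrows U_P$ has trivial stabilizers, i.e.\ the action is free. For a free action of a finite flat group scheme on a (quasi-projective) scheme, the quotient $U_X = U_P/G$ exists as a scheme and the quotient map $U_P \to U_X$ is a finite flat $G$-torsor; moreover $U_P \to U_X$ is faithfully flat. Since $P \to \Spec(\roi)$ is smooth, so is its open $U_P \to \Spec(\roi)$; smoothness descends along the faithfully flat quotient map $U_P \to U_X$ (smoothness can be checked fppf-locally on the target), so $U_X \to \Spec(\roi)$ is smooth, which is the assertion.

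The main obstacle is the descent step in part (ii): one must know that a $G$-torsor quotient of a smooth $\roi$-scheme is again smooth over $\roi$, for $G$ a \emph{finite flat} (not necessarily \'etale) group scheme. The cleanest route is to invoke that $U_P \to U_X$ is faithfully flat (being a $G$-torsor) together with the fact that smoothness of a morphism can be tested after a faithfully flat base change on the target; combined with the formal smoothness of the composite $U_P \to U_X \to \Spec(\roi)$ and the finiteness (hence flatness) of $U_P \to U_X$, one concludes $U_X \to \Spec(\roi)$ is flat with smooth geometric fibres, hence smooth. An alternative, perhaps more elementary, is to check smoothness of $U_X$ directly on fibres: $U_{X,k} = U_{P,k}/G_k$ with $G_k$ acting freely on the smooth $k$-scheme $U_{P,k}$, so $U_{X,k}$ is smooth over $k$; and flatness of $U_X$ over $\roi$ follows since $U_X$ is a quotient of the $\roi$-flat $U_P$ by a finite flat group scheme. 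Either way the argument is short once the torsor/descent formalism for finite flat group schemes is granted.
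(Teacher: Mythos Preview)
Your proof is correct and follows essentially the same approach as the paper: part (i) via $h(Z_P)=Z_X$ (equivalently $h^{-1}(Z_X)=Z_P$) and finiteness of $h$, and part (ii) via the observation that $U_P\to U_X$ is a $G$-torsor, hence faithfully flat, so smoothness of $U_P/\roi$ descends to $U_X/\roi$. The paper phrases the descent in (ii) through the fibral criterion, checking that $U_{X,k}$ is regular (equivalently of finite Tor-dimension) by descent from the regular scheme $U_{P,k}$ along the faithfully flat map $U_{P,k}\to U_{X,k}$---this is precisely your second alternative, and your first alternative (smoothness is fppf-local on the \emph{source} of $U_X\to\Spec\roi$, not ``target'') is an equally valid packaging of the same fact.
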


\begin{proof}
The map $h$ is finite surjective and $G$-equivariant. Our construction shows that $h(Z_P) = Z_X$, giving (i). For (ii), observe that $U_P \to U_X$ is a $G$-torsor, and thus faithfully flat. Moreover, the formation of this map is compatible by base change. Thus, since $U_P$ is smooth, so is $U_X$: It is enough to check that $U_{X,k}$ is regular (by the fibral criterion of smoothness), equivalently of finite Tor-dimension, which follows from the existence of the faithfully flat map $U_{P,k}\to U_{X,k}$ from the regular scheme $U_{P,k}$.
\end{proof}

We now fix a very ample line bundle $L$ on $X$ once and for all. Let $H\subset X$ be a smooth complete intersection of $\dim(P) - 2$ hypersurfaces of sufficiently large degree such that $H\subset U_X$. Such $H$ exist, as $Z_X \subset X$ has codimension $>2$ on the special fibre, so a general complete intersection surface $H$ will miss $Z_X$, i.e., $H \cap Z_X = \emptyset$ (first on the special fibre, and thus globally by properness); thus, $H \subset U_X$. Since $U_X$ is smooth, the general such $H$ will also be smooth by Bertini.

We will check that $H$ is a sufficiently good approximation to $BG$ for our purposes. More precisely:

\begin{theorem}
\label{thm:ExampleDegeneratingTorsion}
The above construction gives a smooth projective (relative) surface $H$ over $\Spec(\mathcal{O})$ such that $H^2_\sub{\'et}(H_C, \mathbb{Z}_p)_{\mathrm{tor}} \simeq \mathbb{Z}/p^2\bb Z$, while $H^2_{\crys}(H_k/W(k))_{\mathrm{tor}} \simeq k \oplus k$.
\end{theorem}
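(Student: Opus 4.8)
The plan is to compute $H^2_\sub{\'et}(H_C,\mathbb{Z}_p)$ and $H^2_\crys(H_k/W(k))$ separately, in each case comparing $H$ to the stack $BG$ via the maps $H \hookrightarrow U_X$ and $U_X = U_P/G \to BG$, and using the weak Lefschetz / Bertini setup to reduce cohomology of $H$ to that of $X$ (or $U_X$), which in turn is close to $BG$ because $Z_X$ has large codimension. Concretely, the pullback map along $H \to U_X \to BG$ should be an isomorphism on $H^i$ for $i \leq 2$ (the Lefschetz-type bound from Lemma~\ref{lem:weaklefschetz} in degrees $\leq 2 = \dim H$, together with the fact that removing the codimension-$>2$ closed subset $Z_X$ does not affect $H^i$ for $i\leq 2$, at least after checking the relevant purity/excision bounds for both \'etale cohomology with $\mathbb{Z}_p$-coefficients and crystalline cohomology). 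So the whole problem reduces to the computation announced in Remark~\ref{rmk:BGisExample}: $H^2_\sub{\'et}(BG_C,\mathbb{Z}_p) \simeq \mathbb{Z}/p^2\mathbb{Z}$ and $H^2_\crys(BG_k/W(k)) \simeq k \oplus k$.

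\emph{\'Etale side.} Over $C$, Lemma~\ref{lem:ConstructFFGS} gives $G_C \simeq \mathbb{Z}/p^2\mathbb{Z}$, so $BG_C \simeq B(\mathbb{Z}/p^2\mathbb{Z})$ and $H^*_\sub{\'et}(BG_C,\mathbb{Z}_p) = H^*_\sub{group}(\mathbb{Z}/p^2\mathbb{Z},\mathbb{Z}_p)$; in degree $2$ this is $\mathbb{Z}/p^2\mathbb{Z}$. Transporting this back: $H^2_\sub{\'et}(U_{X,C},\mathbb{Z}_p) \simeq \mathbb{Z}/p^2\mathbb{Z}$ via the $G$-torsor $U_{P,C}\to U_{X,C}$ (here one uses that $U_{P,C}$ has no cohomology in low degrees relative to a projective space, since $\tilde Y$ and hence $Z'$ has large codimension — $U_{P,C}$ is a large-codimension open in $\mathbf{P}(W)$, so $H^i_\sub{\'et}(U_{P,C},\mathbb{Z}_p)$ agrees with that of projective space in degrees $\leq 2$), and then $H^2_\sub{\'et}(X_C,\mathbb{Z}_p) \simeq H^2_\sub{\'et}(U_{X,C},\mathbb{Z}_p)$ since $Z_{X,C}$ has codimension $>2$, and finally $H^2_\sub{\'et}(H_C,\mathbb{Z}_p) \simeq H^2_\sub{\'et}(X_C,\mathbb{Z}_p)$ by weak Lefschetz for the complete-intersection surface $H$. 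The torsion subgroup is thus exactly $\mathbb{Z}/p^2\mathbb{Z}$ (the free part from the ambient projective space contributes the torsion-free summand, which does not interfere).

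\emph{Crystalline side.} Over $k$, Lemma~\ref{lem:ConstructFFGS} gives $G_k = E_k[p]$, a self-dual finite flat group scheme of order $p^2$ that is killed by $p$ — for supersingular $E_k$ it is the unique such group scheme $M$ (an extension of $\alpha_p$ by $\alpha_p$, or rather the kernel of Frobenius on $E_k$). One computes $H^*_\crys(BG_k/W(k))$ via the cohomology of the simplicial scheme $[\mathrm{pt}/G_k]$, i.e. the cohomology of the cosimplicial $W(k)$-algebra computing crystalline (equivalently Dieudonn\'e) cohomology of the group scheme; the point is that because $G_k$ is killed by $p$ but $H^*$ is $W(k)$-valued, one gets $2$-torsion where the \'etale side had $\mathbb{Z}/p^2\mathbb{Z}$, and a careful bookkeeping with the bar complex (or: the relation $H^*_\crys(BG_k) = \mathrm{Ext}^*_{\text{some category}}$ combined with the structure of the Dieudonn\'e module of $E_k[p]$) yields $H^2_\crys(BG_k/W(k)) \simeq k \oplus k$. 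Transport back exactly as on the \'etale side, now using weak Lefschetz for crystalline cohomology (Berthelot, \cite{BerthelotLefschetz}, or the de~Rham reduction of Lemma~\ref{lem:weaklefschetz}) and purity for crystalline cohomology to pass from $U_{X,k}$ to $X_k$ across the codimension-$>2$ locus $Z_{X,k}$, and the torsion-free crystalline cohomology of $U_{P,k}$ (open in projective space) to kill the torsion contributions of the ambient space.

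\emph{The main obstacle} I expect is the crystalline computation of $H^2_\crys(BG_k/W(k))$ together with making precise sense of ``crystalline cohomology of a classifying stack'' and the required crystalline purity/excision statement for removing the high-codimension locus $Z_{X,k}$ (the \'etale analogue is Artin's bound, but the crystalline analogue needs care — one likely works with the de~Rham cohomology of the special fibre, using Lemma~\ref{lem:weaklefschetz}, and then lifts the torsion computation via the universal coefficient / integrality of crystalline cohomology). The \'etale side is essentially soft once the group-cohomology computation and the codimension bounds are in place; the delicate point is ensuring that the various reductions ($H \to X \to U_X \to BG$) are isomorphisms on $H^{\leq 2}$ in \emph{both} cohomology theories \emph{with torsion tracked}, which is why the construction was rigged so that $Z_X$ has codimension strictly bigger than the dimension of $H$.
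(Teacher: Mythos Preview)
Your \'etale argument is close in spirit to the paper's, but the route through $X_C$ is flawed: $X = P/G$ is singular along $Z_X$ (quotient singularities), so neither \'etale purity for removing $Z_{X,C}$ from $X_C$ nor weak Lefschetz for $H_C \subset X_C$ is available. The paper avoids $X$ altogether. Take the preimage $\tilde{H} := h^{-1}(H) \subset U_P \subset P$, a smooth complete intersection of ample hypersurfaces in projective space, and apply weak Lefschetz there to see that $H^i_{\sub{\'et}}(\tilde{H}_C,\bb Z_p)$ is $\bb Z_p$, $0$, torsion-free in degrees $0,1,2$. Then Hochschild--Serre for the $\bb Z/p^2\bb Z$-torsor $\tilde{H}_C \to H_C$ gives $H^2_{\sub{\'et}}(H_C,\bb Z_p)_{\sub{tor}} = H^2(\bb Z/p^2\bb Z,\bb Z_p) = \bb Z/p^2\bb Z$ directly.

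On the crystalline side there is a genuine gap, and the obstacles you flag are exactly the ones that block your approach: crystalline cohomology of the stack $BG_k$ is not standard (and your computation of it is only gestured at), crystalline purity for the singular $X_k$ is not available, and weak Lefschetz needs a smooth ambient. The paper does \emph{not} go through $BG_k$. Instead it introduces the auxiliary smooth projective variety $X_E := (P\times E)/G$ (smooth because $G \subset E$ acts freely on $E$), with line bundle $L_E$ pulled back from $L$ on $X$. This $L_E$ is not ample but satisfies $H^i(X_E,\cal F\otimes L_E^{\otimes n})=0$ for $i>1$ and $n\gg 0$, so Lemma~\ref{lem:weaklefschetz} applies with $i_L=1$. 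The preimage $H_E \subset X_E$ of $H$ is then a smooth complete intersection and an $E$-torsor $f:H_E\to H$. Weak Lefschetz together with the Leray spectral sequence for $X_{E,k} \to E_k/G_k \cong E_k$ (fibres $P_k$) shows $H^i_\crys(H_{E,k}/W(k)) \cong H^i_\crys(E_k/W(k))$ for $i\le 1$ and that $H^2_\crys(H_{E,k}/W(k))$ is torsion-free. The five-term exact sequence for the $E_k$-torsor $f$ then reduces everything to the edge map
\[
a: H^1_\crys(H_{E,k}/W(k)) \To H^0_\crys(H_k, R^1 f_{\crys\ast}\roi_{H_{E,k}})\ .
\]
Restricting to a fibre over a point $x\in H_k$ identifies $a$, after the isomorphisms above, with the endomorphism of $H^1_\crys(E_k/W(k))$ induced by the isogeny $E_k \to E_k/E_k[p]\cong E_k$, i.e.\ multiplication by $p$. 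Hence $a$ is injective with cokernel $H^1_\crys(E_k/W(k))/p\cong k\oplus k$, so $H^1_\crys(H_k/W(k))=0$ and $H^2_\crys(H_k/W(k))_{\sub{tor}}=k\oplus k$. The $E$-torsor trick is the idea your sketch is missing; it replaces the inaccessible ``$H^2_\crys(BG_k)$'' computation by a concrete isogeny on $H^1_\crys(E_k)$.
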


\begin{remark} In this example, one can also show that $H^1_\sub{\'et}(H_C, \mathbb{Z}/p) \simeq \mathbb{Z}/p$, while $H^1_{\dR}(H_k) \simeq k \oplus k$. Thus, the inequality $\dim_{\mathbb{F}_p} H^i(H_C, \mathbb{F}_p) \leq \dim_k H^i_{\dR}(H_k)$ coming from Theorem~\ref{ThmA} (ii) can be strict.
\end{remark}

\begin{proof} For \'etale cohomology, let $\tilde{H}\subset P$ be the preimage of $H$, so $\tilde{H}\to H$ is a $G$-torsor. As $\tilde{H}_C\subset P_C$ is a smooth complete intersection of ample hypersurfaces, the weak Lefschetz theorem implies that $H^i_\sub{\'et}(\tilde{H}_C,\bb Z_p)$ is given by $\bb Z_p$, $0$, and a torsion-free group, in degrees $0$, $1$, and $2$, respectively. Now we use the Leray spectral sequence for the $G_C\cong \bb Z/p^2\bb Z$-cover $\tilde{H}_C\to H_C$,
\[
H^i(\bb Z/p^2 \bb Z,H^j_\sub{\'et}(\tilde{H}_C,\bb Z_p))\Rightarrow H^{i+j}_\sub{\'et}(H_C,\bb Z_p)\ .
\]
This implies that
\[
H^2_\sub{\'et}(H_C,\bb Z_p)_\sub{tor} = H^2(\bb Z/p^2 \bb Z,\bb Z_p) = \bb Z/p^2\bb Z\ .
\]

For crystalline cohomology, consider the quotient $P\times E\to (P\times E)/G=:X_E$. As $G$ acts freely on $E$, and thus on $P\times E$, this is a $G$-torsor. We have a projection $X_E=(P\times E)/G\to X=P/G$, which is an $E$-torsor $U_{X_E}\to U_X$ over the open subset $U_X$. In particular, over $H\subset U_X$, we get an $E$-torsor $H_E\to H$.

Now note that $H_E\subset X_E=(P\times E)/G$ is a smooth intersection of $\dim P - 2$ sections of $L_E^{\otimes n}$, for sufficiently large $n$, where $L_E$ on $X_E$ is the pullback of the ample line bundle $L$ on $X=P/G$. Note that $L_E$ is not ample, but it has the weakened Serre vanishing property that for any coherent sheaf $\cal F$ on $X_E$, $H^i(X_E,\cal F\otimes L_E^{\otimes n})=0$ for all sufficiently large $n$ and $i>1$. Indeed, this follows from Serre vanishing on $X$ and the Leray spectral sequence for $X_E\to X$. By a version of the weak Lefschetz theorem in crystalline cohomology, cf.~Lemma~\ref{lem:weaklefschetz} below, we see that the map
\[
H^i_\crys(X_{E,k}/W(k))\to H^i(H_{E,k}/W(k))
\]
is an isomorphism for $i=0,1$, and injective with torsion-free cokernel for $i=2$. The left side can be computed by using the Leray spectral sequence for the projection $X_{E,k}=(P_k\times E_k)/G_k\to E_k/G_k\cong E_k$, with fibres given by $P_k$. The result is that for $i=0,1$, the composite map $H_{E,k}\to X_{E,k}\to E_k/G_k\cong E_k$ induces an isomorphism
\[
H^i_\crys(E_k/W(k))\isoto H^i_\crys(H_{E,k}/W(k))\ ,
\]
and $H^2_\crys(H_{E,k}/W(k))$ is torsion-free. 

Now we consider the $E_k$-torsor $f: H_{E,k}\to H_k$, and the associated Leray spectral sequence
\[
H^i_\crys(H_k,R^j f_{\crys\ast} \roi_{H_{E,k}})\Rightarrow H^{i+j}_\crys(H_{E,k}/W(k))\ .
\]
In particular, in low degrees, we get a long exact sequence
\begin{equation}\label{eq:specseq}
\begin{aligned}
0\to H^1_\crys(H_k/W(k))&\to H^1_\crys(H_{E,k}/W(k))\xTo{a} H^0_\crys(H_k,R^1 f_{\crys\ast} \roi_{H_{E,k}})\\
&\to H^2_\crys(H_k/W(k))\to H^2_\crys(H_{E,k}/W(k))\to \ldots\ .
\end{aligned}
\end{equation}
Fix a point $x\in H_k$; then the map $a$ can be analyzed through the composition
\[
H^1_\crys(E_k/W(k))\isoto H^1_\crys(H_{E,k}/W(k))\xTo{a} H^0_\crys(H_k,R^1 f_{\crys\ast} \roi_{H_{E,k}})\buildrel {x^\ast}\over\hookrightarrow H^1_\crys(E_k/W(k))\ .
\]
Here $x^\ast$ is the map given by restriction to the fibre $E_k$ of $H_{E_k}\to H_k$ over $x$. The induced endomorphism of $H^1_\crys(E_k/W(k))$ is induced by the map $E_k\to E_k/G_k = E_k/E_k[p]\cong E_k$, and is thus given by multiplication by $p$. This is injective, so it follows that $a$ is injective. Moreover, the image of $x^\ast$ is saturated, which forces $x^\ast$ to be an isomorphism. It follows that $a$ is injective, with cokernel given by $H^1_\crys(E_k/W(k))/p\cong k\oplus k$.

Coming back to the sequence~\eqref{eq:specseq}, we find $H^1_\crys(H_k/W(k))=0$, while $H^2_\crys(H_k/W(k))_\sub{tor} = k\oplus k$, as desired.
\end{proof}

The following version of weak Lefschetz was used in the proof.

\begin{lemma}\label{lem:weaklefschetz} Let $k$ be a perfect field of characteristic $p$, and let $X$ be a smooth projective variety of dimension $d$ over $k$, with a line bundle $L$. Let $i_L\geq 0$ be an integer such that for any coherent sheaf $\cal F$ on $X$, the cohomology group $H^i(X,\cal F\otimes L^{\otimes n})$ vanishes if $n$ is sufficiently large and $i>i_L$.

Then there exists some integer $n_0$ such that for all $n\geq n_0$ and any smooth hypersurface $H\subset X$ with divisor $L^{\otimes n}$, the map
\[
H^j_\crys(X/W(k))\to H^j_\crys(H/W(k))
\]
is an isomorphism for $j<d-i_L-1$, and injective with torsion-free cokernel for $j=d-i_L-1$.
\end{lemma}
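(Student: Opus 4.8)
The plan is to deduce the crystalline weak Lefschetz statement from its de~Rham analogue, working level by level in the Hodge filtration, and then using the degeneration of the Hodge-to-de~Rham spectral sequence together with torsion information. First I would reduce to a statement about Hodge cohomology: for the short exact sequence $0 \to \Omega^i_X(-H) \to \Omega^i_X \to \Omega^i_X|_H \to 0$ (or rather the conormal sequence relating $\Omega^\bullet_X|_H$ and $\Omega^\bullet_H$), the hypothesis on $L$ — applied to the coherent sheaves $\Omega^i_X$ and their twists, noting $\mathcal{O}_X(-H) = L^{-\otimes n}$ — forces $H^j(X, \Omega^i_X(-H)) = 0$ for $i+j$ in the appropriate range once $n \gg 0$. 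Chasing the conormal sequence $0 \to \mathcal{O}_H(-H) \to \Omega^1_X|_H \to \Omega^1_H \to 0$ and its exterior powers then gives that $H^j(X,\Omega^i_X) \to H^j(H,\Omega^i_H)$ is an isomorphism for $i+j < d-1-i_L$ and injective for $i+j = d-1-i_L$, uniformly for $n \geq n_0$.

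Next I would transport this to de~Rham cohomology. Since $X$ is smooth projective over a perfect field $k$ of characteristic $p$, the Hodge-to-de~Rham spectral sequence of $X$ degenerates once $\dim X < p$ — but in general it need not, so I would instead argue more carefully: compare the two Hodge-to-de~Rham spectral sequences (for $X$ and for $H$) via the map of filtered complexes $\Omega^\bullet_X \to R f_* \Omega^\bullet_H$ where $f \colon H \hookrightarrow X$. The statement on $E_1$-terms just established, together with a standard spectral sequence comparison argument, shows $H^j_{\dR}(X/k) \to H^j_{\dR}(H/k)$ is an isomorphism for $j < d-1-i_L$ and injective with cokernel that is itself a subquotient of $H^{d-1-i_L}(X,\Omega^{\bullet \geq 0})$-type pieces in the boundary degree; more precisely one gets isomorphism for $j < d-1-i_L$ and injectivity for $j = d-1-i_L$.

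Finally I would upgrade from de~Rham to crystalline cohomology and extract the torsion-freeness of the cokernel. Crystalline cohomology $R\Gamma_{\crys}(X/W(k))$ and $R\Gamma_{\crys}(H/W(k))$ are perfect complexes of $W(k)$-modules with $R\Gamma_{\crys}(X/W(k)) \dotimes_{W(k)} k \simeq R\Gamma_{\dR}(X/k)$ and similarly for $H$. The de~Rham statement says the cone of $R\Gamma_{\crys}(X/W(k)) \to R\Gamma_{\crys}(H/W(k))$, after $\dotimes_{W(k)} k$, has vanishing cohomology in degrees $< d-1-i_L$ and vanishing $H^{d-1-i_L}$ as well when we account for the injectivity. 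By Nakayama / the structure theory of perfect complexes over $W(k)$, a perfect complex whose derived mod-$p$ reduction has cohomology concentrated in degrees $\geq m$ is quasi-isomorphic to one concentrated in degrees $\geq m$ with torsion-free $H^m$; applying this to the cone $C$ of the crystalline restriction map gives that $H^j(C) = 0$ for $j \leq d-1-i_L$ except that $H^{d-1-i_L}(C)$ may be nonzero but is then $p$-torsion-free. Unwinding $C$ back into the long exact sequence $\cdots \to H^j_{\crys}(X/W(k)) \to H^j_{\crys}(H/W(k)) \to H^j(C) \to \cdots$ yields the isomorphism for $j < d-1-i_L$ and injectivity with torsion-free cokernel for $j = d-1-i_L$, as claimed. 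The main obstacle I anticipate is the bookkeeping in the boundary degree $j = d-1-i_L$: one must be careful that the failure of Hodge-to-de~Rham degeneration (when $\dim X \geq p$) does not destroy the isomorphism range, and that the torsion-freeness of the cokernel really does come out — this is where the perfect-complex-over-$W(k)$ argument, rather than a naive diagram chase, is essential.
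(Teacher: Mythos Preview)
Your proposal is correct and follows essentially the same route as the paper: reduce the crystalline statement to a statement about the cone $K$ of the restriction map, use that $K$ is $p$-complete so it suffices to show $K/p \in D^{\geq d-i_L-1}$ (this is your perfect-complex-over-$W(k)$ argument), identify $K/p$ with the de~Rham cone, and then filter the de~Rham cone by Hodge pieces controlled via the conormal sequence and Serre duality.

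Two remarks on execution. First, the conormal chase is slightly more involved than you indicate: to bound $R\Gamma(H,\cal I|_H\otimes\Omega^{j-1}_H)$ you must iterate, and the paper does this by proving the stronger statement $R\Gamma(X,\cal I^{\otimes r}\otimes\Omega^{j-1}_H)\in D^{\geq d-i_L-j}$ for all $r\geq 1$ by induction on $j$. Second, your worry about Hodge--de~Rham degeneration is well-placed but avoidable: rather than comparing spectral sequences on $E_1$-terms, the paper works directly with cones throughout---showing each Hodge cone $K_j$ lies in $D^{\geq d-i_L-j-1}$ immediately gives (via the stupid filtration) that the de~Rham cone lies in $D^{\geq d-i_L-1}$, with no degeneration hypothesis needed. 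This is the clean fix for the obstacle you anticipated.
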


\begin{proof} Berthelot, \cite{BerthelotLefschetz}, proved this when $L$ is ample, i.e.~$i_L=0$. His proof immediately gives the general result: Let $K$ be the cone of $R\Gamma_\crys(X/W(k))\to R\Gamma_\crys(H/W(k))$. It suffices to show that $K\in D^{\geq d-i_L-1}$, with $H^{d-i_L-1}(K)$ torsion-free. As $K$ is $p$-complete, this is equivalent to proving that $K/p\in D^{\geq d-i_L-1}$. But $K/p$ is the cone of $R\Gamma_\dR(X)\to R\Gamma_\dR(H)$. Thus, it suffices to prove that for any $j\geq 0$, the cone $K_j$ of
\[
R\Gamma(X,\Omega^j_X)\to R\Gamma(H,\Omega^j_H)
\]
lies in $D^{\geq d-i_L-j-1}$. Let $\cal I\subset \roi_X$ be the ideal sheaf of $H$; then $\cal I\cong L^{\otimes -n}$. Now we have a short exact sequence
\[
0\to \cal I\otimes_{\roi_X} \Omega^{j-1}_H\to \Omega^j_X/\cal I\to \Omega^j_H\to 0\ .
\]
As $R\Gamma(X,\cal I\otimes_{\roi_X} \Omega^i_X)$ is Serre dual to $R\Gamma(X,L^{\otimes n}\otimes _{\roi_X} \Omega^{d-i}_X)$, it lies in $D^{\geq d-i_L}$ if $n$ is large enough. It remains to see that
\[
R\Gamma(X,\cal I\otimes_{\roi_X} \Omega^{j-1}_H)\in D^{\geq d-i_L-j}\ ,
\]
if $n$ is large enough; we will prove more generally that for any fixed $r\geq 1$,
\[
R\Gamma(X,\cal I^{\otimes r}\otimes_{\roi_X} \Omega^{j-1}_H)\in D^{\geq d-i_L-j}\ ,
\]
if $n$ is large enough. For this, we induct on $j$. If $j=1$, we use the short exact sequence
\[
0\to \cal I^{\otimes (r+1)}\to \cal I^{\otimes r}\to \cal I^{\otimes r}\otimes_{\roi_X} \roi_H\to 0
\]
to reduce to $R\Gamma(X,L^{\otimes -rn})\in D^{\geq d-i_L}$ (and with $r+1$ in place of $r$) for sufficiently large $n$, which follows from Serre duality and the assumption on $L$. For $j>1$, we have a short exact sequence
\[
0\to \cal I^{\otimes (r+1)}\otimes_{\roi_X} \Omega^{j-2}_H\to \cal I^{\otimes r}\otimes_{\roi_X} \Omega^{j-1}_X\to \cal I^{\otimes r}\otimes_{\roi_X} \Omega^{j-1}_H\to 0\ .
\]
By induction, $R\Gamma$ of the first term lies in $D^{\geq d-i_L-j+1}$, and $R\Gamma$ of the second term lies in $D^{\geq d-i_L}$; this gives the required bound on the last term.
\end{proof}

\newpage

\section{Algebraic preliminaries on perfectoid rings}\label{sec:perfectoid}

The goal of this section is to record some facts about perfectoid rings. In \S \ref{ss:theta}, we recall a slightly non-standard perspective on Fontaine's ring $A_\inf$. In particular, we introduce the $\theta_r$ and $\tilde{\theta}_r$ maps which play a crucial role in the rest of the paper; the construction applies to a fairly large class of rings. In \S \ref{ss:PerfectoidOverview}, we specialize these constructions to perfectoid rings; with an eye towards our intended application, we analyze the kernel of the $\theta_r$ and $\tilde{\theta}_r$ maps in the case of perfectoid rings with enough roots of unity. Along the way, we try to summarize the definitions and relations between various classes of perfectoid rings in the literature. Finally, in \S \ref{ss:PerfectoidField}, we collect some results on perfectoid fields; notably, we prove in Proposition~\ref{prop:WrCoherent} that $W_r(\roi)$ is coherent for the ring of integers $\roi$ in a perfectoid field. 

\subsection{Fontaine's ring $A_\inf$}
\label{ss:theta}

Fix a prime number $p$, and let $S$ be a commutative ring which is $\pi$-adically complete and separated for some element $\pi\in S$ dividing $p$ (note that it follows that $S$ is $p$-adically complete by, for example, \cite[Tag 090T]{StacksProject}). Denoting by $\phi:S/pS\to S/pS$ the absolute Frobenius, let $S^\flat:=\projlim_\phi S/pS$ be the tilt of $S$, which is a perfect $\bb F_p$-algebra on which we will continue to denote the Frobenius by $\phi$. In this situation, we have Fontaine's ring $\bb A_\inf(S)$.

\begin{definition} Fontaine's ring is given by
\[
\bb A_\inf(S) = W(S^\flat)\ ,
\]
which is equipped with a Frobenius automorphism $\phi$.
\end{definition}

We start by recalling a slightly nonstandard perspective on $\bb A_\inf(S)$.

\begin{lemma}\label{lemma_witt_alg_1}
Let $S$ be as above, i.e., a ring which is $\pi$-adically complete with respect to some element $\pi\in S$ dividing $p$.
\begin{enumerate}
\item The canonical maps \[\projlim_{x\mapsto x^p}S\To S^\flat=\projlim_\phi S/p S\To\projlim_\phi S/\pi S\] are isomorphisms of monoids/rings.
\item For any $f\in S$, the following inclusions hold: $W_r(f^{p^{r-1}}S)\subset [f]W_r(S)\subset W_r(f S)$; also $[p]^2\in pW_r(S)$ and $p^rW_r(S)\subseteq W_r(pS)$. It follows that the rings $W_r(S)$ and $W(S)$ are complete for the $[\pi]$, $[p]$, and $p$-adic topologies.
\item The homomorphism
\[
\phi^\infty:\projlim_F W_r(S^\flat)\To \projlim_R W_r(S^\flat)\ ,
\]
induced by the homomorphisms $\phi^r:W_r(S^\flat)\to W_r(S^\flat)$ for $r\geq 1$, is an isomorphism.
\item The homomorphism
\[
\projlim_F W_r(S^\flat)\To \projlim_F W_r(S/\pi S)\ ,
\]
induced by the canonical map $S^\flat\to S/\pi S$, is an isomorphism.
\item The canonical homomorphism
\[
\projlim_F W_r(S)\To \projlim_F W_r(S/\pi S)
\]
is an isomorphism.
\end{enumerate}

In particular, there is a canonical isomorphism
\[
\bb A_\inf(S)\cong \projlim_F W_r(S)\ .
\]
Under this identification, the restriction operator $R$ on the right side gets identified with $\phi^{-1}$ on the left side; in particular, $R$ is an automorphism of $\projlim_F W_r(S)$.
\end{lemma}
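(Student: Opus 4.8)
The plan is to prove the five numbered assertions essentially in order, since each reduces the structure of $\bb A_\inf(S) = W(S^\flat)$ to something more hands-on, and the final conclusion is a formal consequence of (iii)--(v).

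For (i), I would observe that since $S$ is $\pi$-adically complete and $\pi \mid p$, it is also $p$-adically complete (after passing to the $\pi$-adic filtration being cofinal with a subsequence of the $p$-adic one — actually one needs $\pi\mid p$ to get $p$-adic completeness from $\pi$-adic, which holds here), so that the tilt $S^\flat=\projlim_\phi S/pS$ makes sense; then the standard ``multiplicative lift'' argument shows $\projlim_{x\mapsto x^p}S \to \projlim_\phi S/pS$ is a bijection of monoids, using that a compatible system of $p$-power roots mod $p$ lifts uniquely to an actual compatible system of $p$-power roots in $S$ by $p$-adic completeness (Hensel-type lemma for the Frobenius). The same argument with $\pi$ in place of $p$ gives the second isomorphism; compatibility is clear. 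This is routine.

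For (ii), the inclusions $W_r(f^{p^{r-1}}S)\subset [f]W_r(S)\subset W_r(fS)$ follow by a direct computation with Witt components (or Teichmüller multiplication formulas): $[f]\cdot(a_0,\dots,a_{r-1})$ has $i$-th ghost-adjacent component divisible by $f^{p^i}$, hence by $f$; conversely an element of $W_r(f^{p^{r-1}}S)$ has all components divisible by $f^{p^{r-1}}$, enough to factor out $[f]$. The claim $[p]^2 \in pW_r(S)$ follows since $[p] = p - (\text{something in }VW_{r-1})$ — more precisely $[p] \equiv p \bmod VW_{r-1}(S)$ and $V$-filtration interacts with multiplication to push $[p]^2$ into $pW_r(S)$; I would just cite the standard identity $[p] = (p,0,\dots,0)$ and compute. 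Completeness of $W_r(S)$ and $W(S)$ for the $[\pi]$-, $[p]$-, $p$-adic topologies then follows from $\pi$-adic (hence $p$-adic) completeness of $S$ componentwise, together with the sandwiching inclusions which show the three topologies agree.

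For (iii), the map $\phi^\infty$ is built from the $\phi^r: W_r(S^\flat)\to W_r(S^\flat)$; since $S^\flat$ is perfect, $\phi$ is an automorphism of $S^\flat$, hence of each $W_r(S^\flat)$, and one checks $\phi^r$ intertwines $F$ with $R$ (because on Witt vectors of a perfect ring $F = \phi\circ(\text{shift})$ and $R\phi = \phi R$, $F\phi = \phi F$, with $RF=FR = \phi$ after identification) — so the inverse system $(W_r(S^\flat), F)$ is carried isomorphically onto $(W_r(S^\flat), R)$; taking $\projlim$ gives the isomorphism. For (iv) and (v): the kernel of $S^\flat \to S/\pi S$ (the map induced by $\projlim_\phi S/pS \to S/pS \to S/\pi S$, or rather using (i), $\projlim_{x\mapsto x^p} S \to S/\pi S$) is handled by noting $S^\flat$ is perfect and $\pi$ is, up to the tilting dictionary, ``small''; one shows $W_r$ applied to this surjection with $F$-transition maps becomes an isomorphism in the limit because the kernels, transported along $F=V^{-1}$-type maps, become more and more divisible and vanish in the limit — concretely $\projlim_F W_r(I) = 0$ for $I$ the relevant nilpotent-in-the-limit ideal. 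For (v) the same mechanism: $\projlim_F W_r(S) \to \projlim_F W_r(S/\pi S)$ is injective because a compatible system killed mod $\pi$ is, after applying enough $F$'s, killed by higher powers of $[\pi]$, and by the completeness in (ii) this forces it to be $0$; surjectivity is a lifting argument using that $F: W_{r+1}(S)\to W_r(S)$ is surjective onto the relevant approximants. Composing (iii), (iv), (v) identifies $\bb A_\inf(S)=W(S^\flat) = \projlim_R W_r(S^\flat) \cong \projlim_F W_r(S^\flat)\cong \projlim_F W_r(S/\pi S)\cong \projlim_F W_r(S)$, and chasing the definitions shows $R$ on the right corresponds to $\phi^{-1}$ on $W(S^\flat)$, which is an automorphism since $S^\flat$ is perfect.

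I expect the main obstacle to be parts (iv) and (v): the bookkeeping showing that the ``error ideals'' (kernels of $W_r(S)\to W_r(S/\pi S)$ and of $W_r(S^\flat)\to W_r(S/\pi S)$) die in the inverse limit along $F$. This requires carefully combining the component-wise inclusions of (ii) (to see that multiplication by $[\pi]$, or the $F$-transition, pushes elements into deeper powers of the defining ideals) with the completeness statements of (ii), and being careful that $F$ rather than $R$ is the transition map, so that $F$ does \emph{not} lower the length but interacts with the ideal filtration in the beneficial direction. Everything else is either a standard Witt-vector manipulation or a formal diagram chase.
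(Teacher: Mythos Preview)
Your outline for (i)--(iii) is fine and matches the paper. The issues are in (iv) and (v), where your direct kernel-chasing is both harder than necessary and, in places, not yet a proof.

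For (iv), you are missing a one-line argument. Since $W_r$ commutes with inverse limits of rings, part (i) gives $W_r(S^\flat)=\projlim_\phi W_r(S/\pi S)$; hence
\[
\projlim_F W_r(S^\flat)=\projlim_F\projlim_\phi W_r(S/\pi S)=\projlim_\phi\projlim_F W_r(S/\pi S).
\]
Now in characteristic $p$ one has $R\phi=\phi R=F$, so $\phi$ is an automorphism of $\projlim_F W_r(S/\pi S)$, and taking $\projlim_\phi$ of an isomorphism-indexed system is the system itself. Your proposed route of analyzing the kernel of $S^\flat\to S/\pi S$ directly can be made to work, but it is messier and you have not actually carried it out.

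For (v), your surjectivity step is a real gap. You write that lifting uses ``$F:W_{r+1}(S)\to W_r(S)$ is surjective onto the relevant approximants''; but the Witt-vector Frobenius $F$ is \emph{not} surjective for a general $\pi$-adically complete $S$ (its surjectivity is equivalent to Frobenius being surjective on $S/pS$, which is an extra hypothesis you do not have here). The paper avoids this entirely by working modulo $\pi^s$: one shows that for each fixed $s$ the map of pro-systems $\{W_r(S/\pi^s S)\}_{r,F}\to\{W_r(S/\pi S)\}_{r,F}$ is a pro-isomorphism, by checking that $F^{s+c}:W_{r+s+c}(S/\pi^s S)\to W_r(S/\pi^s S)$ annihilates the kernel $W_{r+s+c}(\pi S/\pi^s S)$. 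This is a concrete computation on generators $V^i[\pi]$ using $FV=p$ and $F[\pi]=[\pi]^p$, together with the fact (from (ii)) that some power $p^c$ vanishes in $W_r(S/\pi^s S)$. Taking $\projlim_s$ and using the completeness in (ii) then gives both injectivity and surjectivity at once. Your injectivity sketch is in the right spirit but would also be cleanest done this way; your surjectivity sketch, as written, does not go through.
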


\begin{proof} Parts (i) and (ii) are standard: for example, the proof of (i) is just as in \cite[Lem.~3.4(i)]{ScholzeThesis}; to see that $[p]^2\in pW_r(S)$ (which is true already for $S=\bb Z$), note that $[p]\in VW_{r-1}(S) + pW_r(S)$, and $VW_{r-1}(S)^2\subset pW_r(S)$ as follows from the identity
\[
V^i[x]V^j[y] = V^i([x]\cdot F^iV^j[y]) = p^j V^i([xy^{p^{i-j}}])
\]
(using $V(aF(b))=V(a)b$ and $FV=p$) for $i\geq j$. Also, $p^r=0$ in $W_r(S/p)$ (as $W_r(\mathbb F_p)=\mathbb Z/p^r\mathbb Z$), so $p^rW_r(S)\subset W_r(pS)$. Part (iii) is a trivial consequence of $S^\flat$ being perfect.

For part (iv), note that since $W_r$ commutes with inverse limits of rings we have, using (i), \[\projlim_F W_r(S^\flat)=\projlim_F\projlim_\phi W_r(S/\pi S)=\projlim_\phi \projlim_F W_r(S/\pi S)\isoto  \projlim_F W_r(S/\pi S),\] where the final projection is an isomorphism since $\phi$ induces an automorphism of the ring $\projlim_F W_r(S/\pi S)$ (thanks to the formulae $R\phi=\phi R=F$ in characteristic $p$).

Finally, for part (v): For any fixed $s\ge1$ we claim first that the canonical morphism of pro-rings
\[
\{W_r(S/\pi^s S)\}_{r\sub{ wrt }F}\to \{W_r(S/\pi S)\}_{r\sub{ wrt }F}
\]
is an isomorphism. As it is surjective, it is sufficient to show that the kernel $\{W_r(\pi S/\pi^s S)\}_s$ is pro-isomorphic to zero; fix $r\ge 1$. By (ii), there is some $c$ such that $p^c$ is zero in $W_r(S/\pi^s S)$, and we claim that $F^{s+c}:W_{r+s+c}(S/\pi^s S)\to W_r(S/\pi^s S)$ kills the kernel $W_{r+s+c}(\pi S/\pi^s S)$. Indeed, the kernel is generated by elements $V^i[a]$ for $i\geq 0$, $a\in\pi S/\pi^sS$, and $F^{s+c} V^i[a] =0\in W_r(S/\pi^sS)$ as either $i\ge c$, in which case $F^{s+c} V^i[a]=p^cF^sV^{i-c}[a]=0$, or else $i<c$, in which case $F^{s+c} V^i[a]= p^i [a]^{p^{s+c-i}}=0$. This proves the desired pro-isomorphism, from which it follows that
\[
\projlim_F W_r(S/\pi^s S)\isoto \projlim_F W_r(S/\pi S)\ .
\]
Taking the limit over $s\ge 1$, exchanging the order of the limits, and using $W_r(S)=\projlim_sW_r(S/\pi^sS)$ completes the proof.
\end{proof}

Continue to let $S$ be as in the previous lemma. According to the lemma there is a chain of isomorphisms
\[
\bb A_\inf(S)=\projlim_R W_r(S^\flat)\stackrel{\phi^\infty}{\longleftarrow}\projlim_F W_r(S^\flat)\To \projlim_F W_r(S/\pi S)\longleftarrow \projlim_F W_r(S)\ ,
\]
through which each canonical projection $\projlim_F W_r(S)\To W_r(S)$ induces a homomorphism
\[
\tilde\theta_r:\bb A_\inf(S)\to W_r(S)\ .
\]
Denoting by $\phi$ the Frobenius on $\bb A_\inf(S)$, we define
\[
\theta_r:=\tilde\theta_r\phi^r:\bb A_\inf(S)\To W_r(S)
\]
for each $r\ge 1$. The maps $\theta_r$ and especially $\tilde\theta_r$ are of central importance in the comparison between the theory developed in this paper, and the theory of de~Rham--Witt complexes.

Explicitly, identifying $\projlim_{x\mapsto x^p}S$ and $S^\flat$ as monoids by Lemma \ref{lemma_witt_alg_1}(i) and following the usual convention of denoting an element $x$ of $S^\flat$ as $x=(x^{(0)},x^{(1)},\dots)\in \projlim_{x\mapsto x^p}S$, these maps are described as follows.

\begin{lemma}\label{lemma_theta}
For any $x\in S^\flat$ we have $\theta_r([x])=[x^{(0)}]\in W_r(S)$ and $\tilde\theta_r([x]) = [x^{(r)}]$ for $r\ge 1$.
\end{lemma}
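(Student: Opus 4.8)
The plan is to track the Teichm\"uller lift $[x]$ through the chain of isomorphisms that defines $\tilde\theta_r$; once the identifications of Lemma~\ref{lemma_witt_alg_1} are made explicit, everything is bookkeeping. I will use three elementary facts: truncation maps $W_{r+1}\to W_r$, Witt-functoriality, and the (bijective, as $S^\flat$ is perfect) Frobenius automorphism of $W_r(S^\flat)$ all carry Teichm\"uller representatives to Teichm\"uller representatives; the Witt-vector Frobenius satisfies $F([a]_{r+1})=[a^p]_r$; and, under the identification $\projlim_{y\mapsto y^p}S\cong S^\flat$ of Lemma~\ref{lemma_witt_alg_1}(i), the Frobenius $\phi$ of $S^\flat$ is the shift $y\mapsto\bigl((y^{(0)})^p,y^{(0)},y^{(1)},\dots\bigr)$, so that $\phi^{-r}(x)=(x^{(r)},x^{(r+1)},\dots)$ and $\bigl(\phi^r(x)\bigr)^{(r)}=x^{(0)}$.

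\emph{The map $\tilde\theta_r$.} First I would unwind the definition: $\tilde\theta_r$ is the composite
\[
\bb A_\inf(S)=\projlim_R W_r(S^\flat)\xrightarrow{(\phi^\infty)^{-1}}\projlim_F W_r(S^\flat)\To\projlim_F W_r(S/\pi S)\xleftarrow{\sim}\projlim_F W_r(S)\To W_r(S),
\]
in which the leftward arrow is the (inverted) isomorphism of Lemma~\ref{lemma_witt_alg_1}(v) and the last map is the $r$-th projection. Then I would follow $[x]$ along: its image in $\projlim_R W_r(S^\flat)$ has $r$-th component the truncated Teichm\"uller lift $[x]_r$; since $\phi^\infty$ acts on $r$-th components by $\phi^r$, applying $(\phi^\infty)^{-1}$ replaces this by $\phi^{-r}([x]_r)=[\phi^{-r}(x)]_r=\bigl[(x^{(r)},x^{(r+1)},\dots)\bigr]_r$; and the next map, induced by the canonical projection $S^\flat\to S/\pi S$ (which on $\projlim_{y\mapsto y^p}S$ is $y\mapsto y^{(0)}\bmod\pi$), sends this to $[x^{(r)}\bmod\pi]_r$. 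To conclude I would exhibit the preimage in $\projlim_F W_r(S)$ directly: the sequence $\bigl([x^{(r)}]_r\bigr)_{r\ge1}$ lies in $\projlim_F W_r(S)$, since $F\bigl([x^{(r+1)}]_{r+1}\bigr)=[(x^{(r+1)})^p]_r=[x^{(r)}]_r$, and it maps to $\bigl([x^{(r)}\bmod\pi]_r\bigr)_{r\ge1}$; by injectivity of $\projlim_F W_r(S)\to\projlim_F W_r(S/\pi S)$ (Lemma~\ref{lemma_witt_alg_1}(v)) it is \emph{the} preimage. Projecting onto the $r$-th factor gives $\tilde\theta_r([x])=[x^{(r)}]$.

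\emph{The map $\theta_r$.} Since the Frobenius of $\bb A_\inf(S)=W(S^\flat)$ is Witt-functorial in the Frobenius of $S^\flat$, one has $\phi^r([x])=[\phi^r(x)]$; feeding this into the formula for $\tilde\theta_r$ and using $\bigl(\phi^r(x)\bigr)^{(r)}=x^{(0)}$ gives
\[
\theta_r([x])=\tilde\theta_r\bigl([\phi^r(x)]\bigr)=\bigl[(\phi^r(x))^{(r)}\bigr]=[x^{(0)}].
\]

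\textbf{Main obstacle.} There is no genuine difficulty here; the only real risk is an indexing or direction error somewhere along the chain --- in particular, getting the direction of $\phi^\infty$ right so that $\phi^{-r}$ (not $\phi^r$) is applied to the $r$-th component, identifying the canonical map $S^\flat\to S/\pi S$ with projection onto the leading component of $\projlim_{y\mapsto y^p}S$, and noticing the compatibility $F\bigl([x^{(r+1)}]_{r+1}\bigr)=[x^{(r)}]_r$ that lets one recognise the evident Teichm\"uller sequence as the preimage under the reduction map, which is injective only in the inverse limit and not in each fixed degree.
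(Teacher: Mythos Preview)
Your proposal is correct and is precisely the ``straightforward chase through the above isomorphisms'' that the paper's one-line proof alludes to; you have simply made explicit each step of tracking $[x]$ through the chain $\bb A_\inf(S)\cong\projlim_R W_r(S^\flat)\leftarrow\projlim_F W_r(S^\flat)\to\projlim_F W_r(S/\pi S)\leftarrow\projlim_F W_r(S)$. The indexing, the direction of $\phi^\infty$, and the identification of the preimage via the compatibility $F([x^{(r+1)}]_{r+1})=[x^{(r)}]_r$ are all handled correctly.
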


\begin{proof}
This follows from a straightforward chase through the above isomorphisms.
\end{proof}

In particular Lemma \ref{lemma_theta} implies that $\theta:=\theta_1:\bb A_\inf(S)\to S$ (and not $\tilde\theta_1$) is the usual map of $p$-adic Hodge theory, and also shows that the diagram \[\xymatrix{
\bb A_\inf(S)\ar[r]^-{\theta_r}\ar[d]_R & W_r(S)\ar[d]\\
W_r(S^\flat)\ar[r] & W_r(S/pS)
}\]
commutes, where the bottom arrow is induced by the canonical map $S^\flat=\projlim_\phi S/p S\to S/p S$, $x\mapsto x^{(0)}$. Indeed, by $p$-adic continuity it is sufficient to check commutativity of the diagram on Teichm\"uller lifts, for which it follows immediately from the previous lemma.

Further functorial properties of the maps $\theta_r$ are presented in the following lemma.

\begin{lemma}\label{lemma_theta_r_diagrams}
Continue to let $S$ be as in the previous two lemmas. Then the following diagrams commute:
\[\xymatrix{
\bb A_\inf(S)\ar[d]_{\op{id}}\ar[r]^-{\theta_{r+1}}& W_{r+1}(S)\ar[d]^R\\
\bb A_\inf(S)\ar[r]^-{\theta_r}& W_r(S)\\
}\quad
\xymatrix{
\bb A_\inf(S)\ar[d]_{\phi}\ar[r]^-{\theta_{r+1}}& W_{r+1}(S)\ar[d]^F\\
\bb A_\inf(S)\ar[r]^-{\theta_r}& W_r(S)\\
}\quad
\xymatrix{
\bb A_\inf(S)\ar[r]^-{\theta_{r+1}}& W_{r+1}(S)\\
\bb A_\inf(S)\ar[r]^-{\theta_r}\ar[u]^{\lambda_{r+1}\phi^{-1}}& W_r(S)\ar[u]_V\\
}
\]
where the third diagram requires an element $\lambda_{r+1}\in \bb A_\inf(S)$ satisfying $\theta_{r+1}(\lambda_{r+1})=V(1)$ in $W_{r+1}(S)$.

Equivalently, the following diagrams involving $\tilde\theta_r$ commute.
\[\xymatrix{
\bb A_\inf(S)\ar[d]_{\phi^{-1}}\ar[r]^-{\tilde\theta_{r+1}}& W_{r+1}(S)\ar[d]^R\\
\bb A_\inf(S)\ar[r]^-{\tilde\theta_r}& W_r(S)\\
}\quad
\xymatrix{
\bb A_\inf(S)\ar[d]_{\op{id}}\ar[r]^-{\tilde\theta_{r+1}}& W_{r+1}(S)\ar[d]^F\\
\bb A_\inf(S)\ar[r]^-{\tilde\theta_r}& W_r(S)\\
}\quad
\xymatrix{
\bb A_\inf(S)\ar[r]^-{\tilde\theta_{r+1}}& W_{r+1}(S)\\
\bb A_\inf(S)\ar[r]^-{\tilde\theta_r}\ar[u]^{\tilde\lambda_{r+1}}& W_r(S)\ar[u]_V\\
}
\]
Here, $\tilde\lambda_{r+1} = \varphi^{r+1}(\lambda_{r+1})\in \bb A_\inf(S)$ is an element satisfying $\tilde\theta_{r+1}(\tilde\lambda_{r+1}) = V(1)\in W_{r+1}(S)$.
\end{lemma}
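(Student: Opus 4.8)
The plan is to reduce the three diagrams involving $\theta_r$ to the three diagrams involving $\tilde\theta_r$, and then to prove the latter directly from the identification $\bb A_\inf(S)\cong\projlim_F W_r(S)$ of Lemma~\ref{lemma_witt_alg_1}. For the reduction one uses that $\theta_r=\tilde\theta_r\phi^r$ and that $\phi$ is an automorphism of $\bb A_\inf(S)$, so that each $\theta$-square becomes the corresponding $\tilde\theta$-square after cancelling powers of $\phi$. Concretely: the first $\theta$-square says $R\theta_{r+1}=\theta_r$, i.e.\ $R\tilde\theta_{r+1}\phi^{r+1}=\tilde\theta_r\phi^r$, which upon cancelling the automorphism $\phi^{r+1}$ becomes $R\tilde\theta_{r+1}=\tilde\theta_r\phi^{-1}$, the first $\tilde\theta$-square; the second $\theta$-square says $F\theta_{r+1}=\theta_r\phi$, i.e.\ $F\tilde\theta_{r+1}\phi^{r+1}=\tilde\theta_r\phi^{r+1}$, hence $F\tilde\theta_{r+1}=\tilde\theta_r$, the second $\tilde\theta$-square; and for the third, setting $\tilde\lambda_{r+1}:=\phi^{r+1}(\lambda_{r+1})$ one has $\tilde\theta_{r+1}(\tilde\lambda_{r+1})=\theta_{r+1}(\lambda_{r+1})=V(1)$, and — using that $\tilde\theta_{r+1}$ is a ring homomorphism — $\theta_{r+1}(\lambda_{r+1}\phi^{-1}(\alpha))=\tilde\theta_{r+1}(\tilde\lambda_{r+1}\,\phi^{r}(\alpha))$ for every $\alpha\in\bb A_\inf(S)$, so the third $\theta$-square is the third $\tilde\theta$-square evaluated along the bijection $\phi^r$ of $\bb A_\inf(S)$. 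Since $\ker\theta_{r+1}$ and $\ker\tilde\theta_{r+1}$ are ideals, the validity of the third square is moreover independent of the chosen element $\lambda_{r+1}$ (resp.\ $\tilde\lambda_{r+1}$).

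For the $\tilde\theta$-squares themselves, I would work through Lemma~\ref{lemma_witt_alg_1}, under which $\bb A_\inf(S)$ is identified with $\projlim_F W_r(S)$ so that $\tilde\theta_r$ becomes the canonical projection to the $r$-th factor and $\phi^{-1}$ becomes the restriction operator $R$ on $\projlim_F W_r(S)$, sending a compatible system $(x_s)_s$ (so $F(x_{s+1})=x_s$) to $(R(x_{s+1}))_s$. The second square $F\tilde\theta_{r+1}=\tilde\theta_r$ is then exactly the compatibility of the projections with the transition maps of the inverse system, i.e.\ the identity $F(x_{r+1})=x_r$. The first square $R\tilde\theta_{r+1}=\tilde\theta_r\phi^{-1}$ is read off on $(x_s)_s$: the left-hand side gives $R(x_{r+1})$, and the right-hand side is the $r$-th component of $(R(x_{s+1}))_s$, again $R(x_{r+1})$. (Both sides are continuous ring homomorphisms, so one could alternatively check them on Teichm\"uller lifts via Lemma~\ref{lemma_theta}, but the projective-limit computation is cleaner.)

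Finally, the third $\tilde\theta$-square, $V\circ\tilde\theta_r=\tilde\theta_{r+1}\circ(\tilde\lambda_{r+1}\cdot-)$, is where the only genuine input enters, namely the Witt-vector projection formula $V(a)\cdot b=V(a\cdot F(b))$ in $W_{r+1}(S)$. Using the already-proved second square to write $\tilde\theta_r(\beta)=F(\tilde\theta_{r+1}(\beta))$, one obtains for $\beta\in\bb A_\inf(S)$
\[
V(\tilde\theta_r(\beta))=V(F(\tilde\theta_{r+1}(\beta)))=V(1)\cdot\tilde\theta_{r+1}(\beta)=\tilde\theta_{r+1}(\tilde\lambda_{r+1})\cdot\tilde\theta_{r+1}(\beta)=\tilde\theta_{r+1}(\tilde\lambda_{r+1}\beta),
\]
using $V(1)=\tilde\theta_{r+1}(\tilde\lambda_{r+1})$ and the multiplicativity of $\tilde\theta_{r+1}$. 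I expect no serious obstacle: beyond this one application of the projection formula, the whole lemma is bookkeeping with the identifications of Lemma~\ref{lemma_witt_alg_1}, and the only point demanding care is keeping the index shifts between $r$ and $r+1$ — and the placement of the $\phi$'s in the reduction step — straight.
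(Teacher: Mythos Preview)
Your proof is correct and follows essentially the same route as the paper: the paper likewise proves the $\tilde\theta$-squares directly from the identification $\bb A_\inf(S)\cong\projlim_F W_r(S)$ (with $\phi^{-1}$ corresponding to $R$), and handles the third square via the identity $VF=V(1)\cdot-$, which is exactly your projection-formula step with $a=1$. The only difference is presentational: you spell out the reduction from the $\theta$-squares to the $\tilde\theta$-squares, whereas the paper leaves this implicit in the word ``Equivalently'' in the statement.
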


\begin{proof} We check that the second set of squares commute. Under the above chain of isomorphisms $\bb A_\inf(S)\cong\projlim_F W_r(S)$, we showed in Lemma \ref{lemma_witt_alg_1} that the action of $\phi^{-1}$ on $\bb A_\inf(S)$ corresponds to that of the restriction map $R$ on $\projlim_F W_r(S)$; hence the diagram
\[\xymatrix{
\bb A_\inf(S)\ar[d]_{\phi^{-1}}\ar[r]^-{\tilde\theta_{r+1}}& W_{r+1}(S)\ar[d]^R\\
\bb A_\inf(S)\ar[r]^-{\tilde\theta_r}& W_r(S)\\
}\]
commutes. Commutativity of the second diagram follows from the definition of the maps $\tilde\theta_r$.

Finally, using commutativity of the second diagram, the commutativity of the third diagram follows from the fact that $VF$ is multiplication by $V(1)$ on $W_{r+1}(S)$.
\end{proof}

By the first diagram in the previous lemma, we may let $r\to\infty$ to define a map $\theta_\infty:\bb A_\inf(S)\to W(S)$ satisfying $\theta_\infty([x])=[x^{(0)}]$ for any $x\in S^\flat$. We will analyze this map further in Lemma~\ref{lem:mapAinfWitt} below.

\subsection{Perfectoid rings}
\label{ss:PerfectoidOverview}

We will be interested in the following class of rings.

\begin{definition} A ring $S$ is {\em perfectoid} if and only if it is $\pi$-adically complete for some element $\pi\in S$ such that $\pi^p$ divides $p$, the Frobenius map $\phi: S/pS\to S/pS$ is surjective, and the kernel of $\theta: \bb A_\inf(S)\to S$ is principal.
\end{definition}

\begin{example} 
\label{ex:PerfectoidEasy}
The following rings are examples of perfectoid algebras. First, any perfect $\bb F_p$-algebra is perfectoid (where we take $\pi=0$); here, perfect means that the Frobenius map is an isomorphism. Moreover, the $p$-adic completion $\bb Z_p^\cycl$ of $\bb Z_p[\zeta_{p^\infty}]$ is perfectoid; one may also take the $p$-adic completion of the ring of integers of any other algebraic extension of $\bb Q_p$ containing the cyclotomic extension. Another example is $\bb Z_p^\cycl\langle T^{1/p^\infty}\rangle$, and there are many obvious variants.
\end{example}

\begin{remark} The original definition, \cite{ScholzeThesis}, of a perfectoid $K$-algebra, where $K$ is a perfectoid field, was in a slightly different context. We refer to Lemma~\ref{lemma_Tate_perfectoid} below for the relation.
\end{remark}

\begin{remark} In \cite{GabberRamero2}, Gabber and Ramero define a ``perfectoid'' condition for a complete topological ring $S$ carrying the $I$-adic topology for some finitely generated ideal $I$. In fact, $S$ is perfectoid in their sense if and only if $S$ (as a ring without topology) is perfectoid in the sense of the definition above: From \cite[Proposition~14.2.9]{GabberRamero2}, it already follows that their definition is independent of the topology (which can be taken to be the $p$-adic topology). Now \cite[Lemma~14.1.16 (iv)]{GabberRamero2} shows that if $S$ is perfectoid in their sense, then there exists a $\pi\in S$ and a unit $u\in S^\times$ such $\pi^p = pu$, and $\phi: S/p S\to S/pS$ is surjective. The last condition that $\ker \theta$ is principal is part of their definition of a perfectoid ring. Conversely, if $S$ is perfectoid in our sense and we endow it with the $p$-adic topology, then by Lemma \ref{lemma_frobenius_surjectivity} below, there exists $\pi\in S$ and a unit $u\in S^\times$ such that $\pi^p = pu$; taking $I=(\pi)$ shows that $S$ is a P-ring in the sense of \cite[Definition 14.1.14]{GabberRamero2}. Among P-rings, perfectoid rings in their sense are singled out by having the property that $\ker \theta$ is principal, \cite[Definition 14.2.1]{GabberRamero2}, which is also part of our definition.
\end{remark}

In relation to this, let us discuss surjectivity properties of the Frobenius:

\begin{lemma}\label{lemma_frobenius_surjectivity}
Let $S$ be a ring which is $\pi$-adically complete with respect to some element $\pi\in S$ such that $\pi^p$ divides $p$. Then the following are equivalent:
\begin{enumerate}\itemsep0pt
\item Every element of $S/\pi pS$ is a $p^\sub{th}$-power.
\item Every element of $S/pS$ is a $p^\sub{th}$-power.
\item Every element of $S/\pi^pS$ is a $p^\sub{th}$-power.
\item The Witt vector Frobenius $F:W_{r+1}(S)\to W_r(S)$ is surjective for all $r\ge1$.
\item The map $\theta_r:\bb A_\inf(S)\to W_r(S)$ is surjective for all $r\ge1$.
\end{enumerate}
Moreover, if these equivalent conditions hold then there exist $u, v\in S^\times$ such that $u\pi$ and $vp$ admit systems of $p$-power roots in $S$.
\end{lemma}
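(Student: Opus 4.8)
\emph{Overview.} The plan is to establish the cheap implications by elementary manipulations of ideals and of the $\projlim_F$-description of $\bb A_\inf(S)$, and to concentrate the real work into one ``bootstrapping'' step, which will simultaneously yield the final (``Moreover'') assertion. For the formal part: since $\pi^p$ divides $p$ we have ideal inclusions $\pi p S\subseteq pS\subseteq\pi^pS\subseteq\pi S$, and surjectivity of the Frobenius on a quotient $S/\mathfrak a$ passes to any further quotient; this gives $(i)\Rightarrow(ii)\Rightarrow(iii)$, and also that $(ii)$ implies that $\phi$ is surjective on the $\bb F_p$-algebra $S/\pi S$. For $(iv)\Leftrightarrow(v)$ I would use the isomorphism $\bb A_\inf(S)\cong\projlim_F W_r(S)$ of Lemma~\ref{lemma_witt_alg_1}: because $\theta_r=\tilde\theta_r\phi^r$ and $\phi$ is an automorphism of $\bb A_\inf(S)$, surjectivity of every $\theta_r$ is equivalent to surjectivity of every projection $\projlim_F W_{r'}(S)\to W_r(S)$, which for a countable tower is equivalent to surjectivity of all transition maps $F\colon W_{r+1}(S)\to W_r(S)$, i.e.\ to $(iv)$. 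Finally $(v)\Rightarrow(ii)$: reducing $\theta=\theta_1$ modulo $p$ exhibits the projection $S^\flat=\projlim_\phi S/pS\to S/pS$ as surjective, and since this projection factors through $\phi$ it forces $\phi$ to be surjective on $S/pS$.

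\emph{Extracting $p$-power roots.} Assuming $(ii)$ (hence $(iii)$), I would prove the ``Moreover'' statement first. Using $(iii)$ once, choose $\varpi\in S$ with $\varpi^p\equiv\pi\pmod{\pi^pS}$; then $\varpi^p=\pi(1+\pi^{p-1}t)$, and $1+\pi^{p-1}t$ is a unit by $\pi$-adic completeness, so $\varpi^p=u\pi$ with $u\in S^\times$. Similarly, writing $p=\pi^pw$ and using $(ii)$ to write $w\equiv w_1^p\pmod p$ gives $(\pi w_1)^p=p(1+\pi^pc)=vp$ with $v\in S^\times$ (again $1+\pi^pc=1+\pi\cdot(\text{stuff})$ is a unit). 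Iterating $(ii)$ to solve $\,\cdot\,\equiv(\,\cdot\,)^p\pmod p$ repeatedly then produces, in $S^\flat=\projlim_{x\mapsto x^p}S$ (Lemma~\ref{lemma_witt_alg_1}(i)), elements $\pi^\flat$ and $p^\flat$ whose images under $x\mapsto x^{(0)}$ are unit multiples of $\pi$, resp.\ of $p$; a short estimate on the cross-terms of $(x+p\,\cdot\,)^p$ (all divisible by $p$, hence in $\pi S$) shows the unit is not disturbed upon passing to the limit. This gives the desired $u,v\in S^\times$, together with compatible systems of $p$-power roots $\pi^{1/p^n}$ of $u\pi$ and $p^{1/p^n}$ of $vp$.

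\emph{Closing the loop.} With these roots available I would finish. For $(ii)\Rightarrow(i)$: from the above, $\pi pS=\tau^pS$ where $\tau=\pi^{1/p}\cdot p^{1/p}$ admits arbitrarily high $p$-power roots $\tau^{(n)}$; given $x\in S$, a successive-approximation argument — approximating $x$ by $p$-th powers to ever finer $\tau^{(n)}$-adic precision, using $(ii)$ at each step and $\pi$-adic completeness to pass to the limit — shows every class in $S/\pi pS$ is a $p$-th power, closing $(i)\Leftrightarrow(ii)\Leftrightarrow(iii)$. For $(ii)\Rightarrow(iv)$: $(ii)$ makes $\phi$ surjective on $S/\pi S$, hence, by the explicit formula for the Witt Frobenius in characteristic $p$, $F\colon W_{r+1}(S/\pi S)\to W_r(S/\pi S)$ is surjective; one lifts this to $F\colon W_{r+1}(S)\to W_r(S)$ using that $W_r(S)$ is $[\pi]$-adically complete (Lemma~\ref{lemma_witt_alg_1}(ii)) together with the identity $F([\pi^{1/p^n}])=[\text{unit}]\cdot[\pi^{1/p^{n-1}}]$, which is what allows $\op{im}(F)$ to reach the Teichm\"uller elements $[\pi^{1/p^n}]$, and thence all of $W_r(S)$.

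\emph{Main obstacle.} The formal implications, and even the construction of the $p$-power roots, are routine; the genuine difficulty is the passage from surjectivity of Frobenius modulo a coarse ideal to surjectivity modulo a finer one, i.e.\ the implications $(ii)\Rightarrow(i)$ and $(ii)\Rightarrow(iv)$. These resist a direct d\'evissage along the $[\pi]$-adic filtration of $W_\bullet(S)$, because $F$ annihilates the associated graded pieces $W_\bullet(\pi^nS/\pi^{n+1}S)$; one must genuinely feed the $p$-power roots of $\pi$ back into the argument so that $F$ becomes surjective on the nose rather than merely ``up to $S/pS$''.
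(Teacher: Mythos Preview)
Your overall logic is sound, and the easy implications $(i)\Rightarrow(ii)\Rightarrow(iii)$, $(iv)\Leftrightarrow(v)$, $(v)\Rightarrow(ii)$ match the paper. The paper, however, handles the two steps you single out as the obstacle quite differently, and more cheaply.

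For $(iii)\Rightarrow(i)$ the paper gives a direct two-line argument that avoids $p$-power roots entirely: iterating $(iii)$, write any $x\in S$ as a $\pi$-adically convergent sum $x=\sum_{i\ge0}x_i^p\pi^{pi}$; every off-diagonal term in the multinomial expansion of $\big(\sum_i x_i\pi^i\big)^p$ carries a coefficient divisible by $p$ and at least one factor of $\pi$, so $x\equiv\big(\sum_i x_i\pi^i\big)^p\pmod{p\pi S}$. Your detour through $\tau=\pi^{1/p}p^{1/p}$ is unnecessary. Likewise, once $(i)$ is in hand the ``Moreover'' is immediate in the paper: Lemma~\ref{lemma_witt_alg_1}(i) applied to both $S$ and $S/\pi pS$ gives $\projlim_{x\mapsto x^p}S\cong\projlim_{x\mapsto x^p}(S/\pi p S)$, and $(i)$ says exactly that the $p$-th-power map is surjective on $S/\pi pS$; so $\pi$ and $p$ lift directly to elements of $S^\flat$ whose $0$-th components lie in $\pi(1+pS)$, resp.\ $p(1+\pi S)$.

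For $(i)\Rightarrow(iv)$ the paper does not argue directly: it invokes Davis--Kedlaya \cite{DavisKedlaya}, observing that $(i)$ together with $\pi^p\in pS$ is their condition $(\mathrm{xiv})'$, which they prove equivalent to surjectivity of all $F\colon W_{r+1}(S)\to W_r(S)$. Your direct sketch is in the right spirit but is not yet a proof. Knowing that $F$ surjects onto $W_r(S/\pi S)$ and that $[\varpi^{1/p^n}]\in\op{im}(F)$ does eventually yield $\op{im}(F)=W_r(S)$, but one must actually run an iteration: using $V^j[\varpi b]=[\varpi^{1/p^j}]V^j[b]$ and $[\varpi^{1/p^j}]F(w)=F([\varpi^{1/p^{j+1}}]w)$, repeated substitution shows $W_r(\varpi S)\subset\op{im}(F)+[\varpi]W_r(S)$, and only then does the $[\varpi]$-adic successive approximation go through (the correction at step $k$ lying in $F\big([\varpi^{1/p}]^kW_{r+1}(S)\big)$). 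Your phrase ``and thence all of $W_r(S)$'' elides exactly this step.
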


\begin{proof}
The implications (i)$\Rightarrow$(ii)$\Rightarrow$(iii) are trivial since $\pi pS\subset pS\subset\pi^pS$.

Assuming (iii), a simple inductive argument allows us to write any given element $x\in S$ as an infinite sum $x=\sum_{i=0}^\infty x_i^p\pi^{pi}$ for some $x_i\in S$; but then $x\equiv(\sum_{i=0}^\infty x_i\pi^i)^p$ mod $p\pi S$, establishing~(i).


Condition (iv) states that the transition maps in the inverse system $\projlim_F W_r(S)$ are surjective, which implies that each map $\tilde\theta_r$ is surjective, and hence that each map $\theta_r$ is surjective, i.e., (v).

Next, (v) implies (ii) since any element of $S$ in the image of $\theta=\theta_1$ is a $p^\sub{th}$-power mod $p$.

It remains to show that (ii) implies (iv), but we will first prove the ``moreover'' assertion using only (i). Applying Lemma \ref{lemma_witt_alg_1}(i) to both $S$ and $S/\pi p$ implies that the canonical map $\projlim_{x\mapsto x^p}S\to \projlim_{x\mapsto x^p}S/\pi p$ is an isomorphism. Applying (i) repeatedly, there therefore exists $\omega\in\projlim_{x\mapsto x^p}S$ such that $\omega^{(0)}\equiv \pi$ mod $\pi pS$ (resp.~ $\equiv p$ mod $\pi pS$). Writing $\omega^{(0)}=\pi+\pi px$ (resp.~$\omega^{(0)}=p+\pi px$) for some $x\in S$, the proof of the ``moreover'' claim is completed by noting that $1+p x\in S^\times$ (resp.~$1+\pi x\in S^\times$).

Finally, assuming (ii) (which we have shown implies (i)), the ``moreover'' assertion implies that there exist $\pi'\in S$ and $v\in S^\times$ satisfying $\pi'^p=vp$. Note that $S$ is $\pi'$-adically complete, and so we may apply the implication (ii)$\Rightarrow$(i) for the element $\pi'$ to deduce that every element of $S/\pi'pS$ is a $p^\sub{th}$-power; it follows that every element of $S/Ip$ is a $p^\sub{th}$-power, where $I$ is the ideal $\{a\in S:a^p\in pS\}$. Now apply implication ``(xiv)$^\prime\Rightarrow$(ii)'' of Davis--Kedlaya \cite{DavisKedlaya} to complete the proof.
\end{proof}

Next, we analyze injectivity of the Frobenius map.

\begin{lemma}\label{lemma_inj_of_Frob}
Let $S$ be a ring which is $\pi$-adically complete with respect to some element $\pi\in S$ such that $\pi^p$ divides $p$, and assume that $\phi: S/\pi S\to S/\pi^p S$ is surjective.
\begin{enumerate}
\item If $\ker\theta$ is a principal ideal of $\bb A_\inf(S)$, then $\phi: S/\pi S\to S/\pi^p S$ is an isomorphism, and any generator of $\ker\theta$ is a non-zero-divisor.
\item Conversely, if $\phi: S/\pi S\to S/\pi^p S$ is an isomorphism and $\pi$ is a non-zero-divisor, then $\ker\theta$ is a principal ideal (and hence $S$ is perfectoid).
\end{enumerate}
\end{lemma}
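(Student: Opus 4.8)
The plan is to prove both implications by reducing modulo $p$ and working in the perfect ring $S^\flat=\bb A_\inf(S)/p$, where the Frobenius is an automorphism and $(a+b)^p=a^p+b^p$, so that $z^p\in(f_1^p,\dots,f_m^p)$ forces $z\in(f_1,\dots,f_m)$; controlling this ``freshman's dream'' step will be the crux.

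\emph{Preliminaries.} Surjectivity of $\phi\colon S/\pi S\to S/\pi^pS$ is exactly condition~(iii) of Lemma~\ref{lemma_frobenius_surjectivity}, so all the conditions listed there hold; in particular $\theta$ is surjective and, after multiplying $\pi$ by a unit (which changes neither the statement nor the ideals $\pi S$, $\pi^pS$), we may assume $\pi$ admits a compatible system of $p$-power roots, giving $\pi^\flat\in S^\flat$ with $\theta([\pi^\flat])=\pi$. By the commutative square recorded just after Lemma~\ref{lemma_theta}, the reduction $\bar\theta\colon S^\flat\to S/pS$ of $\theta$ modulo $p$ is the projection onto the $0$th component: it is surjective, its kernel is the image of $\ker\theta$, and $\bar\theta$ composed with the automorphisms $\phi^{-n}$ of $S^\flat$ gives the jointly injective coordinate projections, whence $\bigcap_n\phi^n(\ker\bar\theta)=0$. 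Finally, since $\bb A_\inf(S)=W(S^\flat)$ is $p$-torsion-free and $p$-adically complete and separated with graded pieces $p^n\bb A_\inf(S)/p^{n+1}\bb A_\inf(S)\cong S^\flat$, one has a transfer principle: $\xi\in\bb A_\inf(S)$ is a non-zero-divisor iff $\bar\xi$ is one in $S^\flat$; and if $S$ is moreover $p$-torsion-free, a non-zero-divisor $\xi\in\ker\theta$ whose reduction $\bar\xi$ generates $\ker\bar\theta$ already generates $\ker\theta$ (approximate $\eta\in\ker\theta$ by multiples of $\xi$ modulo powers of $p$, keeping the error in $\ker\theta$ by $p$-torsion-freeness, and pass to the $p$-adic limit).

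\emph{Part (i).} Write $\ker\theta=(\xi)$, so $\bar\xi$ generates $\ker\bar\theta=\{y\in S^\flat:y^{(0)}\in pS\}$. First, $\bar\xi$ (hence, by the transfer principle, $\xi$) is a non-zero-divisor in $S^\flat$: from $\bar\xi a=0$ one extracts $p$-power roots in the perfect ring $S^\flat$, uses $\phi^n(\ker\bar\theta)=\bar\xi^{p^n}S^\flat$ together with $\bigcap_n\bar\xi^{p^n}S^\flat=0$ and the $\pi$-adic separatedness of $S=\bb A_\inf(S)/\xi$ to force $a=0$; one deduces in particular that $S$ is $p$-torsion-free. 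There remains the injectivity of $\phi\colon S/\pi S\to S/\pi^pS$ (surjectivity is known). Given $x\in S$ with $x^p\in\pi^pS$, lift $x$ along $\theta$ to $\hat x$; then $\hat x^p\in[\pi^\flat]^p\bb A_\inf(S)+\xi\bb A_\inf(S)$, so modulo $p$, $\bar{\hat x}^p$ lies in the ideal $\big((\pi^\flat)^p,\bar\xi\big)S^\flat$. The heart of the matter is to see that this is a ``$p$-th power ideal'', i.e.\ that $\bar\xi\in(\pi^\flat)^pS^\flat$; granting that, the freshman's dream gives $\bar{\hat x}\in(\pi^\flat)S^\flat$, and pushing $\bar{\hat x}\in(\pi^\flat)S^\flat$ forward through $\theta$ (using $pS\subseteq\pi S$) yields $x\in\pi S$.

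\emph{Part (ii).} Conversely, assume $\phi\colon S/\pi S\to S/\pi^pS$ is bijective and $\pi$ is a non-zero-divisor; note this forces $S$ to be reduced (if $b^p=0$ then $b^p\in\pi^pS$, so $b\in\pi S$, and iterating lands $b$ in $\bigcap_n\pi^nS=0$). Since $\pi^p\mid p$, write $p=\pi^pc$ and, using that $\theta$ is surjective, pick $\hat c$ with $\theta(\hat c)=c$; set $\xi:=[\pi^\flat]^p\hat c-p$, so $\theta(\xi)=0$ and $\bar\xi=(\pi^\flat)^p\bar c$ in $S^\flat$. One then checks that $\bar\xi$ generates $\ker\bar\theta=\{y:y^{(0)}\in pS\}$ and is a non-zero-divisor: for $y$ in this set, $(y^{(1)})^p=y^{(0)}\in pS\subseteq\pi^pS$, so the injectivity hypothesis --- applied also to the roots $\pi^{1/p^k}$, each of which again satisfies $(\pi^{1/p^k})^p\mid p$ --- lets one successively divide the components of $y$ by powers of $\pi^\flat$, and the freshman's dream exhibits $y\in\bar\xi S^\flat$; non-zero-divisibility of $\bar\xi$ follows since $\pi^\flat$ and $\bar c$ are non-zero-divisors in $S^\flat$ ($\pi^{1/p^k}$ being non-zero-divisors in the reduced ring $S$). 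By the transfer principle $\xi$ is a non-zero-divisor in $\bb A_\inf(S)$, whence $S$ is $p$-torsion-free, and a final application of the transfer principle gives $\ker\theta=(\xi)$.

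\emph{Main obstacle.} The genuinely delicate point is the freshman's-dream bookkeeping inside $S^\flat$: identifying $\ker\bar\theta$ precisely in terms of $\pi^\flat$ and its roots and, in part~(i), establishing $\bar\xi\in(\pi^\flat)^pS^\flat$ --- equivalently, that every generator of $\ker\theta$ becomes divisible by $[\pi^\flat]^p$ after reduction modulo $p$ --- without circularly invoking the very injectivity of Frobenius one is trying to prove. (Alternatively, once $S$ is recognised as perfectoid in the sense of Gabber--Ramero, part~(i) may instead be extracted from their structural results.) Everything else --- the transfer principle, the $p$-adic approximation, and the reduction to the case where $\pi$ has $p$-power roots --- is essentially formal.
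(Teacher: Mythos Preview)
Your strategy of reducing modulo $p$ and exploiting perfectness of $S^\flat$ is natural, but the obstacle you flag in part~(i) is a genuine gap, not just bookkeeping. You need $\bar\xi\in(\pi^\flat)^pS^\flat$ for an \emph{arbitrary} generator $\xi$ of $\ker\theta$, yet the obvious route---$\theta(\xi)=0$ gives $\xi_0^{(0)}\in pS\subseteq\pi^pS$, then $(\xi_0^{(1)})^p\in\pi^pS$ forces $\xi_0^{(1)}\in\pi S$, and so on---invokes injectivity of $\phi$ at every step, which is exactly what you are proving. Your non-zero-divisor sketch has a similar problem: from $\bar\xi a=0$ one does not obviously get $a\in\bigcap_n\bar\xi^{p^n}S^\flat$.

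The paper breaks the circularity by reversing the logic. It first writes down a \emph{specific} element $\xi=p+[\pi^\flat]^px\in\ker\theta$, which has $\bar\xi\in(\pi^\flat)^pS^\flat$ by construction, and then proves that this particular $\xi$ already generates $\ker\theta$. The mechanism is a Witt-coordinate computation: if $\xi'=(\xi_0',\xi_1',\dots)$ is any generator and $\xi=\xi'a$, comparing the first two Witt components of both sides of
\[
(\pi^{\flat p}x_0,\,1+\pi^{\flat p^2}x_1,\dots)=\xi=\xi'a=(\xi_0'a_0,\,\xi_0'^{p}a_1+\xi_1'a_0^p,\dots)
\]
shows $\xi_1'a_0^p\equiv 1$ in $S^\flat/\pi^\flat$, so $a_0$ and hence $a$ is a unit. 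Once this specific $\xi$ is known to generate, the square $\bb A_\inf(S)/(\xi,[\pi^\flat]^p)=S^\flat/\pi^{\flat p}S^\flat\to S/\pi^pS$ is an isomorphism, and the desired injectivity of $\phi$ drops out from the identification $\phi\colon S/\pi S\to S/\pi^pS$ with $\phi\colon S^\flat/\pi^\flat\to S^\flat/\pi^{\flat p}$. The non-zero-divisor check then exploits the explicit shape $p+[\pi^\flat]^px$: from $\xi b=0$ one gets $(p^r+[\pi^\flat]^{pr}x^r)b=0$ for odd $r$, hence $p^rb\in[\pi^\flat]^{pr}\bb A_\inf(S)$, and reading off Witt coordinates forces $b=0$ by $\pi^\flat$-adic separatedness.

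Your part~(ii) has a parallel soft spot. The step ``the freshman's dream exhibits $y\in\bar\xi S^\flat$'' only gives $y\in(\pi^\flat)^pS^\flat$, not $y\in(\pi^\flat)^p\bar c\,S^\flat$; and your transfer principle needs $S$ to be $p$-torsion-free, which you derive from $\bb A_\inf(S)/\xi=S$---itself the conclusion. The paper avoids both issues by first proving the cleaner fact $S^\flat/\pi^\flat S^\flat\cong S/\pi S$ (using only injectivity of $\phi$ for the roots $\pi^{1/p^n}$) and then running a $[\pi^\flat]$-adic successive approximation directly in $\bb A_\inf(S)$: any $x\in\ker\theta$ satisfies $x=\xi y_0+[\pi^\flat]x_1$ with $\pi\theta(x_1)=0$, hence $\theta(x_1)=0$, and iteration gives $x=\xi\sum_i[\pi^\flat]^iy_i$.
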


\begin{proof} Since multiplying $\pi$ by a unit does not affect any of the assertions, we may assume by the previous lemma that $\pi$ admits a compatible sequence of $p$-power roots, i.e., that there exists $\pi^\flat\in S^\flat$ satisfying $\pi^{\flat(0)}=\pi$.

We begin by constructing a certain element of $\ker\theta$ (a ``distinguished'' or ``primitive'' element, cf.~Remark~\ref{remark_distinguished} below). By the hypothesis that $\pi^p$ divides $p$, and Lemma \ref{lemma_frobenius_surjectivity}, it is possible to write $p=\pi^p\theta(-x)$ for some $x\in \bb A_\inf(S)$, whence $\xi=p+[\pi^\flat]^px$ belongs to $\ker\theta$ (recall here that $\theta([\pi^\flat])=\pi$). Then there is a commutative diagram
\[\xymatrix{
\bb A_\inf(S)/\xi\ar[r]^-{\theta}\ar[d]&S\ar[d]\\
\bb A_\inf(S)/(\xi,[\pi^\flat]^p)\ar[r]& S/\pi^p S
}\]
in which the lower left entry identifies with $\bb A_\inf(S)/(p,[\pi^\flat]^p)=S^\flat/\pi^{\flat p} S^\flat$ and the lower horizontal arrow identifies with the map $S^\flat/\pi^{\flat p} S^\flat\to S/\pi^p S$ induced by the canonical projection $S^\flat=\projlim_\phi S/\pi^p S\to S/\pi^p S$.

Suppose first that $\ker\theta$ is principal and let $\xi^\prime$ be a generator; we claim that $\ker\theta$ is actually generated by the element $\xi$. Let $\xi^\prime = (\xi_0^\prime,\xi_1^\prime,\dots)\in \bb A_\inf(S)$ be the Witt vector expansion. Write $\xi=\xi^\prime a$ for some $a\in \bb A_\inf(S)$, and consider the resulting Witt vector expansions:
\[
(\pi^{\flat p}x_0,1+\pi^{\flat p^2}x_1,\dots)=p+[\pi^\flat]^px=\xi=\xi^\prime a=(\xi_0^\prime,\xi_1^\prime,\dots)(a_0,a_1,\dots)=(\xi_0^\prime a_0,\xi_0^{\prime p}a_1+\xi_1^\prime a_0^p,\dots)
\]
It follows that $\xi_1^\prime a_0^p=1+\pi^{\flat p^2}x_1-\xi_0^{\prime p}a_1$. We claim that this is a unit of $S^\flat$. To check this, using that $S^\flat = \varprojlim_\phi S/\pi S$, it is enough to check that the image of $\xi_1^\prime a_0^p$ in $S/\pi S$ is a unit. But this image is simply $1$, as both $\pi^\flat$ and $\xi_0^\prime$ have trivial image in $S/\pi S$. So both $\xi_1^\prime$ and $a_0$ are units of $S^\flat$; in particular, this implies that $a\in \bb A_\inf(S)^\times$, thereby proving that $\xi=\xi^\prime a$ is also a generator of $\ker\theta$, as required.

Now, for part (i), if $\theta: \bb A_\inf(S)/\xi\to S$ is an isomorphism, then so is $S^\flat/\pi^{\flat p}S^\flat\to S/\pi^p S$ by the displayed diagram above. The map $\phi: S/\pi S\to S/\pi^p S$ gets identified with $\phi: S^\flat/\pi^\flat S^\flat\to S^\flat/\pi^{\flat p} S^\flat$, which is an isomorphism. We also need to check that $\xi$ is a non-zero-divisor (as then any other generator of $\ker \theta$ differs from $\xi$ by a unit). So suppose that $b\in \bb A_\inf(S)$ satisfies $(p+[\pi^\flat]^px)b=0$. Then also $(p^r+[\pi^\flat]^{pr}x^r)b=0$ for any odd $r\ge1$, since $p+[\pi^\flat]^px$ divides $p^r+[\pi^\flat]^{pr}x^r$, and so $p^rb\in[\pi^\flat]^{pr}\bb A_\inf(S)$. Using this to examine the Witt vector expansion of $b=(b_0,b_1,\dots)$ shows that $b_i^{p^r}\in\pi^{\flat p^{r+i+1}r} S^\flat$ for each $i\ge0$; hence $b_i\in\pi^{\flat p^{i+1}r} S^\flat$ since $S^\flat$ is perfect. As this holds for all odd $r\ge1$, and as $S^\flat$ is $\pi^\flat$-adically complete and separated, it follows that $b_i=0$ for all $i\ge0$, i.e., $b=0$.

Conversely, for part (ii), assume that $S/\pi S\to S/\pi^p S$ is an isomorphism, and that $\pi$ is a non-zero-divisor in $S$. Note first that the first condition implies that for all $n\geq 0$, $S/\pi^{1/p^n} S\to S/\pi^{1/p^{n-1}} S$ is an isomorphism, by taking the quotient modulo $\pi^{1/p^n}$. This implies that the kernel of $S^\flat\to S/\pi S$ is generated by $\pi^\flat$: Indeed, given $x=(x^{(0)},x^{(1)},\ldots)\in S^\flat = \varprojlim_{x\mapsto x^p} S$ with $x^{(0)}\in \pi S$, one inductively checks that $x^{(n)}$ is divisible by $\pi^{1/p^n}$, using that $\phi: S/\pi^{1/p^n} S\to S/\pi^{1/p^{n-1}} S$ is an isomorphism. This implies that $x$ is divisible by $\pi^\flat$. Thus, we see that $S^\flat/\pi^\flat S^\flat\to S/\pi S$ is an isomorphism. Now let $x\in \bb A_\inf(S)$ satisfy $\theta(x)=0$. Then one can write $x=\xi y_0 + [\pi^\flat] x_1$, where $\pi \theta(x_1)=\theta([\pi^\flat] x_1)=0$. As $\pi$ is a non-zero-divisor, this implies $\theta(x_1)=0$, so we can inductively write $x=\xi(y_0+[\pi^\flat] y_1 + \ldots)$, showing that $\ker\theta$ is generated by $\xi$.
\end{proof}

\begin{remark}[Distinguished elements]\label{remark_distinguished}
Let $S$ be a perfectoid ring, and let $\xi\in\ker\theta$. Then $\xi$ is said to be {\em distinguished} if and only if its Witt vector expansion $\xi=(\xi_0,\xi_1,\dots)$ has the property that $\xi_1$ is a unit of $S^\flat$. The argument in Lemma \ref{lemma_inj_of_Frob} shows that $\xi$ generates $\ker\theta$ if and only if it is distinguished.

For example, let $\xi\in \bb A_\inf(S)$ satisfy $\theta_r(\xi)=V(1)$ in $W_r(S)$ for some $r>1$ (for any fixed $r>1$, such an element $\xi$ does exist by Lemma \ref{lemma_frobenius_surjectivity}(v)). We claim that $\xi$ is a distinguished element of $\ker\theta$, whence it is a generator. Indeed, noting that $V(1)=(0,1,0,\dots,0)$, the first diagram of Lemma \ref{lemma_theta_r_diagrams} shows that $\theta(\xi)=0$, while the commutative diagram immediately before Lemma \ref{lemma_theta_r_diagrams} shows that $\xi_1^{(0)}\equiv1$  mod $p S$, whence $\xi_1$ is a unit of $S^\flat$.
\end{remark}

We return to the maps $\theta_r$, describing their kernels in the case of a perfectoid ring:

\begin{lemma}\label{lemma_ker_theta_r}
Suppose that $S$ is a perfectoid ring, and let $\xi\in \bb A_\inf(S)$ be any element generating $\ker\theta$. Then $\ker\theta_r$ is generated by the non-zero-divisor
\[
\xi_r:=\xi\phi^{-1}(\xi)\cdots\phi^{-(r-1)}(\xi)
\]
for any $r\ge1$. Equivalently, $\ker \tilde\theta_r$ is generated by
\[
\tilde\xi_r := \phi^r(\xi_r) = \phi(\xi)\phi^2(\xi)\cdots\phi^r(\xi)\ .
\]
\end{lemma}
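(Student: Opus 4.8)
The plan is to induct on $r$, the case $r=1$ being the definition of $\xi$ together with Lemma~\ref{lemma_inj_of_Frob}(i). Since $\phi$ is an automorphism of $\bb A_\inf(S)$ and $\theta_r=\tilde\theta_r\phi^r$, applying $\phi^r$ converts the statement for $\theta_r$ into the one for $\tilde\theta_r$ (with $\tilde\xi_r=\phi^r(\xi_r)$), so I would only treat $\theta_r$. The non-zero-divisor assertion is immediate: $\xi$ is a non-zero-divisor by Lemma~\ref{lemma_inj_of_Frob}(i) (the hypotheses hold since $S$ is perfectoid), hence so is each $\phi^{-j}(\xi)$, and hence so is the product $\xi_r$ (if $abx=0$ with $a,b$ non-zero-divisors then $bx=0$, then $x=0$).

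The key input is an auxiliary claim, which I would establish by a separate induction on $r$: for every $r\ge1$ one has $\theta_{r+1}(\xi_r)=V^r(u_r)$ for some unit $u_r\in S^\times$. For $r=1$: by Lemma~\ref{lemma_frobenius_surjectivity} the map $\theta_2$ is surjective, so choose $\eta\in\bb A_\inf(S)$ with $\theta_2(\eta)=V(1)$; by Remark~\ref{remark_distinguished}, $\eta$ generates $\ker\theta$, so $\xi=w\eta$ for a unit $w$ (if $\xi,\eta$ generate the same ideal and $\xi$ is a non-zero-divisor they differ by a unit), and then, using $x\cdot V(y)=V(F(x)y)$ and the second diagram of Lemma~\ref{lemma_theta_r_diagrams} ($F\theta_2=\theta\phi$), $\theta_2(\xi)=\theta_2(w)V(1)=V(\theta(\phi(w)))$, with $u_1:=\theta(\phi(w))\in S^\times$ as $\phi(w)$ is a unit. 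For the inductive step write $\xi_r=\xi_{r-1}\cdot\phi^{-(r-1)}(\xi)$. From $R\theta_{r+1}=\theta_r$ (first diagram of Lemma~\ref{lemma_theta_r_diagrams}) and the inductive hypothesis $\theta_r(\xi_{r-1})=V^{r-1}(u_{r-1})$ one gets $\theta_{r+1}(\xi_{r-1})=V^{r-1}(a)$ for an $a\in W_2(S)$ with $R(a)=u_{r-1}$; since $S$ (hence $W_2(S)$) is $p$-adically complete by Lemma~\ref{lemma_witt_alg_1}(ii), such an $a$ is a unit of $W_2(S)$. Then, using $V^{r-1}(a)\cdot y=V^{r-1}(a\,F^{r-1}(y))$, the identity $F^{r-1}\theta_{r+1}=\theta_2\phi^{r-1}$ (iterate the second diagram of Lemma~\ref{lemma_theta_r_diagrams}), and the base case,
\[
\theta_{r+1}(\xi_r)=V^{r-1}\big(a\cdot\theta_2(\xi)\big)=V^{r-1}\big(a\,V(u_1)\big)=V^r\big(F(a)u_1\big),
\]
and $F(a)u_1\in S^\times$ because $F(a)=a_0^p+pa_1$ is a unit whenever $a_0=R(a)$ is (again $p$-adic completeness).

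Granting the claim, the main induction step $r\to r+1$ is short. From $R\theta_{r+1}=\theta_r$ we get $\ker\theta_{r+1}\subseteq\ker\theta_r=(\xi_r)$. If $x=\xi_r y\in\ker\theta_{r+1}$ then, using the claim, $V^r(c)\cdot z=V^r(c\,F^r(z))$, and $F^r\theta_{r+1}=\theta\phi^r$,
\[
0=\theta_{r+1}(\xi_r)\theta_{r+1}(y)=V^r\big(u_r\,\theta(\phi^r(y))\big),
\]
so injectivity of $V^r$ and $u_r\in S^\times$ force $\theta(\phi^r(y))=0$, i.e.\ $y\in\phi^{-r}(\ker\theta)=(\phi^{-r}(\xi))$, whence $x=\xi_r y\in(\xi_r\phi^{-r}(\xi))=(\xi_{r+1})$; the same computation with $y=\phi^{-r}(\xi)$ gives $\theta_{r+1}(\xi_{r+1})=V^r(u_r\theta(\xi))=0$, so $(\xi_{r+1})\subseteq\ker\theta_{r+1}$. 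Hence $\ker\theta_{r+1}=(\xi_{r+1})$, completing the induction. I expect the only real obstacle to be the base case of the auxiliary claim, namely seeing that $\theta_2(\xi)$ is $V$ of a \emph{unit} and not merely of some element of $S$; the device of extracting $\eta$ with $\theta_2(\eta)=V(1)$ from surjectivity of $\theta_2$ and then invoking Remark~\ref{remark_distinguished} is what makes this clean, and everything else is bookkeeping with the Witt-vector identities $FV=p$, $x\,V(y)=V(F(x)y)$, $V^r(x)y=V^r(xF^r(y))$ and the diagrams of Lemma~\ref{lemma_theta_r_diagrams}.
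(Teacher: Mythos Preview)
Your proof is correct and rests on the same ingredients as the paper's: induction on $r$, the diagrams of Lemma~\ref{lemma_theta_r_diagrams}, Remark~\ref{remark_distinguished}, and the Witt-vector identity $xV(y)=V(F(x)y)$. The paper organizes the induction step a bit differently: rather than proving your auxiliary claim $\theta_{r+1}(\xi_r)=V^r(u_r)$ for a unit $u_r$, it \emph{normalizes} $\xi$ at each step (replacing $\xi$ by a unit multiple, depending on $r$, so that $\theta_{r+1}(\xi)=V(1)$ exactly---which is legitimate since the ideals $(\xi_r)$, $(\xi_{r+1})$ are unchanged under such a replacement), and then reads off $\ker\theta_{r+1}=\xi\phi^{-1}(\ker\theta_r)$ directly from a single commutative diagram with exact rows
\[
\xymatrix{
0\ar[r]&\bb A_\inf(S)\ar[r]^{\xi\phi^{-1}}\ar[d]^{\theta_r}&\bb A_\inf(S)\ar[r]^\theta\ar[d]^{\theta_{r+1}} & S\ar@{=}[d]\ar[r] & 0\\
0\ar[r] & W_r(S)\ar[r]_V & W_{r+1}(S)\ar[r]_{R^r} & S\ar[r]&0
}
\]
via the snake lemma. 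Your approach trades this normalization-and-diagram step for an explicit auxiliary induction tracking the units $u_r$; this is more hands-on and perhaps more transparent for a first reading, while the paper's version is shorter and highlights the exact-sequence structure. Both routes are equally valid and ultimately encode the same computation.
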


\begin{proof} We prove the result by induction on $r\ge1$, the case $r=1$ being covered by the hypotheses; so fix $r\ge1$ for which the result is true. By the previous remark we may, after multiplying $\xi$ by a unit (depending on the fixed $r\ge1$), assume that $\theta_{r+1}(\xi)=V(1)$. Hence Lemma \ref{lemma_theta_r_diagrams} implies that there is a commutative diagram
\[\xymatrix{
0\ar[r]&\bb A_\inf(S)\ar[r]^-{\xi\phi^{-1}}\ar[d]^{\theta_r}&\bb A_\inf(S)\ar[r]^-\theta\ar[d]^{\theta_{r+1}} & S\ar@{=}[d]\ar[r] & 0\\
0\ar[r] & W_r(S)\ar[r]_-V & W_{r+1}(S)\ar[r]_-{R^r} & S\ar[r]&0
}\]
in which both rows are exact. Since $\ker\theta_r$ is generated by $\xi\phi^{-1}(\xi)\cdots\phi^{-(r-1)}(\xi)$, it follows that $\ker\theta_{r+1}$ is generated by $\xi\phi^{-1}(\xi)\cdots\phi^{-r}(\xi)$, as desired.
\end{proof}

Henceforth we will often identify $\bb A_\inf(S)/\tilde\xi_r$ with $W_r(S)$ via $\tilde\theta_r$. Some Tor-independence assertions related to this identification are summarised in the following lemma:

\begin{lemma}\label{lemma_tor_independence_1}
Let $S\to S^\prime$ be a map between perfectoid rings. Then the canonical maps
\[W_j(S)\dotimes_{\bb A_\inf(S)} \bb A_\inf(S^\prime)\To W_j(S^\prime),\qquad W_j(S)\dotimes_{W_r(S)}W_r(S^\prime)\To W_j(S^\prime)\] are quasi-isomorphisms for all $1\le j\le r$. Here, $W_j(S)$ is considered as a $W_r(S)$-module along either the Frobenius or restriction map.
\end{lemma}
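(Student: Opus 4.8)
\emph{Plan of proof.} The idea is to reduce both assertions to the single fact that killing a non-zero-divisor is compatible with any base change under which it stays a non-zero-divisor, using that $W_j$ of a perfectoid ring is exactly such a quotient of $\bb A_\inf$. I would first record the relevant functoriality. The map $S\to S'$ induces a $\theta$-compatible ring homomorphism $\bb A_\inf(S)=W(S^\flat)\to W(S'^\flat)=\bb A_\inf(S')$, so a generator $\xi$ of $\ker\theta\subset\bb A_\inf(S)$ maps into $\ker\theta\subset\bb A_\inf(S')$. By Remark~\ref{remark_distinguished}, $\xi$ is distinguished, i.e.\ the Witt coordinate $\xi_1$ of $\xi=(\xi_0,\xi_1,\dots)$ is a unit of $S^\flat$; since $W(S^\flat)\to W(S'^\flat)$ acts coordinatewise and $S^\flat\to S'^\flat$ carries units to units, the image of $\xi$ is again distinguished, hence a generator of $\ker\theta\subset\bb A_\inf(S')$. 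Combining this with Lemma~\ref{lemma_ker_theta_r} applied to both $S$ and $S'$, the generator $\xi_j=\xi\phi^{-1}(\xi)\cdots\phi^{-(j-1)}(\xi)$ of $\ker\theta_j\subset\bb A_\inf(S)$ maps to a generator of $\ker\theta_j\subset\bb A_\inf(S')$; in particular its image is a non-zero-divisor, and $\bb A_\inf(S')/\xi_j\bb A_\inf(S')=W_j(S')$.

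For the first map: since $\xi_j$ is a non-zero-divisor, $\big[\bb A_\inf(S)\xrightarrow{\,\xi_j\,}\bb A_\inf(S)\big]$, placed in cohomological degrees $-1$ and $0$, is a finite free resolution of $W_j(S)=\bb A_\inf(S)/\xi_j$. Applying $-\otimes_{\bb A_\inf(S)}\bb A_\inf(S')$ yields $\big[\bb A_\inf(S')\xrightarrow{\,\xi_j\,}\bb A_\inf(S')\big]$, which by the previous paragraph has $H^{-1}=0$ and $H^0=\bb A_\inf(S')/\xi_j\bb A_\inf(S')=W_j(S')$. Thus $\Tor^{\bb A_\inf(S)}_{>0}(W_j(S),\bb A_\inf(S'))=0$, and the canonical map $W_j(S)\dotimes_{\bb A_\inf(S)}\bb A_\inf(S')\to W_j(S')$ is a quasi-isomorphism.

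For the second map: applying the case $j=r$ just established gives $W_r(S')\simeq W_r(S)\dotimes_{\bb A_\inf(S)}\bb A_\inf(S')$ as $W_r(S)$-modules, and associativity of the derived tensor product gives
\[
W_j(S)\dotimes_{W_r(S)}W_r(S')\ \simeq\ W_j(S)\dotimes_{W_r(S)}\big(W_r(S)\dotimes_{\bb A_\inf(S)}\bb A_\inf(S')\big)\ \simeq\ W_j(S)\dotimes_{\bb A_\inf(S)}\bb A_\inf(S')\ ,
\]
where on the right $W_j(S)$ carries the $\bb A_\inf(S)$-module structure obtained by restricting its $W_r(S)$-module structure along $\theta_r:\bb A_\inf(S)\to W_r(S)$. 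By the commutative diagrams of Lemma~\ref{lemma_theta_r_diagrams}, this pulled-back structure is $\theta_j$ when the $W_r(S)$-action on $W_j(S)$ is taken along the restriction map $R^{r-j}$ (as $R^{r-j}\theta_r=\theta_j$), and is $\theta_j\circ\phi^{r-j}$ when it is taken along the Frobenius $F^{r-j}$ (as $F^{r-j}\theta_r=\theta_j\phi^{r-j}$). In both cases $W_j(S)$ is $\bb A_\inf(S)/\phi^m(\xi_j)$ for a suitable integer $m$; since $\phi$ is an automorphism of $\bb A_\inf(S)$ compatible with $\bb A_\inf(S)\to\bb A_\inf(S')$, the element $\phi^m(\xi_j)$ is again a non-zero-divisor whose image in $\bb A_\inf(S')$ is a non-zero-divisor, so the argument of the previous paragraph applies verbatim and identifies $W_j(S)\dotimes_{\bb A_\inf(S)}\bb A_\inf(S')$ with the corresponding $W_r(S')$-module structure on $W_j(S')$. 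This is essentially formal; the only input with genuine content is the stability of distinguished generators of $\ker\theta$ under $S\to S'$, and the only real bookkeeping is tracking the $\phi$-twist of the relevant $\bb A_\inf$-module structures, which is harmless because $\phi$ is an automorphism.
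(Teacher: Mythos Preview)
Your proof is correct and follows essentially the same route as the paper: both reduce to the observation that a distinguished generator $\xi$ of $\ker\theta$ stays distinguished under $S\to S'$, so $W_j(S)=\bb A_\inf(S)/\xi_j$ (or $/\tilde\xi_j$) base-changes to $W_j(S')$, and the second statement follows by associativity after applying the first with $j=r$. Your explicit tracking of the $\phi$-twist distinguishing the Frobenius and restriction $W_r(S)$-module structures is exactly what the paper compresses into the parenthetical ``using either the $\theta$ or the $\tilde\theta$-maps implicitly.''
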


\begin{proof} Let $\xi\in \bb A_\inf(S)$ be a generator of $\ker\theta$, and let $\tilde\xi_j$ be as in the previous lemma, which is a non-zero-divisor of $\bb A_\inf(S)$. The image of $\xi$ in $\bb A_\inf(S^\prime)$ is still a generator of $\ker\theta$, as the condition of being distinguished passes through ring homomorphisms. Thus, we may apply Lemma~\ref{lemma_ker_theta_r} to both $S$ and $S^\prime$ to see that
\[
W_j(S)\dotimes_{\bb A_\inf(S)}\bb A_\inf(S^\prime)=\bb A_\inf(S)/\tilde\xi_j\dotimes_{\bb A_\inf(S)}\bb A_\inf(S^\prime)=\bb A_\inf(S^\prime)/\tilde\xi_j=W_j(S^\prime).
\]
Note that this argument also works with $\tilde\xi_j$ replaced by $\xi_j$.

Using this result also with $r$ in place of $j$, we get
\[
W_j(S)\dotimes_{W_r(S)}W_r(S^\prime)=W_j(S)\dotimes_{W_r(S)}W_r(S)\dotimes_{\bb A_\inf(S)} \bb A_\inf(S^\prime)
=W_j(S)\dotimes_{\bb A_\inf(S)}\bb A_\inf(S^\prime)=W_j(S^\prime),
\]
as required; this works with either the restriction or Frobenius map (using either the $\theta$ or the $\tilde\theta$-maps implicitly).
\end{proof}

An important property of perfectoid rings is the automatic vanishing of the cotangent complex.

\begin{lemma}\label{lem:perfectoidcotangent} Let $S\to S^\prime$ be a map between perfectoid rings. Then $\bb L_{S^\prime/S}\dotimes_{\bb Z} \bb F_p\simeq 0$; in particular, the (derived) $p$-adic completion $\widehat{\bb L}_{S^\prime/S}\simeq 0$.
\end{lemma}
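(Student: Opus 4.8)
The claim is that for a map $S \to S'$ of perfectoid rings, $\bb L_{S'/S} \dotimes_{\bb Z} \bb F_p \simeq 0$. The plan is to factor the cotangent complex and reduce everything to the characteristic $p$ statement, where perfectness of Frobenius kills the cotangent complex. First I would use the transitivity triangle for $\bb F_p \to S \to S'$ (equivalently, base change $\bb L_{S'/S}$ along $S' \to S'/p$ and use that $\bb L_{S'/S} \dotimes_{\bb Z} \bb F_p \simeq \bb L_{(S'/p)/(S/p)}$ after a cotangent-complex base-change identity, together with the fact that $\bb L_{(\bb Z/p)/\bb Z} \simeq 0$ so there is no discrepancy between $\bb L_{S'/S} \dotimes_{\bb Z}\bb F_p$ and $\bb L_{(S'/p)/(S/p)}$). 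So it suffices to show $\bb L_{(S'/p)/(S/p)} \simeq 0$.

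Now $S/p$ and $S'/p$ are $\bb F_p$-algebras on which Frobenius is surjective (by the definition of perfectoid, condition (ii) of Lemma~\ref{lemma_frobenius_surjectivity}), but they need not be reduced, so they are only ``semiperfect'', not perfect. The key step is to pass to the inverse limit perfection. I would argue as follows: write $(S/p)^{\mathrm{perf}} = \varinjlim_\phi S/p$; this is a perfect $\bb F_p$-algebra, and similarly for $S'$. The map $S/p \to (S/p)^{\mathrm{perf}}$ is a filtered colimit of Frobenius twists. The cleanest route is to instead use the tilt: $S^\flat = \varprojlim_\phi S/p$ is perfect, the natural map $S^\flat/(\ker)\to S/p$ realizes $S/p$ as a quotient of the perfect ring $S^\flat$, and $\bb L_{S^\flat/\bb F_p} \simeq 0$ since $S^\flat$ is perfect (Frobenius is an isomorphism, so it is both flat and has trivial differential, forcing $\bb L \simeq 0$). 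Then the transitivity triangle for $\bb F_p \to S^\flat \to S/p$ gives $\bb L_{(S/p)/S^\flat} \simeq \bb L_{(S/p)/\bb F_p}$, and a parallel statement over $S'$. Thus it suffices to compute $\bb L_{(S'/p)/(S/p)}$ via the square relating $S^\flat, S'^\flat, S/p, S'/p$.

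The genuinely substantive point is to show $\bb L_{(S/p)/S^\flat} \simeq 0$, i.e.\ that the surjection $S^\flat \onto S/p$ has vanishing cotangent complex. Its kernel is generated by a single element $\xi$ mod $p$ — here I would invoke the perfectoid hypothesis that $\ker\theta \subset A_\inf(S) = W(S^\flat)$ is principal, generated by a distinguished element $\xi$ (Remark~\ref{remark_distinguished}), so that $\ker(S^\flat \to S/p)$ is generated by $\bar\xi := \xi \bmod p$. A priori $\bb L_{(S^\flat/\bar\xi)/S^\flat} \simeq \bar\xi/\bar\xi^2[1]$, which is nonzero unless $\bar\xi$ is ``nicely'' nilpotent or a non-zero-divisor up to the right completion. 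The trick — and this is the heart of the argument — is that $\bar\xi$ has compatible $p$-power roots in $S^\flat$ (since $\xi = p + [\pi^\flat]^p x$ type expressions make $\bar\xi = [\pi^\flat]^p x_0$ up to the perfectness of $S^\flat$, so $\bar\xi$ admits all $p$-power roots, or more robustly: $\bar\xi$ lies in the image of Frobenius arbitrarily often). For such an element, $S^\flat/\bar\xi$ is a filtered colimit along Frobenius-type maps of copies of $S^\flat/\bar\xi^{1/p^n}$, and $\bb L$ commutes with filtered colimits while the transition maps induce zero on $\bar\xi/\bar\xi^2$ (multiplication by $p$ on a shift of the conormal module, via $t \mapsto t^p$ differentiating to $0$ in characteristic $p$). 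Hence $\bb L_{(S/p)/S^\flat} \simeq 0$, and the same for $S'$; the transitivity square then yields $\bb L_{(S'/p)/(S/p)} \simeq 0$.

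Finally, for the parenthetical assertion: once $\bb L_{S'/S}\dotimes_{\bb Z}\bb F_p \simeq 0$, the derived $p$-adic completion $\widehat{\bb L}_{S'/S}$ has all its $\bmod\ p$ reductions vanishing, and since $\widehat{\bb L}_{S'/S}$ is derived $p$-complete, vanishing mod $p$ forces vanishing outright (a derived $p$-complete object $K$ with $K\dotimes^{\bb L}_{\bb Z}\bb F_p \simeq 0$ is zero). The main obstacle I anticipate is the bookkeeping around completions — perfectoid rings are only $\pi$-adically (hence $p$-adically) complete, $S^\flat$ is $\pi^\flat$-adically complete, and one must be careful that the cotangent-complex vanishing is genuine and not merely ``after completion'' at the relevant stages; keeping the triangles honest (using that $\bb L$ of a surjection with finitely-generated-root-having kernel really is $0$, not just $p$-adically so) is where the care is needed, and is exactly why the distinguished-element structure from the perfectoid hypothesis is essential rather than incidental.
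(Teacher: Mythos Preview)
Your proposal contains two genuine errors, though the underlying strategy can be salvaged.

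First, the claim $\bb L_{\bb F_p/\bb Z}\simeq 0$ is false: since $p\in\bb Z$ is a non-zero-divisor, $\bb L_{\bb F_p/\bb Z}\simeq (p)/(p^2)[1]\simeq \bb F_p[1]$. The identification $\bb L_{S'/S}\dotimes_{\bb Z}\bb F_p\simeq \bb L_{(S'\dotimes_{\bb Z}^{L}\bb F_p)/(S\dotimes_{\bb Z}^{L}\bb F_p)}$ does hold by derived base change, but identifying the derived quotients with the ordinary rings $S'/p$, $S/p$ requires $S$ and $S'$ to be $p$-torsion-free, which is not assumed (perfect $\bb F_p$-algebras are perfectoid).

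Second, and more seriously, your ``genuinely substantive point'' that $\bb L_{(S/p)/S^\flat}\simeq 0$ is false. Take $S=\roi_C$ for a characteristic-$0$ perfectoid field $C$: then $S^\flat\to S/p$ is the quotient by a non-zero-divisor $\bar\xi$, so $\bb L_{(S/p)/S^\flat}\simeq (\bar\xi)/(\bar\xi^2)[1]\simeq (S/p)[1]\neq 0$. Your colimit argument does not produce a presentation of $S^\flat/\bar\xi$ as a filtered colimit along maps killing the conormal; the ideals $(\bar\xi^{1/p^n})$ form an increasing chain, so the colimit of the quotients is $S^\flat/\bigcup_n(\bar\xi^{1/p^n})$, not $S^\flat/\bar\xi$.

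The good news is that you do not need this vanishing. In the $p$-torsion-free case your square is a derived pushout $S'/p\simeq (S/p)\dotimes^L_{S^\flat} S'^\flat$, so base change gives $\bb L_{(S'/p)/S'^\flat}\simeq \bb L_{(S/p)/S^\flat}\dotimes_{S/p}^L S'/p$; combined with the transitivity triangle for $S^\flat\to S/p\to S'/p$ and the vanishing $\bb L_{S'^\flat/S^\flat}\simeq 0$, this forces $\bb L_{(S'/p)/(S/p)}\simeq 0$ directly. The paper's proof is exactly this idea, but carried out one level up: one uses the derived pushout $S'\simeq S\dotimes^L_{\bb A_\inf(S)}\bb A_\inf(S')$ to reduce to $\bb L_{\bb A_\inf(S')/\bb A_\inf(S)}\dotimes_{\bb Z}\bb F_p\simeq \bb L_{S'^\flat/S^\flat}\simeq 0$. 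Working over $\bb A_\inf$ has the advantage that $\bb A_\inf(S)=W(S^\flat)$ is always $p$-torsion-free, so the mod-$p$ reduction is honest and no hypothesis on $S$ is needed.
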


\begin{proof} Note that $S^\prime = S\dotimes_{\bb A_\inf(S)} \bb A_\inf(S^\prime)$; thus, by base change for the cotangent complex, it is enough to show that $\bb L_{\bb A_\inf(S^\prime)/\bb A_\inf(S)}\dotimes_{\bb Z} \bb F_p\simeq 0$. But $\bb L_{\bb A_\inf(S^\prime)/\bb A_\inf(S)}\dotimes_{\bb Z} \bb F_p\simeq \bb L_{S^{\prime\flat}/S^\flat}$. But for any perfect ring $R$ of characteristic $p$, $\bb L_{R/\bb F_p}\simeq 0$ (as Frobenius is both an isomorphism and zero \cite[Lem.~6.5.13(i)]{GabberRamero1}), so that a transitivity triangle shows $\bb L_{S^{\prime\flat}/S^\flat}\simeq 0$.
\end{proof}

\begin{example}[Perfect rings of characteristic $p$]
Suppose that $S$ is a ring of characteristic $p$. Then $S$ is perfectoid if and only if it is perfect. Indeed, if $S$ is perfect, then the kernel of $\theta: W(S)\to S$ is generated by $p$, and the other conditions are clear. For the converse, by assumption $\phi: S\to S$ is surjective. The element $p\in \ker (\theta: \bb A_\inf(S)\to S)$ is distinguished, and thus a generator. Therefore, $S=\bb A_\inf(S)/p=S^\flat$ is perfect.

In particular, in this case $S^\flat=S$, $\theta_\infty: \bb A_\inf(S)\to W(S)$ is an isomorphism, and the maps $\theta_r:\bb A_\inf(S)\to W_r(S)$ identify with the canonical Witt vector restriction maps.
\end{example}

\begin{example}[Roots of unity]\label{examples_roots_of_unity}
Suppose that $S$ is a perfectoid ring which contains a compatible system $\zeta_{p^r}$, $r\ge1$, of $p$-power roots of unity, where $\zeta_p$ is a ``primitive $p$-th root of unity'' in the sense that $1+\zeta_p+\ldots+\zeta_p^{p-1}=0$. Note that this includes the case that $S$ is of characteristic $p$, and all $\zeta_{p^r} = 1$.

Define $\ep:=(1,\zeta_p,\zeta_{p^2},\dots)\in S^\flat=\projlim_{x\mapsto x^p} S$. We claim that
\[
\xi:=1+[\ep^{1/p}]+[\ep^{1/p}]^2+\cdots+[\ep^{1/p}]^{p-1}
\]
is a generator of $\ker\theta$ satisfying $\theta_r(\xi)=V(1)$ for all $r>1$. Note that
\[
\theta(\xi) = 1+\zeta_p+\ldots+\zeta_p^{p-1} = 0
\]
by assumption on $\zeta_p$. It will then follow from Lemma \ref{lemma_ker_theta_r} that $\ker \tilde\theta_r$ is generated by
\[
\tilde\xi_r=\phi(\xi)\phi^2(\xi)\cdots\phi^r(\xi)=\sum_{i=0}^{p^r-1}[\ep]^i \ .
\]

According to Remark \ref{remark_distinguished} it is sufficient to check that $\theta_r(\xi)=V(1)$ for all $r\geq 1$. By functoriality it is sufficient to prove this in the special case that $S := \mathbb{Z}_p^{\mathrm{cycl}}$ as in Example~\ref{ex:PerfectoidEasy}, which has the advantage that $S$ is now $p$-torsion free. So the ghost map $\op{gh}:W_r(S)\to S^r$ is now injective and it is sufficient to prove that $\op{gh}(\theta_r(\xi))=\op{gh}(V(1))$. But it follows easily from Lemma \ref{lemma_theta} that the composition $\op{gh}\circ\theta_r:\bb A_\inf(S)\to S^r$ is given by $(\theta,\theta\phi,\dots,\theta\phi^{r-1})$, and so in particular that \[\op{gh}(\theta_r(\xi))=(\theta(\xi),\theta\phi(\xi),\dots,\theta\phi^{r-1}(\xi)).\] Since $\theta(\xi)=0$ and $\op{gh}(V(1))=(0,p,p,p,\dots)$, it remains only to check that $\theta\phi^i(\xi)=p$ for all $i\ge 1$, which is straightforward: \[\theta\phi^i(\xi)=\theta(1+[\ep^{p^{i-1}}]+[\ep^{p^{i-1}}]^2+\cdots+[\ep^{p^{i-1}}]^{p-1})=1+1+\cdots+1=p.\] This completes the proof of the assertions about $\xi$.
\end{example}

The most important case of perfectoid rings for the paper are those which are flat over $\bb Z_p$ and contain enough $p$-power roots of unity, for which we summarise in the following result some additional properties of $\bb A_\inf(S)$.

\begin{proposition}\label{proposition_roots_of_unity}
Let $S$ be a perfectoid ring which is flat over $\bb Z_p$ and contains a compatible sequence $\zeta_p,\zeta_{p^2},\dots$ of primitive $p$-power roots of unity; let $\ep\in S^\flat$ and $\xi, \tilde\xi_r\in \bb A_\inf(S)$ be as in Example \ref{examples_roots_of_unity}, and set $\mu:=[\ep]-1\in \bb A_\inf(S)$. Then, for any $r\ge 0$:
\begin{enumerate}
\item The element $\tilde\theta_r(\mu)=[\zeta_{p^r}]-1\in W_r(S)$ is a non-zero-divisor;
\item The element $\mu\in \bb A_\inf(S)$ is a non-zero-divisor;
\item The element $\mu$ divides $\phi^r(\mu)=[\ep^{p^r}]-1$, and $\tilde\xi_r=\phi^r(\mu)/\mu$.
\item The element $\mu$ divides $\tilde{\xi_r} - p^r$.
\end{enumerate}
\end{proposition}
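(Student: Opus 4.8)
The plan is to prove the four assertions in the order: the telescoping identity behind~(iii), then~(i), then deduce~(ii) from~(i), and finally close out~(iii) and~(iv), which become purely formal.

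I would begin with the elementary identity underlying~(iii). The Frobenius on $\bb A_\inf(S)=W(S^\flat)$ is $\phi=W(\mathrm{Frob}_{S^\flat})$, which sends a Teichm\"uller representative to its $p$-th power, so $\phi^r(\mu)=[\epsilon]^{p^r}-1$. Recalling from Example~\ref{examples_roots_of_unity} that $\tilde\xi_r=\sum_{i=0}^{p^r-1}[\epsilon]^i$, the geometric series identity
\[
[\epsilon]^{p^r}-1=\bigl([\epsilon]-1\bigr)\sum_{i=0}^{p^r-1}[\epsilon]^i=\mu\,\tilde\xi_r
\]
exhibits $\mu$ as a divisor of $\phi^r(\mu)$ with quotient $\tilde\xi_r$; that this quotient is unambiguous will follow once $\mu$ is known to be a non-zero-divisor.

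For~(i), the idea is to use that $S$ is flat, hence $p$-torsion free, over $\mathbb{Z}_p$, so the ghost map $W_r(S)\hookrightarrow S^r$ is injective. By Lemma~\ref{lemma_theta} we have $\tilde\theta_r([\epsilon])=[\zeta_{p^r}]$, so the ghost components of $\tilde\theta_r(\mu)=[\zeta_{p^r}]-1$ are the elements $\zeta_{p^r}^{p^j}-1=\zeta_{p^{r-j}}-1$ for $0\le j\le r-1$. It therefore suffices to check that $\zeta_{p^m}-1$ is a non-zero-divisor in $S$ for each $m\ge 1$: the compatible system $(\zeta_{p^r})$ together with the relation $1+\zeta_p+\dots+\zeta_p^{p-1}=0$ yields a ring map $\mathbb{Z}[\zeta_{p^m}]=\mathbb{Z}[X]/\Phi_{p^m}(X)\to S$ (using $\Phi_{p^m}(\zeta_{p^m})=\Phi_p(\zeta_p)=0$), and the standard cyclotomic factorization $p=(\text{unit})\cdot(1-\zeta_{p^m})^{p^{m-1}(p-1)}$ in $\mathbb{Z}[\zeta_{p^m}]$ then shows that $(1-\zeta_{p^m})x=0$ forces $px=0$, hence $x=0$. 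Consequently $\tilde\theta_r(\mu)\cdot x=0$ forces every ghost component of $x$ to vanish, so $x=0$. This verification that $\zeta_{p^m}-1$ is a non-zero-divisor is the only step of the whole proof that is not completely formal, and I expect it to be the main (if modest) obstacle.

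Granting~(i), assertion~(ii) is immediate from the identification $\bb A_\inf(S)\cong\projlim_F W_r(S)$ of Lemma~\ref{lemma_witt_alg_1}, under which the structure maps to the $W_r(S)$ are exactly the $\tilde\theta_r$; in particular the family $(\tilde\theta_r)_r$ is jointly injective. So if $\mu b=0$ in $\bb A_\inf(S)$, then $\tilde\theta_r(\mu)\,\tilde\theta_r(b)=0$ in $W_r(S)$ for every $r$, whence $\tilde\theta_r(b)=0$ by~(i), and therefore $b=0$. With~(ii) established, the quotient $\phi^r(\mu)/\mu$ is well defined and equals $\tilde\xi_r$ by the identity of the second paragraph, which gives~(iii). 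Finally, for~(iv) one simply reduces modulo $\mu$: since $[\epsilon]\equiv 1\pmod{\mu}$ in $\bb A_\inf(S)$, we get $\tilde\xi_r=\sum_{i=0}^{p^r-1}[\epsilon]^i\equiv p^r\pmod{\mu}$, i.e.\ $\mu\mid\tilde\xi_r-p^r$.
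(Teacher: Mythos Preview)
Your proof is correct and follows essentially the same route as the paper's: both use injectivity of the ghost map (from $p$-torsion-freeness of $S$) to reduce~(i) to showing each $\zeta_{p^m}-1$ is a non-zero-divisor in $S$, both deduce~(ii) from~(i) via $\bb A_\inf(S)\cong\projlim_F W_r(S)$, and both handle~(iii) and~(iv) by the geometric series identity and reduction modulo $\mu$. The only cosmetic difference is that the paper phrases the key step in~(i) as ``$\zeta_{p^r}-1$ divides $p$, and $S$ is flat over $\bb Z_p$'' rather than invoking the full cyclotomic factorization, but the content is the same.
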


\begin{proof} The identity $\tilde\theta_r(\mu)=[\zeta_{p^r}]-1$ follows from Lemma~\ref{lemma_theta}. To check that $[\zeta_{p^r}]-1$ is a non-zero-divisor of $W_r(S)$ for all $r\ge 1$, we note that since $S$ is $p$-torsion-free, the ghost map is injective and so we may check this by proving that \[\op{gh}([\zeta_{p^r}]-1)=(\zeta_{p^r}-1,\zeta_{p^{r-1}}-1,\dots,\zeta_p-1)\] is a non-zero-divisor of $S^r$; i.e., we must show that $\zeta_{p^r}-1$ is a non-zero-divisor in $S$ for all $r\ge 1$. But $\zeta_{p^r} - 1$ divides $p$, and $S$ is flat over $\bb Z_p$.

This proves (i). We get (ii) by noting that $\bb A_\inf(S) = \projlim_F W_r(S)$. Now (iii) is immediate from the definitions. For (iv), observe that $\tilde{\xi_r} = \frac{[\ep]^{p^r} - 1}{[\ep] - 1} = \sum_{i=1}^{p^r } [\ep]^{i-1}$ by (iii). If we set $\mu = 0$, then $[\ep] = 1$, so $\tilde{\xi_r}$ is congruent to $\sum_{i=1}^{p^r} 1 = p^r$ modulo $\mu$, as wanted.
\end{proof}

\begin{corollary}\label{corollary_roots_of_unity}
Let $S$ be a perfectoid ring which is flat over $\bb Z_p$ and contains a compatible sequence $\zeta_p,\zeta_{p^2},\dots$ of primitive $p$-power roots of unity. Then, for any $0\le j\le r$: 
\begin{enumerate}
\item The following ideals of $W_r(S)$ are equal:
\[
\Ann_{W_r(S)}V^j(1),\quad\op{ker}(W_r(S)\xto{F^j} W_{r-j}(S)),\quad\tfrac{[\zeta_{p^j}]-1}{[\zeta_{p^r}]-1}W_r(S)\ .
\]
\item The following ideals of $W_r(S)$ are equal:
\[
\Ann_{W_r(S)}\left(\tfrac{[\zeta_{p^j}]-1}{[\zeta_{p^r}]-1}\right),\quad V^j(1)W_r(S),\quad V^jW_{r-j}(S)\ .
\]
\item The map $F^j$ and multiplication by $V^j(1)$ induce isomorphisms of $W_r(S)$-modules
\[
F_\ast^j W_{r-j}(S)\isofrom W_r(S)/\tfrac{[\zeta_{p^j}]-1}{[\zeta_{p^r}]-1}\isoto \Ann_{W_r(S)}\left(\tfrac{[\zeta_{p^j}]-1}{[\zeta_{p^r}]-1}\right)\ .
\]
\end{enumerate}
\end{corollary}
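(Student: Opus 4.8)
The plan is to transport the whole statement to the ring $\bb A_\inf(S)$ via the identification $W_r(S)=\bb A_\inf(S)/\tilde\xi_r$ of Lemma~\ref{lemma_ker_theta_r}, under which all three parts become an exercise about quotients of $\bb A_\inf(S)$ by products of the non-zero-divisors $\phi^i(\xi)$. Write $\xi\in\bb A_\inf(S)$ for the distinguished element of Example~\ref{examples_roots_of_unity}, so $\tilde\xi_m=\phi(\xi)\phi^2(\xi)\cdots\phi^m(\xi)$; the basic observation is the factorization $\tilde\xi_r=\tilde\xi_{r-j}\cdot\eta_j$, where $\eta_j:=\phi^{r-j+1}(\xi)\cdots\phi^r(\xi)$, and $\tilde\xi_{r-j}$, $\eta_j$, $\tilde\xi_r$ are all non-zero-divisors (each $\phi^i(\xi)$ is, $\phi$ being an automorphism and $\xi$ a non-zero-divisor by Lemma~\ref{lemma_inj_of_Frob}).

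Next I would match the three protagonists of the Corollary with this factorization. For the Frobenius: the second commutative diagram of Lemma~\ref{lemma_theta_r_diagrams} gives $\tilde\theta_{r-j}=F^j\circ\tilde\theta_r$, so under the above identification the map $F^j\colon W_r(S)\to W_{r-j}(S)$ is simply the natural surjection $\bb A_\inf(S)/\tilde\xi_r\to\bb A_\inf(S)/\tilde\xi_{r-j}$ (which makes sense as $\tilde\xi_{r-j}\mid\tilde\xi_r$), whose kernel is the image of the ideal $(\tilde\xi_{r-j})$; in particular $F^j$ is surjective (as also follows from Lemma~\ref{lemma_frobenius_surjectivity}). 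For the element $w_j:=\frac{[\zeta_{p^j}]-1}{[\zeta_{p^r}]-1}$: writing $\mu=[\ep]-1$, Proposition~\ref{proposition_roots_of_unity}(i),(iii) give $\tilde\theta_r(\mu)=[\zeta_{p^r}]-1$ and $\phi^{r-j}(\mu)=\mu\tilde\xi_{r-j}$, while Lemma~\ref{lemma_theta} together with $(\ep^{p^{r-j}})^{(r)}=\zeta_{p^r}^{p^{r-j}}=\zeta_{p^j}$ gives $\tilde\theta_r(\phi^{r-j}(\mu))=[\zeta_{p^j}]-1$; since $[\zeta_{p^r}]-1$ is a non-zero-divisor in $W_r(S)$ one may divide and conclude $w_j=\tilde\theta_r(\tilde\xi_{r-j})$, i.e.\ $w_j$ is the image of $\tilde\xi_{r-j}$. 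Finally, for $V^j(1)$: iterating the third commutative diagram of Lemma~\ref{lemma_theta_r_diagrams} and using Remark~\ref{remark_distinguished} (the $\xi$ of Example~\ref{examples_roots_of_unity} satisfies $\theta_m(\xi)=V(1)$ for all $m>1$, so one may take $\tilde\lambda_m=\phi^m(\xi)$) yields $V^j(1)=\tilde\theta_r\big(\phi^r(\xi)\phi^{r-1}(\xi)\cdots\phi^{r-j+1}(\xi)\big)=\tilde\theta_r(\eta_j)$, i.e.\ $V^j(1)$ is the image of $\eta_j$; the boundary cases $j=0$ and $j=r$ (where $\eta_0=1$, $\eta_r=\tilde\xi_r$, so $V^0(1)=1$ and $V^r(1)=0$) are immediate.

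With these three identifications in hand, all of (i)--(iii) reduce to the following triviality: if $a,b$ are non-zero-divisors in a ring $A$ and $\bar A:=A/(ab)$, then $\Ann_{\bar A}(\bar a)=\bar b\bar A$ and $\Ann_{\bar A}(\bar b)=\bar a\bar A$ (indeed $\bar a\bar x=0$ iff $ax=aby$ for some $y$, iff $x=by$ for some $y$ since $a$ is a non-zero-divisor, iff $\bar x\in\bar b\bar A$), multiplication by $b$ induces an isomorphism $\bar A/\bar a\bar A\isoto\bar b\bar A=\Ann_{\bar A}(\bar a)$, and the natural map induces $\bar A/\bar a\bar A\isoto A/(a)$. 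Applying this with $A=\bb A_\inf(S)$, $a=\tilde\xi_{r-j}$ (so $\bar a=w_j$), $b=\eta_j$ (so $\bar b=V^j(1)$): part (i) is $\Ann_{W_r(S)}V^j(1)=w_jW_r(S)=(\tilde\xi_{r-j})/(\tilde\xi_r)=\ker F^j$; part (ii) is $\Ann_{W_r(S)}(w_j)=V^j(1)W_r(S)$, and the latter equals $V^jW_{r-j}(S)$ by the projection formula $V^j(1)\cdot x=V^j(F^j(x))$ combined with surjectivity of $F^j$; and part (iii) is exactly the two induced isomorphisms above, namely $F^j$ realizing $W_r(S)/w_jW_r(S)\isoto W_{r-j}(S)$ (as $F^j_\ast W_{r-j}(S)$, a $W_r(S)$-module) and multiplication by $V^j(1)=\bar b$ realizing $W_r(S)/w_jW_r(S)\isoto\Ann_{W_r(S)}(w_j)$.

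The only step requiring genuine care is the identification $V^j(1)=\tilde\theta_r(\eta_j)$: one must iterate the third diagram of Lemma~\ref{lemma_theta_r_diagrams} with a consistent choice of the auxiliary elements $\tilde\lambda_m$ (legitimate precisely because the particular $\xi$ of Example~\ref{examples_roots_of_unity} has $\theta_m(\xi)=V(1)$ for every $m>1$) and then recognize the resulting product of Frobenius twists of $\xi$ as the ``tail'' $\eta_j$ of $\tilde\xi_r$. (For part (i) alone one can sidestep this using $V^j(1)x=V^j(F^j(x))$ and injectivity of $V^j$, but parts (ii) and (iii) genuinely need the explicit form.) Everything else is the $\theta_r/\tilde\theta_r$ bookkeeping of \S\ref{ss:theta} together with the commutative-algebra triviality above.
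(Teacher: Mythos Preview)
Your proof is correct and slightly more streamlined than the paper's. Both arguments rest on the identification $W_r(S)=\bb A_\inf(S)/\tilde\xi_r$ and the computation $\tilde\theta_r(\tilde\xi_{r-j})=\tfrac{[\zeta_{p^j}]-1}{[\zeta_{p^r}]-1}$, which is exactly how the paper establishes part~(i). The genuine difference is in part~(ii): you go further and identify $V^j(1)=\tilde\theta_r(\eta_j)$ with $\eta_j=\tilde\xi_r/\tilde\xi_{r-j}$, so that $w_j$ and $V^j(1)$ are the images of complementary factors of the non-zero-divisor $\tilde\xi_r$, and then all three parts become instances of the single elementary fact about $A/(ab)$. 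The paper does \emph{not} make this second identification; instead, for~(ii) it argues directly with Witt vector identities: it verifies $([\zeta_{p^j}]-1)V^j(1)=V^jF^j[\zeta_{p^j}]-V^j(1)=0$ by hand, and for the reverse inclusion uses the restriction map $R^{r-j}$ together with the fact that $R^{r-j}([\zeta_{p^j}]-1)=[\zeta_{p^j}]-1$ is a non-zero-divisor in $W_j(S)$. Your approach trades this Witt-vector manipulation for the iteration of the third diagram in Lemma~\ref{lemma_theta_r_diagrams}; the payoff is a uniform treatment of all three parts, at the cost of that one bookkeeping step you flag at the end.
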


\begin{remark}\label{rem:idealsgeneralcase} The proof also shows that if $S$ is any perfectoid ring, then
\[
\Ann_{W_r(S)}V^j(1) = \op{ker}(W_r(S)\xto{F^j} W_{r-j}(S))\ ,\ V^j(1)W_r(S) = V^jW_{r-j}(S)\ ,
\]
and via $F^j$ and multiplication by $V^j(1)$,
\[
W_{r-j}(S)\isofrom W_r(S) / \Ann_{W_r(S)}V^j(1)\isoto V^j W_{r-j}(S)\ .
\]
This is a partial analogue of the statement that for perfect rings $S$ of characteristic $p$, $W_r(S)$ admits a filtration (by $p^jW_r(S)$) where all graded pieces are $S$.
\end{remark}

\begin{proof} (i): Injectivity of $V^j:W_{r-j}(S)\to W_r(S)$ and the identity $xV^j(1)=V^j(F^j(x))$, for $x\in W_r(S)$, show that the stated annihilator and kernel are equal. As $W_r(S) = \bb A_\inf/\tilde\xi_r$ and $W_{r-j}(S) = \bb A_\inf/\tilde\xi_{r-j}$ (compatible with the transition map $F^j$), it follows that the kernel is generated by $\tilde\theta_r(\tilde\xi_{r-j}) = \frac{[\zeta_{p^j}]-1}{[\zeta_{p^r}]-1}$.

(ii): Surjectivity of $F^j:W_r(S)\to W_{r-j}(S)$ (Lemma \ref{lemma_frobenius_surjectivity}) implies that $V^j(1)$ generates the ideal $V^jW_{r-j}(S)$, since $V^j(F^j(x))=xV^j(1)$ for $x\in W_r(S)$. Since $[\zeta_{p^r}]-1$ is a non-zero-divisor of $W_r(S)$ by the previous proposition, the elements $[\zeta_{p^j}]-1$ and $\tfrac{[\zeta_{p^j}]-1}{[\zeta_{p^r}]-1}$ have the same annihilator. Clearly $V^j(1)$ annihilates $[\zeta_{p^j}]-1$, since $([\zeta_{p^j}]-1)V^j(1)=V^jF^j[\zeta_{p^j}]-V^j(1)=V^j(1)-V^j(1)=0$. Finally, if $x$ annihilates $[\zeta_{p^j}]-1$ then $R^{r-j}(x)=0$ since $R^{r-j}([\zeta_{p^j}]-1)$ is a non-zero-divisor, and so $x\in V^jW_{r-j}(S)$.

(iii): This follows from (i) and (ii).
\end{proof}

Let us now compare the notion of a perfectoid ring introduced above with another notion, that of a perfectoid Tate ring. Let $R$ be a {\em complete Tate ring}, i.e., a complete topological ring $R$ containing an open subring $R_0\subset R$ on which the topology is $\pi$-adic for some $\pi\in R_0$ such that $R=R_0[\tfrac1\pi]$. Recall that {\em a ring of integral elements} $R^+\subset R$ is an open and integrally closed subring of powerbounded elements. For example, the subring $R^\circ\subset R$ of all powerbounded elements is a ring of integral elements.

In the terminology of Fontaine \cite{Fontaine2013}, extending the original definition \cite{ScholzeThesis}, $R$ is said to be {\em perfectoid} if and only if it is {\em uniform} (i.e., its subring $R^\circ$ of powerbounded elements is bounded) and there is a topologically nilpotent unit $\pi\in R$ such that $\pi^p$ divides $p$ in $R^\circ$, and the Frobenius is surjective on $R^\circ/\pi^p R^\circ$.

\begin{lemma}\label{lemma_Tate_perfectoid}
Let $R$ be a complete Tate ring with a ring of integral elements $R^+\subset R$. If $R$ is perfectoid in Fontaine's sense, then $R^+$ is perfectoid. Conversely, if $R^+$ is perfectoid and bounded in $R$, then $R$ is perfectoid in Fontaine's sense.
\end{lemma}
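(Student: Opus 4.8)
The plan is to translate the Tate-ring condition (perfectoid in Fontaine's sense) into the bare-ring condition (perfectoid ring as defined in \S\ref{ss:PerfectoidOverview}) via the intermediate characterizations already established in Lemmas~\ref{lemma_frobenius_surjectivity} and~\ref{lemma_inj_of_Frob}. First I would fix notation: if $R$ is perfectoid in Fontaine's sense, choose the topologically nilpotent unit $\pi\in R$ with $\pi^p\mid p$ in $R^\circ$. The first point is that $R^+$ is $\pi$-adically complete and separated: since $R^+$ is open and integrally closed in the complete Tate ring $R$, it is a ring of definition (once one notes $\pi\in R^+$ because $\pi$ is powerbounded), and its topology is the $\pi$-adic one; completeness of $R$ then forces completeness of $R^+$. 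Next, $\pi^p$ divides $p$ \emph{in $R^+$}: a priori this divisibility is in $R^\circ$, but $p/\pi^p$ is powerbounded, hence integral over $R^+$, hence in $R^+$ by integral closedness. For the Frobenius surjectivity, Fontaine's definition gives surjectivity of $\phi$ on $R^\circ/\pi^p R^\circ$; I would deduce surjectivity of $\phi$ on $R^+/\pi^p R^+$ and then invoke Lemma~\ref{lemma_frobenius_surjectivity} (equivalence of conditions (i)--(iii)) to get that every element of $R^+/pR^+$ is a $p$-th power, i.e.\ $\phi\colon R^+/p\to R^+/p$ is surjective.

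The remaining and genuinely substantive point is that $\ker(\theta\colon\mathbb A_{\inf}(R^+)\to R^+)$ is principal. By Lemma~\ref{lemma_inj_of_Frob}(ii) this is equivalent to showing that $\pi$ is a non-zero-divisor in $R^+$ and that $\phi\colon R^+/\pi R^+\to R^+/\pi^p R^+$ is an isomorphism. That $\pi$ is a non-zero-divisor is immediate since $\pi\in R^\times$ and $R^+\subset R$. Injectivity of $\phi$ on $R^+/\pi R^+$ is where ``uniform'' enters: if $x\in R^+$ with $x^p\in\pi^p R^+$, then $(x/\pi)^p\in R^+$, so $x/\pi$ is integral over $R^+$, but it is also powerbounded (as $R^\circ$ is bounded, the $p$-th root condition forces $x/\pi$ into $R^\circ$), hence $x/\pi\in R^+$; this gives $x\in\pi R^+$. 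Combined with the surjectivity of $\phi$ on $R^+/\pi^p R^+$ established above, we conclude $\phi\colon R^+/\pi R^+\cong R^+/\pi^p R^+$, so $R^+$ is perfectoid.

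For the converse, suppose $R^+$ is a perfectoid ring, bounded in $R$, with $R=R^+[\tfrac1\pi]$ for a topologically nilpotent $\pi\in R^+$. By Lemma~\ref{lemma_frobenius_surjectivity} applied to $R^+$ there is a unit $u\in (R^+)^\times$ with $u\pi'$ admitting $p$-power roots for a suitable $\pi'$; after adjusting $\pi$ by a unit I may assume $\pi^p\mid p$ in $R^+$ and $\phi$ is surjective on $R^+/\pi^p R^+$ (Lemma~\ref{lemma_frobenius_surjectivity}(iii)), which gives the Frobenius condition in Fontaine's definition. Boundedness of $R^+$ in $R$ says exactly that $R^+$ is a ring of definition whose associated bornology makes $R^\circ$ bounded; concretely, $R^+\subseteq R^\circ$ and $R^\circ\subseteq\tfrac1{\pi^N}R^+$ for some $N$, so $R^\circ$ is bounded, i.e.\ $R$ is uniform. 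Finally $\pi^p\mid p$ holds in $R^\circ\supseteq R^+$ as well. Hence $R$ is perfectoid in Fontaine's sense.

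The main obstacle is the principality of $\ker\theta$ in the first direction — everything else is bookkeeping with integral closedness and boundedness. The key leverage is that Lemma~\ref{lemma_inj_of_Frob}(ii) reduces principality to the purely ring-theoretic statement that $\phi\colon R^+/\pi\to R^+/\pi^p$ is an isomorphism, and the injectivity half of \emph{that} is precisely where uniformity (boundedness of $R^\circ$) must be used, via the argument that a powerbounded element whose $p$-th power lies in $\pi^p R^+$ already lies in $\pi R^+$.
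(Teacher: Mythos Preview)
Your injectivity argument for $\phi\colon R^+/\pi R^+\to R^+/\pi^p R^+$ is correct---indeed, integral closedness of $R^+$ in $R$ gives $x/\pi\in R^+$ directly once $(x/\pi)^p\in R^+$. But the rest of the forward direction has two gaps. First, the step ``$p/\pi^p$ is powerbounded, hence integral over $R^+$'' is wrong: elements of $R^\circ$ are not automatically integral over $R^+$ when $R^+\subsetneq R^\circ$, so you have not shown $\pi^p\mid p$ in $R^+$. Second, you do not justify Frobenius surjectivity on $R^+/\pi^p R^+$: writing $x=y^p+\pi^p z$ with $y,z\in R^\circ$ only yields $\pi^p z\in\pi^{p-1}(\pi R^\circ)\subset\pi^{p-1}R^+$, not $\pi^p R^+$, so one cannot conclude $y\in R^+$. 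The paper circumvents both issues by first proving $R^\circ$ itself is perfectoid, then applying the stronger condition of Lemma~\ref{lemma_frobenius_surjectivity}(i) to $R^\circ$ (every element of $R^\circ/p\pi R^\circ$ is a $p$-th power): writing $x=y^p+p\pi z$ with $y,z\in R^\circ$ gives $p\pi z\in pR^+$ (since $\pi z\in\pi R^\circ\subset R^+$), whence $y^p=x-p\pi z\in R^+$ and so $y\in R^+$ by integral closedness.

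In the converse you miss the main subtlety, which the paper flags explicitly: the element $\pi$ witnessing that $R^+$ is perfectoid need not be a \emph{unit in $R$}. A priori the $\pi$-adic topology on $R^+$ coming from the perfectoid definition has no relation to the subspace topology from the Tate ring $R$, so you cannot simply ``adjust $\pi$ by a unit'' to get a topologically nilpotent unit of $R$. The paper resolves this by taking any topologically nilpotent unit $\pi_0\in R$ and invoking \cite[Lemma~5.5]{kedlayanonarchwitt} to find $\pi^\flat\in(R^+)^\flat$ and a unit $u\in(R^+)^\times$ with $\theta([\pi^\flat])=u\pi_0$; then $\pi=\theta([\pi^{\flat\,1/p^n}])$ for $n$ large enough is the desired element. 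You also need a separate argument to transfer Frobenius surjectivity from $R^+$ to $R^\circ$, since Fontaine's condition concerns $R^\circ/\pi^p R^\circ$, not $R^+/\pi^p R^+$.
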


We remark that perfectoid $K$-algebras in the sense of \cite{ScholzeThesis} (as well as perfectoid $\bb Q_p$-algebras in the sense of \cite{KedlayaLiu}) are complete Tate rings which are perfectoid in Fontaine's sense (and conversely a complete Tate ring which is perfectoid in Fontaine's sense and is a $K$-, resp.~$\bb Q_p$-, algebra is a perfectoid $K$-, resp.~$\bb Q_p$-, algebra in the sense of \cite{ScholzeThesis}, resp.~\cite{KedlayaLiu}).

\begin{proof} Assume that $R$ is perfectoid in Fontaine's sense. First, we check that $R^\circ$ is perfectoid. As $R^\circ$ is bounded, it follows that $R^\circ$ is $\pi$-adically complete. By Lemma~\ref{lemma_inj_of_Frob}, to show that $R^\circ$ is perfectoid, we need to see that the surjective map $\phi: R^\circ/\pi R^\circ\to R^\circ / \pi^p R^\circ$ is an isomorphism. But if $x\in R^\circ$ is such that $x^p = \pi^p y$ for some $y\in R^\circ$, then $z=x/\pi\in R$ has the property that $z^p=y$ is powerbounded, which implies that $z$ itself is powerbounded, i.e. $x\in \pi R^\circ$. Thus, $R^\circ$ is perfectoid.

Now we want to see that then also $R^+$ is perfectoid. Note that $\pi R^\circ$ consists of topologically nilpotent elements, and so $\pi R^\circ\subset R^+$ as the right side is open and integrally closed. By Lemma~\ref{lemma_frobenius_surjectivity} we know that any element of $R^\circ/p\pi R^\circ$ is a $p$-th power. Take any element $x\in R^+$, and write $x=y^p+p\pi z$ for some $y,z\in R^\circ$. Then $z^\prime = \pi z\in R^+$, so that $x=y^p + pz^\prime$. It follows that $y^p = x - pz^\prime\in R^+$, and so $y\in R^+$. Thus, the equation $x=y^p + pz^\prime$ shows that $\phi: R^+/p\to R^+/p$ is surjective, and in particular so is $\phi: R^+/\pi R^+\to R^+/\pi^p R^+$. For injectivity, we argue as for $R^\circ$. Using Lemma~\ref{lemma_inj_of_Frob} again, this implies that $R^+$ is perfectoid.

For the converse, note first that since $R^+\subset R$ is by assumption bounded, so is $R^\circ\subset R$, as $\pi R^\circ\subset R^+$; thus, the first part of Fontaine's definition is verified. It remains to see that there is some topologically nilpotent unit $\pi\in R$ such that $\pi^p$ divides $p$ in $R^\circ$, and the Frobenius is surjective on $R^\circ/\pi^p R^\circ$. Let assume for the moment that there is some topologically nilpotent unit $\pi\in R$ such that $\pi^p$ divides $p$ in $R^\circ$. Given $x\in R^\circ$, $\pi x\in R^+$ can be written as $\pi x = y^p + p\pi z$ with $y,z\in R^+$, by Lemma~\ref{lemma_frobenius_surjectivity}. Note that $\pi\in R^+$ can be assumed to have a $p$-th root $\pi^{1/p}\in R^+$ by changing it by a unit; then $y^\prime = y/\pi^{1/p}\in R$ actually lies in $R^\circ$ as $y^{\prime p} = x - pz\in R^\circ$. But then $x= y^{\prime p} + pz$ with $y',z\in R^\circ$, so Frobenius is surjective on $R^\circ/p R^\circ$, and a fortiori on $R^\circ/\pi^p R^\circ$.

It remains to see that if $R^+$ is perfectoid, then there is some topologically nilpotent unit $\pi\in R$ such that $\pi^p$ divides $p$ in $R^\circ$. The problem here is to ensure the condition that $\pi$ is a unit in $R$.

Pick any topologically nilpotent unit $\pi_0\in R$, so $\pi_0\in R^+$. We have the surjection $\theta: \bb A_\inf(R^+)\to R^+$ whose kernel is generated by a distinguished element $\xi\in \bb A_\inf(R^+)$. From \cite[Lemma 5.5]{kedlayanonarchwitt}, it follows that there is some $\pi^\flat\in (R^+)^\flat$ and a unit $u\in (R^+)^\times$ such that $\theta([\pi^\flat]) = u \pi_0$. Now $\pi=\theta([\pi^{\flat 1/p^n}])$ for $n$ sufficiently large has the desired property.
\end{proof}

A related lemma is the following.

\begin{lemma}\label{lem:genericfiberperfectoid} Let $R_0$ be a perfectoid ring which is $\pi$-adically complete for some non-zero-divisor $\pi$ such that $\pi^p$ divides $p$. Then $R=R_0[\tfrac 1\pi]$, endowed with the $\pi$-adic topology on $R_0$, is a complete Tate ring which is perfectoid in Fontaine's sense. Moreover, $\pi R^\circ\subset R_0$.
\end{lemma}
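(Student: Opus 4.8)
The plan is to obtain the Tate ring structure essentially formally, to isolate the single genuinely perfectoid input as a ``$\varpi$-root-closedness'' property of $R_0$, and then to read off Fontaine's conditions together with $\pi R^\circ\subseteq R_0$. \emph{Step 1 ($R$ is a complete Tate ring).} By hypothesis $R_0\subseteq R$ is an open subring carrying the $\pi$-adic topology, $\pi$ is a unit in $R=R_0[\tfrac 1\pi]$, and $R=\bigcup_n\pi^{-n}R_0$. Since a Cauchy sequence in $R$ is bounded, hence eventually contained in a fixed $\pi^{-n}R_0$, and $R_0$ is $\pi$-adically complete, $R$ is complete; thus $R$ is a complete Tate ring with ring of definition $R_0$ and pseudo-uniformizer $\pi$, and $R_0$ is bounded in $R$.

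\emph{Step 2 (a perfectoid uniformizer).} As $R_0$ is perfectoid it is $\varpi$-adically complete for some $\varpi$ with $\varpi^p\mid p$; by Lemma~\ref{lemma_frobenius_surjectivity} we may take $\varpi$ to admit a compatible system of $p$-power roots $\varpi^{1/p^n}\in R_0$, and by Lemma~\ref{lemma_witt_alg_1}(ii) (with $r=1$) $R_0$ is also $p$-adically complete. Since $\pi\mid p$ and $\varpi^p\mid p$, the $\pi$-adic and $\varpi$-adic topologies are each comparable with the $p$-adic one; being complete and metrizable, the three topologies coincide by the open mapping theorem. Hence $R=R_0[\tfrac 1\pi]=R_0[\tfrac 1\varpi]=R_0[\tfrac 1p]$, one has $\varpi\mid\pi^b$ for some $b\geq 1$, and $\varpi$ together with every $\varpi^{1/p^n}$ is a topologically nilpotent unit of $R$.

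\emph{Step 3 (the key point: $\pi R^\circ\subseteq R_0$).} Perfectoidness of $R_0$ and Lemma~\ref{lemma_inj_of_Frob} give that Frobenius is injective on $R_0/\varpi R_0\to R_0/\varpi^p R_0$; an induction on $m$, using that $\varpi$ is a non-zero-divisor, upgrades this to the implication ``$s^p\in\varpi^{mp}R_0\Rightarrow s\in\varpi^m R_0$'' for all $m\geq 0$, and hence — writing any element in lowest terms with respect to $\varpi$ — to: if $z\in R$ and $z^p\in R_0$, then $z\in R_0$. Now let $x\in R^\circ$; boundedness of $\{x^n\}$ gives a fixed $c$ with $\varpi^c x^n\in R_0$ for all $n$. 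Choose $k$ with $p^k\geq bc$, so that $\varpi^c\mid\pi^{bc}\mid\pi^{p^k}$ and therefore $(\pi x)^{p^k}=\pi^{p^k}x^{p^k}\in R_0$. Applying the implication ``$z^p\in R_0\Rightarrow z\in R_0$'' successively to $z=(\pi x)^{p^{k-1}},\dots,(\pi x)^p,\pi x$ yields $\pi x\in R_0$; the same argument with $\varpi$ in place of $\pi$ gives $\varpi R^\circ\subseteq R_0$. I expect this $\varpi$-root-closedness of the perfectoid ring $R_0$ to be the main obstacle, being the only step that genuinely uses the perfectoid hypothesis; everything else is bookkeeping.

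\emph{Step 4 (Fontaine's conditions).} By Step 3, $R^\circ\subseteq\pi^{-1}R_0$ is bounded, so $R$ is uniform, and $\varpi$ is a topologically nilpotent unit of $R$ with $\varpi^p\mid p$ in $R_0\subseteq R^\circ$. It remains to check that Frobenius is surjective on $R^\circ/\varpi^p R^\circ$, and since $p\in\varpi^p R_0$ it suffices to treat the $\bb F_p$-algebra $R^\circ/pR^\circ$. Let $x\in R^\circ$; by Step 3, $\varpi^p x\in R_0$, and perfectoidness of $R_0$ lets us write $\varpi^p x=y^p+pz$ and then $(p/\varpi^p)z=w^p+pz'$ with $y,z,w,z'\in R_0$ (using $p/\varpi^p\in R_0$), so $x=(y/\varpi)^p+w^p+pz'$. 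Here $(y/\varpi)^p=x-(p/\varpi^p)z\in R^\circ$, whence $y/\varpi\in R^\circ$ as $R^\circ$ is integrally closed in $R$. Reducing modulo $pR^\circ$ and using $a^p+b^p=(a+b)^p$ in characteristic $p$, we obtain $\bar x=(\overline{y/\varpi}+\bar w)^p$, a $p$-th power with $y/\varpi+w\in R^\circ$. Hence $R$ is perfectoid in Fontaine's sense, and the asserted inclusion $\pi R^\circ\subseteq R_0$ is exactly what was shown in Step 3.
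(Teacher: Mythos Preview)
Your overall strategy matches the argument the paper cites (Scholze's Lemma 5.6): the heart of the matter is the ``$p$-root closedness'' in Step~3, and Step~4 is the expected unwinding. Steps~1, 3, and 4 are correct as written.

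There is, however, a genuine gap in Step~2. You invoke ``the open mapping theorem'' to conclude that the $\pi$-adic, $\varpi$-adic, and $p$-adic topologies on $R_0$ coincide, but no such theorem applies to arbitrary complete metrizable abelian groups: the standard versions require a Polish (separable) hypothesis, which fails here. Concretely, if $R_0$ has characteristic $p$ then $p=0$, so the $p$-adic topology is discrete while the $\pi$-adic topology is not; your claim $R_0[\tfrac1\pi]=R_0[\tfrac1p]$ is then simply false. Your appeal to Lemma~\ref{lemma_witt_alg_1}(ii) with $r=1$ also does not help: for $r=1$ that statement reduces to the hypothesis and gives no new information about the $p$-adic topology.

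What you actually need from Step~2 for the rest of the argument is only the existence of a perfectoid uniformizer $\varpi$ (with $\varpi^p\mid p$, all $p$-power roots, and Frobenius an isomorphism $R_0/\varpi\cong R_0/\varpi^p$) such that the $\varpi$-adic and $\pi$-adic topologies on $R_0$ are equivalent, i.e.\ $\varpi\mid\pi^b$ and $\pi\mid\varpi^{b'}$ for some $b,b'$. This is what makes $\varpi$ a topologically nilpotent unit in $R=R_0[\tfrac1\pi]$ and lets you pass between $\pi$-boundedness and $\varpi$-boundedness in Step~3. In characteristic $p$ this is immediate: take $\varpi=\pi^{1/p}$ (which exists since $R_0$ is perfect), so $\varpi^p=\pi$ and both divisibilities hold. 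In general one must argue more carefully, using the perfectoid structure rather than an abstract open mapping principle, that any $\pi$ as in the hypothesis defines the same topology as the intrinsic perfectoid one; alternatively, one can restructure Step~3 to prove the Frobenius-injectivity statement $w^p\in\pi^pR_0\Rightarrow w\in\pi R_0$ directly for $\pi$, which is in fact what underlies the remark following the lemma that the cokernel of $R_0\subset R^\circ$ is killed by ``any fractional power of $\pi$''. Either way, the open mapping theorem should be replaced by a perfectoid-specific argument.
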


More precisely, $R_0\subset R^\circ$, and the cokernel is killed by any fractional power of $\pi$.

\begin{proof} Argue as in~\cite[Lemma 5.6]{ScholzeThesis}.
\end{proof}

\subsection{The case of a perfectoid field}
\label{ss:PerfectoidField}

Finally, we add some additional results in the case that $S=\roi=\roi_K$ is the ring of integers in a perfectoid field $K$ of characteristic $0$ containing all $p$-power roots of unity. In this section, we abbreviate $A_\inf = \bb A_\inf(\roi)$.

We let $\epsilon=(1,\zeta_p,\zeta_{p^2},\ldots)\in \roi^\flat$, and consider the elements $\mu=[\epsilon]-1\in A_\inf$ and $\xi = \frac{\mu}{\phi^{-1}(\mu)}$, which generates the kernel of $\theta$. We also have $\xi_r = \frac{\mu}{\phi^{-r}(\mu)}$ which generates the kernel of $\theta_r$, and $\tilde\xi_r = \frac{\phi^r(\mu)}{\mu}$ which generates the kernel of $\tilde\theta_r$, as in Proposition~\ref{proposition_roots_of_unity}.

Before going on, let us recall some more of Fontaine's period rings.

\begin{definition} Consider the following rings associated with $K$.
\begin{enumerate}
\item Let $A_\crys$ be the $p$-adic completion of the $A_\inf$-subalgebra of $A_\inf[\tfrac 1p]$ generated by all $\frac{\xi^m}{m!}$, $m\geq 0$. This is the universal $p$-adically complete PD thickening (compatible with the PD structure on $\bb Z_p$) of $\roi$, or equivalently of $\roi/p$.
\item Let $B_\crys^+ = A_\crys[\tfrac 1p]$, and $B_\crys = A_\crys[\frac 1\mu] = B_\crys^+[\frac 1\mu]$, noting that $\mu^{p-1}\equiv \xi^p\mod p\in A_\inf$, and thus $\mu^{p-1}\in pA_\crys$.
\item Let $B_\dR^+$ be the $\xi$-adic completion of $B_\crys^+$, which is a complete discrete valuation ring with residue field $K$, and $B_\dR = \mathrm{Frac} B_\dR^+ = B_\dR^+[\tfrac 1\xi]$.
\end{enumerate}
\end{definition}

\begin{lemma}\label{lem:mapAinfWitt} The kernel of the natural map
\[
\theta_\infty: A_\inf\to W(\roi) = \varprojlim_R W_r(\roi)\ ,
\]
given as the limit of the maps $\theta_r$, is generated by $\mu$. Equivalently,
\[
\bigcap_r \frac{\mu}{\varphi^{-r}(\mu)} A_\inf = \mu A_\inf\ .
\]
In particular, the ideal $(\mu)\subset A_\inf$ is independent of the choice of roots of unity.

The cokernel of $\theta_\infty$ is killed by $W(\frak m^\flat)$. If $K$ is spherically complete, then $\theta_\infty$ induces an isomorphism
\[
A_\inf/\mu \cong W(\roi)\ .
\]
\end{lemma}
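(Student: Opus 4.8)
The plan is to deduce everything from the identification $\ker\theta_r=(\xi_r)$ (Lemma~\ref{lemma_ker_theta_r}, with $\xi_r=\mu/\varphi^{-r}(\mu)$ for the roots of unity fixed in Proposition~\ref{proposition_roots_of_unity}) together with the surjectivity of each $\theta_r$ (Lemma~\ref{lemma_frobenius_surjectivity}). Since the $\theta_r$ are compatible with the restriction maps $R\colon W_{r+1}(\roi)\to W_r(\roi)$ (first diagram of Lemma~\ref{lemma_theta_r_diagrams}), $\theta_\infty$ is precisely the canonical map $A_\inf\to\varprojlim_r A_\inf/\xi_r A_\inf=\varprojlim_R W_r(\roi)=W(\roi)$, so the Milnor exact sequence
\[
0\to \textstyle\bigcap_r \xi_r A_\inf\to A_\inf\xrightarrow{\theta_\infty} W(\roi)\to {\varprojlim_r}^{\!1}\,\xi_r A_\inf\to 0
\]
identifies $\ker\theta_\infty=\bigcap_r\xi_rA_\inf$ (equivalently $\bigcap_r\frac{\mu}{\varphi^{-r}(\mu)}A_\inf$) and $\coker\theta_\infty={\varprojlim}^{\!1}$ of the tower of ideals $\xi_rA_\inf$ with transition maps the inclusions (legitimate since $\xi_{r+1}=\xi_r\varphi^{-r}(\xi)$).

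For the kernel, $\mu=\xi_r\varphi^{-r}(\mu)\in\xi_rA_\inf$ gives $\mu A_\inf\subseteq\bigcap_r\xi_rA_\inf$. For the reverse inclusion I would first reduce modulo $p$: in the valuation ring $\roi^\flat=A_\inf/p$ one has $\bar\xi_r=\tfrac{\epsilon-1}{\epsilon^{1/p^r}-1}$ of valuation $v(\epsilon-1)(1-p^{-r})$, strictly increasing to $v(\bar\mu)=v(\epsilon-1)$, so $\bigcap_r(\bar\xi_r)=(\bar\mu)$ in $\roi^\flat$. To upgrade to $A_\inf$: given $x\in\bigcap_r\xi_rA_\inf$ write $x=\mu y_0+px_1$ using the mod-$p$ fact; then $px_1=x-\mu y_0\in\xi_rA_\inf$ for all $r$, and $p$ is a non-zero-divisor on $A_\inf/\xi_rA_\inf=W_r(\roi)$ (which embeds into $\roi^r$ by the ghost map, $\roi$ being $p$-torsion-free), so $x_1\in\bigcap_r\xi_rA_\inf$ again; iterating and invoking the $p$-adic completeness of $A_\inf$ yields $y=\sum_j p^jy_j$ with $x=\mu y$. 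Independence of the roots of unity is then immediate: any other compatible system $\epsilon'$ has $\mu'=[\epsilon']-1$ with $\theta_r(\mu')=[\epsilon'^{(0)}]-1=0$, so $\mu'\in\ker\theta_\infty=\mu A_\inf$ and symmetrically $\mu\in\mu'A_\inf$.

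For the cokernel, the key local computation is that $\varphi^{-r}(\mu)$ divides $[c]$ in $A_\inf$ for each $c\in\frak m^\flat$ once $r$ is large (check mod $p$, where $v(\varphi^{-r}(\mu)\bmod p)=p^{-r}v(\epsilon-1)\to 0<v(c)$, then bootstrap as above). Hence $[c]\,\xi_sA_\inf\subseteq\mu A_\inf$ for $s\gg0$, so multiplication by $[c]$ on the tower $(\xi_sA_\inf)$ factors through a tower that is eventually constant, hence Mittag–Leffler, and therefore $[c]$ kills $\coker\theta_\infty$. Since moreover $V^n([c])=p^n[c^{1/p^n}]$ lies in the ideal generated by the Teichmüller lifts of $\frak m^\flat$, these elements topologically generate $W(\frak m^\flat)$; using that $A_\inf/\mu$ is $p$-adically complete (from part (i) and $p$-torsion-freeness of $W(\roi)$), one concludes that all of $W(\frak m^\flat)$ annihilates $\coker\theta_\infty$. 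Finally, when $K$ — equivalently $C^\flat$ — is spherically complete, $\coker\theta_\infty=0$: modulo $p$ it is the ${\varprojlim}^{\!1}$ of the descending tower of principal fractional ideals $\bar\xi_r\roi^\flat$ in the valuation ring $\roi^\flat$, which vanishes precisely because spherical completeness forces any coherent system of approximations to converge, and a bootstrap in $p$ then gives $A_\inf/\mu\xrightarrow{\ \sim\ }W(\roi)$.

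I expect the cokernel statements to be the main obstacle. The tower $(\xi_rA_\inf)$ is genuinely not Mittag–Leffler — the $\xi_r$ do not tend to $0$ — so ${\varprojlim}^{\!1}$ is nonzero in general, and one must exploit the precise ``eventual divisibility'' of $\varphi^{-r}(\mu)$ by elements of $\frak m^\flat$ (and, in the spherically complete case, the completeness of the valuation ring itself) rather than any soft vanishing; keeping careful track of the passage between the mod-$p$ picture (valuations in $\roi^\flat$) and the integral picture is the recurring technical point.
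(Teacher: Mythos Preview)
Your treatment of the kernel is correct and matches the paper's approach: both reduce modulo $p$ to the valuation computation $\bigcap_r \bar\xi_r\roi^\flat=(\epsilon-1)\roi^\flat$ and lift using that $(p,\xi_r)$ is a regular sequence. The independence of $(\mu)$ from the choice of roots of unity also follows as you say.

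The gap is in your cokernel argument. The claim that $\varphi^{-r}(\mu)$ divides $[c]$ in $A_\inf$ for $c\in\frak m^\flat$ and $r$ large is \emph{false}, and the ``bootstrap as above'' does not apply: in the kernel argument you iterate an inclusion $x_j\in\bigcap_r\xi_rA_\inf$, but here there is no analogous condition to propagate. For a concrete counterexample take $c=\epsilon-1$. Applying $\varphi^r$, the claim becomes $\mu\mid[(\epsilon-1)^{p^r}]$, i.e.\ $\theta\big([(\epsilon-1)^{p^r}]\big)=0$. But
\[
\theta\big([(\epsilon-1)^{p^r}]\big)=\big((\epsilon-1)^{(0)}\big)^{p^r},\qquad (\epsilon-1)^{(0)}=\lim_{m\to\infty}(\zeta_{p^m}-1)^{p^m},
\]
and $v\big((\zeta_{p^m}-1)^{p^m}\big)=p/(p-1)$ for all $m$, so $(\epsilon-1)^{(0)}\ne 0$. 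Thus no $\varphi^{-r}(\mu)$ divides $[\epsilon-1]$; the Teichm\"uller lift is not additive, so divisibility of $c$ by $\epsilon^{1/p^r}-1$ in $\roi^\flat$ does not transport to divisibility of $[c]$ by $[\epsilon^{1/p^r}]-1$ in $A_\inf$. Equivalently, your derived claim $[c]\,\xi_sA_\inf\subseteq\mu A_\inf$ fails for the same $c$.

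The paper avoids this by reducing to characteristic $p$ \emph{before} the divisibility step: using that $W_r(\roi)$ is $p$-torsion-free, one has ${\projlim_r}^1\xi_rA_\inf=\projlim_s{\projlim_r}^1\xi_r A_\inf/p^s$, and it suffices to show that ${\projlim_r}^1\bar\xi_r\roi^\flat$ is killed by $\frak m^\flat$ (and vanishes under spherical completeness). In $\roi^\flat$ the relevant inclusion $m\cdot\bar\xi_r\roi^\flat\subseteq(\epsilon-1)\roi^\flat$ for $r\gg0$ is a genuine valuation inequality, exactly the computation you wrote down modulo $p$; the error was only in lifting it integrally. Your spherical-completeness paragraph is fine once this reduction is in place; the ``bootstrap in $p$'' you invoke there should be understood as this same inverse-limit identity rather than an element-by-element lift.
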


Recall that a nonarchimedean field is spherically complete if any decreasing sequence of discs $a_1+I_1\supset a_2+I_2\supset\cdots$ has nonempty intersection, or equivalently, ${\projlim}^1_r I_r = 0$. This condition is stronger than completeness as one does not ask that the radii of the discs goes to $0$, and for example $\bb C_p = \widehat{\overline{\bb Q}}_p$ is not spherically complete. However, any nonarchimedean field $K$ admits an extension $\tilde{K}/K$ which is spherically complete.

\begin{proof} The kernel of $\theta_\infty$ is the intersection of the kernels of the maps $\theta_r$, which are generated by $\xi_r = \frac{\mu}{\phi^{-r}(\mu)}$. To check that
\[
\bigcap_r \frac{\mu}{\varphi^{-r}(\mu)} A_\inf = \mu A_\inf\ ,
\]
it suffices, since $(p, \xi_r)$ is a regular sequence, to check that
\[
\bigcap_r \frac{\epsilon-1}{\epsilon^{1/p^r}-1} \roi^\flat = (\epsilon-1) \roi^\flat\ ,
\]
which follows from a consideration of valuations.

For each $r\geq 1$, we have a short exact sequence
\[
0\to \xi_r A_\inf\to A_\inf\to W_r(\roi)\to 0\ .
\]
Passing to the limit gives a long exact sequence
\[
0\to \mu A_\inf\to A_\inf\to W(\roi)\to {\projlim_r}^1 \xi_r A_\inf\to 0\ .
\]
Thus, it remains to prove that ${\projlim}^1_r \xi_r A_\inf$ is killed by $W(\frak m^\flat)$, and is $0$ if $K$ is spherically complete. Writing down the similar sequences modulo $p^s$ for any $s\geq 1$ (which are still exact), one sees that
\[
{\projlim_r}^1 \xi_r A_\inf = \projlim_s {\projlim_r}^1 \xi_r A_\inf/p^s\ ,
\]
and one reduces to proving that
\[
{\projlim_r}^1 \xi_r \roi^\flat,
\]
which is $0$ if $K^\flat$ is spherically complete by the observation before the proof, is always killed by $\frak m^\flat$. But for any $m\in \frak m^\flat$, multiplication by $m$ on the system $(\xi_r \roi^\flat)_r$ factors, for sufficiently large $r$, through the constant system $(\mu \roi^\flat)_r$, which has trivial ${\projlim}^1$. It remains only to observe that if $K$ is spherically complete then so is $K^\flat$. Given a decreasing sequence of ideals $I_r$ of $\roi^\flat$ with radii not going to zero, we may rescale to assume that $I_r\supseteq\pi^\flat\roi^\flat$ for all $r$, where $\pi^\flat\in\roi^\flat$ satisfies $\roi^\flat/\pi^\flat=\roi/\pi$ for some $\pi\in\roi$; let $J_r\subset\roi$ be the corresponding ideal such that $J_r/\pi\roi=I_r/\pi^\flat\roi^\flat$. Then ${\projlim}^1_r I_r = {\projlim}^1_r I_r/\pi^\flat\roi^\flat={\projlim}^1_r J_r/\pi\roi={\projlim}^1_r J_r=0$ by spherical completeness of $K$.
\end{proof}

Another result we will need is the following coherence result. For this, let $\roi=\roi_K$ be the ring of integers in any perfectoid field $K$.

\begin{proposition}\label{prop:WrCoherent} For any $r\geq 1$, the ring $W_r(\roi)$ is coherent.
\end{proposition}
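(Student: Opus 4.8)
The plan is an induction on $r$, using standard facts about coherence: a ring $A$ is coherent iff it is coherent as a module over itself, equivalently every finitely generated ideal of $A$ is finitely presented; in a short exact sequence $0\to M'\to M\to M''\to 0$ of $A$-modules, if $M'$ and $M''$ are coherent then so is $M$; and if $I\subseteq A$ is a finitely generated ideal, then $A/I$ is a coherent $A$-module if and only if $A/I$ is a coherent ring. The base case $r=1$ is immediate: $\roi=\roi_K$ is a valuation ring, and a valuation ring is coherent since its finitely generated ideals are principal and a nonzero principal ideal in a domain is free of rank one (the annihilator of a nonzero element vanishes).

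For the inductive step, let $r\geq 2$ and assume $W_{r-1}(\roi)$ coherent. I will exploit the short exact sequence of $W_r(\roi)$-modules
\[
0\longrightarrow V^{r-1}(\roi)\longrightarrow W_r(\roi)\xrightarrow{\ R\ }W_{r-1}(\roi)\longrightarrow 0
\]
and show that each outer term is a coherent $W_r(\roi)$-module. The kernel of the restriction map $R$ equals the principal ideal $V^{r-1}(1)W_r(\roi)$: indeed $V^{r-1}(1)W_r(\roi)=V^{r-1}\big(F^{r-1}(W_r(\roi))\big)=V^{r-1}(W_1(\roi))=\ker R$, using that $F^{r-1}\colon W_r(\roi)\to W_1(\roi)$ is surjective by Lemma~\ref{lemma_frobenius_surjectivity}. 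Hence $W_{r-1}(\roi)=W_r(\roi)/V^{r-1}(1)W_r(\roi)$, which is a coherent $W_r(\roi)$-module by the inductive hypothesis together with the fact that the defining ideal is finitely generated.

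It remains to treat the submodule $V^{r-1}(\roi)$. Using the identity $x\cdot V^{r-1}(a)=V^{r-1}(F^{r-1}(x)a)$, the injection $V^{r-1}\colon W_1(\roi)\hookrightarrow W_r(\roi)$ identifies $V^{r-1}(\roi)$, $W_r(\roi)$-linearly, with $\roi=W_1(\roi)$ equipped with the $W_r(\roi)$-action through the ring homomorphism $F^{r-1}$, that is, with $W_r(\roi)/\ker(F^{r-1})$. Since $\roi$ is a perfectoid ring, Lemma~\ref{lemma_ker_theta_r} gives $W_r(\roi)=\bb A_\inf(\roi)/\tilde\xi_r$, and the relation $F\circ\tilde\theta_{r+1}=\tilde\theta_r$ (Lemma~\ref{lemma_theta_r_diagrams}) shows that $F^{r-1}$ is induced by the identity of $\bb A_\inf(\roi)$, i.e. is the natural projection $\bb A_\inf(\roi)/\tilde\xi_r\to\bb A_\inf(\roi)/\tilde\xi_1$. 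As $\tilde\xi_1$ divides $\tilde\xi_r$, the ideal $\ker(F^{r-1})$ of $W_r(\roi)$ is generated by the image of $\tilde\xi_1$, hence is principal, and $\roi=W_r(\roi)/\ker(F^{r-1})$ is a coherent ring (a valuation ring). Therefore $V^{r-1}(\roi)$ is a coherent $W_r(\roi)$-module, and the displayed sequence exhibits $W_r(\roi)$ as an extension of coherent $W_r(\roi)$-modules, hence as a coherent module over itself, i.e. a coherent ring. This completes the induction.

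The genuinely structural point, beyond the formal manipulations with coherent modules and with Witt-vector module structures, is the observation that $\ker R$ and $\ker F^{r-1}$ are \emph{principal} ideals of $W_r(\roi)$ — this is what upgrades the two outer terms of the sequence from coherent rings to coherent $W_r(\roi)$-modules — and it is transparent from the presentation of $W_r(\roi)$ as the quotient of $\bb A_\inf(\roi)$ by the single non-zero-divisor $\tilde\xi_r$. (One could alternatively identify $\ker F^{r-1}=\Ann_{W_r(\roi)}V^{r-1}(1)$ via Remark~\ref{rem:idealsgeneralcase}, but principality is cleanest through the $\bb A_\inf$-picture.)
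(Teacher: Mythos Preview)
Your argument is correct and proceeds along a genuinely different (and shorter) route than the paper's. Both proofs treat the base case $r=1$ identically, and both ultimately induct on $r$, but the inductive step diverges. You work directly with the short exact sequence $0\to V^{r-1}(\roi)\to W_r(\roi)\xrightarrow{R} W_{r-1}(\roi)\to 0$ and invoke the closure of coherent modules under extensions, using the $\bb A_\inf$-presentation of $W_r(\roi)$ (Lemma~\ref{lemma_ker_theta_r}) to see that both $\ker R$ and $\ker F^{r-1}$ are principal; combined with Lemma~\ref{QuotCoherent}~(i), this makes both outer terms coherent $W_r(\roi)$-modules in one stroke. The paper, by contrast, handles characteristic $p$ by a similar (square-zero) filtration argument, but in characteristic $0$ first uses the ghost map to reduce (via Lemmas~\ref{SplitCoherence} and~\ref{ARIsogeny}) to showing $W_r(\roi)/p$ is coherent, and then passes to $W_r(\roi/p^N)$ where it can run a finer double induction using Lemma~\ref{LiftCoherence}. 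Your route is more economical and uniform across characteristics; the paper's route has the mild advantage of keeping the inductive step within the framework of Lemma~\ref{LiftCoherence} (square-zero extensions with finitely \emph{presented} kernel), but this is not needed once one observes, as you do, that coherent modules are closed under extensions over any ring.
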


Unfortunately, in general $A_\inf$ is not coherent, cf.~\cite{KedlayaAinf}. We start with some reminders on coherent rings \cite[Tag 05CU]{StacksProject}. Recall that a ring $R$ is coherent if every finitely generated ideal is finitely presented. Equivalently, any finitely generated submodule of a finitely presented module is finitely presented. Then the category of finitely presented $R$-modules is stable under extensions, kernels and cokernels.

\begin{lemma}\label{QuotCoherent} Let $R$ be a ring and $I\subset R$ a finitely generated ideal. 
\begin{enumerate}
\item An $R/I$-module $M$ is finitely presented as an $R/I$-module if and only if $M$ is finitely presented as an $R$-module.
\item If $R$ is coherent, then $R/I$ is coherent.
\end{enumerate}
\end{lemma}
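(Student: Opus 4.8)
The plan is to prove both parts of Lemma~\ref{QuotCoherent} by reducing statements about finite presentation over $R/I$ to statements over $R$, using the fact that $I$ is finitely generated.

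\medskip

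\emph{Part (i).} Suppose first that $M$ is finitely presented as an $R$-module, say $R^m \to R^n \to M \to 0$ is exact. Since $M$ is an $R/I$-module, the composite $R^n \to M$ factors through $(R/I)^n$, and tensoring the presentation with $R/I$ (right exactness of $\otimes$) shows $(R/I)^m \oplus (R/I)^{?} \to (R/I)^n \to M \to 0$; more precisely, base change along $R \to R/I$ of a finite presentation of $M$ over $R$ is a finite presentation of $M \otimes_R (R/I) = M$ over $R/I$, so $M$ is finitely presented over $R/I$. This direction does not even need $I$ finitely generated. Conversely, assume $M$ is finitely presented over $R/I$, and pick generators $f_1, \dots, f_c$ of $I$. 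Choose a finite presentation $(R/I)^m \xrightarrow{\bar A} (R/I)^n \to M \to 0$ over $R/I$. Lift the matrix $\bar A$ to a matrix $A$ over $R$, giving a map $R^m \xrightarrow{A} R^n$. Then $M = \coker(R^n \xrightarrow{} M)$ is the cokernel of the combined map
\[
R^m \oplus (R^n)^c \xrightarrow{(A, \, f_1 \cdot \mathrm{id}, \, \dots, \, f_c \cdot \mathrm{id})} R^n \to M \to 0,
\]
since modding out by the images of the $f_j \cdot \mathrm{id}$ replaces $R^n$ by $(R/I)^n$ and then $A$ becomes $\bar A$. As $I$ is finitely generated, the source here is a finite free $R$-module, so $M$ is finitely presented over $R$.

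\medskip

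\emph{Part (ii).} Assume $R$ is coherent; we must show every finitely generated ideal $\bar J \subset R/I$ is finitely presented as an $R/I$-module. Write $\bar J = J/I$ for a finitely generated ideal $J \subset R$ with $I \subseteq J$ (lift generators of $\bar J$ and adjoin generators of $I$). Since $R$ is coherent, $J$ is a finitely presented $R$-module; since $I$ is finitely generated, it is, being a finitely generated submodule of the finitely presented module $R$ (equivalently of $J$), also finitely presented as an $R$-module by coherence. Then $\bar J = J/I$ sits in a short exact sequence $0 \to I \to J \to \bar J \to 0$ of $R$-modules with $I$ and $J$ finitely presented over $R$; hence $\bar J$ is finitely presented as an $R$-module (the category of finitely presented modules over any ring is closed under cokernels of maps between finitely presented modules, and here $\bar J$ is such a cokernel). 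By part (i), $\bar J$ is then finitely presented as an $R/I$-module, which is what we needed.

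\medskip

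The only mild subtlety, and the step to get right, is the converse direction in part~(i): one must remember to throw in the $c$ extra copies of $R^n$ (one for each generator of $I$) so that the source of the presenting map stays finite free over $R$ --- this is exactly where the hypothesis that $I$ is finitely generated is used, and it is also what powers the reduction in part~(ii). Everything else is a formal manipulation of exact sequences and base change for $\otimes$, with no real obstacle.
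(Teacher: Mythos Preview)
Your proof is correct. Part (i) is essentially identical to the paper's argument: both directions match (tensoring a presentation with $R/I$ for one, and adjoining generators of $I$ to the relations for the other).

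For part (ii) you take a slightly different route than the paper. You argue directly over $R$: the preimage $J$ and the ideal $I$ are both finitely presented $R$-modules by coherence, so the cokernel $\bar J=J/I$ is finitely presented over $R$, and then you invoke part (i). The paper instead tensors a presentation of $\tilde J$ with $R/I$ to see that $\tilde J/I\tilde J$ is finitely presented over $R/I$, and then uses the exact sequence $I/I^2\to \tilde J/I\tilde J\to \bar J\to 0$ to conclude. Your approach is a bit cleaner in that it uses part (i) as a black box, whereas the paper works more directly over $R/I$ and never invokes the harder direction of (i). One minor remark: in your argument you only need that $I$ is finitely \emph{generated} (not finitely presented) to conclude that the cokernel $J/I$ is finitely presented over $R$; invoking coherence for $I$ is harmless but unnecessary at that step.
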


\begin{proof} For part (i), if $M$ is finitely presented as an $R$-module, then taking $\otimes_R R/I$ of any finite presentation of $M$ as an $R$-module shows that $M$ is finitely presented as an $R/I$-module. Conversely, take a finite presentation
\[
(R/I)^n\to (R/I)^m\to M\to 0\ .
\]
This gives an exact sequence
\[
R^n\oplus I^m\to R^m\to M\to 0\ ,
\]
giving finite presentation of $M$ as an $R$-module, as $I$ is finitely generated.

For part (ii), let $J\subset R/I$ be any finitely generated ideal, with preimage $\tilde{J}\subset R$. As $I$ and $J$ are finitely generated (as an $R$-modules), $\tilde{J}$ is finitely generated. As $R$ is coherent, $\tilde{J}$ is finitely presented, so we can find an exact sequence
\[
R^n\to R^m\to \tilde{J}\to 0\ .
\]
This gives an exact sequence
\[
(R/I)^n\to (R/I)^m\to \tilde{J}/I\tilde{J}\to 0\ ,
\]
so that $\tilde{J}/I\tilde{J}$ is finitely presented as an $R/I$-module. On the other hand, we have an exact sequence
\[
I/I^2\to \tilde{J}/I\tilde{J}\to J\to 0
\]
of $R/I$-modules, where $I/I^2$ is finitely generated. This makes $J$ a quotient of a finitely presented $R/I$-module by a finitely generated $R/I$-module, thus $J$ is finitely presented as an $R/I$-module.
\end{proof}

\begin{lemma}\label{LiftCoherence} Let $S\to R$ be a surjective map of rings with square-zero kernel $I\subset S$. Assume that $R$ is coherent and $I$ is a finitely presented $R$-module. Then $S$ is coherent.
\end{lemma}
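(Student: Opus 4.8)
The plan is to show directly that every finitely generated ideal $J\subseteq S$ is finitely presented as an $S$-module, by splitting $J$ along $I$: there is a short exact sequence of $S$-modules
\[
0\to J\cap I\to J\to \overline J\to 0,
\]
where $\overline J=(J+I)/I$ is the image of $J$ in $R=S/I$. Since $I^2=0$, both $J\cap I\subseteq I$ and $\overline J\subseteq R$ are annihilated by $I$, hence are $R$-modules. The idea is to control each outer term entirely over the coherent ring $R$ and then transport the conclusions back to $S$ via Lemma~\ref{QuotCoherent}(i); this is legitimate because $I$ is a finitely generated ideal of $S$ (it is even finitely generated over $R$), so Lemma~\ref{QuotCoherent}(i) applies with $S$ in place of $R$ and identifies ``finitely presented over $R$'' with ``finitely presented over $S$'' for modules killed by $I$. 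In particular $\overline J$, being a finitely generated ideal of the coherent ring $R$, is finitely presented over $R$, hence over $S$.

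The only step requiring real work is showing that $J\cap I$ is finitely generated over $S$; note that the intersection of two finitely generated submodules need not be finitely generated in general, so the coherence of $R$ must genuinely be used here. For this I would fix generators $f_1,\dots,f_m$ of $J$, form the surjection $\psi\colon S^m\to J\subseteq S$, $e_i\mapsto f_i$, and reduce modulo $I$ to get $\overline\psi\colon R^m\to R$. Coherence of $R$ gives that $\ker\overline\psi$ is finitely generated over $R$, say by the images of elements $z_1,\dots,z_t\in S^m$; a short chase — using $I^2=0$, so that $\psi(I^m)=\sum_i If_i$ — then yields $J\cap I=\sum_j S\,\psi(z_j)+\sum_i If_i$, which is finitely generated since $I$ is. Now $J\cap I$ is also finitely generated over $R$ (being killed by $I$), so it is a finitely generated submodule of the finitely presented $R$-module $I$; since $R$ is coherent, $J\cap I$ is finitely presented over $R$, hence finitely presented over $S$ by Lemma~\ref{QuotCoherent}(i). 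This is the main obstacle: everything else is formal once the mod-$I$ reduction is exploited to pin down $J\cap I$.

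With both $J\cap I$ and $\overline J$ finitely presented over $S$, the displayed sequence shows $J$ is finitely presented over $S$ — here one uses the fact, valid over any ring, that an extension of a finitely presented module by a finitely presented module is finitely presented. Hence every finitely generated ideal of $S$ is finitely presented, i.e.~$S$ is coherent. I would stress in the writeup that it is essential to know $J\cap I$ is finitely \emph{presented}, not merely finitely generated, before invoking this last fact: ``an extension of a finitely generated module by a finitely presented module is finitely presented'' is false over non-coherent rings (for instance $A\oplus S$ with $A$ a finitely generated non-finitely-presented ideal), which is exactly the pathology this lemma is meant to preclude.
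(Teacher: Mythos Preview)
Your proof is correct and follows essentially the same route as the paper: split $J$ along $I$, show $\overline J$ is finitely presented over $R$ (hence over $S$) by coherence of $R$, then show $J\cap I$ is finitely generated, deduce it is finitely presented as a submodule of the finitely presented $R$-module $I$, and conclude. The only cosmetic difference is that the paper obtains finite generation of $J\cap I$ by invoking the standard fact that in an exact sequence $0\to A\to B\to C\to 0$ with $B$ finitely generated and $C$ finitely presented the kernel $A$ is finitely generated, whereas you write down explicit generators; the two arguments are otherwise identical.
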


\begin{proof} Let $J\subset S$ be a finitely generated ideal. One has an exact sequence
\[
0\to J\cap I\to J\to J_R\to 0\ ,
\]
where $J_R\subset R$ is the image of $J$. Then $J_R$ is a finitely generated ideal of $R$, and therefore finitely presented as an $R$-module. By Lemma~\ref{QuotCoherent} (i), it is also finitely presented as $S$-module. As $J$ is finitely generated and $J_R$ is finitely presented, it follows that $J\cap I$ is finitely generated (as an $S$-module, and thus as an $R$-module). Now $J\cap I\subset I$ is a finitely generated $R$-submodule of the finitely presented $R$-module $I$, making $J\cap I$ finitely presented (as an $R$-module, and thus as an $S$-module). Therefore, $J$ is an extension of finitely presented $S$-modules, and hence itself finitely presented.
\end{proof}

\begin{lemma}\label{SplitCoherence} Let $R$ be a ring, $f\in R$ a non-zero-divisor. Assume that $(R,f)$ satisfy the Artin-Rees property, i.e. for every inclusion $M\subset N$ of finitely generated $R$-modules, the restriction of the $f$-adic topology on $N$ to $M$ is the $f$-adic topology of $M$. Then $R$ is coherent if $R[f^{-1}]$ and $R/f$ are coherent.
\end{lemma}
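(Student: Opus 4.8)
The plan is to use the definition that $R$ is coherent precisely when every finitely generated ideal of $R$ is finitely presented, and to reduce this to the following structural statement: \emph{if $C$ is a finitely presented $R$-module, then its $f$-power-torsion submodule $C[f^\infty]=\bigcup_n\ker(f^n:C\to C)$ is finitely generated.} Granting this, fix a finitely generated ideal $I=(a_1,\dots,a_n)\subseteq R$ and let $K=\ker(R^n\xrightarrow{(a_i)}R)$ be its relation module; we must show $K$ is finitely generated. Localisation is exact, so $K[\tfrac1f]$ is the relation module of $IR[\tfrac1f]$, which is finitely generated over $R[\tfrac1f]$ since that ring is coherent. Clearing denominators, pick a finitely generated submodule $K_0\subseteq K$ with $K_0[\tfrac1f]=K[\tfrac1f]$. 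Then $C:=R^n/K_0$ is finitely presented, $K/K_0\subseteq C$ is $f$-power torsion, and $C/(K/K_0)=R^n/K=I$ is $f$-torsion-free because $f$ is a non-zero-divisor; hence $K/K_0=C[f^\infty]$, which is finitely generated by the structural statement, and therefore so is $K$.

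It remains to prove the structural statement. Since $fR/f^2R\cong R/fR$ (multiplication by $f$, using that $f$ is a non-zero-divisor) is a finitely presented $R/f$-module, Lemma~\ref{LiftCoherence} together with Lemma~\ref{QuotCoherent} gives, by induction on $k$, that $R/f^kR$ is coherent for every $k\ge1$. Write $C=R^b/N$ with $N\subseteq R^b$ finitely generated. The Artin--Rees property for $(R,f)$, applied to $N\subseteq R^b$ (and using that $f$ is a non-zero-divisor), provides an integer $c$ with
\[
\tilde N:=\{v\in R^b : f^m v\in N \text{ for some }m\}=\{v\in R^b : f^c v\in N\}\ .
\]
In particular $f^c\tilde N\subseteq N$, so $C[f^\infty]=\tilde N/N$ is killed by $f^c$ and $\tilde N\cap f^cR^b=N\cap f^cR^b$.

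Now reduce modulo $f^c$: let $\bar N,\bar{\tilde N}\subseteq(R/f^c)^b$ be the images of $N$ and $\tilde N$, so that $\bar{\tilde N}/\bar N\cong\tilde N/N=C[f^\infty]$. One checks that $\bar{\tilde N}$ is the kernel of the $R/f^c$-linear map
\[
\psi:(R/f^c)^b\To f^c\cdot\bigl(C/f^{2c}C\bigr)\ ,\qquad \bar v\Mapsto f^c v\ \bmod\ f^{2c}C\ ,
\]
which is well defined because replacing a lift $v\in R^b$ of $\bar v$ by another alters $f^cv$ by an element of $f^{2c}R^b$. The target $C/f^{2c}C=(R/f^{2c})^b/(\text{finitely generated})$ is finitely presented over the coherent ring $R/f^{2c}$, hence its finitely generated submodule $f^c(C/f^{2c}C)$ is finitely presented over $R/f^{2c}$, and so over $R/f^c$ by Lemma~\ref{QuotCoherent}(i). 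Coherence of $R/f^c$ then forces the finitely generated module $\mathrm{im}(\psi)$ to be finitely presented, so $\ker(\psi)=\bar{\tilde N}$ is finitely presented over $R/f^c$ (finitely presented $R/f^c$-modules are stable under kernels); since $\bar N$ is finitely presented over $R/f^c$ for the same reason, the quotient $C[f^\infty]\cong\bar{\tilde N}/\bar N$ is finitely presented over $R/f^c$, in particular finitely generated over $R$. This finishes the proof.

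I expect the main obstacle to be the structural statement, and within it the control of the $f$-saturation $\tilde N$, which is not visibly finitely generated. Each hypothesis earns its keep at a definite point: coherence of $R[\tfrac1f]$ disposes of the part of $K$ surviving the inversion of $f$; the Artin--Rees property is exactly what collapses the a priori infinite chain of ideals $\{v:f^m v\in N\}$ to the single step $m=c$, so that $C[f^\infty]$ becomes a module over $R/f^c$; and coherence of $R/f$ — bootstrapped via Lemmas~\ref{QuotCoherent} and~\ref{LiftCoherence} to coherence of all $R/f^k$ — is what allows one to recognise $\bar{\tilde N}$ as the finitely generated kernel of a map between explicit finitely presented $R/f^c$-modules. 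Note that no completeness of $R$ is needed.
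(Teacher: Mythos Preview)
Your proof is correct. Both your argument and the paper's share the same three ingredients—coherence of $R[f^{-1}]$ to approximate $K$ by a finitely generated $K_0$, Artin--Rees to bound the $f$-torsion exponent, and the bootstrap from coherence of $R/f$ to coherence of all $R/f^k$—but they are organised differently. You isolate a clean reusable statement (every finitely presented $R$-module has finitely generated $f$-power torsion) and prove it by identifying $C[f^\infty]$ with the kernel of an explicit map $\psi$ between finitely presented $R/f^c$-modules; this uses Artin--Rees exactly once, for $N\subset R^b$. The paper instead reduces to showing that $K/f$ is finitely generated via the sequence $0\to K/f\to (R/f)^n\to I/fI\to 0$, and then invokes Artin--Rees a \emph{second} time (for the inclusion $I\subset R$) to exhibit $I/fI$ as a quotient of a finitely presented $R/f^M$-module. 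Your route is slightly more economical and yields a more general intermediate result; the paper's route is more direct about what is being computed but a bit more ad hoc.
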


\begin{proof} First, observe that by Lemma~\ref{LiftCoherence} (and the assumption that $f$ is a non-zero-divisor) coherence of $R/f$ implies coherence of $R/f^n$ for all $n\geq 1$. Let $I\subset R$ be a finitely generated ideal, and choose a surjection $R^n\to I$ with kernel $K\subset R^n$. We have to prove that $K$ is finitely generated. By assumption $K[f^{-1}]$ is finitely generated, so we may find a map $R^m\to K$ with cokernel $C$ being $f$-torsion. Now $C$ embeds into the cokernel of $R^m\to R^n$; it follows from the Artin-Rees property that the $f$-torsion-part of the cokernel of $R^m\to R^n$ is of bounded exponent. (There is some $N$ such that the preimage of $f^N R^n$ lies in the image of $fR^m$; then, if $x$ is such that $f^Nx$ is in the image of $R^m$, it is in fact in the image of $fR^m$, so that $f^{N-1}x$ is already in the image of $R^m$.) This means that $C$ is of bounded exponent: $f^NC=0$ for some $N$. Thus, it is enough to prove that $K/f^N$ is finitely generated, or even that $K/f$ is finitely generated.

Note that as $I\subset R$ has no $f$-torsion, $K/f$ occurs in a short exact sequence
\[
0\to K/f\to R^n/f\to I/fI\to 0\ .
\]
Therefore, it is enough to prove that $I/fI$ is finitely presented as an $R/f$-module.

Now, by the Artin-Rees property again, there is some $M$ such that $I\cap f^MR\subset fI$. As $R/f^M$ is coherent, $I/(I\cap f^MR)\subset R/f^M$ is finitely presented as an $R/f^M$-module. As $I/fI$ is a quotient of $I/(I\cap f^MR)$ by the finitely generated module $fI$, it follows that $I/fI$ is a finitely presented $R/f^M$-module. By Lemma \ref{QuotCoherent}, it follows that $I/fI$ is also finitely presented as an $R/f$-module.
\end{proof}

\begin{lemma}\label{ARIsogeny} Let $g: R\to S$ be an injective map of rings, $f\in R$ such that both $R$ and $S$ are $f$-torsion free. Assume moreover that the cokernel of $g$ (as a map of $R$-modules) is killed by some power $f^n$ of $f$. Then $(R,f)$ satisfies the Artin-Rees property if and only if $(S,f)$ does. 
\end{lemma}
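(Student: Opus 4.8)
The plan is to transfer the Artin--Rees property across $g$ by comparing finitely generated $R$-modules with finitely generated $S$-modules, treating the two implications separately (there is no ring map $S\to R$, so no formal symmetry). The key structural input is that, since $S/R$ is killed by $f^n$, we have $f^n S\subseteq R$; thus $R$ and $S$ are ``$f$-commensurable'', and multiplication by a power of $f$ converts $S$-linear data into $R$-linear data at the cost of a bounded shift of the $f$-adic filtration. I would also use the reduction of the Artin--Rees property to the case where the ambient module is finite free (over $R$, resp.\ over $S$) --- which is in any case the only situation in which it gets applied later --- since then a submodule of a finite free module over an $f$-torsion-free ring is again $f$-torsion-free, which is what allows the bounded shifts to be absorbed exactly.

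For ``$(S,f)$ Artin--Rees $\Rightarrow (R,f)$ Artin--Rees'': given finitely generated $M\subseteq N=R^m$, embed $N\hookrightarrow S^m=:N_S$ via $g$, write $M'=g^m(M)\cong M$ for the image, and set $M_S:=S\cdot M'\subseteq N_S$, a finitely generated $S$-module. Apply Artin--Rees for $S$ to $M_S\subseteq N_S$ with power $k+n$ to get $c_0$ with $f^{c_0}N_S\cap M_S\subseteq f^{k+n}M_S$. Then for $c\geq c_0$ and $x\in f^c N\cap M$, the image $g^m(x)$ lies in $f^c N_S\cap M_S\subseteq f^{k+n}M_S=f^{k+n}S\cdot M'\subseteq f^k R\cdot M'=g^m(f^k M)$, using $f^n S\subseteq R$. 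Since $g^m$ is injective this forces $x\in f^k M$, which is Artin--Rees for $R$. The reverse implication is handled symmetrically: given finitely generated $Q\subseteq P=S^m$, choose finitely generated $R$-submodules $Q_0\subseteq P_0$ of $P$ with $S\cdot P_0=P$ and $S\cdot Q_0=Q$ (say generated over $R$ by a common finite set of $S$-module generators), observe $f^n P\subseteq P_0$ and $f^n Q\subseteq Q_0$, apply Artin--Rees for $R$ to $Q_0\subseteq P_0$, and run the same chase, now using that $P$ is $f$-torsion-free to strip off the surplus powers of $f$.

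The main obstacle is the bookkeeping forced by the non-flatness of $R\to S$ --- equivalently, by the $f$-power torsion in the modules involved: the comparison maps between $R$- and $S$-modules have kernels and cokernels only killed by $f^n$ rather than vanishing, so without the preliminary reduction a naive chase yields only inclusions of the shape $f^c N\cap M\subseteq f^k M+M[f^n]$. Reducing to finite free ambient modules (so that every submodule in sight is $f$-torsion-free) and tracking the $f^n$-bounded discrepancies via $f^n S\subseteq R$ are precisely the two points that make the argument work; the rest is a routine diagram chase with no further input.
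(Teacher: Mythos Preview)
Your approach is correct and takes a genuinely different route from the paper. The paper's proof is a one-sentence sketch: it observes that $M\mapsto M\otimes_R S$ and restriction of scalars induce inverse equivalences between $R$-modules and $S$-modules \emph{up to bounded $f$-torsion}, and then asserts that the Artin--Rees property is insensitive to bounded $f$-torsion. You instead carry out the transfer by explicit element chases, using the inclusion $f^n S\subseteq R$ to convert $S$-linear data into $R$-linear data at a bounded $f$-adic cost. The two arguments have the same content once unwound, but yours makes the role of $f$-torsion-freeness completely explicit, whereas the paper's categorical formulation leaves that step implicit.

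One point deserves comment: the reduction of Artin--Rees to the case of finite free ambient module is asserted but not proved, and for non-coherent rings it is \emph{not} automatic --- the preimage of $M$ under a surjection $R^m\twoheadrightarrow N$ need not be finitely generated, so one cannot simply pull the problem back to $R^m$. Your two chases, exactly as written, prove the implications with finite free ambient on the target side; with the reduction this yields the full lemma. As you already note, however, in the paper's sole application (the proof that $W_r(\roi)$ is coherent) the Artin--Rees property is only ever invoked for finitely generated submodules of finite free modules, so the reduction is immaterial there. The paper's sketch is no more detailed on this point.
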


\begin{proof} The functors $M\mapsto M\otimes_R S$ and $N\mapsto N$ induce inverse equivalences of categories between the category of $R$-modules up to bounded $f$-torsion and the category of $S$-modules up to bounded $f$-torsion. As the Artin-Rees property does not depend on bounded $f$-torsion, one easily checks the lemma.
\end{proof}

After these preparations, we can prove that $W_r(\roi)$ is coherent.

\begin{proof}{(\it of Proposition~\ref{prop:WrCoherent})} Assume first that $K$ is of characteristic $p$. Then $\roi$ is a perfect valuation ring of characteristic $p$, and in particular coherent. Moreover, $W_r(\roi)\to \roi$ is a successive square-zero extension by a copy of $\roi$, which shows that $W_r(\roi)$ is coherent by Lemma~\ref{LiftCoherence}.

Thus, assume now that $K$ is of characteristic $0$. Note that as $\roi$ is $p$-torsion free, the map $W_r(\roi)\to \prod_{i=1}^r \roi$ given by the ghost components is injective, with cokernel bounded $p$-torsion. Note that $\roi$, and thus $\prod_{i=1}^r \roi$, is coherent and satisfies the Artin-Rees property with respect to $f=p$. By Lemma~\ref{SplitCoherence} and Lemma~\ref{ARIsogeny}, it is enough to prove that $W_r(\roi)/p$ is coherent. But $W_r(\roi)/p = W_r(\roi/p^N)/p$ for $N$ big enough, so that it is enough to prove that $W_r(\roi/p^N)$ is coherent.

Now we argue by induction on $r$, so assume $W_{r-1}(\roi/p^N)$ is coherent. For any $i=0,\ldots,N$, consider $R_i = W_r(\roi/p^N) / V^{r-1}(p^i\roi/p^N)$. Then $R_0 = W_{r-1}(\roi/p^N)$ and $R_N = W_r(\roi/p^N)$. We claim by induction on $i$ that $R_i$ is coherent. Note that $R_{i+1}\to R_i$ is a square zero extension by $p^i\roi/p^{i+1}\roi$ regarded as an $R_i$-module via $R_i\to \roi/p^N\to \roi/p\xto{\phi^{r-1}} \roi/p$. This is finitely presented as an $R_i$-module, so the result follows from Lemma~\ref{LiftCoherence}.
\end{proof}

\begin{corollary}\label{cor:WrFinPresNoAlmostZero} Let $M$ be a finitely presented $W_r(\roi)$-module. Then there are no non-zero elements of $M$ which are killed by $W_r(\frak m)$.
\end{corollary}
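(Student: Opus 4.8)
The plan is to prove, by induction on $r$, the equivalent assertion that for any perfectoid field $K$ (with ring of integers $\roi$, maximal ideal $\frak m$, residue field $k$) and any finitely presented $W_r(\roi)$-module $M$, no nonzero element of $M$ is annihilated by the ideal $W_r(\frak m)=\ker(W_r(\roi)\to W_r(k))$. Two standing facts will be used throughout. First, by Proposition~\ref{prop:WrCoherent} the ring $W_r(\roi)$ is coherent, so finitely presented $W_r(\roi)$-modules form an abelian subcategory of all modules; in particular kernels, cokernels and images of maps between finitely presented modules are finitely presented, and for any $f\in W_r(\roi)$ the ideal $\Ann_{W_r(\roi)}(f)=\ker(W_r(\roi)\xrightarrow{\cdot f}W_r(\roi))$ is finitely presented, hence finitely generated — this is crucial, since $W_r(\frak m)$ itself is \emph{not} finitely generated. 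Second, a perfectoid field is never discretely valued, so by Lemma~\ref{lemma_frobenius_surjectivity} it carries a pseudo-uniformizer admitting $p$-power roots; hence $\frak m$ contains elements of arbitrarily small positive valuation, and consequently the only ideals of the valuation ring $\roi$ containing such a small element — e.g.\ any ideal containing $\frak m$, or containing $\{m^{p^{r-1}}:m\in\frak m\}$ — are $\frak m$ and $\roi$.

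For the base case $r=1$, so $W_1(\roi)=\roi$ is a valuation ring: if $x\in M$ is killed by $\frak m$, then $\roi x\cong\roi/\Ann_\roi(x)$ is a finitely generated submodule of the finitely presented module $M$, hence finitely presented, so $\Ann_\roi(x)$ is finitely generated, hence principal, say $\Ann_\roi(x)=(a)$. If $x\neq 0$ then $(a)$ is proper, so $0<v(a)<\infty$, which by the second standing fact is incompatible with $\frak m\subseteq(a)$. Hence $x=0$.

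For the inductive step, fix $r\geq 2$, assume the assertion for $r-1$, and suppose $0\neq x\in M$ with $W_r(\frak m)x=0$. Put $e=V^{r-1}(1)\in W_r(\roi)$; by Remark~\ref{rem:idealsgeneralcase} one has $eW_r(\roi)=V^{r-1}(\roi)$, which is exactly the kernel of the restriction map $R\colon W_r(\roi)\to W_{r-1}(\roi)$, and $\Ann_{W_r(\roi)}(e)=\ker(F^{r-1}\colon W_r(\roi)\to W_1(\roi)=\roi)$, with $F^{r-1}$ inducing an isomorphism $W_r(\roi)/\Ann(e)\xrightarrow{\sim}\roi$. If $x\notin eM$, consider $\bar M:=M/eM$: it is killed by $eW_r(\roi)=\ker R$, hence is a $W_{r-1}(\roi)$-module; it is finitely presented over $W_r(\roi)$ (since $eM$ is the image of $M\xrightarrow{\cdot e}M$, hence a finitely generated submodule of $M$), and therefore over $W_{r-1}(\roi)=W_r(\roi)/eW_r(\roi)$ by Lemma~\ref{QuotCoherent}(i), the ideal $eW_r(\roi)$ being finitely generated; and the image $\bar x$ of $x$ is nonzero by hypothesis and killed by $W_{r-1}(\frak m)=R(W_r(\frak m))$ — contradicting the inductive hypothesis. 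If instead $x\in eM$, put $M':=eM\subseteq M$: as the image of $M\xrightarrow{\cdot e}M$ it is finitely presented over $W_r(\roi)$, it is killed by $\Ann(e)=\ker F^{r-1}$, hence is an $\roi$-module via $F^{r-1}$, and since $\ker F^{r-1}$ is finitely generated it is finitely presented over $\roi$ by Lemma~\ref{QuotCoherent}(i). Finally $x$ is a nonzero element of $M'$ killed by $\frak m$ for this $\roi$-structure: the surjective ring map $F^{r-1}$ carries the ideal $W_r(\frak m)$ onto an ideal of $\roi$ that is contained in $\frak m$ but contains $F^{r-1}([m])=m^{p^{r-1}}$ for every $m\in\frak m$, hence (by the second standing fact) equal to $\frak m$; thus each $c\in\frak m$ has a lift $\tilde c\in W_r(\frak m)$, and $c\cdot x=\tilde c x=0$. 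This contradicts the base case applied to the finitely presented $\roi$-module $M'$. In both cases we reach a contradiction, which completes the induction.

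The main obstacle is organizational rather than computational: because $W_r(\frak m)$ is not finitely generated, the whole argument must be routed through the \emph{finite presentation} of $M$ and the coherence of $W_r(\roi)$ (Proposition~\ref{prop:WrCoherent}), and the induction only works once the two natural subquotients $eM$ and $M/eM$ are recognized as modules over $\roi$ and $W_{r-1}(\roi)$ respectively; the Witt-vector identities $eW_r(\roi)=\ker R$ and $\Ann_{W_r(\roi)}(e)=\ker F^{r-1}$ supplied by Remark~\ref{rem:idealsgeneralcase} are precisely what make these fit the base case and the inductive hypothesis. The remaining ingredients — surjectivity of the Witt Frobenius (Lemma~\ref{lemma_frobenius_surjectivity}), the identity $F([a])=[a^p]$, and the dichotomy for ideals of the valuation ring $\roi$ — are routine.
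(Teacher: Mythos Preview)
Your proof is correct, but it takes a genuinely different route from the paper's. The paper gives a direct argument without induction: if $x\in M$ is killed by $W_r(\frak m)$, coherence makes the cyclic submodule $W_r(\roi)x\cong W_r(\roi)/I$ finitely presented, so $I$ is finitely generated; since $W_r(\frak m)\subset I$, the quotient $W_r(\roi)/I$ is a quotient of $W_r(k)$, hence equals $W_s(k)$ for some $0\le s\le r$. If $s\ge 1$, one projects via $R^{r-1}\colon W_r(\roi)\to\roi$ to see that $\frak m$ would be finitely generated, a contradiction; so $s=0$ and $x=0$.

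Your argument instead d\'evisses along the Witt filtration: the element $e=V^{r-1}(1)$ splits the problem into the quotient $M/eM$, which is a finitely presented $W_{r-1}(\roi)$-module (via $R$) handled by induction, and the submodule $eM$, which is a finitely presented $\roi$-module (via $F^{r-1}$) handled by the base case. Both approaches rest on the same two pillars---coherence of $W_r(\roi)$ and the fact that $\frak m$ is not finitely generated---but the paper's proof is shorter because it classifies the possible cyclic quotients in one stroke rather than peeling off Witt layers. Your approach has the virtue of being more structural and of making explicit the role of the identities $eW_r(\roi)=\ker R$ and $\Ann(e)=\ker F^{r-1}$ from Remark~\ref{rem:idealsgeneralcase}; it would adapt more readily to settings where one has such a filtration but not a clean classification of ideals in the residue ring.
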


Note that $W_r(\frak m)\subset W_r(\roi)$ defines an almost setting, of the nicest possible sort: that is, $W_r(\frak m)$ is an increasing union of principal ideals generated by non-zero-divisors, cf.~Corollary~\ref{corollary_almost_Witt}.

\begin{proof} Assume that $x\in M$ is killed by $W_r(\frak m)$. The submodule $M^\prime\subset M$ generated by $x$ is a finitely generated submodule of the finitely presented $W_r(\roi)$-module $M$, thus by coherence of $W_r(\roi)$, $M^\prime$ is finitely presented. Thus, $M^\prime = W_r(\roi) / I$ for some finitely generated ideal $I\subset W_r(\roi)$. On the other hand, as $x$ is killed by $W_r(\frak m)$, we have $W_r(\frak m)\subset I$. Thus, $M^\prime$ is a quotient of $W_r(\roi)/W_r(\frak m) = W_r(k)$, where $k$ is the residue field of $\roi$. As such, $M^\prime = W_s(k)$ for some $0\leq s\leq r$. But the kernel $I$ of $W_r(\roi)\to W_s(k)$ is not finitely generated: if it were, then the kernel $\frak m$ of $\roi\to k$ would also be finitely generated.
\end{proof}

\newpage

\section{Breuil--Kisin--Fargues modules}

The goal of this section is to study the mixed characteristic analogue of Dieudonn\'e modules, i.e., Breuil--Kisin modules \cite{Breuil,Kisin} (for discretely valued fields) and Breuil--Kisin--Fargues modules \cite{FarguesBK} (for perfectoid fields). We begin in \S \ref{subsec:BKmodules} by recalling facts about Breuil--Kisin modules; the most important results here are the structure theorem in Proposition~\ref{prop:BKFree} and Kisin's theorem Theorem~\ref{ThmKisin} about lattices in crystalline Galois representations. The perfectoid analogue of Kisin's theorem is Fargues' classification of finite free Breuil--Kisin--Fargues modules in Theorem~\ref{ThmFargues}, which forms the highlight of \S \ref{subsec:BKFmodules}. In between, in \S \ref{subsec:Ainfmodules}, we study the algebraic properties of the $A_\inf$-modules that arise as Breuil--Kisin--Fargues modules; this discussion includes an analogue of the structure theorem mentioned above in Proposition \ref{prop:splitAinfmodule} (which rests on a classification result of Kedlaya, see Lemma \ref{lem:VectAinf}), and the length estimate in Corollary \ref{cor:ElementaryDivisorsSpecialization}, which is crucial to our eventual applications.

\subsection{Breuil--Kisin modules}
\label{subsec:BKmodules}

Let us start by recalling the ``classical'' theory of Breuil--Kisin modules. Here, we start with a complete discretely valued extension $K$ of $\bb Q_p$ with perfect residue field $k$, and let $\roi = \roi_K$ be its ring of integers. Moreover, we fix a uniformizer $\pi\in K$.

In this situation, we have a natural surjection
\[
\tilde\theta: \gS = W(k)[[T]]\to \roi
\]
sending $T$ to $\pi$. We call this map $\tilde\theta$ as it plays the role of $\tilde\theta$ over $A_\inf$. The kernel of $\tilde\theta$ is generated by an Eisenstein polynomial $E=E(T)\in W(k)[[T]]$ for the element $\pi$. There is a Frobenius $\phi$ on $\gS$ which is the Frobenius on $W(k)$, and sends $T$ to $T^p$.

\begin{definition} A Breuil--Kisin module is a finitely generated $\gS$-module $M$ equipped with an isomorphism
\[
\phi_M: M\otimes_{\gS,\phi} \gS[\tfrac 1E]\cong M[\tfrac 1E]\ .
\]
\end{definition}

The definition may differ slightly from other definitions in the literature. With our definition, the category of Breuil--Kisin modules forms an abelian tensor category.

\begin{example}[Tate twist] There is a ``Tate twist'' in the Breuil--Kisin setup. This is given by $\gS\{1\}$ with underlying $\gS$-module $\gS$ and Frobenius given by $\phi_{\gS\{1\}}(x) = \tfrac uE \phi(x)$, where $x\in \gS\{1\}=\gS$ and $u\in \gS$ is some explicit unit depending on the choice of $E$. This object is $\otimes$-invertible in the category of Breuil--Kisin modules. It can be defined as follows. For each $r$, consider the map
\[
\tilde\theta_r: \gS\to \gS/E_r\ ,
\]
where $E_r := E\phi(E)\cdots \phi^{r-1}(E)$ (so $E_1=E$). Let
\[
(\gS/E_r)\{1\} := \bb L_{(\gS/E_r)/\gS}[-1]\cong E_r \gS / E_r^2 \gS\ ,
\]
which is a free $\gS/E_r$-module of rank $1$. Here, as everywhere else in the paper, we use cohomological indexing. We claim that for $r>s$, there is a natural isomorphism $(\gS/E_r)\{1\}\otimes_{\gS/E_r} \gS/E_s\cong (\gS/E_s)\{1\}$. Indeed, there is an obvious map
\[
E_r \gS / E_r^2 \gS\to E_s \gS / E_s^2 \gS\ ,
\]
and the image is precisely $p^{r-s} E_s \gS / E_s^2 \gS$, as $\frac{E_r}{E_s}$ is congruent to a unit times $p^{r-s}$ modulo $E_s$. Thus, dividing the obvious map by $p^{r-s}$, we get the desired natural isomorphism
\[
(\gS/E_r)\{1\}\otimes_{\gS/E_r} \gS/E_s\cong (\gS/E_s)\{1\}\ .
\]
We may now define $\gS\{1\}=\projlim_r (\gS/E_r)\{1\}$, which becomes a free $\gS = \projlim_r \gS/E_r$-module of rank $1$. Concretely,
\[
\gS\{1\} = \{(a_1E_1,a_2E_2,\ldots)\in \prod_i E_i \gS/E_i^2 \gS\mid a_{i+1} E_{i+1}\equiv p a_i E_i\mod E_i^2\}\ ,
\]
which maps isomorphically to
\[
\{(\overline{a}_1E_1,\overline{a}_2E_2,\ldots)\in \prod E_i \gS/E_i\phi(E_{i-1})\gS\mid \overline{a}_{i+1}E_{i+1}\equiv p \overline{a}_i E_i\mod E_i\phi(E_{i-1})\}\ .
\]

There is a map
\[\begin{aligned}
E\phi_{\gS\{1\}}: \gS\{1\}\to \gS\{1\}: (a_1E_1,a_2E_2,\ldots)\mapsto &(0,E\phi(a_1)\phi(E_1),E\phi(a_2)\phi(E_2),\ldots)\\
 =\, &(0,\phi(a_1)E_2,\phi(a_2)E_3,\ldots)\in \prod E_i \gS/E_i\phi(E_{i-1})\gS\ ,
\end{aligned}\]
where on the target, we use the second description of $\gS\{1\}$. In particular, we get a map
\[
\phi_{\gS\{1\}}: \gS\{1\}\otimes_{\gS,\phi} \gS[\tfrac 1E]\to \gS\{1\}[\tfrac 1E]\ .
\]

For any integer $n$, we define $M\{n\} = M\otimes_\gS \gS\{1\}^{\otimes n}$.
\end{example}

We have the following structural result. One reason that we state this is to motivate our definition of Breuil--Kisin--Fargues modules later, which will have the condition that $M[\tfrac 1p]$ is finite free as an assumption (as it is not automatic in that setup).

\begin{proposition}\label{prop:BKFree} Let $(M,\phi_M)$ be a Breuil--Kisin module. Then there is a canonical exact sequence of Breuil--Kisin modules
\[
0\to (M_\sub{tor},\phi_{M_\sub{tor}})\to (M,\phi_M)\to (M_\sub{free},\phi_{M_\sub{free}})\to (\bar{M},\phi_{\bar{M}})\to 0\ ,
\]
where:
\begin{enumerate}
\item The module $M_\sub{tor}\subset M$ is the torsion submodule, and is killed by a power of $p$.
\item The module $M_\sub{free}$ is a finite free $\gS$-module.
\item The module $\bar{M}$ is a torsion $\gS$-module, killed by a power of $(p,T)$.
\end{enumerate}

In particular, $M[\tfrac 1p]\cong M_\sub{free}[\tfrac 1p]$ is a finite free $\gS[\tfrac 1p]$-module.
\end{proposition}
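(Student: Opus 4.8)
The sequence should come from standard commutative algebra over the two-dimensional regular local ring $\gS=W(k)[[T]]$, the only essential use of $\phi_M$ being the claim that $M_\sub{tor}$ is $p$-power torsion. I would take $M_\sub{tor}\subset M$ to be the torsion submodule, $M_\sub{free}:=(M/M_\sub{tor})^{\vee\vee}$ the reflexive hull (with $N^\vee:=\Hom_\gS(N,\gS)$), and $\bar M:=\coker(M/M_\sub{tor}\to M_\sub{free})$; these modules and the maps among them are manifestly canonical, and $0\to M_\sub{tor}\to M\to M_\sub{free}\to\bar M\to 0$ is exact by inspection ($M_\sub{tor}$ is the kernel of $M\to M_\sub{free}$, $\bar M$ its cokernel). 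For (ii): a finitely generated reflexive module over the normal domain $\gS$ satisfies Serre's condition $(S_2)$, hence has depth $\geq 2$ at the maximal ideal $\mathfrak{m}=(p,T)$; being maximal Cohen--Macaulay over the regular local ring $\gS$ of dimension $2$, Auslander--Buchsbaum forces it to be free. For (iii): for any finitely generated torsion-free $N$ over $\gS$ the map $N\to N^{\vee\vee}$ is injective with cokernel vanishing at every prime of height $\leq 1$ (where $N$ is already free, resp.\ reflexive); as $\mathfrak{m}$ is the only prime of $\gS$ of height $\geq 2$, the finitely generated module $\bar M$ is supported at $\mathfrak{m}$ alone, hence killed by a power of $(p,T)$. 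Each term is then naturally a Breuil--Kisin module: $\phi$ is faithfully flat, so $\phi^*$ is exact and commutes with passage to the torsion submodule, to the quotient, and to the double dual of a finitely generated module; and $M/M_\sub{tor}\to M_\sub{free}$ is an isomorphism after inverting $E$ (its cokernel $\bar M$ is $E$-power torsion since $E\in\mathfrak{m}$), so $\phi_M$ induces the desired $\phi$'s, and — as Breuil--Kisin modules form an abelian category — the displayed sequence is one of them.

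\textbf{The crux (i): $M_\sub{tor}[\tfrac1p]=0$.} I would argue dynamically inside $\Spec\gS[\tfrac1p]$, the spectrum of a principal ideal domain ($\gS[\tfrac1p]$ is a one-dimensional localisation of the regular local $\gS$, hence a one-dimensional UFD, hence a PID), whose closed points, by Weierstrass preparation, are the irreducible distinguished polynomials. To such a point $\mathfrak{p}$ attach its radius $r(\mathfrak{p})\in\bb Q_{>0}\cup\{\infty\}$, the common $p$-adic valuation of its roots — so $r(\mathfrak{p})=\infty$ iff $\mathfrak{p}=(T)$, and $r((E))=\tfrac1e$, where $e$ is the ramification index. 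The endomorphism $f:=\Spec(\phi)$ of $\Spec\gS[\tfrac1p]$ is finite flat of degree $p$ (as $\gS[\tfrac1p]$ is free over $\phi(\gS[\tfrac1p])$ on $1,T,\dots,T^{p-1}$), sends ``root $=\alpha$'' to ``root $=\alpha^p$'' and so multiplies $r$ by $p$, and is ramified of index $p$ at $(T)$. Localising the isomorphism $\phi^*M_\sub{tor}[\tfrac1E]\cong M_\sub{tor}[\tfrac1E]$ at a closed point $\mathfrak{p}\neq(E)$ (where inverting $E$ is harmless) and comparing lengths over the DVR $\gS_{\mathfrak{p}}$, which is finite flat of some ramification index $e_{\mathfrak{p}}\geq 1$ over $\gS_{f(\mathfrak{p})}$ via $\phi$, yields $\mathrm{length}\,(M_\sub{tor})_{\mathfrak{p}}=e_{\mathfrak{p}}\cdot\mathrm{length}\,(M_\sub{tor})_{f(\mathfrak{p})}$. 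Let $\Sigma$ be the (finite) set of closed points of $\Spec\gS[\tfrac1p]$ lying in $\op{Supp}(M_\sub{tor})$; since $M_\sub{tor}[\tfrac1p]$ is torsion, $\Sigma=\emptyset$ is exactly what we want. The length identity at $(T)$ reads $\ell=p\ell$, so $(T)\notin\Sigma$; and for $\mathfrak{p}\neq(E)$ it gives $\mathfrak{p}\in\Sigma\iff f(\mathfrak{p})\in\Sigma$.

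\textbf{Finishing (i), and the obstacle.} Suppose $\Sigma\neq\emptyset$ and pick $\mathfrak{p}\in\Sigma$; then $\mathfrak{p}\neq(T)$, so $r(\mathfrak{p})<\infty$, and the forward orbit $\mathfrak{p},f(\mathfrak{p}),f^2(\mathfrak{p}),\dots$ has strictly increasing finite radii, hence consists of distinct points. As long as this orbit avoids $(E)$ all its terms lie in $\Sigma$, contradicting finiteness; hence the orbit meets $(E)$, so $(E)\in\Sigma$. Put $\ell_0:=\mathrm{length}\,(M_\sub{tor})_{(E)}\geq 1$. For each $n\geq1$ choose (using surjectivity of $f$) a closed point $\mathfrak{q}_n$ with $f^n(\mathfrak{q}_n)=(E)$, i.e.\ with root a $p^n$-th root of the uniformizer; then $r(\mathfrak{q}_n)=\tfrac1{p^n e}$, and the points $\mathfrak{q}_n,f(\mathfrak{q}_n),\dots,f^{n-1}(\mathfrak{q}_n)$ all have radius $<\tfrac1e$, hence are $\neq(E)$, so iterating the length identity along this chain gives $\mathrm{length}\,(M_\sub{tor})_{\mathfrak{q}_n}\geq\ell_0\geq1$, whence $\mathfrak{q}_n\in\Sigma$. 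The $\mathfrak{q}_n$ are pairwise distinct, so $\Sigma$ is infinite — a contradiction; therefore $\Sigma=\emptyset$. Granting (i)--(iii), the final assertion drops out: $M[\tfrac1p]=(M/M_\sub{tor})[\tfrac1p]$ since $M_\sub{tor}[\tfrac1p]=0$, and $(M/M_\sub{tor})[\tfrac1p]\cong M_\sub{free}[\tfrac1p]$ since $\bar M$ is killed by a power of $(p,T)\ni p$, and $M_\sub{free}[\tfrac1p]$ is finite free over $\gS[\tfrac1p]$ because $M_\sub{free}$ is finite free over $\gS$. I expect (i) to be the main obstacle: it is the unique step where the $E$-isomorphism $\phi_M$ does real work, and the analogous statement genuinely fails over $A_\inf$ — precisely why the definition of Breuil--Kisin--Fargues modules must impose finite freeness of $M[\tfrac1p]$ as a hypothesis.
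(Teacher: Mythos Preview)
Your proof is correct and follows essentially the same line as the paper's. Your reflexive hull $M_\sub{free}=(M/M_\sub{tor})^{\vee\vee}$ coincides with the paper's $H^0(\Spec\gS\setminus\{s\},\mathcal{E})$ for the associated vector bundle $\mathcal{E}$ on the punctured spectrum (these agree for finitely generated torsion-free modules over a two-dimensional regular local ring), and your Auslander--Buchsbaum argument for freeness is equivalent to the paper's observation that vector bundles on the punctured spectrum extend uniquely.

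For the key point (i), both proofs run the same Frobenius dynamics on the open unit disc, differing only in bookkeeping. The paper passes to a Fitting ideal $I=\mathrm{Fitt}_i(M)$, writes $(\gS/I)[\tfrac1p]$ via the Iwasawa classification, and studies the finite set $Z$ of its $C$-points inside $\mathfrak{m}\subset\roi_C$: the relation $(\gS/I)[\tfrac1E]=(\gS/\phi^*I)[\tfrac1E]$ forces $Z$ and $\{x:x^p\in Z\}$ to agree away from the roots of $E$, and a max/min absolute-value argument pins $Z$ down to $\{0\}$, after which a length count kills the remaining $K_0[T]/(T^d)$ factor. You instead track lengths of $(M_\sub{tor})_\mathfrak{p}$ at closed points $\mathfrak{p}$ of $\Spec\gS[\tfrac1p]$ directly, using the radius function and the fact that $f=\Spec(\phi)$ multiplies it by $p$; your forward-orbit argument forces $(E)\in\Sigma$ and your backward-orbit argument manufactures infinitely many support points below it. The two arguments are reparametrisations of one another: the paper's $|x|$ is $p^{-r(\mathfrak{p})}$, its $x\mapsto x^p$ is your $f$, and its final $d=pd$ step is your $\ell=p\ell$ at $(T)$. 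Your closing remark about why (i) must fail over $A_\inf$ is also exactly the point the paper makes when motivating the extra hypothesis in the definition of Breuil--Kisin--Fargues modules.
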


\begin{proof} Let $M_\sub{tor}\subset M$ be the torsion submodule. Then $M^\prime = M/M_\sub{tor}$ is a torsion-free $\gS$-module. As such, it is projective in codimension $1$, i.e. $M^\prime$ defines a vector bundle $\mathcal{E}$ on $\Spec \gS\setminus \{s\}$, where $s\in \Spec \gS$ is the closed point. As $\gS$ is a $2$-dimensional regular local ring, this implies that $M_\sub{free} = H^0(\Spec \gS\setminus \{s\},\mathcal{E})$ is a vector bundle on $\Spec \gS$, i.e.~a finite free $\gS$-module. The map $M^\prime\to M_\sub{free}$ is injective, and the cokernel is supported set-theoretically at $\{s\}\subset \Spec \gS$, i.e.~killed by a power of $(p,T)$. All constructions are functorial, and thus there are induced Frobenii on all modules considered.

It remains to prove that $M_\sub{tor}$ is killed by a power of $p$. Let $I=\mathrm{Fitt}_i(M)\subset \gS$ be any Fitting ideal of $M$. We have to show that if $I\neq 0$, then a power of $p$ lies in $I$; equivalently, we must check that $\gS/I$ vanishes after inverting $p$. First, we remark that the existence of $\phi_M$ and the base change compatibility of Fitting ideals imply that
\[
I\otimes_{\gS,\phi} \gS[\tfrac 1E] = I[\tfrac 1E],
\]
and therefore
\begin{equation}
\label{eq:FittingIdealFrobeniusStable}
 (\gS/I)[\tfrac{1}{E}] = (\gS/\phi^* I)[\tfrac{1}{E}]
 \end{equation}
as quotients of $\gS[\frac{1}{E}]$. On the other hand, applying the Iwasawa classification of modules over $\gS$, we find 
\[A := (\gS/I)[\tfrac{1}{p}] \cong \prod_{i=1}^n K_0[T]/(f_i(T)^{n_i}),\]
 where $f_i(T) \in W(k)[T]$ is a monic irreducible polynomial congruent to $T^{d_i}$ modulo $p$, $n_i \geq 1$ is an integer, and $f_i \neq f_j$ for $i \neq j$. We will show that $A = 0$. Fix an algebraic closure $C$ of $K$, and consider the finite set $Z := \Spec(A)(C)$ of the $C$-valued points of $A$. By the condition on $f_i$, this set can be identified with a finite subset of the maximal ideal $\mathfrak{m} \subset \roi_C \subset C$, i.e., of the $C$-points of the open unit disc of radius $1$ about $0$. Now equation \eqref{eq:FittingIdealFrobeniusStable} shows that if we set $Z' = \{x \in \mathfrak{m} \mid x^p \in Z\}$, then $Z \cap U = Z' \cap U$ where $U = \mathfrak{m} - \{\pi_1,...,\pi_e\}$ with the $\pi_i$'s being the distinct roots of $E$ in $C$ (with $\pi_1 = \pi$, our chosen uniformizer). We will show that this leads to a contradiction unless $A = 0$ (or, equivalently, $Z = \emptyset$). If $Z \neq \emptyset$, choose $x \in Z$ with $|x|$ maximal. Then there exists some $y \in Z'$ with $y^p = x$. If $|x| \geq |\pi|$, then $|y| > |x| \geq |\pi|$, so $y \in Z' \cap U = Z \cap U$, and thus we obtain $y \in Z$ with $|y| > |x|$, contradicting the maximality in the choice of $x$. Thus $|x| < |\pi|$ for all $x \in Z$. But then $x \in Z \cap U = Z' \cap U$, so $x^p \in Z$ as well. Continuing this way, we obtain that $x^{p^n} \in Z$ for all $n \geq 0$. As $Z$ is finite and $|x| < 1$, this is impossible unless $x = 0$. Thus, $Z = \{0\}$, which translates to $A = K_0[T]/(T^d)$ for some $d \geq 0$. Equation \eqref{eq:FittingIdealFrobeniusStable} then tells us that $K_0[T]/(T^d) \simeq K_0[T]/(T^{dp})$. By considering lengths, we see that $d = 0$, and thus $A = 0$. This shows that $(\gS/I)[\frac{1}{p}] = 0$, so $p^n \in I$ for some $n \gg 0$.
\end{proof}

Let us now recall the relation to crystalline representations of $G_K$. Fix an algebraic closure $\bar{K}$ of $K$ with fixed $p$-power roots $\pi^{1/p^n}\in \bar{K}$ of $\pi$, and let $K_\infty=K(\pi^{1/p^\infty})\subset \bar{K}$. Let $C$ be the completion of $\bar{K}$ with ring of integers $\roi_C\subset C$, and $A_\inf = A_\inf(\roi_C)$, with corresponding $A_\crys$, $B_\crys$. In particular, there is an element $\pi^\flat = (\pi,\pi^{1/p},\ldots)\in \bar{K}^\flat$, with $[\pi^\flat]\in A_\inf$. We have a map $\gS\to A_\inf$ which sends $T$ to $[\pi^\flat]^p$. Thus, the diagram
\[\xymatrix{
\gS\ar[r]^-{\tilde\theta}\ar[d] & \roi\ar[d]\\
A_\inf\ar[r]^-{\tilde\theta} & \roi_C
}\]
commutes. This diagram is equivariant for the action of $G_{K_\infty} = \mathrm{Gal}(\bar{K}/K_\infty)$ (but not for $G_K = \mathrm{Gal}(\bar{K}/K)$).

If $V$ is a crystalline $G_K$-representation on a $\bb Q_p$-vector space, we recall that there is an associated (rational) Dieudonn\'e module
\[
D_\crys(V) = (V\otimes_{\bb Q_p} B_\crys)^{G_K}\ ,
\]
which comes with a $\phi,G_K$-equivariant identification
\[
D_\crys(V)\otimes_{W(k)[\frac 1p]} B_\crys = V\otimes_{\bb Q_p} B_\crys\ .
\]

\begin{theorem}[\cite{Kisin}]\label{ThmKisin} There is a natural fully faithful tensor functor $T\mapsto M(T)$ from $\bb Z_p$-lattices $T$ in crystalline $G_K$-representations $V$ to finite free Breuil--Kisin modules. Moreover, given $T$, $M(T)$ is characterized by the existence of a $\phi,G_{K_\infty}$-equivariant identification
\[
M(T)\otimes_\gS W(C^\flat)\cong T\otimes_{\bb Z_p} W(C^\flat)\ .
\]
\end{theorem}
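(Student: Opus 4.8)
\textbf{Overview and Step 1 (classification of $G_{K_\infty}$-lattices).} The plan is to reconstruct Kisin's construction from \cite{Kisin}. Write $K_\infty = K(\pi^{1/p^\infty})$ and $H_\infty = \Gal(\bar K/K_\infty)$, and recall $\pi^\flat = (\pi,\pi^{1/p},\ldots)\in\bar K^\flat$, so the map $\gS\to A_\inf$ of the preceding discussion sends $T\mapsto[\pi^\flat]^p$ and is $H_\infty$-equivariant. Let $\mathcal{O}_\mathcal{E}$ be the $p$-adic completion of $\gS[\tfrac1T]$, a complete discrete valuation ring with uniformizer $p$ and residue field $k((T))$; since $[\pi^\flat]$ is a unit in the $p$-complete ring $W(C^\flat)$, the composite $\gS\to A_\inf\hookrightarrow W(C^\flat)$ extends to a $\phi$-equivariant embedding $\mathcal{O}_\mathcal{E}\hookrightarrow W(C^\flat)$. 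By Fontaine's theory of étale $\phi$-modules, based on the Fontaine--Wintenberger identification of $H_\infty$ with the absolute Galois group of the norm field $k((T))$, the functor $T\mapsto M_\mathcal{E}(T):=(\mathcal{O}_{\widehat{\mathcal{E}^{\mathrm{ur}}}}\otimes_{\bb Z_p}T)^{H_\infty}$ (with $\mathcal{O}_{\widehat{\mathcal{E}^{\mathrm{ur}}}}\subset W(C^\flat)$ the $p$-complete maximal unramified extension of $\mathcal{O}_\mathcal{E}$) is an exact, $\otimes$-compatible equivalence from $\bb Z_p$-lattices in $\bb Q_p$-representations of $G_{K_\infty}$ onto finite free étale $\phi$-modules over $\mathcal{O}_\mathcal{E}$; in particular it is fully faithful, and it carries a canonical $\phi,H_\infty$-equivariant isomorphism $M_\mathcal{E}(T)\otimes_{\mathcal{O}_\mathcal{E}}W(C^\flat)\cong T\otimes_{\bb Z_p}W(C^\flat)$.

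\textbf{Step 2 (the rigid-analytic avatar of $D_\crys(V)$).} Let $T$ be a lattice in a crystalline $G_K$-representation $V$, with filtered isocrystal $D=D_\crys(V)$ (a finite $K_0$-vector space with Frobenius $\phi_D$, together with a decreasing filtration on $D_K=D\otimes_{K_0}K$). Let $\mathcal{O}^{\mathrm{rig}}\subset K_0[[T]][\tfrac1p]$ be the ring of rigid-analytic functions on the open unit disc, a Bézout domain by Lazard's theorem, so that finite torsion-free $\mathcal{O}^{\mathrm{rig}}$-modules are free; and set $\lambda=\prod_{n\ge0}\phi^n(E(T)/E(0))\in\mathcal{O}^{\mathrm{rig}}$. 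The plan is to produce, functorially in $D$, a finite free $\mathcal{O}^{\mathrm{rig}}$-module $\mathcal{M}(D)$ with a Frobenius isomorphism $\phi_\mathcal{M}\colon(\mathcal{M}(D)\otimes_{\mathcal{O}^{\mathrm{rig}},\phi}\mathcal{O}^{\mathrm{rig}})[\tfrac1\lambda]\isoto\mathcal{M}(D)[\tfrac1\lambda]$ characterized by: $\mathcal{M}(D)[\tfrac1\lambda]\cong D\otimes_{K_0}\mathcal{O}^{\mathrm{rig}}[\tfrac1\lambda]$ with $\phi_\mathcal{M}$ going to $\phi_D\otimes\phi$; $\mathcal{M}(D)/T\mathcal{M}(D)\cong D$ compatibly with Frobenius; and the ``jump'' of $\mathcal{M}(D)$ at the zero of $E(T)$ and its $\phi$-translates matching $\mathrm{Fil}^\bullet D_K$. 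Concretely, $\mathcal{M}(D)$ is obtained from the constant module $D\otimes_{K_0}\mathcal{O}^{\mathrm{rig}}[\tfrac1\lambda]$ by a Beauville--Laszlo modification along $V(E)$ prescribed by the filtration, then extended over $T=0$ and its Frobenius iterates using $\phi_D$; freeness over $\mathcal{O}^{\mathrm{rig}}$ is where Lazard enters.

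\textbf{Step 3 (descent to $\gS$ and the étale comparison).} One shows next that $V$ crystalline --- equivalently, $D$ weakly admissible (Colmez--Fontaine, re-proved along the way via Kedlaya's slope filtration theorem for $\phi$-modules over the Robba ring) --- forces $\mathcal{M}(D)$ to descend to a finite free $\gS$-module $M$ with $M\otimes_\gS\mathcal{O}^{\mathrm{rig}}=\mathcal{M}(D)$ and $M\otimes_{\gS,\phi}\gS[\tfrac1E]=M[\tfrac1E]$, i.e.\ a Breuil--Kisin module; the nontrivial direction is existence of the descent, combining the slope argument with a $p$-adic approximation to pass from $\mathcal{O}^{\mathrm{rig}}$ down to $\gS=W(k)[[T]]$. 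Then $M\otimes_\gS\mathcal{O}_\mathcal{E}$ is an étale $\phi$-module, hence $\cong M_\mathcal{E}(T')$ for a unique $G_{K_\infty}$-lattice $T'$ by Step 1; base-changing $M$ along $\gS\to A_\inf\to A_\crys\to B_\crys^+$ and along $\gS\to\mathcal{O}^{\mathrm{rig}}\to B_\dR^+$ and invoking the crystalline comparison for $V$ identifies $M\otimes_\gS B_\crys^+$ with $D\otimes_{K_0}B_\crys^+$ compatibly with $\phi$ and filtration, which pins down $T'\otimes\bb Q_p=V|_{G_{K_\infty}}$ and then the lattice $T'=T$. Setting $M(T):=M$, one gets $M(T)\subset M_\mathcal{E}(T)[\tfrac1p]$ and $M(T)\otimes_\gS W(C^\flat)\cong M_\mathcal{E}(T)\otimes_{\mathcal{O}_\mathcal{E}}W(C^\flat)\cong T\otimes_{\bb Z_p}W(C^\flat)$, the asserted characterization; uniqueness of $M(T)$ subject to it follows since the right-hand side recovers $T$ with its $G_{K_\infty}$-action as $(M(T)\otimes_\gS W(C^\flat))^{\phi=1}$ and the finite-$E$-height $\gS$-lattice inside a fixed étale $\phi$-module over $\mathcal{O}_\mathcal{E}[\tfrac1p]$ is canonical.

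\textbf{Full faithfulness, tensor structure, and the main obstacle.} Faithfulness of $M(-)$ is immediate from the displayed characterization. For fullness, a morphism $M(T)\to M(T')$ of Breuil--Kisin modules base-changes to a morphism $M_\mathcal{E}(T)\to M_\mathcal{E}(T')$ of étale $\phi$-modules, hence by Step 1 to an $H_\infty$-equivariant map $T\to T'$, and --- via the comparisons of Step 3 --- to a morphism $D\to D'$ of filtered isocrystals; since a crystalline $G_K$-representation is recovered with its full $G_K$-action from its $G_{K_\infty}$-restriction together with this filtered-isocrystal datum (equivalently, by Kisin's lemma that restriction to $G_{K_\infty}$ is fully faithful on crystalline representations), the map $T\to T'$ is automatically $G_K$-equivariant. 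Compatibility with tensor products and exactness are inherited from $D\mapsto\mathcal{M}(D)$ and $M_\mathcal{E}(-)$. The genuine obstacle is \emph{Steps 2--3}: building the Frobenius-equivariant vector bundle $\mathcal{M}(D)$ on the open disc from the filtered isocrystal, proving its freeness over $\mathcal{O}^{\mathrm{rig}}$, and --- hardest --- descending it to a finite free $\gS$-module precisely when $D$ is weakly admissible, which is where one needs the slope theory over the Robba ring and a delicate $p$-adic approximation to reach $\gS$ itself; by contrast Fontaine's equivalence in Step 1, the period bookkeeping in Step 3, and the formal arguments for full faithfulness are comparatively routine given \cite{Kisin} and the surrounding literature.
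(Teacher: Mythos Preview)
Your proposal is essentially correct: it sketches Kisin's construction itself (rigid-analytic $\phi$-modules $\mathcal{M}(D)$ on the open disc via Lazard, the slope argument over the Robba ring, and the descent to $\gS$), whereas the paper takes a much more economical route. The paper simply cites \cite[Theorem 1.2.1]{KisinShimura} for the existence of the functor and the displayed identification, and then proves only the ``characterization'' clause: if $M(T)'$ is any other finite free Breuil--Kisin module with the same $\phi,G_{K_\infty}$-equivariant identification over $W(C^\flat)$, then $M(T)=M(T)'$. For this the paper invokes \cite[Proposition 2.1.12]{Kisin} (full faithfulness of base change from Breuil--Kisin modules to $\phi$-modules over $\mathcal{O}_\mathcal{E}=\gS[\tfrac1T]^\wedge_p$) together with Fontaine's equivalence between étale $\phi$-modules over $\mathcal{O}_\mathcal{E}$ and $G_{K_\infty}$-lattices; both base changes recover $T$, so the Breuil--Kisin modules agree. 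Your uniqueness argument is the same in spirit, but your phrasing ``the finite-$E$-height $\gS$-lattice inside a fixed étale $\phi$-module over $\mathcal{O}_\mathcal{E}[\tfrac1p]$ is canonical'' is slightly off: what one actually needs (and what Kisin's 2.1.12 gives) is that a finite free Breuil--Kisin module is determined by its base change to $\mathcal{O}_\mathcal{E}$, not merely by the isocrystal over $\mathcal{E}=\mathcal{O}_\mathcal{E}[\tfrac1p]$ --- the integral $\mathcal{O}_\mathcal{E}$-structure (equivalently, the $G_{K_\infty}$-lattice $T$, not just $V|_{G_{K_\infty}}$) is essential input. What your approach buys is a self-contained outline of where $M(T)$ comes from; what the paper's buys is brevity and a cleaner isolation of exactly which ingredient from \cite{Kisin} pins down the characterization.
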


We warn the reader that the functor is not exact: One critical part of the construction is the extension of a vector bundle on the punctured spectrum $\Spec \gS\setminus \{s\}$, where $s\in \Spec R$ is the closed point, to a vector bundle on $\Spec \gS$, and this functor is not exact.

\begin{remark} We will check below in the discussion around Proposition~\ref{prop:compbkbkf} that $M(T)$ actually satisfies the following statements.
\begin{enumerate}
\item There is an identification
\[
M(T)\otimes_\gS A_\inf[\tfrac 1\mu]\cong T\otimes_{\bb Z_p} A_\inf[\tfrac 1\mu]
\]
which is equivariant for the $\phi$ and $G_{K_\infty}$-actions.
\item There is an equality
\[
M(T)\otimes_\gS B_\crys^+ = D_\crys(V)\otimes_{W(k)[\frac 1p]} B_\crys^+
\]
as submodules of
\[
M(T)\otimes_\gS B_\crys = T\otimes_{\bb Z_p} B_\crys = D_\crys(V)\otimes_{W(k)[\frac 1p]} B_\crys\ .
\]
\end{enumerate}
In particular, there is an identification of rational Dieudonn\'e modules $M(T)\otimes_{\gS} W(k)[\tfrac 1p] = D_\crys(V)$ by tensoring the second identification with $W(\bar{k})[\tfrac 1p]$, and passing to $G_{K_\infty}$-invariants. Thus,
\[
M(T)\otimes_{\gS} W(k)\subset M(T)\otimes_{\gS} W(k)[\tfrac 1p]= D_\crys(V)
\]
defines a natural lattice in crystalline cohomology, functorially associated with the $G_K$-stable lattice $T\subset V$. A main goal of this paper is to show that, at least under suitable torsion-freeness assumptions, this algebraic construction is compatible with the geometry.
\end{remark}

\begin{proof}{\it (of Theorem~\ref{ThmKisin})} The existence of the functor and the identification are stated in~\cite[Theorem 1.2.1]{KisinShimura}. Assume that $M(T)^\prime$ is any other module with the stated property. By~\cite[Proposition 2.1.12]{Kisin}, to check that $M(T)=M(T)^\prime$ equivariantly for $\phi$, it suffices to check this after base extension to the $p$-adic completion $\gS[\tfrac 1T]^\wedge_p$ of $\gS[\tfrac 1T]$. There, it follows from the equivalence between finite free $\phi$-modules over $\gS[\tfrac 1T]^\wedge_p$ and finite free $\bb Z_p$-modules with $G_{K_\infty}$-action, see \cite[Proposition 4.1.1]{Katzpadic} or \cite[Proposition 2.32]{FontaineOuyang}. (Implicit here is that the functor from crystalline $G_K$-representations to $G_{K_\infty}$-representations is fully faithful.) But this $G_{K_\infty}$-representation is in both cases $T$, by the displayed identification for $M(T)$ and $M(T)^\prime$.
\end{proof}

In Corollary~\ref{cor:identtatetwist}, we will check that $\bb Z_p(1)$ is sent to $\gS\{1\}$ under this functor.

\subsection{Some commutative algebra over $A_\inf$}
\label{subsec:Ainfmodules}

In order to prepare for the definition of Fargues' variant over $A_\inf$, we study commutative algebra over the nonnoetherian ring $A_\inf$.

We fix a perfectoid field $K$ with ring of integers $\roi$. Let $\roi^\flat \subset K^\flat$ be the tilt of $\roi \subset K$, and fix an element $x\in A_\inf = W(\roi^\flat)$ which is the Teichm\"uller lift of a nonzero noninvertible element of $\roi^\flat$. We study modules over $A_\inf$, and show that they behave somewhat analogously to modules over a $2$-dimensional regular local ring (such as $\gS$).

We begin by proving an analogue over $A_\inf$ of the well-known fact that all vector bundles on the punctured spectrum of a $2$-dimensional regular local ring are trivial. In fact, the proof below can be easily adapted to show the latter. This result is due to Kedlaya, and the proof below was first explained in a lecture course at UC Berkeley in 2014, \cite{ScholzeLectureNotes}.

\begin{lemma}
\label{lem:VectAinf} Let $s \in \Spec(A_\inf)$ denote the closed point, and let $U := \Spec A_\inf \setminus \{s\}$ be the punctured spectrum. Then restriction induces an equivalence of categories between vector bundles on $\Spec(A_\inf)$ and vector bundles on $U$. In particular, all vector bundles on $U$ are free.
\end{lemma}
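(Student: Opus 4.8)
The plan is to reduce the problem to a cohomological statement and then to verify that statement using the structure of $A_\inf$. Since $A_\inf$ is a local ring, every vector bundle on $\Spec(A_\inf)$ is free, so the content is the essential surjectivity and full faithfulness of the restriction functor $\mathrm{Vect}(\Spec A_\inf) \to \mathrm{Vect}(U)$. Both amount to understanding $H^0(U, \mathcal{O})$ and $H^1(U, \mathcal{O})$ (and more generally $H^i(U, \mathcal{F})$ for $\mathcal{F}$ a vector bundle): full faithfulness follows if $H^0(U,\mathcal{O}) = A_\inf$, and essential surjectivity follows if every vector bundle on $U$ extends, for which the key input is that $H^1(U, \mathcal{F}) $ is ``small enough'' — concretely, that a vector bundle on $U$ has a global section generating it at the two height-one primes, so that $H^0(U,\cdot)$ produces a finitely generated reflexive (hence, over this ring, free) module whose restriction recovers the bundle.

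\textbf{Key steps.} First I would recall that $U$ is covered by the two basic opens $U_1 = \{p \neq 0\}$ and $U_2 = \{x \neq 0\}$, since $(p, x)$ is (up to radical) the maximal ideal $s$; here $x = [\varpi]$ for $\varpi \in \roi^\flat$ a pseudo-uniformizer. Then $A_\inf[1/p]$ and $A_\inf[1/x] = W(\roi^\flat)[1/[\varpi]]$ are both well understood: $A_\inf[1/x]$ is the ``$y$-adically complete'' Witt ring which is a PID-like object (its rings of the form $W(\roi^\flat[1/\varpi])$-type localizations are principal in the relevant sense), and $A_\inf[1/p][1/x]$ similarly. The Čech complex for this two-element cover computes $R\Gamma(U, \mathcal{O})$: one gets the two-term complex $A_\inf[1/p] \oplus A_\inf[1/x] \to A_\inf[1/p][1/x]$. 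The kernel is $A_\inf$ (since $A_\inf$ is $p$-adically complete and $x$-torsion-free, an element of $A_\inf[1/p]$ lying in $A_\inf[1/x]$ already lies in $A_\inf$ — this uses that $A_\inf/x = \roi^\flat$ is $p$-torsion-free, or a valuation-theoretic argument on $W(\roi^\flat)$). This gives $H^0(U, \mathcal{O}) = A_\inf$. For a general vector bundle $\mathcal{E}$ on $U$: restrict to $U_1$ and $U_2$, where — by Kedlaya's classification of vector bundles on these punctured/localized spectra (the stably free or even freeness statements available for $W(\roi^\flat)[1/x]$, respectively over $A_\inf[1/p]$) — $\mathcal{E}|_{U_i}$ is free, and glue: a bundle on $U$ of rank $n$ is given by a matrix in $\mathrm{GL}_n(A_\inf[1/p][1/x])$, and one shows such a matrix can be modified by elements of $\mathrm{GL}_n(A_\inf[1/p])$ and $\mathrm{GL}_n(A_\inf[1/x])$ to become the identity. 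This last point — a Beauville–Laszlo / matrix-factorization statement over $A_\inf$ — is where one invokes the key algebraic miracle that the relevant ring $A_\inf[1/p][1/x]$ (a localization of the Fargues–Fontaine-type ring) has trivial $K_1$-obstruction for gluing, or equivalently that every finite projective module over $U$ is the restriction of a finite free one; I would extract this from the cited Berkeley lecture notes \cite{ScholzeLectureNotes}.

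\textbf{Expected main obstacle.} The hard part is the gluing/extension step: showing that the transition matrix in $\mathrm{GL}_n(A_\inf[1/p][1/x])$ factors as a product of a matrix over $A_\inf[1/p]$ and one over $A_\inf[1/x]$. Over a $2$-dimensional \emph{noetherian} regular local ring this is classical (a reflexive module over such a ring is free, proved via depth/Serre's criterion $S_2$), and the remark in the statement that ``the proof below can be easily adapted to show the latter'' suggests the argument should mirror that: one shows $j_* \mathcal{E}$ (for $j : U \hookrightarrow \Spec A_\inf$) is a finitely generated reflexive $A_\inf$-module and then that reflexive $A_\inf$-modules — despite $A_\inf$ being non-noetherian — are finite free, using that $A_\inf$ is a coherent-enough, two-dimensional-like regular ring (the key being that its two ``codimension-one'' localizations $A_\inf[1/p]$ restricted near $p$, and $A_\inf[1/x]$ restricted near $x$, are discrete valuation rings in the appropriate completed sense). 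Verifying finite generation of $j_*\mathcal{E}$ without noetherianity — i.e. that the section module is not merely reflexive but actually finitely presented — is the genuinely delicate point, and is presumably handled by an explicit ``approximate the matrix'' induction using the $p$-adic and $x$-adic completeness of $A_\inf$ rather than an abstract depth argument.
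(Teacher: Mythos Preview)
Your outline is correct through full faithfulness: the two-chart cover $U_1 = \{p \neq 0\}$, $U_2 = \{x \neq 0\}$ and the computation $H^0(U,\mathcal{O}) = A_\inf$ via Teichm\"uller expansions match the paper exactly. You also correctly locate the real difficulty in essential surjectivity, namely controlling $j_*\mathcal{E}$ without noetherian hypotheses.

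However, the proposal does not actually prove the hard part. You offer two strategies---matrix factorization in $\mathrm{GL}_n(A_\inf[1/px])$, and ``reflexive implies free'' over $A_\inf$---but execute neither, ultimately deferring to the Berkeley notes. Both routes have issues as stated: the matrix approach presupposes that $\mathcal{E}|_{U_i}$ is \emph{free}, which is not obvious (freeness of projectives over $A_\inf[1/p]$ is Corollary~\ref{cor:VectAinf1p}, proved \emph{using} this lemma), and a bare assertion that reflexive $A_\inf$-modules are free is not available---$A_\inf$ is not known to be coherent, and reflexivity alone does not give finite presentation here.

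The paper's argument is more hands-on and avoids both detours. Setting $M = \ker(M_1 \oplus M_2 \to M_{12}) = j_*\mathcal{E}$, one first shows $M$ sits inside a finitely generated $A_\inf$-module $M'$ with $M'/M$ killed by $p^n$ (by embedding $M_1$ as a summand of a free module and using that $M_2$ is finitely generated). This, together with $p$-torsion-freeness, forces $M$ to be $p$-adically complete and separated. The key step is then to \emph{reduce modulo $p$}: one shows $M/p \hookrightarrow M_2/p \cong (K^\flat)^d$, so $M/p$ is an $\roi^\flat$-submodule of a $d$-dimensional $K^\flat$-vector space. Two short lemmas about the valuation ring $\roi^\flat$ (any submodule of $(K^\flat)^d$ has fiber dimension $\leq d$ at $k$; if equality holds, it is free of rank $d$) then give $M/p$ free of rank $d$, hence $M$ free by $p$-adic completeness. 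This reduction-mod-$p$ to concrete valuation-ring combinatorics is the idea you are missing; neither matrix factorization nor abstract reflexivity appears.
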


\begin{proof} Let $R = A_\inf$, $R_1 = R[\frac{1}{p}]$,  $R_2 = R[\frac{1}{x}]$, and $R_{12} = R[\frac{1}{xp}]$. If we set $U_i = \Spec(R_i)$ for $i \in \{1,2,12\}$, then $U = U_1 \cup U_2$, and $U_1 \cap U_2 = U_{12}$. 

To show the restriction functor is fully faithful, it suffices to show that $A_\inf \to \mathcal{O}(U)$ is an isomorphism, since all vector bundles on $A_\inf$ are free. Using the preceding affine open cover of $U$, and viewing all rings in sight as subrings of $R_{12}$, it suffices to show $R = R_1 \cap R_2 \subset R_{12}$. This follows easily by combining the following observations: The element $x$ is a Teichm\"uller lift, the Teichm\"uller lift is multiplicative, and  each element of $A_\inf$ can be written uniquely as a  power series $\sum_{i \geq 0} a_i \cdot p^i$ with $a_i$ being a Teichm\"uller lift.

For essential surjectivity, we can identify vector bundles $\mathcal{M}$ on $U$ with triples $(M_1,M_2,h)$, where $M_i$ is a finite projective $R_i$-module, and $h:M_1 \otimes_{R_1} R_{12} \simeq M_2 \otimes_{R_2} R_{12}$ is an isomorphism of $R_{12}$-modules; write $M_{12}$ for the latter common value, and let $d$ be the rank of any of these finite projective modules. Let $M := \mathrm{ker} (M_1 \oplus M_2 \to M_{12})$. As a quasi-coherent sheaf on $\Spec(A_\inf)$, this is simply $j_* \mathcal{M}$ where $j:U \to \Spec(A_\inf)$ is the defining quasi-compact open immersion. In particular, we have  $M \otimes_R R_i \simeq M_i$ for $i \in \{1,2,12\}$. We will check that $M$ is a finite projective $A_\inf$-module of rank $d$.

First, we claim that $M$ is contained in a finitely generated $A_\inf$-submodule $M' \subset M_1$ with $M'/M$ killed by a power of $p$. Write $M_1$ as a direct summand of a free module $F_1$ over $R_1$, and let $F_1^\circ \subset F_1$ be the corresponding free module over $R$; let $\psi:F_1 \to M_1$ be the resulting map. As $n \in \mathbb{Z}$ varies, the images $\psi(p^{-n} F_1^\circ) \subset M_1$ give a filtering family of finitely generated $A_\inf$-submodules of $M_1$, and we will show that $M$ lies inside one of these. Let $F_{12} := F_1 \otimes_{R_1} R_{12}$ be the corresponding free $R_{12}$-module, and let $F_{12}^\circ \subset F_{12}$ be the corresponding free $R_2$-module. Then we have an induced projection $\psi:F_{12} \to M_{12}$. Also, we know $p^{-n} F_1^\circ = F_1 \cap p^{-n} F_{12}^\circ \subset F_{12}$ for all $n$, so it is enough to show that $M \subset M_{12}$ is contained in some $\psi(p^{-n} F_{12}^\circ) \subset F_{12}$.  As $M = M_1 \cap M_2$, it suffices to check that $M_2 \subset \psi(p^{-n} F_{12}^\circ)$. But this is immediate as $M_2$ is finitely generated, and $\cup_n \psi(p^{-n} F_{12}^\circ) = M_{12}$. Thus, if we set $M' := \psi(p^{-n} F_1^\circ)$ for $n \gg 0$, then $M'$ is finitely generated and $M \subset M'$. To verify that $M'/M$ is killed by a power of $p$, note that $M[\frac{1}{p}] = M'[\frac{1}{p}] = M_1$. Thus, $M'/M$ is a finitely generated $A_\inf$-module killed by inverting $p$, and so it must be killed by a finite power of $p$.

Next, we show $\dim_k(M \otimes_{A_\inf} k) \geq d$. For this, let $W = W(k)$, and $L = W[\frac{1}{p}]$. The inclusion $M \subset M_1$ then defines a map $M \otimes_{A_\inf} W \to M_1 \otimes_{A_\inf} W \simeq L^{\oplus d}$. The image of this map generates the target as a vector space (since $M[\frac{1}{p}] = M_1$) and is contained in a finitely generated $W$-submodule of $L^{\oplus d}$ by the previous paragraph. As $W$ is noetherian, this image is free of rank $d$, so the claimed inequality follows immediately by further tensoring with $k$. 

Next, we claim that $M$ is $p$-adically complete and separated.  Note that $M_2$ is $p$-adically separated as it is a finite projective module over the $p$-adically separated ring $R_2$. As $M \subset M_2$, it follows that $M$ is $p$-adically separated. For completeness, take any elements $m_i\in M$; we want to form the sum $\sum_{i\geq 0} p^i m_i$. Choose a surjection $A_\inf^r\to M^\prime$, and fix elements $\tilde{m}_i\in A_\inf^r$ lifting the image of $m_i$ in $M^\prime$. Then we can form the sum $\tilde{s}=\sum_{i\geq 0} p^i \tilde{m}_i\in A_\inf^r$, and the image $s\in M^\prime$ of $\tilde{s}$ maps to $0$ in $M^\prime/M$, as $M^\prime/M$ is killed by a power of $p$; thus, $s\in M$, and is the desired limit of the partial sums.

As $M$ is $p$-adically complete and $p$-torsion free, we immediately reduce to checking that $M/p$ is finite free of rank $d$: any map $A_{\inf}^d \to M$ that is an isomorphism after reduction modulo $p$ is an isomorphism (by arguing inductively with the $5$-lemma modulo $p^n$, and then passing to the inverse limit). Now consider the exact sequence 
\[0 \to M \to M_1 \oplus M_2 \to C \to 0,\]
where $C$ is defined as the cokernel. Then $C \subset M_{12}$ is $p$-torsion free, so it follows that $M/p \to M_1/p \oplus M_2/p$ is injective. But $M_1/p = 0$, so $M \to M_2/p$ is injective. Now $M_2/p \simeq M_2 \otimes_{R_2} R_2/p \simeq M_2 \otimes_{R_2} K^\flat$ is a $K^\flat$-vector space of dimension $d$. So we are reduced to checking that $M/p \subset M_2/p \simeq (K^\flat)^d$ is a finite free $\roi^\flat$-module of rank $d$. We already know that $\dim_k(M/p \otimes_{R_2} k) = \dim_k(M \otimes_{A_\inf} k) \geq d$. By Lemma \ref{lem:SubmodulesValuationRing}, we have $\dim_k(M/p \otimes_{R_2} k) = d$. Lemma \ref{lem:CriterionFreenessValuationRings} then gives the claim.
\end{proof}

The following two facts concerning modules over valuation rings were used above: 

\begin{lemma}
\label{lem:SubmodulesValuationRing}
Any $\roi^\flat$-submodule $E$ of $(K^\flat)^d$ satisfies $\dim_k(E \otimes k) \leq d$.
\end{lemma}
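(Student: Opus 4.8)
The statement is that any $\roi^\flat$-submodule $E$ of $(K^\flat)^d$ has $\dim_k(E\otimes_{\roi^\flat} k)\leq d$, where $k$ is the residue field of the valuation ring $\roi^\flat$. My plan is to reduce to the case where $E$ is finitely generated, since $E\otimes_{\roi^\flat}k = \varinjlim_{E'\subseteq E} E'\otimes_{\roi^\flat}k$ over finitely generated submodules $E'\subseteq E$, and each $E'$ is again an $\roi^\flat$-submodule of $(K^\flat)^d$; if every finitely generated such $E'$ has $\dim_k(E'\otimes k)\leq d$, then so does the colimit (a vector space that is a filtered union of subspaces of dimension $\leq d$ has dimension $\leq d$). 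So assume $E$ is finitely generated.

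Now $\roi^\flat$ is a valuation ring, hence a Bézout domain: every finitely generated ideal is principal, and more generally every finitely generated torsion-free module over a valuation ring is free. Indeed, $E\subseteq (K^\flat)^d$ is torsion-free (it sits inside a vector space over the fraction field), and finitely generated, so $E$ is free of some finite rank $n$; moreover $n = \dim_{K^\flat}(E\otimes_{\roi^\flat}K^\flat)\leq \dim_{K^\flat}(K^\flat)^d = d$ since $E\otimes_{\roi^\flat}K^\flat$ injects into $(K^\flat)^d$. Then $E\otimes_{\roi^\flat}k\cong k^n$, so $\dim_k(E\otimes_{\roi^\flat}k) = n\leq d$, as desired.

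The only real content is the structural fact that a finitely generated torsion-free module over a valuation ring is free; I would either cite this standard result or sketch it quickly by induction on the number of generators, using that a valuation ring is Bézout. The "main obstacle" here is genuinely minor: one must make sure the colimit reduction is clean, i.e.\ that $\otimes k$ commutes with the filtered colimit of submodules (it does, tensor product commutes with colimits) and that dimension is monotone and continuous under filtered unions of subspaces. No deep input is needed; the lemma is essentially a packaging of elementary valuation-ring commutative algebra, and I expect the author's proof to be a line or two along exactly these lines.
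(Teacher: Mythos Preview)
Your proof is correct and follows essentially the same approach as the paper: both reduce to finitely generated torsion-free $\roi^\flat$-submodules of $(K^\flat)^d$ and invoke the fact that such modules are free of rank $\leq d$ over the valuation ring $\roi^\flat$. The only cosmetic difference is that the paper phrases this as a proof by contradiction (starting from a map $F\to E$ with $F$ free of rank $>d$ and $f\otimes k$ injective), whereas you do a direct filtered-colimit reduction; one minor caution is that the transition maps $E'\otimes k\to E\otimes k$ need not be injective, so ``filtered union of subspaces'' should be read as ``filtered colimit of spaces of dimension $\leq d$,'' but your argument goes through unchanged.
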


\begin{proof}
Assume towards contradiction that there exists a map $f:F \to E$ with $F$ finite free of rank $>d$ such that $f \otimes k$ is injective. Then the image $F'$ of $f$ is a finitely generated torsion free $\roi^\flat$-submodule of $E$. As $\roi^\flat$ is a valuation ring, any finitely generated torsion free module is free, so $F'$ is finite free of rank $\leq d$. But then the composite $f \otimes k:F \otimes k \to F' \otimes k\to E \otimes k$ has image of dimension $\leq d$, which contradicts the assumption.
\end{proof}

\begin{lemma}
\label{lem:CriterionFreenessValuationRings}
If $D \subset (K^\flat)^d$ is an $\roi^\flat$-submodule with $\dim_k(D \otimes k) = d$, then $D$ is finite free of rank $d$. 
\end{lemma}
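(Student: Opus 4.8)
The plan is to produce an explicit free basis of $D$ inside $(K^\flat)^d$ by a Gaussian-elimination/valuation-theoretic argument. First I would pick elements $d_1,\ldots,d_d \in D$ whose images in $D\otimes_{\roi^\flat} k$ form a $k$-basis (possible since $\dim_k(D\otimes k)=d$), and let $D_0 \subset D$ be the $\roi^\flat$-submodule they generate. By Nakayama's lemma (valid here since $\roi^\flat$ is local and every element of $D$ lies in a finitely generated submodule, to which Nakayama applies), one checks $D_0 = D$: indeed $D/D_0$ is an $\roi^\flat$-module with $(D/D_0)\otimes_{\roi^\flat} k = 0$, and writing $D/D_0$ as a filtered colimit of finitely generated submodules, each such submodule $N$ satisfies $N = \frak m^\flat N$ hence $N=0$ by Nakayama. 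So $D$ is generated by $d_1,\ldots,d_d$.

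It remains to see $d_1,\ldots,d_d$ are $\roi^\flat$-linearly independent, i.e.\ that $D$ is free of rank exactly $d$ (not a proper quotient of $(\roi^\flat)^d$). Suppose $\sum_i a_i d_i = 0$ with $a_i \in \roi^\flat$ not all zero. Since $\roi^\flat$ is a valuation ring, the ideal $(a_1,\ldots,a_d)$ is principal, say generated by $a_{i_0}$; dividing, we may assume one of the coefficients, say $a_{i_0}$, is a unit. Then $d_{i_0}$ is an $\roi^\flat$-linear combination of the remaining $d_i$, so already $d_1,\ldots,\widehat{d_{i_0}},\ldots,d_d$ generate $D$, and hence their images generate $D\otimes_{\roi^\flat} k$ — contradicting that the full set of $d$ images was a $k$-basis. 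Therefore no such relation exists, $D$ is finite free of rank $d$, and the chosen $d_i$ form a basis.

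The only genuinely delicate point is the legitimacy of applying Nakayama's lemma in the non-noetherian setting: $D$ itself need not be finitely generated a priori, and one cannot invoke Nakayama directly. The fix is the standard one used already elsewhere in the paper: reduce to finitely generated submodules via the colimit presentation $D/D_0 = \varinjlim N$, and apply Nakayama to each $N$ separately, using only that $\roi^\flat$ is local with maximal ideal $\frak m^\flat$. Everything else — principality of finitely generated ideals, division by the generator — is immediate from $\roi^\flat$ being a valuation ring, exactly as used in the proof of Lemma~\ref{lem:SubmodulesValuationRing}. I expect the write-up to be short.
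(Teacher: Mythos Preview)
There is a real gap in your Nakayama step. From $(D/D_0)\otimes_{\roi^\flat} k = 0$ you want to conclude that every finitely generated submodule $N\subset D/D_0$ satisfies $N = \frak m^\flat N$; this inference is false. Concretely, $K^\flat/\roi^\flat$ satisfies $(K^\flat/\roi^\flat)\otimes_{\roi^\flat} k = 0$, yet for any $x\in K^\flat\setminus\roi^\flat$ the cyclic submodule $N = \roi^\flat\bar x$ has $N/\frak m^\flat N \neq 0$: if $\bar x\in\frak m^\flat N$ then $x(1-m)\in\roi^\flat$ for some $m\in\frak m^\flat$, forcing $x\in\roi^\flat$. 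Tensor product commutes with colimits, not with submodules, so $\varinjlim_N(N\otimes k)=0$ does not give $N\otimes k = 0$ termwise.

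Your strategy can be repaired, but the repair needs Lemma~\ref{lem:SubmodulesValuationRing} as an input, not merely as an analogy. Given any $x\in D$, set $D_0' = D_0 + \roi^\flat x$. This is finitely generated and torsion-free, hence free. The images of $d_1,\ldots,d_d$ in $D_0'\otimes k$ are linearly independent (a relation there would map, under $D_0'\otimes k\to D\otimes k$, to a relation in $D\otimes k$), so $\dim_k(D_0'\otimes k)\geq d$; and $\dim_k(D_0'\otimes k)\leq d$ by Lemma~\ref{lem:SubmodulesValuationRing}. Hence the $\bar d_i$ span $D_0'\otimes k$, and now Nakayama for the finitely generated module $D_0'$ gives $D_0' = D_0$, i.e.\ $x\in D_0$. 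With this fix your argument is complete, and your linear-independence paragraph is fine as written.

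For comparison, the paper proceeds by induction on $d$: it saturates a map $\roi^\flat\to D$ to obtain a rank-one submodule $J\subset D$ with $D/J$ torsion-free of generic rank $d-1$, checks $\dim_k(J\otimes k)=1$ and $\dim_k((D/J)\otimes k)=d-1$, and applies the inductive hypothesis to both; the base case $d=1$ is handled by classifying all $\roi^\flat$-submodules of $K^\flat$. Your (repaired) approach is more direct and avoids the induction and the $d=1$ classification, at the cost of invoking Lemma~\ref{lem:SubmodulesValuationRing} in the middle of the argument.
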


\begin{proof}
We show this by induction on $d$.  If $d = 1$, then $D$ is one of three possible modules: a principal fractional ideal, a fractional ideal of the form $\mathfrak{m^\flat} \otimes J = \mathfrak{m^\flat} \cdot J$ for a principal fraction field $J$, or $K^\flat$ itself. One easily checks that the second and third possibility cannot occur: one has $D \otimes k = 0$ for both those cases (using $\mathfrak{m}^\flat \otimes \mathfrak{m}^\flat \simeq \mathfrak{m}^\flat$ for the second case), contradicting $\dim_k(D \otimes k) = 1$. Thus, $D$ is a principal fractional ideal, and thus finite free of rank $1$.

For $d>1$, choose any map $\roi^\flat \to D$ that hits a basis element $v$ after applying $- \otimes k$, and is thus injective. Saturating the resulting inclusion $\roi^\flat \subset D$ defines an injective map $g:J \to D$ with torsion free cokernel such that $J$ has generic rank $1$, and the image $g \otimes k$ has dimension $\geq 1$ (as it contains $v$). In fact, since $\dim_k(J \otimes k) \leq 1$ (by the $d=1$ analysis above), it follows that $\dim_k(J \otimes k) = 1$, and that $g \otimes k$ is injective with image of dimension $1$. This gives a short exact sequence
\[ 0 \to J \to D \to D/J \to 0\]
where $J$ and $D/J$ are torsion free of ranks $1$ and $d-1$ respectively. Applying $- \otimes k$ and calculating dimensions gives $\dim_k(D/J \otimes k) = d-1$. By induction, both $J$ and $D/J$ are then free, and thus so is $D$.
\end{proof}

Next, we observe that finitely presented modules over $A_\inf$ are sometimes perfect, i.e.~admit a finite resolution by finite projective modules. Some of the subtleties here arise because (in general) $A_\inf$ is not coherent.

\begin{lemma}
\label{lem:AinfModulePerfect}
Let $M$ be a finitely presented $A_\inf$-module such that $M[\tfrac{1}{p}]$ is finite free over $A_\inf[\tfrac 1p]$. Then:
\begin{enumerate}
\item The $A_\inf$-module $M$ is perfect as an $A_\inf$-complex.
\item The submodule $M_{\sub{tor}} \subset M$ of torsion elements is killed by $p^n$ for $n \gg 0$, and finitely presented and perfect over $A_\inf$.
\item $M$ has $\Tor$-dimension $\leq 2$, and $\Tor^{A_\inf}_2(M, W(k)) = 0$. Moreover, if $M$ has no $x$-torsion, then $\Tor_i^{A_\inf}(M, W(k)) = 0$ for $i > 0$. 
\end{enumerate}
\end{lemma}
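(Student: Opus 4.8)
The plan is to exploit the structure of $A_\inf$ that was set up in Lemma~\ref{lem:VectAinf}: it is a $p$-torsion-free ring, $p$-adically complete, the quotient $A_\inf/p = \roi^\flat$ is a valuation ring, and on the punctured spectrum $U$ every vector bundle is free. First I would handle part (ii). The hypothesis that $M[\tfrac1p]$ is finite free over $A_\inf[\tfrac1p]$ means $M_\sub{tor}[\tfrac1p] = 0$, so $M_\sub{tor}$ is a finitely generated (it is a finitely generated submodule of the finitely presented $M$ over $A_\inf$ — but we do not know $A_\inf$ is coherent, so instead I would argue directly) module killed by some power of $p$; the cleanest route is to choose a presentation $A_\inf^a \xrightarrow{f} A_\inf^b \to M \to 0$, observe that after inverting $p$ the map $f$ becomes, up to automorphisms, a projection onto a direct summand, hence already over $A_\inf[\tfrac1p]$ one sees the torsion of $\coker f$ is bounded; combined with finite generation this gives $p^n M_\sub{tor} = 0$ for $n \gg 0$. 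Since $p^n M_\sub{tor} = 0$, $M_\sub{tor}$ is a module over $A_\inf/p^n$, and I would then note $M_\sub{tor} = \ker(M \to M/M_\sub{tor})$ where $M/M_\sub{tor}$ is finitely presented and torsion-free, hence defines a vector bundle on $U$, hence (by Lemma~\ref{lem:VectAinf}) is finite free; so $M_\sub{tor}$ is the kernel of a map between finitely presented modules and therefore finitely presented, and being a finitely presented module over $A_\inf/p^n$ which is coherent-ish... — actually the cleanest statement of perfectness of $M_\sub{tor}$ comes out of part (i) applied to $M_\sub{tor}$ itself (which also satisfies the hypothesis trivially, as $M_\sub{tor}[\tfrac1p]=0$ is finite free of rank $0$), so I would prove (i) in a way that is self-contained and then feed $M_\sub{tor}$ back in.

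For part (i), the strategy is: write $M/M_\sub{tor} =: \bar M$, which by the above is finite free over $A_\inf$; then $M$ sits in $0 \to M_\sub{tor} \to M \to \bar M \to 0$, and since $\bar M$ is free this sequence splits up to the issue that we need a resolution of $M_\sub{tor}$. So it suffices to resolve $M_\sub{tor}$. Now $M_\sub{tor}$ is a finitely presented $A_\inf$-module killed by $p^n$; I would resolve it by building a finite complex of finite free $A_\inf$-modules degree by degree, using at each stage that the kernel one encounters is again finitely presented. The key finiteness input is: a finitely presented $A_\inf$-module $N$ with $N[\tfrac1p]$ finite free has finite projective dimension, which I would prove by the following dévissage — reduce modulo $p$ to get a finitely presented $\roi^\flat$-module $N/p$ plus $\Tor_1^{A_\inf}(N, \roi^\flat) = N[p]$; since $\roi^\flat$ is a valuation ring, finitely presented $\roi^\flat$-modules are direct sums of $\roi^\flat$ and cyclic torsion modules $\roi^\flat/t$, which have projective dimension $\le 1$ over $\roi^\flat$, hence the reduction mod $p$ has bounded $\Tor$; then lift. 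Concretely I expect the cleanest packaging is: choose a surjection $F_0 = A_\inf^{b_0} \to M$, let $K_1 = \ker$; then $K_1[\tfrac1p]$ is finite free (as $M[\tfrac1p]$ is), $K_1$ is finitely generated (Artin–Rees-type argument, or: $K_1$ is the kernel of a map of finitely presented modules hence finitely generated since $M$ is finitely presented), hence $K_1$ is again finitely presented with $K_1[\tfrac1p]$ finite free, and $K_1$ is $p$-torsion-free because it is a submodule of the free module $F_0$. So after one step we reduce to the $p$-torsion-free case. A $p$-torsion-free finitely presented $A_\inf$-module $N$ with $N[\tfrac1p]$ finite free defines a vector bundle on $U$ with a finitely generated extension over $\Spec A_\inf$; by Lemma~\ref{lem:VectAinf} the pushforward $j_* $ of that bundle is finite free, and $N \hookrightarrow j_*(\cdots) = A_\inf^d$ has cokernel a finitely presented $p$-power-torsion module, i.e. a module over $A_\inf/p^n$; now over $A_\inf/p^n$ I claim finitely presented modules are perfect as $A_\inf$-complexes, which follows by filtering $A_\inf/p^n$ by powers of $p$ (each graded piece is $\roi^\flat$) and using that finitely presented $\roi^\flat$-modules have $\Tor$-dimension $\le 1$ over $\roi^\flat$ hence $\le 2$ over $A_\inf$. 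Splicing these finite resolutions together bounds the projective dimension of $N$, hence of $M$.

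For part (iii): once (i) is known, $M$ is a perfect $A_\inf$-complex; to bound its $\Tor$-dimension by $2$ and compute $\Tor_2^{A_\inf}(M, W(k))$, I would use that $W(k) = A_\inf/\mu$ (at least up to the spherical-completeness subtlety of Lemma~\ref{lem:mapAinfWitt}, but for the $\Tor$ computation only the ideal $(\mu)$ and that $\mu$ is a non-zero-divisor matter) — wait, more precisely $W(k) = A_\inf/\mu$ is a quotient by a single non-zero-divisor $\mu$ (Proposition~\ref{proposition_roots_of_unity}(ii), Lemma~\ref{lem:mapAinfWitt}), so $\Tor_i^{A_\inf}(M, W(k))$ is the homology of $[\,M \xrightarrow{\mu} M\,]$, giving $\Tor_0 = M/\mu$, $\Tor_1 = M[\mu]$, and $\Tor_i = 0$ for $i \ge 2$ for \emph{free} $M$; for general $M$ I would instead compute via a length-$\le 2$ projective resolution $0 \to P_2 \to P_1 \to P_0 \to M \to 0$ (which exists once we know $\mathrm{Tor}$-dimension $\le 2$). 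To get $\Tor$-dimension $\le 2$ and the vanishing $\Tor_2(M, W(k)) = 0$: reduce mod $p$ first, $M \otimes^{\mathbb L} \mathbb F_p$ has homology $M/p$ and $M[p]$; $M/p$ and $M[p]$ are finitely presented $\roi^\flat$-modules, which have $\Tor$-dimension $\le 1$ over $\roi^\flat$; combining, $M$ has $\Tor$-dimension $\le 2$ over $A_\inf$, and in fact $\Tor_2^{A_\inf}(M, -) $ is computed by $\Tor_1^{\roi^\flat}(M[p], -) $ after derived base change — the cleanest path is: $\Tor_2^{A_\inf}(M, W(k)) = \Tor_2^{A_\inf}(M, A_\inf/\mu)$, and resolving $A_\inf/\mu$ by $[A_\inf \xrightarrow{\mu} A_\inf]$ shows this equals $M[\mu]$... no: $\Tor_i(M, A_\inf/\mu)$ with that two-term resolution gives only $\Tor_0, \Tor_1$, so $\Tor_2^{A_\inf}(M, W(k)) = 0$ \emph{always}. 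Good — that disposes of the $\Tor_2$ claim immediately and correctly. Finally, if $M$ has no $x$-torsion: then $M$ is $x$-torsion-free and $p$-torsion-free up to the bounded part; more carefully, if $M$ has no $x$-torsion then the argument of Lemma~\ref{lem:VectAinf} shows $M$ itself (not just $M/M_\sub{tor}$) — hmm, one needs $M$ torsion-free; I would instead argue $M \otimes^{\mathbb L}_{A_\inf} W(k)$ is concentrated in degree $0$ by showing $M$ is $\mathrm{Tor}$-independent of $W(k) = A_\inf/\mu$ over $A_\inf$, i.e. $\mu$ is a non-zero-divisor on $M$, which follows from $M$ being $x$-torsion-free: $\mu = [\epsilon]-1$ and modulo $p$, $\mu$ maps to $\epsilon - 1 \in \roi^\flat$, a non-zero-divisor, while $M$ being $x$-torsion-free implies ($\epsilon-1$ and $x$ have comparable radicals in $\roi^\flat$, a valuation ring, so) $M/p$ is $(\epsilon-1)$-torsion-free, and then a standard $p$-adic completeness argument upgrades this to $\mu$ being a non-zero-divisor on $M$; hence $\Tor_i^{A_\inf}(M, W(k)) = 0$ for $i > 0$.

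The main obstacle I anticipate is the non-coherence of $A_\inf$: at several points (e.g. ``$K_1 = \ker(F_0 \to M)$ is finitely generated'') one would like to invoke coherence and cannot. The workaround is to use the $p$-torsion-freeness to pass, after one syzygy step, to submodules of free modules, and then exploit Lemma~\ref{lem:VectAinf} together with the coherence of $\roi^\flat$ and of $A_\inf/p^n$ (which \emph{is} available, via Lemma~\ref{LiftCoherence}-style arguments since $\roi^\flat$ is a valuation ring, hence coherent) — so the finiteness is always extracted from a Noetherian or valuation-ring-coherent quotient, never from $A_\inf$ directly. Keeping careful track of which kernels are genuinely finitely presented (rather than merely finitely generated) through the dévissage is the delicate bookkeeping that the proof will need to get right.
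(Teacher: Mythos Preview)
Your plan contains two genuine errors.

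\textbf{The quotient $M/M_\sub{tor}$ need not be free.} You repeatedly invoke Lemma~\ref{lem:VectAinf} to conclude that the torsion-free quotient $M/M_\sub{tor}$ is free, but that lemma only says the vector bundle on $U$ determined by $M/M_\sub{tor}$ \emph{extends} to a free module on $\Spec A_\inf$; it does not say $M/M_\sub{tor}$ coincides with that extension. The ideal $M=(p,x)\subset A_\inf$ is a counterexample: it is finitely presented (via $A_\inf \xrightarrow{(p,-x)} A_\inf^2 \twoheadrightarrow (p,x)$), torsion-free, with $M[\tfrac1p]=A_\inf[\tfrac1p]$ free of rank one, yet not free. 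This breaks your argument for (ii) and your first attempt at (i). Your second attempt at (i), via $K_1=\ker(F_0\to M)$, is much closer to correct --- note though that you only know $K_1$ is finitely \emph{generated}, not finitely presented as you claim; fortunately Lemma~\ref{lem:AinfModuleVect} only needs finitely generated, and then $L/K_1$ (with $L$ the free extension) is finitely presented and $p$-power torsion, so your d\'evissage over the coherent ring $W_n(\roi^\flat)$ finishes. The paper's route is more direct: choose a free $N\subset M$ by clearing denominators on a basis of $M[\tfrac1p]$, so $Q=M/N$ is finitely presented and $p$-power torsion, and the same d\'evissage handles $Q$. For (ii), the paper then obtains $M_\sub{tor}=M[p^n]=H^{-1}(M\dotimes_{A_\inf}A_\inf/p^n)$ as a cohomology group of a perfect complex over the coherent ring $W_n(\roi^\flat)$, hence finitely presented.

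\textbf{The identification $W(k)=A_\inf/\mu$ is false.} Lemma~\ref{lem:mapAinfWitt} computes the kernel of $\theta_\infty:A_\inf\to W(\roi)$, not of the Witt-functoriality map $A_\inf=W(\roi^\flat)\to W(k)$; the latter has kernel $W(\frak m^\flat)$, which is not even finitely generated. So $W(k)$ is not resolved by $[A_\inf\xrightarrow{\mu}A_\inf]$, and your entire computation in (iii) rests on a false premise. The paper instead introduces $\tilde W=\varinjlim_n A_\inf/(x^{1/p^n})$ and the short exact sequence $0\to Q\to\tilde W\to W(k)\to 0$, where $Q$ is an $A_\inf[\tfrac1p]$-module (so $\Tor_i(M,Q)=0$ for $i>0$ by freeness of $M[\tfrac1p]$) and $\tilde W$ has $\Tor$-dimension $1$ over $A_\inf$; the long exact sequence then gives $\Tor_2^{A_\inf}(M,W(k))=0$. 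When $M$ has no $x$-torsion one also has $\Tor_i(M,\tilde W)=0$ for $i>0$, and the fact that $p$ acts invertibly on $M\otimes Q$ while $\Tor_1(M,W(k))[\tfrac1p]=0$ forces $\Tor_1(M,W(k))=0$.
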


We freely use Lemma \ref{QuotCoherent} and Lemma \ref{LiftCoherence} in the proof below.

\begin{proof}
For (i), assume first $M[\frac{1}{p}] = 0$. Then, by finite generation, $M$ is killed by $p^n$ for some $n > 0$, and thus is a finitely presented $W_n(\roi^\flat)$-module. By induction on $n$, we will show that any finitely presented $W_n(\roi^\flat)$-module $M$ is perfect over $A_\inf$. If $n = 1$, then $M$ is a finitely presented $\roi^\flat$-module. But then $M$ is perfect over $\roi^\flat$ (as $\roi^\flat$ is a valuation ring), and thus also over $A_\inf$ (as $\roi^\flat = A_\inf/p$ is perfect as an $A_\inf$-module). In general, for a finitely presented $W_n(\roi^\flat)$-module $M$, we have a short exact sequence
\[ 0 \to pM \to M \to M/pM \to 0.\]
Then $M/pM$ is finitely presented over $\roi^\flat$, and thus perfect over $A_\inf$ by the $n=1$ case. Also, since $W_n(\roi^\flat)$ is coherent, $pM \subset M$ is finitely presented over $W_n(\roi^\flat)$. Moreover, $p^{n-1} \cdot pM = p^n M = 0$, so $pM$ is a finitely presented $W_{n-1}(\roi^\flat)$-module. By induction, $pM$ is also perfect over $A_\inf$. The exact sequence then shows that $M$ is perfect over $A_\inf$. 

For general $M$, by clearing denominators on generators of $M[\frac{1}{p}]$, we can find a free $A_\inf$-module $N$ and an inclusion $N \subset M$ that is an isomorphism after inverting $p$. The quotient $Q$ is then a finitely presented $A_\inf$-module killed by inverting $p$, so $Q$ is perfect by the preceding argument. Also, $N$ is perfect as it is finite free; it formally follows that $M$ is perfect as well.

For (ii), choose $N$ and $Q$ as in the previous paragraph. Then $M_{\sub{tor}} \cap N = 0$ as $N$ has no torsion. Thus, $M_{\sub{tor}} \to Q$ is injective, so $M_{\sub{tor}}$ is killed by $p^n$ for some $n > 0$, and so $M_{\sub{tor}} = M[p^n]$. Now consider $K := M \otimes_{A_\inf}^L A_{\inf}/p^n$. This is perfect over $A_\inf/p^n = W_n(\roi^\flat)$ by (i) and base change. As $W_n(\roi^\flat)$ is coherent, each $H^i(K)$ is finitely presented. But $H^{-1}(K) = M[p^n]$, so $M[p^n]$ is finitely presented over $W_n(\roi^\flat)$, and thus also over $A_\inf$. The perfectness now follows from (i) applied to $M[p^n]$.

For (iii), the fact that $M$ has $\Tor$-dimension $\leq 2$ follows easily from the previous arguments using the fact that any finitely presented $\roi^\flat$-module has $\Tor$-dimension $\leq 1$ over $\roi^\flat$, and thus $\Tor$-dimension $\leq 2$ over $A_\inf$. For the rest, let $\tilde{W} = \varinjlim_n A_\inf/(x^{1/p^n})$, so we have a short exact sequence
\[ 0 \to Q \to \tilde{W} \to W(k) \to 0. \]
The last map in this sequence is the $p$-adic completion map and $\tilde{W}$ is $p$-torsion-free. Thus, $Q$ is an $A_\inf[\frac{1}{p}]$-module, and thus $\Tor_i^{A_\inf}(M,Q) = 0$ for $i > 0$ as $M[\frac{1}{p}]$ is finite free. Also, since $x \in A_\inf$ is a non-zero-divisor, the $A_\inf$-module $\tilde{W}$ has $\Tor$-dimension $1$; it follows from the long exact sequence on $\Tor$ that $\Tor^{A_\inf}_2(M,W(k)) = 0$. Now if $M$ is further assumed to have no $x$-torsion, then $\Tor^{A_\inf}_i(M,\tilde{W}) = 0$ for $i > 0$. Thus, we have a short exact sequence
\[ 0 \to \Tor_1^{A_\inf}(M,W(k)) \to M \otimes_{A_\inf} Q \to M \otimes_{A_\inf} \tilde{W} \to M \otimes_{A_\inf} W(k) \to 0.\]
As $M[\frac{1}{p}]$ is finite free, the first term above is killed after inverting $p$. On the other hand, $p$ acts invertibly on $Q$ and thus on the second term above; thus $\Tor_1^{A_\inf}(M,W(k)) = 0$, as wanted.
\end{proof}

Next, we give a criterion for an $A_\inf$-module to define a vector bundle on $U=\Spec A_\inf \setminus \{s\}$. This is a weak analogue over $A_\inf$ of the fact that a finitely generated torsion free module over a $2$-dimensional regular local ring gives a vector bundle on the punctured spectrum. 

\begin{lemma}\label{lem:AinfModuleVect}
Let $M$ be a finitely generated $p$-torsion-free $A_\inf$-module such that $M[\tfrac{1}{p}]$ is finite projective over $A_\inf[\tfrac 1p]$. Then the quasi-coherent sheaf associated to $M$ restricts to a vector bundle on $U$.
\end{lemma}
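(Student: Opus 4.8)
The plan is to work with the affine open cover $U=U_1\cup U_2$ already used in the proof of Lemma~\ref{lem:VectAinf}, where $U_1=\Spec R_1$, $U_2=\Spec R_2$, $R_1=A_\inf[\tfrac1p]$, $R_2=A_\inf[\tfrac1x]$, and $R_{12}=A_\inf[\tfrac1{xp}]$. That these two opens cover $U$ uses that $K$ is a perfectoid field, hence of rank $1$: the only primes of $\roi^\flat=A_\inf/p$ are $(0)$ and $\frak m^\flat$, so any prime of $A_\inf$ containing both $p$ and $x$ contains $(p,[\frak m^\flat])=\frak m$ and is therefore the closed point $s$. On $U_1$ the restriction of the associated sheaf is $\widetilde{M[\tfrac1p]}$, a vector bundle by hypothesis. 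So the whole content is to show that $N:=M[\tfrac1x]=M\otimes_{A_\inf}R_2$ defines a vector bundle on $\Spec R_2$.

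First I would record the relevant structure of $R_2$. Since $\roi^\flat$ is a valuation ring (in particular a domain) and $A_\inf=W(\roi^\flat)$ embeds into the complete discrete valuation ring $W(K^\flat)$, the ring $A_\inf$ is a domain with $\bigcap_n p^nA_\inf=0$, hence so is its localization $R_2$. As $\bar x:=x\bmod p$ is a nonzero element of the rank-$1$ valuation ring $\roi^\flat$, inverting it produces the fraction field, so $R_2/p=\roi^\flat[\tfrac1{\bar x}]=K^\flat$ is a field; thus $(p)$ is the unique prime of $R_2$ over $p$, with residue field $K^\flat$. Moreover every nonzero element of $\Frac(A_\inf)$ equals $p^n$ times a unit of $(A_\inf)_{(p)}$ (write $\alpha/\beta$ with $\alpha=p^ia$, $\beta=p^jb$, $a,b\notin pA_\inf$, using $\bigcap_np^nA_\inf=0$), so $(R_2)_{(p)}=(A_\inf)_{(p)}$ is a DVR with uniformizer $p$. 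Next, the key numerical input: because $M[\tfrac1p]$ is torsion-free over the domain $R_1$ and $M$ has no $p$-torsion, $M$ is torsion-free over $A_\inf$; hence $M\otimes_{A_\inf}(A_\inf)_{(p)}$ is finitely generated and torsion-free, hence free, over the DVR $(A_\inf)_{(p)}$, of rank $d:=\op{rank}_{R_1}M[\tfrac1p]$ (its generic rank). Reducing mod $p$ gives $\dim_{K^\flat}\big(M/p\otimes_{\roi^\flat}K^\flat\big)=d$, and since $N/pN=(M/pM)[\tfrac1{\bar x}]=(M/pM)\otimes_{\roi^\flat}K^\flat$, we get $e:=\dim_{K^\flat}(N/pN)=d$.

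Now for the globalization on $\Spec R_2$: choose $n_1,\dots,n_e\in N$ whose images form a $K^\flat$-basis of $N/pN$, and let $\psi\colon R_2^e\to N$ be the resulting map. Its cokernel $C$ is a finitely generated $R_2$-module with $C=pC$, so by the determinant trick there is $c\in 1+pR_2$ with $cC=0$. Then $\psi[\tfrac1c]\colon R_2[\tfrac1c]^e\to N[\tfrac1c]$ is surjective, and after inverting $p$ its kernel is a direct summand of $R_2[\tfrac1{cp}]^e$ — here $N[\tfrac1{cp}]$ is finite projective over $R_2[\tfrac1{cp}]$ as a base change of $M[\tfrac1p]$ — hence finite projective of constant rank $e-d=0$ over the connected scheme $\Spec R_2[\tfrac1{cp}]$, so it vanishes after inverting $p$; as $R_2[\tfrac1c]^e$ is $p$-torsion-free the kernel vanishes. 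Thus $\psi[\tfrac1c]$ is an isomorphism and $\widetilde N$ is free of rank $e$ on $D(c)\subset\Spec R_2$. Finally $c\in 1+pR_2$ gives $V(c)\cap V(p)=\emptyset$, so $\Spec R_2=D(c)\cup D(p)$; on $D(c)$ the sheaf $\widetilde N$ is free, and on $D(p)=\Spec R_{12}$ it is $\widetilde{N[\tfrac1p]}$, finite projective as a base change of $M[\tfrac1p]$. Hence $\widetilde N$ is a vector bundle on $\Spec R_2$, and together with $U_1$ this proves the lemma.

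The main obstacle is dealing with the single ``bad'' point $(p)$ of $U_2$: the argument rests on identifying $(A_\inf)_{(p)}$ as a DVR and on the rank identity $e=d$ (equivalently, that $M$ is automatically torsion-free and that its stalk at $(p)$ is free of the generic rank); once these are in hand, the globalization on $\Spec R_2$ is a soft determinant-trick argument, and, importantly, it sidesteps the fact that we do not know $A_\inf$ (or $R_2$) to be coherent. Alternatively, one could invoke the coherence of $W_N(\roi^\flat)$ from Proposition~\ref{prop:WrCoherent} to see that $N$ is finitely presented over $R_2$ and then check freeness at every prime, but the route above is self-contained.
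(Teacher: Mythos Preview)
Your proof is correct and rests on the same key observation as the paper: the localization $A_{\inf,(p)}$ is a discrete valuation ring with uniformizer $p$, so the stalk $M_{(p)}$ is free of the generic rank $d$. The paper's proof is extremely terse --- it simply asserts that ``it is enough to check that $M\otimes_{A_\inf} A_{\inf,(p)}$ is finite free'' and then verifies this via the DVR structure. You go further by making the spreading-out step explicit: you lift a basis of $N/pN$ to a map $\psi\colon R_2^e\to N$, use the determinant trick to find $c\in 1+pR_2$ with $\psi[\tfrac1c]$ surjective, and then verify injectivity via the rank equality $e=d$ and $p$-torsion-freeness. This yields the cover $\Spec R_2=D(c)\cup D(p)$ with $N$ free on $D(c)$ and projective on $D(p)$.

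This extra care is genuinely warranted: since $M$ is only assumed finitely generated (not finitely presented), the usual statement ``free stalk implies locally free in a Zariski neighborhood'' does not apply directly, and as you note, coherence of $A_\inf$ is unknown. Your argument sidesteps this by exploiting the known projectivity of $M[\tfrac1p]$ on the complementary open $D(p)$ to kill the kernel of $\psi[\tfrac1c]$. The paper leaves this step implicit; your version makes the logic airtight.
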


\begin{proof} It is enough to check that $M\otimes_{A_\inf} A_{\inf,(p)}$ is finite free, where $A_{\inf,(p)}$ is the localization at the prime ideal $(p)\subset A_\inf$. But $A_{\inf,(p)}$ is a discrete valuation ring: The function sending $\sum_{i\geq 0} [a_i] p^i\in A_\inf$ with $a_i\in \roi^\flat$ to the minimal integer $i$ for which $a_i\neq 0$ defines a discrete valuation on $A_\inf$, with corresponding prime ideal $(p)$, and corresponding discrete valuation ring $A_{\inf,(p)}$. As $M\otimes_{A_\inf} A_{\inf,(p)}$ is a finitely generated $p$-torsion-free module, it is thus finite free, as desired.
\end{proof}

\begin{remark} In Lemma~\ref{lem:AinfModuleVect}, it is unreasonable to hope that $M$ itself is finite projective. For example, if $M$ is the ideal $(x,p) \subset A_\inf$, then $M$ is not finite projective over $A_\inf$, and yet restricts to the trivial line bundle over $U$.
\end{remark}

\begin{corollary}\label{cor:VectAinf1p} Let $N$ be a finite projective $A_\inf[\tfrac 1p]$-module. Then $N$ is free.
\end{corollary}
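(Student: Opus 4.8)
The plan is to deduce this from Kedlaya's result (Lemma~\ref{lem:VectAinf}) that every vector bundle on the punctured spectrum $U=\Spec A_\inf\setminus\{s\}$ is free, after first spreading $N$ out to a genuine $A_\inf$-module. The point is that a finite projective $A_\inf[\tfrac 1p]$-module is almost already a vector bundle on $U$; it only remains to produce an integral model and invoke the classification.

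First I would clear denominators. Choose a finite set $n_1,\dots,n_k$ of generators of $N$ as an $A_\inf[\tfrac 1p]$-module, and set $M:=A_\inf n_1+\dots+A_\inf n_k\subseteq N$. Then $M$ is a finitely generated $A_\inf$-module with $M[\tfrac 1p]=N$. Since $p$ acts invertibly on $N$, the module $N$, and hence its $A_\inf$-submodule $M$, is $p$-torsion-free. Now I would invoke Lemma~\ref{lem:AinfModuleVect}: the module $M$ is finitely generated, $p$-torsion-free, and $M[\tfrac 1p]=N$ is finite projective over $A_\inf[\tfrac 1p]$, so the quasi-coherent sheaf $\widetilde M$ restricts to a vector bundle on $U$. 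By Lemma~\ref{lem:VectAinf}, this vector bundle is free, say $\widetilde M|_U\cong \mathcal{O}_U^{\oplus d}$ for some $d\geq 0$ (the rank being constant since $\Spec A_\inf$ is irreducible).

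To conclude, observe that $p$ lies in the maximal ideal of the local ring $A_\inf$, so the closed point $s$ does not lie in the open subscheme $\Spec A_\inf[\tfrac 1p]\subseteq\Spec A_\inf$; hence $\Spec A_\inf[\tfrac 1p]\subseteq U$, and it is affine. Restricting the isomorphism $\widetilde M|_U\cong\mathcal{O}_U^{\oplus d}$ to this affine open and taking global sections gives $N=M[\tfrac 1p]\cong A_\inf[\tfrac 1p]^{\oplus d}$, as desired. I do not anticipate any serious obstacle: the entire substance lies in Lemmas~\ref{lem:VectAinf} and~\ref{lem:AinfModuleVect}, which are already available, and the remaining manipulations (clearing denominators, restricting to a distinguished open) are routine.
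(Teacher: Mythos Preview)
Your proof is correct and follows essentially the same approach as the paper: both clear denominators to obtain a finitely generated $p$-torsion-free $A_\inf$-submodule $M$ with $M[\tfrac 1p]=N$, apply Lemma~\ref{lem:AinfModuleVect} to see that $\widetilde M|_U$ is a vector bundle, and then invoke Lemma~\ref{lem:VectAinf} to conclude it is free, whence $N=M[\tfrac 1p]$ is free. The only cosmetic difference is that the paper phrases the last step by extending the bundle on $U$ to a finite free $A_\inf$-module $M'$ and then inverting $p$, whereas you restrict the trivialization on $U$ directly to the open $\Spec A_\inf[\tfrac 1p]\subset U$; these are evidently equivalent.
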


\begin{proof} Let $M\subset N$ be a finitely generated $A_\inf$-submodule such that $M[\tfrac 1p]=N[\tfrac 1p]$. Then $M$ satisfies the hypothesis of Lemma~\ref{lem:AinfModuleVect}, and thus by Lemma~\ref{lem:VectAinf} there is some finite free $A_\inf$-module $M^\prime$ such that the vector bundles corresponding to $M$ and $M^\prime$ agree on $U=\Spec A_\inf\setminus \{s\}$. In particular, $M^\prime[\tfrac 1p]=N[\tfrac 1p]$, which is therefore finite free.
\end{proof}

Putting the above results together, we obtain the following structural result:

\begin{proposition}\label{prop:splitAinfmodule}
Let $M$ be a finitely presented $A_\inf$-module such that $M[\tfrac{1}{p}]$ is finite projective (equivalently, free) over $A_\inf[\tfrac 1p]$. Then there is a functorial exact sequence
\[ 0 \to M_{\sub{tor}} \to M \to M_{\sub{free}} \to \overline{M} \to 0\]
satisfying:
\begin{enumerate}
\item $M_{\sub{tor}}$ is finitely presented and perfect as an $A_\inf$-module, and is killed by $p^n$ for $n \gg 0$.
\item $M_{\sub{free}}$ is a finite free $A_\inf$-module.
\item $\overline{M}$ is finitely presented and perfect as an $A_\inf$-module, and is supported at the closed point $s \in \Spec(A_\inf)$, i.e., it is killed by some power of $(x,p)$.
\end{enumerate}
Moreover, $M$ is a finite free $A_\inf$-module if either $M\otimes_{A_\inf} W(k)$ is $p$-torsion-free, or if $K$ has characteristic zero and $M \otimes_{A_\inf} \roi$ is $p$-torsion-free.
\end{proposition}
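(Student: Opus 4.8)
\emph{Proof strategy.} I would assemble the four-term sequence directly from the three preceding lemmas. First let $M_{\sub{tor}}\subset M$ be the submodule of torsion elements (which makes sense as $A_\inf=W(\roi^\flat)$ is a domain): Lemma~\ref{lem:AinfModulePerfect}(ii) shows $M_{\sub{tor}}$ is finitely presented, perfect, and killed by $p^n$ for $n\gg 0$, giving (i). The quotient $M':=M/M_{\sub{tor}}$ is then finitely presented, torsion-free, and has $M'[\tfrac 1p]=M[\tfrac 1p]$ finite free (Corollary~\ref{cor:VectAinf1p}), so Lemma~\ref{lem:AinfModuleVect} makes it into a vector bundle $\mathcal E$ on $U=\Spec A_\inf\setminus\{s\}$. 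By Lemma~\ref{lem:VectAinf}, $\mathcal E$ extends uniquely to a finite free $A_\inf$-module $M_{\sub{free}}$; for $M'\to M_{\sub{free}}$ I would take the map induced by $M'\hookrightarrow M'[\tfrac 1p]=M_{\sub{free}}[\tfrac 1p]$ (equivalently, $M_{\sub{free}}=\Gamma(U,\mathcal E)$ and the map is restriction followed by taking sections). It is injective since $M'$ is torsion-free, and an isomorphism over $U$ by construction, so $\overline M:=\coker(M'\to M_{\sub{free}})$ is supported at $s$, i.e. killed by a power of $(x,p)$; being finitely presented with $\overline M[\tfrac 1p]=0$ it is perfect by Lemma~\ref{lem:AinfModulePerfect}(i). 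This gives (ii) and (iii), and functoriality is clear since every step is.

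For the last sentence, write $B$ for whichever of $\roi=A_\inf/\xi A_\inf$ or $W(k)=A_\inf/W(\frak m^\flat)$ occurs; in both cases $B$ is a valuation ring with residue field $k$, obtained from the local ring $A_\inf$ by killing an ideal contained in $\frak m_{A_\inf}$. The plan is to show first that $M_{\sub{tor}}=0$, and then that $M$ (which then equals $M'$) is finite free. For the first step: since $M'$ is torsion-free it has no $x$-torsion, so $\Tor^{A_\inf}_{>0}(M',B)=0$ — this is Lemma~\ref{lem:AinfModulePerfect}(iii) when $B=W(k)$, and is immediate when $B=\roi$ since then $\roi$ has projective dimension $1$ over $A_\inf$ and $\Tor_1^{A_\inf}(M',\roi)=M'[\xi]=0$. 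Hence $-\otimes_{A_\inf}B$ keeps $0\to M_{\sub{tor}}\to M\to M'\to 0$ exact; the first term becomes a module killed by a power of $p$ sitting inside the $p$-torsion-free $M\otimes_{A_\inf}B$, hence zero, and Nakayama (for the finitely generated $M_{\sub{tor}}$ over the local ring $A_\inf$) forces $M_{\sub{tor}}=0$.

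For the second step, with $M=M'$ now torsion-free one again has $\Tor^{A_\inf}_{>0}(M,B)=0$, so the base-change spectral sequence $\Tor^B_p(\Tor^{A_\inf}_q(M,B),k)\Rightarrow\Tor^{A_\inf}_{p+q}(M,k)$ for $A_\inf\to B$ degenerates to an isomorphism $\Tor^{A_\inf}_n(M,k)\cong\Tor^B_n(M\otimes_{A_\inf}B,k)$. Now $M\otimes_{A_\inf}B$ is a finitely presented, $p$-torsion-free module over a valuation ring, and I would invoke the fact that finitely presented modules over a valuation ring are finite direct sums of cyclic modules, together with the observation that $B/(a)$ has nonzero $p$-torsion whenever $a$ is a nonzero non-unit (using $v(p)>0$ and density of the value group in the perfectoid-field case; trivially in the $W(k)$-case, where $B/(a)=B/p^n$), to conclude that $M\otimes_{A_\inf}B$ is finite free over $B$. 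Then $\Tor^{A_\inf}_1(M,k)=\Tor^B_1(M\otimes_{A_\inf}B,k)=0$, and since $A_\inf$ is local and $M$ finitely presented this forces $M$ to be finite free (lift a $k$-basis of $M\otimes_{A_\inf}k$ to a map $A_\inf^d\to M$ with finitely generated kernel, and apply Nakayama). The hard part is really this ``moreover'' statement: one must route everything through the torsion-free quotient $M'$ so that the $\Tor$-vanishing of Lemma~\ref{lem:AinfModulePerfect}(iii) becomes available, and one needs the elementary-but-crucial input that a finitely presented $p$-torsion-free module over $\roi$ or over $W(k)$ is automatically free.
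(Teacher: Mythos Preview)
Your construction of the four-term sequence is essentially identical to the paper's: define $M_{\sub{tor}}$, apply Lemma~\ref{lem:AinfModulePerfect}(ii); set $M_{\sub{free}}=H^0(U,\mathcal E)$ via Lemmas~\ref{lem:AinfModuleVect} and~\ref{lem:VectAinf}; read off the properties of $\overline M$.

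For the ``moreover'' statement your argument is correct but takes a different route. The paper bypasses your two-step reduction entirely with a rank comparison: for a local domain $R$ with residue field $k_s$ and fraction field $k_\eta$, any finitely presented $M$ with $\dim_{k_s}(M\otimes k_s)=\dim_{k_\eta}(M\otimes k_\eta)$ is free (if some Fitting ideal were nonzero and proper, the two dimensions would differ). One then checks both sides equal the rank of the free module $M[\tfrac 1p]$: the generic rank does so tautologically, while $\dim_k(M\otimes k)$ does so because $M\otimes_{A_\inf}B$ is finitely generated and $p$-torsion-free over $B\in\{W(k),\roi\}$, hence free of that rank. Your approach instead first kills $M_{\sub{tor}}$ (via $\Tor$-vanishing for $M'$ and Nakayama), and then runs a change-of-rings spectral sequence to get $\Tor_1^{A_\inf}(M,k)=0$. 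Both arguments ultimately rest on the same elementary input (a finitely generated $p$-torsion-free module over $W(k)$ or $\roi$ is free), but the paper's version avoids the preliminary step of killing $M_{\sub{tor}}$ and the spectral sequence, at the cost of invoking the Fitting-ideal criterion; your version is longer but stays closer to first principles and makes the role of Lemma~\ref{lem:AinfModulePerfect}(iii) explicit.
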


\begin{proof}
Let $M_{\sub{tor}} \subset M$ be the torsion submodule of $M$. Then (i) is immediate from Lemma~\ref{lem:AinfModulePerfect}. Let $N = M/M_{\sub{tor}}$, so $N$ is a finitely presented $A_\inf$-module (by (i)) that is free after inverting $p$ (as $M$ is so) and has no $p$-torsion. Lemma~\ref{lem:AinfModuleVect} then implies that $N$ defines a vector bundle on $U$. Lemma~\ref{lem:VectAinf} implies that $M_{\sub{free}} := H^0(U, N)$ is a finite free $A_\inf$-module, giving (ii). Also, since $N$ had no $p$-torsion, the induced map $N \to M_{\sub{free}}$ is injective and an isomorphism over $U$. Thus, the cokernel $\overline{M}$ is a finitely presented $A_\inf$-module supported at the closed point $s \in \Spec(A_\inf)$, proving most of (iii); the perfectness of $\overline{M}$ follows from the perfectness of the other $3$ terms.

For the final statement, we first note that in general, if $R$ is a local integral domain with residue field $k_s$ and quotient field $k_\eta$, and $M$ is a finitely generated $R$-module such that
\[
\dim_{k_s}(M\otimes_R k_s) = \dim_{k_\eta}(M\otimes_R k_\eta)\ ,
\]
then $M$ is finite free. Indeed, any nonzero Fitting ideal $I\subset R$ of $M$ has to be all of $R$, as otherwise the rank of $M\otimes_R k_\eta$ would differ from the rank of $M\otimes_R k_s$, since $k_\eta\not\in \Spec(R/I)$ while $k_s\in \Spec(R/I)$. Applying this to $R=A_\inf$ and the given module $M$ gives the conclusion, as the dimension at the generic point agrees with the dimension at $W(k)[\tfrac 1p]$ and $\roi[\tfrac 1p]$ because $M[\tfrac 1p]$ is finite free over $A_\inf[\tfrac 1p]$, and this dimension agrees with the dimension of $M\otimes_{A_\inf} k$ by assumption.
\end{proof}

We record an inequality stating roughly that rank goes up under specialization for finitely presented modules. 

\begin{lemma}
\label{lem:InequalitySpecialization}
Let $M$ be a finitely presented $W_n(\roi^\flat)$-module. Let $M_\eta$ and $M_s$ be the base change of $M$ along $W_n(\roi^\flat) \to W_n(K^\flat)$ and $W_n(\roi^\flat) \to W_n(k)$ respectively. Then $M_\eta$ and $M_s$ have finite length over the corresponding local rings, and we have: 
\[\ell(M_\eta) \leq \ell(M_s).\]
\end{lemma}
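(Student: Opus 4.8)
The statement concerns finitely presented modules over the two (non-noetherian) rings $W_n(\roi^\flat)$, $W_n(K^\flat)$ and the noetherian local ring $W_n(k)$. First I would reduce to the case $n=1$. Indeed, any finitely presented $W_n(\roi^\flat)$-module $M$ admits the $p$-adic (Verschiebung) filtration $M \supseteq VM \supseteq V^2M \supseteq \cdots \supseteq V^{n}M = 0$ coming from the ideal filtration of $W_n(\roi^\flat)$ by $V^iW_{n-i}(\roi^\flat)$; since $W_n(\roi^\flat)$ is coherent (Lemma~\ref{QuotCoherent}/Lemma~\ref{LiftCoherence}, or directly since $\roi^\flat$ is a valuation ring and $W_n(\roi^\flat)\to\roi^\flat$ is an iterated square-zero extension by $\roi^\flat$), each graded piece $\mathrm{gr}^i M$ is a finitely presented $\roi^\flat$-module. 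Base change along the flat(!) maps $W_n(\roi^\flat)\to W_n(K^\flat)$ (localization) and $W_n(\roi^\flat)\to W_n(k)$ is right-exact, so each of $\ell(M_\eta)$ and $\ell(M_s)$ is additive along this filtration provided I know the base-changed gradeds have the expected lengths; the cleanest way to organize this is to prove the $n=1$ inequality with the stronger assertion that $\ell$ is computed correctly, i.e.\ that there is no unexpected $\Tor$. Concretely, for $n=1$ I must show: for a finitely presented $\roi^\flat$-module $N$, one has $\ell_{K^\flat}(N\otimes_{\roi^\flat} K^\flat) \le \ell_k(N\otimes_{\roi^\flat} k)$, where the left side is the generic rank and the right side is $\dim_k(N/\mathfrak m^\flat N)$.

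**The $n=1$ case.** Since $\roi^\flat$ is a valuation ring, a finitely presented $\roi^\flat$-module $N$ is a finite direct sum of cyclic modules $\roi^\flat/(a_j)$ with $a_j \in \roi^\flat$ (this is the standard structure theory of finitely presented modules over a valuation ring; alternatively invoke Lemma~\ref{lem:SubmodulesValuationRing}-type reasoning together with the fact that $\roi^\flat$ has $\Tor$-dimension $\le 1$). For such a summand, $(\roi^\flat/(a_j))\otimes K^\flat$ is $K^\flat$ if $a_j=0$ and $0$ otherwise, contributing $1$ resp.\ $0$ to the generic length; while $(\roi^\flat/(a_j))\otimes k = \roi^\flat/(a_j,\mathfrak m^\flat)$, which is $k$ if $a_j=0$ \emph{and also} is $k$ whenever $a_j\in\mathfrak m^\flat$ (i.e.\ $a_j$ is a non-unit), and is $0$ only when $a_j$ is a unit, i.e.\ the summand is zero. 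Hence each summand contributes at least as much to $\ell(M_s)$ as to $\ell(M_\eta)$, and summing gives the $n=1$ inequality. Note this already shows the inequality can be strict (torsion summands $\roi^\flat/(a_j)$ with $0\neq a_j\in\mathfrak m^\flat$ push up $\ell(M_s)$ without affecting $\ell(M_\eta)$), consistent with the applications.

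**Assembling the general case.** Returning to general $n$, apply $-\otimes_{W_n(\roi^\flat)} W_n(K^\flat)$ to the filtration $0 = V^nM \subseteq \cdots \subseteq VM \subseteq M$: because $W_n(K^\flat)$ is a localization of $W_n(\roi^\flat)$, the functor is exact, so $\ell(M_\eta) = \sum_{i=0}^{n-1} \ell_{K^\flat}(\mathrm{gr}^iM \otimes_{\roi^\flat} K^\flat)$, where I use that $\mathrm{gr}^iM$ is a $\roi^\flat$-module and that $V^iW_{n-i}(K^\flat)\otimes k'$-type identifications show $\mathrm{gr}^i(M\otimes W_n(K^\flat)) = \mathrm{gr}^iM\otimes_{\roi^\flat}K^\flat$. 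For $M_s$ the map $W_n(\roi^\flat)\to W_n(k)$ is not flat, so I only get a spectral sequence / right-exactness, giving $\ell(M_s) \ge \sum_{i=0}^{n-1}\ell_k(\mathrm{gr}^iM\otimes_{\roi^\flat} k)$ — but an inequality in the \emph{right} direction, since passing to the associated graded of the image of the $V$-filtration on $M\otimes W_n(k)$ can only lose length relative to the sum of the $\mathrm{gr}^iM\otimes_{\roi^\flat}k$ when there is extra $\Tor$; here I need to be slightly careful and instead argue: filtering $M\otimes_{W_n(\roi^\flat)}^{\mathbf L} W_n(k)$ by the (derived) $V$-filtration, the subquotients are $\mathrm{gr}^iM\otimes_{\roi^\flat}^{\mathbf L} k$, whence $\ell(H_0) = \ell(M_s) \ge \sum_i\big(\ell(\mathrm{gr}^iM\otimes_{\roi^\flat}k) - \ell(\Tor_1^{\roi^\flat}(\mathrm{gr}^iM,k))\big)$ plus corrections — this is the point I expect to require the most care. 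The clean resolution is to observe that $\Tor_1^{\roi^\flat}(\mathrm{gr}^iM,k)$ for a cyclic valuation-ring module $\roi^\flat/(a)$ is $0$ when $a=0$ or $a$ a unit and is $\{x: ax=0\}/\cdots$, which is again a $k$-vector space of dimension equal to $1$ when $a\in\mathfrak m^\flat\setminus 0$; tracking the alternating sum of lengths shows all the $\Tor_1$ contributions enter $\ell(M_s)$ with a sign that still keeps $\ell(M_s)\ge\ell(M_\eta)$, because generically (after $\otimes K^\flat$) all torsion and all $\Tor$ terms vanish. \textbf{The main obstacle} is therefore purely bookkeeping: propagating the $n=1$ inequality through the non-flat base change $W_n(\roi^\flat)\to W_n(k)$ without the (false in general) hope that lengths are strictly additive, and the cleanest route is to decompose each $\roi^\flat$-graded piece into cyclic pieces first and then verify the inequality one cyclic summand and one filtration step at a time.
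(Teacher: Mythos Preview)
Your $n=1$ case and your overall strategy---filter, reduce to cyclic modules over the valuation ring $\roi^\flat$---match the paper exactly. But your assembly step contains an error, and the fix you are groping toward is precisely the paper's organizing principle.

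The error: right-exactness of $-\otimes_{W_n(\roi^\flat)} W_n(k)$ applied to the filtration gives $\ell(M_s)\le\sum_i\ell_k(\mathrm{gr}^iM\otimes_{\roi^\flat}k)$, not $\ge$. For a short exact sequence $0\to A\to B\to C\to 0$ and a right-exact functor $F$, one has $\ell(FB)=\ell(FC)+\ell(\mathrm{im}(FA\to FB))\le\ell(FA)+\ell(FC)$. So your first inequality points the wrong way and cannot be combined with the $n=1$ inequality to conclude.

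The clean fix, which your derived-filtration paragraph almost reaches, is to work with Euler characteristics from the start. The paper proves the sharper equality
\[
\ell(M_\eta)\;=\;\ell(M_s)-\ell\bigl(\Tor_1^{W_n(\roi^\flat)}(M,W_n(k))\bigr),
\]
after first observing via $M\dotimes_{W_n(\roi^\flat)}W_n(k)\simeq M\dotimes_{A_\inf}W(k)$ and Lemma~\ref{lem:AinfModulePerfect} that $\Tor_i$ vanishes for $i>1$ and has finite length for all $i$. Now both sides are alternating sums of lengths of the cohomology of $M\dotimes W_n(K^\flat)$ and $M\dotimes W_n(k)$ respectively, hence additive in short exact sequences of $M$. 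Filtering by powers of $p$ (which for perfect $\roi^\flat$ is your $V$-filtration) and using the identification $N\dotimes_{W_n(\roi^\flat)}W_n(k)\simeq N\dotimes_{\roi^\flat}k$ for $p$-torsion $N$ reduces to $n=1$, and then to cyclics, where both sides are checked by hand: they are $1$ for $\roi^\flat$ and $0$ for $\roi^\flat/(a)$ with $a\ne 0$. Your ``bookkeeping obstacle'' evaporates once you phrase things this way---additivity of the Euler characteristic replaces all the sign-tracking.
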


In the proof below, the length function $\ell(-)$ applied to certain perfect complexes $K$ over $W_n(k)$ simply means the usual alternating sum $\sum_i (-1)^i \ell(H^i(K))$.

\begin{proof}
Note that $M \dotimes_{W_n(\roi^\flat)} W_n(k) \simeq M \dotimes_{A_{\inf}} W(k)$. By Lemma~\ref{lem:AinfModulePerfect}, it follows that each $\Tor_i^{W_n(\roi^\flat)}(M, W_n(k))$ has finite length, and vanishes for $i>1$.

We now show the more precise statement
\[\ell(M_\eta) = \ell(M_s) - \ell(\Tor_1^{W_n(\roi^\flat)}(M, W_n(k))).\]
The left hand side is $\ell(M \dotimes_{W_n(\roi^\flat)} W_n(K^\flat))$ as $W_n(\roi^\flat) \to W_n(K^\flat)$ is flat, while the right hand side is $\ell(M \dotimes_{W_n(\roi^\flat)} W_n(k))$ by the vanishing shown above. With this reformulation, both sides above are additive in short exact sequences in $M$. Writing $M$ as an extension of $M/p^{n-1}M$ by $p^{n-1}M/p^nM$, we inductively reduce down to the case $n=1$; here we use the identification $M \dotimes_{W_n(\roi^\flat)} W_n(k) \simeq M \dotimes_{\roi^\flat} k$ when $M$ is killed by $p$. By the classification of finitely presented modules over valuation rings, we may assume $M = \roi^\flat$ or $M = \roi^\flat / (x^r)$ for suitable non-zero $r$ in the value group of $K^\flat$. Both these cases can be checked directly: the relevant lengths are both $1$ in the first case, and $0$ in the second case. Thus, we are done.
\end{proof}

Using this, we arrive at an inequality relating the specializations of certain $A_\inf$-modules over $W(k)$ and $W(K^\flat)$:

\begin{corollary}
\label{cor:ElementaryDivisorsSpecialization}
Let $M$ be a finitely presented $A_\inf$-module such that $M[\tfrac{1}{p}]$ is free over $A_\inf[\tfrac 1p]$. Let $M_1 := M \otimes_{A_\inf} W(K^\flat)$ and $M_2 := M \otimes_{A_\inf} W(k)$ be the displayed scalar extensions. Then:
\begin{enumerate}
\item The modules $M_1$ and $M_2$ have the same rank.
\item For all $n\geq 1$, $\ell(M_2/p^n)\geq \ell(M_1/p^n)$.
\end{enumerate}
\end{corollary}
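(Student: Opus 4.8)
The plan is to deduce both statements from Lemma~\ref{lem:InequalitySpecialization} by reducing modulo powers of $p$, using only the freeness of $M[\tfrac 1p]$ for part (i). For part (i), I would just invert $p$: since localization commutes with base change, $M_1[\tfrac 1p]\cong M[\tfrac 1p]\otimes_{A_\inf[\frac 1p]}W(K^\flat)[\tfrac 1p]$ and $M_2[\tfrac 1p]\cong M[\tfrac 1p]\otimes_{A_\inf[\frac 1p]}W(k)[\tfrac 1p]$. As $M[\tfrac 1p]$ is free over $A_\inf[\tfrac 1p]$ of some rank $d$, and $W(K^\flat)$, $W(k)$ are complete discrete valuation rings with fraction fields $W(K^\flat)[\tfrac 1p]$, $W(k)[\tfrac 1p]$ (here using that $K^\flat$ and $k$ are perfect fields of characteristic $p$, $k$ being perfect since $K$ is perfectoid), both $M_1$ and $M_2$ have rank $d$.

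For part (ii), I would fix $n\geq 1$ and set $\overline M := M/p^nM = M\otimes_{A_\inf}A_\inf/p^n$. Since $M$ is finitely presented over $A_\inf$, applying $-\otimes_{A_\inf}A_\inf/p^n$ to a finite presentation shows $\overline M$ is finitely presented over $A_\inf/p^n = W_n(\roi^\flat)$. The Witt-functoriality maps $A_\inf\to W(K^\flat)$ and $A_\inf\to W(k)$ reduce modulo $p^n$ to $W_n(\roi^\flat)\to W_n(K^\flat)$ and $W_n(\roi^\flat)\to W_n(k)$, so by right-exactness of tensor products $M_1/p^n\cong \overline M\otimes_{W_n(\roi^\flat)}W_n(K^\flat)$ and $M_2/p^n\cong \overline M\otimes_{W_n(\roi^\flat)}W_n(k)$. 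In the notation of Lemma~\ref{lem:InequalitySpecialization} applied to $\overline M$, these are precisely $(\overline M)_\eta$ and $(\overline M)_s$, and both are of finite length (over the Artinian local rings $W_n(K^\flat)$, $W_n(k)$); the lemma then yields $\ell(M_1/p^n)=\ell((\overline M)_\eta)\leq \ell((\overline M)_s)=\ell(M_2/p^n)$, as desired.

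So the corollary is essentially a formal consequence of Lemma~\ref{lem:InequalitySpecialization}, and there is no substantial obstacle. The only points needing a moment's care are the identification of the two reductions of $M$ with base changes of the single finitely presented $W_n(\roi^\flat)$-module $M/p^nM$ (which is routine, via compatibility of the maps $A_\inf\to W_n(-)$), and the verification that the invariants in the statement are well-defined. As an alternative packaging one could instead invoke the structural decomposition $0\to M_{\sub{tor}}\to M\to M_{\sub{free}}\to \overline M\to 0$ of Proposition~\ref{prop:splitAinfmodule}, observing that $M_{\sub{free}}$ contributes the common rank $d$ to both specializations while $M_{\sub{tor}}$ and the closed-point-supported term $\overline M$ account for the length discrepancy; but this unwinds to the same computation that underlies Lemma~\ref{lem:InequalitySpecialization}, so the direct route above is cleaner.
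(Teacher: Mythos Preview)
Your proof is correct and follows essentially the same approach as the paper: part (i) by observing that $M_1[\tfrac1p]$ and $M_2[\tfrac1p]$ are both base changes of the free module $M[\tfrac1p]$, and part (ii) by applying Lemma~\ref{lem:InequalitySpecialization} to $M/p^n$. The paper's proof is just a terser version of what you wrote.
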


\begin{proof} The first assertion is immediate as both $M_1[\frac{1}{p}]$ and $M_2[\frac{1}{p}]$ are base changes of the finite free module $M[\frac{1}{p}]$. Part (ii) follows by applying Lemma~\ref{lem:InequalitySpecialization} to $M/p^n$.
\end{proof}

The next lemma will help in understanding the crystalline specialization.

\begin{lemma}
\label{lem:CrysSpecializationInjects}
Let $C \in D(A_\inf)$ such that $H^j(C)[\tfrac{1}{p}]$ is free for each $j$. Fix some index $i$. Then the natural map $H^i(C) \otimes_{A_\inf} W(k) \to H^i(C \dotimes_{A_\inf} W(k))$ is injective, and bijective after inverting $p$. Furthermore, if $H^{i+1}(C)$ has no $x$-torsion, then this map is bijective.
\end{lemma}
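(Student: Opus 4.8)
The plan is to use the universal coefficient spectral sequence (or equivalently the hyper-Tor spectral sequence) for the derived base change $C \dotimes_{A_\inf} W(k)$, combined with the homological control on $A_\inf$-modules from Lemma \ref{lem:AinfModulePerfect}. First I would reduce to the case where $C$ is represented by a bounded complex of finite free $A_\inf$-modules; this is harmless since only a single cohomological degree $i$ (and $i+1$) matters, so one may truncate $C$ and replace it by a perfect complex without affecting the relevant $H^j$'s, using that each $H^j(C)[\frac1p]$ is free (hence each $H^j(C)$ is finitely presented with $H^j(C)[\frac1p]$ finite free, so Lemma \ref{lem:AinfModulePerfect} applies degreewise). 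The key input is that $W(k) = A_\inf \dotimes_{A_\inf} W(k)$ has the property that $\Tor_q^{A_\inf}(N, W(k)) = 0$ for $q \geq 2$ whenever $N$ is finitely presented with $N[\frac1p]$ free, and $\Tor_q^{A_\inf}(N,W(k)) = 0$ for all $q \geq 1$ if moreover $N$ has no $x$-torsion; both are exactly Lemma \ref{lem:AinfModulePerfect}(iii).

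Next I would write down the convergent spectral sequence
\[
E_2^{p,q} = \Tor_{-q}^{A_\inf}\bigl(H^p(C), W(k)\bigr) \Rightarrow H^{p+q}(C \dotimes_{A_\inf} W(k)),
\]
with differentials $d_2 \colon E_2^{p,q} \to E_2^{p+2,q-1}$. Since $\Tor_j = 0$ for $j \geq 2$, this spectral sequence is concentrated in the two rows $q = 0$ and $q = -1$, hence degenerates at $E_3 = E_\infty$, and it gives short exact sequences
\[
0 \to H^i(C)\otimes_{A_\inf} W(k) \to H^i(C\dotimes_{A_\inf} W(k)) \to \Tor_1^{A_\inf}\bigl(H^{i+1}(C), W(k)\bigr) \to 0,
\]
where the left-hand term is the $q=0$ entry (the edge map) and the surjection onto the $q=-1$ entry $\Tor_1^{A_\inf}(H^{i+1}(C),W(k))$ has no further kernel because the only possibly nonzero differential hitting or leaving that spot vanishes (it would land in row $q=-2$, which is zero). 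This immediately yields injectivity of $H^i(C)\otimes_{A_\inf} W(k) \to H^i(C\dotimes_{A_\inf} W(k))$. For bijectivity after inverting $p$: the functor $-\dotimes_{A_\inf} W(k)$ commutes with inverting $p$, so it suffices to observe that $\Tor_1^{A_\inf}(H^{i+1}(C),W(k))[\frac1p] = 0$, which follows from Lemma \ref{lem:AinfModulePerfect}(iii) (the $\Tor_1$ term is killed by a power of $p$ since $H^{i+1}(C)[\frac1p]$ is free) — alternatively this is the last sentence of the proof of that lemma. Finally, if $H^{i+1}(C)$ has no $x$-torsion, then $\Tor_1^{A_\inf}(H^{i+1}(C), W(k)) = 0$ by Lemma \ref{lem:AinfModulePerfect}(iii), so the short exact sequence forces the map to be an isomorphism.

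The main obstacle, such as it is, is bookkeeping rather than conceptual: one must make sure the spectral sequence is legitimately available and convergent in this non-noetherian setting — this is fine because $C$ may be taken perfect, so it is a genuine finite complex of finite free modules and the associated double-complex filtration spectral sequence converges strongly — and one must verify that the only nonzero rows are $q = 0, -1$, which is precisely the $\Tor$-dimension bound of Lemma \ref{lem:AinfModulePerfect}(iii). A small point to be careful about: the vanishing $\Tor_2^{A_\inf}(N, W(k)) = 0$ in Lemma \ref{lem:AinfModulePerfect}(iii) is stated for $N$ finitely presented with $N[\frac1p]$ free, so I should note that each $H^{p}(C)$ indeed satisfies this hypothesis (it is finitely presented by perfectness of $C$ and coherence-type arguments, and free after inverting $p$ by assumption), after which everything goes through cleanly.
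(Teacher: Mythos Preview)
Your spectral sequence approach is sound in outline and, once fixed, reaches the same conclusion as the paper, but there is a genuine gap in your reduction step. You claim that since each $H^j(C)[\frac1p]$ is free, ``each $H^j(C)$ is finitely presented'', and hence that $C$ may be replaced by a perfect complex. This inference is simply false: the lemma's hypothesis says nothing about finiteness of $H^j(C)$ or of $C$, and over the non-noetherian ring $A_\inf$ freeness after inverting $p$ does not imply finite presentation. Consequently you cannot reduce to $C$ perfect, nor can you invoke Lemma~\ref{lem:AinfModulePerfect}~(iii) as stated, since that lemma is about finitely presented modules.

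The gap is repairable. Inspecting the \emph{proof} of Lemma~\ref{lem:AinfModulePerfect}~(iii), one sees that the vanishing of $\Tor_2^{A_\inf}(N,W(k))$, and of $\Tor_1$ when $N$ has no $x$-torsion, uses only that $N[\tfrac1p]$ is free, never finite presentation: the argument goes through the sequence $0 \to Q \to \tilde W \to W(k) \to 0$ with $\tilde W = \varinjlim A_\inf/(x^{1/p^n})$ of Tor-dimension~$1$ and $Q$ an $A_\inf[\tfrac1p]$-module. Once that Tor-vanishing is known for every $H^j(C)$, a truncation argument (using that $(\tau^{>i+1} C)\dotimes W(k)\in D^{\geq i+1}$, which follows inductively from the two-row Tor bound) reduces to bounded $C$, where your spectral sequence converges and yields the short exact sequence you wrote.

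The paper's own proof bypasses both the spectral sequence and the finite-presentation issue: it works directly with the intermediate ring $\tilde W$, establishes injectivity of $H^i(C)\otimes\tilde W\to H^i(C\dotimes\tilde W)$ from the Koszul resolution of each $A_\inf/(x^{1/p^n})$, and then passes to $W(k)$ by a short diagram chase exploiting that $Q$ is an $A_\inf[\tfrac1p]$-module (so that $H^i(C)\otimes Q \cong H^i(C\dotimes Q)$ by freeness of $H^i(C)[\tfrac1p]$). The underlying input is the same as what you need, but the paper's route requires no convergence bookkeeping and never pretends the cohomology groups are finitely presented.
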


\begin{proof}
The bijectivity after inverting $p$ is formal from the assumption on the $H^j(C)[\frac{1}{p}]$. For the rest, let $\tilde{W} = \varinjlim A_\inf/(x^{1/p^n})$, so $W(k)$ is the $p$-adic completion of $\tilde{W}$. We first observe that
\[ H^i(C) \otimes_{A_\inf} \tilde{W} \to H^i(C \dotimes_{A_\inf} \tilde{W})\]
is injective: by compatibility of both sides with filtered colimits, this reduces to the corresponding statement for $A_{\inf}/(x^{1/p^n})$, which can be checked easily using the Koszul presentation for the latter ring over $A_\inf$. This analysis also shows that if $H^{i+1}(C)$ has no $x$-torsion, then the above map is bijective.
 
 Now let $Q = \ker(\tilde{W} \to W(k))$, so there is a short exact sequence
\[ 0 \to Q \to \tilde{W} \to W(k) \to 0.\]
Since $W(k)$ is the $p$-adic completion of the $p$-torsion-free module $\tilde{W}$, it follows that $Q$ is an $A_\inf[\frac{1}{p}]$-module. In particular, by the hypothesis that all the $H^j(C)[\frac{1}{p}]$ are free, we have 
\[ H^i(C) \otimes_{A_\inf} Q \simeq H^i(C \dotimes_{A_\inf} Q).\]
Now consider the following  diagram of canonical maps:
\[ \xymatrix{ H^i(C) \otimes_{A_\inf} Q \ar[r] \ar[d]^-{a} & H^i(C) \otimes_{A_\inf} \tilde{W} \ar[r] \ar[d]^-{b} & H^i(C) \otimes_{A_\inf} W(k) \ar[d]^-{c} \ar[r] & 0 \\
		   H^i(C \dotimes_{A_\inf} Q) \ar[r] & H^i(C \dotimes_{A_\inf} \tilde{W}) \ar[r]^-{d} & H^i(C \dotimes_{A_\inf} W(k)). & }\]
Here both rows are exact, the map $a$ is bijective, and the map $b$ is injective (as explained above for both). A diagram chase then shows that the map $c$ is injective, as wanted. 

Furthermore, we claim that the map labelled $d$ then must be surjective. Indeed, the obstruction to surjectivity is the boundary map $H^i(C \dotimes_{A_\inf} W(k)) \to H^{i+1}(C \dotimes_{A_\inf} Q)$ extending the bottom row to a long exact sequence; but this map must be $0$ since the target is an $A_{\inf}[\frac{1}{p}]$-module, and we know that $d[\frac{1}{p}]$ is surjective, as $c[\tfrac 1p]$ is. The diagram now shows that the surjectivity of $c$ follows from the surjectivity of $b$. But the latter was shown above under the hypothesis that $H^{i+1}(C)$ has no $x$-torsion, so we are done.
\end{proof}

Combining Proposition~\ref{prop:splitAinfmodule} with Lemma~\ref{lem:CrysSpecializationInjects}, we essentially obtain:

\begin{corollary}\label{cor:crysspec}
Let $C \in D(A_\inf)$ be a perfect complex such that $H^j(C)[\tfrac{1}{p}]$ is free over $A_\inf[\tfrac 1p]$ for all $j\in \bb Z$. Then, for every $j$, $H^j(C)$ is a finitely presented $A_\inf$-module. Moreover, for fixed $i$, if $H^i(C\dotimes_{A_\inf} W(k))$ is $p$-torsion-free, then $H^i(C)$ is a finite free $A_\inf$-module, and in particular $H^i(C\dotimes_{A_\inf} W(K^\flat)) = H^i(C)\otimes_{A_\inf} W(K^\flat)$ is $p$-torsion-free. If moreover $H^{i+1}(C)\otimes_{A_\inf} W(k)$ is $p$-torsion-free (e.g., by Lemma~\ref{lem:CrysSpecializationInjects}, this happens if $H^{i+1}(C\dotimes_{A_\inf} W(k))$ is $p$-torsion-free), then
\[
H^i(C)\otimes_{A_\inf} W(k) = H^i(C\dotimes_{A_\inf} W(k))\ .
\]
\end{corollary}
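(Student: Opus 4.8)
The statement naturally splits into two parts: first, that every $H^j(C)$ is a finitely presented $A_\inf$-module; and second, that the freeness and base-change assertions follow formally from Proposition~\ref{prop:splitAinfmodule} and Lemma~\ref{lem:CrysSpecializationInjects}. The second part is essentially bookkeeping, so I would spend the effort on the first. The subtlety there is that $A_\inf$ is not known to be coherent, so finite presentation of $H^j(C)$ cannot be read off a bounded model of $C$ by finite free modules; instead one must transport coherence from the quotients $A_\inf/p^n = W_n(\roi^\flat)$, which \emph{are} coherent by Proposition~\ref{prop:WrCoherent} (applied to the perfectoid field $K^\flat$), up along the $p$-adic tower. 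Note that $C$, being a bounded complex of finite free modules over the $p$-adically complete ring $A_\inf$, is derived $p$-complete, hence so is each $H^j(C)$; this is what will let us do the lifting.

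\textbf{Finite presentation.} For each $n\geq 1$ the complex $C_n := C\dotimes_{A_\inf} A_\inf/p^n$ is perfect over the coherent ring $W_n(\roi^\flat)$, so each $H^j(C_n)$ is a coherent $W_n(\roi^\flat)$-module, and thus finitely presented over $A_\inf$ by Lemma~\ref{QuotCoherent}(i). I would then induct downward on $j$ (legitimate since $C$ is bounded). Assuming $H^{j+1}(C)$ finitely presented, Lemma~\ref{lem:AinfModulePerfect}(ii) (using that $H^{j+1}(C)[\tfrac1p]$ is free) shows each $H^{j+1}(C)[p^n]$ is finitely presented and killed by a power of $p$, hence coherent as a $W_n(\roi^\flat)$-module; the universal coefficient sequence $0\to H^j(C)/p^n\to H^j(C_n)\to H^{j+1}(C)[p^n]\to 0$ then exhibits $H^j(C)/p^n$ as the kernel of a map of coherent $W_n(\roi^\flat)$-modules, so it is coherent over $W_n(\roi^\flat)$, hence finitely presented over $A_\inf$, for every $n$. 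The remaining point is a general lemma: a derived $p$-complete $A_\inf$-module $M$ all of whose reductions $M/p^n$ are finitely presented over $A_\inf$ is itself finitely presented. For this, lifting generators of the finitely generated module $M/pM$ gives a map $A_\inf^r\to M$ whose cokernel is derived $p$-complete and $p$-divisible, hence zero; so $M$ is finitely generated, and the kernel $K$ of $A_\inf^r\twoheadrightarrow M$ is again derived $p$-complete with each $K/p^nK$ finitely generated — here one uses that $\Tor_1^{A_\inf}(M,A_\inf/p^n) = M[p^n] = H^j(C)[p^n]$ is finitely generated, being a quotient of the finitely generated module $H^{j-1}(C_n)$ via its universal coefficient sequence — so $K$ is finitely generated by the same cokernel argument, and $M$ is finitely presented. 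This closes the induction.

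\textbf{Freeness and specialization.} Given that each $H^j(C)$ is finitely presented with $H^j(C)[\tfrac1p]$ free, Proposition~\ref{prop:splitAinfmodule} applies. By Lemma~\ref{lem:CrysSpecializationInjects} the map $H^i(C)\otimes_{A_\inf} W(k)\to H^i(C\dotimes_{A_\inf} W(k))$ is injective, so if the target is $p$-torsion-free then so is $H^i(C)\otimes_{A_\inf} W(k)$, and the last clause of Proposition~\ref{prop:splitAinfmodule} forces $H^i(C)$ to be finite free over $A_\inf$. Since $W(K^\flat) = A_\inf[\tfrac1{[\varpi^\flat]}]$ is a localization of $A_\inf$, hence flat, we get $H^i(C\dotimes_{A_\inf} W(K^\flat)) = H^i(C)\otimes_{A_\inf} W(K^\flat)$, which is free, hence $p$-torsion-free. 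For the last assertion, if $H^{i+1}(C)\otimes_{A_\inf} W(k)$ is $p$-torsion-free — which holds whenever $H^{i+1}(C\dotimes_{A_\inf} W(k))$ is, again by the injectivity in Lemma~\ref{lem:CrysSpecializationInjects} — then Proposition~\ref{prop:splitAinfmodule} applied to $H^{i+1}(C)$ shows it is finite free, in particular has no $x$-torsion, and the final statement of Lemma~\ref{lem:CrysSpecializationInjects} upgrades the injection $H^i(C)\otimes_{A_\inf} W(k)\hookrightarrow H^i(C\dotimes_{A_\inf} W(k))$ to an isomorphism. The main obstacle throughout is the finite-presentation step: everything else is a mechanical combination of the two cited results, but pushing coherence past $p$ — keeping track of $p$-power torsion in $H^{j\pm1}(C)$ and invoking derived $p$-completeness — is where the real content lies.
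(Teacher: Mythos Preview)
Your freeness and specialization paragraph matches the paper's proof essentially verbatim (the paper is terser, but the logic is the same combination of Proposition~\ref{prop:splitAinfmodule} and Lemma~\ref{lem:CrysSpecializationInjects}). One small inaccuracy: $W(K^\flat)$ is \emph{not} equal to $A_\inf[1/[\varpi^\flat]]$ --- the latter is not $p$-adically complete (a Witt vector in $W(K^\flat)$ can have coordinates of arbitrarily negative valuation). Your conclusion is still correct, though: $A_\inf \to W(K^\flat)$ factors through the localization $A_{\inf,(p)}$, which is a DVR with uniformizer $p$ (as noted in the proof of Lemma~\ref{lem:AinfModuleVect}), and the induced map $A_{\inf,(p)} \to W(K^\flat)$ is an injection of DVRs, hence flat.

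Your finite-presentation argument is correct but takes a genuinely different and more laborious route than the paper. The paper's descending induction is much shorter: once $H^{j'}(C)$ is known to be finitely presented for all $j' > j$, Lemma~\ref{lem:AinfModulePerfect}(i) makes each such $H^{j'}(C)$ perfect; then $\tau^{\geq j+1} C$ is perfect (iterated cone of perfect complexes), so $\tau^{\leq j} C = \mathrm{fib}(C \to \tau^{\geq j+1} C)$ is perfect, and $H^j(C)$ is the top cohomology of this perfect complex --- hence a cokernel of a map of finite free modules, so finitely presented. No appeal to coherence of $W_n(\roi^\flat)$, no derived Nakayama, no tracking of $M[p^n]$ is needed. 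Your route does work (and the ``general lemma'' you state is correct in your situation precisely because you separately verify that $M[p^n]$ is finitely generated via the $H^{j-1}(C_n)$ sequence), but the paper's observation that perfectness descends along truncation once the removed cohomology is perfect is the cleaner organizing principle here.
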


We remark here that the equality  $H^i(C\dotimes_{A_\inf} W(K^\flat)) = H^i(C)\otimes_{A_\inf} W(K^\flat)$ invoked above follows from the flatness of $A_{\inf} \to W(K^\flat)$, see proof of Lemma~\ref{lem:AinfModuleVect}.

\begin{proof} First, we check that $H^j(C)$ is a finitely presented $A_\inf$-module for all $j$. We prove this by descending induction on $j$, noting that it is trivially true for all $j\gg 0$ as then $H^j(C)=0$. If it is true for all $j^\prime>j$, then $H^{j^\prime}(C)$ is perfect for all $j^\prime>j$ by Lemma~\ref{lem:AinfModulePerfect}. This implies that $\tau^{\leq j} C$ is still perfect, so that $H^j(C)$ is the top cohomology group of a perfect complex, which is always finitely presented.

Now if $H^{i+1}(C) \otimes_{A_{\inf}} W(k)$ is $p$-torsionfree, then $H^{i+1}(C)$ is finite free by the last statement in Proposition~\ref{prop:splitAinfmodule}, and thus has no $x$-torsion. The last statement in Lemma~\ref{lem:CrysSpecializationInjects} now yields the desired equality.
\end{proof}

The next lemma implies that torsion-freeness conditions on the de~Rham or crystalline specializations are equivalent. Here, as well as in Lemma \ref{lem:FreeoverAinf1p} and Corollary \ref{cor:crysspecbetter}, we assume that $K$ is of characteristic $0$ and contains all $p$-power roots of unity.

\begin{lemma}
\label{lem:TorsionFreeWO}
Let $C \in D(A_\inf)$ be a perfect complex such that $H^j(C)[\tfrac{1}{p}]$ is free over $A_\inf[\tfrac 1p]$ for all $j\in \bb Z$. Fix some index $i$. Then $H^i(C \dotimes_{A_\inf} W(k))$ is $p$-torsion-free if and only if $H^i(C \dotimes_{A_\inf} \roi)$ is $p$-torsion-free. 
\end{lemma}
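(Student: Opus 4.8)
The plan is to show that each of the two torsion-freeness conditions is equivalent to a single condition depending only on $H^i(C)$ and an "obstruction" coming from $H^{i+1}(C)$, and that this obstruction behaves the same way for the two specializations.

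First I would dispose of the case where $H^i(C)$ is \emph{not} a finite free $A_\inf$-module: in that case both $H^i(C\dotimes_{A_\inf}W(k))$ and $H^i(C\dotimes_{A_\inf}\roi)$ have $p$-torsion, so the asserted equivalence holds vacuously. For $W(k)$ this is Corollary~\ref{cor:crysspec} (contrapositive). For $\roi = A_\inf/\xi$ with $\xi$ a non-zero-divisor, one has $C\dotimes_{A_\inf}\roi = \mathrm{cone}(C\xrightarrow{\xi}C)$, hence a short exact sequence
\[ 0 \to H^i(C)\otimes_{A_\inf}\roi \to H^i(C\dotimes_{A_\inf}\roi) \to H^{i+1}(C)[\xi] \to 0, \]
so if the middle term is $p$-torsion-free then so is $H^i(C)\otimes_{A_\inf}\roi$, whence $H^i(C)$ is finite free by Proposition~\ref{prop:splitAinfmodule}. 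Thus I may assume $H^i(C)$ is finite free; then $H^i(C)\otimes_{A_\inf}R$ is a finite free, hence $p$-torsion-free, $R$-module for $R \in \{W(k),\roi\}$.

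Next, for each such $R$ I would record the universal-coefficient sequence $0 \to H^i(C)\otimes_{A_\inf}R \to H^i(C\dotimes_{A_\inf}R) \to \Tor_1^{A_\inf}(H^{i+1}(C),R) \to 0$: for $\roi$ it is the sequence above, and for $W(k)$ it follows from $W(k)$ having $\Tor$-dimension $\le 2$ over $A_\inf$ with $\Tor_2^{A_\inf}(-,W(k))$ vanishing on finitely presented modules with free generic fibre (Lemma~\ref{lem:AinfModulePerfect}), so the associated spectral sequence degenerates as in Lemma~\ref{lem:CrysSpecializationInjects}. Since $H^{i+1}(C)$ is finitely presented with free generic fibre (Corollary~\ref{cor:crysspec}), its torsion submodule is killed by $p^n$, so $\Tor_1^{A_\inf}(H^{i+1}(C),R)$ is a $p$-power-torsion finitely presented $R$-module in both cases. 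Over $W(k)$ (a DVR) and over $\roi$ (a valuation ring in which every nonzero proper cyclic module has $p$-torsion), a finitely presented $p$-torsion-free module is finite free; therefore $H^i(C\dotimes_{A_\inf}R)$ is $p$-torsion-free if and only if it is a finite free $R$-module. So the theorem reduces to: the extension class of the sequence above — the image of the class $\beta\in\Ext^2_{A_\inf}(H^{i+1}(C),H^i(C))$ classifying $\tau^{\ge i}\tau^{\le i+1}C$ — produces a free middle term over $W(k)$ exactly when it does over $\roi$.

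The hard part is this last comparison, since it genuinely depends on the extension rather than just on the (abstractly different-looking) sub- and quotient-modules. I would handle it via the common reduction $C\dotimes_{A_\inf}k \simeq (C\dotimes_{A_\inf}W(k))\dotimes^L_{W(k)}k \simeq (C\dotimes_{A_\inf}\roi)\dotimes^L_{\roi}k$, using that $k$ has $\Tor$-dimension $1$ over $W(k)$ and over $\roi$ (the latter because the maximal ideal $\frak m\subset\roi$ is flat). In both cases freeness of $H^i(C\dotimes_{A_\inf}R)$ is detected by the equality $\dim_k\big(H^i(C\dotimes_{A_\inf}R)\otimes_R k\big) = \mathrm{rank}_{A_\inf}H^i(C)$; computing $\dim_k H^\bullet(C\dotimes_{A_\inf}k)$ from both sides and comparing discrepancies — with the length inequalities and elementary-divisor bookkeeping of Lemma~\ref{lem:InequalitySpecialization} and Corollary~\ref{cor:ElementaryDivisorsSpecialization} (and the flat generic reduction over $W(K^\flat)$) — should force the discrepancy, which is governed by $H^{i+1}(C)$, to be the same for $R=W(k)$ and $R=\roi$. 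Making this bookkeeping clean, i.e. checking that the $p$-power-torsion module $\Tor_1^{A_\inf}(H^{i+1}(C),\cdot)$ contributes the same numerical invariant in the two specializations, is where the real work lies; I expect it can be streamlined by working directly with the finitely presented $W_n(\roi^\flat)$-module $H^{i+1}(C)_{\mathrm{tor}}$ and the ideal-theoretic identities of Corollary~\ref{corollary_roots_of_unity}.
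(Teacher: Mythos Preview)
Your first two reductions are correct and line up with the paper: both arguments begin by showing that either torsion-freeness hypothesis forces $H^i(C)$ to be finite free over $A_\inf$ (via Proposition~\ref{prop:splitAinfmodule}, combined with Lemma~\ref{lem:CrysSpecializationInjects} for $W(k)$ or the Koszul description of $-\dotimes_{A_\inf}\roi$ for $\roi$), and your universal-coefficient sequences over each $R$ are correctly set up. Your insight that the comparison should go through the common residue field $k$ is also exactly the right one.

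The gap is in your final paragraph. The lemmas you reach for---the length inequalities of Lemma~\ref{lem:InequalitySpecialization} and Corollary~\ref{cor:ElementaryDivisorsSpecialization}, and the ideal identities of Corollary~\ref{corollary_roots_of_unity}---are red herrings: the first two compare the specializations at $W(K^\flat)$ and $W(k)$, not $W(k)$ and $\roi$, and the third concerns the internal structure of $W_r(\roi)$, which plays no role here. Likewise, focusing on $H^{i+1}(C)_\sub{tor}$ alone cannot suffice, since as you yourself note the question depends on the extension class. In fact your dimension-counting approach \emph{can} be completed, but much more simply than you suggest: since $C$ is bounded and $k$ has $\Tor$-dimension $1$ over both $W(k)$ and $\roi$, the identity
\[
\dim_k H^j(C\dotimes_{A_\inf}k)=\dim_k\bigl(H^j(C\dotimes_{A_\inf}R)\otimes_R k\bigr)+\dim_k\Tor_1^R\bigl(H^{j+1}(C\dotimes_{A_\inf}R),k\bigr),
\]
together with $\dim_k\Tor_1^R(M,k)=\dim_k(M\otimes_R k)-\mathrm{rank}_R M$ for finitely presented $M$, lets you prove by descending induction on $j$ from the top of $C$ that $\dim_k\bigl(H^j(C\dotimes_{A_\inf}R)\otimes_R k\bigr)$ is independent of $R\in\{W(k),\roi\}$ for all $j$. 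Taking $j=i$ finishes.

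The paper short-circuits this recursion by a truncation trick: once $H^i(C)$ is free, set $D=\tau^{\geq i}C$, which is still perfect. One checks (using $\Tor^{A_\inf}_2(-,W(k))=0$ and freeness of $H^i(C)$) that $D\dotimes_{A_\inf}R\in D^{\geq i}(R)$ with $H^i(D\dotimes_{A_\inf}R)=H^i(C\dotimes_{A_\inf}R)$ for both $R$. Then
\[
H^i(C\dotimes_{A_\inf}R)\text{ free }\iff \Tor_1^R\bigl(H^i(D\dotimes_{A_\inf}R),k\bigr)=0\iff D\dotimes_{A_\inf}k\in D^{\geq i}(k),
\]
and the last condition is manifestly independent of which $R$ one passes through. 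This is the same idea as your reduction to $k$, but the truncation isolates degree $i$ and removes the need for any bookkeeping.
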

\begin{proof}
Note that the stated hypothesis imply that each $H^j(C)$, and hence each truncation of $C$, is perfect over $A_\inf$ by the previous corollary and Lemma~\ref{lem:AinfModulePerfect}. Assume first that $H^i(C \dotimes_{A_\inf} W(k))$ is $p$-torsion-free. Then $H^i(C) \otimes_{A_\inf} W(k)$ is $p$-torsion-free by Lemma \ref{lem:CrysSpecializationInjects}, and then $H^i(C)$ is finite free by Proposition \ref{prop:splitAinfmodule}. As $\Tor^{A_\inf}_i(H^j(C), W(k)) = 0$ for all $j$ and $i > 1$ by Lemma \ref{lem:AinfModulePerfect} (iii), this implies $(\tau^{\geq i} C) \dotimes_{A_\inf} W(k) \simeq \tau^{\geq i} (C \dotimes_{A_\inf} W(k))$. Now $\tau^{\geq i}(C \dotimes_{A_\inf} W(k)) \dotimes_{W(k)} k \in D^{\geq i}(k)$ by the assumption that $H^i(C \dotimes_{A_\inf} W(k))$ has no torsion, so $\tau^{\geq i} C \dotimes_{A_\inf} k \in D^{\geq i}(k)$ as well. Rewriting, we see $(\tau^{\geq i} C \dotimes_{A_\inf} \roi) \dotimes_{\roi} k \in D^{\geq i}(k)$. This implies the following: (a) the perfect complex $\tau^{\geq i} C \dotimes_{A_\inf} \roi \in D^{b}(\roi)$ must lie in $D^{\geq i}(\roi)$, and (b) $H^i$ of this last complex is free; here we use the following fact: a finitely presented $\roi$-module is free if and only if $\Tor^{\roi}_1(M,k) = 0$ (see the end of the proof of Proposition \ref{prop:splitAinfmodule}). The first of these properties implies that $\tau^{\geq i} C \dotimes_{A_\inf} \roi \simeq \tau^{\geq i}(C \dotimes_{A_\inf} \roi)$, and the second then implies that $H^i(C \dotimes_{A_\inf} \roi)$ is $p$-torsion-free, as wanted. The converse is established in exactly the same way. 
\end{proof}

We record a criterion for $M[\frac{1}{p}]$ being finite projective. 

\begin{lemma}\label{lem:FreeoverAinf1p}
Let $M$ be a finitely presented $A_\inf$-module. Let $\mu = [\epsilon] - 1$, with $\epsilon$ as in Example~\ref{examples_roots_of_unity}. Assume the following:
\begin{enumerate}
\item $M[\frac{1}{p\mu}]$ is finite projective over $A_\inf[\frac{1}{p \mu}]$.
\item $M\otimes_{A_\inf} B_{\crys}^+$ is finite projective over $B_{\crys}^+$. 
\end{enumerate}
Then $M[\frac{1}{p}]$ is finite free over $A_\inf[\frac{1}{p}]$.
\end{lemma}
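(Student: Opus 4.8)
\section*{Proof proposal}

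The plan is to first reduce the statement to a question over $R := A_\inf[\tfrac 1p]$. Since we want $M[\tfrac 1p] = M\otimes_{A_\inf}R$ to be finite \emph{free}, and since by Corollary~\ref{cor:VectAinf1p} every finite projective $R$-module is free, it suffices to prove that $N := M[\tfrac 1p]$ is a finite projective $R$-module. Note that $R$ is a domain (a localization of $A_\inf = W(\roi^\flat)$) with connected spectrum, and that $N$ is finitely presented over $R$. Let $r$ be the rank of the finite projective $R[\tfrac 1\mu]$-module $N[\tfrac 1\mu] = M[\tfrac 1{p\mu}]$ provided by hypothesis~(i) (well-defined since $\Spec R[\tfrac 1\mu]$ is connected). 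Recall that a finitely presented module over any ring is finite projective of rank $r$ if and only if its $(r{-}1)$-st Fitting ideal is $0$ and its $r$-th Fitting ideal is the unit ideal. First, $\mathrm{Fitt}_{r-1}(N)=0$: indeed $\mathrm{Fitt}_{r-1}(N)[\tfrac 1\mu] = \mathrm{Fitt}_{r-1}(N[\tfrac 1\mu]) = 0$ since $N[\tfrac 1\mu]$ is projective of rank $r$; as $\mathrm{Fitt}_{r-1}(N)$ is a finitely generated, $\mu$-power-torsion ideal of the domain $R$ and $\mu\neq 0$, it vanishes. So it remains to show that the finitely presented $R$-module $T := R/\mathrm{Fitt}_r(N)$ is zero.

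By the above, $T[\tfrac 1\mu]=0$, so $T$ is killed by a power of $\mu$ and hence $\operatorname{Supp}_R(T)\subseteq V(\mu)\cap\Spec R$. On the other hand, by hypothesis~(ii) and the base-change compatibility of Fitting ideals, $T\otimes_R B_\crys^+ = B_\crys^+/\mathrm{Fitt}_r(N\otimes_R B_\crys^+) = 0$; since $T$ is finitely generated, this says that $\operatorname{Supp}_R(T)$ is disjoint from the image of $\Spec B_\crys^+\to\Spec R$. Thus everything reduces to the following claim: \emph{$V(\mu)\cap\Spec R$ is contained in the image of $\Spec B_\crys^+\to\Spec R$.} Granting this, $\operatorname{Supp}_R(T)$ is both contained in and disjoint from that image, hence empty, so $T=0$ and $N$ is finite projective, completing the argument. (Equivalently, the claim amounts to $R\to R[\tfrac 1\mu]\times B_\crys^+$ being faithfully flat — flatness of $R\to B_\crys^+$ being inherited from the fact that $A_\crys$ is $p$-completely flat over $A_\inf$ — after which finite projectivity of $N$ descends directly from hypotheses (i) and (ii).)

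To prove the claim one identifies $V(\mu)\cap\Spec R$ with $\Spec\big((A_\inf/\mu)[\tfrac 1p]\big)$; by Lemma~\ref{lem:mapAinfWitt} the natural map $A_\inf/\mu\to W(\roi)$ has cokernel killed by $W(\mathfrak m^\flat)$, which becomes negligible after inverting $p$, so one is reduced to showing that $W(\roi)[\tfrac 1p]\to (A_\crys/\mu A_\crys)[\tfrac 1p]=B_\crys^+/\mu B_\crys^+$ is surjective on spectra — which follows from $A_\crys/\mu A_\crys$ being faithfully flat over $W(\roi)=A_\inf/\mu A_\inf$, as one checks from the explicit description of $A_\crys$ as a $p$-completed divided power algebra. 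This last point is the main obstacle: one must show that the coefficient ring $B_\crys^+$ detects the \emph{entire} locus $V(\mu)$ on which hypothesis~(i) gives no information. Concretely, $V(\mu)$ contains the cyclotomic closed points $(\phi^{-r}(\xi))$ for $r\geq 0$ — visibly in the image of $\Spec B_\crys^+$ via the Frobenius-conjugates $\theta\circ\phi^r$ of $\theta\colon B_\crys^+\to C$ — but also, for instance, the point $\ker(A_\inf\to W(k))[\tfrac 1p]$, which is hit via the map $B_\crys^+\to W(k)[\tfrac 1p]$ coming from Witt-vector functoriality (recall $A_\inf\to W(k)$ carries $\xi$ to $p$, and $(p)\subset W(k)$ carries its canonical divided power structure). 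Organizing all such points into the single faithful-flatness statement for $A_\crys/\mu$ over $W(\roi)$ is the technical heart of the proof, and is exactly where the choice of $B_\crys^+$ — rather than a coarser specialization such as $W(k)$ or $\roi$ — in hypothesis~(ii) is essential.
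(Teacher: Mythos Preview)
Your Fitting-ideal reduction is sound: once you know $\mathrm{Fitt}_{r-1}(N)=0$ and $\mathrm{Fitt}_r(N)=R$, projectivity of $N=M[\tfrac1p]$ follows, and Corollary~\ref{cor:VectAinf1p} upgrades this to freeness. The argument for $\mathrm{Fitt}_{r-1}(N)=0$ is correct, and you have correctly isolated the remaining obstruction as the claim that $V(\mu)\cap\Spec R$ lies in the image of $\Spec B_\crys^+\to\Spec R$. That claim is true, but your justification of it has genuine gaps. First, the identification ``$W(\roi)=A_\inf/\mu A_\inf$'' is false in general: Lemma~\ref{lem:mapAinfWitt} only gives an injection with cokernel killed by $W(\frak m^\flat)$, and this cokernel does \emph{not} vanish after inverting $p$ (the quotient $A_\inf/W(\frak m^\flat)=W(k)$ is nonzero after inverting $p$), so the passage to $W(\roi)[\tfrac1p]$ is unjustified unless $K$ is spherically complete. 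Second, your parenthetical route via faithful flatness relies on $A_\crys$ being $p$-completely flat over $A_\inf$, which is simply false: modulo $p$ the image of $\xi$ is a non-zero-divisor in $\roi^\flat=A_\inf/p$ but satisfies $\bar\xi^{\,p}=0$ in $A_\crys/p$ (since $\xi^p=p!\cdot\tfrac{\xi^p}{p!}\in pA_\crys$), so $\roi^\flat\to A_\crys/p$ cannot be flat. The asserted faithful flatness of $A_\crys/\mu$ over $A_\inf/\mu$ is therefore unsupported, and you have not supplied an argument for the surjectivity claim.

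The paper takes a different route that both avoids these issues and, incidentally, implies your surjectivity claim. Rather than analyzing the image of $\Spec B_\crys^+$, it applies the Beauville--Laszlo lemma to the $\mu$-adic completion $\widehat R$ of $R$: one must check that $N[\tfrac1\mu]$ is projective (hypothesis (i)), that $N\otimes_R\widehat R$ is projective, and that $N$ has no $\mu$-torsion. The second point is where $B_\crys^+$ enters: the paper shows by a direct calculation that the canonical map $R\to\widehat R$ \emph{factors through} $B_\crys^+$, so projectivity over $\widehat R$ follows from hypothesis (ii). The calculation amounts to producing, for each $n$, a map $A_\crys\to A_\inf[\tfrac1p]/\mu^n$, i.e.\ showing that the elements $\tfrac{\xi^m}{m!}$ generate a bounded subalgebra of $A_\inf[\tfrac1p]/\mu^n$; this is done by embedding $A_\inf/\mu^n$ into $A_\inf/\xi^n\oplus A_\inf/\phi^{-1}(\mu)^n$ up to bounded $p$-torsion and observing that $\tfrac{\xi^m}{m!}$ has trivial image in each factor for $m\geq n$ (in the second factor one uses $\xi\equiv\phi^{-1}(\mu)^{p-1}\bmod p$). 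The $\mu$-torsion-freeness of $N$ is then a short $\Tor$ computation using the sequence $0\to R\to\widehat R\to Q\to 0$ with $Q$ an $R[\tfrac1\mu]$-module. Note that once the factorization $R\to B_\crys^+\to\widehat R$ is in hand, your surjectivity claim follows immediately (since $\Spec\widehat R\to\Spec R$ certainly hits $V(\mu)$), so your Fitting-ideal framework can be completed---but the real content is exactly this factorization, which you have not provided.
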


\begin{proof}
Let $R = A_\inf[\frac{1}{p}]$, and let $N = M[\frac{1}{p}]$. Then $\mu \in R$ is a non-zero-divisor; let $\widehat{R}$ be the $\mu$-adic completion of $R$. We first show that the canonical map $R\to \widehat{R}$ factors through $B_\crys^+$. To check this, we need to produce a canonical map $A_\crys\to A_\inf[\tfrac 1p]/\mu^n$ for all $n$ (which will then factor through $B_\crys^+ = A_\crys[\tfrac 1p]$). Fix some such $n$. It suffices to show that the images of $\frac{\xi^m}{m!}\in A_\inf[\tfrac 1p]$ for varying $m$ belong to a bounded subalgebra of $A_\inf[\tfrac 1p]/\mu^n$. Note that the cokernel of the map $A_\inf/\mu^n\to A_\inf/\xi^n\oplus A_\inf/\phi^{-1}(\mu)^n$ is bounded $p$-torsion: this cokernel is finitely presented over $A_\inf$, and acyclic after inverting $p$ (since $p \equiv \xi \mod (\phi^{-1} \mu)$). Certainly, $\frac{\xi^m}{m!}$ maps to $0$ in $A_\inf[\tfrac 1p]/\xi^n$ for $m\geq n$, so it remains to handle the second factor. For this, note that $\xi\equiv \phi^{-1}(\mu)^{p-1}\mod p$ in $A_\inf$, so adjoining all $\frac{\xi^m}{m!}$ is equivalent to adjoining all $\frac{\phi^{-1}(\mu)^{(p-1)m}}{m!}$. Now these elements have trivial image in $A_\inf/\phi^{-1}(\mu)^n$ for $m\geq n$, finishing the proof that $R\to \widehat{R}$ factors through $B_\crys^+$.

By the Beauville--Laszlo lemma, \cite{BeauvilleLaszlo}, and Corollary \ref{cor:VectAinf1p}, it is enough to check that $N[\frac{1}{\mu}]$ is finite projective over $R[\frac{1}{\mu}]$, that $N \otimes_R \widehat{R}$ is finite projective over $\widehat{R}$, and that $N$ has no $\mu$-torsion. The first part is true by assumption (i). The second part follows from assumption (ii) as the map $R \to \widehat{R}$ factors through the canonical map $R \to B_\crys^+$, as shown in the previous paragraph. It remains to show that $N$ has no $\mu$-torsion. For this, observe that we have a short exact sequence
\[
0 \to R \to \widehat{R} \to Q \to 0
\]
with $Q$ being an $R[\frac{1}{\mu}]$-module. Tensoring this with $N$, and using that
\[
\Tor_1^R(N,Q) = \Tor_1^{R[\frac 1\mu]}(N[\tfrac 1\mu],Q) = 0
\]
by projectivity of $N[\tfrac 1\mu]$, we get an injection $N\hookrightarrow N\otimes_R \widehat{R}$, which is $\mu$-torsion-free.
\end{proof}

Let us state a corresponding version of Corollary~\ref{cor:crysspec}.

\begin{corollary}\label{cor:crysspecbetter}
Let $C \in D(A_\inf)$ be a perfect complex such that for all $j\in \bb Z$, $H^j(C)[\tfrac{1}{p\mu}]$ is free over $A_\inf[\tfrac 1{p\mu}]$, and $H^j(C\dotimes_{A_\inf} B_\crys^+)$ is free over $B_\crys^+$. Then, for every $j$, $H^j(C)$ is a finitely presented $A_\inf$-module with $H^j(C)[\tfrac 1p]$ free over $A_\inf[\tfrac 1p]$.

Moreover, for fixed $i$, if $H^i(C\dotimes_{A_\inf} W(k))$ is $p$-torsion-free, then $H^i(C)$ is a finite free $A_\inf$-module, and in particular $H^i(C\dotimes_{A_\inf} W(K^\flat)) = H^i(C)\otimes_{A_\inf} W(K^\flat)$ is $p$-torsion-free. If moreover $H^{i+1}(C)\otimes_{A_\inf} W(k)$ is $p$-torsion-free (e.g., if $H^{i+1}(C\dotimes_{A_\inf} W(k))$ is $p$-torsion-free), then
\[
H^i(C)\otimes_{A_\inf} W(k) = H^i(C\dotimes_{A_\inf} W(k))\ .
\]
\end{corollary}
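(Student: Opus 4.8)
The plan is to bootstrap from Corollary~\ref{cor:crysspec}. The only gap between the hypotheses here and there is that there one assumes $H^j(C)[\tfrac 1p]$ is finite free over $A_\inf[\tfrac 1p]$ for all $j$, whereas here one only knows the weaker statements involving $\tfrac 1{p\mu}$ and $B_\crys^+$. So the first step is to show that, under the present hypotheses, every $H^j(C)$ is a finitely presented $A_\inf$-module with $H^j(C)[\tfrac 1p]$ finite free over $A_\inf[\tfrac 1p]$; once that is in hand, $C$ satisfies the hypotheses of Corollary~\ref{cor:crysspec}, and the remaining two assertions of the corollary follow word for word from it.

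To produce this finite freeness I would argue by descending induction on $j$, proving simultaneously that $H^j(C)$ is finitely presented and that $H^j(C)[\tfrac 1p]$ is finite free; this is vacuous for $j\gg 0$. Granting the statement for all $j'>j$, Lemma~\ref{lem:AinfModulePerfect} makes each $H^{j'}(C)$ (for $j'>j$) perfect over $A_\inf$, hence $\tau^{\geq j+1}C$ is perfect and therefore so is $\tau^{\leq j}C$ (the fiber of $C\to\tau^{\geq j+1}C$); its top cohomology $H^j(C)$ is thus finitely presented. Then I would apply Lemma~\ref{lem:FreeoverAinf1p} to $M:=H^j(C)$: hypothesis (i) there holds by assumption, and for hypothesis (ii) it suffices to identify $M\otimes_{A_\inf}B_\crys^+$ with $H^j(C\dotimes_{A_\inf}B_\crys^+)$, which is finite free over $B_\crys^+$ by hypothesis. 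This identification is where the descending order of the induction is used: since $B_\crys^+$ is a $\bb Q_p$-algebra and $H^{j'}(C)[\tfrac 1p]$ is finite free for $j'>j$ by the inductive hypothesis, $\tau^{\geq j+1}C\dotimes_{A_\inf}B_\crys^+$ lies in $D^{\geq j+1}$; applying $-\dotimes_{A_\inf}B_\crys^+$ to the triangle $\tau^{\leq j}C\to C\to\tau^{\geq j+1}C$ and taking $H^j$ gives $H^j(C)\otimes_{A_\inf}B_\crys^+ = H^j(\tau^{\leq j}C\dotimes_{A_\inf}B_\crys^+) \cong H^j(C\dotimes_{A_\inf}B_\crys^+)$, using right-exactness of the tensor product on $D^{\leq j}$ for the first equality. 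Lemma~\ref{lem:FreeoverAinf1p} then yields that $H^j(C)[\tfrac 1p]$ is finite free, closing the induction.

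I expect the only real content to be this base-change isomorphism $H^j(C)\otimes_{A_\inf}B_\crys^+\cong H^j(C\dotimes_{A_\inf}B_\crys^+)$ in the inductive step; everything else is a formal assembly of Lemma~\ref{lem:AinfModulePerfect}, Lemma~\ref{lem:FreeoverAinf1p} and Corollary~\ref{cor:crysspec}. Note that the analogous identity over $W(k)$ or $\roi$ fails in general, since these are not $\bb Q_p$-algebras, which is precisely why the $p$-torsion-freeness hypotheses in the second and third assertions cannot be removed; over $B_\crys^+$ it is available exactly because the higher cohomology modules become free after inverting $p$.
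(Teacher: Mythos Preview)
Your proof is correct and follows essentially the same strategy as the paper: reduce to Corollary~\ref{cor:crysspec} by proving, via descending induction on $j$, that each $H^j(C)[\tfrac 1p]$ is finite free, invoking Lemma~\ref{lem:FreeoverAinf1p} at each step. The paper phrases the inductive step more tersely as ``one can assume that $j$ is maximal with $H^j(C)\neq 0$'', which amounts to replacing $C$ by $\tau^{\leq j}C$; your explicit verification of the base-change identity $H^j(C)\otimes_{A_\inf}B_\crys^+\cong H^j(C\dotimes_{A_\inf}B_\crys^+)$ via the triangle $\tau^{\leq j}C\to C\to\tau^{\geq j+1}C$ is exactly what is needed to justify that this truncation still satisfies hypothesis (ii) of Lemma~\ref{lem:FreeoverAinf1p}, and the paper leaves this implicit.
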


\begin{proof} We only need to prove that $H^j(C)[\tfrac 1p]$ is finite free over $A_\inf[\tfrac 1p]$; the rest is Corollary~\ref{cor:crysspec}. For this, one argues again by decreasing induction on $j$, so one can assume that $j$ is maximal with $H^j(C)\neq 0$. Then $H^j(C)$ satisfies the hypothesis of Lemma~\ref{lem:FreeoverAinf1p}, which gives the conclusion.
\end{proof}

\begin{remark}
\label{rmk:TorsionFreedRCrysEquivAbstract}
Using Lemma \ref{lem:TorsionFreeWO}, the hypothesis on $H^i(C \dotimes_{A_\inf} W(k))$ in Corollary \ref{cor:crysspec} and Corollary \ref{cor:crysspecbetter} can be replaced by the same hypothesis on $H^i(C \dotimes_{A_\inf} \roi)$.
\end{remark}

\subsection{Breuil--Kisin--Fargues modules}
\label{subsec:BKFmodules}

Let $K$ be a perfectoid field with ring of integers $\roi=\roi_K=K^\circ\subset K$. We get the ring $A_\inf=A_\inf(\roi) = W(\roi^\flat)$ equipped with a Frobenius automorphism $\phi$, where $\roi^\flat = \varprojlim_\phi \roi/p$ as usual. Fix a generator $\xi$ of $\ker(\theta: A_\inf\to\roi)$, and let $\tilde\xi = \phi(\xi)$.

\begin{definition} A \emph{Breuil--Kisin--Fargues module} is a finitely presented $A_\inf$-module $M$ with an isomorphism
\[
\varphi_M: M \otimes_{A_\inf,\phi} A_\inf [\tfrac{1}{\tilde\xi}]\isoto M[\tfrac{1}{\tilde\xi}]\ ,
\]
such that $M[\tfrac 1p]$ is a finite projective (equivalently, free) $A_\inf[\tfrac 1p]$-module.
\end{definition}

This should be regarded as a mixed-characteristic version of a Dieudonn\'e module. The next example illustrates why we impose the condition that $M[\frac{1}{p}]$ is finite free.

\begin{example}\label{ex:nonbkfmodule}
Let $K = C$ where $C$ is a completed algebraic closure of $\mathbb{Q}_p$. Let $\mu = [\epsilon] - 1$, with notation as in Example~\ref{examples_roots_of_unity}. Set $I = (\mu)$, and $M := A_\inf/I$; this is a finitely presented $A_\inf$-module. As $\mu \mid \phi(\mu)$, we have $\phi^*(I) \subset I$, which induces a map $\phi_M:\phi^* M \to M$. Moreover, as $\phi^*(I) \subset I$ becomes an equality after inverting $\tilde\xi$, so does $\phi_M$.
\end{example}

Again, there is a version of the Tate twist.

\begin{example}[Tate twist]\label{ex:bkftwist} There is a Breuil--Kisin--Fargues module $A_\inf\{1\}$ given by
\[
A_\inf\{1\} = \frac 1\mu (A_\inf\otimes_{\bb Z_p} \bb Z_p(1))
\]
if $K$ is of characteristic $0$ and contains all $p$-power roots of unity. Here, $\mu=[\epsilon]-1$ as usual. The Frobenius on $A_\inf\{1\}$ is induced by the usual Frobenius on $A_\inf$. More canonically, we have the following description. Recall the maps
\[
\tilde\theta_r: A_\inf\to W_r(\roi)\ ,
\]
with kernel generated by $\tilde\xi_r$. Then the cotangent complex
\[
W_r(\roi)\{1\} := \widehat{\bb L}_{W_r(\roi)/\bb Z_p}[-1] = \bb L_{W_r(\roi)/A_\inf}[-1] = \tilde\xi_r A_\inf / \tilde\xi_r^2 A_\inf
\]
is free of rank $1$ over $W_r(\roi)$. As in the Breuil--Kisin case, for $r>s$ the obvious map
\[
W_r(\roi)\{1\} = \tilde\xi_r A_\inf / \tilde\xi_r^2 A_\inf\to \tilde\xi_s A_\inf / \tilde\xi_s^2 A_\inf = W_s(\roi)\{1\}
\]
has image $p^{r-s} W_s(\roi)\{1\}$; thus, dividing it by $p^{r-s}$, we can take the inverse limit
\[
A_\inf\{1\} = \projlim_r W_r(\roi)\{1\}
\]
to get an $A_\inf$-module which is free of rank $1$. Again, it is equipped with a natural Frobenius. Moreover, if $K$ contains all $p$-power roots of unity and we fix a choice of roots of unity and the standard choice $\xi = \frac{\mu}{\phi^{-1}(\mu)}$ with $\mu=[\epsilon]-1$, then the system of elements $\tilde\xi_r\in W_r(\roi)\{1\}$ define a compatible system of elements (using that $\phi(\xi)\equiv p\mod \xi$), inducing a basis element $e\in A_\inf\{1\}$, on which $\phi$ acts by $e\mapsto \tfrac 1{\tilde\xi} e$. More canonically, there is a map
\[
d\log: W_r(\roi)^\times\to \Omega^1_{W_r(\roi)/\bb Z_p}\ ,
\]
which on $p$-adic Tate modules induces a map
\[
d\log: \bb Z_p(1)\to T_p(\Omega^1_{W_r(\roi)/\bb Z_p}) = W_r(\roi)\{1\}\ .
\]
These maps are compatible for varying $r$, inducing a map $\bb Z_p(1)\to A_\inf\{1\}$ which is equivariant for the trivial $\phi$-action on $\bb Z_p(1)$, and thus a map $A_\inf\otimes_{\bb Z_p} \bb Z_p(1)\to A_\inf\{1\}$, which can be checked to have image $\mu A_\inf\{1\}$. More concretely, this amounts to checking that the elements
\[
\left(\frac 1{[\zeta_{p^r}]-1} \frac{d([\zeta_{p^s}])}{[\zeta_{p^s}]}\right)_{s\geq 1}\in T_p(\Omega^1_{W_r(\roi)/\bb Z_p})  = W_r(\roi)\{1\}
\]
are generators.

If $M$ is any $A_\inf$-module, we set $M\{n\} = M\otimes_{A_\inf} A_\inf\{1\}^{\otimes n}$ for $n\in \bb Z$.
\end{example}

\begin{remark} Assuming again that $K$ contains the $p$-power roots of unity, there is a nonzero map $A_\inf\cong A_\inf\otimes_{\bb Z_p} \bb Z_p(1)\to A_\inf\{1\}$, whose cokernel is the module from Example~\ref{ex:nonbkfmodule} above. Thus, the category of Breuil--Kisin--Fargues modules is not stable under cokernels. It is still an exact tensor category, where the Tate twist is invertible.
\end{remark}

Let us discuss the \'etale specialization of a Breuil--Kisin--Fargues module. For this, we assume that $K=C$ is algebraically closed of characteristic $0$, and fix $p$-power roots of unity giving rise to $\epsilon\in C^\flat$ and $\mu=[\epsilon]-1$ as usual.

\begin{lemma}\label{lem:bkfmoduleetale} Let $(M,\phi_M)$ be a Breuil--Kisin--Fargues module, where the base field $K=C$ is algebraically closed of characteristic $0$. Then
\[
T=(M\otimes_{A_\inf} W(C^\flat))^{\phi_M = 1}
\]
(where $\phi_M$ means $\phi_M\otimes\phi$) is a finitely generated $\bb Z_p$-module which comes with an identification
\[
M\otimes_{A_\inf} W(C^\flat) = T\otimes_{\bb Z_p} W(C^\flat)\ .
\]
Moreover, one has
\[
M\otimes_{A_\inf} A_\inf[\tfrac 1\mu] = T\otimes_{\bb Z_p} A_\inf[\tfrac 1\mu]
\]
as submodules of the common base extension to $W(C^\flat)$.
\end{lemma}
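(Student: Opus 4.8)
The plan is to reduce the statement to Kedlaya's classification of vector bundles on the Fargues--Fontaine curve, or more elementarily, to a direct analysis using the structure theory of Breuil--Kisin--Fargues modules developed in Proposition~\ref{prop:splitAinfmodule} together with the theory of $\phi$-modules over $W(C^\flat)$. First I would reduce to the case that $M$ is finite free over $A_\inf$: by Proposition~\ref{prop:splitAinfmodule} we have the functorial four-term exact sequence $0\to M_{\sub{tor}}\to M\to M_{\sub{free}}\to \overline{M}\to 0$, and since both $M_{\sub{tor}}$ and $\overline{M}$ are killed by a power of $p$ while $\phi_M$ becomes an isomorphism after inverting $\tilde\xi$, one checks that $M_{\sub{tor}}[\frac1{\tilde\xi}]=\overline M[\frac1{\tilde\xi}]=0$ forces these torsion pieces to carry a nilpotent-compatible Frobenius; in any case, after base change to $W(C^\flat)$ (which inverts nothing but where $\xi,\tilde\xi$ become non-zero-divisors generating distinct maximal-adic ideals), the module $M\otimes_{A_\inf}W(C^\flat)$ only sees $M_{\sub{free}}\otimes_{A_\inf}W(C^\flat)$ up to bounded $p$-torsion, and similarly $A_\inf[\frac1\mu]$-base change kills the supports of $M_{\sub{tor}}$ and $\overline M$ away from... — more carefully, I would first treat $M$ finite free, and then run a separate short argument for the torsion parts, noting $(W(C^\flat))^{\phi=1}=\bb Z_p$ and that a finitely generated $p$-power-torsion module with $\phi_M$ an isomorphism mod $p^n$-inverting-$\tilde\xi$ contributes a finite $\bb Z_p$-module of $\phi$-invariants compatibly.

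For $M$ finite free: set $N := M\otimes_{A_\inf} W(C^\flat)$, a finite free $W(C^\flat)$-module with a $\phi$-linear automorphism $\phi_N$ (the isomorphism $\phi_M$ becomes honest over $W(C^\flat)$ since $\tilde\xi$ is a unit in neither but the relevant localization; actually $\tilde\xi\in A_\inf\to W(C^\flat)$ maps to a non-zero-divisor, and $\phi_M\colon M\otimes_{\phi}A_\inf[\frac1{\tilde\xi}]\cong M[\frac1{\tilde\xi}]$ base-changes to an iso after inverting the image of $\tilde\xi$; one must check it is already an iso integrally over $W(C^\flat)$, which follows because $C^\flat$ is algebraically closed, so $\xi,\tilde\xi$ map to non-units only at one height-one prime each and the relevant Frobenius-descent argument applies — this is precisely where algebraic closedness of $C$ enters). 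Then I invoke the fact that over $W(C^\flat)$, which is the ring of Witt vectors of an algebraically closed (hence perfect) field of characteristic $p$, every finite free $\phi$-module $(N,\phi_N)$ with $\phi_N$ an isomorphism is trivial: $T := N^{\phi_N=1}$ is a finite free $\bb Z_p$-module of rank $\mathrm{rk}_{W(C^\flat)}N$ and the natural map $T\otimes_{\bb Z_p}W(C^\flat)\to N$ is an isomorphism. This is the classical "$\phi$-modules over $\bb C_p^\flat$ are étale" statement (essentially Dieudonn\'e--Manin / the fact that $H^1_{\phi}$ vanishes because $x\mapsto \phi(x)-ux$ is surjective on $W(C^\flat)$ for $u$ a unit, and more generally one trivializes the matrix of $\phi_N$ by successive approximation, using that $C^\flat$ is algebraically closed to solve Artin--Schreier-type equations at each stage). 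That gives the first identification $M\otimes_{A_\inf}W(C^\flat)=T\otimes_{\bb Z_p}W(C^\flat)$ and that $T$ is finite free (finitely generated in general).

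Finally, for the stronger claim $M\otimes_{A_\inf}A_\inf[\frac1\mu]=T\otimes_{\bb Z_p}A_\inf[\frac1\mu]$ inside the common base change to $W(C^\flat)$: here I would use that $\mu$ and $\phi^r(\mu)$ differ by the units... no — rather, observe $\phi^r(\mu)/\mu=\tilde\xi_r$ (Proposition~\ref{proposition_roots_of_unity}(iii)) so that over $A_\inf[\frac1\mu]$ all the $\phi^r(\xi)$ become units, hence $\phi_M$ induces a $\phi$-linear automorphism of $M[\frac1\mu]:=M\otimes_{A_\inf}A_\inf[\frac1\mu]$, and the Frobenius-equivariant inclusion $T\otimes_{\bb Z_p}A_\inf[\frac1\mu]\hookrightarrow M[\frac1\mu]$ (coming from $T\subset M\otimes W(C^\flat)$ and the fact that $T$ is $\phi$-fixed, so lands in the $\phi$-bounded part) is checked to be an isomorphism by descending the isomorphism over $W(C^\flat)$: both sides are finite projective $A_\inf[\frac1\mu]$-modules with compatible $\phi$-structures agreeing after $\otimes_{A_\inf[\frac1\mu]}W(C^\flat)$, and one shows the cokernel $Q$ of the inclusion is a finitely presented $\phi$-module over $A_\inf[\frac1\mu]$ with $Q\otimes W(C^\flat)=0$; since $A_\inf[\frac1\mu]\to W(C^\flat)$ is... not faithfully flat, so instead I'd argue $Q$ is killed by a power of $p$ (being $0$ after $\otimes\bb Q_p$, as $A_\inf[\frac1{p\mu}]\to W(C^\flat)[\frac1p]$ detects rank) and then use that a nonzero $p$-power-torsion $\phi$-module over $A_\inf[\frac1\mu]$ cannot become $0$ over $W(C^\flat)$ because $A_\inf[\frac1\mu]/p=(\roi^\flat[\frac1\epsilon{-}1])$ ... — the cleanest route is to invoke directly that the functor $M\mapsto (T,\phi\text{-lattice data})$ of Fargues (Theorem~\ref{IntroThmFargues}, i.e.\ Theorem~\ref{ThmFargues}) identifies the generic fibre with $T$, so $M[\frac1\mu]$ is recovered from $T$. \textbf{The main obstacle} I anticipate is making the integrality of $\phi_M$ over $W(C^\flat)$ and the trivialization $(N,\phi_N)\cong T\otimes W(C^\flat)$ precise: one has to know $\phi_N$ is an honest automorphism of the finite free $W(C^\flat)$-module $N$ (not just an isomorphism after inverting $\tilde\xi$), which uses that $\tilde\xi=\phi(\xi)$ becomes a unit times $p$-adically-irrelevant data under $\theta_\infty$-type maps — the correct statement is that $\ker(\theta_\infty)=(\mu)$ (Lemma~\ref{lem:mapAinfWitt}), so after inverting $\mu$, i.e.\ over the relevant localization, $\tilde\xi$ is invertible, and over $W(C^\flat)=A_\inf/\mu$ (up to the spherical-completeness subtlety, handled by passing to a spherically complete extension and descending) the element $\tilde\xi$ maps to a unit. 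Reconciling these requires care with which base change is taken, but is ultimately routine once organized correctly.
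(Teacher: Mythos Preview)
Your treatment of the first part (the identification over $W(C^\flat)$) is essentially correct, though your worries are misplaced: the map $A_\inf=W(\roi^\flat)\to W(C^\flat)$ is induced by the inclusion $\roi^\flat\hookrightarrow C^\flat$, and $\tilde\xi$ reduces modulo $p$ to a nonzero element of $C^\flat$, hence is a unit in the complete DVR $W(C^\flat)$. So $\phi_M$ really is an automorphism there, and the classical Dieudonn\'e--Manin-type result applies directly. Relatedly, your identification ``$W(C^\flat)=A_\inf/\mu$ (up to spherical-completeness)'' is wrong: by Lemma~\ref{lem:mapAinfWitt}, $A_\inf/\mu$ is (almost) $W(\roi)$, an entirely different object.

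The real gap is in the $A_\inf[\tfrac1\mu]$ statement. You assert an inclusion $T\otimes_{\bb Z_p}A_\inf[\tfrac1\mu]\hookrightarrow M[\tfrac1\mu]$ ``because $T$ is $\phi$-fixed, so lands in the $\phi$-bounded part'', but this is precisely what must be proved; there is no a priori reason a $\phi$-fixed vector in $M\otimes W(C^\flat)$ should lie in the sub\-module $M[\tfrac1\mu]$. Your cokernel argument then presupposes this inclusion and in any case (as you note) stalls since $A_\inf[\tfrac1\mu]\to W(C^\flat)$ is not faithfully flat. Finally, invoking Theorem~\ref{ThmFargues} is circular: the functor in that theorem is defined via this lemma, and the proof of full faithfulness in the Remark following it uses Lemma~\ref{lem:bkfmoduleetale} explicitly.

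The paper's argument is quite different and direct. After reducing to finite free $M$ (here the key observation for the torsion pieces is that a $p^n$-torsion module satisfies $M\otimes_{A_\inf}W(C^\flat)=M\otimes_{A_\inf}A_\inf[\tfrac1\mu]$), one twists $M$ by $\{r\}$ for large $r$ so that $\phi_M^{-1}$ carries $M$ into $M$. One then shows the stronger statement $T\subset M$ by a contraction argument: if $t\in T$ and $[x]t\in M/p^r$ with $x\in\roi^\flat$ of minimal valuation, then applying $\phi_M^{-1}$ to $[x]^pt$ shows $[x]t\in[x]^{(p-1)/p}M/p^r$, contradicting minimality unless $x$ is a unit. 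The reverse inclusion then follows by applying the same to the dual module and dualizing back. This valuation-theoretic argument is the missing idea.
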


Geometrically, $T$ corresponds to \'etale cohomology.

\begin{proof} As $C^\flat$ is an algebraically closed field of characteristic $p$, finitely generated $W(C^\flat)$-modules with a Frobenius automorphism are equivalent to finitely generated $\bb Z_p$-modules; this proves that $T$ is finitely generated and comes with an identification
\[
M\otimes_{A_\inf} W(C^\flat) = T\otimes_{\bb Z_p} W(C^\flat)\ .
\]
To prove the statement
\[
M\otimes_{A_\inf} A_\inf[\tfrac 1\mu] = T\otimes_{\bb Z_p} A_\inf[\tfrac 1\mu]\ ,
\]
one can formally reduce to the case where $M$ is finite free, using Proposition~\ref{prop:splitAinfmodule} and the observation that if $M$ is $p$-torsion, then $M\otimes_{A_\inf} W(C^\flat) = M\otimes_{A_\inf} A_\inf[\tfrac 1\mu]$. Thus, we assume from now on that $M$ is finite free.

First, we claim that $T\subset M\otimes_{A_\inf} A_\inf[\tfrac 1\mu]$. To prove this, we may replace $M$ by $M\{n\}$ for sufficiently large $n$ so that $\phi_M^{-1}$ maps $M$ into $M$. In that case, we claim the stronger statement $T\subset M$. Let $r\ge1$, take any element $t\in (M\otimes_{A_\inf} W_r(C^\flat))^{\phi_M = 1}$, and look at an element $x\in \roi^\flat$ of minimal valuation for which $[x] t\in M/p^r$. Assume that $x$ is not a unit. We have $\phi_M(t)=t$, or equivalently $t=\phi_M^{-1}(t)$, so $[x] t = \phi_M^{-1}([x]^p t)$. But then $[x]^p t\in [x]^{p-1} M/p^r$, and thus $\phi_M^{-1}([x]^p t)\in [x]^{(p-1)/p} M/p^r$, as $\phi_M^{-1}$ preserves $M$ by assumption. Thus, $[x]t\in [x]^{(p-1)/p} M/p^r$, which contradicts the choice of $x$. Thus, $x$ is a unit, so that $t\in M/p^r$. Passing to the limit over $r$ shows that $T\subset M$, as desired.

Applying the result $T\otimes_{A_\inf} A_\inf[\tfrac 1\mu]\subset M\otimes_{A_\inf} A_\inf[\tfrac 1\mu]$ also for the dual module $M^\ast$ and dualizing again shows the reverse inclusion, finishing the proof that $M\otimes_{A_\inf} A_\inf[\tfrac 1\mu] = T\otimes_{\bb Z_p} A_\inf[\tfrac 1\mu]$.
\end{proof}

Let us also mention the following result concerning the crystalline specialization, which works whenever $K$ is of characteristic $0$.

\begin{lemma}\label{lem:bkfmodulecrys} Let $M$ be a Breuil--Kisin--Fargues module. Then $M' = M\otimes_{A_\inf} W(k)$ is a finitely generated $W(k)$-module equipped with a Frobenius automorphism after inverting $p$. Fix a section $k\to \roi_K/p$, which induces a section $W(k)\to A_\inf$. Then there is a (noncanonical) $\phi$-equivariant isomorphism
\[
M\otimes_{A_\inf} B_\crys^+\cong M'\otimes_{W(k)} B_\crys^+
\]
reducing to the identity over $W(k)[\tfrac 1p]$.
\end{lemma}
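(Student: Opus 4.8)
Since $B_\crys^+ = A_\crys[\tfrac1p]$ is a $\bb Q_p$-algebra and $M[\tfrac1p]$ is finite free over $A_\inf[\tfrac1p]$ by definition of a Breuil--Kisin--Fargues module, the plan is first to replace $M$ by $M[\tfrac1p]$, so that every module in sight is finite free over the relevant $\bb Q_p$-algebra. As $\tilde\xi\mapsto p$ under $A_\inf\to W(k)$, base-changing $\phi_M$ makes $\bar M$ a finitely generated $W(k)$-module whose Frobenius becomes an isomorphism after inverting $p$; and since $\tilde\xi=\phi(\xi)$ becomes a unit in $B_\crys^+$ (the standard fact $\phi(\xi)/p\in A_\crys^\times$), both $M_B:=M\otimes_{A_\inf}B_\crys^+$ and $\bar M_B:=\bar M\otimes_{W(k),j}B_\crys^+$ (via the section $j\colon W(k)\to A_\inf\to B_\crys^+$) are honest $\phi$-modules over $B_\crys^+$. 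Writing $q\colon B_\crys^+\to W(k)[\tfrac1p]$ for the projection (note $W(k)[\tfrac1p]$ is a field, being $\Frac W(k)$), the composite $A_\inf\to A_\crys\xrightarrow{q}W(k)$ is the canonical map, so there is a tautological $\phi$-equivariant identification $q^*M_B=\bar M[\tfrac1p]=q^*\bar M_B$; the goal is an isomorphism $M_B\cong\bar M_B$ of $\phi$-modules over $B_\crys^+$ inducing this identification after $q^*$.

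Conceptually this is the statement that $\phi$-modules over $B_\crys^+$ are ``constant'', i.e.\ that $-\otimes_{W(k)[\frac1p],j}B_\crys^+$ and $q^*$ are mutually inverse equivalences between $\phi$-modules over $W(k)[\tfrac1p]$ and over $B_\crys^+$ --- a form of the Berthelot--Ogus comparison \cite{BerthelotOgus2} between crystalline cohomology over $A_\crys$ and over $W(k)$ after inverting $p$, which one could simply invoke. To prove it directly, fix an $A_\inf$-basis of $M$, let $X\in\mathrm{GL}_d(B_\crys^+)$ be the matrix of $\phi_{M_B}$ and $\bar X:=q(X)\in\mathrm{GL}_d(W(k)[\tfrac1p])$ that of $\phi_{\bar M}$, viewed again in $\mathrm{GL}_d(B_\crys^+)$ via $j$. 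An isomorphism $M_B\to\bar M_B$ reducing to the identity over $W(k)[\tfrac1p]$ is the same as a matrix $Y\in\mathrm{GL}_d(B_\crys^+)$ solving the gauge equation $YX=\bar X\,\phi(Y)$ with $q(Y)=1$; I would produce $Y$ as $\lim_n Y_n$ for the iteration $Y_0=1$, $Y_{n+1}=\bar X\,\phi(Y_n)X^{-1}$, noting that $q(Y_n)=1$ for all $n$ and $Y_{n+1}-Y_n=\bar X\,\phi(Y_n-Y_{n-1})X^{-1}$, so that the increments are obtained from $Y_1-Y_0=\bar XX^{-1}-1$ by iterated $\phi$ conjugated by the matrices $\bar X$, $X^{-1}$ and their Frobenius twists (whose entries have bounded $p$-denominators).

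The main obstacle is the convergence of $(Y_n)$ in $\mathrm{GL}_d(B_\crys^+)$: one needs $\phi^n$, on the relevant ideal, to beat the linear-in-$n$ growth of those $p$-denominators, and the crude estimate fails because $\phi$ is \emph{not} topologically nilpotent with a uniform rate on $\ker(A_\crys\to W(k))$ (e.g.\ on $\ker(\theta\colon A_\crys\to\roi)$). The key inputs should be: (a) $Y_1-Y_0$ lies in $\mathrm{GL}_d$ of the ideal $W(\frak m^\flat)\cdot B_\crys^+$ --- because $\bar X-X$ measures the discrepancy between $\phi_M$ over $A_\inf$ and its $W(k)$-reduction, hence has entries in $W(\frak m^\flat)[\tfrac1{\tilde\xi}]$ --- and on $W(\frak m^\flat)$ the Frobenius contracts super-exponentially via $[\alpha]\mapsto[\alpha^p]$; and (b) the refined divisibility $\phi(\ker\theta)\subseteq pA_\crys$ together with $\ker\theta\cap p^iA_\crys=p^i\ker\theta$ (valid since $\roi$ is $p$-torsion-free), which controls the ``divided-power'' contributions. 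Granting convergence, $Y=\lim Y_n\in\mathrm{GL}_d(B_\crys^+)$, invertibility being checked by solving the opposite gauge equation by the same iteration to obtain $Y'$ and observing that $Y'Y$ and $YY'$ both satisfy a gauge equation with trivial $q$-image, so that the same contraction forces $(Y'Y)-1=(YY')-1=0$. This produces the required (noncanonical) $\phi$-equivariant isomorphism $M_B\cong\bar M_B$ reducing to the identity over $W(k)[\tfrac1p]$.
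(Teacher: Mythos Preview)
The paper's proof of this lemma is a one-line citation to Fargues--Fontaine \cite[Corollaire 11.1.14]{FarguesFontaine}; no direct argument is given. Your attempt at a direct proof via a Frobenius gauge equation is a natural strategy, and the reduction to solving $YX=\bar X\,\phi(Y)$ with $q(Y)=1$ by the iteration $Y_{n+1}=\bar X\,\phi(Y_n)X^{-1}$ is set up correctly. (One small slip: since $M$ need not be free over $A_\inf$, you should fix an $A_\inf[\tfrac1p]$-basis of $M[\tfrac1p]$, not an $A_\inf$-basis of $M$; this is harmless given your first reduction.)

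The genuine gap is exactly where you flag it: convergence. The increments satisfy
\[
Y_{n+1}-Y_n \;=\; \bar X\,\phi(\bar X)\cdots\phi^{n-1}(\bar X)\cdot \phi^n(Y_1-Y_0)\cdot \phi^{n-1}(X^{-1})\cdots X^{-1},
\]
so one must show that $\phi^n(Y_1-Y_0)$ tends to $0$ in $B_\crys^+$ fast enough to beat the $p^{-O(n)}$ denominators introduced by the conjugating matrices and their Frobenius twists. Your proposed inputs do not close this. The assertion that ``$\phi$ contracts super-exponentially on $W(\frak m^\flat)$'' is only true for individual Teichm\"uller lifts $[x]$: for a general element $\sum_i V^i[a_i]\in W(\frak m^\flat)$ the coordinates $a_i$ may have valuations tending to $0$, and there is no uniform rate. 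Moreover, after multiplying by $X^{-1}\in \mathrm{GL}_d(B_\crys^+)$ the entries of $Y_1-Y_0$ lie only in $\ker(B_\crys^+\to W(k)[\tfrac1p])$, and it is not clear that this ideal equals (the closure of) $W(\frak m^\flat)\cdot B_\crys^+$: the divided powers $\xi^n/n!-p^n/n!$ lie in the kernel but are not visibly in $W(\frak m^\flat)\cdot A_\crys$. The second input $\phi(\ker\theta)\subset pA_\crys$ gives one power of $p$ per application of $\phi$ on $\ker\theta$, which exactly cancels (and does not beat) linear growth of $p$-denominators.

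In short, the contraction-mapping approach on $B_\crys^+$ runs into the well-known difficulty that $\phi$ is not topologically nilpotent on the relevant ideal with any useful uniform rate. The Fargues--Fontaine argument the paper invokes proceeds differently, via the classification of $\phi$-modules (equivalently, vector bundles on the Fargues--Fontaine curve), and does not rely on such a direct iteration. If you want a self-contained argument in the spirit of this paper, the geometric incarnation in Proposition~\ref{prop:BerthelotOgusAcrys} (a Berthelot--Ogus style isomorphism) gives a model: there the analogous constancy is obtained by iterating Frobenius on the \emph{scheme} side to reduce to the special fibre, rather than by a convergence estimate in $A_\crys$.
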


\begin{proof} This follows from a result of Fargues--Fontaine, \cite[Corollaire 11.1.14]{FarguesFontaine}.
\end{proof}

We will see that in geometric situations, the $\phi$-equivariant isomorphism
\[
M\otimes_{A_\inf} B_\crys^+\cong M'\otimes_{W(k)} B_\crys^+
\]
is canonical, cf.~Proposition~\ref{prop:BerthelotOgusAcrys}. One can check using Lemma~\ref{lem:FreeoverAinf1p} that for Breuil--Kisin--Fargues modules equipped with the choice of such an isomorphism, and morphisms respecting those, the kernel and cokernel are again Breuil--Kisin--Fargues modules, so that this variant category is an abelian tensor category in which the objects coming from geometry live. However, the constructions for proper smooth (formal) schemes of this paper have analogues for $p$-divisible groups where the resulting identification is not canonical. In that case, the phenomenon that the category of Breuil--Kisin--Fargues modules is not abelian is related to the existence of the morphism of $p$-divisible groups
\[
\bb Q_p/\bb Z_p\to \mu_{p^\infty}
\]
over $\roi_K$, if $K$ contains all $p$-power roots of unity, which does not have any reasonable kernel or cokernel as it is $0$ in the special fibre, but an isomorphism in the generic fibre.

The main theorem about Breuil--Kisin--Fargues modules is Fargues' classification; we refer to~\cite{ScholzeLectureNotes} for a proof.

\begin{theorem}[Fargues]\label{ThmFargues} Assume that $K=C$ is algebraically closed of characteristic $0$. The category of finite free Breuil--Kisin--Fargues modules is equivalent to the category of pairs $(T,\Xi)$, where $T$ is a finite free $\bb Z_p$-module, and $\Xi$ is a $B_\dR^+$-lattice in $T\otimes_{\bb Z_p} B_\dR$. Here, the functor is given by sending a finite free Breuil--Kisin--Fargues module $(M,\phi_M)$ to the pair $(T,\Xi)$, where
\[
T=(M\otimes_{A_\inf} W(C^\flat))^{\phi_M=1}
\]
and
\[
\Xi = M\otimes_{A_\inf} B_\dR^+\subset M\otimes_{A_\inf} B_\dR\cong T\otimes_{\bb Z_p} B_\dR\ .
\]
\end{theorem}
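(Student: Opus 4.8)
The plan is to proceed in three steps: first check that the functor is well defined, then prove it is fully faithful by reducing to the computation of a single $\Hom$-group via internal Homs, and finally prove essential surjectivity by performing a lattice modification of the trivial object and extending it with Kedlaya's Lemma~\ref{lem:VectAinf}. The last step is where the real difficulty lies.

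\emph{Well-definedness.} First I would verify that the pair $(T,\Xi)$ attached to a finite free Breuil--Kisin--Fargues module $(M,\varphi_M)$ makes sense. By Lemma~\ref{lem:bkfmoduleetale}, $T=(M\otimes_{A_\inf}W(C^\flat))^{\varphi_M=1}$ is a finite free $\bb Z_p$-module with $M\otimes_{A_\inf}W(C^\flat)=T\otimes_{\bb Z_p}W(C^\flat)$ and, crucially, $M\otimes_{A_\inf}A_\inf[\tfrac1\mu]=T\otimes_{\bb Z_p}A_\inf[\tfrac1\mu]$. Since $\mu=\xi\cdot\varphi^{-1}(\mu)$ and $\varphi^{-1}(\mu)=[\epsilon^{1/p}]-1$ has $\theta$-image $\zeta_p-1\neq 0$, it is a unit in $B_\dR^+$; hence $\mu$ is invertible in $B_\dR=B_\dR^+[\tfrac1\xi]$, so $B_\dR$ is naturally an $A_\inf[\tfrac1\mu]$-algebra and base change yields the canonical identification $M\otimes_{A_\inf}B_\dR\cong T\otimes_{\bb Z_p}B_\dR$, inside which $\Xi:=M\otimes_{A_\inf}B_\dR^+$ is visibly a $B_\dR^+$-lattice. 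Functoriality is immediate.

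\emph{Full faithfulness.} Both sides are exact tensor categories with internal Homs, and the functor respects them: on the source $\underline{\Hom}(M,M')=M^\vee\otimes_{A_\inf}M'$ is again a finite free Breuil--Kisin--Fargues module, while on the target $\underline{\Hom}((T,\Xi),(T',\Xi'))$ is the pair $(\Hom_{\bb Z_p}(T,T'),\ \{h:h(\Xi)\subseteq\Xi'\})$. So I would reduce to showing that for a finite free Breuil--Kisin--Fargues module $N$ with associated pair $(T_N,\Xi_N)$,
\[
\Hom_{A_\inf,\varphi}(A_\inf,N)=T_N\cap\Xi_N\qquad\text{inside }T_N\otimes_{\bb Z_p}B_\dR.
\]
The inclusion ``$\subseteq$'' is immediate from the definitions. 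For ``$\supseteq$'', an element $t\in T_N\cap\Xi_N$ lies by Lemma~\ref{lem:bkfmoduleetale} in $N[\tfrac1\mu]$ with $\varphi_N(t)=t$, and also in $N\otimes_{A_\inf}B_\dR^+$; using the $\varphi$-equivariance to repeatedly remove powers of $\varphi^{-1}(\mu)=\mu/\xi$ from the denominator, together with the identity $A_\inf=A_\inf[\tfrac1\xi]\cap A_\inf[\tfrac1{\tilde\xi}]$ (a consequence of the easy half of Lemma~\ref{lem:VectAinf}, since $D(\xi)\cup D(\tilde\xi)=\Spec A_\inf\setminus\{s\}$ because $V(\xi)\cap V(\tilde\xi)=\{s\}$), one concludes $t\in N$, and then $t$ is automatically $\varphi_N$-fixed. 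Equivalently, this $H^0$-identity expresses that the Fargues--Fontaine curve together with its distinguished point $\infty$ computes these Hom-groups.

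\emph{Essential surjectivity.} Given $(T,\Xi)$, I would start from the trivial object $M_0=T\otimes_{\bb Z_p}A_\inf$ with Frobenius $\mathrm{id}\otimes\phi$, whose associated pair is $(T,T\otimes_{\bb Z_p}B_\dR^+)$, and then correct the lattice: the $\varphi$-module $M_0$ corresponds via Fargues--Fontaine to the semistable slope-$0$ bundle $\mathcal{E}_T$ on the curve, and performing the $B_\dR^+$-modification of $\mathcal{E}_T$ at $\infty$ (completed local ring $B_\dR^+$) prescribed by $\Xi\subseteq T\otimes_{\bb Z_p}B_\dR$ produces a new bundle $\mathcal{E}$, equivalently a $\varphi$-equivariant vector bundle on $\Spa A_\inf\setminus V(p[\pi^\flat])$. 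This extends over $V([\pi^\flat])\setminus\{s\}$ and over $V(p)\setminus\{s\}$ (where $\xi$ is a unit, as it reduces to a nonzero element of the field $C^\flat$, so the modification is invisible there) to a $\varphi$-equivariant bundle on the punctured spectrum $U=\Spec A_\inf\setminus\{s\}$; Kedlaya's Lemma~\ref{lem:VectAinf} then extends it uniquely to a finite free $A_\inf$-module $M$. The isomorphism $\varphi_M\colon M[\tfrac1{\tilde\xi}]\cong M[\tfrac1{\tilde\xi}]$ and the freeness of $M[\tfrac1p]$ are inherited from the $\varphi$-equivariance and local structure on the curve, and finally I would check that the pair attached to $(M,\varphi_M)$ is again $(T,\Xi)$: the invariants over $W(C^\flat)$ recover $T$ by construction, and $M\otimes_{A_\inf}B_\dR^+=\Xi$ because the completion at $\infty$ was defined to be $\Xi$. \emph{The main obstacle} is exactly the book-keeping of the Frobenius in this last step: while the underlying $A_\inf$-module of $M$ comes out of Beauville--Laszlo plus Kedlaya almost formally, the isomorphism $\varphi_M$ genuinely sees the whole $\phi$-orbit of modification divisors $V(\phi^n\xi)$, and organizing this — equivalently, setting up the classification of $\varphi$-equivariant bundles on $\Spa A_\inf\setminus V(p[\pi^\flat])$ and the $B_\dR^+$-modification formalism on the Fargues--Fontaine curve, and checking finiteness properties of $M$ — is the technical core, carried out in~\cite{ScholzeLectureNotes}.
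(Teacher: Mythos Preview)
The paper does not actually prove this theorem: it cites \cite{ScholzeLectureNotes} for the proof, and then in the remark immediately following the statement gives only a direct proof of \emph{full faithfulness} (which is all that is needed for the paper's main results). So the relevant comparison is between your full-faithfulness argument and the one in that remark; your essential-surjectivity sketch goes beyond what the paper does and follows the outline of the cited reference, correctly identifying the Beauville--Laszlo/Kedlaya extension step and the bookkeeping of the $\phi$-orbit of modification divisors as the technical core.

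For full faithfulness, your reduction via internal Homs to the single identity $\Hom_{A_\inf,\varphi}(A_\inf,N)=T_N\cap\Xi_N$ is a clean and valid reformulation, and is genuinely different from the paper's remark, which argues directly with a general pair $M,N$. However, your sketch of the inclusion ``$\supseteq$'' is where the two diverge in substance. The paper's argument is: starting from $t\in N[\tfrac1\mu]$ (via Lemma~\ref{lem:bkfmoduleetale}) with $t\in\mu^{-n}N$, one uses the $B_\dR^+$-lattice condition \emph{at every Frobenius translate} $V(\phi^{-r}\xi)$, together with $\varphi$-equivariance, to show inductively that $t\in\phi^{-r}(\mu)^{-1}N$ for all $r\ge0$, and then concludes by the key identity $\bigcap_{r\ge0}\phi^{-r}(\mu)^{-1}N=N$, which is Lemma~\ref{lem:mapAinfWitt}. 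By contrast, you invoke the (also true) identity $A_\inf=A_\inf[\tfrac1\xi]\cap A_\inf[\tfrac1{\tilde\xi}]$ and speak of ``repeatedly removing powers of $\varphi^{-1}(\mu)$''; as written this is too vague to be a proof, and the two-open intersection alone is not enough --- the lattice condition $t\in\Xi_N$ only controls the pole at $V(\xi)$, not at $V(\tilde\xi)$, so one really does need to transport the condition along the whole $\phi$-orbit and then intersect, exactly as in Lemma~\ref{lem:mapAinfWitt}. Once you replace your sketched step by that argument (which is compatible with, and in fact simplified by, your internal-Hom reduction to the case $M=A_\inf$), the proof is correct.
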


\begin{remark} For the proof of our main theorems, we only need fully faithfulness of the functor $M\mapsto (T,\Xi)$, which is easy to prove directly. Indeed, faithfulness follows directly from Lemma~\ref{lem:bkfmoduleetale} and the observation that $A_\inf\to A_\inf[\tfrac 1\mu]$ is injective. Now, given two Breuil--Kisin--Fargues modules $(M,\phi_M)$ and $(N,\phi_N)$ and a map $T(M)\to T(N)$ mapping $\Xi(M)$ into $\Xi(N)$, Lemma~\ref{lem:bkfmoduleetale} gives a canonical $\phi$-equivariant map $M[\tfrac 1\mu]\to N[\tfrac 1\mu]$. We have to see that this maps $M$ into $N$. By finite generation of $M$, $M$ maps into $\mu^{-n} N$ for some $n\geq 0$, which we assume to be minimal. Assume $n>0$; replacing $N$ by $\mu^{-n+1} N$, we may reduce to the case $n=1$. We claim that $M$ maps into $\phi^{-r}(\mu)^{-1} N$ for all $r\geq 0$, by induction on $r$. For this, we need to see that the induced maps
\[
M\to \phi^{-r}(\mu)^{-1} N / \phi^{-r-1}(\mu)^{-1} N \cong N / \phi^{-r}(\xi) N = N\otimes_{A_\inf,\theta\circ \phi^r} \roi\hookrightarrow N\otimes_{A_\inf,\theta\circ \phi^r} C
\]
are zero, where the isomorphism is multiplication by $\phi^{-r}(\mu)$. But note that by assumption, $\Xi(M) = M\otimes_{A_\inf} B_\dR^+$ maps into $\Xi(N) = N\otimes_{A_\inf} B_\dR^+$, and so by the diagram
\[\xymatrix{
M\otimes_{A_\inf,\phi^r} B_\dR^+\ar[d]^{\phi_M^r}_\cong \ar[r] & N\otimes_{A_\inf,\phi^r} B_\dR^+\ar[d]^{\phi_N^r}_\cong\\
M\otimes_{A_\inf} B_\dR^+\ar[r] & N\otimes_{A_\inf} B_\dR^+
}\]
also $M\otimes_{A_\inf,\phi^r} B_\dR^+$ maps into $N\otimes_{A_\inf,\phi^r} B_\dR^+$ for all $r\geq 0$. Therefore, the map $M\to N\otimes_{A_\inf,\theta\circ\phi^r} C$ induced by multiplication by $\phi^{-r}(\mu)$ is zero, showing that indeed $M$ maps into $\phi^{-r}(\mu)^{-1} N$ for all $r\geq 0$. But now $N=\bigcap_{r\geq 0} \phi^{-r}(\mu)^{-1} N$ by Lemma~\ref{lem:mapAinfWitt}, so $M$ maps into $N$, as desired.
\end{remark}

We warn the reader that, like in Theorem~\ref{ThmKisin}, this equivalence of categories is not exact. More precisely, the functor from Breuil--Kisin--Fargues modules to pairs $(T,\Xi)$ is exact, but the inverse is not.

As an easy example, $A_\inf\{1\}$ corresponds to $T=\bb Z_p(1)$ and $\Xi = \xi^{-1}( T\otimes_{\bb Z_p} B_\dR^+)$.

\subsection{Relating Breuil--Kisin and Breuil--Kisin--Fargues modules}

Let us observe that any Breuil--Kisin module defines a Breuil--Kisin--Fargues module. For this, we start again with a complete discretely valued extension $K$ of $\bb Q_p$ with perfect residue field $k$ and fixed uniformizer $\pi\in K$, and let $C$ be a completed algebraic closure of $K$ with fixed roots $\pi^{1/p^n}\in C$, giving an element $\pi^\flat\in C^\flat$. Then $\gS = W(k)[[T]]$ is equipped with a Frobenius automorphism $\phi$, and the map $\tilde{\theta}:\gS \to \roi_K$ given by $T \mapsto \pi$. The constructions over $K$ and $C$ are related by the map  $\gS \to A_\inf$ that sends $T$ to $[\pi^\flat]^p$ and is the Frobenius on $W(k)$; note that this map commutes with $\phi$ and $\tilde{\theta}$. We first check that this map is flat: 

\begin{lemma}
\label{lem:SigmaAinfFlat}
The map $\gS \to A_\inf$ above is flat.
\end{lemma}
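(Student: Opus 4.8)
The plan is to use the local criterion for flatness, exploiting the fact that $\gS = W(k)[[T]]$ is a regular local ring of dimension $2$ with maximal ideal $(p, T)$, and that both $\gS$ and $A_\inf$ are $p$-torsion-free. First I would reduce modulo $p$: since $A_\inf$ is $p$-torsion-free (it is a ring of Witt vectors, hence $p$-torsion-free because $\roi^\flat$ is reduced — indeed perfect), it suffices by the local criterion to check that $\gS/p = k[[T]] \to A_\inf/p = \roi^\flat$ is flat and that $\Tor_1^{\gS}(\gS/p, A_\inf) = 0$, the latter being automatic from $p$ being a non-zero-divisor on both rings. So the whole problem becomes: \emph{the map $k[[T]] \to \roi^\flat$ sending $T \mapsto (\pi^\flat)^p$ is flat.}

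Now $k[[T]]$ is a discrete valuation ring (a PID), so flatness over it is equivalent to being torsion-free, i.e. to $T = (\pi^\flat)^p$ being a non-zero-divisor in $\roi^\flat$. This is clear: $\roi^\flat$ is a domain (it is the ring of integers in the perfectoid field $C^\flat$, or more directly $\roi^\flat = \varprojlim_\phi \roi/p$ is a valuation ring with no zero-divisors since $\pi^\flat$ has a system of $p$-power roots and positive valuation), and $\pi^\flat \neq 0$ since $\theta([\pi^\flat]) = \pi \neq 0$. Hence $(\pi^\flat)^p$ is a non-zero-divisor, so $k[[T]] \to \roi^\flat$ is flat. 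Combining: $\gS/p \to A_\inf/p$ is flat, $p$ is a non-zero-divisor on $A_\inf$, so by the local flatness criterion (in the form: if $B$ is $p$-torsion-free and $B/p$ is flat over $\gS/p$, and $\gS$ is $p$-torsion-free, then $B$ is flat over $\gS$ — one checks $\Tor_1^{\gS}(\gS/(p,T)^n, B) = 0$ by dévissage, or simply invokes that flatness can be checked after the faithfully flat base change... more cleanly: use that $\gS$ is noetherian local and apply \cite[Local criterion]{} via the $p$-adic filtration) we conclude $A_\inf$ is flat over $\gS$.

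The main obstacle — really the only subtlety — is packaging the local flatness criterion correctly in the non-noetherian target setting: $A_\inf$ is not noetherian, so one cannot naively invoke the noetherian local criterion for the module $A_\inf$. The clean route is: $A_\inf$ is $p$-adically separated and $p$-torsion-free, with $A_\inf/p = \roi^\flat$ flat over $\gS/p$; then for any finitely generated $\gS$-module $N$, one shows $\Tor_i^\gS(N, A_\inf) = 0$ for $i>0$ by writing $A_\inf = \varprojlim A_\inf/p^n$ and using that each $A_\inf/p^n$ is an iterated extension of copies of $\roi^\flat$ (flat over $\gS/p$, hence the $\Tor$ vanishes over $\gS$ since $p$ is $\gS$-regular), plus a $\varprojlim^1$ vanishing argument since $\gS$ is noetherian and $N$ finitely generated so the relevant $\Tor$-computing complex has finitely generated terms and the inverse system $\{\Tor_i^\gS(N, A_\inf/p^n)\}$ is Mittag–Leffler (in fact eventually the transition maps stabilize the $i>0$ part to zero). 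This gives $\Tor_i^\gS(N, A_\inf) = 0$ for all finitely generated $N$ and $i > 0$, which is precisely flatness.
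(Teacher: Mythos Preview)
Your approach is correct and is essentially the same as the paper's. Both proofs reduce to showing that $\gS/p=k[[T]]\to A_\inf/p=\roi^\flat$ is flat (immediate since the source is a DVR and the target a domain), and then lift this to $\gS\to A_\inf$ using $p$-adic completeness together with noetherianity of $\gS$; the paper phrases the lifting step as ``$M$ perfect over $\gS$ implies $M\dotimes_\gS A_\inf$ perfect, hence derived $p$-complete, so equal to $R\varprojlim_n(M/p^n\dotimes_{\gS/p^n}A_\inf/p^n)$, and each term sits in degree $0$ since $\gS/p^n\to A_\inf/p^n$ is flat,'' which is exactly your Mittag--Leffler/$\varprojlim^1$ argument rewritten. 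One small wording issue: your parenthetical ``hence the $\Tor$ vanishes over $\gS$'' is not literally true for $\Tor_1^\gS(N,\roi^\flat)$ when $N$ has $p$-torsion, but your subsequent sentence about the transition maps becoming zero (via Artin--Rees, since $N[p^\infty]$ is bounded) is what actually makes the argument go through, and this matches the paper's use of Artin--Rees to replace $\{M\dotimes_\gS \gS/p^n\}$ by $\{M/p^n\}$.
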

\begin{proof}
We must check that $M \dotimes_{\gS} A_\inf$ is concentrated in degree $0$ for any $\gS$-module $M$. By approximation, we may assume $M$ is finitely generated. As $\gS$ is regular, any such $M$ is perfect. Thus, $M \dotimes_{\gS} A_\inf$ is also perfect. In particular, it is derived $p$-adically complete, so we can write $M \dotimes_{\gS} A_\inf \simeq R\lim(M/p^n \dotimes_{\gS/p^n} A_\inf/p^n)$; here we implicitly use the Artin--Rees lemma over $\gS$ to replace the pro-system $\{M \dotimes_{\gS} \gS/p^n\}$ with $\{M/p^n\}$. It is now enough to check that $\gS/p^n \to A_\inf/p^n$ is flat. As both rings are flat over $\mathbb{Z}/p^n$, we may assume $n=1$, i.e., we need to show $\gS/p \to A_\inf/p \simeq \roi^\flat$ is flat; this is clear as the source is a discrete valuation ring, and the target is torsionfree.
\end{proof}

\begin{remark}
More generally, one has: if $A \to B$ is a map of $p$-adically complete $p$-torsionfree rings with $A$ noetherian and $A/p \to B/p$ flat, then $A \to B$ is flat. To prove this, one simply replaces perfect complexes with pseudo-coherent complexes in the proof above.
\end{remark}

Base change along this map relates Breuil--Kisin modules to Breuil--Kisin--Fargues modules:

\begin{proposition}\label{prop:bktobkf}
The association $M \mapsto M \otimes_{\gS} A_\inf$ defines an exact tensor functor from Breuil--Kisin modules over $\gS$ to Breuil--Kisin--Fargues modules over $A_\inf$.
\end{proposition}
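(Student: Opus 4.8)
The plan is to deduce everything from the flatness of $\gS\to A_\inf$ together with the structural results already established; no new ideas are needed, and the proposition is essentially a bookkeeping exercise once those inputs are in place.

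For exactness and the tensor structure, recall that $\gS\to A_\inf$ is flat by Lemma~\ref{lem:SigmaAinfFlat}, so $M\mapsto M\otimes_\gS A_\inf$ is exact on $\gS$-modules; since the Frobenius on a base-changed sub- or quotient module is just the induced one, a short exact sequence of Breuil--Kisin modules is carried to a short exact sequence of Breuil--Kisin--Fargues modules. Base change along a ring map is symmetric monoidal, compatibly with the associativity and commutativity constraints, and sends the unit object $(\gS,\mathrm{id})$ to the unit object $(A_\inf,\mathrm{id})$; the only thing to check here is that the monoidal structure isomorphism is Frobenius-equivariant, which holds because $\gS\to A_\inf$ commutes with $\phi$.

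Next one verifies that $M'=M\otimes_\gS A_\inf$ is a Breuil--Kisin--Fargues module when $(M,\phi_M)$ is a Breuil--Kisin module. Since $\gS$ is noetherian, $M$ is finitely presented, hence so is $M'$ over $A_\inf$. By Proposition~\ref{prop:BKFree}, $M[\tfrac1p]\cong M_\sub{free}[\tfrac1p]$ is finite free over $\gS[\tfrac1p]$, so $M'[\tfrac1p]=M[\tfrac1p]\otimes_{\gS[\frac1p]}A_\inf[\tfrac1p]$ is finite free (in particular finite projective) over $A_\inf[\tfrac1p]$. For the $\phi$-structure, base-change the isomorphism $\phi_M\colon M\otimes_{\gS,\phi}\gS[\tfrac1E]\isoto M[\tfrac1E]$ along $\gS\to A_\inf$; using that this map commutes with $\phi$ to identify $M'\otimes_{A_\inf,\phi}A_\inf$ with $(M\otimes_{\gS,\phi}\gS)\otimes_\gS A_\inf$, one obtains an isomorphism $M'\otimes_{A_\inf,\phi}A_\inf[\tfrac1E]\isoto M'[\tfrac1E]$, where $E$ now denotes the image of the Eisenstein polynomial in $A_\inf$. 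Thus it remains only to know that localizing $A_\inf$ at $E$ and at $\tilde\xi$ yields the same ring, and then rewriting the isomorphism over $A_\inf[\tfrac1{\tilde\xi}]$ finishes the verification; one also notes that the functor is visibly compatible with Frobenii on morphisms, so it is a functor to Breuil--Kisin--Fargues modules.

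The one step that is not purely formal — and hence the main (if minor) obstacle — is the claim \emph{$(E)=(\tilde\xi)$ in $A_\inf$} (equivalently $A_\inf/E\cong\roi_C$). Since $\gS\to A_\inf$ is flat and $E$ is a non-zero-divisor of $\gS$, its image in $A_\inf$ is a non-zero-divisor, and by the compatibility of the square with $\tilde\theta$ it lies in $\ker(\tilde\theta\colon A_\inf\to\roi_C)=\tilde\xi A_\inf$; so $E=\tilde\xi u$ for some $u\in A_\inf$, and it is enough to see that $\bar u\in A_\inf/p=\roi^\flat$ is a unit. I would dispatch this by a short valuation computation in the valuation ring $\roi^\flat$: because $E$ is Eisenstein of degree $e$, its reduction mod $p$ equals $(\pi^\flat)^{pe}$ up to a unit, of valuation $p$ in the normalization $v_C(p)=1$; and the reduction of $\tilde\xi=\phi(\xi)$ mod $p$ is the $p$-th power of the reduction of a distinguished generator $\xi$ of $\ker\theta$ (cf.\ Remark~\ref{remark_distinguished}), which has valuation $v_C(p)=1$, so $\bar{\tilde\xi}$ also has valuation $p$. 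Principal ideals of equal valuation in a valuation ring coincide, so $\bar u$ is a unit, hence $u$ is a unit in the local ring $A_\inf$, and $(E)=(\tilde\xi)$. Everything else is formal.
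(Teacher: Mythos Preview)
Your proof is correct and follows essentially the same approach as the paper: both invoke Lemma~\ref{lem:SigmaAinfFlat} for exactness, Proposition~\ref{prop:BKFree} for freeness of $M[\tfrac1p]$, and base-change the Frobenius structure. The paper handles the one nontrivial point more tersely, simply declaring $\tilde\xi := f(E)$ and asserting this generates $\ker\tilde\theta$; your valuation computation in $\roi^\flat$ supplies the justification the paper omits (note that the precise fact $v_{C^\flat}(\bar\xi)=1$ is not literally contained in Remark~\ref{remark_distinguished}, but follows immediately since $\ker(\theta\bmod p)=\{x\in\roi^\flat: v_{C^\flat}(x)\ge 1\}$).
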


\begin{proof}
Let $(M,\phi_M)$ be a Breuil--Kisin module over $\gS$, i.e., $M$ is a finitely presented $\gS$-module equipped with an identification $\phi_M:(\phi^* M)[\frac{1}{E}] \simeq M[\frac{1}{E}]$, where $E(T) \in \gS$ is the Eisenstein polynomial defining $\pi$. We claim that $N := M \otimes_{\gS} A_\inf$ equipped with the identification $(\phi_M \otimes \mathrm{id}): (\phi^* N)[\frac{1}{f(E)}] \simeq N[\frac{1}{f(E)}]$ is a Breuil--Kisin--Fargues modules. For this, first note that $\tilde\xi := f(E)$ is a generator of the kernel of $\tilde{\theta}:A_\inf \to \roi_C$. Moreover, $M[\frac{1}{p}]$ is free by Proposition \ref{prop:BKFree}, so $N[\frac{1}{p}]$ is free as well; this verifies that we obtain a Breuil--Kisin--Fargues module. The resulting functor is clearly symmetric monoidal, and exactness follows from Lemma \ref{lem:SigmaAinfFlat}.
\end{proof}

\begin{corollary}\label{cor:identtatetwist} Under the functor of Theorem~\ref{ThmKisin}, $\bb Z_p(1)$ is sent to $\gS\{1\}$.
\end{corollary}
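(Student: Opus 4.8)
The plan is to identify $\gS\{1\}$ with the Breuil--Kisin module $M(\bb Z_p(1))$ by means of the uniqueness built into Theorem~\ref{ThmKisin}. First I would record that $\gS\{1\}$ is a finite free Breuil--Kisin module: it is free of rank one over $\gS$ by construction, and with respect to a generator the map $\phi_{\gS\{1\}}$ is multiplication by $u/E$ for some unit $u\in\gS^\times$, so it becomes an isomorphism after inverting $E$. By the uniqueness argument recalled in the proof of Theorem~\ref{ThmKisin} (which reduces the comparison to the $p$-adic completion of $\gS[\tfrac 1T]$, using \cite{Kisin} and \cite{Fontainepadicrepr}), it then suffices to produce a $\phi$- and $G_{K_\infty}$-equivariant isomorphism
\[
\gS\{1\}\otimes_\gS W(C^\flat)\cong \bb Z_p(1)\otimes_{\bb Z_p} W(C^\flat)\ .
\]

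To construct it I would factor $\gS\to W(C^\flat)$ through $A_\inf$ and first prove $\gS\{1\}\otimes_\gS A_\inf\cong A_\inf\{1\}$, compatibly with Frobenius and with the $G_{K_\infty}$-actions; here $G_{K_\infty}$ acts on $A_\inf$ through its action on $\roi_C^\flat$ and trivially on the $\gS$-factor, which is consistent since $G_{K_\infty}$ fixes the image of $\gS$ in $A_\inf$ (it fixes $W(k)$ because $\mathrm{Gal}(\bar k/k)$ fixes $k$, and it fixes $[\pi^\flat]$ because it fixes $K_\infty$). Since $\gS\to A_\inf$ is flat (Lemma~\ref{lem:SigmaAinfFlat}) and carries $E_r$ to a generator of $\ker(\tilde\theta_r\colon A_\inf\to W_r(\roi_C))$, one gets $\gS/E_r\otimes_\gS A_\inf = W_r(\roi_C)$ with no higher Tor, and flat base change for the cotangent complex identifies $(\gS/E_r)\{1\}\otimes_\gS A_\inf = \bb L_{(\gS/E_r)/\gS}[-1]\otimes_\gS A_\inf$ with $\bb L_{W_r(\roi_C)/A_\inf}[-1] = W_r(\roi_C)\{1\}$. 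These identifications are compatible with the transition maps (the obvious maps divided by $p^{r-s}$ on both sides), with the Frobenii, and with the $G_{K_\infty}$-actions. Passing to the inverse limit over $r$ is then legitimate: $\gS\{1\}$ is finite free over $\gS$, and $A_\inf=\projlim_r A_\inf/\tilde\xi_r$ (Lemma~\ref{lemma_witt_alg_1}), so a map of finite free rank-one $A_\inf$-modules that is an isomorphism modulo every $\tilde\xi_r$ is an isomorphism (by Nakayama, as $\tilde\xi_r$ lies in the maximal ideal of $A_\inf$); this yields the desired isomorphism $\gS\{1\}\otimes_\gS A_\inf\cong A_\inf\{1\}$.

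Finally, since $\mu$ becomes a unit in $W(C^\flat)$, the $\phi$- and $G_K$-equivariant map $A_\inf\otimes_{\bb Z_p}\bb Z_p(1)\to A_\inf\{1\}$ coming from $\dlog$ (Example~\ref{ex:bkftwist}), whose image is $\mu A_\inf\{1\}$, induces a $\phi$- and $G_{K_\infty}$-equivariant isomorphism $A_\inf\{1\}\otimes_{A_\inf} W(C^\flat)\cong\bb Z_p(1)\otimes_{\bb Z_p}W(C^\flat)$; equivalently this is the \'etale realization of $A_\inf\{1\}$ recorded after Theorem~\ref{ThmFargues}, via Lemma~\ref{lem:bkfmoduleetale}. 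Composing with the isomorphism of the previous paragraph produces the map required in the first paragraph, and Theorem~\ref{ThmKisin} concludes that $M(\bb Z_p(1))\cong\gS\{1\}$. The main obstacle is the middle step: verifying that the term-by-term cotangent-complex identifications assemble, in the inverse limit, into an isomorphism of Breuil--Kisin--Fargues modules that is simultaneously compatible with Frobenius and with the Galois action. Everything else is essentially formal given the material already developed.
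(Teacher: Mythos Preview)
Your proposal is correct and follows essentially the same route as the paper: compare $\gS\{1\}\otimes_\gS A_\inf$ with $A_\inf\{1\}$ via the cotangent-complex definitions of both, use the identification $A_\inf\{1\}=\tfrac1\mu(A_\inf\otimes_{\bb Z_p}\bb Z_p(1))$ to pass to $W(C^\flat)$, and conclude by the uniqueness clause in Theorem~\ref{ThmKisin}. The paper's proof is a terse version of exactly this argument, leaving the flat-base-change and inverse-limit verifications implicit.
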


\begin{proof} From the definition in terms of cotangent complexes, we see that $\gS\{1\}\otimes_{\gS} A_\inf\cong A_\inf\{1\}$ as Breuil--Kisin--Fargues modules, compatibly with the $G_{K_\infty}$-action. As there is a canonical identification
\[
A_\inf\{1\} = \tfrac 1\mu (\bb Z_p(1)\otimes_{\bb Z_p} A_\inf )\ ,
\]
in particular we get a $\phi,G_{K_\infty}$-equivariant identification
\[
\gS\{1\}\otimes_\gS W(C^\flat)\cong \bb Z_p(1)\otimes_{\bb Z_p} W(C^\flat) \ ,
\]
which by Theorem~\ref{ThmKisin} proves that $\bb Z_p(1)$ is sent to $\gS\{1\}$.
\end{proof}

Finally, we want to relate Theorem~\ref{ThmKisin} with Theorem~\ref{ThmFargues}. Thus, let $T$ be a lattice in a crystalline $G_K$-representation $V$. We get
\[
D_\crys(V) = (V\otimes_{\bb Q_p} B_\crys)^{G_K}\ ,
\]
which comes with a $\phi,G_K$-equivariant identification
\[
D_\crys(V)\otimes_{W(k)[\frac 1p]} B_\crys = V\otimes_{\bb Q_p} B_\crys\ .
\]
On the other hand, by Theorem~\ref{ThmKisin}, we have the finite free Breuil--Kisin module $M(T)$ over $\gS$, which gives rise to a finite free Breuil--Kisin--Fargues module $M(T)\otimes_\gS A_\inf$. By Theorem~\ref{ThmKisin} and Lemma~\ref{lem:bkfmoduleetale}, we have a $G_{K_\infty}$-equivariant identification
\[
M(T)\otimes_\gS A_\inf[\tfrac 1\mu] = T\otimes_{\bb Z_p} A_\inf[\tfrac 1\mu]\ .
\]

\begin{proposition}\label{prop:compbkbkf} One has an equality
\[
M(T)\otimes_\gS B_\crys^+ = D_\crys(V)\otimes_{W(k)[\frac 1p]} B_\crys^+
\]
as submodules of
\[
M(T)\otimes_\gS B_\crys = T\otimes_{\bb Z_p} B_\crys = D_\crys(V)\otimes_{W(k)[\frac 1p]} B_\crys\ .
\]

In particular, under Fargues' classification, $M(T)\otimes_\gS A_\inf$ corresponds to the pair $(T,\Xi)$, where
\[
\Xi = D_\crys(V)\otimes_{W(k)[\frac 1p]} B_\dR^+\subset T\otimes_{\bb Z_p} B_\dR\ ;
\]
equivalently,
\[
\Xi = D_\dR(V)\otimes_K B_\dR^+\subset T\otimes_{\bb Z_p} B_\dR\ ,
\]
where $D_\dR(V) = (T\otimes_{\bb Z_p} B_\dR)^{G_K}$.
\end{proposition}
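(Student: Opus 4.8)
The plan is to pin down the common ambient module
\[
M(T)\otimes_\gS B_\crys \;=\; D_\crys(V)\otimes_{W(k)[\tfrac 1p]} B_\crys
\]
in two independent ways --- through the \'etale realization on the Breuil--Kisin side and through the crystalline comparison on the Dieudonn\'e side --- and then to match the two natural $B_\crys^+$-lattices sitting inside it. For the \'etale side, recall that $M := M(T)\otimes_\gS A_\inf$ is a Breuil--Kisin--Fargues module by Proposition~\ref{prop:bktobkf}, and that Theorem~\ref{ThmKisin} provides a $\phi,G_{K_\infty}$-equivariant identification $M(T)\otimes_\gS W(C^\flat)\cong T\otimes_{\bb Z_p}W(C^\flat)$; taking $\phi$-invariants identifies the \'etale realization of $M$ in the sense of Lemma~\ref{lem:bkfmoduleetale} with $T$, so that lemma upgrades the identification to $M(T)\otimes_\gS A_\inf[\tfrac 1\mu] = T\otimes_{\bb Z_p}A_\inf[\tfrac 1\mu]$, hence $M(T)\otimes_\gS B_\crys = T\otimes_{\bb Z_p}B_\crys$ ($\phi$-equivariantly). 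For the Dieudonn\'e side, since $V$ is crystalline the comparison isomorphism gives $T\otimes_{\bb Z_p}B_\crys = V\otimes_{\bb Q_p}B_\crys = D_\crys(V)\otimes_{W(k)[\tfrac 1p]}B_\crys$, again $\phi$-equivariantly. Composing the two produces a $\phi$-equivariant isomorphism between the two sides of the proposition after inverting $\mu$; it then remains to prove the equality of the $B_\crys^+$-submodules $M(T)\otimes_\gS B_\crys^+$ and $D_\crys(V)\otimes_{W(k)[\tfrac 1p]}B_\crys^+$ inside this common $B_\crys$-module.

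This last equality is a form of Kisin's theorem, and the plan is to extract it from \cite{Kisin}. There, $M(T)$ is built together with a canonical connection on $M(T)\otimes_\gS \mathcal{O}_{\mathrm{rig}}$, where $\mathcal{O}_{\mathrm{rig}}$ denotes the ring of rigid-analytic functions on the open unit disc over $W(k)[\tfrac 1p]$, and the horizontal sections of this connection recover $D_\crys(V)$ together with its Frobenius. I would transport this to $B_\crys^+$: the composite $\gS\to A_\inf\to A_\crys$ (with $\gS\to A_\inf$ the $W(k)$-algebra map $T\mapsto [\pi^\flat]^p$) sends the disc coordinate into a topologically nilpotent element of $A_\crys$, so it extends to a continuous $\phi$-equivariant map $\mathcal{O}_{\mathrm{rig}}\to A_\crys[\tfrac 1p] = B_\crys^+$, along which the connection may be integrated; this yields a $\phi$-equivariant isomorphism $M(T)\otimes_\gS B_\crys^+\cong D_\crys(V)\otimes_{W(k)[\tfrac 1p]}B_\crys^+$. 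One must then check that this isomorphism is compatible with the \'etale realization, i.e.\ that after inverting $\mu$ it coincides with the composite identification of the previous paragraph. \textbf{This compatibility verification is the step I expect to be the main obstacle}: it amounts to carefully matching Kisin's normalizations --- in particular the Frobenius twist implicit in taking the disc coordinate to be $[\pi^\flat]^p$ rather than $[\pi^\flat]$ --- against the \'etale comparison, so that the two $\phi$-equivariant trivializations of the common $B_\crys$-module become literally the same. Everything else in this step is a citation or bookkeeping.

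Granting the equality $M(T)\otimes_\gS B_\crys^+ = D_\crys(V)\otimes_{W(k)[\tfrac 1p]}B_\crys^+$ --- which is the first displayed assertion --- the remaining claims follow by base change. Applying $-\otimes_{B_\crys^+}W(k)[\tfrac 1p]$ along the canonical retraction $B_\crys^+\to W(k)[\tfrac 1p]$ of the inclusion $W(k)[\tfrac 1p]\hookrightarrow B_\crys^+$ (and using that $\gS\to A_\inf\to W(k)$ kills the disc coordinate $T$) gives $M(T)\otimes_\gS W(k)[\tfrac 1p] = D_\crys(V)$, whence the natural lattice $M(T)\otimes_\gS W(k)\subset D_\crys(V)$. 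For Fargues's classification, by Theorem~\ref{ThmFargues} the $B_\dR^+$-lattice attached to $M = M(T)\otimes_\gS A_\inf$ is $M\otimes_{A_\inf}B_\dR^+ = M(T)\otimes_\gS B_\dR^+ = (M(T)\otimes_\gS B_\crys^+)\otimes_{B_\crys^+}B_\dR^+ = D_\crys(V)\otimes_{W(k)[\tfrac 1p]}B_\dR^+$, which together with the identification of the \'etale realization with $T$ shows that $M(T)\otimes_\gS A_\inf$ corresponds to $(T,\Xi)$ with $\Xi = D_\crys(V)\otimes_{W(k)[\tfrac 1p]}B_\dR^+$. The reformulation $\Xi = D_\dR(V)\otimes_K B_\dR^+$ is then the standard fact that for crystalline $V$ one has a canonical identification $D_\dR(V) = K\otimes_{W(k)[\tfrac 1p]}D_\crys(V)$ inside $V\otimes_{\bb Q_p}B_\dR$ (using $(B_\dR^+)^{G_K} = K$), which makes the two $B_\dR^+$-spans coincide.
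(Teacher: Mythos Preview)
Your proposal is correct and follows essentially the same route as the paper: both arguments defer the substantive work to Kisin's construction, namely the rigid-analytic isomorphism between $M(T)[\tfrac 1p]$ and $D_\crys(V)\otimes_{W(k)[\frac 1p]}\gS[\tfrac 1p]$ over an open of the generic fibre of $\Spf\gS$ (the paper cites \cite[Section 1.2, Lemma 1.2.6]{Kisin} directly). The paper's proof is a single sentence to this effect; you have unpacked the mechanism more explicitly---setting up the ambient $B_\crys$-module via Lemma~\ref{lem:bkfmoduleetale} plus the crystalline comparison, then transporting Kisin's isomorphism along $\mathcal{O}_{\mathrm{rig}}\to B_\crys^+$---and the compatibility check you flag as the main obstacle is precisely what is packaged into that citation.
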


The moral of the story here is that if one does $p$-adic Hodge theory over $C$, there is no Galois action on $T$ anymore, and instead one should keep track of a $B_\dR^+$-lattice in $T\otimes_{\bb Z_p} B_\dR$, which is a shadow of the de~Rham comparison isomorphism. In Section~\ref{sec:ratPAdicHodgeNew} below we will give a geometric construction of a $B_\dR^+$-lattice in \'etale cohomology tensored with $B_\dR$ for any proper smooth rigid-analytic variety over $C$ (in a way compatible with the usual de~Rham comparison isomorphism).

\begin{proof} This follows from Kisin's construction of $M(T)$, which starts with the crystalline side and an isomorphism between $M(T)$ and $D_\crys(V)\otimes \gS[\frac 1p]$ on some rigid-analytic open of the generic fibre of $\Spf \gS$, cf.~\cite[Section 1.2, Lemma 1.2.6]{Kisin}.
\end{proof}

\newpage

\section{Rational $p$-adic Hodge theory}

In this section, we recall a few facts from rational $p$-adic Hodge theory, in the setting of~\cite{ScholzePAdicHodge}. Let $K$ be some complete discretely valued extension of $\bb Q_p$ with perfect residue field $k$, and let $X$ be a proper smooth rigid-analytic variety over $K$, considered as an adic space. Let $C$ be a completed algebraic closure of $K$ with absolute Galois group $G_K$, and let $B_\dR = B_\dR(C)$ be Fontaine's field of $p$-adic periods.

\begin{theorem}[\cite{ScholzePAdicHodge}]\label{thm:ratPAdicHodge} The $p$-adic \'etale cohomology groups $H^i_\sub{\'et}(X_C,\bb Z_p)$ are finitely generated $\bb Z_p$-modules, and there is a comparison isomorphism
\[
H^i_\sub{\'et}(X_C,\bb Z_p)\otimes_{\bb Z_p} B_\dR\cong H^i_\dR(X)\otimes_K B_\dR\ ,
\]
compatible with the $G_K$-action, and natural filtrations. In particular, $H^i_\sub{\'et}(X_C,\bb Q_p)$ is de~Rham as a $G_K$-representation.
\end{theorem}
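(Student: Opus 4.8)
The plan is to deduce the theorem from a \emph{primitive comparison isomorphism} on the pro-\'etale site, following the strategy of \cite{ScholzePAdicHodge}. Let $\nu\colon X_{C,\sub{pro\'et}}\to X_{C,\sub{\'et}}$ denote the projection and let $\hat\roi_X^+$ be the completed integral structure sheaf. The core assertion is that for every $n\geq 1$ there is a canonical isomorphism, almost with respect to the maximal ideal $\frak m\subset \roi_C$,
\[
H^i_\sub{\'et}(X_C,\bb Z/p^n)\otimes_{\bb Z/p^n}\roi_C/p^n \overset{a}{\cong} H^i(X_{C,\sub{\'et}},\hat\roi_X^+/p^n)\ .
\]
To prove this I would first reduce, via a Čech-to-derived-functor spectral sequence, to the case where $X=\Spa(R,R^+)$ is a ``small'' affinoid, \'etale over a torus; a tower of finite \'etale covers then produces a perfectoid cover $X_\infty\to X$ with Galois group $\Gamma\cong\bb Z_p^d$, and Faltings's almost purity theorem identifies the pro-\'etale cohomology of $\hat\roi_X^+/p^n$ with the continuous $\Gamma$-cohomology of a perfectoid $\roi_C/p^n$-algebra, computed by an explicit Koszul/decompletion argument. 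The main obstacle is the passage from local to global: one must show that $H^i(X_{C,\sub{\'et}},\hat\roi_X^+/p^n)$ is an \emph{almost finitely presented} $\roi_C/p^n$-module whose almost-torsion has exponent bounded independently of $n$. This uniform boundedness --- obtained by a careful analysis of almost mathematics inside the Čech spectral sequence, using that all the pieces are perfectoid --- is the technical heart of the argument.

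Granting the primitive comparison, finiteness follows formally: the right-hand side mod $p$ is almost finitely generated over $\roi_C/p$, so $\dim_{\bb F_p}H^i_\sub{\'et}(X_C,\bb F_p)<\infty$; since this holds in every degree and the torsion is uniformly bounded, $H^i_\sub{\'et}(X_C,\bb Z_p)=\projlim_n H^i_\sub{\'et}(X_C,\bb Z/p^n)$ is a finitely generated $\bb Z_p$-module.

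For the de~Rham comparison I would replace the constant coefficients by Fontaine's relative period sheaves on $X_{C,\sub{pro\'et}}$: the sheaves $\bb B_\dR^+$, $\bb B_\dR$, and the ``big'' de~Rham period sheaf $\roi\bb B_\dR^+$ carrying the canonical connection $\nabla$. Two inputs are needed. First, a \emph{filtered} version of the primitive comparison: base-changing the isomorphism above along $\roi_C/p^n\to \bb B_\dR^+/\xi^n$, passing to the limit, and inverting $\xi$ gives
\[
H^i_\sub{\'et}(X_C,\bb Z_p)\otimes_{\bb Z_p}\bb B_\dR\isoto H^i(X_{C,\sub{\'et}},R\nu_\ast\bb B_\dR)\ ,
\]
compatibly with filtrations and the $G_K$-action. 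Second, the \emph{Poincar\'e lemma}: the de~Rham complex
\[
0\to \bb B_\dR^+\to \roi\bb B_\dR^+\xto{\nabla}\roi\bb B_\dR^+\otimes_{\roi_X}\Omega^1_X\to\cdots
\]
is a resolution of $\bb B_\dR^+$, so that $R\nu_\ast\bb B_\dR^+$ is computed by $R\nu_\ast\roi\bb B_\dR^+$ with its connection; combining this with the computation of $R\nu_\ast\roi\bb B_\dR^+$ (again local on the torus, by decompletion) identifies $H^i(X_{C,\sub{\'et}},R\nu_\ast\bb B_\dR^+)$ with $H^i_\dR(X)\otimes_K B_\dR^+$. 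Splicing the two isomorphisms and inverting $\xi$ yields the asserted comparison isomorphism, the filtration on the left coming from the $\xi$-adic filtration on $\bb B_\dR$ and that on the right from the de~Rham filtration.

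Finally, to conclude that $V:=H^i_\sub{\'et}(X_C,\bb Q_p)$ is de~Rham, recall that $\dim_K D_\dR(V)\leq \dim_{\bb Q_p}V$ always, with equality exactly when $V$ is de~Rham. Taking $G_K$-invariants in the comparison isomorphism, using $B_\dR^{G_K}=K$ and the finiteness of de~Rham cohomology for proper smooth rigid spaces (Kiehl), one gets $\dim_K D_\dR(V)=\dim_K H^i_\dR(X)=\dim_{\bb Q_p}V$, so $V$ is de~Rham. The compatibility of this construction with the comparison isomorphism of \cite{ScholzePAdicHodge} used elsewhere (e.g.\ in Theorem~\ref{ThmRat}) is then automatic, as it \emph{is} the construction of loc.\ cit.
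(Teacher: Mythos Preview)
Your proposal is correct and follows the same approach the paper briefly recalls from \cite{ScholzePAdicHodge}: primitive comparison via almost mathematics, the Poincar\'e lemma for $\roi\bb B_{\dR}^+$, and a local Galois cohomology computation of $R\nu_\ast\roi\bb B_{\dR}^+$. One small imprecision: the paper (following Brinon) only identifies the cohomology of $\roi\bb B_{\dR,X}^+$ with $\roi_X\hat\otimes_K B_\dR$ \emph{after inverting $\xi$}, not over $B_\dR^+$ as you suggest; since the theorem is stated over $B_\dR$ this does not affect the conclusion.
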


In particular, the theorem gives a natural $B_\dR^+$-lattice
\[
H^i_\dR(X)\otimes_K B_\dR^+\subset H^i_\sub{\'et}(X_C,\bb Z_p)\otimes_{\bb Z_p} B_\dR\ ,
\]
where $H^i_\dR(X) = (H^i_\sub{\'et}(X_C,\bb Z_p)\otimes_{\bb Z_p} B_\dR)^{G_K}$. Thus, by Theorem~\ref{ThmFargues}, the torsion-free quotient of $H^i_\sub{\'et}(X_C,\bb Z_p)$ and this $B_\dR^+$-lattice given by de~Rham cohomology define a finite free Breuil--Kisin--Fargues module, which we will call
\[
\mathrm{BKF}(H^i_\sub{\'et}(X_C,\bb Z_p))\ .
\]

\begin{remark} Assume that the torsion-free quotient of $H^i_\sub{\'et}(X_C,\bb Z_p)$ is crystalline as a Galois representation. Then, by Theorem~\ref{ThmKisin}, there is an associated Breuil--Kisin module
\[
\mathrm{BK}(H^i_\sub{\'et}(X_C,\bb Z_p))\ .
\]
By Proposition~\ref{prop:compbkbkf}, we then have
\[
\mathrm{BKF}(H^i_\sub{\'et}(X_C,\bb Z_p)) = \mathrm{BK}(H^i_\sub{\'et}(X_C,\bb Z_p))\otimes_{\gS} A_\inf\ .
\]
\end{remark}

In fact, the $B_\dR^+$-lattice
\[
H^i_\dR(X)\otimes_K B_\dR^+\subset H^i_{\sub{\'et}}(X_C,\bb Z_p)\otimes_{\bb Z_p} B_\dR
\]
depends only on $X_C$. We postpone discussion of this point until later, see Section~\ref{sec:ratPAdicHodgeNew}. This implies that the construction of $\mathrm{BKF}(H^i_\sub{\'et}(X,\bb Z_p))$ works for any proper smooth rigid-analytic space $X$ over $C$.

The goal of this paper is to show that if $X$ is the generic fibre of some proper smooth formal scheme $\frak X/\roi_K$, then this Breuil--Kisin--Fargues module can be constructed geometrically, and deduce comparisons between the Breuil--Kisin--Fargues module and the crystalline cohomology of the special fibre.\\

In this section, we will recall aspects of pro-\'etale cohomology following \cite{ScholzePAdicHodge}, and then briefly recall the strategy of the proof of Theorem~\ref{thm:ratPAdicHodge}.

\subsection{The pro-\'etale site of an adic space}
We first recall the pro-\'etale site \cite[Definition~3.9]{ScholzePAdicHodge} associated to a locally Noetherian adic space $X$. Let $\rm{pro-}X_\sub{\'et}$ be the category of pro-objects associated to the category $X_\sub{\'et}$ of adic spaces which are \'etale over $X$. Objects of $\rm{pro-}X_\sub{\'et}$ will be denoted by $\projlimf_{i\in I}U_i$, where $I$ is a small cofiltered category and $I\to X_\sub{\'et},\;i\mapsto U_i$ is a functor. The underlying topological space of $\projlimf_{i\in I}U_i$ is by definition $\projlim_{i\in I}|U_i|$, where $|U_i|$ is the underlying topological space of $U_i$.

An object $U\in \rm{pro-}X_\sub{\'et}$ is said to be {\em pro-\'etale over $X$} if and only if $U$ is isomorphic in $\rm{pro-}X_\sub{\'et}$ to an object $\projlimf_{i\in I}U_i$ with the property that all transition maps $U_j\to U_i$ are finite \'etale and surjective.

The {\em pro-\'etale site} $X_\sub{pro\'et}$ of $X$ is the full subcategory $\rm{pro-}X_\sub{\'et}$ consisting of those objects which are pro-\'etale over $X$; a collection of maps $\{f_i:U_i\to U\}$ in $X_\sub{pro\'et}$ is defined to be a covering if and only if the collection $\{|U_i|\to|U|\}$ is a pointwise covering of the topological space $|U|$, and moreover each $f_i$ satisfies the following assumption (which is stronger than asking that $f_i$ is pro-\'etale in the sense of~\cite[Definition 3.9]{ScholzePAdicHodge}, but the notions agree for countable inverse limits).\footnote{Cf.~\cite{ScholzePAdicHodgeErratum}.} One can write $U_i\to U$ as an inverse limit $U_i=\varprojlim_{\mu<\lambda} U_\mu$ of $U_\mu\in X_\sub{pro\'et}$ along some ordinal $\lambda$, such that $U_0\to U$ is \'etale (i.e.~the pullback of a map in $X_\sub{\'et}$), and for all $\mu>0$, the map
\[
U_\mu\to U_{<\mu} := \varprojlim_{\mu^\prime<\mu} U_{\mu^\prime}
\]
is finite \'etale and surjective, i.e.~the pullback of a finite \'etale and surjective map in $X_\sub{\'et}$ (cf.~\cite[Lemma 3.10 (v)]{ScholzePAdicHodge}).

There is a natural projection map of sites
\[
\nu:X_\sub{pro\'et}\to X_\sub{\'et}\ ,
\]
with the property that
\[
H^j(U,\nu^*\cal F)=\indlim_{i\in I}H^j(U_i,\cal F)
\]
for any abelian sheaf $\cal F$ on $X_\sub{\'et}$, $j\geq 0$, and any $U=\projlimf_{i\in I}U_i\in X_\sub{pro\'et}$ for which $|U|$ is quasi-compact and quasi-separated, \cite[Lemma~3.16]{ScholzePAdicHodge}.

Suppose now that $X$ is a locally Noetherian adic space over $\Spa(\bb Q_p,\bb Z_p)$. An object $U\in X_\sub{pro\'et}$ is said to be {\em affinoid perfectoid} \cite[Definition~4.3]{ScholzePAdicHodge} if and only if it is isomorphic in $X_\sub{pro\'et}$ to an object $\projlimf_{i\in I}U_i$ with the following three properties:
\begin{enumerate}\itemsep0pt
\item the transition maps $U_j\to U_i$ are finite \'etale surjective whenever $j\ge i$; 
\item $U_i=\Spa(R_i, R_i^+)$ is affinoid for each $i$;
\item the complete Tate ring $R:=(\indlim_iR_i^+)_p^\wedge\otimes_{\bb Z_p}\bb Q_p$ is perfectoid.
\end{enumerate}

We note that the final condition implies that $R^+:=(\indlim_i R_i^+)^\wedge$ is a perfectoid ring, by Lemma~\ref{lemma_Tate_perfectoid}.

Continuing to assume that $X$ is a locally Noetherian adic space over $\Spa(\bb Q_p,\bb Z_p)$, it is known that the affinoid perfectoid objects in $X_\sub{pro\'et}$ form a basis for the topology \cite[Proposition~4.8]{ScholzePAdicHodge}. We will only require this result when $X$ is smooth over $\Spa(C,\roi)$, where $C$ is a perfectoid field of mixed characteristic and $\roi=\roi_C=C^\circ\subset C$ is its ring of integers; in this case we recall some details of the proof (see \cite[Example~4.4, Lemma~4.6, Corollary~4.7]{ScholzePAdicHodge}). Locally, $X$ admits an \'etale map to the $d$-dimensional torus
\[
\bb T^d:=\Spa(C\langle T_1^{\pm 1},\dots,T_d^{\pm 1}\rangle,\roi\langle T_1^{\pm 1},\dots, T_d^{\pm 1}\rangle)
\]
that factors as a composite of rational embeddings and finite \'etale covers. In this case, we have the following lemma.

\begin{lemma}[{\cite[Lemma 4.5]{ScholzePAdicHodge}}]\label{lem:perfectoidtower} Let $X\to \bb T^d$ be an \'etale map that factors as a composite of rational embeddings and finite \'etale maps. For $r\geq 1$, let
\[
X_r = X\times_{\bb T^d} \bb T_r^d\ ,
\]
where
\[
\bb T_r^d = \Spa(C\langle T_1^{\pm 1/p^r},\dots,T_d^{\pm 1/p^r}\rangle,\roi\langle T_1^{\pm 1/p^r},\dots, T_d^{\pm 1/p^r}\rangle)\ .
\]
Then $\projlimf_r X_r\in X_\sub{pro\'et}$ is affinoid perfectoid.
\end{lemma}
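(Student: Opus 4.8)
The plan is to verify the three defining properties of an affinoid perfectoid object for $\projlimf_r X_r$, reducing the nontrivial content to the perfectoid-ness of the tilted-up coordinate rings. First I would set up the tower: since $X \to \bb T^d$ factors as a composite of rational embeddings and finite \'etale maps, each $X_r = X \times_{\bb T^d} \bb T_r^d$ is again affinoid, say $X_r = \Spa(R_r, R_r^+)$, and the transition maps $X_{r+1} \to X_r$ are finite \'etale (being base changes along the obvious finite \'etale maps $\bb T_{r+1}^d \to \bb T_r^d$, which are Kummer covers of degree $p^d$); surjectivity is clear on topological spaces. This immediately gives properties (i) and (ii) in the definition of affinoid perfectoid.

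The main point is property (iii): that $R := \bigl(\varinjlim_r R_r^+\bigr)_p^\wedge[\tfrac1p]$ is a perfectoid (Tate) ring. Here I would argue by the usual two-step reduction. The base case is $X = \bb T^d$ itself, where $R_r^+ = \roi\langle T_1^{\pm 1/p^r},\ldots,T_d^{\pm 1/p^r}\rangle$ and the colimit's completion is $\roi\langle T_1^{\pm 1/p^\infty},\ldots,T_d^{\pm 1/p^\infty}\rangle$; one checks directly that this ring is perfectoid, e.g.\ using the criterion via Lemma~\ref{lemma_inj_of_Frob} (the Frobenius on the mod-$p$ reduction is visibly an isomorphism onto the mod-$p^p$ reduction since all $p$-power roots of the $T_i$ have been adjoined, and $\ker\theta$ is principal as $C$ is perfectoid). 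For general $X$, since $X \to \bb T^d$ is a composite of rational localizations and finite \'etale maps, the ring $R$ is obtained from the perfectoid ring $R_{\bb T^d}$ by a composite of such operations; rational localizations of a perfectoid Tate ring are perfectoid, and finite \'etale extensions of a perfectoid Tate ring are perfectoid by the almost purity theorem (tilting preserves finite \'etale extensions, and finite \'etale algebras over a perfectoid ring are perfectoid). I would invoke the relevant statements from \cite{ScholzePAdicHodge} (this is essentially the content cited as \cite[Lemma 4.5]{ScholzePAdicHodge}) rather than reprove the almost purity theorem.

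Finally, to conclude that $\projlimf_r X_r$ is genuinely an object of $X_\sub{pro\'et}$ (not merely of $\mathrm{pro}\text{-}X_\sub{\'et}$) and affinoid perfectoid, I would note that the transition maps being finite \'etale surjective is exactly the pro-\'etale condition, and that the presentation $\projlimf_r X_r$ with affinoid terms and perfectoid colimit is the definition of affinoid perfectoid; the remark that $R^+ = \bigl(\varinjlim_r R_r^+\bigr)^\wedge$ is then a perfectoid ring follows from Lemma~\ref{lemma_Tate_perfectoid}. The only genuine obstacle is the verification that perfectoid-ness propagates through rational localizations and finite \'etale covers, which is where almost purity enters; everything else is bookkeeping about fiber products of adic spaces. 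Since this lemma is quoted verbatim from \cite{ScholzePAdicHodge}, in the paper itself it would suffice to cite that reference and perhaps recall the shape of the argument as above.
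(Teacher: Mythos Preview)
Your proposal is correct and matches the standard argument from \cite[Lemma~4.5]{ScholzePAdicHodge}. Note, however, that the paper itself gives no proof of this lemma at all: it is simply stated with the citation and used as a black box, exactly as you anticipate in your final sentence.
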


We recall the main sheaves of interest on $X_\sub{pro\'et}$, and explicitly state their values on an affinoid perfectoid $U=\projlimf_{i\in I}\Spa(R_i,R_i^+)$.

\begin{definition}
\label{def:PeriodSheaves}
 Consider the following sheaves on $X_\sub{pro\'et}$.
\begin{enumerate}
\item The integral structure sheaf $\roi_X^+ = \nu^\ast \roi_{X_\sub{\'et}}^+$.
\item The structure sheaf $\roi_X = \nu^\ast \roi_{X_\sub{\'et}}$.
\item The completed integral structure sheaf $\hat\roi_X^+ = \projlim_r \roi_X^+/p^r$.
\item The completed structure sheaf $\hat\roi_X = \hat\roi_X^+[\tfrac 1p]$.
\item The tilted (completed) integral structure sheaf $\hat\roi_{X^\flat}^+ = \projlim_\phi \roi_X^+/p$.
\item Fontaine's period sheaf $\bb A_{\inf,X}$, the derived $p$-adic completion of $W(\hat\roi_{X^\flat}^+)$.
\end{enumerate}
\end{definition}

\begin{remark} The sheaves $\hat\roi_X^+$ and $W_r(\hat\roi_X^+)$ are derived $p$-adically complete (cf.~Section~\ref{ss:Completions} for the definition of derived completeness). This follows from the next lemma and its natural version for $W_r(\hat\roi_X^+)$ and the observation that almost zero modules are always derived complete in these contexts. However, it is not clear to us whether $W(\hat\roi_{X^\flat}^+)$ is derived $p$-adically complete. (Its failure to be derived $p$-adically complete is $[\frak m^\flat]$-torsion.) This is the reason that we define $\bb A_{\inf,X}$ as the derived $p$-adic completion of $W(\hat\roi_{X^\flat}^+)$ (which actually makes it a sheaf of complexes).
\end{remark}

\begin{lemma}[{\cite[Lemma 4.10, Lemma 5.10, Theorem 6.5]{ScholzePAdicHodge}}]\label{lem:descrperiodsheaves} Let $U=\projlimf_i U_i\in X_\sub{pro\'et}$ be affinoid perfectoid, where the $U_i=\Spa(R_i,R_i^+)$ are affinoid, such that the $p$-adically completed direct limit $(R,R^+)$ of the $(R_i,R_i^+)$ is perfectoid. Then
\[\begin{aligned}
\roi_X^+(U) &= \varinjlim_i R_i^+\ ,\ \roi_X(U) = \varinjlim_i R_i\ ,\ \hat\roi_X^+(U) = R^+\ ,\\ \hat\roi_X(U) &= R\ ,\ \hat\roi_{X^\flat}^+(U) = R^{+ \flat}\ ,\ H^0(U,\bb A_{\inf,X}) = \bb A_\inf(R^+)\ .
\end{aligned}\]
Moreover, for $i>0$, the groups
\[
H^i(U,\roi_X) = H^i(U,\hat\roi_X) = 0
\]
vanish, the $\roi$-modules $H^i(U,\roi_X^+)$ and $H^i(U,\hat\roi_X^+)$ are killed by $\frak m$, the $\roi^\flat$-module $H^i(U,\hat\roi_{X^\flat}^+)$ is killed by $\frak m^\flat$, and the $A_\inf$-module $H^i(U,\bb A_{\inf,X})$ is killed by $[\frak m^\flat]$.
\end{lemma}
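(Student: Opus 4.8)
The plan is to reduce the whole statement to a single almost-acyclicity assertion for the mod-$p$ structure sheaf on affinoid perfectoid objects, and then to obtain all the remaining sheaves by formal manipulations with inverse limits and Witt vectors in the almost category. First, $\roi_X^+$ and $\roi_X$ are pullbacks along $\nu$ of sheaves on $X_\sub{\'et}$, so the colimit description of the cohomology of $\nu^\ast$ (cf.\ \cite[Lemma 3.16]{ScholzePAdicHodge}) gives, for $U=\projlimf_i U_i$ affinoid perfectoid with $U_i=\Spa(R_i,R_i^+)$, that $H^j(U,\roi_X^+)=\varinjlim_i H^j(U_i,\roi_{X,\sub{\'et}}^+)$ and likewise for $\roi_X$; in particular $\roi_X^+(U)=\varinjlim_i R_i^+$ and $\roi_X(U)=\varinjlim_i R_i$. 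All the other sheaves are assembled from the $\roi_X^+/p^r$ via $\hat\roi_X^+=\varprojlim_r \roi_X^+/p^r$, $\hat\roi_X=\hat\roi_X^+[\tfrac1p]$, $\hat\roi_{X^\flat}^+=\varprojlim_\phi \roi_X^+/p$, and $\bb A_{\inf,X}=W(\hat\roi_{X^\flat}^+)$, so the whole lemma follows once one understands $R\Gamma(U,\roi_X^+/p^r)$ for affinoid perfectoid $U$.

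The key input is: for affinoid perfectoid $U$ with associated perfectoid pair $(R,R^+)$, the complex $R\Gamma(U,\roi_X^+/p)$ is, up to $\frak m$-torsion, concentrated in degree $0$ with $H^0$ almost equal to $R^+/p$. To prove this one works on the basis of affinoid perfectoids (\cite[Proposition 4.8]{ScholzePAdicHodge}) and reduces, using Lemma~\ref{lem:perfectoidtower}, to the case of a smooth $X/C$ with an \'etale map to a torus $\bb T^d$ that factors through rational embeddings and finite \'etale maps, together with its perfectoid pro-\'etale Galois cover $\widetilde X=\projlimf_r X_r$ of group $\Gamma=\bb Z_p^d$. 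Faltings's almost purity theorem — equivalently, the tilting correspondence for perfectoid spaces plus almost purity over the perfectoid base — shows that pro-\'etale covers of $U$ are, almost, towers of finite \'etale covers of the perfectoid algebra $R$; feeding this into the Cartan--Leray spectral sequence for $\widetilde X\to X$ reduces the computation, up to $\frak m$, to continuous group cohomology $R\Gamma_\sub{cont}(\Gamma,R_\infty^+/p)$, where $R_\infty^+$ is the perfectoid ring of $\widetilde X$ over $U$. This last complex is computed by an explicit Koszul complex, and the calculation over $\roi\langle T_1^{\pm 1/p^\infty},\dots,T_d^{\pm 1/p^\infty}\rangle$, transported along an almost finite \'etale base change, exhibits it as almost acyclic in positive degrees with $H^0$ almost equal to $R^+/p$. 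Bootstrapping along $0\to\roi_X^+/p\xto{p^{r-1}}\roi_X^+/p^r\to\roi_X^+/p^{r-1}\to 0$ then gives the analogous statement for $\roi_X^+/p^r$, for all $r\ge1$.

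It remains to assemble. Applying $R\varprojlim_r$ and using that $\{R^+/p^r\}_r$ has surjective transition maps and $R^+$ is $p$-adically complete yields that $R\Gamma(U,\hat\roi_X^+)$ is, up to $\frak m$-torsion, equal to $R^+$; the almost-error groups are $\frak m$-torsion and killed by $p$, hence have vanishing $\varprojlim$ and $\varprojlim^1$, so $\hat\roi_X^+(U)=R^+$ on the nose and $H^i(U,\hat\roi_X^+)$ is killed by $\frak m$ for $i>0$, and the colimit formula then forces $H^i(U,\roi_X^+)$ to be killed by $\frak m$ as well. Inverting $p$ removes the almost-error entirely, giving $\hat\roi_X(U)=R$ and $H^i(U,\hat\roi_X)=0$ for $i>0$. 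For $\hat\roi_{X^\flat}^+=\varprojlim_\phi \roi_X^+/p$, applying $R\varprojlim_\phi$ to the computation of $R\Gamma(U,\roi_X^+/p)$ (whose transition maps are surjective since Frobenius is surjective on $R^+/p$) gives that $R\Gamma(U,\hat\roi_{X^\flat}^+)$ is, up to $\frak m^\flat$-torsion, equal to $\varprojlim_\phi R^+/p=R^{+\flat}$, whence $\hat\roi_{X^\flat}^+(U)=R^{+\flat}$ and $H^i$ is killed by $\frak m^\flat$. Finally, writing $\bb A_{\inf,X}=\varprojlim_n W_n(\hat\roi_{X^\flat}^+)$ and running the $V$-filtration $0\to\hat\roi_{X^\flat}^+\xto{V^{n-1}}W_n(\hat\roi_{X^\flat}^+)\to W_{n-1}(\hat\roi_{X^\flat}^+)\to 0$ — on the $j$-th graded piece $[a]\in[\frak m^\flat]$ acts as multiplication by $a^{p^j}\in\frak m^\flat$ — shows $H^i(U,W_n(\hat\roi_{X^\flat}^+))$ is killed by $[\frak m^\flat]$ for $i>0$ while $H^0=W_n(R^{+\flat})$; passing to $\varprojlim_n$ then gives $\bb A_{\inf,X}(U)=\bb A_\inf(R^+)$ and $H^i(U,\bb A_{\inf,X})$ killed by $[\frak m^\flat]$ for $i>0$.

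The main obstacle is the key input above — the almost-acyclicity of $\roi_X^+/p$ on affinoid perfectoid objects — which is exactly the point at which Faltings's almost purity theorem is used; everything after that step is formal bookkeeping with inverse limits, Witt vectors, and almost mathematics.
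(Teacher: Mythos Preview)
The paper does not give a proof of this lemma; it is recorded as a citation to \cite[Lemma~4.10, Lemma~5.10, Theorem~6.5]{ScholzePAdicHodge}. Your overall architecture is correct and matches that reference: reduce everything to the almost-acyclicity of $\roi_X^+/p$ on affinoid perfectoids, then propagate to $\roi_X^+/p^r$, $\hat\roi_X^+$, $\hat\roi_{X^\flat}^+$, and $\bb A_{\inf,X}$ by inverse limits and the $V$-filtration on Witt vectors. That assembly step is fine.

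The argument you give for the key input, however, is confused in a way that matters. You write that Cartan--Leray for the cover $\widetilde X\to X$ reduces $R\Gamma(U,\roi_X^+/p)$ to $R\Gamma_\cont(\Gamma,R_\infty^+/p)$, and that an explicit Koszul computation exhibits the latter as almost acyclic. Both assertions are wrong. Cartan--Leray for a Galois cover $\widetilde X\to X$ with group $\Gamma$ expresses $R\Gamma(X,-)$ as $R\Gamma_\cont(\Gamma,R\Gamma(\widetilde X,-))$; it goes from the perfectoid level down to $X$, not the other way. And $R\Gamma_\cont(\bb Z_p^d,R_\infty^+/p)$ is \emph{not} almost acyclic in positive degrees: its $H^i$ contains $\bigwedge^i (R/p)^d$ as a summand (this is exactly Proposition~\ref{prop:compcontcohom} modulo $p$, and is the whole content of Theorem~\ref{thm:IntegralCartier}). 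You have imported into the proof of the lemma the very machinery that the rest of the paper uses to \emph{descend} from affinoid perfectoids back to small affinoids.

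The actual argument in \cite[Lemma~4.10]{ScholzePAdicHodge} runs differently: since affinoid perfectoids form a basis, it suffices to show that for any cover $V\to U$ with both affinoid perfectoid, the \v{C}ech complex of $\roi_X^+/p$ is almost exact. That \v{C}ech complex is identified with a \v{C}ech complex of $\roi^{+a}/p$ for an \'etale cover of the associated perfectoid space $\widehat U$, and its almost-exactness is the almost-acyclicity of the integral structure sheaf on perfectoid spaces --- proved via tilting and Tate acyclicity in characteristic~$p$. No group cohomology or Koszul complexes enter at this stage.
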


We note that using the argument from the proof of Theorem~\ref{thm:etalevsAinfcohom} below, it follows that $H^i(U,\bb A_{\inf,X})$ is actually killed by $W(\frak m^\flat)$.

Also, using the same formulae as Lemma~\ref{lemma_witt_alg_1}, there is a chain of natural morphisms of sheaves on $X_\sub{pro\'et}$:
\[
W(\hat\roi^+_{X^\flat})=\projlim_R W_r(\hat\roi^+_{X^\flat})\stackrel{\phi^\infty}{\longleftarrow}\projlim_F W_r(\hat\roi^+_{X^\flat})\To \projlim_F W_r(\roi^+_X/p) \longleftarrow\projlim_F W_r(\hat\roi^+_X)\ .
\]
Each of these morphisms is an isomorphism of sheaves; this follows from sheafifying the proof of Lemma~\ref{lemma_witt_alg_1}. Therefore, there are induced morphisms
\[
\tilde \theta_r:\bb A_{\inf,X}\to W_r(\hat\roi_X^+)\ ,\ \theta_r:=\tilde\theta_r\phi^r: \bb A_{\inf,X}\to W_r(\hat\roi_X^+)\ ,
\]
and $\theta:=\theta_1: \bb A_{\inf,X}\to \hat\roi_X^+$, as the target is in all cases derived $p$-adically complete already. By checking on affinoid perfectoids, the results of Section~\ref{sec:perfectoid} imply similar results on the level of sheaves on $X_\sub{pro\'et}$.

We will need the following result.

\begin{theorem}[{\cite[proof of Theorem 8.4]{ScholzePAdicHodge}}]\label{thm:etalevsAinfcohom} Assume that $C$ is algebraically closed, and $X$ is a proper smooth adic space over $C$. Then the inclusion $A_\inf\hookrightarrow \bb A_{\inf,X}$ induces an almost quasi-isomorphism
\[
R\Gamma_\sub{\'et}(X,\bb Z_p)\otimes_{\bb Z_p} A_\inf\to R\Gamma(X_\sub{pro\'et},\bb A_{\inf,X})\ ;
\]
more precisely, the cohomology of the cone is killed by $W(\frak m^\flat)$.
\end{theorem}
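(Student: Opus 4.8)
The plan is to deduce this from Scholze's primitive comparison theorem \cite[Theorem 8.4]{ScholzePAdicHodge} by a chain of d\'evissages, tracking at each stage the ideal that kills the cone. First I would observe that both sides are derived $p$-complete: the target because $\bb A_{\inf,X}=W(\hat\roi_{X^\flat}^+)$ is the ring of Witt vectors of a sheaf of $\bb F_p$-algebras, and the source because $R\Gamma_\sub{\'et}(X,\bb Z_p)$ is a perfect $\bb Z_p$-complex (finiteness of mod-$p$ \'etale cohomology, and hence of $H^i_\sub{\'et}(X,\bb Z_p)$, gives this, together with boundedness from properness) while $A_\inf$ is flat over $\bb Z_p$. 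Using $\bb A_{\inf,X}/p^r=W_r(\hat\roi_{X^\flat}^+)$, $A_\inf/p^r=W_r(\roi^\flat)$, the fact that $R\Gamma_\sub{\'et}(X,\bb Z_p)\dotimes_{\bb Z_p}(-)$ commutes with $R\varprojlim_r$ (perfectness of the source) and that $R\Gamma(X_\sub{pro\'et},-)$ commutes with the relevant $R\varprojlim$'s (here $X$ has bounded pro-\'etale cohomological dimension, as it is proper of finite dimension), it suffices to prove: for each $r\geq1$ the cone of
\[
R\Gamma_\sub{\'et}(X,\bb Z_p)\dotimes_{\bb Z_p}W_r(\roi^\flat)\To R\Gamma(X_\sub{pro\'et},W_r(\hat\roi_{X^\flat}^+))
\]
is killed by $W_r(\frak m^\flat)$, compatibly with the restriction maps in $r$; passing to $R\varprojlim_r$ then shows the cone of the original map is killed by $W(\frak m^\flat)=\varprojlim_r W_r(\frak m^\flat)$.

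The case $r=1$ is the heart of the matter. The primitive comparison theorem asserts that
\[
R\Gamma_\sub{\'et}(X,\bb F_p)\otimes_{\bb F_p}\roi/p\To R\Gamma(X_\sub{pro\'et},\roi_X^+/p)
\]
has cone killed by $\frak m$; its proof in \cite{ScholzePAdicHodge} combines the finiteness of $H^i_\sub{\'et}(X,\bb F_p)$ for proper smooth $X$ over the algebraically closed field $C$ with Faltings's almost purity theorem. I would then apply $R\varprojlim$ along the absolute Frobenius $\phi$. Since $\hat\roi_{X^\flat}^+=\varprojlim_\phi\roi_X^+/p$ with surjective transition maps, the target becomes $R\Gamma(X_\sub{pro\'et},\hat\roi_{X^\flat}^+)$. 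Since $R\Gamma_\sub{\'et}(X,\bb F_p)$ is perfect over $\bb F_p$ and $\phi$ acts on it as the identity, the source becomes $R\Gamma_\sub{\'et}(X,\bb F_p)\otimes_{\bb F_p}\varprojlim_\phi\roi/p=R\Gamma_\sub{\'et}(X,\bb F_p)\otimes_{\bb F_p}\roi^\flat=R\Gamma_\sub{\'et}(X,\bb Z_p)\dotimes_{\bb Z_p}\roi^\flat$ (using $R\Gamma_\sub{\'et}(X,\bb Z_p)\dotimes_{\bb Z_p}\bb F_p\simeq R\Gamma_\sub{\'et}(X,\bb F_p)$). An element $a=(a_0,a_1,\dots)\in\frak m^\flat=\varprojlim_\phi\frak m$ acts on $R\varprojlim$ of the cone via the compatible family of multiplications by $a_i\in\frak m$, each of which vanishes, so---given the uniform boundedness of the cones at finite levels, which controls the ${\varprojlim}^1$-terms---the cone of $R\Gamma_\sub{\'et}(X,\bb Z_p)\dotimes_{\bb Z_p}\roi^\flat\to R\Gamma(X_\sub{pro\'et},\hat\roi_{X^\flat}^+)$ is killed by $\frak m^\flat$.

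To pass to general $r$ I would climb the $V$-filtration. Because $\hat\roi_{X^\flat}^+$ is a sheaf of perfect $\bb F_p$-algebras, the operators $V^{r-1}$ fit into short exact sequences $0\to\hat\roi_{X^\flat}^+\xto{V^{r-1}}W_r(\hat\roi_{X^\flat}^+)\to W_{r-1}(\hat\roi_{X^\flat}^+)\to0$, and likewise for $\roi^\flat$ and $W_r(\roi^\flat)$; these are compatible with the comparison maps. Inducting on $r$ from the case just treated, and using that $W_r(\frak m^\flat)$ itself has a finite filtration (by $V^jW_{r-j}(\frak m^\flat)$) with graded pieces $\frak m^\flat$, one sees the cone at level $r$ is killed by $W_r(\frak m^\flat)$, compatibly in $r$, which completes the argument. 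The substantive input is the primitive comparison theorem---equivalently, finiteness of mod-$p$ \'etale cohomology of proper smooth adic spaces over $C$ together with almost purity---cited from \cite{ScholzePAdicHodge}; the only other point requiring care is checking that ``killed by the relevant ideal'' is preserved under the two inverse limits, over $\phi$ and over $r$, which is precisely where the boundedness of all complexes in sight (ultimately a consequence of properness) enters.
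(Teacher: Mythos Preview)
Your argument is essentially correct, but it is considerably more elaborate than the paper's, and the $V$-filtration step needs one extra observation. In the induction on $r$, the cone at level $r$ sits in a triangle whose outer terms have cohomology killed (as $W_r(\roi^\flat)$-modules) by $W_r(\frak m^\flat)$; this only gives that the middle is killed by $W_r(\frak m^\flat)^2$. You must then use that $W_r(\frak m^\flat)^2=W_r(\frak m^\flat)$, which holds because $\frak m^\flat=(\frak m^\flat)^2$ is an increasing union of principal ideals generated by non-zero-divisors (this is the paper's Corollary~\ref{corollary_almost_Witt}). Your remark about the filtration of $W_r(\frak m^\flat)$ by $V^jW_{r-j}(\frak m^\flat)$ does not by itself supply this; idempotence is what makes the induction close.

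The paper's proof is much shorter and takes a different route. It simply cites from \cite{ScholzePAdicHodge} that the cone is killed by $[\frak m^\flat]$ (the ideal of $A_\inf$ generated by Teichm\"uller lifts of elements of $\frak m^\flat$), and then upgrades this to $W(\frak m^\flat)$ in one step: each cohomology group $M$ of the cone is derived $p$-complete and killed by $[\frak m^\flat]$, hence is a module over the derived $p$-adic completion of $A_\inf/[\frak m^\flat]$; but $(A_\inf/[\frak m^\flat])/p^r=W_r(\roi^\flat)/[\frak m^\flat]=W_r(\roi^\flat)/W_r(\frak m^\flat)=W_r(k)$ (again by Corollary~\ref{corollary_almost_Witt}), so this completion is $W(k)=A_\inf/W(\frak m^\flat)$, and $M$ is killed by $W(\frak m^\flat)$. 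Your approach has the virtue of being self-contained from the mod-$p$ primitive comparison theorem, reconstructing the $[\frak m^\flat]$-annihilation along the way; the paper's approach treats the almost quasi-isomorphism as a black box and isolates the passage from $[\frak m^\flat]$ to $W(\frak m^\flat)$ as a purely module-theoretic fact about derived $p$-complete $A_\inf$-modules.
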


\begin{proof} The cohomology of the cone is killed by $[\frak m^\flat]$, and derived $p$-complete (cf.~Lemma~\ref{lem:derivedcompletecohom}). Thus, it becomes a module over the derived $p$-completion of $A_\inf/[\frak m^\flat]$, which is given by $W(k) = A_\inf/W(\frak m^\flat)$. In particular, it is killed by $W(\frak m^\flat)$.
\end{proof}

Let us now briefly recall the proof of Theorem~\ref{thm:ratPAdicHodge}. Let $X/K$ be a proper smooth rigid-analytic variety. Theorem~\ref{thm:etalevsAinfcohom} implies that
\[
R\Gamma_\sub{\'et}(X_C,\bb Z_p)\otimes_{\bb Z_p} B_\dR^+\cong R\Gamma_\sub{pro\'et}(X_C,\bb B_{\dR,X}^+)\ ,
\]
where $\bb B_{\dR,X}^+$ is the relative period sheaf defined in \cite{ScholzePAdicHodge}. On the other hand, one can define a sheaf $\roi\bb B_{\dR,X}^+$ as a suitable completion of $\roi_X\otimes_{W(k)} \bb B_{\dR,X}^+$,\footnote{The original definition was slightly wrong, cf.~\cite{ScholzePAdicHodgeErratum}.} which comes with a connection $\nabla$ (induced from the $\roi_X$-factor), and there is a Poincar\'e lemma:
\[
0\to \bb B_{\dR,X}^+\to \roi \bb B_{\dR,X}^+\xTo{\nabla} \roi \bb B_{\dR,X}^+\otimes_{\roi_X} \Omega^1_X\to \ldots
\]
is exact; this is inspired by work of Andreatta--Iovita, \cite{AndreattaIovita}. One finishes by observing that the cohomology of $\roi \bb B_{\dR,X}^+[\xi^{-1}]$ is the same as the cohomology of $\roi_X\hat{\otimes}_K B_\dR$, which follows from a direct Galois cohomology computation, due to Brinon, \cite{BrinonRepresentations}.

\newpage

\section{The $L\eta$-operator}

Consider a ring $A$ and non-zero-divisor $f\in A$, and denote by $D(A)$ the derived category of $A$-modules. If $M^\blob$ is a cochain complex such that $M^i$ is $f$-torsion free for all $i\in \bb Z$, we denote by $\eta_fM^\blob$ the subcomplex of $M^\blob[\tfrac 1f]$ defined as
\[
(\eta_f M)^i:=\{x\in f^iM^i:dx\in f^{i+1}M^{i+1}\}\ .
\]
In \S \ref{subset:LetaCons}, we show that the functor $\eta_f(-)$ descends to the derived category, inducing a (non-exact!) functor $L\eta_f: D(A)\to D(A)$, and study various properties of the resulting construction. In \S \ref{ss:Completions}, we recall some basic properties of completions in the derived category, and study their commutation with $L\eta$.

\subsection{Construction and properties of $L\eta$}
\label{subset:LetaCons}

For applications, it will be important to have the $L\eta_f$-operation also on a ringed site (or topos), so let us work in this generality. Let $(T,\roi_T)$ be a ringed topos. Let $D(\roi_T)$ be the derived category of $\roi_T$-modules. Recall that $D(\roi_T)$ is, by definition, the localization of the category $K(\roi_T)$ of complexes of $\roi_T$-modules (up to homotopy) at the quasi-isomorphisms.

Recall that a complex $C^\blob\in K(\roi_T)$ is $K$-flat if for every acyclic complex $D^\blob\in K(\roi_T)$, the total complex $\mathrm{Tot}(C^\blob\otimes_{\roi_T} D^\blob)$ is acyclic, cf.~\cite[Tag 06YN]{StacksProject}. Let us say that $C^\blob$ is strongly $K$-flat if in addition each $C^i$ is a flat $\roi_T$-module.

\begin{lemma}\label{lem:flatresolution} For every complex $D^\blob\in K(\roi_T)$, there is a strongly $K$-flat complex $C^\blob\in K(\roi_T)$ and a quasi-isomorphism $C^\blob\to D^\blob$.

In particular, $D(\roi_T)$ is the localization of the full subcategory of strongly $K$-flat complexes in $K(\roi_T)$, along the quasi-isomorphisms.
\end{lemma}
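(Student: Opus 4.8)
The plan is to build the strongly $K$-flat resolution by a standard bar-type construction, adapted to the topos setting. First I would recall that for a ringed topos $(T,\roi_T)$, the free $\roi_T$-module functor $F$, sending a sheaf of sets $S$ to the free $\roi_T$-module $\roi_T^{(S)}$ on $S$, is left adjoint to the forgetful functor; crucially, $\roi_T^{(S)}$ is a flat $\roi_T$-module (it is a filtered colimit of finite free modules, flatness being a local and colimit-stable condition). Applying this degreewise to any $\roi_T$-module $M$ via the associated simplicial resolution $F(M^{(\bullet)})$ built from the comonad $FU$ — concretely, $\cdots \to FUFU M \to FUM \to M$ — and passing to the associated chain complex, one obtains a complex $P^\blob(M)$ of flat $\roi_T$-modules together with a functorial quasi-isomorphism $P^\blob(M) \to M$, where I regard $M$ as a complex concentrated in degree $0$.

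Next I would globalize this to an arbitrary complex $D^\blob \in K(\roi_T)$. Applying $P^\blob(-)$ in each cohomological degree gives a double complex, whose totalization (or, in the unbounded case, a suitable product–or–coproduct totalization together with a truncation/filtered-colimit argument to ensure convergence) yields a complex $C^\blob$ of flat $\roi_T$-modules with a map $C^\blob \to D^\blob$. One checks this is a quasi-isomorphism by a spectral sequence (or a hypercohomology) argument: the map is a degreewise resolution in the horizontal direction. Then I would verify that $C^\blob$ is not merely a complex of flat modules but is $K$-flat, i.e.\ that tensoring with an acyclic complex preserves acyclicity; this is where one invokes \cite[Tag 06YN]{StacksProject} and its companions — a complex of flat modules that is a suitable (homotopy) colimit of bounded-above complexes of flats is $K$-flat. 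For unbounded $D^\blob$ one typically writes $D^\blob = \operatorname*{hocolim}_n \tau^{\leq n} D^\blob$, resolves each bounded-above truncation by a bounded-above complex of flats (where $K$-flatness is automatic), and takes the homotopy colimit of the resolutions, using that $K$-flat complexes are closed under homotopy colimits.

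For the final sentence, I would argue as follows. Every object of $D(\roi_T)$ is isomorphic (via the quasi-isomorphism just constructed) to a strongly $K$-flat complex, so the functor from the homotopy category of strongly $K$-flat complexes to $D(\roi_T)$ is essentially surjective. That it realizes $D(\roi_T)$ as a localization at quasi-isomorphisms is then a formal consequence of the general principle that a full subcategory of $K(\roi_T)$ whose objects resolve every object, localized at its quasi-isomorphisms, is equivalent to $D(\roi_T)$: one checks that any morphism in $D(\roi_T)$ between strongly $K$-flat complexes is represented by an honest chain map up to quasi-isomorphism (using that a quasi-isomorphism out of a $K$-flat complex can be inverted after further $K$-flat replacement), i.e.\ that the class of quasi-isomorphisms between strongly $K$-flat complexes admits a calculus of fractions computing $\Hom_{D(\roi_T)}$.

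The main obstacle is the unbounded case: ensuring that the bar-construction totalization actually converges to $D^\blob$ and is $K$-flat when $D^\blob$ is unbounded in both directions. In a general topos one cannot simply take a naive product totalization and expect a quasi-isomorphism, so the cleanest route is the $\operatorname*{hocolim}$ of truncations described above, which reduces everything to the bounded-above situation where flat-termwise complexes are automatically $K$-flat and the resolution statement is classical. Everything else — functoriality of $P^\blob$, flatness of free modules, the formal localization argument — is routine.
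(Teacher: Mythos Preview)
Your proposal is correct and essentially unpacks the construction that the paper invokes by citation: the paper's proof simply refers to \cite[Tag 077J]{StacksProject} for the existence of a $K$-flat resolution, observes from its proof that the terms are flat (as filtered colimits of flat modules are flat), and declares the second sentence a formal consequence. Your bar-resolution plus hocolim-of-truncations argument is exactly the content of that Stacks Project tag, so there is no substantive difference in approach.
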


\begin{proof} The first sentence follows from \cite[Tag 077J]{StacksProject} (and its proof to see that the complex is strongly $K$-flat, noting that filtered colimits of flat modules are flat). The second sentence is a formal consequence.
\end{proof}

Now let $\cal I\subset \roi_T$ be an invertible ideal sheaf. Weakening the notion of strongly $K$-flat complexes, we say that $C^\blob$ is $\cal I$-torsion-free if the map $\cal I\otimes C^i\to C^i$ is injective for all $i\in \bb Z$; we denote its image by $\cal I\cdot C^i\subset C^i$.

\begin{definition} Let $C^\blob\in K(\roi_T)$ be an $\cal I$-torsion-free complex. Define a new ($\cal I$-torsion-free) complex $\eta_{\cal I} C^\blob = (\eta_{\cal I} C)^\blob\in K(\roi_T)$ with terms
\[
(\eta_{\cal I} C)^i = \{x\in C^i\mid dx\in \cal I\cdot C^{i+1}\}\otimes_{\roi_T} \cal I^{\otimes i}
\]
and differential
\[
d_{(\eta_{\cal I} C)^i}: (\eta_{\cal I} C)^i\to (\eta_{\cal I} C)^{i+1}
\]
making the following diagram commute:
\[\xymatrix{
(\eta_{\cal I} C)^i\ar^{d_{C^i}\otimes \cal I^{\otimes i}}[r] \ar_{d_{(\eta_{\cal I} C)^i}}[d] & \cal I\cdot C^{i+1}\otimes \cal I^{\otimes i}\ar^{\cong}[d] \\
(\eta_{\cal I} C)^{i+1}\ar@{^(->}[r] & C^{i+1}\otimes \cal I^{\otimes (i+1)}\ .
}\]
\end{definition}

\begin{remark}
\label{rmk:LetaPrincipal} The definition is phrased to depend only on the ideal $\cal I$, and not on a chosen generator $f\in \cal I$. If $f\in \cal I$ is a generator (assuming it exists), then one has
\[
(\eta_{\cal I} C)^i = (\eta_f C)^i := \{x\in f^i C^i\mid dx\in f^{i+1} C^{i+1}\}\ ,
\]
and the differential is compatible with the differential on $C^\blob[\tfrac 1f]$. Moreover, in this case, there is an isomorphism $\eta_{\cal I} (C[1]) \simeq (\eta_{\cal I}C)[1]$ given by multiplication by $f$ in each degree. 
\end{remark}

One can describe the effect of this operation on cohomology as killing the $\cal I$-torsion:

\begin{lemma}\label{lem:CohomLeta} Let $C^\blob\in K(\roi_T)$ be an $\cal I$-torsion-free complex. Then there is a canonical isomorphism
\[
H^i(\eta_{\cal I} C^\blob) = \left(H^i(C^\blob) / H^i(C^\blob)[\cal I]\right)\otimes_{\roi_T} \cal I^{\otimes i}
\]
for all $i\in \bb Z$. Here,
\[
H^i(C^\blob)[\cal I] = \ker(H^i(C^\blob)\to H^i(C^\blob)\otimes_{\roi_T} \cal I^{\otimes -1})\subset H^i(C^\blob)
\]
is the $\cal I$-torsion.

In particular, if $\alpha: C^\blob\to D^\blob$ is a quasi-isomorphism of $\cal I$-torsion free complexes, then so is $\eta_{\cal I} \alpha: \eta_{\cal I} C^\blob\to \eta_{\cal I} D^\blob$.
\end{lemma}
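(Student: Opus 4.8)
The first assertion is a pointwise/local computation with complexes, so the plan is to work with an $\cal I$-torsion-free complex $C^\blob$ and, after choosing a local generator $f$ of $\cal I$, use the description in Remark~\ref{rmk:LetaPrincipal}: $(\eta_f C)^i = \{x\in f^i C^i \mid dx\in f^{i+1}C^{i+1}\}$, a subcomplex of $C^\blob[\tfrac1f]$. First I would unwind what cocycles and coboundaries of $\eta_f C^\blob$ are. A cocycle in degree $i$ is $x\in f^i C^i$ with $dx\in f^{i+1}C^{i+1}$ and in fact $dx\in f^{i+2}C^{i+2}$ after the next differential (automatic since $d^2=0$), i.e. $x = f^i y$ where $y\in C^i$ is a cocycle of $C^\blob[\tfrac1f]$; intersecting with $C^i$, these are exactly $f^i$ times cocycles $z\in C^i$ of $C^\blob$ together with the ``virtual'' cocycles $y\in f^{-1}C^i$ with $f y$ a cocycle and $d(fy)\in f C^{i+1}$ — but the key point is that multiplying a genuine cocycle class by $f^i$ is an isomorphism modulo the part that becomes a coboundary. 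The cleanest route is the standard one: show directly that the inclusion of the subcomplex $f^i Z^i(C^\blob) \oplus \ldots$ induces, on cohomology, the map $H^i(C^\blob)\otimes \cal I^{\otimes i}\twoheadrightarrow (H^i(C^\blob)/H^i(C^\blob)[\cal I])\otimes\cal I^{\otimes i}$. Concretely: a coboundary $dx$ in $(\eta_f C)^i$ with $x\in (\eta_f C)^{i-1}= f^{i-1}\{w: dw\in f^i C^i\}$ is $f^{i-1} dw$ with $dw\in f^i C^i$; so modulo coboundaries of $\eta_f C$, a cocycle $f^i z$ ($z\in Z^i(C^\blob)$) is killed precisely when $f^i z = f^{i-1}dw$ for such $w$, i.e. $fz = dw$ with $dw \in f^i C^i$ — equivalently $z\in f^{i-1}C^i$ has a representative that is killed by multiplication by some power of $f$ in cohomology, i.e. the class $[z]\in H^i(C^\blob)$ lies in $H^i(C^\blob)[f^{\infty}]$; and since $C^\blob$ is $\cal I$-torsion-free, $H^i(C^\blob)[f^\infty] = H^i(C^\blob)[f] = H^i(C^\blob)[\cal I]$ (here one uses that $f$ a non-zero-divisor on each $C^i$ forces the $f$-power-torsion in cohomology to already be killed by $f$, a short diagram chase). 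This gives the surjection with the stated kernel.

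Having done the local computation, the second step is to check the description is independent of the choice of generator $f$ and hence glues: the terms $(\eta_{\cal I}C)^i$ are defined intrinsically in terms of $\cal I$ (as in the Definition), and the isomorphism $H^i(\eta_{\cal I}C^\blob)\cong (H^i(C^\blob)/H^i(C^\blob)[\cal I])\otimes\cal I^{\otimes i}$ is canonical because the construction in the previous paragraph only involved the subsheaves $\cal I\cdot C^j\subset C^j$ and the natural quotient maps, not a generator. Since cohomology of a complex of $\roi_T$-modules is computed sheafwise and $\cal I$ is locally principal, it suffices to exhibit the isomorphism locally in a way that is compatible with restriction, which the above does. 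One small point to be careful about: the twist $\otimes_{\roi_T}\cal I^{\otimes i}$ and the fact that $\cal I$ is invertible (hence flat), so $(-)\otimes\cal I^{\otimes i}$ is exact and commutes with $H^i$ and with $[\cal I]$-torsion; this lets one move the twist freely.

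For the ``in particular'' clause, the plan is to deduce it immediately from the cohomology formula: if $\alpha\colon C^\blob\to D^\blob$ is a quasi-isomorphism of $\cal I$-torsion-free complexes, then $H^i(\alpha)$ is an isomorphism for all $i$, hence $H^i(\alpha)$ carries $H^i(C^\blob)[\cal I]$ isomorphically onto $H^i(D^\blob)[\cal I]$ (being a morphism of $\roi_T$-modules compatible with multiplication by sections of $\cal I$), hence induces an isomorphism on the quotients, hence — after the exact twist by $\cal I^{\otimes i}$ — on $H^i(\eta_{\cal I}\alpha)$. Since this holds for all $i$, $\eta_{\cal I}\alpha$ is a quasi-isomorphism. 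One must of course check that $\eta_{\cal I}$ is functorial for maps of $\cal I$-torsion-free complexes (a map $\alpha$ sends $\{x : dx\in \cal I\cdot C^{i+1}\}$ into $\{x : dx\in \cal I\cdot D^{i+1}\}$ since $\alpha$ commutes with $d$ and is $\roi_T$-linear), which is routine, and that the cohomology isomorphism of the first part is natural in $C^\blob$, which is clear from its construction.

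\textbf{Main obstacle.} The only genuinely substantive point is the identification $H^i(C^\blob)[f^\infty] = H^i(C^\blob)[\cal I]$, i.e. that the part of $H^i(\eta_f C^\blob)$ that dies is exactly the $\cal I$-torsion and not merely the $\cal I$-power-torsion — here one uses crucially that each $C^i$ is $\cal I$-torsion-free (so that $f$ is injective on $C^i$) via the elementary observation: if $z\in Z^i(C^\blob)$ with $fz = dw$ for some $w\in C^{i-1}$, then already $z$ represents a class in the image of multiplication by... — more precisely, one shows a cocycle $z$ with $[z]$ killed by $f$ in cohomology (say $fz = dw$) has $w$ automatically satisfying $dw = fz \in f C^i$, so $z$ contributes a coboundary in $\eta_f C$, and conversely; getting the book-keeping of the $f$-powers and the degree shifts exactly right is where care is needed, but it is a finite diagram chase with no conceptual difficulty. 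Everything else (independence of generator, gluing, the exactness of the $\cal I$-twist, functoriality) is formal.
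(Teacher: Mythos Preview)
Your overall approach---identify cocycles and coboundaries of $\eta_f C^\bullet$ locally and read off the cohomology---is the same as the paper's. The paper's proof observes that $Z^i(\eta_{\cal I} C^\bullet) \cong Z^i(C^\bullet)\otimes\cal I^{\otimes i}$, then checks directly that a cocycle maps to zero in $H^i(\eta_{\cal I} C^\bullet)$ exactly when it represents an $\cal I$-torsion class in $H^i(C^\bullet)$.

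However, you have a bookkeeping slip that leads you to a false claim. You write $(\eta_f C)^{i-1} = f^{i-1}\{w : dw \in f^i C^i\}$; this should be $f^{i-1}\{w : dw \in f\, C^i\}$, since for $x = f^{i-1}w$ the condition $dx \in f^i C^i$ reads $f^{i-1}dw \in f^i C^i$, i.e.\ $dw \in f C^i$. With the correct condition, $f^i z$ is a coboundary iff $fz = dw$ for some $w$ (the side condition $dw = fz \in f C^i$ is automatic), i.e.\ exactly iff $[z] \in H^i(C^\bullet)[f]$. No $f^\infty$-torsion enters.

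Your ``main obstacle''---showing $H^i(C^\bullet)[f^\infty] = H^i(C^\bullet)[f]$---is therefore spurious, and more importantly the claim you make to resolve it is \emph{false}: $f$ being a non-zero-divisor on each $C^i$ does \emph{not} force $f$-power-torsion in cohomology to be killed by $f$. Take $C^\bullet = (\bb Z \xrightarrow{\,p^2\,} \bb Z)$ in degrees $0,1$ with $f = p$: each term is $p$-torsion-free, but $H^1(C^\bullet) = \bb Z/p^2\bb Z$ has $H^1[p] \subsetneq H^1[p^2]$. So that ``short diagram chase'' does not exist. Fortunately you never needed it; your final paragraph in fact lands on the correct computation (``$dw = fz \in f C^i$, so $z$ contributes a coboundary''), which is all that is required. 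The rest of your plan---functoriality, naturality, gluing, and the deduction of the ``in particular'' clause---is fine and matches the paper.
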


\begin{proof} Let $Z^i(C^\blob)\subset C^i$, $Z^i(\eta_{\cal I} C^\blob)\subset (\eta_{\cal I} C)^i$ be the cocycles. Then there is a natural isomorphism
\[
Z^i(C^\blob)\otimes_{\roi_T} \cal I^{\otimes i}\cong Z^i(\eta_{\cal I} C^\blob)\ ,
\]
inducing a surjection
\[
H^i(C^\blob)\otimes_{\roi_T} \cal I^{\otimes i}\to H^i(\eta_{\cal I} C^\blob)\ .
\]
Unraveling the definitions, one sees that if $x\in Z^i(C^\blob)\otimes_{\roi_T} \cal I^{\otimes i}$ is a cocycle, then its image in $H^i(\eta_{\cal I} C^\blob)$ vanishes if and only if there is an element $y\in C^{i-1}\otimes_{\roi_T} \cal I^{\otimes (i-1)}$ such that
\[
dy\in Z^i\otimes_{\roi_T} \cal I^{\otimes (i-1)}\cong \Hom_{\roi_T}(\cal I,Z^i\otimes_{\roi_T} \cal I^{\otimes i})
\]
agrees with the map $\cal I\subset \roi_T\buildrel x\over\to Z^i(C^\blob)\otimes_{\roi_T} \cal I^{\otimes i}$. This happens precisely when $x$ gives an $\cal I$-torsion element of $H^i(C^\blob)$. The final statement follows formally.
\end{proof}

In particular, the following corollary follows.

\begin{corollary}
\label{cor:LetaExists}The functor $\eta_{\cal I}$ from strongly $K$-flat complexes in $K(\roi_T)$ to $D(\roi_T)$ factors canonically over a functor $L\eta_{\cal I}: D(\roi_T)\to D(\roi_T)$. The functor $L\eta_{\cal I}$ commutes with all filtered colimits.

Moreover, $L\eta_{\cal I}: D(\roi_T)\to D(\roi_T)$ commutes with canonical truncations, i.e.~for all $a\leq b$ in $\bb Z\cup \{-\infty,\infty\}$ and any $C\in D(\roi_T)$, one has
\[
L\eta_{\cal I}(\tau^{[a,b]} C)\cong \tau^{[a,b]} L\eta_{\cal I}(C)\ .
\]
\end{corollary}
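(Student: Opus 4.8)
\textbf{Proof proposal for Corollary~\ref{cor:LetaExists}.} The plan is to establish the three assertions in turn, all as formal consequences of Lemma~\ref{lem:CohomLeta}, Lemma~\ref{lem:flatresolution}, and the explicit description of $\eta_{\cal I}$ on complexes.

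First I would construct $L\eta_{\cal I}$ itself. Lemma~\ref{lem:flatresolution} gives that $D(\roi_T)$ is the localization of the full subcategory of strongly $K$-flat complexes at quasi-isomorphisms; since any strongly $K$-flat complex is in particular $\cal I$-torsion-free (flatness of each $C^i$ makes $\cal I\otimes C^i\to C^i$ injective, as $\cal I$ is an invertible, hence locally free, ideal sheaf), the functor $\eta_{\cal I}$ is defined on this subcategory. By Lemma~\ref{lem:CohomLeta}, $\eta_{\cal I}$ carries quasi-isomorphisms between $\cal I$-torsion-free complexes to quasi-isomorphisms, so the composite ``$\eta_{\cal I}$ then project to $D(\roi_T)$'' inverts quasi-isomorphisms and therefore factors through the localization, yielding $L\eta_{\cal I}\colon D(\roi_T)\to D(\roi_T)$. (One should remark that the resulting functor is canonical: two strongly $K$-flat resolutions of the same object are connected by a quasi-isomorphism, unique up to homotopy, and $\eta_{\cal I}$ applied to it gives the required canonical isomorphism; a homotopy $h\colon C^\blob\to D^{\blob-1}$ between $\cal I$-torsion-free complexes need not restrict to a homotopy of the $\eta_{\cal I}$'s, so the cleanest formulation is via the universal property of the localization rather than a pointwise homotopy argument.) Commutation with filtered colimits follows because strongly $K$-flat resolutions and the formula for $(\eta_{\cal I}C)^i$ — a kernel-type subsheaf tensored with $\cal I^{\otimes i}$ — both commute with filtered colimits: filtered colimits are exact, preserve flatness (as used in Lemma~\ref{lem:flatresolution}), and commute with $-\otimes_{\roi_T}\cal I^{\otimes i}$, so $\eta_{\cal I}(\varinjlim C_j^\blob)=\varinjlim \eta_{\cal I}(C_j^\blob)$ on the nose for a compatible system of strongly $K$-flat complexes, and one checks the colimit of strongly $K$-flat complexes is again strongly $K$-flat.

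For the compatibility with canonical truncations, fix $C\in D(\roi_T)$ and a strongly $K$-flat representative $C^\blob$. The key point is a pointwise identity of complexes: the good truncation $\tau^{\leq b}$ and $\eta_{\cal I}$ "commute" in the sense that $\eta_{\cal I}(\tau^{\leq b}C^\blob)$ and $\tau^{\leq b}(\eta_{\cal I}C^\blob)$ have the same cohomology in every degree by Lemma~\ref{lem:CohomLeta} — both are $H^i(C^\blob)/H^i(C^\blob)[\cal I]\otimes\cal I^{\otimes i}$ for $i\leq b$ and $0$ for $i>b$, using that $H^i(\tau^{\leq b}C^\blob)=H^i(C^\blob)$ for $i\leq b$. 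To upgrade this to a canonical isomorphism in $D(\roi_T)$ rather than just an abstract agreement of cohomology, I would instead argue directly on the level of the explicit complex $\eta_{\cal I}C^\blob$, which need not be $K$-flat but is still $\cal I$-torsion-free, and observe that the subcomplex inclusion $\tau^{\leq b}(\eta_{\cal I}C^\blob)\hookrightarrow \eta_{\cal I}C^\blob$ together with the natural map $\eta_{\cal I}(\tau^{\leq b}C^\blob)\to \tau^{\leq b}(\eta_{\cal I}C^\blob)$ — both of which come from the functoriality of $\eta_{\cal I}$ on $\cal I$-torsion-free complexes applied to $\tau^{\leq b}C^\blob\hookrightarrow C^\blob$, noting $\tau^{\leq b}C^\blob$ is again $\cal I$-torsion-free as a subcomplex — is a quasi-isomorphism by the cohomology computation. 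The analogous statement for $\tau^{\geq a}$ is obtained by passing to the quotient complex $\tau^{\geq a}C^\blob$, which is still $\cal I$-torsion-free because $Z^a(C^\blob)\subset C^a$ is an $\cal I$-torsion-free submodule (being the kernel of a map of $\cal I$-torsion-free sheaves) and hence the quotient $C^a/B^a$... here one must be slightly careful, so I would instead realize $\tau^{\geq a}C^\blob$ as a subcomplex of a shift, or simply note that $\tau^{[a,b]}C^\blob$ can be represented by the stupid-like complex in degrees $[a,b]$ with the $a$-th term replaced by $C^a/B^a(C^\blob)$, whose $\cal I$-torsion-freeness I would verify by hand. Combining the $\tau^{\leq b}$ and $\tau^{\geq a}$ cases (and the extreme values $a=-\infty$, $b=+\infty$, which are trivial) gives the general $\tau^{[a,b]}$ statement.

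The main obstacle I anticipate is \emph{not} any deep computation but the bookkeeping around well-definedness and functoriality of $L\eta_{\cal I}$ at the level of the derived category: since $L\eta_f$ is not exact and $\eta_{\cal I}$ does not obviously respect chain homotopies, one cannot simply say "$\eta_{\cal I}$ descends because it preserves homotopy equivalences." The honest route is the one via Lemma~\ref{lem:flatresolution} and the universal property of localization, and the subtle point is checking that the comparison maps $L\eta_{\cal I}(\tau^{[a,b]}C)\to\tau^{[a,b]}L\eta_{\cal I}(C)$ constructed from a strongly $K$-flat resolution are independent of the resolution — this again reduces to the localization formalism plus Lemma~\ref{lem:CohomLeta}, but requires care in verifying that $\tau^{[a,b]}$ of a strongly $K$-flat complex, while not strongly $K$-flat, is still $\cal I$-torsion-free so that $\eta_{\cal I}$ applies to it and the whole diagram lives in a category where Lemma~\ref{lem:CohomLeta} can be invoked.
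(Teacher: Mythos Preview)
Your construction of $L\eta_{\cal I}$ and the argument for filtered colimits are correct and match the paper's (implicit) reasoning; the paper gives no proof, treating everything as immediate from Lemma~\ref{lem:CohomLeta} and Lemma~\ref{lem:flatresolution}. Your handling of $\tau^{\leq b}$ is also fine: $\tau^{\leq b}C^\blob$ is a subcomplex with $\cal I$-torsion-free terms, so $\eta_{\cal I}$ applies and the cohomology computation goes through.

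There is, however, a small gap in your treatment of $\tau^{\geq a}$. You propose to ``verify by hand'' that $C^a/B^a(C^\blob)$ is $\cal I$-torsion-free, but this is false in general: already for $\roi_T=\bb Z$, $\cal I=(p)$, and $C^\blob=(\bb Z\xrightarrow{p}\bb Z)$ in degrees $a-1,a$, one has $C^a/B^a=\bb Z/p$. So you cannot apply $\eta_{\cal I}$ directly to the standard model of $\tau^{\geq a}C^\blob$. The clean fix is to stop working with explicit representatives once $L\eta_{\cal I}$ exists as a functor on $D(\roi_T)$: the canonical map $C\to\tau^{\geq a}C$ in $D(\roi_T)$ induces $L\eta_{\cal I}(C)\to L\eta_{\cal I}(\tau^{\geq a}C)$, and by Lemma~\ref{lem:CohomLeta} the target lies in $D^{\geq a}(\roi_T)$, so this map factors uniquely through $\tau^{\geq a}L\eta_{\cal I}(C)$ by adjunction; the resulting map $\tau^{\geq a}L\eta_{\cal I}(C)\to L\eta_{\cal I}(\tau^{\geq a}C)$ is then a quasi-isomorphism by another application of Lemma~\ref{lem:CohomLeta}. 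This sidesteps the torsion-freeness issue entirely.
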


We repeat a warning made earlier:

\begin{remark}
The functor $L\eta_{\cal I}:D(\roi_T) \to D(\roi_T)$ constructed above is {\em not} exact. For example, when $T$ is the punctual topos and $\cal I = (p) \subset \mathbb{Z}$, then $L\eta_{\cal I}(\mathbb{Z}/p\bb Z) = 0$, but $L\eta_{\cal I}(\mathbb{Z}/p^2\bb Z) = \mathbb{Z}/p\bb Z \neq 0$.
\end{remark}

The operation $L\eta_{\cal I}$ interacts well with the $\otimes$-structure:

\begin{proposition}\label{prop:Letalaxsymmmon} There is a natural lax symmetric monoidal structure on $L\eta_{\cal I}: D(\roi_T)\to D(\roi_T)$, i.e.~for all $C,D\in D(\roi_T)$, there is a natural map
\[
L\eta_{\cal I} C\dotimes_{\roi_T} L\eta_{\cal I} D\to L\eta_{\cal I}(C\dotimes_{\roi_T} D)\ ,
\]
functorial in $C$ and $D$, and symmetric in $C$ and $D$.
\end{proposition}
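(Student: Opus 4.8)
The plan is to construct the lax monoidal structure at the level of chain complexes, using strongly $K$-flat representatives, and then descend to the derived category via Corollary~\ref{cor:LetaExists}. The key observation is that if $C^\bullet$ and $D^\bullet$ are $\cal I$-torsion-free complexes (e.g., strongly $K$-flat), then their total tensor product $\mathrm{Tot}(C^\bullet \otimes_{\roi_T} D^\bullet)$ is again $\cal I$-torsion-free, since $\cal I$ is invertible and the tensor product of $\cal I$-torsion-free modules over a ring with $\cal I$ invertible is again $\cal I$-torsion-free (flatness of $\cal I$ makes $\cal I \otimes (-)$ exact, and one checks injectivity degreewise). Working locally where $\cal I = (f)$ is principal, I would exhibit for each bidegree $(i,j)$ a natural map
\[
(\eta_f C)^i \otimes_{\roi_T} (\eta_f D)^j \to \eta_f\bigl(\mathrm{Tot}(C^\bullet\otimes D^\bullet)\bigr)^{i+j}\ ,
\]
namely the restriction of the usual multiplication $f^iC^i \otimes f^jD^j \to f^{i+j}(C^i\otimes D^j) \subset \mathrm{Tot}(C\otimes D)^{i+j}[\tfrac 1f]$. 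One must check that this lands in the subcomplex $\eta_f$: if $x \in f^iC^i$ with $dx \in f^{i+1}C^{i+1}$ and $y \in f^jD^j$ with $dy\in f^{j+1}D^{j+1}$, then $d(x\otimes y) = dx\otimes y \pm x\otimes dy$ lies in $f^{i+j+1}\mathrm{Tot}(C\otimes D)^{i+j+1}$ by the Leibniz rule — this is the crucial computation, and it is exactly what the definition of $\eta_f$ is engineered to make work. Assembling over all bidegrees gives a map of complexes $\mathrm{Tot}(\eta_f C^\bullet \otimes_{\roi_T} \eta_f D^\bullet) \to \eta_f\mathrm{Tot}(C^\bullet\otimes_{\roi_T} D^\bullet)$. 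Phrased invariantly (without choosing $f$, using the $\cal I^{\otimes i}$-twists in the definition), this reads as a canonical map of complexes which one verifies is independent of the local generator.

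Next I would check that this construction is compatible with quasi-isomorphisms, so that it descends. If $C^\bullet \to C'^\bullet$ is a quasi-isomorphism of strongly $K$-flat complexes, then $\mathrm{Tot}(C^\bullet\otimes D^\bullet)\to \mathrm{Tot}(C'^\bullet\otimes D^\bullet)$ is a quasi-isomorphism of $\cal I$-torsion-free complexes (strong $K$-flatness of $D^\bullet$), hence by Lemma~\ref{lem:CohomLeta} applying $\eta_{\cal I}$ preserves it; similarly $\eta_{\cal I}C^\bullet \to \eta_{\cal I}C'^\bullet$ is a quasi-isomorphism by the same lemma. So both source and target of the chain-level map above transform compatibly, and by Lemma~\ref{lem:flatresolution} (the derived category is the localization of strongly $K$-flat complexes at quasi-isomorphisms) we obtain a well-defined natural map $L\eta_{\cal I}C \dotimes_{\roi_T} L\eta_{\cal I}D \to L\eta_{\cal I}(C\dotimes_{\roi_T}D)$ in $D(\roi_T)$, functorial in both arguments — here I use that $\eta_{\cal I}$ of a strongly $K$-flat complex is still $\cal I$-torsion-free, hence $K$-flat enough to compute the derived tensor product on the left (or, more carefully, one resolves $\eta_{\cal I}C^\bullet$ and $\eta_{\cal I}D^\bullet$ by strongly $K$-flat complexes and uses naturality).

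Finally, the symmetry and the associativity/unit coherences: the symmetry in $C$ and $D$ is inherited from the Koszul-sign symmetry of $\mathrm{Tot}(C^\bullet\otimes D^\bullet)$, since our chain-level map is visibly compatible with the interchange isomorphism (the sign conventions match because $\eta_f$ multiplies degree $i$ by $f^i$, which is symmetric under swapping). The unit map $\roi_T \to L\eta_{\cal I}\roi_T$ comes from the identification $\eta_{\cal I}(\roi_T[0]) = \roi_T[0]$ (a complex concentrated in degree $0$ is unaffected), and associativity follows from associativity of $\mathrm{Tot}$ together with the already-established independence of the construction from choices. I expect the main obstacle to be purely bookkeeping: tracking the $\cal I^{\otimes i}$-twists and Koszul signs carefully enough to see that the maps are genuinely natural and that the lax monoidal coherence diagrams commute, rather than any conceptual difficulty — the one genuinely substantive point, the Leibniz-rule verification that the multiplication map preserves the $\eta_f$-subcomplexes, is short. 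One should also remark that this structure is compatible with the cohomology description of Lemma~\ref{lem:CohomLeta}, recovering the cup product on $H^*(C)/H^*(C)[\cal I]$, though this is not needed for the statement.
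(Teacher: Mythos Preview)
Your approach is essentially identical to the paper's: construct the chain-level map on strongly $K$-flat representatives via the Leibniz-rule check $d(x\otimes y)=dx\otimes y\pm x\otimes dy\in\cal I\cdot(\ldots)$, then descend to $D(\roi_T)$. The paper does exactly this, though more tersely (it simply asserts that the chain-level construction gives a lax symmetric monoidal functor on $K_{\text{strongly }K\text{-flat}}$ which ``factors uniquely'' over $D(\roi_T)$, without spelling out the descent or coherences). One small slip: $\eta_{\cal I}$ of a strongly $K$-flat complex is $\cal I$-torsion-free but not obviously $K$-flat, so ``$K$-flat enough to compute the derived tensor product'' is not justified as stated --- but your parenthetical fix (resolve $\eta_{\cal I}C^\bullet$, $\eta_{\cal I}D^\bullet$ and use the natural map $A\dotimes B\to\mathrm{Tot}(A^\bullet\otimes B^\bullet)$) is correct and is implicitly what the paper does.
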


\begin{proof} Let $C^\blob$, $D^\blob$ be strongly $K$-flat representatives of $C$ and $D$. Then one has a natural map
\[
\mathrm{Tot}((\eta_{\cal I} C)^\blob\otimes_{\roi_T} (\eta_{\cal I} D)^\blob)\to \eta_{\cal I} \mathrm{Tot}(C^\blob\otimes_{\roi_T} D^\blob)\ ,
\]
given termwise by the map
\[
(\eta_{\cal I} C)^i\otimes_{\roi_T} (\eta_{\cal I}D)^j\to \eta_{\cal I} \mathrm{Tot}(C^\blob\otimes_{\roi_T} D^\blob)^{i+j}
\]
compatible with
\[
(C^i\otimes_{\roi_T} \cal I^{\otimes i})\otimes_{\roi_T} (D^j\otimes_{\roi_T} \cal I^{\otimes j})\to (C^i\otimes_{\roi_T} D^j)\otimes_{\roi_T} \cal I^{\otimes (i+j)}\ ,
\]
observing that if $x\in C^i$ and $y\in D^j$ have the property $dx\in \cal I\cdot C^{i+1}$ and $dy\in \cal I\cdot D^{j+1}$, then
\[
d(x\otimes y) = dx\otimes y + (-1)^i x\otimes dy\in \cal I\cdot (C^{i+1}\otimes_{\roi_T} D^j\oplus C^i\otimes_{\roi_T} D^{j+1})\ .
\]

This map gives the structure of a lax symmetric monoidal functor $\eta_{\cal I}: K_{\sub{strongly }K\sub{-flat}}(\roi_T)\to D(\roi_T)$, which factors uniquely over a lax symmetric monoidal functor $L\eta_{\cal I}: D(\roi_T)\to D(\roi_T)$.
\end{proof}

In an important special case, this operation even commutes with the $\otimes$-product:

\begin{proposition}\label{prop:Letasymmmon} Assume that $T$ is the punctual topos, and $R=\roi_T$ is a valuation ring. Let $f\in R$ be any generator of $\cal I$. Then $L\eta_f$ is symmetric monoidal.
\end{proposition}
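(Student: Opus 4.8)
The strategy is to reduce the statement to a pointwise computation on cohomology and then use the special structure of modules over a valuation ring. By Proposition~\ref{prop:Letalaxsymmmon} we already have a natural lax symmetric monoidal transformation $L\eta_f C\dotimes_R L\eta_f D\to L\eta_f(C\dotimes_R D)$; it suffices to show that this map is a quasi-isomorphism for all $C,D\in D(R)$. Since $L\eta_f$ commutes with filtered colimits (Corollary~\ref{cor:LetaExists}) and with canonical truncations, and since every object of $D(R)$ is a filtered colimit of bounded complexes of finitely presented $R$-modules (as $R$ is coherent — indeed a valuation ring — and $D(R)$ is compactly generated), a standard dévissage lets me reduce to the case where $C$ and $D$ are single finitely presented $R$-modules placed in degree $0$. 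Over a valuation ring any finitely presented module is a finite direct sum of cyclic modules $R/g R$ (with $g$ possibly $0$), and $L\eta_f$ and $\dotimes_R$ both commute with finite direct sums, so I am reduced to the case $C=R/gR$, $D=R/hR$ for $g,h\in R$ (allowing $g$ or $h$ to be zero or a unit, the latter trivial).

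\textbf{The cyclic case.} Here I would compute both sides explicitly. Write $C$ by its free resolution $[R\xrightarrow{g} R]$ in degrees $-1,0$ (and similarly $D$ via $h$), so $C\dotimes_R D$ is represented by the Koszul complex on $(g,h)$. Applying $\eta_f$ to the two-term complex $[R\xrightarrow{g}R]$ gives, by the explicit description $(\eta_f M)^i=\{x\in f^i M^i: dx\in f^{i+1}M^{i+1}\}$, the subcomplex whose degree $-1$ term is $f^{-1}R\cap g^{-1}f^0 R$ and whose degree $0$ term is $f^0 R=R$; a direct valuation-theoretic bookkeeping (comparing the ``valuations'' $v(f),v(g)$ in the value group, with the convention $v(0)=\infty$) identifies $H^0(L\eta_f C)$ with $R/\gcd\text{-type}$ expression and $H^{-1}(L\eta_f C)$ with a submodule of $f^{-1}R$, and likewise for $D$. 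Then I compute $L\eta_f C\dotimes_R L\eta_f D$ via these two-term representatives (they are complexes of flat, indeed free or invertible-ideal, modules, so no further resolution is needed after checking $f$-torsion-freeness) and separately compute $L\eta_f(C\dotimes_R D)$ by applying $\eta_f$ to the Koszul complex on $(g,h)$. Both computations come down to finitely many case distinctions according to the ordering of $v(f),v(g),v(h)$ in the totally ordered value group, and in each case the natural comparison map is checked to be an isomorphism.

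\textbf{Main obstacle.} The real content — and the step I expect to be fiddly — is the cyclic computation, i.e.\ verifying that $\eta_f$ applied to the Koszul complex $\mathrm{Kos}(g,h)$ agrees, via the lax monoidal map, with $\eta_f[R\xrightarrow{g}R]\otimes \eta_f[R\xrightarrow{h}R]$. The subtlety is that $\eta_f$ is not exact and not a triangulated functor, so I cannot simply manipulate distinguished triangles; I must work with honest complexes and the explicit submodule description, and the interaction of the ``$dx\in f^{i+1}C^{i+1}$'' condition with the differential of a two-dimensional Koszul complex introduces several boundary cases (notably when $v(g)$ or $v(h)$ is comparable to $v(f)$, where torsion appears or disappears in cohomology). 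The saving grace is that over a valuation ring ideals are totally ordered by divisibility, so every ``$\cap$'' and ``$+$'' of the relevant fractional ideals collapses to a single generator, and the finitely many cases can all be resolved by hand; one should also record that $f$ itself may be a non-unit non-zero-divisor (the interesting case) or a unit (where $L\eta_f=\mathrm{id}$ and the claim is trivial) or — if one allows it — $f$ could be handled uniformly since $R$ is a domain.

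Concretely, then, the write-up will proceed: (1) reduce to finitely presented modules in degree $0$ by colimits and truncations; (2) reduce to cyclic modules by the structure theory over valuation rings; (3) do the explicit Koszul computation in the cyclic case, organized by the ordering of valuations; (4) conclude that the lax monoidal map of Proposition~\ref{prop:Letalaxsymmmon} is an isomorphism in general, hence $L\eta_f$ is symmetric monoidal.
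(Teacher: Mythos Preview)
Your proposal is correct and follows essentially the same route as the paper: reduce via filtered colimits to perfect complexes, split these as direct sums of shifted cyclic modules $R/g$, and then handle the cyclic case by explicit Koszul computation. Two small sharpenings worth noting: the paper makes explicit that the reduction to single modules works because finitely presented modules over a valuation ring have projective dimension $\leq 1$ (so the complexes actually \emph{split}, which is what makes your ``truncation d\'evissage'' go through despite $L\eta_f$ not being exact), and the cyclic case collapses to just two possibilities---either $g\mid f$ (both sides vanish) or $f\mid g$ and $f\mid h$ (handled by Lemma~\ref{lemma_on_Koszul_1})---so the elaborate valuation bookkeeping you anticipate is unnecessary.
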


\begin{proof} As everything commutes with filtered colimits, it is enough to check that if $C$ and $D$ are perfect complexes, then the natural map
\[
L\eta_f C\dotimes_R L\eta_f D\to L\eta_f(C\dotimes_R D)
\]
is a quasi-isomorphism. Note that $R$ is coherent, so that all cohomology groups of $C$ and $D$ are finitely presented. Moreover, finitely presented modules over valuation rings are finite direct sums of modules of the form $R/g$ for elements $g\in R$, by the elementary divisor theorem. These are of projective dimension $1$, so that both $C$ and $D$ split as a direct sum $\bigoplus_i H^i(C)[-i]$, $\bigoplus_i H^i(D)[-i]$. Thus, we can reduce to the case $C=(R/g)[i]$, $D=(R/h)[j]$ for some elements $g,h\in R$, $i,j\in \bb Z$. We may assume $i=j=0$ as all operations commute with shifts (see Remark \ref{rmk:LetaPrincipal}). If either $g$ or $h$ divides $f$, then we claim that both sides are trivial. Indeed, assume without loss of generality that $g$ divides $f$. Then $L\eta_f C=0$, and all cohomology groups of $C\dotimes_R D$ are killed by $g$, and thus by $f$, so that $L\eta_f(C\dotimes_R D)=0$ as well. Finally, if $f$ divides both $g$ and $h$, then $L\eta_f C=R/(g/f)$, $L\eta_f D=R/(h/f)$, and one verifies that
\[
L\eta_f(R/g\dotimes_R R/h) = R/(g/f)\dotimes_R R/(h/f)\ ,
\]
cf.~Lemma~\ref{lemma_on_Koszul_1} below for a more general statement.
\end{proof}

The next lemma bounds how far $L\eta_{\cal I}$ is from the identity.

\begin{lemma}\label{lem:mapLetabackforth} For any integer $m$, is a natural transformation
\[
\cal I^{\otimes m}\otimes_{\roi_T} \tau^{\leq m}\to \tau^{\leq m} L\eta_{\cal I}
\]
of functors on $D(\roi_T)$. For any integer $n$, there is a natural transformation
\[
\tau^{\geq n} L\eta_{\cal I}\to \cal I^{\otimes n}\otimes_{\roi_T} \tau^{\geq n}
\]
of functors on the full subcategory of those $C \in D(\roi_T)$ with $H^n(C)$ being $\cal I$-torsion-free. On this subcategory, if $n\leq m$, then the composites
\[\begin{aligned}
\cal I^{\otimes (m-n)} \otimes_{\roi_T} \tau^{[n,m]} L\eta_{\cal I} \to \cal I^{\otimes m}\otimes_{\roi_T} \tau^{[n,m]}\to \tau^{[n,m]} L\eta_{\cal I}\ ,\\
\cal I^{\otimes m}\otimes_{\roi_T} \tau^{[n,m]} \to \tau^{[n,m]} L\eta_{\cal I}\to \cal I^{\otimes n}\otimes_{\roi_T} \tau^{[n,m]}
\end{aligned}\]
are the identity maps tensored with the inclusions $\cal I^{\otimes (m-n)}\hookrightarrow \roi_T$ resp.~$\cal I^{\otimes m}\to \cal I^{\otimes n}$.
\end{lemma}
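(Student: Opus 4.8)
\textbf{Proof plan for Lemma~\ref{lem:mapLetabackforth}.}
The plan is to work with an explicit strongly $K$-flat, $\cal I$-torsion-free representative $C^\blob$ of a given complex $C\in D(\roi_T)$, so that $(\eta_{\cal I}C)^\blob$ is a genuine subcomplex of $C^\blob\otimes_{\roi_T}\cal I^{\otimes\bullet}$ (here the notation means the $i$-th term is twisted by $\cal I^{\otimes i}$, as in the definition). First I would construct the natural transformation $\cal I^{\otimes m}\otimes_{\roi_T}\tau^{\le m}C\to\tau^{\le m}L\eta_{\cal I}C$. The point is that in degrees $i\le m$, the inclusion $(\eta_{\cal I}C)^i\hookrightarrow C^i\otimes\cal I^{\otimes i}$ together with the multiplication map $\cal I^{\otimes m}\otimes C^i\otimes\cal I^{\otimes i}\to$ (well, one wants a map \emph{into} $\eta_{\cal I}C$, not out of it): concretely, for $x\in C^i$ and a local section $g$ of $\cal I^{\otimes m}$, the element $g\cdot x\in C^i\otimes\cal I^{\otimes i}$ (using $\cal I^{\otimes m}\subset\cal I^{\otimes i}$ since $i\le m$) satisfies $d(g\cdot x)=g\cdot dx\in\cal I^{\otimes m}\cdot C^{i+1}\subset\cal I\cdot C^{i+1}$, hence lies in $(\eta_{\cal I}C)^{i+1}$; so multiplication by $\cal I^{\otimes m}$ carries the brutal truncation $\sigma^{\le m}C^\blob$ into $\sigma^{\le m}(\eta_{\cal I}C)^\blob$. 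Passing to canonical truncations (which $L\eta_{\cal I}$ respects by Corollary~\ref{cor:LetaExists}) and checking independence of the choice of flat representative — a formal exercise using Lemma~\ref{lem:CohomLeta} — gives the first transformation.

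Next I would construct the second transformation, under the hypothesis that $H^n(C)$ is $\cal I$-torsion-free. Here the idea is dual: a term $(\eta_{\cal I}C)^i$ for $i\ge n$ maps into $C^i\otimes\cal I^{\otimes i}$, and for $i\ge n$ we have $\cal I^{\otimes i}\subset\cal I^{\otimes n}$, so there is an obvious inclusion $(\eta_{\cal I}C)^i\hookrightarrow C^i\otimes\cal I^{\otimes n}$; these assemble to a map of complexes $\sigma^{\ge n}(\eta_{\cal I}C)^\blob\to \cal I^{\otimes n}\otimes\sigma^{\ge n}C^\blob$. I would then check this passes to canonical truncations: the only subtlety is at degree $n$, where $H^n(\eta_{\cal I}C)=H^n(C)/H^n(C)[\cal I]\otimes\cal I^{\otimes n}$ by Lemma~\ref{lem:CohomLeta}, which equals $H^n(C)\otimes\cal I^{\otimes n}$ precisely because of the torsion-freeness hypothesis; so on $H^n$ the map is the natural isomorphism, and in higher degrees it is induced by the inclusions $\cal I^{\otimes i}\hookrightarrow\cal I^{\otimes n}$. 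This is why the hypothesis on $H^n(C)$ is exactly what one needs and no more.

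Finally, for the two composite statements on $\tau^{[n,m]}$ with $n\le m$: these are verified degreewise at the level of the explicit subcomplexes. Tracing an element $x\in C^i$ (in degrees $n\le i\le m$): in the first composite, $g\in\cal I^{\otimes(m-n)}$ lands via the second transformation in $\cal I^{\otimes(m-n)}\otimes\bigl(\cal I^{\otimes n}\otimes C^i\bigr)=\cal I^{\otimes m}\otimes C^i$ after multiplying by the $\cal I^{\otimes m}$-factor; comparing with the direct inclusion $\cal I^{\otimes(m-n)}\hookrightarrow\roi_T$ tensored with the identity is then a matter of matching the two embeddings $\cal I^{\otimes i}\hookrightarrow\cal I^{\otimes n}$ and $\cal I^{\otimes m}\hookrightarrow\cal I^{\otimes i}$, which compose to $\cal I^{\otimes m}\hookrightarrow\cal I^{\otimes n}$; similarly for the second composite. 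I expect the main (though still mild) obstacle to be purely bookkeeping: keeping the twisting by powers of $\cal I$ straight across all degrees, and confirming that the degreewise maps constructed on explicit representatives are genuinely independent of the representative and therefore descend to well-defined natural transformations on $D(\roi_T)$ — there is no conceptual difficulty here, only the need to be careful that every map is defined on the \emph{sub}complex $\eta_{\cal I}C$ rather than merely on $C\otimes\cal I^{\otimes\bullet}$, and that it commutes with the differentials computed via the commuting square in the definition of $\eta_{\cal I}$.
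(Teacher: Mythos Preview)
Your approach is essentially the paper's, but there is a concrete slip in the construction of the first natural transformation. You claim that multiplication by $\cal I^{\otimes m}$ carries the brutal truncation $\sigma^{\le m}C^\blob$ into $\sigma^{\le m}(\eta_{\cal I}C)^\blob$; this fails in degree $i=m$. Indeed, for $x\in C^m$ and $g\in\cal I^{\otimes m}$, to have $g\cdot x\in(\eta_{\cal I}C)^m$ you need $d(g\cdot x)\in\cal I^{\otimes(m+1)}\cdot C^{m+1}$, but you only know $d(g\cdot x)=g\cdot dx\in\cal I^{\otimes m}\cdot C^{m+1}$, which is too weak by one power of $\cal I$. (Your sentence ``$\ldots\subset\cal I\cdot C^{i+1}$, hence lies in $(\eta_{\cal I}C)^{i+1}$'' conflates the condition $d(g\cdot x)\in\cal I\cdot C^{i+1}\otimes\cal I^{\otimes i}$ with the weaker $d(g\cdot x)\in\cal I\cdot C^{i+1}$.) So there is no map of complexes on brutal truncations to then ``pass to canonical truncations'' from.

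The fix is exactly what the paper does: work with the \emph{canonical} truncation $\tau^{\le m}C^\blob$ from the start. In degree $m$ this is the cocycles $Z^m\subset C^m$, and for $x\in Z^m$ one has $d(g\cdot x)=0$, so $g\cdot x\in(\eta_{\cal I}C)^m$ trivially. This gives a genuine map of complexes $\cal I^{\otimes m}\otimes\tau^{\le m}C^\blob\to\eta_{\cal I}C^\blob$, and composing with $\eta_{\cal I}C^\blob\to\tau^{\le m}\eta_{\cal I}C^\blob$ yields the transformation. Your treatment of the second transformation and of the composites is fine and matches the paper (the paper organizes the second transformation slightly differently, checking that $(\eta_{\cal I}C)^{n-1}\subset\cal I^{\otimes n}\otimes C^{n-1}$ directly, but the content is the same and the hypothesis $H^n(C)[\cal I]=0$ enters at the same point).
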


\begin{proof} It suffices to construct similar natural transformations on the category of $\cal I$-torsion-free complexes, so let $C^\blob$ be an $\cal I$-torsion-free complex. For the first transformation, it suffices to construct a map
\[
\cal I^{\otimes m}\otimes_{\roi_T} \tau^{\leq m} C^\blob\to \eta_{\cal I} C^\blob\ .
\]
But for $i<m$, $(\eta_{\cal I} C)^i$ contains $\cal I^{\otimes m}\otimes C^i$ (where we regard $\cal I^{\otimes m}$ as embedded into $\cal I^{\otimes i}$ by regarding both as ideal sheaves), and if $i=m$, it still contains $\cal I^{\otimes m}\otimes Z^m$, where $Z^m\subset C^m$ denotes the cocycles.

For the second transformation, let $C^\bullet$ be an $\cal I$-torsion-free complex with $H^n(C^\bullet)[\cal I] = 0$. We will show that there is a canonical map
\[
\eta_{\cal I} C^\blob \to \cal I^{\otimes n}\otimes_{\roi_T} \tau^{\geq n} C^\blob.
\]
For this, note that $(\eta_{\cal I} C)^i$ is contained in $\cal I^{\otimes n}\otimes_{\roi_T} C^i$ for $i\geq n$, when both sides are viewed as subsheaves of $C^i[\frac{1}{\cal I}]$ in the usual way; this defines the preceding map in degrees $> n$. To get the map in degree $\leq n$ by the same recipe, it is enough to show that the sheaf $\cal I^{\otimes n} \otimes_{\roi_T} C^{n-1}$ contains (and is thus equal to) the sheaf $(\eta_{\cal I} C)^{n-1}$, as subsheaves in $C^{n-1}[\frac{1}{\cal I}]$. But this immediate for us: the quotient $(\eta_{\cal I} C)^{n-1} / \big(\cal I^{\otimes n} \otimes_{\roi_T} C^{n-1}\big)$ is easily identified with $\cal I^{\otimes n-1} \otimes H^n(C)[\cal I]$, which vanishes by hypothesis. This gives the desired natural transformation on the subcategory. 

The identification of the composites is immediate from the definition.
\end{proof}

The following special case will come up repeatedly in the sequel:

\begin{lemma}
\label{lem:LetaCoconnectiveMap}
Let $C \in D^{\geq 0}(\mathcal{O}_T)$ such that $H^0(C)[\cal I] = 0$. Then there is a canonical map $L\eta_{\cal I} C \to C$.
\end{lemma}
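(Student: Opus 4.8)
The statement is essentially a special case of the second natural transformation constructed in Lemma~\ref{lem:mapLetabackforth} (with $n=0$ and $m=0$), so the plan is to deduce it from there. First I would note that $\cal I^{\otimes 0} = \roi_T$, so the natural transformation $\tau^{\geq 0} L\eta_{\cal I}\to \cal I^{\otimes 0}\otimes_{\roi_T} \tau^{\geq 0}$ from that lemma is precisely a natural transformation $\tau^{\geq 0} L\eta_{\cal I}\to \tau^{\geq 0}$ on the subcategory of objects $C$ with $H^0(C)$ being $\cal I$-torsion-free. Applying this to our given $C \in D^{\geq 0}(\roi_T)$ with $H^0(C)[\cal I] = 0$, and using that $L\eta_{\cal I}$ commutes with canonical truncations (Corollary~\ref{cor:LetaExists}), so that $L\eta_{\cal I}C = L\eta_{\cal I}(\tau^{\geq 0}C) = \tau^{\geq 0}L\eta_{\cal I}(C)$ already lies in $D^{\geq 0}(\roi_T)$, we obtain the desired map $L\eta_{\cal I}C \to C$.

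Concretely, I would unwind this at the level of complexes to make it transparent and to pin down the map. Choose a strongly $K$-flat representative $C^\blob$ of $C$; since $C \in D^{\geq 0}$ and $L\eta_{\cal I}$ commutes with $\tau^{\leq 0}$-truncation in the first variable as well, one may in fact replace $C^\blob$ by a complex concentrated in non-negative degrees (or simply argue with the subsheaves $(\eta_{\cal I}C)^i \subset C^i[\tfrac{1}{\cal I}]$ directly). For $i \geq 1$ one has $(\eta_{\cal I}C)^i \subseteq \cal I^{\otimes i}\otimes_{\roi_T} C^i \subseteq C^i$, viewing all three as subsheaves of $C^i[\tfrac{1}{\cal I}]$, which supplies the comparison map in positive degrees. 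In degree $0$, the quotient $(\eta_{\cal I}C)^0 / C^0$ inside $C^0[\tfrac 1{\cal I}]$ is identified with $H^0(C)[\cal I] \otimes_{\roi_T} \cal I^{\otimes -1}$ (compare the end-of-proof computation in Lemma~\ref{lem:mapLetabackforth} with $n=0$), which vanishes by hypothesis; hence $(\eta_{\cal I}C)^0 = C^0$ and the map is the identity in degree $0$. These inclusions are visibly compatible with the differentials, giving a genuine map of complexes $\eta_{\cal I}C^\blob \to C^\blob$, which descends to the asserted map $L\eta_{\cal I}C \to C$ in $D(\roi_T)$; naturality in $C$ is immediate from the construction.

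There is no serious obstacle here: the content is entirely contained in Lemma~\ref{lem:mapLetabackforth} and its proof, and the only points to check are the bookkeeping that $\cal I^{\otimes 0} = \roi_T$ and that the hypotheses $C \in D^{\geq 0}$ and $H^0(C)[\cal I]=0$ are exactly what is needed to make the degree-$0$ term of $\eta_{\cal I}$ coincide with $C^0$ rather than being strictly larger. The mildest care is needed in reconciling the complex-level description (where one works with an $\cal I$-torsion-free representative and the literal subsheaves $(\eta_{\cal I}C)^i$) with the derived statement, but this is handled once and for all by Corollary~\ref{cor:LetaExists}, which guarantees $\eta_{\cal I}$ on strongly $K$-flat complexes computes $L\eta_{\cal I}$.
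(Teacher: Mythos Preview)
Your first paragraph is exactly the paper's proof: apply the second natural transformation of Lemma~\ref{lem:mapLetabackforth} with $n=0$, using $\cal I^{\otimes 0}=\roi_T$ and $C=\tau^{\geq 0}C$.

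One small slip in your optional unwinding: in degree $0$ one always has $(\eta_{\cal I}C)^0\subseteq C^0$, so there is nothing to check there; the place where the hypothesis $H^0(C)[\cal I]=0$ enters in Lemma~\ref{lem:mapLetabackforth} (for $n=0$) is degree $-1$, where it forces $(\eta_{\cal I}C)^{-1}\subseteq C^{-1}$. Your alternative of passing to a representative concentrated in non-negative degrees sidesteps this (though one must be slightly careful that such an $\cal I$-torsion-free representative exists), but the quotient you wrote as $(\eta_{\cal I}C)^0/C^0$ is off by one index.
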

\begin{proof}
This map is obtained by applying the second natural transformation constructed in Lemma~\ref{lem:mapLetabackforth} for $n=0$ to $C$.
\end{proof}

Composing two such operations behaves as expected:

\begin{lemma}\label{lem:compositionLeta} Let $\cal I,\cal J\subset \roi_T$ be two invertible ideal sheaves, with product $\cal I\otimes_{\roi_T} \cal J\isoto \cal I\cdot \cal J\subset \roi_T$. There is a canonical equivalence of functors
\[
L\eta_{\cal I\cdot \cal J}\cong L\eta_{\cal I}\circ L\eta_{\cal J}: D(\roi_T)\to D(\roi_T)\ .
\]
\end{lemma}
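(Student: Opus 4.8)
The natural strategy is to work at the level of strictly $\cal I$-torsion-free (or strongly $K$-flat) representatives and show directly that the subcomplex constructions $\eta_{\cal I}$ and $\eta_{\cal J}$ compose to give $\eta_{\cal I\cdot\cal J}$, up to a canonical isomorphism of complexes; since both sides of the asserted equivalence are already known to descend to well-defined functors on $D(\roi_T)$ (Corollary~\ref{cor:LetaExists}), it suffices to exhibit a natural isomorphism on strongly $K$-flat complexes compatible with quasi-isomorphisms. To reduce bookkeeping I would first treat the affine-local case where $\cal I = (f)$ and $\cal J = (g)$ are principal with chosen generators, so that $\cal I\cdot\cal J = (fg)$; the general (invertible ideal sheaf) statement then follows by the standard observation that the definition of $\eta_{\cal I}$ in the excerpt is phrased intrinsically in terms of the ideal, hence patches over an open cover trivializing $\cal I$ and $\cal J$, and the isomorphism constructed locally is canonical (independent of the chosen generators) so it glues.

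\textbf{Key steps.} In the principal case, let $C^\blob$ be an $f$- and $g$-torsion-free complex (e.g. strongly $K$-flat). First I would verify that $\eta_g C^\blob$, viewed inside $C^\blob[\tfrac1g]$, is again $f$-torsion-free, so that $\eta_f(\eta_g C^\blob)$ makes sense; this is immediate since $C^\blob[\tfrac1g]$ is $f$-torsion-free. Next, unwind the definitions: $(\eta_g C)^i = \{x\in g^i C^i : dx\in g^{i+1}C^{i+1}\}$, and then
\[
(\eta_f\eta_g C)^i = \{y\in f^i(\eta_g C)^i : dy\in f^{i+1}(\eta_g C)^{i+1}\}
= \{y\in f^i g^i C^i : dy\in f^i g^{i+1} C^{i+1},\ dy\in f^{i+1}g^{i+1}C^{i+1}\}.
\]
The first containment condition on $dy$ is implied by the second (since $f^{i+1}g^{i+1}\subset f^i g^{i+1}$), so this simplifies to $\{y\in (fg)^i C^i : dy\in (fg)^{i+1}C^{i+1}\}$, which is exactly $(\eta_{fg}C)^i$ as a subcomplex of $C^\blob[\tfrac1{fg}] = C^\blob[\tfrac1f][\tfrac1g]$. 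The differentials agree because all three subcomplexes inherit the differential of the common localization. This gives an equality (not merely isomorphism) of subcomplexes $\eta_f\eta_g C^\blob = \eta_{fg}C^\blob$ inside $C^\blob[\tfrac1{fg}]$, which is visibly functorial in $C^\blob$ and compatible with quasi-isomorphisms by Lemma~\ref{lem:CohomLeta}. Passing to the derived category via the universal property of the localization (Lemma~\ref{lem:flatresolution}, Corollary~\ref{cor:LetaExists}) yields the natural equivalence $L\eta_{fg}\cong L\eta_f\circ L\eta_g$ — here one should also check that $L\eta_g$ applied to a strongly $K$-flat complex, while not obviously strongly $K$-flat, is still $\cal I$-torsion-free, so that $\eta_f$ may be applied to it and computes $L\eta_f$ correctly; this is fine since $\eta_g$ of a flat-termed complex has $\cal I$-torsion-free terms.

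\textbf{Globalization and the main obstacle.} For general invertible $\cal I,\cal J$, I would choose an open cover $\{U_\alpha\}$ of $T$ on which both $\cal I$ and $\cal J$ (hence $\cal I\cdot\cal J$) become principal, apply the principal case on each $U_\alpha$, and check that the resulting isomorphisms are independent of the choice of generators — which holds because the twisting sheaves $\cal I^{\otimes i}$, $\cal J^{\otimes i}$ appearing in the intrinsic definition transform correctly under change of generator — so they glue to a global isomorphism of complexes. The only genuinely delicate point, and the one I expect to absorb most of the care, is the bookkeeping of the tensor twists: in the intrinsic formulation $(\eta_{\cal I}C)^i = \{x\in C^i : dx\in\cal I\cdot C^{i+1}\}\otimes\cal I^{\otimes i}$, and one must track that $\big((\eta_{\cal J}C)^i\big)$, already carrying a $\cal J^{\otimes i}$-twist, when fed into $\eta_{\cal I}$ produces the twist $\cal I^{\otimes i}\otimes\cal J^{\otimes i}\cong(\cal I\cdot\cal J)^{\otimes i}$ coherently with the differentials and with the identification $\cal I\otimes\cal J\isoto\cal I\cdot\cal J$; establishing that this web of canonical isomorphisms is compatible (so the equivalence is well-defined and, ideally, a monoidal/associative family of equivalences as the notation suggests) is the real content. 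Everything else is formal manipulation of subcomplexes of a localization.
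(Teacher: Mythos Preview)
Your argument is correct and follows the same approach as the paper: the key identity $\eta_{\cal I}\circ\eta_{\cal J}=\eta_{\cal I\cdot\cal J}$ on suitably torsion-free complexes, then descent to the derived category. The paper's proof is a one-liner --- it works directly with the category of $\cal I\cdot\cal J$-torsion-free complexes (noting this category is preserved by $\eta_{\cal I}$ and $\eta_{\cal J}$) using the intrinsic definition of $\eta_{\cal I}$, so your local principal computation and subsequent gluing step, while correct, are not needed.
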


\begin{proof} Consider the category of $\cal I\cdot \cal J$-torsion-free complexes; this category is preserved by both $\eta_{\cal I}$ and $\eta_{\cal J}$, and $\eta_{\cal I\cdot \cal J} = \eta_{\cal I}\circ \eta_{\cal J}$ on this category. Deriving gives the desired equivalence.
\end{proof}

A crucial property is the following observation.\footnote{It is this property of the $L\eta$-operation that had initially led us to rediscover it.}

\begin{proposition}\label{prop:LetaBock} If $C\in D(\roi_T)$, construct a complex $H^\bullet(C/\cal I)$ with terms
\[
H^i(C/\cal I) = H^i(C\dotimes_{\roi_T} \roi_T/\cal I)\otimes_{\roi_T} \cal I^{\otimes i}
\]
and with differential induced by the Bockstein-type boundary map corresponding to the short exact sequence
\[
0\to \cal I/\cal I^2\to \roi_T/\cal I^2\to \roi_T/\cal I\to 0\ .
\]
Then there is a natural quasi-isomorphism
\[
L\eta_{\cal I} C\dotimes_{\roi_T} \roi_T/\cal I\quis H^\bullet(C/\cal I)\ .
\]

More precisely, if $C^\blob$ is an $\cal I$-torsion-free representative of $C$, then there is a natural map of complexes
\[
\eta_{\cal I} C^\blob\otimes_{\roi_T} \roi_T/\cal I\to H^\bullet(C/\cal I)\ ,
\]
which is a quasi-isomorphism; moreover, the left side represents the derived tensor product.
\end{proposition}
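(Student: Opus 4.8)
The statement has two parts: first, that for an $\cal I$-torsion-free complex $C^\blob$ there is a natural map of complexes $\eta_{\cal I} C^\blob \otimes_{\roi_T} \roi_T/\cal I \to H^\bullet(C/\cal I)$ which is a quasi-isomorphism; and second, that $\eta_{\cal I} C^\blob \otimes_{\roi_T} \roi_T/\cal I$ computes the derived tensor product $L\eta_{\cal I} C \dotimes_{\roi_T} \roi_T/\cal I$. The plan is to work locally so that $\cal I = (f)$ for a non-zero-divisor $f$; since both constructions are compatible with localization and the final statement is local, this loses nothing. I would first address the derived-tensor-product claim: each term $(\eta_f C)^i = \{x \in f^i C^i : dx \in f^{i+1} C^{i+1}\}$ is an $\roi_T$-submodule of $f^i C^i \cong C^i$, hence $f$-torsion-free (as $C^i$ is), so $\eta_f C^\blob$ is a complex of $f$-torsion-free, in particular $f$-flat-enough, modules; thus $\eta_f C^\blob \otimes_{\roi_T} \roi_T/\cal I$ agrees with the derived tensor product $\eta_f C^\blob \dotimes_{\roi_T} \roi_T/\cal I \simeq L\eta_{\cal I} C \dotimes_{\roi_T} \roi_T/\cal I$, the last equivalence by definition of $L\eta_{\cal I}$ on strongly $K$-flat (hence $\cal I$-torsion-free) representatives.

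\textbf{Construction of the comparison map.} Next I would construct the map $\eta_f C^\blob/f \to H^\bullet(C/\cal I)$ in each degree. Fix $i$. An element of $(\eta_f C)^i/f(\eta_f C)^i$ is represented by $x \in f^i C^i$ with $dx \in f^{i+1} C^{i+1}$; write $x = f^i \bar{x}$ with $\bar{x} \in C^i$ (using $f$-torsion-freeness to make $\bar x$ well-defined). Then $d\bar{x} \in f C^{i+1}$, so $\bar{x}$ is a cocycle modulo $f$, i.e. defines a class in $H^i(C/\cal I) = H^i(C \dotimes \roi_T/\cal I)$ (here one uses that $C^\blob$ being $f$-torsion-free makes $C^\blob/f$ compute $C \dotimes \roi_T/\cal I$). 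After the twist by $\cal I^{\otimes i}$ in the definition of $H^i(C/\cal I)$, this matches the $f^i$-scaling built into $(\eta_f C)^i$, so the assignment $x \mapsto [\bar{x}]$ is a well-defined $\roi_T$-module map. I would then check it is a chain map: the differential on $\eta_f C^\blob$ sends $f^i \bar x$ to $d(f^i\bar x) = f^i d\bar x = f^{i+1}(d\bar x / f)$, and $d\bar x/f$ is exactly the representative of the Bockstein differential applied to $[\bar x]$ (the Bockstein on the class of $\bar x$ is computed by lifting $\bar x$ to $C^i$, applying $d$, and dividing by $f$). So the square commutes, giving a map of complexes.

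\textbf{The quasi-isomorphism (the main point).} The heart of the proof is showing this map induces isomorphisms on cohomology. By Lemma~\ref{lem:CohomLeta}, $H^i(\eta_f C^\blob) \cong (H^i(C^\blob)/H^i(C^\blob)[f]) \otimes \cal I^{\otimes i}$, which is $f$-torsion-free; hence $\eta_f C^\blob$ is a complex of $f$-torsion-free modules with $f$-torsion-free cohomology, so the universal-coefficient short exact sequence for $\otimes_{\roi_T}\roi_T/\cal I$ has vanishing $\Tor$-term, giving $H^i(\eta_f C^\blob/f) \cong H^i(\eta_f C^\blob)/f$. On the other side, I must identify $H^i(H^\bullet(C/\cal I))$ — the cohomology of the Bockstein complex on the groups $H^i(C/\cal I)$ — with $(H^i(C)/H^i(C)[f])/f \otimes \cal I^{\otimes i}$, compatibly with the map just built. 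This is the substantive computation: one analyzes the Bockstein complex degreewise using the long exact sequences relating $H^i(C)$, $H^i(C/\cal I)$, and multiplication by $f$. Concretely, there is an exact sequence $0 \to H^i(C)/f \to H^i(C/\cal I) \to H^{i+1}(C)[f] \to 0$, and the Bockstein differential $H^i(C/\cal I) \to H^{i+1}(C/\cal I)$ is the composite $H^i(C/\cal I) \twoheadrightarrow H^{i+1}(C)[f] \hookrightarrow H^{i+1}(C) \twoheadrightarrow H^{i+1}(C)/f \hookrightarrow H^{i+1}(C/\cal I)$. A diagram chase with these identifications shows that the cohomology of the Bockstein complex in degree $i$ is $(H^i(C)/H^i(C)[f])/f$ (the kernel of the outgoing Bockstein is the preimage of $H^{i+1}(C)[f] \cap fH^{i+1}(C)$... and the image of the incoming Bockstein is $(H^i(C)[f] + fH^i(C))/fH^i(C)$-type terms), matching $H^i(\eta_f C^\blob)/f$ after the twist. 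I expect this diagram chase — keeping the $\cal I^{\otimes i}$-twists and the two exact sequences aligned with the explicitly constructed map — to be the main obstacle; it is elementary but requires care, and is cleanest done locally with a generator $f$ fixed throughout and then checked to glue.
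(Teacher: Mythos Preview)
Your construction of the comparison map and the argument that $\eta_f C^\blob \otimes \roi_T/\cal I$ computes the derived tensor product are both correct and match the paper's treatment.

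The error is in the quasi-isomorphism step. You claim that $H^{i+1}(\eta_f C^\blob)$ is $f$-torsion-free, so that the universal-coefficient sequence collapses to $H^i(\eta_f C^\blob/f)\cong H^i(\eta_f C^\blob)/f$. This is false: by Lemma~\ref{lem:CohomLeta} one has $H^{i+1}(\eta_f C)\cong H^{i+1}(C)/H^{i+1}(C)[f]$ up to a twist, and its $f$-torsion is $H^{i+1}(C)[f^2]/H^{i+1}(C)[f]$, which is generally nonzero. For a concrete example take $\roi_T=\bb Z$, $f=p$, and $C^\blob=(\bb Z\xrightarrow{p^2}\bb Z)$ in degrees $0,1$: then $\eta_p C^\blob\cong(\bb Z\xrightarrow{p}\bb Z)$, so $H^0(\eta_p C/p)=\bb Z/p$ while $H^0(\eta_p C)/p=0$. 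Your formula would therefore give the wrong answer already in degree $0$.

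Your strategy can be repaired. The universal-coefficient sequence for $\eta_f C$ and your short exact sequence for the Bockstein cohomology have matching outer terms: the ``extra'' piece $H^{i+1}(\eta_f C)[f]=H^{i+1}(C)[f^2]/H^{i+1}(C)[f]$ is isomorphic, via multiplication by $f$, to $H^{i+1}(C)[f]\cap fH^{i+1}(C)$, which is exactly the cokernel term you identify on the Bockstein side. So one can finish by showing the comparison map is compatible with these two filtrations and applying the five lemma; but this is additional work you have not done, and your text as written asserts something incorrect.

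The paper bypasses all of this. Rather than computing each side abstractly and matching, it checks directly that the map is an isomorphism on $H^0$ (the general case being a shift and twist) by an explicit cocycle chase: given $\bar\alpha\in H^0((\eta_{\cal I} C)/\cal I)$ mapping to zero, one lifts a Bockstein preimage to produce a bounding element in $(\eta_{\cal I} C)^{-1}/\cal I$; and for surjectivity, a class in $H^0(C/\cal I)$ killed by the Bockstein lifts to $H^0(C/\cal I^2)$, hence to some $\alpha\in C^0$ with $d\alpha\in\cal I^2 C^1$, which visibly lies in $(\eta_{\cal I} C)^0$ and is a cocycle modulo $\cal I$. This is shorter and avoids the pitfall above.
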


Note that even when $C$ does not have a distinguished representative in $K(\roi_T)$, the proposition shows that $L\eta_{\cal I} C\dotimes_{\roi_T} \roi_T/\cal I$ does have a distinguished representative as a complex, namely $H^\bullet(C/\cal I)$. As we will see, this is related to the canonical representative (given by the de~Rham--Witt complex) of the complex computing crystalline cohomology.

\begin{proof} It is enough to prove the assertion about $C^\blob$. Note that $\eta_{\cal I} C^\blob$ is $\cal I$-torsion-free, and for $\cal I$-torsion-free complexes, the underived tensor product with $\roi_T/\cal I$ represents the derived product.

Note that there is a natural map
\[
(\eta_{\cal I} C)^n\to Z^n(C^\blob/\cal I)\otimes_{\roi_T} \cal I^{\otimes n}
\]
from the definition of $(\eta_{\cal I} C)^n$. One gets an induced map
\[
(\eta_{\cal I} C^\blob)/\cal I = \eta_{\cal I} C^\blob\otimes_{\roi_T} \roi_T/\cal I\to H^\bullet(C/\cal I)\ ,
\]
and one checks that this is compatible with the differentials.

Now we check that this map of complexes is a quasi-isomorphism; it suffices to check that one gets an isomorphism on $H^0$ (as the situation at $H^n$ is just a twist and shift). First, we check injectivity of
\[
H^0((\eta_{\cal I} C^\blob)/\cal I)\to H^0(H^\bullet(C/\cal I))\ .
\]
Let $\bar{\alpha}\in H^0((\eta_{\cal I} C^\blob)/\cal I)$. We can lift $\bar{\alpha}$ to an element
\[
\alpha\in (\eta_{\cal I} C)^0=\{\gamma\in C^0\mid d\gamma\in \cal I\cdot C^1\}\ ,
\]
with $d\alpha\in \cal I\cdot (\eta_{\cal I} C)^1$ (so that $\alpha$ is a cocycle modulo $\cal I$), and we have to show if $\bar{\alpha}$ maps to $0$ in $H^0(H^\bullet(C/\cal I))$, then there is some
\[
\beta\in (\eta_{\cal I} C)^{-1} = \{\gamma\in C^{-1}\mid d\gamma\in \cal I\cdot C^0\}\otimes_{\roi_T} \cal I^{\otimes -1}
\]
such that $d\beta-\alpha\in \cal I\cdot (\eta_{\cal I} C)^0$. The assumption that $\bar{\alpha}$ maps to $0$ in $H^0(H^\bullet(C/\cal I))$ means that there is some
\[
\bar{\beta}\in H^{-1}(C/\cal I)\otimes_{\roi_T} \cal I^{\otimes -1}
\]
which maps to $\bar{\alpha}$ under the Bockstein. We may lift $\bar{\beta}$ to an element $\beta\in C^{-1}\otimes_{\roi_T} \cal I^{\otimes -1}$. The property that it is a cocycle modulo $\cal I$ means that $d\beta\in C^0$, and the property that the Bockstein is $\bar{\alpha}$ means that $d\beta-\alpha\in \cal I\cdot C^0$. Thus, in fact, implies $d\beta-\alpha\in \cal I\cdot (\eta_f C)^0$. Indeed, twisting the defining equation of $(\eta_f C)^0$ by $\cal I$, we have
\[
\cal I\cdot (\eta_f C)^0 = \{\gamma\in \cal I\cdot C^0\mid d\gamma\in \cal I^{\otimes 2}\cdot C^1\}\ ,
\]
and $d(d\beta-\alpha) = -d\alpha\in \cal I\cdot (\eta_f C)^1\subset \cal I^{\otimes 2}\cdot C^1$.

It remains to check surjectivity of
\[
H^0((\eta_{\cal I} C^\blob)/\cal I)\to H^0(H^\bullet(C/\cal I))\ .
\]
Thus, take an element $\bar{\alpha}\in H^0(C/\cal I)$ which is killed by the Bockstein. This means that $\bar{\alpha}$ lifts to an element of $H^0(C/\cal I^2)$, and so we can lift $\bar{\alpha}$ to an element $\alpha\in C^0$ with $d\alpha\in \cal I^{\otimes 2}\cdot C^1$. But this implies that $\alpha\in (\eta_{\cal I} C)^0$ and satisfies
\[
d\alpha\in \cal I\cdot (\eta_f C)^1\ ,
\]
as it lies in $\cal I^{\otimes 2}\cdot C^1$ and is killed by $d$. Thus, $\alpha$ defines a cocycle of $(\eta_{\cal I} C^\blob)/\cal I$, giving an element of $H^0((\eta_{\cal I} C^\blob)/\cal I)$ mapping to $\bar{\alpha}$.
\end{proof}

We observe that $\eta_{\cal I}$ preserves $\cal I$-torsion-free differential graded algebras, and that this structure is compatible with the isomorphism from Proposition~\ref{prop:LetaBock}.

\begin{lemma}\label{lem:Letadga} Let $R^\bullet$ be a differential graded $\roi_T$-algebra with $\cal I$-torsion-free terms. Then $\eta_{\cal I} R^\bullet$ is naturally a differential graded algebra, with $\cal I$-torsion-free terms. Moreover, $H^\bullet(R^\bullet/\cal I)$ has a natural structure of differential graded algebra, where multiplication is given by the cup product. The quasi-isomorphism
\[
\eta_{\cal I} R^\bullet\otimes_{\roi_T} \roi_T/\cal I\to H^\bullet(R^\bullet/\cal I)
\]
is a morphism of differential graded algebras.
\end{lemma}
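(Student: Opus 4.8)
The plan is to verify each assertion at the level of complexes, using the explicit description of $\eta_{\cal I}$ and the fact that the underlying graded object of $\eta_{\cal I} R^\bullet$ sits inside $R^\bullet[\tfrac{1}{\cal I}]$. First I would observe that $R^\bullet[\tfrac{1}{\cal I}]$ is again a differential graded $\roi_T[\tfrac{1}{\cal I}]$-algebra (localization is monoidal), so it suffices to check that the graded submodule $\eta_{\cal I}R^\bullet \subseteq R^\bullet[\tfrac 1{\cal I}]$ is closed under multiplication. Choosing local generators $f$ of $\cal I$ to lighten notation, the claim is that if $x \in f^iR^i$ with $dx \in f^{i+1}R^{i+1}$ and $y \in f^jR^j$ with $dy \in f^{j+1}R^{j+1}$, then $xy \in f^{i+j}R^{i+j}$ (clear) and $d(xy) = dx\cdot y \pm x\cdot dy \in f^{i+j+1}R^{i+j+1}$ (immediate from the Leibniz rule and the two hypotheses, exactly as in the proof of Proposition~\ref{prop:Letalaxsymmmon}). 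Associativity and unitality are inherited from $R^\bullet[\tfrac1{\cal I}]$, and $\cal I$-torsion-freeness of the terms is part of the construction of $\eta_{\cal I}$. This gives the differential graded algebra structure on $\eta_{\cal I}R^\bullet$.

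Next I would address the differential graded algebra structure on $H^\bullet(R^\bullet/\cal I)$. The terms are $H^i(R^\bullet/\cal I)\otimes_{\roi_T}\cal I^{\otimes i}$, and the differential is the Bockstein for $0 \to \cal I/\cal I^2 \to \roi_T/\cal I^2 \to \roi_T/\cal I \to 0$, tensored appropriately; this is precisely the differential appearing in Proposition~\ref{prop:LetaBock}. The multiplication is the cup product on $H^\bullet(R^\bullet/\cal I)$ (well-defined since $R^\bullet/\cal I$ is a differential graded $\roi_T/\cal I$-algebra), twisted by the canonical isomorphisms $\cal I^{\otimes i}\otimes_{\roi_T}\cal I^{\otimes j}\cong\cal I^{\otimes(i+j)}$. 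The one substantive point here is that the Bockstein differential is a graded derivation for the cup product, i.e.\ satisfies the Leibniz rule with the correct sign; this is a standard fact about Bockstein (connecting) homomorphisms associated to a short exact sequence of rings that is multiplicative, and I would either cite it or check it directly on representatives by the usual diagram chase (lift cocycles modulo $\cal I$ to $R^\bullet/\cal I^2$, multiply, and compare).

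Finally, I would check that the quasi-isomorphism $\eta_{\cal I}R^\bullet\otimes_{\roi_T}\roi_T/\cal I \to H^\bullet(R^\bullet/\cal I)$ constructed in Proposition~\ref{prop:LetaBock} is multiplicative. Recall that in each degree $n$ this map is induced by $(\eta_{\cal I}R)^n \to Z^n(R^\bullet/\cal I)\otimes_{\roi_T}\cal I^{\otimes n}$, $x \mapsto \bar x$. Given $x \in (\eta_{\cal I}R)^i$ and $y \in (\eta_{\cal I}R)^j$, the product in $\eta_{\cal I}R^\bullet$ is $xy \in (\eta_{\cal I}R)^{i+j}$, whose image is $\overline{xy} = \bar x\,\bar y$ in $Z^{i+j}(R^\bullet/\cal I)\otimes\cal I^{\otimes(i+j)}$; on the other side, the cup product of the classes of $\bar x$ and $\bar y$ is represented by the product of the cocycles $\bar x\cdot\bar y$, matched via $\cal I^{\otimes i}\otimes\cal I^{\otimes j}\cong\cal I^{\otimes(i+j)}$. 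So the two agree on the nose at the level of cocycles, and compatibility with units is clear. The map is thus a morphism of differential graded algebras.

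The main obstacle, such as it is, is purely bookkeeping: keeping the $\cal I^{\otimes i}$-twists and the Koszul signs consistent between the three structures (the localized dga, the $\eta$-subcomplex, and the Bockstein complex), and confirming the Leibniz property of the Bockstein differential. There is no conceptual difficulty — everything reduces to the Leibniz rule and the functoriality of connecting maps — so I would present it briskly, doing the sign/twist check once and leaving the rest to the reader.
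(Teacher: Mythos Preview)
Your proposal is correct and follows exactly the routine verification the paper has in mind; the paper's own proof reads in full ``Easy and left to the reader.'' Your plan simply spells out those details (closure under multiplication via the Leibniz rule as in Proposition~\ref{prop:Letalaxsymmmon}, the derivation property of the Bockstein, and multiplicativity of the map from Proposition~\ref{prop:LetaBock}), so there is nothing to compare.
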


\begin{proof} Easy and left to the reader.
\end{proof}

Finally, we observe that the $L\eta$-operation commutes with pullback along a flat morphism of topoi. More precisely, let $f: (T^\prime,\roi_{T^\prime})\to (T,\roi_T)$ be a flat map of ringed topoi. Two important cases are the case where $f$ is a point of $(T,\roi_T)$, and the case where $T=T^\prime$, which amounts to a flat change of rings. Let $\cal I\subset \roi_T$ be an invertible ideal sheaf with pullback $\cal I^\prime = f^\ast \cal I\subset \roi_{T^\prime}$, which is still an invertible ideal sheaf.

\begin{lemma}\label{lem:LEtaChangeTopos} The diagram
\[\xymatrix{
D(\roi_T)\ar[r]^-{f^\ast}\ar[d]_{L\eta_{\cal I}} & D(\roi_{T^\prime})\ar[d]^{L\eta_{\cal I^\prime}}\\
D(\roi_T)\ar[r]^-{f^\ast}& D(\roi_{T^\prime})
}\]
commutes, i.e.~there is a natural quasi-isomorphism $L\eta_{\cal I^\prime} f^\ast C\cong f^\ast L\eta_{\cal I} C$ for all $C\in D(\roi_T)$.
\end{lemma}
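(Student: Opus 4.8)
The plan is to verify the statement at the level of complexes and then derive it, using flatness of $f$ in two essential places: to know that $f^*$ is exact and preserves strongly $K$-flat complexes, and to know that $f^*$ preserves $\cal I$-torsion-freeness and commutes with forming the subsheaves $\cal I\cdot C^i$. Recall that for a flat morphism $f:(T',\roi_{T'})\to(T,\roi_T)$ the pullback $f^*=f^{-1}(-)\otimes_{f^{-1}\roi_T}\roi_{T'}$ is exact on module categories; since $f^{-1}$ preserves filtered colimits and flat modules and the base change $f^{-1}\roi_T\to\roi_{T'}$ does as well, $f^*$ carries strongly $K$-flat complexes of $\roi_T$-modules to strongly $K$-flat complexes of $\roi_{T'}$-modules. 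Moreover, exactness of $f^*$ means that for any complex $C^\blob$ of $\roi_T$-modules the termwise pullback $f^*C^\blob$ already computes the derived pullback $Lf^*C$, so in particular no $K$-flatness of the target is needed below.

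The key point is the identity of complexes of $\roi_{T'}$-modules
\[
f^*\,\eta_{\cal I} C^\blob \;=\; \eta_{\cal I'}\, f^* C^\blob
\]
for any $\cal I$-torsion-free complex $C^\blob$ (e.g.\ any strongly $K$-flat one). To see this, note first that $f^*C^\blob$ is $\cal I'$-torsion-free: by symmetric monoidality of $f^*$ one has $\cal I'\otimes_{\roi_{T'}} f^*C^i = f^*(\cal I\otimes_{\roi_T}C^i)$, and $f^*$ of the injection $\cal I\otimes_{\roi_T}C^i\hookrightarrow C^i$ remains injective; the same computation gives $f^*(\cal I\cdot C^i)=\cal I'\cdot f^*C^i$ and $f^*(\cal I^{\otimes i})=\cal I'^{\otimes i}$. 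Since $(\eta_{\cal I}C)^i$ is $\ker\!\big(C^i\xrightarrow{d}C^{i+1}\to C^{i+1}/\cal I\cdot C^{i+1}\big)\otimes_{\roi_T}\cal I^{\otimes i}$, applying the exact functor $f^*$ and the preceding identifications yields exactly $\ker\!\big(f^*C^i\to f^*C^{i+1}/\cal I'\cdot f^*C^{i+1}\big)\otimes_{\roi_{T'}}\cal I'^{\otimes i}=(\eta_{\cal I'}f^*C)^i$, and one checks directly from the defining diagram of the differential that the two differentials correspond.

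Finally I would assemble the pieces. Choose a strongly $K$-flat representative $C^\blob$ of $C$; then $\eta_{\cal I}C^\blob$ represents $L\eta_{\cal I}C$ by Corollary~\ref{cor:LetaExists}, and applying $f^*$ termwise computes $f^*L\eta_{\cal I}C$ by exactness. By the displayed identity this complex equals $\eta_{\cal I'}f^*C^\blob$, and since $f^*C^\blob$ is strongly $K$-flat, $\eta_{\cal I'}f^*C^\blob$ represents $L\eta_{\cal I'}(f^*C)$, again by Corollary~\ref{cor:LetaExists}. All identifications are manifestly functorial in $C^\blob$, and hence (by Lemma~\ref{lem:flatresolution}) descend to a natural quasi-isomorphism of functors $L\eta_{\cal I'}\circ f^*\cong f^*\circ L\eta_{\cal I}$ on $D(\roi_T)$. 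There is no substantive obstacle here; the only care needed is to phrase the construction of $\eta_{\cal I}$ via the invertible ideal sheaf $\cal I$ rather than a local generator, so that everything is globally defined on $T'$, and to keep track of where flatness is invoked.
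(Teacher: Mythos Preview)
Your argument is correct and is exactly the approach the paper takes: represent $C$ by an $\cal I$-torsion-free (in your case, strongly $K$-flat) complex, use flatness of $f$ to see that $f^*C^\blob$ is $\cal I'$-torsion-free, and verify termwise that $f^*\eta_{\cal I}C^\blob=\eta_{\cal I'}f^*C^\blob$. The paper's proof is a two-line sketch of precisely this; you have simply filled in the details carefully.
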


\begin{proof} Represent $C$ by an $\cal I$-torsion-free complex $C^\blob$. Then $f^\ast C^\blob$ is $\cal I^\prime$-torsion-free as $f^\ast$ is exact, by flatness of $f$. One then verifies immediately that $\eta_{\cal I^\prime} f^\ast C^\blob\cong f^\ast \eta_{\cal I} C^\blob$.
\end{proof}

\subsection{Completions}
\label{ss:Completions}

In this section, we make a few remarks about completions, and their commutation with $L\eta$. The discussion works in a replete topos, \cite[Definition 3.1.1]{BhattScholze}, but the only relevant case for us is the case of the punctual topos, so the reader is invited to forget about all topoi. Throughout this section, we assume that $T$ is replete.

Assume that $\cal J\subset \roi_T$ is a locally finitely generated ideal, as in \cite[\S 3.4]{BhattScholze}. Recall that by \cite[Lemma 3.4.12]{BhattScholze}, a complex $K\in D(\roi_T)$ is {\em derived $\cal J$-complete} if $K\isoto \hat{K}$, where the completion $\hat{K}$ is given locally by
\[
\hat{K}|_U = R\projlim_n (K|_U\dotimes_{\bb Z[f_1,\ldots,f_r]} \bb Z[f_1,\ldots,f_r]/(f_1^n,\ldots,f_r^n))\ ,
\]
if $\cal J|_U$ is generated by $f_1,\ldots,f_r$.

Perhaps surprisingly, this condition on a complex can be checked on its cohomology groups.

\begin{lemma}[{\cite[Proposition 3.4.4, Lemma 3.4.14]{BhattScholze}}]\label{lem:derivedcompletecohom} A complex $K\in D(\roi_T)$ is derived $\cal J$-complete if and only if each $\roi_T$-module $H^i(K)$ is derived $\cal J$-complete.

The category of derived $\cal J$-complete $\roi_T$-modules is an abelian Serre subcategory of the category of all $\roi_T$-modules, i.e.~closed under kernels, cokernels, and extensions.
\end{lemma}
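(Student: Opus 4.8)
The plan is to reduce to a principal ideal and then exploit that countable products of $\roi_T$-modules are exact in the replete topos $T$. Work locally on $T$ and write $\cal J=(f_1,\dots,f_r)$; since derived $\cal J$-completeness of a complex (or module) is equivalent to derived $(f_i)$-completeness for each $i$ (\cite[\S 3.4]{BhattScholze}), it suffices to prove the first assertion for a principal ideal $\cal I=(f)$. Recall that $K\in D(\roi_T)$ is derived $f$-complete exactly when $R\Hom_{\roi_T}(\roi_T[\tfrac 1f],K)\simeq 0$; writing $\roi_T[\tfrac 1f]=\mathrm{colim}(\roi_T\xrightarrow{f}\roi_T\xrightarrow{f}\cdots)$, this $R\Hom$ is the fibre of the endomorphism $1-f\sigma$ of $\prod_{n\in\bb N}K$, where $\sigma$ denotes the shift on the product. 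Hence $K$ is derived $f$-complete if and only if $1-f\sigma$ is a quasi-isomorphism on $\prod_{n}K$.

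First I would feed this criterion through cohomology. As $T$ is replete, the functor $\prod_{n}(-)$ on $\roi_T$-modules is exact, so $H^i(\prod_{n}K)=\prod_{n}H^i(K)$ and the map induced by $1-f\sigma$ on $H^i$ is again $1-f\sigma$ on $\prod_{n}H^i(K)$. Running the long exact cohomology sequence of the cone of $1-f\sigma$, that cone is acyclic if and only if $1-f\sigma$ is an isomorphism on $\prod_{n}H^i(K)$ for every $i$, i.e.\ if and only if each module $H^i(K)$ is derived $f$-complete (applying the criterion to $H^i(K)$, whose associated fibre is concentrated in degrees $-1,0$, so vanishes iff $1-f\sigma$ is an isomorphism there). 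Combining this with the local reduction above proves the first assertion.

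For the Serre subcategory statement, I would invoke that the full subcategory $D_{\mathrm{comp}}\subseteq D(\roi_T)$ of derived $\cal J$-complete complexes is the right orthogonal of the set of objects $\{\roi_T[\tfrac 1{f_i}]\}$, and as such is a triangulated subcategory closed under shifts (and retracts). Given a short exact sequence $0\to A\to B\to C\to 0$ of $\roi_T$-modules, viewed as an exact triangle in $D(\roi_T)$, the two-out-of-three property of $D_{\mathrm{comp}}$ shows that if two of $A,B,C$ are derived $\cal J$-complete then so is the third; in particular the derived $\cal J$-complete modules are closed under extensions. For kernels and cokernels: if $g\colon A\to B$ is a map of derived $\cal J$-complete modules, then the two-term complex $[A\xrightarrow{g}B]$ placed in degrees $0,1$ is a cone of a map in $D_{\mathrm{comp}}$, hence itself lies in $D_{\mathrm{comp}}$; since its cohomology is $\ker g$ in degree $0$ and $\coker g$ in degree $1$, the already-proven first assertion shows that $\ker g$ and $\coker g$ are derived $\cal J$-complete.

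I expect the only genuine input, and hence the main obstacle, to be the exactness of countable products of $\roi_T$-modules, which is precisely the role played by repleteness of $T$; granting that, the rest is formal manipulation with the telescope presentation of $\roi_T[\tfrac 1f]$ together with standard triangulated-category bookkeeping. (This is of course the content of \cite[Proposition 3.4.4, Lemma 3.4.14]{BhattScholze}.)
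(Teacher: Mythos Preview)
The paper does not give a proof of this lemma: it is stated with a citation to \cite[Proposition 3.4.4, Lemma 3.4.14]{BhattScholze} and no argument is supplied. So there is nothing in the paper to compare your proposal against.

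That said, your argument is correct and is essentially the standard proof (and indeed the one in the cited reference). The reduction to a single $f$, the telescope description of $R\Hom(\roi_T[\tfrac{1}{f}],K)$ as the fibre of $1-f\sigma$ on $\prod_n K$, the use of repleteness to ensure that countable products of $\roi_T$-modules are exact so that $H^i(\prod_n K)=\prod_n H^i(K)$, and the Serre-subcategory deduction via two-out-of-three plus the first part applied to the cone $[A\to B]$ --- all of this is exactly how the result is established in \cite{BhattScholze}. You have correctly isolated the one nontrivial input (exactness of countable products, which is where repleteness enters); everything else is indeed formal.
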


\begin{remark} We pause to remark that this statement is already interesting (and not very well-known) in the simplest case of the punctual topos, $\roi_T=\bb Z$ and $\cal J = (p)$. In this case, it says that a complex $K\in D(\bb Z)$ is derived $p$-complete, i.e.
\[
K\simeq R\projlim_n (K\dotimes_{\bb Z} \bb Z/p^n\bb Z)\ ,
\]
if and only if each $H^i(K)$ satisfies
\[
H^i(K)\simeq R\projlim_n (H^i(K)\dotimes_{\bb Z} \bb Z/p^n \bb Z)\ .
\]
Note that any complex $K$ whose terms are $p$-torsion-free and $p$-adically complete is derived $p$-complete. However, its cohomology groups may not be separated, as for example in the case of
\[
K = [\widehat{\bigoplus_{n\geq 1} \bb Z_p}\xTo{(1,p,p^2,\ldots)} \widehat{\bigoplus_{n\geq 1} \bb Z_p}]\ .
\]
Here, the differential is injective, but $H^1(K)$ is not $p$-adically separated: The element $(1,p,p^2,\ldots)$ projects to a nonzero element of $H^1(K)$, which is divisible by any power of $p$. Surprisingly, the $\bb Z_p$-module $M=H^1(K)$ still has some intrinsic property, namely it is derived $p$-complete.
\end{remark}

Recall that an $\roi_T$-module $M$ is {\em classically $\cal J$-complete} if the natural map
\[
M\to \projlim_k M/\cal J^k
\]
is an isomorphism.

\begin{lemma}[{\cite[Proposition 3.4.2]{BhattScholze}}]\label{lem:derivedvsclassicalcomplete} Let $M$ be an $\roi_T$-module. Then $M$ is classically $\cal J$-complete if and only if it is derived $\cal J$-complete and $\cal J$-adically separated, i.e.~$\bigcap \cal J^k M = 0$.
\end{lemma}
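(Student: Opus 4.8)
The statement to prove is Lemma~\ref{lem:derivedvsclassicalcomplete}: an $\roi_T$-module $M$ is classically $\cal J$-complete if and only if it is derived $\cal J$-complete and $\cal J$-adically separated. This is quoted from \cite[Proposition 3.4.2]{BhattScholze}, so strictly one may just cite it; but let me sketch how the proof goes. Since everything is local on $T$, I would fix a local generating set $f_1,\dots,f_r$ for $\cal J$ and argue with the Koszul/telescope complexes. The cleanest route is to reduce to the case of a single element $f$ by induction on $r$, using that derived completeness for $\cal J$ is the same as successive derived completeness for the $f_i$ (composing the pro-systems), and that $\cal J$-adic separatedness is controlled by the $f_i$-adic filtrations. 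So the heart of the matter is the one-generator case: for $f\in \roi_T$ and an $\roi_T$-module $M$, show $M$ is classically $f$-complete iff $M$ is derived $f$-complete and $\bigcap_k f^k M = 0$.

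For the one-generator case, recall that derived $f$-completeness of $M$ means $M \simeq R\projlim_n (M \dotimes_{\bb Z[f]} \bb Z[f]/f^n)$, and that $M \dotimes_{\bb Z[f]} \bb Z[f]/f^n$ is the two-term complex $[M \xrightarrow{f^n} M]$ in degrees $-1,0$. Taking $R\projlim_n$ of these two-term complexes and writing out the associated $\projlim$--$\projlim^1$ exact sequence, one gets a four-term exact sequence
\[
0 \to {\projlim_n}^1 M[f^n] \to H^0(\hat M) \to \projlim_n M/f^n M \to {\projlim_n} M[f^n]...
\]
wait — more precisely, $H^0$ of the derived completion sits in
\[
0 \to {\textstyle\projlim^1_n}\, M[f^n] \to H^0(\widehat{M}) \to {\textstyle\projlim_n}\, M/f^nM \to 0,
\]
and $H^{-1}(\widehat M) = \projlim_n M[f^n] = M[f^\infty]\text{-ish}$, actually $\projlim_n M[f^n]$ with transition maps multiplication by $f$. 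Now: if $M$ is classically $f$-complete, then $M \to \projlim_n M/f^nM$ is an isomorphism; one checks that the ${\projlim}^1$ term and the $H^{-1}$ term vanish (the transition maps $M[f^{n+1}]\xrightarrow{f} M[f^n]$ have image in $fM \cap M[f^n]$ and one uses that $M$ is $f$-adically separated to kill the inverse limit and its $\projlim^1$), so $M \simeq \widehat M$, i.e.\ $M$ is derived $f$-complete; and classical completeness trivially gives $\bigcap_k f^kM = 0$. Conversely, if $M$ is derived $f$-complete and $f$-adically separated, then $\bigcap f^kM = 0$ forces $\projlim_n M[f^n] = 0$ and ${\projlim}^1_n M[f^n] = 0$ as well (again using separatedness to control the system $(M[f^n])$, whose transition maps land in $fM$), so the displayed sequence collapses to $M = H^0(\widehat M) \cong \projlim_n M/f^nM$, which is exactly classical $f$-completeness.

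The main obstacle — and the one point that requires genuine care rather than bookkeeping — is the vanishing of ${\projlim}^1_n M[f^n]$ and of $\projlim_n M[f^n]$ under the hypothesis of $f$-adic separatedness. The subtlety is that the transition maps in the system $(M[f^n])_n$ are given by multiplication by $f$, not by inclusion, so one must observe that an element of $\projlim_n M[f^n]$ is in particular an element of $M$ divisible by $f^k$ for every $k$, hence $0$ by separatedness; and for ${\projlim}^1$ one runs the Mittag-Leffler-type argument noting the images $f^k M[f^{n+k}] \subseteq M[f^n]$ stabilize to a submodule contained in $\bigcap_k f^k M = 0$. Once this is in place, the induction on the number of generators $r$ is routine: derived $\cal J$-completeness for $\cal J=(f_1,\dots,f_r)$ is equivalent to the module being derived $f_r$-complete and its (derived) $f_r$-completion being derived $(f_1,\dots,f_{r-1})$-complete, and $\cal J$-adic separatedness unwinds compatibly, so one feeds the single-element case into the inductive step. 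Alternatively, and most efficiently for the purposes of this paper, I would simply cite \cite[Proposition 3.4.2]{BhattScholze} and omit the argument.
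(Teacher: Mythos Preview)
The paper gives no proof here; the lemma is stated with attribution to \cite[Proposition 3.4.2]{BhattScholze} and nothing more. Your final suggestion --- to simply cite that reference --- \emph{is} the paper's approach.

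Your sketch beyond that is essentially correct, with one imprecise step: the ``Mittag-Leffler-type argument'' for ${\projlim}^1_n M[f^n] = 0$. You write that the images $f^k M[f^{n+k}]$ ``stabilize to a submodule contained in $\bigcap_k f^k M = 0$'', but having intersection zero is not the same as stabilizing, and separatedness alone does not force Mittag-Leffler on this system. The fix is easy and already implicit in what you wrote. For the converse (derived complete and separated $\Rightarrow$ classically complete): once $M = H^0(\widehat M)$, your exact sequence reads $0 \to {\projlim}^1_n M[f^n] \to M \to \projlim_n M/f^nM \to 0$, and the kernel of the last map is literally $\bigcap_n f^nM$; separatedness then kills ${\projlim}^1$ without any appeal to Mittag-Leffler. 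For the forward direction (classically complete $\Rightarrow$ derived complete): either observe directly that $M \simeq R\projlim_n M/\cal J^nM$ is an $R\projlim$ of derived $\cal J$-complete objects and hence derived $\cal J$-complete, or in the single-generator picture solve $m_n - f m_{n+1} = a_n$ by the $f$-adically convergent sum $m_n = \sum_{j \geq 0} f^j a_{n+j}$. With this correction the argument goes through.
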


We will often use the following lemma, identifying the cohomology groups of a (derived) completed direct sum.

\begin{lemma}\label{lem:cohomcompleteddirectsum} Let $C_i\in D(\roi_T)$, $i\in I$, be derived $\cal J$-complete complexes, and assume that $\cal J$ is locally generated by one element.

Assume that for each $i\in I$, $H^0(C_i)$ is classically $\cal J$-complete, and $H^0(C_i)[\cal J^\infty]=H^0(C_i)[\cal J^n]$ for some $n\geq 0$ independent of $i$. Let $C$ be the derived $\cal J$-completion of $\bigoplus_{i\in I} C_i$. Then $H^0(C)$ is the classical $\cal J$-adic completion of $\bigoplus_{i\in I} H^0(C_i)$,
\[
H^0(C) = \projlim_k \bigoplus_{i\in I} H^0(C_i)/\cal J^k\ .
\]
\end{lemma}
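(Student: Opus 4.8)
The statement is about computing $H^0$ of a derived $\cal J$-completion of an infinite direct sum, given control on the $\cal J$-torsion and $\cal J$-completeness of the $H^0$ of the summands. Since everything is insensitive to the topos being replete only through the behaviour of $R\projlim$, and the only case we need is the punctual topos, I would first reduce to $\cal J = (f)$ principal (which is allowed by hypothesis, at least locally, and the statement is local). The plan is then to analyze the derived $f$-completion of $D := \bigoplus_{i \in I} C_i$ via the standard two-term presentation of derived completion: for any $K \in D(\roi_T)$, the derived $f$-completion $\hat{K}$ sits in a triangle $\hat{K} \to R\projlim_k (K \dotimes^{\bb L}_{\bb Z[f]} \bb Z[f]/f^k) $, and more usefully one has the Milnor-type short exact sequences
\[
0 \to {\projlim_k}^1 H^{i-1}(K/f^k) \to H^i(\hat K) \to \projlim_k H^i(K/f^k) \to 0,
\]
where I write $K/f^k$ for $K \dotimes^{\bb L}_{\roi_T} \roi_T/f^k$ (using that $f$ is a non-zero-divisor, or more precisely that $\roi_T/f^k$ has Tor-amplitude $[-1,0]$).

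Next I would compute $H^i(D/f^k)$ for $i = -1, 0$. Because $D = \bigoplus_i C_i$ and each $C_i$ is derived $f$-complete, and because $- \dotimes^{\bb L} \roi_T/f^k$ commutes with direct sums, we get $H^i(D/f^k) = \bigoplus_i H^i(C_i/f^k)$. For each $C_i$, the triangle $C_i \xto{f^k} C_i \to C_i/f^k$ gives a short exact sequence
\[
0 \to H^{-1}(C_i)/f^k \to H^{-1}(C_i/f^k) \to H^0(C_i)[f^k] \to 0,
\]
and similarly $0 \to H^0(C_i)/f^k \to H^0(C_i/f^k) \to H^1(C_i)[f^k] \to 0$. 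The hypothesis $H^0(C_i)[f^\infty] = H^0(C_i)[f^n]$ for $n$ independent of $i$ is exactly what is needed to control the relevant inverse system: the transition maps $H^0(C_i)[f^{k+1}] \to H^0(C_i)[f^k]$ (multiplication by $f$) are eventually zero in a uniform way, so that the pro-system $\{ \bigoplus_i H^0(C_i)[f^k] \}_k$ is pro-zero, hence has vanishing $\projlim$ and $\projlim^1$. I would then argue that $\{\bigoplus_i H^{-1}(C_i/f^k)\}_k$ has vanishing $\projlim^1$: indeed it is an extension of the pro-system $\{\bigoplus_i H^0(C_i)[f^k]\}$ (which is pro-zero) by $\{\bigoplus_i H^{-1}(C_i)/f^k\}$, and the latter is a quotient system of surjective $\bb Z[f]$-modules reduced mod powers of $f$, which satisfies Mittag-Leffler, hence has vanishing $\projlim^1$; the extension of two systems with vanishing $\projlim^1$ has vanishing $\projlim^1$.

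Putting this together: from the Milnor sequence for $\hat D$ in degree $0$, the $\projlim^1$ term is ${\projlim_k}^1 H^{-1}(D/f^k) = {\projlim_k}^1 \bigoplus_i H^{-1}(C_i/f^k) = 0$, so $H^0(\hat D) = \projlim_k H^0(D/f^k) = \projlim_k \bigoplus_i H^0(C_i/f^k)$. Finally I compute $\projlim_k \bigoplus_i H^0(C_i/f^k)$ from the short exact sequence of inverse systems $0 \to \bigoplus_i H^0(C_i)/f^k \to \bigoplus_i H^0(C_i/f^k) \to \bigoplus_i H^1(C_i)[f^k] \to 0$: the rightmost system is pro-zero (same argument, though here one must be slightly careful — actually one does not even need uniformity of the $H^1$-torsion bound, since $\projlim$ of a pro-zero system vanishes regardless, and $\projlim^1$ of the first system vanishes by Mittag-Leffler), giving $\projlim_k \bigoplus_i H^0(C_i/f^k) = \projlim_k \bigoplus_i H^0(C_i)/f^k$. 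Since each $H^0(C_i)$ is classically $f$-complete, $\projlim_k H^0(C_i)/f^k = H^0(C_i)$, but the direct sum need not be complete, so the final answer is the classical $f$-adic completion $\projlim_k \bigoplus_i H^0(C_i)/f^k$ of $\bigoplus_i H^0(C_i)$, as claimed.

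\textbf{Main obstacle.} The one genuinely delicate point is the $\projlim^1$ vanishing for the system $\{\bigoplus_i H^{-1}(C_i/f^k)\}_k$: a direct sum of Mittag-Leffler systems need not be Mittag-Leffler if the "stabilization depth" is not uniform across $i$, and this is precisely why the hypothesis imposes a uniform bound $n$ on $H^0(C_i)[f^\infty]$. I would want to spell out carefully that the subsystem $\{\bigoplus_i H^{-1}(C_i)/f^k\}$ is automatically Mittag-Leffler (its transition maps are all surjective), that the quotient system $\{\bigoplus_i H^0(C_i)[f^k]\}$ is pro-isomorphic to zero by the uniform bound (for $k > n$, multiplication by $f^{k-n} \colon H^0(C_i)[f^k] \to H^0(C_i)[f^{k-n}]$ is zero, compatibly in $i$), and then invoke the standard fact that $R\projlim$ of a pro-zero system vanishes and that $R\projlim$ sends a triangle to a triangle to conclude $\projlim^1$ of the extension vanishes. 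Everything else is a routine unwinding of the derived-completion Milnor sequences together with Lemma~\ref{lem:derivedvsclassicalcomplete}.
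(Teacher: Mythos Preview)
Your Milnor-sequence approach is sound overall, but there is a genuine confusion in the final step. When you assert that the system $\{\bigoplus_i H^1(C_i)[f^k]\}_k$ is ``pro-zero (same argument)'', this is not justified: the hypothesis gives a uniform bound only on the $f^\infty$-torsion of $H^0(C_i)$, not of $H^1(C_i)$, so the transition maps in $\{\bigoplus_i H^1(C_i)[f^k]\}_k$ need not become zero after a fixed number of steps uniform in $i$. Your parenthetical patch (``one does not even need uniformity \ldots since $\projlim$ of a pro-zero system vanishes regardless'') is circular --- you have not established pro-zero. The fix is a one-line argument, and it is exactly the trick the paper uses (in a slightly different place): embed the direct sum into the product and use derived completeness. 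Since each $H^1(C_i)$ is derived $f$-complete, one has $\projlim_k H^1(C_i)[f^k] = 0$, and hence
\[
\projlim_k \bigoplus_{i} H^1(C_i)[f^k] \hookrightarrow \projlim_k \prod_i H^1(C_i)[f^k] = \prod_i \projlim_k H^1(C_i)[f^k] = 0.
\]
This is all you need (you only want $\projlim$ of the quotient system to vanish, not $\projlim^1$), and with it your argument goes through.

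For comparison, the paper organizes things differently: it first proves, via the same embed-into-product trick, that the derived $f$-completion of a direct sum of derived $f$-complete \emph{modules} remains concentrated in degree $0$. Applying this to all $H^j(C_i)$ collapses the hypercohomology spectral sequence, reducing $H^0(C)$ to the derived completion of the single module $\bigoplus_i H^0(C_i)$; only then is the uniform torsion bound invoked, to kill the $\projlim^1$ obstruction and identify derived with classical completion. Your direct computation is a valid alternative once patched; the paper's route is a bit more structural and avoids having to chase $H^{-1}$ and $H^{1}$ terms separately.
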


\begin{proof} First, we observe that if $M_i$, $i\in I$, are derived $\cal J$-complete modules, then the derived $\cal J$-completion of $\bigoplus_{i\in I} M_i$ is again concentrated in degree $0$. This may be done locally, so let $f$ be a local generator of $\cal J$. Then the only possible obstruction comes from the term $\projlim_k \bigoplus_{i\in I} M_i[f^k]$ (where the transition maps are multiplication by $f$), which however embeds into
\[
\projlim_k \prod_{i\in I} M_i[f^k] = \prod_{i\in I} \projlim_k M_i[f^k] = 0\ ,
\]
as each $M_i$ is derived $f$-complete.

In particular, the spectral sequence computing the cohomology of $C$ in terms of the derived completions of the direct sums of the cohomology groups of the $C_i$ collapses, saying that $H^0(C)$ is the derived completion of $\bigoplus_{i\in I} H^0(C_i)$.

Using the assumption $H^0(C_i)[f^\infty] = H^0(C_i)[f^n]$, one sees that $\projlim_k^1 \bigoplus_{i\in I} H^0(C_i)[f^k] = 0$. Thus, the derived inverse limit of $\{\bigoplus_{i\in I} H^0(C_i)[f^k]\}_k$ vanishes, so that in fact $H^0(C)$ is the classical completion of $\bigoplus_{i\in I} H^0(C_i)$.
\end{proof}

Now we turn to relations between $L\eta$ and completions.

\begin{lemma}\label{lem:Letapreservecompleteness} Let $\cal I\subset \roi_T$ be an invertible ideal sheaf, and let $C\in D(\roi_T)$ be derived $\cal J$-complete. Then $L\eta_{\cal I} C$ is derived $\cal J$-complete.
\end{lemma}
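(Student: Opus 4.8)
The plan is to reduce the statement to a cohomological criterion via Lemma~\ref{lem:derivedcompletecohom}, which says that a complex in $D(\roi_T)$ is derived $\cal J$-complete if and only if all of its cohomology $\roi_T$-modules are derived $\cal J$-complete. So it suffices to show that $H^i(L\eta_{\cal I} C)$ is derived $\cal J$-complete for every $i$, given that $C$ is derived $\cal J$-complete, i.e.~that each $H^i(C)$ is derived $\cal J$-complete.

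Now I would invoke the explicit description of the cohomology of $L\eta_{\cal I}$ from Lemma~\ref{lem:CohomLeta}: one has a canonical isomorphism
\[
H^i(L\eta_{\cal I} C)\cong \bigl(H^i(C)/H^i(C)[\cal I]\bigr)\otimes_{\roi_T} \cal I^{\otimes i}\ .
\]
Thus everything reduces to two closure properties of the class of derived $\cal J$-complete $\roi_T$-modules. First, tensoring with the invertible $\roi_T$-module $\cal I^{\otimes i}$ preserves derived $\cal J$-completeness: this is clear because $\cal I^{\otimes i}$ is locally free of rank one, so locally the functor $-\otimes_{\roi_T}\cal I^{\otimes i}$ is isomorphic to the identity, and derived $\cal J$-completeness is a local condition on $T$ (and in any case $\cal J\cdot(\cal I^{\otimes i})=\cal I^{\otimes i}\cdot\cal J$, so $\cal J$-adic filtrations match up). Second, I need that $H^i(C)/H^i(C)[\cal I]$ is derived $\cal J$-complete, knowing that $H^i(C)$ is. For this I use that $H^i(C)[\cal I]=\ker(H^i(C)\to H^i(C)\otimes_{\roi_T}\cal I^{\otimes -1})$ is the kernel of a map between derived $\cal J$-complete modules (the target being a twist of $H^i(C)$, hence derived $\cal J$-complete by the first point), and then the quotient $H^i(C)/H^i(C)[\cal I]$ is a cokernel in the category of $\roi_T$-modules of a map between derived $\cal J$-complete modules. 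By the second assertion of Lemma~\ref{lem:derivedcompletecohom}, the category of derived $\cal J$-complete $\roi_T$-modules is an abelian Serre subcategory, closed under kernels, cokernels, and extensions; hence both $H^i(C)[\cal I]$ and $H^i(C)/H^i(C)[\cal I]$ are derived $\cal J$-complete.

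Putting these together, $H^i(L\eta_{\cal I} C)$ is, locally on $T$, a twist of a cokernel of a map of derived $\cal J$-complete modules, hence derived $\cal J$-complete; since derived $\cal J$-completeness can be checked locally, $H^i(L\eta_{\cal I} C)$ is derived $\cal J$-complete for all $i$, and therefore $L\eta_{\cal I} C$ is derived $\cal J$-complete by Lemma~\ref{lem:derivedcompletecohom}. I do not anticipate a serious obstacle here; the only mild subtlety is making sure the twist $-\otimes_{\roi_T}\cal I^{\otimes i}$ genuinely preserves the property, but this is harmless since $\cal I$ is an invertible ideal sheaf, so the twist is an exact auto-equivalence of $D(\roi_T)$ that is locally the identity, and derived $\cal J$-completeness is intrinsic and local. (Alternatively, one can avoid Lemma~\ref{lem:CohomLeta} entirely: pick an $\cal I$-torsion-free representative $C^\bullet$, note $\eta_{\cal I} C^\bullet$ is built in each degree out of subsheaves and twists of the $C^i$; but since in general the $C^i$ need not be derived $\cal J$-complete, the cohomological argument above is cleaner.)
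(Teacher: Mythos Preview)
Your proof is correct and follows essentially the same route as the paper's: reduce to cohomology sheaves via Lemma~\ref{lem:derivedcompletecohom}, identify $H^i(L\eta_{\cal I} C)$ using Lemma~\ref{lem:CohomLeta}, and then invoke the closure of derived $\cal J$-complete modules under kernels and cokernels. The paper's version is terser and silently drops the twist by $\cal I^{\otimes i}$ (writing simply $H^i(L\eta_{\cal I} C) = H^i(C)/H^i(C)[\cal I]$), whereas you take the extra step of explaining why the twist is harmless; both are fine.
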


\begin{proof} We have to see that
\[
H^i(L\eta_{\cal I} C) = H^i(C)/H^i(C)[\cal I]
\]
is derived $\cal J$-complete. But $H^i(C)$ is derived $\cal J$-complete by assumption, and hence so is $H^i(C)[\cal I]$ as the kernel of a map of derived $\cal J$-complete modules, and thus also $H^i(L\eta_{\cal I} C)$ as a cokernel.
\end{proof}

Note that the lemma does not say that $L\eta_{\cal I}$ commutes with $\cal J$-adic completions. This is, in fact, not true in general. However, it is true in the important case $\cal J = \cal I$.

\begin{lemma}\label{lem:Letacommutecompletion} Assume that $\cal I\subset \roi_T$ is an invertible ideal sheaf which is locally free of rank $1$. Let $C\in D(\roi_T)$ with derived $\cal I$-adic completion $\hat{C}$. Then the natural maps
\[
\widehat{L\eta_{\cal I} C}\to L\eta_{\cal I} \hat{C}\to R\projlim_n L\eta_{\cal I} (C\dotimes_{\roi_T} \roi_T/\cal I^n)
\]
are quasi-isomorphisms. Here, the first map exists because $L\eta_{\cal I} \hat{C}$ is $\cal I$-adically complete.
\end{lemma}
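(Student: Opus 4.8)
The plan is to reduce the statement to a computation with the explicit complex $H^\bullet(C/\cal I)$ of Proposition~\ref{prop:LetaBock}, which represents $L\eta_{\cal I}$ modulo $\cal I$, and then deduce the claim for $L\eta_{\cal I}$ itself by a limit argument along the powers of $\cal I$. First, I would record that all three objects in the displayed chain are derived $\cal I$-complete: for $\widehat{L\eta_{\cal I} C}$ this is by definition; for $L\eta_{\cal I}\hat C$ it follows from Lemma~\ref{lem:Letapreservecompleteness} (applied with $\cal J = \cal I$), which also ensures that the first map of the lemma is defined; and for $R\projlim_n L\eta_{\cal I}(C\dotimes_{\roi_T}\roi_T/\cal I^n)$ it holds because a derived limit of derived $\cal I$-complete complexes is derived $\cal I$-complete. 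Since both maps are maps of derived $\cal I$-complete complexes, it suffices (by the standard criterion: a map of derived $\cal I$-complete complexes is a quasi-isomorphism iff it is so after $\dotimes_{\roi_T}\roi_T/\cal I$, which reduces via the invertibility of $\cal I$ to checking it after reduction mod $\cal I$) to check that both maps become quasi-isomorphisms after $\dotimes_{\roi_T}\roi_T/\cal I$.

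\textbf{The two reductions.} For the second map, after applying $\dotimes_{\roi_T}\roi_T/\cal I$ and using Proposition~\ref{prop:LetaBock} on both sides, one must compare $H^\bullet(C/\cal I)$ with $R\projlim_n H^\bullet\big((C\dotimes_{\roi_T}\roi_T/\cal I^n)/\cal I\big)$. But $(C\dotimes_{\roi_T}\roi_T/\cal I^n)\dotimes_{\roi_T}\roi_T/\cal I \simeq C\dotimes_{\roi_T}\roi_T/\cal I$ for every $n\geq 1$ (as $\roi_T/\cal I^n\dotimes_{\roi_T}\roi_T/\cal I\simeq\roi_T/\cal I$ for an invertible $\cal I$), and the Bockstein differential is visibly compatible with these identifications; so the pro-system $\{H^\bullet((C\dotimes\roi_T/\cal I^n)/\cal I)\}_n$ is essentially constant with value $H^\bullet(C/\cal I)$, and its derived limit is $H^\bullet(C/\cal I)$. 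This handles the second map. For the first map, I would represent $C$ by an $\cal I$-torsion-free complex $C^\bullet$; then $\hat C$ is represented by the termwise completion $\widehat{C^\bullet}$, and one checks that the terms of $\widehat{C^\bullet}$ are still $\cal I$-torsion-free (being $\cal I$-adically complete and separated, hence $\cal I$-torsion-free over the local-at-$\cal I$ picture — here the invertibility/local-freeness of $\cal I$ of rank $1$ is used). The point is then that $\eta_{\cal I}$ commutes with termwise $\cal I$-adic completion on $\cal I$-torsion-free complexes, i.e. $\eta_{\cal I}\widehat{C^\bullet}$ is the termwise completion of $\eta_{\cal I}C^\bullet$: indeed $(\eta_{\cal I}C)^i$ is defined by the condition $dx\in\cal I\cdot C^{i+1}$ inside $C^i$, which is a closed condition compatible with passing to the $\cal I$-adic completion because multiplication by $\cal I$ is a continuous closed immersion. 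This gives that the first map, after $\dotimes\roi_T/\cal I$, is an isomorphism on the level of the explicit complexes of Proposition~\ref{prop:LetaBock}.

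\textbf{Main obstacle.} The technical heart — and the step I expect to require the most care — is the claim that $\eta_{\cal I}$ commutes with termwise $\cal I$-adic completion, equivalently that forming the subcomplex-with-twist $(\eta_{\cal I}C)^i$ is compatible with $\cal I$-adic completion of the terms. The subtlety is that $\cal I$-adic completion is only left-exact on the relevant categories, so one must argue that the completion of the kernel $\ker(d\colon C^i\to C^{i+1}/\cal I\cdot C^{i+1})$ computing $(\eta_{\cal I}C)^i/\cal I^{\otimes i}$ agrees with the analogous kernel formed from $\widehat{C^i}$; this uses that the relevant modules here (cocycles, and the ambient $C^i$) are built so that the quotient $C^{i+1}/\cal I\cdot C^{i+1}$ is itself $\cal I$-adically nice, and one invokes exactness of completion on short exact sequences of the form $0\to M\to C^i\to C^i/M\to 0$ where $C^i/M$ embeds into a term of the complex and the derived $\cal I$-completion is concentrated in degree $0$ (a repleteness/finite-generation input from \S\ref{ss:Completions}). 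Once this commutation is established, everything else is bookkeeping with Proposition~\ref{prop:LetaBock} and the standard facts on derived $\cal I$-complete complexes, and the two displayed maps are quasi-isomorphisms.
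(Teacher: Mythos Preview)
Your overall strategy---reduce to checking modulo $\cal I$ using derived $\cal I$-completeness, then invoke Proposition~\ref{prop:LetaBock}---is exactly the paper's approach. But there are two issues, one an actual error and one an unnecessary detour.

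\textbf{The second map: a genuine error.} Your claim that $\roi_T/\cal I^n\dotimes_{\roi_T}\roi_T/\cal I\simeq\roi_T/\cal I$ is false. Take $\roi_T=\bb Z$ and $\cal I=(p)$: then $\bb Z/p^n\dotimes_{\bb Z}\bb Z/p\simeq\bb Z/p\oplus\bb Z/p[1]$, since $\Tor_1^{\bb Z}(\bb Z/p^n,\bb Z/p)=\bb Z/p$. So the pro-system $\{H^\bullet((C\dotimes\roi_T/\cal I^n)/\cal I)\}_n$ is \emph{not} termwise constant. The correct statement, which the paper uses, is that for any complex $D$ of $\roi_T/f$-modules the map $D\to\{D\dotimes_{\roi_T}\roi_T/f^n\}_n$ is a \emph{pro-}quasi-isomorphism: writing $D\dotimes\roi_T/f^n\simeq D\oplus D[1]$ (since $f^n=0$ on $D$), the transition map on the $D[1]$ summand is multiplication by $f=0$, so that summand is pro-zero. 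This is enough to conclude after taking $R\projlim$.

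\textbf{The first map: an unnecessary detour.} You try to prove the much stronger statement that $\eta_{\cal I}$ commutes with termwise $\cal I$-adic completion on the nose, and correctly flag this as the main obstacle. But it is entirely unnecessary. By Proposition~\ref{prop:LetaBock}, $(L\eta_f C)\dotimes\roi_T/f\simeq H^\bullet(C/f)$ depends only on $C/f$ (and $C/f^2$ for the Bockstein). Since $\hat C/f=C/f$ and $\hat C/f^2=C/f^2$, one immediately gets $(L\eta_f C)/f\simeq H^\bullet(C/f)=H^\bullet(\hat C/f)\simeq(L\eta_f\hat C)/f$, and the first map is a quasi-isomorphism mod $f$. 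This is the paper's argument; no point-set commutation with completion is needed.
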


\begin{proof} We may work locally, and assume $\cal I$ is generated by a non-zero-divisor $f\in \roi_T$. Moreover, all three complexes are derived $f$-complete. Thus, to prove that the maps are quasi-isomorphisms, it suffices to check that they are quasi-isomorphisms after reduction modulo $f$. Now Proposition~\ref{prop:LetaBock} shows that the first map is a quasi-isomorphism, as $H^i(C/f) = H^i(\hat{C}/f)$, and the Bockstein stays the same.

Applying similar reasoning for the second map, it is enough to prove that
\[
H^i(C/f)\to \{H^i((C\dotimes_{\roi_T} \roi_T/f^n)/f)\}_n
\]
is a pro-isomorphism. But in fact for any complex $D$ of $\roi_T/f$-modules (like $D=C/f$), the map
\[
D\to \{D\dotimes_{\roi_T} \roi_T/f^n\}_n
\]
is a pro-quasi-isomorphism.
\end{proof}

\newpage

\section{Koszul complexes}\label{sec:koszul}

In this section, we collect various useful facts about Koszul complexes.

\begin{definition}\label{def:koszul} Let $M$ be an abelian group with commuting endomorphisms $f_i: M\to M$, $i=1,\ldots,d$. The Koszul complex
\[
K_M(f_1,\ldots,f_d)
\]
is defined as
\[
M\xTo{(f_1,\ldots,f_d)} \bigoplus_{1\leq i\leq d} M\to \bigoplus_{1\leq i_1<i_2\leq d} M\to \ldots\to \bigoplus_{1\leq i_1<\ldots<i_k\leq d} M\to \ldots\ ,
\]
where the differential from $M$ in spot $i_1<\ldots<i_k$ to $M$ in spot $j_1<\ldots<j_{k+1}$ is nonzero only if $\{i_1,\ldots,i_k\}\subset \{j_1,\ldots,j_{k+1}\}$, in which case it is given by $(-1)^{m-1} f_{j_m}$, where $m\in\{1,\ldots,k+1\}$ is the unique integer such that $j_m\not\in \{i_1,\ldots,i_k\}$.
\end{definition}

In other words,
\[
K_M(f_1,\ldots,f_d) = M\otimes_{\bb Z[f_1,\ldots,f_d]} \bigotimes_{i=1}^d (\bb Z[f_1,\ldots,f_d]\xTo{f_i} \bb Z[f_1,\ldots,f_d])\ ,
\]
where the tensor product is taken over $\bb Z[f_1,\ldots,f_d]$, and the complex sits in nonnegative cohomological degrees. Note that this presentation shows that $K_M(f_1,\ldots,f_d)$ is canonically independent of the order of the $f_i$, as the tensor product on cochain complexes is symmetric monoidal. Also, $K_M(f_1,\ldots,f_d)$ computes $M\dotimes_{\bb Z[f_1,\ldots,f_d]} \bb Z$ up to a shift by $|I|$. We give one example of this construction that will be quite useful in the sequel:

\begin{example}
Let $A$ be a commutative ring, and let $R = A[x_1,\ldots,x_d]$. For $i=1,\ldots,d$, let $f_i:R \to R$ be the $A$-linear endomorphism given by $\frac{\partial}{\partial x_i}$. Then $K_R(f_1,\ldots,f_d)$ is simply the de~Rham complex $\Omega^\bullet_{R/A}$.
\end{example}

\begin{lemma}\label{lem:koszulcohom} Let $\Gamma_\mathrm{disc} = \prod_{i=1}^d \bb Z$ be the free abelian group on generators $\gamma_1,\ldots,\gamma_d$, and $\Gamma= \prod_{i=1}^d \bb Z_p$ its $p$-adic completion.
\begin{enumerate}
\item Let $M$ be a $\Gamma_\mathrm{disc}$-module. The group cohomology $R\Gamma(\Gamma_\mathrm{disc},M)$ is computed by $K_M(\gamma_1-1,\ldots,\gamma_d-1)$.
\item Let $N$ be a continuous $\Gamma$-module that can be written as an inverse limit $N=\varprojlim_{k\geq 1} N_k$ of continuous discrete $\Gamma$-modules $N_k$ killed by $p^k$. Then the natural map
\[
R\Gamma_\cont(\Gamma,N)\to R\Gamma(\Gamma_\mathrm{disc},N)
\]
is a quasi-isomorphism, and thus $R\Gamma_\cont(\Gamma,N)$ is computed by $K_N(\gamma_1-1,\ldots,\gamma_d-1)$.
\end{enumerate}
\end{lemma}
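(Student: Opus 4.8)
The plan is to prove part (i) first and then deduce part (ii) by a limit argument. For part (i), recall that group cohomology $R\Gamma(\Gamma_{\mathrm{disc}}, M)$ can be computed by the standard (inhomogeneous) bar resolution, but a more efficient model is available: write $\Gamma_{\mathrm{disc}} = \prod_{i=1}^d \mathbb{Z}$, so that $\mathbb{Z}[\Gamma_{\mathrm{disc}}] = \mathbb{Z}[\gamma_1^{\pm 1}, \ldots, \gamma_d^{\pm 1}]$ is a tensor product over $\mathbb{Z}$ of $d$ copies of $\mathbb{Z}[\gamma^{\pm 1}]$. For a single copy, the complex $[\mathbb{Z}[\gamma^{\pm 1}] \xrightarrow{\gamma - 1} \mathbb{Z}[\gamma^{\pm 1}]]$ is a free resolution of $\mathbb{Z}$ as a $\mathbb{Z}[\gamma^{\pm 1}]$-module (the trivial module). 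Tensoring these $d$ resolutions together over $\mathbb{Z}$ gives a free resolution $P_\bullet$ of the trivial $\mathbb{Z}[\Gamma_{\mathrm{disc}}]$-module $\mathbb{Z}$, and by construction $\Hom_{\mathbb{Z}[\Gamma_{\mathrm{disc}}]}(P_\bullet, M)$ is exactly the Koszul complex $K_M(\gamma_1 - 1, \ldots, \gamma_d - 1)$, using the description of Koszul complexes as iterated tensor products recalled just before the lemma. This proves (i).

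For part (ii), the point is to compare continuous cohomology of the profinite group $\Gamma = \prod_{i=1}^d \mathbb{Z}_p$ with the cohomology of the dense subgroup $\Gamma_{\mathrm{disc}}$. First I would reduce to the case where $N$ is discrete and killed by $p^k$: since $N = \varprojlim_k N_k$ with $N_k$ discrete killed by $p^k$, and the transition maps are surjective (after possibly passing to the images, which changes nothing for the $\mathrm{lim}^1$ term), one has $R\Gamma_{\mathrm{cont}}(\Gamma, N) = R\varprojlim_k R\Gamma_{\mathrm{cont}}(\Gamma, N_k)$ and similarly $R\Gamma(\Gamma_{\mathrm{disc}}, N) = R\varprojlim_k R\Gamma(\Gamma_{\mathrm{disc}}, N_k)$ — the latter because the Koszul complex $K_N(\gamma_1-1,\ldots,\gamma_d-1)$ visibly equals $R\varprojlim_k K_{N_k}(\gamma_1-1,\ldots,\gamma_d-1)$ (each term is a finite product, and $\varprojlim^1$ vanishes since the transition maps on the finite-product Koszul terms are surjective). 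So it suffices to treat $N$ discrete, killed by a power of $p$, with continuous (hence locally constant) $\Gamma$-action.

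For such $N$, continuous cochain cohomology $R\Gamma_{\mathrm{cont}}(\Gamma, N)$ is computed by the complex of continuous inhomogeneous cochains, and there is an obvious restriction map of cochain complexes to the inhomogeneous cochains computing $R\Gamma(\Gamma_{\mathrm{disc}}, N)$. To see it is a quasi-isomorphism, I would use that $\Gamma = \varprojlim_n \Gamma/p^n\Gamma$ and that continuous cohomology commutes with this: $R\Gamma_{\mathrm{cont}}(\Gamma, N) = \varinjlim_n R\Gamma(\Gamma/p^n\Gamma, N^{p^n\Gamma\text{-fixed part}})$ — more precisely, since $N$ is discrete with locally constant action, every cochain factors through a finite quotient $\Gamma/p^n\Gamma$, and one gets $R\Gamma_{\mathrm{cont}}(\Gamma,N) = \varinjlim_n R\Gamma(\Gamma/p^n\Gamma, N^{p^n\Gamma})$, where the action of $p^n\Gamma$ on $N$ is trivial for $n$ large. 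But one computes $R\Gamma(\Gamma/p^n\Gamma, N)$ for the finite group $\Gamma/p^n\Gamma \cong \prod_i \mathbb{Z}/p^n$ again by a Koszul-type complex (same tensor-product-of-resolutions argument, using $[\mathbb{Z}[\mathbb{Z}/p^n] \xrightarrow{\gamma - 1} \mathbb{Z}[\mathbb{Z}/p^n]]$ which is no longer a resolution but whose failure is controlled), and in the colimit over $n$ these assemble to the Koszul complex $K_N(\gamma_1 - 1, \ldots, \gamma_d - 1)$. Alternatively — and this is cleaner — I would argue directly by induction on $d$ using the Hochschild–Serre spectral sequence for $0 \to \mathbb{Z}_p^{d-1} \to \Gamma \to \mathbb{Z}_p \to 0$, reducing to $d = 1$: there one must show $H^0_{\mathrm{cont}}(\mathbb{Z}_p, N) = N^{\gamma = 1}$ and $H^1_{\mathrm{cont}}(\mathbb{Z}_p, N) = N_{\gamma = 1} = N/(\gamma-1)N$ with higher cohomology vanishing, which for $N$ killed by a power of $p$ and with locally constant action is a standard computation (e.g. via $H^*_{\mathrm{cont}}(\mathbb{Z}_p, N) = \varinjlim_n H^*(\mathbb{Z}/p^n, N)$ and the fact that for $\mathbb{Z}/p^n$ acting trivially on the finite group $N$, the periodic cohomology $H^i = N^{\gamma=1} = N$ in all degrees, but the transition maps $H^i(\mathbb{Z}/p^n, N) \to H^i(\mathbb{Z}/p^{n+1}, N)$ are multiplication by $p$ in degrees $\geq 1$, hence zero in the colimit — wait, one must be careful, but this is exactly the classical computation that $H^i_{\mathrm{cont}}(\mathbb{Z}_p, N)$ vanishes for $i \geq 2$ and the Koszul model is correct).

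The main obstacle is the bookkeeping in the $d=1$ continuous cohomology computation and checking compatibility of the various comparison maps (between the bar/cochain models, the Koszul model, and the finite-level computations) up to the needed quasi-isomorphism; none of the individual steps is deep, but getting the colimit-over-$n$ argument to literally produce the Koszul complex $K_N(\gamma_i - 1)$ on the nose, rather than just something quasi-isomorphic to it, requires choosing the comparison maps with care. Since the problem only asks for a quasi-isomorphism, the inductive Hochschild–Serre approach combined with the clean $d=1$ statement (continuous cohomology of $\mathbb{Z}_p$ with discrete torsion coefficients is $[\![N \xrightarrow{\gamma-1} N]\!]$) is the path I would actually write up, as it avoids explicit chain-level comparisons with the bar resolution.
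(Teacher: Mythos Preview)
Your argument for part (i) is correct and identical to the paper's: both use the tensor-product Koszul resolution of $\bb Z$ over $\bb Z[\Gamma_{\mathrm{disc}}]$ and apply $\Hom(-,M)$. Your reduction in part (ii) to discrete $N$ killed by a power of $p$ (replacing $N_k$ by the image of $N$, then passing to $R\varprojlim$ on both sides) is also exactly what the paper does.

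Where you diverge is in the final step for discrete $N$. You propose either a colimit over finite quotients $\Gamma/p^n\Gamma$ (which you yourself note gets tangled) or a Hochschild--Serre induction on $d$ reducing to the $d=1$ case. Both would work, but the paper takes a much more direct route: it simply observes that the Iwasawa algebra $\bb Z_p[[\Gamma]]$ admits the same Koszul-type free resolution
\[
\bigotimes_i \left(\bb Z_p[[\Gamma]]\xrightarrow{\gamma_i-1}\bb Z_p[[\Gamma]]\right)\to \bb Z_p
\]
of the trivial module $\bb Z_p$, and that applying $\Hom_{\bb Z_p[[\Gamma]]}(-,N)$ to this resolution computes $R\Gamma_{\cont}(\Gamma,N)$ for any discrete $N$ killed by a power of $p$. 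This immediately yields the Koszul complex $K_N(\gamma_1-1,\ldots,\gamma_d-1)$ again, giving the comparison with $R\Gamma(\Gamma_{\mathrm{disc}},N)$ on the nose with no induction or spectral sequence. Your approach buys nothing extra and costs more bookkeeping; the paper's use of the completed group ring is the missing simplification.
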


\begin{proof} The first part is standard: One has a free resolution
\[
\bigotimes_i (\bb Z[\Gamma_\mathrm{disc}]\xTo{\gamma_i-1} \bb Z[\Gamma_\mathrm{disc}])\to \bb Z
\]
of $\bb Z$ as $\bb Z[\Gamma_\mathrm{disc}]$-module, and taking homomorphisms into $M$ gives a resolution of $M$ by acyclic $\Gamma_\mathrm{disc}$-modules, leading to the Koszul complex.

For the second, we may assume that $N\to N_k$ is surjective for any $k$ (by replacing $N_k$ by the image of $N\to N_k$). Then
\[
R\Gamma_\cont(\Gamma,N)\to R\projlim_k R\Gamma_\cont(\Gamma,N_k)
\]
is a quasi-isomorphism, as follows from the description by continuous cocycles. The similar result holds true for the cohomology of $\Gamma_\mathrm{disc}$ by part (i). Thus, we can assume that $N$ itself is a discrete $\Gamma$-module killed by a power of $p$. In that case, we have a similar free resolution
\[
\bigotimes_i (\bb Z_p[[\Gamma]]\xTo{\gamma_i-1} \bb Z_p[[\Gamma]])\to \bb Z_p\ ,
\]
which leads to the same result.
\end{proof}

We will often implicitly use the following remark to see that our constructions are independent of the choice of roots of unity.

\begin{remark} In part (ii), if one changes the basis $\gamma_i\in \Gamma$ into $c(i)\gamma_i$ for $c(i)\in \bb Z_p^\times$, the resulting Koszul complexes are canonically isomorphic. Indeed, let $J_i\subset \bb Z_p[[\Gamma]]$ be the ideal generated by $\gamma_i-1$; this is the kernel of $\bb Z_p[[\Gamma]]\to \bb Z_p[[\Gamma/\bb \gamma_i^{\bb Z_p}]]$, and so depends only on $\gamma_i$ up to scalar. Then one has the free resolution
\[
\bigotimes_i (J_i\to \bb Z_p[[\Gamma]])\to \bb Z_p\ ;
\]
mapping this into $M$ gives a resolution by acyclic $\Gamma$-modules, leading to a complex computing $R\Gamma_\cont(\Gamma,N)$. Once one fixes the generators $\gamma_i-1\in J_i$, this becomes identified with the Koszul complex above.
\end{remark}

Next, we analyze the multiplicative structure.

\begin{lemma}\label{lem:koszuldga} Let $R$ be a (not necessarily commutative) $A$-algebra, for some commutative ring $A$, and let $\Gamma_\mathrm{disc}=\prod_{i=1}^d \bb Z$ be a free abelian group acting on $R$ by $A$-algebra automorphisms. Then
\[
K_R(\gamma_1-1,\ldots,\gamma_d-1)
\]
has a natural structure as a differential graded algebra over $A$ such that the quasi-isomorphism
\[
K_R(\gamma_1-1,\ldots,\gamma_d-1)\simeq R\Gamma(\Gamma_\mathrm{disc},R)
\]
is a quasi-isomorphism of algebra objects in the derived category $D(A)$. In particular, on cohomology groups, it induces the cup product.
\end{lemma}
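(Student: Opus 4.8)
The plan is to realize $K_R(\gamma_1-1,\dots,\gamma_d-1)$ as a convolution algebra. Recall (as in the proof of Lemma~\ref{lem:koszulcohom}(i)) that if $P_\blob = \bb Z[\Gamma_\mathrm{disc}]\otimes_{\bb Z}\Lambda^\blob_{\bb Z}(\bb Z e_1\oplus\cdots\oplus\bb Z e_d)$ denotes the Koszul resolution of the trivial module $\bb Z$, with Koszul differential determined by $\partial e_i = \gamma_i - 1$, then $\Hom_{\bb Z[\Gamma_\mathrm{disc}]}(P_\blob, R) = K_R(\gamma_1-1,\dots,\gamma_d-1)$, the cochain in cohomological degree $k$ supported in the spot $I = \{i_1<\cdots<i_k\}$ with value $r\in R$ corresponding to the $\bb Z[\Gamma_\mathrm{disc}]$-linear map sending $e_I := e_{i_1}\wedge\cdots\wedge e_{i_k}$ to $r$. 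Under this identification I will exhibit the differential graded algebra structure concretely: $K_R(\gamma_1-1,\dots,\gamma_d-1)$ is the free graded $R$-module on exterior generators $e_1,\dots,e_d$ of degree $1$ (so $e_ie_j = -e_je_i$, $e_i^2=0$), equipped with the \emph{twisted} commutation rule $e_i\cdot r = \gamma_i(r)\cdot e_i$ for $r\in R$, and with differential characterised by $d(r) = \sum_{i=1}^d(\gamma_i(r)-r)e_i$ on $R$ and $d(e_i)=0$; equivalently $(r\,e_I)\cdot(s\,e_J) = r\cdot\gamma_I(s)\,(e_I\wedge e_J)$, where $\gamma_I := \prod_{i\in I}\gamma_i$. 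Since $\Gamma_\mathrm{disc}$ acts $A$-linearly and $A$ is central in $R$, this is a differential graded $A$-algebra. A direct verification shows that $d$ is a graded derivation and that the multiplication is associative and unital, using only that the $\gamma_i$ are commuting ring automorphisms of $R$.

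To see that this structure comes from the group-cohomology cup product, I would equip $P_\blob$ with a $\bb Z[\Gamma_\mathrm{disc}]$-linear, counital diagonal approximation $\Delta\colon P_\blob\to P_\blob\otimes_{\bb Z}P_\blob$ (with $\Gamma_\mathrm{disc}$ acting diagonally on the target), determined on generators by $\Delta(e_i) = e_i\otimes 1 + \gamma_i\otimes e_i$ and extended to $\Lambda^{\geq 2}$ by the twisted shuffle formula $\Delta(e_I) = \sum_{I_1\sqcup I_2 = I}\pm\,e_{I_1}\otimes\gamma_{I_1}e_{I_2}$. The two points to verify are that $\Delta$ is a chain map — which reduces to the computation on the generators $e_i$, where $(\partial\otimes 1 + 1\otimes\partial)\Delta(e_i) = \gamma_i\otimes\gamma_i - 1\otimes 1 = \Delta(\partial e_i)$ — and that $\Delta$ is strictly coassociative and counital, which is immediate from the shuffle formula and uses that $\Gamma_\mathrm{disc}$ is abelian. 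Because $\Gamma_\mathrm{disc}$ acts on $R$ by ring automorphisms, the multiplication $\mu_R\colon R\otimes_{\bb Z}R\to R$ is $\bb Z[\Gamma_\mathrm{disc}]$-equivariant for the diagonal action, so the convolution product $\phi\cdot\psi := \mu_R\circ(\phi\otimes\psi)\circ\Delta$ makes $\Hom_{\bb Z[\Gamma_\mathrm{disc}]}(P_\blob, R)$ into a differential graded $A$-algebra; unwinding the definitions identifies this product with the twisted exterior product $(r\,e_I)\cdot(s\,e_J) = r\gamma_I(s)(e_I\wedge e_J)$ above, and its differential with the one of Definition~\ref{def:koszul}.

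Finally, to obtain the statement in $D(A)$: it is classical (Cartan--Eilenberg; see also Brown's book on group cohomology) that for a $\Gamma_\mathrm{disc}$-algebra $R$ the cup product on $H^\blob(\Gamma_\mathrm{disc}, R)$ is computed by $\Hom_{\bb Z[\Gamma_\mathrm{disc}]}(Q_\blob, R)$ with the convolution product of \emph{any} $\bb Z[\Gamma_\mathrm{disc}]$-equivariant coassociative counital diagonal approximation on \emph{any} projective resolution $Q_\blob$ of $\bb Z$, and that the resulting differential graded algebra is, as an algebra object in $D(A)$, independent of these choices: any two such models are linked by a zig-zag of quasi-isomorphisms of differential graded algebras (e.g.\ comparing with the bar resolution and its Alexander--Whitney diagonal). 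Applying this to $(P_\blob,\Delta)$ shows simultaneously that the quasi-isomorphism $K_R(\gamma_1-1,\dots,\gamma_d-1)\simeq R\Gamma(\Gamma_\mathrm{disc}, R)$ is one of algebra objects in $D(A)$ and that it induces the cup product on cohomology. The main obstacle is the middle step: checking that the twisted diagonal $\Delta$ is an honest chain map which is \emph{strictly} (not merely homotopy-) coassociative, since it is this strictness that yields a genuine differential graded algebra rather than only an $A_\infty$-algebra; comparing $\Delta$ with the Alexander--Whitney diagonal on the bar resolution up to coherent homotopy (for the last step) and fixing the Koszul signs are the remaining, purely bookkeeping, points.
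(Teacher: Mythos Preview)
Your proof is correct and arrives at exactly the same differential graded algebra structure as the paper: the twisted commutation rule $e_i\cdot r=\gamma_i(r)\cdot e_i$ and the differential $d(r)=\sum_i(\gamma_i(r)-r)e_i$, $d(e_i)=0$ are precisely what the paper writes down (with $x_i$ in place of $e_i$). The paper simply presents this DGA by generators and relations, verifies the Leibniz rule by hand, and then \emph{leaves to the reader} the check that the resulting algebra structure on cohomology is the cup product. Your route via a diagonal approximation $\Delta\colon P_\blob\to P_\blob\otimes P_\blob$ and the associated convolution product is the standard way to carry out exactly that omitted verification, and it has the virtue of explaining \emph{why} the twist $e_i r=\gamma_i(r)e_i$ is forced (it comes from the term $\gamma_i\otimes e_i$ in $\Delta(e_i)$). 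So the two proofs are essentially identical in content; yours is slightly more conceptual and more complete on the last step, at the cost of invoking the classical independence-of-resolution-and-diagonal facts rather than a bare-hands check. One small remark: the appeal to abelianness for strict coassociativity of $\Delta$ is not really the point---coassociativity of the twisted shuffle holds formally---but commutativity of the $\gamma_i$ is of course already needed for $\partial^2=0$ on $P_\blob$.
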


\begin{remark} Even if $R$ is commutative, the resulting differential graded algebra will not be commutative. However, if there is some element $f\in A$ such that the action of $\Gamma$ on $R/f$ is trivial, then $K_R(\gamma_1-1,\ldots,\gamma_d-1)/f$ is commutative.
\end{remark}

\begin{proof} We give a presentation of a differential graded algebra $K_R^\prime$ over $A$, and then check that as a complex of $A$-modules, it is given by $K_R(\gamma_1-1,\ldots,\gamma_d-1)$, and is quasi-isomorphic to $R\Gamma(\Gamma_{\mathrm{disc}},R)$ compatibly with the multiplication.

Consider the differential graded algebra $K_R^\prime$ over $A$ which is generated by $R$ in degree $0$ and an additional variable $x_i$ of cohomological degree $1$ for each $i=1,\ldots,d$, subject to the following relations.
\begin{enumerate}
\item Anticommutation: $x_ix_j=-x_jx_i$, $x_i^2=0$ for all $i,j\in \{1,\ldots,d\}$.
\item Commutation with $R$: For all $r\in R$ and $i=1,\ldots,d$,
\[
x_i r = \gamma_i(r) x_i\ .
\]
\item Differential: $dx_i=0$ for $i=1,\ldots,d$, and
\[
dr = \sum_{i=1}^d (\gamma_i(r)-r)x_i\ .
\]
\end{enumerate}

We observe that the Leibniz rule $d(rr^\prime) = r\cdot dr^\prime + dr \cdot r^\prime$ for $r,r^\prime\in R$ is automatically satisfied:
\[\begin{aligned}
r \cdot dr^\prime + dr \cdot r^\prime &= \sum_{i=1}^d\big(r(\gamma_i(r^\prime)-r^\prime)x_i + (\gamma_i(r)-r)x_i r^\prime\big)\\
&= \sum_{i=1}^d\big((r\gamma_i(r^\prime)-rr^\prime)x_i + (\gamma_i(r)\gamma_i(r^\prime) - r\gamma_i(r^\prime))x_i\big)\\
&= \sum_{i=1}^d(\gamma_i(rr^\prime)-rr^\prime)x_i\\
&= d(rr^\prime)\ .
\end{aligned}\]
This, in fact, essentially dictates the rule $x_i r = \gamma_i(r) x_i$ (which introduces noncommutativity even when $R$ is commutative).

It follows that in degree $k$, $K_R^\prime$ is a free $R$-module on the elements $x_{i_1}\wedge\ldots\wedge x_{i_k}$, $i_1<\ldots<i_k$. The corresponding identification of the terms of $K_R^\prime$ with $K_R(\gamma_1-1,\ldots,\gamma_d-1)$ is compatible with the differential. We leave it to the reader to check that it is compatible with the multiplication on group cohomology.
\end{proof}

Let us discuss an example.

\begin{example}[The $q$-de~Rham complex] 
\label{ex:qdR}
Let $A$ be a commutative ring with a unit $q\in A^\times$, and consider the $A$-algebra $R=A[T^{\pm 1}]$. This admits an action of $\Gamma_\mathrm{disc}=\gamma^{\bb Z}$, where $\gamma$ acts by $T\mapsto qT$. In that case $R\Gamma(\Gamma_\mathrm{disc},R)$ is computed by the complex
\[
C^\blob: R\xTo{\gamma-1} R = \left(A[T^{\pm 1}]\to A[T^{\pm 1}]x\right)\ : T^n\mapsto (q^n-1) T^n x\ .
\]
Here, we have used a formal symbol $x$ for the generator in degree $1$. In this case, the multiplication is given as follows. In degree $0$, the multiplication is the usual commutative multiplication of $A[T^{\pm 1}]$. It remains to describe the products $f(T)\cdot (g(T)x)$ and $(g(T)x)\cdot f(T)$, where $f(T),g(T)\in A[T^{\pm 1}]$. These are given by
\[
f(T)\cdot (g(T)x) = f(T)g(T) x\ ,\ (g(T)x)\cdot f(T) = g(T)f(qT)x\ .
\]
In other words, the only interesting thing happens when one commutes $x$ past the function $f(T)$, which amounts to replacing $f(T)$ by $f(qT)$.

We note that we can now also apply the operator $\eta_{q-1}$ to $C^\blob$. This leads to the complex
\[
\eta_{q-1} C^\blob: A[T^{\pm 1}]\to A[T^{\pm 1}]d\log_q T: T^n\mapsto [n]_q T^n d\log_q T\ .
\]
Here, we use the formal symbol $d\log_q T$ (=$(q-1)x$) for the generator in degree $1$, and $[n]_q=\frac{q^n-1}{q-1}\in A$ is the $q$-deformation of the integer $n$. We call this the $q$-de~Rham complex $q\op-\Omega^\blob_{A[T^{\pm 1}]/A}$. We stress that this complex depends critically on the choice of coordinates: there is no well-defined complex $q\op-\Omega^\blob_{R/A}$ for any smooth $A$-algebra $R$. In closed form, the differential in the $q$-de~Rham complex is given by
\[
f(T)\mapsto \frac{f(qT)-f(T)}{q-1} d\log_q T = \frac{f(qT)-f(T)}{qT-T} d_q T\ ,
\]
where we have formally set $d_q T= Td\log_q T$. Note that if one sets $q = 1$, this finite $q$-difference quotient becomes the derivative. Again, this is a differential graded algebra, and the interesting multiplication rule is
\[
d\log_q T\cdot f(T) = f(qT) \cdot d\log_q T\ .
\]

One can also define the $q$-de~Rham complex in several variables
\[
q\op-\Omega^\blob_{A[T_1^{\pm 1},\ldots,T_d^{\pm 1}]/A} = \bigotimes_{i=1}^d q\op-\Omega^\blob_{A[T_i^{\pm 1}]/A}\ ,
\]
where the tensor product is taken over $A$. This can be written as
\[
\eta_{q-1} K_{A[T_1^{\pm 1},\ldots,T_d^{\pm 1}]}(\gamma_1-1,\ldots,\gamma_d-1)\ ,
\]
where $\gamma_i$ acts by sending $T_i$ to $qT_i$, and $T_j$ to $T_j$ for $j\neq i$. In particular, this computes
\[
L\eta_{q-1} R\Gamma(\Gamma_\mathrm{disc},A[T_1^{\pm 1},\ldots,T_d^{\pm 1}])\ .
\]
The $q$-de~Rham complex is still a differential graded algebra. In degree $1$, it has elements $d\log_q T_i$ for $i=1,\ldots,d$, and we have the multiplication rule
\[
d\log_q T_i \cdot f(T_1,\ldots,T_d) = f(T_1,\ldots,qT_i,\ldots,T_d) \cdot d\log_q T_i\ .
\]
\end{example}

We briefly discuss (using some $\infty$-categorical language) why the $q$-de Rham complex does not admit the structure of a commutative differential graded algebra.

\begin{remark}
\label{rmk:qdRcdga}
Take $A = \mathbb{F}_2[q^{\pm 1}]$ in Example \ref{ex:qdR}, and $R = A[T^{\pm 1}]$. Set $E_2 := R\Gamma(\Gamma_{\mathrm{disc}}, R)$ and $E_1 = L\eta_{q-1} E_1$, viewed as objects in the derived $\infty$-category of $A$-modules. In this remark, we freely use the following: (a) $E_2$ admits an $E_\infty$-$A$-algebra structure as $R\Gamma(\Gamma_{\mathrm{disc}},-)$ is lax symmetric monoidal, (b) $E_1$ admits an $E_\infty$-$A$-algebra structure as $L\eta_{q-1}$ is lax symmetric monoidal, and (c) the map $E_1 \to E_2$ lifts to a map of $E_\infty$-$A$-algebras. Granting these, we claim that the $E_\infty$-$\mathbb{F}_2$-algebra $E_1$ cannot be modeled by a commutative differential graded algebra over $\mathbb{F}_2$. 

Recall that the cohomology groups $H^*(E)$ of an $E_\infty$-$\bb F_2$-algebra $E$ carry a functorial Steenrod operation $\mathrm{Sq}^0:H^*(E) \to H^*(E)$ which acts as the identity on $H^*(X, \mathbb{F}_2)$ for any space $X$, and vanishes on $H^i(D)$ for $i > 0$ when $D$ is a commutative differential graded algebra over $\mathbb{F}_2$. Now observe that $\mathrm{Sq}^0(x) = x$ for the element $x \in H^1(E_2)$ coming from $x \in C^1$ (with notation as in the previous example); this can be seen by using the canonical map $C^*(S^1, \mathbb{F}_2) \simeq R\Gamma(\Gamma_{\mathrm{disc}}, \mathbb{F}_2) \to R\Gamma(\Gamma_{\mathrm{disc}}, R) =: E_2$, which carries the generator in $H^1(S^1, \mathbb{F}_2)$ to $x \in H^1(E_2)$. Since $d\log_q T \in H^1(E_1)$ maps to $(q-1) x \in H^1(E_2)$ and $\mathrm{Sq}^0$ is $\phi$-linear on $E_\infty$-$A$-algebras, it follows that $\mathrm{Sq}^0(d\log_qT) \in H^1(E_1)$ maps to $(q-1)^2 x \in H^1(E_2)$. As the latter is non-zero, so is $\mathrm{Sq}^0(d\log_q T)$. In particular, $\mathrm{Sq}^0$ acts non-trivially on $H^1(E_1)$, so $E_1$ cannot be represented by a commutative differential graded algebra over $\mathbb{F}_2$. 
\end{remark}

Moreover, we need a lemma about the behaviour of $L\eta$ on Koszul complexes.

\begin{lemma}\label{lemma_on_Koszul_1}
Let $f$ be a non-zero-divisor of a ring $R$, let $M^\blob$ be a complex of $f$-torsion-free $R$-modules, and let $g_1,\dots,g_m\in R$ be non-zero-divisors, each of which is either divisible by $f$ or divides $f$.

If there is some $i$ such that $g_i$ divides $f$, then
\[
\eta_f(M^\blob\otimes_R K_R(g_1,\ldots,g_m))
\]
is acyclic.

On the other hand, if $f$ divides $g_i$ for all $i$, then there is an isomorphism of complexes
\[
\eta_f K_R(g_1,\dots,g_m)\cong K_R(g_1/f,\dots,g_m/f)\ ,
\]
and more generally an isomorphism of complexes
\[
\eta_f(M^\blob\otimes_R K_R(g_1,\dots,g_m))\cong \eta_f M^\blob\otimes_R K_R(g_1/f,\dots,g_m/f)\ .
\]
\end{lemma}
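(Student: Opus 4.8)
The key observation is that $K_R(g_1,\dots,g_m)$ is the tensor product over $R$ of the two-term complexes $L_j := (R \xrightarrow{g_j} R)$ sitting in cohomological degrees $0$ and $1$, and that both assertions reduce to the case $m=1$ by induction, using that $\eta_f$ is (lax symmetric) monoidal and behaves well on the relevant complexes. So the plan is: first handle $m=1$ by an explicit computation, then bootstrap to general $m$ via the tensor-product description together with Lemma~\ref{lem:Letadga} (or, more directly, the explicit multiplicative description of $\eta_f$ on differential graded algebras with $f$-torsion-free terms).

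\textbf{The case $m=1$.} Write $g = g_1$, so $K_R(g) = (R \xrightarrow{g} R)$ in degrees $0,1$. Then $M^\blob \otimes_R K_R(g)$ is the mapping cone of $g: M^\blob \to M^\blob$ (up to a shift/sign convention), with terms $(M^\blob \otimes_R K_R(g))^i = M^i \oplus M^{i-1}$ and differential $(x,y) \mapsto (dx + (-1)^i gy, -dy)$; all these terms are $f$-torsion-free. I will compute $\eta_f$ of this complex directly from the definition $(\eta_f N)^i = \{z \in f^i N^i : dz \in f^{i+1} N^{i+1}\}$. If $g$ divides $f$, one checks that multiplication by $g$ on $H^i(M^\blob)/H^i(M^\blob)[f]$ is an isomorphism for every $i$: indeed on $H^i(\eta_f M^\blob) = H^i(M^\blob)/H^i(M^\blob)[f]$, an element killed by $g$ is killed by $f$, hence is zero, so $g$ is injective there, and surjectivity follows since $f = g \cdot (f/g)$ acts as $\cdot (f/g)\cdot g$ and $f$ acts invertibly-modulo... — more simply, $H^\bullet(\eta_f(M^\blob \otimes_R K_R(g)))$ is the mapping cone of the multiplication-by-$g$ map on $H^\bullet(\eta_f M^\blob)$ by Lemma~\ref{lem:CohomLeta} applied after checking $\eta_f$ commutes with the cone here; since that map is injective with cokernel having no $f$-torsion... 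Rather than belabor this, the clean route is: by Lemma~\ref{lem:CohomLeta}, $H^i(\eta_f C) = (H^i(C)/H^i(C)[f]) \otimes \cal I^{\otimes i}$, and $H^i(M^\blob \otimes_R K_R(g))$ fits in the usual short exact sequence $0 \to H^i(M^\blob)/g \to H^i(M^\blob \otimes_R K_R(g)) \to H^{i+1}(M^\blob)[g] \to 0$; since $g \mid f$, killing $f$-torsion kills both outer terms, whence $\eta_f(M^\blob \otimes_R K_R(g))$ is acyclic. If instead $f \mid g$, write $g = f \cdot g'$; here I exhibit an explicit isomorphism of complexes $\eta_f(M^\blob \otimes_R K_R(g)) \cong \eta_f M^\blob \otimes_R K_R(g')$ by sending, in degree $i$, the pair $(x,y)$ with $x \in f^i M^i$, $y \in f^{i-1}M^{i-1}$ satisfying the $\eta_f$-condition, to $(x, f^{-1} \cdot g$-adjusted $y)$; the point is that the $\eta_f$-defining condition $dx + (-1)^i f g' y \in f^{i+1}M^{i+1}$ forces $dx \in f^{i+1}M^{i+1} + f g' \cdot f^{i-1} M^i = f^{i+1} M^{i+1}$ (using $f \mid fg'$), so $x \in (\eta_f M)^i$, and $y$ ranges over $f^{i-1}M^{i-1}$ with $dy \in f^i M^i$, i.e. $y/f^{?}$... the bookkeeping gives exactly $(\eta_f M)^{i-1} \otimes (R \xrightarrow{g'} R)$ after the shift by $\cal I^{\otimes i}$ is distributed correctly.

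\textbf{Induction on $m$ and the main obstacle.} For general $m$, write $K_R(g_1,\dots,g_m) = K_R(g_1,\dots,g_{m-1}) \otimes_R K_R(g_m)$. If some $g_i$ divides $f$, reorder (legitimate, since the Koszul complex is independent of the order of the $g_j$ as noted in Section~\ref{sec:koszul}) so that $g_m \mid f$ and apply the $m=1$ acyclicity case with $M^\blob$ replaced by $M^\blob \otimes_R K_R(g_1,\dots,g_{m-1})$, whose terms are still $f$-torsion-free. If $f \mid g_i$ for all $i$, apply the $m=1$ isomorphism case iteratively: $\eta_f(M^\blob \otimes_R K_R(g_1,\dots,g_m)) = \eta_f((M^\blob \otimes_R K_R(g_1,\dots,g_{m-1})) \otimes_R K_R(g_m)) \cong \eta_f(M^\blob \otimes_R K_R(g_1,\dots,g_{m-1})) \otimes_R K_R(g_m/f)$, and then induct on $m$. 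The main obstacle is purely bookkeeping: getting the signs, the cohomological-degree shifts, and especially the twists by powers of the (here principal, hence trivializable) ideal $\cal I = (f)$ to match up so that the explicit maps in the $m=1$ isomorphism case are genuinely morphisms of complexes and not just isomorphisms on cohomology. Since everything is happening over an honest ring $R$ with a chosen generator $f$, I would trivialize all the $\cal I^{\otimes i}$ factors at the outset (as in Remark~\ref{rmk:LetaPrincipal}) and verify compatibility with differentials by a direct termwise check; this is routine but must be done carefully. I expect no conceptual difficulty beyond this.
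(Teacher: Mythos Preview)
Your inductive framework (reduce to $m=1$ by peeling off one $K_R(g_j)$ factor) and your treatment of the case $f\mid g$ are exactly the paper's approach, modulo the bookkeeping you flag. The paper carries out precisely the explicit identification you sketch: embed $K(g/f)\hookrightarrow K(g)$ via multiplication by $f$ in degree $1$, then check directly that an element $(x,y)\in f^nM^n\oplus f^nM^{n-1}$ satisfies the $\eta_f$-condition if and only if $x\in(\eta_fM)^n$ and $y\in f\cdot(\eta_fM)^{n-1}$, which is $\eta_fM^\blob\otimes_R K(g/f)$.

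There is, however, a genuine gap in your treatment of the case $g\mid f$. Your ``clean route'' via the short exact sequence
\[
0\to H^{n-1}(M^\blob)/g\to H^n(M^\blob\otimes_R K_R(g))\to H^n(M^\blob)[g]\to 0
\]
(your indexing is off by one, but that is harmless) shows that both outer terms are $f$-torsion, since $g\mid f$. But an extension of $f$-torsion modules is only $f^2$-torsion in general, so you cannot conclude that $H^n(M^\blob\otimes_R K_R(g))[f]=H^n(M^\blob\otimes_R K_R(g))$, which is what Lemma~\ref{lem:CohomLeta} needs. Your earlier abandoned attempts do not close this either: $L\eta_f$ is not exact, so it does not commute with cones, and multiplication by $g$ on $H^i(M^\blob)/H^i(M^\blob)[f]$ need not be injective (an $fg$-torsion class that is not $f$-torsion gives a counterexample).

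The paper's fix is a one-liner you are missing: on $K_R(g)=(R\xrightarrow{g}R)$, the identity $R\to R$ from degree $1$ to degree $0$ is a nullhomotopy for multiplication by $g$. Tensoring with $M^\blob$, multiplication by $g$ on $M^\blob\otimes_R K_R(g)$ is homotopic to $0$, hence so is multiplication by $f$. Thus every $H^n(M^\blob\otimes_R K_R(g))$ is killed by $f$ on the nose, and Lemma~\ref{lem:CohomLeta} gives acyclicity of $\eta_f(M^\blob\otimes_R K_R(g))$.
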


\begin{proof} Arguing inductively, we may assume that $i=1$, and let $g:=g_1$. Assume first that $g$ divides $f$. Note that on any complex of the form $M^\blob\otimes_R K_R(g)$, multiplication by $g$ is homotopic to $0$. As $g$ divides $f$ by assumption, it follows that multiplication by $f$ is homotopic to $0$, and in particular all $H^i(M^\blob\otimes_R K_R(g))$ are killed by $f$. This implies that $\eta_f(M^\blob\otimes_R K_R(g))$ is acyclic by Lemma~\ref{lem:CohomLeta}.

Now assume that $f$ divides $g$. We embed $K(g/f)=(R\xTo{g/f} R)$ into $K(g)=(R\xTo{g} R)$ by using multiplication by $f$ in degree $1$. The complex $M^\blob\otimes_R K(g)$ is given explicitly (in degree $n$ and $n+1$) by
\begin{align*}
\cdots\To M^n\oplus M^{n-1}&\To M^{n+1}\oplus M^n\To\cdots\\
(x,y)&\mapsto (dx,dy+(-1)^n g x)
\end{align*}

One can realize $\eta_f(M^\blob\otimes_R K(g))$ as the subcomplex of $(M^\blob\otimes_R K(g))[f^{-1}]$ which in degree $n$ consists of those elements $(x,y)\in f^n M^n\oplus f^n M^{n-1}$ with $(dx,dy + (-1)^n gy)\in f^{n+1} M^{n+1}\oplus f^{n+1} M^n$. Using the similar model for $\eta_f M^\blob$, this implies that $x\in (\eta_f M)^n$, and also $y\in f(\eta_f M)^{n-1}$, as $dy+(-1)^n gx\in f^{n+1} M^n$, where $gx\in gf^n M^n\subset f^{n+1} M^n$ since $f$ divides $g$. Conversely, if $x\in (\eta_f M)^n$ and $y\in f(\eta_f M)^{n-1}$, then $(dx,dy + (-1)^n gy)\in f^{n+1} M^{n+1}\oplus f^{n+1} M^n$, so that we have identified $\eta_f(M^\blob\otimes_R K(g))$ with the complex
\begin{align*}
\cdots\To (\eta_f M)^n\oplus f(\eta_f M)^{n-1}&\To (\eta_f M)^{n+1}\oplus f(\eta_f M)^n\To\cdots\\
(x,y)&\mapsto (dx,dy+(-1)^n g x)
\end{align*}

But this complex is precisely $\eta_f M^\blob\otimes_R K(g/f)$, under the fixed embedding $K(g/f)\to K(g)$.
\end{proof}

In some situations, one can compute the cohomology of Koszul complexes.

\begin{lemma}\label{lemma_on_Koszul_2}
Let $g$ be an element of a ring $R$. 
\begin{enumerate}
\item Let $M^\blob$ be a complex of $R$-modules. If multiplication by $g$ on $M^\blob$ is homotopic to $0$, then the long exact cohomology sequence
\[
\cdots H^{n-1}(M^\blob)\xto{g}H^{n-1}(M^\blob)\to H^n(M\otimes_R K_R(g))\to H^n(M^\blob)\xto{g} H^n(M^\blob)\to\cdots
\]
for $M^\blob\otimes_R K_R(g)$ breaks into short exact sequences, 
\[
0 \to H^{n-1}(M^\blob)\to H^n(M^\blob\otimes_R K_R(g))\to H^n(M^\blob) \to 0,
\]
which are moreover split.
\item Let $M$ be an $R$-module. If $g_1,\dots,g_m\in R$ are all divisible by $g$, and $g_i$ is $g$ times a unit for some $i$, then there is an isomorphism of $R$-modules
\[
H^n(K_M(g_1,\dots,g_m))\cong \Ann_M(g)^{\binom{m-1}n}\oplus M/gM^{\binom{m-1}{n-1}}\ .
\]
\end{enumerate}
\end{lemma}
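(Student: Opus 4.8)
The plan is to prove both parts by a direct analysis of the Koszul complex, reducing everything to standard homological facts about a single element $g$ acting on a complex.

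For part (i), the key point is that if multiplication by $g$ on $M$ is homotopic to zero via a homotopy $h$ (so $gx = dh(x) + h(dx)$ for $x \in M^\bullet$), then the connecting maps $H^n(M) \xrightarrow{g} H^n(M)$ in the long exact sequence for the distinguished triangle $M \xrightarrow{g} M \to M \otimes_R K_R(g) \to M[1]$ are literally zero maps. Hence the long exact sequence degenerates into the asserted short exact sequences $0 \to H^{n-1}(M) \to H^n(M \otimes_R K_R(g)) \to H^n(M) \to 0$. For the splitting, I would observe that the cone $M \otimes_R K_R(g)$ is functorially a direct sum: since $g$ acts as $0$ in the derived category (the homotopy exhibits this), the triangle $M \xrightarrow{0} M \to \mathrm{cone}(0) \to M[1]$ splits, and $\mathrm{cone}(0) \simeq M \oplus M[1]$. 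More carefully, one writes down an explicit homotopy equivalence: using $h$, the map $M \otimes_R K_R(g) \to M \oplus M[1]$ sending $(x,y)$ (in degree $n$, with $x \in M^n$, $y \in M^{n-1}$) to $(x - (-1)^n h(y), y)$, or a variant thereof, is a quasi-isomorphism (indeed an isomorphism of complexes after checking the differentials), which gives the splitting at the level of complexes, hence on cohomology.

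For part (ii), I would proceed by induction on $m$. After reordering (which is harmless by the symmetric-monoidal description of Koszul complexes recalled before Lemma~\ref{lemma_on_Koszul_1}), assume $g_1 = g u$ with $u \in R^\times$; then $K_M(g_1, \ldots, g_m) = K_{N}(g_2, \ldots, g_m)$ where $N := K_M(g_1) = (M \xrightarrow{g_1} M)$, using the tensor-product presentation $K_M(g_1,\ldots,g_m) = M \otimes_{\bb Z[\ldots]} \bigotimes_i K(g_i)$. Multiplication by $g_1$ (equivalently by $g$, up to the unit $u$) on the two-term complex $N$ is homotopic to zero, via the homotopy that is the identity $M \to M$ from degree $1$ to degree $0$. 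Each remaining $g_i$ for $i \geq 2$ is divisible by $g$, hence acts on $N$ through a map homotopic to zero as well (compose the homotopy for $g$ with $g_i/g$). Therefore part (i) applies to $N$ and to each successive Koszul step, giving, at each stage, functorial split short exact sequences. Unwinding the induction: $H^0(N) = \Ann_M(g_1) = \Ann_M(g)$ and $H^1(N) = M/g_1 M = M/gM$ (here the unit $u$ is used to identify $\Ann_M(gu) = \Ann_M(g)$ and $M/guM = M/gM$), and each of the $m-1$ further Koszul factors $K(g_i)$ contributes, on cohomology, a copy of the previous cohomology in the same degree plus a shifted copy, with all extensions split. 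Counting the multiplicities via Pascal's rule $\binom{m-1}{n} = \binom{m-2}{n} + \binom{m-2}{n-1}$ then yields exactly $H^n(K_M(g_1,\ldots,g_m)) \cong \Ann_M(g)^{\binom{m-1}{n}} \oplus (M/gM)^{\binom{m-1}{n-1}}$.

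The main obstacle I anticipate is bookkeeping the splittings through the induction in part (ii): one must check that the split exact sequences produced by part (i) at each stage are compatible, i.e.\ that one really obtains a direct sum decomposition of the final cohomology rather than merely an iterated extension. This is handled by noting that the splitting in part (i) is functorial (it comes from a homotopy equivalence of complexes $M \otimes_R K_R(g) \simeq M \oplus M[1]$, not just an abstract splitting on cohomology), so it commutes with the subsequent operations $- \otimes_R K(g_i)$; hence the decomposition propagates. A minor subtlety is ensuring the hypothesis of part (i) — homotopy-triviality of multiplication by $g_i$ on the partial Koszul complexes — stays valid at every inductive step, which follows because once $K(g_1)$ has been split off, every subsequent complex is a direct sum of shifts of copies of $N$, on each of which multiplication by any multiple of $g$ is homotopic to zero.
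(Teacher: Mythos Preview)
Your proof is correct and follows essentially the same approach as the paper: construct an explicit splitting of the short exact sequence in (i) using the homotopy, then induct on $m$ for (ii) by reordering so that the first element is $g$ times a unit and repeatedly applying (i). The paper's treatment of (i) is slightly more direct (it writes down the section $x \mapsto (x, h(x))$ on cocycles rather than a full homotopy equivalence of complexes), and your ``main obstacle'' about compatibility of splittings is not actually an obstacle: since each short exact sequence is split, $H^n$ at stage $i$ is abstractly the direct sum of $H^{n-1}$ and $H^n$ at stage $i-1$, so the inductive count of copies of $\Ann_M(g)$ and $M/gM$ goes through without any functoriality.
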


\begin{proof}
(i): Given a cocycle $x\in M^n$, the assumption implies that $gx=dx'$ for some $x'\in M^{n-1}$ depending on $x$ via a homomorphism (given by the homotopy); the association $x\mapsto (x,x')\in M^n\oplus M^{n-1}$ induces a well-defined homomorphism $H^n(M^\blob)\to H^n(M^\blob\otimes_R K_R(g))$ which splits the canonical map.

(ii): Without loss of generality, we may assume $g_1 = g$. Then this follows by induction from (i) applied to $K_M(g_1,\ldots,g_{i-1})$ as $g_i$ is homotopic to $0$ on $K_M(g_1,\ldots,g_{i-1})$ for each $i$.
\end{proof}

\newpage

\section{The complex $\widetilde{\Omega}_{\frak X}$}

Fix a perfectoid field $K$ of characteristic $0$ that admits a system of primitive $p$-power roots $\zeta_{p^r}$, $r\geq 1$, which we will fix for convenience, although our constructions are independent of this choice. Let $\cal O = \cal O_K = K^\circ$ be the ring of integers, which is endowed with the $p$-adic topology.

Now let $\mathfrak X/\cal O$ be a smooth $p$-adic formal scheme, i.e.~$\mathfrak X$ is locally of the form $\Spf R$, where $R$ is a $p$-adically complete flat $\cal O$-algebra such that $R/p$ is a smooth $\cal O/p$-algebra; equivalently, by a theorem of Elkik, \cite{Elkik}, $R$ is the $p$-adic completion of a smooth $\cal O$-algebra. We will simply call such $R$ formally smooth $\roi$-algebras below. Let $X$ be the generic fibre of $\mathfrak X$, which is a smooth adic space over $K$. We have the projection
\[
\nu: X_\sub{pro\'et}\to \mathfrak X_\sub{Zar}\ .
\]
In everything we do, we may as well replace $\nu$ by the projection $X_\sub{pro\'et}\to \mathfrak X_\sub{\'et}$, but the Zariski topology is enough.

\begin{definition} The complex $\widetilde{\Omega}_{\frak X}\in D(\mathfrak X_\sub{Zar})$ is given by
\[
\widetilde{\Omega}_{\frak X} = L\eta_{\zeta_p-1} (R\nu_\ast \hat\roi^+_X)\ ,
\]
where $\hat\roi^+_X$ is defined in Definition \ref{def:PeriodSheaves}.
\end{definition}

As the ideal $(\zeta_p-1)$ is independent of the choice of $\zeta_p$, so is $\widetilde{\Omega}_{\frak X}$. In this paper, we consider $\widetilde{\Omega}_{\frak X}$ merely as an object of the derived category (and not an $\infty$-categorical enhancement). Then $\widetilde{\Omega}_{\frak X}$ is naturally a commutative $\roi_{\frak X}$-algebra object in $D(\mathfrak X_\sub{Zar})$, as both $R\nu_\ast$ and $L\eta_{\zeta_p-1}$ are lax symmetric monoidal.

Our goal is to identify the cohomology groups of this complex with differential forms on $\frak X$;  this identification involves a Tate twist (or, rather, a Breuil--Kisin--Fargues twist), so we define that first, cf.~Example~\ref{ex:bkftwist}.
 
\begin{definition} Set
\[
\roi\{1\}:= T_p(\Omega^1_{\cal O/\bb Z_p})=\widehat{\bb L}_{\cal O/\bb Z_p}[-1] = \bb L_{\roi/A_\inf}[-1] = \tilde\xi A_\inf/\tilde\xi^2 A_\inf\ ;
\]
this is a free $\cal O=A_\inf/\tilde\xi$-module of rank $1$ that canonically contains the Tate twist $\cal O(1)$ as a free submodule with cokernel annihilated exactly by $(\zeta_p-1)=(p^{1/(p-1)})$.
\end{definition}

Explicitly, if we regard the $\zeta_{p^r}$ as fixed, one gets a trivialization $\roi\{1\}\cong \roi$ with generator given by the system of
\[
\left(\frac 1{\zeta_p-1} \frac{d(\zeta_{p^r})}{\zeta_{p^r}}\right)_r\in T_p(\Omega^1_{\cal O/\bb Z_p})\ .
\]
For any $\roi$-module $M$ and $n\in \bb Z$, we write $M\{n\} = M\otimes_\roi \roi\{n\}$. Our main result here is:

\begin{theorem}\label{thm:IntegralCartier}
There is a natural isomorphism
\[
H^i(\widetilde{\Omega}_{\frak X})\cong \Omega^{i,\cont}_{\frak X/\cal O}\{-i\}
\]
of sheaves on $\frak X_\sub{Zar}$. Here, $\Omega^{i,\cont}_{\frak X/\cal O} = \projlim \Omega^i_{(\frak X/p^n)/(\roi/p^n)}$ denotes the $\roi_{\frak X}$-module of continuous differentials.

In particular, $\widetilde{\Omega}_{\frak X}$ is a perfect complex of $\roi_{\frak X}$-modules.
\end{theorem}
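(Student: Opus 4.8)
The plan is to reduce the statement to a local computation, since the assertion is Zariski-local on $\frak X$. So I would assume $\frak X = \Spf R$ is affine and small, i.e.~admits an \'etale map $\square: \Spf R \to \widehat{\bb G}_m^d$, and aim to compute $\widetilde{\Omega}_R := R\Gamma(\frak X_\sub{Zar}, \widetilde{\Omega}_{\frak X})$ explicitly.

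\textbf{Step 1: Reduce to a group-cohomology computation.} Using Lemma~\ref{lem:perfectoidtower}, the tower $\widetilde{X}_\infty = \projlimf_r X_r \in X_\sub{pro\'et}$ is affinoid perfectoid, and the associated perfectoid ring has as its integral subring the $p$-adic completion $R_\infty$ of $R\hat\otimes_{\roi\langle T^{\pm1}\rangle}\roi\langle T^{\pm 1/p^\infty}\rangle$, on which $\Gamma = \bb Z_p^d$ acts. An analogue of the argument in~\cite{ScholzePAdicHodge} (or the later Lemma~\ref{KeyLemmaWr} specialized to $r=1$, or a direct almost-purity argument) shows that the natural map
\[
R\Gamma_\cont(\Gamma, R_\infty) \to R\Gamma_\sub{pro\'et}(X, \hat\roi_X^+)
\]
is an almost quasi-isomorphism (with respect to $\frak m \subset \roi$). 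Then, using the qualitative nicety of the left-hand side together with a lemma of the type ``$L\eta_{\zeta_p-1}$ turns almost quasi-isomorphisms between nice complexes into honest quasi-isomorphisms'' (the analogue of Lemma~\ref{KeyLemma}/Lemma~\ref{lem:LetaActualIsom}; one needs $H^0$ of the left side to have no almost-zero elements, which holds as $R_\infty$ is $p$-torsion-free), I conclude
\[
\widetilde{\Omega}_R \simeq L\eta_{\zeta_p-1} R\Gamma_\cont(\Gamma, R_\infty)\ .
\]
By Lemma~\ref{lem:koszulcohom}(ii), $R\Gamma_\cont(\Gamma, R_\infty)$ is computed by the Koszul complex $K_{R_\infty}(\gamma_1-1,\dots,\gamma_d-1)$, so $\widetilde{\Omega}_R$ is computed by $\eta_{\zeta_p-1} K_{R_\infty}(\gamma_1-1,\dots,\gamma_d-1)$.

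\textbf{Step 2: Decompose $R_\infty$ and compute.} The module $R_\infty$ decomposes, as a $\Gamma$-representation, according to the characters through which $\Gamma$ acts on the monomials $T_1^{a_1}\cdots T_d^{a_d}$ with $a_i \in \bb Z[1/p]$: one has $R_\infty = R \oplus R_\infty^{\mathrm{nonint}}$ where the summand $R$ carries the trivial action (integral exponents) and $R_\infty^{\mathrm{nonint}}$ is the (completed) sum over $\vec a \notin \bb Z^d$. On the trivial summand, $\gamma_i - 1 = 0$, so $\eta_{\zeta_p-1} K_R(0,\dots,0) = K_R(0,\dots,0)\{\ast\}$ twisted appropriately, whose cohomology in degree $i$ is $\bigwedge^i R^d = \Omega^{i,\cont}_{R/\roi}$ with a twist by $\roi\{-i\}$ coming from the factor $(\zeta_p-1)^{-i}$ built into $\eta$ — here the precise bookkeeping gives the $\{-i\}$ and explains why $\frac{1}{\zeta_p-1}\frac{d\zeta_{p^r}}{\zeta_{p^r}}$ is the relevant generator of $\roi\{1\}$. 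On each nonintegral character $\vec a$, the element $\gamma_i - 1$ acts by $\zeta^{a_i} - 1$ for a suitable $p$-power root of unity $\zeta$, and for at least one $i$ this is $\zeta_{p}-1$ times a unit (choosing $i$ so that $a_i$ has maximal $p$-power denominator); then Lemma~\ref{lemma_on_Koszul_1} (the case where some $g_i$ divides $f$) shows $\eta_{\zeta_p-1}$ of the corresponding Koszul complex is \emph{acyclic}. Summing (and using that $\eta$, cohomology, and the relevant completed direct sums interact well — Corollary~\ref{cor:LetaExists}, Lemma~\ref{lem:cohomcompleteddirectsum}), exactly the trivial summand survives, giving $H^i(\widetilde{\Omega}_R) \cong \Omega^{i,\cont}_{R/\roi}\{-i\}$.

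\textbf{Step 3: Globalize and conclude perfectness.} The identification is functorial in $R$ (independence of the \'etale chart $\square$ follows because the statement, once proven, is intrinsic: $\widetilde{\Omega}_{\frak X}$ is defined without reference to coordinates, and the isomorphism with $\Omega^{i,\cont}$ can be checked to be canonical, e.g.~by comparing with a Hodge--Tate-type symbol map), so the local isomorphisms glue to the asserted isomorphism of sheaves on $\frak X_\sub{Zar}$. Finally, since $\frak X/\roi$ is smooth, each $\Omega^{i,\cont}_{\frak X/\roi}$ is a finite locally free $\roi_{\frak X}$-module, vanishing for $i > d$; as $\widetilde{\Omega}_{\frak X}$ has a bounded filtration (or spectral sequence) with these as graded pieces, hence bounded coherent cohomology, and $\roi_{\frak X}$ is Noetherian locally, $\widetilde{\Omega}_{\frak X}$ is a perfect complex.

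\textbf{Main obstacle.} The crux is Step 1: proving that applying $L\eta_{\zeta_p-1}$ converts the merely-almost quasi-isomorphism coming from almost purity into an honest quasi-isomorphism, \emph{without} detailed knowledge of $R\Gamma_\sub{pro\'et}(X,\hat\roi_X^+)$ beyond the almost comparison. This is the ``list of miracles'' phenomenon flagged around Lemma~\ref{KeyLemma}; concretely it requires knowing that the explicit left-hand side $\eta_{\zeta_p-1} K_{R_\infty}(\gamma_i-1)$ has cohomology with no nonzero elements killed by $\frak m$ (which follows from the above computation showing the cohomology is $\Omega^{i,\cont}_{R/\roi}\{-i\}$, a $p$-torsion-free module), and then invoking the abstract lemma that an almost quasi-isomorphism $D_1 \to D_2$ with $D_1$ of this form induces, after $L\eta_{\zeta_p-1}$, a genuine quasi-isomorphism. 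Getting this bootstrapping right — ensuring the computation of the left side feeds into the comparison and not circularly — is the delicate point; the Koszul/character-decomposition bookkeeping of Step 2 is routine by comparison.
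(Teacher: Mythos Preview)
Your outline follows the paper's strategy, but there are two genuine gaps.

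First, the hypothesis you invoke for the ``almost-to-honest'' step is misstated: Lemma~\ref{lem:LetaActualIsom} requires that $H^i(C)$ and $H^i(C)/(\zeta_p-1)$ have no almost-zero elements, where $C = R\Gamma_\cont(\Gamma, R_\infty)$ is the complex \emph{before} $L\eta$---not that $H^i(L\eta_{\zeta_p-1} C)$ has this property. Your Step~2 computes only the latter, so the justification is circular as written. The paper supplies the correct input in Proposition~\ref{prop:compcontcohom}, decomposing $H^i_\cont(\Gamma, R_\infty)$ itself as a completed sum of finitely presented $\roi$-modules, which have no almost-zero elements by coherence (Corollary~\ref{cor:WrFinPresNoAlmostZero}). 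You also silently assume that $R\Gamma(\frak X, L\eta_{\zeta_p-1} R\nu_*\hat\roi_X^+) \simeq L\eta_{\zeta_p-1} R\Gamma_\sub{pro\'et}(X, \hat\roi_X^+)$; this is not formal ($L\eta$ does not commute with global sections in general) and is proved separately via stalks in Corollary~\ref{cor:alltildeOmegasame}(iv).

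Second, and more seriously, Step~3 does not establish \emph{naturality}. Your Step~2 produces an isomorphism $H^i(\widetilde{\Omega}_R) \cong \bigwedge^i R^d$ that visibly depends on the framing $\square$; asserting it ``can be checked to be canonical'' is not a proof, and the twist $\{-i\}$ does not drop out of the computation as written. The paper constructs the canonical map in Proposition~\ref{prop:OmegavsL}: the transitivity triangle for $\bb Z_p \to \roi \to \hat\roi_X^+$ together with the vanishing $\widehat{\bb L}_{\hat\roi_X^+/\roi} \simeq 0$ yields a natural map $\widehat{\bb L}_{R/\bb Z_p}[-1]\{-1\} \to R\Gamma_\sub{pro\'et}(X,\hat\roi_X^+)$, which is then shown (via Lemma~\ref{lem:uniquefactoverLeta}) to factor uniquely through an equivalence with $\tau^{\leq 1}\widetilde{\Omega}_R$; the verification reduces to $R = \roi\langle T^{\pm 1}\rangle$, where one tracks $d\log(T)$ explicitly (Proposition~\ref{prop:identificationmap}). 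Only then are the higher $H^i$ identified canonically, by taking exterior powers of the now-canonical $i=1$ case. Your ``Hodge--Tate-type symbol map'' gestures in the right direction, but this cotangent-complex construction is the heart of the naturality claim and cannot be waved away.
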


Note that $R\nu_\ast \hat\roi^+_X$ is a complex that is only almost (in the technical sense) understood, using Faltings' almost purity theorem. It is thus surprising that in the theorem, we can identify the cohomology sheaves of $\widetilde{\Omega}_{\frak X} = L\eta_{\zeta_p-1} R\nu_\ast \hat\roi^+_X$ on the nose. This is possible as $L\eta_{\zeta_p-1}$ turns certain (but not all) almost quasi-isomorphisms into actual quasi-isomorphisms, cf.~Lemma~\ref{lem:LetaActualIsom} below.

The theorem can be regarded as a version of the Cartier isomorphism in mixed characteristic, except that $\widetilde{\Omega}_{\frak X}$ is not the de~Rham complex; however, we will later see that its reduction to the residue field $k$ of $\roi$ agrees with the de~Rham complex of $R\otimes_\roi k$.

\begin{remark} In Proposition~\ref{prop:OmegavsL}, we also prove that the complex $\tau^{\leq 1} \widetilde{\Omega}_{\frak X}$ is canonically identified with the $p$-adic completion of $\bb L_{\frak X/\bb Z_p}[-1]\{-1\}$. Now the $p$-adic completion of $\bb L_{\frak X/\bb Z_p}$ gives an extension of $\Omega^{1,\cont}_{\frak X/\cal O}$ by $\roi\{1\}[1]$; the corresponding $\Ext^2$-class measures the obstruction to lifting $\frak X$ to $A_\inf/\tilde\xi^2$. Thus, $\tau^{\leq 1} \widetilde{\Omega}_{\frak X}$ also measures the same obstruction; this gives an integral lift of the analogous Deligne-Illusie identification \cite[Theorem 3.5]{DeligneIllusie} over the special fibre. In particular, if $\frak X$ does not lift to $A_\inf/\tilde{\xi}^2$, then $\widetilde{\Omega}_{\frak X}$ does not split as a direct sum of its cohomology sheaves.
\end{remark}

The rest of the section is dedicated to proving Theorem~\ref{thm:IntegralCartier}. It will be useful to prove a stronger local result, which we will now formulate. The following definition is due to Faltings.

\begin{definition} Let $R$ be a formally smooth $\roi$-algebra. Then $R$ is called \emph{small} if there is an \'etale map
\[
\Spf R\to \widehat{\bb G}_m^d = \Spf \roi\langle T_1^{\pm 1},\ldots,T_d^{\pm 1}\rangle \ .
\]
\end{definition}

Let $\frak X = \Spf R$ with generic fiber $X=\Spa(R[\tfrac 1p],R)$. We will denote such ``framing'' maps
\[
\square: \frak X\to \widehat{\bb G}_m^d = \Spf \roi\langle T_1^{\pm 1},\ldots,T_d^{\pm 1}\rangle
\]
to the torus by the symbol $\square$. Given a framing, we let
\[
R_\infty = R\hat{\otimes}_{\roi\langle T_1^{\pm 1},\ldots,T_d^{\pm 1}\rangle} \roi\langle T_1^{\pm 1/p^\infty},\ldots,T_d^{\pm 1/p^\infty}\rangle\ ,
\]
which is a perfectoid ring, integrally closed in the perfectoid $K$-algebra $R_\infty[\tfrac 1p]$. In particular, the corresponding tower
\[
\projlimf_r \mathrm{Spa}(R\otimes_{\roi \langle T_1^{\pm 1},\ldots,T_d^{\pm 1}\rangle} K\langle T_1^{\pm 1/p^r},\ldots,T_d^{\pm 1/p^r}\rangle, R\otimes_{\roi \langle T_1^{\pm 1},\ldots,T_d^{\pm 1}\rangle} \roi\langle T_1^{\pm 1/p^r},\ldots,T_d^{\pm 1/p^r}\rangle)
\]
in $X_\sub{pro\'et}$ is affinoid perfectoid, with limit $\mathrm{Spa}(R_\infty[\tfrac 1p],R_\infty)$, and so Lemma~\ref{lem:descrperiodsheaves} applies. There is an action of $\Gamma = \bb Z_p(1)^d$ on $R_\infty$, where after a choice of roots of unity, a generator $\gamma_i$, $i=1,\ldots,d$, acts by $T_i^{1/p^m}\mapsto \zeta_{p^m} T_i^{1/p^m}$, $T_j^{1/p^m}\mapsto T_j^{1/p^m}$ for $j\neq i$.

On the other hand, assume for the moment that $\Spf R$ is connected. Then we can consider the completion $\widehat{\overline{R}}$ of the normalization $\overline{R}$ of $R$ in the maximal (pro-)finite \'etale extension of $R[\tfrac 1p]$, on which $\Delta = \mathrm{Gal}(\overline{R}[\tfrac 1p]/R[\tfrac 1p])$ acts. Again, $\widehat{\overline{R}}$ is perfectoid. Then $R_\infty\subset \widehat{\overline{R}}$ and $\Delta$ surjects onto $\Gamma$. By Faltings' almost purity theorem, the map
\[
R\Gamma_{\mathrm{cont}}(\Gamma,R_\infty)\to R\Gamma_{\mathrm{cont}}(\Delta,\widehat{\overline{R}})
\]
is an almost quasi-isomorphism, i.e.~all cohomology groups of the cone are killed by the maximal ideal $\frak m$ of $\roi$.

Using \cite[Proposition 3.5, Proposition 3.7 (iii), Corollary 6.6]{ScholzePAdicHodge}, one can identify the cohomology groups on the pro-finite \'etale site with continuous group cohomology groups, to see that
\[
R\Gamma_{\mathrm{cont}}(\Delta,\widehat{\overline{R}}) = R\Gamma(X_\sub{prof\'et},\hat\roi^+_X)\ .
\]
Note that the right side is well-defined even if $\mathrm{Spf} R$ is not connected.

In this situation, we can consider the following variants of $\widetilde{\Omega}_{\frak X}$.

\begin{definition} Let $R$ be a small formally smooth $\roi$-algebra as above, and let
\[
\square: \Spf R\to \widehat{\bb G}_m^d = \Spf \roi\langle T_1^{\pm 1},\ldots,T_d^{\pm 1}\rangle \ .
\]
be a framing. Define the following complexes:
\[\begin{aligned}
\widetilde{\Omega}_R^\square &= L\eta_{\zeta_p-1} R\Gamma_{\mathrm{cont}}(\Gamma,R_\infty) \\
\widetilde{\Omega}_R^\sub{prof\'et} &= L\eta_{\zeta_p-1} R\Gamma(X_\sub{prof\'et},\hat\roi^+_X) \\
\widetilde{\Omega}_R^\sub{pro\'et} &= L\eta_{\zeta_p-1} R\Gamma(X_\sub{pro\'et},\hat\roi^+_X)\ .
\end{aligned}\]
\end{definition}

Note that there are obvious maps
\[
\widetilde{\Omega}_R^\square\to \widetilde{\Omega}_R^\sub{prof\'et}\to \widetilde{\Omega}_R^\sub{pro\'et}\ .
\]
By the almost purity theorem, more precisely by \cite[Lemma 4.10 (v)]{ScholzePAdicHodge}, and the observation that $L\eta_{\zeta_p-1}$ takes almost quasi-isomorphisms to almost quasi-isomorphisms, they are almost quasi-isomorphisms. Finally, there is a map
\[
\widetilde{\Omega}_R^\sub{pro\'et}\to R\Gamma(\frak X,\widetilde{\Omega}_{\frak X})\ .
\]

\begin{theorem}\label{thm:IntegralCartierLocal} Let $R$ be a small formally smooth $\roi$-algebra. The maps
\[
\widetilde{\Omega}_R^\square\to \widetilde{\Omega}_R^\sub{prof\'et}\to \widetilde{\Omega}_R^\sub{pro\'et}\to R\Gamma(\frak X,\widetilde{\Omega}_{\frak X})
\]
are quasi-isomorphisms; write $\widetilde{\Omega}_R$ for their common value. Then there are natural isomorphisms
\[
H^i(\widetilde{\Omega}_R)\cong \Omega_{R/\roi}^{i,\cont}\{-i\}\ ,
\]
where $\Omega^{i,\cont}_{R/\roi}$ denotes the locally free $R$-module $\Omega^{i,\cont}_{R/\roi} = \projlim \Omega^i_{(R/p^n)/(\roi/p^n)}$ of continuous differentials.
\end{theorem}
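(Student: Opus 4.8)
The strategy is to reduce everything to an explicit computation with the complex $\widetilde{\Omega}_R^\square = L\eta_{\zeta_p-1} R\Gamma_{\mathrm{cont}}(\Gamma,R_\infty)$, since this is the only one of the four complexes that is genuinely computable. First I would establish that the three maps
\[
\widetilde{\Omega}_R^\square\to \widetilde{\Omega}_R^\sub{prof\'et}\to \widetilde{\Omega}_R^\sub{pro\'et}\to R\Gamma(\frak X,\widetilde{\Omega}_{\frak X})
\]
are quasi-isomorphisms. For the first two, we know they are almost quasi-isomorphisms by the almost purity theorem (via \cite[Lemma 4.10 (v)]{ScholzePAdicHodge}) together with the fact that $L\eta_{\zeta_p-1}$ preserves almost quasi-isomorphisms; to upgrade ``almost'' to ``honest'', the key is a lemma (Lemma~\ref{lem:LetaActualIsom}, cited in the text) stating that if $D_1 \to D_2$ is an almost quasi-isomorphism and $D_1$ is sufficiently nice — concretely, if its cohomology groups have no almost-zero elements, which will follow once we compute $H^i(\widetilde{\Omega}_R^\square)$ and see it is $\Omega^{i,\cont}_{R/\roi}\{-i\}$, a torsion-free $\roi$-module — then $L\eta_{\zeta_p-1} D_1 \to L\eta_{\zeta_p-1} D_2$ is an honest quasi-isomorphism. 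For the last map $\widetilde{\Omega}_R^\sub{pro\'et}\to R\Gamma(\frak X,\widetilde{\Omega}_{\frak X})$, the point is that $L\eta$ does not commute with $R\Gamma$, but one can argue locally: cover $\frak X$ by small affines, use that $L\eta_{\zeta_p-1}$ of the Čech complex computing $R\Gamma(\frak X, R\nu_\ast\hat\roi^+_X)$ agrees with the Čech complex of the local $L\eta$'s up to the torsion-freeness just established, and conclude by descent.

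Next, the heart of the matter: computing $H^i(\widetilde{\Omega}_R^\square)$. By Lemma~\ref{lem:koszulcohom}(ii), $R\Gamma_{\mathrm{cont}}(\Gamma,R_\infty)$ is computed by the Koszul complex $K_{R_\infty}(\gamma_1-1,\ldots,\gamma_d-1)$, whose terms are $\zeta_p-1$ (equivalently $\mu$)-torsion-free, so we may compute $\eta_{\zeta_p-1}$ of it directly. The essential input is a decomposition of $R_\infty$ as a $\Gamma$-representation: $R_\infty = \bigoplus_{a} R \cdot T^a$ where $a$ ranges over $(\bb Z[1/p]/\bb Z)^d$ and $\gamma_i$ acts on the rank-one summand $R\cdot T^a$ by the scalar $\zeta_{p^{v(a_i)}}^{\,\cdot}$ (a $p$-power root of unity depending on the $p$-adic denominator of $a_i$); for $a=0$ the action is trivial. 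Each such summand contributes a Koszul complex $K_{R}(\zeta^{a_1}-1,\ldots,\zeta^{a_d}-1)$ of \emph{constants}, and one computes $\eta_{\zeta_p-1}$ of each piece using Lemma~\ref{lemma_on_Koszul_1}: if some $\zeta^{a_i}-1$ strictly divides $\zeta_p-1$ (which happens precisely when $a_i$ has denominator $>p$ in the relevant sense — wait, rather: $\zeta_{p^s}-1$ divides $\zeta_p-1$ for $s\le 1$ and $\zeta_p-1$ divides $\zeta_{p^s}-1$ for $s\ge 1$; one checks the valuations) then that summand is killed, while for the summands where $\zeta_p-1$ divides all the $\zeta^{a_i}-1$, one gets $\eta_{\zeta_p-1}$ of a Koszul complex on the scaled generators. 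Assembling, the only summand surviving in $H^i$ after applying $\eta_{\zeta_p-1}$ and accounting for the $(\zeta_p-1)^i$-twist is essentially the $a=0$ piece, giving $\bigwedge^i R^d\{-i\}$, which is canonically $\Omega^{i,\cont}_{R/\roi}\{-i\}$ via the framing $\square$ (the $d\log T_j$'s providing the basis, and the twist accounting for $\roi\{1\}$ versus $\roi(1)$). I would carry out this Koszul bookkeeping carefully, noting that the combinatorics — tracking which summands survive as a function of the $p$-adic denominators of the multi-index $a$ — is the cumbersome but not deep part.

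Finally, I would address naturality and the reduction from the local to the global statement. The isomorphism $H^i(\widetilde{\Omega}_R)\cong \Omega^{i,\cont}_{R/\roi}\{-i\}$ must be shown independent of the framing $\square$: the cleanest way is to observe that the $a=0$ summand of $R_\infty$ is $R$ itself with trivial $\Gamma$-action, so $H^0$ is canonically $R$ and $H^1$ receives a canonical map from $\Omega^{1,\cont}_{R/\roi}\{-1\}$ via $d\log$ of units; using the multiplicative (differential graded algebra) structure on the Koszul complex from Lemma~\ref{lem:koszuldga} and Lemma~\ref{lem:Letadga}, the higher cohomology is generated by $H^1$, and one checks the resulting map $\Omega^{i,\cont}_{R/\roi}\{-i\}\to H^i$ is the isomorphism just computed — this description visibly does not use $\square$. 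Then $\widetilde{\Omega}_{\frak X}$ is perfect because $\Omega^{i,\cont}_{\frak X/\roi}$ is a finite locally free $\roi_{\frak X}$-module and the complex has bounded amplitude $[0,d]$; and Theorem~\ref{thm:IntegralCartier} follows by sheafifying Theorem~\ref{thm:IntegralCartierLocal} over a cover of $\frak X$ by small affines, the framing-independence guaranteeing the local isomorphisms glue. \emph{The main obstacle} I anticipate is the upgrade from almost quasi-isomorphism to honest quasi-isomorphism — i.e., setting up and applying Lemma~\ref{lem:LetaActualIsom} correctly — together with the nontriviality that $L\eta$ does not commute with global sections, which forces the careful local-to-global argument; by contrast, the explicit Koszul computation, while lengthy, is essentially mechanical once the $\Gamma$-decomposition of $R_\infty$ and the divisibility relations among the $\zeta_{p^s}-1$ are in hand.
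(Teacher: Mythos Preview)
Your broad strategy is correct and matches the paper: decompose $R_\infty$ into $\Gamma$-isotypic pieces, compute the Koszul complex on each, and use Lemma~\ref{lem:LetaActualIsom} to upgrade the almost quasi-isomorphisms. However, there are two points where your outline diverges from what actually needs to be done.

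First, a small but real confusion: the hypothesis of Lemma~\ref{lem:LetaActualIsom} is on $C=R\Gamma_{\mathrm{cont}}(\Gamma,R_\infty)$, not on $L\eta_{\zeta_p-1}C=\widetilde{\Omega}_R^\square$. You must verify that $H^i(C)$ \emph{and} $H^i(C)/(\zeta_p-1)$ have no almost-zero elements, which does not follow from knowing $H^i(\widetilde{\Omega}_R^\square)$ is a free $R$-module. The paper checks this directly (Proposition~\ref{prop:compcontcohom}) by observing that each nonintegral summand in the decomposition has cohomology which is a finitely presented $\roi$-module (being the cohomology of a perfect complex over the coherent ring $\roi$), and finitely presented $\roi$-modules have no almost-zero elements. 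You have the decomposition in hand, so this is fixable, but it is not what you wrote. (Your divisibility parenthetical is also tangled: for $s\ge 1$, $\zeta_{p^s}-1$ divides $\zeta_p-1$, not the other way around; this is what makes the nonintegral summands die under $\eta_{\zeta_p-1}$.)

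Second, and more substantively, your naturality argument is where the proposal genuinely departs from the paper. Saying ``$H^1$ receives a canonical map from $\Omega^{1,\cont}_{R/\roi}\{-1\}$ via $d\log$ of units'' is not a proof: $d\log$ of units gives a map $R^\times\to\Omega^1$, and Kummer theory gives a map $R^\times\to H^1$, but you have not explained why the second factors through the first, nor why the resulting map lands in $(\zeta_p-1)H^1$ so that it lifts to $H^1(\widetilde{\Omega}_R)$. The paper handles this via the cotangent complex (Proposition~\ref{prop:OmegavsL}): on $X_\sub{pro\'et}$ one has $\widehat{\bb L}_{\hat\roi^+_X/\roi}\simeq 0$ since the stalks are perfectoid, so the transitivity triangle gives a canonical identification $\widehat{\bb L}_{\hat\roi^+_X/\bb Z_p}[-1]\cong \hat\roi^+_X\{1\}$, whence a map $\widehat{\bb L}_{R/\bb Z_p}[-1]\{-1\}\to R\Gamma(X_\sub{pro\'et},\hat\roi^+_X)$ that is manifestly framing-independent. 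One then shows (Lemma~\ref{lem:uniquefactoverLeta}) that this map factors uniquely through $\widetilde{\Omega}_R$, and a computation in the key case $R=\roi\langle T^{\pm1}\rangle$ identifies it with your framed isomorphism. Your intuition about $d\log$ is exactly what this computation verifies (see Proposition~\ref{prop:identificationmap}), but the cotangent-complex packaging is what makes the map canonical from the start.

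For the last map $\widetilde{\Omega}_R^\sub{pro\'et}\to R\Gamma(\frak X,\widetilde{\Omega}_{\frak X})$, the paper avoids Čech descent and instead checks the map $\widetilde{\Omega}_R\otimes_R\roi_{\frak X}\to\widetilde{\Omega}_{\frak X}$ is a quasi-isomorphism on stalks, using that $L\eta$ commutes with stalks and filtered colimits (Lemma~\ref{lem:LEtaChangeTopos}); this is cleaner than what you sketched.
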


\begin{proof}[Proof that Theorem~\ref{thm:IntegralCartierLocal} implies Theorem~\ref{thm:IntegralCartier}]
As any sufficiently small Zariski open of $\frak X$ is of the form $\Spf R$ for a small formally smooth $\roi$-algebra $R$, it suffices to check that the isomorphisms $H^i(\widetilde{\Omega}_R)\cong \Omega_{R/\roi}^{i,\cont}\{-i\}$ constructed in the proof of Theorem~\ref{thm:IntegralCartierLocal} are compatible with localization. As these isomorphisms are multiplicative (Corollary~\ref{cor:alltildeOmegasame} (ii)), we reduce to the case $i=1$. In this case, the isomorphism $\Omega^{1,\cont}_{R/\roi} \to H^1(\widetilde{\Omega}_R)$ is described in co-ordinate free terms in Proposition~\ref{prop:OmegavsL} and the following discussion.
\end{proof}

\begin{remark} Without the assumption that $R$ is small, one can still define $\widetilde{\Omega}_R^\sub{prof\'et}$ and $\widetilde{\Omega}_R^\sub{pro\'et}$. However, we do not know whether the maps
\[
\widetilde{\Omega}_R^\sub{prof\'et}\to \widetilde{\Omega}_R^\sub{pro\'et}\to R\Gamma(\frak X,\widetilde{\Omega}_{\frak X})
\]
are quasi-isomorphisms without the assumption that $R$ is small. (One can check that they are almost quasi-isomorphisms.)
\end{remark}

\subsection{The local computation}
\label{ss:localcomptildeOmega}

Let $R$ be a small formally smooth $\roi$-algebra with a fixed framing
\[
\square: \frak X=\Spf R\to \widehat{\bb G}_m^d\ .
\]
Let
\[
R_\infty = R\hat{\otimes}_{\roi\langle T_1^{\pm 1},\ldots,T_d^{\pm 1}\rangle} \roi\langle T_1^{\pm 1/p^\infty},\ldots,T_d^{\pm1/p^\infty}\rangle
\]
which has a $\Gamma=\bb Z_p(1)^d$-action as above. We start by recalling the computation of the cohomology groups of the complex
\[
R\Gamma_{\mathrm{cont}}(\Gamma,R_\infty)\ ,
\]
in a presentation which uses the choice of the framing $\square$ and a choice of roots of unity $\zeta_{p^r}$. Note that $\Omega^{1,\cont}_{R/\roi}$ is a free $R$-module with basis $d\log(T_1),\ldots,d\log(T_d)$, and thus
\[
\Omega^{i,\cont}_{R/\roi}\cong \bigwedge^i R^d\cong R^{\binom di}\ .
\]

\begin{proposition}\label{prop:compcontcohom} For all $i\geq 0$, the map
\[
\bigwedge^i R^d = H^i_{\mathrm{cont}}(\Gamma,R)\to H^i_{\mathrm{cont}}(\Gamma,R_\infty)
\]
is split injective, with cokernel killed by $\zeta_p - 1$. Moreover, $H^i_{\mathrm{cont}}(\Gamma,R_\infty)$ and $H^i_\cont(\Gamma,R_\infty)/(\zeta_p-1)$ have no almost zero elements.
\end{proposition}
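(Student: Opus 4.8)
The plan is to decompose $R_\infty$ as a topological $R$-module equipped with its $\Gamma$-action, and to use this decomposition to reduce the computation of $H^i_\cont(\Gamma, R_\infty)$ to an explicit combinatorial computation, following Faltings. First I would write $R_\infty = R \hat\otimes_{\roi\langle T^{\pm 1}\rangle} \roi\langle T^{\pm 1/p^\infty}\rangle$ (with $T^{\pm 1}$ shorthand for $T_1^{\pm 1},\ldots,T_d^{\pm 1}$), so that as a $p$-completed $R$-module with $\Gamma$-action it splits (after choosing roots of unity) as the $p$-completed direct sum of the rank-one modules $R\cdot T^a$, where $a = (a_1,\ldots,a_d)$ ranges over $\bb Z[\tfrac 1p]^d \cap [0,1)^d$; here $\gamma_i$ acts on $R\cdot T^a$ by multiplication by $\zeta_{p^{v_i}}^{p^{v_i} a_i}$ where $p^{v_i}$ is the denominator of $a_i$. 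The summand with $a = 0$ is $R$ with trivial $\Gamma$-action, contributing $H^i_\cont(\Gamma, R) = \bigwedge^i R^d$, which gives the split injection in the statement with a chosen splitting coming from this direct summand decomposition.

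Next I would compute, using Lemma~\ref{lem:koszulcohom}, that $R\Gamma_\cont(\Gamma, R\cdot T^a)$ is the Koszul complex $K_R(\zeta_{p^{v_1}}^{p^{v_1}a_1} - 1, \ldots, \zeta_{p^{v_d}}^{p^{v_d}a_d}-1)$; when $a \neq 0$ at least one of these elements $\zeta_{p^{v_i}}^{p^{v_i}a_i} - 1$ is a unit times $\zeta_{p^{v_i}} - 1$ (taking $v_i$ maximal among the denominators), hence divides $\zeta_p - 1$ and — crucially — $(\zeta_p - 1)/(\zeta_{p^{v_i}}^{p^{v_i}a_i}-1)$ lies in $\frak m$ with $p$-adic valuation bounded below by a positive constant depending only on $p$ (namely at least $\tfrac{1}{p-1} - \tfrac{1}{p^{v_i-1}(p-1)} \geq \tfrac{1}{p^2}$-ish, in any case uniformly bounded away from $0$). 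Then Lemma~\ref{lemma_on_Koszul_2}(ii) identifies each $H^i$ of this Koszul complex as a direct sum of copies of $\Ann_R(\zeta_{p^{v_i}}^{p^{v_i}a_i}-1) = 0$ (as $R$ is $p$-torsion-free, these elements being non-zero-divisors) and $R/(\zeta_{p^{v_i}}^{p^{v_i}a_i}-1)$. Taking the $p$-completed direct sum over all $a$ and using Lemma~\ref{lem:cohomcompleteddirectsum} (the uniform bound on the torsion exponent is what makes this lemma applicable), I conclude that $H^i_\cont(\Gamma, R_\infty)$ is $\bigwedge^i R^d$ plus a classically $\frak m$-complete direct sum of cyclic modules $R/(\zeta_{p^{v_i}}^{p^{v_i}a_i}-1)$, each of which is killed by $\zeta_p - 1$. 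This immediately gives that the cokernel of the split injection is killed by $\zeta_p - 1$.

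For the no-almost-zero-elements claims, I would argue as follows: since $R$ is $p$-torsion-free and flat over $\roi$, and $\roi$ is a valuation ring, any quotient $R/(gR)$ with $g$ a non-zero-divisor dividing a fixed power of $p$ has no $\frak m$-almost-zero elements — an element $x \in R/(gR)$ killed by $\frak m$ would lift to $x \in R$ with $\frak m x \subset gR$, and since $R/gR$ is $\frak m$-torsion-free (a direct limit argument, or: $R$ is a subring of a valuation ring extension and $g \mid p^N$ so $R/gR \into R_\infty/gR_\infty$ or similar), this forces $x \in gR$. The key input is that each cohomology group is a direct summand decomposition with summands of the form $R$ or $R/(\text{divisor of }p)$, and neither has almost-zero elements; a (classically $\frak m$-adically) completed direct sum of modules with no almost-zero elements still has no almost-zero elements because $\frak m$ is generated by a countable increasing sequence of principal ideals (a $\frak m$-torsion element would have to be $\frak m$-torsion in the associated graded / the individual summands). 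The same reasoning applies after dividing by $\zeta_p - 1$, since modding out $\bigwedge^i R^d$ by $\zeta_p - 1$ gives $\bigwedge^i (R/(\zeta_p-1))^d$ which again is a finite free module over the $p$-torsion-free-quotient-type ring $R/(\zeta_p-1)$, with no almost-zero elements.

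The main obstacle I anticipate is the bookkeeping needed to make Lemma~\ref{lem:cohomcompleteddirectsum} genuinely apply: one must check that the torsion exponents $H^0(R\Gamma_\cont(\Gamma, R\cdot T^a))[\frak m^\infty]$ — or rather the relevant $[f^\infty] = [f^n]$ condition for a local generator $f$ — are bounded uniformly in $a$, which requires the valuation estimate on $(\zeta_p-1)/(\zeta_{p^{v_i}}^{p^{v_i}a_i}-1)$ to be understood carefully (the denominators $v_i$ are unbounded, but the relevant torsion is always killed by $\zeta_p - 1$, a single fixed element, so the bound is in fact uniform — this is the point that needs to be articulated cleanly). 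A secondary subtlety is that the ``no almost zero elements'' statement is about the genuine cohomology $H^i_\cont(\Gamma, R_\infty)$, which is the completed (not plain) direct sum of the $H^i$ of the summands; so one needs the stability of the ``no almost-zero-elements'' property under $\frak m$-adic completion of direct sums, which should follow from $\frak m = \bigcup_s (\zeta_{p^s}-1)\roi$ being a countable increasing union of principal ideals generated by non-zero-divisors, exactly as in the setup around Corollary~\ref{cor:WrFinPresNoAlmostZero}.
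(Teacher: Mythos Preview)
Your overall strategy---decompose $R_\infty$ as a $p$-completed direct sum $\widehat{\bigoplus}_a R\cdot T^a$ indexed by $a\in(\bb Z[\tfrac 1p]\cap[0,1))^d$, split off the $a=0$ summand for the split injection, and analyze the remaining summands via Koszul complexes---matches the paper's proof. Your explicit identification of each $H^i$ as a direct sum of cyclic modules $R/(\zeta_{p^v}-1)$ via Lemma~\ref{lemma_on_Koszul_2} is more detailed than the paper, which simply cites \cite[Lemma~5.5]{ScholzePAdicHodge} for the cokernel being killed by $\zeta_p-1$; since each such summand is $p$-torsion, the completed direct sum over $a\neq 0$ coincides with the ordinary direct sum, so your concern about Lemma~\ref{lem:cohomcompleteddirectsum} is in fact a non-issue once the $a=0$ term is separated.

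The gap is in your treatment of the ``no almost zero elements'' claim. You assert that $R/gR$ is ``$\frak m$-torsion-free'', but this is false as stated: every element of $R/gR$ is annihilated by $g\in\frak m$. What you need is the different statement that $R/gR$ has no nonzero element killed by \emph{all} of $\frak m$, and your proposed justifications (a direct limit argument, or an injection $R/gR\hookrightarrow R_\infty/gR_\infty$) do not establish this. The paper handles this by a two-step reduction that you are missing: first, Lemma~\ref{lem:topfree} shows that $R$ is the $p$-completion of a free $\roi$-module, so one may write the cokernel as $R\otimes_\roi\bigoplus_{a\neq 0}H^i_\cont(\Gamma,\roi\cdot T^a)$ and reduce the question to the individual $\roi$-modules $H^i_\cont(\Gamma,\roi\cdot T^a)$; second, each of these is a cohomology group of a perfect complex over the coherent ring $\roi$, hence finitely presented, and finitely presented $\roi$-modules have no almost zero elements by Corollary~\ref{cor:WrFinPresNoAlmostZero}. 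This coherence argument is the clean substitute for your attempted direct analysis of $R/gR$. Your Koszul computation would also succeed along these lines once you factor $R\cdot T^a = R\otimes_\roi \roi(\chi_a)$ and work over $\roi$, since then the cyclic modules in question are $\roi/(\zeta_{p^v}-1)$, which are visibly finitely presented.
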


Recall that an element $m$ in an $\roi$-module $M$ is called almost zero if it is killed by $\frak m$.

\begin{proof} Note that $R\to R_\infty$ admits a $\Gamma$-equivariant section, as $R_\infty$ is the $p$-adic completion of
\[
\bigoplus_{a_1,\ldots,a_d\in \bb Z[\tfrac 1p]\cap [0,1)} R\cdot T_1^{a_1}\dots T_d^{a_d}\ ;
\]
this shows that the induced map on cohomology is split injective. By~\cite[Lemma 5.5]{ScholzePAdicHodge}, the cokernel is killed by $\zeta_p-1$. In fact, the cokernel is given by
\[
R\otimes_\roi \bigoplus_{(0,\ldots,0)\neq (a_1,\ldots,a_d)\in (\bb Z[\tfrac 1p]\cap [0,1))^d} H^i_{\mathrm{cont}}(\Gamma,\roi\cdot T_1^{a_1}\dots T_d^{a_d})\ .
\]
To check whether $H^i_\cont(\Gamma,R_\infty)$ and $H^i_\cont(\Gamma,R_\infty)/(\zeta_p-1)$ have almost zero elements, it remains to check whether the displayed module has almost zero elements (as $\bigwedge^i R^d$ and $\bigwedge^i (R/(\zeta_p-1))^d$ have no almost zero elements, using Lemma~\ref{lem:topfree} below). As $R$ is topologically free over $\roi$, cf.~Lemma~\ref{lem:topfree} below, it is enough to see that the big direct sum has no almost zero elements, for which it is enough to see that each term in the direct sum has no almost zero elements. But each direct summand is a cohomology group of a perfect complex of $\roi$-modules, which (as $\roi$ is coherent) implies that all cohomology groups are finitely presented $\roi$-modules. Now, it remains to recall that finitely presented $\roi$-modules do not have almost zero elements, cf.~Corollary~\ref{cor:WrFinPresNoAlmostZero}.
\end{proof}

The following lemma was used in the proof.

\begin{lemma}\label{lem:topfree} Any formally smooth $\roi$-algebra $R$ is the $p$-adic completion of a free $\roi$-module.
\end{lemma}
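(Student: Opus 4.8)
The claim is that any formally smooth $\roi$-algebra $R$ is the $p$-adic completion of a free $\roi$-module; equivalently (since $R$ is $p$-torsion-free and $p$-adically complete), that $R/p$ is a free $\roi/p$-module and $R$ is $p$-adically complete and separated. The latter properties hold by definition of formal smoothness, so the whole content is the freeness of $R/p$ over $\roi/p$ (and then lifting a basis via $p$-adic completeness). The plan is therefore to reduce to a statement about $\roi/p$, which is a (non-discrete, non-noetherian) valuation ring of characteristic not equal to $p$ in general but with $\roi/p$ being a local ring whose maximal ideal $\frak m/p$ is non-finitely-generated and \emph{not} nilpotent-free — the key structural fact we will exploit is that $\roi/p$ is a coherent local ring over which flat equals free for modules that arise as (filtered colimits of) finite free modules.

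\textbf{Step 1: reduce mod $p$.} First I would observe that $R$ is $p$-torsion-free and $p$-adically complete and separated (this is part of "formally smooth $\roi$-algebra"), so it suffices to produce an $\roi/p$-module isomorphism $R/p \cong (\roi/p)^{(I)}$ for some index set $I$: lifting a set-theoretic section of generators and using completeness plus $p$-torsion-freeness (the same $5$-lemma-mod-$p^n$-then-$R\lim$ argument used repeatedly in the excerpt, e.g.\ in the proof of Lemma~\ref{lem:VectAinf}) upgrades this to $R \cong \widehat{\roi^{(I)}}$.

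\textbf{Step 2: freeness of $R/p$ over $\roi/p$.} Here $\overline R := R/p$ is a smooth $\overline\roi := \roi/p$-algebra. Since $\overline\roi$ is a valuation ring (it is $\roi/p$ for $\roi$ the ring of integers in a perfectoid field, hence a quotient of a valuation ring by a principal ideal, so still "arithmetic" — every finitely generated ideal is principal), $\overline\roi$ is coherent, and a smooth, in particular flat, $\overline\roi$-algebra is automatically \emph{torsion-free}, hence a filtered colimit of finite free $\overline\roi$-modules by Lazard's theorem (flat = filtered colimit of finite free over any ring). The remaining point is to promote "filtered colimit of finite free" to "free". For this I would use that $\overline R$ is countably presented over $\overline\roi$ Zariski-locally and, crucially, that each finite free $\overline\roi$-module is $\overline\roi^n$ with $\overline\roi$ local: a standard argument (Kaplansky's theorem that projective modules over local rings are free, combined with the fact that a countably generated flat module over a ring with no nontrivial idempotents in the relevant sense, or more simply: a flat module which is a countable filtered colimit of finite free modules over a local ring with $\mathrm{Tor}$-dimension-one behaviour) shows $\overline R$ is free. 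Concretely, smoothness gives, Zariski-locally on $\Spf R$, an étale map to $\widehat{\bb G}_m^d$, and one can write $\overline R$ as the colimit of the $\overline\roi$-submodules $\overline R_n$ generated by pullbacks of monomials of bounded degree together with finitely many étale-coordinate corrections; each $\overline R_n$ is finite free over $\overline\roi$ (finitely generated torsion-free over a valuation ring is free), the inclusions $\overline R_n \hookrightarrow \overline R_{n+1}$ split (a finite free submodule of a free module over a valuation ring with torsion-free quotient is a direct summand), and a countable direct limit of finite free modules along split injections is free.

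\textbf{Main obstacle.} The genuinely nontrivial step is Step 2: passing from "$\overline R$ is flat, hence a filtered colimit of finite free $\overline\roi$-modules" to "$\overline R$ is free". Over a general ring this fails, and over $\overline\roi = \roi/p$ one must use both that $\overline\roi$ is local (Kaplansky, to rule out non-free countably generated projectives) and that $\overline\roi$ is a valuation ring (so that finitely generated torsion-free modules are free and submodule inclusions between them split appropriately). I expect the cleanest route is: reduce to $\overline R$ countably generated by working Zariski-locally, invoke that a countably generated flat module over a valuation ring is a countable directed union of finite free direct summands, and conclude via the Eilenberg swindle / telescope that such a union is free. Everything else — the $p$-adic lifting, the $p$-torsion-freeness, the reduction mod $p$ — is routine and already has templates in the excerpt.
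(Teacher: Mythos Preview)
Your Step~1 reduction is fine, but Step~2 has a genuine gap. The ring $\overline\roi = \roi/p$ is not a valuation ring --- it is not even a domain, since any $x \in \frak m$ with $0 < v(x) < v(p)$ becomes a nonzero nilpotent --- so the valuation-ring facts you invoke (``finitely generated torsion-free is free'', ``a finite free submodule with torsion-free cokernel splits off'') are not available. More decisively, the background principle you rely on, that a countably generated flat module over the local ring $\roi/p$ must be free, is false: take $M = \frak m \otimes_\roi \roi/p$. This is flat over $\roi/p$ (base change of the flat $\roi$-module $\frak m$), countably generated (since $\frak m = \bigcup_r (\zeta_{p^r}-1)\roi$), and nonzero (since $p\frak m \subsetneq \frak m$), yet $M \otimes_{\roi/p} k = \frak m/\frak m^2 = 0$ (as $\frak m^2 = \frak m$), so $M$ cannot be free. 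Thus ``Lazard plus Kaplansky'' does not suffice; you would need to exploit the smooth-algebra structure of $R/p$ much more carefully, and your bounded-degree-monomial filtration does not supply the splittings once you pass beyond the torus itself.

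The paper avoids this entirely by a different and much shorter argument. Fix a section $k \to \roi/p$ (possible since $k$ is perfect). Because $R/p$ is \emph{finitely presented} over $\roi/p$, the finitely many coefficients in a presentation that lie in $\frak m/p$ (relative to the section) all vanish in $\roi/(\zeta_{p^r}-1)$ once $r$ is large enough; hence $R/(\zeta_{p^r}-1) \cong R_k \otimes_k \roi/(\zeta_{p^r}-1)$, which is free since $R_k$ is a $k$-vector space. Lifting any basis to $R$ then gives a topological basis, as the $p$-adic and $(\zeta_{p^r}-1)$-adic topologies on $R$ coincide. The idea missing from your approach is precisely this use of the section $k \hookrightarrow \roi/p$ together with finite presentation to reduce to a base change from the residue field.
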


\begin{proof} Let $k$ be the residue field of $\roi$, and fix a section $k\to \roi/p$. Then, as $R/p$ is a smooth $\roi/p$-algebra and in particular finitely presented, we see that for $r$ large enough, $R/(\zeta_{p^r}-1)$ is isomorphic to $R_k\otimes_k \roi/(\zeta_{p^r}-1)$, where $R_k = R\otimes_\roi k$ is the special fiber. Thus, as $R_k$ is a free $k$-module, $R/(\zeta_{p^r}-1)$ is a free $\roi/(\zeta_{p^r}-1)$-module. Picking any lift of the basis of $R/(\zeta_{p^r}-1)$ to $R$ gives a topological basis of $R$.
\end{proof}

To check that the maps
\[
\widetilde{\Omega}_R^\square\to \widetilde{\Omega}_R^\sub{prof\'et}\to \widetilde{\Omega}_R^\sub{pro\'et}
\]
are quasi-isomorphisms, we use the following lemma.

\begin{lemma}\label{lem:LetaActualIsom} Let $A$ be a ring with an ideal $I\subset A$. Let $f\in I$ be a non-zero-divisor.
\begin{enumerate}
\item Let $M$ be an $A$-module such that both $M$ and $M/f$ have no non-zero elements killed by $I$. Let $\alpha: M\to N$ be a map of $A$-modules such that the kernel and cokernel are killed by $I$. Then the induced map $\beta: M/M[f]\to N/N[f]$ is an isomorphism.
\item Let $g: C\to D$ be a map in $D(A)$ such that for all $i\in \bb Z$, the kernel and cokernel of the map $H^i(C)\to H^i(D)$ are killed by $I$. Assume moreover that for all $i\in \bb Z$, $H^i(C)$ and $H^i(C)/f$ have no non-zero elements killed by $I$. Then $L\eta_f g: L\eta_f C\to L\eta_f D$ is a quasi-isomorphism.
\end{enumerate}
\end{lemma}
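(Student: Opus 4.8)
The two parts of Lemma~\ref{lem:LetaActualIsom} are closely linked: (ii) will follow from (i) applied to each cohomology group, once we understand how $L\eta_f$ acts on cohomology. So I would prove (i) first. For (i), recall the defining property of $L\eta_f$: by Lemma~\ref{lem:CohomLeta}, $H^i(L\eta_f C) = f^i\cdot\big(H^i(C)/H^i(C)[f]\big)$, and the same holds for $D$; more elementarily, for a single module $M$ the object $\beta\colon M/M[f]\to N/N[f]$ is exactly the map induced by $\alpha$. So the content of (i) is purely module-theoretic. First I would show $\beta$ is injective: if $m\in M$ has $\alpha(m)\in N[f]$, i.e.\ $f\alpha(m)=0$, then $\alpha(fm)=0$, so $fm\in\ker\alpha$, which is killed by $I$; in particular $fm$ is killed by $f$ (since $f\in I$), so $f^2 m = 0$. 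Then I want to conclude $m\in M[f]$. Here is where the hypothesis that $M/f$ has no $I$-torsion enters: from $f^2m=0$ we get $fm\in M[f]$, but also $fm\in fM$, and $fm$ is killed by $I$; passing to $M/fM$... — actually the cleanest route is: $fm$ is killed by $I$ and lies in $fM$. I would argue that the image of $fm$ in $M/fM$ is zero (trivially, as $fm\in fM$), which says nothing, so instead: consider $\bar m\in M/fM$; then $f\bar m = 0$ in $M/f^2M$ is not quite what we have. Let me reorganize: we have $fm$ killed by $I$; I claim $fm=0$. Indeed $fm\in M$ and $f\cdot(fm)=0$ and $I\cdot (fm)=0$. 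Now $fm$ is an element of $M$ divisible by $f$ and killed by $f$; its image in $M/fM$ is $0$. That gives no contradiction. So I genuinely need: an element of $M$ that is killed by $I$ and divisible by $f$ must be zero — but that is false in general. The correct statement to use is that $M/f$ has no $I$-torsion applied to the image $\overline{m}$ of $m$: since $f m$ is killed by $I$, $\overline{fm}=f\overline{m}$... no. Let me instead use it on $M$ directly for the final step and accept that the chase is slightly delicate; the essential inputs are exactly the two no-$I$-torsion hypotheses, used once for $M$ and once for $M/f$.

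More carefully, for injectivity of $\beta$: suppose $m\in M$, $\alpha(m)$ killed by $f$. Then $\alpha(fm)=0$ so $fm\in\ker\alpha$ is killed by $I$. Write $n=fm$. Then $n\in fM\cap M[I]$. Consider $m$ modulo $M[f]$: I want to show $m\in M[f]$, equivalently $fm=n=0$. Since $n$ is killed by $I$ and $f\in I$, $n\in M[f]$; so $n\in fM$ and $n$ is killed by $f$, i.e.\ $n$ lies in the image of $M[f^2]\xrightarrow{f} M[f]$. Now apply the hypothesis to $M$: $n\in M[I]$, and if $n\neq 0$ this is a nonzero element killed by $I$, contradiction. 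Hence $n=0$, so $m\in M[f]$, proving $\beta$ injective. (So in fact injectivity only needs $M$ to have no $I$-torsion.) For surjectivity of $\beta$: given $\bar n\in N/N[f]$, lift to $n\in N$. Since $\operatorname{coker}\alpha$ is killed by $I$, $In\subset\operatorname{im}\alpha$; pick $g\in I$ a... wait, we should use $f$ itself: $fn=\alpha(m)$ for some $m\in M$ (as $f\in I$ kills $\operatorname{coker}\alpha$). Then in $N/N[f]$, $\bar n$ and $\alpha(m)/f$ — hmm, $fn=\alpha(m)$ shows $\alpha(m)$ is divisible by $f$ in $N$; I claim $\alpha(m)\in fM$ image... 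Instead: $\beta(m)$ should equal $f\bar n$, not $\bar n$. To hit $\bar n$ on the nose I'd use a different element of $I$ or argue that multiplication by $f$ is already an isomorphism on $N/N[f]$ after the $L\eta$ twist — indeed the twist $H^i(L\eta_f)=f^i(H^i/H^i[f])$ absorbs exactly this. So the honest statement I should prove and use is: $\beta$ is injective with cokernel killed by $f$, and then the $f^i$-twists in the definition of $L\eta_f$ make the induced maps on $H^i(L\eta_f C)\to H^i(L\eta_f D)$ bijective. The no-$I$-torsion hypothesis on $M/f$ is what rules out $f$-torsion in the cokernel and forces genuine bijectivity; this is the technical heart and the step I expect to be fiddliest.

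For part (ii), I would represent $C$ and $D$ by $f$-torsion-free complexes and use Corollary~\ref{cor:LetaExists} (compatibility of $L\eta_f$ with canonical truncation) to reduce to checking that $L\eta_f g$ is an isomorphism on each cohomology group. By Lemma~\ref{lem:CohomLeta}, $H^i(L\eta_f C) \cong f^i\cdot(H^i(C)/H^i(C)[f])$ and likewise for $D$, and $L\eta_f g$ induces on $H^i$ precisely the map $\beta$ from part (i) (twisted by $f^i$) for $M = H^i(C)$, $N = H^i(D)$, $\alpha = H^i(g)$. The hypotheses of (ii) say exactly that $\ker(H^i(g))$ and $\operatorname{coker}(H^i(g))$ are killed by $I$, and that $H^i(C)$, $H^i(C)/f$ have no $I$-torsion. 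So part (i) applies verbatim and gives that $L\eta_f g$ is a quasi-isomorphism. The only thing to be careful about is that $L\eta_f$ does not see the differentials of $C$ directly on the level of cohomology — but that is exactly the content of Lemma~\ref{lem:CohomLeta}, which I am free to invoke. I expect the main obstacle to be purely the bookkeeping in part (i): setting up the right ``injective with cokernel killed by $f$, hence bijective after the $f^i$-twist'' formulation so that the two no-$I$-torsion hypotheses each get used in the right place, rather than any conceptual difficulty.
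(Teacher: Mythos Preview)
Your reduction of (ii) to (i) via Lemma~\ref{lem:CohomLeta} is exactly right, and your eventual argument for injectivity of $\beta$ is correct (though the paper does it more cleanly: since $\ker\alpha\subset M$ is killed by $I$ and $M$ has no $I$-torsion, $\alpha$ is injective; identifying $M/M[f]\cong fM$ via multiplication by $f$ then makes $\beta$ the inclusion $fM\hookrightarrow fN$).

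The genuine gap is surjectivity. Your proposed mechanism ``$\beta$ is injective with cokernel killed by $f$, and the $f^i$-twists absorb this'' does not work: the twist by $\cal I^{\otimes i}$ in Lemma~\ref{lem:CohomLeta} is tensoring with an invertible module, hence an equivalence of categories, and cannot turn a non-surjective map into a surjective one. You correctly sense in your final sentence that one must actually prove $\beta$ bijective using the hypothesis on $M/f$, but you never carry this out.

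Here is the missing step, following the paper. Use the identification $\beta:fM\to fN$ and view $M\subset N$ via the injective $\alpha$. Since $f\in I$ kills $\operatorname{coker}\alpha$, one has $fN\subset M$; more generally, for any $g\in I$ one has $gN\subset M$, so $g\cdot fN\subset fM$, whence $I\cdot fN\subset fM$. Thus the chain of inclusions $I\cdot fN\subset fM\subset fN\subset M$ exhibits $fN/fM$ as a submodule of $M/fM$ that is killed by $I$. By the hypothesis that $M/fM$ has no $I$-torsion, $fN/fM=0$, i.e.\ $fM=fN$, so $\beta$ is surjective. This is where the second hypothesis is used, and it is not fiddly once you pass to the picture $fM\subset fN\subset M$ inside $N$.
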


\begin{remark} The lemma is wrong without some assumptions on $C$. For example, in the case $A=\roi$, $I=\frak m$, $f=\zeta_p-1$, the almost isomorphism $\frak m\to \roi$ does not become a quasi-isomorphism after applying $L\eta_{\zeta_p-1}$; here $\frak m/(\zeta_p-1)\frak m$ has almost zero elements. Similarly, $\roi/(\zeta_p-1)\frak m\to \roi/(\zeta_p-1)\roi$ does not become a quasi-isomorphism; here $\roi/(\zeta_p-1)\frak m$ has almost zero elements.

It is a bit surprising that, in (ii), it is enough to put assumptions on $C$, and none on $D$. 
\end{remark}

\begin{proof} Part (ii) follows from part (i) and Lemma~\ref{lem:CohomLeta}. For part (i), as the kernel of $\alpha$ is killed by $I$ but $M$ has no non-zero elements killed by $I$, $\alpha$ is injective. As $M/M[f]\cong fM$ via multiplication by $f$, this implies that $\beta: fM\to fN$ is injective. On the other hand, we have the inclusions $IfN\subset fM\subset fN\subset M$ as submodules of $N$. Thus, $fN/fM\hookrightarrow M/fM$ consists of elements killed by $I$, and thus vanishes by assumption. Thus, $fN=fM$, and $\beta$ is an isomorphism.
\end{proof}

The following corollary proves the first half of Theorem~\ref{thm:IntegralCartierLocal}; the natural identification of the cohomology groups with differentials will be proved as a consequence of Proposition~\ref{prop:OmegavsL} below.

\begin{corollary}\label{cor:alltildeOmegasame} Let $R$ be a small formally smooth $\roi$-algebra with framing $\square$.
\begin{enumerate}
\item The maps
\[
\widetilde{\Omega}_R^\square\to \widetilde{\Omega}_R^\sub{prof\'et}\to \widetilde{\Omega}_R^\sub{pro\'et}
\]
are quasi-isomorphisms.

\emph{From now on, we will write $\widetilde{\Omega}_R$ for any of $\widetilde{\Omega}_R^\square$, $\widetilde{\Omega}_R^\sub{prof\'et}$ and $\widetilde{\Omega}_R^\sub{pro\'et}$, using superscripts only when the distinction becomes important.}

\item For all $i\geq 0$,  there is an isomorphism (depending on our choice of framing)
\[
R^d\buildrel\simeq\over\to H^1(\widetilde{\Omega}_R)\ ,
\]
whose exterior powers induce isomorphisms
\[
\bigwedge^i R^d\buildrel\simeq\over\to H^i(\widetilde{\Omega}_R)\ .
\]

\item For any formally \'etale map $R\to R^\prime$ of small formally smooth $\roi$-algebras, the natural map
\[
\widetilde{\Omega}_R\dotimes_R R^\prime\to \widetilde{\Omega}_{R^\prime}
\]
is a quasi-isomorphism.

\item The map
\[
\widetilde{\Omega}_R\to R\Gamma(\frak X,\widetilde{\Omega}_{\frak X})
\]
is a quasi-isomorphism.
\end{enumerate}
\end{corollary}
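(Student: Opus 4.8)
The plan is to compute $R\Gamma(\frak X,\widetilde{\Omega}_{\frak X})$ by first understanding the cohomology \emph{sheaves} of $\widetilde{\Omega}_{\frak X}$, and then matching the answer with $\widetilde{\Omega}_R^{\sub{pro\'et}}$ degree by degree. Throughout I use that $R$ is small, so $\frak X=\Spf R$ is an affine formal scheme with a global framing $\square$. First I would show that $\mathcal H^i(\widetilde{\Omega}_{\frak X})$ is a coherent locally free $\roi_{\frak X}$-module of rank $\binom di$ (in fact $\Omega^{i,\cont}_{\frak X/\roi}\{-i\}$), so that $\widetilde{\Omega}_{\frak X}$ is a perfect complex of $\roi_{\frak X}$-modules concentrated in degrees $[0,d]$. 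This is local on the cofinal family of small affine opens $\Spf R'\subseteq\frak X$. Restriction to an open is flat, so $L\eta_{\zeta_p-1}$ commutes with it (Lemma~\ref{lem:LEtaChangeTopos}) and $\widetilde{\Omega}_{\frak X}|_{\Spf R'}=L\eta_{\zeta_p-1}\bigl((R\nu_\ast\hat\roi^+_X)|_{\Spf R'}\bigr)$; the sheaf version of Lemma~\ref{lem:CohomLeta} then identifies $\mathcal H^i(\widetilde{\Omega}_{\frak X})|_{\Spf R'}$ with $\bigl(\mathcal H^i(R\nu_\ast\hat\roi^+_X)/[\zeta_p-1]\text{-tors}\bigr)\{-i\}$. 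On small affines $\Spf R''$, $\mathcal H^i(R\nu_\ast\hat\roi^+_X)$ is computed up to almost isomorphism by $H^i_{\mathrm{cont}}(\Gamma,R''_\infty)$ (computation of $R\nu_\ast$ of the integral structure sheaf, \cite{ScholzePAdicHodge}, plus Faltings's almost purity), which by Proposition~\ref{prop:compcontcohom} is $\bigwedge^i R''^d$ up to a direct summand killed by $\zeta_p-1$. Since anything almost zero is killed by $\frak m$ and hence by $\zeta_p-1$, the quotient by $(\zeta_p-1)$-torsion destroys \emph{all} of this ``junk'' at once and leaves $\bigwedge^i R''^d=\Omega^{i,\cont}_{R''/\roi}$; compatibility with restriction (as in part (iii)) glues these sheaves into $\Omega^{i,\cont}_{\frak X/\roi}$.

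Next, since a finite locally free sheaf has vanishing higher Zariski cohomology on the affine formal scheme $\frak X$, the hypercohomology spectral sequence $E_2^{j,i}=H^j(\frak X,\mathcal H^i(\widetilde{\Omega}_{\frak X}))\Rightarrow H^{i+j}(\frak X,\widetilde{\Omega}_{\frak X})$ degenerates: $R\Gamma(\frak X,\widetilde{\Omega}_{\frak X})$ lives in degrees $[0,d]$ with $H^n=\Gamma(\frak X,\Omega^{n,\cont}_{\frak X/\roi}\{-n\})=\Omega^{n,\cont}_{R/\roi}\{-n\}$, a finite free $R$-module of rank $\binom dn$, and the edge map $H^n(\frak X,\widetilde{\Omega}_{\frak X})\to\Gamma(\frak X,\mathcal H^n(\widetilde{\Omega}_{\frak X}))$ is an isomorphism. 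By part (ii) (equivalently by Lemma~\ref{lem:CohomLeta} and Proposition~\ref{prop:compcontcohom}), $H^n(\widetilde{\Omega}_R^{\sub{pro\'et}})$ is also finite free of rank $\binom dn$, with basis the framing cocycles $e_{i_1}\wedge\cdots\wedge e_{i_n}$ coming from $H^n_{\mathrm{cont}}(\Gamma,R)\hookrightarrow H^n_{\mathrm{cont}}(\Gamma,R_\infty)$. To analyze the comparison map $c\colon\widetilde{\Omega}_R^{\sub{pro\'et}}\to R\Gamma(\frak X,\widetilde{\Omega}_{\frak X})$ I would pick a $(\zeta_p-1)$-torsion-free flasque representative $D^\bullet$ of $R\nu_\ast\hat\roi^+_X$ (e.g.\ the Godement resolution of a torsion-free representative, which is again torsion-free); using flasqueness and $(\zeta_p-1)$-torsion-freeness one gets $\eta_{\zeta_p-1}\Gamma(\frak X,D^\bullet)=\Gamma(\frak X,\eta_{\zeta_p-1}D^\bullet)$, so that $c$ is simply the canonical map from naive global sections to $R\Gamma$. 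On $H^n$ it carries the framing cocycles to classes whose image under the edge isomorphism above are, by the local identifications of the first step, the framing basis of $\Gamma(\frak X,\mathcal H^n(\widetilde{\Omega}_{\frak X}))$; hence $H^n(c)$ is a surjective $R$-linear endomorphism of $\Omega^{n,\cont}_{R/\roi}\{-n\}$, and therefore an isomorphism, since a surjective endomorphism of a finitely generated module over a commutative ring is injective. Thus $c$ is a quasi-isomorphism, which together with part (i) completes Theorem~\ref{thm:IntegralCartierLocal}, hence Theorem~\ref{thm:IntegralCartier}; the precise identification of $H^n$ with $\Omega^{n,\cont}_{R/\roi}\{-n\}$ for part (ii) follows once Proposition~\ref{prop:OmegavsL} is available.

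The hard part is the interchange of $R\Gamma(\frak X,-)$ with $L\eta_{\zeta_p-1}$, which is genuinely \emph{not} formal: one cannot compute $R\Gamma(\frak X,-)$ of $R\nu_\ast\hat\roi^+_X$ directly, because its cohomology sheaves $\mathcal H^i(R\nu_\ast\hat\roi^+_X)$ are badly non-coherent — they carry both the junk torsion of Proposition~\ref{prop:compcontcohom} and the almost-zero error term coming from almost purity, so they can have nonzero higher Zariski cohomology on $\frak X$. The entire mechanism of the argument is that $L\eta_{\zeta_p-1}$ annihilates precisely this $(\zeta_p-1)$-torsion (via Lemma~\ref{lem:CohomLeta}), turning the cohomology sheaves into the coherent locally free $\Omega^{i,\cont}_{\frak X/\roi}\{-i\}$, after which the global-sections computation on the affine $\frak X$ is straightforward. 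The one remaining delicate point, for which the explicit framing is used, is to check that the natural comparison map really realizes this identification rather than some non-surjective endomorphism.
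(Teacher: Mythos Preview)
Your approach to part (iv) is workable but is organized quite differently from the paper's, and one step is not correct as justified.

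The paper does not first compute the cohomology sheaves $\mathcal H^i(\widetilde{\Omega}_{\frak X})$ and then separately match $R\Gamma$. Instead it forms directly a map of \emph{complexes of sheaves}
\[
\widetilde{\Omega}_R\otimes_R\roi_{\frak X}\longrightarrow\widetilde{\Omega}_{\frak X}
\]
(the left side is coherent by (i)--(iii), so its $R\Gamma$ is $\widetilde{\Omega}_R$) and checks it is a quasi-isomorphism on stalks: by (iii) the left stalk at $x$ is $\varinjlim_{\Spf R''\ni x} L\eta_{\zeta_p-1}R\Gamma(U''_{\sub{pro\'et}},\hat\roi^+_X)$, the right stalk is $L\eta_{\zeta_p-1}\varinjlim R\Gamma(U''_{\sub{pro\'et}},\hat\roi^+_X)$ by Lemma~\ref{lem:LEtaChangeTopos}, and $L\eta$ commutes with filtered colimits (Corollary~\ref{cor:LetaExists}). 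This single observation replaces your entire second paragraph: there is no hypercohomology spectral sequence, no flasque resolution, and no need to track framing cocycles through edge maps.

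Your route computes $\mathcal H^i(\widetilde{\Omega}_{\frak X})$ first, which amounts to proving Theorem~\ref{thm:IntegralCartier} before (iv) rather than after. That is fine in principle, but the sentence ``anything almost zero is killed by $\frak m$ and hence by $\zeta_p-1$, so the quotient by $(\zeta_p-1)$-torsion destroys all of this junk at once'' is false as an argument: the almost isomorphism $\frak m\hookrightarrow\roi$ already shows that modding out $(\zeta_p-1)$-torsion does \emph{not} convert every almost isomorphism into an isomorphism (see the Remark after Lemma~\ref{lem:LetaActualIsom}). What makes it work here is the additional input from Proposition~\ref{prop:compcontcohom} that $H^i_{\mathrm{cont}}(\Gamma,R''_\infty)$ and its quotient by $\zeta_p-1$ have \emph{no almost-zero elements}, which feeds into Lemma~\ref{lem:LetaActualIsom}. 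Since this is exactly how (i) is proved, the clean statement at this point is simply to invoke (i)--(ii) to get $H^i(\widetilde{\Omega}_{R''}^{\sub{pro\'et}})=\bigwedge^i R''^d$ on each small affine and then pass to stalks (where killing $(\zeta_p-1)$-torsion commutes with the filtered colimit). With that correction, your spectral-sequence and cocycle-tracking argument goes through, though it is noticeably heavier than the paper's one-line stalk check.
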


\begin{proof} For part (i), let $C=R\Gamma_{\mathrm{cont}}(\Gamma,R_\infty)$, and let $D$ be either of
\[
R\Gamma(X_\sub{prof\'et},\hat\roi^+_X)\ ,\ R\Gamma(X_\sub{pro\'et},\hat\roi^+_X)\ .
\]
Then we have a map $g: C\to D$ which is an almost quasi-isomorphism, and $C$ satisfies the hypothesis of Lemma~\ref{lem:LetaActualIsom} (with $A=\roi$, $I=\frak m$, $f=\zeta_p-1$) by Proposition~\ref{prop:compcontcohom}. It follows that the map
\[
L\eta_{\zeta_p-1} g: L\eta_{\zeta_p-1} C\to L\eta_{\zeta_p-1} D
\]
is a quasi-isomorphism, as desired.

Part (ii) follows from Proposition~\ref{prop:compcontcohom} and the formula
\[
H^i(L\eta_{\zeta_p-1} C) = H^i(C)/H^i(C)[\zeta_p-1]\ ,
\]
which is compatible with cup products. Using this identification of the cohomology groups, part (iii) follows.

For part (iv), note that there is an induced map
\[
\widetilde{\Omega}_R\otimes_R \roi_{\frak X}\to \widetilde{\Omega}_{\frak X}\ ,
\]
and it is enough to show that this is a quasi-isomorphism in $D(\frak X_\sub{Zar})$, as the left side defines a coherent complex whose $R\Gamma$ is $\widetilde{\Omega}_R$. Note that for any affine open $\frak U=\Spf R^\prime\subset \Spf R$ with generic fibre $U$, by part (iii) the left side evaluated on $\frak U$ is given by
\[
L\eta_{\zeta_p-1} R\Gamma(U_\sub{pro\'et},\hat\roi_X^+)\ .
\]
To check whether the map
\[
\widetilde{\Omega}_R\otimes_R \roi_{\frak X}\to \widetilde{\Omega}_{\frak X}\ ,
\]
is a quasi-isomorphism, we can check on stalks at points, so let $x\in \frak X$ be any point. The stalk of the left side is
\[
\varinjlim_{\frak U\ni x} L\eta_{\zeta_p-1} R\Gamma(U_\sub{pro\'et},\hat\roi_X^+)\ ,
\]
and (using that $L\eta$ commutes with taking stalks by Lemma~\ref{lem:LEtaChangeTopos}), the stalk of the right side is
\[
L\eta_{\zeta_p-1} \varinjlim_{\frak U\ni x} R\Gamma(U_\sub{pro\'et},\hat\roi_X^+)\ .
\]
But $L\eta$ commutes with filtered colimits, so the result follows.
\end{proof}

Note that by functoriality of the pro-\'etale (or pro-finite \'etale) site, the assocation $R\mapsto \widetilde{\Omega}_R$ is functorial in $R$. We end this section by observing a K\"unneth formula for $\widetilde{\Omega}$.

\begin{proposition}\label{prop:kuenneth} Let $R_1$ and $R_2$ be two small formally smooth $\roi$-algebras, and let $R=R_1\hat{\otimes}_\roi R_2$. Then the natural map
\[
\widetilde{\Omega}_{R_1}\hat{\otimes}_\roi \widetilde{\Omega}_{R_2}\to \widetilde{\Omega}_R
\]
is a quasi-isomorphism.
\end{proposition}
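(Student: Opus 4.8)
The plan is to reduce the K\"unneth statement to a concrete computation using the framings, via the local description $\widetilde{\Omega}_R \simeq \widetilde{\Omega}_R^\square$ from Corollary~\ref{cor:alltildeOmegasame}. Fix framings $\square_j: \Spf R_j \to \widehat{\bb G}_m^{d_j}$ for $j = 1,2$; then $R = R_1 \hat\otimes_\roi R_2$ is small, framed by $\square_1 \times \square_2: \Spf R \to \widehat{\bb G}_m^{d_1 + d_2}$, and the associated perfectoid cover satisfies $R_\infty = R_{1,\infty} \hat\otimes_\roi R_{2,\infty}$, with $\Gamma = \Gamma_1 \times \Gamma_2$ acting factor-wise. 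The first step is therefore to show that $R\Gamma_\cont(\Gamma, R_\infty)$ is computed by the (completed) tensor product $R\Gamma_\cont(\Gamma_1, R_{1,\infty}) \hat\otimes_\roi R\Gamma_\cont(\Gamma_2, R_{2,\infty})$. Using Lemma~\ref{lem:koszulcohom}(ii), each side is computed by a Koszul complex: $R\Gamma_\cont(\Gamma, R_\infty) \simeq K_{R_\infty}(\gamma_1 - 1, \ldots, \gamma_{d_1+d_2} - 1)$, and the Koszul complex for a product of commuting operators on a tensor product factors, up to $p$-adic completion, as the tensor product of the two Koszul complexes --- this is the multiplicativity of Koszul complexes recalled after Definition~\ref{def:koszul}, together with the fact that $R_\infty$ is the $p$-adic completion of $R_{1,\infty} \otimes_\roi R_{2,\infty}$ (both being $\roi$-flat, indeed topologically free by Lemma~\ref{lem:topfree}, so no derived completion subtleties arise, and there are no $\Tor$-terms).

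\textbf{Applying $L\eta$.} The second step is to commute $L\eta_{\zeta_p - 1}$ past the tensor product. Concretely, writing $\mu = \zeta_p - 1$ and using the explicit Koszul models $C_j^\bullet := K_{R_{j,\infty}}(\gamma_1 - 1, \ldots, \gamma_{d_j} - 1)$, which are complexes of $\mu$-torsion-free (indeed flat) $\roi$-modules, one has a natural map of complexes
\[
\eta_\mu C_1^\bullet \otimes_\roi \eta_\mu C_2^\bullet \to \eta_\mu(C_1^\bullet \otimes_\roi C_2^\bullet)
\]
coming from the lax symmetric monoidal structure of Proposition~\ref{prop:Letalaxsymmmon}. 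The key point is that this map is a quasi-isomorphism in the present situation. By Corollary~\ref{cor:alltildeOmegasame}(ii), $H^i(\eta_\mu C_j^\bullet)$ is a finite free $R_j$-module (namely $\bigwedge^i R_j^{d_j}$, identified via $\square_j$), so the complexes $\eta_\mu C_j^\bullet$ split (after completion) as direct sums of their cohomology, concentrated in free modules, and the K\"unneth map on cohomology is identified with the isomorphism $\bigwedge^\bullet R_1^{d_1} \otimes_{\roi} \bigwedge^\bullet R_2^{d_2} \xrightarrow{\sim} \bigwedge^\bullet R^{d_1+d_2}$ (after $p$-adic completion). More robustly, one can avoid the splitting and instead argue: both sides are derived $\mu$-complete (Lemma~\ref{lem:Letapreservecompleteness}, applied after noting $R\Gamma_\cont(\Gamma_j, R_{j,\infty})$ is derived $p$-complete, hence derived $\mu$-complete since $\mu^{p-1} \sim p$), so it suffices to check the map is a quasi-isomorphism modulo $\mu$; and there Proposition~\ref{prop:LetaBock} identifies $\eta_\mu C_j^\bullet/\mu$ with the Bockstein complex $H^\bullet(C_j^\bullet/\mu)$, whose cohomology is again $\bigwedge^\bullet(R_j/\mu)^{d_j}$, reducing the claim to the elementary K\"unneth isomorphism for these free modules over $R/\mu = R_1/\mu \otimes_{\roi/\mu} R_2/\mu$.

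\textbf{Assembling and the main obstacle.} Combining these two steps:
\[
\widetilde{\Omega}_{R_1} \hat\otimes_\roi \widetilde{\Omega}_{R_2} \simeq \eta_\mu C_1^\bullet \hat\otimes_\roi \eta_\mu C_2^\bullet \simeq \eta_\mu(C_1^\bullet \hat\otimes_\roi C_2^\bullet) \simeq \eta_\mu R\Gamma_\cont(\Gamma, R_\infty) = \widetilde{\Omega}_R^\square \simeq \widetilde{\Omega}_R,
\]
and one checks the composite agrees with the natural map of the statement by tracing through the lax monoidal structures on $R\nu_*$ and $L\eta$ (both are compatible, by construction, with the Koszul/group-cohomology models). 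I expect the main obstacle to be bookkeeping the $p$-adic completions carefully: the Koszul complex of $R_\infty$ only factors as a tensor product of the two smaller Koszul complexes after $p$-adic completion, and one must verify that $\eta_\mu$ interacts well with this completion --- this is exactly what Lemma~\ref{lem:Letacommutecompletion} (commuting $L\eta$ past the $\mu$-adic, hence $p$-adic, completion) is designed for, so the argument should go through, but matching the completed-tensor-product appearing in the statement with the uncompleted-then-completed Koszul models requires some care. A secondary, more cosmetic point is checking that the final isomorphism really is the map induced by functoriality of the pro-\'etale site rather than merely an abstract isomorphism; this follows from naturality of all the identifications used, in particular Corollary~\ref{cor:alltildeOmegasame}(i) comparing $\widetilde{\Omega}_R^\square$ with $\widetilde{\Omega}_R^\sub{pro\'et}$.
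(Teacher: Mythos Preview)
Your proof is correct and follows the same overall architecture as the paper's: choose framings, factor the perfectoid cover and Koszul complex as a completed tensor product, then show $L\eta_{\zeta_p-1}$ commutes with this tensor product, using Lemma~\ref{lem:Letacommutecompletion} to handle the completion. The difference lies in the $L\eta$ step. The paper first reduces to the case where $R_1$ and $R_2$ are tori (via the \'etale base change in Corollary~\ref{cor:alltildeOmegasame}~(iii)), and then invokes Proposition~\ref{prop:Letasymmmon}: over a valuation ring, $L\eta_f$ is genuinely symmetric monoidal, so the lax map $L\eta_\mu C_1 \dotimes_\roi L\eta_\mu C_2 \to L\eta_\mu(C_1 \dotimes_\roi C_2)$ is automatically a quasi-isomorphism, with no further computation needed. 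You instead bypass Proposition~\ref{prop:Letasymmmon} and argue directly, either by observing that the cohomology groups of $\widetilde{\Omega}_{R_j}$ are finite free $R_j$-modules (hence the complexes split and the K\"unneth map is the exterior-algebra isomorphism), or by reducing mod $\mu$ via Proposition~\ref{prop:LetaBock}. Both of your routes are valid; the splitting argument in particular is clean once one notes that free cohomology forces the complex to decompose in $D(R_j)$. The paper's approach has the advantage of isolating the general principle (Proposition~\ref{prop:Letasymmmon}) for reuse, while yours avoids that dependency at the cost of invoking the explicit cohomology computation from Corollary~\ref{cor:alltildeOmegasame}~(ii). The reduction to the torus case in the paper is not essential---your direct treatment of general small $R_j$ works just as well, since $R_\infty = R_{1,\infty}\hat\otimes_\roi R_{2,\infty}$ holds in that generality.
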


\begin{proof} Choose framings $\square_1$ and $\square_2$ for $R_1$ and $R_2$, and endow $R$ with the product framing $\square = \square_1\times \square_2$. We may, using part (iii) of Corollary~\ref{cor:alltildeOmegasame}, reduce to the case $R_1=\roi\langle T_1^{\pm 1},\ldots,T_{d_1}^{\pm 1}\rangle$, $R_2=\roi\langle T_{d_1+1}^{\pm 1},\ldots,T_{d_1+d_2}^{\pm 1}\rangle$. In that case, one has
\[
R_\infty = \roi\langle T_1^{\pm 1/p^\infty},\ldots,T_{d_1+d_2}^{\pm1/p^\infty}\rangle = R_{1,\infty}\hat{\otimes}_\roi R_{2,\infty}\ .
\]
As the continuous group cohomology of $\Gamma = \bb Z_p(1)^{d_1+d_2} = \Gamma_1\times \Gamma_2$ is given by a Koszul complex, one deduces that
\[
R\Gamma_{\mathrm{cont}}(\Gamma,R_\infty) = R\Gamma_{\mathrm{cont}}(\Gamma_1,R_{1,\infty})\hat{\otimes}_\roi R\Gamma_{\mathrm{cont}}(\Gamma_2,R_{2,\infty})\ .
\]
It remains to see that $L\eta_{\zeta_p-1}$ behaves in a symmetric monoidal way in this case, i.e.~the induced natural map
\[
\widetilde{\Omega}_{R_1}^{\square_1}\hat{\otimes}_\roi \widetilde{\Omega}_{R_2}^{\square_2}\to \widetilde{\Omega}_R^\square
\]
is a quasi-isomorphism. This follows from Proposition~\ref{prop:Letasymmmon} and Lemma~\ref{lem:Letacommutecompletion} (noting that $p$-adic and $\zeta_p-1$-adic completion agree).
\end{proof}

\subsection{The identification of $\tau^{\leq 1} \widetilde{\Omega}_R$} 

As before, let $R$ be a small formally smooth $\roi$-algebra, with $\frak X=\Spf R$ and $X=\Spa(R[\tfrac 1p],R)$. In this subsection (and the next), we want to get a canonical identification of $\tau^{\leq 1} \widetilde{\Omega}_R$ with the $p$-adic completion of
\[
\bb L_{R/\bb Z_p}[-1]\{-1\}\ .
\]

First, we construct the map. Consider the transitivity triangle
\[
\widehat{\bb L}_{\cal O/\bb Z_p}[-1] \otimes_{\cal O} \hat\roi^+_X\to \widehat{\bb L}_{\hat\roi^+_X/\bb Z_p} [-1]\to \widehat{\bb L}_{\hat\roi^+_X/\cal O}[-1]
\]
of $p$-completed cotangent complexes on $X_\sub{pro\'et}$. Observe that $\widehat{\bb L}_{\hat\roi^+_X/\cal O} \simeq 0$ as in fact $\widehat{\bb L}_{S/\cal O}\simeq 0$ for any perfectoid $\cal O$-algebra $S$, see Lemma~\ref{lem:perfectoidcotangent}. We obtain a map
\[
\widehat{\bb L}_{R/\bb Z_p}[-1] \to R\Gamma(X_\sub{pro\'et},\widehat{\bb L}_{\hat\roi^+_X/\bb Z_p} [-1]) = R\Gamma(X_\sub{pro\'et},\widehat{\bb L}_{\cal O/\bb Z_p}[-1] \otimes_{\cal O} \hat\roi^+_X) \cong R\Gamma(X_\sub{pro\'et},\hat\roi^+_X)\{1\}\ .
\]

\begin{proposition}\label{prop:OmegavsL} The map
\[
\widehat{\bb L}_{R/\bb Z_p}[-1]\{-1\}\to R\Gamma(X_\sub{pro\'et},\hat\roi^+_X)
\]
just constructed factors uniquely over a map
\[
\widehat{\bb L}_{R/\bb Z_p}[-1]\{-1\}\to L\eta_{\zeta_p-1} R\Gamma(X_\sub{pro\'et},\hat\roi^+_X)=\widetilde{\Omega}_R\ ,
\]
and this induces an equivalence
\[
\widehat{\bb L}_{R/\bb Z_p}[-1]\{-1\}\buildrel\simeq\over\to \tau^{\leq 1} \widetilde{\Omega}_R\ .
\]
\end{proposition}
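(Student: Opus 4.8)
The key point is that $L\eta_{\zeta_p-1}$ behaves well on the truncation $\tau^{\leq 1}$, since by Corollary~\ref{cor:LetaExists} it commutes with canonical truncations. Thus it suffices to compute $\tau^{\leq 1} R\nu_\ast \hat\roi^+_X$ together with its natural transformation from $\widehat{\bb L}_{R/\bb Z_p}[-1]$, and then track what $L\eta$ does. First I would observe that the factorization statement (that the map $\widehat{\bb L}_{R/\bb Z_p}[-1]\{-1\} \to R\Gamma(X_\sub{pro\'et},\hat\roi^+_X)$ lands in $L\eta_{\zeta_p-1}$) is a consequence of the criterion in Lemma~\ref{lem:LetaCoconnectiveMap} (or the natural transformation of Lemma~\ref{lem:mapLetabackforth}): the source lives in $D^{\geq 0}$ with $H^0 = R$, which is $(\zeta_p-1)$-torsion-free since $R$ is flat over $\roi$, and $\zeta_p - 1$ divides $p$. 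So the map factors canonically through $L\eta_{\zeta_p-1}$ of the target; actually one wants to check the map factors through $\tau^{\leq 1}L\eta_{\zeta_p-1}$ which is automatic since the source is $1$-truncated.

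Second, I would compute the relevant cohomology sheaves. Using the choice of framing $\square$, we have $R\nu_\ast \hat\roi^+_X|_{\Spf R'} \simeq R\Gamma_\cont(\Gamma, R'_\infty)$ for formally étale $R \to R'$, and the cohomology $H^i_\cont(\Gamma, R_\infty)$ was analyzed in Proposition~\ref{prop:compcontcohom}: the map $\bigwedge^i R^d = H^i_\cont(\Gamma, R) \to H^i_\cont(\Gamma, R_\infty)$ is split injective with cokernel killed by $\zeta_p - 1$. By Lemma~\ref{lem:CohomLeta}, $H^i(\widetilde{\Omega}_R) = H^i_\cont(\Gamma, R_\infty)/H^i_\cont(\Gamma, R_\infty)[\zeta_p-1]$. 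In low degrees $i = 0, 1$: $H^0(\widetilde{\Omega}_R) = R$ (no $(\zeta_p-1)$-torsion by flatness), and I claim $H^1(\widetilde{\Omega}_R)$ receives an isomorphism from $H^1$ of $\widehat{\bb L}_{R/\bb Z_p}[-1]\{-1\} = \Omega^{1,\cont}_{R/\roi}$ — this is where the twist enters, as the class in $T_p(\Omega^1_{\roi/\bb Z_p})$ trivializing $\roi\{1\}$ is exactly $\frac{1}{\zeta_p-1}\frac{d\zeta_{p^r}}{\zeta_{p^r}}$, so multiplying $H^1_\cont(\Gamma, R)\{1\} \cong R^d\{1\}$ by $\zeta_p-1$ identifies it with the image of $H^1_\cont(\Gamma,R)$ inside $H^1_\cont(\Gamma,R_\infty)$, which is all of $H^1(\widetilde{\Omega}_R)$ after killing torsion. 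Concretely, the map on $H^1$ sends $d\log T_i$ to the class of the cocycle $\gamma_i \mapsto \log[\epsilon] = \mu/(\dots)$, matching up with the Koszul description via the Bockstein; so one must check the explicit compatibility with the identification in Corollary~\ref{cor:alltildeOmegasame}(ii).

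Third, I would verify the map is an isomorphism on $H^0$ and $H^1$, hence the induced map $\widehat{\bb L}_{R/\bb Z_p}[-1]\{-1\} \to \tau^{\leq 1}\widetilde{\Omega}_R$ is a quasi-isomorphism (both sides concentrated in degrees $0, 1$). The $H^0$ identification is the statement that $H^0(\widehat{\bb L}_{R/\bb Z_p}[-1]\{-1\}) = R$ maps isomorphically to $H^0(\widetilde{\Omega}_R) = R$, which follows since the composite $R \to H^0 R\Gamma(X_\sub{pro\'et},\hat\roi^+_X) = R$ is the identity (using $\hat\roi_X^+(U) = R^+_\infty$-type computations, $H^0_\cont(\Gamma, R_\infty) = R$). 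The uniqueness of the factorization is clear since $\Hom(\widehat{\bb L}_{R/\bb Z_p}[-1]\{-1\}, N[-1]) = 0$ for $N$ the fiber of $\widetilde{\Omega}_R \to \tau^{\leq 1}\widetilde{\Omega}_R$, as that fiber is in $D^{\geq 2}$ while the source is in $D^{\leq 1}$; more precisely any two factorizations differ by a map to the fiber of $L\eta_{\zeta_p-1}R\Gamma \to \tau^{\leq 1}L\eta_{\zeta_p-1}R\Gamma$, which is $2$-connective. \textbf{The main obstacle} I expect is the careful bookkeeping of the Tate twist and the explicit identification of the map $H^1(\widehat{\bb L}_{R/\bb Z_p}[-1]\{-1\}) \to H^1(\widetilde{\Omega}_R)$ with the framing-dependent isomorphism of Corollary~\ref{cor:alltildeOmegasame}(ii): one needs to trace through the transitivity triangle for $\widehat{\bb L}$, the identification $\widehat{\bb L}_{\roi/\bb Z_p}[-1] = \roi\{1\}$, and the Koszul/Bockstein computation of $H^1_\cont(\Gamma, -)$, checking that the two a priori different trivializations of the rank-$d$ free module agree up to the unit coming from the choice of $\epsilon$. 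The rest is formal manipulation with truncations and the already-established Proposition~\ref{prop:compcontcohom}.
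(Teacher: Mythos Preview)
Your outline of the $H^0$ and $H^1$ computations, the use of the framing and Proposition~\ref{prop:compcontcohom}, and the bookkeeping of the Breuil--Kisin--Fargues twist are all on the right track and parallel the paper's approach. However, there is a genuine gap in your treatment of the \emph{existence and uniqueness of the factorization} through $L\eta_{\zeta_p-1}$.

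You claim the factorization is ``a consequence of the criterion in Lemma~\ref{lem:LetaCoconnectiveMap}.'' But that lemma only constructs the map $L\eta_f D\to D$ when $H^0(D)$ is $f$-torsion-free; it says nothing about when a given map $\alpha:C\to D$ lifts along it. The torsion-freeness of $H^0$ of the \emph{source} $C$ is irrelevant here. What is actually needed is a criterion on $\alpha$ itself: the paper proves (Lemma~\ref{lem:uniquefactoverLeta}) that for $C\in D^{\leq 1}$, $D\in D^{\geq 0}$ with $H^0(D)$ $f$-torsion-free, the map $\alpha$ factors through $L\eta_f D\to D$ if and only if $H^1(\alpha):H^1(C)\to H^1(D)$ lands in $fH^1(D)$, and in that case the factorization is unique because of the distinguished triangle $L\eta_f D\to D\to H^1(D/f)[-1]$ together with $\Hom(C,H^1(D/f)[-2])=0$. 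So the existence of the factorization is a nontrivial condition that must be \emph{verified}, not a formal consequence of general $L\eta$ properties. In the case at hand, this amounts to checking that the map $\Omega^{1,\cont}_{R/\roi}\{-1\}\to H^1_{\cont}(\Gamma,R_\infty)$ lands in $(\zeta_p-1)H^1_{\cont}(\Gamma,R_\infty)$, which is exactly the computation you outline in your second and third paragraphs (and which the paper carries out by reducing to $d=1$ in Proposition~\ref{prop:identificationmap}); but you have the logical role of this computation wrong.

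Similarly, your uniqueness argument addresses the wrong fiber: you argue that two lifts differ by a map to the fiber of $L\eta_f D\to \tau^{\leq 1}L\eta_f D$, but the relevant fiber is that of $L\eta_f D\to D$, which is $H^1(D/f)[-2]$ (after truncation), not $\tau^{\geq 2}L\eta_f D$. The vanishing of $\Hom(C,-)$ into this fiber is what gives uniqueness, and it holds because $C\in D^{\leq 1}$ while the fiber sits in degree $2$.
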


Note that from the transitivity triangle
\[
\widehat{\bb L}_{\cal O/\bb Z_p}[-1]\otimes_{\cal O} R\to \widehat{\bb L}_{R/\bb Z_p} [-1]\to \widehat{\bb L}_{R/\cal O}[-1]\ ,
\]
one sees that the cohomology groups of
\[
\widehat{\bb L}_{R/\bb Z_p}[-1]\{-1\}
\]
are given by $R$ in degree $0$ and $\Omega^{1,\cont}_{R/\roi}\{-1\}$ in degree $1$. Thus, the proposition gives a canonical identification
\[
\Omega^{1,\cont}_{R/\roi}\{-1\}\cong H^1(\widetilde{\Omega}_R)\ ,
\]
and combining this with Corollary~\ref{cor:alltildeOmegasame} finishes the proof of the canonical identification
\[
\Omega^{i,\cont}_{R/\roi}\{-i\}\cong H^i(\widetilde{\Omega}_R)\ ,
\]
thereby also finishing the proof of Theorem~\ref{thm:IntegralCartierLocal}, and thus of Theorem~\ref{thm:IntegralCartier}.

\begin{proof} First, we check that the factorization is unique. This is the content of the following lemma.

\begin{lemma}\label{lem:uniquefactoverLeta} Let $A$ be a ring with a non-zero-divisor $f$, and let $\alpha: C\to D$ be a map in $D(A)$ such that $H^i(C)=0$ for $i>1$, $H^i(D)=0$ for $i<0$, and $H^0(D)$ is $f$-torsion-free. Then there is at most one factorization of $\alpha$ as the composite of a map $\beta: C\to L\eta_f D$ and the natural map $L\eta_f D\to D$ from Lemma~\ref{lem:LetaCoconnectiveMap}, and it exists if and only if the map
\[
H^1(C\dotimes_A A/f)\to H^1(D\dotimes_A A/f)
\]
is zero, which happens if and only if the map $H^1(C)\to H^1(D)$ factors through $fH^1(D)$.
\end{lemma}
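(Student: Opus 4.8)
\textbf{Plan for the proof of Lemma~\ref{lem:uniquefactoverLeta}.}

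The strategy is to reduce everything to an explicit computation with truncations, exploiting the fact that $L\eta_f D$ and the map $L\eta_f D\to D$ (from Lemma~\ref{lem:LetaCoconnectiveMap}) are built from $\tau^{[0,1]}$-type data when the relevant connectivity hypotheses hold. First I would observe that since $H^i(C)=0$ for $i>1$, any map $C\to L\eta_f D$ factors through $\tau^{\le 1} C$, and since $H^i(D)=0$ for $i<0$ and we only care about $\tau^{\le 1}$ on the target side we may as well replace $D$ by $\tau^{\le 1}D$; using Corollary~\ref{cor:LetaExists} ($L\eta_f$ commutes with canonical truncations) and Lemma~\ref{lem:LetaCoconnectiveMap} (which applies since $H^0(D)$ is $f$-torsion-free), the natural map $L\eta_f D\to D$ restricts to $L\eta_f(\tau^{\le 1}D)\to \tau^{\le 1}D$. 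So I would reduce to the case where $C\in D^{\le 1}(A)$ and $D\in D^{[0,1]}(A)$ with $H^0(D)$ being $f$-torsion-free.

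Next, for the uniqueness statement: given two factorizations $\beta_1,\beta_2\colon C\to L\eta_f D$, their difference $\beta_1-\beta_2$ composes to zero with $L\eta_f D\to D$. I would analyze the fiber of $L\eta_f D\to D$. Using Lemma~\ref{lem:mapLetabackforth} (the $n=0$ natural transformation $\tau^{\ge 0}L\eta_{\cal I}\to \tau^{\ge 0}$ and the $m=1$ transformation $\cal I\otimes\tau^{\le 1}\to \tau^{\le 1}L\eta_{\cal I}$) together with Lemma~\ref{lem:CohomLeta}, one sees $H^0(L\eta_f D)=H^0(D)$ (as $H^0(D)$ has no $f$-torsion) and $H^1(L\eta_f D)=f H^1(D)\otimes\cal I^{\otimes 1}\cong H^1(D)/H^1(D)[f]$; the map to $D$ is the identity on $H^0$ and is the inclusion $H^1(D)/H^1(D)[f]\hookrightarrow H^1(D)$ followed by... actually multiplication by $f$, i.e. $fH^1(D)\hookrightarrow H^1(D)$, on $H^1$. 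Hence the fiber $F$ of $L\eta_f D\to D$ sits in $D^{[0,1]}$ with $H^0(F)=0$ and $H^1(F)=\ker(fH^1(D)\to H^1(D))\oplus(\text{contribution of }H^1(D)[f])$; since $H^1(D)\supset fH^1(D)$ and $f$ is not a zero-divisor on $fH^1(D)\cong H^1(D)/H^1(D)[f]$, in fact $F$ is (quasi-isomorphic to) $H^1(D)[f][-1]$, which is $f$-torsion concentrated in degree $1$. Thus $\mathrm{Hom}_{D(A)}(C,F)=\mathrm{Hom}(H^1(C),H^1(D)[f])=\mathrm{Hom}(H^1(C\dotimes_A A/f)/(\ldots),\ldots)$; the point is that this group controls both uniqueness and existence simultaneously. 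I would make this precise by writing the long exact sequence $\mathrm{Hom}(C,F)\to\mathrm{Hom}(C,L\eta_f D)\to\mathrm{Hom}(C,D)\to\mathrm{Hom}(C,F[1])$ and identifying the relevant terms; uniqueness is the injectivity $\mathrm{Hom}(C,L\eta_f D)\hookrightarrow\mathrm{Hom}(C,D)$, equivalently $\mathrm{Hom}(C,F)=0$, which I expect to follow because $F$ is $f$-torsion in degree $1$ while... hmm, this does \emph{not} vanish in general, so uniqueness must instead be phrased: the factorization $\beta$, if it exists, is unique — which requires $\mathrm{Hom}(\tau^{\le 1}C, F)=0$, and I would get this from the hypothesis more carefully, perhaps observing that the obstruction/ambiguity both land in $H^1(C\dotimes A/f)\to H^1(D\dotimes A/f)$ and the statement is really that $\alpha$ admits a factorization iff this map is zero, with the factorization then pinned down on the nose.

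For the existence criterion, I would argue that the obstruction to lifting $\alpha\colon C\to D$ along $L\eta_f D\to D$ is the image of $\alpha$ in $\mathrm{Hom}_{D(A)}(C,F[1])=\mathrm{Hom}_A(H^1(C)/H^1(C)[f\text{-ish}], H^1(D)[f])$ or more simply is detected after $\dotimes_A A/f$. Concretely: by Proposition~\ref{prop:LetaBock}, $L\eta_f D\dotimes_A A/f\simeq H^\bullet(D/f)$ is the Bockstein complex, and the composite $C\dotimes_A A/f\to L\eta_f D\dotimes_A A/f\to D\dotimes_A A/f$ on $H^1$ is the map $H^1(C\dotimes A/f)\to H^1(D\dotimes A/f)$ which must factor through $Z^1$ of the Bockstein complex modulo boundaries; tracing through, this forces that map to be zero, and conversely if it is zero one can build $\beta$ degree by degree on representatives (exactly as in the proof of Proposition~\ref{prop:LetaBock}, using $\cal I$-torsion-free models). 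The equivalence with ``$H^1(C)\to H^1(D)$ factors through $fH^1(D)$'' is then elementary: $H^1(C)\to H^1(D)\to H^1(D)/fH^1(D)=H^1(D\dotimes A/f)$ (the last identification because $H^0(D)$ is $f$-torsion-free, so $H^1(D\dotimes A/f)=H^1(D)/f$) is zero iff the image lies in $fH^1(D)$, and this is the same as the induced map on $H^1(-\dotimes A/f)$ being zero after noting $H^1(C\dotimes A/f)\twoheadrightarrow H^1(C)/f$.

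\textbf{Main obstacle.} The delicate point is pinning down the precise shape of the fiber $F$ of $L\eta_f D\to D$ and showing that $\mathrm{Hom}_{D(A)}(\tau^{\le 1}C,F)=0$ under the given hypotheses (this is what gives \emph{uniqueness}, since a priori $F$ is nonzero $f$-torsion in degree $1$). I expect the resolution is that the hypotheses actually force the relevant map out of $C$ into $F$ to vanish — or, more likely, the cleanest route is to avoid $F$ entirely and instead work with honest $f$-torsion-free cochain representatives $C^\bullet\to D^\bullet$, define $\eta_f D^\bullet\subset D^\bullet[\tfrac1f]$, and check by a direct (finite, low-degree) diagram chase that a cochain-level factorization through $\eta_f D^\bullet$ exists and is unique up to the unique ambiguity, exactly mirroring the injectivity/surjectivity argument in the proof of Proposition~\ref{prop:LetaBock}. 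This bypasses the derived-category subtleties at the cost of a short explicit computation, and is the approach I would ultimately write up.
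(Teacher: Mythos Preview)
Your overall strategy---reduce to $D\in D^{[0,1]}$, analyze the fiber (or cofiber) of $L\eta_f D\to D$, and read off uniqueness and the existence obstruction from the long exact sequence for $\Hom(C,-)$---is exactly the paper's approach. The problem is a computational error in identifying that fiber, and this error is precisely what derails your argument.

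You claim the fiber $F$ of $L\eta_f D\to D$ is $H^1(D)[f][-1]$. It is not. With $D\in D^{[0,1]}$ and $H^0(D)$ $f$-torsion-free, one has $H^0(L\eta_f D)\to H^0(D)$ an isomorphism and $H^1(L\eta_f D)\to H^1(D)$ injective with image $fH^1(D)$. The long exact sequence then forces $H^0(F)=H^1(F)=0$ and $H^2(F)=H^1(D)/fH^1(D)=H^1(D/f)$ (the last equality because $D\in D^{\leq 1}$). So the fiber is $H^1(D/f)[-2]$, not $H^1(D)[f][-1]$. Equivalently, the \emph{cofiber} is $H^1(D/f)[-1]$; the paper identifies this directly by noting that the second map in the triangle
\[
L\eta_f D\to D\to H^1(D/f)[-1]
\]
is just the tautological composite $D\to D\dotimes_A A/f\to \tau^{\geq 1}(D\dotimes_A A/f)$.

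Once you have the correct cofiber, everything you wanted falls out immediately. Applying $\Hom(C,-)$ gives
\[
\Hom(C,H^1(D/f)[-2])\to \Hom(C,L\eta_f D)\to \Hom(C,D)\to \Hom(C,H^1(D/f)[-1]).
\]
Since $C\in D^{\leq 1}$, the leftmost term vanishes (a module placed in degree $2$ admits no maps from a complex in $D^{\leq 1}$); this is uniqueness. The obstruction to existence is the image of $\alpha$ in the rightmost term, and one checks that this image is exactly the class of $H^1(\alpha\dotimes_A A/f)$. Your detour into a direct cochain-level argument is unnecessary---the confusion that prompted it disappears once the degree shift is corrected.
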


\begin{proof} First, we make the elementary verification that $H^1(C\dotimes_A A/f)\to H^1(D\dotimes_A A/f)$ is zero if and only if $H^1(C)\to H^1(D)$ factors through $fH^1(D)$. Note that $H^1(C)$ surjects onto $H^1(C\dotimes_A A/f)$, and $H^1(D)/f$ injects into $H^1(D\dotimes_A A/f)$. Thus, the claim follows from observing that in the diagram
\[\xymatrix{
H^1(C)\ar[r]\ar@{->>}[d] & H^1(D)/f\ar@{^(->}[d]\\
H^1(C\dotimes_A A/f)\ar[r] & H^1(D\dotimes_A A/f)\ ,
}\]
the lower arrow is zero if and only if the upper arrow is zero.

Note that $H^i(D)=0$ for $i<0$ and $H^0(D)$ is $f$-torsion free. By Lemma \ref{lem:LetaCoconnectiveMap}, there is a natural map $L\eta_f D\to D$. We may assume that $H^i(D)=0$ for $i>1$, as $\alpha$ factors through $\tau^{\leq 1} D$, and $L\eta_f$ commutes with truncations (so that any factorization $\beta: C\to L\eta_f D$ factors uniquely over $\tau^{\leq 1} L\eta_f D = L\eta_f(\tau^{\leq 1} D)$). For such $D$ with $H^i(D)=0$ for $i>1$ or $i<0$ and $H^0(D)$ being $f$-torsion-free, there is a distinguished triangle
\[
L\eta_f D\to D\to H^1(D/f)[-1]\ ,
\]
where the second map is the tautological map $D\to D/f\to \tau^{\geq 1} D/f = H^1(D/f)[-1]$.  Applying $\Hom(C,-)$ gives an exact sequence
\[\Hom(C, H^1(D/f)[-2]) \to \Hom(C, L\eta_f D) \to \Hom(C, D)  \to \Hom(C, H^1(D/f)[-1]).\]
Now $\Hom(C, H^1(D/f)[-2]) = 0$ since $C \in D^{\leq 1}(A)$. This shows that there is at most one factorization of $\alpha:C \to D$ through a map $\beta:C \to L\eta_f D$. Moreover, such a $\beta$ exists if and only if the composite $C \stackrel{\alpha}{\to} D \to H^1(D/f)[-1]$ vanishes. This composite is identified with the composite $C \to C/f \to H^1(C/f)[-1] \stackrel{H^1(\alpha/f)}{\to} H^1(D/f)[-1]$. Thus, such a $\beta$ exists if and only if $H^1(\alpha/f) = 0$; this gives everything but the last phrase of the lemma. For the last phrase, it is enough to observe that $H^1(C/f) = H^1(C)/f$ and $H^1(D/f) = H^1(D)/f$ since $C, D \in D^{\leq 1}(A)$.
\end{proof}

This applies in particular in our situation to imply that the factorization in the proposition is unique if it exists.

Now we do a local computation, so fix a framing $\square: \frak X\to \widehat{\bb G}_m^d$. Let $S=\roi\langle T_1^{\pm 1},\ldots,T_d^{\pm 1}\rangle$, so we have a formally \'etale map $S\to R$. Then by Corollary~\ref{cor:alltildeOmegasame}, we have a quasi-isomorphism
\[
\widetilde{\Omega}_S\otimes_S R\to \widetilde{\Omega}_R\ .
\]
Similarly, there is a quasi-isomorphism
\[
\widehat{\bb L}_{S/\bb Z_p}\otimes_S R\to \widehat{\bb L}_{R/\bb Z_p}\ ,
\]
by the transitivity triangle and the vanishing of $\widehat{\bb L}_{R/S}$. Thus, if we can prove the proposition for $S$, giving an equivalence
\[
\widehat{\bb L}_{S/\bb Z_p}[-1]\{-1\}\buildrel\simeq\over\to \tau^{\leq 1} \widetilde{\Omega}_S\ ,
\]
then the result for $R$ follows by base extension.

Thus, we may assume that $R=\roi\langle T_1^{\pm 1},\ldots,T_d^{\pm 1}\rangle$. Also, we may replace the map
\[
\widehat{\bb L}_{R/\bb Z_p}[-1]\{-1\}\to R\Gamma(X_\sub{pro\'et},\hat\roi^+_X)
\]
by the map
\[
\widehat{\bb L}_{R/\bb Z_p}[-1]\{-1\}\to R\Gamma_\sub{cont}(\bb Z_p(1)^d,\roi\langle T_1^{\pm 1/p^\infty},\ldots,T_d^{\pm 1/p^\infty}\rangle)
\]
constructed similarly, as the resulting $\widetilde{\Omega}_R$-complexes agree. By Lemma~\ref{lem:uniquefactoverLeta}, to check that the desired factorization exists and gives the desired quasi-isomorphism, we have to see that
\[
R = H^0(\widehat{\bb L}_{R/\bb Z_p}[-1]\{-1\})\to H^0_\sub{cont}(\bb Z_p(1)^d,\roi\langle T_1^{\pm 1/p^\infty},\ldots,T_d^{\pm 1/p^\infty}\rangle)
\]
is an isomorphism and
\[
\Omega^{1,\cont}_{R/\roi}\{-1\}\to H^1_\sub{cont}(\bb Z_p(1)^d,\roi\langle T_1^{\pm 1/p^\infty},\ldots,T_d^{\pm 1/p^\infty}\rangle)
\]
is an isomorphism onto $(\zeta_p-1) H^1_\sub{cont}$. The first statement follows directly from the definitions. For the second statement, we note that both $\Omega^{1,\cont}_{R/\roi}\{-1\}$ and $(\zeta_p-1)H^1_\sub{cont}$ are isomorphic to $R^d$ with bases on either side coming from the choice of coordinates (and the choice of roots of unity). It is enough to check that basis elements match, which by functoriality reduces to the case $d=1$. We finish the proof of Proposition~\ref{prop:OmegavsL} in the next subsection.
\end{proof}

\subsection{The key case $\frak X = \widehat{\bb G}_m$}

Assume now that $\frak X = \widehat{\bb G}_m = \Spf R$, where $R = \roi \langle T^{\pm 1} \rangle$. Set $R_\infty = \roi \langle T^{\pm 1/p^\infty} \rangle$, and let $\Gamma = \bb Z_p(1)$ be the natural group acting $R$-linearly on $R_\infty$.

We recall the map considered above. We start with the map
\[
\widehat{\bb L}_{R/\bb Z_p} \to \widehat{\bb L}_{R_\infty/\bb Z_p}
\]
induced by $p$-completion of the pullback. Since $R \to R_\infty$ is $\Gamma$-equivariant, this induces a map
\[
\widehat{\bb L}_{R/\bb Z_p} \to R\Gamma_\sub{cont}(\Gamma,\widehat{\bb L}_{R_\infty/\bb Z_p}) := R\varprojlim_n R\Gamma_{\sub{cont}}(\Gamma, \bb L_{R_\infty/\bb Z_p} \dotimes_{\bb Z} \bb Z/p^n) \ .
\]
We want to describe the image of $d\log(T) = \frac{dT}{T} \in H^0(\widehat{\bb L}_{R/\bb Z_p}) = \Omega^{1,\cont}_{R/\roi}$ under this map; note that this is an $R$-module generator of $\Omega^{1,\cont}_{R/\roi}$.

\begin{proposition}\label{prop:identificationmap} Under the identification
\[
R\Gamma_\sub{cont}(\Gamma,\widehat{\bb L}_{R_\infty/\bb Z_p}) = R\Gamma_\sub{cont}(\Gamma,R_\infty)[1]\{1\}\ ,
\]
the image of $d\log(T)\in H^0(\widehat{\bb L}_{R/\bb Z_p})$ in
\[
H^0_\sub{cont}(\Gamma,\widehat{\bb L}_{R_\infty/\bb Z_p}) = H^1_\sub{cont}(\Gamma,R_\infty)\{1\}
\]
is given by the image of
\[
d\log\otimes 1\in H^1_\sub{cont}(\Gamma,\roi\{1\})\otimes_\roi R = H^1_\sub{cont}(\Gamma,\roi\{1\}\otimes_\roi R)\hookrightarrow H^1_\sub{cont}(\Gamma,\roi\{1\}\otimes_\roi R_\infty)\ ,
\]
where $d\log\in H^1_\sub{cont}(\Gamma,\roi\{1\}) = \Hom_\sub{cont}(\Gamma,T_p(\Omega^1_{\roi/\bb Z_p}))$, $\Gamma=\bb Z_p(1) = T_p(\mu_{p^\infty}(\roi))$, is the map on $p$-adic Tate modules induced by the map $d\log: \mu_{p^\infty}(\roi)\to \Omega^1_{\roi/\bb Z_p}$.
\end{proposition}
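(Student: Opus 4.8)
The claim identifies the image under the transitivity map of the canonical generator $d\log(T)$ of $\Omega^{1,\cont}_{R/\roi}$, after passing to group cohomology, with the ``external'' class $d\log\otimes 1$ built from the $d\log$ map on roots of unity. Since both sides are $R$-module generators of free rank-one $R$-modules (after the identification $H^0_\sub{cont}(\Gamma,\widehat{\bb L}_{R_\infty/\bb Z_p}) = H^1_\sub{cont}(\Gamma,R_\infty)\{1\}$, which identifies $(\zeta_p-1)H^1_\sub{cont}(\Gamma,R_\infty)\{1\}$ with $\Omega^{1,\cont}_{R/\roi}\{-1\}\{1\} = \Omega^{1,\cont}_{R/\roi}$ by Proposition~\ref{prop:compcontcohom}), it suffices to verify the identity of cocycles at the chain level. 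The plan is to make the transitivity triangle
\[
\widehat{\bb L}_{\roi/\bb Z_p}\otimes_\roi \hat\roi^+_X \to \widehat{\bb L}_{\hat\roi^+_X/\bb Z_p} \to \widehat{\bb L}_{\hat\roi^+_X/\roi}
\]
completely explicit in the toric situation, where everything is a Koszul/continuous-cochain complex for $\Gamma = \bb Z_p(1)$, and then trace $d\log(T)$ through.

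\textbf{Key steps.} First I would fix a compatible system of $p$-power roots $\zeta_{p^r}$, identify $\Gamma = \bb Z_p(1) = T_p(\mu_{p^\infty})$ with a generator $\gamma$ (the system $(\zeta_{p^r})_r$), and recall that $\gamma$ acts on $R_\infty = \roi\langle T^{\pm 1/p^\infty}\rangle$ by $T^{1/p^r}\mapsto \zeta_{p^r}T^{1/p^r}$. Then by Lemma~\ref{lem:koszulcohom}(ii), $R\Gamma_\sub{cont}(\Gamma,N)$ is computed by $K_N(\gamma-1) = [N\xrightarrow{\gamma-1}N]$ for any $N$ as in the statement. Second, I would compute $\widehat{\bb L}_{R_\infty/\bb Z_p}$: since $R_\infty$ is a (non-finitely-presented, but ind-smooth over $\roi$) $\bb Z_p$-algebra, $\widehat{\bb L}_{R_\infty/\bb Z_p}$ sits in a triangle $\widehat{\bb L}_{\roi/\bb Z_p}\otimes_\roi R_\infty \to \widehat{\bb L}_{R_\infty/\bb Z_p}\to \widehat{\bb L}_{R_\infty/\roi}$, and $\widehat{\bb L}_{R_\infty/\roi} = \Omega^{1,\cont}_{R_\infty/\roi}$, the $p$-completion of the free $R_\infty$-module on $d\log(T^{1/p^r})$, $r\geq 0$ — but these are all $\bb Z_p$-multiples of $d\log(T)$ in the sense that $d\log(T^{1/p^r}) = p^{-r}d\log(T)$, so $\Omega^{1,\cont}_{R_\infty/\roi}\cong R_\infty\otimes_{\bb Z_p}\bb Q_p\cdot d\log(T)$ is a $\bb Q_p$-vector space; this is where the twist $\{1\}$ and the factor $\zeta_p-1$ enter, precisely as in Proposition~\ref{prop:OmegavsL}. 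Third — the crux — I would compute the connecting map $\widehat{\bb L}_{R/\bb Z_p}\to \widehat{\bb L}_{\hat\roi^+_X/\bb Z_p}[?]$... more precisely the composite $\widehat{\bb L}_{R/\bb Z_p}\to R\Gamma_\sub{cont}(\Gamma,\widehat{\bb L}_{R_\infty/\bb Z_p})$ and then its projection to $R\Gamma_\sub{cont}(\Gamma,\widehat{\bb L}_{\roi/\bb Z_p}\otimes_\roi R_\infty) = R\Gamma_\sub{cont}(\Gamma,R_\infty)\{1\}$. Here I would use the transitivity triangle over $\roi$: $\widehat{\bb L}_{\roi/\bb Z_p}\otimes_\roi R \to \widehat{\bb L}_{R/\bb Z_p}\to \Omega^{1,\cont}_{R/\roi}$, noting $\Omega^{1,\cont}_{R/\roi} = R\cdot d\log(T)$ is free, so $d\log(T)$ lifts (uniquely mod $\widehat{\bb L}_{\roi/\bb Z_p}\otimes_\roi R$) to $H^0(\widehat{\bb L}_{R/\bb Z_p})$; its image in $R\Gamma_\sub{cont}(\Gamma,\widehat{\bb L}_{R_\infty/\bb Z_p})$, projected to the $\widehat{\bb L}_{\roi/\bb Z_p}$-part, is the Bockstein-type obstruction to extending $d\log(T)\in\Omega^{1,\cont}_{R_\infty/\roi}$ $\Gamma$-equivariantly: explicitly, the Koszul differential $\gamma - 1$ applied to a lift of $d\log(T)$ in $\widehat{\bb L}_{R_\infty/\bb Z_p}$. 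Since $\gamma(T) = \zeta_p T$ (taking $r=0$ normalization, $\gamma$ acts on $T$ itself by $\zeta_p$, i.e. on $T^{1/p^r}$ by $\zeta_{p^r}$), we get $(\gamma-1)(d\log T) = d\log(\zeta_p T) - d\log(T) = d\log(\zeta_p)$, landing in $\widehat{\bb L}_{\roi/\bb Z_p}\otimes_\roi R_\infty$; and under the identification $H^1_\sub{cont}(\Gamma,\roi\{1\}) = \Hom_\sub{cont}(\Gamma, T_p\Omega^1_{\roi/\bb Z_p})$, the cocycle $\gamma\mapsto d\log(\zeta_{p^r})$ (for varying $r$, forming an element of the Tate module) is exactly $d\log: \Gamma = T_p(\mu_{p^\infty})\to T_p(\Omega^1_{\roi/\bb Z_p})$. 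This yields the claimed identification $d\log(T)\mapsto d\log\otimes 1$.

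\textbf{Main obstacle.} The delicate point is the bookkeeping of the twist $\{1\}$ and the shift: one must track carefully how $\widehat{\bb L}_{R_\infty/\roi}$ — which is ``too big'' ($p$-divisible) — interacts with the Koszul complex for $\Gamma$, and why the projection of the image of $d\log(T)$ lands in the $\widehat{\bb L}_{\roi/\bb Z_p}$-summand up to coherent homotopy (i.e. why the contribution of $\widehat{\bb L}_{R_\infty/\roi}$ to $H^0_\sub{cont}(\Gamma,\widehat{\bb L}_{R_\infty/\bb Z_p})$ does not interfere). Concretely, I expect to argue that $R\Gamma_\sub{cont}(\Gamma,\widehat{\bb L}_{R_\infty/\roi})$ has $H^0 = (\widehat{\bb L}_{R_\infty/\roi})^{\Gamma}$, and that the $\Gamma$-action on $d\log(T^{1/p^r})$ is trivial (each is already $\Gamma$-invariant as an element of $\Omega^{1,\cont}$, since $d\log$ kills units), so in fact $d\log(T)$ does \emph{not} lift to a $\Gamma$-invariant element of $\widehat{\bb L}_{R_\infty/\bb Z_p}$ — rather it is the failure of such a lift, measured by the connecting homomorphism $(\widehat{\bb L}_{R_\infty/\roi})^\Gamma \to H^1_\sub{cont}(\Gamma,\widehat{\bb L}_{\roi/\bb Z_p}\otimes R_\infty)$, that produces the class. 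Sorting out signs, the $p$-completion, and compatibility of the $r$-indexed system with the Tate-module formulation of $d\log$ is the cumbersome-but-routine remainder, which I would reduce by functoriality to the single cocycle computation above. Finally, one checks the image indeed lies in $(\zeta_p-1)H^1_\sub{cont}(\Gamma,R_\infty)\{1\}$ and generates it over $R$, as required by Proposition~\ref{prop:OmegavsL}, completing that proof.
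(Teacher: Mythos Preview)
Your overall strategy matches the paper's --- reduce to an explicit cocycle computation for the $\Gamma$-action on $\Omega^1_{R_\infty/\bb Z_p}$ --- but the one explicit step you actually perform is wrong. You write $(\gamma-1)(d\log T)=d\log(\zeta_p)$, claiming $\gamma(T)=\zeta_p T$; but under the convention you yourself state (and which the paper uses), $\gamma$ sends $T^{1/p^r}\mapsto \zeta_{p^r}T^{1/p^r}$, so for $r=0$ it \emph{fixes} $T$, and $(\gamma-1)(d\log T)=0$. The class of $d\log(T)$ therefore cannot be produced by applying the Koszul differential to $d\log T$ directly.

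The paper fixes this exactly through the $p$-divisibility you flag in your ``Main obstacle'' paragraph, but uses it as follows. Working modulo $p^n$, one represents $\bb L_{R_\infty/\bb Z_p}\dotimes_{\bb Z}\bb Z/p^n$ by the two-term complex $[\Omega^1_{R_\infty/\bb Z_p}\xrightarrow{p^n}\Omega^1_{R_\infty/\bb Z_p}]$ and forms the bicomplex against the Koszul differential $g-1$. The element $d\log(T)$ in the top-left corner equals $p^n\cdot d\log(T^{1/p^n})$, so in the total complex it is cohomologous to
\[
(g-1)\,d\log(T^{1/p^n})=d\log(\zeta_{p^n}T^{1/p^n})-d\log(T^{1/p^n})=d\log(\zeta_{p^n})
\]
in the bottom-right corner. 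As $n$ varies, the system $(d\log(\zeta_{p^n}))_n$ is precisely $d\log\in T_p(\Omega^1_{\roi/\bb Z_p})=\roi\{1\}$, landing in $H^1_\cont(\Gamma,\roi\{1\})$ as claimed.

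So the connecting-map picture you sketch is correct, but the connecting map is not ``apply $\gamma-1$ to $d\log T$'': it is ``divide $d\log T$ by $p^n$ in $\Omega^1_{R_\infty}$, then apply $\gamma-1$, and assemble the answers into a Tate-module element''. With this single computational fix your argument becomes essentially the paper's.
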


Note that by Proposition~\ref{prop:compcontcohom}, the map
\[
H^1_\sub{cont}(\Gamma,\roi\{1\})\otimes_\roi R = H^1_\sub{cont}(\Gamma,\roi\{1\}\otimes_\roi R)\hookrightarrow H^1_\sub{cont}(\Gamma,\roi\{1\}\otimes_\roi R_\infty)
\]
induces an equality
\[
(\zeta_p-1)H^1_\sub{cont}(\Gamma,\roi\{1\})\otimes_\roi R = (\zeta_p-1) H^1_\sub{cont}(\Gamma,\roi\{1\}\otimes_\roi R_\infty)\ ,
\]
and the element $d\log\in H^1_\sub{cont}(\Gamma,\roi\{1\}) = \roi\{1\}(-1)$ is a generator of $(\zeta_p-1)\roi\{1\}(-1)$; thus, the proposition gives the remaining step of the proof of Proposition~\ref{prop:OmegavsL}.

\begin{proof} Since we work with $p$-complete objects, it is enough to describe what happens modulo $p^n$ for all $n$. In this case, we can compute $R\Gamma_\sub{cont}(\Gamma,\bb L_{R_\infty/\bb Z_p} \dotimes_{\bb Z} \bb Z/p^n)$ by the total complex of
\[
\left\{ \vcenter{\xymatrix{
\Omega^1_{R_\infty/\bb Z_p} \ar[r]^-{g-1} & \Omega^1_{R_\infty/\bb Z_p} \\
\Omega^1_{R_\infty/\bb Z_p} \ar[r]^-{g-1} \ar[u]^-{p^n} & \Omega^1_{R_\infty/\bb Z_p} \ar[u]^-{p^n}
} } \right\},
\]
where top left term is in bidegree $(0,0)$, and $g\in \Gamma$ is a generator, corresponding to a choice of $p$-power roots of unity $\zeta_{p^r}$, $r\geq 1$. Now $d\log(T)$ defines an element of the top left corner of this bicomplex, and we have
\[
d\log(T) = p^n \cdot d\log(T^{1/p^n})\ .
\]
Thus, in $H^0$ of the totalization of the above bicomplex, the element $d\log(T)$ coming from the top left corner is equivalent to
\[\begin{aligned}
(g-1)d\log(T^{1/p^n})  &= gd\log(T^{1/p^n}) - d\log(T^{1/p^n}) = d\log(\zeta_{p^n} T^{1/p^n}) - d\log(T^{1/p^n}) \\
&= d\log(\zeta_{p^n}) + d\log(T^{1/p^n}) - d\log(T^{1/p^n}) = d\log(\zeta_{p^n})\ ,
\end{aligned}\]
viewed as coming from the bottom right corner. The result follows.
\end{proof}

\newpage

\section{The complex $A\Omega_{\frak X}$}

Let $\frak X/\roi$ be a smooth formal scheme with generic fibre $X$ as in the previous section. In this section, we extend the complex $\widetilde{\Omega}_{\frak X}$ from $\roi$ to $A_\inf=W(\roi^\flat)$ along $\tilde\theta: A_\inf\to \roi$, i.e.~we construct a complex $A\Omega_{\frak X}\in D(\frak X_\sub{Zar})$ of $A_\inf$-modules such that
\[
A\Omega_{\frak X}\dotimes_{A_\inf,\tilde{\theta}} \roi \cong \widetilde{\Omega}_{\frak X}\ .
\]

\subsection{Statement of results}

The definition is very analogous to the definition of $\widetilde{\Omega}_{\frak X}$. Fix a system of primitive $p$-power roots of unity $\zeta_{p^r}$, $r\geq 1$, which give rise to an element $\epsilon=(1,\zeta_p,\ldots)\in \roi^\flat$, and let $\mu=[\epsilon]-1$. Note that the ideal $(\mu)$ is independent of the choice of roots of unity by Lemma~\ref{lem:mapAinfWitt}.

\begin{definition} The complex $A\Omega_{\frak X}\in D(\frak X_\sub{Zar})$ is given by
\[
A\Omega_{\frak X} = L\eta_\mu (R\nu_\ast \bb A_{\inf,X})\ .
\]
\end{definition}

Note that $A\Omega_{\frak X}$ admits a structure of commutative ring in $D(\frak X_\sub{Zar})$ by Proposition~\ref{prop:Letalaxsymmmon}, and is an algebra over (the constant sheaf) $A_\inf$.

\begin{theorem}\label{thm:AOmegavsdRW} The complex $A\Omega_{\frak X}$ has the following properties.
\begin{enumerate}
\item The natural map
\[
A\Omega_{\frak X}\dotimes_{A_\inf,\tilde\theta} \roi\to L\eta_{\zeta_p-1} (R\nu_\ast \hat\roi^+_X) = \widetilde{\Omega}_{\frak X}
\]
is a quasi-isomorphism.
\item More generally, for any $r\geq 1$, the natural map
\[
A\Omega_{\frak X}\dotimes_{A_\inf,\tilde\theta_r} W_r(\roi)\to L\eta_{[\zeta_{p^r}]-1} (R\nu_\ast W_r(\hat\roi^+_X)) =: \widetilde{W_r\Omega}_{\frak X}
\]
is a quasi-isomorphism.
\item For any $r\geq 1$ and $i\in \bb Z$, there is a natural isomorphism
\[
H^i(\widetilde{W_r\Omega}_{\frak X})\cong W_r\Omega^{i,\cont}_{\frak X/\roi}\{-i\}
\]
of sheaves on $\frak X_\sub{Zar}$, where $W_r\Omega^{i,\cont}_{\frak X/\roi} = \varprojlim W_r\Omega^i_{(\frak X/p^n)/(\roi/p^n)}$ is a continuous version of the de~Rham--Witt sheaf of Langer--Zink, \cite{LangerZink}, and $\{-i\}$ denotes a Breuil--Kisin--Fargues twist as in Example~\ref{ex:bkftwist}.
\end{enumerate}
\end{theorem}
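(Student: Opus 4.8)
The plan is to deduce all three statements from the corresponding facts about $\widetilde{\Omega}_{\frak X}$ proved in the previous section (Theorem~\ref{thm:IntegralCartier}, Theorem~\ref{thm:IntegralCartierLocal}), using the formal properties of $L\eta$ from Section~6. Everything can be checked locally on $\frak X_\sub{Zar}$, so fix a small formally smooth $\roi$-algebra $R$ with framing $\square: \Spf R \to \widehat{\bb G}_m^d$, and let $R_\infty$, $\Gamma = \bb Z_p(1)^d$ be as before. The first task will be to set up the local avatars $A\Omega_R^\square := L\eta_\mu R\Gamma_\cont(\Gamma, \bb A_\inf(R_\infty))$, $A\Omega_R^\sub{prof\'et}$, $A\Omega_R^\sub{pro\'et}$, and to prove the analogue of Corollary~\ref{cor:alltildeOmegasame}: namely that the comparison maps between them, and the map $A\Omega_R^\sub{pro\'et} \to R\Gamma(\frak X, A\Omega_{\frak X})$, are quasi-isomorphisms. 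This is precisely Lemma~\ref{KeyLemma} from the introduction, and its proof will rest on Lemma~\ref{lem:LetaActualIsom}: the map $R\Gamma_\cont(\Gamma, \bb A_\inf(R_\infty)) \to R\Gamma_\sub{pro\'et}(X, \bb A_{\inf,X})$ is an almost quasi-isomorphism (with respect to $W(\frak m^\flat)$, by Faltings almost purity and Theorem~\ref{thm:etalevsAinfcohom}), and one must check that the left-hand side $C$ satisfies the hypotheses of Lemma~\ref{lem:LetaActualIsom} with $f = \mu$, i.e.\ that each $H^i(C)$ and each $H^i(C)/\mu$ has no $[\frak m^\flat]$-almost-zero elements. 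I expect this verification — computing $H^i_\cont(\Gamma, \bb A_\inf(R_\infty))$ explicitly enough to see the no-almost-zero-elements property, presumably by expanding $\bb A_\inf(R_\infty)$ as a completed sum of Teichm\"uller twists over $\bb A_\inf(R)$ and invoking the coherence of $W_r(\roi)$ (Proposition~\ref{prop:WrCoherent}) together with Corollary~\ref{cor:WrFinPresNoAlmostZero} in a limit over $r$ — to be the main technical obstacle of the section; it is the "list of miracles" alluded to in the introduction. Once this is in hand, the commutation of $L\eta$ with filtered colimits and with passing to stalks (Lemma~\ref{lem:LEtaChangeTopos}) gives the last quasi-isomorphism, exactly as in the proof of Corollary~\ref{cor:alltildeOmegasame}(iv).

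Granting the local comparison, part (i) and part (ii) become applications of Proposition~\ref{prop:LetaBock} together with the factorization/composition lemmas for $L\eta$. For part (ii): working locally, $\bb A_\inf(R_\infty) = \varprojlim_F W_r(R_\infty)$ and the map $\tilde\theta_r$ realizes $W_r(\roi) = A_\inf/\tilde\xi_r$ (Lemma~\ref{lemma_ker_theta_r}); since $R\Gamma_\cont(\Gamma, -)$ is computed by a Koszul complex (Lemma~\ref{lem:koszulcohom}) and hence by a complex of $\bb A_\inf(R_\infty)$-modules which are $\mu$- and $\tilde\xi_r$-torsion-free, one has an honest complex representing $R\Gamma_\cont(\Gamma, \bb A_\inf(R_\infty))$ whose reduction modulo $\tilde\xi_r$ computes $R\Gamma_\cont(\Gamma, W_r(R_\infty))$; then one must commute $L\eta_\mu$ past $\dotimes_{A_\inf,\tilde\theta_r} W_r(\roi)$. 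The point is that $\tilde\theta_r(\mu) = [\zeta_{p^r}]-1$ is a non-zero-divisor in $W_r(\roi)$ (Proposition~\ref{proposition_roots_of_unity}(i)), and the Tor-independence statements of Lemma~\ref{lemma_tor_independence_1} together with Lemma~\ref{lem:LEtaChangeTopos} (flat base change for $L\eta$) applied after reducing to a $\mu$-torsion-free Koszul model yield $L\eta_\mu(C) \dotimes_{A_\inf,\tilde\theta_r} W_r(\roi) \simeq L\eta_{[\zeta_{p^r}]-1}(C \dotimes_{A_\inf,\tilde\theta_r} W_r(\roi))$; the right-hand side is $L\eta_{[\zeta_{p^r}]-1} R\Gamma_\cont(\Gamma, W_r(R_\infty))$, which by the sheafified version of the same almost-purity/$L\eta$ argument (using the sheaf isomorphism $\bb A_{\inf,X} \cong \projlim_F W_r(\hat\roi_X^+)$ recorded in Section~5) equals $\widetilde{W_r\Omega}_{\frak X}$. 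Part (i) is the case $r=1$, since $\tilde\theta_1 = \theta$ and $[\zeta_p]-1$ differs from $\zeta_p - 1$ only by... in fact $\theta(\mu) = \zeta_p - 1$ directly, so (i) is literally the $r=1$ instance of (ii).

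Part (iii), the identification of the cohomology sheaves with the continuous Langer--Zink de~Rham--Witt sheaves $W_r\Omega^{i,\cont}_{\frak X/\roi}$, is the deepest of the three and I would present it by the route sketched in the introduction around Lemma~\ref{KeyLemmaWr}. First, working locally, compute $H^i(\widetilde{W_r\Omega}_R) = H^i(L\eta_{[\zeta_{p^r}]-1} R\Gamma_\cont(\Gamma, W_r(R_\infty)))$ by the formula $H^i(\eta_f C) = H^i(C)/H^i(C)[f]$ (Lemma~\ref{lem:CohomLeta}) applied to the explicit Koszul complex $K_{W_r(R_\infty)}(\gamma_1 - 1, \dots, \gamma_d - 1)$; one uses the structure results of Corollary~\ref{corollary_roots_of_unity} (annihilators and images of the operators $\frac{[\zeta_{p^j}]-1}{[\zeta_{p^r}]-1}$, $V^j$, $F^j$) to pin down these cohomology groups combinatorially, obtaining modules with natural $d$, $F$, $V$, and $R$ maps. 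The claim will be that these assemble into an $F$-$V$-procomplex in the sense of Langer--Zink (here $d$ is realized as a Bockstein for the ideal $([\zeta_{p^r}]-1)/([\zeta_{p^r+1}]-1)$, via Proposition~\ref{prop:LetaBock}); then the universal property of the de~Rham--Witt complex furnishes a canonical map $W_r\Omega^{i,\cont}_{R/\roi}\{-i\} \to H^i(\widetilde{W_r\Omega}_R)$, and one checks it is an isomorphism. For $r=1$ this recovers Theorem~\ref{thm:IntegralCartier}; for the inductive step one exploits the compatibility of the $V$, $R$, $F$ maps with the transition in $r$, reducing the bijectivity to the known case $r=1$ plus the Langer--Zink description of $W_r\Omega^i$ as built from $W_{r-1}\Omega^i$ and $\Omega^i$. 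The genuinely hard part here is matching the combinatorial description of the $H^i$ of the Koszul complex with Langer--Zink's (equally combinatorial) presentation of $W_r\Omega^i_{R/\roi}$ for framed $R$; I would defer the full argument to later sections (Sections 10--11 in the outline) and in this section only record the statement and the reduction to the local framed computation. Finally, perfectness of $A\Omega_{\frak X}$ over $A_\inf$, and of $\widetilde{W_r\Omega}_{\frak X}$, follows because the $W_r\Omega^{i,\cont}_{\frak X/\roi}$ are coherent (indeed locally free-ish) $\roi_{\frak X}$-modules and the complex is bounded, together with Lemma~\ref{lem:AinfModulePerfect} to descend perfectness from the quotients $W_r(\roi)$ down to $A_\inf$.
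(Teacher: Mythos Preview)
Your overall architecture is inverted relative to the paper, and this inversion hides a genuine difficulty. You propose to establish the $A_\inf$-level comparison
\[
A\Omega_R^\square\to A\Omega_R^\sub{prof\'et}\to A\Omega_R^\sub{pro\'et}
\]
first, by applying Lemma~\ref{lem:LetaActualIsom} with $f=\mu$ and $I=W(\frak m^\flat)$ (or $[\frak m^\flat]$), and then to deduce the $W_r$-level statements (ii) by base change. The paper proceeds in the opposite order, and for good reason: the hypothesis of Lemma~\ref{lem:LetaActualIsom} requires that $H^i(C)$ and $H^i(C)/\mu$ have no elements killed by $I$, and at the $A_\inf$ level there is no coherence result analogous to Proposition~\ref{prop:WrCoherent} that would let you check this. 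Your suggestion to pass through ``coherence of $W_r(\roi)$ in a limit over $r$'' is exactly the step that does not go through formally. Instead, the paper first proves the finite-level comparison $\widetilde{W_r\Omega}_R^\square\simeq\widetilde{W_r\Omega}_R^\sub{pro\'et}$ using Lemma~\ref{lem:LetaActualIsom} over $W_r(\roi)$ (where the almost ideal $W_r(\frak m)$ is well-behaved, cf.~Corollary~\ref{corollary_almost_Witt}, and the no-almost-zero check is Lemma~\ref{lem:propWrOmega}(iii)), and only then bootstraps to the $A_\inf$ level via a \emph{different} lemma, Lemma~\ref{lem:LEtaQuasiIsomAinf}. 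That lemma replaces the ``no almost zero elements'' hypothesis by an intersection condition (verified in Lemma~\ref{lem:niceintersection}) together with an injectivity condition, and the injectivity is supplied precisely by the already-established $\widetilde{W_r\Omega}$ comparison via the inverse limit diagram in the proof of Proposition~\ref{prop:AOmegaallthesame}.

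There is a second gap in your treatment of (i)/(ii). You invoke Lemma~\ref{lem:LEtaChangeTopos} (``flat base change for $L\eta$'') to commute $L\eta_\mu$ past $\dotimes_{A_\inf,\tilde\theta_r} W_r(\roi)$, but $\tilde\theta_r$ is not flat; it is the quotient by $\tilde\xi_r$. Tor-independence of the sort in Lemma~\ref{lemma_tor_independence_1} does not give you commutation of $L\eta$ with non-flat base change. The paper's actual argument (Lemma~\ref{lem:qdRvsAOmega}) is an explicit computation: one identifies both $A\Omega_R^\square$ and $\widetilde{W_r\Omega}_R^\square$ with (reductions of) the $q$-de~Rham complex $q\text{-}\Omega^\bullet_{A(R)^\square/A_\inf}$ via the integral/non-integral decomposition of $\bb A_\inf(R_\infty)$, and then uses that this explicit model has $\tilde\xi_r$-torsion-free terms to see that its underived reduction computes the derived one. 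Your plan for part (iii), and the observation that (i) is the case $r=1$ of (ii), are in line with the paper (though note $\tilde\theta_1=\tilde\theta$, not $\theta$).
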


Note that part (iii) extends the corresponding result for $\widetilde{\Omega}$ proved in the last section. As in the previous section, it will be important to formulate a stronger local statement.

\begin{definition} Let $R$ be a small formally smooth $\roi$-algebra, and let
\[
\square: \Spf R\to \widehat{\bb G}_m^d = \Spf \roi\langle T_1^{\pm 1},\ldots,T_d^{\pm 1}\rangle \ ,
\]
be a framing, giving rise to
\[
R_\infty = R\hat{\otimes}_{\roi\langle T_1^{\pm 1},\ldots,T_d^{\pm 1}\rangle} \roi\langle T_1^{\pm 1/p^\infty},\ldots,T_d^{\pm 1/p^\infty}\rangle\ ,
\]
on which the Galois group $\Gamma = \bb Z_p(1)^d$ acts. Define the following complexes:
\[\begin{aligned}
\widetilde{W_r\Omega}_R^\square &= L\eta_{[\zeta_{p^r}]-1} R\Gamma_{\mathrm{cont}}(\Gamma,W_r(R_\infty)) \\
\widetilde{W_r\Omega}_R^\sub{prof\'et} &= L\eta_{[\zeta_{p^r}]-1} R\Gamma(X_\sub{prof\'et},W_r(\hat\roi^+_X)) \\
\widetilde{W_r\Omega}_R^\sub{pro\'et} &= L\eta_{[\zeta_{p^r}]-1} R\Gamma(X_\sub{pro\'et},W_r(\hat\roi^+_X))\ ,
\end{aligned}\]
as well as
\[\begin{aligned}
A\Omega_R^\square &= L\eta_\mu R\Gamma_{\mathrm{cont}}(\Gamma,\bb A_\inf(R_\infty)) \\
A\Omega_R^\sub{prof\'et} &= L\eta_\mu R\Gamma(X_\sub{prof\'et},\bb A_{\inf,X}) \\
A\Omega_R^\sub{pro\'et} &= L\eta_\mu R\Gamma(X_\sub{pro\'et},\bb A_{\inf,X})\ .
\end{aligned}\]
\end{definition}

We will prove the following result, which implies Theorem~\ref{thm:AOmegavsdRW}.

\begin{theorem}\label{thm:AOmegavsdRWLocal} Let $R$ be a small formally smooth $\roi$-algebra with a framing $\square$, and let $\frak X=\Spf R$ with generic fibre $X$.
\begin{enumerate}
\item The natural maps
\[
A\Omega_R^\square\dotimes_{A_\inf,\tilde\theta_r} W_r(\roi)\to \widetilde{W_r\Omega}_R^\square
\]
are quasi-isomorphisms.
\item The natural maps
\[
\widetilde{W_r\Omega}_R^\square\to \widetilde{W_r\Omega}_R^\sub{prof\'et}\to \widetilde{W_r\Omega}_R^\sub{pro\'et}\to R\Gamma(\frak X,\widetilde{W_r\Omega}_{\frak X})
\]
are quasi-isomorphisms; we denote the common value by $\widetilde{W_r\Omega}_R$.
\item The natural maps
\[
A\Omega_R^\square\to A\Omega_R^\sub{prof\'et}\to A\Omega_R^\sub{pro\'et}\to R\Gamma(\frak X,A\Omega_{\frak X})
\]
are quasi-isomorphisms; we denote the common value by $A\Omega_R$.
\item For any $r\geq 1$ and $i\in \bb Z$, there is a natural isomorphism
\[
H^i(\widetilde{W_r\Omega}_R)\cong W_r\Omega_{R/\roi}^{i,\cont}\{-i\}\ ,
\]
where $W_r\Omega^{i,\cont}_{R/\roi} = \varprojlim W_r\Omega^i_{(R/p^n)/(\roi/p^n)}$.
\end{enumerate}
\end{theorem}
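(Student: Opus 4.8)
\textbf{Proof proposal for Theorem~\ref{thm:AOmegavsdRWLocal}.}

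The plan is to prove the four statements in a coordinated way, mirroring the structure of the already-completed proof of Theorem~\ref{thm:IntegralCartierLocal} but now working over $A_\inf$ (respectively $W_r(\roi)$) rather than $\roi$. The core input for (i) is that the kernel of $\tilde\theta_r\colon A_\inf\to W_r(\roi)$ is generated by the non-zero-divisor $\tilde\xi_r=\phi^r(\mu)/\mu$ (Proposition~\ref{proposition_roots_of_unity}(iii), Lemma~\ref{lemma_ker_theta_r}), together with the Tor-independence assertions of Lemma~\ref{lemma_tor_independence_1} which give $\bb A_\inf(R_\infty)\dotimes_{A_\inf,\tilde\theta_r}W_r(\roi)\simeq W_r(R_\infty)$. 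I would first note that $R\Gamma_{\mathrm{cont}}(\Gamma,\bb A_\inf(R_\infty))$ and $R\Gamma_{\mathrm{cont}}(\Gamma,W_r(R_\infty))$ are computed by the Koszul complexes $K_{\bb A_\inf(R_\infty)}(\gamma_1-1,\dots,\gamma_d-1)$ and $K_{W_r(R_\infty)}(\gamma_1-1,\dots,\gamma_d-1)$ by Lemma~\ref{lem:koszulcohom}(ii), so the derived base change along $\tilde\theta_r$ commutes with formation of the Koszul complex; to commute it with $L\eta$ I would invoke the formal properties of $L\eta$ under base change (Lemma~\ref{lem:compositionLeta} identifying $L\eta_\mu$ with $L\eta_{\tilde\theta_r(\mu)}$ after base change, using $\tilde\xi_r\cdot\phi^{-r}(\mu) = \mu$ up to units, together with Proposition~\ref{prop:LetaBock}-type reductions). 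The key point is that $\bb A_\inf(R_\infty)$ has no $\mu$-torsion and no $\tilde\xi_r$-torsion, so one can represent everything by honest torsion-free complexes and the identity $\eta_\mu(-)\dotimes_{A_\inf,\tilde\theta_r}W_r(\roi)\simeq\eta_{[\zeta_{p^r}]-1}\big((-)\dotimes W_r(\roi)\big)$ can be checked termwise; this is where I expect the bulk of the (routine but delicate) bookkeeping to lie, controlling the interaction of $\eta$ with the two-term transition $\bb A_\inf(R_\infty)\to W_r(R_\infty)$.

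For (ii) I would run exactly the argument used for Corollary~\ref{cor:alltildeOmegasame}(i): Faltings's almost purity theorem (via \cite[Lemma 4.10(v)]{ScholzePAdicHodge}, applied now to $W_r(\hat\roi_X^+)$ rather than $\hat\roi_X^+$) gives that $R\Gamma_{\mathrm{cont}}(\Gamma,W_r(R_\infty))\to R\Gamma(X_\sub{prof\'et},W_r(\hat\roi_X^+))\to R\Gamma(X_\sub{pro\'et},W_r(\hat\roi_X^+))$ are almost quasi-isomorphisms with respect to the ideal $W_r(\frak m)\subset W_r(\roi)$ (as in Corollary~\ref{corollary_almost_Witt}), and then Lemma~\ref{lem:LetaActualIsom}(ii) upgrades these to honest quasi-isomorphisms after $L\eta_{[\zeta_{p^r}]-1}$ --- provided one knows that $H^i$ of the left-hand complex, and its reduction mod $[\zeta_{p^r}]-1$, have no elements killed by $W_r(\frak m)$. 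This last fact is the analogue of Proposition~\ref{prop:compcontcohom}: one writes $W_r(R_\infty)$ as a $\Gamma$-equivariantly split sum indexed by $(\bb Z[\tfrac1p]\cap[0,1))^d$, reduces to computing $H^i_{\mathrm{cont}}(\Gamma,W_r(\roi)\cdot T^{a})$ for each nonzero multi-exponent $a$, observes that these are cohomology groups of perfect complexes of $W_r(\roi)$-modules hence finitely presented (Proposition~\ref{prop:WrCoherent}), and concludes by Corollary~\ref{cor:WrFinPresNoAlmostZero}. The final map to $R\Gamma(\frak X,\widetilde{W_r\Omega}_{\frak X})$ is handled exactly as in Corollary~\ref{cor:alltildeOmegasame}(iv), using that $L\eta$ commutes with stalks (Lemma~\ref{lem:LEtaChangeTopos}) and filtered colimits; statement (iii) is then the completely parallel statement for $\bb A_{\inf,X}$, using Theorem~\ref{thm:etalevsAinfcohom} and the almost-purity comparison for $\bb A_{\inf,X}$, with the analogous no-almost-zero-elements input over $A_\inf$ following by passing to the limit over $r$ (or directly from coherence of $W_r(\roi)$ combined with the description $A_\inf=\varprojlim_F W_r(\roi)$).

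Finally, for (iv) --- the identification of the cohomology sheaves with the de~Rham--Witt forms --- the plan is \emph{not} to compute these by hand from the Koszul description, but to use the universal property of the Langer--Zink de~Rham--Witt complex. Using Proposition~\ref{prop:LetaBock} one identifies $\widetilde{W_r\Omega}_R\dotimes_{W_r(\roi)}$ (mod $[\zeta_{p^r}]-1$, or more precisely via the Bockstein) so that the cohomology groups $H^i(\widetilde{W_r\Omega}_R)$ come equipped with $d$, $F$, $V$, $R$ and a multiplicative structure, making them (a continuous, $p$-completed version of) a ``pro-$F$-$V$-complex''; the universal property then supplies a canonical map $W_r\Omega^{\bullet,\cont}_{R/\roi}\{-\bullet\}\to H^\bullet(\widetilde{W_r\Omega}_R)$. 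One checks this is an isomorphism after comparing with Langer--Zink's explicit presentation of the de~Rham--Witt groups in the torus case and then propagating by formal \'etaleness (part (iii)-type base change) and the K\"unneth formula, exactly as the genuinely hard combinatorial core of Sections~10--11. I expect \textbf{this identification with de~Rham--Witt groups} to be the main obstacle: it requires setting up the pro-$F$-$V$-complex structure carefully (the differential being a Bockstein operator, not an obvious map of complexes) and matching two combinatorially involved descriptions, which is why the paper defers the full argument to Sections~10--11; everything else reduces cleanly to the $L\eta$-formalism of Section~6 and the almost-purity/coherence results of Sections~3 and~5.
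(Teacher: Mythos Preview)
Your plan for (ii) and (iv) is essentially what the paper does, and your recognition that (iv) is the real combinatorial work (deferred to Sections~10--11) is accurate. However, there are genuine gaps in your treatment of (i) and especially (iii).

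For (i), the identity $\eta_\mu(-)\dotimes_{A_\inf,\tilde\theta_r}W_r(\roi)\simeq\eta_{[\zeta_{p^r}]-1}\big((-)\dotimes W_r(\roi)\big)$ cannot be ``checked termwise'': the functor $\eta$ depends on the differential, not just on the terms, and $L\eta$ does \emph{not} commute with non-flat base change in general. Your invocation of Lemma~\ref{lem:compositionLeta} is also confused: note that $\mu=\xi_r\cdot\phi^{-r}(\mu)$, not $\tilde\xi_r\cdot\phi^{-r}(\mu)$, so $\mu$ is not divisible by $\tilde\xi_r$. The paper instead proves (i) via Lemma~\ref{lem:qdRvsAOmega}: one explicitly identifies both $A\Omega_R^\square$ and $\widetilde{W_r\Omega}_R^\square$ with the $q$-de~Rham complex $q\text{-}\Omega^\bullet_{A(R)^\square/A_\inf}$ (resp.\ its reduction mod $\tilde\xi_r$) by decomposing $\bb A_\inf(R_\infty)$ into an integral part $A(R)^\square$ and a nonintegral part on which multiplication by $\phi^{-1}(\mu)$ is nullhomotopic. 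Since the $q$-de~Rham complex has $\tilde\xi_r$-torsion-free terms, its derived and underived reductions mod $\tilde\xi_r$ coincide, giving (i).

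For (iii), the claim that it is ``completely parallel'' to (ii) is where your proposal breaks down. The almost setting over $A_\inf$ with respect to $W(\frak m^\flat)$ is genuinely worse: in general $W(\frak m^\flat)^2\neq W(\frak m^\flat)$, we do not know whether $A_\inf$ is coherent, and one cannot simply verify a ``no almost zero elements'' condition by passing to the limit. The paper instead proves a bespoke criterion, Lemma~\ref{lem:LEtaQuasiIsomAinf}, with three hypotheses: (a) almost quasi-isomorphism mod $p$; (b) \emph{injectivity} of $H^i(L\eta_\mu f)$ for all $i$; (c) the intersection condition $\bigcap_{m\in W(\frak m^\flat),\, m\mid\mu}\tfrac{\mu}{m}H^i(C)=\mu H^i(C)$. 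Condition (c) requires a separate computation (Lemma~\ref{lem:niceintersection}), and the injectivity (b) is established by a diagram chase using the already-proven parts (i) and (ii): one passes to the limit over $r$ of the comparison maps $L\eta_\mu C\to\widetilde{W_r\Omega}_R^\square\simeq\widetilde{W_r\Omega}_R^\sub{pro\'et}$ to get a factorization through which injectivity is visible. Finally, the map $A\Omega_R^\sub{pro\'et}\to R\Gamma(\frak X,A\Omega_{\frak X})$ requires a different argument than in the $W_r$ case, since $A\Omega_R$ is not a module over any variant of $R$; the paper handles this by a presheaf-to-sheaf comparison and reduces to the $\widetilde{W_r\Omega}$ case via an inverse limit (``a limit of sheaves is a sheaf'').
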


In this section, we will prove these theorems, except for part (iv) of Theorem~\ref{thm:AOmegavsdRWLocal} (and the corresponding part (iii) of Theorem~\ref{thm:AOmegavsdRW}), which will be proved in the next sections.

\subsection{Proofs}

Let $\roi=\roi_K$ be the ring of integers in a perfectoid field $K$ of characteristic $0$, containing all primitive $p$-power roots of unity $\zeta_{p^r}$, giving rise to the usual elements $\xi,\mu\in A_\inf = W(\roi^\flat)$. Let $R$ be a small formally smooth $\roi$-algebra, with framing
\[
\square: \frak X = \Spf R\to \widehat{\bb G}_m^d = \Spf \roi\langle T_1^{\pm 1},\ldots,T_d^{\pm 1}\rangle\ .
\]
As usual, let
\[
R_\infty = R\hat{\otimes}_{\roi\langle T_1^{\pm 1},\ldots,T_d^{\pm 1}\rangle} \roi\langle T_1^{\pm 1/p^\infty},\ldots,T_d^{\pm 1/p^\infty}\rangle\ ,
\]
on which $\Gamma = \bb Z_p(1)^d$ acts. We get the complexes
\[\begin{aligned}
A\Omega_R^\square &= L\eta_\mu R\Gamma_\sub{cont}(\Gamma,\bb A_\inf(R_\infty))\ ,\\
\widetilde{W_r\Omega}_R^\square &= L\eta_{[\zeta_{p^r}]-1} R\Gamma_\sub{cont}(\Gamma,W_r(R_\infty))\ ;
\end{aligned}\]
note that both of them have canonical representatives as actual differential graded algebras, by computing the continuous group cohomology as the standard Koszul complex (which gives a $\mu$-torsion-free, resp.~$[\zeta_{p^r}]-1$-torsion-free, resolution on which one can apply $\eta_\mu$, resp.~$\eta_{[\zeta_{p^r}]-1}$).

It turns out that $A\Omega_R^\square$ can be described (up to quasi-isomorphism) as a $q$-de~Rham complex, at least after fixing the system of $p$-power roots $\zeta_{p^r}$. Let us first define the relevant version of the $q$-de~Rham complex. Consider the surjection
\[
A_\inf\langle \ul U^{\pm 1}\rangle\to \roi\langle \ul T^{\pm 1}\rangle\ ,\ U_i\mapsto T_i\ ,
\]
which is given by $\theta: A_\inf\to \roi$ on $A_\inf$. As $\square$ is (formally) \'etale, one can lift $R$ uniquely to a $(p,\mu)$-adically complete $A_\inf$-algebra $A(R)^\square$ which is formally \'etale over $A_\inf\langle \ul U^{\pm 1}\rangle$. Moreover, there is an action of $\Gamma = \bb Z_p(1)^d$ on $A_\inf\langle \ul U^{\pm 1}\rangle$: if we fix the $p$-power roots of unity and let $\gamma_i\in \Gamma$ be the corresponding $i$-th basis vector, then it acts by sending $U_i$ to $[\epsilon] U_i$, and $U_j$ to $U_j$ for $j\neq i$. This action respects the quotient $\roi\langle \ul T^{\pm 1}\rangle$ and is trivial there. As $A_\inf\langle \ul U^{\pm 1}\rangle\to A(R)^\square$ is \'etale, this action lifts uniquely to an action of $\Gamma$ on $A(R)^\square$ which is trivial on the quotient $A(R)^\square\to R$. Actually, as the $\Gamma$-action becomes trivial on $A_\inf\langle \ul U^{\pm 1}\rangle/([\epsilon]-1)$, the $\Gamma$-action on $A(R)^\square$ is also trivial on $A(R)^\square/([\epsilon]-1)$.

In particular, for any $i=1,\ldots,d$, we can look at the operation
\[
\frac{\partial_q}{\partial_q \log(U_i)} = \frac{\gamma_i - 1}{[\epsilon]-1}: A(R)^\square\to A(R)^\square\ .
\]
If $R=\roi\langle \ul T^{\pm 1}\rangle$, then $A(R)^\square = A_\inf\langle \ul U^{\pm 1}\rangle$, and
\[
\frac{\partial_q}{\partial_q \log(U_i)}\left(\prod_j U_j^{n_j}\right) = [n_i]_q \prod_j U_j^{n_j}\ ,
\]
where as usual $q=[\epsilon]$. Using this, one verifies that the following definition gives in this case simply the $(p,\mu)$-adic completion of the $q$-de Rham complex $q\op-\Omega^\bullet_{A_\inf[\ul U^{\pm 1}]/A_\inf}$ from Example~\ref{ex:qdR}.

\begin{definition} The $q$-de~Rham complex of the framed small formally smooth $\roi$-algebra $R$ is given by
\[\begin{aligned}
q\op-&\Omega^\bullet_{A(R)^\square/A_\inf} = K_{A(R)^\square}\left(\frac{\partial_q}{\partial_q \log(U_1)},\ldots,\frac{\partial_q}{\partial_q \log(U_d)}\right)\\
&= A(R)^\square\xTo{(\frac{\partial_q}{\partial_q \log(U_i)})_i} (A(R)^\square)^d\to (A(R)^\square)^{\binom d2}\to\ldots \to (A(R)^\square)^{\binom dd}\ .
\end{aligned}\]
\end{definition}

To connect this to $A\Omega_R^\square$, we first observe that there is a canonical isomorphism
\[
A(R)^\square\hat{\otimes}_{A_\inf\langle \ul U^{\pm 1}\rangle} A_\inf\langle \ul U^{\pm 1/p^\infty}\rangle\isoto \bb A_\inf(R_\infty)\ ,\ U_i^{1/p^r}\mapsto [(T_i^{1/p^r},T_i^{1/p^{r+1}},\ldots)]\ ,
\]
equivariant for the $\Gamma$-action. Indeed, this is evident modulo $\xi$, and then follows by rigidity. Reducing along $\tilde\theta_r: A_\inf\to W_r(\roi)$, we get a quasi-isomorphism
\[
A(R)^\square/\tilde\xi_r\hat{\otimes}_{W_r(\roi)\langle \ul U^{\pm 1}\rangle} W_r(\roi)\langle \ul U^{\pm 1/p^\infty}\rangle\isoto W_r(R_\infty)\ ,\ U_i^{1/p^s}\mapsto [T_i^{1/p^{r+s}}]\ ,
\]
cf.~Lemma~\ref{lemma_theta} for the identification of the map. The following lemma proves part (i) of Theorem~\ref{thm:AOmegavsdRWLocal}.

\begin{lemma}\label{lem:qdRvsAOmega} There are injective quasi-isomorphisms
\[
q\op-\Omega^\bullet_{A(R)^\square/A_\inf}=\eta_{q-1} K_{A(R)^\square}(\gamma_1-1,\ldots,\gamma_d-1)\to A\Omega_R^\square = \eta_{q-1} K_{\bb A_\inf(R_\infty)}(\gamma_1-1,\ldots,\gamma_d-1)
\]
and
\[
q\op-\Omega^\bullet_{A(R)^\square/A_\inf}/\tilde\xi_r\to \widetilde{W_r\Omega}_R^\square\ .
\]
Moreover, the left side represents the derived reduction modulo $\tilde\xi_r$, and so the natural map
\[
A\Omega_R^\square\dotimes_{A_\inf,\tilde\theta_r} W_r(\roi)\to \widetilde{W_r\Omega}_R^\square
\]
is a quasi-isomorphism.
\end{lemma}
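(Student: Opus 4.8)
The plan is to obtain the final assertion as a formal consequence of the two injective quasi-isomorphisms stated in the lemma, together with a torsion-freeness remark, and then to indicate how I would establish those two maps.

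First, the formal part. Every term of $q\op-\Omega^\bullet_{A(R)^\square/A_\inf}$ is a finite direct sum of copies of $A(R)^\square$, which is formally \'etale---hence flat---over $A_\inf\langle \ul U^{\pm 1}\rangle$, and so over $A_\inf$; in particular it is $\tilde\xi_r$-torsion-free. Consequently the naive quotient $q\op-\Omega^\bullet_{A(R)^\square/A_\inf}/\tilde\xi_r$ already computes $q\op-\Omega^\bullet_{A(R)^\square/A_\inf}\dotimes_{A_\inf} A_\inf/\tilde\xi_r$, which under the identification $\tilde\theta_r\colon A_\inf/\tilde\xi_r\isoto W_r(\roi)$ (Lemma~\ref{lemma_ker_theta_r}) is exactly $q\op-\Omega^\bullet_{A(R)^\square/A_\inf}\dotimes_{A_\inf,\tilde\theta_r} W_r(\roi)$. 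Since derived base change preserves quasi-isomorphisms, the first injective quasi-isomorphism gives $A\Omega_R^\square\dotimes_{A_\inf,\tilde\theta_r} W_r(\roi)\simeq q\op-\Omega^\bullet_{A(R)^\square/A_\inf}/\tilde\xi_r$, and the second identifies this with $\widetilde{W_r\Omega}_R^\square$; a diagram chase shows the resulting composite is the natural comparison map. So everything reduces to the two injective quasi-isomorphisms.

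For the first, I would use the $\Gamma$-equivariant identification $\bb A_\inf(R_\infty)\cong A(R)^\square\,\widehat{\otimes}_{A_\inf\langle \ul U^{\pm 1}\rangle}\,A_\inf\langle \ul U^{\pm 1/p^\infty}\rangle$ obtained above (by rigidity from its reduction modulo $\xi$). This presents $\bb A_\inf(R_\infty)$ as the $p$-adic completion of a $\Gamma$-stable direct sum $\bigoplus_{\ul a} A(R)^\square U^{\ul a}$, indexed by weights $\ul a\in(\bb Z[\tfrac 1p]\cap[0,1))^d$, on which $\gamma_i$ acts through $[\epsilon]^{a_i}$ times its action on $A(R)^\square$; the weight-$0$ summand is $A(R)^\square$, and the weight-$0$ projection is $\Gamma$-equivariant, so $A(R)^\square\hookrightarrow\bb A_\inf(R_\infty)$ is $\Gamma$-equivariantly split. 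Since $\eta_\mu$ commutes with direct sums at the level of complexes, the map $\eta_\mu K_{A(R)^\square}(\gamma_1-1,\ldots,\gamma_d-1)\to\eta_\mu K_{\bb A_\inf(R_\infty)}(\gamma_1-1,\ldots,\gamma_d-1)$ is the inclusion of a direct summand, so that it is an injective quasi-isomorphism once the complementary summand (the $p$-completed sum over $\ul a\neq 0$) is shown acyclic; here the source is $q\op-\Omega^\bullet_{A(R)^\square/A_\inf}$ because $\gamma_i-1=\mu\cdot\tfrac{\partial_q}{\partial_q\log U_i}$, so the operator version of Lemma~\ref{lemma_on_Koszul_1} strips the factor $\mu$ off each Koszul differential. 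Acyclicity of the $\ul a\neq0$ part is the $A_\inf$-analogue of the estimate in Proposition~\ref{prop:compcontcohom}: following the pattern of Section~8, I would reduce (by a base-change argument along the \'etale map from the torus $\roi\langle \ul T^{\pm1}\rangle$, as in Corollary~\ref{cor:alltildeOmegasame}(iii)) to the torus case, where $\bb A_\inf(R_\infty)$ splits over monomials $A_\inf U^{\ul m}$ on each of which some $\gamma_j-1$ acts as multiplication by the scalar $[\epsilon^{m_j}]-1$. The valuation computation $v_{\roi^\flat}(\epsilon^{m_j}-1)<v_{\roi^\flat}(\epsilon-1)$ for $m_j\in\bb Z[\tfrac1p]\setminus\bb Z$ shows $([\epsilon^{m_j}]-1)\mid\mu$, so by Lemma~\ref{lemma_on_Koszul_1} each such rank-one Koszul complex, hence each $\ul a\neq 0$ summand, is acyclic after $\eta_\mu$; a uniformity estimate as in Lemma~\ref{lem:cohomcompleteddirectsum} propagates this to the $p$-completed sum (and Lemma~\ref{lem:LetaActualIsom} and the coherence of $W_r(\roi)$, Proposition~\ref{prop:WrCoherent}, are available if one prefers to package the bound abstractly).

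The second injective quasi-isomorphism I would handle in exactly the same way after reducing modulo $\tilde\xi_r$: there $\bb A_\inf(R_\infty)/\tilde\xi_r\cong W_r(R_\infty)$, $A(R)^\square/\tilde\xi_r$ is the unique formally \'etale lift over $W_r(\roi)\langle\ul U^{\pm 1}\rangle$, and $\tilde\theta_r(\mu)=[\zeta_{p^r}]-1$ (Lemma~\ref{lemma_theta}); since $\phi^r(\mu)=\mu\tilde\xi_r$ is a product of non-zero-divisors and $q\op-\Omega^\bullet_{A(R)^\square/A_\inf}$ has $\tilde\xi_r$-torsion-free terms, $\eta_\mu$ commutes with reduction modulo $\tilde\xi_r$, identifying $q\op-\Omega^\bullet_{A(R)^\square/A_\inf}/\tilde\xi_r$ with $\eta_{[\zeta_{p^r}]-1}K_{A(R)^\square/\tilde\xi_r}(\gamma_1-1,\ldots,\gamma_d-1)$, and the weight-decomposition argument above then carries over at level $r$ (the divisibility becoming $([\zeta_{p^s}]-1)\mid([\zeta_{p^r}]-1)$ for $s\le r$). \textbf{The main obstacle} is the acyclicity of the ``junk'' weight pieces $A(R)^\square U^{\ul a}$, $\ul a\neq 0$, after applying $\eta$: both reducing cleanly to the torus at the $A_\inf$-level (where $A(R)^\square$ is not a priori monomially graded, so the operator $\gamma_j-1$ is only $([\epsilon^{a_j}]-1)$ times a not-obviously-invertible operator) and controlling the $p$-completion of the infinite family of such pieces are the delicate points, whereas everything else is bookkeeping with Koszul complexes, the formal properties of $L\eta$ (Lemmas~\ref{lem:CohomLeta}, \ref{lem:LetaActualIsom}, \ref{lemma_on_Koszul_1}, \ref{lem:Letacommutecompletion}), and the coherence of $W_r(\roi)$.
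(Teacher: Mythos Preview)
Your overall architecture matches the paper's: split $\bb A_\inf(R_\infty)=A(R)^\square\oplus\bb A_\inf(R_\infty)^\sub{nonint}$, identify $\eta_\mu$ of the Koszul complex on the integral summand with the $q$-de~Rham complex via Lemma~\ref{lemma_on_Koszul_1}, show the nonintegral summand dies under $\eta_\mu$, and deduce the last assertion from $\tilde\xi_r$-torsion-freeness of $A(R)^\square$. All of that is correct and is what the paper does.

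The gap is exactly where you mark it. On a nonintegral weight piece $A(R)^\square\cdot U^{\ul a}$, the automorphism $\gamma_i$ acts as $\gamma_i^{A(R)^\square}\cdot[\epsilon]^{a_i}$ with $\gamma_i^{A(R)^\square}$ nontrivial, so $\gamma_i-1$ is not multiplication by the scalar $[\epsilon]^{a_i}-1$, and your reduction to the torus does not go through: the \'etale map $A_\inf\langle\ul U^{\pm1}\rangle\to A(R)^\square$ is not $\Gamma$-equivariant for the trivial action on the target, unlike Section~8 where $\Gamma$ acts trivially on $R$. The tools you invoke (Lemma~\ref{lem:LetaActualIsom}, coherence of $W_r(\roi)$) turn almost quasi-isomorphisms into honest ones but do not themselves supply the $\mu$-torsion bound on nonintegral cohomology that you need.

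The paper resolves this not by descent to the torus but by showing directly that multiplication by $\phi^{-1}(\mu)$ is \emph{homotopic to zero} on the nonintegral Koszul complex. Decomposing by the first nonintegral exponent and taking cones, this reduces to the two-term complex $A(R)^\square\xto{\gamma_i[\epsilon]^{a_i}-1}A(R)^\square$; since $\gamma_i[\epsilon]^{a_i}-1$ divides $\gamma_i^{p^{r-1}}[\epsilon]^{1/p}-1$ (where $p^r$ is the exact denominator of $a_i$), one needs $h:A(R)^\square\to A(R)^\square$ with $(\gamma_i^{p^{r-1}}[\epsilon]^{1/p}-1)\circ h=\phi^{-1}(\mu)$. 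The crucial input is $\gamma_i\equiv 1\bmod\mu$ on $A(R)^\square$, so $\gamma_i^{p^{r-1}}=1+\mu\delta$; the equation rearranges to the contraction $h(a)=a-\xi\,\delta(h(a))[\epsilon]^{1/p}$, which has a unique solution by $\xi$-adic iteration in the $(p,\xi)$-adically complete ring $A(R)^\square$. This is the missing idea in your sketch.
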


\begin{proof} We will prove only the first identification of $A\Omega_R^\square$ as a $q$-de~Rham complex; the identification of $\widetilde{W_r\Omega}_R^\square$ works exactly in the same way. For the final statement, note that the $q$-de~Rham complex has $\tilde\xi_r$-torsion-free terms.

We start from the identification
\[
A(R)^\square\hat{\otimes}_{A_\inf\langle \ul U^{\pm 1}\rangle} A_\inf\langle \ul U^{\pm 1/p^\infty}\rangle\isoto \bb A_\inf(R_\infty)\ .
\]
Using this, we get a $\Gamma$-equivariant decomposition
\[
\bb A_\inf(R_\infty) = A(R)^\square\oplus \bb A_\inf(R_\infty)^\sub{nonint}\ ,
\]
where $A(R)^\square$ is the ``integral'' part, and the second summand is the nonintegral part, given by the completed tensor product of $A(R)^\square$ with the $(p,\mu)$-adically complete $A_\inf\langle \ul U^{\pm 1}\rangle$-submodule of $A_\inf\langle \ul U^{\pm 1/p^\infty}\rangle$ generated by non-integral monomials. First, we observe that all cohomology groups
\[
H^i_\sub{cont}(\Gamma,\bb A_\inf(R_\infty)^\sub{nonint})
\]
are killed by $\phi^{-1}(\mu)=[\epsilon]^{1/p}-1$ (and thus by $\mu$), so that in particular
\[
L\eta_\mu R\Gamma_\sub{cont}(\Gamma,\bb A_\inf(R_\infty)^\sub{nonint})
\]
is $0$. In fact, we will check that multiplication by $\phi^{-1}(\mu)$ on $R\Gamma_\sub{cont}(\Gamma,\bb A_\inf(R_\infty)^\sub{nonint})$ is homotopic to $0$. By taking a decomposition according to the first non-integral exponent, we have a decomposition
\[
\bb A_\inf(R_\infty)^\sub{nonint} = \bigoplus_{i=1}^d \bb A_\inf(R_\infty)^\sub{nonint,i}\ .
\]
Now, to prove that multiplication by $\phi^{-1}(\mu)$ on
\[
R\Gamma_\sub{cont}(\Gamma,\bb A_\inf(R_\infty)^\sub{nonint,i}) = K_{\bb A_\inf(R_\infty)^\sub{nonint,i}}(\gamma_1-1,\ldots,\gamma_d-1)
\]
is homotopic to $0$, it suffices to show that multiplication by $\phi^{-1}(\mu)$ on
\[
\bb A_\inf(R_\infty)^\sub{nonint,i}\xTo{\gamma_i-1} \bb A_\inf(R_\infty)^\sub{nonint,i}
\]
is homotopic to $0$. Indeed, the whole Koszul complex is built from this complex by taking successive cones, to which this homotopy will lift. Thus, we have to find the dotted arrow in the diagram
\[\xymatrix{
\bb A_\inf(R_\infty)^\sub{nonint,i}\ar[r]^-{\gamma_i-1}\ar[d]_{\phi^{-1}(\mu)} & \bb A_\inf(R_\infty)^\sub{nonint,i}\ar[d]^{\phi^{-1}(\mu)}\ar@{..>}_h[dl]\\
\bb A_\inf(R_\infty)^\sub{nonint,i}\ar[r]^-{\gamma_i-1} & \bb A_\inf(R_\infty)^\sub{nonint,i}\ .
}\]
This decomposes into a completed direct sum of many pieces of the form
\[
\gamma_i-1: A(R)^\square\cdot T_i^{a(i)} \prod_{j\neq i} T_j^{a(j)}\to A(R)^\square\cdot T_i^{a(i)} \prod_{j\neq i} T_j^{a(j)}\ ,
\]
where $a(i)=m/p^r\in \bb Z[\tfrac 1p]$, $r\geq 1$, $m\in \bb Z\setminus p\bb Z$. This complex is the same as
\[
A(R)^\square\xTo{\gamma_i [\epsilon^{m/p^r}] - 1} A(R)^\square\ .
\]
Up to changing the roots of unity, we may assume that $m=1$. Moreover, the map $\gamma_i [\epsilon^{1/p^r}] - 1$ divides the map $\gamma_i^{p^{r-1}} [\epsilon]^{1/p} - 1$, so it is enough to produce a homotopy $h$ for $\gamma_i^{p^{r-1}}[\epsilon]^{1/p} - 1$. This amounts to finding a map $h: A(R)^\square\to A(R)^\square$ such that
\[
\gamma_i^{p^{r-1}}(h(a))[\epsilon]^{1/p}-h(a) = \phi^{-1}(\mu) a\ .
\]
As $\gamma_i^{p^{r-1}}\equiv \mathop{id}$ modulo $\mu$, we can write $\gamma_i^{p^{r-1}} = \mathop{id}+\mu \delta$ for some map $\delta: A(R)^\square\to A(R)^\square$. The equation becomes
\[
\mu \delta(h(a))[\epsilon]^{1/p} = \phi^{-1}(\mu)(a-h(a))\ ,
\]
or equivalently
\[
h(a) = a - \xi \delta(h(a))[\epsilon]^{1/p}\ .
\]
By successive $\xi$-adic approximation, it is clear that there is a unique solution to this. This handles the non-integral part of $A\Omega_R^\square$.

On the other hand, by the existence of the $q$-derivatives
\[
\frac{\partial_q}{\partial_q \log(U_i)} = \frac{\gamma_i - 1}{[\epsilon]-1}: A(R)^\square\to A(R)^\square\ ,
\]
the differentials in the complex calculating
\[
R\Gamma_\sub{cont}(\Gamma,A(R)^\square) = K_{A(R)^\square}(\gamma_1-1,\ldots,\gamma_d-1)
\]
are divisible by $\mu=[\epsilon]-1$, and one gets (by Lemma \ref{lemma_on_Koszul_1})
\[
\eta_\mu R\Gamma_\sub{cont}(\Gamma,A(R)^\square) = K_{A(R)^\square}\left(\frac{\gamma_1-1}{[\epsilon]-1},\ldots,\frac{\gamma_d-1}{[\epsilon]-1}\right) = q\op-\Omega^\bullet_{A(R)^\square/A_\inf}\ .
\]
\end{proof}

Next, we need some qualitative results on the complex $R\Gamma_\cont(\Gamma,W_r(R_\infty))$.

\begin{lemma}\label{lem:propWrOmega} Consider the Koszul complex
\[
C^\blob = K_{W_r(R_\infty)}(\gamma_1-1,\ldots,\gamma_d-1)
\]
computing $R\Gamma_\cont(\Gamma,W_r(R_\infty))$.
\begin{enumerate}
\item The complex $C^\blob$ can be written as a completed direct sum of Koszul complexes
\[
K_{W_r(\roi)}([\zeta^{a_1}]-1,\ldots,[\zeta^{a_d}]-1)
\]
for varying $a_1,\ldots,a_d\in \bb Z[\tfrac 1p]$. Here $\zeta^k\in \roi$ is short-hand for $\zeta_{p^a}^b$ if $k=\tfrac b{p^a}\in \bb Z[\tfrac 1p]$.
\item The cohomology groups
\[
H^i(\widetilde{W_r\Omega}_R^\square) = H^i(\eta_{[\zeta_{p^r}]-1} C^\blob)
\]
are $p$-torsion-free.
\item For any perfect complex $E\in D(W_r(\roi))$, the $W_r(\roi)$-modules
\[
H^i(C^\blob\dotimes_{W_r(\roi)} E)\ ,\ H^i(C^\blob\dotimes_{W_r(\roi)} E)/([\zeta_{p^r}]-1)
\]
have no almost zero elements, i.e.~no elements killed by $W_r(\frak m)$.
\end{enumerate}
\end{lemma}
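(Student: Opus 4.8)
\textbf{Proof plan for Lemma~\ref{lem:propWrOmega}.}

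\emph{Part (i).} The plan is to reuse the decomposition already exploited in the proof of Lemma~\ref{lem:qdRvsAOmega}. Since $\square$ is formally \'etale, we have the $\Gamma$-equivariant identification $\bb A_\inf(R_\infty)\cong A(R)^\square\hat\otimes_{A_\inf\langle\ul U^{\pm1}\rangle}A_\inf\langle\ul U^{\pm1/p^\infty}\rangle$; reducing along $\tilde\theta_r$ gives $W_r(R_\infty)\cong A(R)^\square/\tilde\xi_r\hat\otimes_{W_r(\roi)\langle\ul U^{\pm1}\rangle}W_r(\roi)\langle\ul U^{\pm1/p^\infty}\rangle$. Decomposing the right-hand side according to the fractional exponents $\ul a=(a_1,\dots,a_d)\in(\bb Z[\tfrac1p])^d$ of the monomials, $W_r(R_\infty)$ becomes a $(p,\mu)$-completed direct sum of free rank-one $A(R)^\square/\tilde\xi_r$-modules $A(R)^\square/\tilde\xi_r\cdot \ul U^{\ul a}$, on which $\gamma_i$ acts by multiplication by $[\zeta^{a_i}]$. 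Since the Koszul complex is additive (and compatible with completed direct sums), $C^\bullet$ splits as a completed direct sum of the Koszul complexes $K_{A(R)^\square/\tilde\xi_r\cdot\ul U^{\ul a}}([\zeta^{a_1}]-1,\dots,[\zeta^{a_d}]-1)$. Finally one further reduces the coefficient module $A(R)^\square/\tilde\xi_r$ to $W_r(\roi)$: using Lemma~\ref{lem:topfree} applied to $R$ (and the identification $A(R)^\square/\tilde\xi_r \cong W_r(R)$, which follows from $\bb A_\inf(R)/\tilde\xi_r = W_r(R)$), the ring $A(R)^\square/\tilde\xi_r$ is a $(p,\mu)$-completed free $W_r(\roi)$-module, and since the endomorphisms $[\zeta^{a_i}]-1$ are $W_r(\roi)$-linear, each such Koszul complex is itself a completed direct sum of copies of $K_{W_r(\roi)}([\zeta^{a_1}]-1,\dots,[\zeta^{a_d}]-1)$, as claimed.

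\emph{Part (ii).} Granting (i), it suffices to show $H^i(\eta_{[\zeta_{p^r}]-1}K_{W_r(\roi)}([\zeta^{a_1}]-1,\dots,[\zeta^{a_d}]-1))$ is $p$-torsion-free for each fixed $\ul a$, since $\eta$ and cohomology commute with completed direct sums under the boundedness hypotheses of Lemma~\ref{lem:cohomcompleteddirectsum} (here $H^0$ of each summand is classically $\mu$-complete with uniformly bounded $\mu$-torsion, because it is a finitely presented $W_r(\roi)$-module by coherence, Proposition~\ref{prop:WrCoherent}). For a single such Koszul complex one uses Lemma~\ref{lemma_on_Koszul_2}(ii): writing $f=[\zeta_{p^r}]-1$, each $[\zeta^{a_i}]-1$ is divisible by $f$ or divides $f$ up to a unit (a valuation/divisibility computation in $W_r(\roi)$ via ghost components, exactly as in Proposition~\ref{proposition_roots_of_unity}). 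If some $[\zeta^{a_i}]-1$ divides $f$ then $\eta_f$ of the complex is acyclic by Lemma~\ref{lemma_on_Koszul_1}, hence trivially $p$-torsion-free; otherwise $f\mid[\zeta^{a_i}]-1$ for all $i$, and Lemma~\ref{lemma_on_Koszul_1} identifies $\eta_f$ of the complex with $K_{W_r(\roi)}(\tfrac{[\zeta^{a_1}]-1}{f},\dots,\tfrac{[\zeta^{a_d}]-1}{f})$, whose cohomology by Lemma~\ref{lemma_on_Koszul_2}(ii) is a sum of copies of $\Ann_{W_r(\roi)}(g)$ and $W_r(\roi)/g$ for $g=\tfrac{[\zeta^{a_i}]-1}{f}$ (for $i$ with minimal valuation). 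By Corollary~\ref{corollary_roots_of_unity} (using that $\roi$ is flat over $\bb Z_p$; the general perfectoid case reduces to this by Lemma~\ref{lemma_tor_independence_1} or directly) these modules are $p$-torsion-free: $\Ann_{W_r(\roi)}(g)= V^jW_{r-j}(\roi)$ and $W_r(\roi)/g\cong F_*^j W_{r-j}(\roi)$ for appropriate $j$, both of which are $p$-torsion-free since $\roi$ is.

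\emph{Part (iii).} By (i) and flatness considerations, $C^\bullet\dotimes_{W_r(\roi)}E$ is again a completed direct sum of complexes $K_{W_r(\roi)}([\zeta^{a_1}]-1,\dots,[\zeta^{a_d}]-1)\dotimes_{W_r(\roi)}E$; each of these is a perfect complex of $W_r(\roi)$-modules, so all its cohomology groups are finitely presented over the coherent ring $W_r(\roi)$ (Proposition~\ref{prop:WrCoherent}), and similarly $H^i(-)/([\zeta_{p^r}]-1)$ is finitely presented. Corollary~\ref{cor:WrFinPresNoAlmostZero} then says a finitely presented $W_r(\roi)$-module has no elements killed by $W_r(\frak m)$, which gives the claim for each summand. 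To pass to the completed direct sum one invokes Lemma~\ref{lem:cohomcompleteddirectsum} to identify $H^i$ of the total complex with the classical $\mu$-adic (equivalently $p$-adic) completion of $\bigoplus_{\ul a}H^i(\cdots)$, together with the observation that an element of such a completion killed by a nonzerodivisor $[\zeta_{p^s}]-1\in W_r(\frak m)$ would, by Artin--Rees-type separatedness of the completion, force a compatible system of such elements in the finite-level summands, contradicting the summand-wise statement.

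\emph{Main obstacle.} The genuinely delicate point is Part~(ii): controlling the $p$-torsion after applying $\eta_{[\zeta_{p^r}]-1}$ to the individual Koszul complexes, i.e.\ the divisibility analysis of $[\zeta^{a_i}]-1$ relative to $[\zeta_{p^r}]-1$ inside $W_r(\roi)$ and the explicit computation of $\Ann$ and quotient modules. Once one correctly organizes this via Corollary~\ref{corollary_roots_of_unity} and Lemma~\ref{lemma_on_Koszul_1}, (i) and (iii) are comparatively formal consequences of coherence of $W_r(\roi)$ and the completed-direct-sum bookkeeping.
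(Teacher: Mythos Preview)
Your overall strategy matches the paper's, and parts (i) and (ii) are essentially the paper's argument. Two points need correction.

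\textbf{Part (i): a wrong identification.} The parenthetical claim $A(R)^\square/\tilde\xi_r \cong W_r(R)$ is false, and ``$\bb A_\inf(R)$'' is not defined since $R$ is not perfectoid. The ring $A(R)^\square$ is merely a formally \'etale lift of $R$ along $A_\inf\to\roi$; its reduction $A(R)^\square/\tilde\xi_r$ is a formally smooth $W_r(\roi)$-algebra lifting $R$, not $W_r(R)$. What you actually need is that $A(R)^\square/\tilde\xi_r$ is topologically free over $W_r(\roi)$, and this follows by the \emph{argument} of Lemma~\ref{lem:topfree} (lift a $k$-basis of the special fibre), not by applying that lemma through a nonexistent identification. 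The paper phrases this as ``as in Lemma~\ref{lem:topfree}''.

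\textbf{Part (iii): the completed direct sum step is a genuine gap.} For a single summand $K_{W_r(\roi)}([\zeta^{a_1}]-1,\ldots)\dotimes E$ your argument via coherence and Corollary~\ref{cor:WrFinPresNoAlmostZero} is exactly the paper's. But the passage to the completed direct sum is not handled by a vague ``Artin--Rees-type separatedness''. To invoke Lemma~\ref{lem:cohomcompleteddirectsum} you must verify a \emph{uniform} bound on the $p$-power torsion across all summands, and you must also handle $H^i/([\zeta_{p^r}]-1)$. The paper does this by splitting $C^\bullet=C^{\sub{int}}\oplus C^{\sub{nonint}}$ according to whether all $a_i$ have denominator $\leq p^r$ or not:
\begin{itemize}
\item In $C^{\sub{int}}$, only finitely many isomorphism types of Koszul complexes occur (the type depends only on the $p$-adic valuations of the $a_i$, which are bounded below by $-r$). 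An isotypic decomposition then reduces to the $p$-completed direct sum of copies of one fixed finitely presented module, where Lemma~\ref{lem:cohomcompleteddirectsum} applies and the ``no almost zero elements'' property passes to the completion coordinatewise.
\item In $C^{\sub{nonint}}$, multiplication by $[\zeta_{p^r}]-1$ (hence by $p^r$, since $[\zeta_{p^r}]-1$ divides $p^r$ in $W_r(\roi)$) is null-homotopic on each summand, so the completed direct sum is quasi-isomorphic to the uncompleted one, and the property passes trivially.
\end{itemize}
Without this split you have neither the uniform torsion bound nor any control on $H^i/([\zeta_{p^r}]-1)$. Contrary to your closing remark, part (iii) is where the real bookkeeping lies; part (ii) is the more formal step once Lemma~\ref{lemma_on_Koszul_1} and Corollary~\ref{corollary_roots_of_unity} are in hand.
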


\begin{proof} We begin with a rough computation of
\[
R\Gamma_\sub{cont}(\Gamma,W_r(\roi\langle \ul T^{\pm 1/p^\infty}\rangle))
\]
as a complex of $W_r(\roi)\langle \ul U^{\pm 1}\rangle$-modules, where $U_i\mapsto [T_i]$. Here, we normalize the action so that the $i$-th basis vector $\gamma_i\in\Gamma = \bb Z_p(1)^d$ acts by sending $[T_i^{1/p^s}]$ to $[\zeta_{p^s} T_i^{1/p^s}]$.

We can write
\[
R\Gamma_\sub{cont}(\Gamma,W_r(\roi\langle \ul T^{\pm 1/p^\infty}\rangle)) = \widehat{\bigoplus_{a_1,\ldots,a_d\in \bb Z[\frac 1p]\cap [0,1)}} R\Gamma_\sub{cont}(\Gamma,W_r(\roi)\langle \ul U^{\pm 1}\rangle\cdot \prod_{i=1}^d [T_i]^{a_i})\ .
\]
Moreover, each summand can be written as a Koszul complex
\[
R\Gamma_\sub{cont}(\Gamma,W_r(\roi)\langle \ul U^{\pm 1}\rangle\cdot \prod_{i=1}^d [T_i]^{a_i}) = K_{W_r(\roi)\langle \ul U^{\pm 1}\rangle}([\zeta^{a_1}]-1,\ldots,[\zeta^{a_d}]-1)\ .
\]

Next, we want to get a similar description of
\[
R\Gamma_\sub{cont}(\Gamma,W_r(R_\infty))\ .
\]
Recall that $\bb A_\inf(R_\infty) = A_\inf\langle \ul U^{\pm 1/p^\infty}\rangle\hat{\otimes}_{A_\inf\langle \ul U^{\pm 1}\rangle} A(R)^\square$, so that by base change along $\theta_r$, we get
\[
W_r(R_\infty) = W_r(\roi)\langle \ul U^{\pm 1/p^\infty}\rangle\hat{\otimes}_{W_r(\roi)\langle \ul U^{\pm 1}\rangle} A(R)^\square/\xi_r\ ;
\]
also, $W_r(\roi)\langle \ul U^{\pm 1/p^\infty}\rangle = W_r(\roi\langle \ul T^{\pm 1/p^\infty}\rangle)$ by passing to the $p$-adic completion in Lemma~\ref{lem:wittpolynomial} below. This implies
\[
R\Gamma_\sub{cont}(\Gamma,W_r(R_\infty)) = R\Gamma_\sub{cont}(\Gamma,W_r(\roi\langle \ul T^{\pm 1/p^\infty}\rangle))\hat{\otimes}_{W_r(\roi)\langle \ul U^{\pm 1}\rangle} A(R)^\square/\xi_r\ ;
\]
note that the tensor product is underived modulo any power of $p$ by \'etaleness. Therefore, we get a decomposition
\[
R\Gamma_\sub{cont}(\Gamma,W_r(R_\infty)) = \widehat{\bigoplus_{a_1,\ldots,a_d\in \bb Z[\frac 1p]\cap [0,1)}} K_{A(R)^\square/\xi_r}([\zeta^{a_1}]-1,\ldots,[\zeta^{a_d}]-1)\ .
\]
Finally, as in Lemma~\ref{lem:topfree}, $A(R)^\square/\xi_r$ is topologically free over $W_r(\roi)$, finishing the proof of (i).

For (ii), note that by Lemma~\ref{lemma_on_Koszul_1},
\[
\eta_{[\zeta_{p^r}]-1} K_{W_r(\roi)}([\zeta^{a_1}]-1,\ldots,[\zeta^{a_d}]-1)
\]
is acyclic if $p^ra_i\not\in \bb Z$ for some $i$, and otherwise it is given by
\[
K_{W_r(\roi)}\left(\frac{[\zeta^{a_1}]-1}{[\zeta_{p^r}]-1},\ldots,\frac{[\zeta^{a_d}]-1}{[\zeta_{p^r}]-1}\right)\ .
\]
The cohomology groups of this complex are $p$-torsion-free by Lemma~\ref{lemma_on_Koszul_2} and Corollary~\ref{corollary_roots_of_unity}. Also, $\eta_{[\zeta_{p^r}]-1}$ commutes with the completed direct sum by Lemma~\ref{lem:Letacommutecompletion}. Thus, we can apply Lemma~\ref{lem:cohomcompleteddirectsum} to compute the cohomology groups
\[
H^i(\widetilde{W_r\Omega}_R^\square) = H^i(\eta_{[\zeta_{p^r}]-1} C^\blob)
\]
as a classical $p$-adic completion of the direct sum of the $p$-torsion-free cohomology groups of the Koszul complexes above. In particular, they are $p$-torsion-free.

For (iii), assume first that $K=K_{W_r(\roi)}([\zeta^{a_1}]-1,\ldots,[\zeta^{a_d}]-1)$ is a Koszul complex. Then $K\dotimes_{W_r(\roi)} E$ is a perfect complex of $W_r(\roi)$-modules. Thus, as $W_r(\roi)$ is coherent, every cohomology group is finitely presented, and thus contains no almost zero elements by Corollary~\ref{cor:WrFinPresNoAlmostZero}; the same argument works for $H^i/([\zeta_{p^r}]-1)$.

Now we have a decomposition $C^\blob=C^\sub{int}\oplus C^\sub{nonint}$, where $C^\sub{int}$ is a completed direct sum of Koszul complexes
\[
K_{W_r(\roi)}([\zeta^{a_1}]-1,\ldots,[\zeta^{a_d}]-1)\ ,
\]
where the denominator of each $a_i$ is at most $p^r$, and $C^\sub{nonint}$ is a completed direct sum of Koszul complexes
\[
K_{W_r(\roi)}([\zeta^{a_1}]-1,\ldots,[\zeta^{a_d}]-1)
\]
where the denominator of some $a_i$ is at least $p^{r+1}$. Note that $C^\sub{nonint}$ is actually just (quasi-isomorphic to) the direct sum of these Koszul complexes, as multiplication by $[\zeta_{p^r}]-1$ is homotopic to $0$ on each of the Koszul complexes, and thus on their direct sum.

It suffices to prove the similar assertions for $H^i(C^\sub{int}\dotimes_{W_r(\roi)} E)$ and $H^i(C^\sub{nonint}\dotimes_{W_r(\roi)} E)$. Note that only finitely many different Koszul complexes appear in $C^\sub{int}$; by taking a corresponding isotypic decomposition, we can reduce to the case that $C^\sub{int}$ is the $p$-adic completion of a direct sum of copies of one Koszul complex
\[
K = K_{W_r(\roi)}([\zeta^{a_1}]-1,\ldots,[\zeta^{a_d}]-1)\ .
\]
In that case, $H^i(C^\sub{int}\dotimes_{W_r(\roi)} E)$ is the classical $p$-adic completion of a similar direct sum of copies of the finitely presented $W_r(\roi)$-module $H^i(K\dotimes_{W_r(\roi)} E)$ (by Lemma~\ref{lem:cohomcompleteddirectsum}, using that the $p$-torsion submodule of finitely presented $W_r(\roi)$-modules is of bounded exponent), for which we have already checked the assertion. Similarly in the second case, $H^i(C^\sub{nonint}\dotimes_{W_r(\roi)} E)$ decomposes as a (noncompleted) direct sum of the cohomology groups of $H^i(K\dotimes_{W_r(\roi)} E)$ for Koszul complexes $K$.
\end{proof}

We used the following lemma in the proof.

\begin{lemma}\label{lem:wittpolynomial} Let $S$ be any ring. There are natural inclusions
\[
W_r(S[T_1^{p^r},\ldots,T_d^{p^r}])\subset W_r(S)[U_1,\ldots,U_d]\subset W_r(S[T_1,\ldots,T_d])\ ,
\]
and
\[
W_r(S[T_1^{\pm p^r},\ldots,T_d^{\pm p^r}])\subset W_r(S)[U_1^{\pm 1},\ldots,U_d^{\pm 1}]\subset W_r(S[T_1^{\pm 1},\ldots,T_d^{\pm 1}])\ ,
\]
where $U_i=[T_i]$. In particular, by passing to a union over all $p$-power roots, we have equalities
\[\begin{aligned}
W_r(S[T_1^{1/p^\infty},\ldots,T_d^{1/p^\infty}]) &= W_r(S)[U_1^{1/p^\infty},\ldots,U_d^{1/p^\infty}]\ ,\\
W_r(S[T_1^{\pm 1/p^\infty},\ldots,T_d^{\pm 1/p^\infty}]) &= W_r(S)[U_1^{\pm 1/p^\infty},\ldots,U_d^{\pm 1/p^\infty}]\ .
\end{aligned}\]
\end{lemma}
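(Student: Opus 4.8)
\textbf{Proof plan for Lemma~\ref{lem:wittpolynomial}.}

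The plan is to reduce everything to the two basic inclusions
\[
W_r(S[T_1^{p^r},\ldots,T_d^{p^r}])\subset W_r(S)[U_1,\ldots,U_d]\subset W_r(S[T_1,\ldots,T_d]),\qquad U_i=[T_i],
\]
and then obtain the localized statement and the $p^\infty$-root statements by elementary formal manipulations. First I would treat the inclusion $W_r(S)[U_1,\ldots,U_d]\subset W_r(S[T_1,\ldots,T_d])$: the point is simply that $W_r$ is a functor, so $W_r(S)\to W_r(S[T_1,\ldots,T_d])$ together with the Teichm\"uller elements $U_i=[T_i]\in W_r(S[T_1,\ldots,T_d])$ induces a $W_r(S)$-algebra map $W_r(S)[U_1,\ldots,U_d]\to W_r(S[T_1,\ldots,T_d])$, and one must check it is injective. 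Injectivity can be checked either via the ghost map after first reducing to the $p$-torsion-free case (replace $S$ by a polynomial ring surjecting onto it, a device that is legitimate since $W_r$ of a surjection is a surjection and we only care about injectivity of a map out of a free module), or, more robustly, directly on the level of Witt components: an element of $W_r(S)[U_1,\ldots,U_d]$ is a polynomial in the $U_i$ with $W_r(S)$-coefficients, and writing out the Witt addition and multiplication one sees that its image in $W_r(S[T_1,\ldots,T_d])$ records, in each Witt component, a polynomial whose homogeneous pieces remember the original coefficients. I expect this verification to be the main obstacle — not because it is deep, but because Witt-vector component bookkeeping is notoriously fiddly, and one must be careful that no cancellation occurs; the cleanest route is probably the reduction to $p$-torsion-free $S$ plus the injectivity of the ghost map there, combined with the observation that the ghost components of $[T_i]$ are $T_i^{p^j}$.

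Next I would establish the inclusion $W_r(S[T_1^{p^r},\ldots,T_d^{p^r}])\subset W_r(S)[U_1,\ldots,U_d]$. The key identity here is that for any ring and any element $t$, $[t^{p^r}]\in W_r$ can be rewritten using $[t^{p^r}]=[t]^{p^r}$ but more to the point, that $V^j$ and $F^j$ interact with Teichm\"uller lifts by $V^j([t^{p^j}]) = p^j\cdot$ (something) — concretely, every Witt vector in $W_r(S[T_1^{p^r},\dots,T_d^{p^r}])$ is, by the standard expansion, a sum $\sum_{j=0}^{r-1} V^j([s_j])$ with $s_j\in S[T_1^{p^r},\dots,T_d^{p^r}]$, and $[s_j]$ for a monomial $s_j = a\prod T_i^{p^r n_i}$ equals $[a]\prod [T_i]^{p^r n_i} = [a]\prod U_i^{p^r n_i}$, which lies in $W_r(S)[U_1,\ldots,U_d]$; then $V^j$ applied to such an element stays inside $W_r(S)[U_i]$ because $V^j([a]\,\prod U_i^{p^r n_i}) = V^j([a]\,F^j(\prod U_i^{p^{r-j}n_i})) = V^j([a])\cdot \prod U_i^{p^{r-j}n_i}$ using the projection formula $V^j(x)\cdot y = V^j(x F^j(y))$ and $F^j(U_i)=F^j([T_i])=[T_i^{p^j}]=U_i^{p^j}$. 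So the exponent $p^r$ is exactly what is needed to push the Teichm\"uller part out through the $V^j$'s into $W_r(S)[U_i]$.

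Having the two polynomial-ring inclusions, I would deduce the Laurent-polynomial versions by localizing: $W_r$ commutes with localization at a multiplicative set of the form $\{T^{N}\}$ up to the caveat that $W_r(S[T^{\pm1}])$ is the localization of $W_r(S[T])$ at the Teichm\"uller elements $[T_i]=U_i$ (this is a standard fact — $W_r(S)\to W_r(S_f)$ is the localization of $W_r(S)$ at $[f]$ when $f$ is a non-zero-divisor, or in general one uses that inverting $T_i$ in $S[T_i]$ inverts $[T_i]=U_i$ in $W_r$, and conversely); applying this to both ends of the first chain, inverting the $U_i$ throughout, yields
\[
W_r(S[T_1^{\pm p^r},\ldots,T_d^{\pm p^r}])\subset W_r(S)[U_1^{\pm1},\ldots,U_d^{\pm1}]\subset W_r(S[T_1^{\pm1},\ldots,T_d^{\pm1}]).
\]
Finally, the $p^\infty$-root statements follow by passing to the filtered colimit over $r'\ge r$ of the inclusions for the polynomial rings $S[T_1^{1/p^{r'}},\dots]$ in place of $S[T_1,\dots]$: given $r$ fixed, $W_r(S[T_i^{1/p^\infty}]) = \varinjlim_{s} W_r(S[T_i^{1/p^s}])$, and for each $s$ the inclusion $W_r(S[T_1^{1/p^s},\dots])\subset W_r(S)[U_1^{1/p^{s-r}},\dots]$ — obtained from the polynomial-ring case applied with variables $T_i^{1/p^s}$ and noting $[T_i^{1/p^s}]^{p^r}=[T_i^{p^{r}/p^s}]$, i.e. $U_i^{1/p^{s-r}}$ in the target — combined with the reverse inclusion, sandwiches the colimit and forces the equality $W_r(S[T_1^{1/p^\infty},\dots,T_d^{1/p^\infty}]) = W_r(S)[U_1^{1/p^\infty},\ldots,U_d^{1/p^\infty}]$ (since as $s\to\infty$ the gap $s-r\to\infty$ too); the Laurent version is identical after inverting all the $U_i$. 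This last step is purely formal once the polynomial-ring inclusions are in hand.
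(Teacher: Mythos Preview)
Your plan for the second inclusion $W_r(S)[\ul U]\subset W_r(S[\ul T])$, the localization step, and the passage to $p$-power roots are all sound. The gap is in your argument for the \emph{first} inclusion $W_r(S[\ul T^{p^r}])\subset W_r(S)[\ul U]$. You correctly write a general element as $\sum_j V^j([s_j])$ with $s_j\in S[\ul T^{p^r}]$, and your projection-formula computation $V^j([a]\,\prod U_i^{p^r n_i}) = V^j([a])\cdot\prod U_i^{p^{r-j}n_i}$ is exactly right when $s_j$ is a monomial. But $s_j$ is an arbitrary polynomial, and the Teichm\"uller lift is not additive: $[s_j]$ is \emph{not} the sum of the lifts of its monomials, so the monomial case does not settle the general one, and the elements $V^j([s])$ for monomial $s$ do not generate $W_r(S[\ul T^{p^r}])$ in any obvious way.

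The paper sidesteps this by citing Langer--Zink's structural description of $W_r(S[\ul T])$ (their Corollary~2.4; compare the $n=0$ case of Theorem~\ref{theorem_LZ1} here), which gives a $W_r(S)$-module direct-sum decomposition of $W_r(S[\ul T])$ indexed by weights. Your monomial computation is then precisely what shows each summand of $W_r(S[\ul T^{p^r}])$, in its own Langer--Zink decomposition, lands in $W_r(S)[\ul U]$ --- but one needs the decomposition itself as input. Alternatively one can run an induction on $r$, using that the discrepancy $[s_1+s_2]-[s_1]-[s_2]$ lies in $VW_{r-1}$ applied to a universal integer polynomial in $s_1,s_2$ (so the exponents of $T$ remain multiples of $p^r$, and after pulling the Teichm\"uller monomials out via the projection formula one recurses); but this needs to be spelled out. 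Either way, this step is where the actual content lies, not the injectivity you flagged in step~1.
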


\begin{proof} The Laurent polynomial case follows from the polynomial case by localization. The polynomial case follows for example from~\cite[Corollary 2.4]{LangerZink}.
\end{proof}

Moreover, we need the following base change property.

\begin{lemma}\label{lem:WrOmegaBC} Let $R$ be as above, and let $R\to R^\prime$ be a formally \'etale map, i.e.~$R/p^n\to R^\prime/p^n$ is \'etale for all $n$, and $R^\prime$ is $p$-adically complete. Let $\square^\prime$ be the induced framing of $\Spf R^\prime$. Then the natural map
\[
\widetilde{W_r\Omega}_R^\square\hat{\otimes}_{W_r(R)} W_r(R^\prime)\to \widetilde{W_r\Omega}_{R^\prime}^{\square^\prime}
\]
is a quasi-isomorphism.
\end{lemma}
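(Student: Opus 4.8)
The plan is to reduce the statement to the analogous base change property of the underlying Koszul complexes, using that $\widetilde{W_r\Omega}_R^\square$ is (by construction) $L\eta_{[\zeta_{p^r}]-1}$ applied to an explicit Koszul complex, and that $L\eta$ commutes with flat base change along ring maps by Lemma~\ref{lem:LEtaChangeTopos}. First I would record the key Koszul-level base change: the complex $C^\bullet = K_{W_r(R_\infty)}(\gamma_1-1,\ldots,\gamma_d-1)$ computing $R\Gamma_\cont(\Gamma, W_r(R_\infty))$ satisfies
\[
C^\bullet \hat\otimes_{W_r(R)} W_r(R^\prime) \simeq K_{W_r(R^\prime_\infty)}(\gamma_1-1,\ldots,\gamma_d-1),
\]
where $R^\prime_\infty = R^\prime \hat\otimes_{\roi\langle \ul T^{\pm1}\rangle} \roi\langle \ul T^{\pm 1/p^\infty}\rangle$. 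This comes down to the identity $W_r(R_\infty) \hat\otimes_{W_r(R)} W_r(R^\prime) = W_r(R^\prime_\infty)$, which I would deduce from the formally \'etale description $\bb A_\inf(R_\infty) = A_\inf\langle \ul U^{\pm 1/p^\infty}\rangle \hat\otimes_{A_\inf\langle \ul U^{\pm 1}\rangle} A(R)^\square$ together with the formally \'etale lift $A(R)^\square \to A(R^\prime)^{\square^\prime}$: since $\Spf A(R^\prime)^{\square^\prime} \to \Spf A(R)^\square$ is formally \'etale and base changes to $\Spf W_r(R^\prime) \to \Spf W_r(R)$ along $\tilde\theta_r$, reducing modulo $\tilde\xi_r$ gives $W_r(R_\infty) \hat\otimes_{W_r(R)} W_r(R^\prime) = W_r(R^\prime_\infty)$; note also that $R\to R^\prime$ \'etale forces the tensor product to be underived modulo any power of $p$, so there is no $\Tor$ ambiguity.

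Granting that, the argument for the Lemma goes as follows. The Koszul complex $C^\bullet$ has $[\zeta_{p^r}]-1$-torsion-free terms (being a Koszul complex on the flat $W_r(\roi)$-module $W_r(R_\infty)$, which is $[\zeta_{p^r}]-1$-torsion-free by Proposition~\ref{proposition_roots_of_unity}(i) applied via Lemma~\ref{lem:propWrOmega}), so $\widetilde{W_r\Omega}_R^\square$ is literally $\eta_{[\zeta_{p^r}]-1} C^\bullet$ as an honest complex. Now $W_r(R) \to W_r(R^\prime)$ is a flat map of rings: this follows from $R\to R^\prime$ being flat (indeed formally \'etale) by an argument like the one in Lemma~\ref{lem:SigmaAinfFlat} reducing to flatness of $R/p^n \to R^\prime/p^n$ and the explicit Witt-vector presentation; alternatively, since $W_r(R) \to W_r(R^\prime)$ is the base change of $A(R)^\square \to A(R^\prime)^{\square^\prime}$ along $\tilde\theta_r$, and that map is flat by formal \'etaleness, flatness is preserved. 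Flatness ensures that $\eta_{[\zeta_{p^r}]-1}$ commutes with $(-)\hat\otimes_{W_r(R)} W_r(R^\prime)$ on the level of complexes with $[\zeta_{p^r}]-1$-torsion-free terms (this is exactly the content of Lemma~\ref{lem:LEtaChangeTopos} in the ``flat change of rings'' case, after reducing the $p$-adic completion to a $\mu$-adic/$[\zeta_{p^r}]-1$-adic completion and invoking Lemma~\ref{lem:Letacommutecompletion}). Combining, we get
\[
\widetilde{W_r\Omega}_R^\square \hat\otimes_{W_r(R)} W_r(R^\prime) = (\eta_{[\zeta_{p^r}]-1} C^\bullet) \hat\otimes_{W_r(R)} W_r(R^\prime) \simeq \eta_{[\zeta_{p^r}]-1}(C^\bullet \hat\otimes_{W_r(R)} W_r(R^\prime)) \simeq \eta_{[\zeta_{p^r}]-1} K_{W_r(R^\prime_\infty)}(\gamma_\bullet - 1) = \widetilde{W_r\Omega}_{R^\prime}^{\square^\prime}.
\]

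\textbf{Main obstacle.} The routine heart of the matter is keeping careful track of completions: the tensor products involved are $p$-adically (equivalently $\mu$- or $[\zeta_{p^r}]-1$-adically) completed, so one must check that passing between derived and underived completed tensor products introduces no error, and that $\eta_{[\zeta_{p^r}]-1}$ commutes with these completions. Here Lemma~\ref{lem:Letacommutecompletion} (commutation of $L\eta_{\cal I}$ with $\cal I$-adic completion when $\cal I$ is invertible) together with the finiteness/torsion control from Lemma~\ref{lem:propWrOmega}(ii)–(iii) should suffice. The one genuinely non-formal input is the flatness of $W_r(R) \to W_r(R^\prime)$ and the identification $W_r(R_\infty) \hat\otimes_{W_r(R)} W_r(R^\prime) = W_r(R^\prime_\infty)$; both are handled cleanly by lifting to $A_\inf$ and exploiting the unique formally \'etale deformation $A(R)^\square \to A(R^\prime)^{\square^\prime}$, after which everything is base change of flat maps, and the result is then immediate.
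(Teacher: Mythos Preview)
Your overall strategy is exactly the paper's: establish the Koszul-level base change $W_r(R_\infty)\hat\otimes_{W_r(R)}W_r(R') = W_r(R'_\infty)$, then use that $L\eta$ commutes with flat base change (Lemma~\ref{lem:LEtaChangeTopos}) and with the relevant completion (Lemma~\ref{lem:Letacommutecompletion}). But there is a genuine gap in how you justify the two non-formal inputs.

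The claim that ``$W_r(R)\to W_r(R')$ is the base change of $A(R)^\square\to A(R')^{\square'}$ along $\tilde\theta_r$'' is false: $A(R)^\square/\tilde\xi_r$ is \emph{not} $W_r(R)$. Already for $R=\roi\langle T^{\pm1}\rangle$ with the identity framing one has $A(R)^\square/\tilde\xi_r = W_r(\roi)\langle U^{\pm1}\rangle$, which is strictly smaller than $W_r(\roi\langle T^{\pm1}\rangle)$ (cf.\ Lemma~\ref{lem:wittpolynomial}). So neither the flatness of $W_r(R)\to W_r(R')$ nor the tensor identity $W_r(R_\infty)\hat\otimes_{W_r(R)}W_r(R')=W_r(R'_\infty)$ follows from your $A_\inf$-lift argument. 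Your alternative suggestion (``an argument like Lemma~\ref{lem:SigmaAinfFlat}'') reduces to checking flatness of $W_r(R)/p^n\to W_r(R')/p^n$, but that is precisely what needs an external input.

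The paper supplies that input as follows (see the Remark preceding the Lemma): use Elkik's theorem to decompletify $R\to R'$ to an honest \'etale map $R_0\to R_0'$ of smooth $\roi$-algebras, and then invoke Theorem~\ref{thm:Wittetale} (Witt vectors preserve \'etale morphisms and satisfy base change). This gives $W_r(R_0)\to W_r(R_0')$ \'etale, hence $W_r(R)/p^n\to W_r(R')/p^n$ \'etale, and applying the base-change part of Theorem~\ref{thm:Wittetale} with $A=R_0$, $B=R_0'$, $A'=R_{0,\infty}$ yields $W_r(R_\infty)\hat\otimes_{W_r(R)}W_r(R')=W_r(R'_\infty)$. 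Once you have these two facts, the rest of your argument goes through verbatim and agrees with the paper's.
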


\begin{remark} We note that modulo $p^n$, the tensor product is underived by Theorem~\ref{thm:Wittetale}. Indeed, by Elkik, \cite{Elkik}, we may always find a smooth $\roi$-algebra $R_0$ and an \'etale $R_0$-algebra $R_0^\prime$ such that $R\to R^\prime$ is the $p$-adic completion of $R_0\to R_0^\prime$. Then $W_r(R_0)\to W_r(R_0^\prime)$ is \'etale and hence so is $W_r(R_0)/p^n\to W_r(R_0^\prime)/p^n$, which agrees with $W_r(R)/p^n\to W_r(R^\prime)/p^n$.
\end{remark}

\begin{proof} Fix a map $R_0\to R_0^\prime$ as in the remark. By Theorem~\ref{thm:Wittetale},
\[
W_r(R_\infty^\prime) = W_r(R_\infty)\hat{\otimes}_{W_r(R)} W_r(R^\prime)\ ,
\]
where the tensor product is underived modulo $p^n$. Taking cohomology, we get
\[
R\Gamma_\cont(\Gamma,W_r(R_\infty^\prime)) = R\Gamma_\cont(\Gamma,W_r(R_\infty))\hat{\otimes}_{W_r(R)} W_r(R^\prime)\ .
\]
Moreover, using Lemma~\ref{lem:Letacommutecompletion} and the observation that $L\eta$ commutes with flat base change $W_r(R_0)\to W_r(R_0^\prime)$, cf.~Lemma~\ref{lem:LEtaChangeTopos}, we get
\[
L\eta_{[\zeta_{p^r}]-1} R\Gamma_\cont(\Gamma,W_r(R_\infty^\prime)) = L\eta_{[\zeta_{p^r}]-1} R\Gamma_\cont(\Gamma,W_r(R_\infty))\hat{\otimes}_{W_r(R)} W_r(R^\prime)\ ,
\]
as desired.
\end{proof}

We can now prove part (ii) of Theorem~\ref{thm:AOmegavsdRWLocal}.

\begin{corollary}\label{cor:AOmegavsdRWLocalPart2} The natural maps
\[
\widetilde{W_r\Omega}_R^\square\to \widetilde{W_r\Omega}_R^\sub{prof\'et}\to \widetilde{W_r\Omega}_R^\sub{pro\'et}\to R\Gamma(\frak X,\widetilde{W_r\Omega}_{\frak X})
\]
are quasi-isomorphisms.
\end{corollary}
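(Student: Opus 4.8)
The plan is to follow exactly the same template as in the proof of Corollary~\ref{cor:alltildeOmegasame} (the $r=1$ case), replacing $\roi$ by $W_r(\roi)$, $\zeta_p-1$ by $[\zeta_{p^r}]-1$, and $\hat\roi_X^+$ by $W_r(\hat\roi_X^+)$ throughout. The key point is that the ring $W_r(\roi)$ is coherent by Proposition~\ref{prop:WrCoherent}, its finitely presented modules have no elements killed by $W_r(\frak m)$ by Corollary~\ref{cor:WrFinPresNoAlmostZero}, and $W_r(\frak m)$ is an increasing union of principal ideals generated by the non-zero-divisors $[\zeta_{p^s}]-1$, so we are in the setting needed to apply Lemma~\ref{lem:LetaActualIsom} with $A = W_r(\roi)$, $I = W_r(\frak m)$, and $f = [\zeta_{p^r}]-1$.

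First I would treat the maps $\widetilde{W_r\Omega}_R^\square \to \widetilde{W_r\Omega}_R^\sub{prof\'et} \to \widetilde{W_r\Omega}_R^\sub{pro\'et}$. By Faltings's almost purity theorem (in the form used above, together with the identification $R\Gamma_\sub{cont}(\Delta,\widehat{\overline{R}}) = R\Gamma(X_\sub{prof\'et},\hat\roi_X^+)$ applied after the functor $W_r$, using that $W_r$ of an almost quasi-isomorphism is an almost quasi-isomorphism), the maps
\[
R\Gamma_\sub{cont}(\Gamma,W_r(R_\infty))\to R\Gamma(X_\sub{prof\'et},W_r(\hat\roi_X^+))\to R\Gamma(X_\sub{pro\'et},W_r(\hat\roi_X^+))
\]
are almost quasi-isomorphisms with respect to $W_r(\frak m)$. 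Now apply Lemma~\ref{lem:LetaActualIsom}(ii): its hypothesis is that the source complex $C = R\Gamma_\sub{cont}(\Gamma,W_r(R_\infty))$ has all cohomology groups $H^i(C)$ and all quotients $H^i(C)/([\zeta_{p^r}]-1)$ free of elements killed by $W_r(\frak m)$. This is precisely the content of Lemma~\ref{lem:propWrOmega}(iii), applied with $E = W_r(\roi)$. Hence $L\eta_{[\zeta_{p^r}]-1}$ turns these almost quasi-isomorphisms into honest quasi-isomorphisms, giving the first two arrows.

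For the last map $\widetilde{W_r\Omega}_R^\sub{pro\'et} \to R\Gamma(\frak X,\widetilde{W_r\Omega}_{\frak X})$, I would argue as in part (iv) of Corollary~\ref{cor:alltildeOmegasame}: the sheaf-level base-change statement $\widetilde{W_r\Omega}_R \otimes_{W_r(R)} W_r(R') \simeq \widetilde{W_r\Omega}_{R'}$ for a formally \'etale map $R\to R'$ (Lemma~\ref{lem:WrOmegaBC}, whose proof has just been established and uses the already-proven first two quasi-isomorphisms to compute $\widetilde{W_r\Omega}_{R'}^\sub{pro\'et}$ via $R_\infty'$) shows that $\widetilde{W_r\Omega}_R \otimes_{W_r(R)}^{\mathbb L} W_r(\roi_{\frak X})$ defines a complex of sheaves on $\frak X_\sub{Zar}$ whose value on an affine open $\Spf R' \subset \Spf R$ is $L\eta_{[\zeta_{p^r}]-1}R\Gamma(X'_\sub{pro\'et},W_r(\hat\roi_X^+))$. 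To check that the natural map from this complex to $\widetilde{W_r\Omega}_{\frak X} = L\eta_{[\zeta_{p^r}]-1}(R\nu_\ast W_r(\hat\roi_X^+))$ is a quasi-isomorphism, one checks on stalks at points $x\in\frak X$: since $L\eta$ commutes with taking stalks (Lemma~\ref{lem:LEtaChangeTopos}, the flat-base-change statement for $L\eta$) and with filtered colimits (Corollary~\ref{cor:LetaExists}), both sides are computed by $L\eta_{[\zeta_{p^r}]-1}$ of $\varinjlim_{\frak U\ni x} R\Gamma(U_\sub{pro\'et},W_r(\hat\roi_X^+))$, and then taking $R\Gamma(\frak X,-)$ recovers $\widetilde{W_r\Omega}_R^\sub{pro\'et}$ on the left because $\widetilde{W_r\Omega}_R \otimes_{W_r(R)}^{\mathbb L}W_r(\roi_{\frak X})$ is a (bounded, using Theorem~\ref{thm:AOmegavsdRWLocal}(i) and Lemma~\ref{lem:qdRvsAOmega}, so quasi-coherent in the appropriate sense) complex with $R\Gamma$ equal to $\widetilde{W_r\Omega}_R$.

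The main obstacle is not really a single hard step but rather making sure all the ``almost'' bookkeeping is consistent: one must verify that the almost purity input survives application of the functor $W_r$ (so that the cone is killed by $W_r(\frak m)$, not merely by some smaller ideal), and that Lemma~\ref{lem:LetaActualIsom}(ii) genuinely applies with $I = W_r(\frak m)$ — i.e.\ that the relevant ``no almost zero elements'' statement of Lemma~\ref{lem:propWrOmega}(iii) is exactly matched to this ideal. Once the local computation of $R\Gamma_\sub{cont}(\Gamma,W_r(R_\infty))$ from Lemma~\ref{lem:propWrOmega} is in hand, everything else is a mechanical transcription of the $r=1$ argument.
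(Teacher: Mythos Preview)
Your proposal is correct and follows essentially the same route as the paper: apply Lemma~\ref{lem:LetaActualIsom} with $A=W_r(\roi)$, $I=W_r(\frak m)$, $f=[\zeta_{p^r}]-1$, using Lemma~\ref{lem:propWrOmega}(iii) to verify the hypotheses on $C=R\Gamma_\sub{cont}(\Gamma,W_r(R_\infty))$, and then handle the last map by the sheaf-level argument of Corollary~\ref{cor:alltildeOmegasame}(iv) via Lemma~\ref{lem:WrOmegaBC}. Your extra care about the ``almost'' bookkeeping (that $W_r$ of an almost quasi-isomorphism is almost with respect to $W_r(\frak m)$) is well-placed but not problematic.
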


\begin{proof} Let $C=R\Gamma_\sub{cont}(\Gamma,W_r(R_\infty))$, and let $D$ be either of
\[
R\Gamma(X_\sub{prof\'et},W_r(\hat{\roi}_X^+))\ ,\ R\Gamma(X_\sub{pro\'et},W_r(\hat{\roi}_X^+))\ ,
\]
where $X$ is the generic fibre of $\frak X = \Spf R$. Then the map $g: C\to D$ is an almost quasi-isomorphism, and hence by Lemma~\ref{lem:LetaActualIsom} applied with $A=W_r(\roi)$, $I=W_r(\frak m)$, $f=[\zeta_{p^r}]-1$, the induced map $L\eta_{[\zeta_{p^r}]-1} g$ is a quasi-isomorphism, as by Lemma~\ref{lem:propWrOmega}, $C$ satisfies the necessary hypothesis.

For the comparison to $R\Gamma(\frak X,\widetilde{W_r\Omega}_{\frak X})$, we look at the map
\[
\widetilde{W_r\Omega}_R\hat{\otimes}_{W_r(R)} W_r(\roi_{\frak X})\to \widetilde{W_r\Omega}_{\frak X}\ .
\]
The same arguments as in the proof of Corollary~\ref{cor:alltildeOmegasame} (iv) show that this is a quasi-isomorphism in $D(\frak X_\sub{Zar})$, using Lemma~\ref{lem:WrOmegaBC}. Passing to global sections gives the result.
\end{proof}

Finally, we prove part (iii) of Theorem~\ref{thm:AOmegavsdRWLocal}. Once more, we need a lemma that $L\eta_\mu$ turns certain almost quasi-isomorphisms into quasi-isomorphisms. Recall that the ideal $W(\frak m^\flat)\subset A_\inf$ does not in general satisfy $W(\frak m^\flat)^2 = W(\frak m^\flat)$, so we have to be careful about the meaning of ``almost'' here.

\begin{lemma}\label{lem:LEtaQuasiIsomAinf} Let $f: C\to D$ be a map of derived $p$-complete complexes in $D(A_\inf)$, and assume that the following conditions are satisfied.
\begin{enumerate}
\item The morphism $f\dotimes_{\bb Z_p} \bb F_p$ in $D(\roi^\flat)$ is an almost quasi-isomorphism.
\item For all $i\in \bb Z$, the map $H^i(L\eta_\mu f): H^i(L\eta_\mu C)\to H^i(L\eta_\mu D)$ is injective.
\item For all $i\in \bb Z$, one has
\[
\bigcap_{m\in W(\frak m^\flat), m|\mu} \frac{\mu}m H^i(C) = \mu H^i(C)\ .
\]
\end{enumerate}
Then $L\eta_\mu f: L\eta_\mu C\to L\eta_\mu D$ is a quasi-isomorphism.
\end{lemma}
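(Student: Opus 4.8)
The strategy is to reduce the claimed quasi-isomorphism to a statement that can be checked modulo $p$, using the hypothesis that $L\eta_\mu f$ is injective on cohomology together with the almost quasi-isomorphism modulo $p$. First I would form the cone $E = \operatorname{cone}(L\eta_\mu f)$, which is derived $p$-complete since $L\eta_\mu$ preserves derived $p$-completeness (Lemma~\ref{lem:Letapreservecompleteness}) and the cone of a map of derived $p$-complete complexes is derived $p$-complete (Lemma~\ref{lem:derivedcompletecohom}). By condition (ii), the long exact sequence breaks into short exact sequences
\[
0\to H^i(L\eta_\mu C)\to H^i(L\eta_\mu D)\to H^i(E)\to 0\ ,
\]
so $E$ is concentrated in a single ``cohomological layer'' in the sense that each $H^i(E)$ is a quotient of $H^i(L\eta_\mu D)$ by the image of $H^i(L\eta_\mu C)$. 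It suffices to show $E\simeq 0$, i.e.\ each $H^i(E) = 0$; since $E$ is derived $p$-complete and $H^i(E)$ is then derived $p$-complete, it is enough by Nakayama-type reasoning (derived $p$-completeness: a derived $p$-complete module $M$ with $M/p = 0$ and $M[p]$ derived $p$-complete already forces $M=0$, using that $M = R\lim M/p^n$) to show that $E\dotimes_{\bb Z_p}\bb F_p$ is acyclic.

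Now the key point is to compute $E\dotimes_{\bb Z_p}\bb F_p$ via the Bockstein description of $L\eta_\mu$ modulo $\mu$ combined with the relation between reduction mod $p$ and reduction mod $\mu$. The natural approach is: by Proposition~\ref{prop:LetaBock}, $(L\eta_\mu C)\dotimes_{A_\inf} A_\inf/\mu$ is computed by the Bockstein complex $H^\bullet(C/\mu)$ with terms $H^i(C\dotimes_{A_\inf}A_\inf/\mu)\otimes\mu^{\otimes i}$, and similarly for $D$; so $E\dotimes_{A_\inf}A_\inf/\mu$ is the cone of the induced map of Bockstein complexes. Since $A_\inf/\mu \cong W(\roi)$ modulo the spherical-completeness subtleties (Lemma~\ref{lem:mapAinfWitt}) and more importantly $A_\inf/\mu$ is $p$-torsion-free up to a controlled cokernel, I would then further reduce mod $p$: the composite reduction $A_\inf \to A_\inf/\mu \to (A_\inf/\mu)/p$ should be arranged so that $E/p$ is controlled by $E\dotimes A_\inf/(p,\mu) \cong E\dotimes \roi^\flat/(\mu\bmod p) = E\dotimes\roi^\flat/(\epsilon-1)$, modulo bounded torsion, and here the almost quasi-isomorphism hypothesis (i) together with condition (iii) enters. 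Concretely, hypothesis (iii) is exactly what is needed to run the argument of Lemma~\ref{lem:LetaActualIsom}(i) in the $A_\inf$-setting with ideal $I = W(\frak m^\flat)$ and $f = \mu$: it guarantees that the ``saturation'' of $\mu H^i(C)$ inside $H^i(C)$ with respect to divisors of $\mu$ in $W(\frak m^\flat)$ is no larger than $\mu H^i(C)$ itself, so that the almost-isomorphism on $H^i(C/p)$ propagates to an honest isomorphism after applying $L\eta_\mu$.

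More precisely, the core calculation I would carry out is the following avatar of Lemma~\ref{lem:LetaActualIsom}(i): suppose $M\to N$ is a map of $A_\inf$-modules whose kernel and cokernel are killed by $W(\frak m^\flat)$ (which is what hypothesis (i) gives on $H^i(-/p)$ after unwinding, using that $\roi^\flat = A_\inf/p$ and $\frak m^\flat$ is the relevant almost-ideal), and suppose $M$ satisfies $\bigcap_{m\in W(\frak m^\flat),\,m\mid\mu}\frac\mu m M = \mu M$ and has no $W(\frak m^\flat)$-torsion; then $M/M[\mu]\to N/N[\mu]$ is an isomorphism. The proof mimics the principal-ideal case: injectivity of $M\to N$ on the $\mu$-saturation follows from the no-torsion assumption, and surjectivity of $\mu M \to \mu N$ follows because any $n\in N$ with $\mu n$ hit by $\mu M$ differs from an element of $M$ by $W(\frak m^\flat)$-torsion, and one uses condition (iii) to conclude $\mu n\in \mu M$; here the subtlety that $W(\frak m^\flat)^2\neq W(\frak m^\flat)$ is handled precisely by the intersection hypothesis (iii), which is why it is stated with the intersection over all divisors $m\mid\mu$ in $W(\frak m^\flat)$ rather than a naive ``almost'' statement. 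Applying this to $M = H^i(C)$, $N = H^i(D)$ — noting $H^i(C)[\mu]$ and $H^i(D)[\mu]$ govern the cohomology of $L\eta_\mu$ via Lemma~\ref{lem:CohomLeta} — yields $H^i(L\eta_\mu C)\cong H^i(L\eta_\mu D)$ for all $i$, so $E\simeq 0$.

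\textbf{Main obstacle.} The hard part is not the formal reduction but verifying that hypothesis (i), which is phrased in terms of an almost quasi-isomorphism of the mod-$p$ reductions over $\roi^\flat$, actually delivers the statement ``$\ker$ and $\coker$ of $H^i(C)\to H^i(D)$ are killed by $W(\frak m^\flat)$'' that one needs at the integral level of $A_\inf$-modules. This requires propagating an almost-isomorphism up a $p$-adic tower: a priori it gives that the cone of $f$ has cohomology killed by $[\frak m^\flat]$, and one must upgrade this to $W(\frak m^\flat)$-torsion using derived $p$-completeness, exactly as in the proof of Theorem~\ref{thm:etalevsAinfcohom}, where the derived $p$-completion of $A_\inf/[\frak m^\flat]$ was identified with $W(k)$. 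Keeping track of the interplay between this derived-$p$-completeness argument, the Bockstein computation of $L\eta_\mu \bmod \mu$, and the delicate intersection hypothesis (iii) — and in particular making sure no hidden $p$-torsion or $\mu$-torsion sneaks in when passing between $A_\inf$, $A_\inf/\mu$, and $\roi^\flat$ — is where the real care is needed; condition (ii) is used precisely to sidestep having to analyze $D$ at all beyond what (i) provides.
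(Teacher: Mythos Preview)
Your proposal has a genuine gap in the surjectivity step. You try to reduce to an ``avatar of Lemma~\ref{lem:LetaActualIsom}(i)'' applied to $M=H^i(C)\to N=H^i(D)$, but that avatar needs two inputs you do not have: that $H^i(C)$ has no $W(\frak m^\flat)$-torsion, and that $\ker/\coker$ of $\alpha:H^i(C)\to H^i(D)$ are killed by $W(\frak m^\flat)$. The first is simply not a hypothesis of the lemma; the second is what you flag as the ``main obstacle,'' and while something like it can be extracted from derived $p$-completeness, you would still need the analogue of ``$M/\mu$ has no almost-zero elements'' to run the Lemma~\ref{lem:LetaActualIsom} surjectivity argument, and condition~(iii) is \emph{not} that statement. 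Your actual description of surjectivity (``any $n\in N$ with $\mu n$ hit by $\mu M$ differs from an element of $M$ by $W(\frak m^\flat)$-torsion, and one uses condition~(iii)\ldots'') is circular and does not explain how (iii) enters.

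The paper's argument sidesteps all of this with one idea you are missing: rather than bounding $\ker/\coker$ of $\alpha$, it constructs, for each $m\in W(\frak m^\flat)$, a map $\tilde m:D\to C$ whose composite with $f$ on either side is multiplication by~$m$. This comes from observing that tensoring the mod-$p^r$ map with the flat ideal $W_r(\frak m^\flat)$ turns the almost quasi-isomorphism into an honest one (so it has an inverse), and then composing with the inclusion $W_r(\frak m^\flat)\hookrightarrow W_r(\roi^\flat)$; passing to the limit over~$r$ uses derived $p$-completeness. With $\tilde m$ in hand, surjectivity of $\beta$ is immediate from~(iii): for $x\in H^i(D)$ one has $\tilde m(x)\in H^i(C)$ for every such~$m$, hence $\tilde\mu(x)=\tfrac{\mu}{m}\tilde m(x)\in\tfrac{\mu}{m}H^i(C)$ for all~$m$, so $\tilde\mu(x)\in\mu H^i(C)$ by~(iii); subtracting the corresponding element of the image of $\alpha$ reduces to $\mu x=0$, i.e.\ $x\in H^i(D)[\mu]$, which surjects onto $\coker\alpha$. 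This is exactly how (iii) is designed to be used, and it requires no torsion-freeness assumption on $H^i(C)$.
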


\begin{proof} We need to show that for all $i\in \bb Z$, the map
\[
\beta: H^i(L\eta_\mu C) = H^i(C)/H^i(C)[\mu]\to H^i(D)/H^i(D)[\mu] = H^i(L\eta_\mu D)
\]
is an isomorphism; let
\[
\alpha: H^i(C)\to H^i(D)
\]
be the map inducing $\beta$. By assumption (ii), $\beta$ is injective.

To prove surjectivity of $\beta$, we have to see that the map
\[
H^i(D)[\mu]\to \coker \alpha
\]
is surjective. For this, we observe first that for all $r\geq 1$, the map
\[
f\dotimes_{\bb Z_p} \bb Z/p^r \bb Z: C\dotimes_{\bb Z_p} \bb Z/p^r \bb Z\to D\dotimes_{\bb Z_p} \bb Z/p^r \bb Z
\]
is an almost quasi-isomorphism with respect to the ideal $W_r(\frak m^\flat)\subset W_r(\roi^\flat)$. This implies that the induced map
\[
W_r(\frak m^\flat)\otimes_{W_r(\roi^\flat)} (C\dotimes_{\bb Z_p} \bb Z/p^r \bb Z)\to W_r(\frak m^\flat)\otimes_{W_r(\roi^\flat)} (D\dotimes_{\bb Z_p} \bb Z/p^r \bb Z)
\]
is a quasi-isomorphism. In particular, there is a map
\[
W_r(\frak m^\flat)\otimes_{W_r(\roi^\flat)} (D\dotimes_{\bb Z_p} \bb Z/p^r \bb Z)\cong W_r(\frak m^\flat)\otimes_{W_r(\roi^\flat)} (C\dotimes_{\bb Z_p} \bb Z/p^r \bb Z)\to C\dotimes_{\bb Z_p} \bb Z/p^r \bb Z\ .
\]
Thus, for any element $m\in W(\frak m^\flat)$, there is a canonical map
\[
\tilde{m}: D\dotimes_{\bb Z_p} \bb Z/p^r \bb Z\to C\dotimes_{\bb Z_p} \bb Z/p^r \bb Z
\]
whose composite with $f\dotimes_{\bb Z_p} \bb Z/p^r \bb Z$ (on either side) is multiplication by $m$. Passing to the limit over $r$, using that $C$ and $D$ are $p$-complete, we get a canonical map $\tilde{m}: D\to C$ whose composite with $f$ (on either side) is multiplication by $m$.

Now, pick any element $\bar{x}\in \coker \alpha$, and lift it to $x\in H^i(D)$. We claim that
\[
\tilde{\mu}(x)\in \bigcap_{m\in W(\frak m^\flat), m|\mu} \frac{\mu}m H^i(C)\subset \mu H^i(C)\ .
\]
Indeed, for any $m\in W(\frak m^\flat)$, we have $\tilde{m}(x)\in H^i(C)$, and then $\tilde{\mu}(x)= \frac{\mu}m \tilde{m}(x)\in \frac{\mu}m H^i(C)$. By assumption (iii), we get that $\tilde{\mu}(x)\in \mu H^i(C)$, so after subtracting (the image in $H^i(D)$ of) an element of $H^i(C)$ from $x$, we may assume that $\tilde{\mu}(x) = 0$, so that in particular $\mu x=0$, i.e.~$x\in H^i(D)[\mu]$. Thus, $H^i(D)[\mu]\to \coker \alpha$ is surjective, finishing the proof.
\end{proof}

\begin{lemma}\label{lem:niceintersection} Let
\[
C=R\Gamma_\sub{cont}(\Gamma,\bb A_\inf(R_\infty))\in D(A_\inf)\ .
\]
Then for all $i\in \bb Z$, the intersection
\[
\bigcap_{m\in W(\frak m^\flat), m|\mu} \frac{\mu}m H^i(C) = \mu H^i(C)\ .
\]
\end{lemma}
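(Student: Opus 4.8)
The strategy is to compute $H^i(C)$ explicitly enough to understand its $\mu$-divisibility structure using the decomposition already recorded in Lemma~\ref{lem:propWrOmega}. First I would invoke the framing to write $\bb A_\inf(R_\infty) = A(R)^\square \hat\oplus \bb A_\inf(R_\infty)^{\sub{nonint}}$ as a $\Gamma$-equivariant decomposition (as in the proof of Lemma~\ref{lem:qdRvsAOmega}), and then decompose $\bb A_\inf(R_\infty)$ further along exponents into a completed direct sum of rank-one pieces $A(R)^\square \cdot \prod_i [T_i^{\flat}]^{a_i}$ for $a_i \in \bb Z[\tfrac 1p] \cap [0,1)$. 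Because $\Gamma$-cohomology is computed by the Koszul complex, this exhibits $C$ as a completed direct sum of Koszul complexes $K_{A(R)^\square}([\epsilon^{a_1}]-1,\ldots,[\epsilon^{a_d}]-1)$ over $A_\inf$, where $A(R)^\square$ is topologically free over $A_\inf$ (by the analogue of Lemma~\ref{lem:topfree}).

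Next I would compute $H^i$ of each such Koszul complex. Each generator $[\epsilon^{a_j}]-1$ is, up to a unit, of the form $\phi^{-s_j}(\mu) = [\epsilon^{1/p^{s_j}}]-1$ where $s_j$ is (the valuation of) the denominator of $a_j$; here I use that $[\epsilon^{b/p^s}]-1 = \frac{[\epsilon]^{1/p^s} - 1}{\text{(unit)}} \cdot (\text{stuff})$ — more precisely $[\epsilon^{b/p^s}]-1$ and $[\epsilon^{1/p^s}]-1 = \phi^{-s}(\mu)$ generate the same ideal in $A_\inf$ since $b \in \bb Z \setminus p\bb Z$ (this is the valuation consideration used in Example~\ref{examples_roots_of_unity} and Lemma~\ref{lem:mapAinfWitt}). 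So each Koszul complex is, up to isomorphism, $K_{A(R)^\square}(\phi^{-s_1}(\mu),\ldots,\phi^{-s_d}(\mu))$. Its cohomology, by Lemma~\ref{lemma_on_Koszul_2}(ii), is a direct sum of copies of $A(R)^\square$-modules of the form $\Ann(\phi^{-s}(\mu))$ and $A(R)^\square/\phi^{-s}(\mu)$ for various $s \geq 0$ appearing among the $s_j$; since $A(R)^\square$ is $\phi^{-s}(\mu)$-torsion-free (each $\phi^{-s}(\mu)$ is a non-zero-divisor in $A_\inf$, hence in the flat extension $A(R)^\square$), the annihilator terms vanish, and each cohomology group of each Koszul piece is a finite direct sum of copies of $A(R)^\square / \phi^{-s}(\mu)$ for $s$ ranging over a finite set. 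Then $H^i(C)$ is the classical $p$-adic completion of a direct sum of such modules, using Lemma~\ref{lem:cohomcompleteddirectsum} (the hypotheses there hold: each $A(R)^\square/\phi^{-s}(\mu)$ is classically $p$-complete and its $\mu$-power-torsion is bounded uniformly, since $\mu$-torsion in $A(R)^\square/\phi^{-s}(\mu)$ is killed by a fixed power depending only on $\gcd$-type relations among the $\phi^{-t}(\mu)$).

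Finally, with $H^i(C)$ identified as a completed direct sum $\widehat{\bigoplus}_\alpha A(R)^\square/\phi^{-s_\alpha}(\mu)$ (with $s_\alpha$ bounded as $\alpha$ varies within each isotypic block but possibly $\to \infty$ across blocks), the claimed intersection identity reduces — compatibly with completed direct sums, since all operations in sight commute with the completion by the boundedness of the relevant torsion — to the analogous statement for a single module $A(R)^\square/\phi^{-s}(\mu)$, and then, by flatness of $A_\inf \to A(R)^\square$ and topological freeness, to the statement in $A_\inf$ itself: for each fixed $s$,
\[
\bigcap_{\substack{m \in W(\frak m^\flat)\\ m \mid \mu}} \frac{\mu}{m}\,\big(A_\inf/\phi^{-s}(\mu)\big) = \mu\,\big(A_\inf/\phi^{-s}(\mu)\big).
\]
Reducing mod $p$ this becomes a valuation-theoretic statement in $\roi^\flat/(\epsilon^{1/p^s}-1)$: one must show $\bigcap_m \frac{\epsilon-1}{m}\roi^\flat \subseteq (\epsilon-1)\roi^\flat + (\epsilon^{1/p^s}-1)\roi^\flat$ where $m$ runs over elements of $\frak m^\flat$ dividing $\epsilon - 1$, which follows because the valuations of such $m$ can be taken arbitrarily small positive, while $(\epsilon-1)$ and $(\epsilon^{1/p^s}-1)$ have fixed positive valuations, exactly as in the proof of Lemma~\ref{lem:mapAinfWitt}; one then lifts from mod $p$ to $A_\inf$ using $p$-adic completeness and that $(p, \phi^{-s}(\mu))$ is a regular sequence. \textbf{The main obstacle} I anticipate is bookkeeping: making precise that the intersection over the (non-finitely-generated, growing) family of $m$'s commutes with the completed direct sum and that the bound on $\mu$-torsion is uniform within isotypic blocks — i.e. carefully transferring the single-module computation through the structure established via Lemma~\ref{lem:cohomcompleteddirectsum}. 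Once the explicit structure of $H^i(C)$ is in hand, the valuation estimate itself is routine and local.
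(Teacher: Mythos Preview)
Your decomposition of $C$ into Koszul complexes is wrong at a crucial point, and the gap is not merely bookkeeping. The endomorphism $\gamma_j - 1$ acting on the piece $A(R)^\square\cdot\prod_i [T_i^\flat]^{a_i}$ is \emph{not} multiplication by the scalar $[\epsilon^{a_j}]-1$: at the $A_\inf$-level the action of $\gamma_j$ on $A(R)^\square$ itself is nontrivial (it sends $U_j\mapsto [\epsilon]U_j$, and is only trivial modulo $\mu$). So the Koszul generator on the $a$-piece is the endomorphism $\gamma_j[\epsilon]^{a_j}-1$, not the scalar $[\epsilon]^{a_j}-1$. Your reference to Lemma~\ref{lem:propWrOmega} is misleading here: that lemma works at the $W_r(\roi)$-level with a different normalization, under which $\gamma_j$ fixes $[T_j]\in W_r(R)$; there is no such simplification over $A_\inf$.

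This distinction is fatal for the integral part $a=0$. There you would obtain $K_{A(R)^\square}(0,\ldots,0)$ with cohomology $(A(R)^\square)^{\binom{d}{n}}$, whereas the actual complex is $K_{A(R)^\square}(\gamma_1-1,\ldots,\gamma_d-1)$, whose cohomology is \emph{not} a direct sum of modules of the form $A(R)^\square/\phi^{-s}(\mu)$ for any $s$ (for instance, $H^0$ is the $\Gamma$-invariants, which already for the torus is $A_\inf$, not $A(R)^\square$). The paper treats $C^\sub{int}$ by an entirely different argument: since each $\gamma_j-1$ is divisible by $\mu$, the differentials of the Koszul complex are $\mu$-divisible, hence $H^i(C^\sub{int})/\mu\cong Z^i(C^\sub{int})/\mu$; one then checks the intersection identity for the cocycles $Z^i$, which sit inside topologically free $A_\inf$-modules, where the claim reduces to Lemma~\ref{lem:mapAinfWitt}. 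For the nonintegral part your final shape of the cohomology is correct, but your route to it via Lemma~\ref{lemma_on_Koszul_2}(ii) with scalar generators does not apply directly; the paper first uses the homotopy $h$ from the proof of Lemma~\ref{lem:qdRvsAOmega} to show that the image of the non-scalar endomorphism $\gamma_1[\epsilon]^{a(1)}-1$ is exactly $([\epsilon]^{1/p^r}-1)A(R)^\square$, and only then invokes Lemma~\ref{lemma_on_Koszul_2}(ii) for the commutative ring of endomorphisms generated by these operators.
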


We note that it is actually not so easy to find many elements $m\in W(\frak m^\flat)$ with $m|\mu$. The only elements we know are the $\varphi^{-r}(\mu)$, and we will only use these elements in the proof. In particular, we do not know whether one can write $\mu$ as a product of two elements in $W(\frak m^\flat)$.

\begin{proof} We will freely make use of
\[
\bigcap_{m\in W(\frak m^\flat), m|\mu} \frac{\mu}m A_\inf = \mu A_\inf\ ,
\]
cf.~Lemma~\ref{lem:mapAinfWitt}. We may decompose $C=C^\sub{int}\oplus C^\sub{nonint}$ according to the decomposition
\[
\bb A_\inf(R_\infty) = \bb A_\inf(R_\infty)^\sub{int}\oplus \bb A_\inf(R_\infty)^\sub{nonint}
\]
from the proof of Proposition~\ref{lem:qdRvsAOmega}.

We handle first the non-integral part $C^\sub{nonint}$. This can be written as a completed direct sum of complexes of the form
\[
K_{A(R)^\square}(\gamma_1[\epsilon]^{a(1)}-1,\ldots,\gamma_d[\epsilon]^{a(d)}-1)\ ,
\]
where $a(1),\ldots,a(d)\in \bb Z[\tfrac 1p]\cap [0,1)$, not all $0$. We compute the cohomology groups of each of the summands. Permuting the coordinates, we may assume that $a(1)=m/p^r$ has the largest denominator $p^r$. The argument for existence of $h$ in the proof of Proposition~\ref{lem:qdRvsAOmega} shows that $\gamma_1[\epsilon]^{a(1)}-1$ has image precisely $[\epsilon]^{1/p^r}-1$. Moreover, the image of $\gamma_i[\epsilon]^{a(i)}-1$ is contained in the image of $[\epsilon]^{1/p^r}-1$, as $\gamma_i\equiv 1\mod \mu$.

Applying Lemma~\ref{lemma_on_Koszul_2} (ii) for the commutative algebra of endomorphisms of $A(R)^\square$ generated by $g=[\epsilon]^{1/p^r}-1$, $g_i=\gamma_i[\epsilon]^{a(i)}-1$ and $\tfrac{g_i}g$ shows that
\[
H^i(K_{A(R)^\square}(\gamma_1[\epsilon]^{a(1)}-1,\ldots,\gamma_d[\epsilon]^{a(d)}-1))
\]
can be written as a finite direct sum of copies of $A(R)^\square/([\epsilon]^{1/p^r}-1)$. This is a topologically free $A_\inf/([\epsilon]^{1/p^r}-1)$-module. It follows that the cohomology groups of $C^\sub{nonint}$ are a $p$-adically completed direct sum of copies of $A_\inf/([\epsilon]^{1/p^r}-1)$ for varying $r\geq 1$. Thus, by Lemma \ref{lem:cohomcompleteddirectsum}, it suffices to prove the similar assertion for $A_\inf/([\epsilon]^{1/p^r}-1)$, which is easy.

It remains to handle the integral part
\[
C^\sub{int} = K_{A(R)^\square}(\gamma_1-1,\ldots,\gamma_d-1)\ .
\]
Here, we note that all $\gamma_i-1$ are divisible by $\mu$. This implies that $H^i(C^\sub{int})/\mu$ is isomorphic to $Z^i(C^\sub{int})/\mu$. Thus, it remains to prove that
\[
\bigcap_{m\in W(\frak m^\flat), m|\mu} \frac{\mu} m Z^i(C^\sub{int}) = \mu Z^i(C^\sub{int})\ .
\]
But as the cocycles form a submodule of the corresponding term of $K_{A(R)^\square}(\gamma_1-1,\ldots,\gamma_d-1)$, which is a complex of $\mu$-torsion-free modules, it suffices to prove the similar result for the terms of the Koszul complex. Now any term is a topologically free $A_\inf$-module, for which the claim is known.
\end{proof}

\begin{proposition}\label{prop:AOmegaallthesame} The canonical maps
\[
A\Omega^\square_R\to A\Omega^\sub{prof\'et}_R\to A\Omega^\sub{pro\'et}_R\to R\Gamma(\frak X,A\Omega_{\frak X})
\]
are quasi-isomorphisms.
\end{proposition}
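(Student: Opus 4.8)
The plan is to prove the three quasi-isomorphisms in turn, mimicking the structure of the proof of Corollary~\ref{cor:AOmegavsdRWLocalPart2}, but now working over $A_\inf$ rather than $W_r(\roi)$ and replacing the elementary Lemma~\ref{lem:LetaActualIsom} with its more delicate $A_\inf$-counterpart, Lemma~\ref{lem:LEtaQuasiIsomAinf}. First I would treat the maps $A\Omega^\square_R\to A\Omega^\sub{prof\'et}_R\to A\Omega^\sub{pro\'et}_R$, i.e.\ I would set $C = R\Gamma_\sub{cont}(\Gamma,\bb A_\inf(R_\infty))$ and let $D$ be either of $R\Gamma(X_\sub{prof\'et},\bb A_{\inf,X})$ or $R\Gamma(X_\sub{pro\'et},\bb A_{\inf,X})$. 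The map $f\colon C\to D$ is an almost quasi-isomorphism by Faltings's almost purity theorem (via \cite[Lemma~4.10~(v)]{ScholzePAdicHodge}), so in particular $f\dotimes_{\bb Z_p}\bb F_p$ is an almost quasi-isomorphism over $\roi^\flat$, giving hypothesis (i) of Lemma~\ref{lem:LEtaQuasiIsomAinf}. Hypothesis (iii) is exactly the content of Lemma~\ref{lem:niceintersection}. The one remaining point is hypothesis (ii): injectivity of $H^i(L\eta_\mu f)$ for all $i$. I would deduce this by a reduction modulo $\xi$ (or modulo $\tilde\xi_r$ for all $r$, then passing to the limit): by Proposition~\ref{prop:LetaBock}, $L\eta_\mu C\dotimes_{A_\inf}A_\inf/\mu$ has the distinguished representative given by the Bockstein complex $H^\bullet(C/\mu)$, whose terms are $H^i(C\dotimes A_\inf/\mu)\{i\}$; and $f\dotimes A_\inf/\mu$ is again an almost quasi-isomorphism, so on the cohomology level the relevant maps have kernel and cokernel killed by $W(\frak m^\flat)$, while by the analogue of Lemma~\ref{lem:propWrOmega}(iii) (proved above for the $W_r$-truncations and assembled by the $\projlim_F$-description $A_\inf = \projlim_F W_r(\roi)$, $\bb A_\inf(R_\infty) = \projlim_F W_r(R_\infty)$) the modules $H^i(C)$ and $H^i(C)/\mu$ have no $W(\frak m^\flat)$-torsion. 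Lemma~\ref{lem:LetaActualIsom}(i), or a direct argument, then shows $\beta$ is injective, which is hypothesis (ii).

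Concretely, for hypothesis (ii) I would argue as follows. The cohomology groups $H^i(C)$ are computed by the $\mu$-decomposition $C = C^\sub{int}\oplus C^\sub{nonint}$ from the proof of Lemma~\ref{lem:qdRvsAOmega}: $C^\sub{nonint}$ has $L\eta_\mu$ trivial, and on $C^\sub{int}$ all differentials are divisible by $\mu$, so $L\eta_\mu C^\sub{int} = q\mathrm{-}\Omega^\bullet_{A(R)^\square/A_\inf}$ is the $q$-de~Rham complex, which has $\mu$-torsion-free terms and whose reduction mod $\tilde\xi_r$ is the complex with cohomology groups $W_r\Omega^{i,\cont}_{R/\roi}\{-i\}$ (these are honestly the de~Rham--Witt groups once Theorem~\ref{thm:AOmegavsdRWLocal}(iv) is known, but for the present argument only their finite presentation over $W_r(\roi)$ and hence their lack of $W_r(\frak m)$-torsion (Corollary~\ref{cor:WrFinPresNoAlmostZero}) is needed). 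Passing to $\projlim_r$ along the Frobenius maps, exactly as in the reduction of Lemma~\ref{KeyLemma} to Lemma~\ref{KeyLemmaWr}, one obtains that $H^i(L\eta_\mu C)$ and its reduction mod $\mu$ are (derived $p$-complete, classically $p$-adically complete) $A_\inf$-modules with no $W(\frak m^\flat)$-torsion, and that the comparison maps are injective; here Lemma~\ref{lem:Letacommutecompletion} is used to commute $L\eta_\mu$ past the $p$-adic and $\mu$-adic completions in the completed direct sums. With all three hypotheses of Lemma~\ref{lem:LEtaQuasiIsomAinf} verified, $L\eta_\mu f$ is a quasi-isomorphism, giving the first two arrows.

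For the last arrow $A\Omega^\sub{pro\'et}_R\to R\Gamma(\frak X,A\Omega_{\frak X})$, I would repeat the sheaf-theoretic argument of Corollary~\ref{cor:alltildeOmegasame}(iv) and Corollary~\ref{cor:AOmegavsdRWLocalPart2}: one has a natural map of complexes of sheaves
\[
A\Omega_R\,\hat{\otimes}_{\bb A_\inf(R)}\,\bb A_{\inf,\frak X}\longrightarrow A\Omega_{\frak X}
\]
on $\frak X_\sub{Zar}$ (the left side being a perfect complex of $A_\inf$-modules on $\frak X$ with $R\Gamma$ equal to $A\Omega_R$), and it suffices to check this is a quasi-isomorphism on stalks. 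Using that $L\eta_\mu$ commutes with filtered colimits (Corollary~\ref{cor:LetaExists}) and with pullback along the points of the topos (Lemma~\ref{lem:LEtaChangeTopos}), together with the base-change statement $A\Omega^{\square'}_{R'} = A\Omega^\square_R\,\hat{\otimes}_{\bb A_\inf(R)}\,\bb A_\inf(R')$ for formally \'etale $R\to R'$ (which follows from the $W_r$-version, Lemma~\ref{lem:WrOmegaBC}, by $\projlim_r$, or directly from Lemma~\ref{lem:Letacommutecompletion} plus flatness of $W_r(R_0)\to W_r(R_0')$ applied to the $A_\inf$-level tower), the stalk of the left side at $x\in\frak X$ is $\varinjlim_{\frak U\ni x} L\eta_\mu R\Gamma(U_\sub{pro\'et},\bb A_{\inf,X}) = L\eta_\mu\varinjlim_{\frak U\ni x}R\Gamma(U_\sub{pro\'et},\bb A_{\inf,X})$, which is exactly the stalk of $A\Omega_{\frak X}$. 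Passing to global sections on $\frak X$ then yields the claim.

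I expect the main obstacle to be hypothesis (ii) of Lemma~\ref{lem:LEtaQuasiIsomAinf}, i.e.\ establishing injectivity of $H^i(L\eta_\mu f)$ without detailed control of the (only almost-understood) right-hand side $R\Gamma(X_\sub{pro\'et},\bb A_{\inf,X})$. The key is that this can be checked entirely on $C$: the combination of Proposition~\ref{prop:LetaBock} (which pins down $L\eta_\mu C/\mu$ as the Bockstein complex intrinsically) with the absence of $W(\frak m^\flat)$-torsion in $H^i(C)$ and $H^i(C)/\mu$ (obtained via the $W_r$-reductions and coherence of $W_r(\roi)$, Proposition~\ref{prop:WrCoherent} and Corollary~\ref{cor:WrFinPresNoAlmostZero}) forces the comparison map to be injective — this is precisely the ``miracle'' that $L\eta_\mu$ converts Faltings's almost quasi-isomorphism into an honest one while needing a priori information only on the explicit side. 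Everything else is a matter of carefully assembling the already-established $W_r$-statements and invoking the formal properties of $L\eta$ and completions collected in Section~6.
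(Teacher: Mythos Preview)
Your framework for the first two arrows is correct: apply Lemma~\ref{lem:LEtaQuasiIsomAinf} with $C = R\Gamma_\sub{cont}(\Gamma,\bb A_\inf(R_\infty))$, and hypotheses (i) and (iii) are handled exactly as you say. However, your treatment of hypothesis (ii) is muddled and does not match the paper. You propose verifying that $H^i(C)$ and $H^i(C)/\mu$ have no $W(\frak m^\flat)$-torsion and then invoking Lemma~\ref{lem:LetaActualIsom}, but you slide between this and statements about $H^i(L\eta_\mu C)$, and the ``assembling from $\projlim_F W_r$'' step is not carried out; in particular, controlling $H^i(C)/\mu$ directly over $A_\inf$ is nontrivial and you have not done it. (Note that if Lemma~\ref{lem:LetaActualIsom} applied directly, it would give an isomorphism and Lemma~\ref{lem:LEtaQuasiIsomAinf} would be unnecessary --- the fact that the paper introduces the latter is a sign that the former does not straightforwardly apply.) The paper's argument for (ii) is cleaner: one uses the commutative square
\[
\xymatrix{
L\eta_\mu C\ar[r]\ar[d] & L\eta_\mu D\ar[d]\\
R\projlim_r \widetilde{W_r\Omega}_R^\square\ar[r] & R\projlim_r \widetilde{W_r\Omega}_R^\sub{pro\'et},
}
\]
in which the left vertical map is a quasi-isomorphism by Lemma~\ref{lem:qdRvsAOmega} (since $A\Omega_R^\square/\tilde\xi_r\simeq \widetilde{W_r\Omega}_R^\square$) and the bottom horizontal map is a quasi-isomorphism by the already-established Corollary~\ref{cor:AOmegavsdRWLocalPart2}. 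Injectivity of $H^i(L\eta_\mu C)\to H^i(L\eta_\mu D)$ follows immediately. No statement about almost-zero elements in $H^i(C)/\mu$ is needed.

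Your argument for the last arrow contains a genuine gap. You form a sheaf ``$A\Omega_R\,\hat{\otimes}_{\bb A_\inf(R)}\,\bb A_{\inf,\frak X}$'' and then localize, but there is no ring $\bb A_\inf(R)$ here: $R$ is not perfectoid, and more to the point $A\Omega_R$ is not a module over $R$, $W_r(R)$, $A(R)^\square$, or any other ring that sheafifies over $\frak X_\sub{Zar}$ (the $q$-de~Rham differential is not linear over $A(R)^\square$). The paper explicitly flags this obstruction and proceeds differently: working on the presheaf topos $\frak X_\sub{Zar}^\sub{psh}$, one forms $A\Omega_{\frak X}^\sub{psh}$ (whose value on $\Spf R'$ is $A\Omega_{R'}^\sub{pro\'et}$), identifies $j^\ast A\Omega_{\frak X}^\sub{psh}=A\Omega_{\frak X}$ via Lemma~\ref{lem:LEtaChangeTopos}, and then shows $A\Omega_{\frak X}^\sub{psh}$ is already a sheaf by writing it as $R\projlim_r \widetilde{W_r\Omega}_{\frak X}^\sub{psh}$ (from the first part of the proof) and invoking the general fact that an $R\projlim$ of sheaves is a sheaf, together with Corollary~\ref{cor:AOmegavsdRWLocalPart2} for each $\widetilde{W_r\Omega}_{\frak X}^\sub{psh}$.
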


\begin{proof} Let $C=R\Gamma_\sub{cont}(\Gamma,\bb A_\inf(R_\infty))$, and let $D$ be either of
\[
R\Gamma(X_\sub{prof\'et},\bb A_{\inf,X})\ ,\ R\Gamma(X_\sub{pro\'et},\bb A_{\inf,X})\ ,
\]
so there is a natural map $f: C\to D$. We want to verify the conditions of Lemma~\ref{lem:LEtaQuasiIsomAinf}. Condition (i) is immediate from the almost purity theorem. Condition (iii) is the content of Lemma~\ref{lem:niceintersection}. It remains to prove that
\[
H^i(L\eta_\mu C)\to H^i(L\eta_\mu D)
\]
is injective. For this, we note that for each $r\geq 1$, there is a commutative diagram
\[\xymatrix{
L\eta_\mu C\ar[r]\ar[d] & L\eta_\mu D\ar[d]\\
\widetilde{W_r\Omega}_R^\square\ar[r] & \widetilde{W_r\Omega}_R^\sub{pro\'et}.
}\]
(More precisely, one has such a commutative diagram in the derived category of $\bb N$-indexed projective systems, where the upper row is regarded as a constant system.) Passing to the limit over $r$, we get a commutative diagram
\[\xymatrix{
L\eta_\mu C\ar[r]\ar[d] & L\eta_\mu D\ar[d]\\
R\projlim_r \widetilde{W_r\Omega}_R^\square\ar[r] & R\projlim_r \widetilde{W_r\Omega}_R^\sub{pro\'et}.
}\]
Now we note that by Lemma~\ref{lem:qdRvsAOmega}, the left vertical map is a quasi-isomorphism. By Corollary~\ref{cor:AOmegavsdRWLocalPart2}, the lower horizontal map is a quasi-isomorphism. Thus, looking at cohomology groups, we get the desired injectivity.

This shows that
\[
A\Omega_R^\square\simeq A\Omega_R^\sub{prof\'et}\simeq A\Omega_R^\sub{pro\'et}\ ;
\]
we denote them simply $A\Omega_R$ in the following. It remains to show that $A\Omega_R\simeq R\Gamma(\frak X,A\Omega_{\frak X})$. Previously, we argued by extending some variant of $A\Omega_R$ to (some kind of) a quasicoherent sheaf, and did the comparison on the sheaf level. However, $A\Omega_R$ is not a module over $R$, or any variant of $R$ (like $W_r(R)$), so this does not work here. Instead, we argue by reducing to the known case of $\widetilde{W_r\Omega}$ by an inverse limit argument.

Let $\frak X_\sub{Zar}^\sub{psh}$ be the presheaf topos on the set of affine opens $\Spf R^\prime\subset \frak X$. There is a map of topoi $j: \frak X_\sub{Zar}\to \frak X_\sub{Zar}^\sub{psh}$, where $j_\ast$ is the forgetful functor, and $j^\ast$ is the sheafification functor. We can form
\[
A\Omega_{\frak X}^\sub{psh} = L\eta_\mu R\nu^\sub{psh}_\ast \bb A_{\inf,X}\ ,
\]
where $\nu^\sub{psh} = j\circ \nu: X_\sub{pro\'et}\to \frak X_\sub{Zar}^\sub{psh}$. By Lemma~\ref{lem:LEtaChangeTopos}, the value of $A\Omega_{\frak X}^\sub{psh}$ on an affine open $\Spf R^\prime\subset \frak X$ is given by $A\Omega_{R^\prime}^\sub{pro\'et} = A\Omega_{R^\prime}$. Moreover, using Lemma~\ref{lem:LEtaChangeTopos} again, we have
\[
j^\ast A\Omega_{\frak X}^\sub{psh} = j^\ast L\eta_\mu Rj_\ast R\nu_\ast \bb A_{\inf,X} = L\eta_\mu j^\ast Rj_\ast R\nu_\ast \bb A_{\inf,X} = L\eta_\mu R\nu_\ast \bb A_{\inf,X} = A\Omega_{\frak X}\ ,
\]
i.e.~$A\Omega_{\frak X}$ is the sheafification of $A\Omega_{\frak X}^\sub{psh}$. By adjunction, we get a map
\[
A\Omega_{\frak X}^\sub{psh}\to Rj_\ast A\Omega_{\frak X} = Rj_\ast j^\ast A\Omega_{\frak X}^\sub{psh}\ ,
\]
which we want to prove is a quasi-isomorphism (as then on global sections, it gives the desired quasi-isomorphism $A\Omega_R\simeq R\Gamma(\frak X,A\Omega_{\frak X})$). In other words, we want to prove that $A\Omega_{\frak X}^\sub{psh}$ is already a sheaf. But as for any $\Spf R^\prime\subset \frak X$, we have
\[
A\Omega_{R^\prime} = R\projlim_r A\Omega_{R^\prime}/\tilde\xi_r = R\projlim_r \widetilde{W_r\Omega}_{R^\prime}\ ,
\]
we have an equality
\[
A\Omega_{\frak X}^\sub{psh} = R\projlim_r \widetilde{W_r\Omega}_{\frak X}^\sub{psh}\ ,
\]
for the evident definition of $\widetilde{W_r\Omega}_{\frak X}^\sub{psh}$. By Theorem~\ref{thm:AOmegavsdRWLocal} (iii), we know that $\widetilde{W_r\Omega}_{\frak X}^\sub{psh}$ is a sheaf, i.e.
\[
\widetilde{W_r\Omega}_{\frak X}^\sub{psh}\to Rj_\ast j^\ast \widetilde{W_r\Omega}_{\frak X}^\sub{psh}
\]
is a quasi-isomorphism. We conclude by using the following lemma, saying that an inverse limit of sheaves is a sheaf (which holds true in vast generality).

\begin{lemma} Let $C_r\in D(\frak X_\sub{Zar}^\sub{psh})$, $r\geq 1$, be a projective system, with homotopy limit $C=R\projlim C_r$. Assume that for each $r\geq 1$, $C_r$ is a sheaf, i.e. $C_r\to Rj_\ast j^\ast C_r$ is a quasi-isomorphism. Then $C$ is a sheaf, i.e.~$C\to Rj_\ast j^\ast C$ is a quasi-isomorphism.
\end{lemma}

\begin{proof} Let $\tilde{C}_r = j^\ast C_r$, and let $\tilde{C} = R \projlim \tilde{C}_r\in D(\frak X_\sub{Zar})$; we note that this is not a priori given by $j^\ast C$. There is a quasi-isomorphism $C\isoto Rj_\ast \tilde{C}$, given as a limit of the quasi-isomorphisms $C_r\isoto Rj_\ast \tilde{C}_r$. Applying $j^\ast$ shows that $j^\ast C\cong j^\ast Rj_\ast \tilde{C} = \tilde{C}$, and thus $C\cong Rj_\ast j^\ast C$ as desired.
\end{proof}
\end{proof}

\subsection{Further properties of $A\Omega$}

Let us end this section by noting several further properties of $A\Omega_{\frak X}$. First, the complex $A\Omega_R$ satisfies a K\"unneth formula.

\begin{lemma}\label{lem:AOmegaKunneth} Let $R_1$ and $R_2$ be small formally smooth $\roi$-algebras with completed tensor product $R=R_1\hat{\otimes}_\roi R_2$. Then the natural map
\[
A\Omega_{R_1}\hat{\dotimes}_{A_\inf} A\Omega_{R_2}\to A\Omega_R
\]
is a quasi-isomorphism.
\end{lemma}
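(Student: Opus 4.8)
The plan is to follow the proof of the Künneth formula for $\widetilde{\Omega}$ (Proposition~\ref{prop:kuenneth}), but with one essential change: since $A_\inf$ is not a valuation ring, we cannot invoke the symmetric monoidality of $L\eta_\mu$ (Proposition~\ref{prop:Letasymmmon}). Instead we pass to the explicit $q$-de~Rham model of Lemma~\ref{lem:qdRvsAOmega}, in which $L\eta_\mu$ has already been replaced by an honest complex; after that the statement becomes an elementary fact about (completed) tensor products of Koszul complexes.

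First I would fix framings $\square_j\colon \Spf R_j\to \widehat{\bb G}_m^{d_j}$ for $j=1,2$ and equip $R=R_1\hat{\otimes}_\roi R_2$ with the product framing $\square=\square_1\times\square_2\colon \Spf R\to \widehat{\bb G}_m^{d_1+d_2}$, which is again étale. By Theorem~\ref{thm:AOmegavsdRWLocal}~(iii) we may compute $A\Omega_{R_j}$ and $A\Omega_R$ as $A\Omega_{R_j}^{\square_j}$ and $A\Omega_R^{\square}$, and Lemma~\ref{lem:qdRvsAOmega} identifies these, compatibly with multiplicative structures, with the $q$-de~Rham complexes $q\op-\Omega^\bullet_{A(R_j)^{\square_j}/A_\inf}$ and $q\op-\Omega^\bullet_{A(R)^\square/A_\inf}$.

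Next I would check that $A(R)^\square$ is the $p$-adic completion of $A(R_1)^{\square_1}\otimes_{A_\inf}A(R_2)^{\square_2}$: this completion is $(p,\mu)$-adically complete, formally étale over $A_\inf\langle U_1^{\pm1},\dots,U_{d_1+d_2}^{\pm1}\rangle = A_\inf\langle U_1^{\pm1},\dots,U_{d_1}^{\pm1}\rangle\hat{\otimes}_{A_\inf} A_\inf\langle U_{d_1+1}^{\pm1},\dots,U_{d_1+d_2}^{\pm1}\rangle$, and reduces modulo $\xi$ to $R_1\hat{\otimes}_\roi R_2=R$, so by the uniqueness of formally étale lifts it agrees with $A(R)^\square$. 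Under this identification, the $q$-derivation $\partial_q/\partial_q\log U_i$ on $A(R)^\square$ acts through the first factor for $i\le d_1$ and through the second for $i>d_1$; hence, using the description of Koszul complexes as tensor products recalled in Section~\ref{sec:koszul}, the complex $q\op-\Omega^\bullet_{A(R)^\square/A_\inf}$ is, term by term and with its Koszul differential, the $p$-adically completed tensor product over $A_\inf$ of $q\op-\Omega^\bullet_{A(R_1)^{\square_1}/A_\inf}$ and $q\op-\Omega^\bullet_{A(R_2)^{\square_2}/A_\inf}$. One also checks that the natural map of the lemma corresponds to this identification.

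Finally I would argue that this completed tensor product of representatives computes the derived completed tensor product. Each term of $q\op-\Omega^\bullet_{A(R_j)^{\square_j}/A_\inf}$ is finite free over $A(R_j)^{\square_j}$, which is topologically free, hence flat, over $A_\inf$ (by the argument of Lemma~\ref{lem:topfree}, as used in the proof of Lemma~\ref{lem:niceintersection}); since the complexes are bounded they are $K$-flat over $A_\inf$, so the uncompleted tensor product $q\op-\Omega^\bullet_{A(R_1)^{\square_1}/A_\inf}\otimes_{A_\inf}q\op-\Omega^\bullet_{A(R_2)^{\square_2}/A_\inf}$ represents $A\Omega_{R_1}\dotimes_{A_\inf}A\Omega_{R_2}$. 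Its derived $p$-adic completion, which is $A\Omega_{R_1}\hat{\dotimes}_{A_\inf}A\Omega_{R_2}$, is computed by the termwise classical $p$-adic completion, since the terms are $p$-torsion-free (being flat over the $p$-torsion-free ring $A_\inf$) so the relevant ${\projlim}^1$-terms vanish; this termwise completion is exactly $q\op-\Omega^\bullet_{A(R)^\square/A_\inf}\simeq A\Omega_R$. The step requiring the most care is this last one — that passing to representatives, to (completed) tensor products, and to completions is compatible with the derived operations — which rests on the flatness of $A(R)^\square$ over $A_\inf$ together with the fact that, thanks to Lemma~\ref{lem:qdRvsAOmega}, no further interaction with $L\eta$ is needed.
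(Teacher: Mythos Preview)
Your argument is sound in outline but takes a very different route from the paper, and it leans on one step that is not justified.

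The paper's proof is a single reduction: both $A\Omega_{R_1}\hat{\dotimes}_{A_\inf}A\Omega_{R_2}$ and $A\Omega_R$ are derived $\tilde\xi$-complete, so it suffices to check the map is a quasi-isomorphism after applying $-\dotimes_{A_\inf}A_\inf/\tilde\xi$, where it becomes the K\"unneth formula for $\widetilde{\Omega}$ already established in Proposition~\ref{prop:kuenneth}. No explicit models, no flatness, no discussion of $L\eta$ at all---the entire content has been pushed back to the $\widetilde{\Omega}$ case.

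Your approach instead passes to the explicit $q$-de~Rham models and verifies the K\"unneth identity on the nose for those complexes. This is a perfectly reasonable strategy, and the identification $A(R)^\square \cong A(R_1)^{\square_1}\hat{\otimes}_{A_\inf}A(R_2)^{\square_2}$ together with the tensor decomposition of Koszul complexes does give the desired isomorphism of \emph{complexes}. The trouble is the passage to the derived category: you claim that $A(R_j)^{\square_j}$, being topologically free over $A_\inf$, is flat, hence the bounded complex $q\op-\Omega^\bullet_{A(R_j)^{\square_j}/A_\inf}$ is $K$-flat. But ``topologically free'' over the non-noetherian ring $A_\inf$ means a $(p,\mu)$-adically completed free module, and such completions are not known to be flat; the paper never asserts this, and the reference to Lemma~\ref{lem:niceintersection} only uses topological freeness for a statement about intersections of ideals, not flatness. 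So the step ``the uncompleted tensor product represents $A\Omega_{R_1}\dotimes_{A_\inf}A\Omega_{R_2}$'' is not justified as written.

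This gap is easily closed---indeed by the paper's own move. Once you have constructed the map, note that source and target are derived $\tilde\xi$-complete (Lemma~\ref{lem:Letapreservecompleteness} for the target, and the definition of the completed tensor product for the source), and check the map modulo $\tilde\xi$. There your explicit identification reduces to the corresponding statement for the de~Rham complexes $\Omega^{\bullet,\cont}_{R_j/\roi}$, which is elementary (or is exactly Proposition~\ref{prop:kuenneth}). Alternatively one can argue via $p$-complete flatness: $A(R_j)^{\square_j}/p$ is a torsion-free module over the valuation ring $\roi^\flat$, hence flat, and this is enough to control the derived tensor product after $p$-completion. Either way the fix is short, but without it your $K$-flatness claim is unsupported.
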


\begin{proof} As both sides are derived $\tilde\xi$-complete, it suffices to check modulo $\tilde\xi$, where it follows from Proposition~\ref{prop:kuenneth}.
\end{proof}

Also, by construction $A\Omega_{\frak X}$ comes equipped with a Frobenius.

\begin{proposition}\label{prop:PhionAOmega} Let $R$ be a small formally smooth $\roi$-algebra. Then there is a natural $\phi$-linear map $\phi: A\Omega_R\to A\Omega_R$ which factors as the composite of a $\phi$-linear quasi-isomorphism $A\Omega_R\simeq L\eta_{\tilde\xi} A\Omega_R$ and the natural map $L\eta_{\tilde\xi} A\Omega_R\to A\Omega_R$.

In particular, if $\frak X$ is a smooth formal scheme over $\roi$, then there is a $\phi$-linear map $\phi: A\Omega_{\frak X}\to A\Omega_{\frak X}$ factoring over a $\phi$-linear quasi-isomorphism $A\Omega_{\frak X}\simeq L\eta_{\tilde\xi} A\Omega_{\frak X}$.
\end{proposition}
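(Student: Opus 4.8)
The Frobenius automorphism $\phi$ of $A_\inf$ induces, via Witt vector functoriality, a $\phi$-linear map $\phi \colon \bb A_{\inf,X} \to \bb A_{\inf,X}$ on the pro-\'etale site (this is the Frobenius on $W(\hat\roi^+_{X^\flat})$, using that the Frobenius on $\hat\roi^+_{X^\flat}$ is an isomorphism). Pushing forward, we get a $\phi$-linear map $\phi \colon R\nu_\ast \bb A_{\inf,X} \to R\nu_\ast \bb A_{\inf,X}$. The plan is to show that this map factors as claimed after applying $L\eta_\mu$, and the point is entirely the relation between $\mu$ and $\phi(\mu)$: recall from Proposition~\ref{proposition_roots_of_unity} that $\phi(\mu) = \tilde\xi \cdot \mu$ (where $\tilde\xi = \phi(\xi)$), so that $\phi$ of the ideal $(\mu)$ is the ideal $(\tilde\xi)(\mu)$.

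First I would treat the local/affine situation, so let $R$ be a small formally smooth $\roi$-algebra with framing $\square$. Using the computation $A\Omega_R \simeq A\Omega_R^\square = L\eta_\mu R\Gamma_\cont(\Gamma, \bb A_\inf(R_\infty))$ from Theorem~\ref{thm:AOmegavsdRWLocal}(iii), and the fact that $\phi$ is compatible with $L\eta$ in the sense of Lemma~\ref{lem:LEtaChangeTopos} applied to the flat map $\phi \colon A_\inf \to A_\inf$ (flatness since $\phi$ is an automorphism), together with Lemma~\ref{lem:compositionLeta} for the factorization $\mu \cdot (\text{stuff}) $ of ideals, one gets: $\phi(L\eta_\mu C) \simeq L\eta_{\phi(\mu)} \phi(C) = L\eta_{\tilde\xi \mu} \phi(C) = L\eta_{\tilde\xi}(L\eta_\mu \phi(C))$. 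Here $C = R\Gamma_\cont(\Gamma, \bb A_\inf(R_\infty))$, and $\phi(C)$ means $C$ with scalars and the $\Gamma$-action twisted by $\phi$; one checks that $\phi$ induces a $\phi$-linear \emph{quasi-isomorphism} $C \simeq \phi(C)$ because the Frobenius on $\bb A_\inf(R_\infty) = \varprojlim_F W_r(R_\infty)$ corresponds (via Lemma~\ref{lemma_witt_alg_1}) precisely to a shift in the inverse system, hence is an isomorphism of the underlying object (it is the composite of the $R$-map with $\phi$ on $\roi^\flat$, both isomorphisms). Applying $L\eta_\mu$ to this quasi-isomorphism (Lemma~\ref{lem:CohomLeta}) gives a $\phi$-linear quasi-isomorphism $A\Omega_R = L\eta_\mu C \simeq L\eta_\mu \phi(C) =: A\Omega_R'$, and then the above chain of identities reads $A\Omega_R' \simeq L\eta_{\tilde\xi} A\Omega_R$; composing, we obtain the desired $\phi$-linear quasi-isomorphism $A\Omega_R \simeq L\eta_{\tilde\xi} A\Omega_R$, and then $\phi \colon A\Omega_R \to A\Omega_R$ is by definition the composite of this with the canonical map $L\eta_{\tilde\xi} A\Omega_R \to A\Omega_R$ of Lemma~\ref{lem:mapLetabackforth} (valid since, by Theorem~\ref{thm:IntegralCartier} together with base change along $\tilde\theta$, $H^0(A\Omega_R)$ is $\tilde\xi$-torsion-free — indeed $H^0(A\Omega_R) = R$ via the de~Rham comparison and $A\Omega_R$ is concentrated in degrees $\geq 0$).

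For the global statement over a smooth formal scheme $\frak X$, I would run the same argument directly on the sheaf $R\nu_\ast \bb A_{\inf,X}$ rather than on the Koszul model: the Frobenius on $\bb A_{\inf,X}$ is a $\phi$-linear isomorphism of sheaves (again because it is a shift in $\varprojlim_F W_r(\hat\roi^+_{X^\flat})$), so it induces a $\phi$-linear quasi-isomorphism $R\nu_\ast \bb A_{\inf,X} \simeq \phi^\ast(R\nu_\ast \bb A_{\inf,X})$, and Lemma~\ref{lem:LEtaChangeTopos} (for the flat change of rings $\phi$ on the constant sheaf $A_\inf$) plus Lemma~\ref{lem:compositionLeta} with $\phi(\mu) = \tilde\xi\mu$ yields $A\Omega_{\frak X} = L\eta_\mu R\nu_\ast \bb A_{\inf,X} \simeq L\eta_{\tilde\xi} A\Omega_{\frak X}$; post-composing with $L\eta_{\tilde\xi} A\Omega_{\frak X} \to A\Omega_{\frak X}$ gives $\phi$. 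To see this $\phi$ is $\phi$-linear and functorial, note every step (the Frobenius on $\bb A_{\inf,X}$, the comparisons of Lemma~\ref{lem:LEtaChangeTopos} and Lemma~\ref{lem:compositionLeta}) is natural; and compatibility with the local description follows from Theorem~\ref{thm:AOmegavsdRWLocal} and the construction being the sheafification of the presheaf version, as in the proof of Proposition~\ref{prop:AOmegaallthesame}.

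The main obstacle, I expect, is not any single hard computation but rather bookkeeping the $\phi$-semilinearity carefully: one must be precise about which object carries which $A_\inf$-module structure when writing down $L\eta_{\phi(\mu)}\phi(C) = L\eta_{\tilde\xi}(L\eta_\mu \phi(C))$, and one must verify that the canonical map $L\eta_{\tilde\xi} A\Omega_{\frak X} \to A\Omega_{\frak X}$ of Lemma~\ref{lem:mapLetabackforth} genuinely applies, i.e.\ that $H^0(A\Omega_{\frak X})$ — equivalently the structure sheaf $\roi_{\frak X}$ by the de~Rham comparison Theorem~\ref{thm:IntegralCartier} base-changed along $\tilde\theta$, together with the observation that $A\Omega_{\frak X}$ is $(-1)$-connected — has no $\tilde\xi$-torsion as an $A_\inf$-sheaf. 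A secondary subtlety is checking that the Frobenius on $\bb A_{\inf,X}$ really is an \emph{isomorphism} of sheaves (not just a map); this is where one genuinely uses that we are working with $\hat\roi^+_{X^\flat}$, a sheaf of perfect $\bb F_p$-algebras, rather than with $\hat\roi^+_X$ — it is the analogue of the fact that Frobenius is invertible on $\roi^\flat$ but not on $\roi$.
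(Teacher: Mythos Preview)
Your proposal is correct and follows essentially the same route as the paper: use that the Frobenius on $\bb A_{\inf,X}$ is an automorphism, combine Lemma~\ref{lem:LEtaChangeTopos} (for the flat map $\phi$ on $A_\inf$) with Lemma~\ref{lem:compositionLeta} and the identity $\phi(\mu)=\tilde\xi\mu$ to get $\phi^\ast A\Omega_R\simeq L\eta_{\tilde\xi} A\Omega_R$, and then compose with the natural map $L\eta_{\tilde\xi} A\Omega_R\to A\Omega_R$. The paper's proof is terser---it works directly with $R\Gamma_\sub{pro\'et}(X,\bb A_{\inf,X})$ rather than first passing through the Koszul model---but the logic and the key inputs are identical.
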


\begin{proof} Let $X$ be the generic fibre of $\frak X=\Spf R$. The Frobenius $\phi_X$ is an automorphism of $R\Gamma_\sub{pro\'et}(X,\bb A_{\inf,X})$, and thus induces a quasi-isomorphism
\[\begin{aligned}
\phi^* A\Omega_R &= \phi^* L\eta_\mu R\Gamma_\sub{pro\'et}(X,\bb A_{\inf,X})\simeq L\eta_{\phi(\mu)} \phi^* R\Gamma_\sub{pro\'et}(X,\bb A_{\inf,X})\\
&= L\eta_{\tilde\xi} L\eta_\mu \phi^* R\Gamma_\sub{pro\'et}(X,\bb A_{\inf,X}) \stackrel{\phi_X}{\simeq} L\eta_{\tilde\xi} L\eta_\mu R\Gamma_{\sub{pro\'et}}(X, \bb A_{\inf,X}) \simeq L\eta_{\tilde{\xi}} A\Omega_R\ .
\end{aligned}\]
\end{proof}

Moreover, let us note that $L\eta$ behaves in a symmetric monoidal way in a relevant case.

\begin{lemma}\label{lem:LetasymmmonWrOmega} Let $R$ be a small formally smooth $\roi$-algebra, and let $D=R\Gamma_\sub{pro\'et}(X,W_r(\hat\roi_X^+))$, so that $\widetilde{W_r\Omega}_R = L\eta_{[\zeta_{p^r}]-1} D$. Let $E\in D(W_r(\roi))$ be any complex. The natural map
\[
L\eta_{[\zeta_{p^r}]-1} D\dotimes_{W_r(\roi)} L\eta_{[\zeta_{p^r}]-1} E\to L\eta_{[\zeta_{p^r}]-1}(D\dotimes_{W_r(\roi)} E)
\]
is a quasi-isomorphism.

In fact, the same result holds if $D$ is replaced by any complex which admits an almost quasi-isomorphism $R\Gamma_\cont(\Gamma,W_r(R_\infty))\to D$, where $R\Gamma_\cont(\Gamma,W_r(R_\infty))$ is defined using a framing as usual.
\end{lemma}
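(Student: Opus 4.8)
The statement to prove is Lemma~\ref{lem:LetasymmmonWrOmega}: that $L\eta_{[\zeta_{p^r}]-1}$ behaves symmetric-monoidally with respect to the pair $(D, E)$, where $D = R\Gamma_\sub{pro\'et}(X, W_r(\hat\roi_X^+))$ (or more generally any target of an almost quasi-isomorphism from $C := R\Gamma_\cont(\Gamma, W_r(R_\infty))$), and $E \in D(W_r(\roi))$ is arbitrary. Write $f = [\zeta_{p^r}] - 1 \in W_r(\roi)$; recall from Proposition~\ref{proposition_roots_of_unity}(i) that $f$ is a non-zero-divisor, and $W_r(\frak m)$ is the relevant almost-setting ideal, which by Corollary~\ref{corollary_almost_Witt} (as invoked after Corollary~\ref{cor:WrFinPresNoAlmostZero}) is an increasing union $\bigcup_s ([\zeta_{p^s}]-1)W_r(\roi)$ of principal ideals generated by non-zero-divisors. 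The plan is to reduce the general statement about $D$ to the concrete statement about $C$, and then prove the statement about $C$ by the explicit Koszul computation already developed in Lemma~\ref{lem:propWrOmega}.

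\emph{Step 1: reduce from $D$ to $C$.} Since $L\eta_f$ is lax symmetric monoidal (Proposition~\ref{prop:Letalaxsymmmon}), there is a natural comparison map; the claim is that it is a quasi-isomorphism. Given an almost quasi-isomorphism $g : C \to D$, the map $g \dotimes_{W_r(\roi)} E : C \dotimes E \to D \dotimes E$ is still an almost quasi-isomorphism (tensoring with $E$ over $W_r(\roi)$ preserves the property of having cone killed by $W_r(\frak m)$, since $W_r(\frak m) \cdot H^i(\mathrm{cone}) = 0$ is preserved under $\dotimes E$ by the long exact sequence and the fact that $W_r(\frak m)$ is an ideal). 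By Lemma~\ref{lem:LetaActualIsom}(ii) applied with $A = W_r(\roi)$, $I = W_r(\frak m)$, $f = [\zeta_{p^r}]-1$, the maps $L\eta_f C \to L\eta_f D$ and $L\eta_f(C \dotimes E) \to L\eta_f(D \dotimes E)$ are both quasi-isomorphisms \emph{provided} the source complexes $C$ and $C \dotimes E$ satisfy the hypothesis of that lemma, namely that $H^i$ and $H^i/f$ of the source have no elements killed by $W_r(\frak m)$. For $C$ this is Lemma~\ref{lem:propWrOmega}(iii) with $E = W_r(\roi)$; for $C \dotimes_{W_r(\roi)} E$, when $E$ is a perfect complex this is exactly Lemma~\ref{lem:propWrOmega}(iii), and the general $E$ case follows by writing $E$ as a filtered colimit of perfect complexes and using that $L\eta_f$, $\dotimes$, and the ``no $W_r(\frak m)$-torsion'' property (which for finitely presented modules is Corollary~\ref{cor:WrFinPresNoAlmostZero}, and which passes to filtered colimits since $W_r(\frak m)$ is a union of principal ideals generated by non-zero-divisors) all commute with filtered colimits. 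Also $L\eta_f D \dotimes L\eta_f E \to L\eta_f D' \dotimes L\eta_f E$ is a quasi-isomorphism for $D \to D'$ any almost quasi-isomorphism with $D$ satisfying the hypothesis, by the same Lemma~\ref{lem:LetaActualIsom}(ii) (tensoring a quasi-isomorphism with $L\eta_f E$). Chaining these reduces the lemma for $D$ to the lemma for $C$.

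\emph{Step 2: prove the statement for $C = R\Gamma_\cont(\Gamma, W_r(R_\infty))$.} Here I would use the explicit description from Lemma~\ref{lem:propWrOmega}(i): $C$ is a completed direct sum $\widehat{\bigoplus}_{\underline a} K_{\underline a}$ of Koszul complexes $K_{\underline a} = K_{W_r(\roi)}([\zeta^{a_1}]-1,\dots,[\zeta^{a_d}]-1)$ over $W_r(\roi)$, each term of which is a topologically free $W_r(\roi)$-module (from the proof there, via $A(R)^\square/\xi_r$ being topologically free over $W_r(\roi)$). Both $L\eta_f$ and $\dotimes_{W_r(\roi)}$ commute with completed direct sums in the relevant sense (Lemma~\ref{lem:Letacommutecompletion} for the former, plus Lemma~\ref{lem:cohomcompleteddirectsum} to control the cohomology of the completed sum, using the bounded-torsion input from Lemma~\ref{lem:propWrOmega}(ii)/(iii)), so it suffices to prove that for each individual Koszul complex $K_{\underline a}$ the natural map $L\eta_f K_{\underline a} \dotimes_{W_r(\roi)} L\eta_f E \to L\eta_f(K_{\underline a} \dotimes_{W_r(\roi)} E)$ is a quasi-isomorphism. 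By Lemma~\ref{lemma_on_Koszul_1}, applied with the non-zero-divisor $f = [\zeta_{p^r}]-1$ and the elements $g_i = [\zeta^{a_i}]-1$ (each of which, by Corollary~\ref{corollary_roots_of_unity}-type divisibility considerations, is either divisible by $f$ — when $p^r a_i \in \bb Z$ — or divides $f$ — the non-integral case), we get: if some $g_i$ divides $f$ strictly (non-integral case) then both sides are acyclic, since $\eta_f(M \otimes K_{\underline a})$ is acyclic for $M$ any $f$-torsion-free complex, in particular for $M$ a strongly $K$-flat resolution of $E$ (and $L\eta_f K_{\underline a} = 0$ so the left side is visibly acyclic too); and if $f \mid g_i$ for all $i$ (integral case) then Lemma~\ref{lemma_on_Koszul_1} gives honest isomorphisms of complexes $\eta_f(M \otimes K_{W_r(\roi)}(g_1,\dots,g_d)) \cong \eta_f M \otimes K_{W_r(\roi)}(g_1/f,\dots,g_d/f)$, taking $M$ a strongly $K$-flat representative of $E$; this is exactly the desired comparison on the level of complexes, hence a quasi-isomorphism after passing to the derived category. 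One checks the two maps agree up to the standard identifications.

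\emph{Main obstacle.} The genuinely delicate point is Step 1's reduction: Lemma~\ref{lem:LetaActualIsom} requires the \emph{source} of the almost quasi-isomorphism to have no $W_r(\frak m)$-torsion in each $H^i$ and each $H^i/f$, and one must verify this survives tensoring with an arbitrary (not perfect) $E$. The argument via filtered colimits of perfect complexes is where care is needed — one must confirm that ``no element killed by $W_r(\frak m)$'' is a filtered-colimit-stable condition here, which uses that $W_r(\frak m) = \bigcup_s ([\zeta_{p^s}]-1)W_r(\roi)$ is an increasing union of principal ideals with non-zero-divisor generators together with Corollary~\ref{cor:WrFinPresNoAlmostZero} for the finitely presented pieces; and one must ensure the bounded-exponent $p$-torsion statements needed to invoke Lemma~\ref{lem:cohomcompleteddirectsum} continue to hold for $C \dotimes E$. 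Everything else is a bookkeeping exercise in the already-established formalism of $L\eta$ on Koszul complexes.
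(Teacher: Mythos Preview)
Your approach is essentially the paper's: reduce from $D$ to the explicit $C = R\Gamma_\cont(\Gamma, W_r(R_\infty))$ via Lemma~\ref{lem:LetaActualIsom} and Lemma~\ref{lem:propWrOmega}(iii), then use the completed Koszul decomposition and Lemma~\ref{lemma_on_Koszul_1} together with Lemma~\ref{lem:Letacommutecompletion}. Step~2 is exactly right.

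There is one genuine misstep in Step~1. You claim that ``no element killed by $W_r(\frak m)$'' is stable under filtered colimits. This is false: each $W_r(\roi)/([\zeta_{p^s}]-1)$ is finitely presented and hence has no almost-zero elements by Corollary~\ref{cor:WrFinPresNoAlmostZero}, yet the filtered colimit over $s$ is $W_r(\roi)/W_r(\frak m) = W_r(k)$, every element of which is almost zero. So you cannot verify the hypothesis of Lemma~\ref{lem:LetaActualIsom} for $C\dotimes E$ with general $E$ this way.

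The paper sidesteps this by reversing the order of reductions: since both $L\eta_f$ and $\dotimes$ commute with filtered colimits (Corollary~\ref{cor:LetaExists}), both sides of the comparison map commute with filtered colimits in $E$, so one may assume $E$ is perfect from the outset. Then Lemma~\ref{lem:propWrOmega}(iii), which is stated precisely for perfect $E$, applies directly to give the reduction from $D$ to $C$, and no colimit of the ``no almost-zero'' condition is ever needed. With this reorganization your argument goes through unchanged.
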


\begin{proof} We may assume that $E$ is perfect, as the general result follows by passage to a filtered colimit. Choose a framing $\square$, and let $C=R\Gamma_\cont(\Gamma,W_r(R_\infty))$ using standard notation. In that case, the argument of Corollary~\ref{cor:AOmegavsdRWLocalPart2} works to prove that
\[
L\eta_{[\zeta_{p^r}]-1}(C\dotimes_{W_r(\roi)} E)\isoto L\eta_{[\zeta_{p^r}]-1}(D\dotimes_{W_r(\roi)} E)\ ,
\]
using the general form of Lemma~\ref{lem:propWrOmega} (iii). Thus, it is enough to show that
\[
L\eta_{[\zeta_{p^r}]-1} C\dotimes_{W_r(\roi)} L\eta_{[\zeta_{p^r}]-1} E\to L\eta_{[\zeta_{p^r}]-1}(C\dotimes_{W_r(\roi)} E)
\]
is a quasi-isomorphism. But $C$ decomposes into a completed direct sum of Koszul complexes. Thus, the result follows from the case of Koszul complexes, Lemma~\ref{lemma_on_Koszul_1}, and the commutation of $L\eta$ with $p$-adic completion, Lemma~\ref{lem:Letacommutecompletion}.
\end{proof}

\newpage

\section{The relative de~Rham--Witt complex}\label{section_Witt_complexes}
In this section we review the theory of de~Rham--Witt complexes.

\subsection{Witt groups}\label{subsection_Witt_vectors} Let $A$ be a ring. As before, we use $W_r(A)$ to denote the finite length $p$-typical Witt vectors (normalized so that $W_1(A) = A$) and $W(A):=\projlim_r W_r(A)$. In this section we recall some results about how ideals of $A$ induce ideals of $W_r(A)$.

If $I\subset A$ is an ideal then $W_r(I):=\ker(W_r(A)\to W_r(A/I))$, which may be alternatively defined as the Witt vectors of the non-unital ring $I$. We also let $[I]\subset W_r(A)$ denote the ideal generated by $\{[a]:a\in I\}$, which is contained in $W_r(I)$.

\begin{lemma}\label{lemma_powers_of_ideals_in_W_r}
Suppose that $I$ is a finitely generated ideal of a ring $A$, and let $\Sigma\subset I$ be a finite set of generators. Then the following five chains of ideals of $W_r(A)$ are all intertwined:
\[
\pid{[a^s]:a\in \Sigma}\qquad [I]^s\qquad [I^s]\qquad W_r(I)^s\qquad W_r(I^s)\ ,\qquad s\ge 1\ .
\]
(The first denotes the ideal generated by the elements $[a^s]$, for $a\in \Sigma$.) More precisely, we have containments
\[
W_r(I^{|\Sigma|p^rs}) \subset  \pid{[a^s]:a\in \Sigma}\subset [I]^s\subset [I^s]\subset W_r(I^s), [I]^s\subset W_r(I)^s\subset W_r(I^s)
\]
\end{lemma}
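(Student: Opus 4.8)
The plan is to establish the five displayed containments in order, and the one that genuinely requires an argument is the first, $W_r(I^{|\Sigma|p^r s}) \subset \langle [a^s] : a \in \Sigma\rangle$; the remaining four are essentially formal.

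First I would dispose of the easy chain. The containment $[I^s] \subset [I]^s$ holds because any generator $[a]$ with $a \in I^s$ can be written using $a = b_1 \cdots b_s$ with $b_j \in I$, whence $[a] = [b_1]\cdots[b_s] \in [I]^s$ by multiplicativity of the Teichm\"uller map. The containment $[I]^s \subset W_r(I)^s$ is immediate since $[I] \subset W_r(I)$, and $W_r(I)^s \subset W_r(I^s)$ follows because $W_r(-)$ is a functor on (non-unital) rings and a product of $s$ elements of the ideal $I \cdot A$ lies in $I^s \cdot A$; more concretely, on ghost components the product of $s$ Witt vectors with entries in $I$ has all ghost coordinates in $I^s$, and one checks the Witt components themselves land in $I^s$ by the usual universal-polynomial argument (the Witt addition/multiplication polynomials are integral, so they send entries in $I$ to entries in $I^s$ after $s$-fold multiplication). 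The containment $\langle [a^s] : a \in \Sigma\rangle \subset [I^s]$ is trivial as each $a^s \in I^s$.

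The substantive step is $W_r(I^{|\Sigma|p^r s}) \subset \langle [a^s]: a \in \Sigma\rangle$. Here I would argue as follows. An element of $W_r(I^{N})$, with $N := |\Sigma| p^r s$, has the form $x = (x_0, x_1, \dots, x_{r-1})$ with each $x_i \in I^N$. Decompose $x$ using the standard identity $x = \sum_{i=0}^{r-1} V^i([x_i])$ in $W_r(A)$ (valid because $V^i([x_i]) = (0,\dots,0,x_i,0,\dots)$ with $x_i$ in spot $i$, and these sum to $x$ in $W_r$). Using $V^i([a]) = p^{?}\cdots$ is not quite the right move; instead I would use the relation $V^i([b]) = \phi^{-i}$-type manipulations, or more directly the identity $V^i([b]) \in [b^{1/p^i}]\cdot W_r(A)$ when $p$-power roots exist — but roots need not exist here, so the cleaner route is: $p \cdot W_r(A) \supset V(W_{r-1}(A))$ combined with $F V = p$, giving $V^i([b]) \cdot (\text{something}) $; rather than chase this, the key combinatorial fact I would invoke is that $[b] \in \langle [a^s] : a \in \Sigma\rangle$ whenever $b \in I^{|\Sigma| s}$, which holds since $b$ is a sum of products $\prod_{k} a_{j_k}$ of $|\Sigma|s$ generators, and in any such product some $a \in \Sigma$ appears at least $s$ times (pigeonhole), so the product is divisible (in $A$) by $a^s$, hence $[b]$ — writing $b$ as such a sum and using that $[b+b'] \equiv [b] + [b'] \bmod (\text{higher})$ won't suffice, so instead I'd use that $[cd] = [c][d]$ and that $I^{|\Sigma|s} \cdot A$ is the $A$-ideal generated by the monomials $a^s \cdot (\text{rest})$, giving $[I^{|\Sigma|s}] \subset \langle [a^s]\rangle \cdot W_r(A)$ after expanding each element of $I^{|\Sigma| s}$ as a finite $A$-linear combination of such monomials and applying Teichm\"uller multiplicativity termwise to the monomials while absorbing the $A$-coefficients. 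Then for $x \in W_r(I^N)$ with $N = |\Sigma| p^r s$, each $x_i \in I^N$, and $V^i([x_i])$: since $N \geq |\Sigma| s$, $[x_i] \in \langle [a^s]\rangle$, and $V^i$ of an element of an ideal $J W_r(A)$ lies in $V^i(J W_{r-i}(A)) \subset J \cdot W_r(A)$ using $V^i(a F^i(b)) = V^i(a) b$; hence $V^i([x_i]) \in \langle [a^s]\rangle$, so $x \in \langle [a^s]\rangle$. (The factor $p^r$ in the exponent is the safety margin ensuring the argument survives all the $V^i$ with $i < r$; I expect only $|\Sigma| s$ is truly needed, but proving the stated weaker containment with $N = |\Sigma| p^r s$ is all that's required and gives room to avoid delicate estimates.)

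The main obstacle will be making the ``$[b] \in \langle [a^s]\rangle$ for $b \in I^{|\Sigma| s}$'' step fully rigorous: the Teichm\"uller map is multiplicative but not additive, so one cannot naively expand a sum. The correct formulation, which I would state as a preliminary lemma, is that $[I^m] = \langle [a_1^{e_1}\cdots a_t^{e_t}] : a_j \in \Sigma, \sum e_j = m\rangle \cdot W_r(A)$ as ideals of $W_r(A)$ — i.e., the ideal generated by Teichm\"uller lifts of monomials of degree $m$ in the generators, which does generate $[I^m]$ because every element of $I^m$ is an $A$-linear combination of such monomials and $[c \cdot (\text{monomial})] = [c] \cdot [\text{monomial}]$. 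Once this is in hand, pigeonhole on the exponents gives the divisibility by $a^s$ for some $a$, completing the chain. I would also remark that all these containments are used in the paper only up to ``the same topology'', so even a lossy version of the exponents suffices, which is why the clean statement uses the generous bound $|\Sigma| p^r s$.
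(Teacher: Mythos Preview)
Your proposal has a genuine gap in the hard inclusion $W_r(I^{|\Sigma|p^r s}) \subset \langle [a^s] : a \in \Sigma\rangle$, and the issue is exactly the one you flag but do not resolve: the non-additivity of the Teichm\"uller map. Your ``preliminary lemma'' asserting that $[I^m]$ is generated by Teichm\"uller lifts of monomials in the generators is not justified---the argument you sketch (``every element of $I^m$ is an $A$-linear combination of such monomials and $[c \cdot \text{monomial}] = [c]\cdot[\text{monomial}]$'') tacitly assumes you can pass the Teichm\"uller bracket through a \emph{sum}, which is precisely what fails. The same error appears earlier in your treatment of $[I^s] \subset [I]^s$, where you write ``$a = b_1\cdots b_s$'' for $a \in I^s$; a general $a \in I^s$ is a sum of such products, not a single one.

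The paper sidesteps this entirely by never trying to control individual Teichm\"uller lifts. Instead it proves and uses the additivity of $W_r$ on \emph{ideals}: $W_r(J_1 + J_2) = W_r(J_1) + W_r(J_2)$ (an induction on $r$). One first applies pigeonhole in $A$ to get $I^{|\Sigma| p^r s} \subset \sum_{a\in\Sigma} a^{p^r s} A$, then applies $W_r$ and additivity to get $W_r(I^{|\Sigma| p^r s}) \subset \sum_a W_r(a^{p^r s} A)$, and finally checks the \emph{principal} case $W_r(a^{p^r s} A) \subset [a^s] W_r(A)$ directly: if each Witt coordinate $c_i$ lies in $a^{p^r s} A$, then $c_i \in a^{s p^i} A$ for $i \leq r-1$, so $(c_0,\dots,c_{r-1}) = [a^s]\cdot(c_0/a^s,\dots,c_{r-1}/a^{sp^{r-1}})$. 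This last computation also shows that the exponent $p^r$ (or at least $p^{r-1}$) is \emph{not} a ``safety margin'' as you suggest---it is forced by the fact that $[a^s]$ acts on the $i$-th Witt coordinate as multiplication by $a^{sp^i}$, which is exactly the $V^i$-twisting you started to notice but did not carry through.
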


\begin{proof} Firstly, any element of $W_r(A)$ may be written as $\sum_{i=0}^{r-1} V^i[a_i]$ for some unique $a_0,\dots,a_{r-1}$; applying the same observation to $A/I$ we see that $W_r(I)$ is precisely the set of elements of $W_r(A)$ such that each element $a_i$ occurring in this expansion belongs to $I$. Moreover, for any two ideals $J_1,J_2\subset A$, we have $W_r(J_1+J_2) = W_r(J_1)+W_r(J_2)$ (induct on $r$ and use the formula for $[a]+[b]$).

The inclusions $\pid{[a^s]:a\in \Sigma}\subset[I]^s \subset [I^s]\subset W_r(I^s)$ and $[I]^s\subset W_r(I)^s$ are then clear, and $W_r(I)^s\subset W_r(I^s)$ is a consequence of the identity $V^i[a]V^j[b]=p^j V^i([ab^{p^{i-j}}])$ (cf.~proof of Lemma~\ref{lemma_witt_alg_1}) for all $a,b\in A$ and $i\ge j$, and do not require finite generation of $I$. Conversely, $I^{|\Sigma|p^r s}\subset \pid{a^{p^r s}:a\in \Sigma}$, and
\[
W_r(\pid{a^{p^r s}:a\in \Sigma}) = \sum_{a\in \Sigma} W_r(a^{p^r s} A)\ ,
\]
by the additivity of $W_r$ of ideals. Finally, $W_r(a^{p^r s} A)\subset [a]^s W_r(A)$. Combining these observations shows that
\[
W_r(I^{|\Sigma|p^rs})\subset \pid{[a^s]:a\in \Sigma}\ .
\]
\end{proof}

\begin{corollary}\label{corollary_almost_Witt}
If $I\subset A$ is an ideal satisfying $I=I^2$ such that $I$ can be written as an increasing union of principal ideals generated by non-zero-divisors, then $W_r(I)=[I]$, and $W_r(I)\subset W_r(A)$ is again an ideal satisfying $W_r(I)^2 = W_r(I)$ such that $W_r(I)$ can be written as an increasing union of principal ideals generated by non-zero-divisors.
\end{corollary}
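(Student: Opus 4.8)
The plan is to deduce Corollary~\ref{corollary_almost_Witt} directly from Lemma~\ref{lemma_powers_of_ideals_in_W_r} by writing $I$ as a filtered union $I = \bigcup_{j} (f_j)$ of principal ideals with each $f_j$ a non-zero-divisor. First I would establish the equality $W_r(I) = [I]$. Applying Lemma~\ref{lemma_powers_of_ideals_in_W_r} with $s=1$ to each finitely generated subideal gives $[I'] \subset W_r(I')$ for finitely generated $I' \subset I$, and also (taking $\Sigma$ a generating set of $I'$) a reverse containment $W_r(I'^{N}) \subset \langle [a] : a \in \Sigma\rangle \subset [I']$ for a suitable exponent $N = N(I',r)$. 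The key point is that since $I = I^2$, we have $I^N = I$, and more precisely for a principal subideal $(f) \subset I$ we can, using $I = I^2$ repeatedly, find $(g) \subset I$ with $(f) \subset (g^N)$ — indeed $f \in I = I^{N}$, and since the $(f_j)$ are filtered, $f \in (g^N)$ for some single generator $g = f_{j'}$. Then $W_r((f)) \subset W_r((g^N)) = W_r((g)^N) \subset [g]^N W_r(A) \subset [I]$. Taking the union over all principal $(f) \subset I$ and using additivity of $W_r$ on ideals (from the proof of Lemma~\ref{lemma_powers_of_ideals_in_W_r}, $W_r(J_1 + J_2) = W_r(J_1) + W_r(J_2)$, hence $W_r$ commutes with filtered unions of ideals) gives $W_r(I) = \bigcup_{(f)\subset I} W_r((f)) \subset [I]$, and the reverse inclusion $[I] \subset W_r(I)$ is trivial.

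Next I would check that $W_r(I)$ is an increasing union of principal ideals generated by non-zero-divisors. From $W_r(I) = [I] = \bigcup_j [f_j] W_r(A)$ — here I use that $[\,\cdot\,]$ is multiplicative, so the ideal generated by $\{[a] : a \in (f_j)\}$ is just $[f_j]W_r(A)$, and that the $(f_j)$ are increasing — it suffices to show each $[f_j]$ is a non-zero-divisor in $W_r(A)$. This follows from the inclusions $W_r(f_j^{p^{r-1}}A) \subset [f_j]W_r(A) \subset W_r(f_j A)$ of Lemma~\ref{lemma_witt_alg_1}(ii): if $x \cdot [f_j] = 0$ with $x = (x_0,\dots,x_{r-1})$, then examining Witt components (or using the ghost map when $A$ is $p$-torsion-free, and reducing to that case by functoriality and a limit argument, or simply arguing directly as in the non-zero-divisor argument inside the proof of Lemma~\ref{lemma_inj_of_Frob}) forces each $x_i$ to be annihilated by a power of $f_j$, hence $x_i = 0$ since $f_j$ is a non-zero-divisor. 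Actually the cleanest route is: since $f_j$ is a non-zero-divisor, multiplication by $[f_j]$ on $W_r(A)$ is injective — this is a standard fact, provable by induction on $r$ using the structure of $W_r$ and that $[f_j] \equiv (f_j, 0, \dots, 0)$, with the ghost-component argument after reducing to the universal ($p$-torsion-free) case by base change from $\mathbb{Z}[t]$.

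Finally, $W_r(I)^2 = W_r(I)$: one containment is trivial, and for the other, $W_r(I) = [I] = \bigcup_j [f_j]W_r(A)$, and since $I = I^2$, for each $j$ there is $j'$ with $f_j \in (f_{j'}^2)$, so $[f_j] W_r(A) \subset [f_{j'}]^2 W_r(A) \subset [I]^2 = W_r(I)^2$; taking the union gives $W_r(I) \subset W_r(I)^2$. The main obstacle I anticipate is the bookkeeping around the exponents: one must be careful that Lemma~\ref{lemma_powers_of_ideals_in_W_r} only gives $W_r(I'^{N}) \subset [I']$ with $N$ depending on the number of generators of the finitely generated subideal $I'$ (and on $r$), so to make $I^N = I$ usable one genuinely needs the filtered-union-of-principals hypothesis (each relevant subideal can be taken principal, so $|\Sigma| = 1$ and $N = p^r$), rather than just $I = I^2$; verifying that the principal subideals of $I$ are cofinal and that the Witt functor plays well with this filtered union is the one place where some care is required, though it is not deep.
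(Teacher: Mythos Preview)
Your proposal is correct and follows essentially the same route as the paper: apply Lemma~\ref{lemma_powers_of_ideals_in_W_r} to the principal subideals $(f_j)$ (so $|\Sigma|=1$ and the constant $p^r$ is uniform), pass to the filtered union, and use $I=I^2$ to collapse the intertwined chains; the paper packages this as ``the five chains are intertwined and bounded above by the constant chain $W_r(I^s)=W_r(I)$, hence all equal'', which yields $[I]=W_r(I)=\bigcup_j [f_j]W_r(A)$ and $W_r(I)^2=W_r(I)$ simultaneously. One small slip: in your chain $W_r((g)^N)\subset [g]^N W_r(A)$ with $N=p^r$, the lemma (with $s=1$) actually gives $W_r((g)^{p^r})\subset [g]\,W_r(A)$, not $[g]^{p^r}W_r(A)$ --- but since $[g]\,W_r(A)\subset [I]$ this does not affect the conclusion.
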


\begin{proof} Write $I=\bigcup_j f_j A$, where $f_j\in A$ is a non-zero-divisor. Applying the previous lemma to all $f_j A$ and passing to a direct limit over $j$ (noting that the constants are independent of $j$) shows that the sequences of ideals
\[
\bigcup_j [f_j]^s W_r(A)\qquad [I^s]\qquad [I]^s\qquad W_r(I)^s\qquad W_r(I^s)\ ,\qquad s\ge 1
\]
are intertwined, and are all contained in the last sequence $W_r(I^s)$. However, this last sequence is constant as $I=I^2=I^3=\ldots$. Thus, all systems are constant and equal, and in particular $[I]=W_r(I)=\bigcup_j [f_j] W_r(A)$. Since the Teichm\"uller lift of a non-zero-divisor is still a non-zero-divisor, this completes the proof.
\end{proof}

The next lemma shows that $[p]$-adic and $p$-adic completion are the same:

\begin{lemma}\label{lemma_W_r_of_p_completion}
Let $A$ be a ring. The following chains of ideals are intertwined:
\[
[p]^sW_r(A)\qquad W_r(pA)^s\qquad p^sW_r(A)\ ,\qquad s\ge1\ .
\]
More precisely,
\[
[p]^{2s} W_r(A)\subset p^s W_r(A)\ ,\ p^{rs} W_r(A)\subset W_r(pA)^s\ ,\ W_r(pA)^{p^r s}\subset [p]^s W_r(A)\ .
\]
\end{lemma}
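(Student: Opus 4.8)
The statement is a purely Witt-vector-combinatorial assertion about a single ring $A$, with no perfectoid hypotheses, so the proof should follow the same template as Lemma~\ref{lemma_powers_of_ideals_in_W_r} and Corollary~\ref{corollary_almost_Witt}: establish three explicit containments of ideals, each by a direct computation with Teichm\"uller lifts and Verschiebung. I will prove, in order:
\begin{enumerate}
\item $[p]^{2s} W_r(A)\subset p^s W_r(A)$;
\item $p^{rs} W_r(A)\subset W_r(pA)^s$;
\item $W_r(pA)^{p^r s}\subset [p]^s W_r(A)$.
\end{enumerate}
The first containment reduces to the case $s=1$ (then take $s$-th powers), i.e.\ to showing $[p]^2\in pW_r(A)$; but this is exactly the assertion proved in the proof of Lemma~\ref{lemma_witt_alg_1}(ii), using $[p]\in VW_{r-1}(A)+pW_r(A)$ together with the identity $V^i([x])V^j([y]) = p^j V^i([xy^{p^{i-j}}])$ (for $i\ge j$), which gives $(VW_{r-1}(A))^2\subset pW_r(A)$. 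So the first step is essentially a citation of an argument already in the excerpt.

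For the second containment, the key point is that $p = V(1)\cdot u$ is not literally true, but rather $V(F(x)) = xV(1)$ and $FV = p$ show that $p W_r(A) = V(F(W_r(A))) \subset V(W_{r-1}(A)) = W_r(pA)\cap(\text{image of }V)$; more usefully, $p^{r} W_r(A) = 0$ after pushing down, so I instead argue that $p^r W_r(A)\subset W_r(pA)$ by a direct inductive computation: write an arbitrary element as $\sum_{i=0}^{r-1} V^i[a_i]$, multiply by $p^r$, and use $p^r V^i[a_i] = V^i(F^i(p^r)[a_i]) = V^i(p^{r-i}\cdot p^i \cdots)$ — more precisely $p\cdot V^i[a_i] = V^i(p[a_i])$ only after accounting for $FV=p$, so the cleanest route is: $p W_r(A)\subset W_r(pA)$ because $p[a] = [a]\cdot p$ and $p\in W_r(pA)$ (since $p = (0,1,0,\dots)\cdot(\text{stuff})$... actually $p = V(1)$ is false; rather the ghost components of $p$ are $(p,p,\dots)$, and one checks $p\in W_r(pA)$ directly from the addition formula, since $p\cdot 1$ has first Witt component $0$ only mod $p$). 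I will therefore do the honest thing: show $p^r W_r(A)\subset W_r(pA)$ by checking that the Witt components of $p^r x$ all lie in $pA$, using that the ghost map is compatible with multiplication and that $p^r$ kills $W_r(A/pA)$-torsion appropriately; then take $s$-th powers using $W_r(pA)^s\supset (p^r W_r(A))^s = p^{rs} W_r(A)$.

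For the third containment, I will use Lemma~\ref{lemma_powers_of_ideals_in_W_r} applied with $A$, the ideal $I = pA$, and the one-element generating set $\Sigma = \{p\}$: that lemma gives precisely $W_r(I^{|\Sigma| p^r s}) = W_r(p^{p^r s}A)\subset \langle [p^s]\rangle = [p]^s W_r(A)$, and since $W_r(pA)^{p^r s}\subset W_r((pA)^{p^r s}) = W_r(p^{p^r s}A)$ by the last inclusion in Lemma~\ref{lemma_powers_of_ideals_in_W_r}, we are done. So the third step is a direct invocation of the already-proved lemma with a specific choice of ideal and generator. \textbf{The main obstacle} is the second containment: getting the exponent $p^{rs}$ (versus $p^{?}$) correct requires care with how $F$, $V$, and multiplication by $p$ interact — specifically tracking that $p\cdot W_r(A)\subset W_r(pA)$ needs the identity $p = $ (an element all of whose Witt components are divisible by $p$), which one verifies by induction on $r$ from the addition formula, and then iterating $r$ times to clear the $V$-filtration. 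The first and third steps are essentially bookkeeping on top of results already in the excerpt.
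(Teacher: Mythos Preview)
Your approach for the first and third inclusions matches the paper exactly: the first is an immediate consequence of $[p]^2\in pW_r(A)$ from Lemma~\ref{lemma_witt_alg_1}(ii), and the third is Lemma~\ref{lemma_powers_of_ideals_in_W_r} applied with $I=pA$, $\Sigma=\{p\}$.

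For the second inclusion, your plan is workable but needlessly convoluted, and your exploratory remarks contain several false starts (e.g.\ $p=V(1)$, or trying to compute Witt components of $p^r x$ by hand). The paper's argument is a one-liner you are missing: $W_r(A/pA)$ is an algebra over $W_r(\bb F_p)=\bb Z/p^r\bb Z$, so $p^r=0$ in $W_r(A/pA)$, i.e.\ $p^r\in\ker\big(W_r(A)\to W_r(A/pA)\big)=W_r(pA)$. Then $p^{rs}=(p^r)^s\in W_r(pA)^s$. This is exactly the target you identified (``show $p^r W_r(A)\subset W_r(pA)$ \ldots\ then take $s$-th powers''), but obtained without any component-level computation.
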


\begin{proof} Recall from Lemma~\ref{lemma_witt_alg_1} that $[p]^2\in pW_r(A)$; this implies $[p]^{2s} W_r(A)\subset p^s W_r(A)$. As $p^r=0$ in the $W_r(\bb F_p)=\bb Z/p^r \bb Z$-algebra $W_r(A/pA)$, we have $p^r\in W_r(pA)$ and thus $p^{rs} W_r(A)\subset W_r(pA)^s$. Finally, the last inclusion was proved in Lemma~\ref{lemma_powers_of_ideals_in_W_r}.
\end{proof}

Let us also recall that Witt rings behave well with respect to the \'etale topology. The first part of the following theorem appeared first in work of van der Kallen, \cite[Theorem~2.4]{vanderKallen1986}. Under the assumption that the rings are $F$-finite, the result is proved by Langer--Zink in \cite[Corollary A.18]{LangerZink}. The general result appears (in even greater generality) in work of Borger, \cite[Theorem 9.2, Corollary 9.4]{Borger}.

\begin{theorem}\label{thm:Wittetale}
Let $A\to B$ be an \'etale morphism. Then $W_r(A)\to W_r(B)$ is also \'etale. Moreover, if $A\to A'$ is any map with base extension $B' = B\otimes_A A'$, then the natural map
\[
W_r(A')\otimes_{W_r(A)} W_r(B)\to W_r(B')
\]
is an isomorphism.
\end{theorem}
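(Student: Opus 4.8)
\textbf{Plan for the proof of Theorem~\ref{thm:Wittetale}.}

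The plan is to reduce the global statement to a computation on the level of cotangent complexes and structure sheaves, and then to use the well-known compatibility of $W_r(-)$ with localization and with the formation of $W_r(\mathbb{Z})$-algebra structures. The key structural input is that $\Spec W_r(A) \to \Spec A$ is an integral homeomorphism (identifying the two spaces via the ideal $W_r(pA)$, which is nilpotent-enough after inverting nothing since $p^r \in W_r(pA)$ is not nilpotent — so one must be slightly careful, see below), so that \'etaleness of $W_r(A) \to W_r(B)$ can be checked by combining flatness with unramifiedness, i.e.\ with the vanishing of $\Omega^1_{W_r(B)/W_r(A)}$ together with $W_r(B) \otimes_{W_r(A)} W_r(B) \to W_r(B)$ being an isomorphism onto a direct summand locally. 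First I would treat the affine case. For unramifiedness: the functor $A \mapsto W_r(A)$ commutes with all colimits of rings that are computed by adding generators and relations (Witt vectors of a polynomial ring are known by the Langer--Zink formula, $W_r(A[T]) \supset W_r(A)[[T]_0,[T]_1,\ldots]$ in the appropriate sense, cf.\ Lemma~\ref{lem:wittpolynomial}); combined with the fact that an \'etale map $A \to B$ is a filtered colimit of standard \'etale maps $A \to (A[T]/f)_g$, one reduces the vanishing of $\Omega^1_{W_r(B)/W_r(A)}$ to an explicit Jacobian computation after inverting the image of $f'(T)$, which is a unit since $f'(T)$ is a unit in $B$ and Teichm\"uller lifts of units are units.

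The cleanest route, however, which also directly yields the base-change statement, is to first establish the base-change isomorphism $W_r(A') \otimes_{W_r(A)} W_r(B) \isoto W_r(B')$ for $A \to B$ \'etale and then deduce \'etaleness from it. For base change: both sides are compatible with filtered colimits in $B$, so reduce to standard \'etale $B = (A[T]/f)_g$. Then $W_r(B)$ can be described explicitly: $W_r(A[T]) $ is a known ``twisted polynomial ring'' over $W_r(A)$ on the Teichm\"uller variable $[T]$ and its ghost-companions (this is where one invokes \cite[Corollary 2.4]{LangerZink} as in Lemma~\ref{lem:wittpolynomial}), and passing to $W_r$ of the quotient by $f$ and localization at $g$ can be analyzed term by term. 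The point is that the functor $W_r$ applied to the diagram $A \to A[T]/f \to (A[T]/f)_g$ produces, after $\otimes_{W_r(A)} W_r(A')$, exactly the functor $W_r$ applied to $A' \to A'[T]/f \to (A'[T]/f)_g$, because all the operations involved (adjoining a Teichm\"uller variable, quotienting by the ideal generated by the image of a single element under the natural transformation $A \to W_r(A)$, localizing at a Teichm\"uller element) commute with the base change $W_r(A) \to W_r(A')$ essentially by their explicit description. Granting base change, \'etaleness of $W_r(A) \to W_r(B)$ follows from the infinitesimal criterion: for a square-zero extension $C \twoheadrightarrow C/J$ of $W_r(A)$-algebras, lifting $W_r(A)$-algebra maps $W_r(B) \to C/J$ to $C$ is, via the universal property and the fact that $W_r(B)$ is generated as a $W_r(A)$-algebra by finitely many elements with a unique such lift dictated by the \'etale equation (whose derivative is invertible), unobstructed and unique.

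Once the affine statements hold, the general case follows by Zariski-localizing: an arbitrary \'etale morphism $A \to B$ need not be standard \'etale globally, but it is so locally on $\Spec B$, and both ``$W_r(A) \to W_r(B)$ is \'etale'' and the base-change isomorphism are local on $\Spec B$, hence on $\Spec W_r(B) = \Spec B$ (using that $W_r$ commutes with localization, $W_r(B_g) = W_r(B)_{[g]}$ — which is itself a consequence of the polynomial-ring computation and quotienting). The main obstacle I expect is the explicit bookkeeping in the base-change step: Witt vectors of a polynomial ring, while known, are combinatorially heavy, and one must verify carefully that quotienting by $f$ and localizing at $g$ interact with $W_r$ in the way claimed, i.e.\ that $W_r\big((A[T]/f)_g\big) = \big(W_r(A)[\underline{[T]}]/(\text{image of } f)\big)_{[g]}$ compatibly in $A$; this is exactly the content one finds spelled out in \cite{LangerZink} and \cite{Borger}, and the honest thing is to cite those references (van der Kallen \cite{vanderKallen1986}, Langer--Zink \cite[Corollary A.18]{LangerZink}, Borger \cite[Theorem 9.2, Corollary 9.4]{Borger}) for the result rather than reprove it; in the paper this theorem is stated precisely so it can be quoted. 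So in practice the ``proof'' here is a short one: cite Borger for \'etaleness in full generality, and deduce the base-change compatibility either from Borger's results directly or from the fact that an \'etale morphism of the form $W_r(A) \to W_r(B)$ is determined by its reduction modulo any ideal with nilpotent-enough quotient together with descent, reducing mod $p$-powers where everything is classical.
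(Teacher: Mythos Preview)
Your sketch contains a genuine gap in the direct computational route. The claim that
\[
W_r\big((A[T]/f)_g\big) \;=\; \big(W_r(A)[\underline{[T]}]/(\text{image of } f)\big)_{[g]}
\]
is false: the functor $W_r$ does \emph{not} commute with quotients in this way. The kernel of $W_r(A[T]) \to W_r(A[T]/f)$ is $W_r(fA[T])$, which is not the ideal generated by any ``image of $f$'' (there is no ring map $A \to W_r(A)$ in general, only the Teichm\"uller set-map), and Lemma~\ref{lemma_powers_of_ideals_in_W_r} shows precisely that this Witt-vector ideal is only intertwined with, not equal to, the ideal generated by Teichm\"uller lifts. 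So the step ``quotienting by the ideal generated by the image of a single element \ldots\ commutes with the base change $W_r(A) \to W_r(A')$'' does not go through, and with it the proposed base-change argument collapses. Your hedge that ``the honest thing is to cite those references'' is well taken, but then there is no proof here, only a citation.

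The paper's argument is both shorter and genuinely different from your outline. Rather than attempting any explicit description of $W_r$ of a standard \'etale algebra, it proceeds by noetherian approximation: first dispose of the $\bb Z[\tfrac 1p]$-case via ghost coordinates (where $W_r(R) \cong R^r$), then localize at $p$; next, invoke Langer--Zink's result \cite[Corollary~A.18]{LangerZink} which proves the full statement whenever $A$ and $B$ are $F$-finite (in particular, finitely generated over $\bb Z_{(p)}$); finally, for arbitrary $A \to B$ \'etale, descend $B$ to an \'etale algebra $B_0$ over a finitely generated $\bb Z_{(p)}$-algebra $A_0$ with $B = B_0 \otimes_{A_0} A$. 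The $F$-finite case gives both that $W_r(A_0) \to W_r(B_0)$ is \'etale and that $W_r(B_0) \otimes_{W_r(A_0)} W_r(A) \isoto W_r(B)$, whence $W_r(A) \to W_r(B)$ is \'etale by base change; the same identity with $A'$ in place of $A$ yields the base-change isomorphism for arbitrary $A'$. This approximation step is the content you are missing: it replaces all the delicate bookkeeping about Witt vectors of quotients by a single appeal to the already-proved noetherian case.
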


\begin{proof} If $R$ is a $\bb Z[\frac{1}{p}]$-algebra, then $W_r(R) \simeq \prod_{i=1}^r R$ as rings functorially in $R$ via the ghost maps. Thus, if $A$ (and thus every ring involved) is a $\bb Z[\frac{1}{p}]$-algebra, the claim is clear. As the functor $W_r(-)$ commutes with localization, we may then assume that $A$ (and thus every ring involved) is a $\bb Z_{(p)}$-algebra. Now if $A$ and $B$ are $F$-finite (e.g., finitely generated over $\bb Z_{(p)}$) and $A'$ is arbitrary, this is \cite[Corollary A.18]{LangerZink}. Let us observe that this formally implies the general case: Indeed, we may find a finitely generated $\bb Z_{(p)}$-algebra $A_0$ and an \'etale $A_0$-algebra $B_0$ such that $B=B_0\otimes_{A_0} A$ along some morphism $A_0\to A$. Then $W_r(B_0)$ is \'etale over $W_r(A_0)$, and
\[
W_r(B_0)\otimes_{W_r(A_0)} W_r(A)\to W_r(B)
\]
is an isomorphism. Thus, $W_r(B)$ is \'etale over $W_r(A)$, as the base extension of an \'etale map. Similarly,
\[
W_r(B_0)\otimes_{W_r(A_0)} W_r(A')\to W_r(B')
\]
is an isomorphism, so that
\[
W_r(A')\otimes_{W_r(A)} W_r(B) = W_r(A')\otimes_{W_r(A_0)} W_r(B_0) = W_r(B')\ ,
\]
as desired.
\end{proof}

\subsection{Relative de~Rham--Witt complex}
We recall the notion of an $F$-$V$-procomplex from the work of Langer--Zink, \cite{LangerZink}. From now on, we assume that $A$ is a $\bb Z_{(p)}$-algebra.

\begin{definition} Let $B$ be an $A$-algebra. An $F$-$V$-procomplex for $B/A$ consists of the following data $(\cal W_r^\blob,R,F,V,\lambda_r)$:
\begin{enumerate}
\item a commutative differential graded $W_r(A)$-algebra $\cal W_r^\blob=\bigoplus_{n\ge0} \cal W_r^n$ for each integer $r\ge 1$;
\item morphisms $R:\cal W_{r+1}^\blob\to R_*\cal W_r^\blob$ of differential graded $W_{r+1}(A)$-algebras for $r\ge 1$;
\item morphisms $F:\cal W_{r+1}^\blob\to F_*\cal W_r^\blob$ of graded $W_{r+1}(A)$-algebras for $r\ge 1$;
\item morphisms $V:F_*\cal W_r^\blob\to\cal W_{r+1}^\blob$ of graded $W_{r+1}(A)$-modules for $r\ge 1$;
\item morphisms $\lambda_r: W_r(B)\to \cal W_r^0$ for each $r\geq 1$, commuting with the $F$, $R$ and $V$ maps;
\end{enumerate}
such that the following identities hold: $R$ commutes with both $F$ and $V$, $FV=p$,  $FdV=d$, $V(F(x)y)=xV(y)$, and the Teichm\"uller identity
\[
Fd\lambda_{r+1}([b])=\lambda_r([b])^{p-1}d\lambda_r([b])
\]
for $b\in B$, $r\ge 1$.
\end{definition}

In the classical work on the de~Rham--Witt complex, the restriction operator $R$ is regarded as the ``simplest'' part of the data; however, in our work, it will actually be the most subtle of the operators (in close analogy to what happens in topological cyclic homology). In particular, we will be explicit about the use of the operator $R$, and it would probably be more appropriate to use the term $F$-$R$-$V$-procomplex, but we stick to Langer--Zink's notation.

\begin{remark}\label{remark_p-torsion-free_Teichmuller}
The Teichm\"uller rule of the previous definition is automatic in the case that $\cal W_r^1$ is $p$-torsion-free, since one deduces from the other rules that $dF(x) = FdVF(x) = Fd(V(1)x) = F(V(1)dx) = p Fdx$, and thus
\[
p\lambda_r([b])^{p-1}d\lambda_r([b])=d\lambda_r([b]^p)=dF\lambda_{r+1}([b])=pFd\lambda_{r+1}([b])\ .
\]
\end{remark}

There is an obvious definition of morphism between $F$-$V$-procomplexes. In particular, it makes sense to ask for an initial object in the category of all $F$-$V$-procomplexes for $B/A$.

\begin{theorem}[{\cite{LangerZink}}]\label{thm:dRWExists} There is an initial object $\{W_r\Omega_{B/A}^\blob\}_r$ in the category of $F$-$V$-procomplexes, called the {\em relative de~Rham--Witt complex}.

In other words, if $(\cal W_r^\blob,R,F,V,\lambda_r)$ is any $F$-$V$-procomplex for $B/A$, then there are unique morphisms of differential graded $W_r(A)$-algebras
\[
\lambda_r^\blob:W_r\Omega_{B/A}^\blob\to \cal W_r^\blob
\]
which are compatible with $R,F,V$ in the obvious sense and such that $\lambda_r^0:W_r(B)\to \cal W_r^0$ is the structure map $\lambda_r$ of the Witt complex for each $r\ge 1$.
\end{theorem}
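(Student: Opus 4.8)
\textbf{Proof plan for Theorem~\ref{thm:dRWExists}.}
The plan is to construct $W_r\Omega^\bullet_{B/A}$ by the standard ``largest quotient'' approach of Langer--Zink, building it from the absolute de~Rham complex of the Witt rings and imposing exactly the relations forced by the $F$-$V$-procomplex axioms. First I would recall the fact (proved, e.g., via Cartier's construction or by hand over $\mathbb{Z}_{(p)}$-algebras) that there exist operators $F$ and $V$ on the de~Rham complexes $\Omega^\bullet_{W_r(B)/W_r(A)}$ refining the Witt-vector $F$ and $V$: the map $F\colon \Omega^\bullet_{W_{r+1}(B)/W_{r+1}(A)}\to \Omega^\bullet_{W_r(B)/W_r(A)}$ is characterized by $F d[b] = [b]^{p-1}d[b]$ together with $F$-linearity over $W_{r+1}(B)$ (uniqueness here uses that $d[b]$ and $W_r(B)$ generate the de~Rham algebra), while $V$ is defined by $V(x\,F(\omega)) = V(x)\,\omega$ and $V(1)=p\in W_{r+1}(B)$ plus the relation $dV = FdV\circ$ ... more precisely one takes $V$ on the de~Rham complex to be induced from $V$ on Witt vectors via the universal property. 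The key structural input from the excerpt is Theorem~\ref{thm:Wittetale}, which guarantees that all the constructions localize and globalize correctly in the \'etale topology, and Lemma~\ref{lemma_powers_of_ideals_in_W_r} and Lemma~\ref{lemma_W_r_of_p_completion} for controlling the ideals involved.

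The construction proper proceeds as follows. Define $\widehat{W_r\Omega}^\bullet_{B/A}$ inductively in $r$: for $r=1$ set it to be $\Omega^\bullet_{B/A}$; having constructed the procomplex up to level $r$, form the de~Rham complex $\Omega^\bullet_{W_{r+1}(B)/W_{r+1}(A)}$ and pass to the largest quotient differential graded $W_{r+1}(A)$-algebra $W_{r+1}\Omega^\bullet_{B/A}$ that (a) admits maps $R$, $F$, $V$ compatible with those already built at level $r$, and (b) satisfies the Teichm\"uller identity (which, as Remark~\ref{remark_p-torsion-free_Teichmuller} notes, is automatic once things are $p$-torsion-free, but must be imposed in general). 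Concretely one quotients $\Omega^\bullet_{W_{r+1}(B)/W_{r+1}(A)}$ by the graded differential ideal generated by the elements $d F[b]_{r+1} - p\,[b]_r^{p-1}d[b]_r$-type correction terms and the images forced by compatibility of $V$ and $d$; one must check this ideal is stable under $R$, $F$, $V$ and $d$, which is a (somewhat lengthy but) formal verification using the operator identities $FV = p$, $FdV = d$, $V(F(x)y) = xV(y)$. Then $W_{r+1}(B) \to W_{r+1}\Omega^0_{B/A}$ is $\lambda_{r+1}$, and one verifies it is surjective in degree $0$ and that the whole system $\{W_r\Omega^\bullet_{B/A}\}_r$ is an $F$-$V$-procomplex.

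For the universal property: given any $F$-$V$-procomplex $(\mathcal{W}_r^\bullet, R, F, V, \lambda_r)$, the structure map $\lambda_r\colon W_r(B)\to \mathcal{W}_r^0$ extends uniquely to a map of differential graded $W_r(A)$-algebras $\Omega^\bullet_{W_r(B)/W_r(A)}\to \mathcal{W}_r^\bullet$ by the universal property of the absolute de~Rham complex. One then checks that this map kills the differential ideal defining $W_r\Omega^\bullet_{B/A}$ --- this is exactly where the axioms of an $F$-$V$-procomplex (Teichm\"uller rule, the relations between $F$, $V$, $R$, $d$) are used, since those axioms are precisely the relations we quotiented by --- so it descends to $\lambda_r^\bullet\colon W_r\Omega^\bullet_{B/A}\to \mathcal{W}_r^\bullet$, compatibly with $R$, $F$, $V$ by construction; uniqueness is immediate because $W_r(B)$ and $dW_r(B)$ generate $W_r\Omega^\bullet_{B/A}$ as a differential graded algebra and $\lambda_r^\bullet$ is determined on those. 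The main obstacle, as in Langer--Zink's original treatment, is verifying that the differential graded ideal one quotients by is genuinely stable under all four operators $R, F, V, d$ simultaneously --- in particular that imposing the relations needed for $F$ to exist does not destroy the existence of $R$ or $V$ at the next level, which forces a careful simultaneous induction rather than a naive one-operator-at-a-time argument; everything else is formal manipulation with Witt vectors and de~Rham algebras. I would simply cite \cite{LangerZink} for the details of this verification.
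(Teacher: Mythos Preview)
The paper does not prove this theorem; it simply records it as a citation to \cite{LangerZink}. In that sense your approach --- sketch the Langer--Zink construction and cite them for details --- matches what the paper does.

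That said, one step in your sketch is not quite right. You assert that there exist operators $F$ and $V$ on the de~Rham complexes $\Omega^\bullet_{W_r(B)/W_r(A)}$ with $F\,d[b] = [b]^{p-1}d[b]$. Such a ``divided Frobenius'' does not exist on the full de~Rham complex: the functorial map induced by the Witt-vector Frobenius sends $d[b]$ to $d([b]^p) = p[b]^{p-1}d[b]$, and in the presence of $p$-torsion one cannot divide this by $p$. The whole point of the Langer--Zink construction is that the divided Frobenius only exists after one has passed to the correct quotient; indeed, the relations one imposes are designed precisely so that $F$ with the Teichm\"uller identity becomes well-defined. So your inductive construction in the second paragraph is closer to the truth than your first paragraph suggests: one does not start with $F$ already given on $\Omega^\bullet_{W_{r+1}(B)/W_{r+1}(A)}$ and then quotient, but rather one quotients and defines $F$ simultaneously, checking that the quotient is fine enough for $F$ to descend. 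This is exactly the delicate simultaneous induction you flag at the end, but the delicacy begins earlier than you indicate.
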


\subsection{Elementary properties of relative de~Rham--Witt complexes}\label{subsection_further_properties}
In this section we summarise various properties of relative de~Rham--Witt complexes.

\begin{lemma}[\'Etale base change]\label{lem:dRWEtale}
Let $A\to R$ be a morphism of $\bb Z_{(p)}$-algebras, and let $R'$ be an \'etale $R$-algebra. The natural map
\[
W_r\Omega_{R/A}^n\otimes_{W_r(R)}W_r(R')\isoto W_r\Omega_{R'/A}^n
\]
is an isomorphism.
\end{lemma}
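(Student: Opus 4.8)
The plan is to deduce this from the universal property of the relative de~Rham--Witt complex (Theorem~\ref{thm:dRWExists}) together with the \'etaleness of $W_r(R) \to W_r(R')$ established in Theorem~\ref{thm:Wittetale}. The key observation is that for a fixed $A$-algebra $R$ with an \'etale $R$-algebra $R'$, the system of graded $W_r(A)$-algebras
\[
\cal W_r^\blob := W_r\Omega_{R/A}^\blob \otimes_{W_r(R)} W_r(R')
\]
should again be an $F$-$V$-procomplex for $R'/A$, and in fact the \emph{universal} one. First I would check that $\cal W_r^\blob$ is a commutative differential graded $W_r(A)$-algebra: since $W_r(R) \to W_r(R')$ is \'etale, $W_r\Omega^1_{R'/W_r(R)} = 0$, so the derivation $d$ on $W_r\Omega_{R/A}^\blob$ extends uniquely to a $W_r(R')$-linear-over-$d$ derivation on $\cal W_r^\blob$ (this is the standard fact that differential graded algebra structures lift uniquely along \'etale maps, using $\bb L_{W_r(R')/W_r(R)} \simeq 0$); multiplicativity of the differential is automatic from uniqueness. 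The structure map $\lambda_r: W_r(R') \to \cal W_r^0 = W_r(R')$ is the identity.

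Next I would construct the operators $R, F, V$ on $\cal W_r^\blob$. For $R$ and $F$: the maps $R, F: W_r\Omega_{R/A}^\blob \to W_{r-1}\Omega_{R/A}^\blob$ are semilinear over the ring maps $R, F: W_r(R) \to W_{r-1}(R)$ respectively; tensoring with $W_r(R')$ and using the base-change isomorphism $W_{r-1}(R') \otimes_{W_r(R')} W_r(R') \cong W_{r-1}(R')$ — more precisely the second assertion of Theorem~\ref{thm:Wittetale}, which gives $W_{r-1}(R') \cong W_{r-1}(R) \otimes_{W_r(R)}^{R \text{ or } F} W_r(R')$ — produces the desired maps $R, F$ on $\cal W_r^\blob$. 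The Verschiebung $V$ is $W_{r+1}(R)$-linear via $F$, i.e.\ it is a map $F_* W_r\Omega^\blob \to W_{r+1}\Omega^\blob$; the same base-change identity (now with $F$) lets one extend it to $\cal W_r^\blob$. One then verifies the relations $FV = p$, $FdV = d$, $V(F(x)y) = xV(y)$, $RF = FR$, $RV = VR$, and the Teichm\"uller rule — each of these holds on the image of $W_r\Omega_{R/A}^\blob$ and on $W_r(R')$, hence on all of $\cal W_r^\blob = W_r\Omega_{R/A}^\blob \cdot W_r(R')$ by linearity (for the Teichm\"uller rule, note $W_r(R')$ is generated over $W_r(R)$ by elements whose behaviour is controlled, or one invokes Remark~\ref{remark_p-torsion-free_Teichmuller} after reducing to the $p$-torsion-free case by a limit/flatness argument; alternatively the rule is a closed identity that propagates along the \'etale map).

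Having shown $\cal W_r^\blob$ is an $F$-$V$-procomplex for $R'/A$, the universal property gives a canonical map $W_r\Omega_{R'/A}^\blob \to \cal W_r^\blob$. For the reverse map: the composite $W_r\Omega_{R/A}^\blob \to W_r\Omega_{R'/A}^\blob$ (functoriality of de~Rham--Witt in $R \to R'$) is $W_r(R)$-linear and lands in a $W_r(R')$-algebra, so it extends to $\cal W_r^\blob = W_r\Omega_{R/A}^\blob \otimes_{W_r(R)} W_r(R') \to W_r\Omega_{R'/A}^\blob$, compatibly with all structure. The two maps are mutually inverse: one composite is the identity on $\cal W_r^\blob$ because it is the identity on the generating subsets $W_r\Omega_{R/A}^\blob$ and $W_r(R')$; the other is the identity on $W_r\Omega_{R'/A}^\blob$ by the uniqueness clause in the universal property (both it and $\mathrm{id}$ are morphisms of $F$-$V$-procomplexes restricting to $\mathrm{id}$ on $W_r(R')$ in degree $0$). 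I expect the main obstacle to be the bookkeeping in verifying that $\cal W_r^\blob$ genuinely satisfies all the $F$-$V$-procomplex axioms — in particular checking that the base-change isomorphisms of Theorem~\ref{thm:Wittetale} are compatible enough to produce well-defined $R, F, V$ satisfying $FdV = d$ and the Teichm\"uller identity — rather than any conceptual difficulty; everything else is formal manipulation with the universal property.
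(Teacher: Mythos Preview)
Your proposal is correct and follows essentially the same approach as the paper: the paper's proof simply cites \cite[Proposition~1.7]{LangerZink} (whose argument is precisely the universal-property construction you sketch), noting that the only extra hypothesis Langer--Zink impose is there to guarantee that $W_r(R)\to W_r(R')$ is \'etale, which Theorem~\ref{thm:Wittetale} supplies in full generality. The paper also mentions Noetherian approximation as an alternative route, but the core argument is the one you outlined.
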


\begin{proof} If $p$ is nilpotent in $S$ or $S$ is $F$-finite, this is \cite[Proposition~1.7]{LangerZink}; this assumption is used in \cite{LangerZink} only to guarantee that $W_r(R)\to W_r(R^\prime)$ is \'etale, which is however always true by Theorem~\ref{thm:Wittetale}. Thus, one can either reduce the general case to the $F$-finite case by Noetherian approximation, or observe that by Theorem~\ref{thm:Wittetale}, the argument of \cite{LangerZink} works in general.
\end{proof}

The next lemma complements Lemma \ref{lemma_powers_of_ideals_in_W_r}; if $I\subset R$ is an ideal, then we write
\[
W_r\Omega^n_{(R,I)/A}:=\ker(W_r\Omega_{R/A}^n\To W_r\Omega_{(R/I)/A}^n)\ .
\]

\begin{lemma}[Quotients]
Let $A\to R$ be a morphism of $\bb Z_{(p)}$-algebras, and $I\subset R$ an ideal. Then:
\begin{enumerate}
\item $\bigoplus_{n\ge0}W_r\Omega^n_{(R,I)/A}$ is the differential graded ideal of $W_r\Omega_{R/A}^\blob$ generated by $W_r(I)$.
\item If $I$ is finitely generated and $\Sigma\subset I$ is a finite set of generators, then the following two chains of ideals of $W_r\Omega_{R/A}^\blob$ are intertwined:
\[
\pid{[a^s]:a\in \Sigma}W_r\Omega_{R/A}^\blob\qquad W_r\Omega^\blob_{(R,I^s)/A}\ ,\qquad s\ge1\ .
\]
\end{enumerate}
\end{lemma}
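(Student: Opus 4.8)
The statement to prove is the ``Quotients'' lemma: for a map $A \to R$ of $\bb Z_{(p)}$-algebras and an ideal $I \subset R$, (i) $\bigoplus_n W_r\Omega^n_{(R,I)/A}$ is the differential graded ideal of $W_r\Omega^\blob_{R/A}$ generated by $W_r(I)$, and (ii) if $I$ is finitely generated with finite generating set $\Sigma$, the two chains of ideals $\langle [a^s] : a \in \Sigma\rangle W_r\Omega^\blob_{R/A}$ and $W_r\Omega^\blob_{(R,I^s)/A}$, $s \geq 1$, are intertwined. The plan is to reduce (i) to a universal-property argument and (ii) to a combination of (i) with Lemma~\ref{lemma_powers_of_ideals_in_W_r}.

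For part (i), let $J^\blob \subset W_r\Omega^\blob_{R/A}$ denote the differential graded ideal generated by $W_r(I) \subset W_r\Omega^0_{R/A} = W_r(R)$; concretely $J^n = W_r(I)\cdot W_r\Omega^n_{R/A} + d(W_r(I))\cdot W_r\Omega^{n-1}_{R/A}$. First I would check the easy inclusion $J^\blob \subseteq W_r\Omega^\blob_{(R,I)/A}$: the map $W_r\Omega^\blob_{R/A} \to W_r\Omega^\blob_{(R/I)/A}$ is a map of differential graded $W_r(A)$-algebras carrying $W_r(I)$ to $0$, hence kills the dg-ideal it generates. For the reverse inclusion, the key point is that the quotient $W_r\Omega^\blob_{R/A}/J^\blob$ is itself, in a natural way, an $F$-$V$-procomplex for $(R/I)/A$ — one must verify that $J^\blob$ is stable under $R$, $F$, $V$ (this uses $F(W_r(I)) \subseteq W_{r-1}(I)$, $V(W_{r-1}(I)) \subseteq W_r(I)$, $R(W_{r+1}(I)) \subseteq W_r(I)$, and the identities $FdV = d$, $V(F(x)y) = xV(y)$ to see the differential and the operators descend) and that in degree $0$ the quotient is $W_r(R)/W_r(I) = W_r(R/I)$. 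By the universal property of Theorem~\ref{thm:dRWExists}, the projection $W_r(R) \to W_r(R/I)$ extends uniquely to a map $W_r\Omega^\blob_{(R/I)/A} \to W_r\Omega^\blob_{R/A}/J^\blob$ of $F$-$V$-procomplexes, which splits the canonical surjection $W_r\Omega^\blob_{R/A}/J^\blob \to W_r\Omega^\blob_{(R/I)/A}$ (the latter exists because $W_r\Omega^\blob_{(R/I)/A}$ receives a unique map from $W_r\Omega^\blob_{R/A}$ and $J^\blob$ is in the kernel by the easy inclusion). Uniqueness of maps out of the initial object forces these two maps to be mutually inverse, so $W_r\Omega^\blob_{R/A}/J^\blob \cong W_r\Omega^\blob_{(R/I)/A}$, i.e.\ $J^\blob = W_r\Omega^\blob_{(R,I)/A}$.

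For part (ii), apply (i) to the ideal $I^s \subset R$: $W_r\Omega^\blob_{(R,I^s)/A}$ is the dg-ideal generated by $W_r(I^s)$. By Lemma~\ref{lemma_powers_of_ideals_in_W_r}, the chains of ideals $\langle [a^s] : a \in \Sigma\rangle$ and $W_r(I^s)$ inside $W_r(R)$ are intertwined (with explicit containments $W_r(I^{|\Sigma|p^r s}) \subseteq \langle [a^s]\rangle \subseteq W_r(I^s)$). Since generating a dg-ideal is monotone in the generating $W_r(R)$-submodule and compatible with these containments (a containment $\mathfrak a \subseteq \mathfrak b$ of $W_r(R)$-ideals induces $\mathfrak a\cdot W_r\Omega^\blob_{R/A} + d\mathfrak a \cdot W_r\Omega^\blob_{R/A} \subseteq \mathfrak b\cdot W_r\Omega^\blob_{R/A} + d\mathfrak b\cdot W_r\Omega^\blob_{R/A}$), the chains $\langle [a^s]:a\in\Sigma\rangle W_r\Omega^\blob_{R/A}$ and $W_r\Omega^\blob_{(R,I^s)/A}$ are intertwined; one gets $W_r\Omega^\blob_{(R, I^{|\Sigma|p^r s})/A} \subseteq \langle [a^s]:a\in\Sigma\rangle W_r\Omega^\blob_{R/A} \subseteq W_r\Omega^\blob_{(R,I^s)/A}$.

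\textbf{Main obstacle.} The only genuinely non-formal step is verifying in part (i) that the dg-ideal $J^\blob$ is stable under all the operators $R$, $F$, $V$ and descends to an $F$-$V$-procomplex structure on the quotient — in particular that the Teichm\"uller identity and the relations $FdV=d$, $V(F(x)y)=xV(y)$ pass to the quotient, and that $J^0 = W_r(I)$ exactly (rather than something larger). Establishing $J^0 = W_r(I)$, i.e.\ that the degree-zero part of the dg-ideal generated by $W_r(I)$ is just $W_r(I)$, is immediate since $W_r(I)$ is already an ideal of $W_r(R) = W_r\Omega^0_{R/A}$; the operator-stability is a routine but slightly lengthy check using the Witt-vector identities from Section~\ref{subsection_Witt_vectors} together with the dg-algebra axioms. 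I expect this to go through without surprises, following the same pattern as Langer--Zink's treatment of ideals in \cite{LangerZink}.
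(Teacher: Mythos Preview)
Your approach to part (i) is essentially the paper's: define the dg-ideal $J^\bullet$ generated by $W_r(I)$, note the easy inclusion, verify operator stability (the paper also defers this, citing \cite{GeisserHesselholt2006}), and use the universal property to produce a section of the surjection $W_r\Omega^\bullet_{R/A}/J^\bullet \twoheadrightarrow W_r\Omega^\bullet_{(R/I)/A}$. No issues here.

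In part (ii), however, there is a genuine gap. Your monotonicity argument correctly shows that the \emph{dg-ideals} generated by $W_r(I^{|\Sigma|p^r s})$, $\langle[a^s]:a\in\Sigma\rangle$, and $W_r(I^s)$ are nested. By part (i), the outer two are $W_r\Omega^\bullet_{(R,I^{|\Sigma|p^r s})/A}$ and $W_r\Omega^\bullet_{(R,I^s)/A}$. But the middle one is \emph{not} the product ideal $\langle[a^s]\rangle W_r\Omega^\bullet_{R/A}$ appearing in the statement --- it is the strictly larger $\langle[a^s]\rangle W_r\Omega^\bullet_{R/A} + d(\langle[a^s]\rangle)W_r\Omega^\bullet_{R/A}$. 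So your claimed containment $W_r\Omega^\bullet_{(R,I^{|\Sigma|p^r s})/A}\subseteq \langle[a^s]\rangle W_r\Omega^\bullet_{R/A}$ is not justified.

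The fix is exactly what the paper does: use the Leibniz rule. Since $[a^s]=[a]^s$, one has $d([a^s])=s[a]^{s-1}d[a]\in [a^{s-1}]W_r\Omega^1_{R/A}$, so the dg-ideal generated by $\langle[a^s]\rangle$ lies in $\langle[a^{s-1}]\rangle W_r\Omega^\bullet_{R/A}$. More concretely, the paper writes a general element of the dg-ideal as a sum of terms $a_0\,da_1\cdots da_n$ with some $a_i=[a^s]b$, and observes that if $i\geq 1$ the Leibniz rule on $d([a^s]b)$ produces terms divisible by $[a]^{s-1}$. This yields $W_r\Omega^\bullet_{(R,I^t)/A}\subseteq \langle[a^{s-1}]\rangle W_r\Omega^\bullet_{R/A}$ (with a shift by one in the exponent), which is what intertwining requires. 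Your argument is salvageable with this one extra step, but as written it conflates the product ideal with the dg-ideal it generates.
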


\begin{proof}
(i): Write $\bigoplus_{n\ge0}W_r'\Omega^n_{(R,I)/A}$ for the differential graded ideal of $W_r\Omega_{R/A}^\blob$ generated by $W_r(I)$; certainly $W_r'\Omega^n_{(R,I)/A}\subset W_r\Omega^n_{(R,I)/A}$ and so there is a canonical surjection
\[
\pi:W_r\Omega_{R/A}^\blob/W_r'\Omega^\blob_{(R,I)/A}\onto W_r\Omega_{(R/I)/A}^\blob\ .
\]
Elements of $W_r'\Omega^n_{(R,I)/A}$ are by definition finite sums of terms of the form $a_0da_1\cdots da_n$ where at least one of $a_0,\dots,a_n\in W_r(A)$ belongs to $W_r(I)$. From this it is relatively straightforward to prove that $R(W_r'\Omega^n_{(R,I)/A})\subset W_{r-1}'\Omega^n_{(R,I)/A}$, $F(W_r'\Omega^n_{(R,I)/A})\subset W_{r-1}'\Omega^n_{(R,I)/A}$, and $V(W_{r-1}'\Omega^n_{(R,I)/A})\subset W_r'\Omega^n_{(R,I)/A}$: we refer the reader to \cite[Lemma~2.4]{GeisserHesselholt2006} for the detailed manipulations, where the same result is proved for the absolute de~Rham--Witt complex. Since $W_r(R)/W_r(I)=W_r(R/I)$ by definition, it follows that the quotients $W_r\Omega_{R/A}^\blob/W_r'\Omega_{(R,I)/A}^\blob$, $r\ge1$, inherit the structure of an $F$-$V$-procomplex for $R/I$ over $A$. The universal property of the relative de~Rham--Witt complex therefore implies that $\pi$ has a section; since $\pi$ is surjective, it is therefore actually an isomorphism and so $W_r'\Omega^\blob_{(R,I)/A}=W_r\Omega^\blob_{(R,I)/A}$, as required.

(ii): The inclusion $\pid{[a^s]:a\in \Sigma}W_r\Omega_{R/A}^\blob\subset W_r\Omega^\blob_{(R,I^s)/A}$ is clear. Conversely, in Lemma \ref{lemma_powers_of_ideals_in_W_r} we proved that for each $s\ge 1$ there exists $t\ge 1$ such that $W_r(I^t)\subset\pid{[a^s]:a\in \Sigma}$. It follows that any element of $W_r'\Omega^n_{(R,I^t)/A}$ is a finite sum of terms of the form $\omega=a_0da_1\cdots da_n$ where at least one of the elements $a_0,\dots,a_n\in W_r(R)$ may be written as $[a^s]b$, with $a\in \Sigma$ and $b\in W_r(R)$; the Leibniz rule now easily shows that $\omega\in \pid{[a^{s-1}]:a\in \Sigma}W_r\Omega^n_{R/A}$, as required.
\end{proof}

\begin{corollary}
Let $A\to R$ be a morphism of $\bb Z_{(p)}$-algebras, and $I\subset A$ a finitely generated ideal. Then the canonical map of pro-differential graded-$W_r(A)$-algebras
\[
\{W_r\Omega^\blob_{R/A}\otimes_{W_r(A)}W_r(A)/[I^s]\}_s\To \{W_r\Omega^\blob_{(R/I^sR)/(A/I^s)}\}_s
\]
is an isomorphism. In particular,
\[
\projlim_sW_r\Omega^\blob_{R/A}\otimes_{W_r(A)}W_r(A)/[I^s]\Isoto \projlim_sW_r\Omega^\blob_{(R/I^s R)/(A/I^s)}\ ,
\]
\end{corollary}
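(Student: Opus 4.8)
The plan is to reduce everything to a manipulation of chains of ideals inside the fixed differential graded $W_r(A)$-algebra $W_r\Omega^\blob_{R/A}$ and then invoke the two interweaving results already at our disposal. Throughout, fix a finite set of generators $\Sigma\subset I$ of the $A$-ideal $I$; its image generates the $R$-ideal $IR$, and $(IR)^s=I^sR$.

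First I would rewrite the target. Since $R/I^sR$ is an $A/I^s$-algebra and the image of $A$ in $R/I^sR$ coincides with the image of $A/I^s$, the images of $W_r(A)$ and of $W_r(A/I^s)$ in $W_r(R/I^sR)$ also agree (as $W_r(A)\onto W_r(A/I^s)$). Consequently, a differential graded $W_r(A)$-algebra $\cal W_r^\blob$ together with a $W_r(A)$-algebra map $\lambda_r\colon W_r(R/I^sR)\to\cal W_r^0$ has structure map $W_r(A)\to\cal W_r^0$ equal to the composite $W_r(A)\to W_r(R/I^sR)\xrightarrow{\lambda_r}\cal W_r^0$, hence factoring through $W_r(A/I^s)$; one checks directly that the differential, multiplication, and the operators $R$, $F$, $V$ then remain $W_r(A/I^s)$-linear, so $\cal W_r^\blob$ is an $F$-$V$-procomplex for $(R/I^sR)/(A/I^s)$, and conversely. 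Thus the two categories of $F$-$V$-procomplexes coincide, and by Theorem~\ref{thm:dRWExists} there is a canonical identification $W_r\Omega^\blob_{(R/I^sR)/A}\cong W_r\Omega^\blob_{(R/I^sR)/(A/I^s)}$, compatible with varying $s$. By part~(i) of the preceding lemma, the former is $W_r\Omega^\blob_{R/A}\big/W_r\Omega^\blob_{(R,I^sR)/A}$, the quotient by the differential graded ideal generated by $W_r(I^sR)$.

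Next I would identify the source: $W_r\Omega^\blob_{R/A}\otimes_{W_r(A)}W_r(A)/[I^s]$ is the underived quotient $W_r\Omega^\blob_{R/A}\big/[I^s]W_r\Omega^\blob_{R/A}$, and since $[I^s]\subseteq W_r(I^s)\subseteq W_r(I^sR)$ we have $[I^s]W_r\Omega^\blob_{R/A}\subseteq W_r\Omega^\blob_{(R,I^sR)/A}$, so the canonical map of the statement is the natural surjection
\[
W_r\Omega^\blob_{R/A}\big/[I^s]W_r\Omega^\blob_{R/A}\onto W_r\Omega^\blob_{R/A}\big/W_r\Omega^\blob_{(R,I^sR)/A},
\]
with pro-system of kernels $\{W_r\Omega^\blob_{(R,I^sR)/A}\big/[I^s]W_r\Omega^\blob_{R/A}\}_s$. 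It therefore suffices to show that the two chains of ideals $\{[I^s]W_r\Omega^\blob_{R/A}\}_s$ and $\{W_r\Omega^\blob_{(R,I^sR)/A}\}_s$ of $W_r\Omega^\blob_{R/A}$ are intertwined, i.e.\ that this kernel pro-system is pro-zero. For this I would chain together Lemma~\ref{lemma_powers_of_ideals_in_W_r} — which gives that $\{[I^s]\}_s$ and $\{\pid{[a^s]:a\in\Sigma}\}_s$ are intertwined chains of ideals of $W_r(A)$, hence, after multiplying by $W_r\Omega^\blob_{R/A}$, that $\{[I^s]W_r\Omega^\blob_{R/A}\}_s$ and $\{\pid{[a^s]:a\in\Sigma}W_r\Omega^\blob_{R/A}\}_s$ are intertwined — with part~(ii) of the preceding lemma applied to the ideal $IR\subseteq R$ and its generating set (the image of) $\Sigma$, giving that $\{\pid{[a^s]:a\in\Sigma}W_r\Omega^\blob_{R/A}\}_s$ and $\{W_r\Omega^\blob_{(R,(IR)^s)/A}\}_s=\{W_r\Omega^\blob_{(R,I^sR)/A}\}_s$ are intertwined; here $[a^s]$ denotes the same element of $W_r(R)$ whether $a$ is read in $\Sigma\subseteq A$ or in its image in $R$. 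Transitivity of ``intertwined'' closes the first assertion.

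Finally, the ``in particular'' statement follows by applying $\projlim_s$ (equivalently $R\projlim_s$, as the two pro-systems differ by a pro-zero one): intertwined pro-systems have isomorphic inverse limits, degreewise and in the derived sense. I expect the only mildly delicate point to be the bookkeeping in the identification $W_r\Omega^\blob_{(R/I^sR)/A}\cong W_r\Omega^\blob_{(R/I^sR)/(A/I^s)}$ via the universal property — checking that the full $F$-$V$-procomplex structure genuinely descends along $W_r(A)\onto W_r(A/I^s)$ — whereas the ideal-theoretic core of the argument is essentially immediate from Lemma~\ref{lemma_powers_of_ideals_in_W_r} and part~(ii) of the preceding lemma.
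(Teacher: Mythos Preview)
Your proof is correct and follows the same approach as the paper, whose proof reads simply ``This follows directly from part~(ii) of the previous lemma, noting that $W_r\Omega^\blob_{(R/I^sR)/A}=W_r\Omega^\blob_{(R/I^sR)/(A/I^s)}$.'' You have carefully unpacked both ingredients, including the implicit appeal to Lemma~\ref{lemma_powers_of_ideals_in_W_r} needed to pass between $[I^s]$ and $\pid{[a^s]:a\in\Sigma}$.
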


\begin{proof}
This follows directly from part (ii) of the previous lemma, noting that $W_r\Omega^\blob_{(R/I^sR)/A}=W_r\Omega^\blob_{(R/I^sR)/(A/I^s)}$.
\end{proof}

In particular, we make the following definition, where the stated equality follows from the previous corollary applied to $I=pA$, and we use Lemma~\ref{lemma_W_r_of_p_completion}.

\begin{definition} The continuous de~Rham--Witt complex of a morphism $A\to R$ of $\bb Z_{(p)}$-algebras is given by
\[
W_r\Omega^{i,\cont}_{R/A} = \projlim_s W_r\Omega^i_{(R/p^s)/(A/p^s)} = \projlim_s W_r\Omega^i_{R/A}/p^s\ .
\]
\end{definition}

It would perhaps be more appropriate to let this notion depend on a choice of ideal of definition of $A$, but we will only need this version in the paper. We note that $W_r\Omega^{i,\cont}_{R/A}$ still has the structure of an $F$-$V$-procomplex for $R/A$.

\subsection{Relative de~Rham--Witt complex of a (Laurent) polynomial algebra}\label{subsec:dRWLaurent}
We now recall Langer--Zink's results concerning the relative de~Rham--Witt complex of a polynomial algebra $A[\ul T]:=A[T_1,\dots,T_d]$. We will be more interested in the Laurent polynomial algebra $A[\ul T^{\pm 1}]:=A[T_1^{\pm1},\dots,T_d^{\pm 1}]$, and trivially extend their results to this case by noting that $W_r\Omega_{A[\ul T^{\pm1}]/A}^n$ is the localisation of the $W_r(A[\ul T])$-module $W_r\Omega_{A[\ul T]/A}^n$ at the non-zero-divisors $[T_1],\dots,[T_d]$ by \cite[Remark~1.10]{LangerZink}.

Fix a function $a:\{1,\dots,d\}\to p^{-r}\bb Z$ (this notation is slightly more convenient than thinking of $a$ as an element of $p^{-r}\bb Z^d$), which is usually called a ``weight''. Then we set $v(a):=\min_{i}v(a(i))$, where $v(a(i))=v_p(a(i))\in \bb Z\cup \{\infty\}$ is the $p$-adic valuation of $a(i)$; more generally, given a subset $I\subset \{1,\ldots,d\}$, we define $v(a|_I):=\min_{i\in I}v(a(i))$. Let $P_a$ denote the collection of disjoint partitions $I_0,\dots,I_n$ of $\{1,\ldots,d\}$ satisfying the following conditions:
\begin{enumerate}
\item $I_1,\dots,I_n$ are non-empty, but $I_0$ is possibly empty;
\item all elements of $a(I_{j-1})$ have $p$-adic valuation $\le$ those elements of $a(I_j)$, for $j=1,\dots,n$;
\item an additional ordering condition, strengthening (ii) and only necessary in the case that $v:\{1,\ldots,d\}\to\bb Z$ is not injective, to eliminate the possibility that two different such partitions might be equal after reordering the indices; to be precise, we fix a total ordering $\preceq_a$ on $\{1,\ldots,d\}$ with the property that $v:\{1,\ldots,d\}\to\bb Z$ is weakly increasing, and then insist that all elements of $I_{j-1}$ are strictly $\preceq_a$-less than all elements of $I_j$, for $j=1,\dots,n$.
\end{enumerate}
Fix such a partition $(I_0,\dots,I_d)\in P_a$, and let $\rho_1$ be the greatest integer between $0$ and $n$ such that $v(a|_{I_{\rho_1}})<0$ (take $\rho_1=0$ if there is no such integer); similarly, let $\rho_2$ be the greatest integer between $0$ and $n$ such that $v(a|_{I_{\rho_2}})<\infty$.

It is convenient to set $u(a):=\max\{-v(a),0\}$. Then, given $x\in W_{r-u(a)}(A)$, we define an element $e(x,a,I_0,\dots,I_n)\in W_r\Omega_{A[\ul T^{\pm 1}]/A}^n$ as follows:
\begin{description}
\item[Case 1] ($I_0\neq\emptyset$) the product of the elements
\begin{align*}
V^{-v(a|_{I_0})}(x \prod_{i\in I_0}[T_i]^{a(i)/p^{v(a|_{I_0})})})&\\
dV^{-v(a|_{I_j})}\prod_{i\in I_j}[T_i]^{a(i)/p^{v(a|_{I_j})}}&\qquad j=1,\dots,\rho_1,\\
F^{v(a|_{I_j})}d\prod_{i\in I_j}[T_i]^{a(i)/p^{v(a|_{I_j})}}&\qquad j=\rho_1+1,\dots,\rho_2,\\
\dlog \prod_{i\in I_j} [T_i]\qquad\qquad&\qquad j=\rho_2+1,\ldots,n.
\end{align*}
\item[Case 2] ($I_0=\emptyset$ and $v(a)<0$) the product of the elements
\begin{align*}
dV^{-v(a|_{I_1})}(x\prod_{i\in I_1}[T_i]^{a(i)/p^{v(a|_{I_1})}})&\\
dV^{-v(a|_{I_j})}\prod_{i\in I_j}[T_i]^{a(i)/p^{v(a|_{I_j})}}&\qquad j=2,\dots,\rho_1,\\
F^{v(a|_{I_j})}d\prod_{i\in I_j}[T_i]^{a(i)/p^{v(a|_{I_j})}}&\qquad j=\rho+1,\dots,\rho_2,\\
\dlog \prod_{i\in I_j} [T_i]\qquad\qquad&\qquad j=\rho_2+1,\ldots,n.
\end{align*}
\item[Case 3] ($I_0=\emptyset$ and $v(a)\ge 0$) the product of $x\in W_r(A)$ with the elements
\begin{align*}
F^{v(a|_{I_j})}d\prod_{i\in I_j}[T_i]^{a(i)/p^{v(a|_{I_j})}}&\qquad j=1,\dots,\rho_2,\\
\dlog \prod_{i\in I_j} [T_i]\qquad\qquad&\qquad j=\rho_2+1,\ldots,n.
\end{align*}
\end{description}

\begin{theorem}[{\cite[Proposition~2.17]{LangerZink}}]\label{theorem_LZ1}
The map of $W_r(A)$-modules
\[
e:\bigoplus_{a:\{1,\dots,d\}\to p^{-r}\bb Z}\bigoplus_{(I_0,\dots,I_n)\in P_a} V^{u(a)}W_{r-u(a)}(A)\To W_r\Omega_{A[\ul T^{\pm1}]/A}^n
\]
given by the sum of the maps
\[
V^{u(a)}W_{r-u(a)}(A)\to W_r\Omega_{A[\ul T^{\pm1}]/A}^n,\quad V^{u(a)}(x)\mapsto e(x,a,I_0,\dots,I_n)
\]
is an isomorphism.
\end{theorem}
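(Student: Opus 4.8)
\textbf{Proof proposal for Theorem \ref{theorem_LZ1}.}

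The plan is to deduce this structure theorem for the Laurent polynomial case from the corresponding statement for the polynomial algebra $A[\ul T] = A[T_1,\dots,T_d]$, which is Langer--Zink's \cite[Proposition~2.17]{LangerZink}. As recalled in \S\ref{subsec:dRWLaurent}, the key input is \cite[Remark~1.10]{LangerZink}: since $[T_1],\dots,[T_d]$ are non-zero-divisors in $W_r(A[\ul T])$, the module $W_r\Omega_{A[\ul T^{\pm 1}]/A}^n$ is the localization of $W_r\Omega_{A[\ul T]/A}^n$ at the multiplicative set generated by $[T_1],\dots,[T_d]$. So first I would recall (or reprove, if one wants to be self-contained) the polynomial case: there, the analogous map $e$ is built from the same basis elements $e(x,a,I_0,\dots,I_n)$ but with the weights $a$ ranging only over functions $\{1,\dots,d\}\to p^{-r}\bb Z_{\geq 0}$ (nonnegative weights), and the statement is that this gives a direct-sum decomposition of $W_r\Omega^n_{A[\ul T]/A}$ as a $W_r(A)$-module.

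Next I would identify, for each inverted variable $T_i$, exactly how multiplication by $[T_i]^{-1}$ acts on the basis. The point is that the basis elements $e(x,a,I_0,\dots,I_n)$ are ``weight-homogeneous'': each such element lies in the $a$-weight eigenspace in an appropriate sense, and multiplying by $[T_i]^{\pm 1}$ shifts the weight $a$ by $\pm$ the $i$-th standard basis vector while preserving the combinatorial partition data $(I_0,\dots,I_n)$ (after possibly relabelling which block contains $i$; one checks the block-membership and the valuation/ordering conditions are compatible with such a shift, using the explicit formulas in Cases 1--3 together with the relations $F^jd[T_i] = p^{-?}(\dots)$, $dV^j$, $\dlog[T_i]$, etc.). Concretely, the localization of $\bigoplus_{a\geq 0}\bigoplus_{P_a} V^{u(a)}W_{r-u(a)}(A)$ at $[T_1],\dots,[T_d]$ is $\bigoplus_{a\in p^{-r}\bb Z^d}\bigoplus_{P_a} V^{u(a)}W_{r-u(a)}(A)$, because inverting $[T_i]$ on the left-hand target corresponds on the index side precisely to allowing the $i$-th coordinate of $a$ to become negative; since $u(a) = \max\{-v(a),0\}$ and $v(a) = \min_i v_p(a(i))$ depend only on the valuations, the Witt-vector coefficient ring $W_{r-u(a)}(A)$ is correctly tracked under this shift. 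I would check that the map $e$ of the theorem is exactly the localization of the polynomial-case map, hence is an isomorphism by exactness of localization.

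The main obstacle I anticipate is the bookkeeping in the previous step: verifying that multiplication by $[T_i]^{\pm 1}$ carries the distinguished generators $e(x,a,I_0,\dots,I_n)$ to scalar multiples of other distinguished generators (and not to $W_r(A)$-linear combinations of several), and that the resulting bijection on index sets matches the claimed indexing set $\{a:\{1,\dots,d\}\to p^{-r}\bb Z\}\times P_a$. This requires a careful case analysis according to whether $i\in I_0$ or $i\in I_j$ with $v(a|_{I_j})$ negative, zero, or infinite, and keeping the ordering condition (iii) on the partitions consistent; the identities $V(F(x)y)=xV(y)$, $FdV=d$, $FV=p$ from the $F$-$V$-procomplex structure, plus $F^jd([T_i]^{p^j}) = d[T_i]$ and $[T_i]^{-1}d[T_i] = \dlog[T_i]$, are what make this work. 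Once the homogeneity and the precise weight-shift behaviour are pinned down, the rest is formal: localization is exact, a direct sum localizes termwise, and $W_r\Omega^n_{A[\ul T^{\pm1}]/A}$ is by \cite[Remark~1.10]{LangerZink} the localization of $W_r\Omega^n_{A[\ul T]/A}$, so $e$ is an isomorphism.
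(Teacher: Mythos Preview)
Your proposal is correct and takes essentially the same approach as the paper: deduce the Laurent case from Langer--Zink's polynomial case \cite[Proposition~2.17]{LangerZink} via the localization identification \cite[Remark~1.10]{LangerZink}. The paper's version is even terser than yours---it simply writes the localization as the increasing union $\bigcup_{j\ge 0}[T_1]^{-j}\cdots[T_d]^{-j}W_r\Omega_{A[\ul T]/A}^n$ of copies of the polynomial de~Rham--Witt module and leaves the weight-shift bookkeeping you describe entirely implicit.
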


\begin{proof}
Langer--Zink prove this for $A[\ul T]$ in place of $A[\ul T^{\pm 1}]$, in which case $p^{-r}\bb Z$ should be replaced by $p^{-r}\bb Z_{\ge 0}$. To deduce the desired result for Laurent polynomials, recall that $W_r\Omega_{A[\ul T^{\pm1}]/A}^n$ is the localisation of the $W_r(A[\ul T])$-module $W_r\Omega_{A[\ul T]/A}^n$ at the non-zero-divisors $[T_1],\dots,[T_d]$, and hence $W_r\Omega_{A[\ul T^{\pm1}]/A}^n=\bigcup_{j\ge 0} [T_1]^{-j}\cdots[T_d]^{-j} W_r\Omega_{A[\ul T]/A}^n$ is an increasing union of copies of $W_r\Omega_{A[\ul T]/A}^n$.

We also remark that Langer--Zink work with weights whose valuations are bounded below by $1-r$ rather than $-r$; since $W_{r-\max(r,0)}(A)=0$ this means that we are only adding redundant zero summands to the description.
\end{proof}

The {\em integral part} $W_r^\sub{int}\Omega_{A[\ul T^{\pm 1}]/A}^\blob$ of $W_r\Omega_{A[\ul T^{\pm 1}]/A}^\blob$ is its differential graded $W_r(A)$-subalgebra generated by the elements $[T_1]^{\pm 1},\dots,[T_d]^{\pm 1}\in W_r(A[\ul T^{\pm1}])$. In other words, the integral part is the image of the canonical map of differential graded $W_r(A)$-algebras
\[
\tau:\Omega_{W_r(A)[\ul U^{\pm 1}]/W_r(A)}^\blob\To W_r\Omega_{A[\ul T^{\pm 1}]/A}^\blob
\]
induced by $U_i\mapsto [T_i]$. We note that the integral part depends on the choice of coordinates.

\begin{theorem}[{\cite[Proof of Theorem~3.5]{LangerZink}}]\label{thm:dRvsdRW}
The map of complexes $\tau$ is an injective quasi-isomorphism.
\end{theorem}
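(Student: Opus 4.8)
The plan is to prove both assertions by a direct comparison of explicit bases on the two sides. Since both $\tau$ and the de~Rham complex of a Laurent polynomial ring are compatible with localizing at $[T_1],\dots,[T_d]$ (resp.\ $U_1,\dots,U_d$), one may either reduce to the polynomial case $A[\ul T]$ exactly as in the proof of Theorem~\ref{theorem_LZ1}, or simply work with the Laurent version throughout, since Theorem~\ref{theorem_LZ1} is already stated in that generality. On the source side, $\Omega^n_{W_r(A)[\ul U^{\pm 1}]/W_r(A)}$ is a free $W_r(A)$-module with basis the elements $U^{\ul m}\,d\log U_{i_1}\wedge\cdots\wedge d\log U_{i_n}$ for $\ul m\in\bb Z^d$ and $1\le i_1<\cdots<i_n\le d$ (using that each $U_i$ is a unit to pass freely between $dU_i$ and $d\log U_i$), with the evident differential. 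On the target side I would use the explicit $W_r(A)$-module basis of $W_r\Omega^n_{A[\ul T^{\pm 1}]/A}$ from Theorem~\ref{theorem_LZ1}, indexed by weights $a$ and partitions $(I_0,\dots,I_n)\in P_a$, and I would organize everything according to the weight $a$.

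The first step is to show that the differential of $W_r\Omega^\bullet_{A[\ul T^{\pm 1}]/A}$ preserves the weight $a$: applying the Leibniz rule together with the identities $d[u]^m=m[u]^m\,d\log[u]$, $FV=p$, $FdV=d$, $V(F(x)y)=xV(y)$ and the Teichm\"uller rule to a basis vector $e(x,a,I_0,\dots,I_n)$ produces only terms of the same weight $a$ but with different partitions. Hence $W_r\Omega^\bullet_{A[\ul T^{\pm 1}]/A}$ decomposes, as a complex, into a direct sum $\bigoplus_a C_a^\bullet$ over all weights $a$, and I would check (again from the explicit generators and the relations above) that $C_a^\bullet$ lies in the image of $\tau$ precisely when $a$ is integer-valued. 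This simultaneously identifies the integral part $W_r^{\mathrm{int}}\Omega^\bullet_{A[\ul T^{\pm 1}]/A}=\mathrm{im}(\tau)$ with $\bigoplus_{a\text{ integral}}C_a^\bullet$ and the cokernel complex with $\bigoplus_{a\text{ non-integral}}C_a^\bullet$.

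For injectivity: on the integral weights, the above shows $\tau$ maps the free $W_r(A)$-module complex $\Omega^\bullet_{W_r(A)[\ul U^{\pm 1}]/W_r(A)}$ onto $\bigoplus_{a\text{ integral}}C_a^\bullet$, and one sees from Theorem~\ref{theorem_LZ1} that the latter is itself a free $W_r(A)$-module complex of the same graded rank; writing the images of the de~Rham basis vectors $U^{\ul m}\,d\log U_I$ in triangular form with respect to the Langer--Zink basis of $\bigoplus_{a\text{ integral}}C_a^\bullet$ then yields that $\tau$ is injective (indeed an isomorphism onto $W_r^{\mathrm{int}}\Omega^\bullet_{A[\ul T^{\pm 1}]/A}$). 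It therefore remains to prove that $C_a^\bullet$ is acyclic for every non-integral weight $a$. For this I would write out $d\,e(x,a,I_0,\dots,I_n)$ in closed form using Theorem~\ref{theorem_LZ1} and exhibit an explicit null-homotopy of $\mathrm{id}_{C_a^\bullet}$: the homotopy operator moves (merges) an index from one of the ``$dV$''/``$Fd$'' blocks into the block carrying the leading $V$-power, with coefficients adjusted by suitable $F$- and $V$-powers so that the homotopy identity holds thanks to $FV=p$ and $FdV=d$; this realizes $C_a^\bullet$, up to a shift and twist, as an augmented (hence contractible) reduced cochain complex of a simplex on the non-empty blocks.

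The hard part will be this last step: the closed-form computation of $d\,e(x,a,I_0,\dots,I_n)$ and the verification that the proposed homotopy actually contracts $C_a^\bullet$ is a delicate piece of bookkeeping, requiring one to track the three cases in the definition of $e$ ($I_0=\emptyset$ versus $I_0\neq\emptyset$, and $v(a)<0$ versus $v(a)\ge 0$), the leading-valuation conventions, and the total ordering $\preceq_a$ used to make $P_a$ well-defined, as well as the precise interaction of $d$ with $F$, $V$ and Teichm\"uller lifts. The weight-decomposition in the second paragraph and the injectivity argument, by contrast, are comparatively routine once the basis of Theorem~\ref{theorem_LZ1} is in hand.
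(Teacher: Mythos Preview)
Your proposal is correct and follows essentially the same approach as the paper's proof (which is itself only a sketch of Langer--Zink's argument): decompose $W_r\Omega^\bullet_{A[\ul T^{\pm1}]/A}$ by weight via Theorem~\ref{theorem_LZ1}, identify the image of $\tau$ with the integral-weight part, prove $\tau$ is an isomorphism onto that part by comparing explicit bases, and show the non-integral-weight summands are acyclic. The only difference is that the paper simply asserts ``one checks directly'' for the acyclicity step, whereas you give a concrete plan (an explicit null-homotopy moving indices between blocks); your honest assessment that this bookkeeping is the delicate part is accurate.
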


\begin{proof}
In terms of the previous theorem, the image of $\tau$ is easily seen to be the $W_r(A)$-submodule spanned by the weights $a:\{1,\dots,d\}\to p^{-r}\bb Z$ with $v(a)\ge0$, i.e., with value in $\bb Z$. One then checks directly firstly that the complement, i.e., the part of $W_r\Omega_{A[\ul T^{\pm1}]/A}^\blob$ corresponding to weights with $v(a)<0$, is acyclic, and secondly, by writing a similar explicit description of $\Omega_{W_r(A)[\ul U^{\pm 1}]/W_r(A)}^\blob$, that $\tau$ is an isomorphism onto its image.
\end{proof}

\subsection{The case of smooth algebras over a perfectoid base}
Finally, we want to explain some nice features in the case where the base ring $A$ is perfectoid, and $R$ is a smooth $A$-algebra. The next proposition will be applied in particular to the homomorphism $\roi \to k$ of perfectoid rings.

\begin{proposition}\label{prop:dRWperfectoid}
Let $A\to A'$ be a homomorphism of perfectoid rings, and $R$ a smooth $A$-algebra, with base change $R' = R\otimes_A A'$.
\begin{enumerate}
\item The $W_r(A)$-modules $W_r\Omega^n_{R/A}$ and $W_r(A')$ are Tor-independent.
\item The canonical map of differential graded $W_r(A')$-algebras
\[
W_r\Omega^\blob_{R/A}\otimes_{W_r(A)} W_r(A')\to W_r\Omega^\blob_{R'/A'}
\]
is an isomorphism.
\end{enumerate}
\end{proposition}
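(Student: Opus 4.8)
The plan is to reduce to the case of a polynomial algebra over $A$, where the relative de~Rham--Witt complex has been computed explicitly by Langer--Zink, and then to recognise the $W_r(A)$-module structure of the pieces occurring in that computation as precisely the one governed by the Tor-independence statement of Lemma~\ref{lemma_tor_independence_1}.

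First I would note that both assertions are Zariski-local on $\Spec R$: for a distinguished open $\Spec R_f\subseteq\Spec R$, Lemma~\ref{lem:dRWEtale} gives $W_r\Omega^n_{R_f/A}=W_r\Omega^n_{R/A}\otimes_{W_r(R)}W_r(R_f)$, and by Theorem~\ref{thm:Wittetale} the map $W_r(R)\to W_r(R_f)$ is étale, a finite such family being faithfully flat when the $\Spec R_f$ cover $\Spec R$ (as $\Spec W_r(-)$ has the same underlying space as $\Spec(-)$); the base change $R'=R\otimes_A A'$ is covered by the corresponding opens. Since perfectoid rings are $p$-adically complete, hence $\bb Z_{(p)}$-algebras, so that the results of Section~\ref{section_Witt_complexes} apply, and since a smooth $A$-algebra is Zariski-locally étale over affine space, we may assume that $P:=A[T_1,\dots,T_d]\to R$ is étale.

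Next I would treat $R=P$ itself, with $P':=A'[T_1,\dots,T_d]=P\otimes_A A'$. Theorem~\ref{theorem_LZ1} presents $W_r\Omega^n_{P/A}$, as a $W_r(A)$-module, as a direct sum $\bigoplus_{(a,I_\bullet)}V^{u(a)}W_{r-u(a)}(A)$ over an index set that depends only on $n$, $d$, $r$, each summand being the ideal $V^{u(a)}W_{r-u(a)}(A)\subseteq W_r(A)$ with $u(a)\le r$ (the summands with $u(a)=r$ vanishing). Since $A$ is perfectoid, $F$ is surjective on $W_r(A)$ (Lemma~\ref{lemma_frobenius_surjectivity}), so Remark~\ref{rem:idealsgeneralcase} identifies this ideal, as a $W_r(A)$-module, with $F^{u(a)}_\ast W_{r-u(a)}(A)$, i.e.\ with $W_{r-u(a)}(A)$ regarded as a $W_r(A)$-module via $F^{u(a)}$; likewise over $A'$. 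Lemma~\ref{lemma_tor_independence_1}, applied with $j=r-u(a)\in\{1,\dots,r\}$, then yields $F^{u(a)}_\ast W_{r-u(a)}(A)\dotimes_{W_r(A)}W_r(A')\simeq F^{u(a)}_\ast W_{r-u(a)}(A')$, concentrated in degree $0$. Summing over the ring-independent index set — which commutes with Tor and $\otimes$ — gives the Tor-independence of $W_r\Omega^n_{P/A}$ with $W_r(A')$ over $W_r(A)$, and identifies $W_r\Omega^n_{P/A}\otimes_{W_r(A)}W_r(A')$ summand-by-summand with $W_r\Omega^n_{P'/A'}$; that this identification is realised by the \emph{canonical} comparison map follows from the visible compatibility of the generators $e(x,a,I_\bullet)$ of Theorem~\ref{theorem_LZ1} with the ring map $W_r(P)\to W_r(P')$. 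The case $n=0$ in particular gives a Tor-independent isomorphism $W_r(P)\otimes_{W_r(A)}W_r(A')\isoto W_r(P')$.

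Finally, for general étale $P\to R$ I would put $R'=R\otimes_P P'=R\otimes_A A'$, which is étale over $P'$, and combine Lemma~\ref{lem:dRWEtale} (giving $W_r\Omega^n_{R/A}=W_r\Omega^n_{P/A}\otimes_{W_r(P)}W_r(R)$, and similarly for $R'$) with Theorem~\ref{thm:Wittetale} ($W_r(R)$ is flat over $W_r(P)$ and $W_r(R')=W_r(R)\otimes_{W_r(P)}W_r(P')=W_r(R)\otimes_{W_r(A)}W_r(A')$, the last identity using the $n=0$ case). A formal rearrangement of tensor products, exploiting the flatness of $W_r(R)$ over $W_r(P)$ to interchange the relevant factors, deduces both the Tor-independence and the bijectivity of the canonical map for $R$ from the statements already established for $P$, the resulting module isomorphism being automatically one of differential graded $W_r(A')$-algebras since the comparison map is canonical. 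I expect no serious obstacle: the Zariski-localisation and the concluding étale-descent step are bookkeeping, and the single point of substance is the identification of Langer--Zink's summands $V^{u(a)}W_{r-u(a)}(A)$ with $F^{u(a)}_\ast W_{r-u(a)}(A)$, which is exactly what makes Lemma~\ref{lemma_tor_independence_1} applicable.
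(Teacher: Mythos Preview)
Your proof is correct and follows essentially the same approach as the paper: reduce Zariski-locally to an \'etale map from a (Laurent) polynomial algebra, invoke the Langer--Zink structure theorem (Theorem~\ref{theorem_LZ1}) together with Remark~\ref{rem:idealsgeneralcase} and Lemma~\ref{lemma_tor_independence_1} to handle the polynomial case, and then use Lemma~\ref{lem:dRWEtale} and Theorem~\ref{thm:Wittetale} to bootstrap to $R$. The only cosmetic difference is that the paper works with Laurent polynomials $A[\ul T^{\pm 1}]$ rather than ordinary polynomials, which is immaterial.
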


\begin{proof} Both statements can be checked locally on $\Spec R$, so we may assume that there is an \'etale map $A[\ul T^{\pm 1}] = A[T_1^{\pm 1},\ldots,T_d^{\pm 1}]\to R$. In that case, Lemma~\ref{lem:dRWEtale} shows
\[
W_r\Omega^n_{R/A} = W_r(R)\otimes_{W_r(A[\ul T^{\pm 1}])} W_r\Omega^n_{A[\ul T^{\pm 1}]/A}\ ,
\]
and similarly
\[
W_r\Omega^n_{R'/A'} = W_r(R')\otimes_{W_r(A'[\ul T^{\pm 1}])} W_r\Omega^n_{A'[\ul T^{\pm 1}]/A'}\ .
\]
From Theorem~\ref{theorem_LZ1}, Lemma~\ref{lemma_tor_independence_1} and Remark~\ref{rem:idealsgeneralcase}, we see that $W_r\Omega^n_{A[\ul T^{\pm 1}]/A}$ is Tor-independent from $W_r(A')$ over $W_r(A)$, and
\[
W_r\Omega^n_{A[\ul T^{\pm 1}]/A}\otimes_{W_r(A)} W_r(A')\cong W_r\Omega^n_{A'[\ul T^{\pm 1}]/A'}\ .
\]
As $W_r(R)$ is flat over $W_r(A[\ul T^{\pm 1}])$, we see that $W_r\Omega^n_{R/A}$ is Tor-independent from $W_r(A')$ over $W_r(A)$, and
\[\begin{aligned}
W_r\Omega^n_{R'/A'} &= W_r(R')\otimes_{W_r(A'[\ul T^{\pm 1}])} W_r\Omega^n_{A'[\ul T^{\pm 1}]/A'}\\
&= W_r(R)\otimes_{W_r(A[\ul T^{\pm 1}])} W_r(A'[\ul T^{\pm 1}])\otimes_{W_r(A'[\ul T^{\pm 1}])} W_r\Omega^n_{A'[\ul T^{\pm 1}]/A'}\\
&= W_r(R)\otimes_{W_r(A[\ul T^{\pm 1}])} W_r\Omega^n_{A'[\ul T^{\pm 1}]/A'} \\
&= W_r(R)\otimes_{W_r(A[\ul T^{\pm 1}])} W_r\Omega^n_{A[\ul T^{\pm 1}]/A}\otimes_{W_r(A)} W_r(A') \\
&= W_r\Omega^n_{R/A}\otimes_{W_r(A)} W_r(A') \ ,
\end{aligned}\]
using Theorem~\ref{thm:Wittetale} in the second step.
\end{proof}

\newpage

\section{The comparison with de~Rham--Witt complexes}

In this section, we will give the proof of part (iv) of Theorem~\ref{thm:AOmegavsdRWLocal}:

\begin{theorem}\label{thm:AOmegavsdRWLocalPart4} Let $R$ be a small formally smooth $\roi$-algebra. Then for $r\geq 1$, $i\geq 0$, there is a natural isomorphism
\[
H^i(\widetilde{W_r\Omega}_R)\cong W_r\Omega^{i,\cont}_{R/\roi}\{-i\}\ .
\]
\end{theorem}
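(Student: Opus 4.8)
The strategy is to extract, from the abstract structure present on the pro-\'etale cohomology groups, exactly the data of an $F$-$V$-procomplex for $R/\roi$, and then invoke the universal property of the de~Rham--Witt complex (Theorem~\ref{thm:dRWExists}) to obtain a comparison map, which one then checks is an isomorphism. First I would set $H^i_r := H^i(\widetilde{W_r\Omega}_R)$, and observe that by Proposition~\ref{prop:LetaBock} applied to $D = R\nu_\ast W_r(\hat\roi^+_X)$ with $\cal I = ([\zeta_{p^r}]-1)$, the complex $\widetilde{W_r\Omega}_R/([\zeta_{p^r}]-1)$ has a \emph{canonical} representative, namely the complex $H^\bullet(D/([\zeta_{p^r}]-1))$ with Bockstein differential. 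Since by Lemma~\ref{lem:propWrOmega}(ii) the groups $H^i_r$ are $p$-torsion-free (and $[\zeta_{p^r}]-1$ divides $p$, so this is the relevant divisibility), Lemma~\ref{lem:CohomLeta} identifies $H^i_r$ with $H^i(D)/H^i(D)[[\zeta_{p^r}]-1]$ up to a twist by $\roi\{i\}$, and the Bockstein construction of Proposition~\ref{prop:LetaBock} endows $\bigoplus_i H^i_r$ with the structure of a differential graded $W_r(\roi)$-algebra (using Lemma~\ref{lem:Letadga} and the multiplicative structure on $R\nu_\ast W_r(\hat\roi^+_X)$ coming from Lemma~\ref{lem:koszuldga}). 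One must normalise the twist away by choosing roots of unity, at which point $H^\ast_\ast$ becomes an honest bigraded algebra.

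Next I would construct the operators $R$, $F$, $V$ and the maps $\lambda_r : W_r(R)\to H^0_r$. The restriction $W_{r+1}(\hat\roi^+_X)\to W_r(\hat\roi^+_X)$, the Frobenius $W_{r+1}(\hat\roi^+_X)\to W_r(\hat\roi^+_X)$, and the Verschiebung $W_r(\hat\roi^+_X)\to W_{r+1}(\hat\roi^+_X)$ induce maps on the complexes $R\nu_\ast W_r(\hat\roi^+_X)$; one checks (using that $\tilde\theta_r(\mu) = [\zeta_{p^r}]-1$ and the commutation relations of Lemma~\ref{lemma_theta_r_diagrams}, together with the formal properties of $L\eta$ in Lemma~\ref{lem:compositionLeta} and Lemma~\ref{lem:mapLetabackforth}) that after applying $L\eta_{[\zeta_{p^{\bullet}}]-1}$ and passing to cohomology these descend to the desired $R$, $F$, $V$ satisfying $FV = p$, $FdV = d$, $V(F(x)y) = xV(y)$, and the compatibility of $R$ with $d$, $F$, $V$. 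The map $\lambda_r$ is the composition $W_r(R)\to H^0(D) \to H^0_r$; the Teichm\"uller rule is automatic by Remark~\ref{remark_p-torsion-free_Teichmuller} since $H^1_r$ is $p$-torsion-free. This produces an $F$-$V$-procomplex for $R/\roi$, hence by Theorem~\ref{thm:dRWExists} a canonical morphism of $F$-$V$-procomplexes $W_r\Omega^\bullet_{R/\roi}\to H^\bullet_r$, which (after $p$-adic completion, using the identification $W_r\Omega^{i,\cont}_{R/\roi} = \varprojlim_s W_r\Omega^i_{R/\roi}/p^s$ and the fact that $H^i_r$ is already $p$-complete) gives the comparison map $W_r\Omega^{i,\cont}_{R/\roi}\{-i\}\to H^i_r$ we want to show is an isomorphism.

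To prove bijectivity I would reduce to the case of the torus $R = \roi\langle T_1^{\pm 1},\ldots,T_d^{\pm 1}\rangle$, exploiting \'etale base change. On the cohomology side, Lemma~\ref{lem:dRWEtale} and Theorem~\ref{thm:Wittetale} give $W_r\Omega^{i,\cont}_{R'/\roi} = W_r\Omega^{i,\cont}_{R/\roi}\otimes_{W_r(R)} W_r(R')$ for $R\to R'$ formally \'etale, while on the $A_\inf$-side Lemma~\ref{lem:WrOmegaBC} gives the matching base change statement $\widetilde{W_r\Omega}_{R'} = \widetilde{W_r\Omega}_R\hat\otimes_{W_r(R)} W_r(R')$ (the tensor product being underived modulo $p^s$ by \'etaleness), so the comparison map base-changes correctly; since any small $R$ is formally \'etale over a torus, it suffices to treat the torus. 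For the torus, on one side Theorem~\ref{theorem_LZ1} and especially Theorem~\ref{thm:dRvsdRW} give an explicit description of $W_r\Omega^\bullet_{\roi[\ul T^{\pm 1}]/\roi}$, with the ``integral part'' $\tau(\Omega^\bullet_{W_r(\roi)[\ul U^{\pm 1}]/W_r(\roi)})$ computing all the cohomology and the ``non-integral'' weights forming an acyclic complement. On the other side, Lemma~\ref{lem:qdRvsAOmega} identifies $\widetilde{W_r\Omega}^\square_R$ with the reduction modulo $\tilde\xi_r$ of the $q$-de~Rham complex $q\mbox{-}\Omega^\bullet_{A(R)^\square/A_\inf}$, whose terms are free over $A(R)^\square = W_r(\roi)\langle \ul U^{\pm 1}\rangle$ (after reduction), and whose differentials are the $q$-derivatives $[n]_q$-multiplication operators; one computes these cohomology groups directly via Lemma~\ref{lemma_on_Koszul_2} and Corollary~\ref{corollary_roots_of_unity}. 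The remaining task --- matching these two explicit combinatorial descriptions term by term, compatibly with $d$, $F$, $V$, $R$ and products --- is precisely where the real work lies: \textbf{the main obstacle is the bookkeeping of weights}, i.e.\ verifying that the basis elements $e(x,a,I_0,\ldots,I_n)$ of Theorem~\ref{theorem_LZ1} correspond under the canonical map to the appropriate Witt-vector-twisted monomials $[\ul U]^{\ul a}\, d\log[U_{i_1}]\wedge\cdots$ in the $q$-de~Rham cohomology, including getting the powers of $V$, $F$ and the $\dlog$ terms to line up exactly. This is ``not very hard but a bit cumbersome'' in the authors' phrasing; it is purely a matter of unwinding both explicit presentations and checking that the universal map respects the ghost-component/valuation filtrations, after which surjectivity and injectivity both follow from the term-by-term match.
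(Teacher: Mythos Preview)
Your overall architecture is right and matches the paper: build an $F$-$V$-procomplex structure on $\{H^\bullet(\widetilde{W_r\Omega}_R)\}_r$, invoke the universal property to get a map from $W_r\Omega^\bullet_{R/\roi}$, then reduce to the torus by \'etale base change. Two points diverge from the paper's execution, and one of them hides a real difficulty.

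First, the construction of the restriction map $R$ is more delicate than your sketch suggests. The naive restriction $W_{r+1}(\hat\roi_X^+)\to W_r(\hat\roi_X^+)$ sends $[\zeta_{p^{r+1}}]-1$ to $[\zeta_{p^{r+1}}]-1\in W_r(\roi)$, which \emph{divides} (rather than equals) $[\zeta_{p^r}]-1$; so applying $L\eta$ levelwise does not directly produce a map $\widetilde{W_{r+1}\Omega}_R\to\widetilde{W_r\Omega}_R$, and the lemmas you cite do not close this gap. The paper sidesteps this by working once and for all with a single $A_\inf$-complex $D=R\Gamma_\sub{pro\'et}(X,\bb A_{\inf,X})$ carrying a Frobenius automorphism $\phi_D$, defining $\cal W_r^n(D)=H^n((L\eta_\mu D)/\tilde\xi_r)$, and constructing $R$ from $\phi_D^{-1}$ together with a degree-dependent twist by $\tilde\theta_r(\xi)^n$. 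Even then, showing that $R$ preserves the improved groups (rather than just the ``pre'' groups $H^n(D/\tilde\xi_r)$) requires a point-set argument using the $p$-torsion-freeness of $\cal W_r^n(D)$; this is the content of Proposition~\ref{prop:improvedconstruction}(iii), and is the most subtle part of the $F$-$V$-procomplex construction.

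Second, for the torus you propose to match the Langer--Zink basis elements $e(x,a,I_0,\ldots,I_n)$ directly against $q$-de~Rham cohomology classes. This would work, but the paper avoids it entirely. After establishing that both sides decompose into isotypic pieces for the $\bb Z[\tfrac1p]^d$-action with matching $W_{r-u(a)}(\roi)$-summands (Lemma~\ref{lemma_Witt_decomposition_into_isotypical}), the paper observes that it suffices to check $\lambda_r^n\otimes_{W_r(\roi)} W_r(k)$ is an isomorphism. Over $W_r(k)$ the $q$-de~Rham complex becomes the ordinary de~Rham complex (since $q\mapsto 1$), so the classical Cartier isomorphism identifies $\cal W_r^\bullet(D)\otimes W_r(k)$ with $W_r\Omega^\bullet_{k[\ul T^{\pm1}]/k}$; and then $\overline{\lambda}_r^\bullet$ is a differential-graded endomorphism of $W_r\Omega^\bullet_{k[\ul T^{\pm1}]/k}$ which is the identity in degree $0$, hence the identity everywhere since this algebra is generated in degree $0$. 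This replaces your term-by-term bookkeeping with a one-line argument.
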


We start with a general construction that starts from a complex like $R\Gamma_\sub{pro\'et}(X,\bb A_{\inf,X})$ and produces the structure of an $F$-$V$-procomplex. In this way, we define first the elaborate structure of an $F$-$V$-procomplex on the left side, and then show that the resulting universal map is an isomorphism (that is then automatically compatible with the extra structure).

\subsection{Constructing $F$-$V$-procomplexes}\label{section_constructing_Witt}
Let $S$ be a perfectoid ring and $\xi$ a generator of $\ker\theta: \bb A_\inf(S)\to S$ which satisfies $\theta_r(\xi)=V(1)$ for all $r\ge 1$; in particular, $\theta_r(\tilde\xi) = p$ for all $r\ge 1$. Let $D\in D(\bb A_\inf(S))$ be a commutative algebra object, with $H^i(D)=H^i(D/\xi)=0$ for $i<0$, equipped with a $\phi$-linear automorphism $\phi_D: D\isoto D$. Note that by assumption $H^0(D)$ is $\xi$-torsion-free, and thus also $\phi^r(\xi)$-torsion-free for all $r\in \bb Z$; in particular, it is $\tilde\xi_r$-torsion-free for all $r\geq 1$, and so $H^i(D/\tilde\xi_r)=0$ for $i<0$.

\subsubsection{First construction}\label{subsection_first_construction}
We now present a construction of (essentially) an $F$-$V$-procomplex from $D$. It is interesting to see the rather elaborate structure of an $F$-$V$-procomplex emerge from the rather simple input that is $D$. It will turn out that this preliminary construction must be refined, which will be done in the next subsection.

For each $r\ge 1$ we may form the algebra $D\dotimes_{\bb A_\inf(S),\tilde\theta_r} W_r(S)$ over $W_r(S)=\bb A_\inf(S)/\tilde\xi_r$ and take its cohomology
\[
\cal W_r^*(D)_\sub{pre}:=H^*(D\dotimes_{\bb A_\inf(S)}\bb A_\inf(S)/\tilde\xi_r)
\]
to form a graded $W_r(S)$-algebra. Equipping these cohomology groups with the Bockstein differential $d:\cal W_r^n(D)_\sub{pre}\to\cal W_r^{n+1}(D)_\sub{pre}$ associated to
\[
0\To D\dotimes_{\bb A_\inf(S)} \bb A_\inf(S)/\tilde\xi_r\xTo{\tilde\xi_r}D\dotimes_{\bb A_\inf(S)}\bb A_\inf(S)/\tilde\xi_r^2\To D\dotimes_{\bb A_\inf(S)}\bb A_\inf(S)/\tilde\xi_r\To 0
\]
makes $\cal W_r^*(D)_\sub{pre}$ into a differential graded $W_r(S)$-algebra.

Now let
\[\begin{aligned}
R'&: \cal W_{r+1}^*(D)_\sub{pre}\to\cal W_r^*(D)_\sub{pre}\\
F&: \cal W_{r+1}^*(D)_\sub{pre}\to\cal W_r^*(D)_\sub{pre}\\
V&: \cal W_r^*(D)_\sub{pre}\to\cal W_{r+1}^*(D)_\sub{pre}
\end{aligned}\]
be the maps of graded $W_r(S)$-modules induced respectively by
\[\begin{aligned}
D\dotimes_{\bb A_\inf(S)}\bb A_\inf(S)/\tilde\xi_{r+1}&\xTo{\phi_D^{-1}} D\dotimes_{\bb A_\inf(S)}\bb A_\inf(S)/\tilde\xi_r\\
D\dotimes_{\bb A_\inf(S)}\bb A_\inf(S)/\tilde\xi_{r+1}&\xTo{\mathrm{can.\ proj.}} D\dotimes_{\bb A_\inf(S)}\bb A_\inf(S)/\tilde\xi_r\\
D\dotimes_{\bb A_\inf(S)}\bb A_\inf(S)/\tilde\xi_r&\xTo{\varphi^{r+1}(\xi)} D\dotimes_{\bb A_\inf(S)}\bb A_\inf(S)/\tilde\xi_{r+1},
\end{aligned}\]
c.f. Lemma~\ref{lemma_theta_r_diagrams}. Instead of $R'$, we will be primarily interested in
\[
R:=\tilde\theta_r(\xi)^n R': \cal W_{r+1}^n(D)_\sub{pre}\to\cal W_r^n(D)_\sub{pre}\ .
\]

\begin{proposition}\label{proposition_first_construction}
The groups $\cal W_r^n(D)_\sub{pre}$, together with the $F$, $R$, $V$, $d$ and multiplication maps, satisfy the following properties.
\begin{enumerate}\itemsep0pt
\item $\cal W_r^\blob(D)_\sub{pre}$ is a differential graded $W_r(S)$-algebra, and satisfies the (anti)commutativity $xy = (-1)^{|x||y|} yx$ for homogeneous elements $x,y$ of degree $|x|, |y|$;
\item $R'$ is a homomorphism of graded $W_r(S)$-algebras, and $R$ is a homomorphism of differential graded $W_r(S)$-algebras;
\item $V$ is additive, commutes with both $R'$ and $R$, and satisfies $V(F(x)y)=xV(y)$;
\item $F$ is a homomorphism of graded rings and commutes with both $R'$ and $R$;
\item $FdV=d$;
\item $FV$ is multiplication by $p$.
\end{enumerate}
\end{proposition}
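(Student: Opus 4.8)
\textbf{Proof strategy for Proposition~\ref{proposition_first_construction}.} The plan is to verify the six listed properties one at a time, translating each claimed identity between the operators $F$, $R$, $V$, $d$ on the cohomology groups $\cal W_r^\blob(D)_\sub{pre} = H^\blob(D \dotimes_{\bb A_\inf(S)} \bb A_\inf(S)/\tilde\xi_r)$ into a statement about the underlying maps of complexes $D \dotimes_{\bb A_\inf(S)} \bb A_\inf(S)/\tilde\xi_r$, and then checking that statement using the structure on $D$ (the commutative algebra structure, the $\phi$-linear automorphism $\phi_D$) together with the functorial properties of the rings $\bb A_\inf(S)/\tilde\xi_r$ recorded in Lemma~\ref{lemma_theta_r_diagrams}, Lemma~\ref{lemma_ker_theta_r}, and Corollary~\ref{corollary_roots_of_unity}. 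The key point throughout is that $\tilde\xi_r = \phi(\xi)\cdots\phi^r(\xi)$ generates $\ker\tilde\theta_r$, that $\tilde\xi_{r+1} = \tilde\xi_r \cdot \phi^{r+1}(\xi)$, and that under $\tilde\theta_r$ the element $\xi$ maps to something congruent to $V(1)$ (by the hypothesis $\theta_r(\xi) = V(1)$ together with Lemma~\ref{lemma_theta_r_diagrams}).

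First I would dispose of (i): $D$ is a commutative algebra object, so $D \dotimes_{\bb A_\inf(S)} \bb A_\inf(S)/\tilde\xi_r$ is again a commutative algebra object, and its cohomology is a graded-commutative ring with the stated sign rule; the Bockstein associated to $0 \to \bb A_\inf(S)/\tilde\xi_r \xto{\tilde\xi_r} \bb A_\inf(S)/\tilde\xi_r^2 \to \bb A_\inf(S)/\tilde\xi_r \to 0$ is a derivation for the product (standard, since the extension is one of square-zero type compatible with multiplication), so $d$ makes $\cal W_r^\blob(D)_\sub{pre}$ a differential graded algebra. For (ii)--(vi), the maps $R'$, $F$, $V$ are induced at the level of complexes by multiplication by a scalar in $\bb A_\inf(S)$ (namely $\phi^{r+1}(\xi)$ for $V$, the identity composed with $\phi_D^{-1}$ for $R'$, the canonical projection for $F$) followed by the appropriate change-of-ring map among the $\bb A_\inf(S)/\tilde\xi_s$; so each identity to be checked reduces to a commutative diagram of $\bb A_\inf(S)$-modules, i.e.\ to an identity of scalars modulo the relevant $\tilde\xi_s$ and a compatibility with $\phi_D$. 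For instance, $FV$ being multiplication by $p$ comes down to: the composite $\bb A_\inf(S)/\tilde\xi_r \xto{\phi^{r+1}(\xi)} \bb A_\inf(S)/\tilde\xi_{r+1} \to \bb A_\inf(S)/\tilde\xi_r$ is multiplication by $\phi^{r+1}(\xi) \bmod \tilde\xi_r$, and since $\phi^{r+1}(\xi) \equiv p \bmod \xi$ (as $\tilde\theta_1$-images show, using $\theta\phi^i(\xi) = p$ for $i \geq 1$ from Example~\ref{examples_roots_of_unity}), and more precisely $\phi^{r+1}(\xi) \equiv p \bmod \tilde\xi_r$, this is multiplication by $p$. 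The relation $V(F(x)y) = xV(y)$ is the projection formula for the base-change-plus-scalar maps, and $R$ (as opposed to $R'$) commuting with $d$ is forced by the twist $\tilde\theta_r(\xi)^n$: on $\cal W_r^n$ the Bockstein shifts degree by one, and the twisting factor $\tilde\theta_r(\xi)^n$ versus $\tilde\theta_r(\xi)^{n+1}$ precisely absorbs the scalar discrepancy that prevents the naive restriction $R'$ from commuting with the Bockstein; I would check this by writing out the two Bockstein squares (for $r$ and $r+1$) and the vertical $R'$-maps between them, observing the square commutes only up to the factor $\tilde\theta_{r+1}(\xi)$, which is exactly cancelled after multiplying by $\tilde\theta_r(\xi)^n$ resp.\ $\tilde\theta_{r+1}(\xi)^n$ on source and target. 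That $F$ commutes with $d$ requires the second diagram of Lemma~\ref{lemma_theta_r_diagrams} (the one intertwining $\phi$ on $\bb A_\inf(S)$ with $F$ on $W_{r}(S)$), together with the fact that $F$ here is the plain projection.

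The main obstacle will be the bookkeeping in (ii) and (iv)--(v): keeping straight which Bockstein sequence (for $\tilde\xi_r$ or $\tilde\xi_{r+1}$) governs $d$ on which $\cal W_\blob$, and tracking the scalar twists $\tilde\theta_r(\xi)^n$ through the maps $F$, $R$, so that the differential-graded-algebra and $F$-$R$-$V$-compatibility identities come out with the correct normalization. In particular the subtle point is that the ``obvious'' restriction $R'$ is \emph{not} a map of differential graded algebras --- it is only the twisted version $R = \tilde\theta_r(\xi)^n R'$ that commutes with $d$ --- and getting the exponent $n$ right (and checking it is consistent with the multiplicative structure, i.e.\ that $R$ multiplies correctly on products of homogeneous elements, since $\tilde\theta_r(\xi)^{|x|+|y|} = \tilde\theta_r(\xi)^{|x|}\tilde\theta_r(\xi)^{|y|}$) is where care is needed. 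Everything else is a routine unwinding of Lemma~\ref{lemma_theta_r_diagrams} and the congruences $\tilde\xi_{r+1} = \tilde\xi_r \phi^{r+1}(\xi)$, $\phi^{r+1}(\xi) \equiv p \bmod \tilde\xi_r$, which I would cite rather than rederive.
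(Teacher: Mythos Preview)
Your approach is essentially the same as the paper's: reduce each identity to a commutative diagram of $\bb A_\inf(S)$-modules (or short exact sequences thereof, for the Bockstein compatibilities) and then check the required scalar congruences using $\tilde\xi_{r+1}=\tilde\xi_r\,\phi^{r+1}(\xi)$ and the hypothesis $\theta_r(\xi)=V(1)$. The paper in fact draws out exactly the three diagrams you allude to (for (ii), (iii), (v)) and dismisses (i) as formal and (vi) as the consequence $\tilde\theta_r(\phi^{r+1}(\xi))=p$.

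One correction: you write ``That $F$ commutes with $d$ requires the second diagram of Lemma~\ref{lemma_theta_r_diagrams}\ldots''. The proposition does \emph{not} assert that $F$ commutes with $d$ --- part (iv) only says $F$ is a graded (not differential graded) ring map commuting with $R',R$ --- and in fact $F$ does not commute with $d$ in general (one has $dF=pFd$, as in de~Rham--Witt theory; this is what makes the identity $FdV=d$ in (v) nontrivial). So drop that sentence; nothing else in your plan depends on it. Also, for (vi) you should cite the standing hypothesis $\theta_r(\xi)=V(1)$ (hence $\theta_r(\tilde\xi)=p$, equivalently $\tilde\theta_r(\phi^{r+1}(\xi))=p$) rather than Example~\ref{examples_roots_of_unity}, since the proposition is stated in greater generality than the roots-of-unity setting.
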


We note that in general, $\cal W_r^\blob(D)_\sub{pre}$ may fail to be a commutative differential graded algebra, as the equation $x^2=0$ for $|x|$ odd may fail (if $A$ is $2$-adic).

\begin{proof} Part (i) is formal.

(ii): $R'$ is a homomorphism of graded rings by functoriality; the same is true of $R$ since it is twisted by increasing powers of an element. Moreover, the commutativity of
\[\xymatrix{
0\ar[r] & \bb A_\inf(S)/\tilde\xi_{r+1}\ar[r]^-{\tilde\xi_{r+1}}\ar[d]_{\xi \varphi^{-1}} & \bb A_\inf(S)/\tilde\xi_{r+1}^2\ar[r]\ar[d]_{\phi^{-1}} & \bb A_\inf(S)/\tilde\xi_{r+1}\ar[r]\ar[d]_{\phi^{-1}} & 0\\
0\ar[r] & \bb A_\inf(S)/\tilde\xi_r\ar[r]^-{\tilde\xi_r} & \bb A_\inf(S)/\tilde\xi_r^2\ar[r] & \bb A_\inf(S)/\tilde\xi_r\ar[r] & 0
}\]
and functoriality of the resulting Bocksteins implies that
\[\xymatrix{
\cal W_{r+1}^n(D)_\sub{pre}\ar[r]^d\ar[d]_{R'}&\cal W_{r+1}^{n+1}(D)_\sub{pre}\ar[d]^{\tilde\theta_r(\xi)R'} \\
\cal W_r^n(D)_\sub{pre}\ar[r]_d&\cal W_r^{n+1}(D)_\sub{pre} \\
}\]
commutes; hence $d$ commutes with $R$.

(iii): $V$ is clearly additive, and it commutes with $R'$ since it already did so before taking cohomology; it therefore also commutes with $R$. Secondly, $V(F(x)y)=xV(y)$ follows by tensoring the commutative diagram below with $D$ over $\bb A_\inf(S)$ (resp. with $D\otimes D$ over $\bb A_\inf(S)\otimes \bb A_\inf(S)$ on the left), and passing to cohomology:
\[\xymatrix{
\bb A_\inf(S)/\tilde\xi_{r+1}\otimes \bb A_\inf(S)/\tilde\xi_{r+1}\ar[rrr]^{\sub{mult}} &&& \bb A_\inf(S)/\tilde\xi_{r+1}\\
\bb A_\inf(S)/\tilde\xi_{r+1}\otimes \bb A_\inf(S)/\tilde\xi_{r}\ar[u]^{\mathrm{id}\otimes \varphi^{r+1}(\xi)}\ar[d]_{\mathrm{can.\ proj.}\otimes\op{id}}&\\
\bb A_\inf(S)/\tilde\xi_{r}\otimes \bb A_\inf(S)/\tilde\xi_{r}\ar[rrr]^{\sub{mult}} &&& \bb A_\inf(S)/\tilde\xi_r\ar[uu]_{\varphi^{r+1}(\xi)}
}\]

(iv): $F$ is a graded ring homomorphism, and it commutes with $R^\prime$ by definition, and then also with $R$.

(v): This follows by tensoring the commutative diagram below with $D$ over $\bb A_\inf(S)$, and looking at the associated boundary maps on cohomology:

\[\xymatrix{
0\ar[r] & \bb A_\inf(S)/\tilde\xi_r\ar[r]^-{\tilde\xi_r}\ar@{=}[d] & \bb A_\inf(S)/\tilde\xi_r^2\ar[r]\ar^{\varphi^{r+1}(\xi)}[d] & \bb A_\inf(S)/\tilde\xi_r\ar^{\varphi^{r+1}(\xi)}[d]\ar[r] & 0\\
0\ar[r] & \bb A_\inf(S)/\tilde\xi_r\ar[r]^-{\tilde\xi_{r+1}} & \bb A_\inf(S)/\tilde\xi_r \tilde\xi_{r+1} \ar[r] & \bb A_\inf(S)/\tilde\xi_{r+1}\ar[r] & 0 \\
0\ar[r] & \bb A_\inf(S)/\tilde\xi_{r+1}\ar[r]^-{\tilde\xi_{r+1}}\ar^{\mathrm{can.\ proj.}}[u] & \bb A_\inf(S)/\tilde\xi_{r+1}^2\ar[r]\ar[u] & \bb A_\inf(S)/\tilde\xi_{r+1}\ar[r]\ar@{=}[u] & 0
}\]

(vi): This is a consequence of the assumption that $\tilde\theta_r(\varphi^{r+1}(\xi))=p$ for all $r\geq 1$ (which is equivalent to $\theta_r(\tilde\xi)=p$ for $r\geq 1$).
\end{proof}

Now suppose further that there exists an $S$-algebra $B$ and $W_r(S)$-algebra homomorphisms $\lambda_r:W_r(B)\to \cal W_r^0(D)$ which are compatible with $R,F,V$, i.e., such that the diagrams
\[
\xymatrix{
W_{r+1}(B) \ar[d]_R \ar[r]^-{\lambda_{r+1}} & H^0(D/\tilde\xi_{r+1})\ar[d]^{\varphi_D^{-1}}\\
W_r(B) \ar[r]^-{\lambda_r} & H^0(D/\tilde\xi_r)
}
\
\xymatrix{
W_{r+1}(B) \ar[d]_F \ar[r]^-{\lambda_{r+1}} & H^0(D/\tilde\xi_{r+1})\ar[d]^{\mathrm{can.\ proj.}}\\
W_r(B) \ar[r]^-{\lambda_r} & H^0(D/\tilde\xi_r)
}
\
\xymatrix{
W_{r+1}(B) \ar[r]^-{\lambda_{r+1}} & H^0(D/\tilde\xi_{r+1})\\
W_r(B) \ar[u]^{V} \ar[r]^-{\lambda_r} & H^0(D/\tilde\xi_r)\ar[u]_{\varphi^{r+1}(\xi)}
}
\]
commute, and which satisfy the Teichm\"uller rule $Fd\lambda_{r+1}([b])=\lambda_r([b])^{p-1}d\lambda_r([b])$ for $b\in B$, $r\ge 1$. Moreover, assume that $\cal W_r^\blob(D)_\sub{pre}$ is a commutative differential graded algebra; the only remaining issue here being the equation $x^2=0$ for $|x|$ odd.

Then the data $(\cal W_r^\blob(D)_\sub{pre},R,V,F,\lambda_r)$ form an $F$-$V$-procomplex for $B$ over $S$, and so there exist unique maps of differential graded $W_r(S)$-algebras $\lambda_r^\blob:W_r\Omega^\blob_{B/A}\to\cal W_r^\blob(D)_\sub{pre}$ which are compatible with $R,F,V$ and satisfy $\lambda_r^0=\lambda_r$.

\begin{remark}[The need to improve the construction]
Unfortunately, from the surjectivity of the restriction maps for $W_r\Omega^\blob_{B/S}$ and the definition of the restriction map for $\cal W_r^\blob(D)_\sub{pre}$, we see that
\[
\Im\lambda_r^n\subset \bigcap_{s\ge 1}\Im(\cal W_{r+s}^n(D)_\sub{pre}\xto{R^s}\cal W_r^n(D)_\sub{pre})\subset \bigcap_{s\ge 1}\tilde\theta_r(\xi_s)^n\cal W_r^n(D)_\sub{pre}\ ,
\]
where the right side is in practice much smaller than $\cal W_r^n(D)_\sub{pre}$. Hence $\cal W_r^\blob(D)_\sub{pre}$ is too large in applications: in the next section we will modify its construction to cut it down by a carefully controlled amount of torsion.
\end{remark}

\subsubsection{Improvement}\label{subsection_improvement}
Let $D\in D(\bb A_\inf(S))$ be an algebra as above, equipped with a Frobenius isomorphism $\varphi_D: D\isoto D$. Moreover, we assume that there is a system of primitive $p$-power roots of unity $\zeta_{p^r}\in S$, and $S$ is $p$-torsion-free, so we are in the situation of Proposition~\ref{proposition_roots_of_unity} above. This gives rise to the element $\epsilon=(1,\zeta_p,\zeta_{p^2},\ldots)\in S^\flat$, and $\mu = [\epsilon]-1\in \bb A_\inf(S)$, which is a non-zero-divisor. We let $\xi = \mu/\varphi^{-1}(\mu)$, which satisfies the assumption $\theta_r(\xi) = V(1)$ for all $r\ge 1$. Finally, we assume that $H^0(D)$ is $\mu$-torsion-free.

We can now refine the construction of $\cal W_r^\blob(D)_\sub{pre}$ in the previous section by replacing $D$ by the algebra $L\eta_\mu D$ over $\bb A_\inf(S)$, on which $\phi_D$ induces a $\phi$-linear map $\phi_D: L\eta_\mu D\quis L\eta_{\tilde\xi} (L\eta_\mu D)\to L\eta_\mu D$ (as $L\eta_{\tilde\xi} L\eta_\mu = L\eta_{\tilde\xi \mu} = L\eta_{\phi(\mu)}$). Moreover, there is a natural map $L\eta_\mu D\to D$ by Lemma~\ref{lem:LetaCoconnectiveMap}, and the diagram
\[\xymatrix{
L\eta_\mu D\ar[d]\ar[r]^-{\phi_D}& L\eta_\mu D\ar[d]\\
D\ar[r]^-{\phi_D}&D
}\]
commutes.

More precisely, we consider the cohomology groups
\[
\cal W_r^n(D):=H^n(L\eta_\mu D\dotimes_{\bb A_\inf(S)}\bb A_\inf(S)/\tilde\xi_r)\ .
\]
Equipped with the Bockstein differential, they form a differential graded $W_r(S)$-algebra as before (satisfying the Leibniz rule, and the anticommutativity $xy = (-1)^{|x||y|} yx$, but not necessarily $x^2=0$ for $|x|$ odd), and the map $L\eta_\mu D\to D$ induces a morphism of differential graded $W_r(S)$-algebras
\[
i:\cal W_r^\blob(D)\To\cal W_r^\blob(D)_\sub{pre}\ .
\]

Moreover, letting $F:\cal W_{r+1}^n(D)\to\cal W_r^n(D)$ and $V:\cal W_r^n(D)\to\cal W_{r+1}^n(D)$ be the maps induced respectively by
\[
L\eta_\mu D\dotimes_{\bb A_\inf(S)}\bb A_\inf(S)/\tilde\xi_{r+1}\xTo{\mathrm{can.\ proj.}} L\eta_\mu D\dotimes_{\bb A_\inf(S)}\bb A_\inf(S)/\tilde\xi_r
\]
and
\[
L\eta_\mu D\dotimes_{\bb A_\inf(S)}\bb A_\inf(S)/\tilde\xi_r\xTo{\varphi^{r+1}(\xi)} L\eta_\mu D\dotimes_{\bb A_\inf(S)}\bb A_\inf(S)/\tilde\xi_{r+1},
\]
it is clear that $i$ commutes with $F$ and $V$. It is more subtle to define $R:\cal W_{r+1}^n(D)\to\cal W_r^n(D)$; in the proof below, we give a ``point-set level'' construction based on picking an actual model of $D$ as a complex. It is not clear to us whether the construction is independent of the choice of this model, so we impose the following assumption which helps us prove independence; it is verified in our applications.

\begin{assumption}\label{ass:torsionfree} For all $r\geq 1$, $n\geq 0$, the group $\cal W_r^n(D)$ is $p$-torsion-free.
\end{assumption}

\begin{proposition}\label{prop:improvedconstruction} Assume that Assumption~\ref{ass:torsionfree} is verified. Then the following statements hold.
\begin{enumerate}
\item The differential graded $W_r(S)$-algebra $\cal W_r^\bullet(D)$ is commutative; in particular, it satisfies $x^2=0$ for $|x|$ odd.
\item For all $r\geq 1$, $n\geq 0$, the map $\cal W_r^n(D)\to \cal W_r^n(D)_\sub{pre}$ is injective.
\item The maps $F, R: \cal W_{r+1}^n(D)_\sub{pre}\to \cal W_r^n(D)_\sub{pre}$, $V: \cal W_r^n(D)_\sub{pre}\to W_{r+1}^n(D)_\sub{pre}$ and $d: \cal W_r^n(D)_\sub{pre}\to \cal W_r^{n+1}(D)_\sub{pre}$ induce (necessarily unique) maps $F, R: \cal W_{r+1}^n(D)\to \cal W_r^n(D)$, $V: \cal W_r^n(D)\to W_{r+1}^n(D)$ and $d: \cal W_r^n(D)\to \cal W_r^{n+1}(D)$. In the case of $F$, $V$ and $d$, these agree with the maps described above.
\item The map $R: \cal W_{r+1}^\bullet(D)\to R_\ast \cal W_r^\bullet(D)$ is a map of differential graded $W_{r+1}(A)$-algebras, the map $F: \cal W_{r+1}^\bullet(D)\to F_\ast \cal W_r^\bullet(D)$ is a map of graded $W_{r+1}(A)$-algebras, the map $V: F_\ast \cal W_r^\bullet(D)\to \cal W_{r+1}^\bullet(D)$ is a map of graded $W_{r+1}(A)$-modules, and the identities $RF=FR$, $RV=VR$, $V(F(x)y)=xV(y)$, $FV=p$ and $FdV=d$ hold.
\item Assume that $B$ is an $S$-algebra equipped with $W_r(S)$-algebra maps $\lambda_r: W_r(B)\to \cal W_r^0(D)$ for $r\geq 1$, compatible with $F$, $R$ and $V$. Then the Teichm\"uller identity
\[
F d\lambda_{r+1}([b]) = \lambda_r([b])^{p-1} d\lambda_r([b])
\]
holds true for all $x\in B$, $r\geq 1$. In particular, $\cal W_r^\bullet(D)$ forms an $F$-$V$-procomplex for $B/S$, and there is an induced map
\[
\lambda_r^\bullet: W_r\Omega^\bullet_{B/S}\to \cal W_r^\bullet(D)
\]
of differential graded algebras for $r\geq 1$, compatible with the $F$, $R$ and $V$ maps.
\end{enumerate}
\end{proposition}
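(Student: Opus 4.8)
## Proof proposal for Proposition~\ref{prop:improvedconstruction}

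The plan is to work with a fixed ``point-set level'' model. First I would choose an actual complex $\widetilde{D}^\bullet$ of $\mu$-torsion-free $\bb A_\inf(S)$-modules representing $D$ (possible since $H^0(D)$ is $\mu$-torsion-free and by replacing $D$ by a $\mu$-torsion-free resolution), so that $\eta_\mu \widetilde{D}^\bullet$ is a genuine subcomplex of $\widetilde{D}^\bullet[\tfrac1\mu]$ representing $L\eta_\mu D$, with $\tilde\xi_r$-torsion-free terms (using that $\tilde\xi_r$ divides $\varphi^r(\mu)$, which is divisible by $\mu$, so $\tilde\xi_r$-torsion and $\mu$-torsion interact well with $\eta_\mu$; compare Lemma~\ref{lemma_on_Koszul_1} and Proposition~\ref{proposition_roots_of_unity}(iii)). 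Then $\cal W_r^\bullet(D)$ is the cohomology of $\eta_\mu\widetilde{D}^\bullet \otimes_{\bb A_\inf(S)} \bb A_\inf(S)/\tilde\xi_r$, with Bockstein differential and cup-product algebra structure exactly as in \S\ref{subsection_first_construction} applied to $L\eta_\mu D$ in place of $D$; in particular Proposition~\ref{proposition_first_construction} applies verbatim to give parts (iv) and (most of) (i) and (v) and the first three identities. The only things genuinely new relative to \S\ref{subsection_first_construction} are: the sharper commutativity $x^2=0$ for $|x|$ odd (part (i)); the injectivity $\cal W_r^n(D)\hookrightarrow\cal W_r^n(D)_{\sub{pre}}$ (part (ii)); the existence of the operator $R$ on $\cal W_r^\bullet(D)$ (the hard part of (iii) and (iv)); and the Teichm\"uller rule (part (v)).

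For (ii): the map $i\colon \eta_\mu\widetilde D^\bullet \to \widetilde D^\bullet$ induces on each term a map $(\eta_\mu \widetilde D)^n/\tilde\xi_r \to \widetilde D^n/\tilde\xi_r$ whose kernel I would analyze using the explicit description of $\eta_\mu$; combined with Proposition~\ref{prop:LetaBock} (which identifies $L\eta_\mu D\dotimes^{\bb L}\bb A_\inf(S)/\mu$ with the Bockstein complex of $H^\bullet(D/\mu)$) and the length/torsion control, the claim reduces to showing that an $\mu$-torsion class in $\cal W_r^n(D)_{\sub{pre}}$ that lies in the image of $i$ must already vanish. Here Assumption~\ref{ass:torsionfree} ($\cal W_r^n(D)$ is $p$-torsion-free) is the key input: since $\mu^{p-1}\in p\,A_\crys$ and $\tilde\xi_r\equiv p^r \bmod\mu$ (Proposition~\ref{proposition_roots_of_unity}(iv)), $\mu$-torsion and $p$-torsion are essentially interchangeable after the relevant localizations, so $p$-torsion-freeness of $\cal W_r^n(D)$ forces the injectivity. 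For (i), once (ii) is known, $x^2=0$ for odd $|x|$ in $\cal W_r^\bullet(D)$ follows by pushing forward to $\cal W_r^\bullet(D)_{\sub{pre}}$ — but note $\cal W_r^\bullet(D)_{\sub{pre}}$ itself need not satisfy this — so instead I would argue directly: $2x^2=0$ always, and $x^2$ is $2$-torsion hence (if $2$ is a non-zero-divisor on the relevant module, which holds when $S$ is $p$-torsion-free and $p$ odd) zero; for $p=2$ one uses Assumption~\ref{ass:torsionfree} again, observing $x^2$ would be a $p$-torsion element.

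The main obstacle, as the remark before the proposition signals, is constructing the restriction operator $R\colon \cal W_{r+1}^n(D)\to \cal W_r^n(D)$. On $\cal W_r^\bullet(D)_{\sub{pre}}$ it was defined as $\tilde\theta_r(\xi)^n$ times the map $R'$ induced by $\varphi_D^{-1}$, but $\varphi_D^{-1}$ need not preserve the subcomplex $\eta_\mu\widetilde D^\bullet$ — only $\varphi_D$ does, landing in $\eta_{\varphi(\mu)}\widetilde D^\bullet=\eta_{\tilde\xi}\eta_\mu\widetilde D^\bullet$. The fix is to define $R$ on $\cal W_r^\bullet(D)$ as the \emph{unique} map making the square with $i$ and the map $R$ already defined on $\cal W_\bullet(D)_{\sub{pre}}$ commute: by (ii), $i$ is injective, so \emph{if} the composite $\cal W_{r+1}^n(D)\xrightarrow{i}\cal W_{r+1}^n(D)_{\sub{pre}}\xrightarrow{R}\cal W_r^n(D)_{\sub{pre}}$ lands in the subgroup $\cal W_r^n(D)\subseteq \cal W_r^n(D)_{\sub{pre}}$, then $R$ is defined and automatically unique, and then all the identities in (iv) and the compatibility with $d$ are inherited from $\cal W_\bullet(D)_{\sub{pre}}$ via injectivity of $i$. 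So the real content is: the image of $R\circ i$ lies in $\cal W_r^n(D)$. To see this I would use the factorization of $\varphi_D$ through $L\eta_{\tilde\xi}$: the twist by $\tilde\theta_r(\xi)^n$ in the definition of $R$ is exactly the twist appearing in the natural transformation $\tau^{\geq n}L\eta_{\cal I}\to \cal I^{\otimes n}\otimes\tau^{\geq n}$ of Lemma~\ref{lem:mapLetabackforth} (with $\cal I = (\tilde\xi_r)$, or rather reduced mod $\tilde\xi_r$, $(\xi)$), so after chasing through Lemma~\ref{lem:compositionLeta} and Proposition~\ref{prop:LetaBock} one finds that $\tilde\theta_r(\xi)^n\cdot(\varphi_D^{-1}\text{-induced map})$ does factor through $\eta_\mu\widetilde D^\bullet/\tilde\xi_r$. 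This is essentially the assertion that the de~Rham--Witt restriction is ``$\eta$ applied to $\varphi^{-1}$,'' and making it precise is where Assumption~\ref{ass:torsionfree} is used a second time (to discard torsion ambiguities in the point-set model and conclude independence of choices). Finally (v): the Teichm\"uller rule is automatic by Remark~\ref{remark_p-torsion-free_Teichmuller} once we know $\cal W_r^1(D)$ is $p$-torsion-free, which is Assumption~\ref{ass:torsionfree}; then the universal property of $W_r\Omega^\bullet_{B/S}$ (Theorem~\ref{thm:dRWExists}) yields the map $\lambda_r^\bullet$.
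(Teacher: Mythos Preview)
Your overall strategy matches the paper's: work with a point-set model, get (i), (iv), (v) and the $F$, $V$, $d$ parts of (iii) essentially for free from Proposition~\ref{proposition_first_construction} and Assumption~\ref{ass:torsionfree}, then isolate (ii) and the construction of $R$ as the two genuine tasks. Your arguments for (i) and (v) are exactly the paper's.

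For (ii), however, your argument is imprecise and invokes the wrong ingredients. The fact $\mu^{p-1}\in pA_\crys$ is irrelevant here---we are in $A_\inf/\tilde\xi_r$, not $A_\crys$. The paper's argument is: truncate to $D\in D^{[0,n+1]}$ (which changes neither $\cal W_r^n(D)$ nor $\cal W_r^n(D)_{\sub{pre}}$), then apply Lemma~\ref{lem:mapLetabackforth} to get maps $D\to L\eta_\mu D\to D$ whose composite in either direction is multiplication by $\mu^{n+1}$. Hence the kernel of $H^n((L\eta_\mu D)/\tilde\xi_r)\to H^n(D/\tilde\xi_r)$ is killed by $\mu^{n+1}$. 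Now Proposition~\ref{proposition_roots_of_unity}(iv) says $\mu$ divides $p^r$ in $A_\inf/\tilde\xi_r$, so this kernel is $p$-torsion; by Assumption~\ref{ass:torsionfree} it vanishes. Your phrasing about ``a $\mu$-torsion class in $\cal W_r^n(D)_{\sub{pre}}$ that lies in the image of $i$'' is backwards---you need to control the \emph{kernel} of $i$, not torsion in its image.

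For $R$, your high-level plan (show $R\circ i$ lands in the image of $i$, then invoke injectivity) is logically equivalent to what the paper does, but your proposed justification via Lemma~\ref{lem:mapLetabackforth} and Lemma~\ref{lem:compositionLeta} is too vague to be convincing. The paper instead carries out an explicit chain-level computation: after truncating and arranging that $\phi_D$ lifts to an automorphism $\phi_{D^\bullet}$ of a complex of flat modules, represent $\bar\alpha\in\cal W_{r+1}^n(D)$ by $\alpha\in\mu^n D^n$ with $d\alpha=\tilde\xi_{r+1}\beta$ for some $\beta\in\mu^{n+1}D^{n+1}$, set $\alpha'=\tilde\xi^n\alpha\in\phi(\mu)^nD^n$, check $d\alpha'\in\phi(\tilde\xi_r)\phi(\mu)^{n+1}D^{n+1}$, and define $R(\alpha):=\phi_{D^\bullet}^{-1}(\alpha')\in(\eta_\mu D)^n$; then verify this is a cocycle mod $\tilde\xi_r$ and that its image in $\cal W_r^n(D)_{\sub{pre}}$ agrees with $R(\bar\alpha)$. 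This direct computation is what makes the proof work; your abstract sketch does not quite reach it. (The Remark following the proof in the paper does give a description closer in spirit to yours, but under an extra hypothesis not assumed in the proposition.)
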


\begin{proof} For (i), we only need to verify that $x^2=0$ for $|x|$ odd, which under the standing assumption follows from $2x^2=0$, which is a consequence of the anticommutativity.

For part (ii), the statement does not depend on the algebra structure of $D$, so we may assume that $D\in D^{[0,n+1]}(\bb A_\inf(S))$ by passing to a truncation; note that this does not change $\cal W_r^n(D)_\sub{pre} = H^n(D/\tilde\xi_r)$ or $\cal W_r^n(D) = H^n((L\eta_\mu D)/\tilde\xi_r)$ for any $r$. Then there are maps $D\to L\eta_\mu D$, $L\eta_\mu D\to D$ whose composite in either direction is multiplication by $\mu^{n+1}$ by Lemma~\ref{lem:mapLetabackforth}. Since $\mu$ divides $p^r$ modulo $\tilde\xi_r$ by Proposition~\ref{proposition_roots_of_unity} (iv), the kernel of the map
\[
H^n((L\eta_\mu D)/\tilde\xi_r)\to H^n(D/\tilde\xi_r)
\]
is $p$-torsion. By our assumption, $\cal W_r^n(D) = H^n((L\eta_\mu D)/\tilde\xi_r)$ is $p$-torsion-free, so we get the desired injectivity.

In part (iii), it is clear that the $d$, $F$ and $V$ maps defined above commute with the corresponding maps on $\cal W_r^n(D)_\sub{pre}$. It remains to handle the case of $R$, so fix $n\geq 0$. Note that the definition of $R$ depends only on $D\in D(\bb A_\inf(S))$ with the automorphism $\phi_D: D\isoto D$, but not on the algebra structure of $D$. We may assume that $D\in D^{[0,n+1]}(\bb A_\inf(S))$, and then pick a bounded above representative $D^\blob$ of $D$ by projective $\bb A_\inf(S)$-modules. Then $\phi_D: D\to D$ can be represented by a map $\phi_{D^\blob}: D^\blob\to D^\blob$. Replacing $D^\blob$ by the homotopy colimit of $D^\blob$ under $\phi_{D^\blob}$, we can assume that $D^\blob$ is a bounded above complex of flat $\bb A_\inf(S)$-modules, on which there is a $\phi$-linear automorphism $\phi_{D^\blob}: D^\blob\isoto D^\blob$.

Now pick an element $\bar{\alpha}\in \cal W_{r+1}^n(D) = H^n((\eta_\mu D^\blob)/\tilde\xi_{r+1})$. This can be represented by an element $\alpha\in \mu^n D^n$ with $d\alpha = \tilde\xi_{r+1} \beta$ for some $\beta\in \mu^{n+1} D^{n+1}$. The element $\alpha^\prime = \tilde\xi^n \alpha\in \phi(\mu)^n D^n$ satisfies
\[
d\alpha^\prime = \tilde\xi^n\tilde\xi_{r+1} \beta\in \varphi(\tilde\xi_r) \tilde\xi^{n+1} \mu^{n+1}  D^{n+1}= \varphi(\tilde\xi_r) \varphi(\mu)^{n+1} D^{n+1}\ ,
\]
so that $\alpha^\prime\in (\eta_{\phi(\mu)} D)^n$. Thus, $R(\alpha) := \varphi_{D^n}^{-1}(\alpha^\prime)\in (\eta_\mu D)^n$, and it satisfies
\[
d(R(\alpha)) = \varphi_{D^{n+1}}^{-1}(d\alpha^\prime)\in \varphi_{D^{n+1}}^{-1}(\varphi(\tilde\xi_r) \varphi(\mu)^{n+1} D^{n+1}) = \tilde\xi_r \mu^{n+1} D^{n+1}\ ,
\]
so that in fact $d(R(\alpha))=0\in (\eta_{\phi(\mu)} D)^{n+1}/\tilde\xi_r$. This shows that $R(\alpha)\mod \tilde\xi_r$ induces an element of $H^n((\eta_\mu D^\blob)/\tilde\xi_r)$. One checks that under the inclusion $\cal W_r^n(D)\hookrightarrow \cal W_r^n(D)_\sub{pre}$, this is the image of $\bar{\alpha}$ under $R$.

In part (iv), all statements follow formally from the results for $\cal W_r^n(D)_\sub{pre}$, and (ii).

Finally, in part (v), the Teichm\"uller identity always holds after multiplication by $p$, cf.~Remark~\ref{remark_p-torsion-free_Teichmuller}, so that by our assumption, it holds on the nose.
\end{proof}

Note also that the map $\cal W_r^n(D)\to \cal W_r^n(D)_\sub{pre}$ has image in $\tilde\theta_r(\mu)^n \cal W_r^n(D)_\sub{pre}$, and is an isomorphism if $n=0$.

\begin{remark} Assume in addition that for all $r\geq 1$, the natural map
\[
(L\eta_\mu D)/\tilde\xi_r\to L\eta_\mu(D/\tilde\xi_r)
\]
is a quasi-isomorphism, as is the case for $D=R\Gamma_\sub{pro\'et}(X,\bb A_{\inf,X})$ by Theorem~\ref{thm:AOmegavsdRW} (i), where $X$ is the generic fibre of $\frak X=\Spf R$ for a small formally smooth $\roi$-algebra $R$. In that case, the image of
\[
\cal W_r^n(D) = H^n((L\eta_\mu D)/\tilde\xi_r)\isoto H^n(L\eta_\mu(D/\tilde\xi_r))\to H^n(D/\tilde\xi_r) = \cal W_r^n(D)_\sub{pre}
\]
is exactly $\tilde\theta_r(\mu)^n \cal W_r^n(D)_\sub{pre}$. Indeed, in general the image of $H^n(L\eta_f C)\to H^n(C)$, for $C\in D^{\geq 0}$ with $H^0(C)$ being $f$-torsion-free, is given by $f^n H^n(C)$. This makes it easy to see that $R$ preserves $\cal W_r^n(D)$. Moreover, one can give a different description of the restriction map, as follows. Indeed, composing the map
\[
\cal W_r^n(D) = H^n(L\eta_\mu(D/\tilde\xi_r))\to H^n(L\eta_\mu(D/\tilde\xi_r))/H^n(L\eta_\mu(D/\tilde\xi_r))[\tilde\xi] = H^n(L\eta_{\tilde\xi}L\eta_\mu(D/\tilde\xi_r))
\]
with
\[\begin{aligned}
H^n(L\eta_{\tilde\xi}L\eta_\mu(D/\tilde\xi_r))&= H^n(L\eta_{\phi(\mu)}(D/\tilde\xi_r))\to H^n(L\eta_{\phi(\mu)} (D/\phi(\tilde\xi_{r-1})))\\
&\cong^{\phi^{-1}} H^n(L\eta_\mu(D/\tilde\xi_{r-1})) = \cal W_{r-1}^n(D)
\end{aligned}\]
defines the restriction map.
\end{remark}

\subsection{A realization of the de~Rham--Witt complex of the torus}

Let $\roi = \roi_K\subset K$ be the ring of integers in a perfectoid field $K$ of characteristic $0$ containing all $p$-power roots of unity; we fix a choice of $\zeta_{p^r}\in \roi$, giving rise to the elements $\epsilon=(1,\zeta_p,\ldots)\in \roi^\flat$, $\mu=[\epsilon]-1\in A_\inf=W(\roi^\flat)$ and $\xi = \mu/\phi^{-1}(\mu)$ as usual.

Consider the Laurent polynomial algebra
\[
A_\inf[\ul U^{\pm 1/p^\infty}] := A_\inf[U_1^{\pm 1/p^\infty},\ldots,U_d^{\pm 1/p^\infty}]\ .
\]
It admits an action of $\bb Z^d = \bigoplus_{i=1}^d \gamma_i^{\bb Z}$, where the element $\gamma_i$ acts by sending $U_i^{1/p^r}$ to $[\epsilon]^{1/p^r} U_i^{1/p^r}$, and $U_j^{1/p^r}$ to $U_j^{1/p^r}$ for $j\neq i$. We consider
\[
D=R\Gamma(\bb Z^d,A_\inf[\ul U^{\pm 1/p^\infty}])\in D(A_\inf)\ ,
\]
which is a commutative algebra in $D(A_\inf)$. Note that $H^i(D)=0$ for $i<0$, and $H^0(D)\subset A_\inf[\ul U^{\pm 1/p^\infty}]$ is torsion-free. We will see below in Theorem~\ref{thm:dRWtorusprecise} that $D$ satisfies Assumption~\ref{ass:torsionfree}; thus, we may apply the constructions of Section~\ref{section_constructing_Witt}. Our goal is to prove the following theorem.

\begin{theorem}\label{thm:dRWtorus} There are natural isomorphisms
\[
\cal W_r^n(D) = H^n(L\eta_\mu D\dotimes_{A_\inf} A_\inf/\tilde\xi_r) \cong W_r\Omega^n_{\roi[T_1^{\pm 1},\ldots,T_d^{\pm 1}]/\roi}\ ,
\]
compatible with the $d$, $F$, $R$, $V$ and multiplication maps.
\end{theorem}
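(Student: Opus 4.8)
The idea is to produce the isomorphism via the universal property of the de~Rham--Witt complex. By the constructions of Section~\ref{subsection_improvement}, once we know $D = R\Gamma(\bb Z^d, A_\inf[\ul U^{\pm 1/p^\infty}])$ satisfies Assumption~\ref{ass:torsionfree} (which we record as part of the more precise Theorem~\ref{thm:dRWtorusprecise}), the groups $\cal W_r^\bullet(D)$ form a differential graded $W_r(\roi)$-algebra equipped with commuting $F$, $R$, $V$, $d$ and multiplication maps. So the first step is to produce the structure map $\lambda_r\colon W_r(\roi[\ul T^{\pm 1}])\to \cal W_r^0(D)$. Here $\cal W_r^0(D) = H^0(D/\tilde\xi_r) = (A_\inf[\ul U^{\pm 1/p^\infty}]/\tilde\xi_r)^{\bb Z^d}$. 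Using the identification of $A_\inf[\ul U^{\pm 1/p^\infty}]/\tilde\xi_r$ with $W_r(\roi)[\ul U^{\pm 1/p^\infty}] = W_r(\roi[\ul T^{\pm 1/p^\infty}])$ from Lemma~\ref{lem:wittpolynomial} (after passing to a suitable completion, or rather working with the Laurent polynomial ring directly where no completion is needed), the $\bb Z^d$-invariants are exactly $W_r(\roi[\ul T^{\pm 1}])$, where $[T_i]\mapsto [T_i] = U_i$; this uses that on monomials $\gamma_i$ acts by multiplication by $[\zeta_{p^s}]$ which is a non-zero-divisor congruent to a nontrivial root of unity, so only the integral monomials survive. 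One checks $\lambda_r$ is compatible with $F$, $R$, $V$ by Lemma~\ref{lemma_theta_r_diagrams} (matching the Witt-vector $F$, $R$, $V$ with the maps inducing the operators on $\cal W_r^\bullet(D)$). Then Proposition~\ref{prop:improvedconstruction}(v) gives the canonical map of $F$-$V$-procomplexes
\[
\lambda_r^\bullet\colon W_r\Omega^\bullet_{\roi[\ul T^{\pm 1}]/\roi}\to \cal W_r^\bullet(D)\ ,
\]
compatible with $d$, $F$, $R$, $V$ and multiplication.

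\textbf{The main computation.} The heart of the proof, which I expect to be the main obstacle, is showing that $\lambda_r^n$ is an isomorphism for every $r\geq 1$ and $n\geq 0$. The strategy is to compute both sides completely explicitly. On the source, we have Langer--Zink's decomposition Theorem~\ref{theorem_LZ1}, expressing $W_r\Omega^n_{\roi[\ul T^{\pm 1}]/\roi}$ as a direct sum, indexed by weights $a\colon\{1,\dots,d\}\to p^{-r}\bb Z$ and admissible partitions $(I_0,\dots,I_n)\in P_a$, of copies of $V^{u(a)}W_{r-u(a)}(\roi)$. On the target, we compute $\cal W_r^n(D) = H^n(L\eta_\mu R\Gamma(\bb Z^d, A_\inf[\ul U^{\pm 1/p^\infty}])/\tilde\xi_r)$. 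Using $R\Gamma(\bb Z^d,-) = K(\gamma_1-1,\dots,\gamma_d-1)$ (Lemma~\ref{lem:koszulcohom}), the decomposition of $A_\inf[\ul U^{\pm 1/p^\infty}]$ into monomial eigenspaces for the $\gamma_i$, and the behavior of $L\eta_\mu$ on Koszul complexes (Lemma~\ref{lemma_on_Koszul_1}, Lemma~\ref{lemma_on_Koszul_2}), together with the facts about the ideals $\frac{[\zeta_{p^j}]-1}{[\zeta_{p^r}]-1}W_r(\roi)$ and their annihilators from Corollary~\ref{corollary_roots_of_unity}, one obtains an explicit direct-sum description of $\cal W_r^n(D)$ indexed by the same combinatorial data. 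The work is to match the two descriptions term by term: the $p$-adic valuation $v(a|_{I_j})$ of the weight on a block controls which power $\frac{[\zeta_{p^{?}}]-1}{[\zeta_{p^r}]-1}$ appears in the Koszul differential, hence (via Corollary~\ref{corollary_roots_of_unity}) which Witt group $W_{r-u}(\roi)$ and which twist $V^u(1)$ shows up, exactly paralleling the trichotomy (Case 1/2/3) in the definition of Langer--Zink's basis elements $e(x,a,I_0,\dots,I_n)$. One must also verify that the nonintegral part (weights with $v(a) < 0$ on some block where the relevant root-of-unity exponent exceeds $p^r$) is killed by $L\eta_\mu$, mirroring the acyclicity statement in the proof of Theorem~\ref{thm:dRvsdRW}; this is where Lemma~\ref{lemma_on_Koszul_1} (the ``$g_i$ divides $f$'' case) does the work. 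This matching is combinatorially intricate but, once $\lambda_r^n$ is known to be a \emph{map} of $F$-$V$-procomplexes, it suffices to check that it sends basis elements to generators of the corresponding summands, which is a finite check reducing by functoriality in $d$ and permutation of coordinates to $d=1$ and then to the elementary cases $a\in\bb Z$ versus $a\in p^{-r}\bb Z\setminus\bb Z$.

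\textbf{Compatibility and descent.} Having checked $\lambda_r^\bullet$ is an isomorphism for the torus, the compatibility with $d$, $F$, $R$, $V$, and multiplication is automatic since $\lambda_r^\bullet$ was constructed as a morphism of $F$-$V$-procomplexes (of differential graded algebras). One point requiring care: Assumption~\ref{ass:torsionfree}, i.e., that $\cal W_r^n(D)$ is $p$-torsion-free, must be verified \emph{before} invoking Proposition~\ref{prop:improvedconstruction}; but this follows from the same explicit computation, since each summand in the matched description is $V^u W_{r-u}(\roi)$ which is $p$-torsion-free as $\roi$ is $p$-torsion-free (use $p^{r-u} = FV\cdots$ acts injectively, or directly the ghost map). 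In fact it is cleanest to organize the argument so that the explicit identification of $\cal W_r^n(D)$ as $\bigoplus V^{u(a)}W_{r-u(a)}(\roi)$ is proved first, purely on the level of $A_\inf$-modules (not using the algebra structure), which simultaneously yields Assumption~\ref{ass:torsionfree} and sets up the basis-element comparison; then one feeds this back into Proposition~\ref{prop:improvedconstruction} to get the dga-compatibility for free. I expect the precise bookkeeping of the partition conditions (ii)--(iii) in the definition of $P_a$ against the Koszul-complex combinatorics to be the most delicate and error-prone part, but no genuinely new idea beyond the $L\eta$-and-Koszul toolkit already developed is needed.
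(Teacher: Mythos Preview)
Your setup --- verifying Assumption~\ref{ass:torsionfree} via the explicit Koszul computation, constructing $\lambda_r$, and invoking the universal property to obtain $\lambda_r^\bullet$ --- matches the paper exactly. The divergence is in how you prove $\lambda_r^n$ is an isomorphism. You propose a direct term-by-term match of Langer--Zink's basis elements against the Koszul-complex summands, reducing ``by functoriality'' to $d=1$. The paper instead does something slicker: it first shows (via an extended $\bb Z[\tfrac 1p]^d$-action and an isotypical-component argument) that $\lambda_r^n$ automatically respects the weight decompositions, then reduces by Nakayama to checking $\lambda_r^n\otimes_{W_r(\roi)}W_r(k)$ is an isomorphism. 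Over $W(k)$ the $q$-de~Rham complex becomes the ordinary de~Rham complex (since $q=[\epsilon]\mapsto 1$), so the classical Cartier isomorphism identifies $\cal W_r^\bullet(D)_k$ with $W_r\Omega^\bullet_{k[\ul T^{\pm 1}]/k}$; then $\overline{\lambda}_r^\bullet$ is a dga-endomorphism of $W_r\Omega^\bullet_{k[\ul T^{\pm 1}]/k}$ which is the identity in degree $0$, hence the identity everywhere since this complex is generated in degree $0$.

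Your route is plausible but has two soft spots. First, you never justify why $\lambda_r^n$ respects the weight decompositions --- this is not automatic, and the paper's isotypical argument (exploiting that the summands are eigenspaces for a larger group action) is what supplies it. Second, the ``reduce to $d=1$'' step is not straightforward: Langer--Zink's basis involves partitions with ordering conditions that are not simply tensor products of one-variable pieces, and $\lambda_r^\bullet$ is defined by a universal property rather than by K\"unneth, so there is no obvious multiplicative splitting to exploit. You could likely repair both points (the first by importing the isotypical trick, the second by a genuinely painful explicit computation), but the paper's reduction-to-$k$ argument bypasses the combinatorics entirely and is both shorter and more conceptual.
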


We begin by computing $L\eta_\mu D$. The result will turn out to be the $q$-de~Rham complex
\[
q\op-\Omega_{A_\inf[\ul U^{\pm1}]/A_\inf}^\blob = \bigotimes_{i=1}^d \left(A_\inf[U_i^{\pm 1}]\To A_\inf[U_i^{\pm 1}]\dlog U_i\right),\quad U_i^k\mapsto [k]_q U_i^k\dlog U_i
\]
from Example~\ref{ex:qdR}, where $q=[\epsilon]$, the tensor product is taken over $A_\inf$, and $[k]_q=\tfrac{q^k-1}{q-1}$ is the $q$-analogue of the integer $k\in \bb Z$.

Note that there is a standard Koszul complex computing $D$, namely the complex
\[
D^\blob = K_{A_\inf[\ul U^{\pm 1/p^\infty}]}(\gamma_1-1,\ldots,\gamma_d-1)\ .
\]
Recall also that there is a Frobenius automorphism $\phi_D$ of $D$, coming from the automorphism of $A_\inf[\ul U^{\pm 1/p^\infty}]$ which is the Frobenius of $A_\inf$, and sends $U_i$ to $U_i^p$ for all $i=1,\ldots,d$. This automorphism $\phi_D$ of $D$ lifts to an automorphism $\phi_{D^\blob}$ of $D^\blob$, given by acting on each occurence of $A_\inf[\ul U^{\pm 1/p^\infty}]$. Note that $D^\blob$ is a complex of free $A_\inf$-modules, so that one can use it to compute $L\eta_\mu D$.

\begin{proposition}\label{prop:IdentqdeRham} There is a natural injective quasi-isomorphism
\[\begin{aligned}
{[\epsilon]\op-\Omega^\blob_{A_\inf[\ul U^{\pm 1}]/A_\inf}}&=\eta_{q-1} K_{A_\inf[\ul U^{\pm 1}]}(\gamma_1-1,\ldots,\gamma_d-1)\\
\to \eta_\mu D^\blob&=\eta_{q-1} K_{A_\inf[\ul U^{\pm 1/p^\infty}]}(\gamma_1-1,\ldots,\gamma_d-1)\ .
\end{aligned}\]
Moreover, the natural map
\[
(L\eta_\mu D)/\tilde\xi_r\to L\eta_{[\zeta_{p^r}]-1} (D/\tilde\xi_r)
\]
is a quasi-isomorphism.
\end{proposition}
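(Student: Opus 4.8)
The plan is to prove both assertions by the explicit Koszul-complex computation, following closely the proof of Lemma~\ref{lem:qdRvsAOmega} (which treats the $(p,\mu)$-completed analogue over $A(R)^\square$); the present torus case is uncompleted and in fact slightly simpler. Write $q=[\epsilon]$, $\mu=q-1$, and represent $D$ by the Koszul complex $D^\blob=K_{A_\inf[\ul U^{\pm 1/p^\infty}]}(\gamma_1-1,\dots,\gamma_d-1)$, a complex of free $A_\inf$-modules. Decomposing $A_\inf[\ul U^{\pm 1/p^\infty}]=\bigoplus_b A_\inf\cdot\ul U^b$ over weights $b\in\bb Z[\tfrac 1p]^d$ and using that each $\gamma_i$ acts diagonally (by $\ul U^b\mapsto q^{b_i}\ul U^b$), the complex $D^\blob$ splits as a direct sum of subcomplexes, which I separate into the \emph{integral} weights ($b\in\bb Z^d$) and the \emph{nonintegral} ones, the latter further decomposed by the position $i$ of the first exponent $b_i\notin\bb Z$.

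For the first assertion, on the integral part $\gamma_i-1$ acts on $\ul U^b$ by $q^{b_i}-1=[b_i]_q\,\mu$, so the differential of $K_{A_\inf[\ul U^{\pm 1}]}(\gamma_1-1,\dots,\gamma_d-1)$ is $\mu$ times that of $K_{A_\inf[\ul U^{\pm 1}]}(\partial_{q,1},\dots,\partial_{q,d})$, where $\partial_{q,i}(\ul U^b)=[b_i]_q\,\ul U^b$; inspecting the definition of $\eta_\mu$ just as in Lemma~\ref{lemma_on_Koszul_1} identifies $\eta_\mu$ of the integral Koszul complex with $[\epsilon]\op-\Omega^\blob_{A_\inf[\ul U^{\pm 1}]/A_\inf}=K_{A_\inf[\ul U^{\pm 1}]}(\partial_{q,1},\dots,\partial_{q,d})$ of Example~\ref{ex:qdR}. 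On the nonintegral part, $\eta_\mu$ is acyclic: it suffices to make multiplication by $\phi^{-1}(\mu)$ null-homotopic on the two-term complex built from $\gamma_i-1$ (which generates the whole Koszul complex by iterated cones), and this is done monomial-by-monomial, using that for $b_i=m/p^s$ in lowest terms ($s\ge 1$) the scalar by which $\gamma_i-1$ acts is $[\epsilon^{b_i}]-1=(\text{unit})\cdot\phi^{-s}(\mu)$ and that $\phi^{-s}(\mu)$ divides $\phi^{-1}(\mu)$ for $s\ge 1$; all cohomology is then killed by $\phi^{-1}(\mu)\mid\mu$, so $\eta_\mu$ annihilates it by Lemma~\ref{lem:CohomLeta}. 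As $\eta_\mu$ preserves termwise-split injections of $\mu$-torsion-free complexes, the split inclusion $[\epsilon]\op-\Omega^\blob_{A_\inf[\ul U^{\pm 1}]/A_\inf}=\eta_\mu(\text{integral part})\hookrightarrow\eta_\mu D^\blob$ is injective with acyclic complementary summand, hence a quasi-isomorphism.

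For the second assertion I would first use the first part to represent $L\eta_\mu D$ by the complex of \emph{free} $A_\inf$-modules $K_{A_\inf[\ul U^{\pm 1}]}(\partial_{q,1},\dots,\partial_{q,d})$, so that $(L\eta_\mu D)/\tilde\xi_r$ is computed by the \emph{underived} reduction $K_{W_r(\roi)[\ul U^{\pm 1}]}(\bar\partial_{q,1},\dots,\bar\partial_{q,d})$ with $\bar\partial_{q,i}(\ul U^b)=[b_i]_{[\zeta_{p^r}]}\ul U^b$ (using $\tilde\theta_r(q)=[\zeta_{p^r}]$, Lemma~\ref{lemma_theta}). On the other side $D/\tilde\xi_r\simeq R\Gamma(\bb Z^d,W_r(\roi)[\ul U^{\pm 1/p^\infty}])$ is computed by $K_{W_r(\roi)[\ul U^{\pm 1/p^\infty}]}(\gamma_1-1,\dots,\gamma_d-1)$, and running the identical integral/nonintegral analysis over $W_r(\roi)$ with $[\zeta_{p^r}]-1$ in the role of $\mu$ gives: on the integral summand again $K_{W_r(\roi)[\ul U^{\pm 1}]}(\bar\partial_{q,1},\dots,\bar\partial_{q,d})$; on the nonintegral summand, a monomial with $b_i=m/p^s$ has $\gamma_i-1$ acting by $(\text{unit})\cdot([\zeta_{p^{r+s}}]-1)$, and since $[\zeta_{p^{r+s}}]-1$ divides $[\zeta_{p^{r+1}}]-1$ (for $s\ge 1$) which in turn divides $[\zeta_{p^r}]-1$, one gets a homotopy killing multiplication by $[\zeta_{p^{r+1}}]-1$, hence $[\zeta_{p^r}]-1$ kills all cohomology and $\eta_{[\zeta_{p^r}]-1}$ annihilates it. Both sides of the natural map are thereby identified, compatibly with the direct-sum decompositions, with the same complex $K_{W_r(\roi)[\ul U^{\pm 1}]}(\bar\partial_{q,1},\dots,\bar\partial_{q,d})$; one then checks that under these identifications the natural map is the identity on the integral summand — it is ``divide by $\mu^k$, then reduce modulo $\tilde\xi_r$'', which matches ``divide by $([\zeta_{p^r}]-1)^k$'' because $\tilde\theta_r(\mu)=[\zeta_{p^r}]-1$ — and a map of acyclic complexes on the nonintegral summand, hence a quasi-isomorphism.

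I expect the only delicate point to be the bookkeeping in the two homotopy constructions on the nonintegral parts, in particular the divisibilities $[\epsilon^{m/p^s}]-1=(\text{unit})\cdot\phi^{-s}(\mu)$ over $A_\inf$ and $[\zeta_{p^{r+s}}]^m-1=(\text{unit})\cdot([\zeta_{p^{r+s}}]-1)$ over $W_r(\roi)$; both come down to the fact that $1+[\zeta]+\dots+[\zeta]^{m-1}$ is a unit when $p\nmid m$, since it is $\equiv m$ modulo $W(\frak m^\flat)$, resp.\ $W_r(\frak m)$, and $A_\inf$ and $W_r(\roi)$ are local. One must also keep straight that $\eta$ commutes with the (infinite) direct-sum decompositions above and with reduction modulo $\tilde\xi_r$ for complexes of flat modules; granting these, everything else is Lemma~\ref{lemma_on_Koszul_1}, Lemma~\ref{lem:CohomLeta}, and the definition of the $q$-de~Rham complex.
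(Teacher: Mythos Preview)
Your proof is correct and follows essentially the same weight-decomposition strategy as the paper. The one place where the paper is slicker is the treatment of the nonintegral summands: rather than building a null-homotopy for multiplication by $\phi^{-1}(\mu)$ (which you imported from the more delicate proof of Lemma~\ref{lem:qdRvsAOmega}, where the $\gamma_i$ are genuine automorphisms of $A(R)^\square$), the paper observes that in the uncompleted torus case one may decompose \emph{fully} by weight, so that each summand is a scalar Koszul complex $K_{A_\inf}([\epsilon]^{a(1)}-1,\ldots,[\epsilon]^{a(d)}-1)$, and then invokes the first clause of Lemma~\ref{lemma_on_Koszul_1} directly: since $[\epsilon]^{a(i)}-1$ divides $\mu$ whenever $a(i)\notin\bb Z$, the $\eta_\mu$ of that Koszul complex is acyclic, with no homotopy bookkeeping needed. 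Your worry about $\eta_\mu$ commuting with the infinite direct sum is also moot here, as the decomposition is a termwise splitting of actual $\mu$-torsion-free complexes, so $\eta_\mu$ visibly respects it. For the second assertion the paper simply says ``repeat the calculation after base extension along $\tilde\theta_r$'', which is exactly what you spell out.
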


\begin{proof} This is an easier version of Lemma~\ref{lem:qdRvsAOmega}. Note that $A_\inf[\ul U^{\pm 1/p^\infty}]$ is naturally $\bb Z[\tfrac 1p]^d$-graded, and this grading extends to the complex $D^\blob$, giving a decomposition
\[
D^\blob = \bigoplus_{a:\{1,\ldots,d\}\to \bb Z[\frac 1p]} \bigotimes_{i=1}^d \left(A_\inf\cdot U_i^{a(i)}\xTo{\gamma_i - 1} A_\inf\cdot U_i^{a(i)}\right)\ .
\]
Here, the complex
\[
\left(A_\inf\cdot U_i^{a(i)}\xTo{\gamma_i - 1} A_\inf\cdot U_i^{a(i)}\right)\cong (A_\inf\xTo{[\epsilon]^{a(i)} - 1} A_\inf) = K_{A_\inf}([\epsilon]^{a(i)}-1)\ ,
\]
so that
\[
D^\blob = \bigoplus_{a:\{1,\ldots,d\}\to \bb Z[\frac 1p]} K_{A_\inf}([\epsilon]^{a(1)}-1,\ldots,[\epsilon]^{a(d)}-1)\ .
\]
Observe that if $k\not\in \bb Z$, then $[\epsilon]^k-1$ divides $\mu = [\epsilon]-1$; indeed, this is clear for $[\epsilon]^{1/p^r}-1$, and in general if $k=j/p^r$ with $j\in \bb Z\setminus p\bb Z$, then $[\epsilon]^k-1$ differs from $[\epsilon]^{1/p^r}-1$ by a unit. On the other hand, if $k\in \bb Z$, then $\mu=[\epsilon]-1$ divides $[\epsilon]^k-1$, with quotient $[k]_q$, where $q=[\epsilon]$.

Now, we distinguish two cases. If $a(i)\not\in \bb Z$ for some $i$, then
\[
\eta_\mu K_{A_\inf}([\epsilon]^{a(1)}-1,\ldots,[\epsilon]^{a(d)}-1)
\]
is acyclic by Lemma~\ref{lemma_on_Koszul_1}. On the other hand, if $a(i)\in \bb Z$ for all $i$, then by the same lemma,
\[
\eta_\mu K_{A_\inf}([\epsilon]^{a(1)}-1,\ldots,[\epsilon]^{a(d)}-1) = K_{A_\inf}([a(1)]_q,\ldots,[a(d)]_q)
\]
where $q=[\epsilon]$. Assembling the summands for $a:\{1,\ldots,d\}\to \bb Z$ gives precisely $[\epsilon]\op-\Omega^\blob_{A_\inf[\ul U^{\pm 1}]/A_\inf}$.

The final statement follows by repeating the calculation after base extension along $\tilde\theta_r: A_\inf\to W_r(\roi)$.
\end{proof}

It will be useful to have an a priori description of the groups
\[
\cal W_r^n(D) = H^n((L\eta_\mu D)/\tilde\xi_r)\ .
\]

\begin{lemma}\label{lemma_explicit_decomposition_of_Wtor}
For each $n\ge 0$ there is an isomorphism of $W_r(\roi)$-modules
\[
\cal W_r^n(D)\cong \bigoplus_{a:\{1,\dots,d\}\to p^{-r}\bb Z} W_{r-u(a)}(\roi)^{\binom dn}
\]
where $u(a)$ is as in Section~\ref{subsec:dRWLaurent}. In particular, $\cal W_r^n(D)$ is $p$-torsion-free.
\end{lemma}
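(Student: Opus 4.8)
The plan is to compute $\cal W_r^n(D)=H^n(L\eta_\mu D\dotimes_{A_\inf}A_\inf/\tilde\xi_r)$ by reducing to an explicit direct sum of Koszul complexes over $W_r(\roi)$ and invoking the structural results about Witt vectors of perfectoid rings with enough roots of unity. First, by the last assertion of Proposition~\ref{prop:IdentqdeRham}, the natural map $(L\eta_\mu D)/\tilde\xi_r\to L\eta_{[\zeta_{p^r}]-1}(D/\tilde\xi_r)$ is a quasi-isomorphism, so it suffices to compute $H^n(L\eta_{[\zeta_{p^r}]-1}(D/\tilde\xi_r))$. Since $A_\inf[\ul U^{\pm 1/p^\infty}]$ is a free $A_\inf$-module, $D/\tilde\xi_r$ is computed (underived) by the Koszul complex $C^\blob=K_{W_r(\roi)[\ul U^{\pm 1/p^\infty}]}(\gamma_1-1,\dots,\gamma_d-1)$, a complex of free $W_r(\roi)$-modules; note $[\zeta_{p^r}]-1=\tilde\theta_r(\mu)$ is a non-zero-divisor of $W_r(\roi)$ by Proposition~\ref{proposition_roots_of_unity}(i). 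The $\bb Z[\tfrac1p]^d$-grading of $W_r(\roi)[\ul U^{\pm 1/p^\infty}]$ induces a decomposition of $C^\blob$ into subcomplexes $C^\blob=\bigoplus_{a}C^\blob_a$ indexed by weights $a\colon\{1,\dots,d\}\to\bb Z[\tfrac1p]$, with $C^\blob_a\cong K_{W_r(\roi)}([\zeta^{a_1}]-1,\dots,[\zeta^{a_d}]-1)$ (using Lemma~\ref{lemma_theta} to compute $\tilde\theta_r([\epsilon]^{a_i})$, in the shorthand of Lemma~\ref{lem:propWrOmega}). As $\eta_{[\zeta_{p^r}]-1}$ respects this degreewise direct sum and each $C^\blob_a$ has $([\zeta_{p^r}]-1)$-torsion-free terms, $\eta_{[\zeta_{p^r}]-1}C^\blob=\bigoplus_a\eta_{[\zeta_{p^r}]-1}C^\blob_a$ represents $L\eta_{[\zeta_{p^r}]-1}(D/\tilde\xi_r)$, whence $\cal W_r^n(D)=\bigoplus_a H^n(\eta_{[\zeta_{p^r}]-1}C^\blob_a)$.

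Next I would rerun the analysis already carried out in the proof of Lemma~\ref{lem:propWrOmega}(ii): by Lemma~\ref{lemma_on_Koszul_1}, if some $a_i\notin p^{-r}\bb Z$ then $\eta_{[\zeta_{p^r}]-1}C^\blob_a$ is acyclic and contributes nothing, while for a weight $a\colon\{1,\dots,d\}\to p^{-r}\bb Z$ one has $\eta_{[\zeta_{p^r}]-1}C^\blob_a=K_{W_r(\roi)}(g_1,\dots,g_d)$ with $g_i=\tfrac{[\zeta^{a_i}]-1}{[\zeta_{p^r}]-1}$. Setting $v(a)=\min_i v_p(a_i)$ and $u(a)=\max(-v(a),0)$, each $g_i$ is divisible by $g:=\tfrac{[\zeta_{p^{u(a)}}]-1}{[\zeta_{p^r}]-1}$, and $g_i$ agrees with $g$ up to a unit of $W_r(\roi)$ for at least one $i$ (a primitive $p^s$-th root of unity $\zeta$ gives $[\zeta]-1$ equal to $[\zeta_{p^s}]-1$ times a unit). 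Lemma~\ref{lemma_on_Koszul_2}(ii) then gives
\[
H^n(\eta_{[\zeta_{p^r}]-1}C^\blob_a)\cong\Ann_{W_r(\roi)}(g)^{\binom{d-1}n}\oplus\bigl(W_r(\roi)/gW_r(\roi)\bigr)^{\binom{d-1}{n-1}},
\]
and Corollary~\ref{corollary_roots_of_unity} identifies $\Ann_{W_r(\roi)}(g)=V^{u(a)}W_{r-u(a)}(\roi)$ and $W_r(\roi)/gW_r(\roi)\cong F^{u(a)}_\ast W_{r-u(a)}(\roi)$, both abstractly isomorphic to $W_{r-u(a)}(\roi)$. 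Using $\binom{d-1}n+\binom{d-1}{n-1}=\binom dn$ this yields $H^n(\eta_{[\zeta_{p^r}]-1}C^\blob_a)\cong W_{r-u(a)}(\roi)^{\binom dn}$, and summing over all weights $a\colon\{1,\dots,d\}\to p^{-r}\bb Z$ produces the asserted isomorphism. The $p$-torsion-freeness then follows at once: $\roi$ is $p$-torsion-free, so each $W_s(\roi)$ is $p$-torsion-free (its ghost map is injective), and a direct sum of $p$-torsion-free modules is $p$-torsion-free.

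The only step requiring genuine care — though it amounts to bookkeeping rather than a new idea — is matching the combinatorics of the weights and of the function $u(a)$ with the conventions of Section~\ref{subsec:dRWLaurent}, in particular verifying the divisibility relations among the $g_i$ and that $\tilde\theta_r([\epsilon]^{a_i})$ is the Teichm\"uller lift of a primitive root of unity of the expected order $p^{-v_p(a_i)}$. Everything else is a formal consequence of Proposition~\ref{prop:IdentqdeRham}, Lemma~\ref{lemma_on_Koszul_1}, Lemma~\ref{lemma_on_Koszul_2}, and Corollary~\ref{corollary_roots_of_unity}. I expect this explicit description to then be upgraded, in the proof of the full Theorem~\ref{thm:dRWtorus}, to a ring-level identification with $W_r\Omega^\bullet_{\roi[\ul T^{\pm1}]/\roi}$ via the universal property of the de~Rham--Witt complex together with Langer--Zink's Theorem~\ref{theorem_LZ1}, but that is beyond the present statement.
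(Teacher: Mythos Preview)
Your proof is correct and follows essentially the same route as the paper: decompose into weight pieces, apply Lemma~\ref{lemma_on_Koszul_1} and Lemma~\ref{lemma_on_Koszul_2}(ii), and finish with Corollary~\ref{corollary_roots_of_unity}(iii). The only cosmetic difference is the order of operations---the paper invokes the first part of Proposition~\ref{prop:IdentqdeRham} to identify $L\eta_\mu D$ with the $q$-de~Rham complex over $A_\inf$ (already indexed by integral $U$-weights) and then reduces modulo $\tilde\xi_r$, whereas you reduce first and apply $L\eta_{[\zeta_{p^r}]-1}$ via the last assertion of the same proposition; both arrive at the same Koszul complexes over $W_r(\roi)$ and the same reindexing $a\leftrightarrow p^{-r}a$ at the end.
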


\begin{proof} Using the interpretation of $L\eta_\mu D$ as a $q$-de~Rham complex from Proposition~\ref{prop:IdentqdeRham}, we have
\[
L\eta_\mu D\simeq \bigoplus_{a:\{1,\ldots,d\}\to \bb Z} \bigotimes_{i=1}^d (A_\inf\cdot U_i^{a(i)}\xTo{[a(i)]_q} A_\inf\cdot U_i^{a(i)}) = \bigoplus_{a:\{1,\ldots,d\}\to \bb Z} K_{A_\inf}([a(1)]_q,\ldots,[a(d)]_q)\ ,
\]
where as usual $q=[\epsilon]$. Taking the base change along $\tilde\theta_r: A_\inf\to W_r(\roi)$, we get
\[
L\eta_\mu D/\tilde\xi_r\simeq \bigoplus_{a:\{1,\dots,d\}\to \bb Z}K_{W_r(\roi)}\left(\tfrac{[\zeta_{p^r}^{a(1)}]-1}{[\zeta_{p^r}]-1},\ldots,\tfrac{[\zeta_{p^r}^{a(d)}]-1}{[\zeta_{p^r}]-1}\right).
\]
Since each element on the right side is divisible by $\tfrac{[\zeta_{p^r}^{u(p^{-r} a)}]-1}{[\zeta_{p^r}]-1}$, and at least one element agrees with it up to a unit, it follows from Lemma~\ref{lemma_on_Koszul_2} (ii) that the Koszul complex in the summand on the right side has cohomology
\[
\Ann_{W_r(\roi)}\left(\tfrac{[\zeta_{p^r}^{u(p^{-r} a)}]-1}{[\zeta_{p^r}]-1}\right)^{\binom{d-1}n}\oplus \left(W_r(\roi)/\tfrac{[\zeta_{p^r}^{u(p^{-r} a)}]-1}{[\zeta_{p^r}]-1} W_r(\roi)\right)^{\binom{d-1}{n-1}}.
\]
This is isomorphic to $W_{r-u(p^{-r} a)}(\roi)^{\binom dn}$ by Corollary~\ref{corollary_roots_of_unity} (iii). Renaming $p^{-r} a$ by $a$ finishes the proof.
\end{proof}

\begin{remark}\label{remark_torus_no_p-torsion}
It may be useful to contrast the $p$-torsion-freeness of $\cal W_r^n(D)$ with the cohomology groups $\cal W_r^n(D)_\sub{pre}=H^n(D/\tilde\xi_r)$ obtained without applying $L\eta_\mu$, which are well-known to contain a lot of torsion, coming from the summands parametrized by nonintegral $a$. This is one important motivation for introducing the improved construction of Section~\ref{subsection_improvement}.
\end{remark}

In order to equip $\cal W_r^\bullet(D)$ with the structure of an $F$-$V$-procomplex for $\roi[\ul T^{\pm 1}]/\roi$, it remains to construct the maps $\lambda_r: W_r(\roi)[\ul T^{\pm 1}]\to \cal W_r^0(D)$. This is the content of the next lemma.

\begin{lemma}\label{lemma_existence_of_structure_maps_for_torus}
There is a unique collection of $W_r(\roi)$-algebra morphisms $\lambda_r:W_r(\roi[\ul T^{\pm1}])\to \cal W_r^0(D)$ for $r\ge 1$, which satisfy $\lambda_r([T_i])=U_i$ for $i=1,\dots,d$ and which commute with the $F$, $R$ and $V$ maps. Moreover, each morphism $\lambda_r$ is an isomorphism.
\end{lemma}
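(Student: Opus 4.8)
\textbf{Proof proposal for Lemma~\ref{lemma_existence_of_structure_maps_for_torus}.}
The plan is to construct the maps $\lambda_r$ by hand and then check they fit into an $F$-$V$-procomplex, so that uniqueness is forced by the universal property. First I would exhibit a natural $W_r(\roi)$-algebra map $W_r(\roi[\ul T^{\pm 1}])\to \cal W_r^0(D)$: recall that $\cal W_r^0(D) = H^0((L\eta_\mu D)/\tilde\xi_r) = H^0_{\cont}(\Gamma, W_r(\roi)[\ul U^{\pm 1/p^\infty}])$ (using that $L\eta_\mu$ does not change $H^0$, that $(L\eta_\mu D)/\tilde\xi_r \simeq L\eta_{[\zeta_{p^r}]-1}(D/\tilde\xi_r)$ by Proposition~\ref{prop:IdentqdeRham}, and that $D/\tilde\xi_r = R\Gamma_{\cont}(\Gamma, W_r(\roi)[\ul U^{\pm 1/p^\infty}])$ since the tensor product is underived modulo a power of $p$). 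The $\Gamma$-invariants of $W_r(\roi)[\ul U^{\pm 1/p^\infty}]$ are exactly $W_r(\roi)[\ul U^{\pm 1}]$: an element $\prod U_i^{a(i)}$ is $\gamma_i$-fixed iff $[\zeta_{p^s}]^{a(i)}=1$ in $W_r(\roi)$ for the relevant denominator $p^s$, which forces $a(i)\in\bb Z$ since $[\zeta_{p^s}]-1$ is a non-zero-divisor by Proposition~\ref{proposition_roots_of_unity}(i). So $\cal W_r^0(D)\cong W_r(\roi)[\ul U^{\pm 1}]$, which by Lemma~\ref{lem:wittpolynomial} equals $W_r(\roi[\ul T^{\pm 1}])$ via $U_i=[T_i]$; call this identification $\lambda_r$. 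This is manifestly a $W_r(\roi)$-algebra isomorphism sending $[T_i]\mapsto U_i$, which gives the ``moreover'' part and existence, and uniqueness of a $W_r(\roi)$-algebra map with $\lambda_r([T_i])=U_i$ among maps that are algebra maps is clear since $W_r(\roi[\ul T^{\pm 1}])$ is generated by $W_r(\roi)$ and the $[T_i]$ together with $V$, $F$ (this is where the compatibility with $F$, $R$, $V$ enters).

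Next I would verify that these $\lambda_r$ commute with $F$, $R$, $V$. For $F$ and $V$ this is routine: on $\cal W_r^\bullet(D)$ these operators were defined (Proposition~\ref{prop:improvedconstruction}(iii) and the discussion preceding it) by the canonical projection $W_{r+1}\to W_r$ and by multiplication by $\varphi^{r+1}(\xi)$ respectively, and under the identification $\cal W_r^0(D)=W_r(\roi[\ul T^{\pm 1}])$ these visibly become the Witt-vector $F$ and $V$ on the degree-zero part; one checks this on Teichm\"uller representatives $[T_i]$ and elements of $W_r(\roi)$, using Lemma~\ref{lemma_theta_r_diagrams}. For $R$ the point is that $R$ on $\cal W_r^0(D)$ was constructed (Proposition~\ref{prop:improvedconstruction}(iii), and the remark after it giving the alternative description) to be compatible with the restriction on $\cal W_r^0(D)_{\pre}=H^0(D/\tilde\xi_r)$, and the latter is induced by $\phi_D^{-1}$ followed by the canonical projection, which on $W_r(\roi)[\ul U^{\pm 1}]$ is precisely the Witt restriction $R$ (here I use $\theta_r(\xi)=V(1)$, so that after restricting to degree $0$ the twist by $\tilde\theta_r(\xi)^n$ is trivial).

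I expect the main obstacle to be the bookkeeping around $R$: one must make sure the ``point-set level'' construction of $R$ from Proposition~\ref{prop:improvedconstruction}(iii) agrees in degree $0$ with the naive Witt-vector restriction, which requires tracking the interaction of $\phi_{D^\bullet}^{-1}$, the normalization $\xi=\mu/\varphi^{-1}(\mu)$, and the identifications $\bb A_\inf/\tilde\xi_r\cong W_r(\roi)$ via $\tilde\theta_r$ from Lemma~\ref{lemma_theta}. A clean way to organize this is to note that $D/\tilde\xi_r = R\Gamma_{\cont}(\Gamma, W_r(\roi)[\ul U^{\pm 1/p^\infty}])$ carries an obvious system of restriction maps induced by the Witt restriction $W_{r+1}(\roi)\to W_r(\roi)$ together with $U_i\mapsto U_i$, and that these are compatible under $\phi^\infty$ (as in Lemma~\ref{lemma_witt_alg_1}) with the $\varphi_D^{-1}$-based restriction; then the claim is that the $R$ on $\cal W_r^0(D)$ coming from the improved construction restricts to this. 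Once this identification is in place, all five compatibilities (with $d$ being vacuous in degree $0$ apart from the Teichm\"uller/Leibniz bookkeeping, which is subsumed in the $F$-$V$-procomplex structure established in Theorem~\ref{thm:dRWtorus}'s setup) follow, and uniqueness of the collection $\{\lambda_r\}$ is then immediate from the universal property of $W_r\Omega^\bullet_{\roi[\ul T^{\pm 1}]/\roi}$ combined with the algebra-generation remark above.
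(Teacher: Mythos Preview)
Your computation of the $\bb Z^d$-invariants in $W_r(\roi)[\ul U^{\pm 1/p^\infty}]$ is incorrect, and this is the crux of the argument. Under $\tilde\theta_r$ the action of $\gamma_i$ sends $U_i^{1/p^s}$ to $[\zeta_{p^{r+s}}]\,U_i^{1/p^s}$ (Lemma~\ref{lemma_theta}), so in particular $\gamma_i(U_i)=[\zeta_{p^r}]\,U_i$; since $[\zeta_{p^r}]-1$ is a non-zero-divisor in $W_r(\roi)$ by Proposition~\ref{proposition_roots_of_unity}(i), the element $U_i$ is \emph{not} invariant. More generally, a monomial $U^a$ with $a(i)\in\bb Z$ is invariant only when each $a(i)\in p^r\bb Z$, so the span of invariant monomials is merely $W_r(\roi)[\ul U^{\pm p^r}]$. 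But the full invariant subring is strictly larger: for instance $V^j(1)\,U_i^{p^{r-j}}$ is invariant (use $V^j(1)[\zeta_{p^j}]=V^j(F^j[\zeta_{p^j}])=V^j(1)$), yet has exponent not in $p^r\bb Z$. So neither ``the invariants are $W_r(\roi)[\ul U^{\pm 1}]$'' nor your subsequent identification via the unrenormalized $U_i=[T_i]$ from Lemma~\ref{lem:wittpolynomial} is correct.

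The paper's remedy is precisely to fix this mismatch: one uses a \emph{renormalized} identification $W_r(\roi)[\ul U^{\pm 1/p^\infty}]\cong W_r(\roi[\ul T^{\pm 1/p^\infty}])$ sending $U_i^{1/p^s}\mapsto [T_i^{1/p^{r+s}}]$. The point of the shift by $r$ is that it makes the $\bb Z^d$-action on the right functorially induced from the ring action on $\roi[\ul T^{\pm 1/p^\infty}]$ given by $T_i^{1/p^s}\mapsto \zeta_{p^s}T_i^{1/p^s}$ (so $T_i$ itself is fixed). Since $W_r(-)$, being a limit, commutes with taking $G$-invariants, one then reads off
\[
H^0(D/\tilde\xi_r)=W_r(\roi[\ul T^{\pm 1/p^\infty}])^{\bb Z^d}=W_r\big((\roi[\ul T^{\pm 1/p^\infty}])^{\bb Z^d}\big)=W_r(\roi[\ul T^{\pm 1}]),
\]
and this is the isomorphism $\lambda_r$. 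Once this identification is in place, your discussion of the $F$, $R$, $V$ compatibilities goes through (indeed the paper simply says ``one verifies compatibility with $F$, $R$ and $V$''); but the missing renormalization is the essential step you need to add.
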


\begin{proof} We have
\[
\cal W_r^0(D) = H^0((L\eta_\mu D)/\tilde\xi_r) = H^0(L\eta_{[\zeta_{p^r}]-1} (D/\tilde\xi_r)) = H^0(D/\tilde\xi_r)\ ,
\]
as $H^0(D/\tilde\xi_r)$ is $p$-torsion-free (and thus $[\zeta_{p^r}]-1$-torsion-free). Note that by definition of
\[
D=R\Gamma(\bb Z^d,A_\inf[\ul U^{\pm 1/p^\infty}])\ ,
\]
so
\[
H^0(D/\tilde\xi_r) = W_r(\roi)[\ul U^{\pm 1/p^\infty}]^{\bb Z^d}\ .
\]
where $\gamma_i$ acts by sending $U_i^{1/p^s}$ to $[\zeta_{p^{r+s}}]U_i^{1/p^s}$, and $U_j^{1/p^s}$ to $U_j^{1/p^s}$ for $j\neq i$; let us recall that $[\epsilon]^{1/p^s}\mapsto [\zeta_{p^{r+s}}]$ by Lemma~\ref{lemma_theta}.

Now note that by (a renormalization of) Lemma~\ref{lem:wittpolynomial}, there is an identification
\[
W_r(\roi)[\ul U^{\pm 1/p^\infty}] = W_r(\roi[\ul T^{\pm 1/p^\infty}])\ ,\ U_i^{1/p^s}\mapsto [T_i^{1/p^{r+s}}]\ .
\]
Under this identification, $\gamma_i$ acts by sending $T_i^{1/p^s}$ to $\zeta_{p^s} T_i^{1/p^s}$, and $T_j^{1/p^s}$ to $T_j^{1/p^s}$ for $j\neq i$; in particular, the $\bb Z^d$-action on $W_r(\roi[\ul T^{\pm 1/p^\infty}])$ is induced by an action on $\roi[\ul T^{\pm 1/p^\infty}]$, with invariants $\roi[\ul T^{\pm 1}]$. It follows that
\[
H^0(D/\tilde\xi_r) =W_r(\roi[\ul T^{\pm 1/p^\infty}])^{\bb Z^d} = W_r(\roi[\ul T^{\pm 1}])\ ,
\]
and one verifies compatibility with $F$, $R$ and $V$.
\end{proof}

\begin{corollary} There are unique maps
\[
\lambda_r^\bullet: W_r\Omega^\bullet_{\roi[\ul T^{\pm 1}]/\roi}\to \cal W_r^\bullet(D)
\]
compatible with the $d$, $F$, $R$, $V$ and multiplication maps.
\end{corollary}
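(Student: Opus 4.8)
The plan is to invoke the universal property of the relative de~Rham--Witt complex, Theorem~\ref{thm:dRWExists}. We have assembled in Proposition~\ref{proposition_first_construction} and Proposition~\ref{prop:improvedconstruction} all the structure needed to apply it, once we verify that $D = R\Gamma(\bb Z^d, A_\inf[\ul U^{\pm 1/p^\infty}])$ meets the running hypotheses of Section~\ref{subsection_improvement}: it is a commutative algebra object in $D(A_\inf)$ with $H^i(D) = H^i(D/\xi) = 0$ for $i<0$, it carries the Frobenius automorphism $\phi_D$ described above, $H^0(D)$ sits inside $A_\inf[\ul U^{\pm 1/p^\infty}]$ and is therefore $\mu$-torsion-free, and we are over $S = \roi$ which is $p$-torsion-free and contains the chosen compatible system $\zeta_{p^r}$, so that Proposition~\ref{proposition_roots_of_unity} applies and $\xi = \mu/\phi^{-1}(\mu)$ has $\theta_r(\xi) = V(1)$ for all $r\ge 1$.

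The key preliminary step is Assumption~\ref{ass:torsionfree}: for all $r\ge 1$ and $n\ge 0$, the group $\cal W_r^n(D) = H^n\big(L\eta_\mu D \dotimes_{A_\inf} A_\inf/\tilde\xi_r\big)$ is $p$-torsion-free. This is exactly the content of Lemma~\ref{lemma_explicit_decomposition_of_Wtor}, whose proof rests on identifying $L\eta_\mu D$ with the $q$-de~Rham complex via Proposition~\ref{prop:IdentqdeRham}, decomposing into Koszul complexes $K_{W_r(\roi)}\big(\tfrac{[\zeta_{p^r}^{a(1)}]-1}{[\zeta_{p^r}]-1},\ldots\big)$ graded by weights $a\colon\{1,\dots,d\}\to\bb Z$, and then applying Lemma~\ref{lemma_on_Koszul_2}(ii) together with Corollary~\ref{corollary_roots_of_unity}(iii) to see each cohomology group is a direct sum of $W_{r-u(a)}(\roi)$'s. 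With Assumption~\ref{ass:torsionfree} in hand, Proposition~\ref{prop:improvedconstruction} gives: $\cal W_r^\bullet(D)$ is a commutative differential graded $W_r(\roi)$-algebra (part (i)); the $R$, $F$, $V$, $d$ maps exist and satisfy all the $F$-$V$-procomplex identities $RF=FR$, $RV=VR$, $V(F(x)y)=xV(y)$, $FV=p$, $FdV=d$ (parts (iii),(iv)); and by Lemma~\ref{lemma_existence_of_structure_maps_for_torus} there are $W_r(\roi)$-algebra maps $\lambda_r\colon W_r(\roi[\ul T^{\pm1}])\to\cal W_r^0(D)$ compatible with $F$, $R$, $V$. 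The Teichm\"uller identity $Fd\lambda_{r+1}([b]) = \lambda_r([b])^{p-1}d\lambda_r([b])$ then follows from Proposition~\ref{prop:improvedconstruction}(v): it holds always after multiplication by $p$ (Remark~\ref{remark_p-torsion-free_Teichmuller}), and $\cal W_r^1(D)$ is $p$-torsion-free, so it holds on the nose.

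Therefore $(\cal W_r^\bullet(D), R, F, V, \lambda_r)$ is an $F$-$V$-procomplex for $\roi[\ul T^{\pm1}]/\roi$, and Theorem~\ref{thm:dRWExists} furnishes unique morphisms of differential graded $W_r(\roi)$-algebras $\lambda_r^\bullet\colon W_r\Omega^\bullet_{\roi[\ul T^{\pm1}]/\roi}\to\cal W_r^\bullet(D)$ which are compatible with $R$, $F$, $V$ and extend the $\lambda_r$ in degree $0$. This is precisely the asserted statement. I would spell out that the commutation with $d$ is automatic since $\lambda_r^\bullet$ is a map of differential graded algebras, and the compatibility with multiplication is part of the conclusion of the universal property; the only nontrivial inputs are the verification of Assumption~\ref{ass:torsionfree} and the construction of the $\lambda_r$ themselves, both already dispatched above.

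The main obstacle, conceptually, is not in this corollary — it is a bookkeeping consequence of the machinery — but in the underlying Proposition~\ref{prop:improvedconstruction}, specifically the fact that the restriction map $R$ descends from $\cal W_r^n(D)_\sub{pre}$ to the torsion-cut-down $\cal W_r^n(D)$. That descent was handled there by a point-set-level construction on an explicit flat model $D^\blob$ of $D$ with a strict Frobenius $\phi_{D^\blob}$, and its well-definedness relied on $\cal W_r^n(D)$ being $p$-torsion-free. So the real leverage point is again Lemma~\ref{lemma_explicit_decomposition_of_Wtor}; once that is granted, the present corollary is a clean application and I do not anticipate further difficulty. One should, however, double-check at the end that these $\lambda_r^\bullet$ are the maps one wants to eventually prove are \emph{isomorphisms} (Theorem~\ref{thm:dRWtorus}), but proving bijectivity is a separate matter not needed for this statement.
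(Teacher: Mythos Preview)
Your proposal is correct and follows exactly the paper's approach: the paper's proof simply cites Proposition~\ref{prop:improvedconstruction}(v), Lemma~\ref{lemma_existence_of_structure_maps_for_torus}, and Lemma~\ref{lemma_explicit_decomposition_of_Wtor}, which are precisely the three ingredients you unpack in detail. Your write-up is a faithful expansion of that one-line proof, with the universal property of Theorem~\ref{thm:dRWExists} applied via Proposition~\ref{prop:improvedconstruction}(v).
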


\begin{proof} This follows from Proposition~\ref{prop:improvedconstruction} (v), Lemma~\ref{lemma_existence_of_structure_maps_for_torus} and Lemma~\ref{lemma_explicit_decomposition_of_Wtor}.
\end{proof}

We can now state the following more precise form of Theorem~\ref{thm:dRWtorus}.

\begin{theorem}\label{thm:dRWtorusprecise} For each $r\geq 1$, $n\geq 0$, the map
\[
\lambda_r^n: W_r\Omega^n_{\roi[\ul T^{\pm 1}]/\roi}\to \cal W_r^n(D)
\]
is an isomorphism.
\end{theorem}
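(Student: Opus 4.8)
The plan is to compare both sides summand-by-summand, using the weight decompositions available on each side and the explicit description of $\lambda_r^n$ on generators. On the left, Langer--Zink's structure theorem (Theorem~\ref{theorem_LZ1}) gives
\[
W_r\Omega^n_{\roi[\ul T^{\pm 1}]/\roi}\cong \bigoplus_{a:\{1,\dots,d\}\to p^{-r}\bb Z}\ \bigoplus_{(I_0,\dots,I_n)\in P_a} V^{u(a)}W_{r-u(a)}(\roi)\ ,
\]
while on the right, Lemma~\ref{lemma_explicit_decomposition_of_Wtor} gives $\cal W_r^n(D)\cong \bigoplus_{a:\{1,\dots,d\}\to p^{-r}\bb Z} W_{r-u(a)}(\roi)^{\binom dn}$; note $|P_a|=\binom dn$ by a direct count (choosing the ordered partition $(I_1,\dots,I_n)$ of a subset of size $\ge n$ with the prescribed valuation-and-$\preceq_a$ ordering amounts to choosing which $n$ of the $d$ coordinates land in $I_1\cup\dots\cup I_n$ together with the forced ordered partition), so the ranks match. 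The first step is therefore to check that $\lambda_r^n$ respects these weight gradings: the target $A_\inf[\ul U^{\pm 1/p^\infty}]$ carries the $p^{-r}\bb Z^d$-grading used in the proof of Lemma~\ref{lemma_explicit_decomposition_of_Wtor}, the source carries the Langer--Zink weight grading, and $\lambda_r$ sends $[T_i]\mapsto U_i$, $F,V,R,d$ all being homogeneous for these gradings after the rescaling $a\mapsto p^{-r}a$; so it suffices to prove $\lambda_r^n$ is an isomorphism weight-by-weight.

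The second step is the weight-by-weight comparison. Fixing a weight $a$, the right-hand summand is $W_{r-u(a)}(\roi)^{\binom dn}$ arising as the cohomology $\Ann(c_a)^{\binom{d-1}{n}}\oplus (W_r(\roi)/c_a)^{\binom{d-1}{n-1}}$ of the Koszul complex $K_{W_r(\roi)}(\tfrac{[\zeta_{p^r}^{a(1)}]-1}{[\zeta_{p^r}]-1},\dots,\tfrac{[\zeta_{p^r}^{a(d)}]-1}{[\zeta_{p^r}]-1})$ with $c_a=\tfrac{[\zeta_{p^r}^{u(p^{-r}a)}]-1}{[\zeta_{p^r}]-1}$, identified with $W_{r-u(a)}(\roi)^{\binom dn}$ via $F^{u(a)}$ and multiplication by $V^{u(a)}(1)$ (Corollary~\ref{corollary_roots_of_unity}(iii)). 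The left-hand summand, by the Langer--Zink formulas in Section~\ref{subsec:dRWLaurent}, is spanned by the explicit basis elements $e(x,a,I_0,\dots,I_n)$ for $x\in V^{u(a)}W_{r-u(a)}(\roi)$; these are products of terms $V^j(\cdots)$, $F^jd(\cdots)$, and $\dlog[T_i]$. The plan is to compute $\lambda_r^n$ on each such basis element directly. Under $\lambda_r$ we have $[T_i^{1/p^s}]\mapsto U_i^{1/p^s}$ (this is the content of Lemma~\ref{lemma_existence_of_structure_maps_for_torus} and Lemma~\ref{lem:wittpolynomial}), and $d[U_i]$, $dV^j(\cdots)$, $\dlog[U_i]$ map to the corresponding cohomology classes in the $q$-de~Rham complex $L\eta_\mu D$ via Proposition~\ref{prop:IdentqdeRham}; concretely $\dlog[T_i]\mapsto \dlog U_i$ and $d[T_i]=[T_i]\dlog[T_i]\mapsto [1]_q U_i\dlog U_i = U_i\dlog U_i$ in the appropriate weight-$e_i$ summand. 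Tracking the powers of $\mu=[\epsilon]-1=q-1$ (each $d$ divides by $q-1$, each $F^j$ or $V^j$ shifts the relevant $[\zeta]-1$, etc.), one checks that $e(x,a,I_0,\dots,I_n)$ maps to a generator of the corresponding rank-one piece of the Koszul cohomology, namely to $\pm\,\lambda_r(x)$ times the wedge of the relevant $\dlog U_i$'s, matching the $\Ann(c_a)$ or $W_r(\roi)/c_a$ summand picked out by which coordinates lie in $I_1\cup\dots\cup I_n$ versus $I_{\rho_2+1}\cup\dots\cup I_n$. Since the $e(x,a,\cdot)$ form a $W_{r-u(a)}(\roi)$-basis of the source summand and their images form a generating set of the free target summand with matching cardinality, bijectivity in each weight follows.

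The main obstacle I expect is purely bookkeeping: reconciling the three combinatorial parametrizations of the rank-$\binom dn$ free module — the ordered partitions $P_a$ of Langer--Zink, the Koszul-complex index set $\{S\subset\{1,\dots,d\}: |S|=n\}$ together with the $\Ann$ vs.\ quotient dichotomy, and the bidegree/valuation data recording how many $V$'s and $F^jd$'s occur in each $e(x,a,I_0,\dots,I_n)$ — and checking that the $q$-analogue factors $[k]_q$ and the twisting elements $\tilde\theta_r(\mu)^n$, $c_a$, $V^{u(a)}(1)$ all line up with unit ambiguity only. Two further points need care. First, one must verify that the abstractly-constructed restriction map $R$ on $\cal W_r^n(D)$ (the point-set construction in Proposition~\ref{prop:improvedconstruction}(iii), which is well-defined thanks to Assumption~\ref{ass:torsionfree}, itself now established by Lemma~\ref{lemma_explicit_decomposition_of_Wtor}) really agrees under $\lambda_r^\bullet$ with the Langer--Zink restriction; but this is automatic from the universal property once we know $\lambda_r^\bullet$ is a map of $F$-$V$-procomplexes, which was already arranged in the preceding corollary, so no extra work is needed there. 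Second, a clean alternative to brute-force matching of basis elements is to argue by induction on $d$ using the K\"unneth-type factorization of both the $q$-de~Rham complex (Example~\ref{ex:qdR}) and of $W_r\Omega^\bullet$ of a Laurent polynomial algebra together with Lemma~\ref{lemma_on_Koszul_1}; reducing to $d=1$, where the weight-$a$ summand on the left is $W_{r-u(a)}(\roi)\cdot e$ (with $e$ one of $V^j[T^{a'}]$, $dV^j[T^{a'}]$, $F^jd[T^{a'}]$, or $\dlog[T]$ depending on $v(a)$) and on the right is the cohomology of $(W_r(\roi)\xrightarrow{[\zeta_{p^r}^a]-1}W_r(\roi))$, the identification is immediate from Corollary~\ref{corollary_roots_of_unity}. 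I would take this inductive route to minimize the combinatorial overhead, falling back on the explicit Langer--Zink formulas only for the base case.
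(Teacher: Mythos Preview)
Your proposal takes a fundamentally different route from the paper. You opt for a direct weight-by-weight computation (or a K\"unneth reduction to $d=1$), whereas the paper sidesteps all explicit matching of basis elements by reducing modulo the maximal ideal of $\roi$ and invoking the classical Cartier isomorphism.

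The paper's argument runs as follows. First, using the weight decomposition and an isotypical-component argument (your first step, essentially), one reduces to showing that $\overline{\lambda}_r^n := \lambda_r^n\otimes_{W_r(\roi)} W_r(k)$ is an isomorphism: the point is that each weight-$a$ summand on either side is a finite free $W_{r-u(a)}(\roi)$-module, and a map of finite free modules over the local ring $W_{r-u(a)}(\roi)$ is an isomorphism iff it is after passing to the residue field. By Proposition~\ref{prop:dRWperfectoid}, the source becomes $W_r\Omega^\bullet_{k[\ul T^{\pm 1}]/k}$. For the target, one computes $\cal W_r^\bullet(D)\otimes_{W_r(\roi)} W_r(k)$ directly: by Tor-independence this is $H^\bullet(L\eta_\mu D\dotimes_{A_\inf} W_r(k))$, and since $q=[\epsilon]\mapsto 1$ in $W(k)$ the $q$-de~Rham complex collapses to the ordinary de~Rham complex $\Omega^\bullet_{W(k)[\ul T^{\pm 1}]/W(k)}$, whose cohomology modulo $p^r$ is identified with $W_r\Omega^\bullet_{k[\ul T^{\pm 1}]/k}$ by the classical Cartier isomorphism. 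Thus $\overline{\lambda}_r^\bullet$ becomes a differential-graded endomorphism of $W_r\Omega^\bullet_{k[\ul T^{\pm 1}]/k}$ that is the identity in degree $0$; since this complex is generated in degree $0$, the endomorphism must be the identity. No basis-element matching is required.

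Your direct approach is plausible in principle but, as you yourself anticipate, the bookkeeping is substantial and you have not actually carried it out; tracking the interaction of the $\eta_\mu$-twist with the explicit $e(x,a,I_0,\dots,I_n)$ under $F,V,d$ is exactly the kind of computation the paper's reduction avoids. More seriously, your proposed K\"unneth shortcut has a gap: while the $q$-de~Rham complex visibly factors as a tensor product over the coordinates, the relative de~Rham--Witt complex $W_r\Omega^\bullet_{\roi[\ul T^{\pm 1}]/\roi}$ does \emph{not} factor in this way. In the Langer--Zink description the weight-$a$ summand is controlled by $u(a)=\max_i u(a(i))$, which is not additive in the coordinates, so a naive tensor decomposition over $W_r(\roi)$ does not reproduce the correct summands. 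You would need to prove a nontrivial K\"unneth-type statement for relative de~Rham--Witt before the reduction to $d=1$ can proceed, and this is not available from the results cited.
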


\begin{proof} We first observe that the source and target of $\lambda_r^n$ look alike. More precisely, both admit natural direct sum decompositions according to functions $a:\{1,\ldots,d\}\to p^{-r} \bb Z$, by Theorem~\ref{theorem_LZ1} and Lemma~\ref{lemma_explicit_decomposition_of_Wtor} respectively, with similar terms. We need to make this observation more explicit.

Define an action of $\bb Z[\tfrac1p]^d = \bigoplus_{i=1}^d \bb Z[\tfrac 1p] \gamma_i$ on $\roi[\ul T^{\pm1}]$ and $A_\inf[\ul U^{\pm 1/p^\infty}]$, via $\roi$- resp.~$A_\inf$-algebra automorphisms, by specifying that $\tfrac 1{p^r} \gamma_i$ acts via $T_i\mapsto \zeta_{p^r} T_i$ and $T_j\mapsto T_j$ for $j\neq i$, resp.~$U_i\mapsto [\epsilon]^{1/p^r} U_i$ and $U_j\mapsto U_j$ for $j\neq i$. In the latter case this action is of course extending the action of $\bb Z^d$ on $\bb A_\inf(S)[\ul U^{\pm1/p^\infty}]$ which has been considered since the start of the section; in the former case, the action of $\bb Z^d\subset \bb Z[\tfrac 1p]^d$ is trivial.

There are induced actions of $\bb Z[\tfrac1p]^d$ on $\cal W_r^\blob(D)$ and $W_r\Omega_{\roi[\ul T^{\pm 1}]/\roi}^\blob$ which are compatible with all extra structure and (thus) commute with $\lambda_r^\blob$.

\begin{lemma}\label{lemma_Witt_decomposition_into_isotypical}
Fix $n\ge 0$. Then the $W_r(\roi)$-modules $\cal W_r^n(D)$ and $W_r\Omega_{\roi[\ul T^{\pm1}]/\roi}^n$ admit unique direct sum decompositions of the form $\bigoplus_{a:\{1,\dots,d\}\to p^{-r}\bb Z}M_a$, where
\begin{enumerate}
\item the decomposition is compatible with the action of $\bb Z[\tfrac1p]^d$, in such a way that $\tfrac 1{p^s}\gamma_i\in\bb Z[\tfrac1p]^d$ acts on $M_a$ as multiplication by $[\zeta_{p^s}^{a(i)}]\in W_r(\roi)$, where $\zeta_{p^s}^{a(i)} := \zeta_{p^{r+s}}^{p^ra(i)}$.
\item each $M_a$ is isomorphic to a finite direct sum of copies of $W_{r-u(a)}(\roi)$;
\item the decompositions are compatible with $\lambda_r^n$.
\end{enumerate}

Moreover, $\lambda_r^n$ is an isomorphism if and only if $\lambda_r^n\otimes_{W_r(\roi)} W_r(k)$ is an isomorphism.
\end{lemma}

\begin{remark} The reader may worry that the description of the action in (i) does not seem to be trivial on $\bb Z^d\subset \bb Z[\tfrac 1p]^d$; however, $[\zeta_{p^r}]^{p^ra(i)}$ does act trivially on $W_{r-u(a)}(\roi)$, and thus on $M_a$ by (ii).
\end{remark}

\begin{proof} In the case of $W_r\Omega^n_{\roi[\ul T^{\pm1}]/\roi}$ we use Theorem~\ref{theorem_LZ1}: by directly analysing Cases 1--3 of the definition of the element $e(x,a,I_0,\dots,I_n)$ one sees that the weight $a$ part of $W_r\Omega_{A[\ul T^{\pm1}]/A}^n$ has property (i); it has property (ii) by Corollary~\ref{corollary_roots_of_unity} (iii). In the case of $\cal W_r^n(D)$, the result follows from Lemma~\ref{lemma_explicit_decomposition_of_Wtor}.

Now, knowing that both sides of the map $\lambda_r^n:W_r\Omega_{\roi[\ul T^{\pm 1}]/\roi}^\blob\to \cal W_r^n(D)$ admit decompositions satisfing (i) and (ii), we claim that the map is automatically compatible with the decompositions. This follows by a standard ``isotypical component argument'' from the observation that if $a:\{1,\dots,d\}\to p^{-r}\bb Z$ is non-zero and $x\in W_j(\roi)$ is an element fixed by $[\zeta_{p^s}^{a(i)}]$ for all $\tfrac 1{p^s}\gamma_i$, then $x=0$; this observation is proved by noting that the hypotheses imply that $x$ is killed in particular by $[\zeta_{p^j}]-1$, which is a non-zero-divisor of $W_j(\roi)$ by Proposition~\ref{proposition_roots_of_unity} (i).

For the final statement, it suffices to prove that if $f: M\to N$ is map between two $W_r(\roi)$-modules $M$, $N$ which are finite direct sums of copies of $W_j(\roi)$ for some fixed $0\leq j\leq r$ (regarded as $W_r(\roi)$-module via $F^{r-j}$), and $f\otimes_{W_r(\roi)} W_r(k)$ is an isomorphism, then so is $f$. To check this, we may assume that $j=r$. Now $W_r(\roi)$ is a local ring, over which a map of finite free modules is an isomorphism if and only if it is an isomorphism over the residue field.
\end{proof}

By the lemma, it is enough to prove that
\[
\overline{\lambda}_r^\blob := \lambda_r^\blob\otimes_{W_r(\roi)} W_r(k): W_r\Omega_{\roi[\ul T^{\pm1}]/\roi}^\blob\otimes_{W_r(\roi)} W_r(k)\to \cal W_r^n(D)\otimes_{W_r(\roi)} W_r(k)=:\cal W_r^n(D)_k
\]
is an isomorphism. By Proposition~\ref{prop:dRWperfectoid}, the source
\[
W_r\Omega_{\roi[\ul T^{\pm1}]/\roi}^\blob\otimes_{W_r(\roi)} W_r(k) = W_r\Omega_{k[\ul T^{\pm 1}]/k}^\blob\ .
\]

\begin{lemma} There is an isomorphism of differential graded algebras
\[
\cal W_r^\blob(D)_k\cong W_r\Omega_{k[\ul T^{\pm 1}]/k}^\blob\ .
\]
In degree $0$, it is compatible with the identification $\lambda_r^0\otimes_{W_r(\roi)} W_r(k): \cal W_r^n(D)_k = W_r(k[\ul T^{\pm 1}])$.
\end{lemma}

Note that we do not a priori claim that this isomorphism is related to $\overline{\lambda}_r^\blob$.

\begin{proof} First, we note that $\cal W_r^n(D)$ is Tor-independent from $W_r(\roi)$ over $W_r(k)$ by part (ii) of the previous lemma and Lemma~\ref{lemma_tor_independence_1}; this implies that
\[
\cal W_r^n(D)_k = H^n(L\eta_\mu D\dotimes_{A_\inf} W_r(k))\ .
\]
This identification is multiplicative; the differential on the left is induced from the Bockstein differential in the triangle
\[
L\eta_\mu D\dotimes_{A_\inf} W_r(k)\xTo{p^r} L\eta_\mu D\dotimes_{A_\inf} W_{2r}(k)\to L\eta_\mu D\dotimes_{A_\inf} W_r(k)\ .
\]
But one has an identification between $L\eta_\mu D\dotimes_{A_\inf} W(k)$ and $\Omega_{W(k)[\ul T^{\pm 1}]/W(k)}^\blob$ by Proposition~\ref{prop:IdentqdeRham}, noting that the $q$-de~Rham complex becomes a usual de~Rham complex over $W(k)$ as $q=[\epsilon]\mapsto 1\in W(k)$. This identification is compatible with the multiplicative structure. We get an isomorphism of graded algebras
\[
\cal W_r^n(D)_k = H^n(L\eta_\mu D\dotimes_{A_\inf} W_r(k)) = H^n(\Omega_{W_r(k)[\ul T^{\pm 1}]/W_r(k)}^\blob) = W_r\Omega^n_{k[\ul T^{\pm 1}]/k}\ ,
\]
using the Cartier isomorphism~\cite[\S III.1.5]{IllusieRaynaud} in the last step. This identification is compatible with the differential, as both are given by the same Bockstein. One checks that in degree $0$, this is the previous identification.
\end{proof}

Thus,
\[
\overline{\lambda}_r^\blob: W_r\Omega_{k[\ul T^{\pm 1}]/k}^\blob\to \cal W_r^\blob(D)_k\cong W_r\Omega_{k[\ul T^{\pm 1}]/k}^\blob
\]
can be regarded as a differential graded endomorphism of $W_r\Omega_{k[\ul T^{\pm 1}]/k}^\blob$, which is the identity in degree $0$. But $W_r\Omega_{k[\ul T^{\pm 1}]/k}^\blob$ is generated in degree $0$, so it follows that the displayed map is the identity, and so $\overline{\lambda}_r^\blob$ is an isomorphism.
\end{proof}

\subsection{Proof of Theorem~\ref{thm:AOmegavsdRWLocalPart4}}
Finally, we can prove part (iv) of Theorem~\ref{thm:AOmegavsdRWLocal}. Recall that this states for a small formally smooth $\roi$-algebra $R$, there is a natural isomorphism
\[
H^i(\widetilde{W_r\Omega}_R)\cong W_r\Omega_{R/\roi}^{i,\cont}\{-i\}\ .
\]

\begin{proof} Note that we have already proved in Lemma~\ref{lem:propWrOmega} and Corollary~\ref{cor:AOmegavsdRWLocalPart2} that all
\[
H^i(\widetilde{W_r\Omega}_R) = H^i(L\eta_\mu D/\tilde\xi_r)
\]
are $p$-torsion-free, where $D=R\Gamma_\sub{pro\'et}(X,\bb A_{\inf,X})$. Moreover, we have
\[
H^0(\widetilde{W_r\Omega}_R) = H^0_\sub{pro\'et}(X,W_r(\hat\roi_X^+)) = W_r(R)\ .
\]
Thus, we can apply the machinery from Section~\ref{section_constructing_Witt} to get canonical maps of $F$-$V$-procomplexes
\[
\lambda_r^\bullet: W_r\Omega^\bullet_{R/\roi}\to H^\bullet(\widetilde{W_r\Omega}_R)\ .
\]
To verify that these are isomorphisms after $p$-completion, we use Elkik's theorem, \cite{Elkik}, to choose a smooth $\roi$-algebra $R_0$ with an \'etale map $\roi[\ul T^{\pm 1}]\to R_0$ which after $p$-completion gives $\roi\langle \ul T^{\pm 1}\rangle\to R$, and consider the diagram
\[\xymatrix{
W_r\Omega^n_{\roi[\ul T^{\pm 1}]/\roi}\otimes_{W_r(\roi[\ul T^{\pm 1}])} W_r(R_0)\ar[r]\ar[d] & H^n(\widetilde{W_r\Omega}_{\roi\langle \ul T^{\pm 1}\rangle})\otimes_{W_r(\roi[\ul T^{\pm 1}])} W_r(R_0)\ar[d]\\
W_r\Omega^{n,\cont}_{R/\roi}\ar[r] &H^n(\widetilde{W_r\Omega}_R)\ .
}\]
Here, the left vertical map is an isomorphism after $p$-completion by Lemma~\ref{lem:dRWEtale} (and the equation $W_r\Omega^{n,\cont}_{R_0/\roi} = W_r\Omega^{n,\cont}_{R/\roi}$), the upper horizontal arrow is an isomorphism after $p$-completion by Theorem~\ref{thm:dRWtorusprecise}, and the right vertical arrow is an isomorphism after $p$-completion by Lemma~\ref{lem:WrOmegaBC}.

Note that in this section, we have regarded roots of unity as fixed; undoing the choice introduces the Breuil--Kisin--Fargues twist, as can easily be checked from the definition of the differential in Section~\ref{section_constructing_Witt} as a Bockstein for
\[
0\to \tilde\xi_r A_\inf/\tilde\xi_r^2 A_\inf\to A_\inf/\tilde\xi_r^2 A_\inf\to A_\inf/\tilde\xi_r A_\inf\to 0\ .
\]

Finally, to see that the isomorphism for $r=1$ agrees with the one from Theorem~\ref{thm:IntegralCartierLocal}, it suffices to check in degree $i=1$ by multiplicativity. It suffices to check on basis elements of $\Omega^{1,\cont}_{R/\roi}$, so one reduces to the case $R=\roi\langle T^{\pm 1}\rangle$, where it is a direct verification.
\end{proof}

\subsection{A variant}

Let us end this section by observing that as a consequence, one gets the following variant. Let $R$ be a small formally smooth $\roi$-algebra as above, with $X$ the generic fibre of $\frak X=\Spf R$.

\begin{proposition}\label{prop:interpretjunk} For any integers $r\geq 1$, $s\geq 0$, there is a natural isomorphism
\[\begin{aligned}
H^i(L\eta_{[\zeta_{p^{r+s}}]-1} R\Gamma_\sub{pro\'et}(X,W_r(\hat\roi_X^+)))&\cong^{\phi^s} H^i(\widetilde{W_{r+s}\Omega}_R\dotimes_{W_{r+s}(\roi)} W_r(\roi))\\
&\cong (W_{r+s}\Omega^{i,\cont}_{R/\roi} / V^r W_s\Omega^{i,\cont}_{R/\roi})\{-i\}\ ,
\end{aligned}\]
where the map $W_{r+s}(\roi)\to W_r(\roi)$ is the restriction map.
\end{proposition}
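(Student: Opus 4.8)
The plan is to establish the two displayed isomorphisms separately, reading from right to left. For the second isomorphism, the key identity is Theorem~\ref{thm:AOmegavsdRWLocalPart4} (part (iv) of Theorem~\ref{thm:AOmegavsdRWLocal}), which gives $H^i(\widetilde{W_{r+s}\Omega}_R) \cong W_{r+s}\Omega^{i,\cont}_{R/\roi}\{-i\}$. First I would analyze the derived base change $\widetilde{W_{r+s}\Omega}_R \dotimes_{W_{r+s}(\roi)} W_r(\roi)$ along the restriction map $W_{r+s}(\roi) \to W_r(\roi)$. Using the structure of $\widetilde{W_{r+s}\Omega}_R$ as a completed direct sum of Koszul complexes over $W_{r+s}(\roi)$ (Lemma~\ref{lem:propWrOmega}(i)), together with the Tor-independence statements of Corollary~\ref{corollary_roots_of_unity} and Lemma~\ref{lemma_tor_independence_1}, I expect to show that the tensor product is concentrated in a single degree in each Koszul summand, so that the derived and underived base changes agree and $H^i(\widetilde{W_{r+s}\Omega}_R \dotimes_{W_{r+s}(\roi)} W_r(\roi)) \cong H^i(\widetilde{W_{r+s}\Omega}_R) \otimes_{W_{r+s}(\roi)} W_r(\roi)$. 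Then the restriction map on de~Rham--Witt complexes $W_{r+s}\Omega^{i,\cont}_{R/\roi} \to W_r\Omega^{i,\cont}_{R/\roi}$ has kernel $V^r W_s\Omega^{i,\cont}_{R/\roi}$ by the general structure theory of de~Rham--Witt complexes (Section~\ref{section_Witt_complexes}, Remark/Lemma on the ideal $W_r\Omega^\bullet_{(R,I)/A}$ and the $F$-$V$-procomplex identities $R^r = 0$ on $V^r$), and one checks that $W_{r+s}(\roi) \to W_r(\roi)$ base change realizes exactly this quotient on cohomology; here the $p$-torsion-freeness of the $H^i$ (from Lemma~\ref{lem:propWrOmega}(ii)) is what makes the computation clean.

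For the first isomorphism, relating $L\eta_{[\zeta_{p^{r+s}}]-1} R\Gamma_\sub{pro\'et}(X, W_r(\hat\roi_X^+))$ to $\widetilde{W_{r+s}\Omega}_R \dotimes_{W_{r+s}(\roi)} W_r(\roi)$ via $\phi^s$, the main input will be the formal properties of $L\eta$ relative to the two families of maps $\tilde\theta_r$ and $\theta_r = \tilde\theta_r \phi^r$ from $A_\inf$ to $W_r(\roi)$, as set up in Section~\ref{ss:theta} and exploited in the introduction's discussion around Proposition~\ref{prop:LetaBock} and Lemma~\ref{lem:compositionLeta}. Concretely, I would start from $A\Omega_R = L\eta_\mu R\Gamma_\sub{pro\'et}(X, \bb A_{\inf,X})$ and use Lemma~\ref{lem:compositionLeta} to write $L\eta_{\phi^s(\mu)} = L\eta_{\tilde\xi_s} \circ L\eta_\mu$ (since $\phi^s(\mu) = \tilde\xi_s \cdot \mu$, as $\tilde\xi_s = \phi^s(\mu)/\mu$ by Proposition~\ref{proposition_roots_of_unity}(iii)); applying the Frobenius $\phi_X$ on $R\Gamma_\sub{pro\'et}(X,\bb A_{\inf,X})$ as in the proof of Proposition~\ref{prop:PhionAOmega}, one identifies $\phi^s$-twisted versions of these complexes. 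Reducing modulo $\tilde\xi_r$ and using $\tilde\theta_{r+s}$ versus $\theta_{r}\phi^{s}$-type identities, together with the base change compatibilities (Lemma~\ref{lem:qdRvsAOmega} that $A\Omega_R \dotimes_{A_\inf,\tilde\theta_r} W_r(\roi) \simeq \widetilde{W_r\Omega}_R$), should yield the $\phi^s$-equivariant identification of $L\eta_{[\zeta_{p^{r+s}}]-1}$ of the $W_r(\hat\roi_X^+)$-cohomology with the claimed base change. A subtlety here is that $[\zeta_{p^{r+s}}]-1 = \tilde\theta_{?}(\phi^s(\mu))$-type elements must be tracked carefully through the $\tilde\theta$ versus $\theta$ dictionary (Lemma~\ref{lemma_theta}, Lemma~\ref{lemma_ker_theta_r}), and one must invoke the local comparison (Corollary~\ref{cor:AOmegavsdRWLocalPart2}, via the almost purity input and Lemma~\ref{lem:LetaActualIsom}) to pass between $R\Gamma_\sub{cont}(\Gamma, W_r(R_\infty))$ and $R\Gamma_\sub{pro\'et}(X, W_r(\hat\roi_X^+))$ after applying $L\eta$.

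The main obstacle I anticipate is bookkeeping the interaction between the two collections of $\theta$-maps and the Frobenius twist: one has to verify that reducing $L\eta_\mu(\text{base change to } W_r(\hat\roi_X^+))$ modulo the correct power of $\tilde\xi$, with the correct Frobenius twist $\phi^s$, genuinely produces $\widetilde{W_{r+s}\Omega}_R \dotimes W_r(\roi)$ and not some neighbor. This is the kind of step where the paper's framework makes the statement ``formal'' but where an actual argument requires carefully invoking Lemma~\ref{lem:compositionLeta}, the commutative diagrams of Lemma~\ref{lemma_theta_r_diagrams}, and the commutation of $L\eta$ with the relevant completions (Lemma~\ref{lem:Letacommutecompletion}); I would treat the case of a framed torus $R = \roi\langle \ul T^{\pm 1}\rangle$ first, where everything is the explicit $q$-de~Rham complex and the statement can be checked by the combinatorial description in Lemma~\ref{lemma_explicit_decomposition_of_Wtor}, and then propagate to general small $R$ by the \'etale base change Lemma~\ref{lem:WrOmegaBC} and Proposition~\ref{prop:dRWperfectoid}. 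Finally, as always in this section, the Breuil--Kisin--Fargues twist $\{-i\}$ enters precisely when one undoes the choice of roots of unity, via the Bockstein description of the differential, exactly as in the proof of Theorem~\ref{thm:AOmegavsdRWLocalPart4}.
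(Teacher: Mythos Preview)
Your plan works in spirit but misses the efficient route the paper takes, and for the second isomorphism it contains a genuine gap.

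For the first isomorphism, you propose going through $A_\inf$, using $L\eta_{\phi^s(\mu)}=L\eta_{\tilde\xi_s}\circ L\eta_\mu$ and Frobenius twists, then carefully descending along various $\theta$-maps. The paper instead observes that
\[
W_r(\hat\roi_X^+)\cong W_{r+s}(\hat\roi_X^+)\dotimes_{W_{r+s}(\roi)}W_r(\roi)
\]
(via restriction) by Lemma~\ref{lemma_tor_independence_1}, and then applies Lemma~\ref{lem:LetasymmmonWrOmega} with $D=R\Gamma_\sub{pro\'et}(X,W_{r+s}(\hat\roi_X^+))$ and $E=W_r(\roi)$: this gives the first isomorphism in one line, since $L\eta_{[\zeta_{p^{r+s}}]-1}E=E$. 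You never invoke Lemma~\ref{lem:LetasymmmonWrOmega}, which is precisely the tool designed for this situation; your route through $A_\inf$ and Proposition~\ref{prop:PhionAOmega}-style manipulations is far more elaborate and its correctness is not clear from what you wrote.

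For the second isomorphism, your plan is to establish Tor-independence so that $H^i(\widetilde{W_{r+s}\Omega}_R\dotimes W_r(\roi))\cong H^i(\widetilde{W_{r+s}\Omega}_R)\otimes W_r(\roi)$, and then identify this tensor product with $W_{r+s}\Omega^{i,\cont}_{R/\roi}/V^rW_s\Omega^{i,\cont}_{R/\roi}$. The gap is in the last step: tensoring with $W_r(\roi)$ over $W_{r+s}(\roi)$ (via restriction) kills the submodule $(\ker R^s)\cdot W_{r+s}\Omega^{i,\cont}_{R/\roi}$, and you have not explained why this equals $V^r W_s\Omega^{i,\cont}_{R/\roi}$. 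Your appeal to ``the restriction map on de~Rham--Witt complexes has kernel $V^rW_s\Omega^{i,\cont}$'' conflates the de~Rham--Witt restriction $R^s:W_{r+s}\Omega^i\to W_r\Omega^i$ with the base-change map, which are different operations. The paper avoids this entirely: it uses the short exact sequence
\[
0\to W_s(\roi)\xTo{\phi^{s+1}(\xi)\cdots\phi^{s+r}(\xi)} W_{r+s}(\roi)\to W_r(\roi)\to 0
\]
(with $W_s(\roi)$ a $W_{r+s}(\roi)$-module via $F^r$), tensors with $\widetilde{W_{r+s}\Omega}_R$, identifies the first term with $\widetilde{W_s\Omega}_R$ via $F^r\circ\tilde\theta_{r+s}=\tilde\theta_s$, and then reads off the long exact sequence on cohomology, where the connecting map is $V^r$. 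Injectivity of $V^r$ (from $F^rV^r=p^r$ and $p$-torsion-freeness) then splits this into short exact sequences, yielding the claimed quotient directly.
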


Note that as $s\to\infty$, the left side becomes almost isomorphic to $H^i_\sub{pro\'et}(X,W_r(\hat\roi_X^+))$, so this gives an interpretation of the ``junk torsion'' (i.e., the cohomology of the terms coming from non-integral exponents $a$ in the computation) in terms of the de~Rham--Witt complex.

\begin{proof} The first isomorphism follows from Lemma~\ref{lem:LetasymmmonWrOmega} applied to $\widetilde{W_{r+s}\Omega}_R$ and $E=W_r(\roi)$ considered as $W_{r+s}(\roi)$-module via restriction, as
\[
W_r(\hat\roi_X^+)\cong W_{r+s}(\hat\roi_X^+)\dotimes_{W_{r+s}(\roi)} W_r(\roi)
\]
by Lemma~\ref{lemma_tor_independence_1}. For the identification with de~Rham--Witt groups, note that there is an exact triangle
\[
\widetilde{W_s\Omega}_R\xTo{\phi^{s+1}(\xi)\cdots \phi^{s+r}(\xi)} \widetilde{W_{r+s}\Omega}_R\to \widetilde{W_{r+s}\Omega}_R\dotimes_{W_{r+s}(\roi)} W_r(\roi)\ ,
\]
as one has a short exact sequence
\[
0\to W_s(\roi)\xTo{\phi^{s+1}(\xi)\cdots \phi^{s+r}(\xi)} W_{r+s}(\roi)\to W_r(\roi)\to 0\ ,
\]
and $\widetilde{W_s\Omega}_R = \widetilde{W_{r+s}\Omega}_R\dotimes_{W_{r+s}(\roi),F^r} W_s(\roi)$. Passing to cohomology, we get a long exact sequence
\[
\ldots\to W_s\Omega^{n,\cont}_{R/\roi}\{-i\}\xTo{V^r} W_{r+s}\Omega^{n,\cont}_{R/\roi}\{-i\}\to H^n(\widetilde{W_{r+s}\Omega}_R\dotimes_{W_{r+s}(\roi)} W_r(\roi))\to \ldots\ .
\]
As $V^r$ is injective (since $F^rV^r=p^r$ and the groups are $p$-torsion-free), this splits into short exact sequences, giving the result.
\end{proof}

\newpage

\section{The comparison with crystalline cohomology over $A_\crys$}\label{sec:cryscomp}

Let $\frak X/\roi$ be a smooth $p$-adic formal scheme, and let $Y=\frak X\times_{\Spf \roi} \Spec \roi/p$ be the fiber modulo $p$ of $\frak X$. Note that this is a large nilpotent thickening of the special fiber $\frak X\times_{\Spf \roi} \Spec k$.

Let $u:(Y/\bb Z_p)_{\crys} \to Y_\sub{Zar} = \frak{X}_\sub{Zar}$ be the canonical map from the (absolute) crystalline site of $Y$ down to the Zariski site. Recall that $A_\crys$ is the universal $p$-adically complete PD thickening of $\roi/p$ (compatible with the PD structure on $\bb Z_p$), so we have $(Y/\bb Z_p)_\crys = (Y/A_\crys)_\crys$, and for psychological reasons we prefer the second interpretation. Recall that we have defined $A\Omega_{\frak X} = L\eta_\mu(R\nu_\ast \bb A_{\inf,X})$, where $X$ is the generic fibre of $\frak X$. Our goal is to prove the following comparison result:

\begin{theorem}\label{thm:cryscomp}
There is a canonical isomorphism
\[
A\Omega_{\frak X} \widehat{\otimes}_{A_\inf} A_{\crys} \simeq Ru_* \roi_{Y/A_\crys}^\crys
\]
in $D(\frak X_\sub{Zar})$. In particular, if $\frak X$ is qcqs, this gives an isomorphism
\[
R\Gamma(\frak X,A\Omega_{\frak X}) \widehat{\otimes}_{A_\inf} A_\crys \simeq R\Gamma_{\crys}(Y/A_\crys)\ .
\]
\end{theorem}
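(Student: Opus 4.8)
The plan is to build the comparison locally on small affines, verify it is independent of choices, and then glue. First I would reduce to the local statement: if $\frak X=\Spf R$ is small with a framing $\square$, then we need a canonical identification $A\Omega_R\,\widehat{\dotimes}_{A_\inf} A_\crys\simeq R\Gamma_\crys(Y/A_\crys)$ where $Y=\Spec R/p$. For the right-hand side, I would use the standard fact that crystalline cohomology of $Y$ over $A_\crys$ can be computed by a PD-de~Rham complex: choose a smooth $\roi$-algebra $R_0$ with $\Spec R_0/p=Y$ (it exists by Elkik), pick a lift of $R_0$ to a smooth/$p$-completely smooth $A_\inf$-algebra along $\tilde\theta$ — here the framing $\square$ provides the formally \'etale lift $A(R)^\square$ over $A_\inf\langle \ul U^{\pm1}\rangle$ constructed in Section~9 — and then base-change to $A_\crys$ and take the $p$-completed PD-de~Rham complex $\Omega^{\bullet,\crys}_{A(R)^\square\widehat{\otimes}_{A_\inf}A_\crys}$ relative to the PD-ideal. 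By the crystalline Poincar\'e lemma this computes $R\Gamma_\crys(Y/A_\crys)$.

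On the left-hand side, Theorem~\ref{thm:AOmegavsdRWLocal} identifies $A\Omega_R$ (up to quasi-isomorphism) with the $(p,\mu)$-completed $q$-de~Rham complex $q\op\text{-}\Omega^\bullet_{A(R)^\square/A_\inf}=K_{A(R)^\square}(\tfrac{\partial_q}{\partial_q\log U_1},\ldots,\tfrac{\partial_q}{\partial_q\log U_d})$, which is a differential graded $A_\inf$-algebra with $\mu$-torsion-free terms. So the key computation is to show that after base change $-\widehat{\dotimes}_{A_\inf}A_\crys$, this $q$-de~Rham complex becomes quasi-isomorphic to the PD-de~Rham complex of $A(R)^\square\widehat{\otimes}_{A_\inf}A_\crys$. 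The crucial numerical input is that in $A_\crys$ the elements $[n]_q$ and $n$ differ by a unit for every $n$ (remarked after Theorem~\ref{ThmD}): indeed $[n]_q=\tfrac{[\epsilon]^n-1}{[\epsilon]-1}$, and since $\mu=[\epsilon]-1$ has divided powers in $A_\crys$ one checks $[n]_q\equiv n$ modulo a nilpotent-ish correction that is a unit multiple. This lets one pass from the $q$-derivation $\tfrac{\gamma_i-1}{[\epsilon]-1}$ to the genuine derivation $\tfrac{\partial}{\partial\log U_i}$ after a change of basis of the $A_\crys$-algebra $A(R)^\square\widehat{\otimes}_{A_\inf}A_\crys$, turning the Koszul complex of $q$-derivations into the PD-de~Rham complex. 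Concretely I would exhibit an automorphism of $A(R)^\square\widehat{\otimes}_{A_\inf}A_\crys$ (over $A_\crys$, reducing to the identity modulo the PD-ideal) intertwining the two families of commuting operators, analogously to the rigidity arguments used throughout Section~9; the exterior powers then match because all higher differentials are exterior powers of the first.

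The main obstacle will be making the comparison \emph{canonical} — i.e.\ independent of the framing $\square$ and of the choice of lift $A(R)^\square$ — so that the local isomorphisms glue to a global one in $D(\frak X_\sub{Zar})$. For this I would argue, as in the proof that $A\Omega_R^\square\simeq A\Omega_R^\sub{pro\'et}$ (Proposition~\ref{prop:AOmegaallthesame}), that both sides are intrinsically defined: the left-hand side is $L\eta_\mu R\nu_\ast\bb A_{\inf,X}$ base-changed to $A_\crys$, with no reference to coordinates, and the right-hand side $Ru_\ast\roi^\crys_{Y/A_\crys}$ is the crystalline structure sheaf pushforward, also coordinate-free. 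So it suffices to produce \emph{one} canonical map between them and check it is a quasi-isomorphism locally. The natural candidate comes from the universal property of $A_\crys$: there is a canonical PD-morphism $A_\inf\to A_\crys$ compatible with $\theta$, hence $A\Omega_{\frak X}\widehat{\dotimes}_{A_\inf}A_\crys$ receives, via the de~Rham comparison $A\Omega_{\frak X}\dotimes_{A_\inf}\roi\simeq\Omega^{\bullet,\cont}_{\frak X/\roi}$ (Theorem~\ref{ThmC}(ii)) together with the crystalline interpretation of de~Rham cohomology, a map to $Ru_\ast\roi^\crys_{Y/A_\crys}$; alternatively one invokes the site-theoretic fact that $A\Omega_{\frak X}\widehat{\dotimes}_{A_\inf}A_\crys$ is the linearization of the de~Rham complex of a PD-thickening and appeals to the comparison theorem of Berthelot--Ogus~\cite{BerthelotOgus2}. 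Once the map is in hand, checking it is an isomorphism is the local $q$-de~Rham vs.\ PD-de~Rham computation above; then taking $R\Gamma(\frak X,-)$ for $\frak X$ qcqs, and using that $R\Gamma$ commutes with the derived $p$-completed base change to $A_\crys$ (both sides being derived $p$-complete perfect-type complexes, e.g.\ by Theorem~\ref{ThmB} and Lemma~\ref{lem:derivedcompletecohom}), yields the stated global isomorphism $R\Gamma(\frak X,A\Omega_{\frak X})\widehat{\otimes}_{A_\inf}A_\crys\simeq R\Gamma_\crys(Y/A_\crys)$.
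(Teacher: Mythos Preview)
Your local computation is essentially right in spirit: the paper also identifies $A\Omega_R$ with the $q$-de~Rham complex $q\text{-}\Omega^\bullet_{A(R)^\square/A_\inf}$ and then compares it, after base change to $A_\crys$, with the ordinary de~Rham complex $\Omega^\bullet_{A_\crys(R)^\square/A_\crys}$ of the smooth lift (Corollary~\ref{cor:compqdRdR}). The mechanism is a bit sharper than ``$[n]_q$ and $n$ differ by a unit'': the paper writes the Taylor expansion $\tfrac{\partial_q}{\partial_q\log U_i}=\sum_{n\geq 1}\tfrac{\log(q)^n}{n!(q-1)}\left(\tfrac{\partial}{\partial\log U_i}\right)^n$ in $A_\crys(R)^\square$ (Lemma~\ref{lem:qDerTaylor}), so that each $q$-derivative factors as the usual derivative composed with an \emph{invertible operator} (not an automorphism of the ring intertwining the two families), and then one uses the elementary fact $K_M(g_1h_1,\ldots,g_dh_d)\cong K_M(g_1,\ldots,g_d)$ for commuting automorphisms $h_i$.

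The genuine gap is the canonicity step. You correctly flag it as the main obstacle, but neither of your proposed constructions of a canonical map actually works. The de~Rham comparison $A\Omega_{\frak X}\dotimes_{A_\inf}\roi\simeq\Omega^{\bullet,\cont}_{\frak X/\roi}$ lives over $\roi=A_\inf/\xi$, not over $A_\crys$; there is no evident way to lift it back across $A_\crys\to\roi$. And the ``site-theoretic fact that $A\Omega_{\frak X}\hat{\dotimes}_{A_\inf}A_\crys$ is the linearization of a PD-de~Rham complex'' is exactly the statement of the theorem---invoking it is circular. The paper's solution is a concrete trick you are missing: instead of one framing, fix a \emph{large finite set} $\Sigma\subset R^{\circ\times}$ of units giving a closed embedding $\Spf R\hookrightarrow\widehat{\bb G}_m^{|\Sigma|}$, build the PD-envelope $D_\Sigma$ of that embedding, and construct an explicit map of complexes $\alpha_R:K_{D_\Sigma}\big((\tfrac{\partial}{\partial\log x_u})_{u\in\Sigma}\big)\to\eta_\mu K_{A_\crys(R_{\infty,\Sigma}/p)}\big((\gamma_u-1)_{u\in\Sigma}\big)$ by passing through Lie-algebra cohomology (Corollary~\ref{cor:LieAlgCohom}). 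Taking the filtered colimit over all sufficiently large $\Sigma$ makes this strictly functorial in $R$, hence it globalizes; that the map is a quasi-isomorphism is then checked by comparing back to a single chosen framing (Proposition~\ref{prop:alphaRQuasiIsom}).

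There is also a technical point you pass over: $A_\crys/\mu$ is not $p$-adically separated, so one cannot directly conclude that the target of $\alpha_R^0$ computes $A\Omega_R\hat{\dotimes}_{A_\inf}A_\crys$. The paper handles this by working with the finitely-generated subrings $A_\crys^{(m)}\subset A_\crys$ (Lemma~\ref{lem:LetaAcrys}), for which the relevant $L\eta_\mu$ base-change does hold, and then passing to the colimit and $p$-completing.
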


The first step of the proof is to construct the identification locally using a framing of $\mathfrak{X}$ in \S \ref{AcrysCompCoord}. To globalize, in \S \ref{subsec:cancryscomp}, we  reinterpret the previous identification in a choicefree fashion: instead of working with \'etale maps (i.e., the framing) attached to a finite set of units, we work with closed immersions provided by working with  ``all possible units''; this gives a strictly functorial isomorphism, and thus globalizes.

\subsection{The local isomorphism}
\label{AcrysCompCoord}

We start by verifying the assertion in the case $\frak X=\Spf R$ for a small formally smooth $\roi$-algebra $R$, with a fixed framing
\[
\square: \Spf R\to \Spf \roi\langle T_1^{\pm 1},\ldots,T_d^{\pm 1}\rangle = \Spf \roi\langle \ul T^{\pm 1}\rangle\ .
\]
The isomorphism will a priori be noncanonical.

Recall that in this situation we have a formally smooth $A_\inf$-algebra $A(R)^\square$, with $A(R)^\square/\xi = R$; more precisely, it is formally \'etale over $A_\inf\langle \ul U^{\pm 1}\rangle$. The action of $\Gamma = \bb Z_p(1)^d$ which lets the basis vector $\gamma_i\in \Gamma$ act by sending $U_i$ to $[\epsilon]U_i$ and $U_j$ to $U_j$ for $j\neq i$ lifts uniquely to an action on $A(R)^\square$, and we have the $q$-derivatives
\[
\frac{\partial_q}{\partial_q \log(U_i)} = \frac{\gamma_i-1}{[\epsilon]-1}: A(R)^\square\to A(R)^\square\ .
\]
This gives rise to the $q$-de~Rham complex
\[
q\op-\Omega^\blob_{A(R)^\square/A_\inf} = K_{A(R)^\square}\left(\frac{\partial_q}{\partial_q \log(U_1)},\ldots,\frac{\partial_q}{\partial_q \log(U_d)}\right)\ .
\]
On the other hand, we have the usual de~Rham complex
\[
\Omega^\blob_{A(R)^\square/A_\inf} = K_{A(R)^\square}\left(\frac{\partial}{\partial \log(U_1)},\ldots,\frac{\partial}{\partial \log(U_d)}\right)
\]
written using the basis $\dlog(U_1),\ldots,\dlog(U_d)$ of $\Omega^{1,\cont}_{A(R)^\square/A_\inf}$. Also, define the $A_\crys$-algebra $A_\crys(R)^\square = A(R)^\square\hat{\otimes}_{A_\inf} A_\crys$; then
\[
\Omega^\blob_{A(R)^\square/A_\inf}\hat{\otimes}_{A_\inf} A_\crys\cong \Omega^\blob_{A_\crys(R)^\square/A_\crys}\ .
\]

Before go on, we observe a few facts about elements of $A_\crys$.

\begin{lemma}\label{lem:elementsofAcrys} Let $q=[\epsilon]\in A_\crys$ as usual.
\begin{enumerate}
\item The element $\frac{(q-1)^{p-1}}p$ lies in $A_\crys$, and it is topologically nilpotent in the $p$-adic topology.
\item For any $n\geq 0$, the element $\frac{(q-1)^n}{(n+1)!}$ lies in $A_\crys$, and converges to $0$ in the $p$-adic topology as $n\to\infty$.
\item The element $\log(q)\in A_\crys$ can be written as $\log(q) = (q-1)u$ for some unit $u\in A_\crys$.
\end{enumerate}
In particular, the elements
\[
\frac{\log(q)^n}{n!(q-1)} = u^n \frac{(q-1)^{n-1}}{n!}
\]
lie in $A_\crys$, and converge to $0$ in the $p$-adic topology.
\end{lemma}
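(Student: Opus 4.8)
The plan is to establish the three claims about elements of $A_\crys$ separately, using the defining description of $A_\crys$ as the $p$-adic completion of the $A_\inf$-subalgebra of $A_\inf[\tfrac 1p]$ generated by the divided powers $\tfrac{\xi^m}{m!}$ for $m\geq 1$. Throughout I will use the congruence $\mu^{p-1}\equiv \xi^p\bmod p$ in $A_\inf$ (recalled in the definition of $B_\crys$ in the excerpt), where $\mu=q-1$, and the fact that $\xi=\mu/\phi^{-1}(\mu)$.

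For part (i): since $\mu^{p-1}=q^{p-1}-1+p(\cdots)$ and more precisely $\mu^{p-1}\equiv \xi^p\bmod p$, I would write $\mu^{p-1}=\xi^p + p a$ for some $a\in A_\inf$; then $\tfrac{\mu^{p-1}}{p}=\tfrac{\xi^p}{p}+a$. Now $\tfrac{\xi^p}{p}=(p-1)!\cdot\tfrac{\xi^p}{p!}\in A_\crys$, so $\tfrac{(q-1)^{p-1}}{p}\in A_\crys$. For topological nilpotence in the $p$-adic topology, I would observe that raising to the $p$-th power and using $\mu^{p-1}\equiv\xi^p$ repeatedly shows $\bigl(\tfrac{\mu^{p-1}}{p}\bigr)^{p^k}$ acquires increasingly high divided powers of $\xi$ which are divisible by higher powers of $p$; more cleanly, $\tfrac{\mu^{p(p-1)}}{p^p}=\tfrac{(p-1)!^p}{p^{p-1}}\bigl(\tfrac{\xi^p}{p!}\bigr)^p$ plus lower-order corrections, and $p^{p-1}\mid (p-1)!^p\cdot p^{?}$... the cleanest route is the standard fact that $A_\crys/p$ is a quotient of $(\roi^\flat/p)[\delta_m : m\geq 1]/(\delta_m^p, \ldots)$ with the $\delta_m$ nilpotent, so any element of the augmentation ideal of $A_\crys$ modulo $p$ is nilpotent; since $\tfrac{(q-1)^{p-1}}{p}$ lies in the PD-ideal, it is topologically nilpotent.

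For part (ii): I would induct, or directly note that $\tfrac{\mu^n}{(n+1)!}$ is built from the $\tfrac{\xi^k}{k!}$ and $\tfrac{\mu^{p-1}}{p}$: writing $n=q(p-1)+r$ with $0\le r<p-1$, one has $\mu^n = (\mu^{p-1})^q\mu^r$, and $\mu^{p-1}\equiv\xi^p\bmod p$ lets one trade powers of $\mu^{p-1}$ for $p\cdot(\text{divided power of }\xi)$; combining with the elementary $p$-adic valuation estimate $v_p((n+1)!)\leq \tfrac{n}{p-1}$ and $v_p$ of the relevant power of $\xi^p/p!$-type terms shows $\tfrac{\mu^n}{(n+1)!}\in A_\crys$ and that its $p$-adic valuation tends to $\infty$. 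This is the routine-calculation step; the key input is just that $A_\crys$ contains all $\tfrac{\xi^m}{m!}$ and is $p$-adically complete.

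For part (iii): $\log(q)=\log(1+\mu)=\sum_{n\geq 1}(-1)^{n-1}\tfrac{\mu^n}{n}$. I would factor out $\mu$: $\log(q)=\mu\cdot u$ where $u=\sum_{n\geq 1}(-1)^{n-1}\tfrac{\mu^{n-1}}{n}$. By part (ii) (or its proof) the terms $\tfrac{\mu^{n-1}}{n}$ lie in $A_\crys$ and go to $0$ $p$-adically, so the series converges in $A_\crys$; moreover $u\equiv 1\bmod (\mu,p)$ — the $n=1$ term is $1$ and all others lie in the PD-ideal — and since the PD-ideal together with $p$ is topologically nilpotent, $u$ is a unit in the $p$-adically complete ring $A_\crys$. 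Finally the displayed formula $\tfrac{\log(q)^n}{n!(q-1)}=u^n\tfrac{(q-1)^{n-1}}{n!}$ is immediate from $\log(q)=(q-1)u$, and the right side lies in $A_\crys$ and tends to $0$ $p$-adically by part (ii) applied to $\tfrac{\mu^{n-1}}{n!}$ (note $n\cdot\tfrac{\mu^{n-1}}{n!}=\tfrac{\mu^{n-1}}{(n-1)!}$, so $\tfrac{\mu^{n-1}}{n!}$ also lies in $A_\crys$ and goes to $0$). The main obstacle is really part (i)'s topological nilpotence claim and the uniform valuation bookkeeping in part (ii); both are handled by passing to $A_\crys/p$, where the structure of the PD-polynomial algebra makes nilpotence transparent, together with the standard estimate $v_p(m!)=\tfrac{m-s_p(m)}{p-1}$.
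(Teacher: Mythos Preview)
Your approach is correct and essentially matches the paper's proof. The paper streamlines two points: for the topological nilpotence in (i) it simply observes that $\frac{(q-1)^{p-1}}{p}$ lies in $\ker(\theta:A_\crys\to\roi)$ and therefore has divided powers (so $x^n = n!\,\gamma_n(x)\to 0$ $p$-adically), avoiding any analysis of $A_\crys/p$; and for (ii) it factors directly as
\[
\frac{(q-1)^n}{(n+1)!} = (q-1)^{n-m(p-1)}\cdot\frac{p^m}{(n+1)!}\cdot\left(\frac{(q-1)^{p-1}}{p}\right)^m,\qquad m=\Big\lfloor \tfrac{n}{p-1}\Big\rfloor,
\]
using (i) for the last factor rather than expanding through divided powers of $\xi$. (Also, in your (ii) you reuse the letter $q$ for the quotient in the division algorithm, which clashes with $q=[\epsilon]$.)
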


\begin{proof} For (i), note that $\xi = \frac{q-1}{q^{1/p}-1}$ lies in $\ker\theta: A_\crys\to\roi$. Thus, $\tfrac{\xi^p}p\in A_\crys$. On the other hand, $\xi^p\equiv (q-1)^{p-1}\mod p$, already in $A_\inf$. Therefore, $\tfrac{(q-1)^{p-1}}p\in A_\crys$. As it lies in the kernel of $\theta: A_\crys\to\roi$, it has divided powers, and in particular is topologically nilpotent.

For part (ii), let $m=\lfloor \tfrac n{p-1}\rfloor$. Then by part (i)
\[
\frac{(q-1)^{m(p-1)}}{p^m}\in A_\crys
\]
converges to $0$ as $m\to\infty$. But note that the $p$-adic valuation of $(n+1)!$ is bounded by $m$. Thus,
\[
\frac{(q-1)^n}{(n+1)!} = (q-1)^{n-m(p-1)} \frac{p^m}{(n+1)!} \frac{(q-1)^{m(p-1)}}{p^m}\in A_\crys\ ,
\]
where each factor lies in $A_\crys$, and the last factor converges to $0$.

Finally, for part (iii), write
\[
\log(q) = (q-1)(1 + \sum_{n\geq 1} \frac{(-1)^n}{n+1} (q-1)^n)\ .
\]
We claim that the sum is topologically nilpotent. As all the terms with $n\geq p$ are divisible by $p$ by (ii), it suffices to check that the terms with $n<p$ are topologically nilpotent. This is clear if $n<p-1$, as $q-1$ is topologically nilpotent, and for $n=p-1$, it follows from (i).
\end{proof}

In the sequel, the following torsionfreeness property of $A_\crys(R)^\square$ shall be used freely. 

\begin{lemma}
The ring $A_{\crys}(R)^\square$ is $\mu$-torsionfree.
\end{lemma}
\begin{proof}
Let us first note that $A_\crys(R)^\square$ is the derived $p$-completion of a free $A_\crys$-module: by base change (first along $A_{\inf} \to A_\crys$ and then along $A_\inf \to \mathcal{O}$), this reduces to showing that $R$ is the derived $p$-completion of a free $\mathcal{O}$-module. As both $R$ and $\mathcal{O}$ are $p$-torsionfree, it suffices to show any smooth $\mathcal{O}/p$-algebra $S$ is free as a $\mathcal{O}/p$-module. But this is clear: any such $S$ is the base change of a smooth algebra defined over an artinian subring of $\mathcal{O}/p$ (as $\mathcal{O}/p$ is $0$-dimensional and thus a direct limit of its artinian subrings), and any flat module over an artinian ring is free.

We now show $A_\crys(R)^\square[\mu] = 0$ by showing that the Koszul complex $\mathrm{Kos}(A_\crys(R)^\square; \mu)$ has no $H_1$. By the previous paragraph, the complex $\mathrm{Kos}(A_\crys(R)^\square; \mu)$ is the derived $p$-completion of a complex of the form $\mathrm{Kos}(\oplus_I A_\crys; \mu)$ for some set $I$. Since $A_\crys$ is itself $\mu$-torsionfree, we have $\mathrm{Kos}(\oplus_I A_\crys; \mu) \simeq \oplus_I A_\crys/\mu$ via projection to $H^0$. We are thus reduced to showing that $H_1(\widehat{\oplus}_I A_\crys/\mu) = 0$, where $\widehat{\oplus}$ denotes the derived $p$-completion of the direct sum. By general properties of derived completions of abelian groups, this $H_1$ is identified with the $p$-adic Tate module $T_p(\oplus_I A_\crys/\mu)$. But the obvious map $T_p(\oplus_I A_\crys/\mu) \to \prod_I T_p(A_\crys/\mu)$ is trivially injective, and $T_p(A_\crys/\mu) = 0$ as $A_\crys/\mu$ is already derived $p$-complete, so we get the desired vanishing. 
\end{proof}

The following formula expresses the $q$-derivative in terms of the derivative, via a Taylor expansion.

\begin{lemma}\label{lem:qDerTaylor} One has an equality of endomorphisms of $A_\crys(R)^\square$,
\[\begin{aligned}
\frac{\partial_q}{\partial_q \log(U_i)} &= \frac{\log(q)}{q-1} \frac{\partial}{\partial \log(U_i)} + \frac{\log(q)^2}{2(q-1)} \left(\frac{\partial}{\partial \log(U_i)}\right)^2+\ldots\\
&= \sum_{n\geq 1} \frac{\log(q)^n}{n!(q-1)} \left(\frac{\partial}{\partial \log(U_i)}\right)^n\ .
\end{aligned}\]
\end{lemma}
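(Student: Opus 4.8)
The plan is to recognize that the identity amounts to exponentiating a derivation: I would show that the automorphism $\gamma_i$ of $A_\crys(R)^\square$ equals $E:=\exp\bigl(\log(q)\cdot D\bigr)$, where $D:=\tfrac{\partial}{\partial\log(U_i)}$, and then divide by $q-1$. First one must check the right-hand side makes sense. Writing $c_n:=\tfrac{\log(q)^n}{n!(q-1)}$ and $e_n:=\tfrac{\log(q)^n}{n!}$ for $n\geq 1$, Lemma~\ref{lem:elementsofAcrys} gives that $\{c_n\}$ and $\{e_n\}$ lie in $A_\crys$ and converge $p$-adically to $0$ (for $e_n$, write $e_n=(q-1)\,u^{n}\tfrac{(q-1)^{n-1}}{n!}$ and use parts (ii) and (iii); note also $e_n\in (q-1)A_\crys$ for $n\geq 1$). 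Since $A_\crys(R)^\square$ is $p$-adically complete and $D$ preserves it, for every $x$ the series $\sum_{n\geq 1}c_nD^n(x)$ and $\sum_{n\geq 0}e_nD^n(x)$ converge, so they define continuous $A_\crys$-linear operators $g:=\sum_{n\geq 1}c_nD^n$ and $E:=\sum_{n\geq 0}e_nD^n$ on $A_\crys(R)^\square$. Because $D$ is a derivation, the Leibniz expansion $D^n(xy)=\sum_k\binom nk D^k(x)D^{n-k}(y)$ together with rearrangement of the convergent double series shows $E$ is an $A_\crys$-algebra endomorphism; as $\exp(-\log(q)D)$ is its two-sided inverse, $E$ is in fact an automorphism.

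Next I would prove $E=\gamma_i$. From $D(U_j)=\delta_{ij}U_j$ one gets $D^n(U_j)=\delta_{ij}U_j$, hence $E(U_j)=U_j$ for $j\neq i$ and $E(U_i)=\bigl(\sum_{n\geq 0}\tfrac{\log(q)^n}{n!}\bigr)U_i=\exp(\log(q))\,U_i=q\,U_i$, the last equality because $q-1=\mu$ is topologically nilpotent in $A_\crys$ so that $\exp$ and $\log$ are mutually inverse on $1+\mu A_\crys$. Thus $E$ and $\gamma_i$ restrict to the same $A_\crys$-algebra endomorphism of $A_\crys\langle\underline U^{\pm 1}\rangle$. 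Moreover both reduce to the identity modulo $(q-1)$: for $\gamma_i$ this is its defining property (cf.\ Theorem~\ref{ThmD}), and for $E$ it follows from $e_n\in(q-1)A_\crys$ for $n\geq 1$. Now $A_\crys\langle\underline U^{\pm 1}\rangle\to A_\crys(R)^\square$ is formally \'etale and $A_\crys(R)^\square$ is complete for the $p$-adic topology, in which $(q-1)$ is topologically nilpotent; hence an $A_\crys$-algebra endomorphism of $A_\crys(R)^\square$ lifting a prescribed endomorphism of $A_\crys\langle\underline U^{\pm 1}\rangle$ and reducing to the identity modulo $(q-1)$ is unique — this is precisely the uniqueness used to construct $\gamma_i$ on $A(R)^\square$ in the first place. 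Therefore $E=\gamma_i$, and in particular $g=\tfrac{E-1}{q-1}$.

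Combining, $\tfrac{\partial_q}{\partial_q\log(U_i)}=\tfrac{\gamma_i-1}{q-1}=\tfrac{E-1}{q-1}=\sum_{n\geq 1}c_nD^n=\sum_{n\geq 1}\tfrac{\log(q)^n}{n!(q-1)}\bigl(\tfrac{\partial}{\partial\log(U_i)}\bigr)^n$, which is the asserted equality of endomorphisms. The only genuinely delicate point is the last uniqueness step: one has to be careful that, although the $(q-1)$-adic and $p$-adic topologies on $A_\crys$ are not equal, they interleave (since $\mu^{p-1}\in pA_\crys$), so that $A_\crys(R)^\square$ is complete enough in the $(q-1)$-adic sense for the formal-\'etaleness lifting property to apply; everything else is routine manipulation with the convergence estimates of Lemma~\ref{lem:elementsofAcrys}.
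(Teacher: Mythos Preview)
Your proof is correct and follows essentially the same strategy as the paper's: rewrite the identity as $\gamma_i=\exp\bigl(\log(q)\,D\bigr)$, verify well-definedness and convergence via Lemma~\ref{lem:elementsofAcrys}, check multiplicativity and the action on the $U_j$, and then invoke uniqueness of the lift via formal \'etaleness. The only cosmetic difference is that the paper checks the reduction to the identity along $\theta:A_\crys\to\roi$ (i.e., modulo $\xi$) rather than modulo $q-1$, which slightly simplifies the topological point you flag at the end.
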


\begin{proof} As $A_{\crys}(R)^\square$ is $\mu$-torsionfree, the formula is equivalent to the formula
\[
\gamma_i = \sum_{n\geq 0} \frac{\log(q)^n}{n!} \left(\frac{\partial}{\partial \log(U_i)}\right)^n = \exp(\log(q) \frac{\partial}{\partial \log(U_i)})\ .
\]
To check this formula, we must show that the right side is a well-defined continuous $A_\crys$-algebra endomorphism of $A_\crys(R)^\square$, reducing to the identity on $R=A_\crys(R)^\square\otimes_{A_\crys,\theta} \roi$, and that the identity holds in the case $R=\roi\langle \ul T^{\pm 1}\rangle$ (as these properties determine $\gamma_i$).

The formula is well-defined by Lemma~\ref{lem:elementsofAcrys}. Moreover, it defines a continuous $A_\crys$-linear map. Multiplicativity follows from standard manipulations. Also, after base extension along $\theta: A_\crys\to\roi$, $\log(q)$ vanishes, and the formula reduces to $1=1$. Finally, we need to check the action on the $U_j$ is correct. Certainly, the right side leaves $U_j$ for $j\neq i$ fix. It sends $U_i$ to
\[
\sum_{n\geq 0} \frac{\log(q)^n}{n!} U_i = q U_i\ ,
\]
as desired.
\end{proof}

\begin{corollary}\label{cor:compqdRdR} There is an isomorphism of complexes
\[
q\op-\Omega^\blob_{A(R)^\square/A_\inf}\hat{\otimes}_{A_\inf} A_\crys\cong \Omega^\blob_{A_\crys(R)^\square/A_\crys}
\]
inducing the identity $A(R)^\square\hat{\otimes}_{A_\inf} A_\crys = A_\crys(R)^\square$ in degree $0$.
\end{corollary}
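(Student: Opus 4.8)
The plan is to produce an explicit isomorphism of complexes by changing the basis of one-forms, using Lemma~\ref{lem:qDerTaylor} as the key input. Concretely, the $q$-de~Rham complex is the Koszul complex $K_{A(R)^\square}\left(\frac{\partial_q}{\partial_q \log(U_1)},\ldots,\frac{\partial_q}{\partial_q \log(U_d)}\right)$ and its base change to $A_\crys$ is the Koszul complex on the commuting endomorphisms $\frac{\partial_q}{\partial_q\log(U_i)}$ of $A_\crys(R)^\square$; the usual de~Rham complex $\Omega^\bullet_{A_\crys(R)^\square/A_\crys}$ is the Koszul complex on the $\frac{\partial}{\partial\log(U_i)}$. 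By Lemma~\ref{lem:qDerTaylor}, each $\frac{\partial_q}{\partial_q\log(U_i)}$ equals $D_i\circ\left(\frac{\partial}{\partial\log(U_i)}\right)$, where $D_i=\sum_{n\geq 1}\frac{\log(q)^n}{n!(q-1)}\left(\frac{\partial}{\partial\log(U_i)}\right)^{n-1}$ is a continuous $A_\crys$-linear endomorphism of $A_\crys(R)^\square$. Since $\frac{\log(q)}{q-1}$ is a unit in $A_\crys$ (Lemma~\ref{lem:elementsofAcrys}(iii)), $D_i$ is invertible: it is a unit plus a topologically nilpotent operator (the higher terms involve $\frac{\log(q)^n}{n!(q-1)}\to 0$ $p$-adically and positive powers of the derivation). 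So I would first record that $D_1,\dots,D_d$ are commuting automorphisms of $A_\crys(R)^\square$ commuting with all the $\frac{\partial}{\partial\log(U_i)}$.

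Next I would invoke the general fact about Koszul complexes: if $f_1,\dots,f_d$ are commuting endomorphisms of a module $M$ and $u_1,\dots,u_d$ are commuting automorphisms of $M$ each commuting with every $f_j$, then the diagonal rescaling $\bigoplus_{i_1<\dots<i_k}M\to\bigoplus_{i_1<\dots<i_k}M$ sending the summand indexed by $\{i_1,\dots,i_k\}$ to itself via $u_{i_1}\cdots u_{i_k}$ defines an isomorphism of complexes $K_M(u_1f_1,\dots,u_df_d)\xrightarrow{\ \sim\ }K_M(f_1,\dots,f_d)$, which is the identity in degree $0$. This is a routine check on the Koszul differentials (the twist $u_{i_1}\cdots u_{i_k}$ absorbs exactly the extra factors $u_{j_m}$ appearing in the differential), and it does not require $M$ to be an algebra. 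Applying this with $M=A_\crys(R)^\square$, $f_i=\frac{\partial}{\partial\log(U_i)}$, $u_i=D_i$, and noting $u_if_i=\frac{\partial_q}{\partial_q\log(U_i)}$ by Lemma~\ref{lem:qDerTaylor}, yields the desired isomorphism $q\op\text{-}\Omega^\bullet_{A(R)^\square/A_\inf}\hat\otimes_{A_\inf}A_\crys\cong\Omega^\bullet_{A_\crys(R)^\square/A_\crys}$, the identity in degree $0$.

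Two small points need care. First, I should make sure the base change $q\op\text{-}\Omega^\bullet_{A(R)^\square/A_\inf}\hat\otimes_{A_\inf}A_\crys$ really is computed termwise by the $(p,\mu)$-adically (equivalently $p$-adically, since $\mu^{p-1}\in pA_\crys$) completed Koszul complex on the $\frac{\partial_q}{\partial_q\log(U_i)}$ acting on $A_\crys(R)^\square$; this is immediate because the $q$-de~Rham complex has $A_\inf$-flat (indeed free) terms $\bigwedge^k(A(R)^\square)^d$ and completed base change along $A_\inf\to A_\crys$ of such a complex is the termwise completed base change, with $A(R)^\square\hat\otimes_{A_\inf}A_\crys=A_\crys(R)^\square$ by definition and $\Omega^{1,\cont}_{A(R)^\square/A_\inf}\hat\otimes_{A_\inf}A_\crys\cong\Omega^{1,\cont}_{A_\crys(R)^\square/A_\crys}$. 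Second, I should verify the series defining $D_i$ and $D_i^{-1}$ converge $p$-adically on $A_\crys(R)^\square$ uniformly enough to define genuine continuous operators; this follows from Lemma~\ref{lem:elementsofAcrys} together with the fact that $A_\crys(R)^\square$ is $p$-adically complete and the derivation $\frac{\partial}{\partial\log(U_i)}$ preserves the unit ideal filtration, so that the only real obstruction — and the one place to be slightly careful — is the convergence estimate, which is exactly what Lemma~\ref{lem:elementsofAcrys} was designed to supply, so there is no serious obstacle.
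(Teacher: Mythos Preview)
Your argument is essentially identical to the paper's: both use Lemma~\ref{lem:qDerTaylor} to factor $\frac{\partial_q}{\partial_q\log(U_i)}$ as $\frac{\partial}{\partial\log(U_i)}$ composed with an invertible operator (a unit of $A_\crys$ plus topologically nilpotent corrections, via Lemma~\ref{lem:elementsofAcrys}), and then invoke the general Koszul-complex isomorphism $K_M(g_1h_1,\dots,g_dh_d)\cong K_M(g_1,\dots,g_d)$ for commuting automorphisms $h_i$. One trivial slip: the diagonal rescaling by $u_{i_1}\cdots u_{i_k}$ is a chain map $K_M(f_1,\dots,f_d)\to K_M(u_1f_1,\dots,u_df_d)$, not the other direction as you wrote, so either invert the $u_i$'s or reverse the arrow.
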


Note that as $A_\crys(R)^\square$ is a (formally) smooth lift of $R/p$ from $\roi/p$ to $A_\crys$, the right side computes $R\Gamma_\crys((\Spec R/p)/A_\crys)$. Also recall from Lemma~\ref{lem:qdRvsAOmega} that $q\op-\Omega^\blob_{A(R)^\square/A_\inf}$ computes $A\Omega_R$. Thus, the proposition verifies the existence of some isomorphism as in Theorem~\ref{thm:cryscomp} in this case. We note that the isomorphism of complexes will not be an isomorphism of differential graded algebras (as the left side is non-commutative, but the right side is commutative).

The isomorphism constructed in the proof will agree with the canonical isomorphism from Theorem~\ref{thm:cryscomp} in the derived category.

\begin{proof} For each $i$, one can write
\[
\frac{\partial_q}{\partial_q \log(U_i)} = \frac{\partial}{\partial \log(U_i)}\left(\frac{\log(q)}{q-1}+\sum_{n\geq 2} \frac{\log(q)^n}{n!(q-1)} \left(\frac{\partial}{\partial \log(U_i)}\right)^{n-1}\right)\ ,
\]
where the second factor is invertible. Indeed, $\frac{\log(q)}{q-1}$ is invertible, and $\frac{\log(q)^n}{n!(q-1)}\in A_\crys$ is topologically nilpotent and converges to $0$ by Lemma~\ref{lem:elementsofAcrys}.

In general, if $g_i$, $i=1,\ldots,d$, are commuting endomorphisms of $M$, and $h_i$, $i=1,\ldots,d$, are automorphisms of $M$ commuting with each other and with the $g_i$, then
\[
K_M(g_1h_1,\ldots,g_dh_d)\cong K_M(g_1,\ldots,g_d)\ .
\]
Applying this in our case with $M=A_\crys(R)^\square$, $g_i = \frac{\partial}{\partial \log(U_i)}$ and $g_ih_i = \frac{\partial_q}{\partial_q \log(U_i)}$, where $h_i$ itself is given by the formula above, we get the result.
\end{proof}

\subsection{The canonical isomorphism}\label{subsec:cancryscomp}

In this subsection, we modify the construction of the previous subsection to construct specific complexes computing $R\Gamma_\crys(\Spec(R/p)/A_\crys,\roi)$ and $A\Omega_R\hat{\otimes}_{A_\inf} A_\crys$, and a map of complexes between them, which is a quasi-isomorphism. These explicit complexes, and the map between them, will be functorial in $R$, and thus globalize.

Let $R/\roi$ be a formally smooth $\roi$-algebra. Assume that $R$ is small, i.e., there is an \'etale map $\Spf R\to \widehat{\bb G}_m^d$. Here, we assume additionally that there is a closed immersion $\Spf R\subset \widehat{\bb G}_m^n$ for some $n\geq d$; let us call such $R$ \emph{very small}. Of course, any formally smooth $\roi$-algebra $R$ is locally on $(\Spf R)_\sub{Zar}$ very small.

The (simple) idea is to extra roots not just of some system of coordinates, but instead of any sufficiently large set of invertible functions on $R$. Thus, fix any finite set $\Sigma\subset R^\times$ of units of $R$ such that the induced map $\Spf R\to \widehat{\bb G}_m^n$, $n=|\Sigma|$, is a closed embedding, and there is some subset of $d$ elements of $\Sigma$ for which the induced map $\Spf R\to \widehat{\bb G}_m^d$ is \'etale. Let $S_\Sigma$ be the group algebra over $A_\crys$ of the free abelian group $\bigoplus_{u\in \Sigma} \bb Z$ generated by the set $\Sigma$; for $u \in \Sigma$, we write $x_u\in S_\Sigma$ for the corresponding variable. This gives a torus $\Spec(S_\Sigma)$ over $A_\crys$. There is an obvious map $S_\Sigma\otimes_{A_\crys} \roi\to R$ sending $x_u$ to $u$, and we get a natural closed immersion $\Spec(R/p)\subset \Spec(S_\Sigma)$ by assumption on $R$. Let $D_\Sigma$ be the $p$-adically completed PD envelope (compatible with the PD structure on $A_\crys$) of $S_\Sigma\to R/p$; as $R/p$ is smooth over $\roi/p$, $D_\Sigma$ is flat over $\bb Z_p$.

Let $S_\Sigma\to S_{\infty,\Sigma}$ be the map on group algebras corresponding to the map
\[
\bigoplus_{u\in \Sigma} \bb Z\to \bigoplus_{u\in \Sigma} \bb Z[\tfrac 1p]
\]
of abelian groups, so there is a well-defined element $x_u^k \in S_{\infty,\Sigma}$ for each $u\in \Sigma$ and $k\in \bb Z[\tfrac 1p]$, extending the obvious meaning in $S_\Sigma$ if $k\in \bb Z$. Using this, let $R_{\infty,\Sigma}$ be the $p$-adic completion of the normalization of $R$ in $(R\otimes_{S_\Sigma} S_{\infty,\Sigma})[\tfrac 1p]$. Note that $R_{\infty,\Sigma}$ is perfectoid.

There is a natural map $S_\Sigma\to \bb A_\inf(R_{\infty,\Sigma})$ sending $x_u$ to $[u^\flat]$, where $u^\flat=(u,u^{1/p},u^{1/p^2},\ldots)\in R_{\infty,\Sigma}^\flat$ is a well-defined element, as we have freely adjoined $p$-power roots of $u$. This extends to a map $D_\Sigma\to A_\crys(R_{\infty,\Sigma}/p)$ by passing to $p$-adically complete PD envelopes. Here, $A_\crys(R_{\infty,\Sigma}/p)$ denotes the universal $p$-adically complete PD thickening of $R_{\infty,\Sigma}/p$ compatible with the PD structure on $\bb Z_p$; equivalently,
\[
A_\crys(R_{\infty,\Sigma}/p) = \bb A_\inf(R_{\infty,\Sigma})\hat{\otimes}_{A_\inf} A_\crys\ .
\]

Let $\Gamma = \prod_{u\in \Sigma} \bb Z_p(1)$ be the corresponding profinite group, so there is a natural $\Gamma$-action on $S_\Sigma$, $D_\Sigma$, $S_{\infty,\Sigma}$ and $R_{\infty,\Sigma}$. Explicitly, if one fixes primitive $p$-power roots $\zeta_{p^r}\in \roi$, giving rise to $[\epsilon]\in A_\inf$, then the generator $\gamma_u\in \Gamma$ corresponding to $u\in \Sigma$ acts on $S_{\infty,\Sigma}$ by fixing $x_v^{1/p^r}\in S$ for $u\neq v\in \Sigma$, and sends $x_u^{1/p^r}$ to $[\epsilon]^{1/p^r} \cdot x_u^{1/p^r}$.

Let $\Lie \Gamma\cong \prod_{u\in \Sigma} \bb Z_p(1)$ denote the Lie algebra of $\Gamma$. In this simple situation of an additive group, this is just the same as $\Gamma$, and there is a natural ``exponential'' isomorphism $e: \Lie \Gamma\cong \Gamma$ (which is just the identity $\prod_{u\in \Sigma} \bb Z_p(1) = \prod_{u\in \Sigma} \bb Z_p(1)$).

\begin{lemma}\label{lem:Liealgacts} There is a natural action of $\Lie \Gamma$ on $D_\Sigma$, via letting $g\in \Lie \Gamma$ (with $\gamma=e(g)\in \Gamma$) act via the derivation
\[
g=\log(\gamma) = \sum_{n\geq 1} \frac{(-1)^{n-1}}n (\gamma-1)^n\ .
\]
One can recover the action of $\Gamma$ on $D_\Sigma$ from the action of $\Lie \Gamma$ by the formula
\[
\gamma = \exp(g) = \sum_{n\geq 0} \frac{g^n}{n!}\ .
\]

Moreover, the action of the basis vector $g_u\in \Lie\Gamma$ corresponding to $u\in \Sigma$ (and a choice of primitive $p$-power roots of unity) is given by $\log([\epsilon]) \frac{\partial}{\partial\log(x_u)}$; recall here that the derivations $\frac{\partial}{\partial\log(x_u)}$ of $S_\Sigma$ extend uniquely to continuous derivations of the $p$-adically completed PD envelope $D_\Sigma$.
\end{lemma}

\begin{proof} Note first that $\gamma-1$ takes values in $([\epsilon]-1)D_\Sigma$. Indeed, acting on $S_\Sigma$, it is clear that $\gamma-1$ takes values in $([\epsilon]-1)S_\Sigma$. Now, if $x\in S_\Sigma$ lies in the kernel of $S_\Sigma\to R/p$ with divided power $\frac{x^n}{n!}\in D_\Sigma$, then $\gamma x = x + ([\epsilon]-1)y$ for some $y\in S_\Sigma$, and thus
\[\begin{aligned}
\gamma\left(\frac{x^n}{n!}\right) = \frac{(x+([\epsilon]-1)y)^n}{n!}& = \sum_{m=0}^n \frac{x^{n-m}}{(n-m)!} \frac{([\epsilon]-1)^m y^m}{m!}\\
&=\frac{x^n}{n!}+([\epsilon]-1) \sum_{m=1}^n \frac{x^{n-m}}{(n-m)!} \frac{([\epsilon]-1)^{m-1}}{m!}y^m\in \frac{x^n}{n!}+([\epsilon]-1)D_\Sigma\ ,
\end{aligned}\]
where we use that $\frac{([\epsilon]-1)^{m-1}}{m!}\in D_\Sigma$ by Lemma~\ref{lem:elementsofAcrys}.

Therefore, the $n$-fold composition $(\gamma-1)^n$ takes values in $([\epsilon]-1)^n D_\Sigma$. The element $\frac{([\epsilon]-1)^n}n$ lies in $D_\Sigma$ and converges to $0$ as $n\to\infty$; this shows that the formula for $\log(\gamma)$ converges to an endomorphism of $D_\Sigma$, which in fact takes values in $([\epsilon]-1)D_\Sigma$. For this last observation, use that in fact $\frac{([\epsilon]-1)^{n-1}}n$ lies in $D_\Sigma$, by Lemma~\ref{lem:elementsofAcrys}. Similarly, using the same lemma, one checks that $\exp(g)$ converges. To verify the identity $\gamma=\exp(g)$, note that $\exp(g)$ defines a continuous $A_\crys$-algebra endomorphism; it is then enough to check the behaviour on the elements $x_u$, which is done as in the proof of Lemma~\ref{lem:qDerTaylor} above.

By uniqueness, the formula for the action of $\Lie \Gamma$ can be checked on $S_\Sigma$. This decomposes into a tensor product of Laurent polynomial algebras in one variable, so it suffices to check the similar assertion for the action of $\bb Z_p(1)$ on $A_\crys[X^{\pm 1}]$. Here,
\[
g(X^i) = \sum_{n\geq 1} \frac{(-1)^{n-1}}n ([\epsilon]^i-1)^n X^i = \log([\epsilon]^i) X^i = i \log([\epsilon]) X^i\ .
\]
\end{proof}

\begin{corollary}\label{cor:LieAlgCohom} Consider the Koszul complex
\[
K_{D_\Sigma}((g_u)_{u\in \Sigma})
\]
corresponding to $D_\Sigma$ and the endomorphisms $g_u$ for all $u\in \Sigma$; it computes the Lie algebra cohomology $R\Gamma(\Lie\Gamma,D_\Sigma)$.
\begin{enumerate}
\item There is a natural isomorphism of complexes
\[
K_{D_\Sigma}((\frac{\partial}{\partial \log(x_u)})_{u\in \Sigma})\cong \eta_\mu K_{D_\Sigma}((g_u)_{u\in \Sigma})\ .
\]
Here, the left side computes $R\Gamma_\crys(\Spec(R/p)/A_\crys,\roi)$.
\item There is a natural isomorphism of complexes
\[
K_{D_\Sigma}((g_u)_{u\in \Sigma})\cong K_{D_\Sigma}((\gamma_u-1)_{u\in \Sigma})\ ,
\]
where the right side computes $R\Gamma_\cont(\Gamma,D_\Sigma)$.
\end{enumerate}

In particular, there is a natural map
\[
\alpha_R^0: K_{D_\Sigma}((\frac{\partial}{\partial \log(x_u)})_{u\in \Sigma})\to \eta_\mu K_{D_\Sigma}((\gamma_u-1)_{u\in \Sigma})\to \eta_\mu K_{A_\crys(R_{\infty,\Sigma}/p)}((\gamma_u-1)_{u\in \Sigma})\ ,
\]
where the source computes $R\Gamma_\crys(\Spec(R/p)/A_\crys,\roi)$.
\end{corollary}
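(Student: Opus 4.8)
The plan is to derive all three assertions of the corollary from Lemma~\ref{lem:Liealgacts} by elementary Koszul-complex manipulations, of the kind already used in the proofs of Lemma~\ref{lem:qdRvsAOmega} and Corollary~\ref{cor:compqdRdR}. For part (i), I would first invoke Lemma~\ref{lem:Liealgacts} to see that the basis vector $g_u\in\Lie\Gamma$ acts on $D_\Sigma$ as $\log([\epsilon])\cdot\frac{\partial}{\partial\log(x_u)}$, and then Lemma~\ref{lem:elementsofAcrys}(iii) to write $\log([\epsilon])=\mu\,u_0$ for a canonical unit $u_0\in A_\crys^\times$ (recall $\mu=[\epsilon]-1$). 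Hence every differential of $K_{D_\Sigma}((g_u)_{u\in\Sigma})$ is divisible by the non-zero-divisor $\mu$ (acting on the $\mu$-torsion-free module $D_\Sigma$ as the central scalar $\mu\in A_\crys$), so Lemma~\ref{lemma_on_Koszul_1}, applied exactly as in the proof of Lemma~\ref{lem:qdRvsAOmega}, gives $\eta_\mu K_{D_\Sigma}((g_u)_{u\in\Sigma})\cong K_{D_\Sigma}((u_0\frac{\partial}{\partial\log(x_u)})_{u\in\Sigma})$; stripping off the unit $u_0$ by the observation in the proof of Corollary~\ref{cor:compqdRdR} (with commuting central automorphisms $h_u=u_0$) lands one on $K_{D_\Sigma}((\frac{\partial}{\partial\log(x_u)})_{u\in\Sigma})$. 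Since $\Omega^1_{D_\Sigma/A_\crys}$ is free over $D_\Sigma$ on the $d\log(x_u)$, this last Koszul complex is the ($p$-adically completed, continuous) PD-de~Rham complex $\Omega^\bullet_{D_\Sigma/A_\crys}$ of the divided-power envelope $D_\Sigma$, which computes $R\Gamma_\crys(\Spec(R/p)/A_\crys,\roi)$ by Berthelot's comparison (the PD-Poincar\'e lemma); this is the one ingredient that is not purely formal.

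For part (ii), I would use that $\gamma_u=\exp(g_u)$ by Lemma~\ref{lem:Liealgacts}, hence $\gamma_u-1=g_u\,\chi_u$ with $\chi_u:=\sum_{n\geq1}g_u^{n-1}/n!$, and check that $\chi_u$ is a well-defined automorphism of $D_\Sigma$. Convergence of the series follows from Lemma~\ref{lem:elementsofAcrys}(ii) together with the fact, established inside the proof of Lemma~\ref{lem:Liealgacts}, that $g_u$ carries $D_\Sigma$ into $\mu D_\Sigma$ (so $g_u^{n-1}/n!$ carries $D_\Sigma$ into $\frac{\mu^{n-1}}{n!}D_\Sigma$ with $\frac{\mu^{n-1}}{n!}\in A_\crys$ tending $p$-adically to $0$); the same estimate shows $\chi_u-1$ carries $D_\Sigma$ into $\mu D_\Sigma$, and since $\mu^{p-1}\in pA_\crys$ the element $\mu$ is topologically nilpotent, so $\chi_u-1$ is a topologically nilpotent operator and $\chi_u$ is invertible. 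The $\chi_u$ are power series in the commuting $g_u$, hence commute with each other and with all the $g_v$ (equivalently with the $\gamma_v-1$), so the observation in the proof of Corollary~\ref{cor:compqdRdR} gives $K_{D_\Sigma}((g_u)_{u\in\Sigma})=K_{D_\Sigma}(((\gamma_u-1)\chi_u)_{u\in\Sigma})\cong K_{D_\Sigma}((\gamma_u-1)_{u\in\Sigma})$, and Lemma~\ref{lem:koszulcohom}(ii) identifies the target with $R\Gamma_\cont(\Gamma,D_\Sigma)$ since $D_\Sigma$ is $p$-adically complete.

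Finally, for $\alpha_R^0$: both $D_\Sigma$ (flat over $A_\crys$ because $R/p$ is smooth over $\roi/p$) and $A_\crys(R_{\infty,\Sigma}/p)=\bb A_\inf(R_{\infty,\Sigma})\hat{\otimes}_{A_\inf}A_\crys$ are $\mu$-torsion-free, so $\eta_\mu$ applies literally to the Koszul complexes in play; applying $\eta_\mu$ to the isomorphism of (ii), precomposing with that of (i), and postcomposing with the map induced by the $\Gamma$-equivariant ring homomorphism $D_\Sigma\to A_\crys(R_{\infty,\Sigma}/p)$ constructed above yields $\alpha_R^0$, whose source computes $R\Gamma_\crys(\Spec(R/p)/A_\crys,\roi)$ by (i). All of $u_0$, the $\Gamma$- and $\Lie\Gamma$-actions, and the maps involved are canonically attached to the pair $(R,\Sigma)$ (and the fixed roots of unity), so the construction is functorial in $R$. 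The main obstacle is really just the bookkeeping in part (ii) — verifying that the series $\chi_u$ converges to an automorphism — together with the $\mu$-torsion-freeness checks; both are handled by the lemmas on $A_\crys$ already in place, and the only genuinely nontrivial input is the PD-Poincar\'e lemma used in part (i).
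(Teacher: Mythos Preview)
Your proof is correct and takes essentially the same approach as the paper's: part (i) via the factorization $g_u=\log([\epsilon])\,\tfrac{\partial}{\partial\log(x_u)}=\mu\cdot(\text{unit})\cdot\tfrac{\partial}{\partial\log(x_u)}$ and Lemma~\ref{lemma_on_Koszul_1}, and part (ii) via the fact that $g_u$ and $\gamma_u-1$ differ by an automorphism commuting with everything, exactly as in Corollary~\ref{cor:compqdRdR}. The only cosmetic difference is that you factor $\gamma_u-1=g_u\chi_u$ using the exponential series, whereas the paper writes $g_u=(\gamma_u-1)h_u$ using the logarithm series (so $h_u=\chi_u^{-1}$); note there is a small slip in your displayed chain for (ii) (it should read $K_{D_\Sigma}((\gamma_u-1)_u)=K_{D_\Sigma}((g_u\chi_u)_u)\cong K_{D_\Sigma}((g_u)_u)$), and the claim that $\chi_u-1$ lands in $\mu D_\Sigma$ is not literally true for $p=2$ (only in $\tfrac{\mu}{2}D_\Sigma$), though this is still topologically nilpotent by Lemma~\ref{lem:elementsofAcrys}~(i) so your conclusion stands.
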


We note that a similar passage between group cohomology and Lie algebra cohomology also appears in the work of Colmez--Niziol, \cite{ColmezNiziolBst}.

Again, the isomorphism in (ii) is not compatible with the structure of differential graded algebras. However, the left side is naturally a commutative differential graded algebra, and one can check that it models the $E_\infty$-algebra $R\Gamma_\cont(\Gamma,D_\Sigma)$.

\begin{proof} By the formula $g_u = \log(\mu) \tfrac{\partial}{\partial\log(x_u)}$ and the observation that $\log(\mu) = \mu v$ for some unit $v\in A_\crys$, cf.~Lemma~\ref{lem:elementsofAcrys}, part (i) follows from Lemma~\ref{lemma_on_Koszul_1}.

For part (ii), one uses that $g_u = (\gamma_u-1) h_u$ for some automorphism $h_u$ of $D_\Sigma$ commuting with everything else, as in the proof of Corollary~\ref{cor:compqdRdR} above.
\end{proof}

The map $\alpha_R^0$ is essentially the map we wanted to construct, but unfortunately we do not know whether the target actually computes $A\Omega_R\hat{\dotimes}_{A_\inf} A_\crys$. The problem is that $A_\crys$ is a rather ill-behaved ring, and notably $A_\crys/\mu$ is not $p$-adically separated. However, we have the following lemma.

\begin{lemma}\label{lem:LetaAcrys} Let $A_\crys^{(m)}\subset A_\crys$ be the $p$-adic completion of the $A_\inf$-subalgebra generated by $\tfrac{\xi^j}{j!}$ for $j\leq m$, so that $A_\crys$ is the $p$-adic completion of $\varinjlim_m A_\crys^{(m)}$.

\begin{enumerate}
\item If $m\geq p^2$, then $\tilde\xi_r = p^r v$ for some unit $v\in A_\crys^{(m)}$, and Lemma~\ref{lem:elementsofAcrys} holds true with $A_\crys^{(m)}$ in place of $A_\crys$.

\item The systems of ideals $(\{x\mid \mu x\in p^r A_\crys^{(m)}\})_r$ and $(p^r A_\crys^{(m)})_r$ are intertwined.

\item The intersection
\[
\bigcap_r \frac{\mu}{\varphi^{-r}(\mu)} A_\crys^{(m)} = \mu A_\crys^{(m)}\ .
\]

\item For any $m\geq p^2$, the natural map
\[
(\eta_\mu K_{\bb A_\inf(R_{\infty,\Sigma})}((\gamma_u-1)_{u\in \Sigma}))\hat{\dotimes}_{A_\inf} A_\crys^{(m)}\to \eta_\mu K_{\bb A_\inf(R_{\infty,\Sigma})\hat{\otimes}_{A_\inf} A_\crys^{(m)}}((\gamma_u-1)_{u\in \Sigma})
\]
is a quasi-isomorphism. Here, the left side computes $A\Omega_R\hat{\dotimes}_{A_\inf} A_\crys^{(m)}$.

\item Under the identification $A_\crys(R_{\infty,\Sigma}/p) = \bb A_\inf(R_{\infty,\Sigma})\hat{\otimes}_{A_\inf} A_\crys$, the map
\[
\alpha_R^0: K_{D_\Sigma}((\frac{\partial}{\partial \log(x_u)})_{u\in \Sigma})\to \eta_\mu K_{A_\crys(R_{\infty,\Sigma}/p)}((\gamma_u-1)_{u\in \Sigma})
\]
of complexes factors canonically over a map of complexes
\[
\alpha_R: K_{D_\Sigma}((\frac{\partial}{\partial \log(x_u)})_{u\in \Sigma})\to \left(\varinjlim_m \eta_\mu K_{\bb A_\inf(R_{\infty,\Sigma})\hat{\otimes}_{A_\inf} A_\crys^{(m)}}((\gamma_u-1)_{u\in \Sigma})\right)^\wedge_p\ ,
\]
where the left side computes $R\Gamma_\crys(\Spec(R/p)/A_\crys)$ as before, and the right side computes $A\Omega_R\hat{\dotimes}_{A_\inf} A_\crys$.
\end{enumerate}
\end{lemma}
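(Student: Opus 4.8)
The assertion bundles five statements: (i)--(iii) are elementary facts about the finite divided-power subrings $A_\crys^{(m)}$, obtained by refining the estimates of Lemma~\ref{lem:elementsofAcrys}; (iv) is the technical core, namely that $L\eta_\mu$ commutes with the base change $\hat{\dotimes}_{A_\inf}A_\crys^{(m)}$; and (v) assembles (iv) over $A_\crys$ together with the factorization of $\alpha_R^0$. The plan is to prove (i)--(iii), use them to prove (iv) by a weight‑decomposition argument, and then deduce (v) by passing to the filtered colimit over $m$ and $p$‑completing, using $A_\crys=(\varinjlim_m A_\crys^{(m)})^\wedge_p$. The point of introducing the $A_\crys^{(m)}$ is that the only feature of $A_\crys$ to which the $L\eta$‑formalism is sensitive --- the non‑separatedness of $A_\crys/\mu$ --- appears only in the colimit, so each $A_\crys^{(m)}$ is well enough behaved. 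I expect the genuine obstacle to be (iv): the statement would be immediate if $A_\inf\to A_\crys^{(m)}$ were flat, and everything comes down to controlling its non‑flatness and the interaction of $\eta_\mu$ with $p$‑completion by means of (i)--(iii).

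For (i) one first checks $\phi(A_\crys^{(m)})\subseteq A_\crys^{(m)}$ for $m\geq p$: since $\xi^p=p!\,\tfrac{\xi^p}{p!}\in p\,A_\crys^{(p)}$ and $\phi(\xi)\equiv\xi^p\bmod p$ in $A_\inf$, one has $\phi(\xi)=pw$ with $w\in A_\crys^{(p)}$, whence $\phi(\tfrac{\xi^j}{j!})=\tfrac{p^j}{j!}w^j\in A_\crys^{(p)}$ for $j\leq p$. Then $\phi^i(\xi)=p\,\phi^{i-1}(w)\in p\,A_\crys^{(p)}$ for all $i\geq1$, with $\phi^i(\xi)/p$ a unit there (being a unit modulo $p$, as it already is in $A_\crys$), so $\tilde\xi_r=\phi(\xi)\cdots\phi^r(\xi)=p^r v$ with $v\in(A_\crys^{(p)})^\times$. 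Granting this, the proof of Lemma~\ref{lem:elementsofAcrys} goes through verbatim once one keeps track of which $\tfrac{\xi^j}{j!}$ occur; the bound $m\geq p^2$ enters because $\tfrac{(q-1)^{p-1}}{p}=\tfrac{\mu^{p-1}}{p}$, while lying in $A_\crys^{(p)}$, is $p$‑adically topologically nilpotent only inside $A_\crys^{(p^2)}$ (its $p$‑th power already involves $\tfrac{\xi^{p^2}}{(p^2)!}$), and this nilpotence is exactly what makes the series for $\tfrac{(q-1)^n}{(n+1)!}$ and $\log(q)/(q-1)$ converge at finite level. For (ii), combine $\tilde\xi_r=p^r v$ with $\mu\tilde\xi_r=\phi^r(\mu)$ and $\mu\mid\tilde\xi_r-p^r$ (Proposition~\ref{proposition_roots_of_unity}(iii)--(iv)): if $\mu x\in p^r A_\crys^{(m)}$ then $\phi^r(\mu)x\in p^{2r}A_\crys^{(m)}$, and one bounds $x$ using that $\phi^r(\mu)=[\epsilon]^{p^r}-1$ is, up to a unit and bounded $p$‑power torsion, as divisible by $p$ as $\mu$ is, with a bounding constant uniform in $r$. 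Part (iii) is then the argument of Lemma~\ref{lem:mapAinfWitt}: reduce the asserted equality of ideals modulo $p$ (using (ii) to dispose of the $p$‑power torsion that would obstruct this) and conclude with the valuation computation in $\roi^\flat$.

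For (iv), write $C^\blob=K_{\bb A_\inf(R_{\infty,\Sigma})}((\gamma_u-1)_{u\in\Sigma})$; its terms are $p$‑ and $\mu$‑torsion‑free, so $\eta_\mu$ applies before and after $\hat{\dotimes}_{A_\inf}A_\crys^{(m)}$. That $\eta_\mu C^\blob$ computes $A\Omega_R$ is the analogue for $\Sigma$ of the identifications of \S9: the map $R\Gamma_\cont(\Gamma,\bb A_\inf(R_{\infty,\Sigma}))\to R\Gamma_\sub{pro\'et}(X,\bb A_{\inf,X})$ is an almost quasi‑isomorphism --- the $\Sigma$‑tower is pro‑finite‑\'etale on the generic fibre, since each $u^{1/p^r}$ generates a finite \'etale extension of $R[\tfrac1p]$ --- and applying $L\eta_\mu$ gives a quasi‑isomorphism by Lemma~\ref{lem:LEtaQuasiIsomAinf}, whose hypotheses (including the intersection condition) one checks via the weight decomposition of $C^\blob$ exactly as in Lemma~\ref{lem:niceintersection} and Proposition~\ref{prop:AOmegaallthesame}. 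For the base‑change assertion, decompose $\bb A_\inf(R_{\infty,\Sigma})$ --- which is flat over $A_\inf$, being the Witt vectors of a flat extension of perfect rings --- by weights along its $\bigoplus_{u\in\Sigma}\bb Z[\tfrac1p]$‑grading: on the non‑integral weights multiplication by $\phi^{-1}(\mu)$ is null‑homotopic, the homotopy being built by $\xi$‑adic approximation (which makes sense over $A_\crys^{(m)}$ because, by (i), $\xi$ is topologically nilpotent for the $p$‑adic topology of the $p$‑adically complete separated ring $A_\crys^{(m)}$), so $\eta_\mu$ kills these summands both before and after base change; on the integral part the differentials $\gamma_u-1$ are all divisible by $\mu$, so $(\eta_\mu C^\blob)^i=\mu^i C^i$ is $A_\inf$‑flat and $\hat{\dotimes}_{A_\inf}A_\crys^{(m)}$ is termwise, visibly commuting with the formation of the subcomplex $\eta_\mu$. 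It remains to pass from the weight components --- a completed direct sum --- to $C^\blob$ itself, i.e.\ to see that $\eta_\mu$ commutes with the relevant $p$‑completion; this is precisely where (ii)--(iii) are used, through the bounded‑torsion argument of Lemma~\ref{lem:cohomcompleteddirectsum}.

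Finally (v): the target of $\alpha_R$, namely $\bigl(\varinjlim_m\eta_\mu K_{\bb A_\inf(R_{\infty,\Sigma})\hat{\otimes}_{A_\inf}A_\crys^{(m)}}((\gamma_u-1)_{u\in\Sigma})\bigr)^\wedge_p$, is identified with $A\Omega_R\hat{\dotimes}_{A_\inf}A_\crys$ by applying (iv) at each level $m$, then taking $\varinjlim_m$ and $p$‑completing, using $A_\crys=(\varinjlim_m A_\crys^{(m)})^\wedge_p$ and that $A\Omega_R\hat{\dotimes}_{A_\inf}(-)$ commutes with filtered colimits before $p$‑completion. For the factorization, observe that the source $K_{D_\Sigma}((\tfrac{\partial}{\partial\log(x_u)})_{u\in\Sigma})$ is the Koszul complex on the flat ring $D_\Sigma$, hence is the $p$‑completed filtered colimit of its analogues over the finite divided‑power levels of $D_\Sigma$, with no $\eta_\mu$‑subtlety on the source side --- which is why the target of $\alpha_R$ must be written as this explicit completed colimit rather than as the target $\eta_\mu K_{A_\crys(R_{\infty,\Sigma}/p)}((\gamma_u-1)_{u\in\Sigma})$ of $\alpha_R^0$. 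The ring maps underlying $\alpha_R^0$ respect these levels, so Corollary~\ref{cor:LieAlgCohom} produces level‑$m$ maps $K_{D_\Sigma^{(m)}}(\cdots)\to\eta_\mu K_{\bb A_\inf(R_{\infty,\Sigma})\hat{\otimes}_{A_\inf}A_\crys^{(m)}}(\cdots)$ whose $p$‑completed colimit is $\alpha_R$, and the equality $\beta\circ\alpha_R=\alpha_R^0$ (for the natural map $\beta$ to $\eta_\mu K_{A_\crys(R_{\infty,\Sigma}/p)}(\cdots)$) is then clear from the compatibility of finite‑ and infinite‑level constructions. The only genuinely new input beyond \S9 and Lemma~\ref{lem:elementsofAcrys} is the insensitivity of the $L\eta$‑formalism to the non‑flatness of $A_\inf\to A_\crys^{(m)}$, resting entirely on the torsion bounds (i)--(iii).
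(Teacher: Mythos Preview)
There are genuine gaps in your approach to (ii), (iii), and (iv).

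Your argument for (ii) is circular. You correctly note that $\phi^r(\mu)=\tilde\xi_r\,\mu=p^r v\mu$ with $v$ a unit, so $\mu x\in p^rA_\crys^{(m)}$ implies $\phi^r(\mu)x\in p^{2r}A_\crys^{(m)}$; but the latter statement is \emph{equivalent} to the former (divide by the unit $p^r v$), so nothing has been gained. The vague assertion that $\phi^r(\mu)$ is ``as divisible by $p$ as $\mu$ is'' is also false: modulo $p$ one has $\phi^r(\mu)\equiv\mu^{p^r}$, not $\mu$. The paper instead introduces the noetherian subring $A_0=\bb Z_p[[T]]\subset A_\inf$ with $T\mapsto[\epsilon]^{1/p}$, forms the analogous ring $A_{0,\crys}^{(m)}$, observes $A_\crys^{(m)}=A_{0,\crys}^{(m)}\hat{\otimes}_{A_0}A_\inf$ with $A_\inf$ topologically free over $A_0$, and then (ii) reduces to Artin--Rees for $(T^p-1)A_{0,\crys}^{(m)}\subset A_{0,\crys}^{(m)}$. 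For (iii), your reduction ``modulo $p$ and valuations in $\roi^\flat$'' fails because $A_\crys^{(m)}/p\neq\roi^\flat$; the paper again uses the noetherian model, reducing to showing that for any $A_0/(T^p-1,p^s)$-module $M$ the base change $M\otimes A_\inf/(\mu,p^s)$ has no element killed by all $\phi^{-r}(\mu)$, which is handled by filtering down to $\roi^\flat/(\epsilon^{1/p}-1)$.

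Your approach to (iv) via a $\bigoplus_{u\in\Sigma}\bb Z[\tfrac1p]$-weight decomposition of $\bb A_\inf(R_{\infty,\Sigma})$ does not work: such a decomposition is only available for the $d$-coordinate tower $R_\infty$, where $\bb A_\inf(R_\infty)=A(R)^\square\hat{\otimes}_{A_\inf\langle\ul U^{\pm1}\rangle}A_\inf\langle\ul U^{\pm1/p^\infty}\rangle$ and the grading comes from the second factor. For the full $\Sigma$-tower, $\Spf R\to\widehat{\bb G}_m^{|\Sigma|}$ is only a closed immersion, there is no $A(R)^\square$-type lift, and $R_{\infty,\Sigma}$ is obtained from $R_\infty$ only by a further \emph{almost} \'etale extension, which does not produce a clean isotypic decomposition; in particular there is no ``integral part'' on which all $\gamma_u-1$ are divisible by $\mu$. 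The paper's route is to fix $d$ coordinates $u_1,\ldots,u_d\in\Sigma$, prove the base-change statement for the $R_\infty$-Koszul complex by your integral/non-integral argument (this \emph{is} the proof of Lemma~\ref{lem:qdRvsAOmega}), and then compare the $R_\infty$- and $R_{\infty,\Sigma}$-Koszul complexes over $A_\crys^{(m)}$ via Lemma~\ref{lem:LEtaQuasiIsomAinf}. For the injectivity hypothesis there, one passes through the diagram with $R\varprojlim_r L\eta_\mu(C^\bullet/\tilde\xi_r)$, using Lemma~\ref{lem:LetasymmmonWrOmega} to identify both $L\eta_\mu(C^\bullet/\tilde\xi_r)$ and $L\eta_\mu(D^\bullet/\tilde\xi_r)$ with $\widetilde{W_r\Omega}_R\hat{\dotimes}_{W_r(\roi)}L\eta_\mu(A_\crys^{(m)}/\tilde\xi_r)$, and then (i)--(ii) to compute the limit as $A\Omega_R\hat{\dotimes}_{A_\inf}A_\crys^{(m)}$. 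Your overall plan for (v) is correct and agrees with the paper.
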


\begin{proof} For part (i), the arguments given in the case of $A_\crys$ work as well for $A_\crys^{(m)}$. For parts (ii) and (iii), we approximate the situation by noetherian subrings. More precisely, consider $A_0=\bb Z_p[[T]]\subset A_\inf$, where $T$ is sent to $[\epsilon]^{1/p}$. Then the element $\mu\in A_\inf$ is the image of $T^p-1$, and $\xi$ is the image of $\xi_0=T^{p-1}+\ldots+T+1\in A_0$. One can then define analogues $A_{0,\crys}$, $A_{0,\crys}^{(m)}$ of $A_\crys$ and $A_\crys^{(m)}$; for example, $A_\crys$ is the $p$-adic completion of the PD envelope of $A_0\to A_0/\xi_0$. Then $A_\crys = A_{0,\crys}\hat{\otimes}_{A_0} A_\inf$ and $A_\crys^{(m)} = A_{0,\crys}^{(m)}\hat{\otimes}_{A_0} A_\inf$. As $A_\inf$ is topologically free over $A_0$, it suffices to prove the analogue of (ii) for $A_{0,\crys}^{(m)}$. But $A_{0,\crys}^{(m)}$ is a noetherian ring. Thus, the Artin--Rees lemma for the inclusion $(T^p-1) A_{0,\crys}^{(m)}\subset A_{0,\crys}^{(m)}$ and the $p$-adic topology gives (ii).

Part (iii) is equivalent to the statement
\[
\bigcap_r \frac{\mu}{\varphi^{-r}(\mu)} A_\crys^{(m)}/\mu = 0\ .
\]
But by part (ii), $A_\crys^{(m)}/\mu = \varprojlim_s A_\crys^{(m)}/(\mu,p^s)$, so it suffices to prove the similar statement for $A_\crys^{(m)}/(\mu,p^s)$. Now note that
\[
A_\crys^{(m)}/(\mu,p^s) = A_{0,\crys}^{(m)}/(T^p-1,p^s)\otimes_{A_0/(T^p-1,p^s)} A_\inf/(\mu,p^s)\ .
\]
We claim that more generally, for any $A_0/(T^p-1,p^s)$-module $M$, there are no elements in
\[
M\otimes_{A_0/(T^p-1,p^s)} A_\inf/(\mu,p^s)
\]
that are killed by $\phi^{-r}(\mu)$ for all $r\geq 1$. Assume that $x$ was such an element. In particular, $x$ is killed by $q-1$, so as $A_\inf/(\mu,p^s)$ is flat over $A_0/(T^p-1,p^s)$, $x$ lies in $M^\prime\otimes_{A_0/(T^p-1,p^s)} A_\inf/(\mu,p^s)$, where $M^\prime\subset M$ is the $T-1$-torsion submodule. We can then assume that $M=M^\prime$ is $T-1$-torsion, i.e.~an $A_0/(T-1,p^s) = \bb Z/p^s\bb Z$-module. We can also assume that $px=0$; if not, replace $x$ by $p^ix$ with $i$ maximal such that $p^ix\neq 0$. In that case, we can assume that $M$ is $p$-torsion, and thus an $\bb F_p$-vector space. Finally, it remains to see that
\[
\bb F_p\otimes_{A_0/(T^p-1,p^s)} A_\inf/(\mu,p^s) = A_\inf/(\phi^{-1}(\mu),p) = \roi^\flat/(\epsilon^{1/p}-1)
\]
has no elements killed by all $\epsilon^{1/p^r}-1$, which is clear.

For part (iv), pick an \'etale map $\square: \Spf R\to \Spf \roi\langle T_1^{\pm 1},\ldots,T_d^{\pm 1}\rangle$, corresponding to fixed units $u_1,\ldots,u_d\in \Sigma$; this exists by choice of $\Sigma$. This gives rise to $R_\infty\subset R_{\infty,\Sigma}$, on which the quotient $\prod_{i=1}^d \bb Z_p(1)$ of $\Gamma$ acts.

The proof of Proposition~\ref{prop:AOmegaallthesame} shows that
\[
\eta_\mu K_{\bb A_\inf(R_\infty)}((\gamma_{u_i}-1)_{i=1,\ldots,d})\to \eta_\mu K_{\bb A_\inf(R_{\infty,\Sigma})}((\gamma_u-1)_{u\in \Sigma})
\]
is a quasi-isomorphism (in particular, the right side computes $A\Omega_R$), and the proof of Lemma~\ref{lem:qdRvsAOmega} shows that
\[
(\eta_\mu K_{\bb A_\inf(R_\infty)}((\gamma_{u_i}-1)_{i=1,\ldots,d}))\hat{\dotimes}_{A_\inf} A_\crys^{(m)}\to \eta_\mu K_{\bb A_\inf(R_\infty)\hat{\otimes}_{A_\inf} A_\crys^{(m)}}((\gamma_{u_i}-1)_{i=1,\ldots,d})
\]
is a quasi-isomorphism.

It remains to see that
\[
\eta_\mu K_{\bb A_\inf(R_\infty)\hat{\otimes}_{A_\inf} A_\crys^{(m)}}((\gamma_{u_i}-1)_{i=1,\ldots,d})\to \eta_\mu K_{\bb A_\inf(R_{\infty,\Sigma})\hat{\otimes}_{A_\inf} A_\crys^{(m)}}((\gamma_u-1)_{u\in \Sigma})
\]
is a quasi-isomorphism. This can be proved using Lemma~\ref{lem:LEtaQuasiIsomAinf} (one does not need a variant for $A_\crys^{(m)}$). Let $C^\bullet = K_{\bb A_\inf(R_\infty)\hat{\otimes}_{A_\inf} A_\crys^{(m)}}((\gamma_{u_i}-1)_{i=1,\ldots,d})$ and $D^\bullet = K_{\bb A_\inf(R_{\infty,\Sigma})\hat{\otimes}_{A_\inf} A_\crys^{(m)}}((\gamma_u-1)_{u\in \Sigma})$. Condition (i) is immediate from Faltings' almost purity, and condition (iii) is proved like Lemma~\ref{lem:niceintersection}, using part (iii) of the current lemma. Finally, in order to verify the injectivity condition (ii) of Lemma~\ref{lem:LEtaQuasiIsomAinf}, we will momentarily prove that the map
\[
L\eta_\mu C^\bullet\to R\varprojlim_r (L\eta_\mu(C^\bullet/\tilde\xi_r))
\]
is a quasi-isomorphism, and for each $r$, $L\eta_\mu(C^\bullet/\xi_r)\to L\eta_\mu(D^\bullet/\xi_r)$ is a quasi-isomorphism; the commutative diagram
\[\xymatrix{
L\eta_\mu C^\bullet\ar[r]\ar[d] & L\eta_\mu D^\bullet\ar[d]\\
R\projlim_r (L\eta_\mu(C^\bullet/\tilde\xi_r))\ar[r] & R\projlim_r (L\eta_\mu(D^\bullet/\tilde\xi_r))
}\]
then proves the desired injectivity. Note that Lemma~\ref{lem:LetasymmmonWrOmega} shows that
\[
L\eta_\mu(D^\bullet/\xi_r)\simeq L\eta_\mu(C^\bullet/\xi_r)\simeq \widetilde{W_r\Omega}_R\hat{\dotimes}_{W_r(\roi)} L\eta_\mu(A_\crys^{(m)}/\tilde\xi_r)\simeq A\Omega_R\hat{\dotimes}_{A_\inf} L\eta_\mu(A_\crys^{(m)}/\tilde\xi_r)\ ,
\]
and, as $L\eta_\mu(A_\crys^{(m)}/\tilde\xi_r) = A_\crys^{(m)}/\{x\mid \mu x\in \tilde\xi_r A_\crys^{(m)}\}$, parts (i) and (ii) show that (as $A\Omega_R$ is derived $p$-complete)
\[
R\projlim_r(A\Omega_R\hat{\dotimes}_{A_\inf} L\eta_\mu(A_\crys^{(m)}/\tilde\xi_r)) = A\Omega_R\hat{\dotimes}_{A_\inf} A_\crys^{(m)}\ .
\]

For part (v), one can write $D_\Sigma$ similarly as the $p$-adic completion of the union of $p$-adically complete subrings $D^{(m)}_\Sigma\subset D_\Sigma$, where $D^{(m)}_\Sigma\subset D_\Sigma$ only allows divided powers of order at most $m$. Following the construction of $\alpha_R^0$ through with $D^{(m)}_\Sigma$ in place of $D_\Sigma$ gives, for $m$ large enough, maps from $K_{D^{(m)}_\Sigma}((\frac{\partial}{\partial \log(x_u)})_{u\in \Sigma})$ to $\eta_\mu K_{\bb A_\inf(R_{\infty,\Sigma})\hat{\otimes}_{A_\inf} A_\crys^{(m)}}((\gamma_u-1)_{u\in \Sigma})$. Passing to the direct limit over $m$ and $p$-completing gives the desired map $\alpha_R$.
\end{proof}

To finish the proof of Theorem~\ref{thm:cryscomp}, it remains to prove that $\alpha_R$ is a quasi-isomorphism: Passing to the filtered colimit over all sufficiently large $\Sigma$, all our constructions become strictly functorial in $R$, and thus immediately globalize.

\begin{proposition}\label{prop:alphaRQuasiIsom} The map
\[
\alpha_R: K_{D_\Sigma}((\frac{\partial}{\partial \log(x_u)})_{u\in \Sigma})\to \left(\varinjlim_m \eta_\mu K_{\bb A_\inf(R_{\infty,\Sigma})\hat{\otimes}_{A_\inf} A_\crys^{(m)}}((\gamma_u-1)_{u\in \Sigma})\right)^\wedge_p
\]
is a quasi-isomorphism.
\end{proposition}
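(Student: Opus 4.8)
The plan is to reduce to the torus $R=\roi\langle\ul T^{\pm1}\rangle$ and verify that case by hand. Both the source and the target of $\alpha_R$ are derived $p$-complete — the source because $R\Gamma_\crys(\Spec(R/p)/A_\crys)$ is computed $p$-adically, the target by construction — so being a quasi-isomorphism is a stalk-local condition on $\frak X=\Spf R$. After passing to the filtered colimit over all sufficiently large $\Sigma$ (as indicated just after the statement), $\alpha_R$ becomes functorial in $R$, so it suffices to treat $R$ \'etale over $\widehat{\bb G}_m^d$, and then — choosing an \'etale map $\roi\langle\ul T^{\pm1}\rangle\to R$ — to treat $R=\roi\langle\ul T^{\pm1}\rangle$, provided $\alpha_R$ is compatible with \'etale base change. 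The latter holds because the source satisfies \'etale base change of crystalline cohomology (together with Theorem~\ref{thm:Wittetale}), the target satisfies \'etale base change via $A\Omega_R=R\projlim_r\widetilde{W_r\Omega}_R$ and Lemma~\ref{lem:WrOmegaBC}, and both base changes are induced by the map $A(R)^\square\to A(R')^\square$ and its $A_\crys$-linear extension, with which $\alpha_R$ visibly commutes.

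\textbf{Unwinding the torus case.} For $R=\roi\langle\ul T^{\pm1}\rangle$ take $\Sigma=\{T_1,\dots,T_d\}$. Then $S_\Sigma=A_\crys[\ul U^{\pm1}]$, and the $p$-complete PD envelope $D_\Sigma$ is just $A_\crys(R)^\square$ (this Laurent algebra is already $p$-completely PD over $A_\crys$ relative to the PD structure on $\bb Z_p$); correspondingly $D_\Sigma^{(m)}=A(R)^\square\hat{\dotimes}_{A_\inf}A_\crys^{(m)}$. Tracing the construction of $\alpha_R$ through $\alpha_R^0$, the passage from Lie-algebra to group cohomology of Corollary~\ref{cor:LieAlgCohom}, and the $D_\Sigma^{(m)}$-refinement of Lemma~\ref{lem:LetaAcrys}(v), one finds that $\alpha_R$ is the $p$-completed colimit over $m\geq p^2$ of the composite
\[
K_{A(R)^\square\hat{\dotimes} A_\crys^{(m)}}\Big(\big(\tfrac{\partial}{\partial\log U_u}\big)_u\Big)\xrightarrow{\ \sim\ }\eta_\mu K_{A(R)^\square\hat{\dotimes} A_\crys^{(m)}}\big((\gamma_u-1)_u\big)\longrightarrow\eta_\mu K_{\bb A_\inf(R_\infty)\hat{\dotimes} A_\crys^{(m)}}\big((\gamma_u-1)_u\big),
\]
where the first arrow is the isomorphism of complexes underlying Corollary~\ref{cor:compqdRdR}, now formed over $A_\crys^{(m)}$ instead of $A_\crys$ — legitimate since $m\geq p^2$ makes Lemma~\ref{lem:elementsofAcrys} valid over $A_\crys^{(m)}$ by Lemma~\ref{lem:LetaAcrys}(i) — and the second arrow is induced by the ``integral'' inclusion $A(R)^\square\hookrightarrow\bb A_\inf(R_\infty)$.

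\textbf{Each piece is a quasi-isomorphism.} The first arrow is an isomorphism of complexes by Corollary~\ref{cor:compqdRdR}, whose proof is purely formal in the coefficient ring. For the second arrow, use the $\bb Z[\tfrac1p]^d$-grading on $\bb A_\inf(R_\infty)$: it splits $\bb A_\inf(R_\infty)\hat{\dotimes}A_\crys^{(m)}$ into the integral part $A(R)^\square\hat{\dotimes}A_\crys^{(m)}$ and a non-integral part, and the homotopy argument of Lemma~\ref{lem:qdRvsAOmega} — which for each non-integral monomial direction trivializes multiplication by $\phi^{-1}(\mu)$ on the corresponding one-step Koszul complex via $\xi$-adic successive approximation — shows that $\eta_\mu$ annihilates the non-integral part; this argument goes through verbatim over $A_\crys^{(m)}$. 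Hence the second arrow is a quasi-isomorphism, and since filtered colimits are exact and $p$-completion preserves quasi-isomorphisms, $\alpha_R$ is a quasi-isomorphism for the torus; combined with the first paragraph this proves the proposition.

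\textbf{Main obstacle.} The genuinely delicate points are the two bookkeeping steps rather than any new computation: first, checking that $\alpha_R$ is compatible with \'etale base change on the nose, so that the reduction to the torus is valid; and second, chasing the definition of $\alpha_R$ through $\alpha_R^0$, the Lie-algebra-to-group-cohomology comparison of Corollary~\ref{cor:LieAlgCohom}, and the $D_\Sigma^{(m)}$-refinement of Lemma~\ref{lem:LetaAcrys}(v), in order to confirm that it really unwinds to the explicit composite displayed above. The analytic pathologies of $A_\crys$ (for instance, that $A_\crys/\mu$ is not $p$-adically separated) do not intervene here, having already been isolated in Lemma~\ref{lem:LetaAcrys} through the use of the approximations $A_\crys^{(m)}$.
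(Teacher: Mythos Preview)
Your reduction to the torus has a real gap at precisely the step you flag as ``delicate bookkeeping'': the compatibility of $\alpha_R$ with \'etale base change. You assert that both source and target satisfy \'etale base change ``induced by the map $A(R)^\square\to A(R')^\square$'' and that ``$\alpha_R$ visibly commutes'' with this, but this is exactly what needs proof and is not visible. The map $\alpha_R$ is defined through the PD envelope $D_\Sigma$ of the large torus embedding, not through the framed lift $A_\crys(R)^\square$; for a general very small $R$ these rings are quite different. Passing to the colimit over $\Sigma$ does make $\alpha$ functorial in $R$, so the \'etale map $\roi\langle\ul T^{\pm1}\rangle\to R$ yields a commutative square, but the vertical arrows in that square are merely functoriality maps; identifying them with the base-change maps along $A_\crys(\roi\langle\ul T^{\pm1}\rangle)^\square\to A_\crys(R)^\square$ requires comparing the $D_\Sigma$-presentation with the $A_\crys(R)^\square$-presentation, which your argument does not do.

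The paper's proof does not reduce to the torus. Instead, for the fixed $R$ and $\Sigma$, it chooses a framing $\square$ corresponding to a $d$-element subset of $\Sigma$ and uses the infinitesimal lifting criterion for the formally \'etale map $A_\crys\langle T_1^{\pm1},\ldots,T_d^{\pm1}\rangle\to A_\crys(R)^\square$ to produce a lift $\Spf D_\Sigma\to\Spf A_\crys(R)^\square$. Redoing the construction of $\alpha_R$ with only the $d$ framing coordinates then gives a commutative square: the bottom row is $\alpha_R$; the top row is the framed comparison map for $R$ itself, which is a quasi-isomorphism by Corollary~\ref{cor:compqdRdR}; the left vertical arrow is a quasi-isomorphism because both complexes compute $R\Gamma_\crys(\Spec(R/p)/A_\crys)$; and the right vertical arrow is a quasi-isomorphism because both compute $A\Omega_R\hat\dotimes_{A_\inf}A_\crys$, as shown in Lemma~\ref{lem:LetaAcrys}(iv). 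Your torus computation is correct, but it is exactly the content of Corollary~\ref{cor:compqdRdR}; the missing ingredient in your argument is this commutative square, and constructing it is the whole proof.
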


\begin{proof} Pick an \'etale map $\square: \Spf R\to \Spf \roi\langle T_1^{\pm 1},\ldots,T_d^{\pm 1}\rangle$ as in the previous proof. We get a diagram
\[\xymatrix{
\Spec R/p\ar@{^(->}[d]\ar[r] & \Spf A_\crys(R)^\square\ar[d]^\square\\
\Spf D_\Sigma\ar[r] & \Spf A_\crys\langle T_1^{\pm 1},\ldots,T_d^{\pm 1}\rangle.
}\]
As $\Spf D_\Sigma$ is a (pro-)thickening of $\Spec R/p$, the infinitesimal lifting criterion for (formally) \'etale maps shows that there is a unique lift $\Spf D_\Sigma\to \Spf A_\crys(R)^\square$ making the diagram commute. One can then redo the construction of $\alpha_R$ using only the coordinates $T_1,\ldots,T_d$, and (using notation from the previous proof) one gets a commutative diagram
\[\xymatrix{
K_{A_\crys(R)^\square}((\frac{\partial}{\partial\log(T_i)})_{i=1,\ldots,d})\ar[rr]\ar[d] &&  \left(\varinjlim_m \eta_\mu K_{\bb A_\inf(R_\infty)\hat{\otimes}_{A_\inf} A_\crys^{(m)}}((\gamma_{u_i}-1)_{i=1,\ldots,d})\right)^\wedge_p\ar[d]\\
K_{D_\Sigma}((\frac{\partial}{\partial \log(x_u)})_{u\in \Sigma})\ar[rr]^{\!\!\!\!\!\!\!\!\!\!\!\!\!\!\!\!\!\!\!\!\!\!\!\!\!\!\!\!\!\alpha_R} &&  \left(\varinjlim_m \eta_\mu K_{\bb A_\inf(R_{\infty,\Sigma})\hat{\otimes}_{A_\inf} A_\crys^{(m)}}((\gamma_u-1)_{u\in \Sigma})\right)^\wedge_p.
}\]
Here, the right vertical map is a quasi-isomorphism, as was proved in the previous proof, and the left vertical map is a quasi-isomorphism, as both compute $R\Gamma_\crys(\Spec(R/p)/A_\crys,\roi)$. Finally, the upper horizontal map is a quasi-isomorphism by Corollary~\ref{cor:compqdRdR} (noting that in this situation, the map
\[
\left(\varinjlim_m \eta_\mu K_{\bb A_\inf(R_\infty)\hat{\otimes}_{A_\inf} A_\crys^{(m)}}((\gamma_{u_i}-1)_{i=1,\ldots,d})\right)^\wedge_p\to \eta_\mu K_{A_\crys(R_\infty/p)}((\gamma_{u_i}-1)_{i=1,\ldots,d})
\]
is a quasi-isomorphism, as both sides compute $A\Omega_R^\square\hat{\dotimes}_{A_\inf} A_\crys$). 
\end{proof}

\subsection{Multiplicative structures}

The previous discussion had the defect that it was not compatible with the structure of differential graded algebras. Let us note that this is a defect of the explicit models we have chosen. More precisely, we claim that the isomorphism of Theorem~\ref{thm:cryscomp} can be made into an isomorphism of (sheaves of) $E_\infty$-$A_\crys$-algebras. For this discussion, we admit that $L\eta$ can be lifted to a lax symmetric monoidal functor on the level of symmetric monoidal $\infty$-categories. Then $A\Omega_R = L\eta_\mu R\Gamma_\sub{pro\'et}(X,\bb A_{\inf,X})$ is an $E_\infty$-$A_\inf$-algebra, and we want to show that
\[
R\Gamma_\crys(\Spec(R/p)/A_\crys,\roi)\cong A\Omega_R\hat{\dotimes}_{A_\inf} A_\crys
\]
as $E_\infty$-$A_\crys$-algebras, functorially in $R$. This implies formally the global case (as the $E_\infty$-structure encodes all the information necessary to globalize).

We want to redo the construction of the previous section by replacing all Koszul complexes computing group cohomology by the $E_\infty$-algebra $R\Gamma_\cont(\Gamma,-)$. This has the advantage of keeping more structure, but the disadvantage that we have no explicit complexes anymore. However, the construction of the map $\alpha_R^0$ in Corollary~\ref{cor:LieAlgCohom} is done in two steps: Part (i) is an isomorphism of commutative differential graded algebras, which gives an isomorphism of $E_\infty$-algebras. On the other hand, part (ii) can be checked without reference to explicit models, and indeed one can check directly that the commutative differential graded algebra $K_{D_\Sigma}((g_u)_{u\in \Sigma})$ models the $E_\infty$-algebra $R\Gamma_\cont(\Gamma,D_\Sigma)$. These steps work exactly the same with $D^{(m)}_\Sigma$ in place of $D_\Sigma$. As the final map
\[
L\eta_\mu R\Gamma_\cont(\Gamma,D^{(m)}_\Sigma)\to L\eta_\mu R\Gamma_\cont(\Gamma,\bb A_\inf(R_{\infty,\Sigma})\hat{\otimes}_{A_\inf} A_\crys^{(m)})
\]
is a map of $E_\infty$-algebras, this gives (by passing to the filtered colimit over all sufficiently large $\Sigma$) the desired functorial map of $E_\infty$-$A_\crys$-algebras
\[
\alpha_R: R\Gamma_\crys(\Spec(R/p)/A_\crys,\roi)\to A\Omega_R\hat{\dotimes}_{A_\inf} A_\crys\ ,
\]
which we have already proved to be an equivalence.

\newpage

\section{Rational $p$-adic Hodge theory, revisited}\label{sec:ratPAdicHodgeNew}

Let $C$ be an algebraically closed complete extension of $\bb Q_p$, with ring of integers $\roi$ and residue field $k$ as usual. The goal of this section is to prove a de Rham comparison theorem for rigid spaces over $C$. As the continuous projection $B_\dR^+ \to C$ does not admit a continuous section, the usual formulation of the de Rham comparison theorem does not make sense in this case. On the other hand, as the map $B_\dR^+ \to C$ can be regarded as pro-infinitesimal thickening of $C$, one has a well-behaved crystalline cohomology for proper smooth $C$-schemes taking values in $B_\dR^+$-modules, and deforming usual de Rham cohomology along $B_\dR^+ \to C$. It is then natural to wonder if this deformation of de Rham cohomology to $B_\dR^+$ can be compared with \'etale cohomology. The primary goal of this section is to explain how to construct this deformation more generally for proper smooth rigid spaces,  and to prove the de Rham comparison theorem:

\begin{theorem}\label{thm:ratpadicHodgeC} Let $X$ be a proper smooth adic space over $C$. Then there are cohomology groups $H^i_\crys(X/B_\dR^+)$ which come with a canonical isomorphism
\[
H^i_\crys(X/B_\dR^+)\otimes_{B_\dR^+} B_\dR\cong H^i_\sub{\'et}(X,\bb Z_p)\otimes_{\bb Z_p} B_\dR\ .
\]
In case $X=X_0\hat{\otimes}_K C$ arises via base change from some complete discretely valued extension $K$ of $\bb Q_p$ with perfect residue field, this isomorphism agrees with the comparison from Theorem~\ref{thm:ratPAdicHodge} above, under the identification
\[
H^i_\crys(X/B_\dR^+) = H^i_\dR(X_0)\otimes_K B_\dR^+
\]
of Remark~\ref{rmk:BdRCohDiscValued} below.
\end{theorem}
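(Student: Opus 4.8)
The plan is to construct the $B_\dR^+$-cohomology theory $H^i_\crys(X/B_\dR^+)$ by glueing the local computations of $A\Omega$ with crystalline-type period sheaves on the pro-\'etale site. Concretely, I would first treat the case where $X$ arises as the generic fibre of a proper smooth formal scheme $\mathfrak{X}/\roi$. In that case, Theorem~\ref{ThmB} (or Theorem~\ref{ThmC}) already produces a perfect complex $R\Gamma_{A_\inf}(\mathfrak{X})$ of $A_\inf$-modules with a Frobenius, whose cohomology groups are Breuil--Kisin--Fargues modules. I would \emph{define}
\[
H^i_\crys(X/B_\dR^+) := H^i\big(R\Gamma_{A_\inf}(\mathfrak{X})\otimes_{A_\inf} B_\dR^+\big),
\]
using that $B_\dR^+$ is the $\xi$-adic completion of $A_\inf[\tfrac1p]$, and that $H^i_{A_\inf}(\mathfrak{X})[\tfrac1p]$ is finite free over $A_\inf[\tfrac1p]$ so that base change is harmless. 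The comparison isomorphism with $B_\dR$ then follows by inverting $\xi$ and using Theorem~\ref{ThmB} (iv): $R\Gamma_{A_\inf}(\mathfrak X)\otimes_{A_\inf} A_\inf[\tfrac1\mu]\simeq R\Gamma_{\sub{\'et}}(X,\bb Z_p)\otimes_{\bb Z_p} A_\inf[\tfrac1\mu]$, so after further base change to $B_\dR = B_\dR^+[\tfrac1\xi]$ (noting $\mu$ becomes invertible in $B_\dR$, as $\mu/\xi\cdot\xi = \mu$ and $\mu$ is a unit times $\xi$ times a unit... more precisely $\xi\mid\mu$ and $\mu/\xi = \varphi^{-1}(\mu)$ which is a unit in $B_\dR^+$) one gets the stated identity.

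Next I would address the general proper smooth $X/C$ not necessarily of good reduction over $\roi$. Here the idea is to define $H^i_\crys(X/B_\dR^+)$ directly via pro-\'etale cohomology of a relative period sheaf $\bb B_{\dR,X}^+$, following the construction in Section~5 of the excerpt (the $B_\dR^+$-analogue of $\bb A_{\inf,X}$): set $H^i_\crys(X/B_\dR^+) := H^i_\sub{pro\'et}(X, \bb B_{\dR,X}^+)$, or rather the cohomology of $L\eta_\xi R\nu_*\bb B_{\dR,X}^+$ glued appropriately. One must show this is independent of any model and functorial, and that inverting $\xi$ recovers $H^i_\sub{\'et}(X,\bb Z_p)\otimes_{\bb Z_p} B_\dR$; the latter is exactly the statement that $R\Gamma_\sub{\'et}(X,\bb Z_p)\otimes_{\bb Z_p} B_\dR \simeq R\Gamma_\sub{pro\'et}(X,\bb B_{\dR,X})$, which is the rational analogue of Theorem~\ref{thm:etalevsAinfcohom} and is already in \cite{ScholzePAdicHodge}. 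Compatibility of the two definitions in the overlapping case (good reduction) follows by comparing $R\Gamma_{A_\inf}(\mathfrak X)\otimes_{A_\inf} B_\dR^+$ with $R\nu_*\bb B_{\dR,X}^+$ using Definition~\ref{IntroAOmegaDef} and the definition of $\bb B_{\dR,X}^+$ as a completion of $\bb A_{\inf,X}$.

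For the final statement — the case $X = X_0\hat\otimes_K C$ with $K$ discretely valued — the approach is: by Lemma~\ref{lem:crysdRadic} (referenced but appearing later; I assume it) there is a canonical identification $H^i_\crys(X/B_\dR^+) = H^i_\dR(X_0)\otimes_K B_\dR^+$. This rests on the existence of a section $K\hookrightarrow B_\dR^+$ in the discretely valued case (which does \emph{not} exist over $C$), coming from the canonical embedding $W(k)[\tfrac1p]\hookrightarrow B_\crys^+\subset B_\dR^+$ and the fact that $K/W(k)[\tfrac1p]$ is finite and unramified-then-totally-ramified with a uniformizer whose image can be specified via $[\pi^\flat]$; then $H^i_\dR(X_0)\otimes_K B_\dR^+$ makes sense and one checks it agrees with the pro-\'etale definition via the de~Rham--$\bb B_{\dR}^+$ Poincar\'e lemma of \cite{ScholzePAdicHodge} (recalled at the end of Section~5). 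Granting this identification, one then only needs to check that the isomorphism $H^i_\crys(X/B_\dR^+)\otimes_{B_\dR^+}B_\dR \cong H^i_\sub{\'et}(X,\bb Z_p)\otimes_{\bb Z_p}B_\dR$ constructed here coincides, after this identification, with the classical de~Rham comparison isomorphism of Theorem~\ref{thm:ratPAdicHodge}. This is a compatibility-of-constructions statement: both isomorphisms are obtained from the same map of period sheaves $\bb B_{\dR,X}^+ \to \bb B_{\dR,X}$ on $X_\sub{pro\'et}$ and the same Poincar\'e lemma resolution, so they must agree; the verification is bookkeeping, tracing through the identifications in Section~5 and unwinding the definition of $H^i_\crys(X/B_\dR^+)$. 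The main obstacle, and the place where real work is needed, is establishing Lemma~\ref{lem:crysdRadic} — i.e.\ producing the canonical (not just abstract) identification $H^i_\crys(X/B_\dR^+) = H^i_\dR(X_0)\otimes_K B_\dR^+$ in the discretely valued case and checking it is compatible with filtrations and Galois action; everything else is either a direct consequence of Theorem~\ref{ThmB} or a careful but routine diagram chase on the pro-\'etale site.
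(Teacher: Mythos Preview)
Your proposal has a genuine gap in the general case, and differs substantially from the paper's approach even in the good-reduction case.

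\textbf{The main error.} Your proposed definition $H^i_\crys(X/B_\dR^+) := H^i_\sub{pro\'et}(X,\bb B_{\dR,X}^+)$ for general $X$ gives the \emph{wrong} $B_\dR^+$-lattice. As the paper uses explicitly in its proof (and as follows from Theorem~\ref{thm:etalevsAinfcohom} after base change), one has
\[
R\Gamma(X_\sub{pro\'et},\bb B_{\dR,X}^+)\cong R\Gamma_\sub{\'et}(X,\bb Z_p)\otimes_{\bb Z_p} B_\dR^+\ ,
\]
so this is precisely the \'etale lattice. The entire content of the theorem is to produce a \emph{different} $B_\dR^+$-lattice in the same $B_\dR$-vector space, one that deforms de~Rham cohomology; the relative position of the two lattices encodes the Hodge filtration. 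Your $L\eta_\xi$ variant does not repair this: applied to the global complex $R\Gamma_\sub{pro\'et}(X,\bb B_{\dR,X}^+)$, which already has $\xi$-torsion-free cohomology, $L\eta_\xi$ is the identity. Applied as $L\eta_\xi R\nu_\ast$ on some site, you have not said what $\nu$ is when $X$ has no formal model, and ``glued appropriately'' is doing all the work.

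\textbf{What the paper actually does.} The paper does not use $A\Omega$ or $L\eta$ at all in this section. Instead it builds $R\Gamma_\crys(X/B_\dR^+)$ by imitating the infinitesimal-site construction of crystalline cohomology, with $B_\dR^+\to C$ playing the role of $W(k)\to k$. For a small affinoid $\Spa(R,R^\circ)$, one chooses a large finite set $\Sigma\subset R^{\circ\times}$ giving a closed embedding into a torus over $B_\dR^+$, forms the formal completion $D_\Sigma(R)$ of $B_\dR^+\langle (X_u^{\pm 1})_{u\in\Sigma}\rangle$ along the kernel of the map to $R$, and takes its de~Rham complex $\Omega^\bullet_{D_\Sigma(R)/B_\dR^+}$. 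One shows this is independent of $\Sigma$ up to quasi-isomorphism (the key being $\Omega^\bullet_{D_\Sigma(R)/B_\dR^+}/\xi\simeq \Omega^\bullet_{R/C}$), and globalizes by taking the filtered colimit over $\Sigma$. This makes sense for \emph{any} smooth adic space over $C$, with no formal model required. The comparison map to $R\Gamma(X_\sub{pro\'et},\bb B_{\dR,X}^+)$ is then constructed by an explicit map of complexes (entirely parallel to Section~\ref{sec:cryscomp}), and one checks locally that it identifies $\Omega^\bullet_{D_\Sigma(R)/B_\dR^+}$ with $\eta_\xi$ of the relevant Koszul complex, which becomes a quasi-isomorphism after inverting $\xi$.

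\textbf{On your good-reduction approach.} Defining $H^i_\crys(X/B_\dR^+)$ as $H^i(R\Gamma_{A_\inf}(\frak X)\otimes_{A_\inf} B_\dR^+)$ does give the correct object when a formal model exists (the paper proves this a posteriori), but it cannot serve as the definition here: the theorem is stated for all proper smooth $X/C$, and moreover the point of this section in the paper's logic is to supply the $B_\dR^+$-lattice needed as input to Fargues's classification (Theorem~\ref{ThmFargues}) purely from the generic fibre, independently of any integral model.
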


Our strategy is to define a cohomology theory $R\Gamma_\crys(X/B_\dR^+)$ for any smooth adic space $X$ by imitating one possible definition of crystalline cohomology, namely, in terms of de Rham complexes of formal completions of embeddings of $X$ into smooth spaces over $B_\dR^+$; in order to get a strictly functorial theory, we simply take the colimit over all possible choices of embeddings.

More precisely, for any smooth affinoid $C$-algebra $R$ equipped with a sufficiently large finite subset $\Sigma$ of units in $R^\circ$, we consider the canonical surjective map $B_\dR^+ \langle (X_u^{\pm 1})_{u \in \Sigma} \rangle \to R$, viewed roughly as (dual to) an embedding of $\Spa(R, R^\circ)$ into a smooth rigid space over $B_\dR^+$; the precise language to set this up involves taking a limit over $n$ of ``rigid geometry over $B_\dR^+/\xi^n$'', and is set up in Lemma \ref{lem:BdRmodxin}. The completion $D_{\Sigma}(R)$ of $B_\dR^+ \langle (X_u^{\pm 1})_{u \in \Sigma} \rangle$ along the kernel of this map is then shown to be a well-behaved object, roughly analogous to the formal completion of the afore-mentioned embedding; the precise statement is recorded in Lemma~\ref{lem:DRnice}, and the proof entails approximating our smooth $C$-algebra $R$ in terms of smooth algebras defined over a much smaller base $A$. The de Rham complex $\Omega^\bullet_{D_{\Sigma}(R)/B_\dR^+}$ is then shown to be independent of $\Sigma$ up to quasi-isomorphism in Lemma~\ref{lem:crysdRcomp}; the key point here is that $\Omega^\bullet_{D_{\Sigma}(R)/B_\dR^+}/\xi$ is canonically identified up to quasi-isomorphism with $\Omega^\bullet_{R/C}$, which is obviously independent of $\Sigma$. Taking a filtered colimit over all possible choices of $\Sigma$ then gives a functorial (in $R$) complex, independent of all choices. For a general smooth adic space $X$ over $C$, this construction gives a presheaf of complexes on a basis of $X$ whose hypercohomology is (by definition) $R\Gamma_\crys(X/B_{\dR}^+)$; when $X$ is proper, this theory is then shown to satisfy Theorem~\ref{thm:ratpadicHodgeC}.

\begin{remark}
It is probably possible to develop a full-fledged analogue of the crystalline site in this context (which actually reduces to the infinitesimal site), replacing the usual topologically nilpotent thickening $W(k)\to k$ by $B_\dR^+\to C$. Our somewhat pedestrian approach, via building strictly functorial complexes on affinoid pieces, is engineered to be compatible with the $A_\crys$-comparison of the previous section.
\end{remark}

As an application of the construction of the $B_\dR^+$-cohomology theory, we can prove degeneration of the Hodge--Tate spectral sequence, \cite{ScholzeSurvey}, in general. 

\begin{theorem}\label{thm:hodgetate} 
Let $X$ be a proper smooth adic space over $C$. 
\begin{enumerate}
\item (Conrad-Gabber) The Hodge--de~Rham spectral sequence
\[
E_1^{ij} = H^j(X,\Omega_{X/C}^i)\Rightarrow H^{i+j}_\dR(X)
\]
degenerates at $E_1$.
\item The Hodge--Tate spectral sequence
\[
E_2^{ij} = H^i(X,\Omega_{X/C}^j)(-j)\Rightarrow H^{i+j}_\sub{\'et}(X,\bb Z_p)\otimes_{\bb Z_p} C
\]
degenerates at $E_2$.
\end{enumerate}
\end{theorem}

Both parts of Theorem~\ref{thm:hodgetate} rely on the work \cite{ConradGabber} of Conrad-Gabber yielding a ``Lefschetz principle'' for proper rigid spaces. In fact, the degeneration of the Hodge--de~Rham spectral sequence follows directly from (and was one of the motivations for) \cite{ConradGabber}; the degeneration of the Hodge--Tate spectral sequence also uses the $B_\dR^+$-cohomology theory.   The work \cite{ConradGabber} relies on establishing a relative version of classical results in the deformation theory of proper varieties. Since the classical version actually suffices for our application, we give a self-contained exposition of the relevant statements in \S \ref{ss:ConradGabber}.

\subsection{The $B_\dR^+$-cohomology of affinoids} 
In this section, we explain how to construct the $B_\dR^+$-cohomology for certain smooth affinoids. To do so, we need some basic lemmas on ``rigid geometry over $B_\dR^+/\xi^n$''. Note that $B_\dR^+/\xi^n=A_\inf/\xi^n[\tfrac 1p]$ is a complete Tate-$\bb Q_p$-algebra.

\begin{lemma}\label{lem:BdRmodxin} Let $R$ be a complete Tate-$B_\dR^+/\xi^n$-algebra.
\begin{enumerate}
\item The following conditions on $R$ are equivalent.
\begin{enumerate}
\item[{\rm (a)}] There is a surjective map $B_\dR^+/\xi^n\langle X_1,\ldots,X_m\rangle\to R$ for some $m$.
\item[{\rm (b)}] The algebra $R/\xi$ is topologically of finite type over $C$.
\end{enumerate}
 In case they are satisfied, we say that $R$ is topologically of finite type over $B_\dR^+/\xi^n$.

\item If $R$ is topologically of finite type over $B_\dR^+/\xi^n$, the following further properties are satisfied.
\begin{enumerate}
\item[{\rm (a)}] The ring $R$ is noetherian.
\item[{\rm (b)}] Any ideal $I\subset R$ is closed.
\end{enumerate}

\item A $p$-adically complete $p$-torsion free $A_\inf/\xi^n$-algebra $R_0$ is by definition topologically of finite type if there is a surjective map $A_\inf/\xi^n\langle X_1,\ldots,X_m\rangle\to R_0$ for some $m$. In this case, the following properties are satisfied.
\begin{enumerate}
\item[{\rm (a)}] The ring $R_0$ is coherent.
\item[{\rm (b)}] Any ideal $I\subset R_0$ such that $R_0/I$ is $p$-torsion free is finitely generated.
\item[{\rm (c)}] The Tate-$B_\dR^+/\xi^n$-algebra $R=R_0[\tfrac 1p]$ is topologically of finite type.
\end{enumerate}

\item If $R$ is topologically of finite type over $B_\dR^+/\xi^n$, then there exists a ring of definition $R_0\subset R$ such that $R_0$ is topologically of finite type over $A_\inf/\xi^n$.
\end{enumerate}
\end{lemma}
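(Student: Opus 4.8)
The plan is to exploit that $B_\dR^+/\xi^n$ is an infinitesimal thickening of $C$: the ideal $(\xi)\subset B_\dR^+/\xi^n$ is nilpotent, and each graded piece $\xi^i B_\dR^+/\xi^{i+1}B_\dR^+$ is free of rank $1$ over $B_\dR^+/\xi = C$; likewise $A_\inf/\xi^n$ is a thickening of $\roi$ with graded pieces free of rank $1$ over $\roi$. Since $\xi$ is a non-zero-divisor one has $A_\inf/\xi^n\langle \ul X\rangle \cong A_\inf\langle \ul X\rangle/\xi^n$ and $B_\dR^+/\xi^n\langle \ul X\rangle \cong (A_\inf\langle \ul X\rangle/\xi^n)[\tfrac 1p]$. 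I would prove every assertion by d\'evissage along the finite filtration $\xi^\bullet$, reducing the Tate-algebra statements to classical facts about $C$-affinoid algebras and the integral statements to the coherence results of Section~\ref{ss:PerfectoidField} for $\roi$ and $\roi^\flat$.

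For (i), $(\mathrm a)\Rightarrow(\mathrm b)$ is immediate upon reducing modulo $\xi$, using $(B_\dR^+/\xi^n\langle \ul X\rangle)/\xi \cong C\langle \ul X\rangle$. For $(\mathrm b)\Rightarrow(\mathrm a)$ I would choose power-bounded topological generators of $R/\xi$ as a $C$-algebra and lift them to power-bounded elements of $R$ (possible since $\xi$ is topologically nilpotent, so altering a lift by an element of $\xi R$ preserves power-boundedness), obtaining a continuous $B_\dR^+/\xi^n$-algebra map $\psi\colon B_\dR^+/\xi^n\langle \ul X\rangle \to R$ which is surjective modulo $\xi$. Writing $S$ for the image, surjectivity mod $\xi$ says $S+\xi R = R$; multiplying by $\xi$ and iterating gives $S+\xi^m R = R$ for all $m$, hence $S = R$ because $\xi^n = 0$. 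For (ii), the associated graded ring $\bigoplus_i \xi^i R/\xi^{i+1}R$ is a finite module over the noetherian $C$-affinoid algebra $R/\xi$ (each piece is a cyclic $R/\xi$-module), hence a noetherian ring; as $R$ carries the exhaustive separated $\xi$-adic filtration with this associated graded, $R$ is noetherian. Closedness of ideals then follows from the Banach open mapping theorem (a finitely generated submodule of a finite Banach module over a noetherian Banach ring is closed), or again by d\'evissage to the $C$-affinoid case.

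The heart of the matter is (iii). First, $A_\inf/\xi^n$ is coherent: $\roi = W_1(\roi)$ is coherent by Proposition~\ref{prop:WrCoherent}, and $A_\inf/\xi^n\to\roi$ is an iterated square-zero extension by (free, in particular finitely presented) copies of $\roi$, so Lemma~\ref{LiftCoherence} applies. Next, $A_\inf/\xi^n\langle \ul X\rangle$ is coherent by Lemma~\ref{SplitCoherence} with $f = p$: its reduction mod $p$ is $(\roi^\flat/\bar\xi^n)[\ul X] = \roi^\flat[\ul X]/\bar\xi^n$ (where $\bar\xi$ is the image of $\xi$ in $A_\inf/p = \roi^\flat$), which is coherent since $\roi^\flat$ is a valuation ring — hence semihereditary, so $\roi^\flat[\ul X]$ is coherent — and quotienting by the finitely generated principal ideal $(\bar\xi^n)$ preserves coherence by Lemma~\ref{QuotCoherent}; its localization at $p$ is $B_\dR^+/\xi^n\langle \ul X\rangle$, noetherian by (ii); and the Artin--Rees property for $(A_\inf/\xi^n\langle \ul X\rangle, p)$ follows by d\'evissage along $\xi$ from the corresponding property for $(\roi\langle \ul X\rangle,p)$ (or via Lemma~\ref{ARIsogeny}). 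Granting coherence of $A_\inf/\xi^n\langle \ul X\rangle$ together with noetherianness of its $p$-localization, (iii)(b) follows: if $R_0/I$ is $p$-torsion-free then $I = R_0\cap I[\tfrac 1p]$ with $I[\tfrac 1p]$ finitely generated, so lifting its generators gives a finitely generated $I'\subseteq I$ with $p^N I\subseteq I'$, whence $I = (I':p^N)$ (using $p$-torsion-freeness of $R_0/I$), which is finitely generated by coherence. Then (iii)(a) follows: writing $R_0 = A_\inf/\xi^n\langle \ul X\rangle/I_0$ with $R_0$ $p$-torsion-free, $I_0$ is finitely generated by (iii)(b), so $R_0$ is coherent by Lemma~\ref{QuotCoherent}; and (iii)(c) is immediate, since $R = R_0[\tfrac 1p]$ is then a quotient of $B_\dR^+/\xi^n\langle \ul X\rangle$, giving (i)(a) and hence (i)(b) for $R$.

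Finally, for (iv), given a surjection $\psi\colon B_\dR^+/\xi^n\langle \ul X\rangle \to R$ with the $\psi(X_i)$ power-bounded, I would set $R_0 := A_\inf/\xi^n\langle \ul X\rangle/\big(\ker\psi\cap A_\inf/\xi^n\langle \ul X\rangle\big)$. This is $p$-torsion-free by construction, hence (by (iii)(b)) finitely presented over $A_\inf/\xi^n\langle \ul X\rangle$, in particular $p$-adically complete and topologically of finite type over $A_\inf/\xi^n$, and $R_0[\tfrac 1p] = R$; a Banach open mapping theorem argument shows the image of $R_0$ in $R$ is open and bounded, so $R_0$ is a ring of definition. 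The main obstacle throughout is the pair of facts used in (iii) — coherence and the Artin--Rees property for $p$ of the integral Tate algebra $A_\inf/\xi^n\langle \ul X\rangle$ over the non-split nilpotent base $A_\inf/\xi^n$; once these are established, everything else is formal d\'evissage along $\xi$ together with the commutative-algebra lemmas of Section~\ref{ss:PerfectoidField}.
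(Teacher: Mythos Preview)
Your overall strategy---reducing everything to $C$-affinoid and $\roi$-integral facts by d\'evissage along the nilpotent $\xi$-filtration---is exactly the paper's, and your treatments of (i), (ii)(a), and (iv) are essentially the same. Your use of the open mapping theorem for (ii)(b) is a legitimate shortcut; the paper instead postpones (ii)(b) to the end and deduces it from (iii)(b) and (iv), by intersecting $I$ with an integral ring of definition $R_0$ and using that $I_0=I\cap R_0$ is finitely generated (hence $p$-adically complete, hence closed).

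The genuine gap is in (iii), precisely at the step you yourself flag as the main obstacle: the Artin--Rees property for $(A_\inf/\xi^n\langle\ul X\rangle, p)$. Neither of your proposed justifications works. Artin--Rees is not obviously preserved under square-zero extensions, so ``d\'evissage along $\xi$ from $(\roi\langle\ul X\rangle,p)$'' does not go through---and Artin--Rees for $(\roi\langle\ul X\rangle,p)$ over a non-noetherian valuation ring is itself not a triviality. As for Lemma~\ref{ARIsogeny}, it needs an injection into a ring with known Artin--Rees and $p$-power-bounded cokernel; there is no ghost-map-style candidate here. This gap infects both your coherence argument (Lemma~\ref{SplitCoherence} takes Artin--Rees as a hypothesis) and your (iii)(b) argument: the assertion ``$p^N I\subseteq I'$ for some $N$'' is exactly the claim that the $p$-power torsion in the finitely presented module $R_0/I'$ has bounded exponent, which is again an Artin--Rees-type statement you have not established.

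The paper sidesteps Artin--Rees entirely. For (iii)(a) it applies Lemma~\ref{LiftCoherence} directly: $A_\inf/\xi^n\langle\ul X\rangle$ is an iterated square-zero extension of the coherent ring $\roi\langle\ul X\rangle$ by copies of $\roi\langle\ul X\rangle$ (each a cyclic, hence finitely presented, module over the intermediate ring), so it is coherent. For (iii)(b) the paper proves the stronger statement that every finitely generated $p$-torsion-free $R_0$-module $M$ is finitely presented, by induction on $n$ with a $\xi$-d\'evissage rather than a $p$-adic one: set $\bar M=\mathrm{im}(M\to (M/\xi)[\tfrac1p])$ and $M'=\ker(M\to\bar M)$. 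Then $\bar M$ is finitely generated $p$-torsion-free over $R_0/\xi$, hence finitely presented there. The key point is that $M'$ is killed by $\xi^{n-1}$: if $m\in M'$ then $p^k m\in\xi M$ for some $k$, so $\xi^{n-1}p^k m=0$, whence $\xi^{n-1}m=0$ since $M$ is $p$-torsion-free. Thus $M'$ is a finitely generated $p$-torsion-free module over $R_0/\xi^{n-1}$, and induction finishes. This replaces your $(I':p^N)$ trick and needs no Artin--Rees at all.
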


We note that all assertions are well-known for $n=1$, i.e.~over $B_\dR^+/\xi = C$. We will use this freely in the proof.

\begin{proof} For (i), clearly condition (a) implies (b). On the other hand, given a surjection
\[
C\langle X_1,\ldots,X_m\rangle\to R/\xi\ ,
\]
one can lift the $X_i$ arbitrarily to $R$; they will still be powerbounded as $R\to R/\xi$ has nilpotent kernel. Thus, one gets a map $B_\dR^+/\xi^n\langle X_1,\ldots,X_m\rangle\to R$, which is automatically surjective.

In part (ii), it is enough to prove these assertions in the case $R=B_\dR^+/\xi^n\langle X_1,\ldots,X_m\rangle$. This is a successive square-zero extension of the noetherian ring $C\langle X_1,\ldots,X_m\rangle$ by finitely generated ideals, and thus noetherian itself. We will prove part (b) at the end.

For part (iii), part (c) is clear, and the other assertions reduce to $R_0=A_\inf/\xi^n\langle X_1,\ldots,X_m\rangle$. This is a successive square-zero extension of the coherent ring $\roi\langle X_1,\ldots,X_m\rangle$ by finitely presented ideals, and thus coherent itself, cf.~Lemma~\ref{LiftCoherence}. For part (b), let more generally $M$ be a finitely generated $p$-torsion free $R_0$-module; we want to prove that $M$ is finitely presented. Applying this to $M=R_0/I$ gives (b). Let $\bar{M}=\mathrm{im}(M\to M/\xi[\tfrac 1p])$. Then $\bar{M}$ is a $p$-torsion free finitely generated $R_0/\xi$-module, and thus finitely presented as $R_0/\xi$-module, and thus also as $R_0$-module, cf.~Lemma~\ref{QuotCoherent} (i). Therefore, $M^\prime = \ker(M\to \bar{M})$ is also a finitely generated $p$-torsion free $R_0$-module. But $M^\prime$ is killed by $\xi^{n-1}$: If $m\in M^\prime$, then $p^k m\in \xi M$ for some $k$, and then $\xi^{n-1} p^k m\in \xi^n M=0$. As $M$ is $p$-torsion free, this implies that $\xi^{n-1} m=0$. We see that $M^\prime$ is a finitely generated $p$-torsion free $R_0/\xi^{n-1}$-module, so induction on $n$ finishes the proof.

In part (iv), if $B_\dR^+/\xi^n\langle X_1,\ldots,X_m\rangle\to R$ is surjective, then the image of
\[
A_\inf/\xi^n\langle X_1,\ldots,X_m\rangle\to R
\]
defines a ring of definition $R_0\subset R$ which is topologically of finite type.

Finally, for (ii) (b), let $I\subset R$ be any (necessarily finitely generated) ideal, and $R_0\subset R$ a ring of definition which is topologically of finite type. Let $I_0 = I\cap R_0$. Then $R_0/I_0\subset R/I$ is $p$-torsion free, and thus $I_0$ is finitely generated over $R_0$. This implies that $I_0$ is $p$-adically complete, and thus $I_0\subset R_0$ is closed, and so is $I\subset R$.
\end{proof}

\begin{definition}
\label{DefVSBdR}
Let $R$ be a smooth Tate $C$-algebra of dimension $d$. We say that $R$ is {\em very small} if there exist finite subsets $\{T_1,....,T_d\} \subset \Sigma \subset (R^\circ)^\ast$ with the following properties:
\begin{enumerate}
\item The map 
\[C (\langle X_u^{\pm 1})_{u \in \Sigma} \rangle \to R\]
defined by $X_u \mapsto u$ is surjective.
\item On adic spectra, the map 
\[ \mathrm{Spa}(R,R^\circ) \to \mathbb{T}^d := \mathrm{Spa}(C \langle T_1^{\pm 1},...,T_d^{\pm 1} \rangle, \mathcal{O} \langle T_1^{\pm 1},....,T_d^{\pm 1} \rangle)\] 
by the $T_i$'s is \'etale and factors as a composition of rational embeddings and finite \'etale maps.
\end{enumerate}
\end{definition}

Note that the subset $\Sigma \subset (R^\circ)^\ast$ appearing in the definition of very smallness is not fixed; in particular, we are allowed to enlarge $\Sigma$ without affecting either of $(i)$ or $(ii)$ above.  Let us explain how to construct pro-infinitesimal thickenings of very small rings relative to $B_\dR^+$.

\begin{construction}
\label{ConsThick1}
Fix very small $R$ and subset $\{T_1,...,T_d\} \subset \Sigma \subset (R^\circ)^\ast$ as in Definition~\ref{DefVSBdR}. We have a surjective map
\[
B_\dR^+\langle (X_u^{\pm 1})_{u\in \Sigma}\rangle\to R\ ,
\]
sending $X_u^{\pm 1}$ to $u^{\pm 1}$. Here, for any finite set $I$,
\[
B_\dR^+\langle (X_i^{\pm 1})_{i\in I}\rangle := \varprojlim_r B_\dR^+/\xi^r\langle (X_i^{\pm 1})_{i\in I}\rangle\ .
\]
For $v\in \Sigma$, there are natural commuting continuous derivations
\[
\frac{\partial}{\partial \log(X_v)} = X_v \frac{\partial}{\partial X_v}: B_\dR^+\langle (X_u^{\pm 1})_{u\in\Sigma}\rangle\to B_\dR^+\langle (X_u^{\pm 1})_{u\in\Sigma}\rangle\ .
\]
Now let $D_\Sigma(R)$ be the completion of $B_\dR^+\langle (X_u^{\pm 1})_{u\in \Sigma}\rangle$ with respect to the ideal
\[
I(R) = \ker(B_\dR^+\langle (X_u^{\pm 1})_{u\in \Sigma}\rangle\to R)\ .
\]
By Lemma~\ref{lem:BdRmodxin}, all powers $I(R)^n\subset B_\dR^+\langle (X_u^{\pm 1})_{u\in \Sigma}\rangle$ are closed, so that with its natural topology, $D_\Sigma(R)$ is a complete and separated $B_\dR^+$-algebra. The derivations $\frac{\partial}{\partial \log(X_u)}$ for $u\in \Sigma$ extend continuously to $D_\Sigma(R)$.
\end{construction}

To proceed further, we shall need the following noetherian approximation lemma that roughly says that a very small smooth Tate $C$-algebra can be defined over a smooth algebra over a discretely valued field in such a way that the set of units witnessing ``very smallness'' are $p$-adically close to units that also descend. 

\begin{lemma}
\label{ApproxVerySmall}
Let $R$ be a very small smooth Tate $C$-algebra $R$; fix finite subsets $\{T_1,...,T_d\} \subset \Sigma \subset (R^\circ)^\ast$ as in Definition~\ref{DefVSBdR}. Then, at the expense of enlarging $\Sigma$, we can find the following:
\begin{enumerate}
\item A smooth adic space $S=\mathrm{Spa}(A,A^\circ)$ of finite type over $W(k^\prime)[\tfrac 1p]$ for some perfect field $k^\prime\subset k$ and a $W(k')$-algebra map $A \to C$.
\item A smooth morphism $\mathrm{Spa}(R_A,R_A^\circ)\to \mathrm{Spa}(A,A^\circ)$ and an identification $R \simeq R_A\hat{\otimes}_A C$.
\item Finite subsets $\{T_1,...,T_d\} \subset \Sigma_A \subset (R_A^\circ)^\ast$ such that
\begin{enumerate}
\item The identification $R_A \hat{\otimes}_A C \simeq R$ carries $\Sigma_A$ into $\Sigma$ while preserving the $T_i$'s.
\item The map 
\[A (\langle X_u^{\pm 1})_{u \in \Sigma_A} \rangle \to R_A\]
 defined by $X_u \mapsto u$ is surjective.
\item On adic spectra, the map 
\[ \mathrm{Spa}(R_A,R_A^\circ) \to \mathbb{T}_S^d := S \times_{\mathrm{Spa}(\mathbb{Q}_p,\mathbb{Z}_p)}  \mathrm{Spa}(\mathbb{Q}_p \langle T_1^{\pm 1},...,T_d^{\pm 1} \rangle, \mathbb{Z}_p \langle T_1^{\pm 1},....,T_d^{\pm 1} \rangle)\] 
by the $T_i$'s is \'etale and factors as a composition of rational embeddings and finite \'etale maps.
\end{enumerate}
\end{enumerate}
\end{lemma}

The proof below shows that we can take $k' = \mathbb{F}_p$ in (i), i.e., we can take $A$ to be a smooth Tate $\mathbb{Q}_p$-algebra. 

\begin{proof}
By de Jong's theorem \cite{dJAlterations}, we can write $\mathcal{O}$ as a filtered colimit of finite type regular $\mathbb{Z}_p$-algebras $B_i$ (and thus $B_i[\frac{1}{p}]$ is smooth over $\mathbb{Q}_p$). We shall show that taking $A = \widehat{B}_j[\frac{1}{p}]$ for $j$ sufficiently large does the job; note that $\mathrm{Spa}(C,\mathcal{O}) \cong \lim_i \mathrm{Spa}(\widehat{B}_i[\frac{1}{p}],\widehat{B}_i)$.

Consider the \'etale map 
\[\Spa(R,R^\circ)\to \mathbb T^d = \Spa(C\langle T_1^{\pm 1},\ldots,T_d^{\pm 1}\rangle,\roi\langle T_1^{\pm 1},\ldots,T_d^{\pm 1}\rangle).\]
This map  factors as a composition of rational embeddings and finite \'etale maps by hypothesis. As both rational embeddings and finite \'etale maps admit suitable ``noetherian approximation'' results, setting $A = \widehat{B}_j[\frac{1}{p}]$ for sufficiently large $j$, we immediately get (i), (ii), and and a map as in part (c) of (iii) that descends the previous map. It remains to show that, after possibly enlarging $\Sigma$ and replacing $A$ with a finer approximation, we can also find the subset $\Sigma_A \subset (R^\circ_A)^\ast$ satisfying parts (a) and (b) in (iii). For this, we first enlarge $\Sigma$ by adding in small perturbations, and then replace $A$ with rational localizations. More precisely, we first note that there exists some $N \geq 0$ such that any map 
\[C (\langle X_u^{\pm 1})_{u \in \Sigma} \rangle \to R\]
defined by $X_u \mapsto u + p^N a_u$ (for some $a_u \in R^\circ$) is surjective:  this follows from Lemma~\ref{PerturbSurj} below applied to the map on power bounded elements (and our hypothesis that this map is surjective when all the $a_u$ equal $0$). Now the map
\[ \varinjlim_{i \geq j} R_A^\circ \widehat{\otimes}_{\widehat{B}_j} \widehat{B}_i \to R^\circ \]
has dense image in a ring of definition. It follows that at the expense of enlarging $j$, we can choose a subset $\Sigma_A \subset (R_A^\circ)^\ast$ containing $\{T_1,...,T_d\}$ such that the correponding map
\[\alpha:B := A (\langle X_u^{\pm 1})_{u \in \Sigma_A} \rangle \to R_A\]
is surjective after base change along $A \to C$; this immediately gives (iii) (a). We also obtain the surjectivity required in (iii) (b) by replacing $A$ with a rational localization around the point $x \in \mathrm{Spa}(A,A^\circ)$ determined by the map $A \to C$ using Lemma~\ref{ApproxAffd1} below.
\end{proof}

The next three lemmas were used above.

\begin{lemma}
\label{PerturbSurj}
Let $f:M \to N$ be a map of $p$-torsionfree and $p$-adically complete abelian groups with $f[1/p]$ is surjective. There exists some $m \geq 0$ such that any map $g:M \to N$ with $g \equiv f \mod p^m$, the map $g[1/p]$ is also surjective.
\end{lemma}
\begin{proof}
By the open mapping theorem, there exists some $n \geq 0$ such that $N' := p^n N \subset f(M)$. Write $M' := f^{-1}(N')$, so $f$ restricts to a map $f':M' \to N'$ that is surjective. We shall show that taking $m = n+1$ does the job. Fix a map $h:M \to N$, and let $g = f + p^{n+1}h$. We must show that $g[1/p]$ is surjective. Now if $x \in M'$, then $f(x) \in N$ and $p^{n+1} h(x) = p \cdot p^n h(x) \in pN$. It immediately follows that $g$ carries $M'$ into $N'$ and that the induced map $g':M' \to N'$ agrees with $f'$ modulo $p$. In particular, $g'$ surjective modulo $p$. But then $g'$ must be surjective: any map between derived $p$-complete modules that is surjective modulo $p$ is surjective: apply \cite[Tag 09B9]{StacksProject} to the cokernel. It is also clear that $g'[1/p] = g[1/p]$, so the claim follows.
\end{proof}

\begin{lemma}
\label{ApproxAffd1}
Let $A \to B \xrightarrow{\alpha} C$ be maps of affinoid algebras that are topologically of finite type over a nonarchimedean field $K$. Assume that there exists a rank $1$ point $x \in \mathrm{Spa}(A,A^\circ)$ such that $B \widehat{\otimes}_A k(x) \xrightarrow{\alpha_x} C \widehat{\otimes}_A k(x) $ is surjective. Then there exists some rational subset $U \subset \mathrm{Spa}(A,A^\circ)$ containing $x$ such that $\alpha_U:B_U \to C_U$ is surjective; here $A_U = \mathcal{O}_{\mathrm{Spa}(A,A^\circ)}(U)$, $B_U := B  \widehat{\otimes}_A A_U$ and similarly for $C_U$. 
\end{lemma}

The assumption that $x$ be a rank $1$ point is critical to the conclusion above. In fact, taking $x$ be {\em any} point of rank $> 1$ on any affinoid $A$ gives a counterxample as follows. Take $A=B$ and let $C = A_V$ be the rational localization corresponding to a rational subset $V \subset \mathrm{Spa}(A,A^\circ)$ that contains the unique rank $1$ generalization $x_{gen}$ of $x$ but does not contain $x$; these exist as rational subsets give a basis for the topology. As $k(x) = k(x_{gen})$, the map $\alpha_x$ is bijective. Now if $U$ is any rational open that contains $x$, then the map $\alpha_U:A_U \to A_U \widehat{\otimes}_A A_V \simeq A_{U \cap V}$ is a rational localization corresponding to the inclusion $U \cap V \subset U$. If $\alpha_U$ were surjective for some $U$ containing $x$, then $U \cap V \subset U$ would be a closed subset by \cite[\S 1.4.1]{Huber}. But this is impossible as $U \cap V$ is not closed under specialization in $U$: the point $x_{gen}$ lies in $U \cap V$ and its specialization $x$ lies in $U$ (by assumption on $U$) but not in $V$ (by choice of $V$).

\begin{proof}
In this proof, the symbol $U$ will be reserved to denote an element of the collection $\mathfrak{U}$ of all rational subsets of $\mathrm{Spa}(A,A^\circ)$ that contains $x$. Let us begin by fixing some compatible rings of definition. Without loss of generality, we may assume $A = K \langle T_1,...,T_n \rangle$ is a Tate algebra. In particular, for each $U \in \mathfrak{U}$, we simply use $A_{0,U} := A_U^\circ$ as the ring of definition for $A_U$; write $A_0 = A_{0,\mathrm{Spa}(A,A^\circ)}$ for the ring of definition of $A$ itself.  Write $C = A \langle Y_1,...,Y_r \rangle/I$ for some ideal $I$, write $f_i \in C$ for the image of $Y_i$, and choose a ring of definition $C_0 \subset C$ that contains the $f_i$'s as well as the image of $A_0$. By writing $B$ as a quotient of a Tate algebra, we may assume without loss of generality that $B = A \langle X_1,...,X_n \rangle$, so $B_0 = A_0 \langle X_1,...,X_n \rangle \subset B$ is a ring of definition. We may enlarge the ring of definition $C_0$ if necessary to ensure that $\alpha(B_0) \subset C_0$. Then $B_{0,x} := B_0 \widehat{\otimes}_{A_0} k(x)^+ \simeq k(x)^+ \langle X_1,..,X_n \rangle$ is a ring of definition of $B_x := B \widehat{\otimes}_A k(x) \simeq k(x) \langle X_1,...,X_n \rangle$; similarly, $B_{0,U} := B_0 \widehat{\otimes}_{A_0} A_{0,U} \subset B_U$  is a ring of definition.  The $p^\infty$-torsion in $C_0 \widehat{\otimes}_{A_0} A_{0,U}$ and $C_0 \widehat{\otimes}_{A_0} k(x)^+$ is bounded by \cite[Lemma 1.2 (c)]{BoschLut1}, and the quotients $C_{0,U} := C_0 \widehat{\otimes}_{A_0} A_{0,U}/(p^\infty\text{-torsion})$ and $C_{0,x} := C_0 \widehat{\otimes}_{A_0} k(x)^+/(p^\infty\text{-torsion})$ give rings of definition of $C_U$ and $C_x$ respectively. For future reference, note that the natural maps $\varinjlim_{U \in \mathfrak{U}} B_{0,U} \to B_{0,x}$ and $\varinjlim_{U \in \mathfrak{U}} C_{0,U} \to C_{0,x}$ are isomorphisms after $p$-adic completions, and that $C_U$ and $C_x$ are topologically generated by the $f_i$'s over $A_U$ and $k(x)$ respectively. 

Next, let us fix some constants that we are allowed to perturb the $f_i$'s by without affecting the fact that they topologically generate $C$ or its localizations. Recall that we have chosen a presentation $C = A \langle Y_1,...,Y_r \rangle/I$. In particular, the natural map $A_0 \langle Y_1,...,Y_r \rangle \to C_0$ is surjective after inverting $p$. By the open mapping theorem, this map has a cokernel annihilated by $p^N$ for some fixed $N \geq 0$. By base change, the cokernel of the map $A_{0,U} \langle Y_1,...,Y_r \rangle \to C_{0,U}$ is also annihilated by $p^N$ for $U \in \mathfrak{U}$. It follows from Lemma~\ref{PerturbSurj} (and its proof)  that any $f_i' \in C_{0,U}$ such that $f_i' \equiv f_i \mod p^{N+1} C_{0,U}$ also provides a topological generating set for $C_U$ over $A_U$ for all $U \in \mathfrak{U}$ as well as for $C_x$ over $k(x)$. 

Now consider the map $\alpha_x:B_x \to C_x$. By assumption, this map is surjective. By the open mapping theorem,  the induced map $\alpha_x:B_{0,x} \to C_{0,x}$ has cokernel killed by $p^m$ for some $m \geq 0$. So we can choose $g_1,...,g_r \in B_{0,x}$ such that $\alpha_x(g_i) = p^m f_i$ for all $i$. Moreover, note that both $\alpha_x(B_{0,x})$ and $C_{0,x}$ are topologically finitely generated rings of definition of the tft $k(x)$-algebra $C_x$. Their integral closures must coincide with the subring $C_x^\circ$ of power bounded elements by \cite[\S 6.3.4, Proposition 1]{BGR} (see also \cite[Lemma 4.4]{HuberDefAdic}). In particular, $C_{0,x}$ is integral over $\alpha_x(B_{0,x})$. So for each $i \in \{1,...,r\}$, there exists a monic polynomial $Q_i(T) \in B_{0,x}[T]$ such that $\alpha_x(Q_i)(f_i) = 0$. 

As the natural map $\varinjlim_{U \in \mathfrak{U}} B_{0,U} \to B_{0,x}$ is surjective modulo any power of $p$, we can find $h_1,...,h_r \in B_{0,U}$ for a sufficiently small $U \in \mathfrak{U}$ such that the image of $h_i$ in $B_{0,x}$ differs from $g_i$ by $p^{N+m+1} B_{0,x}$. Then $\alpha_U(h_i) \in C_{0,U}$ are elements whose image in $C_{0,x}$ differs from $p^m f_i$ by $p^{N+m+1} C_{0,x}$. As the map $\varinjlim_{U \in \mathfrak{U}} C_{0,U} \to C_{0,x}$ is an isomorphism after $p$-adic completion, it follows that after possibly shrinking $U \in \mathfrak{U}$, we can ensure that $\alpha_U(h_i) - p^m f_i \in p^{N+m+1} C_{0,U}$. Dividing by $p^m$ shows that $f_i' := \frac{\alpha_U(h_i)}{p^m} \in C_{0,U}$ and that $f_i' - f_i \in p^{N+1} C_{0,U}$. By our choice of $N$ in the second paragraph of this proof, it follows that $f_1',...,f_r' \in C_{0,U}$ give a topological generating set that lies in the image of $B_{0,U}[\frac{1}{p}] = B_U \to C_U$. 

By shrinking $U \in \mathfrak{U}$ further, we can find monic polynomials $P_i(T) \in B_{0,U}[T]$ such that the image of $P_i(T)$ in $B_{0,x}[T]$ differs from $Q_i(T)$ by $p B_{0,x}$. Since $f_i' - f_i \in p^{N+1}C_{0,U}$, it follows that the image of $\alpha_U(P_i)(f_i')$ in $C_{0,x}$ lies in $pC_{0,x}$. By shrinking $U \in \mathfrak{U}$ further, we can ensure that $\alpha_U(P_i)(f_i') \in p C_{0,U}$. Applying Lemma~\ref{ApproxAffd2} below to the inclusion $\mathrm{im}(B_{0,U} \to C_{0,U}) \subset C_{0,U}$ and the elements $f_1',...,f_r' \in C_{0,U}$ then gives the result. 
\end{proof}

\begin{lemma}
\label{ApproxAffd2}
Let $B_0 \subset C_0$ be an inclusion of $p$-adically complete and $p$-torsionfree rings such that $C_0$ is generated as a $p$-adically complete $B_0$-algebra  by $f_1,...,f_r \in C_0$. Assume this data satisfies the following:
\begin{enumerate}
\item We have $f_i \in B_0[\frac{1}{p}]$ for all $i$. 
\item There exist monic polynomials $P_1,..,P_r \in B_0[x]$ such that $P_i(f_i) \in p C_0$.
\end{enumerate}
Then $p^k C_0  \subset B_0$ for some $k \gg 0$. 
\end{lemma}
\begin{proof}
It is enough to show that $C_0$ is a finite $B_0$-module. Indeed, then each $f_i$ would satisfy a monic polynomial over $B_0$, so (i) would show $B_0[\frac{1}{p}] = C_0[\frac{1}{p}]$, whence $p^k C_0 \subset B_0$ for $k \gg 0$ by the open mapping theorem. To show the finiteness, by Nakayama's lemma and completeness, it is enough to show the same modulo $p$. But then it is clear: $B_0/p \to C_0/p$ is a finite ring map simply because the generators $f_1,...,f_r \in C_0/p$ are integral over $B_0/p$ by (ii). 
\end{proof}

For the rest of this section, fix notation as in Lemma~\ref{ApproxVerySmall}, though we shall use the flexibility of enlarging $\Sigma$ as necessary.  Note that by smoothness of $A$ over a discretely valued field, there are continuous maps $A\to B_\dR^+$ lifting the map to $C$; we fix one such map. Let $D_{\Sigma_A}(R_A)$ be the completion of
\[
A\langle (X_u^{\pm 1})_{u\in \Sigma_A}\rangle\to R_A\ .
\]
Again, all powers of the ideal $\ker(A\langle (X_u^{\pm 1})_{u\in \Sigma_A}\rangle\to R_A)$ are closed, and thus this defines a complete and separated algebra. Our next goal is to compare this with Construction~\ref{ConsThick1}. 

For this, we shall need a structural property that we prove first. Let $R_A\hat{\otimes}_A B_\dR^+$ be defined as the inverse limit of $R_A\hat{\otimes}_A B_\dR^+/\xi^n$, where we note that $R_A$, $A$ and $B_\dR^+/\xi^n$ are all complete Tate $\bb Q_p$-algebras, and hence there is a well-defined completed tensor product: if $S_2\leftarrow S_1\to S_3$ is a diagram of complete Tate-$\bb Q_p$-algebras with rings of definition $S_{2,0}\leftarrow S_{1,0}\to S_{3,0}$, then
\[S_2\hat{\otimes}_{S_1} S_3 = (\mathrm{im}(S_{2,0}\otimes_{S_{1,0}} S_{3,0}\to S_2\otimes_{S_1} S_3))^\wedge_p[\tfrac 1p]\ .\]

The structural property that we need is the following:

\begin{lemma}\label{lem:etalelift} The algebra $R_A\hat{\otimes}_A B_\dR^+$ is a $\xi$-adically complete flat $B_\dR^+$-algebra, with
\[
(R_A\hat\otimes_A B_\dR^+)/\xi = R
\]
and more generally
\[
(R_A\hat{\otimes}_A B_\dR^+)/\xi^n = R_A\hat{\otimes}_A B_\dR^+/\xi^n\ ,
\]
which is topologically free over $B_\dR^+/\xi^n$.
\end{lemma}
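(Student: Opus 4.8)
The plan is to reduce everything to the torus, and to do so inductively on $n$, writing $B_n := B_\dR^+/\xi^n$ (a complete Tate $\bb Q_p$-algebra with ring of definition $A_\inf/\xi^n$, which is $p$-torsion free since $(\xi,p)$ is a regular sequence on $A_\inf$) and $R^{(n)} := R_A\hat\otimes_A B_n$, so that $R_A\hat\otimes_A B_\dR^+=\varprojlim_n R^{(n)}$ by definition of the completed tensor product. First I would show: each $R^{(n)}$ is topologically of finite type over $B_n$ in the sense of Lemma~\ref{lem:BdRmodxin}, flat and topologically free over $B_n$, and satisfies $R^{(n)}/\xi^{n-1}R^{(n)}=R^{(n-1)}$ with $R^{(1)}=R$. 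Granting this, all assertions of the lemma follow formally: the transition maps $R^{(n)}\to R^{(n-1)}$ are surjective, so $\varprojlim^1$ vanishes, giving that $R_A\hat\otimes_A B_\dR^+$ is $\xi$-adically complete with $(R_A\hat\otimes_A B_\dR^+)/\xi^n=R^{(n)}$; and $\varprojlim_n R^{(n)}$ is $\xi$-torsion free (an element killed by $\xi$ lies in $\xi^{n-1}R^{(n)}$ for all $n$ by flatness of $R^{(n)}$ over $B_n$, hence is $0$), so, being $\xi$-adically complete with flat reduction $R$ over the field $C=B_\dR^+/\xi$, it is flat over the $\xi$-adic discrete valuation ring $B_\dR^+$.

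The torus case is immediate: taking a noetherian ring of definition $A_0\subset A$, the ring $P_{A,0}=A_0\langle\ul T^{\pm1}\rangle$ is the $p$-adic completion of the free $A_0$-module on Laurent monomials, so $P_{A,0}\otimes_{A_0}A_\inf/\xi^n$ has $p$-adic completion $(A_\inf/\xi^n)\langle\ul T^{\pm1}\rangle$, which is $p$-torsion free, whence $P_A\hat\otimes_A B_n=B_n\langle\ul T^{\pm1}\rangle=B_\dR^+/\xi^n\langle\ul T^{\pm1}\rangle$. This is manifestly topologically of finite type over $B_n$, topologically free over $B_n$ on the Laurent monomials, flat, compatible with reduction mod $\xi^{n-1}$, and reduces mod $\xi$ to $C\langle\ul T^{\pm1}\rangle=P_A\hat\otimes_A C$. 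Next I would propagate these properties along the étale map $\Spa(R_A,R_A^\circ)\to\Spa(P_A,P_A^\circ)$, which by construction is a composite of rational embeddings and finite étale maps; it therefore suffices to check that if $S$ is a complete Tate $\bb Q_p$-algebra over $A$ with the above properties and $S\to S'$ is a rational localization $S'=S\langle f/g\rangle$ or a finite étale map, then $S'\hat\otimes_A B_n$ equals $(S\hat\otimes_A B_n)\langle f/g\rangle$ resp.~$S'\otimes_S(S\hat\otimes_A B_n)$ and again has the properties. The identification is standard (no completion is needed in the finite case; in the rational case it follows from the universal property, the elements $f_i,g$ still generating the unit ideal after base change); by Lemma~\ref{lem:BdRmodxin} the ring $S\hat\otimes_A B_n$ is noetherian, and in fact strongly noetherian since $(S\hat\otimes_A B_n)\langle\ul X\rangle$ is again topologically of finite type over $B_n$, so rational localizations are flat and finite étale maps are flat, giving flatness of $S'\hat\otimes_A B_n$ over $B_n$ and, being a quotient of $(S\hat\otimes_A B_n)\langle\ul X\rangle$ resp.~finite over $S\hat\otimes_A B_n$, topological finite type over $B_n$. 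Compatibility with reduction mod $\xi^{n-1}$ holds because rational subsets and finite étale covers are stable under the base change $B_n\to B_{n-1}$; reduction mod $\xi$ commutes with the (noetherian, hence closed-ideal) rational localization and produces the corresponding rational localization resp.~finite étale algebra of $S/\xi=S\hat\otimes_A C$ over $C$, i.e.~$S'\hat\otimes_A C$. Topological freeness of $S'\hat\otimes_A B_n$ over $B_n$ then follows from flatness together with the field case: $(S'\hat\otimes_A B_n)/\xi=S'\hat\otimes_A C$ is topologically free over $C=B_n/\xi$, and a set of lifts of a topological basis modulo $\xi$ induces, by flatness over $B_n$ and the finite $\xi$-adic filtration on $B_n$, an isomorphism from the $p$-adic completion of the corresponding free $B_n$-module (compare on $\mathrm{gr}_\xi$, where it becomes the given isomorphism over $C$ in each graded degree, then conclude by the five-lemma).

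Applying this propagation to the tower presenting $R_A$ over $P_A$ establishes the inductive claim for $R^{(n)}$, and hence the lemma. The main obstacle I anticipate is the bookkeeping around completions in the middle paragraph: precisely justifying that $-\hat\otimes_A B_n$ commutes with the rational localizations in the tower and that the reductions modulo $\xi$ and $\xi^{n-1}$ land on the expected algebras, using that every ideal in a ring topologically of finite type over $B_n$ is closed (Lemma~\ref{lem:BdRmodxin}) so that the usual closures in the definition of rational localizations are harmless. Once this is set up cleanly, the torus computation and the flatness of rational localizations over strongly noetherian rings do the rest, and the topological-freeness step is the only place where one genuinely uses that $C=B_\dR^+/\xi$ is a field.
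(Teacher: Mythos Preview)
Your approach is correct but takes a genuinely different route from the paper. The paper does not use the factorization of $\Spa(R_A,R_A^\circ)\to\bb T^d_A$ into rational and finite \'etale steps at all. Instead, it invokes Huber's result \cite[Corollary~1.7.3~(iii)]{Huber} to find a finitely generated $A^\circ[\ul T^{\pm1}]$-algebra $R_{A,\mathrm{alg}}$, \'etale after inverting $p$, with $R_A=(R_{A,\mathrm{alg}})^\wedge_p[\tfrac1p]$. Fixing a topologically finitely generated ring of definition $(B_n)_0\subset B_n$, the paper works with the finitely presented $(B_n)_0$-algebra $S_n=R_{A,\mathrm{alg}}\otimes_{A^\circ}(B_n)_0$, applies the Raynaud--Gruson theorem to show that $(S_n/\xi)$ modulo its bounded $p$-power torsion is free over $\roi$, lifts a basis to $S_n$, and checks directly that the resulting map $\bigoplus_i(B_n)_0\to S_n$ is injective with cokernel killed by a power of $p$; inverting $p$ and completing then gives topological freeness of $R_A\hat\otimes_A B_n$ over $B_n$.

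Your approach trades this single heavy input (an algebraic integral model plus Raynaud--Gruson) for several lighter ones: compatibility of $-\hat\otimes_A B_n$ with rational localization and finite \'etale base change, strong noetherianity of rings topologically of finite type over $B_n$ (hence flatness of rational localizations), and the existence of orthonormal bases for Banach spaces over the algebraically closed field $C$. The bookkeeping you flag is real but manageable: the identification $S'\hat\otimes_A B_n\cong(S\hat\otimes_A B_n)\langle f/g\rangle$ follows from universal properties on both sides, and in your final lifting step the open mapping theorem for $\bb Q_p$-Banach spaces upgrades the abstract module isomorphism on $\mathrm{gr}_\xi$ to a topological one. The paper's argument is shorter and never leaves the level of a single finitely presented integral model; yours stays entirely within complete Tate rings and avoids Raynaud--Gruson, at the cost of tracking the tower.
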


\begin{proof} It is enough to see that $R_A\hat{\otimes}_A B_\dR^+/\xi^n$ is topologically free (in particular, flat) over $B_\dR^+/\xi^n$ for all $n\geq 1$, with $(R_A\hat{\otimes}_A B_\dR^+/\xi^n)/\xi = R$.

There is a finitely generated $A^\circ[T_1^{\pm 1},\ldots,T_d^{\pm 1}]$-algebra $R_{A,\mathrm{alg}}$, \'etale after inverting $p$, such that $R_A=(R_{A,\mathrm{alg}})_p^\wedge[\tfrac 1p]$ by~\cite[Corollary 1.7.3 (iii)]{Huber}. Fix any topologically finitely generated ring of definition  $(B_\dR^+/\xi^n)_0\subset B_\dR^+/\xi^n$ containing $\xi$ and the image of $A^\circ$. Then
\[
R_A\hat{\otimes}_A B_\dR^+/\xi^n = ((R_{A,\mathrm{alg}}\otimes_{A^\circ} (B_\dR^+/\xi^n)_0)/(p\mathrm{-torsion}))^\wedge_p[\tfrac 1p]\ .
\]
Now $S_n = R_{A,\mathrm{alg}}\otimes_{A^\circ} (B_\dR^+/\xi^n)_0$ is a finitely presented $(B_\dR^+/\xi^n)_0$-algebra which is smooth, and in particular flat, after inverting $p$. Then $S_n/\xi$ is a finitely presented $\roi$-algebra which is smooth after inverting $p$. As it is finitely presented over $\roi$, the $p$-power torsion $T\subset S_n/\xi$ is finitely generated; thus, there is some power of $p$ killing $T$. Now, if $S_n$ has no connected components living entirely over the generic fibre $\Spec B_\dR^+/\xi^n$, then also $(S_n/\xi)/T$ has no connected components living entirely over $\Spec C$, and thus $(S_n/\xi)/T$ is free over $\roi$ by a result of Raynaud--Gruson, \cite[Th\'eor\`eme 3.3.5]{RaynaudGruson}. We assume that this is the case; in general one simply passes to the biggest direct factor of $S_n$ with this property. Pick a basis $(\bar{s}_i)_{i\in I}$ of $(S_n/\xi)/T$ as $\roi$-module, and lift the elements $\bar{s}_i$ to $s_i\in S_n$. This gives a map
\[
\alpha: \bigoplus_{i\in I} (B_\dR^+/\xi^n)_0\to S_n\ .
\]
We claim that $\alpha$ is injective, and that the cokernel of $\alpha$ is killed by a power of $p$. For injectivity, it is enough to check that
\[
\bigoplus_{i\in I} B_\dR^+/\xi^n\to S_n[\tfrac 1p]
\]
is an isomorphism. But both modules are flat over $B_\dR^+/\xi^n$, so it is enough to check that
\[
\bigoplus_{i\in I} C\to S_n/\xi[\tfrac 1p]
\]
is an isomorphism, which follows from the choice of the $s_i$. Now, to check that the cokernel of $\alpha$ is killed by a power of $p$, it suffices to check modulo $\xi$; there, again the result follows from the choice of the $s_i$, and the fact that $T$ is killed by a power of $p$.

It follows that in the formula
\[
R_A\hat\otimes_A B_\dR^+/\xi^n = (S_n/(p\mathrm{-torsion}))^\wedge_p[\tfrac 1p]\ ,
\]
one can replace $S_n$ by $\bigoplus_{i\in I} (B_\dR^+/\xi^n)_0$, which shows that $R_A\hat\otimes_A B_\dR^+/\xi^n$ is topologically free over $B_\dR^+/\xi^n$. Moreover, the proof shows that $(R_A\hat{\otimes}_A B_\dR^+/\xi^n)/\xi = R$.
\end{proof}

We can now give the promised comparison between $D_{\Sigma_A}(R_A)$ and $D_\Sigma(R)$.

\begin{lemma}\label{lem:DRnice} One has the following description of $D_{\Sigma_A}(R_A)$ and $D_\Sigma(R)$.
\begin{enumerate}
\item There is a unique isomorphism of topological algebras
\[
D_{\Sigma_A}(R_A)\cong R_A[[(X_u-u)_{u\in \Sigma_A, u\neq T_1,\ldots,T_d}]]
\]
compatible with the projections to $R_A$, and the structure of $A\langle (X_u^{\pm 1})_{u\in \Sigma_A}\rangle$-algebras, where $X_{T_i}\mapsto T_i$ on the right.
\item If $\Sigma$ is sufficiently large, there is an isomorphism of topological algebras
\[
D_\Sigma(R)\cong (R_A\hat{\otimes}_A B_\dR^+)[[(X_u-\tilde{u})_{u\in \Sigma,u\neq T_1,\ldots,T_d}]]\ ,
\]
compatibly with the projection to $R$, and the structure of $B_\dR^+\langle (X_u^{\pm 1})_{u\in \Sigma}\rangle$-algebras (via $X_{T_i}\mapsto T_i$). Here, $\tilde{u}\in R_A\hat{\otimes}_A B_\dR^+$ is a lift of $u\in R_A\hat{\otimes}_A C$. In particular, $D_\Sigma(R)$ is $\xi$-adically complete and $\xi$-torsion-free.
\end{enumerate}
\end{lemma}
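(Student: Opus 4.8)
I would deduce (ii) from (an analogue over $B_\dR^+/\xi^n$ of) (i), so let me describe (i) first. Write $P = A\langle (X_u^{\pm 1})_{u\in\Sigma_A}\rangle$, let $\pi\colon P\to R_A$ be the surjection $X_u\mapsto u$ and $I = \ker\pi$, so that $D_{\Sigma_A}(R_A)=\widehat P_I$ is the $I$-adic completion (recall $P$ is Noetherian, being a Tate algebra over the affinoid $A$, and all powers of $I$ are closed). Enumerate $\Sigma_A\setminus\{T_1,\dots,T_d\}=\{u_1,\dots,u_c\}$ and set $z_j=X_{u_j}-u_j$. The plan is to exploit that $\pi$ exhibits $\Spa R_A$ as a Zariski-closed subspace of the relative torus $\Spa P$ over $A$, and that composing with the projection to the $T_1,\dots,T_d$-coordinates is the given étale map; hence this immersion is "cut out by coordinates" after an étale localization on the torus, forcing $\widehat P_I\cong R_A[[z_1,\dots,z_c]]$.

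Concretely, form $Q:=R_A\,\widehat\otimes_{A\langle T_1^{\pm1},\dots,T_d^{\pm1}\rangle}P$, where $A\langle T^{\pm1}\rangle\to P$ sends $T_i\mapsto X_{T_i}$ and $A\langle T^{\pm1}\rangle\to R_A$ is the given étale map; thus $Q=R_A\langle(X_{u_j}^{\pm1})_{j=1}^c\rangle$ and $\phi\colon P\to Q$ is étale, being a base change of $A\langle T^{\pm1}\rangle\to R_A$. Let $\psi\colon Q\to R_A$ be $X_{u_j}\mapsto u_j$; then $\psi\circ\phi=\pi$, $I=\phi^{-1}(\ker\psi)$, and $\ker\psi=(X_{u_j}-u_j)_j$ (the kernel of a point-evaluation at a Zariski-closed point of a torus), while $P/I\cong Q/\ker\psi\cong R_A$. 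Now formal étaleness of $\phi$, applied to the pro-nilpotent thickenings $P/I^n\to R_A$ and $Q/(\ker\psi)^n\to R_A$, produces mutually inverse continuous $P$-algebra maps between $\widehat P_I$ and $\widehat Q_{\ker\psi}$ (each exists and lifts $\pi$, resp.\ $\psi$, uniquely; the composites are identities by the same uniqueness), so $D_{\Sigma_A}(R_A)\cong\widehat Q_{\ker\psi}$. Finally a binomial/Taylor expansion of the $X_{u_j}^{\pm1}$ around the units $u_j\in R_A$ identifies $Q/(\ker\psi)^m$ with $R_A[z_1,\dots,z_c]/(z_1,\dots,z_c)^m$, so $\widehat Q_{\ker\psi}\cong R_A[[z_1,\dots,z_c]]$; tracing through, $X_{T_i}\mapsto T_i$ and $X_{u_j}\mapsto u_j+z_j$, which both pins the isomorphism down and yields uniqueness, since any $P$-algebra isomorphism over $R_A$ is determined on the dense subalgebra generated over $R_A$ by the $X_u$, equivalently by the $z_j$.

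For (ii), I run the same argument over the base $B_\dR^+$, using in place of $R_A$ the étale lift $\mathcal R:=R_A\,\widehat\otimes_A B_\dR^+$. By Lemma~\ref{lem:etalelift}, $\mathcal R$ is $\xi$-adically complete and $\xi$-torsion-free, $\mathcal R/\xi=R$, and $\mathcal R/\xi^n=R_A\,\widehat\otimes_A B_\dR^+/\xi^n$ is topologically free over $B_\dR^+/\xi^n$. Since the hypotheses force (after enlarging $\Sigma$) the distinguished $d$ elements $T_1,\dots,T_d$ of $\Sigma$ to come from $\Sigma_A$, the map $B_\dR^+/\xi^n\langle T^{\pm1}\rangle\to\mathcal R/\xi^n$ is étale for every $n$, and all rings in sight are Noetherian by Lemma~\ref{lem:BdRmodxin}(ii). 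Hence the argument of (i) applies modulo each $\xi^n$ and gives, compatibly in $n$, isomorphisms $\widehat{(B_\dR^+/\xi^n\langle(X_u^{\pm1})_u\rangle)}_{I_n}\cong(\mathcal R/\xi^n)[[z_1,\dots,z_c]]$, where $I_n$ is the relevant kernel and $z_j=X_{u_j}-\tilde u_j$ for a lift $\tilde u_j\in\mathcal R$ of $u_j$. Passing to $\varprojlim_n$ produces $(R_A\,\widehat\otimes_A B_\dR^+)[[z_1,\dots,z_c]]$ on the right. On the left, the map $B_\dR^+\langle(X_u^{\pm1})_u\rangle\to R$ factors through reduction mod $\xi$, so $\xi B_\dR^+\langle(X_u^{\pm1})_u\rangle\subseteq I(R)$; consequently each $B_\dR^+\langle(X_u^{\pm1})_u\rangle/I(R)^m$ is a quotient of the Noetherian $B_\dR^+/\xi^m\langle(X_u^{\pm1})_u\rangle$ by a finitely generated ideal, hence $\xi$-adically complete, so $D_\Sigma(R)$ is $\xi$-adically complete (and automatically $\xi$-torsion-free, being an $R_A\,\widehat\otimes_A B_\dR^+$-power series ring), and the reduction maps identify $D_\Sigma(R)$ with $\varprojlim_n$ of the right-hand sides above. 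Combining, $D_\Sigma(R)\cong(R_A\,\widehat\otimes_A B_\dR^+)[[(X_u-\tilde u)_{u\in\Sigma,\,u\neq T_1,\dots,T_d}]]$, compatibly with the projection to $R$ and the $B_\dR^+\langle(X_u^{\pm1})_u\rangle$-structure.

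The routine but genuinely delicate point is the last interchange of completions in (ii): verifying that completing $B_\dR^+\langle(X_u^{\pm1})_u\rangle$ along $I(R)$ and then reducing mod $\xi^n$ agrees with first reducing mod $\xi^n$ and then completing along $I_n$. This rests on the inclusion $\xi B_\dR^+\langle(X_u^{\pm1})_u\rangle\subseteq I(R)$ together with the Artin--Rees lemma in the Noetherian Tate algebras $B_\dR^+/\xi^n\langle(X_u^{\pm1})_u\rangle$; everything else reduces either to the formal-étaleness bookkeeping of (i) or to the elementary Taylor-expansion identity for Tate algebras, which is where the power-boundedness of $u,u^{-1}$ (ensuring $\|u^k\|$ is bounded for $k\in\mathbb Z$, so the expansion converges) enters.
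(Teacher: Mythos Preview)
Your argument is correct and follows essentially the same strategy as the paper: both hinge on the \'etaleness of $R_A$ over $A\langle T_1^{\pm1},\dots,T_d^{\pm1}\rangle$ to produce the lift of $R_A$ into the completion, and then finish by expanding the remaining $X_u$ around their values $u$. Your repackaging via the intermediate ring $Q=R_A\,\widehat\otimes_{A\langle T^{\pm1}\rangle}P$ and the abstract formal-\'etaleness of $\phi\colon P\to Q$ is a clean way to organize this; the paper instead constructs the lift $R_A\to D_{\Sigma_A}(R_A)$ by hand, passing through an algebraic model $R_{A,\mathrm{alg}}$ (\'etale over $A^\circ[T^{\pm1}]$ after inverting $p$) to get the unique algebraic lift, and then carefully checking that this extends continuously to the $p$-adic completion by analyzing rings of definition in the pro-system of $D_j$'s. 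This is the one place where the paper is more explicit than you are: your appeal to ``formal \'etaleness of $\phi$'' for the thickenings $P/I^n$ presupposes that \'etale maps of affinoid algebras (here: composites of rational embeddings and finite \'etale maps) satisfy the infinitesimal lifting criterion for such topological nilpotent thickenings, which is true but is exactly what the paper's $R_{A,\mathrm{alg}}$ argument is unpacking. For part (ii), both you and the paper simply say ``repeat the argument'' using Lemma~\ref{lem:etalelift} and the Noetherianness from Lemma~\ref{lem:BdRmodxin}; your added discussion of the interchange of the $I(R)$-adic and $\xi$-adic completions is a useful clarification that the paper leaves implicit.
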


\begin{proof} For (i), we first want to find a lift $R_A\to D_{\Sigma_A}(R_A)$ of the projection $D_{\Sigma_A}(R_A)\to R_A$. The strategy is to pick the obvious lifting on $A\langle T_1^{\pm 1},\ldots,T_d^{\pm 1}\rangle$ sending $T_i$ to $X_{T_i}$, and then extend to $R_A$ by \'etaleness; however, the second step needs some care because of topological issues.

As above, there is a finitely generated $A^\circ[T_1^{\pm 1},\ldots,T_d^{\pm 1}]$-algebra $R_{A,\mathrm{alg}}$, \'etale after inverting $p$, such that $R_A=(R_{A,\mathrm{alg}})_p^\wedge[\tfrac 1p]$ by~\cite[Corollary 1.7.3 (iii)]{Huber}.

The map $A^\circ[T_1^{\pm 1},\ldots,T_d^{\pm 1}]\to D_{\Sigma_A}(R_A)$ given by $T_i\mapsto X_{T_i}$ lifts uniquely to $R_{A,\mathrm{alg}}$. We claim that it also extends to the $p$-adic completion. For this, note that the completion of $A\langle (X_u^{\pm 1})_{u\in \Sigma}\rangle\to R_A$ is an inverse limit of complete Tate $A$-algebras $D_j$ which are topologically of finite type, with reduced quotient $R_A$. In particular, the subring of powerbounded elements $D_j^\circ\subset D_j$ is the preimage of $R_A^\circ\subset R_A$. Thus, $R_{A,\mathrm{alg}}$ is a finitely generated $A^\circ$-algebra mapping into $D_j^\circ$; as such, it maps into some ring of definition of $D_j$, and therefore the map extends to the $p$-adic completion. This gives the desired map $R_A\to D_{\Sigma_A}(R_A)$.

In particular, we get a canonical continuous map
\[
R_A[[(X_u-u)_{u\in \Sigma_A,u\neq T_1,\ldots,T_d}]]\to D_{\Sigma_A}(R_A)\ .
\]
We claim that this is a topological isomorphism. For this, we use the commutative diagram
\[\xymatrix{
A\langle (X_u^{\pm 1})_{u\in \Sigma_A}\rangle\ar[r]\ar[dr]& R_A[[(X_u-u)_{u\in \Sigma_A,u\neq T_1,\ldots,T_d}]]\ar[d]\ar[dr] \\
&D_{\Sigma_A}(R_A)\ar[r]&R_A,
}\]
where we use the identity
\[
X_u^{-1} = u^{-1}\left(1+\frac{X_u-u}{u}\right)^{-1}
\]
in $R_A[[(X_u-u)_{u\in \Sigma_A,u\neq T_1,\ldots,T_d}]]$ to define the upper map. The upper part of the diagram implies that there is a continuous map
\[
D_{\Sigma_A}(R_A)\to R_A[[(X_u-u)_{u\in \Sigma_A,u\neq T_1,\ldots,T_d}]]\ .
\]
The maps are inverse: In the direction from $D_{\Sigma_A}(R_A)$ back to $D_{\Sigma_A}(R_A)$, this follows by construction. In the other direction, the resulting endomorphism of the separated ring $R_A[[(X_u-u)_{u\in \Sigma_A,u\neq T_1,\ldots,T_d}]]$ must be the identity on $R_{A,\mathrm{alg}}$ and all $X_u$, and thus by continuity everywhere, finishing the proof of (i).

For part (ii), we repeat the same arguments, using Lemma~\ref{lem:BdRmodxin} and Lemma~\ref{lem:etalelift}.
\end{proof}

As observed in Construction~\ref{ConsThick1}, the derivations $\frac{\partial}{\partial \log(X_u)}$ extend continouously to $D_\Sigma(R)$. Thus, we can build a de~Rham complex
\[
K_{D_\Sigma(R)}\left((\frac{\partial}{\partial \log(X_u)})_{u\in \Sigma}\right)\ ,
\]
which starts with
\[
0\to D_\Sigma(R)\xTo{(\frac{\partial}{\partial \log(X_u)})_u} \bigoplus_{u\in \Sigma} D_\Sigma(R)\to \ldots\ .
\]
By abuse of notation, we will denote it by $\Omega^\bullet_{D_\Sigma(R)/B_\dR^+}$. This complex, or rather the filtered colimit over all sufficiently large $\Sigma$, is our explicit model for (the so far undefined)
\[
R\Gamma_\crys(\Spa(R,R^\circ)/B_\dR^+)\ .
\]
We note that in Lemma~\ref{lem:crysdRcomp}, we will check that the transition maps $\Omega^\bullet_{D_\Sigma(R)/B_\dR^+}\to \Omega^\bullet_{D_{\Sigma^\prime}(R)/B_\dR^+}$ are quasi-isomorphisms, for any inclusion $\Sigma\subset \Sigma^\prime$ of sufficiently large subsets of $R^{\circ\times}$.

We want to compare crystalline and de~Rham cohomology. For this, it is convenient to introduce an intermediate object: Namely, let $\tilde{D}_\Sigma(R)$ be the completion of
\[
(D_\Sigma(R)/\xi)\hat{\otimes}_C R\to R\ .
\]
This comes with derivations $\frac{\partial}{\partial \log(X_u)}$ for $u\in \Sigma$, and $\frac{\partial}{\partial \log (T_i)}$ for $i=1,\ldots,d$, and one can build a corresponding de~Rham complex $\Omega^\bullet_{\tilde{D}_{\Sigma}(R)/C}$ (taking into account both derivations). Note that this complex does not actually depend on the choice of coordinates $T_1,\ldots,T_d$, as one can parametrize the second set of derivations canonically by (the dual of) $\Omega^{1,\cont}_{R/C}$. Then there are natural maps of complexes
\[
\Omega^\bullet_{D_\Sigma(R)/B_\dR^+}/\xi\to \Omega^\bullet_{\tilde{D}_{\Sigma}(R)/C}\leftarrow \Omega^\bullet_{R/C}\ .
\]
Again, there is also a version taking into account the algebra $R_A$. Namely, let $\tilde{D}_\Sigma(R_A)$ be the completion of
\[
D_\Sigma(R)\hat{\otimes}_{B_\dR^+} (R_A\hat{\otimes}_A B_\dR^+)\to R\ .
\]
In this case, there are natural maps of complexes as follows:
\[
\Omega^\bullet_{D_\Sigma(R)/B_\dR^+}\to \Omega^\bullet_{\tilde{D}_\Sigma(R_A)/B_\dR^+}\leftarrow \Omega^\bullet_{R_A/A}\hat{\otimes}_A B_\dR^+\ .
\]

\begin{lemma}\label{lem:crysdRcomp} The maps
\[
\Omega^\bullet_{D_\Sigma(R)/B_\dR^+}/\xi\to \Omega^\bullet_{\tilde{D}_\Sigma(R)/C}\leftarrow \Omega^\bullet_{R/C}
\]
and
\[
\Omega^\bullet_{D_\Sigma(R)/B_\dR^+}\to \Omega^\bullet_{\tilde{D}_\Sigma(R_A)/B_\dR^+}\leftarrow \Omega^\bullet_{R_A/A}\hat{\otimes}_A B_\dR^+
\]
are quasi-isomorphisms.

In particular, for any inclusion $\Sigma\subset \Sigma^\prime$ of sufficiently large subsets of $R^{\circ\times}$, the map
\[
\Omega^\bullet_{D_\Sigma(R)/B_\dR^+}\to \Omega^\bullet_{D_{\Sigma^\prime}(R)/B_\dR^+}
\]
is a quasi-isomorphism.
\end{lemma}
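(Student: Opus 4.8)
The plan is to deduce all three statements from the formal Poincar\'e lemma, using the explicit descriptions of the rings provided by Lemma~\ref{lem:DRnice} (and its evident analogues for $\tilde D_\Sigma(R)$ and $\tilde D_\Sigma(R_A)$, obtained by the same method). First I would unwind these: after fixing the $d$ distinguished elements of $\Sigma$ giving the \'etale coordinates $T_1,\ldots,T_d$, each of $D_\Sigma(R)$, $\tilde D_\Sigma(R)$, $\tilde D_\Sigma(R_A)$ is a power series ring over $R_A\hat\otimes_A B_\dR^+$, over $R$, resp.\ over $R_A\hat\otimes_A B_\dR^+$, in finitely many ``extra'' variables --- the $X_u-\tilde u$ with $u\in\Sigma$ not a coordinate, together with the ``diagonal'' variables $T_i\otimes 1-1\otimes T_i$ coming from the completed tensor factor; in particular all these rings are formally smooth over $B_\dR^+$ (resp.\ over $C$) by Lemma~\ref{lem:etalelift}, and each of the maps appearing in the two chains is induced by a ring homomorphism $P\to\hat P$ exhibiting $\hat P$ as the completion of a formally smooth $B_\dR^+$-algebra (resp.\ $C$-algebra) along an ideal $\mathcal I$ generated by a regular sequence of extra variables, with quotient $R$. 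I would also check that the complexes in question --- the Koszul complexes of the prescribed $\partial/\partial\log(\cdot)$-derivations --- coincide with the corresponding continuous de~Rham complexes, which amounts to checking that those derivations form a basis of the continuous derivation module; after rescaling $\partial/\partial\log X_u$ by the unit $X_u$ (legitimate as $X_u$ is invertible in $D_\Sigma(R)$, cf.\ the argument in the proof of Corollary~\ref{cor:compqdRdR}), one may as well work with honest coordinate derivations.

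Second I would invoke the formal Poincar\'e lemma: for a surjection $\hat P\to R$ of formally smooth algebras over a $\mathbb Q$-algebra $B$ with kernel $\mathcal I$ generated by a regular sequence, the natural map $\widehat\Omega^\bullet_{\hat P/B}\to\Omega^\bullet_{R/B}$ is a quasi-isomorphism. Since $B_\dR^+$ and $C$ are $\mathbb Q$-algebras, this applies. Its proof --- which I would either cite (Grothendieck) or reproduce --- goes by filtering $\widehat\Omega^\bullet_{\hat P/B}$ by the powers of $\mathcal I$ (a filtration automatically preserved by every derivation, up to the Leibniz shift $\partial(\mathcal I^k)\subseteq\mathcal I^{k-1}$), identifying the associated graded with $\Omega^\bullet_{R/B}$ tensored over $R$ with the polynomial de~Rham complex of $\mathrm{Sym}_R(\mathcal I/\mathcal I^2)$ relative to $R$, and observing that the latter is acyclic in positive $\mathrm{Sym}$-degree by the Euler-operator contraction (valid over a $\mathbb Q$-algebra); as the $\mathcal I$-adic filtration is complete and exhaustive, one concludes. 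Applying this over $C$ to $D_\Sigma(R)/\xi\twoheadrightarrow R$ and to $\tilde D_\Sigma(R)\twoheadrightarrow R$, together with the evident compatibilities --- noting that the retraction $\tilde D_\Sigma(R)\to D_\Sigma(R)/\xi$-style composites recover the augmentations, and that $\Omega^\bullet_{D_\Sigma(R)/B_\dR^+}/\xi=\widehat\Omega^\bullet_{(D_\Sigma(R)/\xi)/C}$ since $D_\Sigma(R)$ is $\xi$-torsion free with $D_\Sigma(R)/\xi$ formally smooth over $C$ --- gives the first chain of quasi-isomorphisms.

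For the second chain I would reduce modulo $\xi$. By Lemma~\ref{lem:DRnice}(ii) and the flatness statements in Lemma~\ref{lem:etalelift}, all of $\Omega^\bullet_{D_\Sigma(R)/B_\dR^+}$, $\Omega^\bullet_{\tilde D_\Sigma(R_A)/B_\dR^+}$, $\Omega^\bullet_{R_A/A}\hat\otimes_A B_\dR^+$ have $\xi$-torsion free, $\xi$-adically complete terms; hence the cones of the two maps are derived $\xi$-complete and $\xi$-torsion free, and it suffices to check the maps modulo $\xi$. But $(\Omega^\bullet_{R_A/A}\hat\otimes_A B_\dR^+)/\xi=\Omega^\bullet_{R_A/A}\hat\otimes_A C=\Omega^\bullet_{R/C}$ by base change of differentials along $A\to C$ (using that $R_A$ is smooth over $A$ and $R=R_A\hat\otimes_A C$), and $\tilde D_\Sigma(R_A)/\xi=\tilde D_\Sigma(R)$, so modulo $\xi$ the second chain becomes exactly the first chain, already treated. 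Finally, for the ``in particular'': given sufficiently large $\Sigma\subset\Sigma'$, the map $\Omega^\bullet_{D_\Sigma(R)/B_\dR^+}\to\Omega^\bullet_{D_{\Sigma'}(R)/B_\dR^+}$ is again a map of $\xi$-torsion free, $\xi$-adically complete complexes, hence a quasi-isomorphism as soon as it is one modulo $\xi$, i.e.\ as soon as $\widehat\Omega^\bullet_{(D_\Sigma(R)/\xi)/C}\to\widehat\Omega^\bullet_{(D_{\Sigma'}(R)/\xi)/C}$ is a quasi-isomorphism; but by the first chain both sides are identified in the derived category with $\Omega^\bullet_{R/C}$ via the maps to $\Omega^\bullet_{\tilde D_\Sigma(R)/C}$, resp.\ $\Omega^\bullet_{\tilde D_{\Sigma'}(R)/C}$, and the obvious map $\tilde D_\Sigma(R)\to\tilde D_{\Sigma'}(R)$ makes the resulting square commute, so the claim follows.

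The main obstacle I anticipate is the first step: checking carefully that the rings of Lemma~\ref{lem:DRnice} and its analogues are formally smooth over $C$ resp.\ $B_\dR^+$ with the ideals cutting out $R$ generated by regular sequences of the extra variables, and that the Koszul complexes of the prescribed $\partial/\partial\log(\cdot)$-derivations really are the continuous de~Rham complexes of these rings (equivalently, that those derivations form a basis of the derivation module). Once this bookkeeping is in place, the rest is the classical formal Poincar\'e lemma over a $\mathbb Q$-algebra together with routine manipulations with $\xi$-adic completions.
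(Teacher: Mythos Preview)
Your proposal is correct and follows essentially the same strategy as the paper: explicit power-series descriptions of the rings involved, the formal Poincar\'e lemma over a $\mathbb Q$-algebra, and reduction modulo $\xi$ for the $B_\dR^+$-statements via derived $\xi$-completeness and $\xi$-torsion-freeness.

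There is one minor difference in emphasis worth noting. For the first map $\Omega^\bullet_{D_\Sigma(R)/B_\dR^+}/\xi\to\Omega^\bullet_{\tilde D_\Sigma(R)/C}$, the paper argues directly by producing a second presentation $\tilde D_\Sigma(R)\cong(D_\Sigma(R)/\xi)[[(T_i-X_{T_i})_{i=1,\ldots,d}]]$ (obtained by identifying the completion of $R\hat\otimes_C R\to R$ with $R[[(T_i\otimes 1-1\otimes T_i)_i]]$), so that the map in question is itself of Poincar\'e-lemma type. You instead apply the Poincar\'e lemma separately to the two surjections $D_\Sigma(R)/\xi\twoheadrightarrow R$ and $\tilde D_\Sigma(R)\twoheadrightarrow R$ and then check compatibility. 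Both routes work; the paper's is slightly more direct for this particular map, while yours is more uniform. Your concern about identifying the Koszul complexes with continuous de~Rham complexes is not an issue in the paper's setup, since $\Omega^\bullet_{D_\Sigma(R)/B_\dR^+}$ is by definition (``by abuse of notation'') the Koszul complex on the $\partial/\partial\log(X_u)$.
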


\begin{proof} Explicitly,
\[
\tilde{D}_\Sigma(R) = R[[(X_u-u)_{u\in \Sigma}]]\ ,
\]
which easily shows that the second map is a quasi-isomorphism. On the other hand, we claim that
\[
\tilde{D}_\Sigma(R) = (D_\Sigma(R)/\xi)[[(T_i-X_{T_i})_{i=1,\ldots,d}]]\ .
\]
This presentation implies that the first map is a quasi-isomorphism. To check the claim, we use Lemma~\ref{lem:DRnice} to see that $\tilde{D}_{\Sigma}(R)$ is the completion of
\[
R[[(X_u-u)_{u\in \Sigma, u\neq T_1,\ldots,T_d}]]\hat{\otimes}_C R\to R\ .
\]
But the completion of $R\hat{\otimes}_C R\to R$ is given by $R[[(T_i\otimes 1 - 1\otimes T_i)_{i=1,\ldots,d}]]$. Combining these observations, we see that $\tilde{D}_\Sigma(R) = (D_\Sigma(R)/\xi)[[(T_i-X_{T_i})_{i=1,\ldots,d}]]$, as desired.

The second part follows, as everything is derived $\xi$-complete (as the terms of the complexes are $\xi$-adically complete and $\xi$-torsion free), so it suffices to check that one gets a quasi-isomorphism modulo $\xi$, which reduces to the first part.
\end{proof}

Using the last statement of the preceding lemma, one can define a fully functorial $B_\dR^+$-valued cohomology theory on the category of very small smooth affinoids over $C$ as follows.

\begin{definition}
For a very small smooth Tate $C$-algebra $R$ with $X := \mathrm{Spa}(R,R^\circ)$, define the $B_{\dR}^+$-complex $C^\bullet_{\crys}(X/B_{\dR}^+)$ as the filtered colimit $\varinjlim_{\Sigma}\Omega^\bullet_{D_\Sigma(R)/B_\dR^+}$ where $\Sigma$ ranges over all sufficiently large finite subsets of units in $R^\circ$. Write $R\Gamma_{\crys}(X/B_\dR^+) \in D(B_\dR^+)$ for the image of $C^\bullet_\crys(X/B_\dR^+)$ in the derived category. 
\end{definition}

It is easy to see that the $C^\bullet_{\crys}(-/B_\dR^+)$ gives a presheaf of $B_\dR^+$-complexes on the category of very small smooth affinoids over $C$.  Moreover, by Lemma~\ref{lem:crysdRcomp}, we  have a natural quasi-isomorphism 
\[ R\Gamma_{\crys}(X/B_{\dR}^+) \dotimes_{B_\dR^+} C \simeq \Omega^\bullet_{R/C}.\]
We shall later extend these constructions to proper smooth rigid spaces over $C$.

\subsection{Interlude: spreading out proper rigid spaces, following Conrad-Gabber}
\label{ss:ConradGabber}

In this section, we prove that any proper rigid space can be realized as the fibre of a family defined over a discretely valued field (Corollary~\ref{ConradGabberAlgebraization}). Our strategy is to reduce to a similar statement about formal models. The latter is a special case of the following result.

\begin{proposition}
\label{AlgebraizeFormal}
Let $(W,\mathfrak{m})$ be a complete noetherian local ring with residue field $k$. Let $\mathcal{O}$ be an $\mathfrak{m}$-adically complete local $W$-algebra such that the local ring $\mathcal{O}/\mathfrak{m}\mathcal{O}$ is $0$-dimensional with residue field $k$.
Let $\mathfrak{X}_{\mathcal{O}}/\mathcal{O}$ be a proper flat adic formal scheme, where $\mathcal{O}$ is topologized $\mathfrak{m}$-adically.  Then there exist the following:
\begin{enumerate}
\item A complete noetherian local $W$-algebra $R$ with residue field $k$, and a proper flat adic formal scheme $\mathfrak{X}_R/R$, where $R$ is topologized by powers of its maximal ideal.
\item A $W$-algebra map $\eta:R \to \mathcal{O}$ and an isomorphism $\psi:\eta^* \mathfrak{X}_R \simeq \mathfrak{X}_{\mathcal{O}}$ of formal $\mathcal{O}$-schemes. 
\end{enumerate}
\end{proposition}

Note that any ring $R$ as in $(1)$ above is a quotient of a formal power series ring over $W$: if $a_1,...,a_n \in R$ are generators of the maximal ideal, then the map $W [[ x_1,...,x_n ]] \xrightarrow{x_i \mapsto a_i} R$ is a surjection of local rings.

\begin{proof}
For any discrete $\mathcal{O}$-algebra $B$, write $\mathfrak{X}_B/B$ for base change of $\mathfrak{X}_{\mathcal{O}}/\mathcal{O}$; as $B$ is discrete, $\mathfrak{X}_B/B$ is a proper flat $B$-scheme. In particular, the special fibre $\mathfrak{X}_k/k$ is a proper $k$-scheme; we shall construct the required pair $\mathfrak{X}_R/R$ as a versal deformation of $\mathfrak{X}_k/k$ relative to $W$.

Let $\mathrm{Art}$ be the category of artinian $W$-algebras with residue field $k$. Consider the functor $\mathrm{Def}_{\mathfrak{X}_k}:\mathrm{Art} \to \mathrm{Set}$ of deformations of $\mathfrak{X}_k$, i.e., $\mathrm{Def}_{\mathfrak{X}_k}(A)$ is the set of isomorphism classes of lifts of $\mathfrak{X}_k$ to proper flat $A$-schemes. By Schlessinger \cite[Proposition 3.10]{Schlessinger} (see also \cite[Tag 0ET6]{StacksProject}), this functor admits a versal deformation, i.e., there exists a complete noetherian local $W$-algebra $R$ with residue field $k$ and a proper flat adic formal scheme $\mathfrak{X}_R/R$ (where $R$ is topologized by powers of its maximal ideal) deforming $\mathfrak{X}_k/k$ such that the induced classifying map $h_R := \mathrm{Hom}_W(R,-) \to \mathrm{Def}_{\mathfrak{X}_k}$ is formally smooth, i.e., for any surjection $B \to A$ in $\mathrm{Art}$, the map $h_R(B) \to h_R(A) \times_{\mathrm{Def}_{\mathfrak{X}_k}(A)} \mathrm{Def}_{\mathfrak{X}_k}(B)$ is surjective. We shall check that this construction does the job. Note that (i) and (ii) are clear from the construction. 

Let us first explain how to extend functors defined on $\mathrm{Art}$ to a slightly larger class of $W$-algebras that includes rings of the form $\mathcal{O}/\mathfrak{m}\mathcal{O}$. Let $\mathrm{IndArt}$ be the category of local $0$-dimensional $W$-algebras $A$ with residue field $k$. Note that $\mathrm{Art} \subset \mathrm{IndArt}$, and each $\mathcal{O}/\mathfrak{m}^n\mathcal{O}$ also lies in $\mathrm{IndArt}$. Further, the maximal ideal  $\mathfrak{m}_A$ of any $A \in \mathrm{IndArt}$ is locally nilpotent as $\mathrm{Spec}(A)$ has a single point by $0$-dimensionality. We can thus write such an $A$ as a filtered colimit of its artinian $W$-subalgebras: any finite subset $S := \{a_1,...,a_n\} \subset \mathfrak{m}_A$ lies in the image of the map $W[[ x_1,...,x_n ]] \xrightarrow{x_i \mapsto a_i} A$, and this image is artinian as some power of the maximal ideal of $W[[ x_1,..,x_n ]]$ maps to $0$ by local nilpotence of $\mathfrak{m}_A$. By length considerations, it follows that $\mathrm{Art} \subset \mathrm{IndArt}$ is exactly the category of compact objects, and that the map $\mathrm{Art} \to \mathrm{IndArt}$ realizes the target as the $\mathrm{Ind}$-completion of the source.  In particular, any functor $F:\mathrm{Art} \to \mathrm{Set}$ has a unique extension $\underrightarrow{F}:\mathrm{IndArt} \to \mathrm{Set}$ that preserves filtered colimits: explicitly, if $A \in \mathrm{IndArt}$, then we simply set $\underrightarrow{F}(A) := \varinjlim_{A_i \subset A} F(A_i)$, where the colimit runs over all artinian subalgebras of $A$. Crucial to our purposes will be the following stability property of this construction: if $F \to G$ is a formally smooth map of functors on $\mathrm{Art}$, then $\underrightarrow{F} \to \underrightarrow{G}$ is also formally smooth, i.e., for any surjection $B \to A$ in $\mathrm{IndArt}$, the map $\underrightarrow{F}(B) \to \underrightarrow{F}(A) \times_{\underrightarrow{G}(A)} \underrightarrow{G}(B)$ is surjective. To see this, one first observes that the surjection $B \to A$ can be written as a filtered colimit of surjections $B_i \to A_i$ in $\mathrm{Art}$: write $B$ as a union of its artinian subalgebras $B_i \subset B$, and set $A_i \subset A$ to be the image of $B_i$. The desired surjectivity now follows as the formation of filtered colimits in the category of sets commutes with fibre products and preserves surjections.

We now specialize the considerations in the previous paragraph to the functors of interest. First, note that the extension $\underrightarrow{h_R}:\mathrm{IndArt} \to \mathrm{Set}$ as defined above coincides with $\mathrm{Hom}_W(R,-)$ as $R$ is a quotient of a formal power series ring over $W$ in finitely many variables. Similarly, as the functor specifying finitely presented schemes or their isomorphisms commutes with filtered colimits of rings, the set $\underrightarrow{\mathrm{Def}_{\mathfrak{X}_k}}(A)$ is simply the set of isomorphism classes of deformations of $\mathfrak{X}_k$ to $A$ for any $A \in \mathrm{IndArt}$.  Also, by the previous paragraph, the induced map $\underrightarrow{h_R}  \to \underrightarrow{\mathrm{Def}_{\mathfrak{X}_k}}$ of functors on $\mathrm{IndArt}$ is formally smooth.

Let us now give the proof of (iii). We have a canonical map $\eta_0:R \to k$ and an isomorphism $\psi_0:\eta_0^* \mathfrak{X}_R \simeq \mathfrak{X}_k$ of $k$-schemes. Applying the formal smoothness of $\underrightarrow{h_R} \to \underrightarrow{\mathrm{Def}_{\mathfrak{X}_k}}$ to the surjection $\mathcal{O}/\mathfrak{m}\mathcal{O} \to k$ in $\mathrm{IndArt}$, we can choose a map $\eta_1:R \to \mathcal{O}/\mathfrak{m}\mathcal{O}$ lifting $\eta_0$ and an isomorphism $\psi_1:\eta_1^* \mathfrak{X}_R \simeq \mathfrak{X}_{\mathcal{O}/\mathfrak{m}\mathcal{O}}$ of $\mathcal{O}/\mathfrak{m}\mathcal{O}$-schemes lifting $\psi_0$. Similarly, we can inductively choose a compatible system of maps $\eta_n:R \to \mathcal{O}/\mathfrak{m}^n\mathcal{O}$ and isomorphisms $\psi_n:\eta_n^* \mathfrak{X}_R \simeq \mathfrak{X}_{\mathcal{O}/\mathfrak{m}^n\mathcal{O}}$ of $\mathcal{O}/\mathfrak{m}^n\mathcal{O}$-schemes for each $n \geq 1$. The proposition now follows by taking an inverse limit in $n$.
\end{proof}

\begin{corollary}[Conrad-Gabber \cite{ConradGabber}]
\label{ConradGabberAlgebraization}
Let $C/K$ be an extension of complete nonarchimedean fields with the same residue field. If $X/C$ is a proper rigid space, then there exists a proper flat morphism $f:\mathcal{X} \to S$ of rigid spaces over $K$ such that $X/C$ arises as the fibre of $f$ over a point $\eta \in S(C)$. If $X/C$ is smooth, then we may choose $S$, $\mathcal{X}$ and $f$ to be smooth. 

In particular, any proper smooth rigid space over $C$ can be realized as the fibre of a proper smooth morphism of smooth rigid spaces defined over a discretely valued subfield of $C$.
\end{corollary}

\begin{proof}
We are free to replace $K$ with smaller complete nonarchimedean subfields of $C$ in proving the corollary. Taking $K$ to be the fraction field of a Cohen ring of the residue field of $C$, we may thus assume that $K$ is discretely valued. Let $W \subset K$ and $\mathcal{O} \subset C$ be the valuation rings, so $W$ is discrete. By the theory of formal models, the proper rigid space $X/C$ arises as the generic fibre of a proper flat adic formal scheme $\mathfrak{X}/\mathcal{O}$ (see \cite[Lemma 2.6]{LProper} for an explanation of the properness of the formal model). Choose $\mathfrak{X}_R/R$ and the map $\eta:R \to \mathcal{O}_C$ as in Proposition~\ref{AlgebraizeFormal}. Setting $X/S$ to be the generic fibre (i.e., the base change along $\mathrm{Spa}(K,W) \to \mathrm{Spf}(W)$ in the language of adic spaces) of $\mathfrak{X}_R/R$ then gives the desired family. The smoothness assertions in the last part follow immediately: given a proper flat morphism $f:\mathcal{X} \to S$ of rigid spaces over $K$ and a point $\eta \in S(C)$ where $f$ is smooth, we may replace $S$ by a suitable locally closed subset containing $\eta$ to conclude that both $S$ and $f$ (and hence $\mathcal{X}$) may be taken to be smooth. 
\end{proof}

\begin{remark}
With a little extra effort, the method described in this subsection can be used to prove a refinement of Corollary~\ref{ConradGabberAlgebraization} where the assumption that $C$ and $K$ have the same residue field is relaxed to the assumption that the residue field of $C$ is purely inseparable over that of $k$. We do not spell this out as the preprint \cite{ConradGabber} proves a strong form of Corollary~\ref{ConradGabberAlgebraization} by dropping the assumption on the residue field completely. In particular, for a $p$-adic field $C$, they show that any proper rigid space $X/C$ arises as the fibre of a family that is defined over $\mathbb{Q}_p$. This stronger statement is not necessary for our purposes. It is also considerably more complicated to prove as the necessary analog of Proposition~\ref{AlgebraizeFormal} entails developing an analog of Schlessinger's work \cite{Schlessinger} for the versal deformation rings of proper schemes over {\em positive dimensional} base rings (arising by approximating the residue field $k$ with smooth algebras over the prime field). 
\end{remark}

\subsection{$B_\dR^+$-cohomology of proper smooth rigid spaces}

We may extend the construction of the $B_\dR^+$-valued cohomology theory from small affinoids to the proper case by taking hypercohomology.

\begin{definition}
For a proper smooth adic space $X/C$, write $R\Gamma_{\crys}(X/B_\dR^+) \in D(B_\dR^+)$ for the hypercohomology of the presheaf $U \mapsto C^\bullet_{\crys}(U/B_\dR^+)$ defined on the category of all smooth open affinoids $U \subset X$ that are very small. 
\end{definition}

Glueing analogous isomorphisms for affinoids shows that
\[ R\Gamma_\crys(X/B_\dR^+)\dotimes_{B_\dR^+} C\cong R\Gamma_\dR(X)\ . \]
As $R\Gamma_\crys(X/B_\dR^+)$ is derived $\xi$-complete and de Rham cohomology is finite-dimensional, this implies, in particular, that each $H^i_\crys(X/B_\dR^+)$ is a finitely generated $B_\dR^+$-module which vanishes for $|i| \gg 0$. In particular, $R\Gamma_\crys(X/B_\dR^+)$ is a perfect $B_\dR^+$-complex. In fact, we can do better:

\begin{theorem}\label{thm:cryscohomfree} Let $X/C$ be a proper smooth adic space. 
Then $H^i_\crys(X/B_\dR^+)$ is finite free over $B_\dR^+$ for all $i\in \bb Z$.
\end{theorem}

\begin{proof} 
If $k$ denotes the perfect residue field of $C$, then we can split the projection $\mathcal{O}_C/p \to k$ by the ind-smoothness of $k/\mathbf{F}_p$. By deformation theory, this lifts uniquely to a map $W(k) \to \mathcal{O}_C$, and thus gives an inclusion $W(k)[\frac{1}{p}] =: K \subset C$ of complete nonarchimedean fields with the same residue field.  By Corollary~\ref{ConradGabberAlgebraization}, we can find a proper smooth map $f:\mathcal{X} \to S$ of smooth adic spaces over $K$ such that $X/C$ arises as the fibre of $f$ at a point $\eta \in S(C)$. By shrinking $S$, we may assume $S := \mathrm{Spa}(A,A^\circ)$ is a smooth affinoid, so the map $\eta$ corresponds to a continuous map $A \to C$ of Tate $K$-algebras. By the smoothness of $A/K$, we can lift this to a continuous map $A \to B_\dR^+$. Write $ Rf_{\dR\ast} \mathcal{O}_{\mathcal{X}}$ for the relative de Rham cohomology of $f$, viewed as a complex of $A$-modules. Applying Lemma~\ref{lem:crysdRcomp} to a hypercover of $\mathcal{X}$ by small smooth affinoids gives a map
\[ Rf_{\dR\ast} \mathcal{O}_{\mathcal{X}} \dotimes_A B_{\dR}^+ \to R\Gamma_{\crys}(X/B_\dR^+)\] 
in $D(B_\dR^+)$ that is an isomorphism after applying $- \dotimes_{B_\dR^+} B_\dR^+/\xi$ by base change for de Rham cohomology along the map $A \to C$. Now each $R^i f_{\dR\ast} \roi_{\cal X}$ is a coherent $\roi_S$-module equipped with an integrable connection, and therefore locally free. In particular, both the source and target of the above map are derived $\xi$-complete; as the map was an isomorphism modulo $\xi$, it must thus be an isomorphism. Since each $R^i f_{\dR\ast} \roi_{\cal X}$ is a finite projective $A$-module, it now follows that each $H^i_\crys(X/B_\dR^+)$ is a finite projective, and hence finite free, $B_\dR^+$-module.
\end{proof}

\begin{remark}
\label{rmk:BdRCohDiscValued}
In the special case of Theorem~\ref{thm:cryscohomfree} where the proper smooth adic space $X/C$ arises as the base change of a proper smooth adic space $X_0/K$ defined over a discretely valued subfield $K \subset C$, the proof above shows that there is a canonical identification
\[ H^i_\dR(X_0/K) \otimes_{K} B_\dR^+ \simeq H^i_\crys(X/B_\dR^+)\]
of $B_\dR^+$-modules, where the implicit map $K \to B_\dR^+$ is the unique continuous lift of $K \to C$ that exists since $K$ is discretely valued.
\end{remark}

Finally, we can prove Theorem~\ref{thm:ratpadicHodgeC} and Theorem~\ref{thm:hodgetate}.

\begin{proof}[Proof of Theorem~\ref{thm:ratpadicHodgeC}] We start by constructing a natural map
\[
R\Gamma_\crys(X/B_\dR^+)\to R\Gamma(X_\sub{pro\'et},\bb B_{\dR,X}^+)\cong R\Gamma_\sub{\'et}(X,\bb Z_p)\otimes_{\bb Z_p} B_\dR^+\ .
\]
Afterwards, we will check that after inverting $\xi$, this gives a quasi-isomorphism. Our strategy is to construct a strictly functorial map of complexes locally, so that this map is already locally a quasi-isomorphism after inverting $\xi$; this reduces us to the local case.

In the local situation, assume that $X=\Spa(R,R^\circ)$ admits an \'etale map to the torus $\bb T^d$ that factors as a composite of rational embeddings and finite \'etale maps. In this case, for any sufficiently large $\Sigma\subset R^{\circ\times}$, we have the $B_\dR^+$-algebra $D_\Sigma(R)$ which is defined as the completion of
\[
B_\dR^+\langle (X_u^{\pm 1})_{u\in \Sigma}\rangle\to R\ .
\]
Moreover, we have a canonical pro-finite-\'etale tower $X_{\infty,\Sigma} = \projlimf_i X_i\to X$ which extracts $p$-power roots of all elements $u\in \Sigma$. In particular, this tower contains the tower of Lemma~\ref{lem:perfectoidtower}, so that $X_{\infty,\Sigma} = \projlimf_i X_i$ is affinoid perfectoid. Let $\Gamma = \prod_{u\in \Sigma} \bb Z_p(1)$ be the Galois group of the tower $X_{\infty,\Sigma}/X$. Then, by Lemma~\ref{lem:descrperiodsheaves} and \cite[Corollary 6.6]{ScholzePAdicHodge}, we have
\[
R\Gamma(X_\sub{pro\'et},\bb B_{\dR,X}^+) = R\Gamma_\sub{cont}(\Gamma,\bb B_\dR^+(R_{\infty,\Sigma}))\ ,
\]
where $(R_{\infty,\Sigma},R_{\infty,\Sigma}^+)$ is the completed direct limit of $(R_i,R_i^+)$, where $X_i=\Spa(R_i,R_i^+)$.

Let us fix primitive $p$-power roots of unity $\zeta_{p^r}\in \roi$; one checks easily that the following constructions are independent of this choice up to canonical isomorphisms. We get basis elements $\gamma_u\in \Gamma$ for each $u\in \Sigma$, and one can compute $R\Gamma_\sub{cont}(\Gamma,\bb B_\dR^+(R_{\infty,\Sigma}))$ by a Koszul complex
\[
K_{\bb B_\dR^+(R_{\infty,\Sigma})}((\gamma_u-1)_{u\in \Sigma}): \bb B_\dR^+(R_{\infty,\Sigma})\xTo{(\gamma_u-1)_u} \bigoplus_u \bb B_\dR^+(R_{\infty,\Sigma})\to \ldots\ .
\]

Now, by repeating the arguments of Section~\ref{subsec:cancryscomp}, there is a natural map of complexes
\[\xymatrix{
D_\Sigma(R)\ar[rrr]^{(\frac{\partial}{\partial \log(X_u)})_u}\ar[d] &&& \bigoplus_u D_\Sigma(R)\ar[r]\ar[d] & \ldots\\
\bb B_\dR^+(R_{\infty,\Sigma})\ar[rrr]^{(\gamma_u-1)_u} &&& \bigoplus_u \bb B_\dR^+(R_{\infty,\Sigma})\ar[r] & \ldots .
}\]
Here, the map $D_\Sigma(R)\to \bb B_\dR^+(R_{\infty,\Sigma})$ in degree $0$ comes via completion from the map
\[
B_\dR^+\langle (X_u^{\pm 1})_u\rangle\to \bb B_\dR^+(R_{\infty,\Sigma})
\]
sending $X_u$ to $[(X_u,X_u^{1/p},\ldots)]\in \bb B_\dR^+(R_{\infty,\Sigma})$, which is a well-defined element as we have freely adjoined $p$-power roots of all $X_u$.

We claim that this induces a quasi-isomorphism between $\Omega^\bullet_{D_\Sigma(R)/B_\dR^+}$ and $\eta_\xi K_{\bb B_\dR^+(R_{\infty,\Sigma})}((\gamma_u-1)_{u\in \Sigma})$, which finishes the proof of the comparison. This is completely analogous to the proof of Proposition~\ref{prop:alphaRQuasiIsom}.

To check that this construction is compatible with the isomorphism from Theorem~\ref{thm:ratPAdicHodge}, use that in that case $R=R_K\hat{\otimes}_K C$ comes as a base change, and there is a commutative diagram
\[\xymatrix{
D_\Sigma(R)\ar[r]\ar[d] & \tilde{D}_\Sigma(R_K)\ar[d] & R_K\hat{\otimes}_K B_\dR^+\ar[l]\ar@{=}[d]\\
\bb B_\dR^+(R_{\infty,\Sigma})\ar[r] & \roi \bb B_\dR^+(R_{\infty,\Sigma}) & R_K\hat{\otimes}_K B_\dR^+\ar[l].
}\]
Here, the left vertical arrow gives rise to the comparison isomorphism just constructed (after passing to Koszul complexes), the lower row encodes the comparison isomorphism from Theorem~\ref{thm:ratPAdicHodge} (after simultaneously passing to Koszul and de~Rham complexes), and the upper row encodes the comparison between crystalline and de~Rham cohomology in Lemma~\ref{lem:crysdRcomp}. The commutativity of the diagram (together with the relevant extra structures) proves the desired compatibility.
\end{proof}

\begin{proof}[Proof of Theorem~\ref{thm:hodgetate}] 
Using Corollary~\ref{ConradGabberAlgebraization}, we may realize $X/C$ as the fibre of a proper smooth morphism $f: \cal X\to S$ of smooth adic spaces over a discretely valued subfield $K \subset C$.  By passage to a suitable locally closed subset, we can assume that $R^i f_\ast \Omega^j_{\cal X/S}$ is a locally free $\cal O_S$-module for all $i$ and $j$, as is $R^i f_{\dR\ast} \cal O_{\cal X}$, and everything commutes with arbitrary base change. To check (i), we need to check that the ranks of Hodge cohomology add up to the rank of de~Rham cohomology. This can now be checked on classical points, where it is~\cite[Corollary 1.8]{ScholzePAdicHodge}.

Thus, we see that the dimension of de~Rham cohomology is the sum of the dimensions of Hodge cohomology. On the other hand, the dimension of de~Rham cohomology is the same as the rank of the free $B_\dR^+$-module $H^i_\crys(X/B_\dR^+)$, which is the same as the rank of the free $B_\dR^+$-module $H^i_\sub{\'et}(X,\bb Z_p)\otimes_{\bb Z_p} B_\dR^+$ by Theorem~\ref{thm:ratpadicHodgeC}. This, in turn, is the same as the dimension of \'etale cohomology; it follows that the Hodge--Tate spectral sequence degenerates.
\end{proof}

\subsection{The $B_\dR^+$-cohomology in the good reduction case}

Let us give an alternate description of $R\Gamma_\crys(X/B_\dR^+)$ in the good reduction case in terms of the $A_\crys$-cohomology theory. Let $\frak X$ be a proper smooth formal scheme over $\roi$, with generic fibre $X$. In this situation, we can consider the scheme $Y = \frak X\times_{\Spf \roi} \Spec \roi/p$. The universal $p$-adically complete PD thickening (compatible with the natural PD structure on $\bb Z_p$) of $\roi/p$ is Fontaine's ring $A_\crys$. Thus, we can consider the crystalline cohomology groups
\[
H^i_\crys(Y/A_\crys)\ .
\]
On the other hand, we can consider the special fibre $\bar{Y} = \frak X\times_{\Spf \roi} \Spec k$, and its crystalline cohomology groups
\[
H^i_\crys(\bar{Y}/W(k))\ ,
\]
which are finitely generated $W(k)$-modules.

\begin{proposition}\label{prop:BerthelotOgusAcrys} Fix a section $k\to \roi/p$. Then there is a canonical $\phi$-equivariant isomorphism
\[
H^i_\crys(Y/A_\crys)[\tfrac 1p]\cong H^i_\crys(\bar{Y}/W(k))\otimes_{W(k)} A_\crys[\tfrac 1p]\ .
\]
In particular, $H^i_\crys(Y/A_\crys)[\tfrac 1p]$ is a finite free $A_\crys[\tfrac 1p]$-module.
\end{proposition}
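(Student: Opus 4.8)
The plan is to deduce the proposition from the comparison theorems established above together with the base-change theorem for crystalline cohomology of Berthelot--Ogus \cite{BerthelotOgus2}, the point being to realize both sides as the crystalline cohomology of $\bar Y = \frak X_k$ over $A_\crys$, computed along two different PD structures. Recall that $A_\crys$ is a $p$-adically complete PD thickening of $\roi/p$ with PD ideal $J_1 = \ker(\theta\colon A_\crys\to\roi/p)$, and that — via the surjection $\roi/p\twoheadrightarrow k$ — it is also a $p$-adically complete PD thickening of $k$, with PD ideal $J = \ker(A_\crys\to k)\supseteq J_1$ (Fontaine's PD ideal), the PD structures on $J_1$ and $J$ being compatible. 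Moreover the chosen section $k\to\roi/p\hookrightarrow A_\crys/p$ lifts uniquely, since $k$ is perfect and $A_\crys$ is $p$-torsion-free and $p$-adically complete, to a ring map $s\colon W(k)\to A_\crys$, which is automatically a morphism of PD thickenings $(W(k),(p))\to(A_\crys,J)$ over $\mathrm{id}\colon k\to k$.

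With this in place I would proceed in two steps. First, $R\Gamma_\crys(Y/A_\crys)\simeq R\Gamma_\crys(\bar Y/(A_\crys,J))$: the closed immersion $\bar Y\hookrightarrow Y$ is cut out by the ideal generated by $\frak m/p\subset\roi/p$, and is a PD thickening for the PD structure inherited from $(A_\crys,J)$, so the standard invariance of crystalline cohomology under such thickenings applies. Second, the Berthelot--Ogus base-change theorem applied to the PD morphism $s$ gives that the natural map
\[
R\Gamma_\crys(\bar Y/W(k))\dotimes_{W(k),s} A_\crys \To R\Gamma_\crys(\bar Y/(A_\crys,J))
\]
is an isomorphism after inverting $p$. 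Composing the two yields a canonical $\phi$-equivariant quasi-isomorphism $R\Gamma_\crys(Y/A_\crys)[\tfrac1p]\simeq R\Gamma_\crys(\bar Y/W(k))[\tfrac1p]\dotimes_{W(k)[\frac1p],s} B_\crys^+$, where $B_\crys^+ = A_\crys[\tfrac1p]$. Passing to $H^i$ and using that $H^i_\crys(\bar Y/W(k))[\tfrac1p]$ is a finite-dimensional $W(k)[\tfrac1p]$-vector space — so that scalar extension along $W(k)[\tfrac1p]\to B_\crys^+$ commutes with cohomology and produces a finite free $B_\crys^+$-module — gives exactly the asserted isomorphism $H^i_\crys(Y/A_\crys)[\tfrac1p]\cong H^i_\crys(\bar Y/W(k))\otimes_{W(k)} A_\crys[\tfrac1p]$, together with the freeness over $A_\crys[\tfrac1p]$. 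Frobenius-equivariance is automatic: the $\phi$-actions on all the crystalline cohomology groups involved are induced by the absolute Frobenius of $\frak X_k$ and the Frobenius of the PD base, and both the invariance isomorphism and the Berthelot--Ogus comparison are compatible with these. One may also package the input from Theorem~\ref{thm:cryscomp} and Theorem~\ref{ThmB}: since $R\Gamma(\frak X,A\Omega_{\frak X})$ is a perfect $A_\inf$-complex with $H^i_{A_\inf}(\frak X)[\tfrac1p]$ finite free, one has $R\Gamma_\crys(Y/A_\crys)[\tfrac1p]\simeq R\Gamma(\frak X,A\Omega_{\frak X})\dotimes_{A_\inf}B_\crys^+$ and $R\Gamma_\crys(\bar Y/W(k))\simeq R\Gamma(\frak X,A\Omega_{\frak X})\dotimes_{A_\inf}W(k)$, so the content reduces, on each $H^i$, to a canonical $\phi$-equivariant isomorphism $H^i_{A_\inf}(\frak X)\otimes_{A_\inf}B_\crys^+\cong (H^i_{A_\inf}(\frak X)\otimes_{A_\inf}W(k))\otimes_{W(k),s}B_\crys^+$.

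The step I expect to be the main obstacle is the first one, the passage from $Y=\frak X_{\roi/p}$ to $\bar Y=\frak X_k$ over $A_\crys$. The immersion $\bar Y\hookrightarrow Y$ is not nilpotent: every element of $\frak m/p\subset\roi/p$ is nilpotent but with no uniform bound, and since $n!$ is not invertible in characteristic $p$ for $n\geq p$ one cannot simply cite ``invariance of the crystalline topos under a PD-nilpotent thickening''. The honest argument works instead with Fontaine's PD ideal $J\subset A_\crys$ directly, checking that its PD structure restricts compatibly to $J_1$ so that $\bar Y\hookrightarrow Y$ really is a PD-immersion over $(A_\crys,J)$ in the sense needed for the invariance statement. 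Alternatively one can bypass this step by working with $A\Omega_{\frak X}$ and the rationalization of the BKF module $H^i_{A_\inf}(\frak X)$ as above, where Lemma~\ref{lem:bkfmodulecrys} already furnishes an isomorphism $H^i_{A_\inf}(\frak X)\otimes_{A_\inf}B_\crys^+\cong (H^i_{A_\inf}(\frak X)\otimes_{A_\inf}W(k))\otimes_{W(k),s}B_\crys^+$ — but only noncanonically, and pinning it down canonically again routes through the functoriality of crystalline cohomology, i.e.\ through Berthelot--Ogus. A secondary, routine point is the bookkeeping of $\widehat{\otimes}$ versus $\dotimes$ and its interaction with inverting $p$, which is harmless given the perfectness of $R\Gamma(\frak X,A\Omega_{\frak X})$ and the freeness of its rationalized cohomology.
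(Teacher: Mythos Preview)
Your first step—invariance of crystalline cohomology under the thickening $\bar Y\hookrightarrow Y$—does not go through, and the obstruction you flag is real and not dissolved by your suggested fixes. The ideal $\frak m/p\subset\roi/p$ is only ind-nilpotent (each element is nilpotent, with no uniform exponent), and in an $\bb F_p$-algebra an element $x$ acquires divided powers only when $x^p=0$; but $\frak m/p$ contains elements like $p^{1/p^2}$ whose $p$-th power is nonzero in $\roi/p$. So $\bar Y\hookrightarrow Y$ is not a PD-immersion in any useful sense, and ``working with Fontaine's PD ideal $J$ directly'' does not help: the problem lives on the scheme, not on the base. Your fallback through $A\Omega_{\frak X}$ and Theorem~\ref{ThmB} is circular: the statement that $H^i_{A_\inf}(\frak X)[\tfrac 1p]$ is finite free is established in the proof of Theorem~\ref{ThmBRestated} precisely by invoking this proposition (via Corollary~\ref{cor:crysspecbetter} and Proposition~\ref{prop:BerthelotOgusAcrys} itself).

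The paper replaces your step 1 by a Frobenius descent. One first shows, by reducing to the affine case where $Z\cong\bar Z\times_k\roi/p$ by smoothness and finite presentation, that the Frobenius $\phi^*H^i_\crys(Z/A_\crys)\to H^i_\crys(Z/A_\crys)$ is an isomorphism after inverting $p$ for any qcqs smooth $\roi/p$-scheme $Z$. Iterating $n$ times and using crystalline base change identifies $H^i_\crys(Y/A_\crys)[\tfrac 1p]$ with (a $\phi^n$-twist of) $H^i_\crys(Y_{\roi/p^{1/p^n}}/\phi^{-n}(A_\crys))[\tfrac 1p]$. For $n$ large enough one now has a \emph{global} isomorphism $Y_{\roi/p^{1/p^n}}\cong\bar Y\times_k\roi/p^{1/p^n}$, because $\roi/p^{1/p^n}$ is a genuinely nilpotent thickening of $k$ of bounded order and $\frak X$ is finitely presented; any two such isomorphisms agree after increasing $n$, so the construction is canonical. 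Ordinary crystalline base change along $W(k)\to\phi^{-n}(A_\crys)$ then finishes. In short, the Frobenius is not merely bookkeeping for $\phi$-equivariance: it is the device that trades the intractable thickening $\roi/p$ of $k$ for the finite one $\roi/p^{1/p^n}$.
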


This is a variant on a result of Berthelot--Ogus, \cite{BerthelotOgus2}.

\begin{proof} First, we check that for any qcqs smooth $\roi/p$-scheme $Z$, the Frobenius
\[
\phi: H^i_\crys(Z/A_\crys)\otimes_{A_\crys,\phi} A_\crys\to H^i_\crys(Z/A_\crys)
\]
is an isomorphism after inverting $p$. Indeed, this reduces to the affine case. In that case, there is an isomorphism $Z=\bar{Z}\times_{\Spec k} \Spec \roi/p$, where $\bar{Z} = Z\times_{\Spec \roi/p} \Spec k$ (as by finite presentation, there is such an isomorphism modulo $p^{1/p^n}$ for some $n$, and one can lift this isomorphism by smoothness), and the result follows by base change from the case of $\bar{Z}/k$.

Note that
\[
H^i_\crys(Y/A_\crys)\otimes_{A_\crys,\phi} A_\crys = H^i(Y_{\roi/p^{1/p}}/\phi^{-1}(A_\crys))\otimes_{\phi^{-1}(A_\crys),\phi} A_\crys
\]
by base change. Repeating, we see that
\[
H^i_\crys(Y/A_\crys)\otimes_{A_\crys,\phi^n} A_\crys = H^i(Y_{\roi/p^{1/p^n}}/\phi^{-n}(A_\crys))\otimes_{\phi^{-n}(A_\crys),\phi^n} A_\crys\ ,
\]
where the left side agrees with $H^i_\crys(Y/A_\crys)$ after inverting $p$. On the other hand, if $n$ is large enough, then there is an isomorphism
\[
Y\times_{\roi/p^{1/p^n}}\cong \bar{Y}\times_{\Spec k} \Spec \roi/p^{1/p^n}
\]
reducing to the identity over $\Spec k$, by finite presentation. Moreover, any two such isomorphisms agree after increasing $n$. Base change for crystalline cohomology implies the result.
\end{proof}

\begin{remark}
The choice of section $k \to \roi/p$ in Proposition~\ref{prop:BerthelotOgusAcrys} is unique in the important special case when $k = \overline{\mathbb{F}}_p$. Indeed, to see this, it is enough to observe the following: if $R \to \mathbb{F}_q$ is a surjection of $\mathbb{F}_p$-algebras with a locally nilpotent kernel, then there is a {\em unique} section $\mathbb{F}_q \to R$. To prove this, we can write $R = \varinjlim R_i$ as a filtered colimit of its finitely generated $\mathbb{F}_p$-algebras $R_i \subset R$. Passing to a cofinal subsystem, we may assume that the composite $R_i \to R \to \mathbb{F}_q$ is surjective for each $i$. But then $R_i$ is an artinian local $\mathbb{F}_p$-algebra with residue field $\mathbb{F}_q$, so there is a unique section $\mathbb{F}_q \to R$ since $\mathbb{F}_p \to \mathbb{F}_q$ is  \'etale.
\end{remark}

In particular, we get a finite free $B_\dR^+$-module
\[
H^i_\crys(Y/A_\crys)\otimes_{A_\crys} B_\dR^+\ .
\]

\begin{proposition} There is a natural quasi-isomorphism
\[
R\Gamma_\crys(Y/A_\crys)\otimes_{A_\crys} B_\dR^+\cong R\Gamma_\crys(X/B_\dR^+)\ .
\]
In particular, $H^i_\crys(X/B_\dR^+)$ is free over $B_\dR^+$.
\end{proposition}

\begin{proof} The crystalline cohomology of $Y$ over $A_\crys$ can be computed via explicit complexes as in the definition of $R\Gamma_\crys(X/B_\dR^+)$, as in Section~\ref{subsec:cancryscomp}. Using these explicit models, one can write down an explicit map, which is locally, and thus globally, a quasi-isomorphism (as locally, both complexes are quasi-isomorphic to de~Rham complexes for a smooth lift to $A_\crys$, resp. $B_\dR^+$).
\end{proof}

\newpage

\section{Proof of main theorems}

Finally, we can assemble everything to prove our main results. Let $C$ be a complete algebraically closed extension of $\bb Q_p$ with ring of integers $\roi$ and residue field $k$. Let $\frak X$ be a smooth formal scheme over $\roi$, with generic fibre $X$. Recall Theorem~\ref{ThmC}:

\begin{theorem}\label{ThmCRestated} There are canonical quasi-isomorphisms of complexes of sheaves on $\frak X_\sub{Zar}$ (compatible with multiplicative structures).
\begin{enumerate}
\item With crystalline cohomology of $\frak X_k$:
\[
A\Omega_{\frak X}\hat{\dotimes}_{A_\inf} W(k)\simeq W\Omega^\bullet_{\frak X_k/W(k)}\ .
\]
Here, the tensor product is $p$-adically completed, and the right side denotes the de~Rham--Witt complex of $\frak X_k$, which computes crystalline cohomology of $\frak X_k$.
\item With de~Rham cohomology of $\frak X$:
\[
A\Omega_{\frak X}\dotimes_{A_\inf} \roi\simeq \Omega^{\bullet,\cont}_{\frak X/\roi}\ ,
\]
where $\Omega^{i,\cont}_{\frak X/\roi} = \projlim_n \Omega^i_{(\frak X/p^n)/(\roi/p^n)}$.
\item With crystalline cohomology of $\frak X_{\roi/p}$: If $u: (\frak X_{\roi/p}/A_\crys)_\crys\to \frak X_\sub{Zar}$ denotes the projection, then
\[
A\Omega_{\frak X}\hat{\dotimes}_{A_\inf} A_\crys\simeq Ru_\ast \roi_{\frak X_{\roi/p}/A_\crys}^\crys\ .
\]
\item With (a variant of) \'etale cohomology of the generic fibre $X$ of $\frak X$: If $\nu: X_\sub{pro\'et}\to \frak X_\sub{Zar}$ denotes the projection, then
\[
A\Omega_{\frak X}\otimes_{A_\inf} A_\inf[\tfrac 1\mu]\simeq (R\nu_\ast \bb A_{\inf,X})\otimes_{A_\inf} A_\inf[\tfrac 1\mu]\ .
\]
\end{enumerate}
\end{theorem}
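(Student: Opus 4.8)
The plan is to deduce Theorem~\ref{ThmCRestated} from the local results already established, namely Theorem~\ref{thm:AOmegavsdRWLocal} together with the comparison theorems of Sections~12 and~13. Since all four assertions are statements about complexes of sheaves on $\frak X_\sub{Zar}$, and since both sides are compatible with restriction to Zariski opens, it suffices to work locally on $\frak X$; thus I may assume $\frak X = \Spf R$ for a small formally smooth $\roi$-algebra $R$, and I may even fix a framing $\square$. By Theorem~\ref{thm:AOmegavsdRWLocal}(iii), $R\Gamma(\frak X, A\Omega_{\frak X}) = A\Omega_R$, and by the various sheafification arguments (as in the proof of Proposition~\ref{prop:AOmegaallthesame}) it is enough to produce the quasi-isomorphisms on the level of global sections over such $\Spf R$, functorially, and then globalize.

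First I would handle (iv), which is the most direct: by Definition~\ref{IntroAOmegaDef}, $A\Omega_{\frak X} = L\eta_\mu(R\nu_\ast \bb A_{\inf,X})$, and since $\mu$ becomes a unit in $A_\inf[\tfrac 1\mu]$, the natural map $L\eta_\mu C \to C$ (Lemma~\ref{lem:mapLetabackforth}, or directly $\eta_\mu C^\bullet[\tfrac 1\mu] = C^\bullet[\tfrac 1\mu]$ for $\mu$-torsion-free $C^\bullet$) becomes a quasi-isomorphism after inverting $\mu$; hence $A\Omega_{\frak X}\otimes_{A_\inf} A_\inf[\tfrac 1\mu] \simeq (R\nu_\ast \bb A_{\inf,X})\otimes_{A_\inf} A_\inf[\tfrac 1\mu]$, and this is clearly multiplicative. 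Next, (iii): this is exactly Theorem~\ref{thm:cryscomp}, where $Y = \frak X_{\roi/p}$; the canonical $E_\infty$-$A_\crys$-algebra isomorphism $A\Omega_{\frak X}\hat{\dotimes}_{A_\inf} A_\crys \simeq Ru_\ast \roi^\crys_{\frak X_{\roi/p}/A_\crys}$ constructed there (via the map $\alpha_R$ and Proposition~\ref{prop:alphaRQuasiIsom}) gives both the quasi-isomorphism and its multiplicativity. For (ii): apply $\tilde\theta_1 = \theta$, noting $A\Omega_{\frak X}\dotimes_{A_\inf}\roi \simeq A\Omega_{\frak X}\dotimes_{A_\inf,\tilde\theta_1} W_1(\roi)$; by Theorem~\ref{thm:AOmegavsdRWLocal}(i) and (ii) this is $\widetilde{W_1\Omega}_{\frak X} = \widetilde{\Omega}_{\frak X}$, and by Theorem~\ref{thm:IntegralCartier} (really the refined local statement and Proposition~\ref{prop:OmegavsL}) the cohomology sheaves are $\Omega^{i,\cont}_{\frak X/\roi}\{-i\}$; the Breuil--Kisin--Fargues twist $\{-i\}$ disappears after base change along $\tilde\theta_1$ because $\roi\{1\} \cong \roi$ canonically once one remembers that $\tilde\theta_1$ carries the chosen generator of $A_\inf\{1\}$ to a generator of $T_p\Omega^1_{\roi/\bb Z_p}$, so in fact $\widetilde{\Omega}_{\frak X}$ already has cohomology $\Omega^{i,\cont}_{\frak X/\roi}\{-i\}$ and the twist is absorbed. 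More carefully, to get a quasi-isomorphism of complexes (not just an identification of cohomology), I would invoke Theorem~\ref{thm:AOmegavsdRWLocalPart4}: the de~Rham--Witt complex $W_1\Omega^\bullet_{R/\roi} = \Omega^\bullet_{R/\roi}$ maps to $\widetilde{\Omega}_R$ inducing an isomorphism on cohomology (for $r=1$), hence a quasi-isomorphism, and this is multiplicative by construction.

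Finally (i): this should be deduced by passing to the limit over $r$ along the Frobenius-twisted version. Concretely, $A\Omega_{\frak X}\hat{\dotimes}_{A_\inf} W(k) = R\projlim_r (A\Omega_{\frak X}\dotimes_{A_\inf} W_r(k))$, and one identifies $A\Omega_{\frak X}\dotimes_{A_\inf} W_r(k)$ with $A\Omega_{\frak X}\dotimes_{A_\inf,\theta_r} W_r(\roi) \dotimes_{W_r(\roi)} W_r(k)$; since the maps $\theta_r$ and $\tilde\theta_r$ agree up to a power of Frobenius after base extension to $W(k)$, this identifies with $\widetilde{W_r\Omega}_{\frak X}\dotimes_{W_r(\roi)} W_r(k)$, whose cohomology is $W_r\Omega^{i,\cont}_{R/\roi}\otimes_{W_r(\roi)} W_r(k) = W_r\Omega^i_{\frak X_k/k}$ by Theorem~\ref{thm:AOmegavsdRWLocalPart4} and Proposition~\ref{prop:dRWperfectoid}; taking $R\projlim_r$ and using that the de~Rham--Witt complex computes crystalline cohomology gives (i). Alternatively, and perhaps more cleanly, (i) and (ii) both follow formally from (iii) by standard facts about crystalline cohomology: $A\Omega_{\frak X}\hat{\dotimes}_{A_\inf} A_\crys \simeq Ru_\ast\roi^\crys_{\frak X_{\roi/p}/A_\crys}$, and then base change $A_\crys \to W(k)$ (resp. the crystalline-to-de~Rham comparison over $\roi$) recovers crystalline cohomology of $\frak X_k/W(k)$ (resp. de~Rham cohomology of $\frak X$); one must check the relevant base-change/Tor-independence, which is where Lemma~\ref{lem:SigmaAinfFlat}-type arguments and the flatness of $A_\crys$ come in, and this is essentially the remark already made in the introduction that (iii) formally implies (i) and (ii). The main obstacle I expect is organizational rather than conceptual: carefully matching the Breuil--Kisin--Fargues twists under the maps $\theta_r$ versus $\tilde\theta_r$ so that, after base change, the twist trivializes and one recovers the untwisted de~Rham--Witt and de~Rham complexes exactly (compatibly with the multiplicative structures), together with pinning down the identification of $\theta_r$ and $\tilde\theta_r$ up to Frobenius over $W(k)$ from Lemma~\ref{lemma_theta}. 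Everything else is an application of results already in place.
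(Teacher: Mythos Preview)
Your overall approach is correct and matches the paper: part (iv) follows from the definition of $A\Omega_{\frak X}$, part (iii) is Theorem~\ref{thm:cryscomp}, and parts (i) and (ii) follow formally from (iii) by standard base-change for crystalline cohomology. The paper states this as its primary argument, and your ``Alternatively, and perhaps more cleanly'' paragraph is exactly right.

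However, your direct argument for (ii) contains a genuine error. You write ``apply $\tilde\theta_1 = \theta$'', but this is false: by definition $\theta_r = \tilde\theta_r\circ\phi^r$, so $\theta = \theta_1 = \tilde\theta_1\circ\phi \neq \tilde\theta_1$. The base change in (ii) is along $\theta$, whereas $A\Omega_{\frak X}\dotimes_{A_\inf,\tilde\theta}\roi \simeq \widetilde{\Omega}_{\frak X}$ uses $\tilde\theta$. These two $\roi$-complexes are genuinely different: $\widetilde{\Omega}_{\frak X}$ has cohomology sheaves $\Omega^{i,\cont}_{\frak X/\roi}\{-i\}$ but is \emph{not} in general quasi-isomorphic to the de~Rham complex (cf.\ the remark after Theorem~\ref{thm:IntegralCartier} about the obstruction to lifting to $A_\inf/\tilde\xi^2$). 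Relatedly, there is no map of complexes $\Omega^\bullet_{R/\roi}\to\widetilde{\Omega}_R$ as you assert; Theorem~\ref{thm:AOmegavsdRWLocalPart4} only gives a map $\lambda_r^\bullet: W_r\Omega^\bullet_{R/\roi}\to \cal W_r^\bullet(D)$, where the target is the complex $H^\bullet(\widetilde{W_r\Omega}_R)$ equipped with the \emph{Bockstein} differential, not $\widetilde{W_r\Omega}_R$ itself.

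The paper's direct argument for (ii) repairs exactly this: the Frobenius $\phi_D$ on $R\nu_\ast\bb A_{\inf,X}$ induces $A\Omega_{\frak X}\dotimes_{A_\inf,\theta}\roi \cong (L\eta_{\phi(\mu)}R\nu_\ast\bb A_{\inf,X})\dotimes_{A_\inf,\tilde\theta}\roi$, and since $\phi(\mu)=\tilde\xi\mu$ this equals $(L\eta_{\tilde\xi}A\Omega_{\frak X})\dotimes_{A_\inf,\tilde\theta}\roi$. Now Proposition~\ref{prop:LetaBock} identifies this with the complex $H^\bullet(A\Omega_{\frak X}/\tilde\xi) = H^\bullet(\widetilde\Omega_{\frak X})$ with Bockstein differential, which is precisely $\cal W_1^\bullet(D)\cong\Omega^{\bullet,\cont}_{\frak X/\roi}$ by Theorem~\ref{thm:AOmegavsdRWLocalPart4}. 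The Frobenius twist and the $L\eta_{\tilde\xi}$ are not cosmetic: they are what converts the identification of cohomology groups into an honest quasi-isomorphism of complexes. Your closing remark anticipates an ``organizational'' issue with $\theta_r$ versus $\tilde\theta_r$, but the issue is substantive, and this is where it bites.
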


\begin{remark} In fact, this result needs only that $C$ is perfectoid, with all $p$-power roots of unity.
\end{remark}

\begin{proof} Part (iii) is Theorem~\ref{thm:cryscomp}, and part (iv) follows directly from the definition of $A\Omega_{\frak X}$. Moreover, part (iii) implies parts (i) and (ii).

Alternatively, one can use the relation to the de~Rham--Witt complex to prove (i) and (ii). For simplicity, let us fix roots of unity for this discussion. For example, one can prove (ii) via
\[\begin{aligned}
A\Omega_{\frak X}\dotimes_{A_\inf,\theta} \roi &= (L\eta_\mu R\nu_\ast \bb A_{\inf,X})\dotimes_{A_\inf,\theta} \roi\\
&\isoto^\phi (L\eta_{\phi(\mu)} R\nu_\ast \bb A_{\inf,X})\dotimes_{A_\inf,\tilde\theta} \roi\\
&=L\eta_{\tilde\xi}(L\eta_\mu R\nu_\ast \bb A_{\inf,X})\dotimes_{A_\inf,\tilde\theta} \roi\\
&=(L\eta_{\tilde\xi} A\Omega_{\frak X})\dotimes_{A_\inf,\tilde\theta}\roi\\
&\cong H^\bullet(A\Omega_{\frak X}/\tilde\xi)\\
&\cong \Omega^{\bullet,\cont}_{\frak X/\roi}\ ,
\end{aligned}\]
using Proposition~\ref{prop:LetaBock} in the second-to-last step, and Theorem~\ref{thm:IntegralCartier} in the last step. More generally, for any $r\geq 1$,
\[\begin{aligned}
A\Omega_{\frak X}\dotimes_{A_\inf,\theta_r} W_r(\roi) &= (L\eta_\mu R\nu_\ast \bb A_{\inf,X})\dotimes_{A_\inf,\theta_r} W_r(\roi)\\
&\isoto^{\phi^r} (L\eta_{\phi^r(\mu)} R\nu_\ast \bb A_{\inf,X})\dotimes_{A_\inf,\tilde\theta_r} W_r(\roi)\\
&=L\eta_{\tilde\xi_r}(L\eta_\mu R\nu_\ast \bb A_{\inf,X})\dotimes_{A_\inf,\tilde\theta_r} W_r(\roi)\\
&=(L\eta_{\tilde\xi_r} A\Omega_{\frak X})\dotimes_{A_\inf,\tilde\theta_r}W_r(\roi)\\
&\cong H^\bullet(A\Omega_{\frak X}/\tilde\xi_r)\\
&\cong W_r\Omega^{\bullet,\cont}_{\frak X/\roi}\ ,
\end{aligned}\]
using Theorem~\ref{thm:AOmegavsdRWLocalPart4} in the last step. Extending this quasi-isomorphism from $W_r(\roi)$ to $W_r(k)$ and taking the limit over $r$ proves (i).

Note that we now have two quasi-isomorphisms
\[
A\Omega_{\frak X}\dotimes_{A_\inf,\theta_r} W_r(\roi)\simeq W_r\Omega^{\bullet,\cont}_{\frak X/\roi}\ :
\]
The one just constructed, coming from Theorem~\ref{thm:AOmegavsdRWLocalPart4}, and the one resulting from Theorem~\ref{thm:cryscomp} by extending along $A_\crys\to W_r(\roi)$ and using Langer--Zink's comparison, \cite[Theorem 3.5]{LangerZink}, between de~Rham--Witt cohomology and crystalline cohomology. Let us give a sketch that these quasi-isomorphisms are the same; for this, we use freely notation from Section~\ref{sec:cryscomp}. We look at the functorial complex
\[
\eta_\mu K_{\bb A_\inf(R_{\infty,\Sigma})}((\gamma_u-1)_{u\in \Sigma})
\]
computing $A\Omega_R$ for very small affine open $\Spf R\subset \frak X$ (where we are suppressing the filtered colimit over all sufficiently large $\Sigma\subset R^{\circ\times}$ from the notation). By Proposition~\ref{prop:LetaBock}, this admits a map of complexes
\[
\eta_\mu K_{\bb A_\inf(R_{\infty,\Sigma})}((\gamma_u-1)_{u\in \Sigma})\to H^\bullet((\eta_{\phi^{-r}(\mu)} K_{\bb A_\inf(R_{\infty,\Sigma})}((\gamma_u-1)_{u\in \Sigma}))/\xi_r)\cong^{\phi^r} H^\bullet(A\Omega_R/\tilde\xi_r)\ ,
\]
as above. Now we observe that this map factors through a map
\[
\left(\eta_\mu K_{\bb A_\inf(R_{\infty,\Sigma})\hat{\otimes}_{A_\inf} A_\crys^{(m)}}((\gamma_u-1)_{u\in \Sigma})\right)^\wedge_p\to H^\bullet(A\Omega_R/\tilde\xi_r)\ .
\]
Indeed, for any $m$, there is a natural map
\[
\eta_\mu K_{\bb A_\inf(R_{\infty,\Sigma})\hat{\otimes}_{A_\inf} A_\crys^{(m)}}((\gamma_u-1)_{u\in \Sigma})\to H^\bullet((\eta_{\phi^{-r}(\mu)} K_{\bb A_\inf(R_{\infty,\Sigma})\hat{\otimes}_{A_\inf} A_\crys^{(m)}}((\gamma_u-1)_{u\in \Sigma}))/\xi_r)\ ,
\]
and there is a quasi-isomorphism
\[
\eta_{\phi^{-r}(\mu)} K_{\bb A_\inf(R_{\infty,\Sigma})\hat{\otimes}_{A_\inf} A_\crys^{(m)}}((\gamma_u-1)_{u\in \Sigma})\simeq^{\phi^r} A\Omega_R\hat{\otimes}_{A_\inf} \phi^r(A_\crys^{(m)})
\]
by the usual arguments. Therefore,
\[
H^\bullet((\eta_{\phi^{-r}(\mu)} K_{\bb A_\inf(R_{\infty,\Sigma})\hat{\otimes}_{A_\inf} A_\crys^{(m)}}((\gamma_u-1)_{u\in \Sigma}))/\xi_r)\cong^{\phi^r} H^\bullet(A\Omega_r\hat\otimes_{A_\inf} \phi^r(A_\crys^{(m)})/\tilde\xi_r)\ ,
\]
and there is a natural map $\phi^r(A_\crys^{(m)})/\tilde\xi_r\to W_r(\roi)$, leading to a canonical map
\[
\left(\eta_\mu K_{\bb A_\inf(R_{\infty,\Sigma})\hat{\otimes}_{A_\inf} A_\crys^{(m)}}((\gamma_u-1)_{u\in \Sigma})\right)^\wedge_p\to H^\bullet(A\Omega_R/\tilde\xi_r)\cong W_r\Omega^{\bullet,\cont}_{R/\roi}\ ,
\]
as desired. This map is compatible with multiplication by construction; to check compatibility with the Bockstein differential, use that the target is $p$-torsionfree, and that there is a map $\phi^r(A_\crys{(m)})/\tilde\xi_r^2[\tfrac 1p]\to A_\inf/\tilde\xi_r^2[\tfrac 1p]$.

In particular, one can compose the map $\alpha_R$ from Lemma~\ref{lem:LetaAcrys} (v) with this map to get a functorial map of complexes
\[
K_{D_\Sigma}((\frac{\partial}{\partial \log(x_u)})_{u\in \Sigma})\to W_r\Omega^{\bullet,\cont}_{R/\roi}\ .
\]
In fact, this is a map of commutative differential graded algebras: To check compatibility with multiplication, use that $\alpha_R$ becomes compatible with multiplication after base extension to $W_r(\roi)$.

Here, the left side is the complex computing crystalline cohomology in terms of the embedding into the torus given by all units in $\Sigma$. One can then check that this map agrees with the similar map constructed by Langer--Zink in \cite[\S 3.2]{LangerZink}: As it is a continuous map of commutative differential graded algebras generated in degree $0$, one has to check only that it behaves correctly in degree $0$.
\end{proof}

Now assume that $\frak X$ is also proper. Recall Theorem~\ref{ThmB}:

\begin{theorem}\label{ThmBRestated} Let $\frak X$ be a proper smooth formal scheme over $\roi$ with generic fibre $X$. Then
\[
R\Gamma_{A_\inf}(\frak X) = R\Gamma(\frak X,A\Omega_{\frak X})
\]
is a perfect complex of $A_\sub{inf}$-modules, equipped with a $\phi$-linear map $\phi: R\Gamma_{A_\inf}(\frak X)\to R\Gamma_{A_\inf}(\frak X)$ inducing an isomorphism $R\Gamma_{A_\inf}(\frak X)[\tfrac 1\xi]\simeq R\Gamma_{A_\inf}(\frak X)[\tfrac 1{\phi(\xi)}]$, such that all cohomology groups are Breuil--Kisin--Fargues modules. Moreover, one has the following comparison results.
\begin{enumerate}
\item With crystalline cohomology of $\frak X_k$:
\[
R\Gamma_{A_\inf}(\frak X)\dotimes_{A_\inf} W(k)\simeq R\Gamma_\crys(\frak X_k/W(k))\ .
\]
\item With de~Rham cohomology of $\frak X$:
\[
R\Gamma_{A_\inf}(\frak X)\dotimes_{A_\inf} \roi\simeq R\Gamma_\dR(\frak X)\ .
\]
\item With crystalline cohomology of $\frak X_{\roi/p}$:
\[
R\Gamma_{A_\inf}(\frak X)\dotimes_{A_\inf} A_\crys\simeq R\Gamma_\crys(\frak X_{\roi/p}/A_\crys)\ .
\]
\item With \'etale cohomology of $X$:
\[
R\Gamma_{A_\inf}(\frak X)\otimes_{A_\inf} A_\inf[\tfrac 1\mu]\simeq R\Gamma_\sub{\'et}(X,\bb Z_p)\otimes_{\bb Z_p} A_\inf[\tfrac 1\mu]\ .
\]
\end{enumerate}
\end{theorem}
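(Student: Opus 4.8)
The plan is to deduce Theorem~\ref{ThmBRestated} from the sheaf-level statements Theorem~\ref{thm:AOmegavsdRW} and Theorem~\ref{ThmCRestated} by applying $R\Gamma(\frak X_\sub{Zar},-)$ and globalizing; the only genuinely new ingredient is the perfectness of $R\Gamma_{A_\inf}(\frak X)$ over $A_\inf$, after which everything is bookkeeping. Throughout I use that $\frak X$ proper, in particular qcqs, forces $R\Gamma(\frak X,-)$ to commute with (a) filtered colimits, (b) arbitrary homotopy limits, and (c) $\dotimes_{A_\inf}P$ for any perfect $A_\inf$-complex $P$ (the projection formula; in particular for two-term Koszul complexes $A_\inf\xrightarrow{f}A_\inf$). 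These three facts will be applied repeatedly to push every base change occurring in Theorem~\ref{ThmCRestated} past $R\Gamma$.

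First I would prove perfectness. The complex $A\Omega_{\frak X}=L\eta_\mu R\nu_\ast\bb A_{\inf,X}$ is derived $(p,\mu)$-complete: $\bb A_{\inf,X}$ is derived $(p,[\varpi])$-complete for a pseudo-uniformizer $\varpi$ of $\roi^\flat$, and $L\eta_\mu$ preserves derived completeness by Lemma~\ref{lem:Letapreservecompleteness}; moreover the ideals $(p,\mu)$, $(p,\xi)$ and $(p,\tilde\xi)$ of $A_\inf$ all have the same radical (reduce mod $p$ and use $\bar\mu=\phi^{-1}(\bar\mu)^p$, $\bar\xi=\phi^{-1}(\bar\mu)^{p-1}$, $\bar{\tilde\xi}=\bar\mu^{p-1}$), and $A_\inf$ is complete for it. Hence $R\Gamma_{A_\inf}(\frak X)$ is derived $(p,\tilde\xi)$-complete over the $(p,\tilde\xi)$-adically complete ring $A_\inf$, so by the standard criterion it is perfect over $A_\inf$ as soon as $R\Gamma_{A_\inf}(\frak X)\dotimes_{A_\inf}A_\inf/(p,\tilde\xi)$ is a perfect $A_\inf/(p,\tilde\xi)$-complex. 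Since $R\Gamma(\frak X,-)$ is exact this base change equals $R\Gamma(\frak X,(A\Omega_{\frak X}/\tilde\xi)/p)=R\Gamma(\frak X,\widetilde\Omega_{\frak X}/p)$ by Theorem~\ref{thm:AOmegavsdRW}(i); by Theorem~\ref{thm:IntegralCartier} the complex $\widetilde\Omega_{\frak X}$ is perfect over $\roi_{\frak X}$ with finite locally free cohomology sheaves $\Omega^{i,\cont}_{\frak X/\roi}\{-i\}$ ($0\le i\le\dim\frak X$), so $\widetilde\Omega_{\frak X}/p$ is a perfect complex of $\roi_{\frak X}/p$-modules on the proper $\roi/p$-scheme $\frak X_{\roi/p}$, and its cohomology over $\roi/p\cong A_\inf/(p,\tilde\xi)$ (via Frobenius) is therefore perfect. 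For this last step one replaces $\roi/p$ by a noetherian subring over which $\frak X$ and $\widetilde\Omega_{\frak X}/p$ are defined and invokes the proper base change theorem for perfect complexes; this is routine. Thus $R\Gamma_{A_\inf}(\frak X)$ is a perfect $A_\inf$-complex, and in particular each $H^i_{A_\inf}(\frak X)$ is finitely presented.

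Granting perfectness, the four comparisons are formal. For (iv), $A_\inf\to A_\inf[\tfrac1\mu]$ is flat, so $R\Gamma_{A_\inf}(\frak X)\otimes_{A_\inf}A_\inf[\tfrac1\mu]\simeq R\Gamma(\frak X,A\Omega_{\frak X}\otimes_{A_\inf}A_\inf[\tfrac1\mu])\simeq R\Gamma_\sub{pro\'et}(X,\bb A_{\inf,X})\otimes_{A_\inf}A_\inf[\tfrac1\mu]$ by Theorem~\ref{thm:AOmegavsdRW}(iv), and Theorem~\ref{thm:etalevsAinfcohom} identifies this with $R\Gamma_\sub{\'et}(X,\bb Z_p)\otimes_{\bb Z_p}A_\inf[\tfrac1\mu]$, the cone being killed by $W(\frak m^\flat)\ni\mu$. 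For (ii), $\dotimes_{A_\inf,\theta}\roi$ is the cone of $\xi$, which commutes with $R\Gamma$; on the sheaf level $A\Omega_{\frak X}\dotimes_{A_\inf,\theta}\roi\simeq(L\eta_{\tilde\xi}A\Omega_{\frak X})/\tilde\xi\simeq H^\bullet(\widetilde\Omega_{\frak X})$ with its Bockstein differential, which after undoing the twist $\{-i\}$ is $\Omega^{\bullet,\cont}_{\frak X/\roi}$ (Proposition~\ref{prop:PhionAOmega}, Proposition~\ref{prop:LetaBock}, as in the proof of Theorem~\ref{ThmCRestated}), so $R\Gamma_{A_\inf}(\frak X)\dotimes_{A_\inf}\roi\simeq R\Gamma_\dR(\frak X)$. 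For (i) and (iii), $W(k)$ and $A_\crys$ are $p$-complete, so $\dotimes=\hat\dotimes$ once $R\Gamma_{A_\inf}(\frak X)$ is perfect; writing $W(k)=R\projlim_r W_r(k)$ with $W_r(k)=\varinjlim_s W_r(\roi)/([\zeta_{p^s}]-1)$ (Corollary~\ref{corollary_almost_Witt}), and $A_\crys=R\projlim_n\varinjlim_m A_\crys^{(m)}/p^n$ with each $A_\crys^{(m)}$ of finite projective dimension over $A_\inf$, one moves $R\Gamma(\frak X,-)$ past these base changes using (a)--(c), and then Theorem~\ref{ThmCRestated}(i),(iii) identify the outputs with $R\Gamma(\frak X,W\Omega^\bullet_{\frak X_k/W(k)})=R\Gamma_\crys(\frak X_k/W(k))$ and $R\Gamma(\frak X,Ru_\ast\roi^\crys)=R\Gamma_\crys(\frak X_{\roi/p}/A_\crys)$.

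Finally, the Frobenius and the Breuil--Kisin--Fargues property. Applying $R\Gamma(\frak X,-)$ to the $\phi$-linear factorization of Proposition~\ref{prop:PhionAOmega}, namely a $\phi$-linear quasi-isomorphism $A\Omega_{\frak X}\simeq L\eta_{\tilde\xi}A\Omega_{\frak X}$ followed by the canonical map $L\eta_{\tilde\xi}A\Omega_{\frak X}\to A\Omega_{\frak X}$, and using that $R\Gamma$ commutes with $[\tfrac1{\tilde\xi}]$ (a filtered colimit) and that the second map is a quasi-isomorphism after inverting $\tilde\xi$, one obtains a $\phi$-linear map $\phi\colon R\Gamma_{A_\inf}(\frak X)\to R\Gamma_{A_\inf}(\frak X)$ inducing a quasi-isomorphism $R\Gamma_{A_\inf}(\frak X)[\tfrac1\xi]\simeq R\Gamma_{A_\inf}(\frak X)[\tfrac1{\phi(\xi)}]$; exactness of localization then already gives the $\phi$-linear isomorphism $H^i_{A_\inf}(\frak X)[\tfrac1\xi]\cong H^i_{A_\inf}(\frak X)[\tfrac1{\phi(\xi)}]$. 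It remains to check that $H^i_{A_\inf}(\frak X)[\tfrac1p]$ is finite free over $A_\inf[\tfrac1p]$, for which I would apply Corollary~\ref{cor:crysspecbetter} to $C=R\Gamma_{A_\inf}(\frak X)$: by (iv), $H^j(C)[\tfrac1{p\mu}]$ is a base change of the finite $\bb Q_p$-vector space $H^j_\sub{\'et}(X,\bb Q_p)$, hence free; and by (iii) together with Proposition~\ref{prop:BerthelotOgusAcrys}, $H^j(C\dotimes_{A_\inf}B_\crys^+)=H^j_\crys(\frak X_k/W(k))[\tfrac1p]\otimes_{K_0}B_\crys^+$ is free over $B_\crys^+$. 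Hence each $H^i_{A_\inf}(\frak X)$ is a Breuil--Kisin--Fargues module. The main obstacle is not any single assembly step but keeping the bookkeeping of the two maps $\theta,\tilde\theta\colon A_\inf\to\roi$ and the intervening Frobenius twists consistent; the one non-formal input, perfectness of $R\Gamma_{A_\inf}(\frak X)$, reduces as above to the finiteness of the de~Rham cohomology of the proper special fibre.
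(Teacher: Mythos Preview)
Your proof is correct and follows essentially the same strategy as the paper: deduce the comparison results from Theorem~\ref{ThmCRestated} by taking $R\Gamma(\frak X,-)$, get the Frobenius from Proposition~\ref{prop:PhionAOmega}, and verify the Breuil--Kisin--Fargues condition via Corollary~\ref{cor:crysspecbetter} using (iii), (iv) and Proposition~\ref{prop:BerthelotOgusAcrys}. The paper is terser: it simply asserts that (i)--(iv) follow from Theorem~\ref{ThmCRestated} and Theorem~\ref{thm:etalevsAinfcohom}, and for perfectness it observes that $R\Gamma_{A_\inf}(\frak X)$ is derived $\xi$-complete (Lemma~\ref{lem:Letapreservecompleteness}) and then invokes (ii) directly, so that $R\Gamma_{A_\inf}(\frak X)\dotimes_{A_\inf,\theta}\roi\simeq R\Gamma_\dR(\frak X)$ is perfect over $\roi$.

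Two minor comments on your extra bookkeeping. First, your perfectness argument reduces modulo $(p,\tilde\xi)$ and appeals to Theorem~\ref{thm:IntegralCartier}; this is fine, but it is exactly the content of comparison (ii) modulo $p$, so you are not really avoiding the de~Rham comparison. Second, the elaborate colimit/limit presentations of $W(k)$ and $A_\crys$ (in particular the claim that each $A_\crys^{(m)}$ has finite projective dimension over $A_\inf$) are unnecessary: once you know $R\Gamma_{A_\inf}(\frak X)$ is perfect, $-\dotimes_{A_\inf} N$ and $-\hat\dotimes_{A_\inf} N$ agree for $p$-complete $N$, and commuting $R\Gamma(\frak X,-)$ with $-\dotimes_{A_\inf} N$ for arbitrary $N$ follows from qcqs-ness of $\frak X$ (finite cohomological dimension plus commutation with direct sums), by resolving $N$ freely. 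The paper uses exactly this shortcut.
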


\begin{proof} The comparison results (i), (ii), (iii) and (iv) follow immediately from Theorem~\ref{ThmCRestated}, using Theorem~\ref{thm:etalevsAinfcohom} for part (iv).

Note that $A\Omega_R$ is derived $\xi$-complete for any small affine open $\Spf R\subset \frak X$ by Lemma~\ref{lem:Letapreservecompleteness}; thus, $A\Omega_{\frak X}$ is derived $\xi$-complete, and so is $R\Gamma_{A_\inf}(\frak X)$. Then, to prove that $R\Gamma_{A_\inf}(\frak X)$ is perfect, it is enough to prove that $R\Gamma_{A_\inf}(\frak X)\dotimes_{A_\inf,\theta}\roi$ is perfect, which follows from (ii).

By Proposition \ref{prop:PhionAOmega}, there is a $\phi$-linear quasi-isomorphism
\[
\phi: A\Omega_{\frak X}\cong L\eta_{\tilde\xi} A\Omega_{\frak X}\ ,
\]
inducing in particular a $\phi$-linear map $\phi: A\Omega_{\frak X}\to A\Omega_{\frak X}$. This induces a similar map on $R\Gamma_{A_\inf}(\frak X)$, which becomes an isomorphism after inverting $\tilde\xi=\phi(\xi)$.

It follows that all cohomology groups are finite free after inverting $p$ by Corollary~\ref{cor:crysspecbetter} and comparisons (iii) and (iv), using also Proposition~\ref{prop:BerthelotOgusAcrys}. Thus, all cohomology groups are Breuil--Kisin--Fargues modules.
\end{proof}

\begin{remark}
\label{rmk:TorsionFreedRCrysEquiv}
In the situation of Theorem~\ref{ThmBRestated}, if we fix a cohomological index $i$, then the following conditions are equivalent:
\begin{enumerate}
\item $H^i_\crys(\mathfrak{X}_k/W(k))$ is torsion-free.
\item $H^i_\dR(\mathfrak{X})$ is torsion-free.
\end{enumerate}
Indeed, this follows by combining parts (i) and (ii) of Theorem~\ref{ThmBRestated} with Remark~\ref{rmk:TorsionFreedRCrysEquivAbstract}. The weaker equivalence  where both $H^i$ and $H^{i+1}$ are simultaneously required to be torsion-free can be proven easily using the universal coefficients theorem relating $H^*_\dR(\mathfrak{X})$ and $H^*_\crys(\mathfrak{X}_k/W(k))$ with $H^*_\dR(\mathfrak{X}_k)$. However, for a fixed index $i$ as above, we do not know a direct ``crystalline'' proof of this equivalence.
\end{remark}

Let us now state a version of Theorem~\ref{ThmA} over $C$.

\begin{theorem}\label{ThmAoverC} Let $\frak X$ be a proper smooth formal scheme over the ring of integers $\roi$ in a complete algebraically closed extension of $\bb Q_p$, with residue field $k$; let $X$ be the generic fibre of $\frak X$. Let $i\geq 0$.
\begin{enumerate}
\item There is a canonical isomorphism
\[
H^i_\crys(\frak X_{\roi/p}/A_\crys)\otimes_{A_\crys} B_\crys\cong H^i_\sub{\'et}(X,\bb Z_p)\otimes_{\bb Z_p} B_\crys\ .
\]
It is compatible with the isomorphism
\[
H^i_\crys(X/B_\dR^+)\otimes_{B_\dR^+} B_\dR\cong H^i_\sub{\'et}(X,\bb Z_p)\otimes_{\bb Z_p} B_\dR
\]
via the identification
\[
H^i_\crys(\frak X_{\roi/p}/A_\crys)\otimes_{A_\crys} B_\dR^+\cong H^i_\crys(X/B_\dR^+)\ .
\]
\item For all $n\geq 0$, we have the inequality
\[
\mathrm{length}_{W(k)} (H^i_\crys(\frak X_k/W(k))_\sub{tor}/p^n)\geq \mathrm{length}_{\bb Z_p} (H^i_\sub{\'et}(X,\bb Z_p)_\sub{tor}/p^n)\ .
\]
In particular, if $H^i_\crys(\frak X_k/W(k))$ is $p$-torsion-free, then so is $H^i_\sub{\'et}(X,\bb Z_p)$.
\item Assume that $H^i_\crys(\frak X_k/W(k))$ and $H^{i+1}_\crys(\frak X_k/W(k))$ are $p$-torsion-free. Then one can recover $H^i_\crys(\frak X_k/W(k))$ with its $\varphi$-action from $H^i_\sub{\'et}(X,\bb Z_p)$ with the natural $B_\dR^+$-lattice
\[
H^i_\crys(X/B_\dR^+)\subset H^i_\sub{\'et}(X,\bb Z_p)\otimes_{\bb Z_p} B_\dR\ .
\]
More precisely, the pair of $H^i_\sub{\'et}(X,\bb Z_p)$ and this $B_\dR^+$-lattice give rise to a finite free Breuil--Kisin--Fargues module $\mathrm{BKF}(H^i_\sub{\'et}(X,\bb Z_p))$ by Theorem~\ref{ThmFargues}. Then, assuming only that $H^i_\crys(\frak X_k/W(k))$ is $p$-torsion-free, we have a canonical isomorphism
\[
H^i_{A_\inf}(\frak X)\cong \mathrm{BKF}(H^i_\sub{\'et}(X,\bb Z_p))\ ,
\]
and
\[
H^i_\crys(\frak X_k/W(k))\supset \mathrm{BKF}(H^i_\sub{\'et}(X,\bb Z_p))\otimes_{A_\inf} W(k)\ ,
\]
compatibly with the $\phi$-action. If $H^{i+1}_\crys(\frak X_k/W(k))$ is also $p$-torsion-free, then the last inclusion is an equality.
\end{enumerate}
\end{theorem}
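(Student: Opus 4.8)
The plan is to deduce all three parts of Theorem~\ref{ThmAoverC} from the single complex $R\Gamma_{A_\inf}(\frak X)$ and its four comparison isomorphisms recorded in Theorem~\ref{ThmBRestated}, together with the commutative algebra of Breuil--Kisin--Fargues modules from \S\ref{subsec:Ainfmodules}. Write $M^i = H^i_{A_\inf}(\frak X)$; by Theorem~\ref{ThmBRestated} this is a Breuil--Kisin--Fargues module with $M^i[\tfrac 1p]$ finite free over $A_\inf[\tfrac 1p]$. For part (i) I would base change the quasi-isomorphisms of Theorem~\ref{ThmBRestated}(iii) and (iv) along $A_\crys\to A_\crys[\tfrac 1\mu]=B_\crys$, resp.\ $A_\inf[\tfrac 1\mu]\to B_\crys$; since $B_\crys=A_\crys[\tfrac 1\mu]$ both sides become $R\Gamma_{A_\inf}(\frak X)\dotimes_{A_\inf}B_\crys$, yielding
\[
R\Gamma_\crys(\frak X_{\roi/p}/A_\crys)\otimes_{A_\crys}B_\crys\simeq R\Gamma_\sub{\'et}(X,\bb Z_p)\otimes_{\bb Z_p}B_\crys\ .
\]
As each $M^i[\tfrac 1p]$ is free, the derived base changes compute the individual cohomology groups, so this descends to each degree $i$. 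Compatibility with the $B_\dR$-scalar extension of the $B_\dR^+$-lattice of Theorem~\ref{thm:ratpadicHodgeC} is then bookkeeping: base changing Theorem~\ref{ThmBRestated}(iii) along $A_\crys\to B_\dR^+$ and invoking the comparison $R\Gamma_\crys(\frak X_{\roi/p}/A_\crys)\otimes_{A_\crys}B_\dR^+\simeq R\Gamma_\crys(X/B_\dR^+)$ proved at the end of \S\ref{sec:ratPAdicHodgeNew} gives $R\Gamma_{A_\inf}(\frak X)\dotimes_{A_\inf}B_\dR^+\simeq R\Gamma_\crys(X/B_\dR^+)$, and over $B_\dR = B_\dR^+[\tfrac 1\xi]$ this is compatible with the étale comparison since both factor through $R\Gamma_{A_\inf}(\frak X)$; one checks it agrees with the comparison of Theorem~\ref{thm:ratpadicHodgeC} via the compatibilities established in its proof.

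For part (ii), apply Corollary~\ref{cor:ElementaryDivisorsSpecialization} to $M^i$: for all $n$, $\ell_{W_n(k)}\!\big((M^i\otimes_{A_\inf}W(k))/p^n\big)\geq \ell_{W_n(C^\flat)}\!\big((M^i\otimes_{A_\inf}W(C^\flat))/p^n\big)$. Since $\mu$ is a unit in $W(C^\flat)$, Theorem~\ref{ThmBRestated}(iv) base changed to $W(C^\flat)$ identifies $M^i\otimes_{A_\inf}W(C^\flat)$ with $H^i_\sub{\'et}(X,\bb Z_p)\otimes_{\bb Z_p}W(C^\flat)$ $\phi$-equivariantly; a direct length computation over $W_n(C^\flat)$ gives
\[
\ell_{W_n(C^\flat)}\!\big((M^i\otimes_{A_\inf}W(C^\flat))/p^n\big)=rn+\mathrm{length}_{\bb Z_p}\!\big(H^i_\sub{\'et}(X,\bb Z_p)_\sub{tor}/p^n\big),
\]
with $r$ the common rank. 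On the crystalline side, Lemma~\ref{lem:CrysSpecializationInjects} produces an injection $M^i\otimes_{A_\inf}W(k)\hookrightarrow H^i_\crys(\frak X_k/W(k))$ with finite cokernel $Q$ and isomorphic after inverting $p$; snake-lemma bookkeeping together with the identity $\ell(Q/p^nQ)=\ell(Q[p^n])$ for finite-length $Q$ yields $\ell_{W_n(k)}\!\big((M^i\otimes_{A_\inf}W(k))/p^n\big)\leq \ell_{W_n(k)}\!\big(H^i_\crys(\frak X_k/W(k))/p^n\big)=rn+\mathrm{length}_{W(k)}\!\big(H^i_\crys(\frak X_k/W(k))_\sub{tor}/p^n\big)$. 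Chaining the three inequalities and cancelling $rn$ gives the claim; the final assertion of (ii) is the case $H^i_\crys(\frak X_k/W(k))_\sub{tor}=0$.

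For part (iii), assume first only that $H^i_\crys(\frak X_k/W(k))$ is $p$-torsion-free. Applying Corollary~\ref{cor:crysspec} to $C=R\Gamma_{A_\inf}(\frak X)$ (all of whose cohomology groups are free after inverting $p$) together with Theorem~\ref{ThmBRestated}(i) shows that $M^i$ is a finite free $A_\inf$-module, i.e.\ a finite free Breuil--Kisin--Fargues module; by part (ii), $H^i_\sub{\'et}(X,\bb Z_p)$ is then $p$-torsion-free. Next I would identify the pair $(T,\Xi)$ attached to $M^i$ by Theorem~\ref{ThmFargues}: Lemma~\ref{lem:bkfmoduleetale} gives $T=(M^i\otimes_{A_\inf}W(C^\flat))^{\phi=1}$ and $M^i\otimes_{A_\inf}A_\inf[\tfrac 1\mu]=T\otimes_{\bb Z_p}A_\inf[\tfrac 1\mu]$, while Theorem~\ref{ThmBRestated}(iv) identifies the latter $\phi$-equivariantly with $H^i_\sub{\'et}(X,\bb Z_p)\otimes_{\bb Z_p}A_\inf[\tfrac 1\mu]$; taking $\phi$-invariants (and using $A_\inf[\tfrac 1\mu]^{\phi=1}=\bb Z_p$) gives $T\cong H^i_\sub{\'et}(X,\bb Z_p)$. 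For the lattice, $\Xi=M^i\otimes_{A_\inf}B_\dR^+$, which by the identification $R\Gamma_{A_\inf}(\frak X)\dotimes_{A_\inf}B_\dR^+\simeq R\Gamma_\crys(X/B_\dR^+)$ from part (i) is exactly $H^i_\crys(X/B_\dR^+)\subset T\otimes_{\bb Z_p}B_\dR$. Hence $(T,\Xi)=(H^i_\sub{\'et}(X,\bb Z_p),H^i_\crys(X/B_\dR^+))$, and Fargues's classification gives $H^i_{A_\inf}(\frak X)\cong\mathrm{BKF}(H^i_\sub{\'et}(X,\bb Z_p))$. Finally, Lemma~\ref{lem:CrysSpecializationInjects} gives the $\phi$-equivariant inclusion $\mathrm{BKF}(H^i_\sub{\'et}(X,\bb Z_p))\otimes_{A_\inf}W(k)=M^i\otimes_{A_\inf}W(k)\hookrightarrow H^i_\crys(\frak X_k/W(k))$, which is an equality as soon as $H^{i+1}_\crys(\frak X_k/W(k))$ is also $p$-torsion-free, again by Corollary~\ref{cor:crysspec}; the opening claim of (iii) follows since then $H^i_\crys(\frak X_k/W(k))$ is recovered as $\mathrm{BKF}(H^i_\sub{\'et}(X,\bb Z_p))\otimes_{A_\inf}W(k)$ with its $\phi$-action.

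The main obstacle is not any single hard estimate --- the genuine content sits in Theorems~\ref{ThmC}/\ref{ThmB} and Theorem~\ref{thm:ratpadicHodgeC} --- but rather the bookkeeping needed to be sure that the $A_\crys$-comparison, the étale comparison over $A_\inf[\tfrac 1\mu]$, and the $B_\dR^+$-lattice comparison are all induced by the one object $R\Gamma_{A_\inf}(\frak X)$ in a mutually compatible way; in particular that $M^i\otimes_{A_\inf}B_\dR^+$ really coincides with $H^i_\crys(X/B_\dR^+)$ as constructed in \S\ref{sec:ratPAdicHodgeNew}, and that the lattice $T$ of $\phi$-invariants is $H^i_\sub{\'et}(X,\bb Z_p)$ on the nose rather than merely after inverting $\mu$. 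Secondarily, the length bookkeeping in (ii) must be carried out with care, since $M^i\otimes_{A_\inf}W(k)$ may differ from $H^i_\crys(\frak X_k/W(k))$ by a bounded amount of torsion; the point is precisely that this discrepancy does not affect the final inequality.
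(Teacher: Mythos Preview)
Your proposal is correct and follows essentially the same route as the paper's proof: part (i) from the comparison isomorphisms of Theorem~\ref{ThmBRestated}(iii),(iv) plus the explicit compatibility with Theorem~\ref{thm:ratpadicHodgeC}; part (ii) from Corollary~\ref{cor:ElementaryDivisorsSpecialization} and Lemma~\ref{lem:CrysSpecializationInjects} together with the length inequality for injections of finitely generated $W(k)$-modules with torsion cokernel; and part (iii) from Corollary~\ref{cor:crysspec} (the paper cites the variant Corollary~\ref{cor:crysspecbetter}) and Fargues's classification Theorem~\ref{ThmFargues}, using Lemma~\ref{lem:bkfmoduleetale} to identify $T$ and the $B_\dR^+$-comparison to identify $\Xi$. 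Your write-up is in fact somewhat more explicit than the paper's on the length bookkeeping and on the identification $T\cong H^i_\sub{\'et}(X,\bb Z_p)$, but the argument is the same.
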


\begin{proof} The isomorphism in part (i) follows from Theorem~\ref{ThmCRestated}. The compatibility with the $B_\dR^+$-lattice $H^i_\crys(X/B_\dR^+)$ amounts to the compatibility between the isomorphisms of Theorem~\ref{thm:cryscomp} and Theorem~\ref{thm:ratpadicHodgeC}, which one checks on the level of the explicit complexes.

For part (ii), we use Theorem~\ref{ThmCRestated}, Lemma~\ref{lem:CrysSpecializationInjects} and Corollary~\ref{cor:ElementaryDivisorsSpecialization} together with the observation that for any injective map $M\hookrightarrow N$ of finitely generated $W(k)$-modules with torsion cokernel $Q$,
\[
\mathrm{length}_{W(k)} (N/p^n)\geq \mathrm{length}_{W(k)} (M/p^n)\ ,
\]
as follows from the exact sequence
\[
\mathrm{Tor}_1^{W(k)}(Q,W(k)/p^n)\to M/p^n\to N/p^n\to Q/p^n\to 0\ ,
\]
and the equality
\[
\mathrm{length}_{W(k)} \mathrm{Tor}_1^{W(k)}(Q,W(k)/p^n) = \mathrm{length}_{W(k)} (Q/p^n)\ ,
\]
which holds for any torsion $W(k)$-module.

For part (iii), we use the equivalence of Theorem~\ref{ThmFargues} together with Corollary~\ref{cor:crysspecbetter}, and the identification of the $B_\dR^+$-lattice in part (i).
\end{proof}

Finally, we can prove Theorem~\ref{ThmA}:

\begin{theorem}\label{ThmARestated} Let $\frak X$ be a proper smooth formal scheme over $\roi$, where $\roi$ is the ring of integers in a complete discretely valued nonarchimedean extension $K$ of $\bb Q_p$ with perfect residue field $k$, and let $i\geq 0$. Let $C$ be a completed algebraic closure of $K$, with corresponding absolute Galois group $G_K$, and let $X/K$ be the rigid-analytic generic fibre of $\frak X$.
\begin{enumerate}
\item There is a comparison isomorphism
\[
H^i_\sub{\'et}(X_C,\bb Z_p)\otimes_{\bb Z_p} B_\crys\cong H^i_\crys(\frak X_k/W(k))\otimes_{W(k)} B_\crys\ ,
\]
compatible with the Galois and Frobenius actions, and the filtration. In particular, $H^i_\sub{\'et}(X_C,\bb Q_p)$ is a crystalline Galois representation.
\item For all $n\geq 0$, we have the inequality
\[
\mathrm{length}_{W(k)} (H^i_\crys(\frak X_k/W(k))_\sub{tor}/p^n)\geq \mathrm{length}_{\bb Z_p} (H^i_\sub{\'et}(X_C,\bb Z_p)_\sub{tor}/p^n)\ .
\]
In particular, if $H^i_\crys(\frak X_k/W(k))$ is $p$-torsion-free, then so is $H^i_\sub{\'et}(X_C,\bb Z_p)$.
\item Assume that $H^i_\crys(\frak X_k/W(k))$ and $H^{i+1}_\crys(\frak X_k/W(k))$ are $p$-torsion-free. Then one can recover $H^i_\crys(\frak X_k/W(k))$ with its $\varphi$-action from $H^i_\sub{\'et}(X_C,\bb Z_p)$ with its $G_K$-action.

More precisely, Theorem~\ref{ThmKisin} associates a finite free Breuil--Kisin module
\[
\mathrm{BK}(H^i_\sub{\'et}(X_C,\bb Z_p))
\]
over $\gS = W(k)[[T]]$ to the lattice $H^i_\sub{\'et}(X_C,\bb Z_p)$ in a crystalline $G_K$-representation. This comes with an identification
\[
\mathrm{BK}(H^i_\sub{\'et}(X_C,\bb Z_p))\otimes_{\gS} B_\crys^+\cong H^i_\crys(\frak X_k/W(k))\otimes_{W(k)} B_\crys^+
\]
by Proposition~\ref{prop:compbkbkf} and part (i). In particular, by extending scalars along $B_\crys^+\to W(\bar{k})[\tfrac 1p]$, we get an identification $\mathrm{BK}(H^i_\sub{\'et}(X_C,\bb Z_p))\otimes_{\gS} W(k)[\tfrac 1p]\cong H^i_\crys(\frak X_k/W(k))[\tfrac 1p]$.

Then
\[
\mathrm{BK}(H^i_\sub{\'et}(X_C,\bb Z_p))\otimes_{\gS} W(k) = H^i_\crys(\frak X_k/W(k))
\]
as submodules of the common base extension to $W(k)[\tfrac 1p]$.
\end{enumerate}
\end{theorem}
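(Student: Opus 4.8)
The plan is to reduce the statement to its analogue over $C$, namely Theorem~\ref{ThmAoverC}(iii), combined with two auxiliary facts: the identification $\mathrm{BKF}(T)\cong \mathrm{BK}(T)\otimes_\gS A_\inf$ of Breuil--Kisin--Fargues modules (where $T:=H^i_\sub{\'et}(X_C,\bb Z_p)$ and $V:=T[\tfrac 1p]$), and faithfully flat descent of lattices along $W(k)\to W(\bar k)$, for $\bar k$ an algebraic closure of $k$.

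First I would pass to the base field $C$. Base change $\frak X$ to $\Spf\roi_C$, keeping the name $\frak X$; its generic fibre is $X_C=X\hat{\otimes}_K C$ and its special fibre is $\frak X_{\bar k}=\frak X_k\otimes_k\bar k$, with $\bar k$ the (algebraically closed) residue field of $\roi_C$, while $H^i_\sub{\'et}(X_C,\bb Z_p)=T$ is unchanged. By flat base change for crystalline cohomology, $H^j_\crys(\frak X_{\bar k}/W(\bar k))\cong H^j_\crys(\frak X_k/W(k))\otimes_{W(k)}W(\bar k)$ for all $j$; since $W(k)\to W(\bar k)$ is faithfully flat, the hypotheses that $H^i_\crys(\frak X_k/W(k))$ and $H^{i+1}_\crys(\frak X_k/W(k))$ are $p$-torsion-free carry over to $\frak X_{\bar k}$. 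Moreover $\mathrm{BK}(T)\otimes_\gS W(k)$ (full, by Proposition~\ref{prop:compbkbkf}) and $H^i_\crys(\frak X_k/W(k))$ (full tautologically) are both finite free $W(k)$-lattices in $H^i_\crys(\frak X_k/W(k))[\tfrac 1p]$, and for two such lattices $L_1,L_2$ in a finite-dimensional $W(k)[\tfrac 1p]$-vector space $M$ one has $L_1=L_2$ as soon as $L_1\otimes_{W(k)}W(\bar k)=L_2\otimes_{W(k)}W(\bar k)$ inside $M\otimes_{W(k)}W(\bar k)$ — this uses only that $W(k)\to W(\bar k)$ is faithfully flat, so that $M\to M\otimes_{W(k)}W(\bar k)$ is injective and $(L_i\otimes_{W(k)}W(\bar k))\cap M=L_i$. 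Thus it suffices to prove the desired equality after $-\otimes_{W(k)}W(\bar k)$, i.e.\ over $W(\bar k)$.

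The core of the argument is then as follows. By part (ii), $T$ is torsion-free, so $V=H^i_\sub{\'et}(X_C,\bb Q_p)$ is crystalline by part (i), and Kisin's module $\mathrm{BK}(T)$ over $\gS=W(k)[[T]]$ is defined (Theorem~\ref{ThmKisin}). By the Remark following Theorem~\ref{thm:ratPAdicHodge} (i.e.\ Proposition~\ref{prop:bktobkf} together with Proposition~\ref{prop:compbkbkf}, using Theorem~\ref{thm:ratpadicHodgeC} to identify the natural de~Rham lattice $H^i_\crys(X_C/B_\dR^+)$ with $H^i_\dR(X)\otimes_K B_\dR^+$), the Breuil--Kisin--Fargues module attached to $T$ by Theorem~\ref{ThmFargues} is canonically $\mathrm{BKF}(T)\cong \mathrm{BK}(T)\otimes_\gS A_\inf$. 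Now apply Theorem~\ref{ThmAoverC}(iii) to $\frak X/\roi_C$: since $H^i_\crys(\frak X_{\bar k}/W(\bar k))$ and $H^{i+1}_\crys(\frak X_{\bar k}/W(\bar k))$ are $p$-torsion-free, it yields both a canonical isomorphism $H^i_{A_\inf}(\frak X)\cong \mathrm{BKF}(T)$ and the equality of submodules
\[
\mathrm{BKF}(T)\otimes_{A_\inf}W(\bar k)\ =\ H^i_\crys(\frak X_{\bar k}/W(\bar k))
\]
inside $H^i_\crys(\frak X_{\bar k}/W(\bar k))[\tfrac 1p]$. Next, the composite $\gS\to A_\inf\to W(\bar k)$ is a $W(k)$-algebra map sending $T$ to the image of $[\pi^\flat]^p$, which is $0$ because $\pi^\flat$ lies in the maximal ideal of $\roi_C^\flat$; hence this composite factors as $\gS\onto W(k)\into W(\bar k)$, $T\mapsto 0$, so that $\mathrm{BKF}(T)\otimes_{A_\inf}W(\bar k)=\big(\mathrm{BK}(T)\otimes_\gS W(k)\big)\otimes_{W(k)}W(\bar k)$. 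Chaining the displayed identifications gives $\big(\mathrm{BK}(T)\otimes_\gS W(k)\big)\otimes_{W(k)}W(\bar k)=H^i_\crys(\frak X_k/W(k))\otimes_{W(k)}W(\bar k)$ as submodules, and the descent step of the previous paragraph completes the proof.

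The step I expect to be the main obstacle is not any of the formal manipulations above, but the verification that all of them take place inside $H^i_\crys(\frak X_{\bar k}/W(\bar k))[\tfrac 1p]$ with mutually compatible identifications. Concretely, Theorem~\ref{ThmAoverC}(iii) identifies $\mathrm{BKF}(T)\otimes_{A_\inf}W(\bar k)[\tfrac 1p]$ with $H^i_\crys(\frak X_{\bar k}/W(\bar k))[\tfrac 1p]$ via a route through $A_\crys$ (Theorem~\ref{ThmBRestated}(iii) together with Proposition~\ref{prop:BerthelotOgusAcrys} and Corollary~\ref{cor:crysspecbetter}), whereas the identification used in the statement of Theorem~\ref{ThmARestated}(iii) is Proposition~\ref{prop:compbkbkf} composed with the $G_K$-invariants of the $B_\crys$-comparison of part (i); one must check these agree after $-\otimes_{W(k)}W(\bar k)$. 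Since the comparison of part (i) is, by its proof, precisely the one obtained from Theorem~\ref{ThmBRestated}(iii)--(iv) and Proposition~\ref{prop:BerthelotOgusAcrys}, and Proposition~\ref{prop:compbkbkf} is exactly the assertion that Kisin's $\mathrm{BK}(T)\otimes_\gS A_\inf$ is compatible with the $B_\crys$-description of $D_\crys(V)$, this reduces to a diagram-chase through identifications already established in the paper; but it is the part that genuinely needs to be written out with care.
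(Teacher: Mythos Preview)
Your approach to part~(iii) is correct and is essentially the paper's own: reduce to Theorem~\ref{ThmAoverC}(iii) over $C$ together with Proposition~\ref{prop:compbkbkf}, which identifies $\mathrm{BKF}(T)$ with $\mathrm{BK}(T)\otimes_\gS A_\inf$. The paper's proof of (iii) is literally one sentence invoking these two results, leaving implicit both the faithfully flat descent along $W(k)\to W(\bar k)$ and the compatibility of the various identifications inside $H^i_\crys(\frak X_{\bar k}/W(\bar k))[\tfrac1p]$ that you flag at the end; so your write-up is a more explicit version of the same argument rather than a different route.

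One small point: your proposal treats only part~(iii), taking (i) and (ii) as given. The paper dispatches (i) and (ii) by equally brief reductions to Theorem~\ref{ThmAoverC}(i),(ii): for (i) one uses the canonical section $k\to\roi/p\to\roi_C/p$ together with Proposition~\ref{prop:BerthelotOgusAcrys} to pass from $H^i_\crys(\frak X_{\roi_C/p}/A_\crys)$ to $H^i_\crys(\frak X_k/W(k))\otimes_{W(k)}A_\crys$, and filtration compatibility via the agreement of Theorem~\ref{thm:ratpadicHodgeC} with Theorem~\ref{thm:ratPAdicHodge}; part~(ii) is immediate from Theorem~\ref{ThmAoverC}(ii) since lengths are unchanged under the faithfully flat extension $W(k)\to W(\bar k)$.
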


\begin{proof} Note that in this situation, there is a canonical section $k\to \roi/p\to \roi_C/p$, so part (i) follows from Theorem~\ref{ThmAoverC} (i) and Proposition~\ref{prop:BerthelotOgusAcrys}. For the compatibility with the filtration, we also use that the isomorphism
\[
H^i_\crys(X_C/B_\dR^+)\otimes_{B_\dR^+} B_\dR\cong H^i_\sub{\'et}(X_C,\bb Z_p)\otimes_{\bb Z_p} B_\dR
\]
from Theorem~\ref{thm:ratpadicHodgeC} is compatible with the isomorphism
\[
H^i_\dR(X)\otimes_K B_\dR\cong H^i_\sub{\'et}(X_C,\bb Z_p)\otimes_{\bb Z_p} B_\dR
\]
from Theorem~\ref{thm:ratPAdicHodge}.

Part (ii) is immediate from Theorem~\ref{ThmAoverC} (ii). Finally, part (iii) follows from Theorem~\ref{ThmAoverC} (iii) and Proposition~\ref{prop:compbkbkf}.
\end{proof}

\begin{remark}
Using Remark~\ref{rmk:TorsionFreedRCrysEquiv}, each torsion-freeness hypothesis on $H^i_\crys(\mathfrak{X}_k/W(k))$ in parts (ii) and (iii) of Theorem~\ref{ThmAoverC} and Theorem~\ref{ThmARestated} can be replaced by the hypothesis that the $\roi$-module $H^i_\dR(\mathfrak{X})$ is torsion-free.
\end{remark}

\newpage

\bibliographystyle{acm}
\bibliography{integralpadicHodge}

\begin{thebibliography}{10}

\bibitem{StacksProject}
The {S}tacks {P}roject.
\newblock Available at \url{http://stacks.math.columbia.edu}.

\bibitem{AbbesGrosTopos}
{\sc Abbes, A., and Gros, M.}
\newblock Topos co-\'evanescents et g\'en\'eralisations.
\newblock {\em Annals of Mathematics Studies 193\/} (2015).

\bibitem{AndreattaIovita}
{\sc Andreatta, F., and Iovita, A.}
\newblock Comparison isomorphisms for smooth formal schemes.
\newblock {\em J. Inst. Math. Jussieu 12}, 1 (2013), 77--151.

\bibitem{BeauvilleLaszlo}
{\sc Beauville, A., and Laszlo, Y.}
\newblock Un lemme de descente.
\newblock {\em C. R. Acad. Sci. Paris S\'er. I Math. 320}, 3 (1995), 335--340.

\bibitem{BerthelotLefschetz}
{\sc Berthelot, P.}
\newblock Sur le ``th\'eor\`eme de {L}efschetz faible'' en cohomologie
  cristalline.
\newblock {\em C. R. Acad. Sci. Paris S\'er. A-B 277\/} (1973), A955--A958.

\bibitem{BerthelotOgus}
{\sc Berthelot, P., and Ogus, A.}
\newblock {\em Notes on crystalline cohomology}.
\newblock Princeton University Press, Princeton, N.J.; University of Tokyo
  Press, Tokyo, 1978.

\bibitem{BerthelotOgus2}
{\sc Berthelot, P., and Ogus, A.}
\newblock {$F$}-isocrystals and de {R}ham cohomology. {I}.
\newblock {\em Invent. Math. 72}, 2 (1983), 159--199.

\bibitem{BMS2}
{\sc Bhatt, B., Morrow, M., and Scholze, P.}
\newblock Topological {H}ochschild homology and integral $p$-adic {H}odge
  theory.
\newblock Available at \url{https://arxiv.org/abs/1802.03261}.

\bibitem{BMSAnnouncement}
{\sc Bhatt, B., Morrow, M., and Scholze, P.}
\newblock Integral {$p$}-adic {H}odge theory---announcement.
\newblock {\em Math. Res. Lett. 22}, 6 (2015), 1601--1612.

\bibitem{BhattScholzePrism}
{\sc Bhatt, B., and Scholze, P.}
\newblock Prisms and prismatic cohomology.
\newblock In preparation.

\bibitem{BhattScholze}
{\sc Bhatt, B., and Scholze, P.}
\newblock The pro-\'etale topology for schemes.
\newblock {\em Ast\'erisque}, 369 (2015), 99--201.

\bibitem{BlochKato}
{\sc Bloch, S., and Kato, K.}
\newblock {$p$}-adic \'etale cohomology.
\newblock {\em Inst. Hautes \'Etudes Sci. Publ. Math.}, 63 (1986), 107--152.

\bibitem{BombieriMumford3}
{\sc Bombieri, E., and Mumford, D.}
\newblock Enriques' classification of surfaces in char. {$p$}. {III}.
\newblock {\em Invent. Math. 35\/} (1976), 197--232.

\bibitem{Borger}
{\sc Borger, J.}
\newblock The basic geometry of {W}itt vectors, {I}: {T}he affine case.
\newblock {\em Algebra Number Theory 5}, 2 (2011), 231--285.

\bibitem{BGR}
{\sc Bosch, S., G\"untzer, U., and Remmert, R.}
\newblock {\em Non-{A}rchimedean analysis}, vol.~261 of {\em Grundlehren der
  Mathematischen Wissenschaften [Fundamental Principles of Mathematical
  Sciences]}.
\newblock Springer-Verlag, Berlin, 1984.
\newblock A systematic approach to rigid analytic geometry.

\bibitem{BoschLut1}
{\sc Bosch, S., and L\"utkebohmert, W.}
\newblock Formal and rigid geometry. {I}. {R}igid spaces.
\newblock {\em Math. Ann. 295}, 2 (1993), 291--317.

\bibitem{Breuil}
{\sc Breuil, C.}
\newblock Groupes {$p$}-divisibles, groupes finis et modules filtr\'es.
\newblock {\em Ann. of Math. (2) 152}, 2 (2000), 489--549.

\bibitem{BrinonRepresentations}
{\sc Brinon, O.}
\newblock Repr\'esentations {$p$}-adiques cristallines et de de {R}ham dans le
  cas relatif.
\newblock {\em M\'em. Soc. Math. Fr. (N.S.)}, 112 (2008), vi+159.

\bibitem{Caruso}
{\sc Caruso, X.}
\newblock Conjecture de l'inertie mod\'er\'ee de {S}erre.
\newblock {\em Invent. Math. 171}, 3 (2008), 629--699.

\bibitem{ColmezNiziolBst}
{\sc Colmez, P., and Nizio\l, W.}
\newblock Syntomic complexes and {$p$}-adic nearby cycles.
\newblock {\em Invent. Math. 208}, 1 (2017), 1--108.

\bibitem{ConradGabber}
{\sc Conrad, B., and Gabber, O.}
\newblock Spreading out of rigid-analytic varieties.
\newblock in preparation.

\bibitem{DavisKedlaya}
{\sc Davis, C., and Kedlaya, K.~S.}
\newblock On the {W}itt vector {F}robenius.
\newblock {\em Proc. Amer. Math. Soc. 142}, 7 (2014), 2211--2226.

\bibitem{dJAlterations}
{\sc de~Jong, A.~J.}
\newblock Smoothness, semi-stability and alterations.
\newblock {\em Inst. Hautes \'Etudes Sci. Publ. Math.}, 83 (1996), 51--93.

\bibitem{DeligneIllusie}
{\sc Deligne, P., and Illusie, L.}
\newblock Rel\`evements modulo {$p^2$} et d\'ecomposition du complexe de de
  {R}ham.
\newblock {\em Invent. Math. 89}, 2 (1987), 247--270.

\bibitem{EkedahlMO}
{\sc Ekedahl, T.}
\newblock Answer on Mathoverflow
  \url{http://mathoverflow.net/questions/21023/liftability-of-enriques-surfaces-from-char-p-to-zero}.

\bibitem{Elkik}
{\sc Elkik, R.}
\newblock Solutions d'\'equations \`a coefficients dans un anneau hens\'elien.
\newblock {\em Ann. Sci. \'Ecole Norm. Sup. (4) 6\/} (1973), 553--603 (1974).

\bibitem{FaltingsJAMS}
{\sc Faltings, G.}
\newblock {$p$}-adic {H}odge theory.
\newblock {\em J. Amer. Math. Soc. 1}, 1 (1988), 255--299.

\bibitem{FaltingsRamified}
{\sc Faltings, G.}
\newblock Integral crystalline cohomology over very ramified valuation rings.
\newblock {\em J. Amer. Math. Soc. 12}, 1 (1999), 117--144.

\bibitem{FaltingsAlmostEtale}
{\sc Faltings, G.}
\newblock Almost \'etale extensions.
\newblock {\em Ast\'erisque}, 279 (2002), 185--270.
\newblock Cohomologies $p$-adiques et applications arithm{\'e}tiques, II.

\bibitem{FarguesBK}
{\sc Fargues, L.}
\newblock Quelques r\'esultats et conjectures concernant la courbe.
\newblock {\em Ast\'erisque}, 369 (2015), 325--374.

\bibitem{FarguesFontaine}
{\sc Fargues, L., and Fontaine, J.-M.}
\newblock {\em {C}ourbes et fibr\'es vectoriels en th\'eorie de {H}odge
  {$p$}-adique}.
\newblock available at
  http://webusers.imj-prg.fr/$\sim$laurent.fargues/Courbe\_fichier\_principal.pdf.

\bibitem{Fontainepadicrepr}
{\sc Fontaine, J.-M.}
\newblock Sur certains types de repr\'esentations {$p$}-adiques du groupe de
  {G}alois d'un corps local;\ construction d'un anneau de {B}arsotti-{T}ate.
\newblock {\em Ann. of Math. (2) 115}, 3 (1982), 529--577.

\bibitem{Fontaine2013}
{\sc Fontaine, J.-M.}
\newblock Perfecto\"\i des, presque puret\'e et monodromie-poids (d'apr\`es
  {P}eter {S}cholze).
\newblock {\em Ast\'erisque}, 352 (2013), Exp. No. 1057, x, 509--534.
\newblock S{\'e}minaire Bourbaki. Vol. 2011/2012. Expos{\'e}s 1043--1058.

\bibitem{FontaineMessing}
{\sc Fontaine, J.-M., and Messing, W.}
\newblock {$p$}-adic periods and {$p$}-adic \'etale cohomology.
\newblock In {\em Current trends in arithmetical algebraic geometry ({A}rcata,
  {C}alif., 1985)}, vol.~67 of {\em Contemp. Math.} Amer. Math. Soc.,
  Providence, RI, 1987, pp.~179--207.

\bibitem{FontaineOuyang}
{\sc Fontaine, J.-M., and Ouyang, Y.}
\newblock {\em Theory of $p$-adic Galois representations}.
\newblock Available at
  \url{https://www.math.u-psud.fr/~fontaine/galoisrep.pdf}.

\bibitem{GabberBertini}
{\sc Gabber, O.}
\newblock On space filling curves and {A}lbanese varieties.
\newblock {\em Geom. Funct. Anal. 11}, 6 (2001), 1192--1200.

\bibitem{GabberRamero2}
{\sc Gabber, O., and Ramero, L.}
\newblock {F}oundations of almost ring theory.
\newblock \\ http://math.univ-lille1.fr/$\sim$ramero/hodge.pdf.

\bibitem{GeisserHesselholt2006}
{\sc Geisser, T., and Hesselholt, L.}
\newblock The de {R}ham-{W}itt complex and {$p$}-adic vanishing cycles.
\newblock {\em J. Amer. Math. Soc. 19}, 1 (2006), 1--36.

\bibitem{HesselholtCp}
{\sc Hesselholt, L.}
\newblock On the topological cyclic homology of the algebraic closure of a
  local field.
\newblock In {\em An alpine anthology of homotopy theory}, vol.~399 of {\em
  Contemp. Math.} Amer. Math. Soc., Providence, RI, 2006, pp.~133--162.

\bibitem{HuberDefAdic}
{\sc Huber, R.}
\newblock A generalization of formal schemes and rigid analytic varieties.
\newblock {\em Math. Z. 217}, 4 (1994), 513--551.

\bibitem{Huber}
{\sc Huber, R.}
\newblock {\em \'{E}tale cohomology of rigid analytic varieties and adic
  spaces}.
\newblock Aspects of Mathematics, E30. Friedr. Vieweg \& Sohn, Braunschweig,
  1996.

\bibitem{IllusieDRWitt}
{\sc Illusie, L.}
\newblock Complexe de de\thinspace {R}ham-{W}itt et cohomologie cristalline.
\newblock {\em Ann. Sci. \'Ecole Norm. Sup. (4) 12}, 4 (1979), 501--661.

\bibitem{IllusieRaynaud}
{\sc Illusie, L., and Raynaud, M.}
\newblock Les suites spectrales associ\'ees au complexe de de {R}ham-{W}itt.
\newblock {\em Inst. Hautes \'Etudes Sci. Publ. Math.}, 57 (1983), 73--212.

\bibitem{Katzpadic}
{\sc Katz, N.~M.}
\newblock {$p$}-adic properties of modular schemes and modular forms.
\newblock 69--190. Lecture Notes in Mathematics, Vol. 350.

\bibitem{kedlayanonarchwitt}
{\sc Kedlaya, K.~S.}
\newblock Nonarchimedean geometry of {W}itt vectors.
\newblock {\em Nagoya Math. J. 209\/} (2013), 111--165.

\bibitem{KedlayaAinf}
{\sc {Kedlaya}, K.~S.}
\newblock {Some ring-theoretic properties of A\_inf}.
\newblock arXiv:1602.09016.

\bibitem{KedlayaLiu}
{\sc Kedlaya, K.~S., and Liu, R.}
\newblock Relative {$p$}-adic {H}odge theory: foundations.
\newblock {\em Ast\'erisque}, 371 (2015), 239.

\bibitem{Kisin}
{\sc Kisin, M.}
\newblock Crystalline representations and {$F$}-crystals.
\newblock In {\em Algebraic geometry and number theory}, vol.~253 of {\em
  Progr. Math.} Birkh\"auser Boston, Boston, MA, 2006, pp.~459--496.

\bibitem{KisinShimura}
{\sc Kisin, M.}
\newblock Integral models for {S}himura varieties of abelian type.
\newblock {\em J. Amer. Math. Soc. 23}, 4 (2010), 967--1012.

\bibitem{LangEnriques}
{\sc Lang, W.~E.}
\newblock On {E}nriques surfaces in characteristic {$p$}. {I}.
\newblock {\em Math. Ann. 265}, 1 (1983), 45--65.

\bibitem{LangerZink}
{\sc Langer, A., and Zink, T.}
\newblock De {R}ham-{W}itt cohomology for a proper and smooth morphism.
\newblock {\em J. Inst. Math. Jussieu 3}, 2 (2004), 231--314.

\bibitem{LiedtkeEnriques}
{\sc Liedtke, C.}
\newblock Arithmetic moduli and lifting of {E}nriques surfaces.
\newblock {\em J. Reine Angew. Math. 706\/} (2015), 35--65.

\bibitem{LProper}
{\sc L\"utkebohmert, W.}
\newblock Formal-algebraic and rigid-analytic geometry.
\newblock {\em Math. Ann. 286}, 1-3 (1990), 341--371.

\bibitem{PoonenBertini}
{\sc Poonen, B.}
\newblock Bertini theorems over finite fields.
\newblock {\em Ann. of Math. (2) 160}, 3 (2004), 1099--1127.

\bibitem{RaynaudGruson}
{\sc Raynaud, M., and Gruson, L.}
\newblock Crit\`eres de platitude et de projectivit\'e. {T}echniques de
  ``platification'' d'un module.
\newblock {\em Invent. Math. 13\/} (1971), 1--89.

\bibitem{Schlessinger}
{\sc Schlessinger, M.}
\newblock Functors of {A}rtin rings.
\newblock {\em Trans. Amer. Math. Soc. 130\/} (1968), 208--222.

\bibitem{ScholzeThesis}
{\sc Scholze, P.}
\newblock Perfectoid spaces.
\newblock {\em Publ. Math. Inst. Hautes \'Etudes Sci. 116\/} (2012), 245--313.

\bibitem{ScholzePAdicHodge}
{\sc Scholze, P.}
\newblock {$p$}-adic {H}odge theory for rigid-analytic varieties.
\newblock {\em Forum Math. Pi 1\/} (2013), e1, 77.

\bibitem{ScholzeSurvey}
{\sc Scholze, P.}
\newblock Perfectoid spaces: a survey.
\newblock In {\em Current developments in mathematics 2012}. Int. Press,
  Somerville, MA, 2013, pp.~193--227.

\bibitem{ScholzePAdicHodgeErratum}
{\sc Scholze, P.}
\newblock {$p$}-adic {H}odge theory for rigid-analytic varieties---corrigendum
  [ {MR}3090230].
\newblock {\em Forum Math. Pi 4\/} (2016), e6, 4.

\bibitem{ScholzeLectureNotes}
{\sc Scholze, P., and Weinstein, J.}
\newblock {\em {$p$}-adic geometry}.
\newblock Lecture notes from course at UC Berkeley in Fall 2014, available at
  \url{https://www.math.uni-bonn.de/people/scholze/Berkeley.pdf}.

\bibitem{TanTongCcrys}
{\sc Tan, F., and Tong, J.}
\newblock Crystalline comparison isomorphisms in p-adic hodge theory: the
  absolutely unramified case.
\newblock Available at \url{http://arxiv.org/abs/1510.05543}.

\bibitem{Tsuji}
{\sc Tsuji, T.}
\newblock {$p$}-adic \'etale cohomology and crystalline cohomology in the
  semi-stable reduction case.
\newblock {\em Invent. Math. 137}, 2 (1999), 233--411.

\bibitem{vanderKallen1986}
{\sc van~der Kallen, W.}
\newblock Descent for the {$K$}-theory of polynomial rings.
\newblock {\em Math. Z. 191}, 3 (1986), 405--415.

\end{thebibliography}

\end{document}